\theoremstyle{plain}
\newtheorem{thm}{Theorem}[section]
\newtheorem*{thm*}{Theorem}
\newtheorem{prop}[thm]{Proposition}
\newtheorem*{prop*}{Proposition}
\newtheorem{cor}[thm]{Corollary}
\newtheorem{lem}[thm]{Lemma}
\newtheorem*{conj*}{Conjecture}
\theoremstyle{definition}
\newtheorem{defi}[thm]{Definition}
\newtheorem{rem}[thm]{Remark}
\newtheorem*{ass*}{Assumption}
\newenvironment{customass}[1]{\innercustomass}{\endinnercustomass}
\newenvironment{customconj}[1]{\innercustomconj}{\endinnercustomconj}
\newcommand{\xmiddle}[1]{\;\middle#1\;}\newcommand{\Q}{\mathbb{Q}}
\newcommand{\Z}{\mathbb{Z}}
\newcommand{\Ns}{\mathbb{Z}_{>0}}
\newcommand{\N}{\mathbb{Z}_{\geq0}}
\renewcommand{\C}{\mathbb{C}}
\newcommand{\R}{\mathbb{R}}
\renewcommand{\H}{\mathbb{H}}
\newcommand{\h}{\mathfrak{h}}
\renewcommand{\k}{\mathfrak{k}}
\newcommand{\wt}{\operatorname{wt}}
\newcommand{\tr}{\operatorname{tr}}
\renewcommand{\i}{\mathrm{i}}\newcommand{\e}{\mathrm{e}}\newcommand{\End}{\operatorname{End}}
\newcommand{\Aut}{\operatorname{Aut}}
\newcommand{\Hom}{\operatorname{Hom}}
\newcommand{\Mor}{\operatorname{Mor}}
\newcommand{\Ind}{\operatorname{Ind}}
\newcommand{\rk}{\operatorname{rk}}
\newcommand{\im}{\operatorname{im}}
\newcommand{\ord}{\operatorname{ord}}
\newcommand{\sign}{\operatorname{sign}}
\newcommand{\voa}{vertex operator algebra}
\newcommand{\Voa}{Vertex operator algebra}
\newcommand{\VOA}{Vertex Operator Algebra}
\newcommand{\vosa}{vertex operator subalgebra}
\newcommand{\aia}{abelian intertwining algebra}
\newcommand{\Aia}{Abelian intertwining algebra}
\newcommand{\AIA}{Abelian Intertwining Algebra}
\newcommand{\mtc}{modular tensor category}
\newcommand{\MTC}{Modular Tensor Category}
\newcommand{\mtcs}{modular tensor categories}
\newcommand{\MTCs}{Modular Tensor Categories}
\newcommand{\fpvosa}{fixed-point vertex operator subalgebra}
\newcommand{\FPVOSA}{Fixed-Point Vertex Operator Subalgebra}
\newcommand{\BKMa}{Borcherds-Kac-Moody algebra}
\newcommand{\BKMA}{Borcherds-Kac-Moody Algebra}
\newcommand{\fqs}{finite quadratic space}
\newcommand{\Fqs}{Finite quadratic space}
\newcommand{\FQS}{Finite Quadratic Space}
\newcommand{\vac}{\textbf{1}}\newcommand{\ch}{\operatorname{ch}}\newcommand{\sch}{\operatorname{sch}}\newcommand{\id}{\operatorname{id}}
\newcommand{\amgis}{{\reflectbox{$\sigma$}}}
\newcommand{\eps}{\varepsilon}
\newcommand{\lcm}{\operatorname{lcm}}
\newcommand{\SLZ}{\operatorname{SL}_2(\mathbb{Z})}
\newcommand{\SLR}{\operatorname{SL}_2(\mathbb{R})}
\newcommand{\GL}{\operatorname{GL}}
\newcommand{\MpZ}{\operatorname{Mp}_2(\mathbb{Z})}
\newcommand{\ee}{\mathfrak{e}}
\newcommand{\g}{\mathfrak{g}}
\newcommand{\s}{\mathfrak{s}}
\newcommand{\ad}{\operatorname{ad}}
\newcommand{\Inn}{\operatorname{Inn}}
\newcommand{\Out}{\operatorname{Out}}
\renewcommand{\sl}{\mathfrak{sl}}
\newcommand{\so}{\mathfrak{so}}
\renewcommand{\sp}{\mathfrak{sp}}
\renewcommand{\S}{\mathcal{S}}
\newcommand{\T}{\mathcal{T}}
\newcommand{\Irr}{\operatorname{Irr}}
\newcommand{\V}{\mathcal{V}}
\newcommand{\qdim}{\operatorname{qdim}}\newcommand{\sdim}{\operatorname{sdim}}
\newcommand{\oddity}{\operatorname{oddity}}
\newcommand{\spn}{\operatorname{span}}
\newcommand{\CC}{\mathcal{C}}
\renewcommand{\Im}{\operatorname{Im}}
\newcommand{\unit}{{\mathbf{1}_\mathcal{C}}}\newcommand{\Res}{\operatorname{Res}}
\newcommand{\mult}{\operatorname{mult}}
\newcommand{\II}{I\!I}
\tikzset{ch/.style={circle,draw,on chain,inner sep=2pt},chj/.style={ch,join}}
\newcommand{\mlabel}[1]{  \(#1\)
}
\let\dlabel=\alabel
\newcommand{\dnode}[2][chj]{\node[#1,label={below:\dlabel{#2}},fill=yellow] {};
}
\newcommand{\dnodenj}[1]{\dnode[ch]{#1}
}
\newcommand{\dnodebr}[1]{\node[chj,label={below right:\dlabel{#1}},fill=yellow] {};
}
\newcommand{\dydots}{\node[chj,draw=none,inner sep=1pt] {\dots};
}
\newcommand{\QLeftarrow}{\begingroup
\tikz
\draw[shorten >=0pt,shorten <=0pt] (0,3pt) -- ++(-1em,0) (0,1pt) -- ++(-1em-1pt,0) (0,-1pt) -- ++(-1em-1pt,0) (0,-3pt) -- ++(-1em,0) (-1em+1pt,5pt) to[out=-105,in=45] (-1em-2pt,0) to[out=-45,in=105] (-1em+1pt,-5pt);
\endgroup
}
\title{A Cyclic Orbifold Theory for Holomorphic \VOA{}s and Applications}
\author{}
\date{\normalsize
vom Fachbereich Mathematik\\
der Technischen Universität Darmstadt\\
zur Erlangung des Grades eines\\
Doktors der Naturwissenschaften\\
(Dr.\ rer.\ nat.)\\
genehmigte Dissertation\\
~\\
~\\
Tag der Einreichung: 07. Juli 2016\\
Tag der mündlichen Prüfung: 15. September 2016\\
~\\
Referent: Prof.\ Dr.\ Nils R.\ Scheithauer\\
1. Korreferent: Prof.\ Dr.\ Martin Möller\\
2. Korreferent: Prof.\ Dr.\ Gerald Höhn\\
~\\
von\\
Sven Möller, M.Sc.\\
aus\\
Wiesbaden\\
~\\
~\\
Darmstadt, D 17\\
2016}
\begin{document}

\maketitle

\begin{otherlanguage}{ngerman}
\chapter*{Zusammenfassung}
Diese Dissertation beschäftigt sich mit der Konstruktion und Klassifikation von Vertexoperatoralgebren, die gewissen Regularitätsannahmen genügen. Wir zeigen, dass die Fusionsalgebra einer solchen Vertexoperatoralgebra, deren irreduzible Moduln zudem allesamt einfache Ströme sind, die Gruppenalgebra einer endlichen, abelschen Gruppe ist, die wir Fusionsgruppe nennen. Außerdem definiert die Modulo-1-Reduktion der konformen Gewichte der irreduziblen Moduln eine nicht-ausgeartete quadratische Form auf dieser Fusionsgruppe. Wir zeigen weiterhin, dass die direkte Summe aller irreduziblen Moduln eine abelsche Intertwining-Algebra ist, deren zugeordnete quadratische Form das Negative der gerade erwähnten ist. Schränkt man diese Summe auf eine isotrope Untergruppe ein, so erhält man eine Vertexoperatoralgebra, welche die ursprüngliche Vertexoperatoralgebra erweitert.

Weiterhin bestimmen wir die Fusionsalgebra der Fixpunktvertexoperatorunteralgebra einer holomorphen Vertexoperatoralgebra unter einer endlichen, zyklischen Gruppe von Automorphismen. Wir zeigen, dass die Fusionsgruppe eine zentrale Erweiterung dieser endlichen, zyklischen Gruppe mit sich selbst ist, und bestimmen deren Isomorphieklasse. Außerdem bestimmen wir die quadratische Form auf der Fusionsgruppe.

Durch Kombination der gerade genannten Resultate erhalten wir ausgehend von einer holomorphen Vertexoperatoralgebra und einer endlichen, zyklischen Automorphismengruppe eine weitere holomorphe Vertexoperatoralgebra, definiert auf gewissen irreduziblen Moduln der Fixpunktvertexoperatorunteralgebra. Diese holomorphe Vertexoperatoralgebra heißt Orbifold der ursprünglichen.

Als Anwendung der Orbifoldtheorie konstruieren wir fünf holomorphe Vertexoperatoralgebren von zentraler Ladung 24 und liefern somit einen Beitrag zur Klassifikation dieser Vertexoperatoralgebren, von denen es bis auf Isomorphie vermutlich genau 71 gibt, die sogenannte Liste von Schellekens.
\end{otherlanguage}

\chapter*{Acknowledgements}

First and foremost I would like to express my gratitude to Prof.\ Nils Scheithauer for his inspiration and excellent supervision. I thank Dr.\ Jethro van Ekeren for being a great colleague and many stimulating discussions. I enjoyed working together with both of them on the paper \cite{EMS15}, which is partly based on this dissertation and vice versa.

I thank the referees of this dissertation Prof.\ Nils Scheithauer, Prof.\ Martin Möller and, in particular, Prof.\ Gerald Höhn, who has made a number of useful suggestions concerning this text.

I am grateful to Prof.\ Gerald Höhn and Prof.\ Thomas Creutzig for many helpful discussions. I thank Dr.\ Stephan Ehlen for advice concerning some computer calculations.

I also thank Prof.\ Péter Bántay, Prof.\ Scott Carnahan, Prof.\ Thomas Creutzig, Prof.\ Gerald Höhn, Prof.\ Yi-Zhi Huang, Prof.\ Victor Kac, Prof.\ Ching Hung Lam, Prof.\ Masahiko Miyamoto, Prof.\ Bert Schellekens, Prof.\ Hiroki Shimakura and Prof.\ Hiroshi Yamauchi for comments on the paper \cite{EMS15}.

It is a pleasure to thank the organisers of the workshop ``Algebras, Groups and Geometries 2014'' at the University of Tokyo, where parts of this work were presented, and Prof.\ Hiroshi Yamauchi for his hospitality during our stay.

I thank Heiko Möller, Daniel Günzel, Dr.\ Moritz Egert and Dr.\ Friederike Steglich for proofreading parts of this text.

The author was partially supported by a doctoral scholarship from the German Academic Scholarship Foundation and by the German Research Foundation as part of the project ``Infinite-dimensional Lie algebras in string theory''.

\tableofcontents

\chapter*{Introduction}
\addcontentsline{toc}{chapter}{Introduction}

It has been three decades since the introduction of \emph{vertex algebras} into mathematics by Richard Borcherds \cite{Bor86}. Today---although still deemed quite exotic---these objects have become rather ubiquitous. They are essential for the representation theory of infinite-dimensional Lie algebras and bear influence on algebraic geometry, the theory of finite groups, topology, integrable systems and combinatorics, to name only a few subjects. Moreover, vertex algebras serve as a rigorous mathematical formulation of the chiral part of two-dimensional conformal field theories in physics. In a nutshell, vertex algebras are to quantum field theory what associative algebras of operators on Hilbert spaces are to quantum mechanics.

The main focus of this text is on special vertex algebras called \emph{\voa{}s}, carrying a representation of the Virasoro Lie algebra at a certain central charge $c\in\C$ \cite{FLM88}. The original motivation for introducing the notion of \voa{}s was to realise the largest sporadic group, the Monster group $M$, as the group of symmetries of a certain infinite-dimensional, $\Z$-graded vector space
\begin{equation*}
V=\bigoplus_{n\in\N}V_n
\end{equation*}
with a ``natural'' algebraic structure. This natural structure turned out to be that of a \voa{} and the infinite-dimensional representation of the Monster group is the famous \emph{Monster \voa{}} or \emph{Moonshine module} $V^\natural$ \cite{FLM88}. In his ground-breaking article \cite{Bor92} Borcherds proved the Conway-Norton conjecture for the Moonshine module \cite{CN79}, i.e.\ he showed that for any element $g\in M$ of the Monster group the graded trace
\begin{equation*}
T_g(q)=\sum_{n\in\Z}q^{n-1}\tr_{V^\natural_n}g,
\end{equation*}
called \emph{McKay-Thompson series}, is the $q$-expansion of a Hauptmodul for a genus 0 subgroup of $\SLR$. Borcherds was awarded the Fields Medal in 1998 in part for his proof of this conjecture.

The graded traces or \emph{characters} of \voa{}s play a decisive rôle in the representation theory of \voa{}s and also in this thesis as they are sometimes modular forms \cite{Zhu96,DLM00}. For example, the character of the Moonshine module $V^\natural$
\begin{equation*}
\ch_{V^\natural}(q)=T_{\id}(q)=\sum_{n\in\Z}q^{n-1}\dim_\C(V^\natural_n)=q^{-1}+196884q+21493760q^2+\ldots
\end{equation*}
is exactly the \emph{modular $j$-function} minus its constant term. This explains why the coefficients of the $j$-function are linear combinations of the dimensions of the irreducible representations of the Monster group $M$ with small non-negative coefficients, as was remarked by John McKay in 1978. His observation started the \emph{Monstrous Moonshine} research which climaxed in Borcherds' proof.

\minisec{Nice \VOA{}s}
\Voa{}s can exhibit numerous pathological behaviours, making it difficult to explore them beyond basic properties. Consequently, it is useful to restrict oneself to \voa{}s satisfying certain \emph{niceness} properties, especially when studying their representation theory. In the following we call a \voa{} \emph{nice} if it is simple, rational, $C_2$-cofinite, self-contragredient and of CFT-type (Assumption~\ref{ass:n}).

For instance, a \emph{rational} \voa{} has only finitely many isomorphism classes of irreducible modules and each module is isomorphic to a direct sum of these, i.e.\ the module category is semisimple. \emph{$C_2$-cofiniteness} is a technical condition ensuring that the graded traces of \voa{}s are modular forms or, more precisely, vector-valued modular forms for Zhu's representation \cite{Zhu96}. Many natural examples of \voa{}s are nice, including the aforementioned Moonshine module $V^\natural$ and \voa{}s associated with positive-definite, even lattices. Furthermore, a rational \voa{} $V$ is called \emph{holomorphic} if $V$ itself is the only irreducible $V$-module, i.e.\ the representation theory of $V$ is trivial.

\minisec{Important Problems}
While there are many well-understood examples of vertex algebras, the general theory is still rather poorly developed, despite some recent breakthroughs. Three of the main problems in the structure theory of \voa{}s are:
\begin{enumerate}
\item the \emph{extension problem}: constructing new \voa{}s from the irreducible modules of a given nice \voa{}, extending the original \voa{} structure,
\item the \emph{orbifold problem}: given a nice \voa{} $V$ and a finite group $G$ of automorphisms of $V$, understanding the properties and representation theory of the \fpvosa{} $V^G$,
\item the \emph{classification problem}: classifying the nice, holomorphic \voa{}s of a given central charge $c$.
\end{enumerate}
Solutions to those problems will be addressed in this thesis.

\minisec{Simple Currents}
The modules of a nice \voa{} admit a tensor product $\boxtimes$, called the \emph{fusion product} \cite{HL92,HL94,HL95,HL95b,Li98}. An irreducible module is a \emph{simple current} if its fusion product with any irreducible module is again irreducible. Nice \voa{}s whose irreducible modules are all simple currents have a particularly simple representation theory. We show that the fusion algebra $\V(V)$ of such a \voa{} $V$ is the group algebra $\C[F_V]$ of some finite abelian group $F_V$, called the \emph{fusion group} (Proposition~\ref{prop:sca}). In particular, the irreducible $V$-modules $W^\alpha$, $\alpha\in F_V$, are indexed by $F_V$ and
\begin{equation*}
W^\alpha\boxtimes W^\beta\cong W^{\alpha+\beta}
\end{equation*}
for all $\alpha,\beta\in F_V$ are the fusion rules for $V$. In this situation we say that $V$ has \emph{group-like fusion} (Assumption~\ref{ass:sn}).

Suppose in addition that the irreducible modules of $V$ other than $V$ itself have positive conformal weights, i.e.\ $V$ satisfies the \emph{positivity assumption} (Assumption~\ref{ass:p}). Then we prove that the modulo-1 reduction $Q_\rho$ of the \emph{conformal weights} is a non-degenerate quadratic form on $F_V$, i.e.\ it endows $F_V$ with the structure of a \fqs{} (Theorem~\ref{thm:fafqs}). Moreover, we show that Zhu's representation $\rho_V$ is up to a character identical to the well-known Weil representation $\rho_{F_V}$ on $F_V$ (Theorem~\ref{thm:zhuweil}). This character is closely related to the modular transformations of the Dedekind eta function (Corollary~\ref{cor:zhuweil}).

\minisec{Extension Problem}
Let $V$ be a nice \voa{} with group-like fusion. Then it is known that the direct sum of all irreducible $V$-modules
\begin{equation*}
A=\bigoplus_{\alpha\in F_V}W^\alpha
\end{equation*}
admits the structure of an \emph{\aia{}} associated with some abelian 3-cocycle $(F,\Omega)$ on $F_V$ \cite{Hua00,Hua05}. We prove, using the theory of \mtcs{}, that the quadratic form $Q_\Omega$ associated with $\Omega$ is exactly the negative of the quadratic form $Q_\rho$ if additionally the positivity assumption holds for $V$ (Theorem~\ref{thm:2.7}). This closes a gap in the theory pointed out by Scott Carnahan \cite{Car14}.

An important application of this result is a solution of the extension problem. We prove that in the above situation for an isotropic subgroup $I$ of $F_V$ the sum
\begin{equation*}
V_I=\bigoplus_{\alpha\in I}W^\alpha
\end{equation*}
admits the structure of a \voa{} naturally extending the \voa{} structure of $V$ and the module structures on the $W^\alpha$, $\alpha\in I$ (Theorem~\ref{thm:4.3}). $V_I$ is called a \emph{simple-current extension} of $V$ and we show that $V_I$ is holomorphic if and only if $I^\bot=I$.

\minisec{Orbifold Problem}
Then we turn to the orbifold problem. Let $V$ be a \voa{} and $G\leq\Aut(V)$ some finite group of automorphisms of $V$. Then the vectors in $V$ fixed by $G$ form a \vosa{} of $V$. \emph{Orbifold theory} is concerned with the properties and representation theory of the \fpvosa{} $V^G$. It is natural to ask whether $V^G$ inherits the niceness from $V$. It has recently been established that $V^G$ is again rational and $C_2$-cofinite for a finite, solvable group $G$ \cite{Miy15,CM16}. The other three niceness properties are easily seen to hold for any finite group $G$.

The fusion algebra of $V^G$ for nice $V$ is determined in this work in the simplest non-trivial case where $V$ is holomorphic and $G=\langle\sigma\rangle$ is a finite, cyclic group of some order $n\in\Ns$ (Assumption~\ref{ass:o}), generalising previous results for $n=2,3$ \cite{FLM88,DGM90,Miy13b}. We show that all $n^2$ irreducible $V^G$-modules are simple currents (Lemma~\ref{lem:step4}) and that the fusion group $F_{V^G}$ is a central extension of $\Z_n$ by $\Z_n$ whose isomorphism class is determined by the conformal weight of the unique $\sigma$-twisted $V$-module $V(\sigma)$ (Theorem~\ref{thm:main} and Corollary~\ref{cor:main}). We also determine the quadratic form $Q_\rho$ on $F_{V^G}$. We establish that the conformal weights of the irreducible $V^G$-modules lie in $(1/n^2)\Z$ (Theorem~\ref{thm:confwnn}) and we determine the level of the trace functions on the irreducible $V^G$-modules (Theorem~\ref{thm:modinv}), in both cases generalising results from \cite{DLM00}.

Combining the results from the theory of simple-current \voa{}s and orbifold theory, we prove the existence of an \emph{orbifold construction} of a nice, holomorphic \voa{} $\widetilde{V}$ starting from a nice, holomorphic \voa{} $V$ and some finite, cyclic group $G$ of automorphisms of $V$ such that $V^G$ satisfies the positivity assumption (Assumption~\ref{ass:op}). More precisely, we show that
\begin{equation*}
\widetilde{V}=\bigoplus_{\alpha\in I}W^\alpha
\end{equation*}
admits the structure of a nice, holomorphic \voa{} where $I$ is an isotropic subgroup of the fusion group $F_{V^G}$ of $V^G$ with $I^\bot=I$ (Theorem~\ref{thm:orbvoa}).

The orbifold construction is a powerful tool for constructing new holomorphic \voa{}s. For example, the aforementioned Moonshine module $V^\natural$ is an orbifold of the lattice \voa{} associated with the Leech lattice by a certain automorphism of order $n=2$ \cite{FLM88}.

Also note that Carnahan's recent proof of Norton's generalised Moonshine conjecture for the twisted modules of $V^\natural$ \cite{Car12b} relies on the orbifold results in this text.

\minisec{Classification Problem}
The main application of the orbifold construction developed in this dissertation is the classification of nice, holomorphic \voa{}s of central charge 24.

It follows from Zhu's modular invariance result \cite{Zhu96} that the central charge $c$ of a nice, holomorphic \voa{} is a positive multiple of 8. For $c=8,16$ all nice, holomorphic \voa{}s with this central charge are known up to isomorphism: they are exactly the lattice \voa{}s associated with the unimodular lattices $E_8$ of rank 8 and $E_8^2$ and $D_{16}^+$ of rank 16 \cite{DM04b}. Here, the classification of nice, holomorphic \voa{}s mirrors that of positive-definite, unimodular, even lattices.

Given a positive-definite, even lattice of rank $\rk(L)\in\N$, it is possible to associate with it a nice \voa{} $V_L$ of central charge $c=\rk(L)$. Its irreducible modules are indexed by the discriminant form $L'/L$ where $L'$ denotes the dual lattice. Hence, such a lattice \voa{} is holomorphic if and only if $L$ is unimodular, i.e.\ $L'=L$.

For $c=24$, the situation is more involved. Certainly, the 24 Niemeier lattices, i.e.\ the even, unimodular, positive-definite lattices of rank 24, yield nice, holomorphic \voa{}s of central charge 24. But also the Moonshine module $V^\natural$ belongs to this category.

It is well known that the weight-one space $V_1$ of any \voa{} $V$ carries the structure of a Lie algebra. In \cite{Sch93} the physicist Schellekens produced a list of 71 possible meromorphic conformal field theories of central charge $24$. It was recently proved in \cite{EMS15} that this is a rigorous theorem on \voa{}s: if $V$ is a nice, holomorphic \voa{} with central charge $c=24$, then $V_1=\{0\}$ (e.g.\ $V\cong V^\natural$), $V_1$ is 24-dimensional abelian or $V_1$ is one of the 69 semisimple Lie algebras given in Table~1 of \cite{Sch93}. Note that this result only lists the possible Lie algebra structures of $V_1$ but two \voa{}s with the same $V_1$ might be non-isomorphic as \voa{}s.

It is an ongoing effort to prove the existence of all 71 cases on Schellekens' list, i.e.\ that there is a \voa{} with the desired Lie algebra structure in its weight-one space. So far, all but one of the 71 cases have been constructed, namely 24 lattice \voa{}s associated with the Niemeier lattices \cite{Bor86,FLM88,Don93}, 15 $\Z_2$-orbifolds of those lattice \voa{}s \cite{DGM90}, including the Moonshine module $V^\natural$ \cite{FLM88}, 17 framed \voa{}s \cite{Lam11,LS12a,LS15a}, 3 lattice $\Z_3$-orbifolds \cite{Miy13b,SS16} and 6 orbifolds by inner automorphisms \cite{LS16a,LS16}. We contribute 5 new \voa{}s obtained as orbifolds of order 4, 5, 6 and 10 of lattice \voa{}s (Theorem~\ref{thm:newcases}). The results in \cite{LS16a,LS16} depend on the general orbifold theory developed in this text. In total, we obtain that for each Lie algebra on Schellekens' list, with the possible exception of $A_2F_4$, there exists a nice holomorphic \voa{} $V$ of central charge $24$ with this Lie algebra structure on $V_1$ (Theorem~\ref{thm:summary}). Recently, Ching Hung Lam and Xingjun Lin have announced they constructed the last remaining case $A_2F_4$ using mirror extensions \cite{LL16}.

Note that for $c\geq 32$ the classification of nice, holomorphic \voa{}s of this central charge is no longer feasible. For instance, there exist more than one billion even, unimodular, positive-definite lattices of rank 32 and consequently at least as many nice, holomorphic \voa{}s with $c=32$.

The main results of this dissertation described so far are also published in a condensed form as part of \cite{EMS15}.

\minisec{BRST Construction}

As a second application of the orbifold theory we present the natural construction of ten \BKMa{}s whose denominator identities are completely reflective automorphic products of singular weight, which were classified in \cite{Sch04b,Sch06}. \BKMa{}s are natural generalisations of Kac-Moody algebras introduced by Borcherds \cite{Bor88}. In \cite{Bor92}, Borcherds constructs a family of Borcherds-Kac-Moody (super)algebras by twisting the denominator identity of the Fake Monster Lie algebra by automorphisms of the Leech lattice $\Lambda$. As an open problem he asks for ``natural constructions'' of such Lie (super)algebras. A possible approach is the construction using the BRST quantisation. The BRST construction of a certain \BKMa{} also plays a central rôle in Borcherds' proof of the Moonshine conjecture.

In this thesis we present the BRST construction of ten \BKMa{}s corresponding to the elements of square-free order in the Mathieu group $M_{23}$, which acts on the Leech lattice (Theorem~\ref{thm:10brst}). The result depends on two technical conjectures, which we aim to prove in the future.

\section*{Outlook}

The main goal concerning the classification of \voa{}s is to prove the following conjecture:
\begin{conj*}
There are exactly 71 nice, holomorphic \voa{}s of central charge 24 up to isomorphism. They are uniquely determined by the Lie algebra structure of $V_1$ and the full list is given in Table~1 of \cite{Sch93}.
\end{conj*}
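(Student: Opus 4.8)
The plan is to split the statement into an \emph{existence} and a \emph{uniqueness} assertion and to attack them by rather different means. For existence one must exhibit, for each of the $71$ Lie algebras on Schellekens' list, a nice holomorphic \voa{} of central charge $24$ realising it as the structure on $V_1$. As summarised in Theorem~\ref{thm:summary}, this is already almost complete: the $24$ Niemeier lattice \voa{}s, their $\Z_2$- and $\Z_3$-orbifolds, the framed \voa{}s, the inner-automorphism orbifolds, and the five new orbifolds of order $4,5,6,10$ of Theorem~\ref{thm:newcases} together cover all but $A_2F_4$, which is to be obtained from the mirror-extension argument announced in \cite{LL16}. The remaining work for existence is therefore only to verify that each construction lands on the predicted $V_1$ and yields a nice, holomorphic \voa{}; since the orbifold construction of Theorem~\ref{thm:orbvoa} supplies the bulk of these, the existence half reduces to careful bookkeeping across finitely many cases.

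The hard half is uniqueness: given two nice holomorphic \voa{}s $V,\widetilde V$ of central charge $24$ with $V_1\cong\widetilde V_1$ as Lie algebras, one must prove $V\cong\widetilde V$. For the $69$ semisimple cases I would use a \emph{reverse orbifold} strategy made possible by the cyclic orbifold theory of this thesis. First, the Lie algebra structure on $V_1$, together with the levels dictated by Schellekens' constraints, fixes an invariant form and an element $h\in V_1$ for which $\sigma=\exp(2\pi\i\,h_0)$ is an inner automorphism of prescribed finite order $n$; Zhu's modular invariance and the graded trace data then pin down the conformal weight of the twisted module $V(\sigma)$ and hence, by Corollary~\ref{cor:main}, the fusion group $F_{V^{\langle\sigma\rangle}}$ as a \fqs{}. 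Second, one shows that the orbifold $\widehat V$ built from a suitable isotropic $I\subset F_{V^{\langle\sigma\rangle}}$ with $I^\bot=I$ via Theorem~\ref{thm:orbvoa} is again holomorphic and has the form of a lattice \voa{} $V_L$ with $L$ a Niemeier lattice (in the extreme case the Leech lattice, where $V_1$ is $24$-dimensional abelian). Since Niemeier lattices are classified and lattice \voa{}s are determined up to isomorphism by their lattice, $\widehat V$ is unique; and because the inverse orbifold recovers $V$ from $\widehat V$ together with orbifold data depending only on $V_1$, this forces $V\cong\widetilde V$. The $24$-dimensional abelian case is handled directly as the Leech lattice \voa{}.

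The genuine obstacle is the single case $V_1=\{0\}$, which asserts that $V^\natural$ is the \emph{unique} nice holomorphic \voa{} of central charge $24$ with trivial weight-one space. Here the reverse-orbifold mechanism breaks down at its very first step: with $V_1=\{0\}$ there is no Lie algebra, hence no inner automorphism $\exp(2\pi\i\,h_0)$ to orbifold by, and no affine substructure to exploit for rigidity. This is precisely the long-standing uniqueness conjecture of Frenkel, Lepowsky and Meurman, which is expected to require genuinely new input---for instance a rigidity statement forcing any such \voa{} to arise as a $\Z_2$-orbifold of the Leech lattice \voa{} by a fixed-point-free involution---beyond the cyclic orbifold methods developed here. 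I therefore expect the semisimple and abelian cases to be accessible by the programme above, while the $V_1=\{0\}$ case remains the decisive open problem.
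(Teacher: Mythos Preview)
This statement is a \emph{conjecture} in the paper, not a theorem; there is no proof in the paper to compare against. Your proposal is not a proof either, and you are clear about that: it is a programme, with the $V_1=\{0\}$ case explicitly left open.

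That said, your programme is essentially the one the paper itself sketches in its Outlook. The paper splits the conjecture into existence and uniqueness in the same way, regards existence as settled modulo the case $A_2F_4$ (announced in \cite{LL16}), and for uniqueness proposes exactly the inverse-orbifold/reverse-orbifold strategy you describe: produce from the given $V$ an orbifold with $V_1$ of rank $24$, invoke \cite{DM04b} to identify it as a Niemeier lattice \voa{}, and then use Theorem~\ref{thm:invorb} to recover $V$ uniquely. The paper is, however, more cautious than you are about the scope of this method: it notes that the approach has so far been carried out only for three Lie algebras ($E_6G_2^3$, $A_2^6$, $A_5D_4A_1^3$) in \cite{LS16b}, whereas you present it as if the semisimple cases are more or less routine. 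In practice each case requires locating a suitable automorphism, controlling the twisted-module data, and verifying that the inverse-orbifold data depend only on $V_1$; none of this is automatic.

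You are right that the abelian case is already settled (it is part of Theorem~\ref{thm:schellekens}, due to \cite{DM04}), and you correctly identify the $V_1=\{0\}$ case as the FLM uniqueness conjecture, which the paper also singles out as a special case lying beyond these methods.
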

This conjecture includes as a special case the well-known conjecture in \cite{FLM88} that the Moonshine module $V^\natural$ is the unique holomorphic \voa{} of central charge 24 with $V_1=\{0\}$. 

Considering all cases on Schellekens' list as constructed, to complete this classification it remains to prove that the Lie algebra structure of $V_1$ determines the \voa{} structure of $V$ up to isomorphism.

The most promising approach to prove the uniqueness of the \voa{} structure for many of the Lie algebras on Schellekens' list is a case-by-case ansatz where the concept of the inverse orbifold developed in this text is used (Theorem~\ref{thm:invorb}). One attempts to reduce the uniqueness of the case studied to the uniqueness of the 24 cases with Lie algebras of rank 24, for which one can show that the corresponding \voa{} is isomorphic to a lattice \voa{} associated with one of the Niemeier lattices \cite{DM04b}. This approach was demonstrated recently for the Lie algebras $E_6G_2^3$, $A_2^6$ and $A_5D_4A_1^3$ in \cite{LS16b}.

The peculiar observation that summing up $\dim_\C(V_1)$ over all 71 cases on Schellekens' list yields $13824=24^3$ gives an aesthetic reason to believe that the above conjecture is correct.

Concerning the orbifold theory, the two main simplifying assumptions in this text are the holomorphicity of $V$ and the cyclicity of $G$. It is natural to pose the question what happens if these assumptions are relaxed: given a nice \voa{} $V$ and some finite group $G$ of automorphisms of $V$, determine the irreducible modules and the fusion algebra of $V^G$ for
\begin{enumerate}
\item $V$ not holomorphic,
\item $G$ not cyclic.
\end{enumerate}
One part of the above problem, namely the classification of the irreducible modules of $V^G$, can be answered using results in \cite{DLM00} and \cite{MT04} under the assumption that $V^G$ is rational and $C_2$-cofinite. In this case, using a simple argument due to Miyamoto, one can show that each irreducible $V^G$-module occurs as a $V^G$-submodule of the irreducible $g$-twisted $V$-module for some $g\in G$. The problem of classifying all $V^G$-modules is hence equivalent to that of classifying the $g$-twisted modules of $V$ for all $g\in G$. A classification of twisted modules for $V$ is known, for instance, if $V$ is holomorphic \cite{DLM00} or if $V$ is a lattice \voa{} \cite{BK04}.

The determination of the fusion rules between the irreducible $V^G$-modules, however, is in this generality still an open problem. For example, if $V$ is not holomorphic, the irreducible modules of $V^G$ need not be simple currents even for cyclic $G$ \cite{DRX15}.

The easiest generalisation of the results in this text is probably the determination of the fusion algebra of $V_L^G$ for a lattice \voa{} $V_L$ associated with some even, positive-definite, non-unimodular lattice $L$ and a finite, cyclic group of automorphisms of $V_L$ arising from lattice automorphisms of $L$ using the results from \cite{BK04}.

\part{Orbifold Theory}

\chapter{Preliminaries}

In this chapter, we collect important definitions and well-known properties of \voa{}s and related concepts that we will use in the subsequent chapters. We also set up most of the notation. The experienced reader may skip large parts of this chapter.

\section{Formal Calculus}\label{sec:formal}

An introduction into the calculus of formal series can be found in \cite{FHL93}, Section~2.1, and \cite{LL04}, Chapter~2. We consider formal series with coefficients in some $\C$-vector space $V$. The symbols $x,y,x_0,x_1,x_2,\ldots$ represent commuting \emph{formal} variables. Sometimes, the formal power series are viewed as functions in one or more complex variable by substituting complex numbers for the formal variables. We denote these complex variables by $z,w,z_0,z_1,z_2,\ldots\in\C$.

\begin{defi}[Formal Series]
Let $V$ be a $\C$-vector space. We define the following spaces of \emph{formal series} over $V$:
\begin{enumerate}
\item\label{enum:series1} the vector space of \emph{doubly infinite formal Laurent series} over V
\begin{equation*}
V[[x,x^{-1}]]:=\left\{\sum_{n\in\Z}v_nx^n\xmiddle|v_n\in V\right\},
\end{equation*}
\item\label{enum:series2} the ring of \emph{(truncated) formal Laurent series} over V
\begin{equation*}
V((x)):=\left\{\sum_{n\in\Z}v_nx^n\xmiddle|v_n\in V\text{, $v_n=0$ for sufficiently negative $n$}\right\},
\end{equation*}
\item\label{enum:series3} the ring of \emph{formal power series} over V
\begin{equation*}
V[[x]]:=\left\{\sum_{n\in\N}v_nx^n\xmiddle|v_n\in V\right\},
\end{equation*}
\item\label{enum:series4} the ring of \emph{Laurent polynomials} over V
\begin{equation*}
V[x,x^{-1}]:=\left\{\sum_{n\in\Z}v_nx^n\xmiddle|v_n\in V\text{, $v_n=0$ for almost all $n$}\right\},
\end{equation*}
\item\label{enum:series5} the ring of \emph{polynomials} over V
\begin{equation*}
V[x]:=\left\{\sum_{n\in\N}v_nx^n\xmiddle|v_n\in V\text{, $v_n=0$ for almost all $n$}\right\},
\end{equation*}
\item\label{enum:series6} spaces of formal series with fractional exponents (with a finite common denominator), obtained by inserting $x^{1/d}$ for $x$ for some $d\in\Ns$ in the definitions of items \ref{enum:series1} to \ref{enum:series5},
\item\label{enum:series7} the vector space of \emph{formal series with powers in $\C$} over $V$
\begin{equation*}
V\{x\}:=\left\{\sum_{n\in\C}v_nx^n\xmiddle|v_n\in V\right\}.
\end{equation*}
\end{enumerate}
\end{defi}
It is clear how to generalise these notions to the case of several commuting formal variables $x_1,\ldots,x_r$. Care is to be exercised in taking products of two formal series or substituting variables in multi-variable formal series. The resulting expressions are not always well-defined.

For a formal series $f(x)\in V\{x\}$ we denote by
\begin{equation*}
\left[f(x)\right](n)
\end{equation*}
the coefficient of $x^n$ in $f(x)$ for $n\in\C$. In particular, we define the residue
\begin{equation*}
\Res_x(f):=\left[f(x)\right](-1)
\end{equation*}
as the coefficient of $x^{-1}$.

\minisec{Functions in Complex Variables}
When passing from a formal series with non-integer exponents to a complex function we need a choice of the branch of the complex logarithm. More specifically, we interpret an arbitrary power $z^n$ for $n\in\C$ of the complex variable $z$ as
\begin{equation*}
z^n = \e^{n\log(z)}
\end{equation*}
for $z\in\C^\times$ where $\log(z) = \log(|z|) + \i\arg(z)$ and $0\leq\arg(z)<2\pi$.
Hence, for a formal series $f(x)=\sum_{n\in\C}v_nx^n$ we obtain
\begin{equation*}
f(z)=\sum_{n\in\C}v_n\e^{n\log(z)},
\end{equation*}
which might or might not be a convergent sum for a given $z\in\C^\times$. We also use the notation
\begin{equation*}
f(\e^\zeta):=\sum_{n\in\C}v_n\e^{n\zeta}
\end{equation*}
for $\zeta\in\C$, which is in conflict with the one above since it is possible that $f(z)\neq f(\e^\zeta)$ even though $z=\e^\zeta$. However, no confusion should arise from this. Finally, given the formal series $f(x)$ we obtain a new formal series $f(\e^\zeta x)$ defined by
\begin{equation*}
f(\e^\zeta x):=\sum_{n\in\C}v_n\e^{n\zeta}x^n.
\end{equation*}

\minisec{Laurent Series Expansion}

Throughout this work we will need to give meaning to expressions in two formal variables of the form $(x\pm y)^n$ for some $n\in\Z$. To this end, we consider the binomial expansion
\begin{equation*}
(x\pm y)^n\stackrel{\iota_{x,y}}{\longmapsto}\sum_{m\in\N}(\pm1)^m\binom{n}{m}y^mx^{n-m},
\end{equation*}
i.e.\ the Laurent series expansion as a formal series with non-negative powers of $y$ and arbitrarily negative powers of $x$. Replacing $x$ and $y$ with the complex variables $z$ and $w$, respectively, the above infinite series converges in the domain $|z|>|w|>0$ to the corresponding term on the left-hand side.

Consider the field of fractions $\operatorname{Quot}(\C[[x,y]])$ of the integral domain $\C[[x,y]]$. Therein lies the subalgebra $\C((x,y))[(x\pm y)^{-1}]=\C[[x,y]][x^{-1},y^{-1},(x\pm y)^{-1}]$ where $\C((x,y))$ denotes the formal Laurent series with powers of $x$ and $y$ both bounded from below. Then the binomial expansion defines a linear embedding
\begin{equation*}
\iota_{x,y}:\C((x,y))[(x\pm y)^{-1}]\hookrightarrow\C((x))((y))\subset\C[[x^{\pm1},y^{\pm1}]]
\end{equation*}
into the Laurent series in $y$ whose coefficients are Laurent series in $x$. These may have arbitrarily negative powers in $x$ but involve only finitely many negative powers of $y$. Similarly, there is a Laurent series expansion
\begin{equation*}
\iota_{y,x}:\C((x,y))[(x\pm y)^{-1}]\hookrightarrow\C((y))((x))\subset\C[[x^{\pm1},y^{\pm1}]]
\end{equation*}
corresponding to an expansion in the domain $|w|>|z|>0$ upon replacing $x$ and $y$ with the complex variables $z$ and $w$, respectively. Specifically
\begin{equation*}
(x\pm y)^n\stackrel{\iota_{y,x}}{\longmapsto}\sum_{m\in\N}(\pm1)^{n-m}\binom{n}{m}x^my^{n-m}.
\end{equation*}
The situation is summarised in the following \emph{non-commutative} diagram:
\begin{equation*}
\begin{tikzcd}
{}& \C((x,y))[(x\pm y)^{-1}] \arrow[hook]{dl}{\iota_{x,y}} \arrow[hook]{dr}{\iota_{y,x}} \\
\C((x))((y)) \arrow[hook]{dr} &&\C((y))((x)) \arrow[hook]{dl} \\
&\C[[x^{\pm1},y^{\pm1}]]
\end{tikzcd}
\end{equation*}
Note that for example $\iota_{x,y}(x\pm y)^n=\iota_{y,x}(x\pm y)^n$ if and only if $n\in\N$ and in this case both expressions are just polynomials in $x$ and $y$.

Many authors do not write down the embeddings $\iota_{x,y}$ and $\iota_{y,x}$ explicitly and always assume that the binomial expression $(x\pm y)^n$ is to be \emph{expanded in non-negative, integral powers of the second variable}. We will not make this assumption in this text.

\minisec{Binomials with Non-Integral Powers}
Note that the binomial expansion is not restricted to integral powers. For $n\in\C$ we can again consider the expansion
\begin{equation*}
\iota_{x,y}(x\pm y)^n=\sum_{m\in\N}(\pm1)^m\binom{n}{m}y^mx^{n-m},
\end{equation*}
which is an expansion in non-negative, integral powers of $y$ and complex powers of $x$ with arbitrarily negative real part.

A complication occurs for the expansion $\iota_{y,x}(x-y)^n$ since we have to deal with non-integral powers of $-1$. For consistency with \cite{DL93} we shall read $x-y$ as a shorthand notation for $x+\e^{-\pi\i}y$. Then the expansion $\iota_{y,x}$ gives
\begin{align}\begin{split}\label{eq:dl93}
\iota_{y,x}(x-y)^n&=\iota_{y,x}(x+\e^{-\pi\i}y)=\sum_{m\in\N}(\e^{-\pi\i})^{n-m}\binom{n}{m}x^my^{n-m}\\
&=\e^{-\pi\i n}\sum_{m\in\N}(-1)^{m}\binom{n}{m}x^my^{n-m}=\e^{-\pi\i n}\iota_{y,x}(y-x)^n.
\end{split}\end{align}

\minisec{Formal Delta Function}
An important formal series is the \emph{formal delta function} defined by
\begin{equation*}
\delta(x):=\sum_{n\in\Z}x^n\in\C[[x,x^{-1}]].
\end{equation*}
This formal series plays the rôle of the delta distribution at $x=1$, i.e.\
\begin{equation*}
f(x)\delta(x)=f(1)\delta(x)
\end{equation*}
for any formal Laurent polynomial $f(x)\in V[x^{\pm1}]$. Also, consider a formal series in two variables $f(x,y)\in V[[x^{\pm1},y^{\pm1}]]$ such that $f(x,x)$ exists, substituting $x$ for $y$ in $f(x,y)$. Then
\begin{equation*}
f(x,y)\delta(x/y)=f(x,x)\delta(x/y)=f(y,y)\delta(x/y)
\end{equation*}
in $V[[x^{\pm1},y^{\pm1}]]$ and in particular all three expressions exist.

In the various versions of the Jacobi identity (see e.g.\ Definition~\ref{defi:borid}) we will encounter three-variable expressions of the form $\delta\left((x_1-x_2)/x_0\right)$, which are usually expanded in non-negative powers of the second variable of the numerator, i.e.\
\begin{equation*}
\iota_{x_1,x_2}\delta\left(\frac{x_1-x_2}{x_0}\right)=\sum_{n\in\Z}\frac{\iota_{x_1,x_2}(x_1-x_2)^n}{x_0^n}=\sum_{m\in\N,n\in\Z}(-1)^m \binom{n}{m}x_0^{-n}x_1^{n-m}x_2^m,
\end{equation*}
using the binomial theorem. The following property holds (cf.\ the Jacobi identity below):
\begin{equation*}
\iota_{x_1,x_0}x_2^{-1}\delta\left(\frac{x_1-x_0}{x_2}\right)=\iota_{x_1,x_2}x_0^{-1}\delta\left(\frac{x_1-x_2}{x_0}\right)-\iota_{x_2,x_1}x_0^{-1}\delta\left(\frac{x_2-x_1}{-x_0}\right),
\end{equation*}
where the three terms are formal power series in non-negative, integral powers of $x_0$, $x_2$ and $x_1$, respectively.

\minisec{Formal $q$-Series}

When defining trace functions and characters we will encounter formal Laurent series $f(q)\in V((q))$ or $f(q)\in V((q^{1/n}))$ for some $n\in\Ns$ in the formal variable $q$. We view them as functions in $\tau$ on the complex upper half-plane $\H:=\{z\in\C\;|\;\Im(z)>0\}$ by replacing $q$ with $q_\tau=\e^{(2\pi\i)\tau}$ if $f(q_\tau)$ converges for $|q_\tau|<1$. We use the convention that $q_\tau^{1/n}=\e^{(2\pi\i)\tau/n}$. By an abuse of notation we write $f(\tau)$ for $f(q_\tau)$.

\section{\VOA{}s}

The concept of vertex algebras was first introduced by Borcherds in \cite{Bor86}. We assume the ground field to be the complex numbers, i.e.\ all vector spaces are over $\C$.

\minisec{Vertex Algebras}

The following definition of a vertex algebra is from \cite{Kac98}:
\begin{defi}[Vertex Algebra]
A \emph{vertex algebra} is given by the following data:
\begin{itemize}
\item (\emph{space of states}) a vector space $V$,
\item (\emph{vacuum vector}) a non-zero vector $\vac\in V$,
\item (\emph{translation operator}) a linear operator $T\colon V\to V$,
\item (\emph{vertex operators} or \emph{state-field correspondence}) a linear map
\begin{equation*}
Y(\cdot,x)\colon V\to\End_\C(V)[[x^{\pm1}]]
\end{equation*}
taking each $a\in V$ to a \emph{field}
\begin{equation*}
a\mapsto Y(a,x)=\sum_{n\in\Z}a_nx^{-n-1}
\end{equation*}
where for each $v\in V$, $a_nv=0$ for $n$ sufficiently large. Equivalently we can view $Y(\cdot,x)$ as a map $V\otimes_\C V\to V((x))$.
\end{itemize}

These data are subject to the following axioms:
\begin{itemize}
\item (\emph{left vacuum axiom}) $Y(\vac,x)=\id_Vx^0=\id_V$. In other words: $\vac_n=\delta_{n,-1}\id_V$.
\item (\emph{right vacuum axiom}) For any $a\in V$, $Y(a,x)\vac\in V[[x]]$ so that $Y(a,z)\vac$ has a well-defined value at $z=0$ and
\begin{equation*}
Y(a,z)\vac|_{z=0}=a.
\end{equation*}
In other words: $a_n\vac=0$ for $n\geq 0$ and $a_{-1}\vac=a$.
\item (\emph{translation axiom}) For any $a\in V$,
\begin{equation*}
[T,Y(a,x)]=\partial_x Y(a,x)
\end{equation*}
and $T\vac=0$.
\item (\emph{locality axiom}) All fields $Y(a,x)$, $a\in V$, are \emph{local} with respect to each other, i.e.\ for each $a,b\in V$ there is an $N\in\N$ such that
\begin{equation*}
(x-y)^N[Y(a,x),Y(b,y)]=0
\end{equation*}
as formal power series in $\End_\C(V)[[x^{\pm1},y^{\pm1}]]$.
\end{itemize}
\end{defi}
Note that it already follows from the translation axiom together with the right vacuum axiom that the operator $T$ is given by $Ta=a_{-2}\vac$, $a\in V$. So, we would not have needed to introduce $T$ as independent datum.

Under the presence of the other axioms the locality axiom may be equivalently replaced by one of the following three axioms:
\begin{defi}[Borcherds and Jacobi Identity]\label{defi:borid}
\item
\begin{itemize}
\item (\emph{Borcherds identity, original version}) For $a,b\in V$,
\begin{equation*}
(a_n b)_m = \sum_{j=0}^\infty (-1)^j \binom{n}{j} \left(a_{n-j} b_{m+j} - (-1)^n b_{n+m-j} a_j\right)
\end{equation*}
for all $m,n\in\Z$, where the sum becomes finite when applied to any vector in $V$.
\item (\emph{Borcherds identity, modern version}) For $a,b\in V$,
\begin{equation*}
\sum_{j=0}^\infty\binom{m}{j}(a_{n+j}b)_{m+k-j} = \sum_{j=0}^\infty \binom{n}{j} \left((-1)^ja_{m+n-j}b_{k+j} - (-1)^{j+n} b_{k+n-j} a_{m+j}\right)
\end{equation*}
for all $k,m,n\in\Z$, where the sum on the right-hand side becomes finite when applied to any vector in $V$. 
\item (\emph{Jacobi identity}) For $a,b\in V$,
\begin{align*}
&\iota_{x_1,x_0}x_2^{-1}\delta\left(\frac{x_1-x_0}{x_2}\right)Y(Y(a,x_0)b,x_2)\\
&=\iota_{x_1,x_2}x_0^{-1}\delta\left(\frac{x_1-x_2}{x_0}\right)Y(a,x_1)Y(b,x_2)-\iota_{x_2,x_1}x_0^{-1}\delta\left(\frac{x_2-x_1}{-x_0}\right)Y(b,x_2)Y(a,x_1).
\end{align*}
When applied to any element of $V$, the coefficient of each monomial in the formal variables is only a finite sum.
\end{itemize}
\end{defi}
The first identity was used in the original definition by Borcherds in \cite{Bor86}. Borcherds later gave an equivalent definition using the second identity \cite{Bor92}, which is today usually referred to as the Borcherds identity. It is shown in \cite{Kac98} that the definition using locality is equivalent to using the Borcherds identity. Frenkel, Lepowsky and Meurman \cite{FLM88} used the Jacobi identity instead. It is shown for example in \cite{LL04} that this is also equivalent to the Borcherds identity.

We remarked above that the operator $T$ fulfilling the translation axiom is given by $Ta=a_{-2}\vac$ for $a\in V$. If we introduce the operator $T$ like this, the translation axiom will already follow from the other axioms of a vertex algebra if we use the definition including the Jacobi identity. In fact, by \cite{LL04}, Proposition~3.1.21, $[T,Y(a,x)]=Y(Ta,x)=\partial_x Y(a,x)$.

\minisec{Graded Vertex Algebras}
Often, one also introduces a weight $\Z$-grading on the vertex algebra:
\begin{defi}[Graded Vertex Algebra]\label{defi:gva}
A \emph{graded vertex algebra} $V$ is a vertex algebra with:
\begin{itemize}
\item (weight \emph{$\Z$-grading}) Let
\begin{equation*}
V=\bigoplus_{n\in\Z}V_n
\end{equation*}
with \emph{weights} $\wt(v)=n$ for $v\in V_n$. This grading is bounded from below, i.e.\ $V_n=\{0\}$ for sufficiently small $n$ and
\begin{equation*}
\dim_\C(V_n)<\infty
\end{equation*}
for all $n\in\Z$. Furthermore, let $\vac\in V_0$, $T$ be an operator of weight 1 and $a\in V$ be mapped to a field $Y(a,x)=\sum_{n\in\Z}a_nx^{-n-1}$ with $\wt(a_n)=\wt(a)-n-1$ for homogeneous $a$.
\end{itemize}
\end{defi}
For a graded vertex algebra the property that $a_nv=0$ for $a,v\in V$ and $n$ sufficiently large already follows from the boundedness from below of the weight grading on $V$ and the above formula for the weight of $a_n$.

The above system of axioms together with the grading condition is the definition of a vertex algebra as presented in
\cite{FKRW95} with the only exception that their grading begins at $0$ rather than at some possibly negative integer. The systems of axioms for a vertex algebra in \cite{Kac98} and \cite{FKRW95} are inspired by \cite{God89}. These axioms are essentially equivalent to Borcherds' original axioms as is shown in \cite{Kac98}.

The following weakening of the definition of graded vertex algebras will be called weak graded vertex algebras in this text:
\begin{defi}[Weak Graded Vertex Algebra]\label{defi:wgva}
A \emph{weak graded vertex algebras} is defined like a graded vertex algebra but the graded components are not required to be finite-dimensional and the grading does not have to be bounded from below.
\end{defi}
Many elementary properties of graded vertex algebras are still true for this class of vertex algebras.

By definition, graded vertex algebras have a $\Z$-grading that is bounded from below. Quite often, we will require the grading to take values in $\N$ only. This is included in the following definition:
\begin{defi}[CFT-Type]
A graded vertex algebra $V$ is said to be of \emph{CFT-type} if $V=\bigoplus_{n=0}^\infty V_n$ and $\dim_\C(V_0)=1$, i.e.\ the grading is non-negative and $V_0$ is spanned by the vacuum vector $\vac$.
\end{defi}

\minisec{\VOA{}s}

A very important special case of vertex algebras are \emph{\voa{}s}, introduced in \cite{FLM88}. \Voa{}s are graded vertex algebras carrying a representation of the Virasoro algebra that also induces the grading of the vertex algebra.
\begin{defi}[\VOA{}]
Let $V$ be a graded vertex algebra. $V$ is called a \emph{\voa{}} of \emph{central charge} (or \emph{rank}) $c\in\C$ if additionally the following datum is present:
\begin{itemize}
\item (\emph{conformal vector}) a non-zero vector $\omega\in V_2$,
\end{itemize}
subject to the axiom:
\begin{itemize}
\item (\emph{Virasoro relations}) The modes $L_n:=\omega_{n+1}$ of
\begin{equation*}
Y(\omega,x)=\sum_{n\in\Z}\omega_nx^{-n-1}=\sum_{n\in\Z}L_nx^{-n-2}
\end{equation*}
satisfy the \emph{Virasoro relations} at central charge $c$, i.e.\
\begin{equation*}
[L_m,L_n]=(m-n)L_{m+n}+\frac{m^3-m}{12}\delta_{m+n,0}\id_Vc
\end{equation*}
for $m,n\in\Z$. Moreover, $L_{-1}=T$ and $L_0v=\wt(v)v$ for homogeneous $v\in V$.
\end{itemize}
\end{defi}

Since \voa{}s will be the main object of interest in this text, let us at this point write down the complete definition of a \voa{} using the Jacobi identity. This is exactly the definition given in \cite{FLM88} and \cite{FHL93}. Using the Jacobi identity will be useful in order to define modules for \voa{}s in a natural way starting from the definition of \voa{}s.
\begin{defi}[\VOA{}]\label{defi:voa}
A \emph{\voa{}} of \emph{central charge} (or \emph{rank}) $c\in\C$ is given by the following data:
\begin{itemize}
\item (\emph{space of states}) a $\Z$-graded vector space
\begin{equation*}
V=\bigoplus_{n\in\Z}V_n
\end{equation*}
with \emph{weight} $\wt(v)=n$ for $v\in V_n$, $V_n=\{0\}$ for sufficiently small $n$ and $\dim_\C(V_n)<\infty$ for all $n\in\Z$,
\item (\emph{vacuum vector}) a non-zero vector $\vac\in V_0$,
\item (\emph{conformal vector}) a non-zero vector $\omega\in V_2$,
\item (\emph{translation operator}) a linear operator $T\colon V\to V$ of weight 1,
\item (\emph{vertex operators} or \emph{state-field correspondence}) a linear map
\begin{equation*}
Y(\cdot,x)\colon V\to\End_\C(V)[[x^{\pm1}]]
\end{equation*}
taking each $a\in V$ to a \emph{field}
\begin{equation*}
a\mapsto Y(a,x)=\sum_{n\in\Z}a_nx^{-n-1}
\end{equation*}
where for each $v\in V$, $a_nv=0$ for $n$ sufficiently large or equivalently a map $Y(\cdot,x)\colon V\otimes_\C V\to V((x))$. If $a\in V$ is homogeneous, then $\wt(a_n)=\wt(a)-n-1$ for all $n\in\Z$.
\end{itemize}

These data are subject to the following axioms:
\begin{itemize}
\item (\emph{vacuum axiom}) $Y(\vac,x)=\id_V$ and $Y(a,z)\vac|_{z=0}=a$ for all $a\in V$.
\item (\emph{translation axiom}) For any $a\in V$,
\begin{equation*}
[T,Y(a,x)]=\partial_x Y(a,x)
\end{equation*}
and $T\vac=0$.
\item (\emph{Jacobi identity}) For $a,b\in V$,
\begin{align*}
&\iota_{x_1,x_0}x_2^{-1}\delta\left(\frac{x_1-x_0}{x_2}\right)Y(Y(a,x_0)b,x_2)\\
&=\iota_{x_1,x_2}x_0^{-1}\delta\left(\frac{x_1-x_2}{x_0}\right)Y(a,x_1)Y(b,x_2)-\iota_{x_2,x_1}x_0^{-1}\delta\left(\frac{x_2-x_1}{-x_0}\right)Y(b,x_2)Y(a,x_1).
\end{align*}
\item (\emph{Virasoro relations}) The modes $L_n:=\omega_{n+1}$ of
\begin{equation*}
Y(\omega,x)=\sum_{n\in\Z}\omega_nx^{-n-1}=\sum_{n\in\Z}L_nx^{-n-2}
\end{equation*}
satisfy the \emph{Virasoro relations} at central charge $c$, i.e.\
\begin{equation*}
[L_m,L_n]=(m-n)L_{m+n}+\frac{m^3-m}{12}\delta_{m+n,0}\id_Vc
\end{equation*}
for $m,n\in\Z$. Moreover, $L_{-1}=T$ and $L_0v=\wt(v)v$ for homogeneous $v\in V$.
\end{itemize}
\end{defi}

The Jacobi identity implies that $[a_0,Y(b,x)]=Y(a_0b,x)$ for any $a,b\in V$ and hence since $T=L_{-1}=\omega_0$, one obtains that $[T,Y(a,x)]=Y(Ta,x)$. This means that the translation axiom $[T,Y(a,x)]=\partial_x Y(a,x)$ may be equivalently written as $Y(Ta,x)=\partial_x Y(a,x)$. This is the way the definition is presented in \cite{FLM88,FHL93}, for instance.

\begin{defi}[Weak \VOA{}]
A \emph{weak \voa{}} is defined like a \voa{} but we do not require the graded components to be finite-dimensional and the grading does not have to be bounded from below.
\end{defi}

\minisec{Elementary Categorical Notions}
The following are standard definitions for (weak) (graded) vertex (operator) algebras. See for example \cite{LL04}, Section~3.9, or \cite{FHL93}, Section~2.4, for details.
\begin{defi}[Vertex Algebra Homomorphism]\label{defi:hom}
A \emph{homomorphism of vertex algebras} is a linear map $f\colon V^1\to V^2$ between two vertex algebras $(V^1,Y_1,\vac_1,T_1)$ and $(V^2,Y_2,\vac_2,T_2)$ such that
\begin{equation*}
f(Y_1(a,x)b)=Y_2(f(a),x)f(b)
\end{equation*}
for all $a,b\in V^1$ and additionally
\begin{equation*}
f(\vac_1)=\vac_2.
\end{equation*}
From $Ta=a_{-2}\vac$ it follows that $f(T_1a)=T_2f(a)$ for all $a\in V^1$, i.e.\ $f$ intertwines the translation operators.

For (weak) graded vertex algebras we additionally demand that $f$ be grade-preserving, which means that
\begin{equation*}
\wt(f(a))=\wt(a)
\end{equation*}
for all homogeneous $a\in V^1$.

Let $(V^1,Y_1,\vac_1,\omega_1)$ and $(V^2,Y_2,\vac_2,\omega_2)$ be two (weak) \voa{}s. A \emph{homomorphism of (weak) \voa{}s} $f\colon V^1\to V^2$ is a linear map such that
\begin{equation*}
f(Y_1(a,x)b)=Y_2(f(a),x)f(b)
\end{equation*}
for all $a,b\in V^1$ and additionally
\begin{equation*}
f(\vac_1)=\vac_2\quad\text{and}\quad f(\omega_1)=\omega_2.
\end{equation*}
The last condition implies that the homomorphism $f$ is grade-preserving and intertwines the translation operators. Moreover, it follows directly from the definition of homomorphism that $V^1$ and $V^2$ have the same central charge.
\end{defi}

The notions of \emph{isomorphism}, \emph{endomorphism} and \emph{automorphism} are defined in the obvious way.

\begin{defi}[Vertex Subalgebra]\label{defi:subvoa}
Let $(V,Y,\vac,T)$ be a vertex algebra. A \emph{vertex subalgebra} of $V$ is a $T$-invariant subspace $U$ such that $\vac\in U$ and $Y(a,x)b\in U((x))$ for $a,b\in U$.

For a (weak) graded vertex algebra we additionally demand that $U$ be a graded subspace of $V$.

Let $(V,Y,\vac,\omega)$ be a (weak) \voa{}. A subspace $U$ of $V$ is called a \emph{\vosa{}} of $V$ if $\vac\in U$, $\omega\in U$ (hence $U$ is a graded subspace and $T$-invariant) and $Y(a,x)b\in U((x))$ for $a,b\in U$.
\end{defi}
Subalgebras of a (weak) (graded) vertex (operator) algebra $V$ are exactly the images of homomorphisms into $V$.

Sometimes the condition that $U$ and $V$ have the same Virasoro vector is not included in the definition of subalgebra $U$ for a (weak) \voa{} $V$. In that setting subalgebras as defined above are called \emph{full} \vosa{}s. Non-full \vosa{}s will only play a rôle in Chapter~\ref{ch:BRST}, where we will simply call them subalgebras and subalgebras in the above sense will be called full subalgebras.

\begin{defi}[Vertex Algebra Ideal]
Let $V$ be a vertex algebra. An \emph{ideal} $I$ of $V$ is a subspace $I$ such that
\begin{equation*}
Y(a,x)b\in I((x))\quad\text{and}\quad Y(b,x)a\in I((x))
\end{equation*}
for all $a\in V$ and all $b\in I$. It follows from $Ta=a_{-2}\vac$ for all $a\in V$ that $I$ is $T$-invariant.

For a (weak) graded vertex algebra we additionally demand that $I$ be a graded subspace of $V$.

Let $V$ be a (weak) \voa{}. An \emph{ideal} $I$ of $V$ is a subspace $I$ such that
\begin{equation*}
Y(a,x)b\in I((x))\quad\text{and}\quad Y(b,x)a\in I((x))
\end{equation*}
for all $a\in V$ and all $b\in I$. It follows that $T=L_{-1}=\omega_0$ and $L_0=\omega_1$ both map $I$ into itself so that $I$ is automatically graded and $T$-invariant. Moreover, for a (weak) \voa{} the left-ideal and the right-ideal conditions are equivalent (see Remark~3.9.8 in \cite{LL04}).
\end{defi}
Clearly, $\{0\}$ and $V$ are ideals of $V$. If an ideal contains the vacuum vector $\vac$, then it is already all of $V$.

The following standard algebraic facts hold: given an ideal $I$ of a (weak) (graded) vertex (operator) algebra $V$, there is a natural quotient (weak) (graded) vertex (operator) algebra $V/I$. Moreover, for a (weak) \voa{} $V$ the quotient $V/I$ has the same central charge as $V$. Ideals of $V$ are exactly the kernels of homomorphisms from $V$ and the quotient of $V$ by the kernel is isomorphic to the image of the homomorphism.

The following definition will be important:
\begin{defi}[Simplicity]
A (weak) (graded) vertex (operator) algebra $V$ is said to be \emph{simple} if it has no non-trivial ideal, i.e.\ no ideal other than $\{0\}$ or $V$.
\end{defi}

\section{Modules for \VOA{}s}\label{sec:modules}

A module for a (weak) (graded) vertex (operator) algebra $V$ can be defined naturally as a vector space $W$ and a linear map
\begin{equation*}
Y_W(\cdot,x)\colon V\to \End_\C(W)[[x^{\pm1}]]
\end{equation*}
such that \emph{all the defining properties of a (weak) (graded) vertex (operator) algebra that still make sense hold} \cite{FHL93}.

Modules for vertex algebras are defined for instance in \cite{LL04}, Definition~4.1.1. We will only need modules for \voa{}s. These are vertex algebra modules additionally equipped with a $\C$-grading bounded from below and related to the Virasoro vector $\omega\in V$ and whose graded components are finite-dimensional. This automatically implies the Virasoro relations for the modes of $Y_W(\omega,x)$ as is shown in \cite{LL04}, Proposition~4.1.5. We will still make this part of the definition of \voa{} modules as in the original definition in \cite{FLM88}.

The difference between the modern definition of modules for \voa{}s (as given in \cite{DLM97,DLM98,LL04}) and the original one (as in \cite{FLM88}) is that we allow for a $\C$-grading while originally \voa{} modules were defined to be only $\Q$-graded (see also Remark~4.1.2 in \cite{FHL93}).

\begin{defi}[\VOA{} Module]
Let $V$ be a \voa{} of central charge $c\in\C$. A $V$-module $W$ is given by the data:
\begin{itemize}
\item (\emph{space of states}) a $\C$-graded vector space
\begin{equation*}
W=\bigoplus_{\lambda\in\C}W_\lambda
\end{equation*}
with \emph{weight} $\wt(w)=\lambda$ for $w\in W_\lambda$, $\dim_\C(W_\lambda)<\infty$ for all $\lambda\in\C$ and $W_\lambda=\{0\}$ for $\lambda$ ``sufficiently small in the sense of modifications by an integer'', i.e.\ for fixed $\lambda\in\C$, $W_{\lambda+n}=\{0\}$ for sufficiently negative $n\in\Z$,\footnote{In \cite{LL04} the restriction is slightly stricter demanding $W_\lambda=\{0\}$ for $\lambda$ with sufficiently negative real part.}
\item (\emph{vertex operators}) a linear map
\begin{equation*}
Y_W(\cdot,x)\colon V\to\End_\C(W)[[x^{\pm1}]]
\end{equation*}
taking each $a\in V$ to a field
\begin{equation*}
a\mapsto Y_W(a,x)=\sum_{n\in\Z}a_nx^{-n-1}
\end{equation*}
where for each $w\in W$, $a_nw=0$ for $n$ sufficiently large or equivalently a map $Y_W(\cdot,x)\colon V\otimes_\C W\to W((x))$.
\end{itemize}
These data are subject to the following axioms:
\begin{itemize}
\item (\emph{left vacuum axiom}) $Y_W(\vac,x)=\id_W$.
\item (\emph{translation axiom}) For any $a\in V$,
\begin{equation*}
Y_W(Ta,x)=\partial_x Y_W(a,x).
\end{equation*}
\item (\emph{Jacobi identity}) For $a,b\in V$,
\begin{align*}
&\iota_{x_1,x_0}x_2^{-1}\delta\left(\frac{x_1-x_0}{x_2}\right)Y_W(Y(a,x_0)b,x_2)\\
&=\iota_{x_1,x_2}x_0^{-1}\delta\left(\frac{x_1-x_2}{x_0}\right)Y_W(a,x_1)Y_W(b,x_2)\\
&\quad-\iota_{x_2,x_1}x_0^{-1}\delta\left(\frac{x_2-x_1}{-x_0}\right)Y_W(b,x_2)Y_W(a,x_1).
\end{align*}
\item (\emph{Virasoro relations}) The modes $L_n^W:=\omega_{n+1}^W$ of
\begin{equation*}
Y_W(\omega,x)=\sum_{n\in\Z}\omega_n^Wx^{-n-1}=\sum_{n\in\Z}L_n^Wx^{-n-2}
\end{equation*}
satisfy the \emph{Virasoro relations} at central charge $c$, i.e.\
\begin{equation*}
[L^W_m,L^W_n]=(m-n)L^W_{m+n}+\frac{m^3-m}{12}\delta_{m+n,0}\id_Wc
\end{equation*}
for $m,n\in\Z$. Moreover, $L^W_0w=\wt(w)w$ for homogeneous $w\in W$.
\end{itemize}
\end{defi}

We often omit the $W$ in $L_n^W$ if it is clear from the context whether $L_n$ acts on the \voa{} or its module.

Note that the above axioms imply that for homogeneous $v\in V$ and $n\in\Z$
\begin{equation*}
\wt(v_n)=\wt(v)-n-1
\end{equation*}
as operator on $W$ (cf.\ Definition~\ref{defi:voa}).

\begin{defi}[Adjoint Module]
By definition, any \voa{} can be viewed as a module for itself. As such it is called the \emph{adjoint module}.
\end{defi}

\minisec{Elementary Categorical Notions}
Again, there are the following elementary categorical notions. See \cite{FHL93}, Section~4.3, or \cite{LL04}, Section~4.5, for details.
\begin{defi}[\VOA{} Module Homomorphism]
Let $W^1$ and $W^2$ be modules for the same \voa{} $V$. A $V$-module homomorphism $f\in\Hom_V(W^1,W^2)$ is a linear map $f\colon W^1\to W^2$ such that
\begin{equation*}
f(Y_{W^1}(v,x)w)=Y_{W^2}(v,x)f(w)
\end{equation*}
for all $v\in V$, $w\in W^1$.
\end{defi}
A $V$-module homomorphism is automatically grade-preserving, i.e.\ $\wt(f(w))=\wt(w)$ for homogeneous $w\in W^1$, since $f(L^{W^1}_0 w)=L_0^{W^2}f(w)$ by the above equation.

The notions of \emph{isomorphisms} (or \emph{equivalences}), \emph{endomorphisms} and \emph{automorphisms} of $V$-modules are defined in the obvious way.

\begin{defi}[\VOA{} Submodule]
Let $V$ be a \voa{} and $W$ a $V$-module. A subspace $U$ of $W$ is called \emph{submodule} of $W$ if $Y_W(v,x)w\in U((x))$ for all $v\in V$, $w\in U$ or equivalently if $U$ becomes a $V$-module with the restriction of $Y_W(\cdot,x)$ to $U$.
\end{defi}

\emph{Quotient modules}, \emph{direct-sum modules}, \emph{irreducibility} and \emph{complete reducibility} of modules, etc.\ are defined as expected. Irreducible modules are in particular non-zero.

\begin{defi}[Irreducibility]
A \voa{} $V$ is said to be \emph{irreducible} if the adjoint module $V$ is an irreducible $V$-module.
\end{defi}
Clearly, irreducible \voa{}s are simple. The converse is also true since any submodule of a \voa{} viewed as adjoint module is an ideal \cite{FHL93}.
\begin{prop}
The notions of simplicity and irreducibility of \voa{}s are equivalent.
\end{prop}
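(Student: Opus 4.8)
The plan is to show that the submodules of $V$, regarded as the adjoint module, are precisely the ideals of $V$. The proposition then follows immediately from unwinding the definitions: the adjoint module $V$ is irreducible exactly when it has no proper non-zero submodule, while $V$ is simple exactly when it has no proper non-zero ideal, so the two notions coincide once the submodules and the ideals are identified.

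First I would establish the easy inclusion. Let $I$ be an ideal of $V$. By definition $Y(a,x)b\in I((x))$ for all $a\in V$ and all $b\in I$. Since the vertex operator of the adjoint module is $Y_V=Y$, this is exactly the defining condition of a submodule, so every ideal is a submodule of the adjoint module. In particular, if $V$ has no proper non-zero submodule, then it has no proper non-zero ideal, which gives that every irreducible \voa{} is simple.

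For the converse I would show that, conversely, every submodule of the adjoint module is an ideal. Let $U$ be such a submodule. Then $Y(v,x)u\in U((x))$ for all $v\in V$ and $u\in U$, which is one of the two conditions appearing in the definition of an ideal. To obtain the second condition I would invoke the fact recorded just above this proposition that for a \voa{} the left-ideal and right-ideal conditions are equivalent (Remark~3.9.8 in \cite{LL04}); this rests on the skew-symmetry relation $Y(a,x)b=\e^{xT}Y(b,-x)a$, valid because $T=L_{-1}$. Hence $U$ is an ideal, and simplicity of $V$ forces $U=\{0\}$ or $U=V$, so the adjoint module is irreducible.

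The only step that is not a pure matching of definitions is the passage from the one-sided submodule condition to the two-sided ideal condition, and I expect this to be the main point of the argument. However, it is already supplied by the cited skew-symmetry property of \voa{}s, so beyond invoking that property the proof is entirely formal.
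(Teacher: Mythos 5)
Your proposal is correct and follows essentially the same route as the paper: the paper's (very terse) argument is precisely that irreducible \voa{}s are clearly simple and that, conversely, any submodule of the adjoint module is an ideal, citing \cite{FHL93} for that fact. You simply fill in the justification the paper delegates to the citation, namely that the submodule condition is the one-sided ideal condition and that the left- and right-ideal conditions coincide by skew-symmetry (the paper records exactly this equivalence, via Remark~3.9.8 of \cite{LL04}, in its definition of an ideal), so the two write-ups agree in substance.
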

Note that if we define irreducibility in the same manner for vertex algebras, then only one implication holds, i.e.\ irreducible vertex algebras are always simple but the converse is false (see \cite{Li03} for details).

\minisec{Schur's Lemma}
Schur's lemma can also be formulated for \voa{} modules:
\begin{prop}[Schur's Lemma, \cite{LL04}, Proposition~4.5.5]\label{prop:schur}
Let $V$ be a \voa{} and $W$ an irreducible $V$-module. Then
\begin{equation*}
\End_V(W)\cong\C.
\end{equation*}
\end{prop}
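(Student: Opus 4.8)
The plan is to exploit the finiteness built into the definition of a \voa{} module together with the fact that $\C$ is algebraically closed. First I would take an arbitrary endomorphism $f\in\End_V(W)$ and recall from the remark following the definition of a $V$-module homomorphism that $f$ is automatically grade-preserving, i.e.\ $f(W_\lambda)\subseteq W_\lambda$ for every $\lambda\in\C$.

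Next, since $W$ is irreducible it is in particular non-zero, so the bounded-below grading with finite-dimensional components forces the existence of some $\lambda\in\C$ with $0<\dim_\C(W_\lambda)<\infty$. The restriction $f|_{W_\lambda}$ is then a linear endomorphism of a non-zero finite-dimensional complex vector space, and because $\C$ is algebraically closed its characteristic polynomial has a root $\mu\in\C$. Thus $f-\mu\,\id_W$ annihilates at least one non-zero vector in $W_\lambda$.

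The key step is then to observe that $g:=f-\mu\,\id_W$ is again a $V$-module homomorphism (the identity is one, and the endomorphisms form a vector space), so its kernel $\ker(g)$ is a $V$-submodule of $W$: indeed, whenever $g(w)=0$ we have $g(Y_W(v,x)w)=Y_W(v,x)g(w)=0$, hence $Y_W(v,x)w\in\ker(g)((x))$ for all $v\in V$. By the previous paragraph $\ker(g)$ is non-zero, and by irreducibility of $W$ it must therefore equal all of $W$. Consequently $f=\mu\,\id_W$.

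Finally, since every $f\in\End_V(W)$ is a scalar multiple of $\id_W$, the assignment $\mu\mapsto\mu\,\id_W$ yields an isomorphism $\C\to\End_V(W)$, giving $\End_V(W)\cong\C$. The argument is entirely standard; the only essential inputs are the algebraic closedness of $\C$ and the finite-dimensionality of the graded pieces $W_\lambda$, so I do not anticipate a genuine obstacle beyond verifying that $\ker(g)$ is a submodule, which is immediate from the homomorphism property.
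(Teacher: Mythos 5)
Your proof is correct and follows essentially the same argument as the one in the cited reference (\cite{LL04}, Proposition~4.5.5), which the paper invokes without reproducing: grade-preservation of $f$, existence of an eigenvalue $\mu$ on a non-zero finite-dimensional homogeneous component (using that $\C$ is algebraically closed), and the observation that $\ker(f-\mu\,\id_W)$ is a non-zero submodule, hence all of $W$ by irreducibility. There is nothing to add.
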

Clearly, if $W^1$ and $W^2$ are two irreducible $V$-modules and if there is a non-zero homomorphism from $W^1$ to $W^2$, then $W^1$ and $W^2$ already have to be isomorphic, i.e.\ $W^1\cong W^2$. Hence:
\begin{cor}\label{cor:schur}
Let $V$ be a \voa{} and let $W^1$ and $W^2$ be irreducible $V$-modules. Then
\begin{equation*}
\Hom_V(W^1,W^2)\cong\begin{cases}\C&\text{if }W^1\cong W^2,\\0&\text{if }W^1\ncong W^2.\end{cases}
\end{equation*}
\end{cor}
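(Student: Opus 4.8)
The plan is to deduce this corollary directly from Schur's Lemma (Proposition~\ref{prop:schur}), using only the elementary fact that a non-zero homomorphism between irreducible modules is automatically an isomorphism. First I would observe that $\Hom_V(W^1,W^2)$ is a $\C$-vector space, so the task is to compute its dimension in the two cases.

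For the case $W^1\ncong W^2$, I would argue by contradiction. Suppose there were a non-zero $f\in\Hom_V(W^1,W^2)$. Since $W^1$ is irreducible, the kernel $\ker f$ is a submodule of $W^1$, hence either $\{0\}$ or all of $W^1$; as $f\neq 0$ we must have $\ker f=\{0\}$, so $f$ is injective. Likewise the image $\im f$ is a submodule of $W^2$, and since $W^2$ is irreducible and $\im f\neq\{0\}$, we get $\im f=W^2$, so $f$ is surjective. (Here I use that module homomorphisms are grade-preserving, so the kernel and image are graded submodules in the sense of the preceding definitions.) Thus $f$ is an isomorphism, contradicting $W^1\ncong W^2$. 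Hence no non-zero homomorphism exists and $\Hom_V(W^1,W^2)=0$.

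For the case $W^1\cong W^2$, I would fix an isomorphism $g\colon W^1\to W^2$ and use it to transport the problem. Composition with $g$ gives a linear isomorphism $\End_V(W^1)\to\Hom_V(W^1,W^2)$, $h\mapsto g\circ h$, with inverse $f\mapsto g^{-1}\circ f$. By Schur's Lemma $\End_V(W^1)\cong\C$, so $\Hom_V(W^1,W^2)\cong\C$ as well. Combining the two cases yields the claimed formula.

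There is essentially no hard step here: the statement is a completely standard consequence of Schur's Lemma, and the only things to be careful about are that the kernel and image of a module homomorphism really are submodules (immediate from the defining intertwining property $f(Y_{W^1}(v,x)w)=Y_{W^2}(v,x)f(w)$) and that irreducibility forbids proper non-zero submodules. The $W^1\cong W^2$ case is a triviality once Proposition~\ref{prop:schur} is invoked. Accordingly I expect the whole proof to be a few lines, with the ``main obstacle'' being merely the bookkeeping of reducing the two-module statement to the endomorphism statement via a fixed isomorphism.
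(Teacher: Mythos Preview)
Your proof is correct and follows exactly the approach the paper indicates: the paper simply remarks before the corollary that a non-zero homomorphism between irreducible modules must be an isomorphism, and leaves the rest as an immediate consequence of Proposition~\ref{prop:schur}. Your write-up just makes the kernel/image and composition-with-$g$ steps explicit, which the paper omits.
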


\minisec{Conformal Weight}
It is a simple consequence of the definition (see \cite{LL04}, Section~4.1) that any \voa{} module $W$ decomposes into submodules for the congruence classes modulo~1 of the weights, i.e.\
\begin{equation*}
W=\bigoplus_{\mu+\Z\in\C/\Z}W_{\mu+\Z}\quad\text{(as $V$-modules)}
\end{equation*}
with
\begin{equation*}
W_{\mu+\Z}:=\bigoplus_{\lambda\in\mu+\Z}W_{\lambda}\quad\text{(as vector spaces)}.
\end{equation*}

\begin{defi}[Conformal Weight]
If $W$ is an irreducible module for a \voa{}, then the above considerations imply that $W$ is of the form $W=W_{\rho+\Z}$ for some $\rho\in\C$. Moreover, we know that the grading has to be bounded from below and hence we can choose $\rho$ such that
\begin{equation*}
W=\bigoplus_{\lambda\in\rho+\N}W_{\lambda}=\bigoplus_{k=0}^\infty W_{\rho+k}.
\end{equation*}
This $\rho$ is called the \emph{conformal weight} of $W$ and denoted by $\rho(W)$.
\end{defi}

If $V$ is a simple \voa{}, then viewed as its adjoint module it is irreducible and has some conformal weight $\rho(V)\in\Z$. If $V$ is of CFT-type, then $\rho(V)=0$.

\section{Rationality and \texorpdfstring{$C_2$}{C\_2}-Cofiniteness}

In the following we introduce two very important \emph{niceness} properties of \voa{}s, namely rationality and $C_2$-cofiniteness, which will be used throughout this text.

\minisec{$C_2$-Cofiniteness}

The notion of $C_2$-cofiniteness was first introduced by Zhu (part of ``finiteness condition $C$'' in \cite{Zhu96}) as a property needed to prove his modular invariance result (see Section~\ref{sec:zhu}).
\begin{defi}[$C_2$-Cofiniteness]
Let $V$ be a \voa{} and let
\begin{equation*}
C_2(V):=\spn_\C(\{a_{-2}b\;|\;a,b\in V\})
\end{equation*}
be the linear span of the elements of the form $a_{-2}b$. The \voa{} $V$ is said to be \emph{$C_2$-cofinite} if the space $C_2(V)$ has finite codimension in $V$.
\end{defi}

\minisec{Rationality and Regularity}

In order to define rationality and the related notion of regularity we also need the concept of \emph{weak} and \emph{admissible} modules of \voa{}s (as defined for example in \cite{DLM97,DLM98}).
We will not give the definition but note that for a \voa{} $V$ there are the following inclusions:
\begin{equation*}
\{\text{weak $V$-modules}\}\supseteq\{\text{admissible $V$-modules}\}\supseteq\{\text{(ordinary) $V$-modules}\}.
\end{equation*}

\begin{defi}[Rationality~I, \cite{DLM97}]
A \voa{} $V$ is called \emph{rational} if every admissible $V$-module is completely reducible, i.e.\ isomorphic to a direct sum of irreducible admissible $V$-modules.
\end{defi}
\begin{rem}
If $V$ is a rational \voa{}, then one can show (see \cite{DLM97}, Remark~2.4) that there are only finitely many irreducible admissible $V$-modules up to isomorphism and that each irreducible admissible $V$-module is an (ordinary) $V$-module.
\end{rem}
We can hence rewrite the definition of rationality with a formally stronger condition:
\begin{defi}[Rationality~II]
A \voa{} $V$ is called \emph{rational} if every admissible $V$-module is isomorphic to a direct sum of irreducible (ordinary) $V$-modules.
\end{defi}
If $V$ is a rational \voa{}, we denote by $\Irr(V)$ the \emph{finite} set of isomorphism classes of irreducible $V$-modules.
By an element $W\in\Irr(V)$ we sometimes mean an isomorphism class of $V$-modules but more often some arbitrary representative of that class.

Similarly, we define the stronger concept of regularity by replacing admissible modules with weak modules in the above definition.
\begin{defi}[Regularity]
A \voa{} $V$ is called \emph{regular} if every weak $V$-module is isomorphic to a direct sum of irreducible (ordinary) $V$-modules.
\end{defi}
\begin{rem}\label{rem:regrat}
Clearly, every regular \voa{} is rational. In \cite{Li99} it is shown that any regular \voa{} is $C_2$-cofinite. Conversely, a rational, $C_2$-cofinite \voa{} of CFT-type is regular \cite{ABD04}. In total, this means that a \voa{} of CFT-type is regular if and only if it is rational and $C_2$-cofinite.
\end{rem}

The following result is proved in Dong, Li and Mason's modular invariance paper \cite{DLM00} (see also Section~\ref{sec:dlmmodinv} below) and justifies the use of the word ``rational'':
\begin{thm}[\cite{DLM00}, Theorem~11.3]\label{thm:11.3}
Let $V$ be a rational, $C_2$-cofinite \voa{}. Then the central charge of $V$ is rational and each irreducible $V$-module has rational conformal weight.
\end{thm}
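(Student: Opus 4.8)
The plan is to deduce both assertions from Zhu's modular invariance theorem (Section~\ref{sec:zhu}, \cite{Zhu96}), which applies precisely because $V$ is rational and $C_2$-cofinite. Write $\Irr(V)=\{W^1,\dots,W^r\}$ and consider the graded-dimension functions (characters)
\[
\ch_{W^i}(\tau)=\tr_{W^i}q^{L_0-c/24}=q^{\rho(W^i)-c/24}\sum_{n\geq0}\dim_\C(W^i_{\rho(W^i)+n})q^n,
\]
which are holomorphic on $\H$. By Zhu's theorem the span $\mathcal{C}:=\spn_\C\{\ch_{W^1},\dots,\ch_{W^r}\}$ is finite-dimensional and invariant under the weight-zero action of $\SLZ$, giving a representation $\pi\colon\SLZ\to\GL(\mathcal{C})$. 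Setting $\lambda_i:=\rho(W^i)-c/24$, the transformation $\tau\mapsto\tau+1$ fixes $q$ and sends $q^{\lambda_i}$ to $\e^{2\pi\i\lambda_i}q^{\lambda_i}$, so each $\ch_{W^i}$ is an eigenvector of $\pi(T)$ with eigenvalue $\e^{2\pi\i\lambda_i}$. As these eigenvectors span $\mathcal{C}$, the statement reduces to showing that every eigenvalue $\e^{2\pi\i\lambda_i}$ of $\pi(T)$ is a root of unity, since this forces $\lambda_i\in\Q$ for all $i$.

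The heart of the proof, and the step I expect to be the main obstacle, is this roots-of-unity claim. The key observation is that the reduced series $q^{-\lambda_i}\ch_{W^i}=\sum_n\dim_\C(W^i_{\rho(W^i)+n})q^n$ have non-negative integer coefficients, so that $\mathcal{C}$ carries a natural $\overline{\Q}$-structure; consequently the matrix entries of $\pi$, and hence the eigenvalues $\e^{2\pi\i\lambda_i}$, are algebraic numbers. I would then run a Galois-theoretic argument: for $\sigma\in\operatorname{Gal}(\overline{\Q}/\Q)$ the conjugate representation $\pi^\sigma$ is again the modular representation attached to a holomorphic system of $q$-series of the same shape, and holomorphicity on $\H$ together with the moderate growth $q^{-\lambda_i}\ch_{W^i}\to\dim_\C(W^i_{\rho(W^i)})$ at the cusp $\i\infty$ forces every Galois conjugate of each $\e^{2\pi\i\lambda_i}$ to lie on the unit circle. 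Since an algebraic integer all of whose conjugates have absolute value $1$ is a root of unity (Kronecker's theorem), we conclude $\lambda_i\in\Q$ for every $i$.

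Finally I would deduce the two assertions from the rationality of the exponents. Because $V$ is simple it is irreducible as a module over itself, with conformal weight $\rho(V)\in\Z$, and its character lies in $\mathcal{C}$; hence $\lambda_V=\rho(V)-c/24\in\Q$ yields $c/24=\rho(V)-\lambda_V\in\Q$, so the central charge $c$ is rational. Then for every irreducible module $W^i$ we obtain $\rho(W^i)=\lambda_i+c/24\in\Q$, which is exactly the rationality of the conformal weights.

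I emphasise that the difficulty is concentrated entirely in the second paragraph: Zhu's theorem supplies the finite-dimensional $\SLZ$-action cheaply, and the reductions in the first and third paragraphs are elementary, but controlling the Galois conjugates of the $T$-eigenvalues — so as to invoke Kronecker's theorem — is the substantive analytic and arithmetic input, relying crucially on the integrality of the graded dimensions and on holomorphicity with moderate growth at the cusp.
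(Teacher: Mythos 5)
Your first and third paragraphs are sound and in fact reproduce the standard route: Zhu's theorem (Theorem~\ref{thm:zhumodinv}) makes the span of the characters $\SLZ$-stable, each character is an eigenvector of $\pi(T)$ with eigenvalue $\e^{2\pi\i\lambda_i}$, and rationality of all the $\lambda_i$ then gives $c\in\Q$ and $\rho(W^i)\in\Q$. (One small repair: the theorem does not assume $V$ simple, so you cannot speak of \emph{the} conformal weight of $V$; instead take any irreducible summand $W$ of the adjoint module, which exists by rationality and has conformal weight in $\Z$ because $V$ is $\Z$-graded.) The genuine gap is in your second paragraph, which you rightly call the heart. First, the algebraicity step is circular: in the character basis $\pi(T)$ is diagonal with entries $\e^{2\pi\i\lambda_i}$, so the $\overline{\Q}$-span of the characters is $\pi(T)$-stable \emph{if and only if} the $\e^{2\pi\i\lambda_i}$ are algebraic --- which is what you are trying to prove; integrality of the reduced coefficients does not give a $\pi$-stable $\overline{\Q}$-structure, because the unknown prefactors $q^{\lambda_i}$ are precisely the obstruction (and algebraicity of the $\pi(S)$-entries is itself a nontrivial theorem, available via the Verlinde formula only under the stronger Assumption~\ref{ass:n}). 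Second, even granting algebraicity, the conjugate representation $\pi^\sigma$ is merely an abstract representation; nothing provides a holomorphic system of $q$-series realising it, and constructing one (by Galois-conjugating a modular Wronskian differential equation whose coefficients lie in the field generated by the exponents) is exactly the hard content you have skipped. Third, the proposed mechanism fails already for $\sigma=\id$: since $|q^\lambda|=\e^{-2\pi(\Re(\lambda)\Im(\tau)+\Im(\lambda)\Re(\tau))}$, growth along the imaginary axis is blind to $\Im(\lambda)$, so holomorphy plus moderate growth at the cusp does not force eigenvalues onto the unit circle --- it does not even show the original $\lambda_i$ are real. Finally, Kronecker's theorem needs algebraic \emph{integers}, while you only claim algebraic numbers; $(3+4\i)/5$ and its conjugate both lie on the unit circle, yet it is not a root of unity, so a separate integrality argument would be required.

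For comparison: the paper gives no proof of Theorem~\ref{thm:11.3} at all --- it quotes the result from \cite{DLM00}, whose proof consists precisely of your first and third paragraphs together with an appeal to the rationality theorem of Anderson and Moore (Comm.\ Math.\ Phys.\ 117 (1988)) for the arithmetic step. So your plan amounts to reproving Anderson--Moore. The Kronecker-style strategy you sketch is a recognised pattern (it is how quasi-unipotence of monodromy is proved in other settings, where an integral lattice supplies the algebraic integers and a polarisation supplies the unit-circle bound), but here both inputs are missing, and as written the central step restates the difficulty rather than resolving it.
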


\minisec{Holomorphicity}
The following is an important special case of rationality:
\begin{defi}[Holomorphicity]\label{defi:hol}
A rational \voa{} $V$ is called \emph{holomorphic} if the adjoint module $V$ is the only irreducible $V$-module up to isomorphism.
\end{defi}
Some authors do not include rationality in the definition of holomorphicity. Also note that holomorphic \voa{}s are sometimes called \emph{self-dual} (see for example \cite{Mon94,Hoe95}) or \emph{meromorphic}.\footnote{A more suitable name for ``holomorphic'' is perhaps \emph{``unimodular''} describing the fact that such \voa{}s have only one irreducible module but also acknowledging that lattice \voa{}s (see Section~\ref{sec:latvoa}) associated with unimodular lattices are holomorphic. For lattices, of course, ``unimodular'' means that the determinant is of unit modulus.}

\begin{rem}
A holomorphic \voa{} is by definition irreducible and therefore simple.
\end{rem}

The following is a well-known consequence of Zhu's modular invariance result:
\begin{prop}\label{prop:div8}
Let $V$ be a holomorphic, $C_2$-cofinite \voa{} of CFT-type. Then the central charge $c$ of $V$ is a positive integer divisible by 8.
\end{prop}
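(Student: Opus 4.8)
The plan is to apply Zhu's modular invariance theorem (\cite{Zhu96}; see Section~\ref{sec:zhu}), which is available because a holomorphic, $C_2$-cofinite \voa{} of CFT-type is in particular rational, $C_2$-cofinite and of CFT-type. As $V$ is holomorphic, its only irreducible module is $V$ itself, so Zhu's theorem produces a \emph{one}-dimensional representation of the modular group, i.e.\ a character $\rho\colon\SLZ\to\C^\times$ with
\begin{equation*}
\ch_V(\gamma\tau)=\rho(\gamma)\,\ch_V(\tau)\qquad(\gamma\in\SLZ),
\end{equation*}
the character carrying no automorphy factor since it is the trace function attached to the weight-$0$ state $\vac$. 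Everything then reduces to computing $\rho$ on the standard generators $T\colon\tau\mapsto\tau+1$ and $S\colon\tau\mapsto-1/\tau$ and combining the result with the relations of $\SLZ$.

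First I would evaluate $\rho(T)$. Writing $\ch_V(\tau)=q^{-c/24}\sum_{n\geq0}\dim_\C(V_n)q^n$ with $q=\e^{2\pi\i\tau}$ and using that $V$ is of CFT-type, so that the grading takes values in $\N$ and $V_0=\C\vac$, the substitution $\tau\mapsto\tau+1$ multiplies the $n$-th term by $\e^{2\pi\i(n-c/24)}=\e^{-2\pi\i c/24}$, the integer $n$ contributing nothing. Hence $\rho(T)=\e^{-2\pi\i c/24}$.

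The crucial step is to show $\rho(S)=1$. Since $-I$ acts trivially on $\H$ and $\ch_V\neq0$, plugging $\gamma=S^2=-I$ into the transformation law gives $\rho(S)^2=\rho(-I)=1$, so $\rho(S)\in\{\pm1\}$. To fix the sign I use \emph{positivity}: for $\tau=\i t$ with $t>0$ one has $q=\e^{-2\pi t}\in(0,1)$, whence
\begin{equation*}
\ch_V(\i t)=\e^{2\pi tc/24}\sum_{n\geq0}\dim_\C(V_n)\e^{-2\pi tn}>0,
\end{equation*}
all Fourier coefficients being non-negative dimensions with $\dim_\C(V_0)=1$. The same holds for $\ch_V(-1/(\i t))=\ch_V(\i/t)>0$, so $\rho(S)=\ch_V(\i/t)/\ch_V(\i t)>0$, forcing $\rho(S)=+1$. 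This positivity input is exactly what separates the correct bound $8\mid c$ from the weaker $4\mid c$ one would get from $\rho(S)=\pm1$ alone, and I expect it to be the main obstacle in the sense that it is the only place where one genuinely uses more about $V$ than that its character spans a character of $\SLZ$.

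Finally I would feed this into the relation $(ST)^3=S^2$ in $\SLZ$: applying $\rho$ gives $\rho(S)^3\rho(T)^3=\rho(S)^2$, and since $\rho(S)=1$ this simplifies to $\rho(T)^3=1$, i.e.\ $\e^{-2\pi\i c/8}=1$, so that $c\in8\Z$. For positivity, I would consider the eta-quotient $f(\tau)=\ch_V(\tau)\eta(\tau)^c$: the transformation formulas for $\eta$ together with $\rho(T)=\e^{-2\pi\i c/24}$, $\rho(S)=1$ and $8\mid c$ show that $f$ is a holomorphic modular form of weight $c/2$ for $\SLZ$ with trivial multiplier and $q$-expansion $1+O(q)$; as there are no non-zero holomorphic modular forms of negative weight and weight $0$ would force $\ch_V\equiv1$, i.e.\ $V=\C$, we conclude $c>0$, so that $c$ is a positive multiple of $8$.
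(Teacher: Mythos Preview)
Your argument follows the same route as the paper: use Zhu's theorem to get a one-dimensional representation $\rho$ of $\SLZ$, read off $\rho(T)=\e^{-2\pi\i c/24}$ from the $q$-expansion, pin down $\rho(S)=1$ by a positivity argument, and deduce $8\mid c$ from $(ST)^3=-I$; for $c>0$ the paper applies the valence formula directly to $\ch_V$, while you multiply by $\eta^c$ and use the vanishing of holomorphic forms of negative weight, which is an equivalent and arguably cleaner packaging.

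There is one small gap. Your inequality $\ch_V(\i t)>0$ tacitly assumes that $c$ is real, which you have not yet established at that point; if $c\notin\R$ then $\e^{2\pi tc/24}$ is not real and the inequality is meaningless. The simplest repair is to evaluate at the $S$-fixed point $\tau=\i$: since $S.\i=\i$, the transformation law gives $\rho(S)\ch_V(\i)=\ch_V(\i)$, and $\ch_V(\i)=\e^{-2\pi c/24}\sum_{n\geq0}\dim_\C(V_n)\e^{-2\pi n}\neq0$ because the sum is a convergent series of non-negative reals with leading term $1$; hence $\rho(S)=1$ without any hypothesis on $c$. Alternatively you could invoke Theorem~\ref{thm:11.3} (rationality of $c$ for rational, $C_2$-cofinite \voa{}s) at the outset, after which your positivity argument goes through verbatim. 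Once $8\mid c$ is established you do have $c\in\Z$, so your $\eta^c$-argument for $c>0$ is fine as written.
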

We include a proof for completeness. It depends on definitions and results occurring later in this text.
\begin{proof}
We use Zhu's modular invariance result (Theorem~\ref{thm:zhumodinv}). The character of $V$, $\ch_V(\tau)=\tr_V q_\tau^{L_0-c/24}=q_\tau^{-c/24}\sum_{n=0}^\infty \dim_\C(V_n)q_\tau^n$, $q_\tau=\e^{2\pi\i\tau}$, transforms under $\SLZ$ as $\ch_V(M.\tau)=\sigma(M)\ch_V(\tau)$ with a representation $\sigma\colon\SLZ\to\C^\times$. Let us define $\lambda=\sigma(S)$ where $S=\left(\begin{smallmatrix}0&-1\\1&0\end{smallmatrix}\right)$. Then, since $S^2.\tau=\tau$ we know that $\lambda^2=1$, i.e.\ $\lambda=\pm 1$ and from Lemma~\ref{lem:lambdapos} we get that indeed $\lambda=1$. For $T=\left(\begin{smallmatrix}1&1\\0&1\end{smallmatrix}\right)\in\SLZ$, $\sigma(T)=\e^{(2\pi\i)(-c/24)}$ as can be easily seen from the $q_\tau$-expansion of the character. Also, $(ST)^3.\tau=\tau$ and hence $1=\lambda^3\e^{(2\pi\i)(-3c/24)}=\e^{(2\pi\i)(-3c/24)}$, which implies $8\mid c$. Finally, we note that $\ch_V(\tau)$ is a modular form for $\SLZ$ of weight $0$ and possibly some character of order 3 which is holomorphic on $\H$ and meromorphic at the cusp $\i\infty$. This implies that $\ch_V(\tau)$ has a pole at $\i\infty$, which means that the $q_\tau$-expansion starts at some negative exponent, i.e. $c$ is positive.

Indeed, assume that $\ch_V(\tau)$ has no pole at $\i\infty$. Then $\ch_V(\tau)-\ch_V(\i\infty)$ has a zero at $\i\infty$ and is holomorphic on $\H$ so that the valence formula (see e.g.\ \cite{HBJ94}, Theorem~I.4.1) implies that $\ch_V(\tau)-\ch_V(\i\infty)$ vanishes, i.e.\ that $\ch_V(\tau)$ is constant, which is a contradiction.
\end{proof}

\minisec{Nice \VOA{}s}

For the purposes of this text, we will mostly deal with \voa{}s satisfying the following five niceness properties:
\begin{customass}{\textbf{\textsf{N}}}[Niceness]\label{ass:n}
Let the \voa{} be simple, rational, $C_2$-cofinite, self-contragredient and of CFT-type.
\end{customass}
These \voa{}s are particularly well-behaved and many strong results hold under this assumption. Note that self-contragredience will be defined in the next section.

\section{Contragredient Modules and Invariant Bilinear Forms}\label{sec:cmibf}
Given a module $W$ of a \voa{} $V$, it is possible to construct another module $W'$, \emph{dual} to $W$, with the same grading as $W$. This module $W'$ is called the contragredient module. The following steps can be found in \cite{FHL93}, Section~5.2.
\minisec{Contragredient Modules}
Let $W$ be a $V$-module with weight grading
\begin{equation*}
W=\bigoplus_{\lambda\in\C}W_\lambda.
\end{equation*}
We define $W'$ as a vector space to be the graded dual space of $W$, i.e.\
\begin{equation*}
W':=\bigoplus_{\lambda\in\C}W_\lambda^*
\end{equation*}
where $W_\lambda^*$ denotes the dual vector space of $W_\lambda$. We then define the \emph{adjoint vertex operators} $Y_{W'}(\cdot,x)\colon V\to\End_\C(W')[[x^{\pm1}]]$ via
\begin{equation*}
\langle Y_{W'}(v,x)w',w\rangle=\langle w',Y_W^*(v,x)w\rangle
\end{equation*}
with
\begin{equation*}
Y_W^*(v,x)=\sum_{n\in\Z}v^*_n x^{-n-1}:=Y_W(\e^{xL_1}(-x^2)^{L_0}v,x^{-1})
\end{equation*}
for $v\in V$, $w\in W$, $w'\in W'$. Here, $\langle\cdot,\cdot\rangle$ denotes the canonical pairing between $W$ and its graded dual space $W'$. The $\C$-grading on $W'$ is simply given by $(W')_\lambda:=(W_\lambda)^*$. Then the following theorem holds:
\begin{thm}[\cite{FHL93}, Theorem~5.2.1]
The pair $(W',Y_{W'})$ carries the structure of a $V$-module, called the \emph{contragredient} (or \emph{dual}) \emph{module} of $W$.
\end{thm}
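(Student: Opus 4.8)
The plan is to verify that $(W',Y_{W'})$ satisfies all the axioms in the definition of a $V$-module. The grading is immediate: since $(W')_\lambda=(W_\lambda)^*$, the graded pieces of $W'$ are finite-dimensional and bounded from below exactly because those of $W$ are, and the explicit formula for the adjoint operators gives $\wt(v_n^*)=\wt(v)-n-1$ as operators on $W'$, so for fixed homogeneous $w'$ the truncation property $v_n^*w'=0$ for $n\gg0$ follows from lower-boundedness. The vacuum axiom is a short computation: since $L_0\vac=L_1\vac=0$ we have $\e^{xL_1}(-x^2)^{L_0}\vac=\vac$, hence $Y_W^*(\vac,x)=Y_W(\vac,x^{-1})=\id_W$, and dualizing gives $Y_{W'}(\vac,x)=\id_{W'}$. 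The translation axiom $Y_{W'}(Tv,x)=\partial_xY_{W'}(v,x)$ reduces, by the same pairing, to a direct differentiation of the defining formula. Once the Jacobi identity is established, the Virasoro relations for the modes of $Y_{W'}(\omega,x)$ and the $L_0$-grading on $W'$ follow formally. Thus everything reduces to the Jacobi identity for $Y_{W'}$.

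For the Jacobi identity I would pair against arbitrary $w\in W$ and $w'\in W'$. Because $Y_{W'}(v,x)$ is by definition the transpose (with respect to $\langle\cdot,\cdot\rangle$) of the adjoint operator $Y_W^*(v,x)$, and transposition reverses the order of composition, the two product terms $Y_{W'}(a,x_1)Y_{W'}(b,x_2)$ and $Y_{W'}(b,x_2)Y_{W'}(a,x_1)$ turn into $Y_W^*(b,x_2)Y_W^*(a,x_1)$ and $Y_W^*(a,x_1)Y_W^*(b,x_2)$ acting on $W$. The problem therefore becomes a Jacobi-type identity for the adjoint operators $Y_W^*(v,x)=Y_W(\e^{xL_1}(-x^2)^{L_0}v,x^{-1})$, which I would derive from the Jacobi identity already available on $W$. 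The key intermediate tools are the two conformal-covariance (conjugation) formulas valid in any $V$-module, namely the scaling formula
\[
a^{L_0}Y_W(v,x)a^{-L_0}=Y_W(a^{L_0}v,ax)
\]
and an $L_1$-conjugation formula of the shape
\[
\e^{yL_1}Y_W(v,x)\e^{-yL_1}=Y_W\!\left(\e^{y(1-yx)L_1}(1-yx)^{-2L_0}v,\ \frac{x}{1-yx}\right).
\]
Using these, one rewrites $Y_W^*(a,x_1)$ and $Y_W^*(b,x_2)$ in terms of ordinary operators $Y_W$, and after the substitution $x_i\mapsto x_i^{-1}$ the desired identity is transported from the known Jacobi identity on $W$ through this invertible change of variables.

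The main obstacle is the formal-variable bookkeeping rather than any conceptual difficulty. Under $x\mapsto x^{-1}$ the delta-function arguments $(x_1-x_2)/x_0$ and $(x_2-x_1)/(-x_0)$ transform in a way that must be reconciled with the binomial-expansion conventions $\iota_{x_1,x_2}$ and $\iota_{x_2,x_1}$ fixed earlier, and the factor $(-x^2)^{L_0}$ produces non-integral powers whenever $W$ has non-integral conformal weight, which forces a careful use of the branch of the logarithm and of the $x-y=x+\e^{-\pi\i}y$ convention recorded in \eqref{eq:dl93}. Tracking the signs generated by $(-1)^{L_0}$ and by the order reversal, and verifying that the three resulting matrix coefficients are expansions of one common rational function in the appropriate domains, is where essentially all the effort lies; the algebraic skeleton is otherwise a routine transfer of the Jacobi identity on $W$ to $W'$.
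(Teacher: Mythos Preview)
The paper does not give its own proof of this theorem; it merely cites the result from \cite{FHL93}, Theorem~5.2.1. Your outline is essentially the standard argument carried out there: reduce everything to the Jacobi identity for the adjoint operators $Y_W^*(v,x)$, and derive that from the Jacobi identity on $W$ using the $L_0$- and $L_1$-conjugation formulas you wrote down.

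One small correction: the factor $(-x^2)^{L_0}$ in the definition of $Y_W^*(v,x)$ acts on $v\in V$, not on elements of $W$. Since $V$ is $\Z$-graded, $(-x^2)^{L_0}v=(-1)^{\wt(v)}x^{2\wt(v)}v$ for homogeneous $v$, and only integral powers of $x$ appear. So there is no issue with non-integral exponents or branches of the logarithm here, and the convention \eqref{eq:dl93} is not needed; the verification of the Jacobi identity for $Y_{W'}$ is a purely formal-variable computation in $V[[x_0^{\pm1},x_1^{\pm1},x_2^{\pm1}]]$. The formal bookkeeping you anticipate (the substitution $x\mapsto x^{-1}$ and the resulting transformation of the delta-function arguments) is indeed the main work, but it stays within integer-exponent formal calculus.
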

By definition, $\dim_\C(W_\lambda)=\dim_\C(W'_\lambda)$ for all $\lambda\in\C$ and the following holds:
\begin{prop}[\cite{FHL93}, Proposition~5.3.1]
Let $V$ be a \voa{} and let $W$ be a $V$-module. Then $W''\cong W$.
\end{prop}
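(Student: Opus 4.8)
The plan is to write down an explicit isomorphism $\phi\colon W\to W''$ coming from the canonical double-duality pairing and to check that it respects the module structures. Because each graded component $W_\lambda$ is finite-dimensional, the natural map $W_\lambda\to(W_\lambda^*)^*$ sending $w$ to the evaluation functional $w'\mapsto\langle w',w\rangle$ is a linear isomorphism; taking the direct sum over $\lambda$ yields a grade-preserving linear isomorphism $\phi\colon W\to W''$ characterised by $\langle\phi(w),w'\rangle=\langle w',w\rangle$ for all $w\in W$ and $w'\in W'$, where $\langle\cdot,\cdot\rangle$ denotes both the canonical pairing between $W'$ and $W$ and that between $W''$ and $W'$. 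It then remains to prove that $\phi$ is a homomorphism of $V$-modules, i.e.\ that $\phi(Y_W(v,x)w)=Y_{W''}(v,x)\phi(w)$ for all $v\in V$ and $w\in W$.

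First I would test this identity against an arbitrary $w'\in W'$ and unravel both sides using only the defining relation of the contragredient construction from Section~\ref{sec:cmibf}. On the left, the definition of $\phi$ gives immediately $\langle\phi(Y_W(v,x)w),w'\rangle=\langle w',Y_W(v,x)w\rangle$. On the right, I would apply the definition of the adjoint vertex operators twice: first viewing $W''$ as the contragredient of $W'$, so that $\langle Y_{W''}(v,x)\phi(w),w'\rangle=\langle\phi(w),Y_{W'}^*(v,x)w'\rangle=\langle Y_{W'}^*(v,x)w',w\rangle$, and then expanding $Y_{W'}^*$ through the definition of $Y_{W'}$ as the contragredient of $W$. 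This second step rewrites the right-hand side as $\langle w',Y_W(B(x)v,x)w\rangle$ for a single operator $B(x)$ on $V$ assembled from the conformal operators $L_0$ and $L_1$ and the substitution $x\mapsto x^{-1}$. Comparing with the left-hand side, the proposition reduces to the single operator identity $B(x)=\id_V$.

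The crux—and the only genuinely nontrivial step—is therefore the verification that $B(x)=\id_V$, which is precisely the statement that passing to the adjoint vertex operators is an involution. Here $B(x)$ is the composite of the state transformation $\theta_x$ underlying the adjoint (Section~\ref{sec:cmibf}) with its counterpart at $x^{-1}$, and I would prove $B(x)=\id_V$ by a purely formal computation in the two-dimensional Lie algebra spanned by $L_0$ and $L_1$. The key input is the Virasoro relation $[L_0,L_1]=-L_1$, which yields the conjugation rule $t^{L_0}\e^{aL_1}t^{-L_0}=\e^{at^{-1}L_1}$; using it to move the $L_0$-scaling operator past the translation-type exponential makes the two factors $\e^{\pm aL_1}$ cancel and forces the two $L_0$-scalings to multiply to $1^{L_0}=\id_V$ (a cancellation that is insensitive to any branch choice, since weights on $V$ are integers). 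I would also record that $\e^{xL_1}$ acts well on $V$ because $L_1$ lowers the weight and the grading is bounded from below, so all the series involved are finite on homogeneous vectors. I expect the bookkeeping of the variable substitution $x\mapsto x^{-1}$ interacting with the conjugation rule to be the main place where care is required; once $B(x)=\id_V$ is established, the intertwining relation holds and $\phi$ is the desired isomorphism $W\cong W''$.
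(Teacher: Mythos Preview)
The paper does not give its own proof of this proposition; it merely cites \cite{FHL93}, Proposition~5.3.1. Your outline is precisely the standard argument from that reference: the canonical double-dual isomorphism on the finite-dimensional graded pieces, reduced to the operator identity expressing that the adjoint transformation $v\mapsto\e^{xL_1}(-x^{-2})^{L_0}v$ (with the variable substitution $x\mapsto x^{-1}$) is an involution, which is verified using $[L_0,L_1]=-L_1$. This is correct and matches the cited source.
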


\begin{prop}[\cite{FHL93}, Proposition~5.3.2]\label{prop:conirr}
Let $V$ be a \voa{} and let $W$ be a $V$-module. The module $W$ is irreducible if and only if $W'$ is.
\end{prop}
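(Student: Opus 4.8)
The plan is to build an inclusion-reversing correspondence between the graded submodules of $W$ and those of its contragredient $W'$ by means of graded annihilators, and then to turn the biconditional into a single implication using the isomorphism $W''\cong W$ from the preceding proposition. Since the statement is symmetric under passing to the dual, it suffices to prove one direction and dualise.

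First I would record that every submodule is graded. Because the grading of a module is by honest $L_0$-eigenvalues ($L_0w=\wt(w)w$ on homogeneous $w$), any submodule $U\subseteq W$ is $L_0$-invariant and therefore splits as $U=\bigoplus_\lambda(U\cap W_\lambda)$. For such a graded $U$ I define its annihilator $U^\bot:=\bigoplus_\lambda(U\cap W_\lambda)^\bot\subseteq\bigoplus_\lambda W_\lambda^*=W'$, where $(U\cap W_\lambda)^\bot$ is the ordinary annihilator inside the finite-dimensional dual $W_\lambda^*$. Finite-dimensionality of each graded piece gives the standard facts I will need: $(U^\bot)^\bot=U$ under $W''\cong W$, together with $U^\bot=0$ iff $U=W$ and $U^\bot=W'$ iff $U=0$.

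The key step is to show that if $U$ is a submodule of $W$, then $U^\bot$ is a submodule of $W'$. By definition of the adjoint vertex operators, for $v\in V$, $w'\in U^\bot$ and $u\in U$ we have $\langle Y_{W'}(v,x)w',u\rangle=\langle w',Y_W^*(v,x)u\rangle$ with $Y_W^*(v,x)=Y_W(\e^{xL_1}(-x^2)^{L_0}v,x^{-1})$. Since $L_1$ acts locally nilpotently (the grading is bounded below), $\e^{xL_1}(-x^2)^{L_0}v$ is a \emph{finite} sum $\sum_k v^{(k)}x^{k}$ of vectors of $V$, so each coefficient of $Y_W^*(v,x)u$ is a finite linear combination of modes $v^{(k)}_n u$. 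As $U$ is a submodule, these all lie in $U$ and are therefore annihilated by $w'\in U^\bot$; hence $\langle Y_{W'}(v,x)w',u\rangle=0$ coefficientwise. Since this holds for every $u\in U$, the coefficients of $Y_{W'}(v,x)w'$ lie in $U^\bot$, i.e.\ $U^\bot$ is a $V$-submodule of $W'$.

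Finally I would assemble the equivalence. The implication just proved shows that if $W$ has a nonzero proper submodule $U$, then $U^\bot$ is a nonzero proper submodule of $W'$, so that $W'$ irreducible forces $W$ irreducible. Applying the same implication to the module $W'$ and invoking $W''\cong W$ yields the reverse direction, whence $W$ is irreducible if and only if $W'$ is. I expect the only genuine obstacle to be the bookkeeping in the key step: one must rewrite $Y_W^*(v,x)$ in terms of honest modes of $Y_W(v^{(k)},x^{-1})$ and use local nilpotence of $L_1$ to guarantee that each coefficient involves only finitely many such modes, so that membership in $U$ is preserved.
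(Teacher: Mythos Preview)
Your argument is correct and is essentially the standard proof (as in \cite{FHL93}, Proposition~5.3.2): establish an inclusion-reversing correspondence $U\mapsto U^\bot$ between submodules of $W$ and of $W'$ via graded annihilators, then use $W''\cong W$ to reduce to one direction. The paper does not supply its own proof of this proposition but simply cites the reference, so there is nothing further to compare.
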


Recall that a \voa{} $V$ itself can be viewed as a module, the adjoint module, for which we can also consider the contragredient module $V'$.
\begin{defi}[Self-Contragredience]
Let $V$ be a \voa{} and let $W$ be a $V$-module. Then $W$ is called \emph{self-contragredient} (usually called \emph{self-dual}\footnote{We do not use this term to avoid confusion with holomorphicity, for which sometimes also the term ``self-dual'' is used (see Definition~\ref{defi:hol}).}) if $W$ is isomorphic to its contragredient module $W'$.

The \voa{} $V$ is called \emph{self-contragredient} if the adjoint module $V$ is isomorphic to its contragredient module $V'$, i.e.\ if $V\cong V'$ (as $V$-modules).
\end{defi}

Holomorphic \voa{}s are always self-contragredient.

\minisec{Invariant Bilinear Forms}
Let $V$ be a \voa{} and let $W$ be a $V$-module. Assume that $W$ is self-contragredient, i.e.\ that there is an isomorphism of $V$-modules $\phi_W\colon W\to W'$. Consider the natural bilinear form $(\cdot,\cdot)_W$ on $W$ defined by
\begin{equation*}
(u,w)_W:=\langle\phi_W(u),w\rangle,
\end{equation*}
for $u,w\in W$ (see \cite{FHL93}, Remark~5.3.3).
\begin{defi}[Invariant Bilinear Form]
Let $V$ be a \voa{}. We say a bilinear form $(\cdot,\cdot)$ on a $V$-module $W$ is \emph{invariant} if
\begin{equation*}
(Y_W(v,x)u,w)=(u,Y_W^*(v,x)w)
\end{equation*}
for all $v\in V$, $u,w\in W$.
\end{defi}
\begin{rem}[\cite{Li94}, Remark~2.5]
Let $V$ be a \voa{} and $(\cdot,\cdot)$ a bilinear form on the $V$-module $W$. If $(\cdot,\cdot)$ is invariant, then also
\begin{equation*}
(Y_W^*(v,x)u,w)=(u,Y_W(v,x)w)
\end{equation*}
for all $v\in V$, $u,w\in W$.
\end{rem}
The bilinear form $(\cdot,\cdot)_W$ we defined above on the self-contragredient module $W$ is non-degenerate and it is invariant since
\begin{align*}
(Y_W(v,x)u,w)_W&=\langle\phi_W Y_W(v,x)u,w\rangle=\langle Y_{W'}(v,x)\phi_W(u),w\rangle=\langle\phi_W(u),Y_W^*(v,x)w\rangle\\
&=(u,Y_W^*(v,x)w)_W,
\end{align*}
where we used that $\phi_W\in\Hom_{V}(W,W')$. In fact, specifying a non-degenerate bilinear form on $W$ is equivalent to specifying a $V$-module isomorphism $W\cong W'$. This yields a characterisation of self-contragredient modules: a module $W$ for a \voa{} $V$ is self-contragredient if and only if there is a non-degenerate, invariant bilinear form on $W$. More generally:
\begin{prop}[\cite{FHL93}]\label{prop:bilhom}
The space of all invariant bilinear forms on a module $W$ for a \voa{} $V$ is naturally isomorphic to $\Hom_V(W,W')$. 
\end{prop}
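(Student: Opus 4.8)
The plan is to exhibit two explicit, mutually inverse linear maps between $\Hom_V(W,W')$ and the space of invariant bilinear forms on $W$, both built from the canonical pairing $\langle\cdot,\cdot\rangle$ between $W'$ and $W$; naturality will then be immediate because no auxiliary choices enter. In one direction I send $f\in\Hom_V(W,W')$ to the bilinear form $B_f(u,w):=\langle f(u),w\rangle$. That $B_f$ is invariant is exactly the computation already displayed for $\phi_W$: using the intertwining relation $f(Y_W(v,x)u)=Y_{W'}(v,x)f(u)$ together with the defining property $\langle Y_{W'}(v,x)w',w\rangle=\langle w',Y_W^*(v,x)w\rangle$ of the contragredient module, one obtains $B_f(Y_W(v,x)u,w)=\langle Y_{W'}(v,x)f(u),w\rangle=\langle f(u),Y_W^*(v,x)w\rangle=B_f(u,Y_W^*(v,x)w)$. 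The assignment $f\mapsto B_f$ is manifestly linear, and it is injective since $B_f=0$ forces $\langle f(u),w\rangle=0$ for all $u,w$, whence $f(u)=0$ by non-degeneracy of the pairing.

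In the other direction, given an invariant bilinear form $B$, I want to define $f_B\colon W\to W'$ by $\langle f_B(u),w\rangle=B(u,w)$, that is $f_B(u)=B(u,\cdot)$. The one genuinely non-formal point is that $f_B(u)$ must lie in the \emph{graded} dual $W'=\bigoplus_{\lambda}W_\lambda^*$ and not merely in the full algebraic dual. To secure this I would first show that every invariant form respects the weight grading, i.e.\ $B(W_\lambda,W_\mu)=0$ whenever $\lambda\neq\mu$. This comes from $L_0$-self-adjointness: a short computation with $Y_W^*(\omega,x)$ (using $L_1\omega=0$ and $L_0\omega=2\omega$) shows that the mode of $Y_W^*(\omega,x)$ adjoint to $L_0=\omega_1$ is again $L_0$, so invariance specialised to the conformal vector yields $B(L_0u,w)=B(u,L_0w)$; for homogeneous $u\in W_\lambda$, $w\in W_\mu$ this gives $\lambda B(u,w)=\mu B(u,w)$ and hence $B(u,w)=0$ unless $\lambda=\mu$. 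Consequently, for homogeneous $u\in W_\lambda$ the functional $f_B(u)$ is supported on the single finite-dimensional weight space $W_\lambda$, so $f_B(u)\in W_\lambda^*\subseteq W'$, and by linearity $f_B(u)\in W'$ for all $u$.

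It then remains to check that $f_B$ is a $V$-module homomorphism, which is the same calculation read backwards: for every $w$ one has $\langle f_B(Y_W(v,x)u),w\rangle=B(Y_W(v,x)u,w)=B(u,Y_W^*(v,x)w)=\langle f_B(u),Y_W^*(v,x)w\rangle=\langle Y_{W'}(v,x)f_B(u),w\rangle$, and non-degeneracy of the pairing forces $f_B(Y_W(v,x)u)=Y_{W'}(v,x)f_B(u)$. Finally I would verify that $f\mapsto B_f$ and $B\mapsto f_B$ are mutually inverse: both composites collapse, again by non-degeneracy of $\langle\cdot,\cdot\rangle$, to the tautologies $\langle f(u),w\rangle=\langle f(u),w\rangle$ and $B(u,w)=B(u,w)$. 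Since every construction is phrased purely through the canonical pairing, the resulting linear isomorphism is natural.

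The main obstacle is precisely the grading issue in the second paragraph, namely ensuring that an arbitrary invariant form lands in the graded dual $W'$ rather than the full dual $W^*$; this is what forces the use of $L_0$-self-adjointness. In the forward direction the corresponding compatibility is automatic, because $V$-module homomorphisms are grade-preserving (as noted after the definition of module homomorphism), so $f$ maps $W_\lambda$ into $(W')_\lambda=W_\lambda^*$ and $B_f$ is grading-compatible for free.
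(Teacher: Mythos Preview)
Your proof is correct and is precisely the standard argument from \cite{FHL93}; the paper itself does not give an independent proof of this proposition but simply cites that reference. Note that the grading step you rightly flag as the key point is exactly what the paper later records as Proposition~\ref{prop:bilorth} (the relation $(L_nu,v)=(u,L_{-n}v)$), so your argument and the paper's presentation are fully aligned.
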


Let $W$ be an irreducible, self-contragredient $V$-module. By Schur's lemma (Proposition~\ref{prop:schur}) and the above proposition, there is exactly one invariant bilinear form on $W$ up to multiplication by a complex scalar. This bilinear form is non-degenerate and up to a scalar given by the natural invariant bilinear form $(\cdot,\cdot)_W$ on $W$ defined above.

The following property of invariant bilinear forms on irreducible modules holds:
\begin{prop}[\cite{Li94}, Proposition~2.8]\label{prop:2.8}
Let $V$ be a \voa{} and $W$ an irreducible $V$-module. Then any invariant bilinear form on $W$ is either symmetric or antisymmetric.
\end{prop}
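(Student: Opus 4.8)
The plan is to exploit the fact that, for an irreducible module, the space of invariant bilinear forms is at most one-dimensional, and to show that ``transposing'' an invariant form (swapping its two arguments) produces another invariant form. Uniqueness up to scalar will then force the transpose to be a scalar multiple of the original, and a double-transpose argument will pin this scalar down to $\pm 1$.

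First I would dispose of the degenerate situation. By Proposition~\ref{prop:bilhom} the space of invariant bilinear forms on $W$ is isomorphic to $\Hom_V(W,W')$, and since $W$ is irreducible so is $W'$ (Proposition~\ref{prop:conirr}); hence by Corollary~\ref{cor:schur} this space is either $0$ or one-dimensional. If it is $0$, the only invariant form is the zero form, which is trivially both symmetric and antisymmetric, and there is nothing to prove. So I may assume $W \cong W'$ and that the space of invariant bilinear forms is spanned by a single nonzero form.

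Next, given an invariant bilinear form $(\cdot,\cdot)$ on $W$, I define its transpose $(\cdot,\cdot)^t$ by $(u,w)^t := (w,u)$ and check that it is again invariant. Unwinding the definitions, the invariance of $(\cdot,\cdot)^t$ amounts to the identity $(w, Y_W(v,x)u) = (Y_W^*(v,x)w, u)$ for all $v \in V$ and $u,w \in W$. This is precisely the symmetrised form of invariance recorded in \cite{Li94}, Remark~2.5 (stated above), with the roles of the two arguments interchanged. Thus the transpose of an invariant form is again invariant. Combining, since $(\cdot,\cdot)$ and $(\cdot,\cdot)^t$ both lie in the one-dimensional space of invariant forms, there is a scalar $\lambda \in \C$ with $(w,u) = \lambda(u,w)$ for all $u,w$; transposing once more gives $(\cdot,\cdot) = \lambda^2(\cdot,\cdot)$, and as the form is nonzero this forces $\lambda^2 = 1$, hence $\lambda = \pm 1$. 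The case $\lambda = 1$ yields a symmetric form and $\lambda = -1$ an antisymmetric one.

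The only real content of the argument is the verification that the transpose form is invariant; everything else is formal once the one-dimensionality from Schur's lemma (Proposition~\ref{prop:schur}) is in hand. I expect the main obstacle to be making sure the ``opposite'' invariance identity $(Y_W^*(v,x)u,w) = (u,Y_W(v,x)w)$ is genuinely available---this is the symmetric companion of the defining invariance relation and relies on the explicit form $Y_W^*(v,x) = Y_W(\e^{xL_1}(-x^2)^{L_0}v, x^{-1})$ together with properties of the adjoint operation; here I would simply invoke \cite{Li94}, Remark~2.5 rather than reprove it.
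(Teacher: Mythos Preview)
Your proof is correct. The paper itself does not give a proof of this proposition; it simply cites \cite{Li94}, Proposition~2.8. Your argument---showing that the transpose of an invariant form is again invariant via the companion identity from \cite{Li94}, Remark~2.5, and then using the one-dimensionality of the space of invariant forms (Proposition~\ref{prop:bilhom} together with Corollary~\ref{cor:schur}) to force the transpose to be $\pm 1$ times the original---is exactly the standard proof and matches the approach in \cite{Li94}.
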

\begin{prop}[\cite{Li94}, Proposition~2.6, \cite{FHL93}, Proposition~5.3.6]\label{prop:2.6}
Any invariant bilinear form on a \voa{} $V$ is symmetric.
\end{prop}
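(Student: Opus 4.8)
The plan is to prove symmetry by a direct computation: reduce first to homogeneous vectors of equal weight, then to quasi-primary vectors, where the adjoint vertex operators take a particularly simple form, and finally swap the two arguments using the skew-symmetry of vertex operators.

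First I would record two consequences of invariance. Applying the invariance relation with $v=\omega$ and comparing coefficients yields $(L_n u,w)=(u,L_{-n}w)$ for all $n\in\Z$; in particular $L_0$ is self-adjoint and $L_{-1},L_1$ are mutually adjoint. Self-adjointness of $L_0$ shows that the form is \emph{graded}, i.e.\ $(u,w)=0$ unless $\wt(u)=\wt(w)$ (this also follows from Proposition~\ref{prop:bilhom}, since the underlying map $V\to V'$ is grade-preserving). Hence it suffices to prove $(u,w)=(w,u)$ for homogeneous $u,w$ of a common weight $n$.

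Next I would exploit the $\sl_2$-triple $\{L_{-1},L_0,L_1\}$ acting on $V$. Since the grading is bounded from below with finite-dimensional pieces, standard positive-energy representation theory gives that $V$ is spanned by the vectors $L_{-1}^k p$ with $p$ \emph{quasi-primary}, i.e.\ $L_1 p=0$. The heart of the argument is the quasi-primary case. If $u$ is quasi-primary of weight $n$, then $L_1 u=0$ collapses the exponential in $Y^*(u,x)$; starting from the invariance identity $(Y(u,x)\vac,w)=(\vac,Y^*(u,x)w)$ and using $Y(u,x)\vac=\e^{xL_{-1}}u$ (so that, by gradedness, the left-hand side equals $(u,w)$), a short computation with the adjoint operator yields
\begin{equation*}
(u,w)=(-1)^{n}\,(\vac,u_{2n-1}w).
\end{equation*}
The same formula holds with $u$ and $w$ interchanged, so it remains to compare $u_{2n-1}w$ and $w_{2n-1}u$ inside the vacuum pairing. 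Here I would invoke the skew-symmetry $Y(u,x)w=\e^{xL_{-1}}Y(w,-x)u$, a consequence of the Jacobi identity. Extracting the coefficient of $x^{-2n}$ gives
\begin{equation*}
u_{2n-1}w=\sum_{j\geq0}\frac{(-1)^{j}}{j!}\,L_{-1}^{j}\bigl(w_{2n-1+j}u\bigr).
\end{equation*}
Pairing with $\vac$ and using that $L_{-1}$ is adjoint to $L_1$ together with $L_1\vac=0$ annihilates every term with $j\geq1$, leaving $(\vac,u_{2n-1}w)=(\vac,w_{2n-1}u)$ and hence $(u,w)=(w,u)$ for quasi-primary $u,w$.

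Finally I would propagate symmetry from quasi-primary vectors to all of $V$ by induction on the weight, writing $V_n=P_n\oplus L_{-1}V_{n-1}$ with $P_n$ the quasi-primaries. The cross terms $(p,L_{-1}w')$ vanish because $(p,L_{-1}w')=(L_1 p,w')=0$, while $(L_{-1}u',L_{-1}w')$ is rewritten via $L_1L_{-1}=L_{-1}L_1+2L_0$ as $(L_1u',L_1w')+2(n-1)(u',w')$, which is manifestly symmetric in $u',w'$ by the induction hypothesis. I expect the main obstacle to be the formal-variable bookkeeping in the quasi-primary step: correctly simplifying the adjoint operator on a quasi-primary vector (tracking the power of $x$ and the sign $(-1)^{n}$) and reading off the right coefficient from the skew-symmetry relation so that only the $j=0$ term survives against the vacuum. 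By contrast, the $\sl_2$-reduction is routine once the decomposition into $L_{-1}$-strings over quasi-primary vectors is in hand. Note that this approach uses neither simplicity nor non-degeneracy, so it applies to an arbitrary invariant bilinear form on $V$.
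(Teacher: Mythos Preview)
Your computation for quasi-primary vectors (the formula $(u,w)=(-1)^n(\vac,u_{2n-1}w)$ and the use of skew-symmetry together with $L_1\vac=0$) is correct and is indeed the heart of the matter. The gap is in the reduction step: the claimed decomposition $V_n=P_n\oplus L_{-1}V_{n-1}$, and even the weaker spanning $V_n=P_n+L_{-1}V_{n-1}$, is \emph{not} a consequence of ``standard positive-energy representation theory'' for the $\sl_2$-triple $\{L_{-1},L_0,L_1\}$. A concrete counterexample is the rank-one Heisenberg vertex operator algebra $M_{\hat\h}(1)$ equipped with a shifted Virasoro vector $\omega'=\omega+\lambda\,a(-2)\vac$ with $\lambda\neq0$ (exactly the construction used in Section~\ref{sec:bc} for the ghost system). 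The grading is unchanged, $L_{-1}'\vac=0$, but $L_1'(a(-1)\vac)=-2\lambda\vac\neq0$; hence $P_1=\{0\}$ while $L_{-1}'V_0=\{0\}$, so $P_1+L_{-1}'V_0=\{0\}\subsetneq V_1$. In abstract $\sl_2$ terms, the obstruction is the projective cover $P(2)$ in the relevant block of category~$\mathcal O$, which is \emph{not} spanned by $L_{-1}$-translates of lowest-weight vectors. (In this particular example $V_0/L_1'V_1=0$, so there are no invariant forms and the proposition is vacuous---but your argument does not establish that, and I do not see how to prove in general that failure of the spanning forces the absence of invariant forms.)

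The proofs in \cite{Li94} and \cite{FHL93} avoid this issue by never reducing to quasi-primaries: they run your step-3 computation with the full factor $\e^{xL_1}(-x^{-2})^{L_0}u$ kept intact and combine it directly with skew-symmetry and $L_1\vac=0$ in a single formal-variable identity valid for arbitrary $u,w$. In effect you have carried out the special case where $\e^{xL_1}$ collapses to $1$ and then tried to bootstrap via an $\sl_2$-decomposition that need not exist. The cleanest repair is to drop the reduction and push your own calculation through for general $u$: the extra $L_1^ju$ terms that appear pair to zero against $\vac$ after commuting them past the other vertex operator (using $[L_1,p_k]=(2n-k-2)p_{k+1}+(L_1p)_k$) and invoking $L_{-1}\vac=0$. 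That is essentially the Li--FHL argument.
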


The proof of the second proposition uses that on a \voa{} $V$
\begin{equation}\label{eq:skew}
Y_V(a,x)b=\e^{xL_{-1}}Y_V(b,-x)a
\end{equation}
for all $a,b\in V$, a property which is called \emph{skew-symmetry} (see e.g.\ (2.3.19) of \cite{FHL93}).

A simple calculation shows:
\begin{prop}[\cite{FHL93,Li94}]\label{prop:bilorth}
Let $V$ be a \voa{} and $(\cdot,\cdot)$ an invariant bilinear form on the $V$-module $W$. Then the modes of the Virasoro vector obey
\begin{equation*}
(L_nu,v)=(u,L_{-n}v)
\end{equation*}
for $u,v\in W$ and $n\in\Z$ so that
\begin{equation*}
(W_\lambda,W_\mu)=0
\end{equation*}
for $\lambda\neq\mu\in\C$.
\end{prop}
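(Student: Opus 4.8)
The plan is to specialise the defining invariance property of the form to the conformal vector $v=\omega$ and to read off the coefficients of the resulting identity of formal series in $x$. Recall that invariance means $(Y_W(\omega,x)u,v)=(u,Y_W^*(\omega,x)v)$ for all $u,v\in W$, where $Y_W^*(\omega,x)=Y_W(\e^{xL_1}(-x^{-2})^{L_0}\omega,x^{-1})$ is the adjoint vertex operator introduced above. Expanding $Y_W(\omega,x)=\sum_{n\in\Z}L_nx^{-n-2}$ on the left and identifying $Y_W^*(\omega,x)$ explicitly on the right will produce the adjointness relation $(L_nu,v)=(u,L_{-n}v)$, from which the orthogonality of distinct weight spaces then drops out of the case $n=0$.

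First I would compute the adjoint operator $Y_W^*(\omega,x)$. The point is that $\omega$ is an eigenvector for $L_0$ with eigenvalue $\wt(\omega)=2$, so $(-x^{-2})^{L_0}\omega=x^{-4}\omega$, and that $L_1\omega=0$; the latter follows from $\omega=L_{-2}\vac$ together with the Virasoro relations, since $L_1\omega=[L_1,L_{-2}]\vac=3L_{-1}\vac=0$. Consequently $\e^{xL_1}(-x^{-2})^{L_0}\omega=x^{-4}\omega$ and therefore $Y_W^*(\omega,x)=x^{-4}Y_W(\omega,x^{-1})$. Since $Y_W(\omega,x^{-1})=\sum_{n\in\Z}L_nx^{n+2}$, this gives $Y_W^*(\omega,x)=\sum_{n\in\Z}L_nx^{n-2}$. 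Plugging both expansions into the invariance relation produces the identity of formal series
\begin{equation*}
\sum_{n\in\Z}(L_nu,v)\,x^{-n-2}=\sum_{n\in\Z}(u,L_nv)\,x^{n-2},
\end{equation*}
and comparing the coefficient of $x^{-n-2}$ on both sides yields $(L_nu,v)=(u,L_{-n}v)$ for all $n\in\Z$ and all $u,v\in W$.

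For the final assertion I would take $n=0$, giving $(L_0u,v)=(u,L_0v)$. If $u\in W_\lambda$ and $v\in W_\mu$, then $L_0u=\lambda u$ and $L_0v=\mu v$, so that $\lambda(u,v)=\mu(u,v)$, i.e.\ $(\lambda-\mu)(u,v)=0$; hence $(u,v)=0$ whenever $\lambda\neq\mu$, which is precisely $(W_\lambda,W_\mu)=0$. The only genuinely delicate step is the computation of $Y_W^*(\omega,x)$: one must verify $L_1\omega=0$ and keep careful track of the shift in mode indices caused by the substitution $x\mapsto x^{-1}$ combined with the scalar $x^{-4}$, since any slip there corrupts the index on the right-hand side. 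The coefficientwise comparison itself is harmless, being legitimate because both sides of the invariance relation are bona fide formal Laurent series in $x$.
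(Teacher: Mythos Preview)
Your proof is correct and is exactly the ``simple calculation'' the paper alludes to without writing out; the paper gives no proof of this proposition, so there is nothing further to compare.

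One remark worth making: you quote the adjoint operator as $Y_W^*(\omega,x)=Y_W(\e^{xL_1}(-x^{-2})^{L_0}\omega,x^{-1})$, but the paper's displayed definition actually has $(-x^2)^{L_0}$. Your version is the correct one---it agrees with \cite{FHL93}, (5.2.4), and with the paper's own later usage in Section~\ref{sec:mtc}, where $\e^{-\pi\i L_0}x^{-2L_0}=(-x^{-2})^{L_0}$ appears---whereas with $(-x^2)^{L_0}$ the computation would produce $(L_nu,v)=(u,L_{-n-8}v)$, which is absurd. So the paper's $(-x^2)^{L_0}$ is a typo, and your (silent) correction is necessary for the argument to go through.
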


In the following we will characterise the space of all symmetric, invariant bilinear forms on a \voa{} $V$ (see \cite{Li94}, Section~3). Let $(\cdot,\cdot)$ be such a bilinear form. Then $(u,v)$ is equal to the constant term of
\begin{equation}\label{eq:biluni}
(Y(u,x)\vac,v)=(\vac,Y^*(u,x)v)
\end{equation}
for $u,v\in V$. Let $f$ be the linear functional $f\colon V_0\to \C$ defined by
\begin{equation*}
f(u):=(\vac,u)
\end{equation*}
for $u\in V_0$. Then by the above Proposition~\ref{prop:bilorth} and \eqref{eq:biluni} it is evident that $(\cdot,\cdot)$ is uniquely determined by $f$. On the other hand, since $L_{-1}\vac=0$, it follows that
\begin{equation*}
(\vac,L_1V_1)=(L_{-1}\vac,V_1)=0,
\end{equation*}
which means that $f$ vanishes on the subspace $L_1V_1$ of $V_0$. Therefore we can view $f$ as a linear functional on the quotient space $V_0/L_1V_1$. Conversely, a linear functional $f$ on $V_0$ determines a symmetric, invariant bilinear form if and only if $L_1V_1\subseteq\ker(f)$ \cite{Li94}. Therefore:
\begin{thm}[\cite{Li94}, Theorem~3.1]\label{thm:3.1li}
Let $V$ be a \voa{}. The space of symmetric, invariant bilinear forms is naturally isomorphic to the dual space of $V_0/L_1V_1$.
\end{thm}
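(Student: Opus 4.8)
The plan is to exhibit an explicit linear isomorphism $\Phi$ from the space of symmetric, invariant bilinear forms on $V$ to the dual space $(V_0/L_1V_1)^*$ and to verify that it is well-defined, injective, and surjective; by Proposition~\ref{prop:2.6} every invariant bilinear form on a \voa{} is automatically symmetric, so ``symmetric, invariant'' and ``invariant'' describe the same space here. The map $\Phi$ should send a form $(\cdot,\cdot)$ to the functional $f$ given by $f(u)=(\vac,u)$ for $u\in V_0$, exactly as set up in the paragraph preceding the statement. First I would check that $\Phi$ lands in $(V_0/L_1V_1)^*$, i.e.\ that $f$ kills $L_1V_1$: by Proposition~\ref{prop:bilorth} one has $(\vac,L_1v)=(L_{-1}\vac,v)$ for $v\in V_1$, and since $L_{-1}\vac=T\vac=0$ this vanishes, so $f$ descends to the quotient.

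Injectivity is the easy direction and follows from the reconstruction formula \eqref{eq:biluni}. For arbitrary $u,v\in V$ the value $(u,v)$ equals the constant term of $(\vac,Y^*(u,x)v)$; because $\vac\in V_0$ and weight spaces are orthogonal under an invariant form (Proposition~\ref{prop:bilorth}), pairing against $\vac$ only detects the weight-$0$ component of $Y^*(u,x)v$, and on that component the form agrees with $f$. Hence $(\cdot,\cdot)$ is completely determined by $f=\Phi((\cdot,\cdot))$, so $\Phi$ is injective.

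Surjectivity is the substantive part. Given a functional $f$ on $V_0$ with $L_1V_1\subseteq\ker(f)$, I would \emph{define} a bilinear form by declaring $(u,v)$ to be $f$ applied to the weight-$0$ coefficient of $Y^*(u,x)v=Y(\e^{xL_1}(-x^2)^{L_0}u,x^{-1})v$, then prove that this pairing is indeed invariant and symmetric. Bilinearity is immediate from linearity of $Y^*(\cdot,x)$ and $f$. For symmetry and invariance I would use the skew-symmetry relation \eqref{eq:skew} together with the standard conjugation identities for adjoint vertex operators (the associativity/commutativity properties packaged in the Jacobi identity for the module $V$), manipulating the corresponding $\delta$-function expressions to move operators across the pairing; this is precisely where the hypothesis $L_1V_1\subseteq\ker(f)$ is needed to guarantee that the weight-$0$ projection is consistent, i.e.\ that the two ways of computing a matrix coefficient agree. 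Having both maps, one checks $\Phi$ and the construction $f\mapsto(\cdot,\cdot)_f$ are mutually inverse, yielding the claimed natural isomorphism.

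The main obstacle I expect is the surjectivity step: verifying that the form built from $f$ satisfies $(Y_V(v,x)u,w)=(u,Y^*_V(v,x)w)$ for all $v$, which requires a careful contour/$\delta$-function computation reconciling the action of $Y^*$ on both arguments via skew-symmetry, rather than any single slick identity. Everything else reduces to the orthogonality of weight spaces and the vanishing $L_{-1}\vac=0$.
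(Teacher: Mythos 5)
Your proposal is correct and follows essentially the same route as the paper: the same assignment $(\cdot,\cdot)\mapsto f=(\vac,\cdot)|_{V_0}$, the same verification that $f$ descends to $V_0/L_1V_1$ via $L_{-1}\vac=0$ and Proposition~\ref{prop:bilorth}, and the same injectivity argument combining \eqref{eq:biluni} with the orthogonality of weight spaces. The surjectivity step you single out as the main obstacle is exactly the part the paper does not prove but cites to \cite{Li94}; your plan for it (defining the form from $f$ via the weight-zero coefficient of $(\vac,Y^*(u,x)v)$ and verifying invariance by hand) is in substance Li's argument, which can be organised more cleanly by invoking Proposition~\ref{prop:bilhom} to reduce the problem to producing a $V$-module homomorphism $V\to V'$ from a functional on $V_0$ killing $L_1V_1$.
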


In particular, combining the above result with that of Proposition~\ref{prop:bilhom} we obtain:
\begin{cor}
Let $V$ be a \voa{}. Then there is a natural isomorphism of $\C$-vector spaces
\begin{equation*}
\left(V_0/L_1V_1\right)^*\cong\Hom_V(V,V').
\end{equation*}
\end{cor}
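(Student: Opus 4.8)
The plan is to chain together the three results on invariant bilinear forms that precede this statement, exploiting the fact that for the adjoint module the class of symmetric invariant bilinear forms coincides with the class of all invariant bilinear forms. First I would invoke Proposition~\ref{prop:bilhom} with the choice $W=V$, viewing $V$ as its own adjoint module. This gives a natural isomorphism between the space of all invariant bilinear forms on $V$ and $\Hom_V(V,V')$. The word ``natural'' here is important: the isomorphism sends an invariant form $(\cdot,\cdot)$ to the module homomorphism $\phi\colon V\to V'$ determined by $\langle\phi(u),w\rangle=(u,w)$, and this assignment is $\C$-linear and functorial, so no choices are involved.

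The next step is to eliminate the apparent mismatch between ``all invariant bilinear forms'' in Proposition~\ref{prop:bilhom} and ``symmetric, invariant bilinear forms'' in Theorem~\ref{thm:3.1li}. This is precisely the content of Proposition~\ref{prop:2.6}: any invariant bilinear form on a \voa{} $V$ is automatically symmetric. Consequently the space of invariant bilinear forms on $V$ equals the space of symmetric, invariant bilinear forms on $V$ as subspaces of the space of all bilinear forms, with no need to pass to a subspace or quotient. I would state explicitly that this equality is as identical vector spaces, so that composing it with the isomorphisms on either side preserves naturality.

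Finally I would apply Theorem~\ref{thm:3.1li}, which provides a natural isomorphism between the space of symmetric, invariant bilinear forms on $V$ and the dual space $\left(V_0/L_1V_1\right)^*$. Reading the identification of Proposition~\ref{prop:2.6} from right to left and stitching the two natural isomorphisms together, one obtains
\begin{equation*}
\left(V_0/L_1V_1\right)^*\cong\{\text{symmetric, invariant bilinear forms on }V\}=\{\text{invariant bilinear forms on }V\}\cong\Hom_V(V,V'),
\end{equation*}
which is the desired isomorphism of $\C$-vector spaces. Since each arrow in this chain is a natural isomorphism and the middle equality is an equality of subspaces, the composite is again natural.

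There is essentially no real obstacle here, as the corollary is a formal consequence of results already proved; the only point requiring a moment of care is verifying that the symmetry statement of Proposition~\ref{prop:2.6} lets one identify the two spaces of forms \emph{as the same space} rather than merely as abstractly isomorphic spaces, since otherwise the resulting isomorphism, while still valid, might not obviously deserve the label ``natural.'' I would therefore emphasise that Proposition~\ref{prop:2.6} asserts symmetry of every invariant form, so the inclusion of symmetric invariant forms into all invariant forms is an equality, and naturality of the composite follows immediately.
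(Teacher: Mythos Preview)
Your proposal is correct and follows exactly the approach the paper indicates: the corollary is obtained by combining Theorem~\ref{thm:3.1li} with Proposition~\ref{prop:bilhom}, using Proposition~\ref{prop:2.6} to identify invariant bilinear forms on $V$ with symmetric ones. The paper gives no further details beyond that one-sentence indication, so your more explicit write-up is fine.
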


\minisec{Self-Contragredient \VOA{}s}
The above corollary implies
\begin{equation*}
\dim_\C(V_0/L_1V_1)=\dim_\C(\Hom_V(V,V')).
\end{equation*}
If we assume that $V$ is simple, then we can combine this with Schur's lemma (Corollary~\ref{cor:schur})
to obtain:
\begin{cor}
Let $V$ be a simple \voa{}. Then
\begin{equation*}
\dim_\C(V_0/L_1V_1)=\begin{cases}1&\text{if }V\cong V',\\0&\text{if }V\ncong V'.\end{cases}
\end{equation*}
\end{cor}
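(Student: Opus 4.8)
The statement to prove is the final corollary: for a simple \voa{} $V$, we have $\dim_\C(V_0/L_1V_1)=1$ if $V\cong V'$ and $0$ otherwise. The plan is to assemble this directly from the two ingredients established immediately before it, so the work is mostly bookkeeping rather than new analysis. First I would invoke the corollary preceding this one, which gives the natural isomorphism $(V_0/L_1V_1)^*\cong\Hom_V(V,V')$; taking dimensions yields $\dim_\C(V_0/L_1V_1)=\dim_\C\Hom_V(V,V')$, since a finite-dimensional vector space and its dual have equal dimension. (Here $V_0/L_1V_1$ is finite-dimensional because $V_0$ is, by the finite-dimensionality of graded pieces in Definition~\ref{defi:voa}.)

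The second and decisive ingredient is Schur's lemma in the form of Corollary~\ref{cor:schur}. Since $V$ is simple, it is irreducible as its own adjoint module (by the proposition identifying simplicity and irreducibility of \voa{}s), and by Proposition~\ref{prop:conirr} the contragredient $V'$ is then irreducible as well. Thus $V$ and $V'$ are two irreducible $V$-modules, and Corollary~\ref{cor:schur} applies verbatim: $\Hom_V(V,V')\cong\C$ when $V\cong V'$ and $\Hom_V(V,V')=0$ when $V\ncong V'$. Combining this with the dimension count from the previous paragraph gives exactly the claimed case distinction.

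Putting these together, the proof is essentially one line once the setup is in place:
\begin{equation*}
\dim_\C(V_0/L_1V_1)=\dim_\C\Hom_V(V,V')=\begin{cases}1&\text{if }V\cong V',\\0&\text{if }V\ncong V'.\end{cases}
\end{equation*}
I do not anticipate a genuine obstacle here, as both inputs are already proved in the excerpt; the only point requiring a word of care is the reduction to irreducibility, namely checking that simplicity of $V$ legitimately forces both $V$ and $V'$ to be irreducible so that Schur's lemma is applicable. Everything else is a formal consequence of the stated results.
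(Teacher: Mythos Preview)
Your proof is correct and follows exactly the same approach as the paper: take dimensions in the isomorphism $(V_0/L_1V_1)^*\cong\Hom_V(V,V')$ and then apply Schur's lemma (Corollary~\ref{cor:schur}) to the irreducible $V$-modules $V$ and $V'$. Your additional remark that one needs $V'$ irreducible (via Proposition~\ref{prop:conirr}) is a nice clarification the paper leaves implicit.
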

Clearly, if $V$ is additionally of CFT-type, then $\dim_\C(V_0)=1$ and hence the above result implies that $V$ is self-contragredient if and only if $L_1V_1=\{0\}$. The $\impliedby$-direction is true for any simple \voa{} $V$.

We also need the following result:
\begin{prop}\label{prop:subsd}
Let $V$ be a simple \voa{} of CFT-type and let $U$ be a \vosa{} of $V$ such that $U$ is also simple. If $V$ is self-contragredient, i.e.\ $V\cong V'$, then so is $U$, i.e.\ $U\cong U'$.
\end{prop}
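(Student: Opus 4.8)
The plan is to reduce the statement to the self-contragredience criterion recorded just above, namely that a simple \voa{} $W$ of CFT-type is self-contragredient if and only if $L_1W_1=\{0\}$, where moreover the implication ``$L_1W_1=\{0\}$ $\Rightarrow$ $W\cong W'$'' holds for \emph{any} simple \voa{} $W$. This last point follows from Theorem~\ref{thm:3.1li} together with the two subsequent corollaries: since $\dim_\C(W_0/L_1W_1)\in\{0,1\}$ for simple $W$, and since $\vac\in W_0$ gives $\dim_\C W_0\geq 1$, the vanishing $L_1W_1=\{0\}$ forces $\dim_\C(W_0/L_1W_1)=\dim_\C W_0=1$ and hence $W\cong W'$. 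Thus it suffices to establish $L_1U_1=\{0\}$ and then invoke this implication for the simple \voa{} $U$.

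First I would apply the ``$\Rightarrow$'' direction of the criterion to $V$: since $V$ is simple, of CFT-type and self-contragredient, we obtain $L_1V_1=\{0\}$. Next I would exploit the definition of a \vosa{} (Definition~\ref{defi:subvoa}): $U$ shares the conformal vector $\omega$ of $V$, so the Virasoro mode $L_1=\omega_2$ acts on $U$ as the restriction of $L_1$ on $V$, and $U$ is a graded subspace, whence $U_1\subseteq V_1$. Combining these gives $L_1U_1\subseteq L_1V_1=\{0\}$, so that $L_1U_1=\{0\}$. Applying the ``$\impliedby$'' implication (valid for any simple \voa{}) to the simple \voa{} $U$ then yields $U\cong U'$, as desired.

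The argument is short, and the only point requiring care is the logical bookkeeping: one should use the ``$\Rightarrow$'' direction for $V$ (which genuinely needs CFT-type) but only the ``$\impliedby$'' direction for $U$ (which needs only simplicity), so that no separate verification that $U$ is of CFT-type is required---although this is in any case immediate from $\vac\in U\subseteq V$ and $V_0=\C\vac$. As an alternative I would restrict the non-degenerate, symmetric, invariant bilinear form on $V$ furnished by $V\cong V'$ to the subalgebra $U$; it remains invariant, and it is non-zero because it is already non-zero on $U_0=\C\vac$ (using Proposition~\ref{prop:bilorth} and non-degeneracy of the form on the one-dimensional space $V_0$). Since the radical of an invariant bilinear form on $U$ is a $U$-submodule and $U$ is simple, a non-zero such form is automatically non-degenerate, giving $U\cong U'$ directly. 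The main obstacle in either route is merely to observe that the relevant self-contragredience criterion transfers from $V$ to $U$ precisely because $U$ inherits both the conformal vector and the CFT-type behaviour in degree zero.
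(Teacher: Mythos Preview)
Your proof is correct and follows exactly the same route as the paper: deduce $L_1V_1=\{0\}$ from the self-contragredience criterion applied to $V$, use that $U$ inherits $\omega$ (hence $L_1$) and is graded so that $L_1U_1\subseteq L_1V_1=\{0\}$, and then apply the $\impliedby$-direction of the criterion to the simple \voa{} $U$. The paper's proof is a terse three lines encoding precisely these steps (and it likewise stresses afterwards that the argument hinges on $U$ having the same Virasoro vector as $V$); your alternative via restricting the invariant bilinear form is a valid second route not given in the paper.
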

\begin{proof}
Since $V$ is self-contragredient and of CFT-type, $L_1V_1=\{0\}$ holds. Then also $L_1U_1=\{0\}$ and $U$ is self-contragredient.
\end{proof}
For the above proof to be correct it is important that the Virasoro vector $\omega$ and hence $L_1$ is the same on $U$ and on $V$ (see Definition~\ref{defi:subvoa}).

\section{Intertwining Operators and Fusion Product}\label{sec:intops}
We briefly introduce the closely related concepts of intertwining operators and fusion products. 
\minisec{Intertwining Operators}
Vertex operators are operators parametrised by elements of the vertex algebra acting on the vertex algebra itself or on modules. A more general concept is that of intertwining operators (see \cite{FHL93}, Section~5.4), which are parametrised by the vectors in one module and map elements in a second module to elements in a third one.
\begin{defi}[Intertwining Operators]\label{defi:intops}
Let $V$ be a \voa{} and let $W^1,W^2,W^3$ be $V$-modules (not necessarily distinct and possibly equal to $V$). An \emph{intertwining operator} of type $\binom{W^3}{W^1\,W^2}$ is a linear map $W^1\to \Hom_\C(W^2,W^3)\{x\}$ or equivalently $W^1\otimes_\C W^2\to W^3\{x\}$,
\begin{equation*}
w\mapsto\mathcal{Y}(w,x)=\sum_{n\in\C}w_nx^{-n-1}
\end{equation*}
where for each $w'\in W^2$, $w_n w'=0$ for $n$ with sufficiently large real part such that all the defining properties of a module action that still make sense in this setting hold, i.e.:
\begin{itemize}
\item (\emph{Jacobi identity}) Let $Y_{W^1}(\cdot,x)$,
$Y_{W^2}(\cdot,x)$, $Y_{W^3}(\cdot,x)$ denote the module vertex operators on the modules $W^1,W^2,W^3$. For $v\in V$ and $w\in W^1$,
\begin{align*}
&\iota_{x_1,x_0}x_2^{-1}\delta\left(\frac{x_1-x_0}{x_2}\right)\mathcal{Y}(Y_{W^1}(v,x_0)w,x_2)\\
&=\iota_{x_1,x_2}x_0^{-1}\delta\left(\frac{x_1-x_2}{x_0}\right)Y_{W^3}(v,x_1)\mathcal{Y}(w,x_2)\\
&\quad-\iota_{x_2,x_1}x_0^{-1}\delta\left(\frac{x_2-x_1}{-x_0}\right)\mathcal{Y}(w,x_2)Y_{W^2}(v,x_1).
\end{align*}
Note that all terms become meaningful when applied to an element $w'\in W^2$. Moreover, note that this Jacobi identity involves integral powers of $x_0$ and $x_1$ and complex powers of $x_2$.
\item (\emph{translation axiom}) For $w\in W^1$,
\begin{equation*}
\partial_x\mathcal{Y}(w,x) = \mathcal{Y}(L_{-1}w,x)
\end{equation*}
where $L_{-1}$ is the operator $L_{-1}^{W^1}$ acting on $W^1$.
\end{itemize}
\end{defi}
This definition is more general than the one in \cite{FHL93} since it allows complex rather than only rational exponents. The intertwining operators of type $\binom{W^3}{W^1\,W^2}$ form a vector space, which we denote by $\V_{W^1\,W^2}^{W^3}$. Clearly, for a \voa{} $V$, the vertex operator $Y(\cdot,x)$ is an intertwining operator of type $\binom{V}{V\,V}$ and for a $V$-module $W$ the module vertex operator $Y_W(\cdot,x)$ is an intertwining operator of type $\binom{W}{V\,W}$.

It follows from the definition of intertwining operators that $\wt(w_n)=\wt(w)-n-1$ for homogeneous $w\in W^1$ and $n\in\C$. In particular, assume there are irreducible $V$-modules $W^1,W^2,W^3$ with conformal weights $\rho(W^1),\rho(W^2),\rho(W^3)\in\C$. Then for the intertwining operators of type $\binom{W^3}{W^1\,W^2}$,
\begin{equation*}
\mathcal{Y}(\cdot,x)\colon W^1\to x^{B}\Hom_\C(W^2,W^3)[[x^{\pm1}]]
\end{equation*}
where $B=\rho(W^3)-\rho(W^1)-\rho(W^2)\in\C$. Equivalently we can view $\mathcal{Y}(\cdot,x)$ as a map $W^1\otimes_\C W^2\to x^{B}W^3((x))$.

\minisec{$S_3$-Symmetry}

In the following we discuss certain isomorphisms between spaces of intertwining operators. These have first been stated in \cite{FHL93} under an integrality assumption on the weight grading but hold more generally.
\begin{prop}[\cite{HL95}, Propositions 7.1 and 7.3, \cite{FHL93}, Propositions 5.4.7 and 5.5.2]\label{prop:intsym}
Let $V$ be a \voa{} and let $W^1,W^2,W^3$ be $V$-modules. There are natural isomorphisms of spaces of intertwining operators
\begin{equation*}
\V_{W^1\,W^2}^{W^3}\cong\V_{W^2\,W^1}^{W^3}\quad\text{and}\quad\V_{W^1\,W^2}^{W^3}\cong\V_{W^1\,(W^3)'}^{(W^1)'}.
\end{equation*}
\end{prop}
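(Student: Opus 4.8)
The plan is to construct both isomorphisms by writing down explicit linear maps on spaces of intertwining operators that generalise the skew-symmetry relation \eqref{eq:skew} and the adjoint-operator construction used to define contragredient modules in Section~\ref{sec:cmibf}. For the first isomorphism I would define a \emph{skew-symmetry map} $\Omega\colon\V^{W^3}_{W^1\,W^2}\to\V^{W^3}_{W^2\,W^1}$ by
\begin{equation*}
\Omega(\mathcal{Y})(w^2,x)w^1:=\e^{xL_{-1}}\mathcal{Y}(w^1,\e^{\pi\i}x)w^2
\end{equation*}
for $w^1\in W^1$, $w^2\in W^2$, where $L_{-1}$ acts on $W^3$; this directly mimics \eqref{eq:skew}. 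For the second isomorphism I would define an \emph{adjoint map} $A$ using the contragredient recipe $Y_W^*(v,x)=Y_W(\e^{xL_1}(-x^2)^{L_0}v,x^{-1})$, declaring $A(\mathcal{Y})$ through the pairing
\begin{equation*}
\langle A(\mathcal{Y})(w^1,x)w',\,w^2\rangle:=\langle w',\,\mathcal{Y}(\e^{xL_1}(\e^{-\pi\i}x^{-2})^{L_0}w^1,x^{-1})w^2\rangle,
\end{equation*}
with $w'$ in the appropriate contragredient module and $\langle\cdot,\cdot\rangle$ the canonical pairing. The first task is to check that each transformed object is again an intertwining operator, i.e.\ that it satisfies the two axioms of Definition~\ref{defi:intops}.

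The translation axiom should be the routine part: for $\Omega$ it follows from $\partial_x\e^{xL_{-1}}=L_{-1}\e^{xL_{-1}}=\e^{xL_{-1}}L_{-1}$ together with the translation axiom for $\mathcal{Y}$ and the fact that $L_{-1}$ on $W^3$ intertwines with the $L_{-1}$ appearing in $\mathcal{Y}(L_{-1}w^1,x)$; for $A$ it follows from the standard commutation relations $[L_1,L_{-1}]=2L_0$, $[L_0,L_{-1}]=L_{-1}$ applied to the inner expression. The substantial content is the Jacobi identity. For $\Omega$ I would substitute the definition into the target Jacobi identity, conjugate the three-variable delta-function identity by $\e^{x_2L_{-1}}$ (which shifts the formal variables, using that $\e^{x L_{-1}}$ acts on fields by translating their argument), and reorganise the two noncommutative terms; the sign change $x\mapsto\e^{\pi\i}x$ is precisely what converts the $\iota_{x_2,x_1}$ term into the $\iota_{x_1,x_2}$ term after relabelling. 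For $A$ the verification is dual: one pairs against $w^2$ and transports the Jacobi identity for $\mathcal{Y}$ across the pairing, using the adjoint relations for $L_0$ and $L_1$ and the substitution $x\mapsto x^{-1}$ in the delta functions.

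The main obstacle will be the careful bookkeeping of \emph{complex} exponents and branch cuts, since the conformal weights $\rho(W^i)$ are complex and the shift $B=\rho(W^3)-\rho(W^1)-\rho(W^2)$ need not be rational. This is exactly where the convention \eqref{eq:dl93}, reading $x-y$ as $x+\e^{-\pi\i}y$ and giving $\iota_{y,x}(x-y)^n=\e^{-\pi\i n}\iota_{y,x}(y-x)^n$, is indispensable: the factors $\e^{\pi\i}$ and $\e^{-\pi\i}$ inserted in the definitions of $\Omega$ and $A$ must be tracked consistently so that the monodromy contributions cancel and the transformed Jacobi identity holds on the nose. Once both maps are shown to land in the correct spaces, I would finish by exhibiting inverses: computing $\Omega^2$ and $A^2$ reduces, via the same $L_{-1}$- and $L_0$-conjugation identities, to multiplication by a scalar determined by the eigenvalue of $\e^{2\pi\i L_0}$ (i.e.\ the monodromy $\e^{2\pi\i B}$), which is invertible, so each map is a linear bijection and hence the asserted natural isomorphism.
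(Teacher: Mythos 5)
Your route is the standard one, and it is essentially the proof behind the references the paper cites: the paper itself gives no proof of this proposition (it quotes \cite{HL95} and \cite{FHL93}), and your two maps coincide, up to scalar normalisation and choice of branch, with the operators $\sigma_{12}$ and $\sigma_{23}$ that the paper writes down later in Section~\ref{sec:mtc}. However, your final step fails in the stated generality. The proposition concerns \emph{arbitrary} $V$-modules, whereas the shift $B=\rho(W^3)-\rho(W^1)-\rho(W^2)$ and the claim that $\Omega^2$ is ``multiplication by the scalar $\e^{2\pi\i B}$'' only make sense when $W^1,W^2,W^3$ are irreducible. In general one gets $\Omega^2(\mathcal{Y})(w^1,x)w^2=\mathcal{Y}(w^1,\e^{2\pi\i}x)w^2$, which by $L_0$-conjugation equals $\e^{2\pi\i L_0}\mathcal{Y}(\e^{-2\pi\i L_0}w^1,x)\e^{-2\pi\i L_0}w^2$: an invertible modification of $\mathcal{Y}$, but not a scalar multiple of it. The clean repair — and the one used in \cite{HL95} — is to introduce the opposite-branch maps, $\Omega^-$ defined with $\e^{-\pi\i}x$ and $A^-$ defined with $\e^{+\pi\i}$ inside $(\cdot)^{L_0}$, and to check directly that these are two-sided inverses; for $A$ this additionally requires the natural identification $W''\cong W$ (valid because modules have finite-dimensional weight spaces), so that the double adjoint lands back in the original space.

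Second, be precise about where $A$ lands. With your pairing, $A(\mathcal{Y})(w^1,x)w'$ is paired against $w^2\in W^2$, so it lies in the (completion of) $(W^2)'$, and $A$ maps into $\V_{W^1\,(W^3)'}^{(W^2)'}$ — \emph{not} into $\V_{W^1\,(W^3)'}^{(W^1)'}$ as the proposition is printed. The printed target is in fact a typo (compare \cite{FHL93}, Proposition~5.5.2, and note that the $S_3$-symmetry corollary immediately following the proposition requires $(W^2)'$), but your phrase ``the appropriate contragredient module'' glosses over exactly the spot where this matters; your construction proves the corrected statement, and you should say which space you land in explicitly. A smaller omission: Definition~\ref{defi:intops} contains a lower-truncation condition besides the Jacobi identity and the translation axiom. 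It is immediate for $\Omega$, since $\e^{xL_{-1}}$ only introduces non-negative powers of $x$, but for $A$, where $x\mapsto x^{-1}$ reverses all the modes, verifying it genuinely uses that the weights of the modules are bounded below in each coset of $\Z$ with finite-dimensional graded pieces, so it cannot be subsumed under ``the two axioms.''
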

This immediately leads to the following result:
\begin{cor}[\cite{HL95}, Proposition~7.5, (5.5.8) in \cite{FHL93}]
Let $V$ be a \voa{} and let $W^1,W^2,W^3$ be $V$-modules. If we define
\begin{equation*}
\V_{W^1,W^2,W^3}:=\V_{W^1\,W^2}^{(W^3)'},
\end{equation*}
then for any permutation $\sigma\in S_3$,
\begin{equation*}
\V_{W^1,W^2,W^3}\cong\V_{W^{\sigma(1)},W^{\sigma(2)},W^{\sigma(3)}}.
\end{equation*}
\end{cor}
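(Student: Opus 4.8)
The plan is to deduce the full $S_3$-symmetry from the two elementary isomorphisms of Proposition~\ref{prop:intsym} by exploiting that the symmetric group $S_3$ is generated by the two transpositions $(1\,2)$ and $(2\,3)$. It therefore suffices to produce natural isomorphisms realising these two generators, expressed in the symmetric notation $\V_{W^1,W^2,W^3}=\V_{W^1\,W^2}^{(W^3)'}$, and then to compose them for each of the six elements of $S_3$.

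First I would rewrite the first isomorphism of Proposition~\ref{prop:intsym} (the skew-symmetry exchanging the two lower arguments) with $W^3$ replaced by $(W^3)'$. This gives
\begin{equation*}
\V_{W^1,W^2,W^3}=\V_{W^1\,W^2}^{(W^3)'}\cong\V_{W^2\,W^1}^{(W^3)'}=\V_{W^2,W^1,W^3},
\end{equation*}
which is exactly invariance under the transposition $(1\,2)$.

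Next I would invoke the second isomorphism of Proposition~\ref{prop:intsym}, the contragredient symmetry, which trades the upper index for the second lower argument upon passing to contragredients. Applying it to $\V_{W^1\,W^2}^{(W^3)'}$ and simplifying the doubly-dualised argument by the involutivity $W''\cong W$ of the contragredient functor (\cite{FHL93}, Proposition~5.3.1), I obtain
\begin{equation*}
\V_{W^1,W^2,W^3}=\V_{W^1\,W^2}^{(W^3)'}\cong\V_{W^1\,W^3}^{(W^2)'}=\V_{W^1,W^3,W^2},
\end{equation*}
i.e.\ invariance under the transposition $(2\,3)$.

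Finally, since every $\sigma\in S_3$ can be written as a word in $(1\,2)$ and $(2\,3)$, composing the corresponding isomorphisms yields $\V_{W^1,W^2,W^3}\cong\V_{W^{\sigma(1)},W^{\sigma(2)},W^{\sigma(3)}}$. The only point that needs care—the ``main obstacle,'' such as it is—is the bookkeeping in the second step: one must confirm that after substituting $(W^3)'$ for the upper index and using $W''\cong W$, the contragredient isomorphism genuinely lands in $\V_{W^1\,W^3}^{(W^2)'}$ rather than some other reindexing. Because the statement asserts only the \emph{existence} of isomorphisms and not a coherent $S_3$-action, no further coherence condition is required, and composing the two generator isomorphisms along any word representing $\sigma$ suffices.
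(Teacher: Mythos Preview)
Your argument is correct and is exactly the approach the paper intends: it states the corollary as an immediate consequence of Proposition~\ref{prop:intsym}, and your proof simply spells out that the two isomorphisms there realise the transpositions $(1\,2)$ and $(2\,3)$ (using $W''\cong W$ for the latter), which generate $S_3$.
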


\minisec{Fusion Product}

We introduce the concept of the fusion-product (or tensor-product) module.
\begin{defi}[Fusion Product, \cite{Li98}, Definition~3.1]
Let $W^1$ and $W^2$ be $V$-modules. A \emph{fusion product} (or \emph{tensor product}) for the ordered pair $(W^1,W^2)$ is a pair $(W^1\boxtimes_VW^2,\mathcal{F}(\cdot,x))$ consisting of a $V$-module $W^1\boxtimes_VW^2$ and a $V$-intertwining operator $\mathcal{F}(\cdot,x)$ of type $\binom{W^1\boxtimes_VW^2}{W^1\,W^2}$ satisfying the following universal property: for any $V$-module $U$ and any intertwining operator $\mathcal{Y}(\cdot,x)$ of type $\binom{U}{W^1\,W^2}$ there is a unique $V$-module homomorphism $\psi\colon W^1\boxtimes_VW^2\to U$ such that $\mathcal{Y}(\cdot,x)=\psi \mathcal{F}(\cdot,x)$.
\end{defi}
Viewing the intertwining operator $\mathcal{F}(\cdot,x)$ as a map $W^1\otimes_\C W^2\to (W^1\boxtimes_VW^2)\{x\}$ and $\mathcal{Y}(\cdot,x)\colon W^1\otimes_\C W^2\to U\{x\}$ we can express the above situation by the following commutative diagram:
\begin{equation*}
\begin{tikzcd}
W^1\otimes_\C W^2 \arrow{ddrr}{\mathcal{Y}(\cdot,x)} \arrow{rr}{\mathcal{F}(\cdot,x)}&&(W^1\boxtimes_VW^2)\{x\}\arrow[dashed]{dd}{\exists !\psi}\\
\\
&&U\{x\}
\end{tikzcd}
\end{equation*}
We will also write $\boxtimes$ instead of $\boxtimes_V$ if the \voa{} $V$ is clear from the context.

\begin{rem}[\cite{Li98}, Remark~3.2]
It follows directly from the definition via the universal property that if a fusion product exists, then it is unique up to isomorphism.
\end{rem}

The existence of a fusion product for two given $V$-modules is far from obvious. The theory of tensor products of modules of a \voa{} was first developed by Huang and Lepowsky in \cite{HL92,HL94,HL95,HL95b} and by Li in \cite{Li98}. In particular, it is shown in \cite{HL95,HL95b} and \cite{Li98} that if the \voa{} $V$ is rational, then a fusion product $W^1\boxtimes_V W^2$ for any two $V$-modules $W^1$ and $W^2$ exists.

In fact, for any $z\in\C^\times$ Huang and Lepowsky construct a tensor product, which they call $P(z)$-tensor product, denoted by $\boxtimes_{P(z)}$. For convenience, we choose $z=1$ and write $\boxtimes$ or $\boxtimes_V$ for $\boxtimes_{P(1)}$.
\begin{prop}[\cite{HL95}, Corollary~6.5, \cite{HL95b}, Corollary~13.12, \cite{Li98}, Theorem~3.20]
Let $V$ be a rational \voa{} and $W^1$ and $W^2$ two $V$-modules. Then a tensor product $W^1\boxtimes_V W^2$ exists.
\end{prop}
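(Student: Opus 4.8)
The plan is to use rationality to turn the existence of $W^1\boxtimes_V W^2$ into a representability statement, and then to reduce that statement to two inputs: the finite-dimensionality of all spaces of intertwining operators and the construction of a single tautological intertwining operator. Since $V$ is rational, the set $\Irr(V)=\{M^1,\dots,M^k\}$ is finite, every $V$-module is a finite direct sum of copies of the $M^l$, the module category is semisimple, and Schur's lemma (Proposition~\ref{prop:schur}) applies. An intertwining operator into a direct-sum module is the direct sum of its components, so for any module $U=\bigoplus_l M^l\otimes Z_l$ (with finite-dimensional multiplicity spaces $Z_l$) there is a natural isomorphism $\V_{W^1\,W^2}^{U}\cong\bigoplus_l\V_{W^1\,W^2}^{M^l}\otimes Z_l$. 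The first step is therefore to show that each fusion rule $\dim_\C\V_{W^1\,W^2}^{M^l}$ is finite; for a rational \voa{} this follows from the analysis of intertwining operators via Zhu's algebra and the associated Frenkel--Zhu bimodules, which bound $\V_{W^1\,W^2}^{M^l}$ by a space of maps between finite-dimensional $A(V)$-modules.

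Granting finiteness, I would define the candidate
\begin{equation*}
W^1\boxtimes_V W^2 := \bigoplus_{l=1}^{k} M^l\otimes\bigl(\V_{W^1\,W^2}^{M^l}\bigr)^{*},
\end{equation*}
a finite direct sum of irreducible modules, and equip it with the \emph{tautological} intertwining operator $\mathcal{F}(\cdot,x)$ whose $M^l$-component is the evaluation pairing: on the $l$-th summand it sends $w^1\otimes w^2$ to the element of $\bigl(M^l\otimes(\V_{W^1\,W^2}^{M^l})^{*}\bigr)\{x\}$ corresponding to the linear map $\mathcal{Y}\mapsto\mathcal{Y}(w^1,x)w^2$ for $\mathcal{Y}\in\V_{W^1\,W^2}^{M^l}$. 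Because each $\mathcal{Y}$ individually satisfies the Jacobi identity and translation axiom of Definition~\ref{defi:intops}, the simultaneous evaluation $\mathcal{F}$ satisfies them component-wise, hence is itself an intertwining operator of type $\binom{W^1\boxtimes_V W^2}{W^1\,W^2}$; and since the $l$-th multiplicity space is exactly the dual of $\V_{W^1\,W^2}^{M^l}$, no nonzero component of $W^1\boxtimes_V W^2$ is missed, so $\mathcal{F}$ generates the whole module.

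For the universal property, take any module $U=\bigoplus_l M^l\otimes Z_l$ and any $\mathcal{Y}$ of type $\binom{U}{W^1\,W^2}$. Using Schur's lemma and semisimplicity I would compute $\Hom_V(W^1\boxtimes_V W^2,\,U)\cong\bigoplus_l\bigl(\V_{W^1\,W^2}^{M^l}\bigr)^{**}\otimes Z_l\cong\bigoplus_l\V_{W^1\,W^2}^{M^l}\otimes Z_l$, which matches $\V_{W^1\,W^2}^{U}$ under the natural isomorphism above. One checks that, under this identification, $\psi\mapsto\psi\,\mathcal{F}(\cdot,x)$ is precisely the comparison map, so for the given $\mathcal{Y}$ there is a unique $\psi$ with $\mathcal{Y}(\cdot,x)=\psi\,\mathcal{F}(\cdot,x)$; uniqueness is forced by the fact that $\mathcal{F}$ generates $W^1\boxtimes_V W^2$. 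Uniqueness of the object up to isomorphism is then automatic from the universal property.

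I expect the genuine obstacle to lie not in this bookkeeping but in the two analytic facts it presupposes: the finiteness of the fusion rules and, more seriously, the very existence of enough intertwining operators together with the convergence of their products. This is the heart of the Huang--Lepowsky theory (and of Li's construction), whose intrinsic route builds $W^1\boxtimes_V W^2$ instead as the contragredient of a subspace of $(W^1\otimes_\C W^2)^{*}$ cut out by the $P(1)$-compatibility and grading-restriction conditions; the hard step there is proving that this subspace is an honest, finitely graded $V$-module, which ultimately rests on convergence results for products and iterates of intertwining operators. Securing those convergence and grading-restriction properties is, in my view, the crux of the whole argument, with the representability manipulation above being the comparatively routine part.
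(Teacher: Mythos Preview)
The paper does not prove this statement; it only cites \cite{HL95}, \cite{HL95b}, \cite{Li98}. Your representability argument is correct and is close to Li's approach in \cite{Li98}: once the fusion rules are finite, the direct sum $\bigoplus_l M^l\otimes(\V_{W^1\,W^2}^{M^l})^*$ together with its tautological intertwining operator satisfies the universal property via Schur's lemma and semisimplicity. The finiteness of the fusion rules for rational $V$ is indeed the substantive input, coming from the Frenkel--Zhu bimodule bound and the finite-dimensional semisimplicity of $A(V)$.

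One correction to your final paragraph: for the route you actually take, convergence of products of intertwining operators is not needed. That analysis enters only later, for associativity (Proposition~\ref{prop:fusionassoc}) and the braided tensor structure, not for the mere existence of $\boxtimes_V$. Nor is ``existence of enough intertwining operators'' an obstacle --- if every $\V_{W^1\,W^2}^{M^l}$ vanishes, the tensor product is zero and the universal property holds trivially. So the only real input in your argument is the finiteness of the fusion rules; the bookkeeping you describe is not a routine add-on but is in fact the whole proof once that is granted.
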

Also note that the fusion product will in general not have the vector space tensor product as its underlying vector space.

The fusion product is by definition closely related to the concept of intertwining operators:
\begin{prop}[\cite{Li98}, Corollary~3.4, \cite{HL95}, Corollary~4.10, \cite{HL95b}, Proposition~12.3]\label{prop:cor3.4}
Let $V$ be a \voa{} and $W^1,W^2,W^3$ be $V$-modules. Assume that the tensor product $W^1\boxtimes_V W^2$ exists. Then, naturally
\begin{equation*}
\Hom_V(W^1\boxtimes_V W^2,W^3)\cong\V_{W^1\,W^2}^{W^3}
\end{equation*}
(as vector spaces), where $\V_{W^1\,W^2}^{W^3}$ is the space of intertwining operators of type $\binom{W^3}{W^1\,W^2}$.
\end{prop}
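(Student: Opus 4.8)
The plan is to exploit the universal property built into the definition of the fusion product directly; indeed the asserted isomorphism is essentially a reformulation of that property. Let $\mathcal{F}(\cdot,x)$ denote the canonical intertwining operator of type $\binom{W^1\boxtimes_V W^2}{W^1\,W^2}$ accompanying $W^1\boxtimes_V W^2$. I would define the candidate map
\begin{equation*}
\Phi\colon\Hom_V(W^1\boxtimes_V W^2,W^3)\longrightarrow\V_{W^1\,W^2}^{W^3},\qquad \psi\longmapsto\psi\circ\mathcal{F}(\cdot,x),
\end{equation*}
sending a $V$-module homomorphism to its composition with $\mathcal{F}$.

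First I would check that $\Phi$ is well defined, i.e.\ that $\psi\circ\mathcal{F}(\cdot,x)$ really is an intertwining operator of type $\binom{W^3}{W^1\,W^2}$. Since $\psi$ is a $V$-module homomorphism, it satisfies $\psi\,Y_{W^1\boxtimes_V W^2}(v,x)=Y_{W^3}(v,x)\,\psi$ for all $v\in V$; applying $\psi$ to the Jacobi identity satisfied by $\mathcal{F}$ thus converts the target vertex operator $Y_{W^1\boxtimes_V W^2}$ into $Y_{W^3}$ (while leaving the two terms built from $Y_{W^1}$ and $Y_{W^2}$ untouched, as $\psi$ passes through on the left), yielding exactly the Jacobi identity required of an intertwining operator into $W^3$. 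The translation axiom is equally immediate: $\partial_x(\psi\mathcal{F}(w,x))=\psi\bigl(\mathcal{F}(L_{-1}w,x)\bigr)$ because $\psi$ is linear, so $\psi\circ\mathcal{F}$ again satisfies $\partial_x\mathcal{Y}(w,x)=\mathcal{Y}(L_{-1}w,x)$. Linearity of $\Phi$ in $\psi$ is clear.

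The bijectivity of $\Phi$ is then nothing but the universal property. For surjectivity, take an arbitrary $\mathcal{Y}\in\V_{W^1\,W^2}^{W^3}$ and apply the universal property with $U=W^3$: there is a $V$-module homomorphism $\psi\colon W^1\boxtimes_V W^2\to W^3$ with $\mathcal{Y}=\psi\circ\mathcal{F}(\cdot,x)=\Phi(\psi)$. The uniqueness clause of the universal property simultaneously delivers injectivity: if $\Phi(\psi)=\Phi(\psi')$, then $\psi$ and $\psi'$ both factor the same intertwining operator through $\mathcal{F}$, whence $\psi=\psi'$. Hence $\Phi$ is a linear isomorphism.

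I do not expect any genuine obstacle here, since the statement is a direct transcription of the defining universal property of $\boxtimes_V$; the only point requiring a (routine) verification is the well-definedness step, and even there the computation reduces to the fact that composing an intertwining operator with a module homomorphism on the target preserves both the Jacobi identity and the translation axiom. If one wishes to record the naturality asserted in the statement, one checks that $\Phi$ is compatible with post-composition by $V$-module homomorphisms $W^3\to\widetilde{W}^3$ on both sides, which follows formally from the associativity of composition.
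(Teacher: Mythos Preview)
Your proposal is correct and matches the paper's approach exactly: the paper does not give its own proof but simply cites the references and describes the isomorphism as $\varphi\mapsto\varphi\circ\mathcal{F}(\cdot,x)$, which is precisely your map $\Phi$. Your verification of well-definedness and your derivation of bijectivity from the universal property are the standard argument behind those cited results.
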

The isomorphism in the above proposition can be explicitly described as follows: let $(W^1\boxtimes_VW^2,\mathcal{F}(\cdot,x))$ be the fusion product for $(W^1,W^2)$ and $\varphi\in\Hom_V(W^1\boxtimes_V W^2,W^3)$. Then $\varphi\circ\mathcal{F}(\cdot,x)$ is an intertwining operator of type $\binom{W^3}{W^1\,W^2}$.

\begin{prop}[\cite{Li98}, Remark~3.5]\label{prop:rem3.5a}
Let $V$ be a \voa{} and $W^1,W^2$ be $V$-modules. Assume that the tensor product $W^1\boxtimes_V W^2$ exists. Then also $W^2\boxtimes_V W^1$ exists and
\begin{equation*}
W^1\boxtimes_V W^2\cong W^2\boxtimes_V W^1
\end{equation*}
(as $V$-modules).
\end{prop}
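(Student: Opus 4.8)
The plan is to characterise the fusion product as a representing object and to transport the skew-symmetry of intertwining operators through this characterisation. By Proposition~\ref{prop:cor3.4} the fusion product $(W^1\boxtimes_V W^2,\mathcal{F})$ is governed by the isomorphism
\begin{equation*}
\Hom_V(W^1\boxtimes_V W^2,W^3)\cong\V_{W^1\,W^2}^{W^3},\qquad\varphi\mapsto\varphi\circ\mathcal{F},
\end{equation*}
which is natural in $W^3$: post-composing $\varphi$ with a $V$-module homomorphism $g\colon W^3\to\widetilde{W}^3$ corresponds on both sides to post-composing with $g$. On the other hand, Proposition~\ref{prop:intsym} provides a natural isomorphism $\Omega\colon\V_{W^1\,W^2}^{W^3}\to\V_{W^2\,W^1}^{W^3}$, the skew-symmetry map, explicitly of the form
\begin{equation*}
\Omega(\mathcal{Y})(w_2,x)w_1=\e^{xL_{-1}}\mathcal{Y}(w_1,\e^{\pi\i}x)w_2
\end{equation*}
for $w_1\in W^1$, $w_2\in W^2$, in analogy with the skew-symmetry \eqref{eq:skew} on $V$ itself. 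Composing these two isomorphisms exhibits $W^1\boxtimes_V W^2$ as a representing object for the functor $W^3\mapsto\V_{W^2\,W^1}^{W^3}$, which is precisely the functor whose representing object is $W^2\boxtimes_V W^1$; this will yield both existence and the desired isomorphism.

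To make this concrete, set $\widetilde{\mathcal{F}}:=\Omega(\mathcal{F})$, an intertwining operator of type $\binom{W^1\boxtimes_V W^2}{W^2\,W^1}$. I claim $(W^1\boxtimes_V W^2,\widetilde{\mathcal{F}})$ satisfies the universal property defining the fusion product of the ordered pair $(W^2,W^1)$. Indeed, given a $V$-module $U$ and an intertwining operator $\mathcal{Y}$ of type $\binom{U}{W^2\,W^1}$, apply the inverse skew-symmetry to obtain $\Omega^{-1}(\mathcal{Y})$ of type $\binom{U}{W^1\,W^2}$. By the universal property of $(W^1\boxtimes_V W^2,\mathcal{F})$ there is a unique $V$-module homomorphism $\psi\colon W^1\boxtimes_V W^2\to U$ with $\Omega^{-1}(\mathcal{Y})=\psi\circ\mathcal{F}$. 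Applying $\Omega$ and invoking its naturality under post-composition then gives
\begin{equation*}
\mathcal{Y}=\Omega(\psi\circ\mathcal{F})=\psi\circ\Omega(\mathcal{F})=\psi\circ\widetilde{\mathcal{F}},
\end{equation*}
and uniqueness of $\psi$ follows from the uniqueness clause in the universal property of $(W^1\boxtimes_V W^2,\mathcal{F})$ after applying $\Omega^{-1}$ once more. Hence $W^2\boxtimes_V W^1$ exists, and by the uniqueness of fusion products up to isomorphism (\cite{Li98}, Remark~3.2) we conclude $W^2\boxtimes_V W^1\cong W^1\boxtimes_V W^2$.

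The one genuine point to verify is the naturality identity $\psi\circ\Omega(\mathcal{F})=\Omega(\psi\circ\mathcal{F})$, i.e.\ that $\Omega$ commutes with post-composition by $V$-module homomorphisms. This is immediate from the explicit formula for $\Omega$: a $V$-module homomorphism $\psi$ intertwines all modes $v_n$, in particular $L_{-1}=\omega_0$, so it commutes with the operator $\e^{xL_{-1}}$ and with the relabelling $x\mapsto\e^{\pi\i}x$ of the formal variable, both of which act in the target module. Thus the substantive work reduces to the already-established fact (Proposition~\ref{prop:intsym}) that $\Omega$ is a well-defined isomorphism of spaces of intertwining operators. I expect the only delicate issue to be the correct handling of the branch $\e^{\pi\i}x$ of the complex powers permitted in our intertwining operators, but this is exactly what Proposition~\ref{prop:intsym} takes care of, so no further calculation is needed here.
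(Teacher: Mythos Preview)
Your argument is correct and is precisely the standard one: transport the universal intertwining operator $\mathcal{F}$ through the skew-symmetry isomorphism $\Omega$ of Proposition~\ref{prop:intsym}, verify the universal property for the pair $(W^2,W^1)$ on the same underlying module, and conclude by uniqueness of fusion products. The paper does not give its own proof of this statement but cites \cite{Li98}, Remark~3.5, where exactly this reasoning is indicated; so your proof is essentially the same as the referenced one. One small remark: the paper's conventions (see Definition~\ref{defi:OmegaDef} and the $\sigma_{12}$-operator in Section~\ref{sec:mtc}) use $\e^{-\pi\i}x$ rather than $\e^{\pi\i}x$ in the skew-symmetry formula, but either branch yields an isomorphism of intertwining spaces and does not affect your argument.
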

Note that in the above proposition $W^1\boxtimes_V W^2$ and $W^2\boxtimes_V W^1$ are not isomorphic as fusion products but only as $V$-modules.

The following unitality property of the adjoint module holds:
\begin{prop}[\cite{Li98}, Remark~3.5]\label{prop:rem3.5b}
Let $V$ be a \voa{} and $W$ a $V$-module. Then the tensor product $V\boxtimes_V W$ exists and
\begin{equation*}
V\boxtimes_V W\cong W
\end{equation*}
(as $V$-modules).
\end{prop}
More precisely, if $(W,Y_W(\cdot,x))$ is a $V$-module, then $(W,Y_W(\cdot,x))$ is a tensor product for the pair $(V,W)$. Propositions \ref{prop:cor3.4} and \ref{prop:rem3.5b} imply that $\V_{V\,W^1}^{W^2}\cong\Hom_{V}(W^1,W^2)$ for any $V$-modules $W^1$ and $W^2$.

The associativity of the fusion product has been established under certain assumptions (in particular if $V$ is rational, $C_2$-cofinite and of CFT-type) in \cite{Hua95b,Hua96,Hua96b,Hua05} (see also \cite{DLM97b}):
\begin{prop}\label{prop:fusionassoc}
Let $V$ be a rational, $C_2$-cofinite \voa{} of CFT-type and let $W^1,W^2,W^3$ be three $V$-modules. Then
\begin{equation*}
W^1\boxtimes_V(W^2\boxtimes_VW^3)\cong(W^1\boxtimes_VW^2)\boxtimes_VW^3
\end{equation*}
(as $V$-modules).
\end{prop}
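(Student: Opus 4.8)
The plan is to deduce associativity from the universal property by a Yoneda-style argument. Since $V$ is rational, all the fusion products involved exist (and by Proposition~\ref{prop:fusionassoc}'s hypotheses the category is well-behaved), so it suffices to produce a natural isomorphism of the representable functors $U\mapsto\Hom_V\!\big(W^1\boxtimes_V(W^2\boxtimes_VW^3),U\big)$ and $U\mapsto\Hom_V\!\big((W^1\boxtimes_VW^2)\boxtimes_VW^3,U\big)$ and then invoke that an isomorphism of representable functors comes from a (unique) isomorphism of the representing objects. By Proposition~\ref{prop:cor3.4} the first functor is naturally isomorphic to the space of intertwining operators of type $\binom{U}{W^1\,(W^2\boxtimes_VW^3)}$, and the second to those of type $\binom{U}{(W^1\boxtimes_VW^2)\,W^3}$. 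The problem is thereby reduced to a natural identification of these two spaces of intertwining operators.

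To carry out that identification I would pass through the associativity of intertwining operators. Writing $M=W^2\boxtimes_VW^3$ and $N=W^1\boxtimes_VW^2$, an intertwining operator of type $\binom{U}{W^1\,M}$ paired with the canonical one of type $\binom{M}{W^2\,W^3}$ yields, for $u'\in U'$, the \emph{product} correlation function $\langle u',\mathcal{Y}_1(w_1,z_1)\mathcal{Y}_2(w_2,z_2)w_3\rangle$, which converges absolutely for $|z_1|>|z_2|>0$; dually, an operator of type $\binom{U}{N\,W^3}$ paired with one of type $\binom{N}{W^1\,W^2}$ yields the \emph{iterate} $\langle u',\mathcal{Y}_3(\mathcal{Y}_4(w_1,z_0)w_2,z_2)w_3\rangle$ with $z_0=z_1-z_2$, convergent for $|z_2|>|z_0|>0$. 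The structural claim is that, summing over the finitely many isomorphism classes of intermediate modules, these two families of multivalued analytic functions span the same space on the configuration space $\{(z_1,z_2): z_1,z_2,z_1-z_2\neq0\}$ and are matched by analytic continuation (crossing symmetry); transporting this bijection back through Proposition~\ref{prop:cor3.4} gives the sought natural isomorphism of Hom-functors.

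The main obstacle is exactly this convergence-and-continuation step, and it is here that $C_2$-cofiniteness (together with rationality and CFT-type) is indispensable. Following Huang, one uses $C_2$-cofiniteness to reduce matrix coefficients of products and iterates modulo $C_2(V)$, thereby deriving finite recursions that exhibit the formal correlation functions as solutions of a system of ordinary differential equations in $z_1,z_2$ with \emph{regular} singular points along the diagonals and at $0,\infty$. Regularity of these singular points guarantees both that each formal series converges to a genuine analytic function on its domain and that it extends to a multivalued analytic function with controlled monodromy, which is what permits one to expand a product as an iterate on the overlap region and hence match the two spaces. Assembling these analytic facts into the associativity isomorphism and verifying that it is a $V$-module map, natural in $W^1,W^2,W^3$, completes the argument; the full technical details are worked out in \cite{Hua95b,Hua96,Hua96b,Hua05}.
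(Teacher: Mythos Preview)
The paper does not actually prove this proposition; it simply records it as a result from the literature, citing \cite{Hua95b,Hua96,Hua96b,Hua05} (and \cite{DLM97b}). Your sketch is a faithful outline of Huang's approach from precisely those references---reducing to an isomorphism of intertwining-operator spaces via the universal property (Proposition~\ref{prop:cor3.4}), establishing the product/iterate correspondence through analytic continuation, and invoking $C_2$-cofiniteness to obtain the requisite differential equations with regular singular points---so your proposal and the paper's (cited) proof coincide.
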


\minisec{Fusion Algebra}
In the following assume that the \voa{} $V$ is rational. Then every $V$-module is completely reducible and there is a finite set $\Irr(V)$ of isomorphism classes of irreducible $V$-modules. We consider the fusion product of two irreducible modules $X,Y\in\Irr(V)$ and decompose it again as a direct sum of irreducible modules, i.e
\begin{equation*}
X\boxtimes_V Y\cong\bigoplus_{W\in \Irr(V)}\underbrace{(W\oplus\ldots\oplus W)}_{N_{X,Y}^W\text{ times}}=\bigoplus_{W\in \Irr(V)}N_{X,Y}^W W
\end{equation*}
for $X,Y\in \Irr(V)$ where the numbers $N_{X,Y}^W\in\N$ depend only on the isomorphism classes and are called \emph{fusion rules}.

The fusion rules are closely related to the dimensions of the spaces of intertwining operators. Indeed, by Proposition~\ref{prop:cor3.4} and Schur's lemma (Corollary~\ref{cor:schur}),
\begin{align*}
\V_{X\,Y}^{W}&\cong\Hom_V(X\boxtimes_V Y,W)\cong \Hom_V(\bigoplus_{U\in \Irr(V)}N_{X,Y}^U U,W)\\
&\cong\prod_{U\in \Irr(V)}(\underbrace{\Hom_V(U,W)}_{\cong\delta_{U,W}\C})^{N_{X,Y}^U}\cong\C^{N_{X,Y}^W}
\end{align*}
(as vector spaces) and hence
\begin{equation*}
\dim_\C(\V_{X\,Y}^{W})=N_{X,Y}^W
\end{equation*}
for any $X,Y,W\in \Irr(V)$.

We then define the \emph{fusion algebra} (or \emph{Verlinde algebra}) $\V(V)$ associated with $V$ on the finite-dimensional vector space $\V(V):=\bigoplus_{W\in \Irr(V)}\C W$, spanned by a formal basis $\Irr(V)$, by equipping it with an algebra product defined by linear continuation of the above fusion rules.

By Proposition~\ref{prop:rem3.5a} we know that the fusion rules are symmetric, i.e.\ $N_{X,Y}^W=N_{Y,X}^W$ for all $X,Y,W\in \Irr(V)$. This also follows directly from Proposition~\ref{prop:intsym} about spaces of intertwining operators together with the identity $\dim_\C(\V_{X\,Y}^{W})=N_{X,Y}^W$. Finally, by Proposition~\ref{prop:fusionassoc} we know that if $V$ is rational, $C_2$-cofinite and of CFT-type, then the fusion product is associative.

We collect the above observations in the following omnibus theorem:
\begin{thm}\label{thm:fusalg}
Let $V$ be a rational, $C_2$-cofinite \voa{} of CFT-type. Then the associated fusion algebra $\V(V)$ is a finite-dimensional, commutative, associative, unital $\C$-algebra with unit $V$.
\end{thm}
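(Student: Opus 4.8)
The plan is to verify the four asserted properties in turn, observing that most of them are direct repackagings of the propositions about the fusion product $\boxtimes_V$ collected above, with associativity being the only step that requires genuine bookkeeping. First I would record that the algebra is well-defined and finite-dimensional: since $V$ is rational, the set $\Irr(V)$ is finite, so $\V(V)=\bigoplus_{W\in\Irr(V)}\C W$ is a finite-dimensional vector space; and rationality also guarantees that $W^1\boxtimes_V W^2$ exists for any two $V$-modules and decomposes as a finite direct sum of irreducibles, so the fusion rules $N_{X,Y}^W\in\N$ are well-defined and the product on $\V(V)$ is a genuine $\C$-bilinear operation.

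Commutativity and unitality hold on basis elements and then extend by bilinearity. Commutativity is the identity $N_{X,Y}^W=N_{Y,X}^W$, which is immediate from $X\boxtimes_V Y\cong Y\boxtimes_V X$ (Proposition~\ref{prop:rem3.5a}), since isomorphic modules have the same irreducible multiplicities. For the unit, Proposition~\ref{prop:rem3.5b} gives $V\boxtimes_V W\cong W$, hence $N_{V,W}^U=\delta_{U,W}$, so the basis element $V$ acts as a left identity on every basis element; together with commutativity this makes $V$ a two-sided unit of $\V(V)$.

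The main work is associativity, where the module isomorphism of Proposition~\ref{prop:fusionassoc} must be converted into an identity among structure constants. Expanding the products bilinearly over $\Irr(V)$ gives
\begin{align*}
(X\cdot Y)\cdot Z&=\sum_{W}\Big(\sum_{U}N_{X,Y}^U N_{U,Z}^W\Big)W,\\
X\cdot(Y\cdot Z)&=\sum_{W}\Big(\sum_{U}N_{Y,Z}^U N_{X,U}^W\Big)W,
\end{align*}
so associativity is equivalent to $\sum_U N_{X,Y}^U N_{U,Z}^W=\sum_U N_{Y,Z}^U N_{X,U}^W$ for all $X,Y,Z,W\in\Irr(V)$. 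I would interpret each side as a module multiplicity: using that $\boxtimes_V$ distributes over finite direct sums, the left-hand sum is exactly the multiplicity of $W$ in $(X\boxtimes_V Y)\boxtimes_V Z$ and the right-hand sum the multiplicity of $W$ in $X\boxtimes_V(Y\boxtimes_V Z)$. Since $V$ is rational, $C_2$-cofinite and of CFT-type, Proposition~\ref{prop:fusionassoc} gives $(X\boxtimes_V Y)\boxtimes_V Z\cong X\boxtimes_V(Y\boxtimes_V Z)$, and isomorphic modules have equal irreducible multiplicities, yielding the identity.

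I expect the only delicate point to be this distributivity of $\boxtimes_V$ over direct sums, which is used to read off the two sums as multiplicities and does not appear as a numbered proposition above. I would therefore establish it explicitly, e.g.\ from the natural isomorphism $\Hom_V(A\boxtimes_V C,W)\cong\V_{A\,C}^{W}$ (Proposition~\ref{prop:cor3.4}) together with the additivity $\V_{(A\oplus B)\,C}^{W}\cong\V_{A\,C}^{W}\oplus\V_{B\,C}^{W}$ of intertwining operators in the first argument; combined with complete reducibility these give $(A\oplus B)\boxtimes_V C\cong(A\boxtimes_V C)\oplus(B\boxtimes_V C)$. Everything else in the theorem is then a direct transcription of the cited propositions.
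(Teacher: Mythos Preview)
Your proposal is correct and follows essentially the same approach as the paper: the paper presents this as an ``omnibus theorem'' with no formal proof, simply pointing to Proposition~\ref{prop:rem3.5a} for commutativity, Proposition~\ref{prop:fusionassoc} for associativity, and Proposition~\ref{prop:rem3.5b} for the unit. Your version supplies more detail, in particular the translation of the module-level associativity isomorphism into the structure-constant identity and the remark on distributivity of $\boxtimes_V$ over direct sums, which the paper leaves implicit.
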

The fact that $V$ is a unit is exactly the statement of Proposition~\ref{prop:rem3.5b}.

\section{Tensor Product}\label{sec:tensor}

In the previous section we studied the fusion product (or tensor product) of two modules for a fixed \voa{}, which we denoted by $\boxtimes_V$. A much more elementary concept is that of the tensor product of two, or in general finitely many, possibly distinct \voa{}s (see \cite{FHL93}, Section~2.5). Namely, given two \voa{}s $V^{(1)}$ and $V^{(2)}$, it is possible to define a \voa{} structure on the tensor product of vector spaces $V^{(1)}\otimes V^{(2)}:=V^{(1)}\otimes_\C V^{(2)}$ by setting
\begin{equation*}
Y_{V^{(1)}\otimes V^{(2)}}(v_1\otimes v_2,x):=Y_{V^{(1)}}(v_1,x)\otimes Y_{V^{(2)}}(v_2,x)
\end{equation*}
for $v_1\in V^{(1)}$ and $v_2\in V^{(2)}$,
\begin{equation*}
\vac_{V^{(1)}\otimes V^{(2)}}:=\vac_{V^{(1)}}\otimes\vac_{V^{(2)}}
\end{equation*}
and defining the Virasoro vector
\begin{equation*}
\omega_{V^{(1)}\otimes V^{(2)}}:=\omega_{V^{(1)}}\otimes\vac_{V^{(2)}}+\vac_{V^{(1)}}\otimes\omega_{V^{(2)}}.
\end{equation*}
Clearly, the central charges add, i.e.\
\begin{equation*}
c_{V^{(1)}\otimes V^{(2)}}=c_{V^{(1)}}+c_{V^{(2)}}.
\end{equation*}
It follows directly from the definition of the Virasoro vector that
\begin{equation*}
\wt(v_1\otimes v_2)=\wt(v_1)+\wt(v_2)
\end{equation*}
for two homogeneous vectors $v_1\in V^{(1)},v_2\in V^{(2)}$.

Furthermore, the following properties hold:
\begin{prop}[\cite{FHL93}, Corollary~4.7.3]
Given two \voa{}s $V^{(1)}$ and $V^{(2)}$, the tensor-product \voa{} $V^{(1)}\otimes V^{(2)}$ is simple if and only if both $V^{(1)}$ and $V^{(2)}$ are simple.
\end{prop}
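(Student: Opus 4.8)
The plan is to reduce the assertion to a statement about irreducibility, using the equivalence of simplicity and irreducibility for \voa{}s established earlier, and then to prove the two implications separately. The forward direction (simplicity of the tensor product forces simplicity of each factor) is elementary and I would argue by contraposition. Suppose, say, $V^{(1)}$ has a non-trivial ideal $I$, so $\{0\}\neq I\subsetneq V^{(1)}$. I claim $I\otimes V^{(2)}$ is a non-trivial ideal of $V^{(1)}\otimes V^{(2)}$: it is non-zero and proper because $V^{(2)}\neq\{0\}$ and tensoring over $\C$ preserves proper inclusions. To see it is an ideal it suffices by bilinearity to check $Y(a_1\otimes a_2,x)(b_1\otimes b_2)=(Y_{V^{(1)}}(a_1,x)b_1)\otimes(Y_{V^{(2)}}(a_2,x)b_2)$ for $b_1\in I$; since $I$ is an ideal the first factor lies in $I((x))$, so the whole expression lies in $(I\otimes V^{(2)})((x))$, and for \voa{}s the left-ideal condition already implies the ideal condition. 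The symmetric argument handles $V^{(2)}$.

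For the converse — both factors simple implies the tensor product simple — I would work with a non-zero ideal (equivalently submodule) $W\subseteq V^{(1)}\otimes V^{(2)}$ and show $W=V^{(1)}\otimes V^{(2)}$. The key computation is that $(a\otimes\vac)_n=a_n\otimes\id$ and $(\vac\otimes b)_n=\id\otimes b_n$, so $W$ is invariant under the associative algebras $R_1\otimes\id$ and $\id\otimes R_2$, where $R_i\subseteq\End_\C(V^{(i)})$ is generated by all modes of $V^{(i)}$. Because an $R_i$-invariant subspace is precisely an ideal of $V^{(i)}$ (using that $L_0\in R_i$ forces such a subspace to be graded and that left-ideal equals right-ideal), simplicity of $V^{(i)}$ says exactly that $V^{(i)}$ is a simple $R_i$-module; moreover a linear map commuting with all modes is a \voa{}-module endomorphism, so by Schur's lemma (Proposition~\ref{prop:schur}) the commutant of $R_i$ is $\C$. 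Jacobson's density theorem then applies: for any $\C$-linearly independent $u_1,\ldots,u_r\in V^{(i)}$ and arbitrary targets there is an element of $R_i$ realising the prescribed assignment.

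With density in hand the conclusion follows by a short bootstrap. Writing a non-zero $w\in W$ as $\sum_{k=1}^r u_k\otimes v_k$ with the $u_k$ linearly independent and $v_1\neq 0$, I choose $T_1\in R_1$ with $T_1u_1=\vac$ and $T_1u_k=0$ for $k\geq2$, so that $(T_1\otimes\id)w=\vac\otimes v_1\in W$. Applying $\id\otimes R_2$ to $\vac\otimes v_1$ and using density in $V^{(2)}$ (with the single non-zero vector $v_1$) gives $\vac\otimes V^{(2)}\subseteq W$, and then applying $R_1\otimes\id$ and using density in $V^{(1)}$ (with the single non-zero vector $\vac$) yields $u\otimes v\in W$ for all $u,v$. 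Since simple tensors span, $W=V^{(1)}\otimes V^{(2)}$, so the tensor product is irreducible, hence simple.

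I expect the main obstacle to be the reverse direction, and within it the careful verification of the hypotheses of the density theorem: that ideal-invariance coincides with the $R_i$-module structure (including the gradedness point) and that the commutant is exactly $\C$ via Schur's lemma. Once these are in place the density theorem does the heavy lifting and the remaining bootstrap is routine.
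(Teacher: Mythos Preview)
The paper does not supply a proof of this proposition; it is simply cited from \cite{FHL93} without argument. Your proof is correct and is essentially the standard density argument underlying the treatment in \cite{FHL93}, Section~4.7: the forward direction is the evident contraposition via $I\otimes V^{(2)}$, and for the converse you correctly identify the commutant of each mode algebra $R_i$ with $\C$ via Schur's lemma (Proposition~\ref{prop:schur}), so that Jacobson density lets you strip an arbitrary non-zero element of a submodule down to a pure tensor $\vac\otimes v_1$, after which simplicity of the factors finishes the job.
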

By definition of the Virasoro vector:
\begin{prop}
Let $V^{(1)}$ and $V^{(2)}$ be two \voa{}s of CFT-type. Then also the tensor-product \voa{} $V^{(1)}\otimes V^{(2)}$ is of CFT-type.
\end{prop}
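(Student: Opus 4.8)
The plan is to verify the two defining conditions of CFT-type directly from the weight-additivity formula $\wt(v_1\otimes v_2)=\wt(v_1)+\wt(v_2)$ established above. Since $V^{(1)}$ and $V^{(2)}$ are of CFT-type, we may write $V^{(1)}=\bigoplus_{m\in\N}V^{(1)}_m$ and $V^{(2)}=\bigoplus_{n\in\N}V^{(2)}_n$, so that the weight-$k$ homogeneous component of the tensor product is the convolution
\begin{equation*}
(V^{(1)}\otimes V^{(2)})_k=\bigoplus_{m+n=k}V^{(1)}_m\otimes V^{(2)}_n.
\end{equation*}

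First I would check that the grading is non-negative. Each summand $V^{(1)}_m\otimes V^{(2)}_n$ on the right is non-zero only for indices $m,n\in\N$, so any non-zero homogeneous vector has weight $k=m+n\geq0$; hence $(V^{(1)}\otimes V^{(2)})_k=\{0\}$ whenever $k<0$. Next I would compute the weight-zero space. The only decomposition $0=m+n$ with $m,n\in\N$ is $m=n=0$, so $(V^{(1)}\otimes V^{(2)})_0=V^{(1)}_0\otimes V^{(2)}_0$. Using $V^{(1)}_0=\C\vac_{V^{(1)}}$ and $V^{(2)}_0=\C\vac_{V^{(2)}}$ with $\dim_\C(V^{(1)}_0)=\dim_\C(V^{(2)}_0)=1$, this forces $\dim_\C((V^{(1)}\otimes V^{(2)})_0)=1$ and in fact $(V^{(1)}\otimes V^{(2)})_0=\C(\vac_{V^{(1)}}\otimes\vac_{V^{(2)}})=\C\vac_{V^{(1)}\otimes V^{(2)}}$, as required.

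There is essentially no obstacle here: the statement is an immediate bookkeeping consequence of the additivity of weights, which in turn is dictated by the choice $\omega_{V^{(1)}\otimes V^{(2)}}=\omega_{V^{(1)}}\otimes\vac_{V^{(2)}}+\vac_{V^{(1)}}\otimes\omega_{V^{(2)}}$ of the Virasoro vector. The only point worth recording is that the grading of the tensor-product \voa{} is genuinely the convolution of the two factor gradings, so that non-negativity of both factors passes to the product and the one-dimensional weight-zero spaces tensor together to a one-dimensional space spanned by the vacuum.
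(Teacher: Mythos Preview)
Your proof is correct and takes essentially the same approach as the paper: both use the convolution formula $(V^{(1)}\otimes V^{(2)})_k=\bigoplus_{m+n=k}V^{(1)}_m\otimes V^{(2)}_n$ to read off non-negativity and the one-dimensionality of the weight-zero space. The paper's version is even terser, simply displaying this formula for $k=0$ and declaring the result immediate.
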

\begin{proof}
Note that for any two \voa{}s $V^{(1)}$ and $V^{(2)}$,
\begin{equation*}
(V^{(1)}\otimes V^{(2)})_0=\ldots\oplus(V^{(1)}_{-1}\otimes_\C V^{(2)}_{1})\oplus(V^{(1)}_{0}\otimes_\C V^{(2)}_{0})\oplus(V^{(1)}_{1}\otimes_\C V^{(2)}_{-1})\oplus\ldots.
\end{equation*}
The statement follows immediately.
\end{proof}

In the same way we defined the tensor product for \voa{}s, we can also define the tensor product for modules (see \cite{FHL93}, Section~4.6), which is well-defined under a certain assumption on the grading of the original modules. This assumption is in particular fulfilled if the original modules are irreducible. Given two modules $W^{(1)}$ and $W^{(2)}$ for the \voa{}s $V^{(1)}$ and $V^{(2)}$, respectively, the tensor-product module $W^{(1)}\otimes W^{(2)}$ is indeed a module for the tensor-product \voa{} $V^{(1)}\otimes V^{(2)}$ (see \cite{FHL93}, Proposition~4.6.1). Again
\begin{equation*}
\wt(w_1\otimes w_2)=\wt(w_1)+\wt(w_2)
\end{equation*}
for two homogeneous vectors $w_1\in W^{(1)}$, $w_2\in W^{(2)}$. This means in particular that
\begin{equation*}
\rho(W^{(1)}\otimes W^{(2)})=\rho(W^{(1)})+\rho(W^{(2)})
\end{equation*}
for irreducible modules $W^{(1)}$ and $W^{(2)}$. Note that $W^{(1)}\otimes W^{(2)}$ is also irreducible, as the following proposition shows:
\begin{prop}[\cite{FHL93}, Section~4.7, \cite{DMZ94}, Proposition~2.7]
Let $V^{(1)}$ and $V^{(2)}$ be two \voa{}s. The irreducible $V^{(1)}\otimes V^{(2)}$-modules are up to isomorphism exactly tensor-product modules of the form $W^{(1)}\otimes W^{(2)}$ where $W^{(1)}$, $W^{(2)}$ are irreducible modules for $V^{(1)}$, $V^{(2)}$, respectively.

Furthermore, the tensor-product \voa{} $V^{(1)}\otimes V^{(2)}$ is rational if and only if both $V^{(1)}$ and $V^{(2)}$ are rational.
\end{prop}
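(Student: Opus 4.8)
The plan is to prove the two assertions separately, in each case passing between $V:=V^{(1)}\otimes V^{(2)}$ and its factors through the observation that on any $V$-module the modes of $V^{(1)}\otimes\vac$ and those of $\vac\otimes V^{(2)}$ form two commuting families of operators. First I would show that $W^{(1)}\otimes W^{(2)}$ is irreducible whenever $W^{(1)}$, $W^{(2)}$ are irreducible. Let $A^{(i)}\subseteq\End_\C(W^{(i)})$ be the associative algebra generated by all modes $v_n$, $v\in V^{(i)}$, $n\in\Z$. A subspace of $W^{(i)}$ is $A^{(i)}$-stable precisely when it is a $V^{(i)}$-submodule, so $W^{(i)}$ is a simple $A^{(i)}$-module and, by Schur's lemma (Proposition~\ref{prop:schur}), $\End_{A^{(i)}}(W^{(i)})=\C$; the Jacobson density theorem then gives that $A^{(i)}$ acts densely on $W^{(i)}$. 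Given a nonzero submodule $M\subseteq W^{(1)}\otimes W^{(2)}$, I choose $0\neq u=\sum_{i=1}^{k}w_1^{(i)}\otimes w_2^{(i)}\in M$ with $k$ minimal, so that $\{w_1^{(i)}\}_i$ and $\{w_2^{(i)}\}_i$ are each linearly independent. Since $M$ is stable under $A^{(1)}\otimes\id$, density lets me pick $a\in A^{(1)}$ with $aw_1^{(1)}=w_1^{(1)}$ and $aw_1^{(i)}=0$ for $i\geq2$, giving $w_1^{(1)}\otimes w_2^{(1)}=(a\otimes\id)u\in M$. Applying $A^{(1)}\otimes A^{(2)}$ and using that each $A^{(i)}$ acts transitively on $W^{(i)}\setminus\{0\}$ yields $M=W^{(1)}\otimes W^{(2)}$.

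For the converse, let $M$ be an irreducible $V$-module. Regarding $M$ as a $V^{(1)}$-module, I would produce a nonzero irreducible $V^{(1)}$-submodule $W^{(1)}$ and form the multiplicity space $W^{(2)}:=\Hom_{V^{(1)}}(W^{(1)},M)$, which acquires a $V^{(2)}$-module structure from the commuting action of $\vac\otimes V^{(2)}$. The evaluation map $W^{(1)}\otimes W^{(2)}\to M$ is a nonzero $V$-homomorphism, hence surjective; cutting $W^{(2)}$ down to an irreducible $V^{(2)}$-summand and invoking the first part to see that the source is then irreducible forces an isomorphism $M\cong W^{(1)}\otimes W^{(2)}$ with both factors irreducible. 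I expect this direction to be the main obstacle: the existence of an irreducible $V^{(1)}$-submodule and the verification that $W^{(2)}$ is an honest $V^{(2)}$-module with the correct grading constitute the double-commutant bookkeeping that is genuinely delicate in the absence of semisimplicity, and it simplifies considerably once $V^{(1)}$ is rational.

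It remains to prove that $V$ is rational if and only if both factors are. For the ``if'' direction, I assume $V^{(1)}$ and $V^{(2)}$ rational and let $M$ be an admissible $V$-module. Complete reducibility over $V^{(1)}$ gives $M\cong\bigoplus_{W^{(1)}\in\Irr(V^{(1)})}W^{(1)}\otimes C_{W^{(1)}}$ with multiplicity spaces $C_{W^{(1)}}:=\Hom_{V^{(1)}}(W^{(1)},M)$; the commuting action of $\vac\otimes V^{(2)}$ preserves each isotypic component and makes $C_{W^{(1)}}$ an admissible $V^{(2)}$-module. Rationality of $V^{(2)}$ decomposes each $C_{W^{(1)}}$ into irreducibles, and by the first assertion every summand $W^{(1)}\otimes W^{(2)}$ is an irreducible $V$-module, so $M$ is completely reducible and $V$ is rational. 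For the ``only if'' direction, I assume $V$ rational, let $N$ be an admissible $V^{(1)}$-module, and fix any irreducible $V^{(2)}$-module $W^{(2)}$ (which exists, e.g.\ as an irreducible subquotient of the adjoint module). Then $N\otimes W^{(2)}$ is an admissible $V$-module, hence completely reducible, and passing to the multiplicity space $\Hom_{V^{(2)}}(W^{(2)},N\otimes W^{(2)})\cong N$ transports complete reducibility back to $N$ as a $V^{(1)}$-module. Thus $V^{(1)}$ is rational, and $V^{(2)}$ by symmetry.
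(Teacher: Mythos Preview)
The paper does not supply its own proof of this proposition; it only cites \cite{FHL93}, Section~4.7, and \cite{DMZ94}, Proposition~2.7. So there is no in-paper argument to compare against.

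Your density argument that $W^{(1)}\otimes W^{(2)}$ is irreducible is correct and standard. The rationality equivalence is also essentially right; for the ``only if'' direction you should note that any $V^{(2)}$-map $W^{(2)}\to N\otimes W^{(2)}$ has image in some $N_0\otimes W^{(2)}$ with $\dim_\C N_0<\infty$ (since $W^{(2)}$ is cyclic), which makes the identification $\Hom_{V^{(2)}}(W^{(2)},N\otimes W^{(2)})\cong N$ rigorous even when $N$ is infinite-dimensional.

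The genuine gap is the one you flag yourself: showing that every irreducible $M$ is a tensor product. Your outline needs (i) an irreducible $V^{(1)}$-submodule of $M$ and (ii) an irreducible $V^{(2)}$-direct-summand of the multiplicity space, and neither exists for free without rationality. The proof in \cite{DMZ94} avoids this by passing to Zhu's algebra: one has $A(V^{(1)}\otimes V^{(2)})\cong A(V^{(1)})\otimes A(V^{(2)})$, and Zhu's bijection between irreducible modules for $V$ and for $A(V)$ reduces the claim to the classical fact that a finite-dimensional irreducible module over a tensor product of associative $\C$-algebras is a tensor product of irreducibles. This applies because the lowest weight space $M_\rho$ of an ordinary irreducible module is finite-dimensional. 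If you prefer to stay module-theoretic, you can salvage your approach by working directly with $M_\rho$ and the commuting zero-mode actions of $V^{(1)}$ and $V^{(2)}$ on it, which is the Zhu-algebra argument in disguise.
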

Note that in \cite{FHL93} modules are defined to be only $\Q$-graded, which requires the authors to add a rationality condition in \cite{FHL93}, Theorem~4.7.4, since the tensor product of two modules with non-rational grading could have rational grading.

\begin{prop}[\cite{DLM97}, Proposition~3.3]
Let $V^{(1)}$ and $V^{(2)}$ be two regular \voa{}s. Then also the tensor-product \voa{} $V^{(1)}\otimes V^{(2)}$ is regular.
\end{prop}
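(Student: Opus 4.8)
The plan is to exploit the two commuting subalgebra actions carried by any weak module and to reduce the claim to the regularity of each factor separately. Write $V := V^{(1)} \otimes V^{(2)}$ and let $W$ be an arbitrary weak $V$-module; the goal is to show that $W$ is a direct sum of irreducible ordinary $V$-modules. Restricting the action along the two maps $v_1 \mapsto v_1 \otimes \vac$ and $v_2 \mapsto \vac \otimes v_2$ makes $W$ simultaneously a weak $V^{(1)}$-module and a weak $V^{(2)}$-module. First I would record that these two actions commute: since
\begin{equation*}
Y_W(v_1 \otimes \vac, x)(\vac \otimes v_2) = \bigl(Y_{V^{(1)}}(v_1, x)\vac\bigr) \otimes v_2
\end{equation*}
involves only non-negative integral powers of $x$ by the right vacuum axiom, we have $(v_1 \otimes \vac)_n(\vac \otimes v_2) = 0$ for all $n \geq 0$, so the operator product is non-singular and the commutator formula (a consequence of the module Jacobi identity) forces $[Y_W(v_1 \otimes \vac, x), Y_W(\vac \otimes v_2, y)] = 0$. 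Hence the modes of the $V^{(1)}$-action and of the $V^{(2)}$-action commute on $W$.

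Next, since $V^{(1)}$ is regular, $W$ decomposes as a weak $V^{(1)}$-module into irreducible ordinary $V^{(1)}$-modules; grouping these by isomorphism type and using that $\Irr(V^{(1)})$ is finite (regularity implies rationality) yields $W = \bigoplus_{M \in \Irr(V^{(1)})} W[M]$, where $W[M]$ is the $M$-isotypic component. Because the modes of the $V^{(2)}$-action are $V^{(1)}$-module endomorphisms of $W$, they preserve each isotypic component, so every $W[M]$ is a weak $V$-submodule. Writing $W[M] \cong M \otimes_\C \Omega_M$ with multiplicity space $\Omega_M := \Hom_{V^{(1)}}(M, W)$, Schur's lemma (Proposition~\ref{prop:schur}) gives $\End_{V^{(1)}}(M) \cong \C$ and hence $\End_{V^{(1)}}(W[M]) \cong \id_M \otimes \End_\C(\Omega_M)$. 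Consequently the commuting $V^{(2)}$-operators act only on the second factor, of the form $\id_M \otimes \widetilde{Y}(v_2, x)$, and the induced fields $\widetilde{Y}(v_2,x)$ endow $\Omega_M$ with a weak $V^{(2)}$-module structure, the lower-truncation property and the Jacobi identity being inherited directly from those of the $V^{(2)}$-action on $W$.

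Finally, regularity of $V^{(2)}$ gives $\Omega_M \cong \bigoplus_j N_{M,j}$ with each $N_{M,j}$ an irreducible ordinary $V^{(2)}$-module, so that
\begin{equation*}
W \cong \bigoplus_{M \in \Irr(V^{(1)})} \bigoplus_j M \otimes N_{M,j}.
\end{equation*}
Each summand $M \otimes N_{M,j}$ is an irreducible ordinary $V$-module by the proposition classifying the irreducible modules of a tensor-product \voa{}, and the identity $\wt(w_1 \otimes w_2) = \wt(w_1) + \wt(w_2)$ keeps the grading bounded from below with finite-dimensional homogeneous pieces, so these are genuine ordinary modules. This exhibits $W$ as a direct sum of irreducible ordinary $V$-modules, establishing regularity.

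I expect the main obstacle to be the middle step: transporting the isotypic/multiplicity-space decomposition, familiar from ordinary semisimple representation theory, into the weak-module setting, and verifying carefully that the commuting $V^{(2)}$-operators descend to a bona fide weak $V^{(2)}$-module structure on $\Omega_M$ satisfying all the module axioms. By contrast, the commutativity of the two actions and the irreducibility of the tensor-product summands are comparatively routine once this framework is in place.
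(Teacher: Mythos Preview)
The paper does not give its own proof of this proposition; it is simply cited from \cite{DLM97}. Your proposal follows essentially the same strategy as the original proof there: restrict $W$ to each factor, use commutativity of the two actions, decompose into $V^{(1)}$-isotypic components using regularity of $V^{(1)}$, pass to multiplicity spaces, and apply regularity of $V^{(2)}$. You have also correctly identified the one genuinely delicate point, namely that the commuting $V^{(2)}$-operators on an isotypic component $W[M]\cong M\otimes\Omega_M$ really act as $\id_M\otimes\widetilde{Y}(v_2,x)$ and that $\widetilde{Y}$ satisfies the weak-module axioms on $\Omega_M$; one clean way to make this rigorous is to realise $\Omega_M$ concretely as a suitable highest-weight subspace of $W[M]$ (using the $L_0^{(1)}$-grading now available since $W$ is a direct sum of ordinary $V^{(1)}$-modules) rather than as an abstract $\Hom$-space, which avoids subtleties with $\Hom$ into infinite direct sums.
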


\begin{prop}[\cite{Abe13}, Proposition~5.1]
Given two \voa{}s $V^{(1)}$ and $V^{(2)}$ the tensor-product \voa{} $V^{(1)}\otimes V^{(2)}$ is $C_2$-cofinite if and only if both $V^{(1)}$ and $V^{(2)}$ are $C_2$-cofinite.
\end{prop}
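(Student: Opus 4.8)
The plan is to reduce both implications to the single algebraic identity
\begin{equation*}
C_2(V^{(1)}\otimes V^{(2)}) = C_2(V^{(1)})\otimes_\C V^{(2)} + V^{(1)}\otimes_\C C_2(V^{(2)}),
\end{equation*}
after which everything follows from the standard vector-space isomorphism $(A\otimes B)/(A'\otimes B + A\otimes B')\cong (A/A')\otimes(B/B')$ and the multiplicativity of dimension under $\otimes_\C$. Throughout I use that the modes on the tensor-product \voa{} are $(v_1\otimes v_2)_n = \sum_{i+j=n-1}(v_1)_i\otimes(v_2)_j$, which is immediate from $Y_{V^{(1)}\otimes V^{(2)}}(v_1\otimes v_2,x)=Y_{V^{(1)}}(v_1,x)\otimes Y_{V^{(2)}}(v_2,x)$.

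First I would establish the inclusion $\supseteq$, which is elementary and already yields the ``if'' direction. Using the left vacuum axiom $Y(\vac,x)=\id$ one computes, for $a,b\in V^{(1)}$ and $c\in V^{(2)}$,
\begin{equation*}
(a\otimes\vac)_{-2}(b\otimes c) = (a_{-2}b)\otimes c,
\end{equation*}
since in the mode sum only the term with $\vac_{-1}=\id$ survives; symmetrically $(\vac\otimes a)_{-2}(b\otimes c) = b\otimes(a_{-2}c)$. Hence both $C_2(V^{(1)})\otimes_\C V^{(2)}$ and $V^{(1)}\otimes_\C C_2(V^{(2)})$ lie in $C_2(V^{(1)}\otimes V^{(2)})$.

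The crux is the reverse inclusion $\subseteq$, which underlies the harder ``only if'' direction. The key preliminary lemma is that $v_{-n}w\in C_2(V)$ for every $n\ge 2$ and all $v,w\in V$. This follows by induction on $n$ from the translation relation $(L_{-1}v)_{-n} = n\,v_{-n-1}$ (a consequence of $Y(L_{-1}v,x)=\partial_x Y(v,x)$): the base case $n=2$ is the definition of $C_2(V)$, and the inductive step rewrites $v_{-n-1}w = \tfrac1n (L_{-1}v)_{-n}w$, which lies in $C_2(V)$ by the hypothesis applied to $L_{-1}v$. Granting this, I expand a general generator
\begin{equation*}
(v_1\otimes v_2)_{-2}(w_1\otimes w_2) = \sum_{i+j=-3}\big((v_1)_i w_1\big)\otimes\big((v_2)_j w_2\big),
\end{equation*}
which is a finite sum. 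Since $i+j=-3$ forces at least one of $i,j$ to be $\le -2$, each summand lies in $C_2(V^{(1)})\otimes_\C V^{(2)}$ (when $i\le -2$) or in $V^{(1)}\otimes_\C C_2(V^{(2)})$ (when $j\le -2$) by the lemma, which proves $\subseteq$ and hence the identity.

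With the identity in hand, the quotient isomorphism gives
\begin{equation*}
(V^{(1)}\otimes V^{(2)})/C_2(V^{(1)}\otimes V^{(2)}) \cong \big(V^{(1)}/C_2(V^{(1)})\big)\otimes_\C\big(V^{(2)}/C_2(V^{(2)})\big),
\end{equation*}
so the left side is finite-dimensional if and only if the product of the two factor dimensions is finite. As each factor $V^{(i)}/C_2(V^{(i)})$ is nonzero (the image of $\vac$ is a nonzero unit, automatic for \voa{}s of CFT-type since $C_2(V^{(i)})$ then meets no weight-$0$ component), finiteness of the product is equivalent to finiteness of both factors, which is exactly the assertion. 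I expect the main obstacle to be the reverse inclusion, i.e.\ proving the lemma $v_{-n}w\in C_2(V)$ and carrying out the bookkeeping that every term of the expanded generator carries an index $\le -2$ in one of the two tensor slots.
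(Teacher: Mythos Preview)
The paper does not give its own proof of this statement; it simply cites \cite{Abe13}, Proposition~5.1. Your argument is correct and is the standard one: you establish the identity
\[
C_2(V^{(1)}\otimes V^{(2)}) \;=\; C_2(V^{(1)})\otimes_\C V^{(2)} \;+\; V^{(1)}\otimes_\C C_2(V^{(2)})
\]
via the lemma $v_{-n}w\in C_2(V)$ for $n\ge 2$ (proved by induction from $(L_{-1}v)_{-n}=n\,v_{-n-1}$), and then read off both directions from the resulting tensor-product isomorphism of quotients. Each step you wrote checks out, including the mode formula $(v_1\otimes v_2)_n=\sum_{i+j=n-1}(v_1)_i\otimes(v_2)_j$ and the pigeonhole observation that $i+j=-3$ forces $\min(i,j)\le -2$.

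The only point that is not fully closed in your write-up is the one you already flag: the ``only if'' direction needs $V^{(i)}/C_2(V^{(i)})\neq 0$, since otherwise the tensor product of the two quotients could be zero (hence finite-dimensional) without both factors being finite-dimensional. Your weight argument settles this under CFT-type. For a general \voa{} as defined in the paper (grading merely bounded from below) this is still true but is not quite immediate from weights alone; one way to close it is via Li's decreasing filtration, which is separated for \voa{}s with lower-bounded $L_0$-grading and has $F^1V=C_2(V)$, so $C_2(V)\subsetneq V$. With that one extra input your proof is complete in the generality stated.
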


There is the following result on spaces of intertwining operators for tensor-product modules:
\begin{prop}[\cite{DMZ94}, Proposition~2.10]
Let $W^{(i),1},W^{(i),2},W^{(i),3}$ be irreducible modules for the \voa{} $V^{(i)}$, $i=1,2$. Assume that the spaces of intertwining operators of type $\binom{W^{(i),3}}{W^{(i),1}\,W^{(i),2}}$, $i=1,2$, are finite-dimensional. Then
\begin{equation*}
\V^{W^{(1),3}\otimes W^{(2),3}}_{W^{(1),1}\otimes W^{(2),1}\,W^{(1),2}\otimes W^{(2),2}}\cong\V^{W^{(1),3}}_{W^{(1),1}\,W^{(1),2}}\otimes_\C\V^{W^{(2),3}}_{W^{(2),1}\,W^{(2),2}}.
\end{equation*}
\end{prop}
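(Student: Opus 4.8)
The plan is to realise the asserted isomorphism through the natural ``tensor product of intertwining operators'' map and to prove that it is a linear isomorphism by checking well-definedness, injectivity and surjectivity in turn. First I would construct the map. Given intertwining operators $\mathcal{Y}^{(1)}$ of type $\binom{W^{(1),3}}{W^{(1),1}\,W^{(1),2}}$ and $\mathcal{Y}^{(2)}$ of type $\binom{W^{(2),3}}{W^{(2),1}\,W^{(2),2}}$, I set $(\mathcal{Y}^{(1)}\otimes\mathcal{Y}^{(2)})(w_1\otimes w_2,x):=\mathcal{Y}^{(1)}(w_1,x)\otimes\mathcal{Y}^{(2)}(w_2,x)$ and extend bilinearly. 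Since $Y_{V^{(1)}\otimes V^{(2)}}(v_1\otimes v_2,x)=Y_{V^{(1)}}(v_1,x)\otimes Y_{V^{(2)}}(v_2,x)$ and the defining $\delta$-function coefficients are scalars common to both factors, the Jacobi identity for $\mathcal{Y}^{(1)}\otimes\mathcal{Y}^{(2)}$ is obtained by tensoring the two factor Jacobi identities term by term; the translation axiom follows from the product rule together with $L_{-1}=L_{-1}^{(1)}\otimes\id+\id\otimes L_{-1}^{(2)}$ on the tensor-product module. Hence $\mathcal{Y}^{(1)}\otimes\mathcal{Y}^{(2)}$ is an intertwining operator of the tensor-product type (its $x$-exponents lie in $(B_1+B_2)+\Z$, matching the conformal-weight shift of the tensor modules), and I obtain a linear map $\Phi$ from $\V^{W^{(1),3}}_{W^{(1),1}\,W^{(1),2}}\otimes_\C\V^{W^{(2),3}}_{W^{(2),1}\,W^{(2),2}}$ into the space on the left-hand side.

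For injectivity I would use the weight grading to decouple the two factors. Writing a putative kernel element as $\sum_k\mathcal{Y}^{(1)}_k\otimes\mathcal{Y}^{(2)}_k$ with both families linearly independent (a standard reduction), I evaluate the resulting operator on $u_1\otimes u_2$ and pair with $f_1\otimes f_2$, all vectors chosen homogeneous. Because the weight of a mode is fixed by the weights of its arguments, each factor $\langle f_i,\mathcal{Y}^{(i)}_k(w_i,x)u_i\rangle$ is a single monomial in $x$, so the vanishing of the tensor operator forces $\sum_k\alpha_k\beta_k=0$, where $\alpha_k,\beta_k$ are the scalar homogeneous matrix coefficients of $\mathcal{Y}^{(1)}_k$ and $\mathcal{Y}^{(2)}_k$. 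Linear independence of $\{\mathcal{Y}^{(1)}_k\}$ means the achievable vectors $(\alpha_k)_k$ span $\C^{r}$ (where $r$ is the number of summands), while linear independence of $\{\mathcal{Y}^{(2)}_k\}$ yields a choice with $(\beta_k)_k\neq0$; the relation $\sum_k\alpha_k\beta_k=0$ for a spanning family of $(\alpha_k)_k$ then forces $(\beta_k)_k=0$, a contradiction. Thus $\Phi$ is injective and the target has dimension at least the product of the two factor dimensions.

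The hard part is surjectivity, i.e.\ showing every intertwining operator $\mathcal{Y}$ of the tensor-product type factorises. My approach is to extract candidate factor operators by freezing the second tensor slot: for homogeneous $w_2,u_2,f_2$ I form the $W^{(1),3}$-valued operators $\Theta_{w_2,u_2,f_2}(w_1,x)u_1:=(\id\otimes f_2)\,\mathcal{Y}(w_1\otimes w_2,x)(u_1\otimes u_2)$ on $W^{(1),2}$, and dually for the first slot. The Jacobi identity of $\mathcal{Y}$ specialised to $v=v_1\otimes\vac$ (respectively $v=\vac\otimes v_2$) acts by the identity on the opposite factor and so yields the $V^{(1)}$- (respectively $V^{(2)}$-) Jacobi identity for these components. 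The genuine obstacle is the translation axiom: since $L_{-1}$ on the tensor module mixes the two factors, one computes $\partial_x\Theta_{w_2,u_2,f_2}(w_1,x)=\Theta_{w_2,u_2,f_2}(L_{-1}^{(1)}w_1,x)+\Theta_{L_{-1}^{(2)}w_2,u_2,f_2}(w_1,x)$, so the $\Theta_{w_2,u_2,f_2}$ are not individually intertwining operators; I expect this coupling between the two $L_{-1}$'s to be the crux.

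I would resolve this using that $W^{(2),1},W^{(2),2},W^{(2),3}$ are irreducible, hence generated from their conformal-weight spaces under the $V^{(2)}$-action, so that the second-factor dependence of $\mathcal{Y}$ is rigidly propagated from finitely many lowest-weight data. Organising the components over homogeneous bases of the relevant (finite-dimensional) weight spaces and combining the two commuting Jacobi structures should show that $\mathcal{Y}$ lies in the span of tensor products $\mathcal{Y}^{(1)}_k\otimes\mathcal{Y}^{(2)}_k$, with the extra translation terms absorbed once the second-factor variation is itself encoded by genuine $V^{(2)}$-intertwining operators. Together with the injectivity bound from the previous step, this would exhibit $\Phi$ as the desired isomorphism.
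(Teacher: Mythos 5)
The paper does not actually contain a proof of this proposition: it is quoted directly from \cite{DMZ94} (Proposition~2.10) as a known result, so your attempt has to be judged on its own merits rather than against an internal argument. Judged that way, your overall strategy (exhibit the natural map $\Phi$ and prove it bijective) is the standard and correct one, and your injectivity argument is sound: for homogeneous vectors each matrix coefficient $\langle f_i,\mathcal{Y}^{(i)}_k(w_i,x)u_i\rangle$ is a single monomial whose exponent is independent of $k$, and the spanning/linear-independence argument in $\C^r$ then forces a minimal-length kernel element to vanish. This correctly establishes
\begin{equation*}
\dim_\C\V^{W^{(1),3}\otimes W^{(2),3}}_{W^{(1),1}\otimes W^{(2),1}\,W^{(1),2}\otimes W^{(2),2}}\geq\dim_\C\V^{W^{(1),3}}_{W^{(1),1}\,W^{(1),2}}\cdot\dim_\C\V^{W^{(2),3}}_{W^{(2),1}\,W^{(2),2}},
\end{equation*}
modulo one rigor issue in the construction itself: your claim that the Jacobi identity for $\mathcal{Y}^{(1)}\otimes\mathcal{Y}^{(2)}$ is ``obtained by tensoring the two factor Jacobi identities term by term'' is not valid as stated. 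Each factor identity has three terms with three \emph{different} delta functions, so multiplying the identities produces nine terms, including ill-defined squares such as $\bigl(x_0^{-1}\delta\bigl(\frac{x_1-x_2}{x_0}\bigr)\bigr)^2$; in the identity one actually needs, each delta function appears exactly once against a tensor product of operator products. A genuine verification (via the component/Borcherds form of the identity, or via weak commutativity and associativity adapted to non-integral powers together with the $L_{-1}$-derivative property) is required here.

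The fatal gap, however, is surjectivity, which is the substantive half of the statement and the only place where the irreducibility and finite-dimensionality hypotheses are really used. You correctly isolate the obstruction — the frozen-slot components $\Theta_{w_2,u_2,f_2}$ satisfy the $V^{(1)}$-Jacobi identity but not the $L_{-1}$-derivative property, because $L_{-1}$ on the tensor-product module couples the two factors — but your proposed resolution is a plan, not an argument: phrases like ``should show that $\mathcal{Y}$ lies in the span of tensor products'' and ``I expect this coupling \ldots\ to be the crux'' leave precisely the hard step undone. Nothing in the proposal produces genuine $V^{(2)}$-intertwining operators encoding the second-slot variation, nor shows that the span of the operators $\mathcal{Y}^{(1)}_k\otimes\mathcal{Y}^{(2)}_k$ exhausts the left-hand side; indeed it is not even explained how finite-dimensionality enters, without which the claimed equality can fail. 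As it stands, the proposal proves the inequality above but not the isomorphism asserted in the proposition.
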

Everything we wrote down in this section generalises in the obvious way to the case of the tensor product of finitely many \voa{}s or modules.

\section{Zhu's Modular Invariance of Trace Functions}\label{sec:zhu}

In this section we present a very strong result stating that the trace functions and characters of \voa{}s are under certain circumstances vector-valued modular forms for $\SLZ$. Using the Verlinde formula (see Theorem~\ref{thm:verlinde} below) this result is a powerful tool for determining the fusion rules amongst the irreducible modules of a \voa{}, such as for simple-current \voa{}s (see Section~\ref{sec:scvoa}).

\minisec{Trace Functions}

Let $W$ be a module of a \voa{} $V$. We define $o(v)$ to be the grade-preserving operator in $\End_\C(W)$ associated with $v\in V$ defined by linear continuation of $o(v)=v_{\wt(v)-1}$, the $(\wt(v)-1)$-th mode of $Y_{W}(v,x)$, for homogeneous $v$ with respect to the weight grading on $V$.

Let $V$ be a \voa{} and $W$ an irreducible $V$-module. Recall that $\rho(W)\in\C$ denotes the conformal weight of $W$ and $c\in\C$ the central charge of $V$. We will study the \emph{trace function} (or \emph{graded trace}) of $W$
\begin{equation*}
T_W(\cdot,q)\colon V\to q^{\rho(W)-c/24}\C[[q]]
\end{equation*}
defined by
\begin{equation*}
T_{W}(v,q):=\tr_{W}o(v)q^{L_0-c/24}=q^{\rho(W)-c/24}\sum_{n=0}^\infty\tr_{W_{\rho(W)+n}}o(v)q^n,
\end{equation*}
which is linear in $v\in V$ and takes values in the formal power series in $q$ with some complex shift $\rho(W)-c/24\in\C$ in the exponents.

Replacing $q$ by $q_\tau=\e^{2\pi\i\tau}$ for $\tau\in\H$ we can view
\begin{equation*}
T_{W}(v,\tau):=T_{W}(v,q_\tau)
\end{equation*}
as a function
\begin{equation*}
T_W\colon V\times\H\to\C
\end{equation*}
on $V$ times the upper half-plane $\H$ if it is well-defined, i.e.\ if the sum converges. Using his $C_2$-cofiniteness Zhu was able to show that the trace functions $T_{W}(v,\tau)$ are indeed well-defined and even holomorphic on $\H$. Moreover, he established that these trace functions exhibit a certain modular invariance property.

\minisec{Modular Invariance}

Given a \voa{} $V$, in \cite{Zhu96} the author introduced a second \voa{} structure on $V$ with a new grading
\begin{equation*}
V=\bigoplus_{k\in\Z}V_{[k]},
\end{equation*}
which facilitates the formulation of his modular invariance result. The weight of a homogeneous vector $v\in V$ with respect to this second grading is denoted by $\wt[v]$, as opposed to $\wt(v)$ for the original grading $V=\bigoplus_{k\in\Z}V_{k}$.

We let $\SLZ$ act on the upper half-plane $\H$ via the usual \emph{Möbius transformation}
\begin{equation*}
M.\tau:=\frac{a\tau +b}{c\tau +d}
\end{equation*}
for $M=\left(\begin{smallmatrix}a&b\\c&d\end{smallmatrix}\right)\in\SLZ$, $\tau\in\H$. Slightly reformulated\footnote{Dong, Li and Mason also removed some unnecessary assumptions made by Zhu.} to match the presentation in \cite{DLM00} Zhu's modular invariance result reads:
\begin{thm}[\cite{Zhu96}, Theorem~5.3.2, Theorem~4.4.1]\label{thm:zhumodinv}
Let $V$ be a rational, $C_2$-cofinite \voa{} of central charge $c$.
Then:
\begin{enumerate}
\item The trace functions $T_{W}(v,\tau)=\tr_{W}o(v)q_\tau^{L_0-c/24}$, $W\in\Irr(V)$, $v\in V$, are well-defined, holomorphic functions on $\H$ (in $\tau$).
\item There is a representation
\begin{equation*}
\rho_V\colon\SLZ\to\GL(\V(V))
\end{equation*}
of $\SLZ$ on the finite-dimensional $\C$-vector space $\V(V)=\bigoplus_{W\in \Irr(V)}\C W$ such that for each $M=\left(\begin{smallmatrix}a&b\\c&d\end{smallmatrix}\right)\in\SLZ$, $W\in\Irr(V)$,
\begin{equation*}
(c\tau +d)^{-k}T_{W}(v,M.\tau)=\sum_{X\in\Irr(V)} \rho_V(M)_{W,X}T_{X}(v,\tau)
\end{equation*}
if $v\in V_{[k]}$ for $k\in\Z$.
\end{enumerate}
\end{thm}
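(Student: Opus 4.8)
The plan is to follow Zhu's original strategy \cite{Zhu96}, as streamlined by Dong, Li and Mason \cite{DLM00}. The starting point is the \emph{square-bracket formalism}: one equips $V$ with a second \voa{} structure $(V,Y[\cdot,x],\omega-(c/24)\vac)$ arising from the change of coordinates $x\mapsto\e^x-1$, setting $Y[v,x]=Y(v,\e^x-1)\e^{x\wt(v)}$ for homogeneous $v$. This produces new modes $v[n]$, new Virasoro operators $L[n]$, and the second grading $V=\bigoplus_k V_{[k]}$ by $L[0]$-eigenvalue that appears in the statement. These square-bracket modes are the geometrically natural ones for the elliptic curve $\C/(\Z+\tau\Z)$, which explains why the automorphy weight attached to $v$ is its square-bracket weight $\wt[v]=k$.

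First I would derive \textbf{Zhu's recursion relations}. For $a,b\in V$ and $n\geq1$ one expresses the trace $T_W(a[-n]b,\tau)$ as a finite $\C$-linear combination of lower trace functions $T_W(a[m]b,\tau)$, $m\geq0$, with coefficients built from the Eisenstein series $G_{2k}(\tau)$ (equivalently, expansions of the Weierstrass $\wp$-function and its derivatives). These identities follow formally from the commutator and associativity formulas of $V$ together with the definition of $o(v)$ and the cyclicity of the trace. Their significance is twofold: they reduce any $T_W(v,\tau)$ to trace functions of a controlled set of generators, and their coefficients are \emph{modular} objects whose $\SLZ$-transformation is classical.

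The analytic and representation-theoretic heart uses \textbf{$C_2$-cofiniteness} decisively. Since $C_2(V)$ has finite codimension, the recursion relations—together with $a_{-2}b\equiv0$ modulo $C_2(V)$ and lower terms—force each $T_W(v,\tau)$ to satisfy an ordinary differential equation in $q\,d/dq$ with holomorphic modular-form coefficients of bounded weight. This yields part (1): the formal $q$-series converge to holomorphic functions on $\H$ with leading exponent $\rho(W)-c/24$, and it produces a finite-dimensional solution space on which $\SLZ$ can act. For part (2) one shows this space is closed under the modular group. The generator $T=\left(\begin{smallmatrix}1&1\\0&1\end{smallmatrix}\right)$ acts diagonally, multiplying $T_W$ by $\e^{(2\pi\i)(\rho(W)-c/24)}$ straight from the $q$-expansion; the real content lies in $S=\left(\begin{smallmatrix}0&-1\\1&0\end{smallmatrix}\right)$, where one must prove that $\tau^{-k}T_W(v,-1/\tau)$ for $v\in V_{[k]}$ is again a trace function, lying in $\spn\{T_X(v,\tau):X\in\Irr(V)\}$. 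Because the recursion coefficients transform as modular forms of the matching weight, the $S$-transform of a solution is another solution with an admissible $q$-expansion, and rationality (finiteness of $\Irr(V)$) forces it to be a combination of the finitely many $T_X$. Assembling the $S$- and $T$-matrices yields $\rho_V\colon\SLZ\to\GL(\V(V))$ with the stated transformation law.

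The \textbf{main obstacle} is precisely this $S$-closure together with the convergence it presupposes. Everything upstream is formal algebra; the genuinely hard input is controlling the trace functions well enough—via the $C_2$-cofinite differential equation—to know both that they converge to holomorphic functions on $\H$ and that their $S$-transforms remain inside the finite-dimensional space of genus-one blocks rather than escaping it. This is the technical core of \cite{Zhu96}, which we invoke here.
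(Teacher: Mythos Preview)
The paper does not give its own proof of this theorem; it simply states it as a cited result from \cite{Zhu96} (as reformulated in \cite{DLM00}). Your proposal is therefore consistent with the paper's treatment, and your outline of Zhu's argument---square-bracket coordinates, the Eisenstein-series recursion, the $C_2$-cofinite ODE, and $\SLZ$-closure---is a faithful summary of the cited proof.

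One point in your sketch deserves sharpening. In the $S$-closure step you write that ``rationality (finiteness of $\Irr(V)$) forces'' the $S$-transform to be a combination of the $T_X$. Finiteness alone is not what does this. Zhu first axiomatises an abstract space of genus-one one-point functions (vectors $S\colon V\times\H\to\C$ satisfying the recursion relations, the $L[-1]$- and $L[-2]$-identities, and a regularity condition at $q=0$), shows this space is finite-dimensional using the $C_2$-cofinite ODE, and---crucially---proves via the Zhu algebra $A(V)$ that when $V$ is rational this space is exactly spanned by the trace functions $T_W$, $W\in\Irr(V)$. The $S$-transform of a solution is again a solution because the Eisenstein coefficients transform correctly, and only then does rationality identify it as a combination of the $T_X$. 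Without the $A(V)$ step one would not know the abstract solution space coincides with the span of the trace functions.
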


The above theorem shows that the trace functions $T_{W}(v,\tau)$, $W\in \Irr(V)$, form a vector-valued modular form of weight $k=\wt[v]$ for $\rho_V$ under $\SLZ$. We remark that the $T_{W}(v,\tau)$, $W\in \Irr(V)$, are linearly independent, allowing $\tau$ \emph{and} $v$ to vary.
This implies that the above representation $\rho_V$ is unique and we call $\rho_V$ \emph{Zhu's representation} for $V$.

\minisec{Characters}

We discuss one special choice of $v$, namely $v=\vac$, the vacuum vector in $V$. This vector fulfils the homogeneity condition with $\wt[\vac]=0$ and $o(\vac)=\id_W$, the identity operator on the module $W$. Then the trace function
\begin{equation*}
\ch_{W}(\tau):=T_{W}(\vac,\tau)=\tr_{W}q_\tau^{L_0-c/24}=q_\tau^{\rho(W)-c/24}\sum_{n=0}^\infty\dim_\C(W_{\rho(W)+n})q_\tau^n
\end{equation*}
is called the \emph{character} (or \emph{graded dimension}) of the module $W$. Since a module $W$ and its contragredient module $W'$, which in general are not isomorphic, have the same grading, their characters are identical and hence this shows that for fixed $v$ the trace functions need not be linearly independent.

\minisec{Congruence Subgroup Property}
It is a central question whether the representation $\rho_V$ of $\SLZ$ on $\V(V)$ becomes trivial on a congruence subgroup of $\SLZ$, i.e.\ whether there is an integer $N\in\Ns$ such that
\begin{equation*}
\Gamma(N)\leq\ker(\rho_V)
\end{equation*}
where $\Gamma(N)$ is the principal congruence subgroup
\begin{equation*}
\Gamma(N)=\left\{\left(\begin{smallmatrix}a&b\\c&d\end{smallmatrix}\right)\in\SLZ\xmiddle|a=d=1\pmod{N}\text{ and }b=c=0\pmod{N}\right\}.
\end{equation*}
The minimal such $N$ is called the level of $\rho_V$. In this case, the trace functions $T_{W}(v,\tau)$, $W\in \Irr(V)$, are scalar-valued modular forms for a congruence subgroup of $\SLZ$ of level $N$.

In general, i.e.\ under the assumptions of Zhu's theorem, this problem is still open. It is not even known that $\ker(\rho_V)$ has finite index in $\SLZ$. However, if Assumption~\ref{ass:n} holds, then Huang has shown that there is the structure of a \mtc{} on the modules of $V$ (see Theorem~\ref{thm:voamtc}). This can be used to show the congruence subgroup property:
\begin{thm}[\cite{DLN15}, Theorem~I]\label{thm:thm1dln12}
Let $V$ satisfy Assumption~\ref{ass:n} and have central charge $c$.
Let $v\in V$ be homogeneous with respect to Zhu's second grading. Then the trace functions $T_{W}(v,\tau)$, $W\in \Irr(V)$, are modular forms of weight $\wt[v]$ for a congruence subgroup of $\SLZ$ of level $N$ where $N$ is the smallest positive integer such that $N(\rho(W)-c/24)\in\Z$ for all $W\in \Irr(V)$.
\end{thm}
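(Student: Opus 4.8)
The plan is to obtain the statement from two inputs. The first is Zhu's modular invariance (Theorem~\ref{thm:zhumodinv}), which already realises the trace functions $T_{W}(v,\tau)$, $W\in\Irr(V)$, as a vector-valued modular form of weight $\wt[v]$ for the representation $\rho_V\colon\SLZ\to\GL(\V(V))$. The second is the congruence subgroup property for the modular tensor category of $V$-modules. Granting these, the only thing to prove is that $\Gamma(N)\leq\ker(\rho_V)$ with $N$ as described, and that $N$ is minimal with this property; once $\rho_V$ is trivial on $\Gamma(N)$, restricting the weight-$\wt[v]$ vector-valued modular form to $\Gamma(N)$ decouples it into its components, so each $T_{W}(v,\tau)$ becomes an ordinary scalar-valued modular form of weight $\wt[v]$ for $\Gamma(N)$.

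First I would compute $\rho_V(T)$ for $T=\left(\begin{smallmatrix}1&1\\0&1\end{smallmatrix}\right)$. Since $T.\tau=\tau+1$ and $o(v)$ is grade-preserving, the expansion $T_{W}(v,\tau)=q_\tau^{\rho(W)-c/24}\sum_{n\geq0}\tr_{W_{\rho(W)+n}}o(v)\,q_\tau^n$ yields $T_{W}(v,\tau+1)=\e^{(2\pi\i)(\rho(W)-c/24)}T_{W}(v,\tau)$. Because the $T_{X}(v,\tau)$ are linearly independent when both $v$ and $\tau$ vary, this forces $\rho_V(T)_{W,X}=\delta_{W,X}\,\e^{(2\pi\i)(\rho(W)-c/24)}$. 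Hence $\rho_V(T)$ is diagonal in the basis $\Irr(V)$, and its order is exactly the smallest $N\in\Ns$ with $N(\rho(W)-c/24)\in\Z$ for all $W\in\Irr(V)$.

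The heart of the argument is to invoke the modular tensor category structure on the $V$-modules guaranteed by Assumption~\ref{ass:n} (Theorem~\ref{thm:voamtc}). Huang's proof of the Verlinde conjecture identifies Zhu's representation $\rho_V$, in the standard normalisation, with the canonical $\SLZ$-representation afforded by the modular data $(S,T)$ of this category, whose $T$-matrix is precisely the diagonal matrix computed above. For modular tensor categories the Ng--Schauenburg congruence subgroup property applies: the associated $\SLZ$-representation factors through the finite quotient $\operatorname{SL}_2(\Z/N\Z)$, where $N$ is the order of the $T$-matrix. This gives $\Gamma(N)\leq\ker(\rho_V)$. Minimality is then elementary: if $\Gamma(M)\leq\ker(\rho_V)$, then $\left(\begin{smallmatrix}1&M\\0&1\end{smallmatrix}\right)\in\Gamma(M)$ gives $\rho_V(T)^M=\id$, whence $N\mid M$; together with $\Gamma(N)\leq\ker(\rho_V)$ this shows $N$ is exactly the level.

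The main obstacle is the congruence property itself. Both the identification of $\rho_V$ with the categorical modular representation, which rests on Huang's deep tensor-category and Verlinde-formula machinery, and the Ng--Schauenburg theorem that a modular tensor category representation of $\SLZ$ is a congruence representation of level equal to the order of $T$, are substantial external results; they are where essentially all the difficulty resides. Everything else—the explicit diagonalisation of $\rho_V(T)$, the determination of $N$ as the common denominator of the shifts $\rho(W)-c/24$, and the passage from a vector-valued modular form to scalar-valued modular forms on the kernel $\Gamma(N)$—is routine.
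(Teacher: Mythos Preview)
The paper does not give its own proof of this theorem; it is quoted as an external result from \cite{DLN15}, and the only remark added is that $c$ and the $\rho(W)$ are rational so that $N$ exists. Your outline is precisely the strategy of \cite{DLN15}: compute $\rho_V(T)$ explicitly (this is Proposition~\ref{prop:tmatrix} in the paper), invoke Huang's modular tensor category on the $V$-modules (Theorem~\ref{thm:voamtc}), and then apply the Ng--Schauenburg congruence theorem to conclude that $\rho_V$ factors through $\operatorname{SL}_2(\Z/N\Z)$ where $N=\ord(\rho_V(T))$. So your approach agrees with the cited source.

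The paper does, however, give an independent and much more elementary argument in the special case where all irreducible $V$-modules are simple currents (Assumption~\ref{ass:sn}): there Zhu's representation is identified, up to a character, with the Weil representation $\rho_{F_V}$ of the fusion group (Theorem~\ref{thm:zhuweil}), and it is classical that the Weil representation of a finite quadratic space of level $N$ is trivial on $\Gamma(N)$. This bypasses both Huang's tensor-category machinery and the Ng--Schauenburg theorem entirely, at the cost of only covering the group-like fusion case. Your general argument is the right one for the full theorem, but it is worth noting that for the applications in this paper the simple-current case already suffices, and there the congruence property is essentially free.
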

Note that by \cite{DLM00}, Theorem~11.3, both $c$ and the $\rho(W)$ are rational, which is why the number $N\in\Ns$ in the above theorem always exists.

\section{\texorpdfstring{$S$}{S}-Matrix, \texorpdfstring{$T$}{T}-Matrix and Verlinde Formula}

It is a well-known fact that the matrices
\begin{equation*}
S=\begin{pmatrix}0&-1\\1&0\end{pmatrix}\quad\text{and}\quad T=\begin{pmatrix}1&1\\0&1\end{pmatrix}
\end{equation*}
generate the group $\SLZ$. In fact, $\SLZ$ is obtained from the free group generated by $S$ and $T$ via the relations
\begin{equation*}
(ST)^3=S^2\quad\text{and}\quad S^4=\id.
\end{equation*}
We also define $Z:=S^2=-\id$.

Let us continue in the setting of the previous section, i.e.\ we let $V$ be a rational, $C_2$-cofinite \voa{} and consider the representation $\rho_V$ from Zhu's modular invariance theorem. We introduce the $S$- and $T$-matrix
\begin{equation*}
\S:=\rho_V(S)\quad\text{and}\quad\T:=\rho_V(T)
\end{equation*}
in $\GL(\V(V))$, which have complex entries and already fully determine Zhu's representation $\rho_V$. By Theorem~\ref{thm:zhumodinv} and the comment on the uniqueness of the representation $\rho_V$, the matrices $\S$ and $\T$ depend only on $V$. By definition, the transformation behaviour of the trace functions under $\tau\mapsto S.\tau=-1/\tau$ and $\tau\mapsto T.\tau=\tau+1$ is
\begin{align*}
(1/\tau)^kT_{W}(v,-1/\tau)&=\sum_{X\in\Irr(V)} \S_{W,X}T_{X}(v,\tau),\\
T_{W}(v,\tau+1)&=\sum_{X\in\Irr(V)} \T_{W,X}T_{X}(v,\tau).
\end{align*}

The transformation behaviour under $T$ can be easily read off from the definition of the trace functions. One obtains:
\begin{prop}\label{prop:tmatrix}
Let $V$ be a rational, $C_2$-cofinite \voa{} of central charge $c$. Then
\begin{equation*}
\T_{X,Y}=\delta_{X,Y}\e^{(2\pi\i)(\rho(X)-c/24)}
\end{equation*}
for $X,Y\in \Irr(V)$. In particular, $\T$ is diagonal.
\end{prop}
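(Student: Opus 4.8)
The plan is to extract $\T=\rho_V(T)$ directly from the transformation law $T_W(v,\tau+1)=\sum_{X\in\Irr(V)}\T_{W,X}T_X(v,\tau)$ by computing its left-hand side from the series definition of the trace function. The point is that the substitution $\tau\mapsto\tau+1$ acts on each graded summand only through an explicit phase, and that phase turns out to be the same for all summands up to an integer, hence constant. First I would write the trace function in fully expanded form
\begin{equation*}
T_W(v,\tau)=\sum_{n=0}^\infty\tr_{W_{\rho(W)+n}}o(v)\,\e^{(2\pi\i)\tau(\rho(W)+n-c/24)},
\end{equation*}
so that the entire $\tau$-dependence resides in the exponentials $\e^{(2\pi\i)\tau(\rho(W)+n-c/24)}$, the coefficients $\tr_{W_{\rho(W)+n}}o(v)$ being fixed complex numbers.

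The key step is to substitute $\tau\mapsto\tau+1$ and factor the resulting exponential as
\begin{equation*}
\e^{(2\pi\i)(\tau+1)(\rho(W)+n-c/24)}=\e^{(2\pi\i)\tau(\rho(W)+n-c/24)}\cdot\e^{(2\pi\i)(\rho(W)+n-c/24)}.
\end{equation*}
Because $n\in\N$, we have $\e^{(2\pi\i)n}=1$, so the second factor collapses to the $n$-independent constant $\e^{(2\pi\i)(\rho(W)-c/24)}$. Pulling this constant out of the sum yields
\begin{equation*}
T_W(v,\tau+1)=\e^{(2\pi\i)(\rho(W)-c/24)}\,T_W(v,\tau).
\end{equation*}

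Finally I would compare this identity with the transformation law above, obtaining $\sum_{X\in\Irr(V)}\bigl(\T_{W,X}-\delta_{W,X}\e^{(2\pi\i)(\rho(W)-c/24)}\bigr)T_X(v,\tau)=0$ for all $v$ and all $\tau$. Invoking the linear independence of the trace functions $T_X(v,\tau)$, $X\in\Irr(V)$ (as functions of $v$ and $\tau$ jointly), noted after Theorem~\ref{thm:zhumodinv}, forces every coefficient to vanish, so that $\T_{W,X}=\delta_{W,X}\e^{(2\pi\i)(\rho(W)-c/24)}$; in particular $\T$ is diagonal. I do not expect a genuine obstacle here: the computation is elementary, and the only point demanding care is the bookkeeping of the complex (and possibly irrational) powers of $q_\tau$, where it is precisely the integrality of the grading shift $n$ that annihilates the spurious phase and leaves a global phase depending only on $\rho(W)$ modulo $1$ and on $c$.
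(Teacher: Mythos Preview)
Your proposal is correct and is exactly the approach indicated in the paper, which simply notes that the transformation behaviour under $T$ ``can be easily read off from the definition of the trace functions'' and states the result without further detail. Your write-up fills in precisely that routine computation.
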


To determine the $S$-matrix is in general a much harder problem. In the following we collect some known properties of $\S$. For convenience let us also introduce $\mathcal{C}:=\S^2=\rho_V(Z)$.

\minisec{Verlinde Formula}

The following remarkable property, establishing a relation between the entries of the $S$-matrix and the fusion rules of a \voa{} $V$, was in this context first mentioned by Verlinde \cite{Ver88} and proved by Moore and Seiberg \cite{MS89} and Huang \cite{Hua08}:
\begin{thm}[Verlinde Formula, \cite{Hua08}, Section~5]\label{thm:verlinde}
Let $V$ satisfy Assumption~\ref{ass:n}.
Then $\S$ is symmetric and the square $\mathcal{C}=\S^2$ is a permutation matrix which shifts $W$ to the contragredient module $W'$. Moreover, $\S_{V,W}\neq 0$ for $W\in\Irr(V)$ and the formula
\begin{equation}\tag{Verlinde formula}
N_{X,Y}^W=\sum_{U\in \Irr(V)}\frac{\S_{X,U}\S_{Y,U}\S_{U,W'}}{\S_{V,U}}
\end{equation}
for the fusion rules $N_{X,Y}^W$ of $V$ holds.
\end{thm}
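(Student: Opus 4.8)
The plan is to deduce all four assertions from the fact that, under Assumption~\ref{ass:n}, the category of $V$-modules carries the structure of a \mtc{} (Theorem~\ref{thm:voamtc}), together with the identification of Zhu's analytic $S$-matrix $\S=\rho_V(S)$ with the intrinsic categorical $S$-matrix $\S_{\mathrm{cat}}$ of that \mtc{}. Once this identification is secured, the symmetry of $\S$, the shape of $\mathcal{C}=\S^2$, the non-vanishing of the distinguished row, and the Verlinde formula itself all become statements internal to a modular tensor category, where they follow from the ribbon and nondegeneracy axioms. This is precisely the programme carried out by Huang in \cite{Hua08}, building on Moore and Seiberg \cite{MS89}.

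First I would recall the categorical $S$-matrix: for simple objects $X,Y\in\Irr(V)$ it is defined, up to a global normalisation, as the trace of the double braiding $c_{Y,X}\circ c_{X,Y}$ acting on $X\boxtimes_V Y$. From this description the matrix is manifestly symmetric, and the ribbon twist gives $\S_{\mathrm{cat}}^2=\mathcal{C}$, where $\mathcal{C}$ is the charge-conjugation permutation $W\mapsto W'$; this is exactly the second assertion, and combined with the identification below it yields the first. The entries of the distinguished row $\S_{V,W}$ are proportional to the quantum dimensions $\qdim(W)$, which are nonzero in a \mtc{}, so the third assertion follows as well.

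The analytic heart of the argument---and the step I expect to be the main obstacle---is the identification $\rho_V(S)=\S_{\mathrm{cat}}$. This requires extending Zhu's modular invariance (Theorem~\ref{thm:zhumodinv}) from trace functions of module vertex operators to the full collection of genus-one correlation functions obtained as $q$-traces of products of intertwining operators. One must show that these generalised trace functions are holomorphic on $\H$, that they span a finite-dimensional space closed under $\SLZ$, and that their transformation under $\tau\mapsto-1/\tau$ is governed precisely by the categorical braiding data. Controlling the convergence of these $q$-traces and their analytic continuation along the modular flow is the technically demanding part, and it is the content of Huang's sequence of papers; everything else is comparatively formal.

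With the identification in hand, the Verlinde formula becomes the diagonalisation statement for a \mtc{}. The fusion matrices $N_X=(N_{X,Y}^{W})_{Y,W}$ pairwise commute by associativity and commutativity of $\boxtimes_V$ (Propositions~\ref{prop:fusionassoc} and \ref{prop:rem3.5a}). One then verifies the fundamental relation $\S_{X,U}\S_{Y,U}=\S_{V,U}\sum_{W\in\Irr(V)}N_{X,Y}^{W}\S_{W,U}$, obtained by decomposing the double-braiding trace of $X\boxtimes_V Y$ against the fusion rules. This exhibits each column $(\S_{W,U})_{W}$ of $\S$ as a simultaneous eigenvector of every $N_X$ with eigenvalue $\S_{X,U}/\S_{V,U}$, well-defined since $\S_{V,U}\neq0$. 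Because $\S$ is invertible (nondegeneracy of the \mtc{}), these columns form a basis, so $N_X=\S\,\Lambda_X\,\S^{-1}$ with $\Lambda_X$ diagonal of entries $\S_{X,U}/\S_{V,U}$; reading off the $(Y,W)$-entry and rewriting $\S^{-1}=\S\mathcal{C}$, that is $(\S^{-1})_{U,W}=\S_{U,W'}$ (which follows from $\S^2=\mathcal{C}$ and $\mathcal{C}^2=\id$), yields exactly $N_{X,Y}^{W}=\sum_{U\in\Irr(V)}\S_{X,U}\S_{Y,U}\S_{U,W'}/\S_{V,U}$.
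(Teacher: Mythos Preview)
The paper does not give its own proof of this theorem; it is stated as a citation of Huang \cite{Hua08}, so there is no paper-side argument to compare against. Your outline is a reasonable high-level sketch of how the Verlinde formula follows once one has a \mtc{} with the categorical $S$-matrix identified with Zhu's $\rho_V(S)$, and the diagonalisation argument in your last paragraph is correct.

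However, there is a genuine logical issue with the order of dependencies. You invoke Theorem~\ref{thm:voamtc} (the \mtc{} structure) as your starting point, but as the paper explicitly notes immediately after that theorem, ``The proof of the theorem, in particular showing rigidity, makes use of the Verlinde formula for \voa{}s, also proved by Huang \cite{Hua08} (see Theorem~\ref{thm:verlinde}).'' In Huang's actual programme the logic runs the other way: the Verlinde formula is established \emph{first}, directly from the analytic theory of $q$-traces of products of intertwining operators and their modular transformations, and only \emph{then} is it used to prove rigidity and hence complete the \mtc{} structure in \cite{Hua08b}. So your plan, as written, is circular within the framework of this paper. The ``analytic heart'' you identify is indeed where the work lies, but that work must be done \emph{before} one is entitled to the full \mtc{} axioms, not after. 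A cleaner account would drop the appeal to Theorem~\ref{thm:voamtc} and instead sketch how Huang obtains the symmetry of $\S$, the form of $\S^2$, the nonvanishing of $\S_{V,W}$, and the Verlinde identity directly from the braiding and fusing isomorphisms on intertwining operators together with the genus-one modular invariance of their traces.
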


\minisec{Further Properties of the $S$-Matrix}
Using results on quantum dimensions (see Section~\ref{sec:mtc}) from \cite{DJX13} one can show:
\begin{prop}\label{prop:pospos}
Let $V$ satisfy Assumptions~\ref{ass:n}\ref{ass:p}. Then
\begin{equation*}
\S_{W,V}=\S_{V,W}\in\R_{>0}
\end{equation*}
for all $W\in \Irr(V)$.
\end{prop}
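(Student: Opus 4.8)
The plan is to identify the relevant $S$-matrix entries with quantum dimensions and exploit their positivity, following the circle of ideas in \cite{DJX13}. First I would invoke Theorem~\ref{thm:verlinde}: since Assumption~\ref{ass:n} holds, the $S$-matrix $\S$ is symmetric, so $\S_{W,V}=\S_{V,W}$, and moreover $\S_{V,W}\neq0$ for every $W\in\Irr(V)$. It therefore suffices to prove $\S_{V,W}>0$.

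The central object is the \emph{quantum dimension}
\[
\qdim_V(W):=\lim_{t\to0^+}\frac{\ch_W(\i t)}{\ch_V(\i t)},
\]
a ratio of characters evaluated along the positive imaginary axis. The key input from \cite{DJX13} is that, precisely under Assumptions~\ref{ass:n} and \ref{ass:p}, this limit exists for every irreducible $W$, equals $\S_{V,W}/\S_{V,V}$, and satisfies $\qdim_V(W)\geq1$. The positivity assumption is exactly what makes this work: writing the $S$-transformation of characters (Theorem~\ref{thm:zhumodinv} with $v=\vac$, so that $\wt[\vac]=0$ and the automorphy factor is trivial) as $\ch_W(\i t)=\sum_{X\in\Irr(V)}\S_{W,X}\ch_X(\i/t)$ and letting $t\to0^+$, so that $\i/t\to\i\infty$ and each $\ch_X(\i/t)\sim a_X\,\e^{-2\pi(\rho(X)-c/24)/t}$ with leading coefficient $a_X>0$, the dominant contribution comes from the unique module minimising $\rho(X)$. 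By Assumption~\ref{ass:p} this is $X=V$ with $\rho(V)=0$ and, by CFT-type, $a_V=\dim_\C(V_0)=1$; hence $\ch_W(\i t)\sim\S_{W,V}\,\e^{2\pi(c/24)/t}$, and taking the ratio against $\ch_V(\i t)\sim\S_{V,V}\,\e^{2\pi(c/24)/t}$ yields the stated formula for $\qdim_V(W)$.

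With this in hand the sign claims fall out. Each character is a $q$-series with non-negative coefficients and, by CFT-type, strictly positive leading coefficient, so $\ch_W(\i t)>0$ and $\ch_V(\i t)>0$ for all $t>0$; thus the ratio defining $\qdim_V(W)$ is positive and its limit is non-negative (and $\geq1$ by \cite{DJX13}). Applying the asymptotic estimate to $W=V$ gives $\ch_V(\i t)\sim\S_{V,V}\,\e^{2\pi(c/24)/t}$ as $t\to0^+$; since the left-hand side and the exponential are both positive, this forces $\S_{V,V}\geq0$, and then $\S_{V,V}\neq0$ from Theorem~\ref{thm:verlinde} gives $\S_{V,V}>0$. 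Finally
\[
\S_{V,W}=\qdim_V(W)\,\S_{V,V}
\]
is a product of two strictly positive reals, so $\S_{W,V}=\S_{V,W}\in\R_{>0}$, as desired.

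I expect the main obstacle to be the rigorous justification that the quantum-dimension limit exists and equals $\S_{V,W}/\S_{V,V}$ — that is, controlling the $t\to0^+$ asymptotics of the whole transformed character vector and showing the subleading terms are genuinely negligible, rather than the final sign bookkeeping, which is routine. This is exactly where Assumption~\ref{ass:p} enters decisively, guaranteeing that $V$ alone, with its strict gap to the next-smallest conformal weight, dominates the transformed characters; it is the content I would borrow wholesale from \cite{DJX13}.
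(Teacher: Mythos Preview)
Your proposal is correct and follows essentially the same route as the paper: both arguments hinge on the positivity of characters along the positive imaginary axis together with the quantum-dimension input from \cite{DJX13}, Lemma~4.2 (which the paper cites as $\S_{V,W}/\S_{V,V}>0$ and you cite in the equivalent form $\qdim_V(W)=\S_{V,W}/\S_{V,V}\geq1$), and both first extract $\S_{V,V}>0$ before concluding. The paper is slightly terser, writing the $S$-transformation as $\ch_V(\i/y)=\S_{V,V}\sum_W(\S_{V,W}/\S_{V,V})\ch_W(\i y)$ and reading off the sign directly, whereas you unpack more of the asymptotic reasoning behind the \cite{DJX13} lemma; but the substance is the same.
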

The proposition needs the positivity assumption (Assumption~\ref{ass:p}), which will be introduced in the next chapter.
\begin{proof}
We consider Zhu's modular invariance for $W=V\in\Irr(V)$, $M=S\in\SLZ$, $v=\vac$ and $\tau=\i y$ for some $y>0$, i.e.\
\begin{equation*}
\ch_{V}(\i/y)=\sum_{W\in\Irr(V)}\S_{V,W}\ch_{W}(\i y)=\S_{V,V}\sum_{W\in\Irr(V)}\frac{\S_{V,W}}{\S_{V,V}}\ch_{W}(\i y).
\end{equation*}
It follows from the definition that $\ch_{V}(\i y)>0$ for any $y>0$ since $c,\rho(W)\in\Q$ for all $W\in \Irr(V)$ by Theorem~\ref{thm:11.3}. Moreover, under the assumptions of the proposition, Lemma~4.2 in \cite{DJX13} states that $\S_{V,W}/\S_{V,V}>0$ for all $W\in \Irr(V)$. Together with the above equation this implies that $\S_{V,V}>0$ and hence $\S_{W,V}>0$ for all $W\in \Irr(V)$.
\end{proof}

Moreover, one can show:
\begin{prop}[\cite{DLN15}, Corollary~3.6]\label{prop:sunitary}
Let $V$ satisfy Assumption~\ref{ass:n}. Then $\S^\dagger=\overline{\S}=\S\mathcal{C}=\S^3$. In particular $\S^\dagger\S=\S\S^\dagger=\id$, i.e.\ $\S$ is unitary.
\end{prop}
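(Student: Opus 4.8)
The plan is to reduce the chain of identities to a single reality relation and then to obtain that relation from the behaviour of Zhu's trace functions under complex conjugation. From the Verlinde formula (Theorem~\ref{thm:verlinde}) I take that $\S$ is symmetric and that $\mathcal{C}=\S^2$ is the permutation matrix with $\mathcal{C}_{W,X}=\delta_{X,W'}$. Symmetry gives immediately $\S^\dagger=\overline{\S^T}=\overline{\S}$, the first equality, and the last equality is purely formal, $\S\mathcal{C}=\S\cdot\S^2=\S^3$. Since $Z=S^2$ satisfies $Z^2=\id$ in $\SLZ$, I also have $\mathcal{C}^2=\rho_V(\id)=\id$ and $\S^4=\rho_V(S^4)=\id$. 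Thus the entire statement rests on the middle equality $\overline{\S}=\S\mathcal{C}$, i.e.\ entrywise $\overline{\S_{W,X}}=\S_{W,X'}$, and this one identity also yields the unitarity: assuming it, $\S^\dagger\S=\overline{\S}\,\S=\S\mathcal{C}\S=\S^2\mathcal{C}=\mathcal{C}^2=\id$ because $\mathcal{C}$, being a power of $\S$, commutes with $\S$; conversely $\S^\dagger\S=\id$ and symmetry force $\overline{\S}=\S^{-1}=\S^3=\S\mathcal{C}$.

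To prove the reality relation I would work with the representation $\rho_V$ globally. The antiholomorphic involution $\tau\mapsto-\overline{\tau}$ of $\H$ induces the automorphism $\alpha$ of $\SLZ$ given by conjugation with $\operatorname{diag}(1,-1)$, which sends $S\mapsto S^{-1}$ and $T\mapsto T^{-1}$. The target is the clean relation $\overline{\rho_V(M)}=\rho_V(\alpha(M))$ for all $M\in\SLZ$; evaluated at $S$ it gives exactly $\overline{\S}=\rho_V(S^{-1})=\S^{-1}=\S^3=\S\mathcal{C}$. On the generator $T$ this relation already holds, since $\T$ is diagonal (Proposition~\ref{prop:tmatrix}) with entries $\e^{(2\pi\i)(\rho(X)-c/24)}$ of modulus one and $\rho(X),c\in\Q$ (Theorem~\ref{thm:11.3}), so that $\overline{\T}=\T^{-1}=\rho_V(\alpha(T))$. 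It therefore suffices to verify the relation on $S$, i.e.\ to compute $\overline{\S}$ from the conjugated $S$-transformation of the trace functions, using that the $T_W(v,\tau)$ are linearly independent as $v$ and $\tau$ vary (the remark following Theorem~\ref{thm:zhumodinv}).

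The main obstacle is precisely this conjugation step for the full trace functions. For characters it is harmless: the $q$-expansion of $\ch_W$ has non-negative integer coefficients, whence $\overline{\ch_W(\tau)}=\ch_W(-\overline{\tau})$, and conjugating $\ch_W(-1/\tau)=\sum_X\S_{W,X}\ch_X(\tau)$ relates $\overline{\S}$ to $\S$. However, characters alone cannot isolate the individual entries $\S_{W,X}$, because contragredient modules share characters ($\ch_W=\ch_{W'}$) and the family $\{\ch_W\}$ is not linearly independent; one genuinely needs trace functions with arbitrary $v$, and conjugating $\tr_W o(v)q_\tau^{L_0-c/24}$ for general $v$ requires an antilinear structure on $V$ and its modules that Assumption~\ref{ass:n} does not by itself supply. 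Under Assumption~\ref{ass:n} the clean way around this is the modular tensor category structure on the module category of $V$ furnished by Huang (Theorem~\ref{thm:voamtc}): Zhu's $\S$ and $\T$ coincide with the categorical modular data, and for the $S$-matrix of a modular tensor category the identity $\overline{S}=SC$ (together with symmetry and $C=S^2$) is standard, whence unitarity. This is exactly the content of the cited \cite{DLN15}, Corollary~3.6, and completes the proof.
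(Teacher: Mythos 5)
Your formal reductions (symmetry gives $\S^\dagger=\overline{\S}$, the identity $\S\mathcal{C}=\S^3$ is trivial, and everything hinges on $\overline{\S}=\S\mathcal{C}$, which is equivalent to unitarity) are all correct, and your closing appeal to Huang's modular tensor category structure and to \cite{DLN15}, Corollary~3.6, is precisely the paper's own treatment: the paper states this proposition without proof, quoting it directly from that reference. So your proposal matches the paper's approach, with the added (correct) bookkeeping showing how all the claimed equalities collapse to the single reality relation.
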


We can immediately deduce that the representation $\rho_V$ is unitary under the assumptions of the above proposition.
\begin{cor}\label{cor:rhounitary}
Let $V$ satisfy Assumption~\ref{ass:n}. Then the representation $\rho_V\colon\SLZ\to\GL(\V(V))$ is unitary.
\end{cor}
\begin{proof}
We just saw that under the given assumptions $\S$ is unitary. As stated above, the $T$-matrix is in general a diagonal matrix with entries $\e^{(2\pi\i)(\rho(W)-c/24)}$, which makes the matrix unitary since $c$ and all $\rho(W)$ are in $\Q$ by Theorem~\ref{thm:11.3}. Altogether, the representation $\rho_V\colon\SLZ\to\GL(\V(V))$ is unitary since $S$ and $T$ generate $\SLZ$.
\end{proof}

Finally, we can use the relation $TSTST=S$ to prove the following formula for the $S$-matrix involving the conformal weights:
\begin{prop}\label{prop:ststst}
Let $V$ be a rational, $C_2$-cofinite \voa{} of central charge $c$. Then
\begin{equation*}
\S_{X,Y}=\sum_{W\in \Irr(V)}\S_{X,W}\S_{W,Y}\e^{(2\pi\i)(\rho(X)+\rho(Y)+\rho(W)-c/8)}
\end{equation*}
for all $X,Y\in\Irr(V)$.
\end{prop}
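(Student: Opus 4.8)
The plan is to exploit the relation $TSTST=S$ in $\SLZ$, which follows immediately from the presentation relation $(ST)^3=S^2$ recorded above: writing $STSTST=(ST)^3=S^2$ and cancelling one factor of $S$ on the left gives $TSTST=S$. Since $\rho_V$ is a group homomorphism and $\S=\rho_V(S)$, $\T=\rho_V(T)$, applying $\rho_V$ to both sides yields the matrix identity
\begin{equation*}
\T\S\T\S\T=\S
\end{equation*}
in $\GL(\V(V))$. Note that no assumption beyond rationality and $C_2$-cofiniteness is needed here, since both this relation and the properties of $\T$ used below hold in that generality.

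Next I would compute the $(X,Y)$-entry of the left-hand side explicitly. By Proposition~\ref{prop:tmatrix} the matrix $\T$ is diagonal with entries $\T_{X,X}=\e^{(2\pi\i)(\rho(X)-c/24)}$. Writing out
\begin{equation*}
(\T\S\T\S\T)_{X,Y}=\sum_{a,b,c,d\in\Irr(V)}\T_{X,a}\S_{a,b}\T_{b,c}\S_{c,d}\T_{d,Y},
\end{equation*}
the diagonality of $\T$ collapses the summation, forcing $a=X$, $c=b$ and $d=Y$. Setting $W:=b$ this leaves
\begin{equation*}
(\T\S\T\S\T)_{X,Y}=\T_{X,X}\,\T_{Y,Y}\sum_{W\in\Irr(V)}\T_{W,W}\,\S_{X,W}\S_{W,Y}.
\end{equation*}

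Finally I would combine the three diagonal factors, whose product is
\begin{equation*}
\T_{X,X}\T_{Y,Y}\T_{W,W}=\e^{(2\pi\i)(\rho(X)+\rho(Y)+\rho(W)-3c/24)}=\e^{(2\pi\i)(\rho(X)+\rho(Y)+\rho(W)-c/8)},
\end{equation*}
the essential bookkeeping being that the three copies of $-c/24$ add up to $-c/8$. Substituting this back and using $(\T\S\T\S\T)_{X,Y}=\S_{X,Y}$ from the matrix identity gives exactly the claimed formula. This argument is short and essentially mechanical; the only point requiring care is the correct collapse of the diagonal matrices in the fivefold product and the tracking of the exponential prefactors, so I do not anticipate a genuine obstacle beyond this bookkeeping.
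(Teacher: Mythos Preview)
Your proof is correct and follows exactly the approach of the paper: derive the matrix identity $\S=\T\S\T\S\T$ from the relation $TSTST=S$ in $\SLZ$, then read off the $(X,Y)$-entry using the diagonal form of $\T$ from Proposition~\ref{prop:tmatrix}. The paper's own proof is in fact just a one-line reference to this identity, so your write-up merely spells out the bookkeeping.
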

\begin{proof}
This follows directly from the corresponding formula $\S=\T\S\T\S\T$ and the explicit formula for $\T$ in Proposition~\ref{prop:tmatrix}.
\end{proof}

\section{Twisted Modules}\label{sec:twmod}

We also need the concept of twisted modules for \voa{}s. A good historical overview of the development of twisted modules can be found in the introduction of \cite{DL96}. In the definition of twisted modules, the Jacobi identity is twisted by an automorphism of the \voa{}. Twisted vertex operators were first studied in \cite{Lep85,FLM87} in the context of \voa{}s associated with lattices. These twisted vertex operators satisfy the twisted Jacobi identity as described in \cite{FLM87,FLM88,Lep85,Lep88}. The important case where the \voa{} isomorphism originates from the $(-1)$-isometry on the lattice is essential in the construction of the Moonshine module $V^\natural$ \cite{FLM84,FLM87,FLM88}. The general notion of twisted module was first introduced in \cite{FFR91,Don94}.

\minisec{Automorphisms of \VOA{}s}

For convenience, let us repeat the definition of \voa{} automorphisms (see Definition~\ref{defi:hom}). An automorphism of a \voa{} $V$ is a vector-space automorphism $g\in\Aut_\C(V)$ such that $gY(a,x)g^{-1}=Y(ga,x)$ for all $a\in V$, $g\vac=\vac$ and $g\omega=\omega$. By definition, $gY(\omega,x)=Y(\omega,x)g$, i.e.\ $g$ commutes with the modes of $\omega$, such as $L_0=\omega_1$. In particular, any \voa{} automorphism leaves the grading invariant, i.e.\ $\wt(gv)=\wt(v)$ for homogeneous $v\in V$.

Let $g$ be an automorphism of $V$ of finite order $n\in\Ns$. Then $V$ decomposes as
\begin{equation*}
V=\bigoplus_{r\in\Z_n}V^r
\end{equation*}
into eigenspaces
\begin{equation*}
V^r:=\{v\in V\;|\;gv=\xi_n^rv\}
\end{equation*}
of $g$ where $\xi_n:=\e^{(2\pi\i)1/n}$ is a primitive $n$-th root of unity.

\minisec{Twisted Modules}
We are now ready to define twisted modules. Note that there is a subtlety in the definition of twisted modules for \voa{}s:
\begin{rem}[Sign Convention]\label{rem:convtwmod}
What is defined as a twisted module in \cite{DLM00} for instance (newer definition) is defined as the twisted module for the inverse automorphism in \cite{DL96,DLM98} for instance (older definition). In this text, we will use the definition from \cite{DLM00} since this makes the modular invariance theorem (see Theorem~\ref{thm:1.4}) have the expected form, as mentioned in \cite{DLM00}, Remark~3.1.
\end{rem}

Moreover, we allow twisted modules to be $\C$-graded as in \cite{DLM98,DLM00} and not just $\Q$-graded as in older definitions, such as in \cite{DL96}.

\begin{defi}[Twisted \VOA{} Module]
Let $V$ be a \voa{} of central charge $c$ and let $g$ be an automorphism of $V$ of order $n\in\Ns$. A $g$-twisted $V$-module $W$ is given by the data:
\begin{itemize}
\item (\emph{space of states}) a $\C$-graded vector space
\begin{equation*}
W=\bigoplus_{\lambda\in\C}W_\lambda
\end{equation*}
with \emph{weight} $\wt(w)=\lambda$ for $w\in W_\lambda$, $\dim_\C(W_\lambda)<\infty$ for all $\lambda\in\C$ and $W_\lambda=\{0\}$ for $\lambda$ ``sufficiently small in the sense of modifications by $(1/n)\Z$'', i.e.\ for fixed $\lambda\in\C$, $W_{\lambda+k/n}=\{0\}$ for sufficiently negative $k\in\Z$,
\item (\emph{vertex operators}) a linear map
\begin{equation*}
Y_W(\cdot,x)\colon V\to\End_\C(W)[[x^{\pm{1/n}}]]
\end{equation*}
taking each $a\in V$ to a field
\begin{equation*}
a\mapsto Y_W(a,x)=\sum_{k\in(1/n)\Z}a_kx^{-k-1}
\end{equation*}
where for each $w\in W$, $a_kw=0$ for $k$ sufficiently large or equivalently a map $Y_W(\cdot,x)\colon V\otimes_\C W\to W((x^{1/n}))$.
\end{itemize}
These data are subject to the following axioms:
\begin{itemize}
\item (\emph{twisting compatibility}) For $a\in V^r$,
\begin{equation*}
Y_W(a,x)=\sum_{k\in -r/n+\Z}a_nx^{-k-1}.
\end{equation*}
\item (\emph{left vacuum axiom}) $Y_W(\vac,x)=\id_W$.
\item (\emph{translation axiom}) For any $a\in V$,
\begin{equation*}
Y_W(Ta,x)=\partial_x Y_W(a,x).
\end{equation*}
\item (\emph{twisted Jacobi identity}) For $a\in V^r$, $b\in V$,
\begin{align*}
&\iota_{x_1,x_0}x_2^{-1}\left(\frac{x_1-x_0}{x_2}\right)^{r/n}\delta\left(\frac{x_1-x_0}{x_2}\right)Y_W(Y(a,x_0)b,x_2)\\
&=\iota_{x_1,x_2}x_0^{-1}\delta\left(\frac{x_1-x_2}{x_0}\right)Y_W(a,x_1)Y_W(b,x_2)\\
&\quad-\iota_{x_2,x_1}x_0^{-1}\delta\left(\frac{x_2-x_1}{-x_0}\right)Y_W(b,x_2)Y_W(a,x_1).
\end{align*}
\item (\emph{Virasoro relations}) The modes $L_k^W:=\omega_{k+1}^W$ of
\begin{equation*}
Y_W(\omega,x)=\sum_{k\in\Z}\omega_k^Wx^{-k-1}=\sum_{k\in\Z}L_k^Wx^{-k-2}
\end{equation*}
satisfy the \emph{Virasoro relations} at central charge $c$, i.e.\
\begin{equation*}
[L^W_m,L^W_k]=(m-k)L^W_{m+k}+\frac{m^3-m}{12}\delta_{m+k,0}\id_Wc
\end{equation*}
for $m,k\in\Z$. Moreover, $L^W_0w=\wt(w)w$ for homogeneous $w\in W$.
\end{itemize}
\end{defi}

Analogously to the untwisted case one can also define \emph{weak} and \emph{admissible $g$-twisted modules} \cite{DLM98}.

\minisec{Elementary Categorical Notions}

\emph{Homomorphisms}, \emph{isomorphisms}, \emph{endomorphisms} and \emph{automorphism} between $g$-twisted modules for the same \voa{} are defined as expected. The same applies to \emph{quotient modules}, \emph{direct-sum modules}, \emph{irreducible} modules, \emph{completely reducible} modules, etc.

\minisec{Schur's Lemma}

Schur's lemma (c.f.\ Proposition~\ref{prop:schur}) also holds in the twisted case:
\begin{prop}[Schur's Lemma, Twisted Version]\label{prop:twistedschur}
Let $V$ be a \voa{}, $g$ an automorphism of $V$ of finite order and $W$ an irreducible $g$-twisted $V$-module. Then
\begin{equation*}
\End_V(W)=\C.
\end{equation*}
\end{prop}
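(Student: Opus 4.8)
The plan is to mimic the proof of the untwisted Schur's lemma (Proposition~\ref{prop:schur}, i.e.\ \cite{LL04}, Proposition~4.5.5), which carries over verbatim once the graded structure of irreducible twisted modules is in place. Let $f\in\End_V(W)$, meaning $f$ commutes with every $g$-twisted module vertex operator $Y_W(v,x)$, $v\in V$. Taking $v=\omega$ shows in particular that $f$ commutes with $L_0^W=\omega_1^W$, so $f$ preserves each weight space $W_\lambda$; that is, $f$ is grade-preserving, exactly as for untwisted module homomorphisms.

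Next I would exhibit a distinguished finite-dimensional, nonzero weight space on which to find an eigenvalue. The twisting compatibility axiom gives $\wt(a_k)=\wt(a)-k-1\in(1/n)\Z$ for homogeneous $a\in V^r$ and $k\in-r/n+\Z$, so every mode shifts the weight by an element of $(1/n)\Z$. Hence $W$ decomposes as a direct sum of twisted submodules indexed by the congruence classes of $\C/((1/n)\Z)$. Since $W$ is irreducible it consists of a single such class, and by the boundedness-from-below axiom we may write
\begin{equation*}
W=\bigoplus_{k=0}^\infty W_{\rho+k/n}
\end{equation*}
for some conformal weight $\rho\in\C$, with $W_\rho\neq\{0\}$ and $\dim_\C(W_\rho)<\infty$.

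Now $f$ restricts to a linear endomorphism of the nonzero finite-dimensional complex vector space $W_\rho$, which therefore possesses an eigenvalue $\mu\in\C$ with a nonzero eigenvector $w\in W_\rho$. The map $f-\mu\,\id_W$ again lies in $\End_V(W)$, and its kernel is a twisted $V$-submodule of $W$ containing $w\neq 0$, hence is nonzero. By irreducibility of $W$ the kernel equals all of $W$, so $f=\mu\,\id_W$. This proves $\End_V(W)=\C\,\id_W\cong\C$.

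I do not expect a genuine obstacle here: the argument is routine linear algebra combined with irreducibility. The only point requiring care is the second step, namely justifying that an irreducible $g$-twisted module has a nonzero, finite-dimensional lowest weight space $W_\rho$; this rests on the congruence-class decomposition modulo $(1/n)\Z$ together with the grading axioms in the definition of twisted modules, rather than on any deep input.
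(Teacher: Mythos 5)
Your proof is correct and is exactly the argument the paper relies on: the paper states the twisted version without proof, as the untwisted argument of \cite{LL04}, Proposition~4.5.5, carries over verbatim, and your write-up (grade-preservation via commutation with $L_0^W$, the congruence-class decomposition modulo $(1/n)\Z$ giving a nonzero finite-dimensional lowest weight space as in Definition~\ref{defi:sec3}, then the eigenvalue argument with $\ker(f-\mu\,\id_W)$ and irreducibility) is precisely that adaptation. No gaps.
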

\begin{cor}\label{cor:twistedschur}
Let $V$ be a \voa{}, $g$ an automorphism of $V$ of finite order and $W^1$ and $W^2$ irreducible $g$-twisted $V$-modules. Then
\begin{equation*}
\Hom_V(W^1,W^2)\cong\begin{cases}\C&\text{if }W^1\cong W^2,\\0&\text{if }W^1\ncong W^2.\end{cases}
\end{equation*}
\end{cor}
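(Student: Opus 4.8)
The plan is to derive this corollary from the twisted Schur's Lemma (Proposition~\ref{prop:twistedschur}) by exactly the same formal argument that turns Proposition~\ref{prop:schur} into Corollary~\ref{cor:schur} in the untwisted setting. The only input specific to the twisted case is that kernels and images of $g$-twisted module homomorphisms are again $g$-twisted submodules. I would record this first: for $f\in\Hom_V(W^1,W^2)$ the defining relation $f(Y_{W^1}(v,x)w)=Y_{W^2}(v,x)f(w)$ shows that $\ker(f)$ is stable under every $Y_{W^1}(v,x)$ and that $\im(f)$ is stable under every $Y_{W^2}(v,x)$, so both are submodules.

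First I would treat the case $W^1\ncong W^2$. Let $f\in\Hom_V(W^1,W^2)$ be arbitrary. Since $\ker(f)$ is a submodule of the irreducible module $W^1$, it equals either $W^1$ or $\{0\}$. In the former case $f=0$. In the latter case $f$ is injective, so $\im(f)$ is a nonzero submodule of the irreducible module $W^2$ and hence equals $W^2$; then $f$ is an isomorphism, contradicting $W^1\ncong W^2$. Therefore $f=0$, and $\Hom_V(W^1,W^2)=0$.

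Next I would treat the case $W^1\cong W^2$. Fixing an isomorphism $\phi\colon W^1\to W^2$, postcomposition $h\mapsto\phi\circ h$ defines a linear bijection $\End_V(W^1)\to\Hom_V(W^1,W^2)$ with inverse $f\mapsto\phi^{-1}\circ f$. By the twisted Schur's Lemma, $\End_V(W^1)=\C\,\id_{W^1}$, and hence $\Hom_V(W^1,W^2)\cong\C$.

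There is no genuine obstacle here: the entire substantive content—that the $\C$-grading with finite-dimensional graded pieces bounded below forces an endomorphism of an irreducible $g$-twisted module to be a scalar—is already packaged in Proposition~\ref{prop:twistedschur}. The step above is purely formal module theory and is identical in structure to the proof of the untwisted Corollary~\ref{cor:schur}.
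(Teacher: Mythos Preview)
Your proof is correct and follows exactly the approach implicit in the paper: the paper does not give an explicit proof for this corollary but treats it as an immediate consequence of Proposition~\ref{prop:twistedschur} by the same formal argument used for Corollary~\ref{cor:schur} in the untwisted case, which is precisely what you have written out.
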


\minisec{Conformal Weight}

As in the untwisted case, we can define a conformal weight for irreducible twisted modules (see \cite{DLM00}, Section~3):

\begin{defi}[Conformal Weight]\label{defi:sec3}
Let $V$ be a \voa{} and $g$ an automorphism of $V$ of order $n\in\Ns$. Let $W$ be an irreducible $g$-twisted $V$-module. Then $W$ has grading
\begin{equation*}
W=\bigoplus_{\lambda\in\rho+(1/n)\N} W_{\lambda}=\bigoplus_{k=0}^\infty W_{\rho+k/n}
\end{equation*}
for some $\rho\in\C$, called \emph{conformal weight} of $W$ and denoted by $\rho(W)$.
\end{defi}

\minisec{Holomorphicity}

Holomorphic \voa{}s have by definition only one irreducible module up to isomorphism. If the \voa{} is $C_2$-cofinite, then the same is true for the twisted modules. As for untwisted modules, there is also a statement about the rationality of the conformal weights, which we only need for the special case of holomorphic \voa{}s.

\begin{thm}[\cite{DLM00}, Theorem~1.2 or Theorems 10.3 and 11.1]\label{thm:10.3}
Let $V$ be a holomorphic \voa{} satisfying condition $C_2$. Let $g$ be an automorphism of $V$ of finite order. Then $V$ possesses a unique irreducible $g$-twisted $V$-module up to isomorphism, call it $V(g)$. Moreover, the conformal weight of $V(g)$ is rational.
\end{thm}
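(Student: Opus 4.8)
The plan is to derive both assertions from the modular invariance of twisted and twined trace functions, which is the main technical result of \cite{DLM00}. The decisive feature of a holomorphic \voa{} is that its space of untwisted characters is one-dimensional (spanned by $\ch_V$), and more generally, since $V$ is the unique irreducible $V$-module, the $g$-twined trace on $V$ spans a one-dimensional space of functions. I would begin by recording the module-theoretic framework: using $C_2$-cofiniteness, the $g$-twisted Zhu algebra $A_g(V)$ is finite-dimensional, and its irreducible modules correspond to the lowest-weight spaces of the irreducible $g$-twisted $V$-modules. This already bounds the number of irreducible $g$-twisted modules and reduces existence to exhibiting a single nonzero one.

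For existence and uniqueness I would exploit the $S$-transformation $\tau\mapsto-1/\tau$, under which the $g$-twisted sector is interchanged with the $g$-twined untwisted sector (up to replacing $g$ by $g^{-1}$, which is harmless as $g^{-1}$ is again of finite order). Since $V$ is the unique irreducible module, the $g$-twined trace $v\mapsto\tr_V o(v)\,g\,q_\tau^{L_0-c/24}$ is a single well-defined function, and it is nonzero because its leading term comes from $V_0=\C\vac$. By the modular invariance theorem, the linearly independent trace functions $T_M(v,\tau)$ of the distinct irreducible $g$-twisted modules $M$ are carried by $S$ into the span of these $g$-twined traces, which for holomorphic $V$ is one-dimensional when $v$ and $\tau$ are allowed to vary. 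As $S$ is invertible inside the finite-dimensional $\SLZ$-representation, there is at most one irreducible $g$-twisted module; and since the nonzero $g$-twined trace must lie in the $S$-image, there is at least one. Hence there is a unique irreducible $g$-twisted $V$-module $V(g)$.

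I would then establish rationality of $\rho(V(g))$ as the twisted counterpart of Theorem~\ref{thm:11.3}. Because $g$ has order $n$, the transformation $T^n\colon\tau\mapsto\tau+n$ fixes the $g$-twisted sector, so it acts on the one-dimensional space spanned by $\ch_{V(g)}$ by a scalar; reading off the expansion $\ch_{V(g)}(\tau)=q_\tau^{\rho(V(g))-c/24}\sum_{k\geq0}\dim_\C\!\big(V(g)_{\rho(V(g))+k/n}\big)q_\tau^{k/n}$, this scalar equals $\e^{(2\pi\i)n(\rho(V(g))-c/24)}$. Since the $g$-twisted trace functions form a finite-dimensional $\SLZ$-module, this scalar is forced to be a root of unity, which is exactly the mechanism underlying Theorem~\ref{thm:11.3}; consequently $n(\rho(V(g))-c/24)\in\Q$. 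As $V$ is holomorphic, hence rational and $C_2$-cofinite, Theorem~\ref{thm:11.3} gives $c\in\Q$, and therefore $\rho(V(g))\in\Q$.

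The main obstacle is the modular invariance theorem itself. Proving that the twisted and twined trace functions close up under $\SLZ$ and span a finite-dimensional space requires the full apparatus of recursion formulas for $n$-point functions on the torus, together with the finite-dimensionality furnished by $C_2$-cofiniteness; establishing that the resulting $\SLZ$-action on this space is well-defined and that its $T$-eigenvalues are roots of unity is precisely the heavy lifting carried out in \cite{DLM00}. Everything above, by contrast, is a comparatively soft dimension count once that machinery is in place.
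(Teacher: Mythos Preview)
The paper does not prove this theorem; it simply cites \cite{DLM00} (Theorems~10.3 and~11.1 there). Your sketch is essentially the argument of \cite{DLM00}: the $S$-transformation interchanges the $(g,1)$- and $(1,g^{-1})$-sectors, the latter is one-dimensional for holomorphic $V$, and linear independence of the twisted trace functions (over varying $v$ and $\tau$) then forces existence and uniqueness of $V(g)$; rationality of $\rho(V(g))$ follows from the twisted analogue of the Anderson--Moore/Zhu argument behind Theorem~\ref{thm:11.3}.

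One point deserves sharpening. The sentence ``since the $g$-twisted trace functions form a finite-dimensional $\SLZ$-module, this scalar is forced to be a root of unity'' is not quite right as stated: finite-dimensionality of a $\SLZ$-representation alone does not force $T$-eigenvalues to be roots of unity. What \cite{DLM00} actually use (as in Theorem~\ref{thm:11.3}) is that the trace functions are solutions of a modular linear ODE with regular singular point at $q=0$ and coefficients in $\C(j)$; it is this analytic structure, combined with the $\SLZ$-action, that forces the exponents at the cusp---and hence $n(\rho(V(g))-c/24)$---to be rational. You gesture at this by invoking ``the mechanism underlying Theorem~\ref{thm:11.3}'', but the preceding clause should be corrected. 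With that adjustment your outline matches the cited proof.
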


In Theorem~\ref{thm:confwnn} we prove an improvement of the second statement by showing that the conformal weight of $V(g)$ lies in $(1/n^2)\Z$ if $n$ is the order of $g$.

\minisec{Contragredient Module}

\emph{Contragredient modules} are defined exactly in the same way as in the untwisted case (see \cite{DLM98}, Section~3). One has the following result:
\begin{prop}[\cite{DLM98}, Lemma~3.7]\label{prop:lem3.7}
Let $V$ be a \voa{} and $g$ an automorphism of $V$ of finite order. Let $W$ be an admissible $g$-twisted $V$-module. Then the contragredient module $W'$ is an admissible $g^{-1}$-twisted $V$-module.
\end{prop}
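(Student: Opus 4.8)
The plan is to imitate the untwisted contragredient construction of Section~\ref{sec:cmibf} verbatim and then check that the twisting gets inverted. Concretely, writing $W=\bigoplus_\lambda W_\lambda$ for the admissible $g$-twisted module, I would define $W'=\bigoplus_\lambda W_\lambda^*$ to be its graded dual and equip it with the adjoint vertex operators $Y_{W'}(\cdot,x)$ determined by $\langle Y_{W'}(v,x)w',w\rangle=\langle w',Y_W^*(v,x)w\rangle$ for $v\in V$, $w\in W$, $w'\in W'$, where $Y_W^*$ is the adjoint operator of Section~\ref{sec:cmibf}. By construction $W'$ carries the same grading as $W$, so the boundedness of the grading in steps of $(1/n)\Z$ transfers unchanged, and the left vacuum axiom together with the translation axiom is verified exactly as in the untwisted case and needs no modification. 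The two axioms that genuinely see the automorphism are the twisting compatibility and the twisted Jacobi identity, and these are where the passage from $g$ to $g^{-1}$ must surface.

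First I would settle the twisting compatibility. Since $g$ is a \voa{} automorphism it commutes with every mode of $\omega$, in particular with $L_0$ and $L_1$, so each eigenspace $V^r$ is preserved by $L_0$, by $L_1$, and hence by the operator $\e^{xL_1}(-x^2)^{L_0}$ occurring in $Y_W^*$. Consequently, for $a\in V^r$ the vector $\e^{xL_1}(-x^2)^{L_0}a$ is a series in integral powers of $x$ with coefficients again in $V^r$, and the $g$-twisted field of any vector of $V^r$ involves only powers of $x$ in $r/n+\Z$. The substitution $x\mapsto x^{-1}$ built into $Y_W^*$ negates these exponents, so $Y_{W'}(a,x)$ involves exactly the powers in $-r/n+\Z$. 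As $a\in V^r$ is an eigenvector of $g^{-1}$ for the eigenvalue $\xi_n^{-r}$, this is precisely the compatibility condition demanded of a $g^{-1}$-twisted field, and the first half of the verification is complete.

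The heart of the matter is the twisted Jacobi identity, which I would derive by adapting the proof of the untwisted contragredient theorem (\cite{FHL93}, Theorem~5.2.1) while tracking the extra twisting factor $\bigl((x_1-x_0)/x_2\bigr)^{r/n}$ carried by the $g$-twisted identity for $a\in V^r$. The algebraic engine is unchanged: the conjugation formulas for $\e^{xL_1}$ and $(-x^2)^{L_0}$ inside a vertex operator still hold on the twisted module because $\omega\in V^0$ makes $Y_W(\omega,x)$ an ordinary integer-moded field, so $L_{-1},L_0,L_1$ behave exactly as in the untwisted theory. Pairing the $g$-twisted Jacobi identity for $W$ against $w\in W$ and $w'\in W'$ and performing the variable inversions inherent in $Y_W^*$ sends the exponent $r/n$ of the twisting factor to $-r/n$, which is exactly the twisting factor of the $g^{-1}$-twisted Jacobi identity. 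I expect the main obstacle to lie precisely in this fractional-power bookkeeping: because $r/n\notin\Z$ in general, the factor $\bigl((x_1-x_0)/x_2\bigr)^{r/n}$ must be expanded with a fixed binomial convention $\iota_{x_1,x_0}$ and the branch choice $x_1-x_0=x_1+\e^{-\pi\i}x_0$ of \eqref{eq:dl93}; one must keep this choice consistent through the inversion $x_i\mapsto x_i^{-1}$ and verify that the accumulated phases $\e^{\pm\pi\i r/n}$ recombine to yield precisely the $g^{-1}$-twisted identity rather than the $g$-twisted one marred by a spurious root of unity.

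Finally, the Virasoro relations transfer without incident: since $\omega$ is $g$-fixed, the computation of $Y_{W'}(\omega,x)$ is identical to the untwisted case, the modes $L_n^{W'}$ satisfy the Virasoro relations at the same central charge $c$, and $L_0^{W'}$ acts on $W_\lambda^*$ as multiplication by $\lambda$ because it is the adjoint of $L_0^{W}$ on $W_\lambda$. Together with the axioms already checked, this exhibits $(W',Y_{W'})$ as an admissible $g^{-1}$-twisted $V$-module, as claimed.
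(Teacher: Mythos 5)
The paper offers no proof of this statement: it is imported verbatim from \cite{DLM98}, Lemma~3.7, so there is nothing in the text to compare your argument against. Your plan is nonetheless the standard one and coincides with the proof in the cited reference: take the graded dual with the adjoint vertex operators, use that $g$ commutes with $L_0$ and $L_1$ so that $\e^{xL_1}(-x^2)^{L_0}$ preserves each eigenspace $V^r$, observe that the inversion $x\mapsto x^{-1}$ carries the exponent lattice $r/n+\Z$ of $Y_W(a,x)$ to $-r/n+\Z$, which is exactly the compatibility condition for the $g^{-1}$-eigenvalue $\xi_n^{-r}$ of $a\in V^r$, and then push the twisted Jacobi identity through the adjoint.

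The one genuine shortfall is the one you flag yourself: you assert, but do not verify, that the branch conventions attached to $\bigl((x_1-x_0)/x_2\bigr)^{r/n}$ survive the substitutions $x_i\mapsto x_i^{-1}$ and reassemble into the $g^{-1}$-twisted identity with exponent $-r/n$ and no spurious root of unity. Since everything else in your write-up is formal bookkeeping identical to \cite{FHL93}, that computation \emph{is} the content of the lemma, and as it stands your text is a plausible plan rather than a complete proof. The step does go through: the key tool is item~(3) of Lemma~\ref{lem:formaldistributions}, valid for arbitrary complex exponents, which lets one transport the fractional power $\bigl((x_1-x_0)/x_2\bigr)^{r/n}$ across the delta function while tracking the sign of the exponent; carrying this out as in \cite{DLM98}, Section~3, produces precisely the $g^{-1}$-twisted Jacobi identity. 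A last cosmetic remark: \cite{DLM98} uses the opposite sign convention for twisted modules from this text (Remark~\ref{rem:convtwmod}), but the statement ``the contragredient of a $g$-twisted module is $g^{-1}$-twisted'' is invariant under exchanging $g$ and $g^{-1}$, so your argument and the quoted statement are consistent.
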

The analogous statement is true for (ordinary) twisted modules.

\section{Dong, Li and Mason's Modular Invariance}\label{sec:dlmmodinv}
Dong, Li and Mason have generalised Zhu's modular invariance to twisted modules \cite{DLM00}. We mainly need the special case for holomorphic \voa{}s and cyclic automorphism groups: we consider a holomorphic, $C_2$-cofinite \voa{} $V$ and cyclic group $G=\langle\sigma\rangle$ of automorphisms of $V$ of order $n\in\Ns$. In this case, by Theorem~\ref{thm:10.3}, for each $i\in\Z_n$ there is an up to isomorphism unique irreducible $\sigma^i$-twisted $V$-module, called $V(\sigma^i)$, with some conformal weight $\rho_i:=\rho(V(\sigma^i))\in\Q$.

Consider the automorphisms $\phi_i(\sigma^j)$ of the vector spaces $V(\sigma^i)$ obeying
\begin{equation*}
\phi_i(\sigma^j)Y_{V(\sigma^i)}(v,x)\phi_i^{-1}(\sigma^j)=Y_{V(\sigma^i)}(\sigma^jv,x)
\end{equation*}
for all $i,j\in\Z_n$, $v\in V$, introduced in Section~\ref{sec:fpvosa} (and~\ref{sec:duality}), which are unique up to a scalar factor. Then for $g=\sigma^i,h=\sigma^j\in G$, $i,j\in\Z_n$, we consider the \emph{twisted trace function}
\begin{equation*}
T(\cdot,i,j,q)\colon V\to q^{\rho_i-c/24}\C[[q^{1/n}]]
\end{equation*}
defined by
\begin{equation*}
T(v,i,j,q):=\tr_{V(\sigma^i)} o(v)\phi_i(\sigma^j)q^{L_0-c/24}=q^{\rho_i-c/24}\!\!\!\sum_{k\in((i,n)/n)\N}\!\!\!\tr|_{V(\sigma^i)_{\rho_i+k}}o(v)\phi_i(\sigma^j)q^{k}
\end{equation*}
for $v\in V$. Again, it is not clear that this formal power series in $q^{(i,n)/n}$ with some shift $\rho_i-c/24$ in the exponents is well-defined viewed as a function on the upper half-plane $\H$ by substituting $q\mapsto\e^{2\pi\i\tau}$.

We let $\SLZ$ act from the right on $\Z_n\times\Z_n$ by matrix multiplication. The modular invariance result for the twisted trace functions reads:
\begin{thm}[\cite{DLM00}, Theorem~1.4]\label{thm:1.4}
Let $V$ be a holomorphic, $C_2$-cofinite \voa{} and $G=\langle\sigma\rangle$ a cyclic group of automorphisms of $V$ of order $n\in\Ns$. Then:
\begin{enumerate}
\item For each $v\in V$ and each pair of automorphisms $g=\sigma^i,h=\sigma^j\in G$, $i,j\in\Z_n$, the twisted trace function $T(v,i,j,\tau)$ is a well-defined, holomorphic function on $\H$.
\item The twisted trace functions satisfy
\begin{equation*}
(c\tau+d)^{-k}T(v,i,j,M.\tau)=\sigma(i,j,M)T(v,(i,j)M,\tau)
\end{equation*}
for each $M=\left(\begin{smallmatrix}a&b\\c&d\end{smallmatrix}\right)\in\SLZ$ and for each homogeneous $v\in V_{[k]}$ with respect to Zhu's second grading, $k\in\Z$, where $\sigma(i,j,M)\in\C$ are constants only depending on $i$, $j$ and $M$ and on the choice of $\phi_i(\sigma^j)$ for the different $i$ and $j$.
\end{enumerate}
\end{thm}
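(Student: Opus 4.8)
The plan is to adapt Zhu's proof of the untwisted modular invariance result (Theorem~\ref{thm:zhumodinv}) to the twisted, $h$-equivariant setting, and then to exploit the holomorphicity of $V$ to collapse the a priori vector-valued transformation into the scalar law stated in part~(2). First I would set up genus-one correlation functions: for the unique irreducible $\sigma^i$-twisted module $V(\sigma^i)$ (which exists by Theorem~\ref{thm:10.3}) equipped with the equivariance operator $\phi_i(\sigma^j)$, define the $m$-point functions
\[
F_{i,j}(v_1,z_1;\dots;v_m,z_m;\tau):=\tr_{V(\sigma^i)}Y_{V(\sigma^i)}(v_1,z_1)\cdots Y_{V(\sigma^i)}(v_m,z_m)\,\phi_i(\sigma^j)\,q_\tau^{L_0-c/24},
\]
using Zhu's second (``square-bracket'') coordinates. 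The one-point function $F_{i,j}(v;\tau)$ is, up to normalisation, the trace function $T(v,i,j,\tau)$.

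Next I would derive the twisted recursion formulas. Commuting a vertex operator past $\phi_i(\sigma^j)q_\tau^{L_0-c/24}$ produces the twist $\sigma^i$ from the twisted Jacobi identity and the factor $\sigma^j$ from the equivariance of $\phi_i(\sigma^j)$, so that the commutator contributes precisely the quasi-periodic coefficients given by twisted Weierstrass/Eisenstein functions $P_k\!\left[\begin{smallmatrix}i/n\\j/n\end{smallmatrix}\right](z,\tau)$, with multipliers $\xi_n^{\pm i}$ and $\xi_n^{\pm j}$. These reduce $m$-point functions to $(m-1)$-point functions with coefficients that are twisted modular forms. Using $C_2$-cofiniteness one then shows, exactly as in Zhu and in \cite{DLM00}, that the one-point functions satisfy finite-order, modular-invariant linear ODEs in $\tau$ whose coefficients are holomorphic modular forms; absolute convergence and holomorphicity on $\H$ follow, which is part~(1). (Here Theorem~\ref{thm:11.3} and Theorem~\ref{thm:10.3} guarantee the rational shifts $\rho_i-c/24$ that make the $q$-expansions well-posed.)

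For part~(2) the decisive input is the covariance of the twisted elliptic functions: for $M=\left(\begin{smallmatrix}a&b\\c&d\end{smallmatrix}\right)\in\SLZ$ one has
\[
P_k\!\left[\begin{smallmatrix}i/n\\j/n\end{smallmatrix}\right]\!\Big(\tfrac{z}{c\tau+d},M.\tau\Big)=(c\tau+d)^k\,P_k\!\left[\begin{smallmatrix}i'/n\\j'/n\end{smallmatrix}\right](z,\tau),\qquad (i',j')=(i,j)M.
\]
Hence $(c\tau+d)^{-\wt[v]}T(v,i,j,M.\tau)$, regarded as a function of $v$ and $\tau$, satisfies the same recursion relations that characterise trace functions, but now with the permuted multipliers $(i,j)M$; it therefore lies in the space of $\sigma^{j'}$-equivariant trace functions on $V(\sigma^{i'})$. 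Now I invoke holomorphicity of $V$: for each fixed pair $(i',j')$ this space is one-dimensional, since $V(\sigma^{i'})$ is the unique irreducible $\sigma^{i'}$-twisted module (Theorem~\ref{thm:10.3}) and the equivariant trace functions on it are determined up to scalar by the Schur-type rigidity of $\phi_{i'}(\sigma^{j'})$. Consequently $(c\tau+d)^{-\wt[v]}T(v,i,j,M.\tau)$ must equal a scalar multiple of $T(v,(i,j)M,\tau)$; because both sides are linear in $v$ and the identity holds for all $v$ simultaneously, the scalar $\sigma(i,j,M)$ is independent of $v$ and $\tau$ and depends only on $i$, $j$, $M$ and the normalisation of the $\phi$'s, absorbing the transformation of the $q^{-c/24}$ and multiplier factors.

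Since $\SLZ$ is generated by $S$ and $T$, it suffices to verify the law on these generators and then propagate it through the relations (which pins down the cocycle $\sigma(i,j,M)$); the $T$-case is immediate from the $q$-expansion, with $(i,j)\mapsto(i,i+j)$. The main obstacle is the $S$-case: proving the ODE-based convergence and, above all, the covariance of the twisted elliptic functions under $S=\left(\begin{smallmatrix}0&-1\\1&0\end{smallmatrix}\right)$, where the theta-transformation (Poisson summation) input appears and where the multiplier permutation $(i,j)\mapsto(j,-i)$ is produced. Establishing these twisted transformation identities, and controlling the resulting scalars, is the technical heart of the argument.
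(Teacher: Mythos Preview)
The paper does not prove this theorem; it is quoted verbatim from \cite{DLM00} and used as a black box. So there is no ``paper's own proof'' to compare against. That said, your sketch is an accurate outline of the Dong--Li--Mason argument: one defines a space of $(g,h)$-conformal blocks via twisted recursion relations with coefficients in twisted Eisenstein series, shows via $C_2$-cofiniteness that these satisfy modular ODEs (convergence and holomorphy), proves that the modular transform of such a block is again a block for the pair $(g,h)M$ by the covariance of the twisted elliptic functions, and finally uses that for holomorphic $V$ the unique irreducible $\sigma^{i'}$-twisted module gives a one-dimensional space of trace functions.

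One point deserves care: the step ``this space is one-dimensional'' is not immediate from the uniqueness of $V(\sigma^{i'})$ alone. What one needs is that the abstractly defined conformal-block space $\mathcal{C}(\sigma^{i'},\sigma^{j'})$ is \emph{spanned} by trace functions (this is a separate theorem in \cite{DLM00}, their modular invariance theorem in the general form), and only then does uniqueness of the twisted module collapse the dimension to one. Your phrasing elides this, jumping from ``satisfies the same recursion relations'' to ``must be a scalar multiple'', but the identification of conformal blocks with trace functions is precisely the hard analytic content of the DLM argument.
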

This shows that the $T(v,i,j,\tau)$, $i,j\in\Z_n$, form a vector-valued modular form of weight $k=\wt[v]$ for some representation of $\SLZ$ on $\C[\Z_n\times\Z_n]$.

\minisec{\texorpdfstring{$S$}{S}-Transformation}
The constants $\lambda_{i,j}:=\sigma(i,j,S)\in\C$ for $S=\left(\begin{smallmatrix}0&-1\\1&0\end{smallmatrix}\right)$ describing the $S$-transformation will be of particular importance in Chapter~\ref{ch:fpvosa}. Explicitly,
\begin{equation}\label{eq:lambda}
(1/\tau)^kT(v,i,j,-1/\tau)=\lambda_{i,j}T(v,j,-i,\tau)
\end{equation}
for all $i,j\in\Z_n$, homogeneous $v\in V_{[k]}$ and $k\in\Z$. The constants $\lambda_{i,j}$ depend only on $i$, $j$ and the choice of the $\phi_i$.

Consider the non-zero trace function
\begin{equation*}
\ch_{V(\sigma^i)}(\tau):=T(\vac,i,0,\tau)=\tr_{V(\sigma^i)} q_\tau^{L_0-c/24}=q_\tau^{\rho_i-c/24}\!\!\!\sum_{k\in((i,n)/n)\N}\!\!\!\dim_\C(V(\sigma^i)_{\rho_i+k})q_\tau^k.
\end{equation*}
Applying the $S$-transformation four times gives back the original function with a factor of $\lambda_{i,0}\lambda_{0,-i}\lambda_{-i,0}\lambda_{0,i}$, which has to be 1 and hence we can conclude that $\lambda_{i,0}\neq0\neq\lambda_{0,i}$ for all $i\in\Z_n$.

As we will see in Remark~\ref{rem:phi0}, $\phi_0(\sigma^j)=\sigma^j$ is a possible choice for the automorphisms $\phi_0(\sigma^j)$ on the untwisted $V$-module $V(\sigma^0)\cong V$. For the orbifold theory developed in Chapter~\ref{ch:fpvosa} we need to determine the value of the $\lambda_{i,j}$ under certain assumptions. At this point we can deduce the following result:
\begin{lem}\label{lem:lambdapos}
Let $V$ be a holomorphic, $C_2$-cofinite \voa{} and let $G=\langle\sigma\rangle$ be a cyclic group of automorphisms of $V$ of order $n\in\Ns$. Assume in addition that $V$ is of CFT-type. If we choose $\phi_0(\sigma^j)=\sigma^j$, then $\lambda_{0,j}\in\R_{>0}$, $j\in\Z_n$.
\end{lem}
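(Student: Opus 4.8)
The plan is to evaluate the $S$-transformation \eqref{eq:lambda} for $i=0$ and $v=\vac$ (so that $k=\wt[\vac]=0$) on the imaginary axis and to read off the sign of the constant $\lambda_{0,j}$. With the choice $\phi_0(\sigma^j)=\sigma^j$ the left-hand trace function becomes the equivariant character $T(\vac,0,j,\tau)=\tr_V\sigma^j q_\tau^{L_0-c/24}=\sum_{n\geq0}\tr_{V_n}(\sigma^j)\,q_\tau^{n-c/24}$, while by the definition of the twisted character the right-hand side is $\lambda_{0,j}\,T(\vac,j,0,\tau)=\lambda_{0,j}\,\ch_{V(\sigma^j)}(\tau)$. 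Setting $\tau=\i y$ with $y>0$, so that $-1/\tau=\i/y$, the relation reads
\begin{equation*}
T(\vac,0,j,\i/y)=\lambda_{0,j}\,\ch_{V(\sigma^j)}(\i y).
\end{equation*}
Since $\lambda_{0,j}$ is a constant independent of $\tau$, it suffices to show that both sides are positive real numbers for some $y>0$.

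The right-hand side is immediate: for $\tau=\i y$ the variable $q_\tau=\e^{-2\pi y}$ lies in $(0,1)$, and since $\rho_j$ and $c$ are rational by Theorem~\ref{thm:11.3}, the series $\ch_{V(\sigma^j)}(\i y)=q_\tau^{\rho_j-c/24}\sum_k\dim_\C(V(\sigma^j)_{\rho_j+k})q_\tau^k$ is a convergent sum of strictly positive terms, hence a positive real number for every $y>0$.

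The crux is the left-hand side, which a priori is complex because the coefficients $\tr_{V_n}(\sigma^j)$ are sums of $n$-th roots of unity. Here I would use that $V$, being holomorphic, is simple, self-contragredient and of CFT-type, so that it carries a non-degenerate symmetric invariant bilinear form $(\cdot,\cdot)$ (Proposition~\ref{prop:2.6} and the surrounding discussion), which by Proposition~\ref{prop:bilorth} restricts to a non-degenerate form on each finite-dimensional $V_n$. The key point is that $\sigma$ is an isometry of this form: the form $(\sigma\,\cdot,\sigma\,\cdot)$ is again invariant---since $\sigma$ fixes $\omega$ and $\vac$ it commutes with $L_0$ and $L_1$ and hence with the passage to adjoint vertex operators---so by uniqueness of the invariant form on the simple \voa{} $V$ it equals $(\cdot,\cdot)$ up to a scalar, and evaluating on $\vac$ shows the scalar is $1$. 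Consequently each $\sigma^j|_{V_n}$ preserves a non-degenerate bilinear form, so its eigenvalue multiset is invariant under $\mu\mapsto\mu^{-1}$; as the eigenvalues are roots of unity, $\mu^{-1}=\overline{\mu}$, the multiset is stable under complex conjugation, and therefore $\tr_{V_n}(\sigma^j)\in\R$ for all $n$.

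Granting this reality, $T(\vac,0,j,\i/y)=\sum_{n\geq0}\tr_{V_n}(\sigma^j)\,\e^{-(2\pi/y)(n-c/24)}$ is real for every $y>0$, and its sign is determined by letting $y\to0^+$, i.e.\ by pushing the argument $\i/y$ towards the cusp $\i\infty$. Using $V_0=\C\vac$ and $\sigma^j\vac=\vac$, the term $n=0$ contributes $\tr_{V_0}(\sigma^j)\,\e^{(2\pi/y)(c/24)}=\e^{(2\pi/y)(c/24)}$, and factoring it out gives $T(\vac,0,j,\i/y)=\e^{(2\pi/y)(c/24)}\bigl(1+\sum_{n\geq1}\tr_{V_n}(\sigma^j)\,\e^{-(2\pi/y)n}\bigr)$, where the remaining sum tends to $0$ as $y\to0^+$. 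Hence $T(\vac,0,j,\i/y)>0$ for all sufficiently small $y>0$, and comparison with the displayed identity forces $\lambda_{0,j}>0$. The main obstacle is precisely the reality of the traces $\tr_{V_n}(\sigma^j)$; once the isometry property of $\sigma$ is established, the remainder is a routine convergence-and-sign analysis at the cusp.
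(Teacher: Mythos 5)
Your proof is correct, and it shares the paper's skeleton: both specialise the $S$-transformation \eqref{eq:lambda} at $i=0$, $v=\vac$ to the imaginary axis, both use that the twisted character $\ch_{V(\sigma^j)}$ is a sum of non-negative terms there, and both extract the sign from the regime where the untwisted side's argument tends to $\i\infty$ and the vacuum term dominates. The genuine difference is how you handle the crux you correctly identify — that $\tr_{V_n}(\sigma^j)$ is a priori complex. The paper never proves these traces are real: it divides by $\lambda_{0,j}$, observes that the resulting left-hand side lies in the closed set $\R_{\geq0}$ for \emph{every} $t>0$ (because it equals the manifestly non-negative twisted-character side), and then computes its limit as $t\to\infty$ to be $1/\lambda_{0,j}$ via dominated convergence, so that $1/\lambda_{0,j}\in\R_{\geq0}$. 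You instead prove the reality of each $\tr_{V_n}(\sigma^j)$ outright, by showing $\sigma$ is an isometry of the (unique up to scalar, by simplicity, self-contragredience and CFT-type) non-degenerate invariant bilinear form, so that the eigenvalue multiset of $\sigma^j|_{V_n}$ is stable under $\mu\mapsto\mu^{-1}=\overline{\mu}$. This is a genuinely different key lemma: it costs you the bilinear-form machinery of Section~1.5, but it buys a pointwise conclusion (positivity of $T(\vac,0,j,\i/y)$ for all small $y$, hence $\lambda_{0,j}$ as a ratio of two positive numbers), so you never need the limit-interchange argument in the form the paper uses, nor the prior observation that $\lambda_{0,j}\neq 0$. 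One small slip: for the reality/rationality of the twisted conformal weight $\rho_j$ you should cite Theorem~\ref{thm:10.3} (the twisted statement), not Theorem~\ref{thm:11.3}, which concerns untwisted modules; this does not affect the argument.
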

\begin{proof}
From Theorem~\ref{thm:1.4} we obtain
\begin{equation*}
T(\vac,0,j,-1/\tau)=\lambda_{0,j}T(\vac,j,0,\tau),
\end{equation*}
where both sides are holomorphic functions on $\H$. The left-hand side is given by
\begin{equation*}
T(\vac,0,j,-1/\tau)=\tr_{V}\sigma^j\e^{(2\pi\i)(-1/\tau)(L_0-c/24)}=\sum_{k=0}^\infty \e^{(2\pi\i)(-1/\tau)(k-c/24)}\tr_{V_k}\sigma^j|_{V_k},
\end{equation*}
the right-hand side is
\begin{equation*}
\lambda_{0,j}T(\vac,j,0,\tau)=\lambda_{0,j}\tr_{V(\sigma^j)}q_\tau^{L_0-c/24}=\lambda_{0,j}\!\!\!\sum_{k\in\rho_j+((j,n)/n)\N}\!\!\!\e^{(2\pi\i\tau)(k-c/24)}\dim_\C(V(\sigma^j)_k)
\end{equation*}
where $\rho_j\in\Q$ is the conformal weight of $V(\sigma^j)$. We specialise to $\tau=\i/t$ with $t\in\R_{>0}$ and obtain
\begin{equation*}
\frac{1}{\lambda_{0,j}}\sum_{k=0}^\infty\!\!\!\e^{-(2\pi t)(k-c/24)}\tr_{V_k}\sigma^j|_{V_k}=\sum_{k\in\rho_j+((j,n)/n)\N}\e^{-(2\pi/t)(k-c/24)}\dim_\C(V(\sigma^j)_k).
\end{equation*}
For convenience, we bring $c$ on the right-hand side and get
\begin{equation*}
\frac{1}{\lambda_{0,j}}\sum_{k=0}^\infty \e^{-2\pi t k}\tr_{V_k}\sigma^j|_{V_k}=\e^{2\pi(1/t-t)c/24}\!\!\!\sum_{k\in\rho_j+((j,n)/n)\N}\!\!\!\e^{-2\pi k/t}\dim_\C(V(\sigma^j)_k)\in\R_{\geq0}
\end{equation*}
since $k,c\in\R$ (by Proposition~\ref{prop:div8} and Definition~\ref{defi:sec3}) and the dimensions are non-negative. Since $\R_{\geq0}$ is closed in $\C$, this means that the limit for $t\to\infty$ of the left-hand side is in $\R_{\geq0}$ if it exists.

The limit of the left-hand side is given by
\begin{align*}
\lim_{t\to\infty}\frac{1}{\lambda_{0,j}}\sum_{k=0}^\infty \e^{-2\pi t k}\tr_{V_k}\sigma|_{V_k}&\stackrel{!}{=}\frac{1}{\lambda_{0,j}}\sum_{k=0}^\infty\lim_{t\to\infty}\e^{-2\pi t k}\tr_{V_k}\sigma|_{V_k}\\
&=\frac{1}{\lambda_{0,j}}\sum_{k=0}^\infty\delta_{k,0}\tr_{V_k}\sigma|_{V_k}=\frac{1}{\lambda_{0,j}}\tr_{V_0}\sigma|_{V_0}\\
&=\frac{1}{\lambda_{0,j}},
\end{align*}
where we used that $V_0=\C\vac$ and $\sigma\vac=\vac$ and assumed that we are allowed to interchange the infinite sum and the limit. Hence $\lambda_{0,j}\in\R_{>0}$.

It remains to show that the summation and limit-taking processes may be swapped. For this we use the dominated convergence theorem for series. The summands
\begin{equation*}
a_t^{(k)}:=\e^{-2\pi t k}\tr_{V_k}\sigma|_{V_k}
\end{equation*}
are dominated by
\begin{equation*}
b^{(k)}:=\e^{-2\pi k}\dim_\C(V_k),
\end{equation*}
i.e.\
\begin{equation*}
|a_t^{(k)}|\leq b^{(k)}
\end{equation*}
for all $t\geq 1$, where we use that for a finite-order automorphism $A$ on a finite-dimensional $\C$-vector space of dimension $m$, $|\tr(A)|=|\sum_{i=1}^m\mu_i|\leq\sum_{i=1}^m|\mu_i|=\sum_{i=1}^m 1=m$ where the $\mu_i$ are the eigenvalues of $A$ counted with algebraic multiplicities. Each of the limits
\begin{equation*}
\lim_{t\to\infty}a_t^{k}=\lim_{t\to\infty}\e^{-2\pi t k}\tr_{V_k}\sigma|_{V_k}=\delta_{k,0}\tr_{V_k}\sigma|_{V_k},
\end{equation*}
$k\in\N$, exists and hence we are allowed to interchange the infinite sum and the limit if the sum
\begin{equation*}
\sum_{k=0}^\infty b^{(k)}=\sum_{k=0}^\infty \e^{-2\pi k}\dim_\C(V_k)
\end{equation*}
converges. But this is nothing else but the Fourier expansion of $\e^{-2\pi c/24}T(\vac,0,0,\tau)$ evaluated at $\tau=i$.
\end{proof}

\chapter{Simple Currents}\label{ch:sc}

In this chapter we study \voa{}s whose irreducible modules are all simple currents and determine their fusion algebra. We show that under certain assumptions these \voa{}s have group-like fusion and that the conformal weights modulo~1 of the irreducible modules define a quadratic form on the fusion group. Moreover, in this case, Zhu's representation is up to a character the well-known Weil representation. We also describe simple-current extensions and their representation theory for a \voa{} with group-like fusion.

\section{Simple Currents}\label{sec:simplecurrents}
Let $V$ be a rational \voa{}. Then the fusion product $\boxtimes_V$ exists and is unique up to isomorphism for any two $V$-modules. We define:
\begin{defi}[Simple Current]
Let $V$ be a rational \voa{}. A $V$-module $U$ is called \emph{simple-current module} (or \emph{simple current} in short) if the tensor product $U\boxtimes_V W$ is irreducible
for every irreducible $V$-module $W$.
\end{defi}
Clearly, the adjoint module $V$ is always a simple current since it is the unit in the fusion algebra $\V(V)$. The following property holds:
\begin{prop}\label{prop:scirr}
Let $V$ be a rational \voa{}. Then the following are equivalent:
\begin{enumerate}
 \item $V$ is simple.
 \item Every simple-current module is irreducible.
\end{enumerate}
\end{prop}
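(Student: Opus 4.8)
The plan is to prove the two implications separately, each reducing to the behaviour of the adjoint module $V$ under the fusion product, together with the earlier observation that simplicity and irreducibility coincide for \voa{}s. In both directions the only real content is assembling the unitality and commutativity properties of $\boxtimes_V$ with this simple/irreducible dictionary.

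For the implication (1) $\Rightarrow$ (2) I would start from an arbitrary simple current $U$ (not assumed irreducible) and exploit that the adjoint module serves as the unit. Assuming $V$ is simple, $V$ is irreducible as a $V$-module, so the defining property of a simple current may be applied with $W=V$: the fusion product $U\boxtimes_V V$ is irreducible. Then commutativity of the fusion product (Proposition~\ref{prop:rem3.5a}) together with unitality (Proposition~\ref{prop:rem3.5b}) gives $U\boxtimes_V V\cong V\boxtimes_V U\cong U$, so $U$ itself is irreducible. This settles the first direction.

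For the implication (2) $\Rightarrow$ (1) I would use that the adjoint module is \emph{always} a simple current, regardless of whether $V$ is simple: by unitality (Proposition~\ref{prop:rem3.5b}) one has $V\boxtimes_V W\cong W$, which is irreducible for every irreducible $W$, so $V$ satisfies the definition of a simple current. Hypothesis (2) then forces $V$ to be irreducible as a $V$-module, and by the equivalence of simplicity and irreducibility for \voa{}s established above this is precisely the statement that $V$ is simple. Equivalently, one may argue by contraposition: if $V$ is not simple it is not irreducible, yet it is a simple current, contradicting (2).

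Since both directions are assembled from the established unitality and commutativity of $\boxtimes_V$ and the simple/irreducible correspondence, I do not expect a substantial obstacle. The only points requiring care are to invoke the fusion isomorphisms of Propositions~\ref{prop:rem3.5a} and~\ref{prop:rem3.5b} for a possibly reducible module $U$ — which is legitimate because rationality guarantees the fusion product exists for every pair of $V$-modules — and to record that $V$ qualifies as a simple current before any simplicity hypothesis has been imposed, which is exactly what makes the second implication immediate.
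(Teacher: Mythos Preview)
Your proof is correct and follows essentially the same approach as the paper: both directions reduce to the facts that $V$ is the unit for $\boxtimes_V$ and that simplicity equals irreducibility for \voa{}s. You are slightly more explicit in invoking commutativity (Proposition~\ref{prop:rem3.5a}) to get $U\boxtimes_V V\cong U$ from the one-sided unitality of Proposition~\ref{prop:rem3.5b}, which the paper's proof glosses over.
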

\begin{proof}
Let $V$ be simple. Let $U$ be a simple-current module. Since $V$ is simple, i.e.\ irreducible, $U\cong U\boxtimes_V V$ has to be irreducible by the definition of simple-current module. Conversely, assume that every simple-current module is irreducible. Since $V$ is a simple-current module, it is irreducible, i.e.\ simple.
\end{proof}

\minisec{Positivity Assumption}

A number of results in this chapter and the subsequent ones depend on the following positivity assumption:
\begin{customass}{\textbf{\textsf{P}}}[Positivity Assumption]\label{ass:p}
Let $V$ be a simple \voa{}. Suppose that for all irreducible $V$-modules $W$, $\rho(W)>0$ if $W\not\cong V$ and $\rho(V)=0$.
\end{customass}

\minisec{$S$-Matrix for Simple-Current Modules}
Consider a simple, rational \voa{} with an irreducible module $U$. Whether $U$ is a simple current can under certain assumptions be read off from the $S$-matrix:
\begin{prop}[\cite{DJX13}, Proposition~4.17]\label{prop:4.17}
Let $V$ satisfy Assumptions~\ref{ass:n}\ref{ass:p}. Then an irreducible module $U\in \Irr(V)$ is a simple-current if and only if $\S_{V,U}=S_{V,V}$.
\end{prop}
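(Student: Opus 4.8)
The plan is to characterise simple currents through their \emph{quantum dimensions} $\qdim(W):=\S_{V,W}/\S_{V,V}$, which by Proposition~\ref{prop:pospos} are well-defined strictly positive reals under Assumptions~\ref{ass:n}\ref{ass:p}. Extending $\qdim$ additively to arbitrary completely reducible modules, the argument rests on two observations. First, $U$ is a simple current exactly when it is invertible in the fusion algebra with inverse $U'$, i.e. $U\boxtimes U'\cong V$. Second, $\qdim$ is multiplicative: $\qdim(X\boxtimes Y)=\qdim(X)\qdim(Y)$ for irreducible $X,Y$. The whole statement then falls out of comparing quantum dimensions on both sides of $U\boxtimes U'\cong V$.

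First I would record the auxiliary facts. From the symmetry and unitarity of $\S$ (Proposition~\ref{prop:sunitary}) together with $\overline{\S}=\S\mathcal{C}$ and $\mathcal{C}_{Y,Z}=\delta_{Y,Z'}$ one gets $\S_{X,Z'}=\overline{\S_{X,Z}}$. Plugging this into the Verlinde formula (Theorem~\ref{thm:verlinde}) collapses the sum $\sum_{Z\in\Irr(V)}N_{X,Y}^Z\,\S_{V,Z}$ to its $X=V$ term, since $\sum_Z\S_{X,Z'}\S_{V,Z}=(\S\S^\dagger)_{V,X}=\delta_{V,X}$; one obtains $\sum_Z N_{X,Y}^Z\,\S_{V,Z}=\S_{X,V}\S_{Y,V}/\S_{V,V}$, and dividing by $\S_{V,V}$ gives multiplicativity of $\qdim$. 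The same reality input, via $\S_{V,U'}=\overline{\S_{V,U}}=\S_{V,U}$ (using positivity from Proposition~\ref{prop:pospos}), yields $\qdim(U')=\qdim(U)$. Finally, Proposition~\ref{prop:intsym} combined with Propositions~\ref{prop:cor3.4} and~\ref{prop:rem3.5b} and Corollary~\ref{cor:schur} shows $N_{X,Y}^V=\delta_{Y,X'}$, so the unit $V$ occurs in $X\boxtimes Y$ precisely when $Y\cong X'$, and then with multiplicity one.

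For the forward direction, assume $U$ is a simple current. Then $U\boxtimes U'$ is irreducible and, by the last fact, contains $V$; hence $U\boxtimes U'\cong V$. Multiplicativity gives $\qdim(U)\qdim(U')=\qdim(V)=1$, and since $\qdim(U')=\qdim(U)>0$ we conclude $\qdim(U)=1$, that is $\S_{V,U}=\S_{V,V}$. For the converse, suppose $\S_{V,U}=\S_{V,V}$, so $\qdim(U)=\qdim(U')=1$. Writing $U\boxtimes U'=\bigoplus_Z N_{U,U'}^Z Z$ and applying $\qdim$ gives $\sum_Z N_{U,U'}^Z\qdim(Z)=1$; the term $Z=V$ already contributes $1$, and as all quantum dimensions are strictly positive every other term must vanish, so again $U\boxtimes U'\cong V$. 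To upgrade this invertibility to the simple-current property I would use the length $\ell(M)$, the number of irreducible constituents of $M$. Any fusion product of irreducibles is non-zero (it has positive quantum dimension), whence $\ell(U'\boxtimes M)\geq\ell(M)$; therefore, for any irreducible $W$, associativity (Proposition~\ref{prop:fusionassoc}) and $U'\boxtimes U\cong V$ give $1=\ell(W)=\ell\bigl(U'\boxtimes(U\boxtimes W)\bigr)\geq\ell(U\boxtimes W)\geq1$. Thus $U\boxtimes W$ is irreducible for every $W$, i.e. $U$ is a simple current.

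The step I expect to be the main obstacle is the converse: converting the single numerical identity $\S_{V,U}=\S_{V,V}$ into irreducibility of \emph{all} fusion products $U\boxtimes W$. The quantum-dimension computation only delivers $U\boxtimes U'\cong V$; the extra, slightly delicate ingredient is the length-monotonicity argument, which relies on every $U'\boxtimes Z$ being non-zero and on associativity of the fusion product. One must also be careful that $\qdim$ is genuinely multiplicative, and this is precisely where the unitarity of $\S$ and the strict positivity of $\S_{V,W}$ are indispensable.
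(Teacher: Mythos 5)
Your proof is correct, but there is nothing in the paper to compare it against: the paper never proves this proposition — it is quoted as a black box from \cite{DJX13}, Proposition~4.17 (as is its quantum-dimension reformulation, Proposition~\ref{prop:4.17b}, which is equivalent to it via Proposition~\ref{prop:lem4.2}). What you have done is reconstruct the argument of the cited source from ingredients the thesis itself also only quotes: positivity of $\S_{V,W}$ (Proposition~\ref{prop:pospos}), symmetry and unitarity of $\S$ (Theorem~\ref{thm:verlinde}, Proposition~\ref{prop:sunitary}), and the Verlinde formula, from which you correctly derive the multiplicativity $\sum_{Z} N_{X,Y}^Z \qdim_V(Z)=\qdim_V(X)\qdim_V(Y)$ and the fact $N_{X,Y}^V=\delta_{Y,X'}$; both directions then go through exactly as you describe. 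Your final step — upgrading the invertibility $U\boxtimes U'\cong V$ to irreducibility of every $U\boxtimes W$ via non-vanishing of fusion products, associativity (Proposition~\ref{prop:fusionassoc}) and the length estimate — is in substance the same argument the paper uses for the implication (2)$\Rightarrow$(1) of its Proposition~\ref{prop:scvoa}, with your positivity-of-$\qdim_V$ reasoning replacing the paper's contradiction argument for non-vanishing. One point you implicitly got right and that deserves to be made explicit: there is no circularity in invoking Proposition~\ref{prop:pospos}, since its proof rests on Lemma~4.2 of \cite{DJX13} (existence and positivity of quantum dimensions) and not on the proposition being proved here.
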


The following is formula~(2) in \cite{SY89}, where it is proved in a different context.
\begin{prop}\label{prop:ssss}
Let $V$ satisfy Assumption~\ref{ass:n}. Let $U$ be a simple current. Then
\begin{equation*}
\S_{U,Y}\S_{X,Y}=\S_{V,Y}\S_{U\boxtimes X,Y}
\end{equation*}
for any two irreducible modules $X,Y\in \Irr(V)$.
\end{prop}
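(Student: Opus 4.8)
The plan is to exploit the fact, encoded in the Verlinde formula (Theorem~\ref{thm:verlinde}), that the normalised columns of the $S$-matrix are precisely the one-dimensional characters of the fusion algebra. Concretely, for a fixed $Y\in\Irr(V)$ I would consider the quantity $\sum_{W\in\Irr(V)} N_{U,X}^W\,\S_{W,Y}$ and evaluate it in two different ways, reading off the claimed identity from the comparison.

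First I would record the elementary collapsing identity
\begin{equation*}
\sum_{W\in\Irr(V)}\S_{W,Y}\,\S_{U',W'}=\delta_{U',Y}
\end{equation*}
valid for all $U',Y\in\Irr(V)$. This follows from the structural properties of $\S$ collected earlier: $\S$ is symmetric, the square $\mathcal{C}=\S^2$ is the permutation matrix sending $W$ to its contragredient $W'$, and $\S^4=\mathcal{C}^2=\id$ (Proposition~\ref{prop:sunitary}). Indeed, writing $\mathcal{C}_{Z,W}=\delta_{Z,W'}$ gives $\S_{U',W'}=(\mathcal{C}\S)_{W,U'}$, so the left-hand side equals $(\S\mathcal{C}\S)_{Y,U'}=(\S^4)_{Y,U'}=\delta_{U',Y}$. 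This is the one genuinely technical step, and the only place where one must be careful with the bookkeeping of contragredients and the symmetry of $\S$; I expect it to be the main obstacle, while everything else is routine.

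Next I would insert the Verlinde formula $N_{U,X}^W=\sum_{U'}\S_{U,U'}\S_{X,U'}\S_{U',W'}/\S_{V,U'}$, interchange the finite sums, and apply the identity above to eliminate the sum over $W$:
\begin{equation*}
\sum_{W}N_{U,X}^W\,\S_{W,Y}=\sum_{U'}\frac{\S_{U,U'}\S_{X,U'}}{\S_{V,U'}}\sum_{W}\S_{W,Y}\,\S_{U',W'}=\sum_{U'}\frac{\S_{U,U'}\S_{X,U'}}{\S_{V,U'}}\,\delta_{U',Y}=\frac{\S_{U,Y}\,\S_{X,Y}}{\S_{V,Y}}.
\end{equation*}

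Finally I would bring in the hypothesis that $U$ is a simple current: then $U\boxtimes X$ is irreducible, so $N_{U,X}^W=\delta_{W,U\boxtimes X}$ and the left-hand side is simply $\S_{U\boxtimes X,Y}$. Comparing the two evaluations yields $\S_{U\boxtimes X,Y}=\S_{U,Y}\,\S_{X,Y}/\S_{V,Y}$, and multiplying through by $\S_{V,Y}$, which is non-zero by Theorem~\ref{thm:verlinde}, gives the asserted equality $\S_{U,Y}\,\S_{X,Y}=\S_{V,Y}\,\S_{U\boxtimes X,Y}$ for all $X,Y\in\Irr(V)$.
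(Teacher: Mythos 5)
Your proof is correct and follows essentially the same route as the paper's: both apply the Verlinde formula together with the simple-current property $N_{U,X}^W=\delta_{W,U\boxtimes X}$, and both reduce the resulting double sum using the identity $\sum_{Z}\S_{Z,Y}\S_{Z',W}=\delta_{Y,W}$, which comes from $\S^2=\mathcal{C}$ and $\S^4=\id$ exactly as you argue. The only cosmetic difference is that you evaluate $\sum_{W}N_{U,X}^{W}\S_{W,Y}$ in two ways, whereas the paper starts directly from $\S_{U\boxtimes X,Y}=\sum_{Z}\S_{Z,Y}\delta_{U\boxtimes X,Z}$ and substitutes; the computation is the same.
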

\begin{proof}
Since $U$ is a simple current, the Verlinde formula implies
\begin{equation*}
\delta_{U\boxtimes X,Z}=N_{U,X}^Z=\sum_{W\in \Irr(V)}\frac{\S_{U,W}\S_{X,W}\S_{Z',W}}{\S_{V,W}}
\end{equation*}
for $X,Z\in \Irr(V)$ and hence
\begin{align*}
\S_{U\boxtimes X,Y}&=\sum_{Z\in \Irr(V)}\S_{Z,Y}\delta_{U\boxtimes X,Z}=\sum_{Z,W\in \Irr(V)}\S_{Z,Y}\frac{\S_{U,W}\S_{X,W}\S_{Z',W}}{\S_{V,W}}\\
&=\sum_{W\in \Irr(V)}\frac{\S_{U,W}\S_{X,W}}{\S_{V,W}}\sum_{Z\in \Irr(V)}\S_{Z,Y}\S_{Z',W}.
\end{align*}
Using that $(\S^2)_{Z,L}=\delta_{Z',L}$ for $Z,L\in\Irr(V)$ we get
\begin{align*}
\sum_{Z\in \Irr(V)}\S_{Z,Y}\S_{Z',W}&=\sum_{Z,L\in \Irr(V)}\S_{Y,Z}\delta_{Z',L}\S_{L,W}=\sum_{Z,L\in \Irr(V)}\S_{Y,Z}(\S^2)_{Z,L}\S_{L,W}\\
&=(\S^4)_{Y,W}=\delta_{Y,W}
\end{align*}
and hence the above formula yields
\begin{align*}
\S_{U\boxtimes X,Y}&=\sum_{W\in \Irr(V)}\frac{\S_{U,W}\S_{X,W}}{\S_{V,W}}\delta_{Y,W}=\frac{\S_{U,Y}\S_{X,Y}}{\S_{V,Y}}.
\end{align*}
\end{proof}
There is a simple corollary:
\begin{cor}\label{cor:ss}
Let $V$ satisfy Assumptions~\ref{ass:n}\ref{ass:p}. Let $U$ be a simple current. Then
\begin{equation*}
\S_{V,X}=\S_{V,U\boxtimes X}
\end{equation*}
for all $X\in \Irr(V)$.
\end{cor}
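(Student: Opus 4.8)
The plan is to specialise the identity from Proposition~\ref{prop:ssss} to the module $Y=V$ and then exploit the characterisation of simple currents via the $S$-matrix. Concretely, I would first apply Proposition~\ref{prop:ssss} (which only needs Assumption~\ref{ass:n}) with $Y=V$, obtaining
\begin{equation*}
\S_{U,V}\S_{X,V}=\S_{V,V}\S_{U\boxtimes X,V}
\end{equation*}
for every $X\in\Irr(V)$. This already has the desired shape once the prefactors $\S_{U,V}$ and $\S_{V,V}$ are shown to coincide.

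The next step is to establish $\S_{U,V}=\S_{V,V}$. Since $U$ is a simple current and Assumptions~\ref{ass:n} and~\ref{ass:p} hold, Proposition~\ref{prop:4.17} gives $\S_{V,U}=\S_{V,V}$, and the symmetry of $\S$ (part of the Verlinde formula, Theorem~\ref{thm:verlinde}) yields $\S_{U,V}=\S_{V,U}=\S_{V,V}$. Substituting this into the displayed identity turns it into
\begin{equation*}
\S_{V,V}\S_{X,V}=\S_{V,V}\S_{U\boxtimes X,V}.
\end{equation*}

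Finally, I would cancel the common factor $\S_{V,V}$. Here the one point that genuinely requires the positivity assumption is the non-vanishing of $\S_{V,V}$: by Proposition~\ref{prop:pospos} we have $\S_{V,V}\in\R_{>0}$ under Assumptions~\ref{ass:n} and~\ref{ass:p}, so in particular $\S_{V,V}\neq0$ and the cancellation is legitimate. This leaves $\S_{X,V}=\S_{U\boxtimes X,V}$, and one more application of the symmetry of $\S$ rewrites this as $\S_{V,X}=\S_{V,U\boxtimes X}$, as claimed.

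I do not expect any serious obstacle here, since the corollary is essentially the $Y=V$ slice of Proposition~\ref{prop:ssss}; the only thing to be careful about is that cancelling $\S_{V,V}$ is justified, which is precisely where the positivity assumption (via Proposition~\ref{prop:pospos}) is invoked, and that all uses of $\S_{U,V}=\S_{V,U}$ and $\S_{X,V}=\S_{V,X}$ rest on the symmetry of $\S$ from Theorem~\ref{thm:verlinde}.
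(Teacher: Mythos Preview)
Your proof is correct and follows essentially the same approach as the paper: specialise Proposition~\ref{prop:ssss} to $Y=V$, use Proposition~\ref{prop:4.17} to identify $\S_{U,V}$ with $\S_{V,V}$, and cancel via the positivity from Proposition~\ref{prop:pospos}. The only difference is that you are slightly more explicit about invoking the symmetry of $\S$ from Theorem~\ref{thm:verlinde}, which the paper uses tacitly.
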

\begin{proof}
We apply the above proposition for $Y=V$, i.e.\ $\S_{U,V}\S_{X,V}=\S_{V,V}\S_{U\boxtimes X,V}$. Then we use that $\S_{V,U}=S_{V,V}$ by Proposition~\ref{prop:4.17}. This gives $\S_{X,V}=\S_{U\boxtimes X,V}$ since $\S_{V,U}\neq 0$ by Proposition~\ref{prop:pospos}.
\end{proof}

The following is essentially formula~(5) in \cite{SY89}, which is a special case of a more general formula in \cite{BYZ89}.
\begin{prop}\label{prop:sbilform}
Let $V$ satisfy Assumptions~\ref{ass:n}\ref{ass:p}. Let $U$ be a simple current. Then
\begin{equation*}
\S_{X,U}=\S_{X,V}\e^{(2\pi\i)(\rho(X)+\rho(U)-\rho(U\boxtimes X))}
\end{equation*}
for all $X\in \Irr(V)$.
\end{prop}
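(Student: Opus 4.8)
The plan is to derive the formula from the ``$\T\S\T\S\T=\S$'' identity of Proposition~\ref{prop:ststst}, using the simple-current product rule of Proposition~\ref{prop:ssss} to rewrite the relevant product of $\S$-entries and then re-assembling the result into a second instance of the same identity. First I would apply Proposition~\ref{prop:ststst} with $Y=U$, obtaining
\[
\S_{X,U}=\sum_{W\in\Irr(V)}\S_{X,W}\S_{W,U}\,\e^{(2\pi\i)(\rho(X)+\rho(U)+\rho(W)-c/8)}.
\]
Since $\S$ is symmetric (Theorem~\ref{thm:verlinde}), $\S_{W,U}=\S_{U,W}$, and for each $W\in\Irr(V)$ Proposition~\ref{prop:ssss} applied with $Y=W$ gives $\S_{U,W}\S_{X,W}=\S_{V,W}\S_{U\boxtimes X,W}$. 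Substituting this identity term by term replaces each summand by $\S_{V,W}\S_{U\boxtimes X,W}\,\e^{(2\pi\i)(\rho(X)+\rho(U)+\rho(W)-c/8)}$.

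Next I would factor the constant $\e^{(2\pi\i)(\rho(X)+\rho(U)-\rho(U\boxtimes X))}$ out of the sum, which is legitimate since $\Irr(V)$ is finite by rationality. The remaining sum is
\[
\sum_{W\in\Irr(V)}\S_{V,W}\S_{U\boxtimes X,W}\,\e^{(2\pi\i)(\rho(U\boxtimes X)+\rho(W)-c/8)},
\]
which I would recognise as exactly the right-hand side of Proposition~\ref{prop:ststst} applied with $X\mapsto V$ and $Y\mapsto U\boxtimes X$; here one uses $\rho(V)=0$ (as $V$ is of CFT-type) together with the symmetry $\S_{W,U\boxtimes X}=\S_{U\boxtimes X,W}$. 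Hence this sum equals $\S_{V,U\boxtimes X}$.

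Finally, Corollary~\ref{cor:ss} yields $\S_{V,U\boxtimes X}=\S_{V,X}$, and symmetry gives $\S_{V,X}=\S_{X,V}$, so that
\[
\S_{X,U}=\S_{X,V}\,\e^{(2\pi\i)(\rho(X)+\rho(U)-\rho(U\boxtimes X))},
\]
as claimed. I expect no genuine obstacle in this argument: the crux is the single observation that the two conformal-weight exponential factors coming from the two applications of Proposition~\ref{prop:ststst} differ precisely by $\e^{(2\pi\i)(\rho(X)+\rho(U)-\rho(U\boxtimes X))}$. The only points deserving care are the consistent use of the symmetry of $\S$, the correct bookkeeping of the simple-current identity in both index slots, and the harmless interchange of the scalar factor with the finite sum.
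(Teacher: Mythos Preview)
Your proposal is correct and follows essentially the same argument as the paper: apply Proposition~\ref{prop:ststst} with $Y=U$, use Proposition~\ref{prop:ssss} to rewrite $\S_{X,W}\S_{W,U}$ as $\S_{V,W}\S_{W,U\boxtimes X}$, recognise the resulting sum as a second instance of Proposition~\ref{prop:ststst} (using $\rho(V)=0$), and finish with Corollary~\ref{cor:ss}. The only cosmetic difference is that the paper isolates the second application of Proposition~\ref{prop:ststst} in the form $\S_{X,V}\e^{(2\pi\i)(-\rho(X)+c/8)}=\sum_W\S_{X,W}\S_{W,V}\e^{(2\pi\i)\rho(W)}$ before substituting, whereas you invoke it directly with $X\mapsto V$, $Y\mapsto U\boxtimes X$.
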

\begin{proof}
Consider the formula from Proposition~\ref{prop:ststst} for $Y=U$ and let $U$ be a simple current. Then Proposition~\ref{prop:ssss} implies
\begin{align*}
\S_{X,U}&=\sum_{W\in \Irr(V)}\S_{X,W}\S_{W,U}\e^{(2\pi\i)(\rho(X)+\rho(U)+\rho(W)-c/8)}\\
&=\sum_{W\in \Irr(V)}\S_{V,W}\S_{W,U\boxtimes X}\e^{(2\pi\i)(\rho(X)+\rho(U)+\rho(W)-c/8)}.
\end{align*}
On the other hand, consider the same formula for $Y=V$. This yields
\begin{equation*}
\S_{X,V}=\sum_{W\in \Irr(V)}\S_{X,W}\S_{W,V}\e^{(2\pi\i)(\rho(X)+\rho(W)-c/8)}
\end{equation*}
or equivalently
\begin{equation*}
\S_{X,V}\e^{(2\pi\i)(-\rho(X)+c/8)}=\sum_{W\in \Irr(V)}\S_{X,W}\S_{W,V}\e^{(2\pi\i)\rho(W)}.
\end{equation*}
Consequently, for the simple current $U$ we get
\begin{align*}
\S_{X,U}&=\e^{(2\pi\i)(\rho(X)+\rho(U)-c/8)}\sum_{W\in \Irr(V)}\S_{V,W}\S_{W,U\boxtimes X}\e^{(2\pi\i)\rho(W)}\\
&=\e^{(2\pi\i)(\rho(X)+\rho(U)-c/8)}\S_{U\boxtimes X,V}\e^{(2\pi\i)(-\rho(U\boxtimes X)+c/8)}\\
&=\S_{U\boxtimes X,V}\e^{(2\pi\i)(\rho(X)+\rho(U)-\rho(U\boxtimes X))}\\
&=\S_{X,V}\e^{(2\pi\i)(\rho(X)+\rho(U)-\rho(U\boxtimes X))},
\end{align*}
where we used Corollary~\ref{cor:ss} in the last step. 
\end{proof}

\section{Simple-Current \VOA{}s}\label{sec:scvoa}

For a simple, rational \voa{} $V$ we just saw in Proposition~\ref{prop:scirr} that
\begin{equation*}
\{\text{simple-current modules}\}\subseteq\{\text{irreducible modules}\}.
\end{equation*}
In the following we consider \voa{}s for which the converse is true, namely
\begin{equation*}
\{\text{irreducible modules}\}\subseteq\{\text{simple-current modules}\}.
\end{equation*}
\begin{defi}[Simple-Current \VOA{}]
Let $V$ be a rational \voa{}. $V$ is called \emph{simple-current \voa{}} if all irreducible $V$-modules are simple currents.
\end{defi}
For the purposes of this text we will usually consider simple simple-current \voa{}s. For these the irreducible modules are exactly the simple currents, i.e.\
\begin{equation*}
\{\text{irreducible modules}\}=\{\text{simple-current modules}\}.
\end{equation*}

Let us consider a simple-current \voa{} $V$. We index the finitely many isomorphism classes of irreducible $V$-modules by the set $F_V$, i.e.\
\begin{equation*}
\Irr(V)=\{W^\alpha\;|\;\alpha\in F_V\}
\end{equation*}
and, if necessary, choose representatives $W^\alpha$ for $\alpha\in F_V$. Then, since all irreducible modules are simple currents, for any $\alpha,\beta\in F_V$,
\begin{equation*}
W^\alpha\boxtimes_V W^\beta\cong :W^{\alpha+\beta}
\end{equation*}
for a certain element $\alpha+\beta\in F_V$. This defines a binary operation $+\colon F_V\times F_V\to F_V$. In terms of the fusion rules,
\begin{equation*}
N_{W^\alpha,W^\beta}^{W^\gamma}=:N_{\alpha,\beta}^\gamma=\delta_{\alpha+\beta,\gamma}
\end{equation*}
for $\alpha,\beta,\gamma\in F_V$.
Since the fusion product $\boxtimes_V$ is symmetric, so is the binary operation $+$ on $F_V$, which justifies the usage of the symbol $+$.

If $V$ is in addition $C_2$-cofinite and of CFT-type, then by Proposition~\ref{prop:fusionassoc} we know that the fusion algebra is associative, which proves that also $+$ is associative. This shows that $+$ endows the set of indices $F_V$ with the structure of a commutative semigroup.

If we further assume that the \voa{} $V$ is simple, then the adjoint module $V$ is irreducible and hence $+$ has a neutral element, namely the index of the unit $V$ in $\V(V)$. In this case, $F_V$ even has the structure of a commutative monoid. For convenience, let us assume that $0\in F_V$ and that 0 is the index of $V$, i.e.\ $V=W^0$. Then $0$ is the neutral element for $+$.

In order to get the structure of an abelian group on $F_V$ we still need the existence of an inverse. This can be proved to exist using the Verlinde formula (see Theorem~\ref{thm:verlinde}) under the assumption that $V$ is in addition self-contragredient, i.e.\ $V\cong V'$. We will denote by $\alpha'$ the index of the contragredient module $W^{\alpha'}\cong(W^\alpha)'$ of $W^\alpha$, $\alpha\in F_V$. In total, one obtains (see also \cite{LY08}, Corollary~1):
\begin{prop}\label{prop:sca}
Let $V$ satisfy Assumption~\ref{ass:n}. Assume that all irreducible $V$-modules are simple currents. Then the fusion algebra $\V(V)$ of $V$ is the group algebra $\C[F_V]$ of some finite abelian group $(F_V,+)$, i.e.\
\begin{equation*}
W^\alpha\boxtimes_V W^\beta\cong W^{\alpha+\beta}
\end{equation*}
for all $\alpha,\beta\in F_V$ where the neutral element in $F_V$ is given by 0, the index of $V=W^0$, and the inverse of $\alpha$ in $F_V$ is given by $\alpha'=:-\alpha$ the index of the contragredient module $W^{\alpha'}\cong(W^\alpha)'$ of $W^\alpha$.
\end{prop}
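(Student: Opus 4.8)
The plan is to build directly on the preceding discussion, where it was already shown that $(F_V,+)$ is a finite commutative monoid: closure under $+$ comes from the simple-current hypothesis, associativity from Proposition~\ref{prop:fusionassoc}, and the neutral element is $0$, the index of the unit $V=W^0$, with fusion rules $N_{\alpha,\beta}^\gamma=\delta_{\alpha+\beta,\gamma}$. To upgrade $F_V$ to a group it suffices to exhibit an inverse for each $\alpha\in F_V$, and the natural candidate is $\alpha'$, the index of the contragredient $W^{\alpha'}\cong(W^\alpha)'$. I would therefore prove the single statement $\alpha+\alpha'=0$, i.e.\ that $V$ occurs in $W^\alpha\boxtimes_V W^{\alpha'}$; finiteness of the group is automatic since $\Irr(V)$ is finite.

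Concretely, I would compute the fusion rule $N_{\alpha,\alpha'}^0$, which equals $\dim_\C\V_{W^\alpha\,W^{\alpha'}}^V$. Applying the $S_3$-symmetry of intertwining-operator spaces (Proposition~\ref{prop:intsym}) together with the fact that taking contragredients is an involution ($W''\cong W$) gives
\begin{equation*}
\V_{W^\alpha\,(W^\alpha)'}^V\cong\V_{W^\alpha\,V'}^{((W^\alpha)')'}\cong\V_{W^\alpha\,V'}^{W^\alpha}.
\end{equation*}
Here the self-contragredience of $V$ from Assumption~\ref{ass:n}, i.e.\ $V'\cong V=W^0$, is the decisive input: it lets me replace $V'$ by $V$, after which Proposition~\ref{prop:cor3.4}, the unitality and commutativity of the fusion product (Propositions~\ref{prop:rem3.5b} and \ref{prop:rem3.5a}), and Schur's lemma (Corollary~\ref{cor:schur}) yield
\begin{equation*}
\V_{W^\alpha\,V}^{W^\alpha}\cong\Hom_V(W^\alpha\boxtimes_V V,W^\alpha)\cong\Hom_V(W^\alpha,W^\alpha)\cong\C.
\end{equation*}
Hence $N_{\alpha,\alpha'}^0=1$. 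On the other hand, since $W^\alpha$ is a simple current we have $N_{\alpha,\alpha'}^0=\delta_{\alpha+\alpha',0}$, and comparing the two values forces $\alpha+\alpha'=0$.

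This shows $\alpha'=-\alpha$ is the inverse of $\alpha$, so $(F_V,+)$ is a finite abelian group, and the fusion algebra $\V(V)=\bigoplus_{\alpha\in F_V}\C W^\alpha$ with multiplication induced by $W^\alpha\boxtimes_V W^\beta\cong W^{\alpha+\beta}$ is by definition the group algebra $\C[F_V]$. I expect the one genuinely load-bearing step to be the multiplicity computation $N_{\alpha,\alpha'}^0=1$: the symmetry isomorphisms of Proposition~\ref{prop:intsym} are routine to invoke, but one must insert the self-contragredience of $V$ at exactly the right place, since without $V'\cong V$ the same computation would instead give $\delta_{\delta,0}$ with $\delta$ the index of $V'$, and the monoid would in general fail to admit inverses. (The Verlinde formula of Theorem~\ref{thm:verlinde} provides an alternative, more computational route to the same identity $N_{\alpha,\alpha'}^0=1$, but the intertwining-space symmetry argument above is cleaner and avoids any reference to the $S$-matrix entries.)
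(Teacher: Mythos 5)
Your proof is correct, but it reaches the key identity $N_{\alpha,\alpha'}^{0}=1$ by a different route than the paper. The paper derives Proposition~\ref{prop:sca} from the $\implies$-direction of Proposition~\ref{prop:scvoa}, whose proof computes
\begin{equation*}
N_{X,X'}^V=\sum_{W\in \Irr(V)}\frac{\S_{X,W}\S_{X',W}\S_{W,V'}}{\S_{V,W}}=\sum_{W\in \Irr(V)}\S_{X,W}\S_{X',W}=(\S^2)_{X',X''}=1
\end{equation*}
via the Verlinde formula, using $V'\cong V$ and the symmetry and unitarity properties of $\S$. You instead compute $\dim_\C\V_{W^\alpha\,(W^\alpha)'}^{V}$ directly from the $S_3$-symmetry of intertwining-operator spaces plus $W''\cong W$, reduce to $\V_{W^\alpha\,V}^{W^\alpha}\cong\End_V(W^\alpha)\cong\C$ via unitality and Schur, and insert self-contragredience $V'\cong V$ at the same decisive point; you correctly flag the Verlinde formula only as an alternative. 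Your route has the advantage of resting on the comparatively elementary Propositions~\ref{prop:intsym}, \ref{prop:cor3.4} and \ref{prop:rem3.5b} rather than on Huang's Verlinde formula (Theorem~\ref{thm:verlinde}), which is the deepest input in the paper's argument; the paper's route, on the other hand, comes packaged inside the equivalence of Proposition~\ref{prop:scvoa}, whose converse direction is needed elsewhere, so the Verlinde machinery is not really saved. One small remark: the second isomorphism in Proposition~\ref{prop:intsym} as printed, $\V_{W^1\,W^2}^{W^3}\cong\V_{W^1\,(W^3)'}^{(W^1)'}$, has a typo (the target should be $(W^2)'$, as the subsequent corollary on $\V_{W^1,W^2,W^3}$ confirms); your chain $\V_{W^\alpha\,(W^\alpha)'}^V\cong\V_{W^\alpha\,V'}^{((W^\alpha)')'}$ uses the corrected form, which is the right thing to do.
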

\begin{defi}[Fusion Group]
In the situation of the above proposition we call the finite abelian group $F_V$ such that $\V(V)=\C[F_V]$ the \emph{fusion group} of $V$.
\end{defi}

The statement of the above proposition immediately follows from the following one. At this point we only need the $\implies$-direction of the equivalence.
\begin{prop}\label{prop:scvoa}
Let $V$ be a rational, $C_2$-cofinite \voa{} of CFT-type. Then the following are equivalent:
\begin{enumerate}
\item\label{enum:equiv1} $V$ is simple, self-contragredient and all irreducible $V$-modules are simple currents.
\item\label{enum:equiv2} $W'\boxtimes_V W\cong V$ for all $W\in \Irr(V)$.
\end{enumerate}
\end{prop}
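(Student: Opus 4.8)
The plan is to prove both implications by passing between fusion coefficients and spaces of intertwining operators through Proposition~\ref{prop:cor3.4} ($\dim_\C\V_{X\,Y}^{Z}=N_{X,Y}^Z$), combined with the $S_3$-symmetry of Proposition~\ref{prop:intsym} and the unit and associativity properties of $\boxtimes_V$.

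For $(1)\Rightarrow(2)$ I fix $W\in\Irr(V)$. Since $W$ is a simple current, $W'\boxtimes_V W$ is irreducible, so it suffices to show that the adjoint module occurs in it, i.e.\ $N_{W',W}^V\neq0$. Applying the symmetry $\V_{W^1\,W^2}^{W^3}\cong\V_{W^1\,(W^3)'}^{(W^2)'}$ of Proposition~\ref{prop:intsym} with $(W^1,W^2,W^3)=(W',W,V)$ gives $\V_{W'\,W}^{V}\cong\V_{W'\,V'}^{W'}$. Using self-contragredience $V'\cong V$, then $W'\boxtimes_V V\cong W'$ (Propositions~\ref{prop:rem3.5a} and~\ref{prop:rem3.5b}), Proposition~\ref{prop:cor3.4} and Schur's lemma (Proposition~\ref{prop:schur}), this space is $\Hom_V(W'\boxtimes_V V,W')\cong\Hom_V(W',W')\cong\C$. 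Hence $N_{W',W}^V=1$, and irreducibility of $W'\boxtimes_V W$ forces $W'\boxtimes_V W\cong V$.

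For $(2)\Rightarrow(1)$ I first observe that \emph{simplicity is automatic} for rational \voa{}s of CFT-type. Indeed, the adjoint module is completely reducible, $V\cong\bigoplus_i m_iU_i$ with each $U_i$ an irreducible $\N$-graded submodule of integral conformal weight $\geq0$; since $\dim_\C V_0=1$, exactly one summand $U_0$ has nonzero weight-$0$ space, with $m_0=1$ and $\dim_\C(U_0)_0=1$, so $\vac\in U_0$. As every $v=v_{-1}\vac$, the adjoint module is generated by $\vac$ and therefore coincides with the submodule generated by $\vac$, which lies in $U_0$; thus $V=U_0$ is irreducible, i.e.\ simple. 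Self-contragredience then follows by applying $(2)$ to $W=V'$, which is irreducible by Proposition~\ref{prop:conirr}: using $(V')'\cong V$ one gets $V\boxtimes_V V'\cong V$, while $V\boxtimes_V V'\cong V'$ by Proposition~\ref{prop:rem3.5b}, whence $V\cong V'$. Now Assumption~\ref{ass:n} holds, so the Verlinde formula (Theorem~\ref{thm:verlinde}) and the unitarity of $\S$ (Proposition~\ref{prop:sunitary}) are available. Given $W\in\Irr(V)$, condition $(2)$ and commutativity give $W'\boxtimes_V W\cong V\cong W\boxtimes_V W'$, so $W$ is invertible; by associativity (Proposition~\ref{prop:fusionassoc}), $W'\boxtimes_V(W\boxtimes_V X)\cong(W'\boxtimes_V W)\boxtimes_V X\cong X$ for every $X\in\Irr(V)$, in particular $W\boxtimes_V X\neq0$. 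Decomposing $W\boxtimes_V X\cong\bigoplus_U N_{W,X}^U\,U$ yields $\dim_\C\Hom_V\!\big(W'\boxtimes_V(W\boxtimes_V X),X\big)=\sum_U N_{W,X}^U N_{W',U}^X$, and the left-hand side equals $\dim_\C\Hom_V(X,X)=1$. Combined with the charge-conjugation symmetry $N_{W',U}^X=N_{W,X}^U$ this gives $\sum_U(N_{W,X}^U)^2=1$, so the nonzero module $W\boxtimes_V X$ is irreducible and $W$ is a simple current. (Equivalently, once the module category is a \mtc{} (Theorem~\ref{thm:voamtc}), invertibility makes $-\boxtimes_V W$ an autoequivalence with quasi-inverse $-\boxtimes_V W'$, which preserves irreducibility.)

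The main obstacle is exactly the last step of $(2)\Rightarrow(1)$: deducing the simple-current property from invertibility. The delicate input is the identity $N_{W',U}^X=N_{W,X}^U$, i.e.\ the charge-conjugation symmetry of the fusion coefficients; establishing it cleanly uses that $\S_{V,Y}\in\R$ (from $\overline{\S}=\S\mathcal{C}$ together with the self-contragredience $V'\cong V$ just proven) alongside the $S_3$-symmetry of the $N$'s, or else the rigidity adjunction $\Hom_V(W\boxtimes_V X,-)\cong\Hom_V(X,W'\boxtimes_V-)$. Care must also be taken to avoid circularity, since Proposition~\ref{prop:sca} is derived \emph{from} this result: only the general-theory inputs above may be invoked, never group-like fusion itself.
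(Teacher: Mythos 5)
Your proof is correct, but it takes a genuinely different route from the paper's in both directions --- in fact you and the paper swap which implication carries the Verlinde-formula machinery. For $(1)\Rightarrow(2)$ the paper computes $N_{X,X'}^V=\sum_{W}\S_{X,W}\S_{X',W}=1$ directly from the Verlinde formula (Theorem~\ref{thm:verlinde}); you instead get $N_{W',W}^V=1$ from the $S_3$-symmetry of intertwining-operator spaces (Proposition~\ref{prop:intsym}) together with the unit property, which is arguably lighter. For $(2)\Rightarrow(1)$ the paper avoids the Verlinde formula entirely: it first checks that fusion products of irreducibles are non-zero, then applies $X'\boxtimes_V(-)$ to the decomposition $X\boxtimes_V Y\cong\bigoplus_W k_W W$ and uses irreducibility of $Y\cong\bigoplus_W k_W(X'\boxtimes_V W)$ to force all but one $k_W$ to vanish --- a purely elementary associativity argument that also yields irreducibility of $V$ as a by-product. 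You instead prove simplicity up front; your observation that rationality plus CFT-type already forces the adjoint module to be irreducible is correct, and it in fact tidies up an ordering point, since condition $(2)$ can only be applied to $W=V$ or $W=V'$ once one knows $V\in\Irr(V)$. You then deduce the simple-current property from $\sum_U(N_{W,X}^U)^2=1$, which hinges on the charge-conjugation symmetry $N_{W',U}^X=N_{W,X}^U$; as you rightly flag, this is not one of the bare $S_3$-symmetries and must be extracted from the Verlinde formula together with $\overline{\S}=\S\mathcal{C}$ and the reality of $\S_{V,U}$ (available only after self-contragredience is established), or from rigidity. Both routes are sound; the paper's second half is shorter and more self-contained, while your first half is, and your explicit treatment of simplicity is a worthwhile addition.
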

\begin{proof}
First, assume that \ref{enum:equiv1} holds. For this direction of the proof we need the results from Theorem~\ref{thm:verlinde}, which is valid under the above assumptions. By the Verlinde formula we know that
\begin{equation*}
\delta_{X,Y}=N_{Y,V}^X=\sum_{W\in \Irr(V)}\S_{Y,W}\S_{W,X'}
\end{equation*}
since $V$ is the unit in $\V(V)$. Hence, using that $V$ is self-contragredient and the symmetry of $\S$, we get
\begin{equation*}
N_{X,X'}^V=\sum_{W\in \Irr(V)}\frac{\S_{X,W}S_{X',W}S_{W,V'}}{S_{V,W}}=\sum_{W\in \Irr(V)}S_{X,W}S_{X',W}=\delta_{X',X'}=1,
\end{equation*}
which shows that $X\boxtimes X'\cong V$ for all $X\in \Irr(V)$ since any $X\in \Irr(V)$ is a simple current. This is \ref{enum:equiv2}.

Conversely, assume that \ref{enum:equiv2} holds. For $V$ we get $V'\cong V'\boxtimes V\cong V$, so that $V$ is self-contragredient. Then consider the module $U\cong X\boxtimes_V Y$ for some $X,Y\in \Irr(V)$. First we show that $U$ is non-zero. Assume that $U=\{0\}$. Then, using associativity and that $V$ is the unit, we get
\begin{equation*}
\{0\}\cong X'\boxtimes_V \{0\} \cong X'\boxtimes_VX\boxtimes_V Y\cong V\boxtimes_VY\cong Y,
\end{equation*}
a contradiction. Hence the fusion product of any two irreducible $V$-modules is non-zero. The same statement immediately follows for not necessarily irreducible modules.

Consider again the product $U\cong X\boxtimes_V Y$. To show that it is irreducible we consider
\begin{equation*}
X'\boxtimes_V U\cong X'\boxtimes_V X\boxtimes_V Y\cong V\boxtimes_V Y\cong Y.
\end{equation*}
Decomposing $U$ into its irreducible components $U\cong \bigoplus_{W\in \Irr(V)}k_W W$ for some $k_W\in\N$ with $\sum_{W\in \Irr(V)}k_W\geq1$ we obtain
\begin{equation*}
Y\cong X'\boxtimes_V U\cong \bigoplus_{W\in \Irr(V)}k_W(X'\boxtimes_V W).
\end{equation*}
The terms in the brackets on the right-hand side are all non-zero and so for $Y$ to be irreducible all $k_W$ except for one have to vanish. This means that $U$ is irreducible.

The above argument for the product $W'\boxtimes W\cong V$ for some $W\in \Irr(V)$ also yields that $V$ is irreducible.
\end{proof}

\minisec{Assumptions}

\Voa{}s with the properties in Proposition~\ref{prop:sca} play an important rôle in this text. For convenience, let us define the following assumption:
\begin{customass}{\textbf{\textsf{SN}}}[Group-Like Fusion]\label{ass:sn}
Let $V$ satisfy Assumption~\ref{ass:n}, i.e.\ $V$ is a simple, rational, $C_2$-cofinite, self-contragredient \voa{} of CFT-type. Moreover, assume that $V$ is a simple-current \voa{}, i.e.\ all irreducible $V$-modules are simple currents. Then we know that the irreducible $V$-modules are given by $\Irr(V)=\{W^\alpha\;|\;\alpha\in F_V\}$ and the fusion algebra $\V(V)$ is the group algebra $\C[F_V]$ of the fusion group $(F_V,+)$ with the inverse given by the index of the contragredient module.
\end{customass}
The assumption also includes a choice of representatives $W^\alpha$ for $\alpha\in F_V$ with $V=W^0$.

\minisec{\FQS{}}

We just saw that for a \voa{} $V$ satisfying Assumption~\ref{ass:sn} the fusion algebra $\V(V)$ is the group algebra $\C[F_V]$ of the group $F_V$ of indices of irreducible modules, i.e.\ $\Irr(V)=\{W^\alpha\;|\;\alpha\in F_V\}$.

In the following we will see that the fusion group $F_V$ from the above considerations admits an additional structure, namely that of a \fqs{} (see Appendix~\ref{ch:fqs}, especially Definition~\ref{defi:fqs}), i.e.\ there is a natural non-degenerate finite quadratic form on $F_V$. This quadratic form will be given by the conformal weights modulo~1 of the irreducible modules.

We write
\begin{equation*}
Q_\rho(\alpha):=\rho(W^\alpha)+\Z\in\Q/\Z,
\end{equation*}
$\alpha\in F_V$, for the conformal weight of $W^\alpha$ modulo~1, i.e.\ we view $Q_\rho$ as a function $Q_\rho\colon F_V\to\Q/\Z$. Let us also define the symmetric function $B_\rho\colon F_V\times F_V\to\Q/\Z$ by
\begin{equation*}
B_\rho(\alpha,\beta):=Q_\rho(\alpha+\beta)-Q_\rho(\alpha)-Q_\rho(\beta)\in\Q/\Z.
\end{equation*}
The function $Q_\rho$ is the candidate for the quadratic form on $F_V$ and $B_\rho$ would then be its associated bilinear form. For later convenience we also introduce the functions $q_\rho\colon F_V\to\C^\times$ and $b_\rho\colon F_V\times F_V\to\C^\times$ defined by
\begin{align*}
\e^{(2\pi\i)Q_\rho(\alpha)}&=:q_\rho(\alpha),\\
\e^{(2\pi\i)B_\rho(\alpha,\beta)}&=:b_\rho(\alpha,\beta)
\end{align*}
for $\alpha,\beta\in F_V$.

First, we derive some more properties of the $S$-matrix of $V$ under the assumption that $V$ is a simple-current \voa{}. Let us write $\S_{\alpha,\beta}:=\S_{W^\alpha,W^\beta}$ for $\alpha,\beta\in F_V$.
\begin{prop}\label{prop:ssca}
Let $V$ satisfy Assumptions~\ref{ass:sn}\ref{ass:p}. Then
\begin{equation*}
\S_{0,0}=S_{0,\alpha}=\frac{1}{\sqrt{|F_V|}}
\end{equation*}
for all $\alpha\in F_V$.
\end{prop}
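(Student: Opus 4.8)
The plan is to first show that all the entries $\S_{0,\alpha}$, $\alpha\in F_V$, coincide, and then to pin down their common value using a normalisation coming from the unitarity of $\S$ (equivalently, from $\mathcal{C}=\S^2$). Since Assumption~\ref{ass:sn} subsumes Assumption~\ref{ass:n}, all the structural results on the $S$-matrix are at our disposal, and since Assumption~\ref{ass:p} also holds we may invoke the positivity statements.

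For the first equality, I would apply Corollary~\ref{cor:ss}. Because $V$ is a simple-current \voa{}, every irreducible module $W^\alpha$ is a simple current, so I may take $U=W^\alpha$ and $X=V=W^0$ in that corollary. Using $W^\alpha\boxtimes_V V\cong W^\alpha$ (Proposition~\ref{prop:rem3.5b}), this yields
\begin{equation*}
\S_{0,0}=\S_{V,V}=\S_{V,W^\alpha\boxtimes_V V}=\S_{V,W^\alpha}=\S_{0,\alpha}
\end{equation*}
for every $\alpha\in F_V$. Thus the whole $0$-row of $\S$ is constant, equal to $\S_{0,0}$, and by Proposition~\ref{prop:pospos} this common value lies in $\R_{>0}$.

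To determine the value, I would use that $\S^2=\mathcal{C}$ is the permutation matrix sending $W^\alpha$ to its contragredient $W^{\alpha'}$ (Theorem~\ref{thm:verlinde}) and that $\S$ is symmetric. Since $V=W^0$ is self-contragredient, $0'=0$, whence $(\S^2)_{0,0}=\delta_{0',0}=1$. Expanding and using symmetry together with the first step,
\begin{equation*}
1=(\S^2)_{0,0}=\sum_{\alpha\in F_V}\S_{0,\alpha}\S_{\alpha,0}=\sum_{\alpha\in F_V}\S_{0,\alpha}^2=|F_V|\,\S_{0,0}^2 .
\end{equation*}
(Alternatively, the identical relation follows from $\S\S^\dagger=\id$ in Proposition~\ref{prop:sunitary}, since $\overline{\S_{0,\alpha}}=\S_{0,\alpha}$ by positivity.) Hence $\S_{0,0}^2=1/|F_V|$, and because $\S_{0,0}>0$ we may extract the positive square root to conclude $\S_{0,0}=\S_{0,\alpha}=1/\sqrt{|F_V|}$.

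There is no serious obstacle here; the argument is essentially a two-line consequence of results already established. The only point requiring care is the use of positivity (Proposition~\ref{prop:pospos}) at two places: it guarantees both that the entries are real, so that $\sum_\alpha\S_{0,\alpha}^2$ is a genuine sum of squares rather than a sum involving conjugates, and that the correct sign of the square root is the positive one. Without Assumption~\ref{ass:p} one would only obtain $\S_{0,0}=\pm 1/\sqrt{|F_V|}$ and the equality of the entries could fail.
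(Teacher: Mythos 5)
Your proof is correct and takes essentially the same approach as the paper. The only cosmetic difference lies in the first equality: the paper cites Proposition~\ref{prop:4.17} directly (simple currents are exactly the irreducibles $U$ with $\S_{V,U}=\S_{V,V}$), while you route through Corollary~\ref{cor:ss} with $X=V$, which is itself deduced from that proposition; your second step---expanding $(\S^2)_{0,0}=1$ using symmetry and extracting the positive square root via Proposition~\ref{prop:pospos}---is identical to the paper's argument.
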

\begin{proof}
The first equality is simply Proposition~\ref{prop:4.17} since all $W^\alpha$, $\alpha\in F_V$, are simple currents by assumption. For the second statement we consider
\begin{equation*}
1=\delta_{0,-0}=(\S^2)_{0,0}=\sum_{\gamma\in F_V}\S_{0,\gamma}\S_{\gamma,0}=|F_V|\S_{0,0}^2.
\end{equation*}
The statement follows since $\S_{0,0}>0$ by Proposition~\ref{prop:pospos}.
\end{proof}

We immediately get the following important result, namely a closed formula for the entries of the $S$-matrix depending only on the conformal weights of the irreducible $V$-modules modulo~1.
\begin{prop}\label{prop:sbilform2}
Let $V$ satisfy Assumptions~\ref{ass:sn}\ref{ass:p}. Then
\begin{equation*}
\S_{\alpha,\beta}=\frac{1}{\sqrt{|F_V|}}\e^{(2\pi\i)(Q_\rho(\alpha)+Q_\rho(\beta)-Q_\rho(\alpha+\beta))}=\frac{1}{\sqrt{|F_V|}}\e^{-(2\pi\i)B_\rho(\alpha,\beta)}
\end{equation*}
for all $\alpha,\beta\in F_V$.
\end{prop}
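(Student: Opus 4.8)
The plan is to obtain the claimed closed formula by directly combining Proposition~\ref{prop:sbilform} with the evaluation of the vacuum row of the $S$-matrix in Proposition~\ref{prop:ssca}. Since $V$ satisfies Assumption~\ref{ass:sn}, every irreducible module $W^\beta$, $\beta\in F_V$, is a simple current, so Proposition~\ref{prop:sbilform} applies with $U=W^\beta$ and $X=W^\alpha$. This yields
\begin{equation*}
\S_{\alpha,\beta}=\S_{W^\alpha,V}\,\e^{(2\pi\i)(\rho(W^\alpha)+\rho(W^\beta)-\rho(W^\beta\boxtimes_V W^\alpha))}.
\end{equation*}

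First I would simplify the fusion product inside the exponent: by Proposition~\ref{prop:sca} the fusion group is abelian with $W^\beta\boxtimes_V W^\alpha\cong W^{\alpha+\beta}$, hence $\rho(W^\beta\boxtimes_V W^\alpha)=\rho(W^{\alpha+\beta})$. Since the conformal weights enter only through the factor $\e^{(2\pi\i)(\cdots)}$, they may be replaced by their residues modulo~$1$, i.e.\ by $Q_\rho(\alpha)$, $Q_\rho(\beta)$ and $Q_\rho(\alpha+\beta)$, respectively. Next I would evaluate the prefactor: using the symmetry of $\S$ (Theorem~\ref{thm:verlinde}) together with Proposition~\ref{prop:ssca} gives $\S_{W^\alpha,V}=\S_{\alpha,0}=\S_{0,0}=1/\sqrt{|F_V|}$. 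Substituting these two observations produces exactly
\begin{equation*}
\S_{\alpha,\beta}=\frac{1}{\sqrt{|F_V|}}\,\e^{(2\pi\i)(Q_\rho(\alpha)+Q_\rho(\beta)-Q_\rho(\alpha+\beta))}.
\end{equation*}

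Finally, the second equality is immediate from the definition $B_\rho(\alpha,\beta)=Q_\rho(\alpha+\beta)-Q_\rho(\alpha)-Q_\rho(\beta)$, so that $Q_\rho(\alpha)+Q_\rho(\beta)-Q_\rho(\alpha+\beta)=-B_\rho(\alpha,\beta)$. I do not expect any genuine obstacle here: the whole statement is a bookkeeping consequence of the already-established simple-current $S$-matrix identity and the computation of $\S_{0,\alpha}$. The only points requiring a little care are to invoke the symmetry of $\S$ when passing from $\S_{W^\alpha,V}$ to $\S_{0,\alpha}$, and to note that all conformal-weight terms are interpreted modulo~$1$ inside the exponential, which is precisely what licenses replacing $\rho$ by $Q_\rho$.
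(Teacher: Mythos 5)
Your proof is correct and follows exactly the route taken in the paper, whose entire proof reads that the statement ``follows directly from Propositions~\ref{prop:sbilform} and \ref{prop:ssca}''; you have merely spelled out the bookkeeping (substituting $U=W^\beta$, $X=W^\alpha$, identifying $W^\beta\boxtimes_V W^\alpha\cong W^{\alpha+\beta}$, and using symmetry of $\S$ together with $\S_{0,\alpha}=1/\sqrt{|F_V|}$). The care you take in passing from $\S_{W^\alpha,V}$ to $\S_{0,\alpha}$ and in replacing conformal weights by their classes modulo~$1$ inside the exponential is exactly what is implicitly used in the paper.
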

\begin{proof}
This follows directly from Propositions \ref{prop:sbilform} and \ref{prop:ssca}.
\end{proof}

Propositions \ref{prop:ssss} and \ref{prop:ssca} immediately yield the formula
\begin{equation*}
\S_{\alpha,\gamma}\S_{\beta,\gamma}=\frac{1}{\sqrt{|F_V|}}\S_{\alpha+\beta,\gamma}
\end{equation*}
for all $\alpha,\beta,\gamma\in F_V$. Considering the formula from the above proposition this means exactly that $B_\rho$ is linear in the first argument and by symmetry of $\S$ also in the second argument. Hence $B_\rho$ is a finite bilinear form. We also know that $Q_\rho(0)=\rho(V)+\Z=0+\Z$ and $Q_\rho(\alpha)=\rho(W^\alpha)+\Z=\rho((W^\alpha)')+\Z=Q_\rho(-\alpha)$. Then Proposition~\ref{prop:bilquad} implies that $Q_\rho\colon F_V\to\Q/\Z$ is a finite quadratic form with associated bilinear form $B_\rho$.
\begin{thm}\label{thm:fafqs}
Let $V$ satisfy Assumptions~\ref{ass:sn}\ref{ass:p}. Then the function $Q_\rho\colon F_V\to\Q/\Z$ given by the conformal weights modulo~1 of the irreducible $V$-modules is a finite quadratic form on $F_V$ and its associated bilinear form $B_\rho\colon F_V\times F_V\to\Q/\Z$ is non-degenerate, i.e.\ $(F_V,Q_\rho)$ admits the structure of a \fqs{}.
\end{thm}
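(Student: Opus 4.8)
The plan is to split the statement into two parts: (i) that $Q_\rho$ is a finite quadratic form with associated bilinear form $B_\rho$, and (ii) that $B_\rho$ is non-degenerate. Part (i) is essentially settled by the discussion preceding the statement: the multiplicativity $\S_{\alpha,\gamma}\S_{\beta,\gamma}=\frac{1}{\sqrt{|F_V|}}\S_{\alpha+\beta,\gamma}$ together with the closed formula of Proposition~\ref{prop:sbilform2} forces $B_\rho$ to be additive in each argument, hence a finite bilinear form; combined with $Q_\rho(0)=0$ and $Q_\rho(-\alpha)=Q_\rho(\alpha)$, Proposition~\ref{prop:bilquad} then certifies that $Q_\rho$ is a finite quadratic form whose associated bilinear form is $B_\rho$. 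So I would simply restate this and concentrate the actual work on non-degeneracy.

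For part (ii) I would argue directly that the radical of $B_\rho$ is trivial. Suppose $\alpha\in F_V$ satisfies $B_\rho(\alpha,\beta)=0$ in $\Q/\Z$ for every $\beta\in F_V$. Substituting into Proposition~\ref{prop:sbilform2} gives $\e^{-(2\pi\i)B_\rho(\alpha,\beta)}=1$, so that
\[
\S_{\alpha,\beta}=\frac{1}{\sqrt{|F_V|}}
\]
for all $\beta\in F_V$. But by Proposition~\ref{prop:ssca} the same value $1/\sqrt{|F_V|}$ is taken by $\S_{0,\beta}$ for every $\beta$. Hence the row of $\S$ indexed by $\alpha$ coincides entrywise with the row indexed by $0$.

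The decisive input is then unitarity: by Proposition~\ref{prop:sunitary} the matrix $\S$ is unitary, in particular invertible, so its rows are linearly independent and two distinct rows can never be equal. Therefore $\alpha=0$, the radical of $B_\rho$ is trivial, and $B_\rho$ is non-degenerate. Equivalently, the rows of the unitary matrix $\S$ are orthonormal, so rows $\alpha$ and $0$ cannot agree unless $\alpha=0$. Combining (i) and (ii) yields that $(F_V,Q_\rho)$ is a \fqs{}.

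I expect no genuine obstacle here, since all the heavy machinery is already in place: the explicit evaluation of the $S$-matrix in terms of conformal weights (Proposition~\ref{prop:sbilform2}, itself resting on the \mtc{} structure and the Verlinde formula) and the unitarity of $\S$ (Proposition~\ref{prop:sunitary}). The only point requiring care is the clean translation between ``$\alpha$ lies in the radical of $B_\rho$'' and ``rows $\alpha$ and $0$ of $\S$ agree,'' together with invoking \emph{invertibility} of $\S$ rather than merely its symmetry.
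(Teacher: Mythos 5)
Your proposal is correct, and part (i) coincides with the paper's treatment: the paper also disposes of the quadratic-form statement before the proof proper (via the multiplicativity of the $S$-matrix rows, Proposition~\ref{prop:sbilform2}, $Q_\rho(0)=0+\Z$, $Q_\rho(-\alpha)=Q_\rho(\alpha)$ and Proposition~\ref{prop:bilquad}), so its proof opens with ``it only remains to show the non-degeneracy of $B_\rho$.''

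For part (ii), however, you take a genuinely different route. The paper's key input is the statement from Theorem~\ref{thm:verlinde} that $\mathcal{C}=\S^2$ is the charge-conjugation permutation matrix; in the group-like setting this reads $(\S^2)_{\alpha,\beta}=\delta_{\alpha,-\beta}$, and inserting the closed formula $\S_{\alpha,\gamma}=\tfrac{1}{\sqrt{|F_V|}}\e^{-(2\pi\i)B_\rho(\alpha,\gamma)}$ yields the orthogonality relation
\begin{equation*}
\delta_{\alpha,0}=\frac{1}{|F_V|}\sum_{\gamma\in F_V}\e^{(2\pi\i)B_\rho(\alpha,\gamma)},
\end{equation*}
from which triviality of the radical is immediate: if $B_\rho(\alpha,\cdot)\equiv 0+\Z$ the right-hand side equals $1$, forcing $\alpha=0$. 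You instead observe that an $\alpha$ in the radical makes row $\alpha$ of $\S$ constant equal to $1/\sqrt{|F_V|}$, hence equal to row $0$ by Proposition~\ref{prop:ssca}, and then invoke unitarity (Proposition~\ref{prop:sunitary}) — really just invertibility — of $\S$ to conclude that two rows can only agree if their indices do. Both arguments rest on the same formula for $\S$ in terms of $B_\rho$ and on structural facts available under Assumption~\ref{ass:n}; what each buys is slightly different. The paper's computation produces the explicit character-orthogonality identity for the pairing $\e^{(2\pi\i)B_\rho}$, which is of independent use, whereas your argument is softer: it never needs to identify $\S^2$ as the contragredient permutation, only that $\S$ is invertible, so it isolates the minimal input needed for non-degeneracy. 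Your closing caveat is also the right one — symmetry of $\S$ alone would not suffice; it is invertibility that makes the row-comparison argument close.
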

\begin{proof}
It only remains to show the non-degeneracy of $B_\rho$. Consider
\begin{equation*}
\delta_{\alpha+\beta,0}=\delta_{\alpha,-\beta}=(\S^2)_{\alpha,\beta}=\sum_{\gamma\in F_V}\S_{\alpha,\gamma}\S_{\gamma,\beta}=\frac{1}{|F_V|}\sum_{\gamma\in F_V}\e^{(2\pi\i)B_\rho(\alpha+\beta,\gamma)}.
\end{equation*}
Hence
\begin{equation*}
\delta_{\alpha,0}=\frac{1}{|F_V|}\sum_{\gamma\in F_V}\e^{(2\pi\i)B_\rho(\alpha,\gamma)}.
\end{equation*}
Now assume that $B_\rho(\alpha,\gamma)=0+\Z$ for all $\gamma\in F_V$. Then the above formula immediately gives $\alpha=0$, which proves that $B_\rho$ is non-degenerate.
\end{proof}
In the following we will write $F_V=(F_V,Q_\rho)$ for the fusion group together with the quadratic form $Q_\rho$ on it. We conclude with a few remarks:
\begin{rem}\label{rem:fafqs}
\item
\begin{enumerate}
\item Lemma~\ref{lem:rationallocality} below directly proves the existence of a quadratic form under Assumption~\ref{ass:sn}, i.e.\ Assumption~\ref{ass:p} is not required. However, in the above considerations the positivity assumption ensures that the relation between the $S$-matrix and the bilinear form $B_\rho$ is as described, which otherwise might have some additional minus sign.
In particular, it is not clear that the quadratic form is non-degenerate if we drop Assumption~\ref{ass:p}.

\item Recall that the exponents of the formal variable of the intertwining operators of type $\binom{W^\gamma}{W^\alpha\,W^\beta}$ lie in $\rho(W^\gamma)-\rho(W^\alpha)-\rho(W^\beta)+\Z$. Since we are in the simple-current situation, intertwining operators only exist for $\gamma=\alpha+\beta$ and hence the exponents lie exactly in $Q_\rho(\alpha+\beta)-Q_\rho(\alpha)-Q_\rho(\beta)=B_\rho(\alpha,\beta)$.

\item Any given \fqs{}, i.e.\ a finite abelian group with a non-degenerate quadratic form on it, can be realised as fusion group of some \voa{}. Indeed, for a given \fqs{} $D$ it is always possible to find a positive-definite, even lattice $L$ with discriminant form $L'/L\cong D$, which is also the fusion group of the associated lattice \voa{} $V_L$ (see Proposition~\ref{prop:cor12.10}), i.e.\ $F_{V_L}\cong D$ as \fqs{}s.
\end{enumerate}
\end{rem}

\minisec{Weil Representation}

Let us return to Zhu's representation $\rho_V\colon\SLZ\to\GL(\C[F_V])$ in the simple-current situation, i.e.\ we consider a \voa{} satisfying Assumptions~\ref{ass:sn}\ref{ass:p}. The $S$- and $T$-matrices associated with this representation are given by
\begin{align*}
\rho_V(S)_{\alpha,\beta}&=\S_{\alpha,\beta}=\frac{1}{\sqrt{|F_V|}}\e^{-(2\pi\i)B_\rho(\alpha,\beta)},\\
\rho_V(T)_{\alpha,\beta}&=\T_{\alpha,\beta}=\delta_{\alpha,\beta}\e^{(2\pi\i)(Q_\rho(\alpha)-c/24)}
\end{align*}
in terms of the quadratic form $Q_\rho$ and the non-degenerate associated bilinear form $B_\rho$ on the fusion group $F_V$.

We contrast this to the well-known Weil representation $\rho_D$ with respect to a given \fqs{} $D$ (see Section~\ref{sec:weil}). Since the \fqs{} structure on the fusion group can have odd signature, we have to consider the Weil representation of the metaplectic group $\MpZ$, the double-cover of $\SLZ$. Via the natural covering map $\MpZ\to\SLZ$ we can view $\rho_V$ as a representation of $\MpZ$ on $\C[F_V]$. Then:
\begin{align*}
\rho_V(\widetilde{S})_{\alpha,\beta}&=\frac{1}{\sqrt{|F_V|}}\e^{-(2\pi\i)B_\rho(\alpha,\beta)},\\
\rho_V(\widetilde{T})_{\alpha,\beta}&=\delta_{\alpha,\beta}\e^{(2\pi\i)(Q_\rho(\alpha)-c/24)}
\end{align*}
for the standard generators $\widetilde{S}$ and $\widetilde{T}$ of $\MpZ$. The Weil representation $\rho_{F_V}$ on the other hand is given by
\begin{align*}
\rho_{F_V}(\widetilde{S})_{\alpha,\beta}&=\frac{1}{\sqrt{|F_V|}}\e^{(2\pi\i)(-B_\rho(\alpha,\beta)-\sign(F_V)/8)},\\
\rho_{F_V}(\widetilde{T})_{\alpha,\beta}&=\delta_{\alpha,\beta}\e^{(2\pi\i)Q_\rho(\alpha)}.
\end{align*}
Here $\sign(F_V)\in\Z_8$ is the signature of the \fqs{} $F_V$. An immediate consequence is the following:
\begin{prop}
Let $V$ satisfy Assumptions~\ref{ass:sn}\ref{ass:p}. Then the \fqs{} $F_V$ from Theorem~\ref{thm:fafqs} has signature
\begin{equation*}
\sign(F_V)=c\pmod{8}
\end{equation*}
where $c$ is the central charge of the \voa{} $V$.
\end{prop}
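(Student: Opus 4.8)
The plan is to exploit the explicit comparison of Zhu's representation $\rho_V$ with the Weil representation $\rho_{F_V}$ just computed. Reading off the two pairs of generator matrices one sees that $\rho_V$ and $\rho_{F_V}$, both acting on $\C[F_V]$, differ only by a scalar on each of the standard generators $\widetilde S$ and $\widetilde T$ of $\MpZ$, namely
\begin{equation*}
\rho_V(\widetilde S)=\e^{(2\pi\i)\sign(F_V)/8}\,\rho_{F_V}(\widetilde S),\qquad \rho_V(\widetilde T)=\e^{-(2\pi\i)c/24}\,\rho_{F_V}(\widetilde T).
\end{equation*}
Since $\rho_V$ is a representation of $\SLZ$ by Zhu's theorem, pulled back along the covering $\MpZ\to\SLZ$, and $\rho_{F_V}$ is a genuine representation of $\MpZ$, both respect all relations of $\MpZ$; hence the assignment $\chi(\widetilde S):=\e^{(2\pi\i)\sign(F_V)/8}$ and $\chi(\widetilde T):=\e^{-(2\pi\i)c/24}$ extends to a one-dimensional character $\chi\colon\MpZ\to\C^\times$ with $\rho_V=\chi\cdot\rho_{F_V}$.

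First I would record that $F_V$ is a \fqs{} by Theorem~\ref{thm:fafqs}, so that $\sign(F_V)\in\Z_8$ is defined, and verify the two displayed identities directly from the explicit generator matrices above. Next I would invoke the single relation $\widetilde S=\widetilde T\widetilde S\widetilde T\widetilde S\widetilde T$ of $\MpZ$ (equivalently $(\widetilde S\widetilde T)^3=\widetilde S^2$), which is the same relation already used in the proof of Proposition~\ref{prop:ststst}. Evaluating the homomorphism $\chi$ on it gives $\chi(\widetilde S)=\chi(\widetilde S)^2\chi(\widetilde T)^3$, whence
\begin{equation*}
\chi(\widetilde S)\,\chi(\widetilde T)^3=1.
\end{equation*}

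Finally I would substitute the explicit values. The relation $\chi(\widetilde S)\chi(\widetilde T)^3=1$ reads
\begin{equation*}
\e^{(2\pi\i)\sign(F_V)/8}\,\e^{-(2\pi\i)c/8}=\e^{(2\pi\i)(\sign(F_V)-c)/8}=1,
\end{equation*}
so that $\sign(F_V)-c\in 8\Z$ and therefore $\sign(F_V)=c\pmod 8$ (in particular $c\in\Z$).

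The only point requiring genuine care is assembling the scalar discrepancies on the two generators into an \emph{honest} character of $\MpZ$, rather than treating them as two unrelated phases. This is forced precisely because $\rho_V$ and $\rho_{F_V}$ are both representations on the same space that agree up to scalars on the generators, so their ratio $\chi$ must obey every relation of $\MpZ$; and it is exactly the consistency constraint coming from $(\widetilde S\widetilde T)^3=\widetilde S^2$ that couples $\sign(F_V)$ to $c$ and yields the congruence.
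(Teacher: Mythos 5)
Your proof is correct and is essentially identical to the paper's: both define the character $\chi$ of $\MpZ$ as the scalar ratio $\rho_V\cdot\rho_{F_V}^{-1}$, read off $\chi(\widetilde{S})=\e^{(2\pi\i)\sign(F_V)/8}$ and $\chi(\widetilde{T})=\e^{(2\pi\i)(-c/24)}$ from the explicit generator matrices, and extract the congruence from the relation $(\widetilde{S}\widetilde{T})^3=\widetilde{S}^2$. The only difference is that you spell out the well-definedness of $\chi$ as a character, which the paper leaves implicit.
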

\begin{proof}
Consider the scalar representation $\chi\colon\MpZ\to\C^\times$ defined by
\begin{equation*}
\chi((M,\varphi))\id:=\rho_V((M,\varphi))(\rho_{F_V}((M,\varphi)))^{-1}
\end{equation*}
for any $(M,\varphi)\in\MpZ$, where it is clear from the above formulæ that the right-hand side is a scalar multiple of the identity matrix. In fact,
\begin{equation*}
\chi(\widetilde{S})=\e^{(2\pi\i)\sign(F_V)/8}\quad\text{and}\quad\chi(\widetilde{T})=\e^{(2\pi\i)(-c/24)}.
\end{equation*}
Now the relation $\widetilde{S}^2=(\widetilde{S}\widetilde{T})^3$ gives the desired result.
\end{proof}
An important corollary is the following:
\begin{cor}
Let $V$ satisfy Assumptions~\ref{ass:sn}\ref{ass:p}. Then the central charge $c\in\Z$.
\end{cor}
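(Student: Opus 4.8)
The plan is to read the result off directly from the immediately preceding proposition, which asserts $\sign(F_V)=c\pmod 8$. The crucial point is that the signature of a \fqs{} is by definition a genuine integer (an element of $\Z_8$ represented by an integer), so this congruence pins $c$ down to being integral, not merely integral modulo~$8$.

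First I would recall the mechanism behind that congruence: in the proof of the previous proposition one builds the scalar character $\chi\colon\MpZ\to\C^\times$ with $\chi(\widetilde{S})=\e^{(2\pi\i)\sign(F_V)/8}$ and $\chi(\widetilde{T})=\e^{(2\pi\i)(-c/24)}$, and the relation $\widetilde{S}^2=(\widetilde{S}\widetilde{T})^3$ forces $\e^{(2\pi\i)\sign(F_V)/8}=\e^{(2\pi\i)c/8}$. Since we already know from Theorem~\ref{thm:11.3} (or Proposition~\ref{prop:div8}) that $c$ is real, this exponential identity is equivalent to $c/8-\sign(F_V)/8\in\Z$, that is, $c-\sign(F_V)\in 8\Z$. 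As $\sign(F_V)\in\Z$, the difference $c-\sign(F_V)$ is then an integer multiple of $8$, so $c=\sign(F_V)+8k\in\Z$ for some $k\in\Z$, which is exactly the claim.

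The only point requiring care --- and the closest thing to an obstacle here --- is to resist the misreading that the proposition yields merely $c\bmod 8\in\Z_8$, which by itself would say nothing about whether $c$ is an integer (a value such as $c=1/2$ would have a non-integral residue). The resolution is that the proposition's equality is an honest congruence in $\R/8\Z$ (equivalently in $\Q/8\Z$, once the rationality of $c$ is invoked), so the genuine statement is $c-\sign(F_V)\in 8\Z\subset\Z$; it is this containment that upgrades ``integral modulo~$8$'' to ``integral'', and the corollary follows at once.
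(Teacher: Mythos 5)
Your proposal is correct and takes essentially the same route as the paper, which gives no separate argument for the corollary but reads it off the preceding proposition exactly as you do: the congruence $\sign(F_V)=c\pmod{8}$ is a genuine statement $c-\sign(F_V)\in 8\Z$ (using $c\in\Q$ from Theorem~\ref{thm:11.3} to pass from the exponential identity to a congruence of real numbers), and since $\sign(F_V)\in\Z$ this forces $c\in\Z$. One trivial caveat: your parenthetical fallback to Proposition~\ref{prop:div8} is not available here, since that proposition assumes holomorphicity, which Assumption~\ref{ass:sn} does not grant; your primary citation of Theorem~\ref{thm:11.3} is the correct one.
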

Note that in general, even for a \voa{} satisfying Assumption~\ref{ass:n}, it is only known that $c\in\Q$ (cf. Theorem~\ref{thm:11.3}). Moreover, from the above proof we can directly deduce the following theorem:
\begin{thm}\label{thm:zhuweil}
Let $V$ be a \voa{} satisfying Assumptions~\ref{ass:sn}\ref{ass:p}. Then the representation $\rho_V\colon\SLZ\to\GL(\C[F_V])$ is given by
\begin{equation*}
\rho_V(M)=\eps(\widetilde{M})^{-c}\rho_{F_V}(\widetilde{M})
\end{equation*}
for all $M\in\SLZ$ where $\widetilde{M}\in\MpZ$ is the image of $M$ under the standard embedding of $\SLZ$ into $\MpZ$, $\rho_{F_V}$ is the Weil representation
of $\MpZ$ on $\C[F_V]$ and $\eps\colon\MpZ\to U_{24}$ is a character of $\MpZ$ defined by
\begin{equation*}
\eps(\widetilde{S}):=\e^{(2\pi\i)(-1/8)}\quad\text{and}\quad\eps(\widetilde{T}):=\e^{(2\pi\i)1/24}.
\end{equation*}
\end{thm}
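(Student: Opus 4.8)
The plan is to deduce the theorem almost immediately from the preceding proposition, whose proof already introduced the relevant scalar character. First I would recall that in the proof of the above proposition one defines $\chi\colon\MpZ\to\C^\times$ by $\chi((M,\varphi))\id=\rho_V((M,\varphi))\,\rho_{F_V}((M,\varphi))^{-1}$, and that this ratio really is a scalar: since $\rho_V$ and $\rho_{F_V}$ differ only by scalars on the generators $\widetilde S,\widetilde T$, the same holds on every word in $\widetilde S,\widetilde T$, so the ratio is scalar on all of $\MpZ$. As both $\rho_V$ and $\rho_{F_V}$ are homomorphisms, $\chi$ is then automatically a (genuine) character of $\MpZ$, and rearranging its definition gives $\rho_V(\widetilde M)=\chi(\widetilde M)\,\rho_{F_V}(\widetilde M)$ for all $\widetilde M\in\MpZ$. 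Because $\rho_V$ descends through the covering $\MpZ\to\SLZ$, we have $\rho_V(M)=\rho_V(\widetilde M)$ for the standard lift $\widetilde M$ of $M\in\SLZ$, so the whole theorem reduces to the identity $\chi=\eps^{-c}$.

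Next I would verify that $\chi$ and $\eps^{-c}$ agree on the two generators of $\MpZ$, using the explicit values from the proof of the previous proposition. On the one hand $\chi(\widetilde T)=\e^{(2\pi\i)(-c/24)}$, which coincides with $\eps(\widetilde T)^{-c}=\e^{(2\pi\i)(1/24)(-c)}$ directly. On the other hand $\chi(\widetilde S)=\e^{(2\pi\i)\sign(F_V)/8}$, while $\eps(\widetilde S)^{-c}=\e^{(2\pi\i)(-1/8)(-c)}=\e^{(2\pi\i)c/8}$; these are equal precisely because $\sign(F_V)\equiv c\pmod 8$, which is exactly the content of the preceding proposition (the two exponents then differ by an integer). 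Since $\widetilde S$ and $\widetilde T$ generate $\MpZ$ and $\chi$ is a homomorphism, agreement on the generators forces $\chi=\eps^{-c}$ on all of $\MpZ$, and combined with the previous paragraph this yields $\rho_V(M)=\eps(\widetilde M)^{-c}\rho_{F_V}(\widetilde M)$.

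The only step that genuinely needs a comment, rather than being a bare computation, is the well-definedness of $\eps$ as a character of $\MpZ$ with values in $U_{24}$: a priori $\eps$ is prescribed only on $\widetilde S,\widetilde T$, and to speak of the character $\eps^{-c}$ one must know these values respect the defining relations of $\MpZ$ (for instance $\widetilde S^2=(\widetilde S\widetilde T)^3$ and the central relation). Here I would note that this compatibility comes for free: since $\chi$ is already established to be a character taking the values $\eps(\widetilde S)^{-c}$ and $\eps(\widetilde T)^{-c}$ on the generators, those values automatically satisfy the relations, so $\eps^{-c}$ is a bona fide character of $\MpZ$. That $\eps$ itself is the Dedekind eta multiplier system, a character of exact order $24$ landing in $U_{24}$, is classical and justifies the target group $U_{24}$ in the statement. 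Thus there is no real obstacle in the argument; essentially all of the mathematical content is already packaged in the congruence $\sign(F_V)\equiv c\pmod 8$ proved in the previous step.
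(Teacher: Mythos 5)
Your proof is correct and takes essentially the same route as the paper: there the theorem is deduced directly from the proof of the preceding signature proposition (the paper literally says "from the above proof we can directly deduce the following theorem"), using the same scalar character $\chi(\widetilde{M})\id=\rho_V(\widetilde{M})\rho_{F_V}(\widetilde{M})^{-1}$, the values $\chi(\widetilde{S})=\e^{(2\pi\i)\sign(F_V)/8}$, $\chi(\widetilde{T})=\e^{(2\pi\i)(-c/24)}$, and the congruence $\sign(F_V)=c\pmod{8}$ to conclude $\chi=\eps^{-c}$ on the generators and hence everywhere. The details you make explicit—scalarity of the ratio on all of $\MpZ$, the homomorphism property of $\chi$, and the classical well-definedness of $\eps$ as the eta multiplier character—are exactly what the paper leaves implicit.
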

Note that if $24\mid c$, then Zhu's representation $\rho_V$ and the Weil representation $\rho_{F_V}$ on $\C[F_V]$ coincide.

\begin{rem}
Let us for simplicity assume that $c$ is even so that the Weil representation descends to a representation of $\SLZ$. It is a well-known fact that the Weil representation acts trivially under $\Gamma(N)$ where $N$ is the level of the \fqs{} $(F_V,Q_\rho)$. This essentially proves Theorem~\ref{thm:thm1dln12}, the congruence subgroup property, in the special case where all irreducible $V$-modules are simple currents.
\end{rem}

We compare the character $\eps\colon\MpZ\to\C^\times$ with the modular-transformation properties of the Dedekind eta function
\begin{equation*}
\eta(\tau)=q_\tau^{1/24}\prod_{n=1}^\infty (1-q_\tau^n).
\end{equation*}
The eta function transforms as
\begin{align*}
\eta(T.\tau)&=\e^{(2\pi\i)1/24}\eta(\tau),\\
\eta(S.\tau)&=\tau^{1/2}\e^{(2\pi\i)(-1/8)}\eta(\tau).
\end{align*}
This means that for $M=\left(\begin{smallmatrix}a&b\\c&d\end{smallmatrix}\right)\in\SLZ$,
\begin{equation*}
\eta(M.\tau)=(c\tau+d)^{1/2}\epsilon(M)\eta(\tau)
\end{equation*}
for a certain function $\epsilon\colon\SLZ\to U_{24}$ (see e.g.\ \cite{Apo90}, Theorem~3.4). The function $\epsilon$ is not a character on $\SLZ$ since it fails to be a homomorphism. However, if we define the character $\eps\colon\MpZ\to U_{24}$ on the metaplectic double cover $\MpZ$ via $\eps(\widetilde{T}):=\epsilon(T)$ and $\eps(\widetilde{S}):=\epsilon(S)$, then $\eps(\widetilde{M})=\eps(M)$ for all $M\in\SLZ$ and
\begin{equation*}
\eta(M.\tau)=(c\tau+d)^{1/2}\eps(\widetilde{M})\eta(\tau),
\end{equation*}
meaning that $\eta$ is a modular form of weight $1/2$ for $\MpZ$ and character $\eps\colon\SLZ\to U_{24}$. Moreover, by definition of $\eps$, it is clear that this is exactly the same character as the $\eps$ occurring in the above theorem. This implies the following nice corollary to Theorems \ref{thm:zhumodinv} and \ref{thm:zhuweil}:
\begin{cor}\label{cor:zhuweil}
Let $V$ be as in Assumptions~\ref{ass:sn}\ref{ass:p} of central charge $c\in\Z$. Let $F_V$ be the fusion group of $V$, a \fqs{} by Theorem~\ref{thm:fafqs}. Then the functions
\begin{equation*}
F_W(\tau):=T_W(v,\tau)\eta(\tau)^c
\end{equation*}
for $W\in\Irr(V)$ form a vector-valued modular form of weight $k=\wt[v]+c/2$ for the Weil representation $\rho_{F_V}$ of $\MpZ$, i.e.\
\begin{equation*}
(c\tau +d)^{-k}F_W(M.\tau)=\sum_{X\in\Irr(V)} \rho_{F_V}(\widetilde{M})_{W,X}F_X(v,\tau)
\end{equation*}
for $M=\left(\begin{smallmatrix}a&b\\c&d\end{smallmatrix}\right)\in\SLZ$.
\end{cor}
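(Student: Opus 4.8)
The plan is to derive the transformation law by a direct computation that assembles three facts already established: Zhu's modular invariance (Theorem~\ref{thm:zhumodinv}), the identification of Zhu's representation with the Weil representation up to the character $\eps$ (Theorem~\ref{thm:zhuweil}), and the transformation behaviour of the Dedekind eta function recorded just above the corollary. The whole mechanism rests on a single cancellation: the factor $\eps(\widetilde{M})^{c}$ produced by $\eta^{c}$ is exactly the inverse of the factor $\eps(\widetilde{M})^{-c}$ relating $\rho_V$ to $\rho_{F_V}$, so the two scalar corrections annihilate each other and leave only the bare Weil representation.

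Concretely, I would start from $F_W(\tau)=T_W(v,\tau)\eta(\tau)^c$ and evaluate it at $M.\tau$ for $M=\left(\begin{smallmatrix}a&b\\c&d\end{smallmatrix}\right)\in\SLZ$. Applying Theorem~\ref{thm:zhumodinv} to a vector $v\in V_{[k]}$ with $k=\wt[v]$ gives $T_W(v,M.\tau)=(c\tau+d)^{\wt[v]}\sum_{X\in\Irr(V)}\rho_V(M)_{W,X}\,T_X(v,\tau)$, while raising the eta transformation to the $c$-th power gives $\eta(M.\tau)^c=(c\tau+d)^{c/2}\eps(\widetilde{M})^c\,\eta(\tau)^c$. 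Multiplying these expressions collects an automorphy factor $(c\tau+d)^{\wt[v]+c/2}$ together with the scalar $\eps(\widetilde{M})^c$, so that
\begin{equation*}
F_W(M.\tau)=(c\tau+d)^{\wt[v]+c/2}\,\eps(\widetilde{M})^c\sum_{X\in\Irr(V)}\rho_V(M)_{W,X}\,T_X(v,\tau)\eta(\tau)^c.
\end{equation*}

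Next I would substitute $\rho_V(M)_{W,X}=\eps(\widetilde{M})^{-c}\rho_{F_V}(\widetilde{M})_{W,X}$ from Theorem~\ref{thm:zhuweil}. The factors $\eps(\widetilde{M})^c$ and $\eps(\widetilde{M})^{-c}$ cancel, and since $T_X(v,\tau)\eta(\tau)^c=F_X(\tau)$ and $k=\wt[v]+c/2$, one is left with $F_W(M.\tau)=(c\tau+d)^{k}\sum_{X}\rho_{F_V}(\widetilde{M})_{W,X}F_X(\tau)$. Dividing by $(c\tau+d)^{k}$ yields precisely the asserted identity, which is by definition the statement that the $F_W$, $W\in\Irr(V)$, form a vector-valued modular form of weight $k$ for $\rho_{F_V}$.

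There is no substantial obstacle here, as all three ingredients are in hand; the only point requiring care is the square-root convention in the half-integer-weight case (odd $c$), where $(c\tau+d)^{1/2}$ is multivalued and the metaplectic structure is essential. One must check that the branch of $(c\tau+d)^{1/2}$ implicit in the eta transformation agrees with the one encoded by $\widetilde{M}\in\MpZ$ in Theorem~\ref{thm:zhuweil}, so that $\eps(\widetilde{M})^{c}$ and $\eps(\widetilde{M})^{-c}$ are genuinely reciprocal as functions on the metaplectic cover and the cancellation is legitimate. Since the discussion preceding the corollary fixed $\eps$ on $\MpZ$ by $\eps(\widetilde{T})=\epsilon(T)$ and $\eps(\widetilde{S})=\epsilon(S)$ to be exactly the eta multiplier, this consistency holds by construction, and the computation goes through verbatim.
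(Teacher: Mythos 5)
Your computation is correct and is precisely the argument the paper intends: the corollary is stated as an immediate consequence of Theorem~\ref{thm:zhumodinv}, Theorem~\ref{thm:zhuweil} and the eta transformation, with the cancellation of $\eps(\widetilde{M})^{c}$ against $\eps(\widetilde{M})^{-c}$ doing all the work. Your remark on matching the branch of $(c\tau+d)^{1/2}$ in the odd-$c$ case is a sensible check that the paper handles implicitly by defining $\eps$ on $\MpZ$ via the eta multiplier itself.
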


\section{Simple-Current Extensions}\label{sec:sce}

We need some results on simple-current extensions. Most of them are well-known.
\begin{defi}[\VOA{} Extension]
Let $V^0$ be a \voa{}. Another \voa{} $V$ is called an \emph{extension} of $V^0$ if $V$ contains a \vosa{} isomorphic to $V^0$ (having the same vacuum and Virasoro vector as $V$, see Definition~\ref{defi:subvoa}). The extension is called \emph{simple} if $V$ is simple as a \voa{}.
\end{defi}

Now, let $V^0$ be a simple and rational \voa{} and $I$ an abelian group. Let $\{V^\alpha\;|\;\alpha\in I\}$ be a set of irreducible $V^0$-modules, indexed by the additive group $I$ such that the \voa{} $V^0$ is indexed by the neutral element of $I$.
\begin{lem}[\cite{SY03}, Lemma~3.1]
Assume that the direct sum $\bigoplus_{\alpha\in I} V^\alpha$ carries a \voa{} structure such that $0\neq V^\alpha\cdot V^\beta:=\spn_\C\{v_n w\;|\;v\in V^\alpha,w\in V^\beta,n\in\Z\}\subseteq V^{\alpha+\beta}$, i.e.\ $V$ is $I$-graded. This \voa{} is simple if and only if all $V^\alpha$, $\alpha\in I$, are non-isomorphic, irreducible $V^0$-modules.
\end{lem}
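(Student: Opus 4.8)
The plan is to prove both implications by analysing $V$ as a module over its subalgebra $V^0$. Since $V^0$ is rational and, by hypothesis, $V=\bigoplus_{\alpha\in I}V^\alpha$ is a direct sum of irreducible $V^0$-modules, $V$ is a semisimple $V^0$-module whose $I$-homogeneous components are exactly the given irreducibles. I would record at the outset that the grading produces automorphisms of $V$: for each character $\chi\in\hat I$ the map $\theta_\chi$ acting as the scalar $\chi(\alpha)$ on $V^\alpha$ is a \voa{} automorphism, since it fixes $\vac,\omega\in V^0$ and respects products because $0\neq V^\alpha\cdot V^\beta\subseteq V^{\alpha+\beta}$. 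For finite $I$ the group $G=\{\theta_\chi\mid\chi\in\hat I\}\cong\hat I$ then has fixed-point subalgebra $V^G=V^0$, as $\hat I$ separates the points of $I$.

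For the direction ``$\Leftarrow$'' I would assume the $V^\alpha$ pairwise non-isomorphic, so that $V$ is multiplicity-free as a $V^0$-module. In a multiplicity-free semisimple module every submodule is the direct sum of a subset of the simple summands, so any nonzero ideal $\mathcal I$ of $V$ --- being in particular a $V^0$-submodule --- has the form $\mathcal I=\bigoplus_{\alpha\in J}V^\alpha$ for some nonempty $J\subseteq I$. Fixing $\alpha_0\in J$, the ideal property gives $V^\beta\cdot V^{\alpha_0}\subseteq\mathcal I$ for every $\beta\in I$, while by hypothesis $0\neq V^\beta\cdot V^{\alpha_0}\subseteq V^{\alpha_0+\beta}$; hence $\mathcal I\cap V^{\alpha_0+\beta}$ is a nonzero $V^0$-submodule of the irreducible module $V^{\alpha_0+\beta}$ and so equals it. As $\beta$ ranges over $I$ this forces $J=I$, i.e.\ $\mathcal I=V$, and $V$ is simple.

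The direction ``$\Rightarrow$'' is the genuine obstacle, and I do not expect it to follow from simplicity by a bare endomorphism count: $\End_V(V)=\C$ (Schur, Proposition~\ref{prop:schur}) does not by itself exclude repeated $V^0$-constituents, so the abelian grading must enter essentially. The clean route is to combine the automorphism group $G\cong\hat I$ above with the quantum Galois theory of Dong and Mason: for a simple \voa{} and a finite group $G\leq\Aut(V)$ the $G$-isotypic components of $V$ are mutually inequivalent irreducible $V^G$-modules, and since $G$ is abelian these components are precisely the character eigenspaces, namely the $V^\alpha$. Hence, given that the $V^\alpha$ are already irreducible, they are pairwise non-isomorphic as $V^0=V^G$-modules. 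I would take the finiteness of $I$ as the standing case, since it is all that occurs in the applications to simple-current extensions, and reduce the infinite case to finite subgroups. The point requiring the most care is exactly this forward implication --- constructing the grading automorphisms, verifying $V^G=V^0$, and applying the Galois correspondence --- an alternative being to follow the direct argument of \cite{SY03}, which manufactures a proper nonzero ideal from a hypothetical $V^0$-isomorphism $V^\alpha\cong V^\beta$ with $\alpha\neq\beta$.
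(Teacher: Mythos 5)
Your argument is correct. Note first that the paper itself offers no proof of this lemma---it is imported verbatim from \cite{SY03} with only the citation---so there is no in-text argument to compare against; what you have written is a sound reconstruction. Your ``$\Leftarrow$'' direction is exactly the expected one: an ideal is in particular a $V^0$-submodule of the multiplicity-free semisimple module $\bigoplus_{\alpha\in I}V^\alpha$, hence a sum of homogeneous pieces, and the hypothesis $0\neq V^\alpha\cdot V^\beta\subseteq V^{\alpha+\beta}$ propagates one nonzero piece to all of them. For ``$\Rightarrow$'' you correctly identify that Schur's lemma alone is not enough and that the grading must be converted into symmetry; the route through the dual group $\hat I$ acting by $\theta_\chi$ and the quantum Galois theorem of Dong--Mason (the untwisted, abelian case of the Schur--Weyl duality the paper invokes in Section~2.2 via \cite{DY02,DM97,MT04}) does deliver that the character eigenspaces $V^\alpha$ are pairwise inequivalent irreducible $V^{\hat I}=V^0$-modules. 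Two small housekeeping points, neither of which is a gap: your restriction to finite $I$ is automatic in this setting, since $V^0$ is rational with finitely many irreducibles and a \voa{} has finite-dimensional graded components, so infinitely many summands would force repetitions and blow up some $V_n$; and the alternative you mention---manufacturing a proper ideal directly from a hypothetical isomorphism $V^\alpha\cong V^\beta$, $\alpha\neq\beta$---is the more elementary argument of \cite{SY03} itself, trading the Galois machinery for an explicit construction.
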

This motivates the following definition:
\begin{defi}[Graded Extension]
Let $V^0$ be a simple and rational \voa{}. An \emph{$I$-graded extension} $V_I$ of $V^0$ is a simple extension of $V^0$ of the form $V_I=\bigoplus_{\alpha\in I}V^\alpha$ where all $V^\alpha$, $\alpha\in I$, are non-isomorphic irreducible $V^0$-modules and the vertex operators in $I$ satisfy $Y(v,x)w\in V^{\alpha+\beta}((x))$ for any $v\in V^\alpha$ and $w\in V^\beta$.
\end{defi}
The rationality of $V^0$ implies that if an $I$-graded extension $V_I$ of $V^0$ exists, then $I$ is a \emph{finite} abelian group. Also, all the $V^0$-modules $V^\alpha$, $\alpha\in I$, have to be $\Z$-graded, i.e.\ their conformal weight $\rho(V^\alpha)$ has to be in $\Z$.

Finally, we define simple-current extensions:
\begin{defi}[Simple-Current Extension]\label{defi:sce}
If in addition in the above definition all $V^\alpha$, $\alpha\in I$, are simple-current $V^0$-modules, then $V_I$ is called \emph{$I$-graded simple-current extension} of $V^0$.
\end{defi}

In an $I$-graded extension $V_I$, $V^\alpha\cdot V^\beta\subseteq V^{\alpha+\beta}$ by definition. This means that the vertex operation on $V_I$ restricts to intertwining operators of type $\binom{W^{\alpha+\beta}}{W^\alpha\,W^\beta}$, implying that the space $\V_{W^\alpha,W^\beta}^{W^{\alpha+\beta}}$ is at least one-dimensional. For an $I$-graded simple-current extension this clearly implies $N_{W^\alpha,W^\beta}^{W^\gamma}=\delta_{\alpha+\beta,\gamma}$ and hence
\begin{equation*}
W^\alpha\boxtimes_{V_0}W^\beta\cong W^{\alpha+\beta}.
\end{equation*}

We summarise some properties of simple-current extensions. The following is essentially Proposition~1 in \cite{LY08}:
\begin{prop}[\cite{ABD04}, \cite{DM04b}, Section~5, \cite{Yam04}, Lemma~2.6, \cite{Lam01}, Theorem~4.5]\label{prop:scerc2}
Let $V^0$ be a simple, rational \voa{}. Let $V_I=\bigoplus_{\alpha\in I}V^\alpha$ be an $I$-graded simple-current extension of $V^0$. Then:
\begin{enumerate}
\item \label{enum:sce1} $V_I$ is rational.
\item \label{enum:sce2} If $V^0$ is $C_2$-cofinite and of CFT-type, then also $V_I$ is $C_2$-cofinite.
\item \label{enum:sce3}(Uniqueness) If $\hat V_I=\bigoplus_{\alpha\in I}\hat V^\alpha$ is another simple-current extension of $V^0$ such that $\hat V^\alpha\cong V^\alpha$ for all $\alpha\in I$, then $V_I$ and $\hat V_I$ are isomorphic \voa{}s.
\end{enumerate}
\end{prop}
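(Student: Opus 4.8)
The three assertions are independent and I would prove each separately; the common input is that rationality of $V^0$ makes every $V^\alpha$ a simple current, so that each fusion space $\V_{V^\alpha\,V^\beta}^{V^{\alpha+\beta}}$ is one-dimensional and the vertex operation of $V_I$ restricts on $V^\alpha\otimes V^\beta$ to a scalar multiple of the unique intertwining operator of type $\binom{V^{\alpha+\beta}}{V^\alpha\,V^\beta}$. For the rationality in \ref{enum:sce1} I would follow Yamauchi \cite{Yam04} and Lam \cite{Lam01}: given an admissible $V_I$-module $M$, restricting the vertex operators to $V^0$ makes $M$ an admissible $V^0$-module, which decomposes into irreducibles by rationality of $V^0$; the $I$-grading of $V_I$ together with the fusion rules $V^\alpha\boxtimes_{V^0}V^\beta\cong V^{\alpha+\beta}$ then lets one assemble these $V^0$-summands into $V_I$-submodules and deduce complete reducibility of $M$ as a $V_I$-module, as well as the finiteness of $\Irr(V_I)$. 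I would cite the references for the detailed induction.

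For \ref{enum:sce2} I would argue directly. Since $V^0$ is $C_2$-cofinite and of CFT-type, each irreducible $V^0$-module is $C_2$-cofinite as a module, i.e.\ $V^\alpha/C_2^{V^0}(V^\alpha)$ is finite-dimensional, where $C_2^{V^0}(V^\alpha):=\spn_\C\{a_{-2}v\mid a\in V^0,\,v\in V^\alpha\}$; this is the content of \cite{ABD04}. As $V^0\subseteq V_I$, the VOA subspace $C_2(V_I)=\spn_\C\{a_{-2}b\mid a,b\in V_I\}$ contains each $C_2^{V^0}(V^\alpha)$, so $C_2(V_I)\supseteq\bigoplus_{\alpha\in I}C_2^{V^0}(V^\alpha)$ and $V_I/C_2(V_I)$ is a quotient of $\bigoplus_{\alpha\in I}V^\alpha/C_2^{V^0}(V^\alpha)$, which is finite-dimensional because $I$ is finite. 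Hence $V_I$ is $C_2$-cofinite.

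For the uniqueness in \ref{enum:sce3}, fix $V^0$-module isomorphisms $f_\alpha\colon V^\alpha\to\hat V^\alpha$ with $f_0=\id$ and, for each pair, a spanning intertwining operator $\mathcal{Y}^{\alpha,\beta}$ of type $\binom{V^{\alpha+\beta}}{V^\alpha\,V^\beta}$. Relative to these the operation of $V_I$ is $c_{\alpha,\beta}\mathcal{Y}^{\alpha,\beta}$ and that of $\hat V_I$, transported along $f=\bigoplus_\alpha f_\alpha$, is $\hat c_{\alpha,\beta}\mathcal{Y}^{\alpha,\beta}$ for scalars $c_{\alpha,\beta},\hat c_{\alpha,\beta}\in\C^\times$. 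Replacing $f_\alpha$ by $\mu_\alpha f_\alpha$ multiplies the ratio $\lambda_{\alpha,\beta}:=\hat c_{\alpha,\beta}/c_{\alpha,\beta}$ by $\mu_\alpha\mu_\beta\mu_{\alpha+\beta}^{-1}$, so it suffices to trivialise $\lambda$. Associativity of both vertex operations forces $\lambda$ to be a $2$-cocycle on $I$ valued in $\C^\times$, and their commutativity (locality) forces $\lambda$ to be \emph{symmetric}. Since $\C^\times$ is divisible, hence injective as a $\Z$-module, $\mathrm{Ext}^1_\Z(I,\C^\times)=0$ and every symmetric $2$-cocycle is a coboundary; choosing $\mu$ to split it turns $f$ into a \voa{} isomorphism $V_I\cong\hat V_I$. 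This is the argument of Dong--Mason \cite{DM04b} and Lam--Yamauchi \cite{LY08}, organised cohomologically.

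The main obstacle is the rationality in \ref{enum:sce1}: lifting complete reducibility from the restricted $V^0$-module to the $V_I$-module and bounding $\Irr(V_I)$ is where the genuine work lies and requires the full induced-module machinery, whereas \ref{enum:sce2} and \ref{enum:sce3} become essentially formal once the one-dimensionality of the simple-current fusion spaces is available.
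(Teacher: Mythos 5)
Your proposal is correct, and it cannot really diverge from the paper's argument because the paper gives none: Proposition~\ref{prop:scerc2} is imported from the literature with exactly the citations you lean on (\cite{ABD04}, \cite{DM04b}, \cite{Yam04}, \cite{Lam01}, and the paper notes it is essentially Proposition~1 of \cite{LY08}). Your deferral of \ref{enum:sce1} to the induced-module machinery matches what the paper does; your argument for \ref{enum:sce2} (each $V^\alpha/C_2^{V^0}(V^\alpha)$ is finite-dimensional by \cite{ABD04}, each $C_2^{V^0}(V^\alpha)\subseteq C_2(V_I)$ because the vertex operation of $V_I$ restricts to the module action, and $I$ is finite) and your cohomological argument for \ref{enum:sce3} (the ratio of structure constants is a symmetric $2$-cocycle on $I$ valued in the divisible, hence injective, group $\C^\times$, so $\mathrm{Ext}^1_\Z(I,\C^\times)=0$ forces it to be a coboundary, which can be absorbed into the $f_\alpha$) are both correct and are precisely the standard proofs in those references.

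One slip should be fixed: rationality of $V^0$ does \emph{not} make the $V^\alpha$ simple currents --- no rationality hypothesis could. The simple-current property is part of the definition of an $I$-graded simple-current extension (Definition~\ref{defi:sce}); it is this hypothesis, combined with the $I$-grading of the vertex operation, that gives $N_{V^\alpha,V^\beta}^{V^{\alpha+\beta}}=1$ and hence the one-dimensionality of the fusion spaces you use throughout. Since the needed fact holds by hypothesis rather than by your stated reason, the slip is harmless to the logic, but the sentence should be reworded.
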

Item \ref{enum:sce3} shows that the \voa{} structure of the simple-current extension only depends on the isomorphism classes of the simple-current $V^0$-modules.

\minisec{Modules for Simple-Current Extensions}

In the following we will review the representation theory of simple-current extensions, which is developed in \cite{Lam01,Yam04}.
\begin{prop}[\cite{SY03}, Lemma~3.6]
Let $V_I$ be an $I$-graded simple-current extension of the simple and rational \voa{} $V^0$. Let $X$ be a $V_I$-module. Let $W$ be an irreducible $V^0$-submodule of $X$. Then all
\begin{equation*}
V^\alpha\cdot W=\spn_\C\{v_n w\;|\;v\in V^\alpha,w\in W,n\in\Z\},
\end{equation*}
$\alpha\in I$, are also irreducible $V^0$-submodules of $X$.
\end{prop}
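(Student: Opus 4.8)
The plan is to realise each space $V^\alpha\cdot W$ as the image of a $V^0$-module homomorphism out of the irreducible fusion product $V^\alpha\boxtimes_{V^0}W$, and then to apply Schur's lemma. First I would observe that restricting the module vertex operator $Y_X$ of the $V_I$-module $X$ to its first argument in $V^\alpha$ and to inputs in $W$ yields an operator $\mathcal{Y}_\alpha(v,x)w:=Y_X(v,x)w$, which I claim is an intertwining operator of type $\binom{X}{V^\alpha\,W}$ for $V^0$. Indeed, the Jacobi identity and translation axiom required of $\mathcal{Y}_\alpha$ (Definition~\ref{defi:intops}) are exactly the $V_I$-module axioms for $X$ read off with first input $a\in V^0\subseteq V_I$, second input $v\in V^\alpha\subseteq V_I$, and applied to $w\in W\subseteq X$, using $V^0\cdot V^\alpha\subseteq V^\alpha$ and $V^0\cdot W\subseteq W$; since these are coefficients of the $V_I$-module vertex operator, the powers of $x$ are integral, so $\mathcal{Y}_\alpha$ maps into $X((x))$. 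By Proposition~\ref{prop:cor3.4} this intertwining operator corresponds to a $V^0$-module homomorphism $\psi_\alpha\colon V^\alpha\boxtimes_{V^0}W\to X$ with $\mathcal{Y}_\alpha=\psi_\alpha\circ\mathcal{F}$, where $\mathcal{F}$ is the canonical intertwining operator of the fusion product. As $V^\alpha\boxtimes_{V^0}W$ is spanned by the coefficients $\mathcal{F}(v)_n w$, its image is
\[
\im\psi_\alpha=\spn_\C\{\psi_\alpha(\mathcal{F}(v)_n w)\}=\spn_\C\{v_n w\}=V^\alpha\cdot W.
\]

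Next, since $V^\alpha$ is a simple current for $V^0$ and $W$ is irreducible, the fusion product $V^\alpha\boxtimes_{V^0}W$ is an irreducible $V^0$-module by the very definition of simple current. Hence by Corollary~\ref{cor:schur} the homomorphism $\psi_\alpha$ is either zero or injective; in the injective case $V^\alpha\cdot W=\im\psi_\alpha\cong V^\alpha\boxtimes_{V^0}W$ is irreducible, which is the assertion. It therefore remains only to exclude $\psi_\alpha=0$, i.e.\ to prove $V^\alpha\cdot W\neq\{0\}$.

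For the non-vanishing I would argue by contradiction: suppose $Y_X(v,x)w=0$ for all $v\in V^\alpha$, $w\in W$. For $u\in V^{-\alpha}$ the associativity of vertex operators on the module $X$ (a consequence of its Jacobi identity) gives, after the appropriate expansion,
\[
Y_X(u,x_1)Y_X(v,x_2)w=Y_X\!\big(Y_{V_I}(u,x_1-x_2)v,x_2\big)w.
\]
The left-hand side vanishes by assumption, so $Y_X(u_n v,x_2)w=0$ for every $n$, where $u_n v\in V^{-\alpha}\cdot V^\alpha\subseteq V^0$ by the grading of $V_I$. Now $V^{-\alpha}\cdot V^\alpha$ is a $V^0$-submodule of the adjoint module $V^0$ (again by grading and associativity), and it is non-zero because $V_I$ is a simple \voa{}, so the modes $u_n v$ of non-zero vectors cannot all vanish; since $V^0$ is simple, $V^{-\alpha}\cdot V^\alpha=V^0$. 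In particular $\vac\in V^{-\alpha}\cdot V^\alpha$, whence $w=Y_X(\vac,x_2)w=0$, contradicting $W\neq\{0\}$. Thus $V^\alpha\cdot W\neq\{0\}$, and combining with the previous paragraph completes the proof.

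I expect the non-vanishing step to be the main obstacle, and within it the identity $V^{-\alpha}\cdot V^\alpha=V^0$: the whole argument hinges on extracting from the simplicity of $V_I$ the fact that the inverse module $V^{-\alpha}$ really pairs $V^\alpha$ back onto the vacuum. The two supporting facts I rely on — that a fusion product is spanned by the coefficients of its canonical intertwining operator, and the module associativity identity — are routine, but they must be handled with the correct expansions $\iota_{x_1,x_2}$ versus $\iota_{x_2,x_1}$ when one writes out the displayed formulae in full.
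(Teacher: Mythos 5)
Your proof is correct. Note, however, that the paper itself contains no proof of this statement to compare against: the proposition is imported verbatim from \cite{SY03}, Lemma~3.6. Your argument is essentially the standard one behind that lemma — restrict $Y_X$ to an intertwining operator of type $\binom{X}{V^\alpha\,W}$ for $V^0$, factor it through the fusion product via the universal property (Proposition~\ref{prop:cor3.4}), identify the image with $V^\alpha\cdot W$ (the span of the coefficients of the canonical intertwining operator does exhaust $V^\alpha\boxtimes_{V^0}W$, by the uniqueness clause of the universal property), and use that $V^\alpha\boxtimes_{V^0}W$ is irreducible because $V^\alpha$ is a simple current — so the structure of your proof is sound and complete in outline.

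Two points deserve tightening. First, the step ``by Corollary~\ref{cor:schur} the homomorphism $\psi_\alpha$ is either zero or injective'' is not an application of that corollary, whose hypotheses require \emph{both} modules to be irreducible, whereas $X$ is not; what you actually use is the elementary fact that the kernel of a homomorphism out of an irreducible module is either $\{0\}$ or the whole module. Second, your non-vanishing step rests on the claim that $Y_{V_I}(u,x)v\neq 0$ for nonzero $u\in V^{-\alpha}$, $v\in V^\alpha$, i.e.\ on the absence of zero divisors in the simple \voa{} $V_I$. This is true but is proved nowhere in the present text, so it needs a citation (it is, for instance, Proposition~11.9 of \cite{DL93}); alternatively, in the setting of \cite{SY03} the condition $V^\alpha\cdot V^\beta\neq 0$ is built into the definition of an $I$-graded \voa{}, which makes the non-vanishing of $V^{-\alpha}\cdot V^\alpha$ immediate there. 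Granting that reference, the remainder of your contradiction argument — the commutator formula showing $V^{-\alpha}\cdot V^\alpha$ is a $V^0$-submodule of the adjoint module, simplicity of $V^0$ forcing $V^{-\alpha}\cdot V^\alpha=V^0$, weak associativity converting the vanishing of products into the vanishing of iterates, and the vacuum axiom yielding $w=0$ — is correct, including your caveat about handling the $\iota_{x_1,x_2}$ expansions carefully.
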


Let $X$ be an irreducible $V_I$-module. Since $V^0$ is rational, there is always an irreducible $V^0$-submodule $W$ of $X$. Since $X$ is irreducible, we get
\begin{equation*}
X=V_I\cdot W=\bigoplus_{\alpha\in I}V^\alpha\cdot W.
\end{equation*}
In view of the above proposition we define
\begin{equation*}
I_W:=\{\alpha\in I\;|\;V^\alpha\cdot W\cong W\},
\end{equation*}
which is a subgroup of $I$ since both $V^\alpha\cdot(V^\beta\cdot W)$ and $V^{\alpha+\beta}\cdot W$ are irreducible $V^0$-modules by the previous proposition and by associativity they are isomorphic.

\begin{defi}[$I$-Stability]
A $V_I$-module $X$ is said to be $I$-stable if $I_W=\{0\}$ for some irreducible $V^0$-submodule $W$ of $X$.
\end{defi}
Clearly, if $I_W=\{0\}$ for some irreducible $V^0$-submodule $W$ of $X$, then the same is true for all $V^0$-submodules of $X$. We will see that $I$-stable $V_I$-modules behave very nicely.
\begin{prop}[\cite{SY03}, Proposition~3.8]
Let $V_I$ be an $I$-graded simple-current extension of the simple, rational \voa{} $V^0$. Then the structure of every $I$-stable, irreducible $V_I$-module is completely determined by its $V^0$-module structure.
\end{prop}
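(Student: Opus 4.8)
The plan is to prove the sharper statement that two $I$-stable, irreducible $V_I$-modules which are isomorphic as $V^0$-modules are already isomorphic as $V_I$-modules, so that the $V_I$-structure is recovered up to isomorphism from the underlying $V^0$-structure. The whole argument reduces to the observation that the $V_I$-action on such a module is built from intertwining operators living in \emph{one-dimensional} spaces, so that the only freedom is a system of scalars which turns out to be a coboundary.

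First I would fix the decomposition. For an $I$-stable irreducible $V_I$-module $X$ and an irreducible $V^0$-submodule $W\subseteq X$ the preceding results give $X=\bigoplus_{\alpha\in I}W^\alpha$ with $W^\alpha:=V^\alpha\cdot W$ pairwise non-isomorphic irreducible $V^0$-modules, distinctness being exactly the condition $I_W=\{0\}$. The first step is to recognise this as an $I$-grading of the $V_I$-action: for $v\in V^\alpha$ the operator $Y_X(v,x)$, restricted to $W^\beta$ and projected to $W^\gamma$, is an intertwining operator of type $\binom{W^\gamma}{V^\alpha\,W^\beta}$. Since $V^\alpha$ is a simple current one has $V^\alpha\boxtimes_{V^0}W^\beta\cong W^{\alpha+\beta}$, so by Proposition~\ref{prop:cor3.4} together with Corollary~\ref{cor:schur} this space is one-dimensional for $\gamma=\alpha+\beta$ and zero otherwise. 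Hence $Y_X(V^\alpha,x)W^\beta\subseteq W^{\alpha+\beta}((x^{\pm1/N}))$.

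Next, given two $I$-stable irreducible $V_I$-modules $X_1,X_2$ with a $V^0$-module isomorphism $f\colon X_1\to X_2$, I would set $W_1:=W\subseteq X_1$ and choose the base submodule of $X_2$ to be $W_2:=f(W_1)$. Since $\V_{V^\alpha\,W_2^\gamma}^{W_2^{\gamma'}}\neq0$ only for $\gamma'=\alpha+\gamma$, the $V^0$-isomorphism $f$ is forced to satisfy $f(W_1^\alpha)=W_2^\alpha$ for all $\alpha$. Now for $v\in V^\alpha$ and $w\in W_1^\beta$, both $w\mapsto f(Y_{X_1}(v,x)w)$ and $w\mapsto Y_{X_2}(v,x)f(w)$ are, regarded via $f$ as maps $V^\alpha\otimes W_2^\beta\to W_2^{\alpha+\beta}$, nonzero intertwining operators of type $\binom{W_2^{\alpha+\beta}}{V^\alpha\,W_2^\beta}$; by the one-dimensionality just established they differ by a scalar, so
\begin{equation*}
f(Y_{X_1}(v,x)w)=c_{\alpha,\beta}\,Y_{X_2}(v,x)f(w)
\end{equation*}
with $c_{\alpha,\beta}\in\C^\times$, and $c_{0,\beta}=1$ because $f$ is a $V^0$-homomorphism. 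The key relation among these scalars comes from applying $f$ to the associativity (equivalently, the Jacobi) identity for module vertex operators, used only on vectors $w_0\in W=W^0$: for $u\in V^\alpha$, $v\in V^\beta$ one has $Y_{V_I}(u,x_0)v\in V^{\alpha+\beta}$ and $Y_{X_1}(v,x_2)w_0\in W_1^\beta$, and transporting both sides of $Y_{X_1}(u,x_1)Y_{X_1}(v,x_2)w_0\leftrightarrow Y_{X_1}(Y_{V_I}(u,x_0)v,x_2)w_0$ by $f$ (using the previous display) and comparing with the same identity in $X_2$ yields
\begin{equation*}
c_{\alpha,\beta}\,c_{\beta,0}=c_{\alpha+\beta,0}.
\end{equation*}
Setting $\lambda_\gamma:=c_{\gamma,0}^{-1}$ (so $\lambda_0=1$) and defining $g\colon X_1\to X_2$ by $g|_{W_1^\gamma}:=\lambda_\gamma\,f|_{W_1^\gamma}$, this relation gives $\lambda_{\alpha+\beta}c_{\alpha,\beta}=\lambda_\beta$, whence $g(Y_{X_1}(v,x)w)=\lambda_{\alpha+\beta}c_{\alpha,\beta}Y_{X_2}(v,x)f(w)=Y_{X_2}(v,x)g(w)$ for all $v\in V^\alpha$, $w\in W_1^\beta$. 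Thus $g$ intertwines the full $V_I$-action and, being a nonzero scalar on each $V^0$-constituent of a $V^0$-isomorphism, is bijective, hence a $V_I$-module isomorphism.

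I expect the main obstacle to be the structural input of the second paragraph: establishing rigorously that the relevant spaces of intertwining operators are one-dimensional. This rests on the simple-current identification $V^\alpha\boxtimes_{V^0}W^\beta\cong W^{\alpha+\beta}$ (which itself uses associativity of the fusion product) combined with Proposition~\ref{prop:cor3.4}. Once that one-dimensionality is available, the rest is the purely formal verification that the $2$-cocycle $(c_{\alpha,\beta})$ is a coboundary, carried out above; the only care needed there is the standard handling of the formal-variable and branch subtleties in the associativity identity used to derive the scalar relation $c_{\alpha,\beta}c_{\beta,0}=c_{\alpha+\beta,0}$.
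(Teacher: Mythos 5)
Your argument is correct, and it is essentially the standard proof of this fact: the paper itself gives no proof here, quoting the result from \cite{SY03} (and remarking that it also follows from Theorems~2.8 and 2.14 of \cite{Yam04}), and the argument in those references is exactly your scheme — one-dimensionality of the spaces $\V_{V^\alpha\,W^\beta}^{W^{\alpha+\beta}}$ forces the two module structures to differ by a system of nonzero scalars $c_{\alpha,\beta}$, associativity shows this system is a coboundary, and a diagonal rescaling of the $V^0$-isomorphism produces the $V_I$-isomorphism. The supporting facts you invoke all hold under the stated hypotheses: $V^\alpha\boxtimes_{V^0}W^\beta\cong W^{\alpha+\beta}$ follows from the simple-current property together with the nonzero intertwining operator supplied by the module action and Schur's lemma, and the nonvanishing of $V^\alpha\cdot W_1^\beta$ follows from the irreducibility of $X_1$. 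One small point of execution: in deriving $c_{\alpha,\beta}c_{\beta,0}=c_{\alpha+\beta,0}$ you should use the two-term weak associativity relation $(x_0+x_2)^k Y(u,x_0+x_2)Y(v,x_2)w_0=(x_0+x_2)^kY(Y(u,x_0)v,x_2)w_0$ rather than the full three-term Jacobi identity, since in the latter the reversed product $Y(v,x_2)Y(u,x_1)w_0$ transforms with the \emph{different} scalar $c_{\beta,\alpha}c_{\alpha,0}$ and the three terms cannot be separated coefficientwise; with weak associativity the relation drops out immediately (and there are no branch issues, as all exponents here are integral).
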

The statement of the proposition also follows from the more general results Theorems 2.8 and 2.14 in \cite{Yam04}.

\begin{prop}[\cite{Yam04}, Lemma~2.16, \cite{SY03}, Lemma~3.12]\label{prop:scefusion}
Let $V_0$ be a simple, rational, $C_2$-cofinite \voa{} of CFT-type and let $V_I$ be an $I$-graded simple-current extension of $V^0$. Let $X^1,X^2,X^3$ be irreducible $I$-stable $V_I$-modules and let $W^1,W^2,W^3$ be $V^0$-submodules of $X^1,X^2,X^3$, respectively. Then there is the following isomorphism of spaces of intertwining operators
\begin{equation*}
\binom{X^3}{X^1\,X^2}_{V_I}\cong\bigoplus_{\alpha\in I}\binom{V^\alpha\boxtimes_{V^0}W^3}{W^1\,W^2}_{V^0}.
\end{equation*}
\end{prop}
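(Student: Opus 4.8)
\emph{Setup.} The plan is to realise the isomorphism explicitly through a restriction map and to construct its inverse by induction. Recall from the preceding propositions that, since each $X^i$ is an irreducible $I$-stable $V_I$-module containing the irreducible $V^0$-submodule $W^i$, it decomposes as a $V^0$-module as $X^i=\bigoplus_{\alpha\in I}V^\alpha\cdot W^i\cong\bigoplus_{\alpha\in I}V^\alpha\boxtimes_{V^0}W^i$, where $I$-stability guarantees that the summands are pairwise non-isomorphic irreducible $V^0$-modules. Consequently each projection $\pi_\alpha\colon X^3\to V^\alpha\cdot W^3$ onto an isotypic component is a well-defined $V^0$-module homomorphism.

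\emph{Restriction.} First I would define a linear map
\[
R\colon\binom{X^3}{X^1\,X^2}_{V_I}\to\bigoplus_{\alpha\in I}\binom{V^\alpha\boxtimes_{V^0}W^3}{W^1\,W^2}_{V^0},\qquad R(\mathcal{Y})=\bigoplus_{\alpha\in I}\pi_\alpha\circ\bigl(\mathcal{Y}(\cdot,x)|_{W^1\otimes W^2}\bigr).
\]
Each component $\pi_\alpha\circ\mathcal{Y}|_{W^1\otimes W^2}$ is a $V^0$-intertwining operator of the asserted type: substituting $v\in V^0\subseteq V_I$ into the $V_I$-Jacobi identity for $\mathcal{Y}$ reproduces verbatim the $V^0$-Jacobi identity for the module vertex operators $Y_{W^1},Y_{W^2},Y_{V^\alpha\cdot W^3}$ restricted to $V^0$, and $\pi_\alpha$ is $V^0$-linear; the translation axiom is inherited directly. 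The map $R$ is injective because $X^1=V_I\cdot W^1$ and $X^2=V_I\cdot W^2$, so the iterate and commutator formulas coming from the $V_I$-Jacobi identity express every value $\mathcal{Y}(a_mw^1,x)(b_kw^2)$ (for $a,b\in V_I$) in terms of the values $\mathcal{Y}(w^1,x)w^2$ together with the known operators $Y_{X^1},Y_{X^2},Y_{X^3}$; hence $R(\mathcal{Y})=0$ forces $\mathcal{Y}=0$.

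\emph{Induction (surjectivity) --- the main obstacle.} Given a $V^0$-intertwining operator $\mathcal{Y}^0$ lying in one summand $\binom{V^{\alpha_0}\boxtimes_{V^0}W^3}{W^1\,W^2}_{V^0}$, I would extend it to a $V_I$-intertwining operator $\mathcal{Y}$ with $R(\mathcal{Y})=\mathcal{Y}^0$ by propagating $\mathcal{Y}^0$ along the $V_I$-action, the defining intertwining relations forcing the values of $\mathcal{Y}$ on all of $V^\alpha\cdot W^1\otimes V^\beta\cdot W^2$. The hard part is precisely the well-definedness and the full $V_I$-Jacobi identity for the resulting $\mathcal{Y}$: one must check that the prescription is independent of the presentation of a vector as a $V_I$-mode acting on $W^i$, and that commutativity and associativity of the extended operator hold. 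This is where the hypotheses on $V^0$ are indispensable, since the required associativity and commutativity of products of $V^0$-intertwining operators are available only because $V^0$ is rational, $C_2$-cofinite and of CFT-type (Huang's theorems underlying Proposition~\ref{prop:fusionassoc}); this step is the content of the cited Lemmas in \cite{Yam04,SY03}.

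\emph{Sanity check and conceptual reformulation.} As a check, both sides are at most one-dimensional: writing $W^i=W^{\mu_i}$ with $\mu_i\in F_{V^0}$, the summand $\binom{V^\alpha\boxtimes_{V^0}W^3}{W^1\,W^2}_{V^0}$ is non-zero exactly when $\alpha=\mu_1+\mu_2-\mu_3\in I$, leaving a single non-zero term. Conceptually the whole statement is Frobenius reciprocity for the induction functor $\mathrm{Ind}=V_I\boxtimes_{V^0}(-)$: using Proposition~\ref{prop:cor3.4} the two sides become $\Hom_{V_I}(X^1\boxtimes_{V_I}X^2,X^3)$ and $\Hom_{V^0}(W^1\boxtimes_{V^0}W^2,\bigoplus_{\alpha\in I}V^\alpha\boxtimes_{V^0}W^3)$, and the identifications $X^i\cong\mathrm{Ind}(W^i)$, $\bigoplus_{\alpha}V^\alpha\boxtimes_{V^0}W^3\cong\mathrm{Res}\,X^3$, and the monoidality $\mathrm{Ind}(W^1\boxtimes_{V^0}W^2)\cong X^1\boxtimes_{V_I}X^2$ turn the desired isomorphism into the adjunction $\Hom_{V_I}(\mathrm{Ind}(M),N)\cong\Hom_{V^0}(M,\mathrm{Res}\,N)$; the substance again being the associativity results above.
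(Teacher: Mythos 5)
The paper does not prove this proposition at all: it is quoted verbatim from the literature (\cite{Yam04}, Lemma~2.16, and \cite{SY03}, Lemma~3.12), so there is no in-paper argument to compare against. Judged on its own, your proposal correctly identifies the architecture of such a proof. The restriction map $R$ is the right object, its components are genuinely $V^0$-intertwining operators for the reasons you give, and the injectivity argument (values on $W^1\otimes W^2$ determine $\mathcal{Y}$ on all of $X^1\otimes X^2$ because $X^i=V_I\cdot W^i$ and the iterate/commutator formulas propagate) is sound and standard.

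However, as a proof the proposal has a genuine gap exactly where you flag one: surjectivity. Extending a $V^0$-intertwining operator to a $V_I$-intertwining operator requires verifying that the propagated values are independent of the presentation $a_m w^1$, and that the full $V_I$-Jacobi identity holds for the extension; this rests on Huang's associativity and commutativity of $V^0$-intertwining operators and is precisely the content of the cited lemmas. Writing ``this step is the content of the cited Lemmas'' defers the entire substance of the statement rather than proving it, so the proposal is an outline, not a proof. Two smaller points: your ``sanity check'' that both sides are one-dimensional silently assumes $V^0$ has group-like fusion (so that $W^1\boxtimes_{V^0}W^2$ is irreducible and the index $\alpha=\mu_1+\mu_2-\mu_3$ makes sense), which is not among the hypotheses of the proposition as stated --- it holds only in the special setting in which the paper later applies it. And the Frobenius-reciprocity reformulation, while conceptually illuminating, relies on the monoidality $\mathrm{Ind}(W^1\boxtimes_{V^0}W^2)\cong X^1\boxtimes_{V_I}X^2$, which is essentially equivalent to the statement being proved, so it cannot serve as an independent argument.
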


Let $V_I$ be an $I$-graded simple-current extension of $V_0$. We consider the group $\hat{I}$ of characters $\chi\colon I\to\C^\times$. Then the elements $\chi$ of $\hat{I}$ define automorphisms of $V_I$ which leave $V^0$ pointwise invariant, i.e.\ $V^0\subseteq (V_I)^\chi$ for all $\chi\in\hat{I}$ where $(V_I)^\chi$ denotes the fixed points of $V_I$ under $\chi$. In fact the converse is also true:
\begin{prop}[\cite{Yam04}, Lemma~2.18]
An automorphism $\chi\in\Aut(V)$ satisfies $V^0\subseteq (V_I)^\chi$ if and only if $\chi\in\hat{I}$.
\end{prop}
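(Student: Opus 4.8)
The plan is to prove the two implications separately, the key observation being that if $\chi$ fixes $V^0$ pointwise then $\chi$ is forced to be a $V^0$-module automorphism of $V_I$, at which point Schur's lemma pins down its shape. Recall first how $\hat I$ acts: a character $\chi\colon I\to\C^\times$ is identified with the map acting on $V_I=\bigoplus_{\alpha\in I}V^\alpha$ as multiplication by $\chi(\alpha)$ on the homogeneous component $V^\alpha$. For the ``if'' direction I would simply verify that this is a \voa{} automorphism: for $v\in V^\alpha$, $w\in V^\beta$ the grading $Y(v,x)w\in V^{\alpha+\beta}((x))$ shows $\chi(Y(v,x)w)=\chi(\alpha+\beta)Y(v,x)w=\chi(\alpha)\chi(\beta)Y(v,x)w=Y(\chi v,x)\chi w$, and $\chi$ fixes $\vac,\omega\in V^0$. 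Since $\chi(0)=1$, such a $\chi$ acts as the identity on $V^0$, so $V^0\subseteq(V_I)^\chi$.

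For the converse, suppose $\chi\in\Aut(V_I)$ fixes $V^0$ pointwise. Then for every $v\in V^0$ and $w\in V_I$ one has $\chi(Y(v,x)w)=Y(\chi v,x)\chi w=Y(v,x)\chi w$, so $\chi$ commutes with the $V^0$-module action, i.e.\ $\chi\in\End_{V^0}(V_I)$. Because $V_I$ is a graded extension, the $V^\alpha$, $\alpha\in I$, are pairwise non-isomorphic irreducible $V^0$-modules, so Corollary~\ref{cor:schur} gives $\Hom_{V^0}(V^\alpha,V^\beta)\cong\C$ for $\alpha=\beta$ and $0$ otherwise. Hence every $V^0$-endomorphism of $V_I$ is diagonal with respect to the decomposition; in particular $\chi$ preserves each $V^\alpha$ and acts there as a scalar $\chi(\alpha)\in\C$, with $\chi(\alpha)\in\C^\times$ since $\chi$ is invertible.

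It then remains to check that $\alpha\mapsto\chi(\alpha)$ is a group homomorphism, and here I would invoke the non-vanishing built into graded extensions, namely $V^\alpha\cdot V^\beta\neq\{0\}$ inside $V^{\alpha+\beta}$ (equivalently, the vertex operation restricts to a nonzero intertwining operator of type $\binom{V^{\alpha+\beta}}{V^\alpha\,V^\beta}$). Choosing $v\in V^\alpha$, $w\in V^\beta$, $n\in\Z$ with $v_nw\neq0$ and taking the coefficient of $x^{-n-1}$ in $\chi(Y(v,x)w)=Y(\chi v,x)\chi w$ yields $\chi(\alpha+\beta)(v_nw)=\chi(\alpha)\chi(\beta)(v_nw)$, and cancelling the nonzero vector $v_nw$ gives $\chi(\alpha+\beta)=\chi(\alpha)\chi(\beta)$, so $\chi\in\hat I$.

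I expect the only genuine obstacle to be the grading-preservation step: the whole argument rests on the $V^\alpha$ being \emph{pairwise non-isomorphic} irreducible $V^0$-modules, since this is exactly what makes $\End_{V^0}(V_I)$ diagonal and prevents $\chi$ from mixing components. Once that is secured via Schur's lemma, the character property follows routinely from the non-triviality of the products $V^\alpha\cdot V^\beta$.
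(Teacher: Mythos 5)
Your proof is correct, and it is essentially the argument behind the cited result: the paper gives no proof of its own (it quotes this as Lemma~2.18 of \cite{Yam04}), and the proof there follows exactly your route — an automorphism fixing $V^0$ pointwise is a $V^0$-module endomorphism, hence by Schur's lemma acts by a nonzero scalar on each of the pairwise non-isomorphic components $V^\alpha$, and the non-vanishing $V^\alpha\cdot V^\beta\neq\{0\}$ (a consequence of the simplicity required of an $I$-graded extension) forces $\alpha\mapsto\chi(\alpha)$ to be a character, the converse direction being the direct verification you give.
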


Let $W$ be an irreducible $V^0$-module. We consider the function $\chi_W\colon I\to\C^\times$,
\begin{equation*}
\chi_W(\alpha):=\e^{(2\pi\i)(\rho(V^\alpha\boxtimes W)-\rho(W))}.
\end{equation*}
That this is a character, i.e.\ $\chi_W\in\hat{I}$, is shown under suitable assumptions in \cite{Yam04}, Lemma~3.1.
\begin{thm}[\cite{Yam04}, Theorems 3.2 and 3.3]\label{thm:irrmodsce}
Let $V_0$ be a simple, rational, $C_2$-cofinite \voa{} of CFT-type and let $V_I$ be an $I$-graded simple-current extension of $V^0$. Then every irreducible $V^0$-module $W$ is contained in an irreducible $\chi_W$-twisted $V_I$-module where $\chi_W$ is the character defined above.
\end{thm}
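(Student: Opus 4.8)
The plan is to realise $W$ inside an \emph{induced} $\chi_W$-twisted module. For each $\alpha\in I$ set $M^\alpha:=V^\alpha\boxtimes_{V^0}W$ and form $M:=\bigoplus_{\alpha\in I}M^\alpha$ with its natural $L_0$-grading. Each $M^\alpha$ is an irreducible $V^0$-module because $V^\alpha$ is a simple current, and $W\cong M^0=V^0\boxtimes_{V^0}W$ is a $V^0$-submodule of $M$ by Proposition~\ref{prop:rem3.5b}. I would then define twisted vertex operators $Y_M(\cdot,x)\colon V_I\to\End_\C(M)\{x\}$ whose restriction sending $M^\beta$ into $M^{\alpha+\beta}\{x\}$, for $v\in V^\alpha$, is a suitably normalised intertwining operator $\mathcal Y_\alpha^\beta$ of type $\binom{M^{\alpha+\beta}}{V^\alpha\,M^\beta}$. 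Since all fusion rules among these simple currents are $\delta_{\alpha+\beta,\gamma}$, each space $\V_{V^\alpha\,M^\beta}^{M^{\alpha+\beta}}$ is one-dimensional, so the $\mathcal Y_\alpha^\beta$ are unique up to scalar and only their normalisation has to be pinned down.

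The fractional powers of $x$ are exactly what produces the twist, and matching them to $\chi_W$ is the conceptual heart of the statement. By the discussion of conformal weights of intertwining operators, $\mathcal Y_\alpha^\beta(v,x)$ carries powers of $x$ in $\rho(M^{\alpha+\beta})-\rho(V^\alpha)-\rho(M^\beta)+\Z$. Writing $f(\gamma):=\rho(V^\gamma\boxtimes_{V^0}W)-\rho(W)\in\Q/\Z$ and using $\rho(V^\alpha)\in\Z$, this exponent set reduces modulo $1$ to $f(\alpha+\beta)-f(\beta)$, so that $Y_M(v,x)=\sum_k v_kx^{-k-1}$ has modes $v_k$ only for $k\equiv-(f(\alpha+\beta)-f(\beta))\pmod 1$. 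For this to agree with the twisting-compatibility axiom for $g=\chi_W$---namely $k\equiv-f(\alpha)\pmod 1$ on all of $M$, in the sign convention of Remark~\ref{rem:convtwmod}---one needs $f$ to be additive modulo $1$. This is precisely the fact that $\chi_W=\e^{(2\pi\i)f(\cdot)}$ is a character of $I$ (\cite{Yam04}, Lemma~3.1): the eigenvalue of $\chi_W$ on $V^\alpha$ matches the fractional grading shift of $Y_M(v,x)$, so the ansatz is internally consistent as a $\chi_W$-twisted module.

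It then remains to verify the twisted Jacobi identity; the vacuum, translation and Virasoro axioms are inherited from the $V^0$-module structures on the $M^\alpha$, since $V^0$ and $V_I$ share the Virasoro vector. I would deduce the twisted Jacobi identity from the associativity and commutativity of intertwining operators available under rationality, $C_2$-cofiniteness and CFT-type (Proposition~\ref{prop:fusionassoc} and the underlying results of Huang). Concretely, one-dimensionality of the intertwining spaces lets one rescale the $\mathcal Y_\alpha^\beta$ so that all associativity isomorphisms become trivial; this amounts to trivialising a $\C^\times$-valued $2$-cocycle on $I$, and I expect the main obstacle to be organising these normalisations so that every associativity and braiding constraint closes up simultaneously---equivalently, exhibiting $M$ as a genuine twisted module for the extension rather than merely a graded vector space carrying intertwining operators.

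Finally, to produce an \emph{irreducible} twisted module containing $W$, I would use that $V_I$ is rational (Proposition~\ref{prop:scerc2}\ref{enum:sce1}), whence its $\chi_W$-twisted modules are completely reducible and $M\cong\bigoplus_i N_i$ with each $N_i$ an irreducible $\chi_W$-twisted $V_I$-module. Each projection $\pi_i\colon M\to N_i$ is a $V^0$-module homomorphism, so $\pi_i|_W\colon W\to N_i$ is, by Schur's Lemma (Corollary~\ref{cor:schur}) and the irreducibility of $W$, either zero or injective. As $\sum_i\pi_i|_W=\id_W\neq0$, at least one $\pi_i|_W$ is injective, embedding $W$ as a $V^0$-submodule of the irreducible $\chi_W$-twisted $V_I$-module $N_i$, which is exactly the assertion.
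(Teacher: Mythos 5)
The paper gives no proof of this statement: it is imported directly from \cite{Yam04}, Theorems 3.2 and 3.3, with only the remark that the sign of $\chi_W$ must be flipped to match the convention of Remark~\ref{rem:convtwmod}. Your reconstruction is essentially the argument of that source: form the induced module $M=\bigoplus_{\alpha\in I}V^\alpha\boxtimes_{V^0}W$, equip it with twisted vertex operators built from the one-dimensional intertwining spaces $\V_{V^\alpha\,M^\beta}^{M^{\alpha+\beta}}$ (one-dimensional because $V^\alpha$ is a simple current, so $V^\alpha\boxtimes M^\beta\cong M^{\alpha+\beta}$ is irreducible and Schur's lemma applies --- not because the $M^\beta$ are themselves simple currents), match the fractional grading shifts to $\chi_W$ via the additivity of $f$ modulo $1$ (\cite{Yam04}, Lemma~3.1), and extract an irreducible constituent containing $W$. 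The exponent computation placing the modes in $-f(\alpha)+\Z$ is correct in the paper's sign convention.

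Two points need attention. The genuine gap is in your last step: you invoke complete reducibility of $\chi_W$-twisted $V_I$-modules, but Proposition~\ref{prop:scerc2} only gives rationality of $V_I$, which by definition concerns \emph{untwisted} admissible modules; $\chi_W$-rationality is a separate and nontrivial assertion --- compare the proof of Proposition~\ref{prop:thm3.3}, where $g$-rationality has to be imported from \cite{ADJR14}. Fortunately you do not need it: $M$ has finite length already as a $V^0$-module and is generated over $V_I$ by $W=M^0$ (each $V^\alpha\cdot W$ is a nonzero $V^0$-submodule of the irreducible $M^\alpha$), so a maximal proper twisted submodule $N$ exists; $N\cap W$ is $0$ or $W$ by irreducibility of $W$, and $W\subseteq N$ would force $N\supseteq V_I\cdot W=M$, so $W$ embeds into the irreducible quotient $M/N$. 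Separately, the verification of the twisted Jacobi identity --- the simultaneous normalisation of the $\mathcal Y_\alpha^\beta$ so that all associativity and braiding constraints close --- is left as an expectation rather than carried out; this is precisely the content of \cite{Yam04}, Theorem~3.2, and is where the real work lies, though your identification of the mechanism (one-dimensionality of the intertwining spaces plus Huang's associativity and commutativity) is the right one.
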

\begin{rem}
Note that in \cite{Yam04} the character $\chi_W$ is defined with opposite sign. It has to be chosen such that it is in agreement with the sign convention in the definition of twisted modules (see Remark~\ref{rem:convtwmod}).
\end{rem}

\minisec{Simple-Current Extensions for Simple-Current \VOA{}s}

For a simple-current extension $V_I=\bigoplus_{\alpha\in I}V^\alpha$ the irreducible $V_0$-modules $V^\alpha$, $\alpha\in I$, are all simple currents. However, this does not mean that \emph{all} irreducible $V_I$-modules are simple currents. Indeed, given a simple-current extension $V_I$ there is a subset $I$ of the isomorphism classes of irreducible $V^0$-modules such that all these modules are $\Z$-graded, simple currents and $I$ is closed under fusion.

In the following we want to study the case where all irreducible $V^0$-modules are simple currents. Let $V^0$ be a \voa{} satisfying Assumption~\ref{ass:sn}. Then the conformal weights define a quadratic form $Q_\rho$ on the fusion group $F_V$. Note that by Remark~\ref{rem:fafqs}, Assumption~\ref{ass:p} is not needed for the existence of the quadratic form, only for its non-degeneracy.

Clearly, any $I$-graded extension $V_I$ of $V^0$ will have to exist on a direct sum $V_I=\bigoplus_{\alpha\in I}W^\alpha$ for some isotropic subgroup $I$ of $F_V$ and this extension will automatically be a simple-current extension.

In the following we study the representation theory of such an extension $V_I$ and give a classification of the irreducible $V_I$-modules.

\begin{prop}\label{prop:aboveprop}
Let $V^0$ fulfil Assumption~\ref{ass:sn}. Assume that for some isotropic subgroup $I\leq F_V$ of the fusion group the direct sum $V_I=\bigoplus_{\alpha\in I}W^\alpha$ is an $I$-graded simple-current extension of $V^0$. Then any irreducible $V_I$-module $X$ is of the form
\begin{equation*}
X\cong\bigoplus_{\alpha\in I}W^{\alpha+\gamma}=:X^{\gamma+I}
\end{equation*}
for some $\gamma\in F_V$.
\end{prop}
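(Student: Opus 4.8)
The plan is to restrict the irreducible $V_I$-module $X$ to the sub-\voa{} $V^0$ and exploit the simple-current property. First I would use that $V^0$ is rational to choose an irreducible $V^0$-submodule $W$ of $X$; since $\Irr(V^0)=\{W^\gamma\mid\gamma\in F_V\}$, we have $W\cong W^\gamma$ for some $\gamma\in F_V$. Because $X$ is irreducible as a $V_I$-module and $V_I\cdot W$ is a non-zero $V_I$-submodule containing $W$, it follows that $X=V_I\cdot W=\sum_{\alpha\in I}V^\alpha\cdot W$, where I use $V_I=\bigoplus_{\alpha\in I}W^\alpha$ and thus $V^\alpha=W^\alpha$.

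Next I would identify each summand. By the cited lemma of \cite{SY03} (Lemma~3.6), every $V^\alpha\cdot W$, $\alpha\in I$, is an irreducible $V^0$-submodule of $X$; it remains to pin down its isomorphism class. The key observation is that restricting the vertex operator $Y_X(\cdot,x)$ of the $V_I$-module $X$ to $V^\alpha\subseteq V_I$ yields a $V^0$-intertwining operator of type $\binom{X}{V^\alpha\,W}$, so by Proposition~\ref{prop:cor3.4} it factors through the fusion product, giving a $V^0$-homomorphism $V^\alpha\boxtimes_{V^0}W\to X$ whose image is exactly $V^\alpha\cdot W$. Since $V^\alpha=W^\alpha$ is a simple current, $W^\alpha\boxtimes_{V^0}W\cong W^{\alpha+\gamma}$ is irreducible; as the image $V^\alpha\cdot W$ is non-zero, this homomorphism is injective and hence $V^\alpha\cdot W\cong W^{\alpha+\gamma}$.

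Finally I would argue that the sum is direct. As $\alpha$ ranges over $I$, the indices $\alpha+\gamma$ run over the coset $\gamma+I$ without repetition, so the submodules $V^\alpha\cdot W\cong W^{\alpha+\gamma}$ are pairwise non-isomorphic irreducible $V^0$-modules. A sum of pairwise non-isomorphic irreducible submodules of a module is automatically direct (distinct isotypic components of the semisimple $V^0$-module $X$ intersect trivially), whence
\begin{equation*}
X=\bigoplus_{\alpha\in I}V^\alpha\cdot W\cong\bigoplus_{\alpha\in I}W^{\alpha+\gamma}=X^{\gamma+I}.
\end{equation*}

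The main obstacle is the middle step: making precise that the restricted module structure produces a genuine $V^0$-intertwining operator and that the resulting map onto $V^\alpha\cdot W$ is an isomorphism rather than merely a surjection. Once the simple-current hypothesis forces $W^\alpha\boxtimes_{V^0}W$ to be irreducible, the non-vanishing of $V^\alpha\cdot W$ (guaranteed by the cited lemma) closes this gap; everything else is bookkeeping with cosets and semisimplicity.
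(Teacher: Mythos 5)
Your proposal is correct and follows essentially the same route as the paper's proof: decompose $X$ over $V^0$, locate a copy of $W^\gamma$, and use the $I$-grading together with the simple-current property to see that $V^\alpha\cdot W^\gamma$ contributes a copy of $W^{\alpha+\gamma}$ for each $\alpha\in I$, concluding by irreducibility of $X$ as a $V_I$-module. The paper is terser (it simply notes $W^\alpha\cdot W^\gamma\subseteq W^{\alpha+\gamma}$ and that both sides are irreducible $V_I$-modules), whereas you spell out the identification $V^\alpha\cdot W\cong W^{\alpha+\gamma}$ via Proposition~\ref{prop:cor3.4} and the directness of the sum; this is the same argument with the bookkeeping made explicit.
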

\begin{proof}
As $V^0$-module, $X$ is a direct sum of irreducible $V^0$-modules. Since $X$ is non-zero, let us assume that $X$ contains $W^\gamma$ for some $\gamma\in F_V$. Then $W^\alpha\cdot W^\gamma\subseteq W^{\alpha+\gamma}$ has to be in $X$ for any $\alpha\in I$ and hence any $W^{\alpha+\gamma}$ is contained in the decomposition of $X$ into irreducible $V_0$-modules. This shows that
\begin{equation*}
X\supseteq\bigoplus_{\alpha\in I}W^{\alpha+\gamma}
\end{equation*}
up to isomorphism. Both sides are irreducible $V_I$-modules and hence we get equality up to isomorphism.
\end{proof}

On the other hand, one can ask the question which objects of the form $X^{\gamma+I}=\bigoplus_{\alpha\in I}W^{\alpha+\gamma}$ for some $\gamma\in F_V$ are irreducible $V_I$-modules. The answer is given by Theorem~\ref{thm:irrmodsce}.
\begin{thm}\label{thm:scemodclass}
Let $V^0$ fulfil Assumption~\ref{ass:sn} and let $V_I$ be as in Proposition~\ref{prop:aboveprop}. Then the irreducible (untwisted) $V_I$-modules are up to isomorphism exactly given by
\begin{equation*}
X^{\gamma+I}=\bigoplus_{\alpha\in I}W^{\alpha+\gamma}
\end{equation*}
for $\gamma\in I^\bot$, i.e.\ the irreducible $V_I$-modules are indexed by $I^\bot/I$.
\end{thm}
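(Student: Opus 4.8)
The plan is to combine Proposition~\ref{prop:aboveprop} with the induction result Theorem~\ref{thm:irrmodsce}, translating the abstract character $\chi_W$ into the quadratic-form data on the fusion group. By Proposition~\ref{prop:aboveprop} every irreducible $V_I$-module already has the shape $X^{\gamma+I}=\bigoplus_{\alpha\in I}W^{\alpha+\gamma}$ for some $\gamma\in F_V$, so the remaining task is to decide which cosets $\gamma+I$ give an \emph{untwisted} $V_I$-module and to identify the index set with $I^\bot/I$.

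First I would apply Theorem~\ref{thm:irrmodsce} to the irreducible $V^0$-module $W=W^\gamma$: it is contained in an irreducible $\chi_{W^\gamma}$-twisted $V_I$-module, where $\chi_{W^\gamma}(\alpha)=\e^{(2\pi\i)(\rho(W^\alpha\boxtimes W^\gamma)-\rho(W^\gamma))}$ for $\alpha\in I$. Using the group-like fusion rules $W^\alpha\boxtimes W^\gamma\cong W^{\alpha+\gamma}$ from Assumption~\ref{ass:sn} and passing to conformal weights modulo $1$, this reads $\chi_{W^\gamma}(\alpha)=\e^{(2\pi\i)(Q_\rho(\alpha+\gamma)-Q_\rho(\gamma))}$. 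Expanding $Q_\rho(\alpha+\gamma)=Q_\rho(\alpha)+Q_\rho(\gamma)+B_\rho(\alpha,\gamma)$ and using that $I$ is isotropic, so $Q_\rho(\alpha)=0$ for $\alpha\in I$, I obtain the clean formula $\chi_{W^\gamma}(\alpha)=\e^{(2\pi\i)B_\rho(\alpha,\gamma)}$.

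The module $X^{\gamma+I}$ is untwisted exactly when $\chi_{W^\gamma}$ is the trivial character, i.e.\ when $B_\rho(\alpha,\gamma)=0$ in $\Q/\Z$ for all $\alpha\in I$, which is precisely the condition $\gamma\in I^\bot$. Conversely, for $\gamma\in I^\bot$ the character is trivial, so Theorem~\ref{thm:irrmodsce} places $W^\gamma$ inside an irreducible untwisted $V_I$-module, which by Proposition~\ref{prop:aboveprop} must equal $X^{\gamma+I}$. For the indexing I would note that the $W^\beta$ are pairwise non-isomorphic irreducible $V^0$-modules, so the multiset of $V^0$-constituents of $X^{\gamma+I}$ recovers the coset $\gamma+I\in F_V/I$; hence distinct cosets give non-isomorphic $V_I$-modules. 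Since $I$ is isotropic we have $I\subseteq I^\bot$, and the admissible cosets $\{\gamma+I : \gamma\in I^\bot\}$ are exactly the elements of $I^\bot/I$.

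The main obstacle is the bookkeeping in the second step: one must reconcile the sign convention for $\chi_W$ flagged in the remark following Theorem~\ref{thm:irrmodsce} with the twisted-module convention of Remark~\ref{rem:convtwmod}, and one must justify \emph{both} directions of the equivalence ``untwisted $\iff\chi_{W^\gamma}$ trivial''. The latter relies on the fact that the irreducible (possibly twisted) $V_I$-module containing a prescribed $W^\gamma$ is determined by $\gamma$, so that its twist type is forced to be $\chi_{W^\gamma}$; the sign ambiguity is harmless for deciding triviality of the character but should be stated explicitly.
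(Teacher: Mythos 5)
Your proposal is correct and follows essentially the same route as the paper's own proof: candidates from Proposition~\ref{prop:aboveprop}, then Theorem~\ref{thm:irrmodsce} to compute the twisting character $\chi_{W^\gamma}(\alpha)=\e^{(2\pi\i)B_\rho(\alpha,\gamma)}$ via group-like fusion and isotropy of $I$, concluding that the module is untwisted exactly when $\gamma\in I^\bot$. The extra remarks you add on the sign convention and on distinguishing distinct cosets are sensible but do not change the argument.
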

\begin{proof}
The above proposition gives all possible candidates for the irreducible $V_I$-modules. Consider $X^{\gamma+I}=\bigoplus_{\alpha\in I}W^{\alpha+\gamma}$ for some $\gamma\in F_V$. Then $X^{\gamma+I}$ contains the irreducible $V^0$-module $W^\gamma$. Theorem~\ref{thm:irrmodsce} on the other hand states that the $V_0$-module $W^\gamma$ is contained in an irreducible, possibly twisted $V_I$-module, which can only be $X^{\gamma+I}$ by the above proposition since no other of the $X^{\beta+I}$ for $\beta+I\in F_V/I$ contains $W^\gamma$. This $V_I$-module is twisted by the character of $I$
\begin{align*}
\chi_{W^\gamma}(\alpha)&=\e^{(2\pi\i)(\rho(W^\alpha\boxtimes W^\gamma)-\rho(W^\gamma))}\\
&=\e^{(2\pi\i)(Q_\rho(\alpha+\gamma)-Q_\rho(\gamma))}\\
&=\e^{(2\pi\i)(Q_\rho(\alpha+\gamma)-Q_\rho(\gamma)-Q_\rho(\alpha))}\\
&=\e^{(2\pi\i)B_\rho(\alpha,\gamma)},
\end{align*}
$\alpha\in I$, and is untwisted if and only if $B_\rho(\alpha,\gamma)=0+\Z$ for all $\alpha\in I$, i.e.\ $\gamma\in I^\bot$.
\end{proof}

We remark that the irreducible $V_I$-modules $X^{\gamma+I}$, $\gamma+I\in I^\bot/I$, have conformal weights in $Q_\rho(\alpha)$ for some $\alpha$ in $\gamma+I$. This makes sense since $\gamma\in I^\bot$ and hence $Q_\rho(\alpha)=Q_\rho(\gamma)$ for all $\alpha\in\gamma+I$.

We can also calculate the fusion rules of the irreducible $V_I$-modules
\begin{equation*}
\Irr(V_I)=\left\{X^{\alpha+I}\xmiddle|\alpha+I\in I^\bot/I\right\}.
\end{equation*}
The fusion group will turn out to be $I^\bot/I$, as expected.

\begin{lem}
Let $V^0$ fulfil Assumption~\ref{ass:sn} and let $V_I$ be as in Proposition~\ref{prop:aboveprop}. Then every irreducible $V_I$-module $X^{\alpha+I}$, $\alpha+I\in I^\bot/I$, is $I$-stable.
\end{lem}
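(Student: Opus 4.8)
The plan is to unwind the definition of $I$-stability and exhibit a single irreducible $V^0$-submodule $W$ of $X^{\alpha+I}$ with $I_W=\{0\}$; by the remark following the definition of $I$-stability this already forces $I_{W'}=\{0\}$ for every irreducible $V^0$-submodule $W'$ of $X^{\alpha+I}$. By Proposition~\ref{prop:aboveprop} we may write $X^{\alpha+I}=\bigoplus_{\beta\in I}W^{\beta+\alpha}$ as a $V^0$-module, so the natural choice is $W:=W^\alpha$, the summand corresponding to $\beta=0$.

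First I would compute $I_{W^\alpha}=\{\delta\in I\mid V^\delta\cdot W^\alpha\cong W^\alpha\}$. Since $V_I$ is an $I$-graded simple-current extension, $V^\delta=W^\delta$ is a simple current for each $\delta\in I$, and the $I$-grading of the $V_I$-module $X^{\alpha+I}$ forces $V^\delta\cdot W^\alpha$ to lie in the summand $W^{\delta+\alpha}$. By the proposition quoted from \cite{SY03} this product is a non-zero irreducible $V^0$-submodule, so $V^\delta\cdot W^\alpha\cong W^\delta\boxtimes_{V^0}W^\alpha\cong W^{\delta+\alpha}$, where the last isomorphism is the simple-current fusion rule of Proposition~\ref{prop:sca}.

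Next I would invoke that the chosen representatives $W^\gamma$, $\gamma\in F_V$, are pairwise non-isomorphic, being indexed by the distinct isomorphism classes in $\Irr(V^0)$. Hence $W^{\delta+\alpha}\cong W^\alpha$ holds if and only if $\delta+\alpha=\alpha$, i.e.\ $\delta=0$. This yields $I_{W^\alpha}=\{0\}$ and therefore the $I$-stability of $X^{\alpha+I}$.

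There is no serious obstacle here: the claim is essentially a bookkeeping consequence of the simple-current fusion rules together with the pairwise non-isomorphism of the $W^\gamma$. The only point requiring a moment's care is the identification $V^\delta\cdot W^\alpha\cong W^{\delta+\alpha}$, which combines the grading of the extension module with the irreducibility statement for $V^\delta\cdot W$; note in particular that the argument never uses $\alpha\in I^\bot$ and so applies to $X^{\gamma+I}$ for any $\gamma\in F_V$, even though only the $\gamma\in I^\bot$ yield genuine untwisted $V_I$-modules by Theorem~\ref{thm:scemodclass}.
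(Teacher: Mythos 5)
Your proof is correct and follows essentially the same route as the paper: the paper's proof also reduces to observing that $W^\beta\cdot W^\alpha\cong W^\beta\boxtimes_{V^0}W^\alpha\cong W^{\beta+\alpha}$ is isomorphic to $W^\alpha$ only for $\beta=0$, so that $I_{W^\alpha}=\{0\}$. Your additional remarks (that one submodule with trivial stabiliser suffices, and that $\alpha\in I^\bot$ is never used) are accurate but do not change the argument.
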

\begin{proof}
We determine the $\beta\in I$ for which $W^\beta\cdot W^\alpha\cong W^\beta\boxtimes_{V^0} W^\alpha\cong W^{\beta+\alpha}$ is isomorphic to $W^\alpha$. This is clearly only the case for $\beta=0$ and hence $X^{\alpha+I}$ is an $I$-stable $V_I$-module.
\end{proof}

\begin{thm}\label{thm:scefusion}
Let $V^0$ fulfil Assumption~\ref{ass:sn} and let $V_I$ be as in Proposition~\ref{prop:aboveprop}. Then
\begin{equation*}
X^{\alpha+I}\boxtimes_{V_I} X^{\beta+I}\cong X^{\alpha+\beta+I}
\end{equation*}
for all $\alpha+I,\beta+I\in I^\bot/I$. In particular, $V_I$ is a simple-current \voa{}, i.e.\ all irreducible $V_I$-modules are simple currents.
\end{thm}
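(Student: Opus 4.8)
The plan is to compute the fusion rules $N^{X^{\gamma+I}}_{X^{\alpha+I},\,X^{\beta+I}}$ of $V_I$ explicitly and to read off the fusion product from them. Since $V^0$ satisfies Assumption~\ref{ass:sn} and $V_I$ is an $I$-graded simple-current extension, Proposition~\ref{prop:scerc2} guarantees that $V_I$ is rational and $C_2$-cofinite of CFT-type; hence every $V_I$-module is completely reducible and the fusion rules coincide with the dimensions of the corresponding spaces of intertwining operators (Proposition~\ref{prop:cor3.4}). By Theorem~\ref{thm:scemodclass} the irreducible $V_I$-modules are exactly the $X^{\gamma+I}$ with $\gamma+I\in I^\bot/I$, and by the preceding lemma each of these is $I$-stable.

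First I would fix irreducible $V^0$-submodules $W^\alpha\subseteq X^{\alpha+I}$, $W^\beta\subseteq X^{\beta+I}$ and $W^\gamma\subseteq X^{\gamma+I}$, and apply Proposition~\ref{prop:scefusion} to these $I$-stable modules, obtaining
\begin{equation*}
\binom{X^{\gamma+I}}{X^{\alpha+I}\,X^{\beta+I}}_{V_I}\cong\bigoplus_{\mu\in I}\binom{V^\mu\boxtimes_{V^0}W^\gamma}{W^\alpha\,W^\beta}_{V^0}.
\end{equation*}
Since $V^\mu=W^\mu$ and $V^0$ has group-like fusion, we have $V^\mu\boxtimes_{V^0}W^\gamma\cong W^{\mu+\gamma}$, so the right-hand side is $\bigoplus_{\mu\in I}\binom{W^{\mu+\gamma}}{W^\alpha\,W^\beta}_{V^0}$.

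Next I would invoke the group-like fusion of $V^0$ once more, now in the form $\dim_\C\binom{W^{\mu+\gamma}}{W^\alpha\,W^\beta}_{V^0}=N^{\mu+\gamma}_{\alpha,\beta}=\delta_{\alpha+\beta,\,\mu+\gamma}$. Summing over $\mu\in I$ yields
\begin{equation*}
\dim_\C\binom{X^{\gamma+I}}{X^{\alpha+I}\,X^{\beta+I}}_{V_I}=\sum_{\mu\in I}\delta_{\mu,\,\alpha+\beta-\gamma},
\end{equation*}
which equals $1$ exactly when $\alpha+\beta-\gamma\in I$, i.e.\ $\gamma+I=\alpha+\beta+I$, and vanishes otherwise. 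Hence the only irreducible $V_I$-module occurring in $X^{\alpha+I}\boxtimes_{V_I}X^{\beta+I}$ is $X^{\alpha+\beta+I}$, with multiplicity one, so $X^{\alpha+I}\boxtimes_{V_I}X^{\beta+I}\cong X^{\alpha+\beta+I}$. In particular the fusion product of any two irreducible $V_I$-modules is irreducible, whence every irreducible $V_I$-module is a simple current and $V_I$ is a simple-current \voa{}.

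I do not expect a substantial obstacle, as the essential work is already contained in Proposition~\ref{prop:scefusion}. The only points demanding care are verifying the hypotheses of that proposition---in particular the $I$-stability of all three modules, which is precisely the preceding lemma---and checking that the computation is independent of the chosen coset representatives $W^\alpha,W^\beta,W^\gamma$; the latter follows immediately from the group-like fusion of $V^0$, since replacing a representative by another element of the same coset merely reindexes the summands over $\mu\in I$.
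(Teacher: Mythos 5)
Your proof is correct and follows essentially the same route as the paper: apply Proposition~\ref{prop:scefusion} to the $I$-stable irreducible $V_I$-modules, use the group-like fusion of $V^0$ to identify $V^\mu\boxtimes_{V^0}W^\gamma\cong W^{\mu+\gamma}$, and sum the Kronecker deltas to obtain $N_{\alpha+I,\beta+I}^{\gamma+I}=\delta_{\alpha+\beta+I,\gamma+I}$. The extra remarks on complete reducibility and independence of coset representatives are fine but not needed beyond what the cited results already provide.
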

\begin{proof}
By Proposition~\ref{prop:scefusion},
\begin{equation*}
\binom{X^{\gamma+I}}{X^{\alpha+I}\,X^{\beta+I}}_{V_I}\cong\bigoplus_{\delta\in I}\binom{W^\delta\boxtimes_{V^0}W^\gamma}{W^\alpha\,W^\beta}_{V^0}=\bigoplus_{\delta\in I}\binom{W^{\delta+\gamma}}{W^\alpha\,W^\beta}_{V^0}.
\end{equation*}
Then
\begin{equation*}
N_{\alpha+I,\beta+I}^{\gamma+I}:=N_{X^{\alpha+I},X^{\beta+I}}^{X^{\gamma+I}}=\sum_{\delta\in I}\delta_{\alpha+\beta,\gamma+\delta}=\delta_{\alpha+\beta+I,\gamma+I}.
\end{equation*}
\end{proof}

Finally the following proposition holds:
\begin{prop}\label{prop:sceselfcon}
Let $V^0$ fulfil Assumption~\ref{ass:sn} and let $V_I$ be as in Proposition~\ref{prop:aboveprop}. Then
\begin{equation*}
(X^{\gamma+I})'\cong X^{-\gamma+I}
\end{equation*}
for $\gamma+I\in I^\bot/I$. In particular $V_I$ is self-contragredient.
\end{prop}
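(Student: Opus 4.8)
The plan is to identify the contragredient $(X^{\gamma+I})'$ among the irreducible $V_I$-modules already classified in Theorem~\ref{thm:scemodclass} by reading off its decomposition into irreducible $V^0$-modules. First I would observe that $(X^{\gamma+I})'$ is again an irreducible $V_I$-module by Proposition~\ref{prop:conirr}, so by Theorem~\ref{thm:scemodclass} it is isomorphic to $X^{\delta+I}$ for a unique coset $\delta+I\in I^\bot/I$ (note $-\gamma\in I^\bot$ since $I^\bot$ is a subgroup, so $X^{-\gamma+I}$ does occur in the list). It therefore suffices to pin down the coset $\delta+I$, and the natural invariant for this is the restriction to $V^0$: the module $X^{\delta+I}$ restricts to $\bigoplus_{\alpha\in I}W^{\alpha+\delta}$, i.e.\ to the direct sum of the pairwise non-isomorphic irreducibles $W^\beta$ with $\beta\in\delta+I$, so distinct cosets yield distinct $V^0$-restrictions.

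The key step is that forming the contragredient commutes with restriction to the full subVOA $V^0$. Since $V^0$ and $V_I$ share the same conformal vector (Definition~\ref{defi:subvoa}), the operators $L_0$ and $L_1$ entering the adjoint vertex operators of Section~\ref{sec:cmibf} agree, so the $V^0$-action on $(X^{\gamma+I})'$ is exactly the contragredient of the $V^0$-action on $X^{\gamma+I}$. Using that the graded dual of a direct sum is the direct sum of graded duals and that $(W^\mu)'\cong W^{-\mu}$ by Proposition~\ref{prop:sca}, I then compute
\[
(X^{\gamma+I})'\big|_{V^0}\cong\Big(\bigoplus_{\alpha\in I}W^{\alpha+\gamma}\Big)'\cong\bigoplus_{\alpha\in I}W^{-\alpha-\gamma}=\bigoplus_{\beta\in I}W^{\beta-\gamma}=X^{-\gamma+I}\big|_{V^0},
\]
where I reindexed by $\beta=-\alpha$, using that $I$ is a group. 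Comparing $V^0$-restrictions forces $\delta+I=-\gamma+I$, whence $(X^{\gamma+I})'\cong X^{-\gamma+I}$ as $V_I$-modules. Specialising to $\gamma=0$ gives $(V_I)'=(X^{0+I})'\cong X^{0+I}=V_I$, so $V_I$ is self-contragredient.

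The only genuinely delicate point, and hence the main obstacle, is the compatibility of the contragredient with restriction to $V^0$: I must check that the adjoint vertex operators for $v\in V^0$ are insensitive to whether one regards the module over $V^0$ or over $V_I$, which rests precisely on $V^0$ being a full subVOA sharing $\omega$. Once that is established the remainder is bookkeeping with the coset labels set up in Theorem~\ref{thm:scemodclass} and Proposition~\ref{prop:sca}; in particular, no appeal to the Verlinde formula or to a prior self-contragredience of $V_I$ is needed, which keeps the argument non-circular.
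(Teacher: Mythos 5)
Your proof is correct and follows essentially the same route as the paper: the paper's proof is exactly the computation $(X^{\gamma+I})'\cong\bigoplus_{\alpha\in I}(W^{\alpha+\gamma})'\cong\bigoplus_{\alpha\in I}W^{-\alpha-\gamma}=X^{-\gamma+I}$, with the dual-of-a-direct-sum step flagged as the only non-trivial point. Your additional care in identifying the result among the modules classified in Theorem~\ref{thm:scemodclass} via restriction to $V^0$, and in checking that the contragredient commutes with restriction to the full \vosa{}, makes explicit what the paper leaves implicit, but it is the same argument.
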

\begin{proof}
Consider
\begin{equation*}
(X^{\gamma+I})'=\left(\bigoplus_{\alpha\in I}W^{\alpha+\gamma}\right)'\cong\bigoplus_{\alpha\in I}(W^{\alpha+\gamma})'\cong\bigoplus_{\alpha\in I}W^{-\alpha-\gamma}=\bigoplus_{\alpha\in I}W^{\alpha-\gamma}=X^{-\gamma+I}.
\end{equation*}
Only the second step is non-trivial. But this follows directly from the definition of the contragredient module.
\end{proof}

In total, we have shown:
\begin{cor}\label{cor:scefusion}
Let $V^0$ fulfil Assumption~\ref{ass:sn} and let $V_I$ be as in Proposition~\ref{prop:aboveprop}. Then the fusion group of $V_I$ is $F_{V_I}=I^\bot/I$.
\end{cor}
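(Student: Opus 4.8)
The plan is to obtain the corollary as a synthesis of Theorem~\ref{thm:scemodclass}, Theorem~\ref{thm:scefusion} and Proposition~\ref{prop:sceselfcon}, the only real work being to confirm that the fusion group $F_{V_I}$ is actually defined, i.e.\ that $V_I$ itself falls under Assumption~\ref{ass:sn} so that Proposition~\ref{prop:sca} applies. So the first step is to assemble the niceness properties of $V_I$. Simplicity holds because $V_I$ is an $I$-graded simple-current extension whose homogeneous summands $W^\alpha$, $\alpha\in I$, are pairwise non-isomorphic irreducible $V^0$-modules (the graded-extension lemma of \cite{SY03}). Rationality and $C_2$-cofiniteness are supplied by Proposition~\ref{prop:scerc2}\ref{enum:sce1}--\ref{enum:sce2}, using that $V^0$ is $C_2$-cofinite and of CFT-type. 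Self-contragredience is exactly the content of Proposition~\ref{prop:sceselfcon}. Finally, Theorem~\ref{thm:scefusion} already tells us that every irreducible $V_I$-module is a simple current, so once CFT-type is checked, $V_I$ satisfies Assumption~\ref{ass:sn} and Proposition~\ref{prop:sca} guarantees that $\V(V_I)=\C[F_{V_I}]$ for a well-defined fusion group.

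With this in place the identification is immediate and purely group-theoretic. By Theorem~\ref{thm:scemodclass} the assignment $\gamma+I\mapsto X^{\gamma+I}$ is a bijection from $I^\bot/I$ onto $\Irr(V_I)$, hence identifies the underlying set of $I^\bot/I$ with that of $F_{V_I}$; the neutral coset $0+I$ is sent to the unit $X^{0+I}=\bigoplus_{\alpha\in I}W^\alpha=V_I$ of the fusion algebra. Theorem~\ref{thm:scefusion} shows this bijection carries addition of cosets to the fusion product, $X^{\alpha+I}\boxtimes_{V_I}X^{\beta+I}\cong X^{(\alpha+\beta)+I}$, and Proposition~\ref{prop:sceselfcon} shows it carries negation to the contragredient, $(X^{\gamma+I})'\cong X^{-\gamma+I}$, which is precisely the inverse operation in $F_{V_I}$ by Proposition~\ref{prop:sca}. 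Therefore $\gamma+I\mapsto X^{\gamma+I}$ is an isomorphism of finite abelian groups and $F_{V_I}=I^\bot/I$.

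The single non-formal point, and the one I would treat most carefully, is the verification that $V_I$ is of CFT-type, as this is the hypothesis of Proposition~\ref{prop:sca} that does not transfer verbatim from $V^0$. Here one uses that $I$ is isotropic, so each $W^\alpha$ with $\alpha\in I$ is $\Z$-graded ($Q_\rho(\alpha)=0$), whence $(V_I)_0=(V^0)_0=\C\vac$ provided no $W^\alpha$, $\alpha\neq0$, contributes in degree $0$; everything else is either quoted directly from the preceding results or a one-line check. I would note that Assumption~\ref{ass:p} is not required for the corollary, since we only assert the group structure of $F_{V_I}$ and not the non-degeneracy of any quadratic form on it.
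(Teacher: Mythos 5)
Your proposal is correct and follows essentially the same route as the paper, which presents this corollary simply as the summary ("In total, we have shown") of Theorem~\ref{thm:scemodclass}, Theorem~\ref{thm:scefusion} and Proposition~\ref{prop:sceselfcon}. The extra care you take in verifying that $V_I$ itself satisfies Assumption~\ref{ass:sn} (in particular the CFT-type check) goes slightly beyond what the paper writes down here, but it is consistent with how the paper handles the analogous point later in Theorem~\ref{thm:4.3}.
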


\chapter{\AIA{}s}\label{ch:aia}

In this chapter we show that the irreducible modules of a \voa{} $V$ with group-like fusion form an \aia{}. If in addition the positivity assumption holds, the associated quadratic form is the negative of the quadratic form determined by the conformal weights. We also prove an extension theorem stating that an isotropic subgroup of the fusion group yields a \voa{} extending $V$.

\section{\AIA{}s}

\Aia{}s are important generalisations of \voa{}s and were first introduced in \cite{DL93}, Chapter~12 and \cite{DL94}. We begin by recalling some basic cohomological definitions.

\minisec{Cohomology of Abelian Groups}

We consider the \emph{cohomology of abelian groups} introduced by Eilenberg and Mac Lane \cite{ML52,Eil52,EML53,EML54} (see also \cite{DL93}, p.\ 130 for a summary).
\begin{defi}[Abelian 3-Cocycle]\label{defi:abeliancocycle}
Let $D$ be an abelian group. An \emph{abelian 3-cocycle} for $D$ with values in $\C^\times$ is a pair of maps
\begin{equation*}
F\colon D\times D\times D\to \C^\times\quad\text{and}\quad\Omega\colon D\times D\to \C^\times
\end{equation*}
satisfying
\begin{align*}
&F(\alpha,\beta,\gamma)F(\alpha,\beta,\gamma+\delta)^{-1}F(\alpha,\beta+\gamma,\delta)F(\alpha+\beta,\gamma,\delta)^{-1}F(\beta,\gamma,\delta)=1,\\
&F(\alpha,\beta,\gamma)^{-1}\Omega(\alpha,\beta+\gamma)F(\beta,\gamma,\alpha)^{-1}=\Omega(\alpha,\beta)F(\beta,\alpha,\gamma)^{-1}\Omega(\alpha,\gamma),\\
&F(\alpha,\beta,\gamma)\Omega(\alpha+\beta,\gamma)F(\gamma,\alpha,\beta)=\Omega(\beta,\gamma)F(\alpha,\gamma,\beta)\Omega(\alpha,\gamma)
\end{align*}
for all $\alpha,\beta,\gamma,\delta\in D$. The abelian 3-cocycle $(F,\Omega)$ is \emph{normalised} if
\begin{align*}
&F(\alpha,\beta,0)=F(\alpha,0,\gamma)=F(0,\beta,\gamma)=1,\\
&\Omega(\alpha,0)=\Omega(0,\beta)=1
\end{align*}
for all $\alpha,\beta,\gamma\in D$.
\end{defi}
The abelian 3-cocycles for $D$ with values in $\C^\times$ form a group under multiplication, denoted by $Z^3_\text{ab.}(D,\C^\times)$.

Given an abelian 3-cocycle $(F,\Omega)$ we define the map $B\colon D\times D\times D\to\C^\times$ by
\begin{equation*}
B(\alpha,\beta,\gamma):=F(\beta,\alpha,\gamma)^{-1}\Omega(\alpha,\beta)F(\alpha,\beta,\gamma)
\end{equation*}
for all $\alpha,\beta,\gamma\in D$. We also define $q_\Omega\colon D\to\C^\times$ by
\begin{equation*}
q_\Omega(\alpha):=\Omega(\alpha,\alpha)
\end{equation*}
for $\alpha\in D$. One can show that $q_\Omega$, called the \emph{trace} of $F$, is a quadratic form on $D$ \cite{ML52}. Moreover, the bilinear form $b_\Omega$ associated with $q_\Omega$ is given by
\begin{equation*}
b_\Omega(\alpha,\beta)=q_\Omega(\alpha+\beta)q_\Omega(\alpha)^{-1}q_\Omega(\beta)^{-1}=\Omega(\alpha,\beta)\Omega(\beta,\alpha)
\end{equation*}
for $\alpha,\beta\in D$. We also define the (additive) quadratic and bilinear forms $Q_\Omega\colon D\to\C/\Z$ and $B_\Omega\colon D\times D\to \C/\Z$ by
\begin{align}\begin{split}\label{eq:QOQr}
\e^{(2\pi\i)Q_\Omega(\alpha)}&:=q_\Omega(\alpha),\\
\e^{(2\pi\i)B_\Omega(\alpha,\beta)}&:=b_\Omega(\alpha,\beta)
\end{split}\end{align}
for $\alpha,\beta\in D$.

Clearly, if the abelian group $D$ is finite, then the values of $q_\Omega$ and $b_\Omega$ lie in some finite, cyclic subgroup of $\C^\times$ and hence have unit modulus (this does not have to be true for the values of $F$ and $\Omega$). Then the quadratic and bilinear forms $Q_\Omega$ and $B_\Omega$ are maps $Q_\Omega\colon D\to \Q/\Z$ and $B_\Omega\colon D\times D\to \Q/\Z$.

\begin{rem}\label{rem:omegatilde}
Let $(F,\Omega)$ be a (normalised) abelian 3-cocycle. Then so is $(F,\widetilde\Omega)$ where
\begin{equation*}
\widetilde\Omega(\alpha,\beta)=\Omega(\beta,\alpha)^{-1}.
\end{equation*}
Clearly, the traces fulfil $Q_{\widetilde\Omega}=-Q_\Omega$.
\end{rem}

\begin{defi}[Abelian 3-Coboundary]
Let $f\colon D\times D\to\C^\times$ be any function. Then $(F_f,\Omega_f)$ with
\begin{align*}
F_f(\alpha,\beta,\gamma)&:=f(\beta,\gamma)f(\alpha+\beta,\gamma)^{-1}f(\alpha,\beta+\gamma)f(\alpha,\beta)^{-1},\\
\Omega_f(\alpha,\beta)&:=f(\alpha,\beta)f(\beta,\alpha)^{-1}
\end{align*}
defines an abelian 3-cocycle.
We call $(F_f,\Omega_f)$ an \emph{abelian 3-coboundary}.
\end{defi}
We observe that two functions $f$ and $f'$ which only differ by a multiplicative constant define the same abelian 3-coboundary. We denote by $B^3_\text{ab.}(D,\C^\times)$ the group of abelian 3-coboundaries for $D$ with values in $\C^\times$. As usual, we say that two abelian 3-cocycles $(F,\Omega)$ and $(F',\Omega')$ are \emph{cohomologous} if there is an abelian 3-coboundary $(F_f,\Omega_f)$ such that $F'=F\cdot F_f$ and $\Omega'=\Omega\cdot\Omega_f$ and we define the \emph{cohomology classes}
\begin{equation*}
H^3_\text{ab.}(D,\C^\times):=Z^3_\text{ab.}(D,\C^\times)/B^3_\text{ab.}(D,\C^\times).
\end{equation*}

A simple calculation shows that the coboundary $(F_f,\Omega_f)$ is normalised if and only if
\begin{equation*}
f(\alpha,0)=f(0,\alpha)=c
\end{equation*}
for all $\alpha\in D$ for some constant $c\in\C^\times$. Every cohomology class has a normalised representative, i.e.\ every abelian 3-cocycle is cohomologous to a normalised one. Clearly, if $(F,\Omega)$ and $(F',\Omega')$ are two abelian 3-cocycles and $(F,\Omega)$ is normalised, then the product $(F\cdot F',\Omega\cdot\Omega')$ is normalised if and only if $(F',\Omega')$ is normalised.

Recall that $(F,\Omega)\mapsto q_\Omega$ is a map
\begin{equation*}
Z^3_\text{ab.}(D,\C^\times)\to\{\text{quadratic forms }D\to \C^\times\}
\end{equation*}
into the group of quadratic forms from $D$ into $\C^\times$, called trace map. It is clearly a homomorphism. By definition, abelian 3-coboundaries have trivial trace and hence if we multiply some abelian 3-cocycle $(F,\Omega)$ by a coboundary, then the trace $q_\Omega$ remains unchanged. In fact, the above map induces a group isomorphism \cite{ML52}
\begin{equation*}
H^3_\text{ab.}(D,\C^\times)\cong\{\text{quadratic forms }D\to \C^\times\}.
\end{equation*}

\minisec{\AIA{}s}

We can now define \aia{}s.

\begin{defi}[\AIA{}, \cite{DL93}, Chapter~12]\label{defi:aia}
Let $D$ be an abelian group equipped with a normalised abelian 3-cocycle $(F,\Omega)$. Let $q_\Omega$, $B_\Omega$ and $B$ associated with $(F,\Omega)$ as defined above. Let $N$ be some positive integer such that $B_\Omega$ is restricted to values in $((1/N)\Z)/\Z$.
An \emph{\aia{}} of level $N$ associated with $D$, $F$ and $\Omega$ of central charge $c$ is given by the following data:
\begin{itemize}
\item (\emph{space of states}) a $(1/N)\Z$- and $D$-graded vector space
\begin{equation*}
V=\bigoplus_{n\in\frac{1}{N}\Z}V_n=\bigoplus_{\alpha\in D}V^\alpha
\end{equation*}
with \emph{weight} $\wt(v)=n$ for $v\in V_n$ and the compatibility condition
\begin{equation*}
V^\alpha=\bigoplus_{n\in\frac{1}{N}\Z}V^\alpha_n
\end{equation*}
for $\alpha\in D$ where $V^\alpha_n:=V_n\cap V^\alpha$,
\item (\emph{vacuum vector}) a non-zero vector $\vac\in V_0^0$,
\item (\emph{conformal vector}) a non-zero vector $\omega\in V_2^0$,
\item (\emph{vertex operators}) a linear map
\begin{equation*}
Y(\cdot,x)\colon V\to\End_\C(V)[[x^{\pm 1/N}]]
\end{equation*}
taking each $v\in V$ to a field
\begin{equation*}
v\mapsto Y(v,x)=\sum_{n\in\frac{1}{N}\Z}v_nx^{n-1}
\end{equation*}
where for each $u\in V$, $v_n u=0$ for sufficiently large $n$ or equivalently a map $Y(\cdot,x)\colon V\otimes_\C V\to V((x^{1/N}))$. If $v\in V$ is homogeneous with respect to the weight $(1/N)\Z$-grading, then $\wt(v_n)=\wt(v)-n-1$ for all $n\in(1/N)\Z$.
\end{itemize}
These data are subject to the following axioms:
\begin{itemize}
\item (\emph{vacuum axioms}) $Y(\vac,x)=\id_V$ and $Y(v,z)\vac |_{z=0}=v$ for all $v\in V$.
\item (\emph{translation axiom}) $\partial_x Y(v,x)=Y(L_{-1}v,x)$ for any $v\in V$.
\item (\emph{grading compatibility}) For $v\in V^\alpha$ and $n\in(1/N)\Z$,
\begin{equation*}
v_n V^\beta\subseteq V^{\alpha+\beta}
\end{equation*}
and
\begin{equation*}
Y(v,x)|_{V^\beta}=\sum_{n=B_\Omega(\alpha,\beta)\pmod{1}}v_n x^{-n-1}.
\end{equation*}
\item (\emph{generalised Jacobi identity}) For $\alpha,\beta,\gamma\in D$ and $v_1\in V^\alpha$, $v_2\in V^\beta$, $v_3\in V^\gamma$,
\begin{align*}
&F(\alpha,\beta,\gamma)\iota_{x_1,x_0}x_2^{-1}\delta\left(\frac{x_1-x_0}{x_2}\right)Y(Y(v_1,x_0)v_2,x_2)\left(\frac{x_1-x_0}{x_2}\right)^{-B_\Omega(\alpha,\gamma)}v_3\\
&=\iota_{x_1,x_2}x_0^{-1}\left(\frac{x_1-x_2}{x_0}\right)^{B_\Omega(\alpha,\beta)}\delta\left(\frac{x_1-x_2}{x_0}\right)Y(v_1,x_1)Y(v_2,x_2)v_3\\
&\quad-B(\alpha,\beta,\gamma)\iota_{x_2,x_1}x_0^{-1}\left(\frac{x_2-x_1}{\e^{\pi\i}x_0}\right)^{B_\Omega(\alpha,\beta)}\delta\left(\frac{x_2-x_1}{-x_0}\right)Y(v_2,x_2)Y(v_1,x_1)v_3.
\end{align*}
\item (\emph{Virasoro relations}) The modes $L_n:=\omega_{n+1}$ of
\begin{equation*}
Y(\omega,x)=\sum_{n\in\Z}\omega_nx^{-n-1}=\sum_{n\in\Z}L_nx^{-n-2}
\end{equation*}
satisfy the \emph{Virasoro relations} at central charge $c$, i.e.\
\begin{equation*}
[L_m,L_n]=(m-n)L_{m+n}+\frac{m^3-m}{12}\delta_{m+n,0}\id_Vc
\end{equation*}
for $m,n\in\Z$. Moreover, $L_0v=nv=\wt(v)v$ for $v\in V_n$, $n\in(1/N)\Z$.
\end{itemize}
\end{defi}
When we speak of the level $N$ of an \aia{} we will generally assume that $N$ is \emph{minimal} with the property that $B_\Omega(\alpha,\beta)\in((1/N)\Z)/\Z$ for all $\alpha,\beta\in D$ and that the $L_0$-eigenvalues lie in $((1/N)\Z)/\Z$.

As the name suggests, \aia{}s are closely related to the concept of intertwining operators.
\begin{rem}\label{rem:12.30}
\item
\begin{enumerate}
\item\label{enum:remaia1} (\cite{DL93}, Remark~12.30) Let $V$ be an \aia{} such that $V^0$ is $\Z$-graded, i.e.\ $V^0=\sum_{n\in\Z}V_n^0$. Also assume that the graded components $V_n^\alpha$ are finite-dimensional and that on each $V^\alpha$, $\alpha\in D$, the weight grading is bounded from below. In this case $V^0$ is a \voa{}, the $V^\alpha$, $\alpha\in D$, are $V^0$-modules and $Y(\cdot,x)$ is composed of intertwining operators between these modules.

Indeed, let $\alpha=0$ in the generalised Jacobi identity. Then, since $(F,\Omega)$ is normalised, the $F$- and $B$-factors become 1 and we obtain the Jacobi identity for intertwining operators of type $\binom{V^{\beta+\gamma}}{V^{\beta}\,V^{\gamma}}$ (see Definition~\ref{defi:intops}).

\item\label{enum:remaia2}If we further assume that the $V^\alpha$, $\alpha\in D$, are irreducible as $V_0$-modules, then they have a conformal weight $\rho(V^\alpha)\in\C$. The \aia{} vertex operation $Y(v,x)|_{V^{\beta}}$ restricted to $v\in V^{\alpha}$ is an intertwining operator of type $\binom{V^{\alpha+\beta}}{V^{\alpha}\,V^{\beta}}$, which has exponents in the formal variable in $\rho(V^{\alpha+\beta})-\rho(V^{\alpha})-\rho(V^{\beta})+\Z$ (see Section~\ref{sec:intops}). By the definition of \aia{}s, on the other hand, these exponents lie in $-B_\Omega(\alpha,\beta)$ so that
\begin{equation*}
\rho(V^{\alpha+\beta})-\rho(V^{\alpha})-\rho(V^{\beta})+\Z=-B_\Omega(\alpha,\beta)\in(\frac{1}{N}\Z)/\Z.
\end{equation*}
Finally, assume that the $V^\alpha$, $\alpha\in D$, index all the irreducible $V_0$-modules up to isomorphism and that $V^0$ has group-like fusion, implying in particular that $D$ is finite. Then the left-hand side defines the bilinear form $B_\rho$ and we can conclude that $B_\rho=-B_\Omega$, i.e.\ the finite bilinear forms associated to the conformal weights and to the abelian 3-cocycle are the negatives of each other.

\item\label{enum:remaia3}We will show in Theorem~\ref{thm:2.7} that under certain assumptions even the corresponding quadratic forms fulfil $Q_\rho=-Q_\Omega$. Also, by construction, this is the case for the \aia{} naturally associated with a positive-definite, even lattice (see Theorem~\ref{thm:lataia}, \cite{DL93}, Remark~12.29).
\end{enumerate}
\end{rem}

\begin{rem}[\cite{DL93}, Remark~12.23]
Given an \aia{} with associated normalised abelian 3-cocycle $(F,\Omega)$, we can normalise each restriction of the vertex operator $Y(v,x)|_{V^{\beta}}$ for $v\in V^{\alpha}$ by multiplying it by $f(\alpha,\beta)$ for some function $f\colon D\times D\to\C^\times$. If we demand that $f(\alpha,0)=f(0,\alpha)=1$ for all $\alpha\in D$, then we again obtain an \aia{} associated with the normalised abelian 3-cocycle $(F',\Omega')=(F\cdot F_f,\Omega\cdot\Omega_f)$.\footnote{In principle, any function $f$ with $f(\alpha,0)=f(0,\alpha)=c$, $c\in\C^\times$, would again yield a normalised abelian 3-cocycle but in order to preserve the vacuum axioms of the \aia{} we have to demand that $c=1$.} In particular, the quadratic form $Q_\Omega$ does not change.
\end{rem}

The following proposition describes how \aia{}s generalise \voa{}s:
\begin{prop}\label{prop:aiavoa}
An \aia{} $A$ of level $N$ associated with some abelian 3-cocycle $(F,\Omega)$ carries the structure of a \voa{} upon rescaling of the vertex operators if and only if $Q_\Omega=0$, $N=1$, the weight grading is bounded from below and the graded components are finite-dimensional. 
\end{prop}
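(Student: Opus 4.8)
The plan is to reduce the statement to two facts about the abelian cohomology set up before Definition~\ref{defi:aia}: first, that rescaling the vertex operators by a normalised function $f$ (with $f(\alpha,0)=f(0,\alpha)=1$) changes the associated normalised cocycle $(F,\Omega)$ only by the coboundary $(F_f,\Omega_f)$ and therefore leaves the trace $Q_\Omega$ invariant; and second, that the trace map induces an isomorphism $H^3_\text{ab.}(D,\C^\times)\cong\{\text{quadratic forms on }D\}$, so that $(F,\Omega)$ is cohomologous to the trivial cocycle $(1,1)$ if and only if $Q_\Omega=0$. Throughout I would compare Definition~\ref{defi:aia} termwise with the \voa{} Jacobi identity of Definition~\ref{defi:voa}.

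For the ``if'' direction, assume $Q_\Omega=0$, $N=1$, boundedness below and finite-dimensionality. Since $Q_\Omega=0$ forces $b_\Omega\equiv 1$ and hence $B_\Omega\equiv 0$ (equivalently, $N=1$ already gives $B_\Omega(\alpha,\beta)\in\Z/\Z=\{0\}$), the grading-compatibility clause shows every $Y(v,x)$ involves only integral powers of $x$, and $N=1$ makes the weight grading $\Z$-valued. By the trace isomorphism, $Q_\Omega=0$ means $(F,\Omega)$ is a normalised coboundary $(F_g,\Omega_g)$; rescaling the vertex operators by $f=g^{-1}$, which is legitimate because $f\mapsto(F_f,\Omega_f)$ is a homomorphism and $g$ may be chosen normalised, replaces $(F,\Omega)$ by $(1,1)$. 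With $F\equiv 1$, $\Omega\equiv 1$ (hence $B\equiv 1$) and $B_\Omega\equiv 0$, all the fractional-power prefactors and all the $F$- and $B$-factors in the generalised Jacobi identity become $1$, so it collapses exactly to the Jacobi identity of Definition~\ref{defi:voa}. The vacuum, translation and Virasoro axioms of the \aia{} are unaffected by a normalised rescaling, so together with the $\Z$-grading this yields a \voa{}.

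For the ``only if'' direction, suppose that after some rescaling $A$ is a \voa{}. Finite-dimensionality of the graded pieces and boundedness below are part of the \voa{} datum and are intrinsic to the underlying graded vector space, hence necessary. A \voa{} is $\Z$-graded, so the $L_0$-eigenvalues are integral; and its vertex operators have integral powers, while rescaling by scalars $f(\alpha,\beta)$ cannot shift powers of $x$, so the intrinsic exponents $B_\Omega(\alpha,\beta)$ must already be integral, i.e.\ $B_\Omega\equiv 0$. By minimality of the level this forces $N=1$. Finally, a \voa{} satisfies the unmodified Jacobi identity, which in the \aia{} language means its normalised cocycle is exactly $(1,1)$, of trivial trace; since rescaling preserves the trace, the original $Q_\Omega$ equals the trace of $(1,1)$, namely $0$.

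The main obstacle --- really the heart of the argument --- is the clean bookkeeping of this cohomological correspondence: one must verify that ``being a \voa{} after rescaling'' is genuinely equivalent to ``the cocycle class is trivial'', which rests on the isomorphism $H^3_\text{ab.}(D,\C^\times)\cong\{\text{quadratic forms}\}$ and on the invariance of the trace under coboundaries. A subtle point worth emphasising is that $N=1$ (equivalently $B_\Omega\equiv 0$) is strictly weaker than $Q_\Omega=0$: a quadratic form can be nonzero while its associated bilinear form vanishes, its values then lying in $\tfrac{1}{2}\Z/\Z$, so both hypotheses are genuinely needed and do independent work. In particular one should resist invoking $Q_\rho=-Q_\Omega$ from Theorem~\ref{thm:2.7}, which is not available at this level of generality.
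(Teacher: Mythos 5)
Your proof is correct and follows essentially the same route as the paper, which simply notes that $Q_\Omega=0$ lets one rescale away the cocycle (via the trace isomorphism $H^3_\text{ab.}(D,\C^\times)\cong\{\text{quadratic forms}\}$), that $N=1$ makes the grading integral, and that the remaining axioms then coincide with those of a \voa{}; you merely spell out the cohomological bookkeeping and the converse, which the paper dismisses as clear. The only nitpick is the parenthetical identification of $N=1$ with $B_\Omega\equiv 0$: by the paper's convention the level also constrains the $L_0$-eigenvalues, though your argument does separately secure their integrality from the $\Z$-grading of a \voa{}, so nothing is lost.
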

\begin{proof}
Assume that the quadratic form $Q_\Omega$ is trivial. Then we can rescale the vertex operators such that we obtain an \aia{} $A'$ with trivial abelian 3-cocycle. If $N=1$, then the weight grading is a $\Z$-grading. If we further assume that the weight grading is bounded from below and that the graded components are finite-dimensional, then the axioms reduce to those of a \voa{}, forgetting about the abelian group structure. The converse statement is clearly also true.
\end{proof}

\section{Simple-Current \AIA{}s I}\label{sec:aiascvoa1}

Let us return to the situation in Section~\ref{sec:scvoa} about simple-current \voa{}s, i.e.\ let $V$ be as in Assumption~\ref{ass:sn} a simple, rational, $C_2$-cofinite, self-contragredient \voa{} of CFT-type such that all irreducible modules are simple currents. Then we saw that there is the structure of a finite abelian group on $F_V$, the fusion group of $V$, and a finite quadratic form $Q_\rho$, given by the conformal weights modulo~1.\footnote
{
We had to additionally assume that $V$ satisfies Assumption~\ref{ass:p} but the result on the quadratic form does not depend on that (except for the non-degeneracy, cf.\ Theorem~\ref{thm:fafqs}). Indeed, that the conformal weights form a quadratic form is part of the statement of Theorem~\ref{thm:pre2.7} and will be shown in Lemma~\ref{lem:rationallocality}, which is needed for the proof of the theorem.
}

It follows from results by Huang:
\begin{thm}\label{thm:pre2.7}
Let $V$ be as in Assumption~\ref{ass:sn}. Then the direct sum of the irreducible $V$-modules
\begin{equation*}
A:=\bigoplus_{\gamma\in F_V}W^\gamma
\end{equation*}
carries the structure of an \aia{} associated with some normalised abelian 3-cocycle $(F,\Omega)$ on $F_V$.

The \aia{} structure on $A$ is the unique one up to a normalised abelian 3-coboundary extending the given \voa{} and module structures.
\end{thm}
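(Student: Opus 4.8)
The plan is to build the \aia{} structure on $A$ from suitably chosen intertwining operators and to extract the abelian 3-cocycle $(F,\Omega)$ from the associativity and commutativity (braiding) data of the \mtc{} of $V$-modules, which is available by Huang's theorem (Theorem~\ref{thm:voamtc}) under Assumption~\ref{ass:n}. First I would fix the building blocks. Since $V$ is a simple-current \voa{} (Assumption~\ref{ass:sn}), the fusion rules are $N^\gamma_{\alpha,\beta}=\delta_{\alpha+\beta,\gamma}$, so by Proposition~\ref{prop:cor3.4} each space $\V^{W^{\alpha+\beta}}_{W^\alpha\,W^\beta}$ is one-dimensional and all other intertwining spaces among the $W^\gamma$ vanish. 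I would choose for each pair $(\alpha,\beta)\in F_V\times F_V$ a nonzero intertwining operator $\mathcal{Y}_{\alpha,\beta}$ of type $\binom{W^{\alpha+\beta}}{W^\alpha\,W^\beta}$, normalised so that $\mathcal{Y}_{0,\beta}$ is the module vertex operator $Y_{W^\beta}$ (using $V=W^0$ and Proposition~\ref{prop:rem3.5b}) and $\mathcal{Y}_{\alpha,0}$ is fixed by the right vacuum/skew-symmetry normalisation. The vertex operator on $A$ is then defined by $Y(v,x)|_{W^\beta}:=\mathcal{Y}_{\alpha,\beta}(v,x)$ for $v\in W^\alpha$, with $\vac$ and $\omega$ inherited from $W^0$.

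Next I would check the elementary \aia{} axioms of Definition~\ref{defi:aia}. The $D=F_V$-grading is the module decomposition, while the $(1/N)\Z$-grading is by conformal weight, with $N$ the smallest positive integer such that every $\rho(W^\gamma)\in(1/N)\Z$ (which exists by Theorem~\ref{thm:11.3}). Grading compatibility $v_nW^\beta\subseteq W^{\alpha+\beta}$ for $v\in W^\alpha$, together with the statement that the exponents of the formal variable lie in $-B_\Omega(\alpha,\beta)=B_\rho(\alpha,\beta)$, is exactly the content of part~\ref{enum:remaia2} of Remark~\ref{rem:12.30}; this simultaneously forces $B_\Omega=-B_\rho$. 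The vacuum, translation and Virasoro axioms are immediate from the corresponding axioms for intertwining operators and from the \voa{} structure on $W^0$.

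The heart of the proof is the \emph{generalised Jacobi identity}, and this is where I expect the main obstacle. I would obtain it from Huang's associativity and commutativity of intertwining operators: both the product $\mathcal{Y}_{\alpha,\beta+\gamma}(v_1,x_1)\mathcal{Y}_{\beta,\gamma}(v_2,x_2)$ and the iterate $\mathcal{Y}_{\alpha+\beta,\gamma}(\mathcal{Y}_{\alpha,\beta}(v_1,x_0)v_2,x_2)$ converge and admit analytic continuation into the same multivalued function, and since all relevant intertwining spaces are one-dimensional, the associativity isomorphism and the braiding are mere scalars. I would \emph{define} $F(\alpha,\beta,\gamma)$ to be the associativity scalar and $\Omega(\alpha,\beta)$ the braiding scalar, using the branch convention $x-y=x+\e^{-\pi\i}y$ of \cite{DL93} recorded in \eqref{eq:dl93}. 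The pentagon and hexagon axioms of the \mtc{} then translate precisely into the three defining relations of an abelian 3-cocycle in Definition~\ref{defi:abeliancocycle}, and the resulting relation among the three rearrangements of products is the generalised Jacobi identity with its $B$- and $B_\Omega$-factors. The delicate points are the careful bookkeeping of the fractional powers and branch cuts, so that the $\left(\tfrac{x_2-x_1}{\e^{\pi\i}x_0}\right)^{B_\Omega(\alpha,\beta)}$ and the skew-symmetry factor $B(\alpha,\beta,\gamma)$ come out exactly as stated; this is essentially the translation of Huang's analytic duality \cite{Hua00,Hua05} into the algebraic cocycle language. In particular, since the trace $q_\Omega(\alpha)=\Omega(\alpha,\alpha)$ is automatically a quadratic form by Mac Lane's theorem, this also yields that $Q_\rho$ is a quadratic form (cf.\ the forthcoming Lemma~\ref{lem:rationallocality}), though the precise sign $Q_\Omega=-Q_\rho$ is deferred to Theorem~\ref{thm:2.7}.

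Finally I would treat normalisation and uniqueness. Replacing each $\mathcal{Y}_{\alpha,\beta}$ by $f(\alpha,\beta)\mathcal{Y}_{\alpha,\beta}$ for a function $f\colon F_V\times F_V\to\C^\times$ multiplies $(F,\Omega)$ by the abelian 3-coboundary $(F_f,\Omega_f)$, as in Remark~12.23 of \cite{DL93}; choosing $f(\alpha,0)=f(0,\alpha)=1$ renders $(F,\Omega)$ normalised. Conversely, any two \aia{} structures on $A$ extending the given \voa{} and module structures must restrict on each $W^\alpha$ to intertwining operators of the prescribed types, which are one-dimensional, so they differ exactly by such a rescaling $f$. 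Hence the \aia{} structure is unique up to a normalised abelian 3-coboundary, and its cohomology class in $H^3_{\text{ab.}}(F_V,\C^\times)$—equivalently, by the trace isomorphism, the quadratic form $Q_\Omega$—is a well-defined invariant of $V$.
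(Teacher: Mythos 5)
Your proposal is correct and follows essentially the same route as the paper: fix generators of the one-dimensional intertwining spaces normalised via the module action and skew-symmetry, invoke Huang's convergence/associativity/commutativity results to derive the generalised Jacobi identity with scalars $F$ and $B$, define $\Omega$ by the braiding convention, verify the abelian 3-cocycle conditions via Huang's tensor-product theory, establish $B_\Omega=-B_\rho$ for the grading compatibility, and obtain uniqueness from rescaling by functions $f$ yielding normalised coboundaries. The only difference is presentational — the paper carries out the branch-cut and formal-delta-function bookkeeping you flag as delicate explicitly (Lemmas~\ref{lem:rationallocality} and \ref{lem:formaldistributions}), but the underlying inputs and logic coincide.
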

\begin{rem}
The uniqueness statement in the above theorem is to be understood in the following sense: the vertex operation $Y(\cdot,x)$ on $A$ has to be composed of intertwining operators of type $\binom{W^{\alpha+\beta}}{W^\alpha\,W^\beta}$, $\alpha,\beta\in F_V$. These are unique up to a scalar. Hence, $Y(\cdot,x)$ may be multiplied by a function $f\colon F_V\times F_V\to\C^\times$ with $f(\alpha,0)=f(0,\alpha)=1$ for all $\alpha\in F_V$, resulting in a change of the abelian 3-cocycle $(F,\Omega)$ by the normalised abelian 3-coboundary $(F_f,\Omega_f)$. In particular the quadratic form $Q_\Omega$ is unique, as is $B_\Omega$.
\end{rem}

It is part of the statement of the theorem that the bilinear forms $B_\rho$ and $B_\Omega$ associated with the conformal weights and the abelian 3-cocycle $(F,\Omega)$, respectively, are the negatives of each other, i.e.\ $B_\rho=-B_\Omega$ (as explained in item~\ref{enum:remaia2} of Remark~\ref{rem:12.30}). From the theory of finite quadratic forms alone this does not imply that the quadratic forms $Q_\rho$ and $-Q_\Omega$ are identical because of the fact that to every bilinear form on $F_V$, there are $|F_V/2F_V|$ many possible quadratic forms with that associated bilinear form (see Remark~\ref{rem:bilquad}).

However, using the theory of \mtcs{} and in particular Huang's construction of \mtcs{} associated with certain \voa{}s, it is possible to show that in the situation of Theorem~\ref{thm:pre2.7} and under Assumption~\ref{ass:p} the quadratic forms $Q_\rho$ and $-Q_\Omega$ are indeed the same.\footnote{Also, since we use Assumption~\ref{ass:p} in the theorem, we know that the quadratic form $Q_\rho=-Q_\Omega$ is non-degenerate, i.e.\ $(F_V,Q_\rho)$ forms a \fqs{} (see Theorem~\ref{thm:fafqs}).} This is the statement of the following main theorem of this chapter, which is
Theorem~2.7 in \cite{HS14} but was stated there with an incomplete proof, as was pointed out by Scott Carnahan (see introduction of \cite{Car14}):
\begin{oframed}
\begin{thm}\label{thm:2.7}
Let $V$ be as in Assumptions~\ref{ass:sn}\ref{ass:p} with fusion group $F_V=(F_V,Q_\rho)$. Then the direct sum
\begin{equation*}
A:=\bigoplus_{\gamma\in F_V}W^\gamma
\end{equation*}
can be given the structure of an \aia{}, the unique one up to a normalised abelian 3-coboundary extending the given \voa{} and module structures, with associated normalised abelian 3-cocycle $(F,\Omega)$ such that
\begin{equation*}
Q_\Omega(\gamma)=-Q_\rho(\gamma)
\end{equation*}
for all $\gamma\in F_V$, i.e.\ the quadratic forms associated with the abelian 3-cocycle and the conformal weights are the negatives of each other. In other words: the \fqs{} $(F_V,Q_\Omega)$ associated with the \aia{} $A$ equals $\overline{F_V}=(F_V,-Q_\rho)$.

The level of the \aia{} $A$ is exactly the level of the \fqs{} $F_V$ (or $\overline{F_V}$).
\end{thm}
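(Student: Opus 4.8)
The existence and uniqueness (up to a normalised abelian 3-coboundary) of the \aia{} structure is precisely the content of Theorem~\ref{thm:pre2.7}, so the only new assertions are the identification $Q_\Omega=-Q_\rho$ and the computation of the level. By item~\ref{enum:remaia2} of Remark~\ref{rem:12.30} the \emph{associated bilinear forms} already agree up to sign: since $Y(\cdot,x)$ is composed of intertwining operators of type $\binom{W^{\alpha+\beta}}{W^\alpha\,W^\beta}$, matching their exponents forces $-B_\Omega(\alpha,\beta)=\rho(W^{\alpha+\beta})-\rho(W^\alpha)-\rho(W^\beta)+\Z=B_\rho(\alpha,\beta)$, i.e.\ $B_\Omega=-B_\rho$ and $b_\Omega=b_\rho^{-1}$. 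Thus $Q_\Omega$ and $-Q_\rho$ are two quadratic refinements of the \emph{same} bilinear form $B_\Omega=-B_\rho$, and by Remark~\ref{rem:bilquad} they can differ only by a homomorphism $\chi\colon F_V\to((1/2)\Z)/\Z$. The entire difficulty—the gap noted by Carnahan—is to show $\chi=0$; the exponent computation above only sees the monodromy, hence the bilinear form, and is blind to the values of a quadratic form on the $2$-torsion of $F_V$.

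The plan is to recover the missing $2$-torsion information from the genuine braided structure rather than from exponents alone. First I would invoke Huang's theorem (Theorem~\ref{thm:voamtc}) that the module category $\CC_V$ of a \voa{} satisfying Assumption~\ref{ass:n} is a \mtc{}; because all irreducible modules are simple currents, every simple object $W^\gamma$ is invertible, so $\CC_V$ is a \emph{pointed} \mtc{} with group of isomorphism classes $F_V$. In such a category the ribbon twist is $\theta_{W^\gamma}=\e^{(2\pi\i)L_0}=\e^{(2\pi\i)\rho(W^\gamma)}\id=q_\rho(\gamma)\,\id$, and for an invertible object the self-braiding equals the twist, $c_{W^\gamma,W^\gamma}=\theta_{W^\gamma}=q_\rho(\gamma)$. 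Crucially, $\gamma\mapsto c_{W^\gamma,W^\gamma}$ is a \emph{bona fide} quadratic form determined on the nose by the braiding and twist—including on $2$-torsion—with no refinement ambiguity, while its associated bilinear form is the double braiding $c_{W^\beta,W^\gamma}c_{W^\gamma,W^\beta}=b_\rho(\beta,\gamma)$.

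Next I would identify this categorical datum with the \aia{} trace. The restrictions of $Y(\cdot,x)$ to the summands $W^\gamma$ are, by construction, the intertwining operators realising the associativity and commutativity of $\CC_V$, so the normalised abelian 3-cocycle $(F,\Omega)$ of Theorem~\ref{thm:pre2.7} is a cocycle representative of this braided structure; under the Eilenberg--Mac Lane isomorphism $H^3_{\text{ab.}}(F_V,\C^\times)\cong\{\text{quadratic forms}\}$ its class corresponds to the trace $q_\Omega$, with $q_\Omega(\gamma)=\Omega(\gamma,\gamma)$ the self-braiding scalar it encodes. The Dong--Lepowsky convention $x-y=x+\e^{-\pi\i}y$ of \eqref{eq:dl93}, used to define the commutativity and hence $\Omega$, reverses the orientation of the braiding relative to the categorical one; this is already visible in $b_\Omega=b_\rho^{-1}$, the monodromy being inverted, so the braided category underlying $A$ is the reverse of $\CC_V$. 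Its self-braiding is therefore the inverse, $q_\Omega(\gamma)=c_{W^\gamma,W^\gamma}^{-1}=\theta_{W^\gamma}^{-1}=q_\rho(\gamma)^{-1}$, i.e.\ $Q_\Omega=-Q_\rho$ and $\chi=0$. The main obstacle is exactly this convention-matching: one must verify that the skew-symmetry/analytic-continuation relation for intertwining operators of type $\binom{W^{2\gamma}}{W^\gamma\,W^\gamma}$ contributes precisely the phase $\e^{-(2\pi\i)\rho(W^\gamma)}$ to $\Omega(\gamma,\gamma)$, and—this is the point that closes the gap—that the \mtc{} axioms (the hexagons, equivalently the abelian-cocycle conditions) make $\gamma\mapsto\Omega(\gamma,\gamma)$ the rigid self-braiding invariant, not merely some refinement of $b_\rho^{-1}$.

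Finally, for the level: by Definition~\ref{defi:aia} the level of $A$ is the minimal $N$ with $B_\Omega(\alpha,\beta)\in((1/N)\Z)/\Z$ for all $\alpha,\beta$ and with all $L_0$-eigenvalues in $((1/N)\Z)/\Z$. The weights occurring in $A=\bigoplus_{\gamma}W^\gamma$ lie in $\bigcup_\gamma(\rho(W^\gamma)+\N)$, whose reductions modulo~$1$ are exactly the values $Q_\rho(\gamma)$; clearing them requires $N\,Q_\rho=0$, which is by definition the level of the \fqs{} $(F_V,Q_\rho)$. Since $B_\Omega=-B_\rho$ is determined by $Q_\rho$, the same $N$ automatically clears $B_\Omega$, so the two minimal values coincide and the level of $A$ equals the level of $F_V$. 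As $Q_\Omega=-Q_\rho$ has the same level, this is also the level of $\overline{F_V}$.
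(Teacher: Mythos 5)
Your overall strategy---read off $Q_\Omega$ from the braiding and twist of Huang's \mtc{} and match it against the trace of the abelian 3-cocycle---is exactly the route of the paper's first proof of Theorem~\ref{thm:2.7}, and your opening diagnosis of the problem (the two quadratic forms refine the same bilinear form and can differ only by a character $\chi\colon F_V\to((1/2)\Z)/\Z$ supported on $2$-torsion) is correct. But the step on which everything rests is asserted rather than proved, and as stated it is not a true general fact: for an invertible object $X$ in a ribbon category the self-braiding does \emph{not} equal the twist; the correct identity, which follows from $\tr_{X\otimes X}(c_{X,X}^{-1})=\tr_X(\theta_X^{-1})$ (Proposition~\ref{prop:2.32}), is $c_{X,X}=d(X)\,\theta_X$, where $d(X)=\pm1$ is the quantum dimension. (In the category of super vector spaces with trivial twist, the odd line has $c_{X,X}=-1=d(X)\theta_X\neq\theta_X$.) That sign $d(X)$ is precisely a quadratic refinement ambiguity of the sort your $\chi$ parametrises, so writing $c_{W^\gamma,W^\gamma}=\theta_{W^\gamma}$ without justification silently assumes away the very gap the theorem is meant to close. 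Eliminating it is where the positivity assumption enters: one needs $d(W^\gamma)=\qdim_V(W^\gamma)=1$ for all simple currents, which is Propositions~\ref{prop:3.11} and~\ref{prop:4.17b} and is not available without Assumption~\ref{ass:p}. Your proposal never mentions quantum dimensions or where positivity is used, so the decisive point is missing.

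The second half of your argument has a related soft spot that you yourself flag as ``the main obstacle'' but do not resolve. The relation between $\Omega(\gamma,\gamma)$ and the categorical self-braiding is not simply ``inverse because the Dong--Lepowsky convention reverses the braiding'': the paper's Proposition~\ref{prop:braidingaia} computes $c_{W^\alpha,W^\alpha}=q_\Omega(\alpha)q_\rho(\alpha)^2\,\id$, where the extra factor $q_\rho(\alpha)^2=\e^{(2\pi\i)B_\rho(\alpha,\alpha)}$ arises from the fractional exponents when passing between the canonical intertwining operator $\mathcal{Y}^{\boxtimes}_{W^\alpha,W^\alpha}$ and the chosen $\mathcal{Y}^+_{W^\alpha,W^\alpha}$ and between $\e^{-\pi\i}x$ and $\e^{\pi\i}x$. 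Your claimed relation $q_\Omega(\gamma)=c_{W^\gamma,W^\gamma}^{-1}$ is only consistent with this \emph{after} one already knows $c_{W^\gamma,W^\gamma}=q_\rho(\gamma)$, so the bookkeeping is circular as written. Once both computations are supplied ($c_{W^\alpha,W^\alpha}=q_\Omega(\alpha)q_\rho(\alpha)^2$ and $c_{W^\alpha,W^\alpha}=d(\alpha)\theta_{W^\alpha}=q_\rho(\alpha)$), the conclusion $q_\Omega=q_\rho^{-1}$ follows and your level argument is fine. For comparison, the paper also gives a second, independent proof that establishes $q_\Omega=q_\rho^{-1}$ on $2$-torsion via Bántay's formula for the Frobenius--Schur indicator and then propagates it to all of $F_V$ by tensoring with an auxiliary lattice \voa{} and applying the extension lemma to a diagonal isotropic subgroup.
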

\end{oframed}
In the following we will present proofs of Theorems \ref{thm:pre2.7} and \ref{thm:2.7}. The first theorem is essentially well known and a special case of more general results by Huang. Indeed, Theorem~3.7 (and Remark~3.8) in \cite{Hua05} states that for a rational, $C_2$-cofinite \voa{} $V$ of CFT-type the direct sum of all irreducible $V$-modules up to isomorphism admits the structure of an \emph{intertwining operator algebra}. Intertwining operator algebras were first introduced in \cite{Hua97} but Definition~5.1 in \cite{Hua00} gives an equivalent characterisation of them as natural non-abelian generalisations of \aia{}s. From this definition on can read off that the intertwining operator algebra from Theorem~3.7 in \cite{Hua05} becomes an \aia{} if we additionally assume that $V$ is simple, self-contragredient
and all modules are simple currents, i.e.\ under Assumption~\ref{ass:sn}.

For reasons of comprehensibility it seems appropriate to include a direct proof of Theorem~\ref{thm:pre2.7}, which also heavily relies on results by Huang. This will be the rest of this section (Section~\ref{sec:aiascvoa1}).

For the proof of Theorem~\ref{thm:2.7} we need some knowledge of \mtcs{}, in particular those defined by Huang associated with certain \voa{}s. These concepts will be introduced in Section~\ref{sec:mtc}. Finally, Section~\ref{sec:aiascvoa2} gives two independent proofs of Theorem~\ref{thm:2.7}.

\minisec{Proof of Theorem~\ref{thm:pre2.7}}
The following proof is largely based on notes by van Ekeren \cite{Eke15} and uses results by Huang from \cite{Hua96,Hua00,Hua05,Hua08}.
Recall that we interpret fractional powers of complex variables as $z^n=\e^{n\log(z)}$ where $\log(z)=\log(|z|)+\i\arg(z)$ and $0\leq\arg(z)<2\pi$ (see Section~\ref{sec:formal}).

We start our considerations with the following result due to Huang:
\begin{prop}[Huang]\label{prop:restrictions}
Let $V$ be a rational, $C_2$-cofinite \voa{} of CFT-type. Let $A, B, C, D, P$ be $V$-modules and $\mathcal{Y}_1 \in \V^{D}_{A \, P}$ and $\mathcal{Y}_2 \in \V^{P}_{B \, C}$ intertwining operators. Fix vectors $a \in A$, $b \in B$, $c \in C$ and $d'\in D'$. Then there exists a $V$-module $Q$ and intertwining operators $\mathcal{Y}_3 \in \V^{D}_{B \, Q}$ and $\mathcal{Y}_4 \in \V^{Q}_{A \, C}$ such that the series
\begin{equation*}
\left\langle d',\mathcal{Y}_{1}(a, z) \mathcal{Y}_{2}(b, w) c \right\rangle
\quad \text{and} \quad
\left\langle d',\mathcal{Y}_{3}(b, w) \mathcal{Y}_{4}(a, z) c \right\rangle
\end{equation*}
converge in the domains $0<|w|<|z|$ and $0<|z|<|w|$, respectively, and are restrictions to these domains of a multi-valued function $F(z,w)$, analytic on the domain $\{(z, w) \in \C^2 \;|\; z, w, z-w \neq 0\}$. Moreover, there is a $V$-module $R$ and intertwining operators $\mathcal{Y}_5 \in \V^{D}_{R \, C}$ and $\mathcal{Y}_6 \in \V^{R}_{A \, B}$ such that
\begin{equation*}
\left\langle d',\mathcal{Y}_{5}(\mathcal{Y}_{6}(a, z-w)b, w) c \right\rangle
\end{equation*}
converges in the domain $0<|z-w|<|w|$ and is the restriction to that domain of $F(z,w)$.
\end{prop}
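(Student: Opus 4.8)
The plan is to assemble this statement from Huang's theory of intertwining operators for rational, $C_2$-cofinite \voa{}s of CFT-type, the convergence and analytic continuation being controlled by systems of ordinary differential equations that arise precisely from $C_2$-cofiniteness. Throughout I would use the $S_3$-symmetry and the translation axiom for intertwining operators, $\partial_x\mathcal{Y}(w,x)=\mathcal{Y}(L_{-1}w,x)$, together with the Jacobi identity from Definition~\ref{defi:intops}, as the elementary manipulations underlying each step.

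First I would establish convergence and the differential equations. Fixing $a,b,c,d'$, I would view $\langle d',\mathcal{Y}_1(a,z)\mathcal{Y}_2(b,w)c\rangle$ as a formal series in $z,w$ with complex powers and show that it satisfies a system of ODEs in each of $z$ and $w$. The crucial input is $C_2$-cofiniteness: for each module it forces the matrix coefficients built from a basis of $C_2$-quotients to satisfy relations, which, combined with the $L_{-1}$-derivative property, produce linear ODEs whose only singular points lie at $z=0$, $w=0$, $z=w$ and $\infty$, and which are of \emph{regular singular} type there. By the classical theory of regular singular points, the formal series solutions converge in the annular region $0<|w|<|z|$ and extend to a multivalued analytic function $F(z,w)$ on the complement $\{(z,w)\in\C^2\;|\;z,w,z-w\neq0\}$. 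This is the content of \cite{Hua96,Hua05}.

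Next, for the commutativity statement (existence of $Q$, $\mathcal{Y}_3\in\V^D_{B\,Q}$, $\mathcal{Y}_4\in\V^Q_{A\,C}$), I would note that the solution space of the ODE system on any simply-connected sector is finite dimensional, so analytically continuing $F$ from the region $|w|<|z|$ into $|z|<|w|$ yields a branch that again lies in this space. Invoking Huang's commutativity for intertwining operators, I would identify this branch with $\langle d',\mathcal{Y}_3(b,w)\mathcal{Y}_4(a,z)c\rangle$, where $Q$ arises as a summand of the fusion product $A\boxtimes_V C$ and the operators $\mathcal{Y}_3,\mathcal{Y}_4$ are produced from the universal property of $\boxtimes_V$ (Proposition~\ref{prop:cor3.4}). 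For the associativity statement (existence of $R$, $\mathcal{Y}_5\in\V^D_{R\,C}$, $\mathcal{Y}_6\in\V^R_{A\,B}$), I would expand $F$ near the partial diagonal in the region $0<|z-w|<|w|$ and use Huang's associativity theorem to realize this expansion as the iterate $\langle d',\mathcal{Y}_5(\mathcal{Y}_6(a,z-w)b,w)c\rangle$, with $R$ a summand of $A\boxtimes_V B$; the three expressions agree because they solve the same system and represent the same $F$ on overlapping sectors.

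The hard part is the first step: setting up the differential equations and verifying that their singularities are regular singular, which is exactly where $C_2$-cofiniteness is indispensable and which constitutes the technical core of Huang's program. Since the proposition is attributed to Huang, the honest route is to verify that our hypotheses (rational, $C_2$-cofinite, of CFT-type) meet those of \cite{Hua05} (building on \cite{Hua96,Hua00}, with modular and Verlinde input from \cite{Hua08}) and then quote the convergence, commutativity and associativity theorems therein; the sketch above indicates the mechanism by which each clause of the statement follows.
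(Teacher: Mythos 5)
Your proposal is correct and follows essentially the same route as the paper: the paper's proof simply observes that the statement is a special case of Huang's results (Lemma~4.1 of \cite{Hua00}, Theorems~1.8 and 3.1 of \cite{Hua96}), which hold under the ``convergence and extension properties'' verified for rational, $C_2$-cofinite \voa{}s of CFT-type in \cite{Hua05}, Remark~3.8 --- exactly the reduction you carry out. Your additional sketch of the mechanism (regular-singular ODEs from $C_2$-cofiniteness, finite-dimensional solution spaces, identification of branches via commutativity and associativity) accurately describes what lies behind the cited results but is not reproduced in the paper's proof.
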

\begin{proof}
The proposition is a special case of \cite{Hua00}, Lemma~4.1. The result is also given without the precise statement on the domain of $F$ in \cite{Hua96}, Theorems 1.8 and 3.1. In both cases the result is proved under additional hypotheses on $V$ called ``convergence and extension properties'' (see \cite{Hua96}, p.\ 210). In \cite{Hua05}, Remark~3.8, these hypotheses are shown to hold if $V$ is rational, $C_2$-cofinite and of CFT-type.
\end{proof}

The proof of the following lemma is similar to that of \cite{Yam04}, Lemma~3.1, where a special case of item~\ref{enum:eke1} is proved.
\begin{lem}[\cite{Eke15}, Lemma~4.11]\label{lem:rationallocality}
Let $V$ be as in Assumption~\ref{ass:sn}. In the situation of Proposition~\ref{prop:restrictions} let $A = W^\alpha$, $B = W^\beta$, $C = W^\gamma$ and $D = W^{\alpha+\beta+\gamma}$ for $\alpha,\beta,\gamma\in F_V$. Then:
\begin{enumerate}
\item \label{enum:eke1} $Q_\rho(\alpha)=\rho(W^\alpha)+\Z$, $\alpha\in F_V$, defines a quadratic form on the abelian group $F_V$ with associated bilinear form $B_\rho(\alpha, \beta) = Q_\rho(\alpha+\beta)-Q_\rho(\alpha)-Q_\rho(\beta)$, $\alpha,\beta\in F_V$.
\item \label{enum:eke2} There is an $N \in \Q$, depending only on $a \in A$ and $b \in B$, such that
\begin{equation*}
(x-y)^N \left[ \mathcal{Y}_{1}(a, x) \mathcal{Y}_{2}(b, y) - \mathcal{Y}_{3}(b, y) \mathcal{Y}_{4}(a, x)\right] = 0.
\end{equation*}
\item \label{enum:eke3} For $a \in A$, $b \in B$, $c \in C, d'\in D'$ the series
\begin{align*}
\left\langle d', \mathcal{Y}_{1}(a, x) \mathcal{Y}_{2}(b, y) c \right\rangle \iota_{x, y} (x-y)^{-B_\rho(\alpha, \beta)} x^{-B_\rho(\alpha, \gamma)} y^{-B_\rho(\beta, \gamma)}, \\
\left\langle d', \mathcal{Y}_{3}(b, y) \mathcal{Y}_{4}(a, x) c \right\rangle \iota_{y, x} (x-y)^{-B_\rho(\alpha, \beta)} x^{-B_\rho(\alpha, \gamma)} y^{-B_\rho(\beta, \gamma)}, \\
\left\langle d', \mathcal{Y}_{5}(\mathcal{Y}_{6}(a, x-y)b, y) c \right\rangle \iota_{y, x-y} (x-y)^{-B_\rho(\alpha, \beta)} x^{-B_\rho(\alpha, \gamma)} y^{-B_\rho(\beta, \gamma)}
\end{align*}
are the images of a common element of $\C[x^{\pm 1}, y^{\pm 1}, (x-y)^{-1}]$ under $\iota_{x, y}$, $\iota_{y, x}$ and $\iota_{y, x-y}$, respectively.
\end{enumerate}
\end{lem}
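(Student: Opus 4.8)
The plan is to read the entire lemma off the analytic structure of the matrix coefficients furnished by Proposition~\ref{prop:restrictions}, extracting the quadratic form from the fractional powers (equivalently, the monodromies) of the resulting multivalued function. First I would pin down the intermediate modules. Since all irreducible $V$-modules are simple currents, a space $\V^{W^\delta}_{W^\mu\,W^\nu}$ is non-zero (and then one-dimensional, by Proposition~\ref{prop:cor3.4} and the fusion rule $N^\delta_{\mu,\nu}=\delta_{\mu+\nu,\delta}$) exactly when $\delta=\mu+\nu$. Hence the modules of Proposition~\ref{prop:restrictions} are forced to be $P\cong W^{\beta+\gamma}$, $Q\cong W^{\alpha+\gamma}$, $R\cong W^{\alpha+\beta}$, and I may take all six intertwining operators non-zero. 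Recalling from Section~\ref{sec:intops} that an operator of type $\binom{W^3}{W^1\,W^2}$ carries the formal variable to powers congruent to $\rho(W^3)-\rho(W^1)-\rho(W^2)$ modulo $\Z$, the three expansions of the common function $F(z,w)$ have, in their respective domains, all powers of $z$, $w$ and $z-w$ congruent modulo $\Z$ to single values: for $\langle d',\mathcal{Y}_1(a,z)\mathcal{Y}_2(b,w)c\rangle$ the $z$- and $w$-exponents to $B_\rho(\alpha,\beta+\gamma)$ and $B_\rho(\beta,\gamma)$; for $\langle d',\mathcal{Y}_3(b,w)\mathcal{Y}_4(a,z)c\rangle$ the $z$-exponent to $B_\rho(\alpha,\gamma)$; and the $(z-w)$-exponent of the iterate $\langle d',\mathcal{Y}_5(\mathcal{Y}_6(a,z-w)b,w)c\rangle$ to $B_\rho(\alpha,\beta)$.

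For part~(1) I would compute the monodromy of $F$. Because in each domain the relevant exponents are constant modulo $\Z$, the monodromy of $F$ around $z=0$, around $w=0$ and around $z=w$ is multiplication by the scalars $\e^{(2\pi\i)B_\rho(\alpha,\gamma)}$, $\e^{(2\pi\i)B_\rho(\beta,\gamma)}$ and $\e^{(2\pi\i)B_\rho(\alpha,\beta)}$, respectively. Fixing $w\neq0$ and letting $z$ traverse a large loop enclosing both $0$ and $w$, the $|z|>|w|$ expansion shows this monodromy equals $\e^{(2\pi\i)B_\rho(\alpha,\beta+\gamma)}$; but that loop is homotopic in $\C\setminus\{0,w\}$ to the composition of the small loops around $0$ and around $w$, so the monodromy also equals $\e^{(2\pi\i)(B_\rho(\alpha,\gamma)+B_\rho(\alpha,\beta))}$. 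Comparison gives $B_\rho(\alpha,\beta+\gamma)=B_\rho(\alpha,\gamma)+B_\rho(\alpha,\beta)$ in $\Q/\Z$, i.e.\ additivity of $B_\rho$ in the second variable; symmetry of $B_\rho$ yields it in the first. Together with $Q_\rho(0)=\rho(V)+\Z=0$ and $Q_\rho(-\alpha)=Q_\rho(\alpha)$ (since $W^{-\alpha}=(W^\alpha)'$ has the same grading, hence the same conformal weight), Proposition~\ref{prop:bilquad} then gives that $Q_\rho$ is a finite quadratic form with associated bilinear form $B_\rho$. Note this uses only the analytic structure, not Assumption~\ref{ass:p}.

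For part~(3) I would set $G(z,w):=F(z,w)\,(z-w)^{-B_\rho(\alpha,\beta)}z^{-B_\rho(\alpha,\gamma)}w^{-B_\rho(\beta,\gamma)}$, with branches matching the expansions. By the monodromy computation each of the three generating monodromies of $F$ is cancelled by the corresponding fractional-power factor, so $G$ is single-valued on $\{z,w,z-w\neq0\}$; since the intertwining operators are lower-truncated, $F$ has only finite-order poles along the three divisors and polynomial growth, whence $G$ extends to a rational function lying in $\C[x^{\pm1},y^{\pm1},(x-y)^{-1}]$, and unwinding the definition identifies the three displayed series with $\iota_{x,y}G$, $\iota_{y,x}G$ and $\iota_{y,x-y}G$. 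For part~(2) I would then write the two operator products as $\iota_{x,y}[G\Psi]$ and $\iota_{y,x}[G\Psi]$ with $\Psi=(x-y)^{B_\rho(\alpha,\beta)}x^{B_\rho(\alpha,\gamma)}y^{B_\rho(\beta,\gamma)}$, choose $N\in\Q$ with $N+B_\rho(\alpha,\beta)\in\N$ exceeding the order of the pole of $G$ along $x=y$ (a bound depending only on the weights of $a$ and $b$), and observe that $(x-y)^NG\Psi$ then carries no diagonal singularity, so its $\iota_{x,y}$- and $\iota_{y,x}$-expansions coincide; this forces $(x-y)^N[\mathcal{Y}_1(a,x)\mathcal{Y}_2(b,y)-\mathcal{Y}_3(b,y)\mathcal{Y}_4(a,x)]=0$.

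The step I expect to be the main obstacle is the algebraicity assertion inside part~(3): passing from ``single-valued with finite-order singularities on the coordinate and diagonal divisors'' to ``rational of the prescribed form'' requires genuine control of the pole orders (from lower-truncation of the intertwining operators) and of the growth at infinity, which is exactly where the convergence-and-extension input behind Proposition~\ref{prop:restrictions} is indispensable. One must also ensure the branch choices for the three factors of $\Psi$ are globally consistent and handle the $\e^{-\pi\i}$ convention of \eqref{eq:dl93}, so that once the diagonal factor is cleared the two expansions of $\Psi$ do not secretly differ by a phase.
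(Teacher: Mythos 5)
Your proposal is correct and follows the same strategy as the paper: everything is read off the common analytic continuation $F(z,w)$ supplied by Proposition~\ref{prop:restrictions}, the bilinearity of $B_\rho$ comes from single-valuedness constraints on the fractional exponents, and the rational function of item~\ref{enum:eke3} is obtained by clearing those exponents. Your identification of the intermediate modules $P\cong W^{\beta+\gamma}$, $Q\cong W^{\alpha+\gamma}$, $R\cong W^{\alpha+\beta}$ via group-like fusion, the use of $Q_\rho(0)=0$ and $Q_\rho(-\alpha)=Q_\rho(\alpha)$ together with Proposition~\ref{prop:bilquad}, and the absorption of the $\e^{-\pi\i}$ ambiguity of \eqref{eq:dl93} into the normalisation of $\mathcal{Y}_3,\mathcal{Y}_4$ all match the paper (the last point is exactly Remark~\ref{rem:zwswaprem}).

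The one place where you genuinely diverge is the step you yourself flag as the main obstacle, and it is worth recording how the paper sidesteps it. You argue: $G=F\cdot(z-w)^{-B_\rho(\alpha,\beta)}z^{-B_\rho(\alpha,\gamma)}w^{-B_\rho(\beta,\gamma)}$ is single-valued with finite-order poles along the three divisors, hence rational by a growth estimate at infinity. The paper never needs such an estimate: it first multiplies by $(z-w)^N$ with $N\in-B_\rho(\alpha,\beta)$ large enough that, by lower truncation of $\mathcal{Y}_6$, the iterate expansion of $(z-w)^NF$ is regular at $z-w=0$, so that the $\iota_{z,w}$-series converges for all $|z|,|w|>0$; after the remaining monomial corrections one has \emph{two} convergent series in integral powers of $z$ and $w$ (one per operator ordering) defining the same function up to a factor $(z/w)^k$, $k\in\Z$. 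Lower truncation bounds one of them below in $w$ while the pairing against the fixed vector $d'$ bounds it above in $z$, and vice versa for the other; a single function with both complementary boundedness properties is forced to lie in $\C[z^{\pm1},w^{\pm1}]$. This is a purely combinatorial comparison of Laurent expansions and avoids any analytic control at infinity. Your monodromy computation for item~\ref{enum:eke1} (local monodromies are scalars because each expansion has exponents in a single congruence class, and the product of the three local monodromies on $\mathbb{P}^1\setminus\{0,w,\infty\}$ is trivial) is a clean, equivalent repackaging of the paper's observation that the ratio of the two single-valued corrected series, namely $z^{B_\rho(\alpha,\beta)-B_\rho(\alpha,\beta+\gamma)+B_\rho(\alpha,\gamma)}w^{B_\rho(\beta,\alpha+\gamma)-B_\rho(\alpha,\beta)-B_\rho(\beta,\gamma)}$, must itself be single-valued.
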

By a slight abuse of notation we interpret $B_\rho(\alpha,\beta)$ in a term like $x^{B_\rho(\alpha,\beta)}$ as an arbitrary representative in $\Q$ of $B_\rho(\alpha,\beta)\in\Q/\Z$.
\begin{proof}
For the proof we consider some expressions in terms of the complex variables $z,w$ rather than the formal variables $x,y$. Fix $a,b,c,d'$ as in Proposition~\ref{prop:restrictions}. By definition, $\mathcal{Y}(W^\alpha, x)W^\beta \subseteq x^{B_\rho(\alpha, \beta)} W^{\alpha+\beta}((x))$ for an intertwining operator $\mathcal{Y}(\cdot,x)$ of type $\binom{W^{\alpha+\beta}}{W^\alpha\,W^\beta}$. The expression $\left\langle d',\mathcal{Y}_{5}(\mathcal{Y}_{6}(a, z-w)b,w)c\right\rangle$ defines the multi-valued function $F(z, w)$ as a series in $z-w$, $w$. By the boundedness-from-below property for $\mathcal{Y}_{6}$ (see Definition~\ref{defi:intops}) there exists an $N \in \Q$, depending only on $a$ and $b$, such that $(z-w)^N F(z, w)$ is regular at $z-w=0$. More precisely, $N \in -B_\rho(\alpha,\beta)$.

We consider the series expansion
\begin{equation*}
(z-w)^N F(z, w) = \iota_{z, w} (z-w)^N \left\langle d', \mathcal{Y}_{1}(a, z) \mathcal{Y}_{2}(b, w) c \right\rangle
\end{equation*}
in the domain $0 < |w| < |z|$. Since $(z-w)^N F(z, w)$ is regular at $z-w=0$, the series on the right-hand side actually converges for all $|z|, |w| > 0$. It is a series in fractional powers of $z$ and $w$ but
\begin{equation*}
z^{B_\rho(\alpha, \beta)-B_\rho(\alpha, \beta+\gamma)} w^{-B_\rho(\beta, \gamma)} \iota_{z, w} (z-w)^N \left\langle d', \mathcal{Y}_{1}(a, z) \mathcal{Y}_{2}(b, w) c \right\rangle
\end{equation*}
contains clearly only integral powers of $z$ and $w$. Similarly,
\begin{equation*}
z^{-B_\rho(\alpha, \gamma)} w^{B_\rho(\alpha, \beta)-B_\rho(\beta, \alpha+\gamma)} \iota_{w, z} (z-w)^N \left\langle d', \mathcal{Y}_{3}(b, w) \mathcal{Y}_{4}(a, z) c \right\rangle
\end{equation*}
is a convergent series in integral powers of $z$ and $w$. Since the functions defined by these two series are single-valued, so is their ratio
\begin{equation*}
z^{B_\rho(\alpha, \beta)-B_\rho(\alpha, \beta+\gamma)+B_\rho(\alpha, \gamma)} w^{B_\rho(\beta, \alpha+\gamma)-B_\rho(\alpha, \beta)-B_\rho(\beta, \gamma)}.
\end{equation*}
This implies
\begin{align*}
B_\rho(\alpha,\beta)-B_\rho(\alpha,\beta+\gamma)+B_\rho(\alpha,\gamma)&=0+\Z,\\
B_\rho(\beta,\alpha+\gamma)-B_\rho(\alpha,\beta)-B_\rho(\beta,\gamma)&=0+\Z,
\end{align*}
which means that $B_\rho$ is bilinear. Since a module and its contragredient have the same weight grading, $Q_\rho(-\alpha)=Q_\rho(\alpha)$ and $Q_\rho(0)=\rho(V)+\Z=0+\Z$ so that by Proposition~\ref{prop:bilquad} $Q_\rho$ is a quadratic form. Above we obtained two convergent series in integral powers of $z$ and $w$ whose ratio is of the form $(z/w)^k$ for some $k \in \Z$. One series has finitely many negative powers of $w$ and finitely many positive powers of $z$, the other vice versa. Hence both lie in $\C[z^{\pm 1}, w^{\pm 1}]$, which proves items \ref{enum:eke2} and \ref{enum:eke3}.
\end{proof}

\begin{rem}\label{rem:zwswaprem}
Recall from \eqref{eq:dl93} that for consistency with the definitions of \cite{DL93} we chose to interpret
\begin{equation*}
\iota_{y,x}(x-y)^n=(\e^{\pi\i})^{-n}\iota_{y,x}(y-x)^n
\end{equation*}
for $n\in\C$. This expression appears in Lemma~\ref{lem:rationallocality} but the convention does not matter for the statement of the lemma because any ambiguity can be absorbed into the choice of $\mathcal{Y}_3$ and $\mathcal{Y}_4$.
\end{rem}
In order to pass to a generalised Jacobi identity for the intertwining operators under Assumption~\ref{ass:sn} we first recall some standard formulæ for Laurent polynomials in several formal variables:
\begin{lem}\label{lem:formaldistributions}
Let $f(x_0,x_1,x_2) \in \C[x_0^{\pm 1}, x_1^{\pm 1}, x_2^{\pm 1}]$. Then:
\begin{enumerate}
\item \label{enum:eker1} \cite{FHL93}, Proposition~3.1.1:
\begin{align*}
&\iota_{x_1,x_0}x_2^{-1}\delta\left(\frac{x_1-x_0}{x_2}\right)f(x_0,x_1,x_1-x_0)\\
&=\iota_{x_1,x_2}x_0^{-1}\delta\left(\frac{x_1-x_2}{x_0}\right)f(x_1-x_2,x_1,x_2)\\
&\quad-\iota_{x_2,x_1}x_0^{-1}\delta\left(\frac{x_2-x_1}{-x_0}\right)f(x_1-x_2,x_1,x_2).
\end{align*}
\item \label{enum:eker2} \cite{FHL93}, Proposition~3.1.1:
\begin{equation*}
\iota_{x_2,x_0} x_1^{-1}\delta\left(\frac{x_2+x_0}{x_1}\right) f(x_0,x_0+x_2,x_2) = \iota_{x_1,x_0} x_2^{-1} \delta\left(\frac{x_1-x_0}{x_2}\right)f(x_0,x_1,x_1-x_0).
\end{equation*}
\item \label{enum:eker3}\cite{FLM88}, Proposition~8.8.22: 
\begin{equation*}
\iota_{x_1, x_0} x_2^{-1} \left( \frac{x_1-x_0}{x_2} \right)^{m} \delta\left( \frac{x_1-x_0}{x_2} \right) = \iota_{x_2, x_0} x_1^{-1} \left( \frac{x_2+x_0}{x_1} \right)^{-m} \delta\left( \frac{x_2+x_0}{x_1} \right)
\end{equation*}
for every $m\in\C$.
\end{enumerate}
\end{lem}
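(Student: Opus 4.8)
The three identities are all elementary consequences of the formal-variable calculus set up in Section~\ref{sec:formal}, and my plan is to reduce each of them to two basic facts: the substitution property of the formal delta function, $g(x)\delta(x)=g(1)\delta(x)$ for a Laurent polynomial $g$, together with its two-variable avatar $f(x,y)\delta(x/y)=f(y,y)\delta(x/y)$; and the three-term delta-function identity established in Section~\ref{sec:formal}, which relates $\iota_{x_1,x_0}x_2^{-1}\delta((x_1-x_0)/x_2)$ to the two expansions $\iota_{x_1,x_2}x_0^{-1}\delta((x_1-x_2)/x_0)$ and $\iota_{x_2,x_1}x_0^{-1}\delta((x_2-x_1)/(-x_0))$. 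Since the statements are cited to \cite{FHL93} and \cite{FLM88}, the role of the proof is to exhibit how each follows from these two facts.

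For item \ref{enum:eker1} I would multiply the bare three-term identity (the case $f\equiv 1$) by the Laurent polynomial $f(x_0,x_1,x_2)$ and then use the substitution property to simplify $f$ in each term. On the left, the factor $x_2^{-1}\delta((x_1-x_0)/x_2)$ is supported on $x_2=x_1-x_0$, so the product is unchanged if $x_2$ is replaced by $x_1-x_0$ inside $f$, yielding $f(x_0,x_1,x_1-x_0)$. On each right-hand term the factor $x_0^{-1}\delta((x_1\mp x_2)/(\pm x_0))$ is supported on $x_0=x_1-x_2$, so $f(x_0,x_1,x_2)$ may be replaced by $f(x_1-x_2,x_1,x_2)$. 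The consistency of these two substitutions on the common surface $x_2=x_1-x_0\Leftrightarrow x_0=x_1-x_2$ is exactly what produces the stated equation. The only point needing care is that each product be well-defined, i.e.\ that $f$, being a Laurent polynomial, contributes only finitely many powers of the contracting variable, so that the substitution property applies coefficient by coefficient.

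Item \ref{enum:eker2} is a two-term identity supported on the single relation $x_1=x_2+x_0$ (equivalently $x_2=x_1-x_0$). I would first prove the bare version, $\iota_{x_2,x_0}x_1^{-1}\delta((x_2+x_0)/x_1)=\iota_{x_1,x_0}x_2^{-1}\delta((x_1-x_0)/x_2)$, by expanding both sides via the binomial theorem and checking that the coefficient of each monomial $x_0^ax_1^bx_2^c$ agrees; this is the basic substitution identity of \cite{FHL93}. The polynomial $f$ is then reinstated through the substitution property, since on the common support $f(x_0,x_0+x_2,x_2)=f(x_0,x_1,x_1-x_0)$. Item \ref{enum:eker3} is the refinement of this bare identity to a complex exponent $m$: the factors $((x_1-x_0)/x_2)^m$ and $((x_2+x_0)/x_1)^{-m}$ agree on the support $x_1=x_2+x_0$, and I would again compare the two $\iota$-expansions monomial by monomial, now allowing fractional powers.

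The main obstacle is the bookkeeping in item \ref{enum:eker3}: because $m\in\C$, one must expand $((x_1-x_0)/x_2)^m$ and $((x_2+x_0)/x_1)^{-m}$ using the binomial conventions and the branch of the logarithm fixed in Section~\ref{sec:formal}, and verify that the powers of $-1$ introduced by reading $x-y$ as $x+\e^{-\pi\i}y$ (cf.\ \eqref{eq:dl93}) cancel consistently between the two sides. Once the complex-power case is settled, items \ref{enum:eker1} and \ref{enum:eker2} are essentially its $m=0$ specializations dressed with the test function $f$ via the substitution property, so the substantive work is confined to tracking these branch choices and the one-sided finiteness that makes the formal products legitimate.
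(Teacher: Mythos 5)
Your proof is correct and is essentially the standard argument behind the cited results: the paper itself gives no proof of this lemma, deferring to \cite{FHL93}, Proposition~3.1.1 and \cite{FLM88}, Proposition~8.8.22, and your reduction to the bare delta-function identities followed by reinstating the Laurent polynomial $f$ via the substitution property is exactly how those references argue. One small simplification: the branch bookkeeping you flag in item (3) is harmless, because both $\iota$-expansions place only non-negative \emph{integer} powers on the second variable, so only integer powers of $-1$ arise and the two sides match coefficient by coefficient without any genuine branch issue.
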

We continue in the situation of Lemma~\ref{lem:rationallocality}. Let $g(x,y)\in\C[x^{\pm 1},y^{\pm 1},(x-y)^{-1}]$ be the common element in item~\ref{enum:eke3} there and let the rational function $f(x_0,x_1,x_2)\in\C[x_0^{\pm 1},x_1^{\pm 1},x_2^{\pm 1}]$ be such that $g(x,y)=f(x-y,x,y)$. Applying items \ref{enum:eker1} and \ref{enum:eker2} of Lemma~\ref{lem:formaldistributions} to $f$ yields
\begin{align}\begin{split}\label{eq:lemmatof}
&\iota_{x_1,x_2} (x_1-x_2)^{-B_\rho(\alpha, \beta)} x_1^{-B_\rho(\alpha, \gamma)} x_2^{-B_\rho(\beta, \gamma)} x_0^{-1} \delta\left( \frac{x_1-x_2}{x_0} \right) \mathcal{Y}_1(a, x_1) \mathcal{Y}_2(b, x_2) c \\
&\quad- \iota_{x_2,x_1}(x_1-x_2)^{-B_\rho(\alpha, \beta)} x_1^{-B_\rho(\alpha, \gamma)} x_2^{-B_\rho(\beta, \gamma)} x_0^{-1} \delta\left( \frac{x_2-x_1}{-x_0} \right) \mathcal{Y}_3(b, x_2) \mathcal{Y}_4(a, x_1) c \\
&= \iota_{x_2,x_0} x_0^{-B_\rho(\alpha, \beta)} (x_0+x_2)^{-B_\rho(\alpha, \gamma)} x_2^{-B_\rho(\beta, \gamma)} x_1^{-1} \delta\left( \frac{x_2+x_0}{x_1} \right) \mathcal{Y}_5(\mathcal{Y}_6(a, x_0)b, x_2) c.
\end{split}\end{align}
Indeed, the identity holds when paired with $d'$ for all $d'\in(W^{\alpha+\beta+\gamma})'$ and hence it holds with $d'$ omitted.
The first term of \eqref{eq:lemmatof} equals
\begin{equation*}
\iota_{x_1,x_2}x_0^{-B_\rho(\alpha,\beta)}x_1^{-B_\rho(\alpha,\gamma)}x_2^{-B_\rho(\beta,\gamma)}x_0^{-1}\left(\frac{x_1-x_2}{x_0}\right)^{-B_\rho(\alpha,\beta)}\!\delta\left(\frac{x_1-x_2}{x_0}\right)\mathcal{Y}_1(a,x_1)\mathcal{Y}_2(b, x_2)c.
\end{equation*}
Using item~\ref{enum:eker3} of Lemma~\ref{lem:formaldistributions}, the third term of \eqref{eq:lemmatof} equates to
\begin{align*}
&\iota_{x_2,x_0}x_0^{-B_\rho(\alpha,\beta)}x_1^{-B_\rho(\alpha,\gamma)}x_2^{-B_\rho(\beta,\gamma)}x_1^{-1}\!\left(\frac{x_2+x_0}{x_1}\right)^{-B_\rho(\alpha,\gamma)}\!\delta\!\left(\frac{x_2+x_0}{x_1}\right)\mathcal{Y}_5(\mathcal{Y}_6(a,x_0)b,x_2)c\\
&=\iota_{x_1,x_0}x_0^{-B_\rho(\alpha,\beta)}x_1^{-B_\rho(\alpha,\gamma)}x_2^{-B_\rho(\beta,\gamma)}x_2^{-1}\!\left(\frac{x_1-x_0}{x_2}\right)^{B_\rho(\alpha,\gamma)}\!\delta\!\left(\frac{x_1-x_0}{x_2}\right)\mathcal{Y}_5(\mathcal{Y}_6(a,x_0)b,x_2)c
\end{align*}
and the second term becomes
\begin{align*}
& -\iota_{x_2,x_1} x_0^{-B_\rho(\alpha, \beta)} \left(\frac{x_1-x_2}{x_0}\right)^{-B_\rho(\alpha, \beta)} x_1^{-B_\rho(\alpha, \gamma)} x_2^{-B_\rho(\beta, \gamma)} x_0^{-1} \delta\left( \frac{x_2-x_1}{-x_0} \right)\\
&\quad\mathcal{Y}_3(b, x_2)\mathcal{Y}_4(a, x_1) c \\
&= -\iota_{x_2,x_1} x_0^{-B_\rho(\alpha, \beta)} \left(\frac{\e^{-\pi \i}(x_2-x_1)}{x_0}\right)^{-B_\rho(\alpha, \beta)} x_1^{-B_\rho(\alpha, \gamma)} x_2^{-B_\rho(\beta, \gamma)} x_0^{-1} \delta\left( \frac{x_2-x_1}{-x_0} \right)\\
&\quad\mathcal{Y}_3(b, x_2)\mathcal{Y}_4(a, x_1) c \\
&= -\iota_{x_2,x_1} x_0^{-B_\rho(\alpha, \beta)} \left(\frac{x_2-x_1}{\e^{\pi \i}x_0}\right)^{-B_\rho(\alpha, \beta)} x_1^{-B_\rho(\alpha, \gamma)} x_2^{-B_\rho(\beta, \gamma)} x_0^{-1} \delta\left( \frac{x_2-x_1}{-x_0} \right)\\
&\quad\mathcal{Y}_3(b, x_2)\mathcal{Y}_4(a, x_1) c,
\end{align*}
where, passing from the first to the second line, we have used the convention in Remark~\ref{rem:zwswaprem}. Cancelling $x_0^{-B_\rho(\alpha, \beta)} x_1^{-B_\rho(\alpha, \gamma)} x_2^{-B_\rho(\beta, \gamma)}$ from all three terms we finally obtain
\begin{align*}
&\iota_{x_1,x_2} x_0^{-1} \left( \frac{x_1-x_2}{x_0} \right)^{-B_\rho(\alpha, \beta)} \delta\left( \frac{x_1-x_2}{x_0} \right) \mathcal{Y}_1(a, x_1) \mathcal{Y}_2(b, x_2) c \\
&\quad- \iota_{x_2,x_1} x_0^{-1} \left( \frac{x_2-x_1}{\e^{\pi \i}x_0} \right)^{-B_\rho(\alpha, \beta)} \delta\left( \frac{x_2-x_1}{-x_0} \right) \mathcal{Y}_3(b, x_2) \mathcal{Y}_4(a, x_1) c \\
&= \iota_{x_1,x_0} x_2^{-1} \left( \frac{x_1-x_0}{x_2} \right)^{+B_\rho(\alpha, \gamma)} \delta\left( \frac{x_1-x_0}{x_2} \right) \mathcal{Y}_5(\mathcal{Y}_6(a, x_0)b, x_2) c.
\end{align*}
This already resembles the generalised Jacobi identity for \aia{}s (see Definition~\ref{defi:aia}).

Recall that in the situation of group-like fusion (Assumption~\ref{ass:sn}) we chose $V$-module representatives $W^\alpha$ for $\alpha\in F_V$ with $W^0=V$. Now we also choose representatives of the one-dimensional spaces $\V^{W^{\alpha+\beta}}_{W^\alpha\,W^\beta}$ of intertwining operators of type $\binom{W^{\alpha+\beta}}{W^\alpha\,W^\beta}$, $\alpha,\beta\in F_V$.
\begin{defi}[System of Scalars]\label{defi:FandB}
Assume that $V$ satisfies Assumption~\ref{ass:sn}. We fix a choice of intertwining operator $\mathcal{Y}_{W^\alpha,W^\beta}^+\in\V^{W^{\alpha+\beta}}_{W^\alpha\,W^\beta}$ for each $\alpha,\beta\in F_V$. Each intertwining operator $\mathcal{Y}_k$, $k = 1, \ldots, 6$ in the discussion above is a scalar multiple of one of the fixed ones and we immediately obtain the following derived \emph{generalised Jacobi identity} (cf.\ Definition~\ref{defi:aia}):
\begin{align*}
&\iota_{x_1,x_2}x_0^{-1}\left(\frac{x_1-x_2}{x_0}\right)^{-B_\rho(\alpha,\beta)}\!\delta\!\left(\frac{x_1-x_2}{x_0}\right)\mathcal{Y}_{W^\alpha,W^{\beta+\gamma}}^+(a,x_1)\mathcal{Y}_{W^\beta,W^\gamma}^+(b,x_2)c\\
&\quad-B(\alpha,\beta,\gamma)\iota_{x_2,x_1}x_0^{-1}\left(\frac{x_2-x_1}{\e^{\pi\i}x_0}\right)^{-B_\rho(\alpha,\beta)}\!\delta\!\left(\frac{x_2-x_1}{-x_0}\right)\mathcal{Y}_{W^\beta,W^{\alpha+\gamma}}^+(b,x_2)\mathcal{Y}_{W^\alpha,W^\gamma}^+(a,x_1)c\\
&=F(\alpha,\beta,\gamma)\iota_{x_1,x_0}x_2^{-1}\left(\frac{x_1-x_0}{x_2}\right)^{B_\rho(\alpha,\gamma)}\!\delta\!\left(\frac{x_1-x_0}{x_2}\right)\mathcal{Y}_{W^{\alpha+\beta},W^\gamma}^+(\mathcal{Y}_{W^\alpha,W^\beta}^+(a,x_0)b,x_2)c
\end{align*}
for some system of non-zero scalar factors $F,B\colon F_V\times F_V\times F_V\to\C^\times$.
\end{defi}
We can immediately also define the function $\Omega\colon F_V\times F_V\to\C^\times$:
\begin{defi}[Braiding Convention]\label{defi:OmegaDef}
For all $\alpha, \beta \in F_V$ we define $\widetilde{\mathcal{Y}}_{W^\alpha,W^\beta}^+$ by
\begin{equation*}
\widetilde{\mathcal{Y}}_{W^\alpha,W^\beta}^+(a,x)b=\e^{xL_{-1}}\mathcal{Y}_{W^\beta,W^\alpha}^+(b,\e^{-\pi \i}x)a.
\end{equation*}
Then by \cite{HL95}, Section~7, $\widetilde{\mathcal{Y}}_{W^\alpha,W^\beta}^+\in\V^{W^{\alpha+\beta}}_{W^\alpha\,W^\beta}$ and we define $\Omega\colon F_V\times F_V\to\C^\times$ by
\begin{equation*}
\mathcal{Y}_{W^\alpha,W^\beta}^+(a,x)b =\widetilde\Omega(\alpha,\beta)\widetilde{\mathcal{Y}}_{W^\alpha,W^\beta}^+(a,x)b
\end{equation*}
where $\widetilde\Omega(\alpha,\beta)=\Omega(\beta,\alpha)^{-1}$ as in Remark~\ref{rem:omegatilde}.
\end{defi}
In the following we show that $F$, $B$ and $\Omega$ satisfy the properties required to endow $A=\bigoplus_{\gamma\in F_V}W^\gamma$ with the structure of an \aia{} whose vertex operation consists of the intertwining operators chosen in Definition~\ref{defi:FandB}. We begin by showing that $F$ and $\Omega$ form an abelian 3-cocycle:
\begin{prop}[Huang, \cite{Eke15}, Proposition~4.20]\label{prop:Huang.3cocycle}
Let $V$ be as in Assumption~\ref{ass:sn}. Let $F$ be as in Definition~\ref{defi:FandB} and $\Omega$ as in Definition~\ref{defi:OmegaDef}. Then $(F,\Omega)$ is an abelian $3$-cocycle. If we choose $\mathcal{Y}_{V,W^\alpha}^+$ to be the $V$-module action of $V$ on $W^\alpha$ and $\mathcal{Y}_{W^\alpha,V}^+:=\widetilde{\mathcal{Y}}_{V,W^\alpha}^+$, then $(F,\Omega)$ is normalised.
\end{prop}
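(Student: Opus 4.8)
The plan is to recognise that the three abelian $3$-cocycle identities of Definition~\ref{defi:abeliancocycle} are precisely the Mac~Lane pentagon (for $F$ alone) and the two hexagon relations (coupling $F$ and $\Omega$) expressing the coherence of the associativity and braiding constraints of the braided tensor category of $V$-modules constructed by Huang. Concretely, the scalars $F(\alpha,\beta,\gamma)$ are the one-dimensional matrix entries of the associativity (fusing) isomorphism and $\Omega(\alpha,\beta)$ the entries of the braiding, so the cocycle conditions follow from comparing the distinct ways of re-associating and re-braiding a composite of intertwining operators. Since in the group-like situation (Assumption~\ref{ass:sn}) every space $\V^{W^{\alpha+\beta}}_{W^\alpha\,W^\beta}$ is one-dimensional, all these isomorphisms are scalars and the coherence diagrams collapse to the scalar equations we must verify. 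I would carry this out directly from the analytic convergence and extension results of Proposition~\ref{prop:restrictions} rather than invoking the categorical machinery as a black box.

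For the pentagon identity I would extend Proposition~\ref{prop:restrictions} to a product of \emph{three} intertwining operators $\langle d',\mathcal{Y}(a,z_1)\mathcal{Y}(b,z_2)\mathcal{Y}(c,z_3)e\rangle$ with $a\in W^\alpha$, $b\in W^\beta$, $c\in W^\gamma$, $e\in W^\delta$ and $d'\in(W^{\alpha+\beta+\gamma+\delta})'$, which converges in a suitable domain to the restriction of a single multi-valued analytic function. The two bracketings of the Mac~Lane pentagon correspond to two sequences of associativity moves carrying the product into the fully iterated form $\mathcal{Y}_5(\mathcal{Y}_6(\cdots))$; each elementary move multiplies the expression by a factor of the type $F(\cdot,\cdot,\cdot)$ from Definition~\ref{defi:FandB}. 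Because both sequences produce the analytic continuation of the \emph{same} function along homotopic paths, the accumulated scalars must agree, which is exactly the first equation of Definition~\ref{defi:abeliancocycle}.

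For the two hexagon identities I would combine associativity with the braiding of Definition~\ref{defi:OmegaDef}. Here the skew-symmetry isomorphism $\widetilde{\mathcal{Y}}$ (valid by \cite{HL95}, Section~7, and Proposition~\ref{prop:intsym}) implements the swap of two operators and introduces the factor $\widetilde\Omega(\alpha,\beta)=\Omega(\beta,\alpha)^{-1}$. Comparing the two natural ways of transporting one operator past a product, namely braiding-then-associating-then-braiding versus associating-then-braiding-then-associating, yields the second and third equations. Throughout, one must keep scrupulous track of the branches of the fractional powers, and in particular of the convention $x-y=x+\e^{-\pi\i}y$ from \eqref{eq:dl93} whenever the commutativity isomorphism reverses the order of the variables; this branch bookkeeping, together with verifying that the monodromy picked up by analytic continuation matches the prescribed scalar factors, is the main obstacle of the argument.

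Finally, for normalisation I would use the choices $\mathcal{Y}^+_{V,W^\alpha}$ given by the module action and $\mathcal{Y}^+_{W^\alpha,V}:=\widetilde{\mathcal{Y}}^+_{V,W^\alpha}$. With these, every associativity or braiding move involving the unit module $W^0=V$ is governed by the vacuum axiom and hence trivial, so the corresponding scalars are $1$; this gives $F(\alpha,\beta,0)=F(\alpha,0,\gamma)=F(0,\beta,\gamma)=1$ and $\Omega(\alpha,0)=\Omega(0,\beta)=1$, establishing that $(F,\Omega)$ is normalised.
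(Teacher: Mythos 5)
Your proposal is correct in outline but takes a genuinely different route from the paper. The paper's proof of this proposition is deliberately minimal: it establishes only the two relations $F(0,\beta,\gamma)=1$ (read off from the fact that for $\alpha=0$ the generalised Jacobi identity of Definition~\ref{defi:FandB} reduces to the ordinary Jacobi identity for intertwining operators, so no scalar appears) and $\Omega(0,\beta)=1$ (immediate from the choice $\mathcal{Y}_{W^\alpha,V}^+:=\widetilde{\mathcal{Y}}_{V,W^\alpha}^+$), observes that these two relations together with the cocycle identities force the remaining normalisation conditions, and then cites Huang (\cite{Hua08}, Section~1) for the cocycle property itself, which is exactly the pentagon and hexagon coherence of the Huang--Lepowsky tensor-product theory. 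You instead propose to reprove the cocycle identities from scratch by analytic continuation. That is the content of Huang's theorem, and your plan is the right shape for it; what the paper's citation buys is precisely avoiding the two places where your sketch is thinnest.

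Concretely, two points would need real work before your argument closes. First, Proposition~\ref{prop:restrictions} as stated in the paper only controls products of \emph{two} intertwining operators; the pentagon requires convergence and single-valued extension for correlators of \emph{three} intertwining operators applied to four module elements, together with the statement that all five bracketings are restrictions of one multi-valued function on the configuration space. This is a separate (and harder) convergence theorem of Huang's, not a formal corollary of the two-operator case, and the claim that the two composite re-association paths are homotopic so that the accumulated scalars agree is exactly where the monodromy bookkeeping you defer actually lives. Second, your normalisation argument overclaims: $F(\alpha,\beta,0)=1$ and $F(\alpha,0,\gamma)=1$ do not follow "trivially from the vacuum axiom", since they involve the operators $\mathcal{Y}_{W^{\alpha+\beta},V}^+$ and $\mathcal{Y}_{W^\alpha,V}^+$, which are defined through the skew-symmetry operation $\widetilde{\mathcal{Y}}$ rather than directly by the module action; the clean way out is the paper's, namely to prove only $F(0,\beta,\gamma)=1$ and $\Omega(0,\beta)=1$ directly and deduce the rest from the first and third cocycle equations of Definition~\ref{defi:abeliancocycle}.
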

\begin{proof}
The relation $F(0,\beta,\gamma)=1$ follows from the definition of the Jacobi identity for intertwining operators (see Definition~\ref{defi:intops}), the relation $\Omega(0,\beta)=1$ from our choice of $\mathcal{Y}_{W^\alpha,V}^+$. These two relations imply that $(F,\Omega)$ is normalised once we establish that it is a cocycle.

A proof of the cocycle condition is given by Huang in \cite{Hua08}, Section~1, using the Huang-Lepowsky tensor-product theory.
\end{proof}
The above proposition also implies that $Q_\Omega$ associated with $\Omega$ as in equation \eqref{eq:QOQr} is a quadratic form.
\begin{lem}[\cite{Eke15}, Lemma~4.23]\label{lem:OmegaEqualsB}
Let $V$ be as in Assumption~\ref{ass:sn}, $\Omega$ as in Definition~\ref{defi:OmegaDef} and let $B_\Omega$ and $B_\rho$ be the bilinear forms associated with the quadratic forms $Q_\Omega$ and $Q_\rho$, respectively. Then
\begin{equation*}
B_\Omega=-B_\rho.
\end{equation*}
\end{lem}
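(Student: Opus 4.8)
The plan is to compute the symmetric combination $b_\Omega(\alpha,\beta)=\Omega(\alpha,\beta)\Omega(\beta,\alpha)$ directly from the braiding convention of Definition~\ref{defi:OmegaDef} by applying the braiding twice, first to swap $\alpha$ and $\beta$ and then to swap them back, and to read off the resulting monodromy factor from the fact that the chosen intertwining operator $\mathcal{Y}_{W^\alpha,W^\beta}^+$ has all its exponents of the formal variable in $B_\rho(\alpha,\beta)+\Z$. Since $\e^{(2\pi\i)B_\Omega(\alpha,\beta)}=b_\Omega(\alpha,\beta)$ by definition, identifying this factor with $\e^{-(2\pi\i)B_\rho(\alpha,\beta)}$ gives $B_\Omega=-B_\rho$ in $\Q/\Z$ at once.

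Concretely, I would first record the braiding relation from Definition~\ref{defi:OmegaDef} in the form
\[
\mathcal{Y}_{W^\alpha,W^\beta}^+(a,x)b=\Omega(\beta,\alpha)^{-1}\e^{xL_{-1}}\mathcal{Y}_{W^\beta,W^\alpha}^+(b,\e^{-\pi\i}x)a,
\]
together with the same relation with the roles of $\alpha$ and $\beta$ interchanged,
\[
\mathcal{Y}_{W^\beta,W^\alpha}^+(b,y)a=\Omega(\alpha,\beta)^{-1}\e^{yL_{-1}}\mathcal{Y}_{W^\alpha,W^\beta}^+(a,\e^{-\pi\i}y)b.
\]
Substituting the second into the first with $y=\e^{-\pi\i}x$ turns the inner argument into $\e^{-\pi\i}y=\e^{-2\pi\i}x$ and the translation factor into $\e^{\e^{-\pi\i}xL_{-1}}=\e^{-xL_{-1}}$, so that the two translation exponentials telescope, $\e^{xL_{-1}}\e^{-xL_{-1}}=\id$.

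The remaining ingredient is the effect of the substitution $x\mapsto\e^{-2\pi\i}x$ on $\mathcal{Y}_{W^\alpha,W^\beta}^+(a,x)b$. Because this operator lies in $x^{B_\rho(\alpha,\beta)}W^{\alpha+\beta}((x))$ (Section~\ref{sec:intops} and item~\ref{enum:eke1} of Lemma~\ref{lem:rationallocality}), every power of $x$ occurring is congruent to $B_\rho(\alpha,\beta)$ modulo $\Z$, whence $\mathcal{Y}_{W^\alpha,W^\beta}^+(a,\e^{-2\pi\i}x)b=\e^{-(2\pi\i)B_\rho(\alpha,\beta)}\mathcal{Y}_{W^\alpha,W^\beta}^+(a,x)b$. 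Combining everything yields
\[
\mathcal{Y}_{W^\alpha,W^\beta}^+(a,x)b=\Omega(\alpha,\beta)^{-1}\Omega(\beta,\alpha)^{-1}\e^{-(2\pi\i)B_\rho(\alpha,\beta)}\mathcal{Y}_{W^\alpha,W^\beta}^+(a,x)b.
\]
Since $\mathcal{Y}_{W^\alpha,W^\beta}^+$ is a nonzero element of the one-dimensional space $\V^{W^{\alpha+\beta}}_{W^\alpha\,W^\beta}$, I may cancel it (evaluating on $a,b$ on which it does not vanish) to obtain $\Omega(\alpha,\beta)\Omega(\beta,\alpha)=\e^{-(2\pi\i)B_\rho(\alpha,\beta)}$, that is $b_\Omega(\alpha,\beta)=\e^{-(2\pi\i)B_\rho(\alpha,\beta)}$, which is exactly the assertion $B_\Omega=-B_\rho$.

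The main obstacle is the bookkeeping of multivaluedness: the two successive braidings each introduce a factor $\e^{-\pi\i}$ in the formal variable, and one must verify that their composition is precisely $\e^{-2\pi\i}$ (rather than collapsing to $\e^{0}$), consistently with the branch convention $z^n=\e^{n\log z}$, $0\le\arg z<2\pi$, of Section~\ref{sec:formal} and with the reading of $x-y$ fixed in \eqref{eq:dl93}. One must likewise confirm that the translation exponentials genuinely cancel rather than producing an extra monodromy, which hinges on using $\e^{-\pi\i}=-1$ consistently both for the argument of the inner intertwining operator and for the exponent of $L_{-1}$.
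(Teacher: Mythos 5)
Your proposal is correct and follows essentially the same route as the paper's proof: apply the braiding convention twice, let the translation exponentials telescope, and read off the monodromy factor $\e^{-(2\pi\i)B_\rho(\alpha,\beta)}$ from the fact that the exponents of $\mathcal{Y}_{W^\alpha,W^\beta}^+$ lie in $B_\rho(\alpha,\beta)+\Z$, then cancel the nonzero intertwining operator. The identity $b_\Omega(\alpha,\beta)=\Omega(\alpha,\beta)\Omega(\beta,\alpha)$ that you invoke is exactly the preliminary step the paper derives from the abelian 3-cocycle conditions, so nothing is missing.
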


\begin{proof}
Proposition~\ref{prop:Huang.3cocycle} states that $(F, \Omega)$ is a normalised abelian $3$-cocycle. It is straightforward, though tedious, to derive
\begin{equation*}
\Omega(\alpha+\beta, \alpha+\beta) = \Omega(\alpha, \alpha) \Omega(\alpha, \beta) \Omega(\beta, \alpha) \Omega(\beta, \beta)
\end{equation*}
from Definition~\ref{defi:abeliancocycle}. Hence
\begin{equation*}
b_\Omega(\alpha,\beta)=\e^{(2\pi\i)(Q_\Omega(\alpha+\beta) - Q_\Omega(\alpha)-Q_\Omega(\beta))} = \Omega(\alpha, \beta) \Omega(\beta, \alpha).
\end{equation*}
On the other hand, applying Definition~\ref{defi:OmegaDef} twice gives
\begin{align*}
\mathcal{Y}_{W^\alpha,W^\beta}^+(a, x)b
&= \Omega(\beta, \alpha)^{-1} \e^{xL_{-1}} \mathcal{Y}_{W^\beta,W^\alpha}^+(b, \e^{-\pi\i}x)a \\
&= \Omega(\beta, \alpha)^{-1}\Omega(\alpha, \beta)^{-1} \mathcal{Y}_{W^\alpha,W^\beta}^+(a, \e^{-2\pi \i}x)b \\
&= \Omega(\beta, \alpha)^{-1}\Omega(\alpha, \beta)^{-1} \e^{-(2\pi\i) B_\rho(\alpha, \beta)} \mathcal{Y}_{W^\alpha,W^\beta}^+(a, x)b,
\end{align*}
so that $B_\Omega(\alpha,\beta)=-B_\rho(\alpha,\beta)$ for all $\alpha,\beta\in F_V$.
\end{proof}

As a last step we show that $F$ and $B$ are related via $\Omega$ in the desired way.
\begin{lem}[\cite{Eke15}, Lemma~4.26]\label{lem:FBOM}
Let $V$ be as in Assumption~\ref{ass:sn} and let $F$ and $B$ be as in Definition~\ref{defi:FandB} and $\Omega$ as in Definition~\ref{defi:OmegaDef}. Then
\begin{equation*}
B(\alpha,\beta,\gamma)=F(\beta,\alpha,\gamma)^{-1}\Omega(\alpha,\beta)F(\alpha,\beta,\gamma)
\end{equation*}
for all $\alpha,\beta,\gamma\in F_V$
\end{lem}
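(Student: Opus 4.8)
The plan is to read off the asserted relation by comparing the derived generalised Jacobi identity (Definition~\ref{defi:FandB}) for the ordered triple $(\alpha,\beta,\gamma)$ with the very same identity for the swapped triple $(\beta,\alpha,\gamma)$, and to use the braiding relation of Definition~\ref{defi:OmegaDef} to pass between the two. Note first that the right-hand side $F(\beta,\alpha,\gamma)^{-1}\Omega(\alpha,\beta)F(\alpha,\beta,\gamma)$ is precisely the cohomological quantity $B$ attached to the abelian $3$-cocycle $(F,\Omega)$ in Definition~\ref{defi:abeliancocycle}; since Proposition~\ref{prop:Huang.3cocycle} already guarantees that $(F,\Omega)$ is an abelian $3$-cocycle, the content of the lemma is exactly that the commutativity scalar $B(\alpha,\beta,\gamma)$ appearing in the Jacobi identity coincides with this cohomological $B$.

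First I would apply the generalised Jacobi identity of Definition~\ref{defi:FandB} to fixed vectors $a\in W^\alpha$, $b\in W^\beta$, $c\in W^\gamma$, paired against an arbitrary $d'\in(W^{\alpha+\beta+\gamma})'$, obtaining a relation of the shape $T_1-B(\alpha,\beta,\gamma)\,T_2=F(\alpha,\beta,\gamma)\,T_3$ among the three canonical products of intertwining operators: the direct product $\mathcal{Y}_{W^\alpha,W^{\beta+\gamma}}^+(a,x_1)\mathcal{Y}_{W^\beta,W^\gamma}^+(b,x_2)c$, the opposite product $\mathcal{Y}_{W^\beta,W^{\alpha+\gamma}}^+(b,x_2)\mathcal{Y}_{W^\alpha,W^\gamma}^+(a,x_1)c$, and the iterate $\mathcal{Y}_{W^{\alpha+\beta},W^\gamma}^+(\mathcal{Y}_{W^\alpha,W^\beta}^+(a,x_0)b,x_2)c$. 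Next I would write down the same identity for $(\beta,\alpha,\gamma)$, substitute $x_1\leftrightarrow x_2$ and $x_0\mapsto-x_0$ so that its direct product becomes the opposite product of the $(\alpha,\beta,\gamma)$ identity and conversely, and apply the braiding relation $\mathcal{Y}_{W^\beta,W^\alpha}^+(b,x_0)a=\Omega(\alpha,\beta)^{-1}\e^{x_0L_{-1}}\mathcal{Y}_{W^\alpha,W^\beta}^+(a,\e^{-\pi\i}x_0)b$ to the inner intertwining operator inside its iterate term. This rewrites the $(\beta,\alpha,\gamma)$ identity as a second linear relation among exactly the same three products $T_1,T_2,T_3$, now with scalar coefficients involving $F(\beta,\alpha,\gamma)$, $B(\beta,\alpha,\gamma)$ and $\Omega(\alpha,\beta)^{-1}$.

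To conclude I would invoke the fact that, by Proposition~\ref{prop:restrictions} and Lemma~\ref{lem:rationallocality}, the three products are restrictions of a single multi-valued analytic function to three disjoint domains, so that the space of linear relations among $T_1,T_2,T_3$ is one-dimensional, spanned by the formal-variable identity of item~\ref{enum:eker1} of Lemma~\ref{lem:formaldistributions}. Hence the two Jacobi relations obtained above must be proportional, and matching their coefficients term by term yields $B(\alpha,\beta,\gamma)=F(\beta,\alpha,\gamma)^{-1}\Omega(\alpha,\beta)F(\alpha,\beta,\gamma)$; the comparison of the remaining coefficient simultaneously produces, as a consistency check, the analogous formula for $B(\beta,\alpha,\gamma)$.

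The hard part will be the formal-calculus bookkeeping in the second step: carrying the $\delta$-function prefactors and the fractional powers $\bigl(\tfrac{x_1-x_2}{x_0}\bigr)^{-B_\rho(\cdot,\cdot)}$ through the substitutions $x_1\leftrightarrow x_2$, $x_0\mapsto-x_0$ and $x_0\mapsto\e^{-\pi\i}x_0$, together with the translation operators $\e^{x_0L_{-1}}$, and verifying that the resulting phase factors — governed by the branch convention \eqref{eq:dl93} and the identities of Lemma~\ref{lem:formaldistributions} — cancel to exactly $1$, so that the two relations align with no spurious sign. It is precisely this cancellation, equivalently the statement that the two conversion constants produced by the variable swaps are negatives of one another, that upgrades the mere proportionality of the relations into the clean cohomological formula, and it is where the $\e^{\pi\i}$ conventions must be handled with the greatest care.
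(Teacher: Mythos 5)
Your proposal is correct and follows essentially the same route as the paper's proof: both apply the derived generalised Jacobi identity of Definition~\ref{defi:FandB} to the two orderings $(\alpha,\beta,\gamma)$ and $(\beta,\alpha,\gamma)$, pass between their iterate terms via the braiding relation of Definition~\ref{defi:OmegaDef} together with the Taylor shift $\e^{x_0L_{-1}}$ and the delta-function identities of Lemma~\ref{lem:formaldistributions}, and conclude by matching coefficients of the delta-weighted products (picking up the same $\e^{\pi\i B_\rho(\alpha,\beta)}$ phases, which reconcile the two resulting expressions through $\Omega(\alpha,\beta)\Omega(\beta,\alpha)=b_\rho(\alpha,\beta)^{-1}$ from Lemma~\ref{lem:OmegaEqualsB}). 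The only difference is organisational: the paper chains the second Jacobi identity directly into the transformed iterate of the first, whereas you derive two parallel relations and invoke one-dimensionality of the space of relations among the three products --- which is precisely the linear independence that the paper's ``comparing the coefficients'' step uses implicitly.
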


\begin{proof}
In the following manipulations, for reasons of readability, we omit $F, B, \Omega$ and most of the subscripts $W^\alpha,W^\beta,W^\gamma$ for $\mathcal{Y}^+(\cdot,x)$ and only reinsert them at the very end. We begin with the generalised Jacobi identity in Definition~\ref{defi:FandB}:
\begin{align*}
&\iota_{x_1,x_2} x_0^{-1} \left( \frac{x_1-x_2}{x_0} \right)^{-B_\rho(\alpha, \beta)} \delta\left( \frac{x_1-x_2}{x_0} \right) \mathcal{Y}^+(a, x_1) \mathcal{Y}^+(b, x_2) c \\
&\quad -\iota_{x_2,x_1} x_0^{-1} \left( \frac{x_2-x_1}{\e^{\pi\i}x_0} \right)^{-B_\rho(\alpha, \beta)} \delta\left( \frac{x_2-x_1}{-x_0} \right) \mathcal{Y}^+(b, x_2) \mathcal{Y}^+(a, x_1) c \\
&=\iota_{x_1,x_0} x_2^{-1} \left( \frac{x_1-x_0}{x_2} \right)^{B_\rho(\alpha, \gamma)} \delta\left( \frac{x_1-x_0}{x_2} \right) \mathcal{Y}^+(\mathcal{Y}^+(a, x_0)b, x_2) c \\
&=\iota_{x_1,x_0} x_2^{-1} \left( \frac{x_1-x_0}{x_2} \right)^{B_\rho(\alpha, \gamma)} \delta\left( \frac{x_1-x_0}{x_2} \right) \mathcal{Y}^+(\e^{x_0L_{-1}}\mathcal{Y}_{W^\beta,W^\alpha}^+(b, \e^{-\pi\i}x_0)a, x_2) c \\
&=\iota_{x_1,x_0} x_2^{-1} \left( \frac{x_1-x_0}{x_2} \right)^{B_\rho(\alpha, \gamma)} \delta\left( \frac{x_1-x_0}{x_2} \right) \mathcal{Y}^+(\mathcal{Y}_{W^\beta,W^\alpha}^+(b, \e^{-\pi\i}x_0)a, x_2+x_0) c,
\end{align*}
using the translation axiom and Taylor expansion in the last step. Now
\begin{align*}
&\iota_{x_1,x_0} x_2^{-1} \left( \frac{x_1-x_0}{x_2} \right)^{B_\rho(\alpha, \gamma)} \delta\left( \frac{x_1-x_0}{x_2} \right) (x_2+x_0)^n\\
&= \iota_{x_2,x_0} x_1^{-1} \left( \frac{x_2+x_0}{x_1} \right)^{-B_\rho(\alpha, \gamma)} \delta\left( \frac{x_2+x_0}{x_1} \right) (x_2+x_0)^n \\
&= \iota_{x_2,x_0} x_1^{-1} \left( \frac{x_2+x_0}{x_1} \right)^{B_\rho(\beta, \gamma)} \delta\left( \frac{x_2+x_0}{x_1} \right) x_1^n
\end{align*}
for any $n \in B_\rho(\alpha+\beta, \gamma)$. Here we used item~\ref{enum:eker3} of Lemma~\ref{lem:formaldistributions}, the bilinearity of $B_\rho$ and the following property of the delta function: $\delta(x/y) p(x) = \delta(x/y) p(y)$ for a polynomial $p$ with integral exponents. Using this we rewrite the expression above as
\begin{equation*}
\iota_{x_2,x_0} x_1^{-1} \left( \frac{x_2+x_0}{x_1} \right)^{B_\rho(\beta, \gamma)} \delta\left( \frac{x_2+x_0}{x_1} \right) \mathcal{Y}^+(\mathcal{Y}_{W^\beta,W^\alpha}^+(b, \e^{-\pi\i}x_0)a, x_1) c.
\end{equation*}
Then we apply the derived generalised Jacobi identity from Definition~\ref{defi:FandB} again:
\begin{align*}
&\left. \iota_{x_2,t} x_1^{-1} \left( \frac{x_2-t}{x_1} \right)^{B_\rho(\beta, \gamma)} \delta\left( \frac{x_2-t}{x_1} \right) \mathcal{Y}^+(\mathcal{Y}_{W^\beta,W^\alpha}^+(b, t)a, x_1) c \right|_{t = \e^{-\pi\i}x_0} \\
&=\left. \iota_{x_2,x_1} t^{-1} \left( \frac{x_2-x_1}{t} \right)^{-B_\rho(\alpha, \beta)} \delta\left( \frac{x_2-x_1}{t} \right) \mathcal{Y}^+(b, x_2) \mathcal{Y}^+(a, x_1) c \right|_{t=\e^{-\pi\i}x_0} \\
&\quad- \left. \iota_{x_1,x_2} t^{-1} \left( \frac{x_1-x_2}{\e^{\pi\i}t} \right)^{-B_\rho(\alpha, \beta)} \delta\left( \frac{x_1-x_2}{-t} \right) \mathcal{Y}^+(a, x_1) \mathcal{Y}^+(b, x_2) c \right|_{t=\e^{-\pi\i}x_0} \\
&=-\iota_{x_2,x_1} x_0^{-1} \e^{-(2\pi\i)B_\rho(\alpha,\beta)}\left( \frac{x_2-x_1}{\e^{\pi\i}x_0} \right)^{-B_\rho(\alpha, \beta)} \delta\left( \frac{x_2-x_1}{-x_0} \right) \mathcal{Y}^+(b, x_2) \mathcal{Y}^+(a, x_1) c \\
&\quad+ \iota_{x_1,x_2} x_0^{-1} \left( \frac{x_1-x_2}{x_0} \right)^{-B_\rho(\alpha, \beta)} \delta\left( \frac{x_1-x_2}{x_0} \right) \mathcal{Y}^+(a, x_1) \mathcal{Y}^+(b, x_2) c.
\end{align*}
Reinserting all the omitted scalar factors and comparing the coefficients we obtain
\begin{align*}
1 &= F(\alpha, \beta, \gamma) \widetilde\Omega(\alpha, \beta) F(\beta, \alpha, \gamma)^{-1} B(\beta, \alpha, \gamma), \\
B(\alpha, \beta, \gamma) &= F(\alpha, \beta, \gamma) \widetilde\Omega(\alpha, \beta) F(\beta, \alpha, \gamma)^{-1}b_\rho(\alpha,\beta)^{-1},
\end{align*}
which can be reorganised to read
\begin{align*}
B(\alpha, \beta, \gamma) &= F(\beta, \alpha, \gamma)^{-1} \Omega(\alpha, \beta) F(\alpha, \beta, \gamma), \\
B(\alpha, \beta, \gamma) &= F(\alpha, \beta, \gamma) \Omega(\beta, \alpha)^{-1} F(\beta, \alpha, \gamma)^{-1}b_\rho(\alpha,\beta)^{-1}.
\end{align*}
Hence, $B$ satisfies the relation we aimed to prove and we also obtain $\Omega(\alpha,\beta)\Omega(\beta,\alpha)=1/b_\rho(\alpha,\beta)$, which already follows from Lemma~\ref{lem:OmegaEqualsB}.
\end{proof}

Finally, collecting all results, we can prove that the direct sum $A$ of the irreducible $V$-modules up to isomorphism admits the structure of an \aia{}. We choose the vertex operation $Y(\cdot,x)$ on $A$ to be composed of the intertwining operators $\mathcal{Y}_{W^\alpha,W^\beta}^+(\cdot,x)$ for $\alpha,\beta\in F_V$ with the normalisation condition described in Proposition~\ref{prop:Huang.3cocycle} fulfilled.

\begin{proof}[Proof of Theorem~\ref{thm:pre2.7}]
With our choice of intertwining operators, they satisfy the generalised Jacobi identity for an \aia{} (see Definition~\ref{defi:aia}) for some system of scalars $F$ and $B$ (see Definition~\ref{defi:FandB}). We saw in Proposition~\ref{prop:Huang.3cocycle} that $(F, \Omega)$ defines a normalised abelian $3$-cocycle. It is necessary that $B$ be compatible with $F$ and $\Omega$, which is Lemma~\ref{lem:FBOM}. For the grading condition on $Y(\cdot,x)$ to hold the bilinear forms associated with $Q_\Omega$ and $Q_\rho$ have to be negatives of each other, which is shown in Lemma~\ref{lem:OmegaEqualsB}. The remaining axioms are easy to verify.
\end{proof}

\section{\MTCs{}}\label{sec:mtc}

For a \voa{} $V$ fulfilling suitable regularity assumptions Huang has shown that the category of $V$-modules carries the structure of a \mtc{}. Translating results of categorical nature into the language of \voa{}s offers additional insight and, in particular, allows to prove Theorem~\ref{thm:2.7}. For an introduction into \mtcs{} the reader is referred to \cite{BK01,Tur10}, and \cite{JS93,EGNO15} for related concepts. We present a short overview.

\minisec{Ribbon Categories}
The most basic concept is that of a monoidal category. A \emph{monoidal category} (or \emph{tensor category}) is a category $\CC$ equipped with a bifunctor
\begin{equation*}
\otimes\colon\CC\times\CC\to\CC,
\end{equation*}
called \emph{tensor product}, a \emph{unit object} $\unit\in\CC$ and three natural isomorphisms: the \emph{associativity isomorphism}
\begin{equation*}
\alpha_{A,B,C}\colon (A\otimes B)\otimes C\to A\otimes (B\otimes C)
\end{equation*}
for $A,B,C\in\CC$ and the \emph{left} and \emph{right unit isomorphisms}
\begin{equation*}
l_A\colon\unit\otimes A\to A\quad\text{and}\quad r_A\colon A\otimes\unit\to A
\end{equation*}
for $A\in\CC$ subject to certain coherence conditions encoded in the pentagon and the triangle diagrams.

A \emph{braiding} on a monoidal category $\CC$ is given by a natural isomorphism
\begin{equation*}
c_{A,B}\colon A\otimes B\to B\otimes A
\end{equation*}
for $A,B\in\CC$ such that the braiding is compatible with the associativity isomorphism as expressed in the two hexagon diagrams. A monoidal category with a braiding is called \emph{braided monoidal category}.

A \emph{twist} on a braided tensor category $\CC$ is a natural isomorphism
\begin{equation*}
\theta_A\colon A\to A
\end{equation*}
for $A\in\CC$ with $\theta_\unit=\id_\unit$ and compatible in a certain way with the braiding. A monoidal category with a braiding and a twist is sometimes called \emph{balanced monoidal category}.

A \emph{right duality} on a monoidal category $\CC$ is an assignment of a \emph{right dual} object $A^*$ to every object $A\in\CC$ together with \emph{evaluation} and \emph{coevaluation} morphisms
\begin{equation*}
d_A\colon A^*\otimes A\to\unit\quad\text{and}\quad b_A\colon\unit\to A\otimes A^*
\end{equation*}
satisfying certain conditions. A monoidal category with a right duality is called \emph{right rigid} (or \emph{right autonomous}). A left duality with dual objects ${}^*\!A$ and evaluation and coevaluation morphisms
\begin{equation*}
\tilde{d}_A\colon A\otimes {}^*\!A\to\unit\quad\text{and}\quad \tilde{b}_A\colon\unit\to {}^*\!A\otimes A
\end{equation*}
is defined analogously. A monoidal category is called \emph{rigid} (or \emph{autonomous}) if it has a right and a left duality.

A \emph{ribbon category} (or \emph{tortile category}) is a right rigid, balanced monoidal category, i.e.\ a monoidal category with a braiding, a twist and a right duality such that the twist and the right duality are compatible in a certain way. Ribbon categories have a number of well-known properties. For instance they come automatically equipped with a left duality by means of the definitions ${}^*\!A:=A^*$ and
\begin{equation*}
\tilde{b}_A:=(\theta_{A^*}\otimes\id_A)\circ c_{A,A^*} \circ b_A\quad\text{and}\quad\tilde{d}_A:=d_A\circ c_{A,A^*}(\theta_A\otimes\id_{A^*})
\end{equation*}
for $A\in\CC$. This means that ribbon categories are rigid. In fact, ribbon categories are examples of \emph{sovereign} categories, where there is a natural isomorphism ${}^*\!A\to A^*$ for $A\in\CC$.
Ribbon categories are also \emph{pivotal} categories, a concept essentially equivalent to the aforementioned sovereign categories. A pivotal category is a right rigid monoidal category equipped with a natural isomorphism
\begin{equation*}
\psi_A\colon A\to A^{**}
\end{equation*}
for $A\in\CC$ that is monoidal, i.e.\ compatible in a certain way with the monoidal structure.

Via the braiding it is possible to define another natural morphism $u_A\colon A\to A^{**}$ as
\begin{align*}
A&\xrightarrow{(\id_A\otimes b_{A^*})\circ r_A^{-1}}A\otimes(A^*\otimes A^{**})
\xrightarrow{\alpha^{-1}_{A,A^*,A^{**}}}(A\otimes A^*)\otimes A^{**}\\
&\xrightarrow{c_{A,A^*}\otimes\id_{A^{**}}}(A^*\otimes A)\otimes A^{**}
\xrightarrow{l_a\circ(d_A\otimes\id_{A^{**}})}A^{**},
\end{align*}
which is well known to be an isomorphism. In fact, $\psi_A$ and $u_A$ are precisely related via the twist isomorphism $\theta_A$, i.e.\
\begin{equation*}
\psi_A=u_A\theta_A.
\end{equation*}

In pivotal categories we define the \emph{left} and \emph{right traces} $\tr_\text{L}(f),\tr_\text{R}(f)\in\Mor_\CC(\unit,\unit)$ of morphisms $f\in\Mor_\CC(A,A)$ via
\begin{align*}
\tr_\text{L}(f)&:\unit\xrightarrow{b_{A^*}}A^*\otimes A^{**}\xrightarrow{\id_{A^*}\otimes\psi_A^{-1}}A^*\otimes A\xrightarrow{\id_{A^*}\otimes f}A^*\otimes A\xrightarrow{d_A}\unit,\\
\tr_\text{R}(f)&:\unit\xrightarrow{b_A}A\otimes A^*\xrightarrow{f\otimes\id_{A^*}}A\otimes A^*\xrightarrow{\psi_A\otimes\id_{A^*}}A^{**}\otimes A^*\xrightarrow{d_{A^*}}\unit.
\end{align*}
Alternatively, in a sovereign category we can express the traces as 
\begin{align*}
\tr_\text{L}(f)&:\unit\xrightarrow{\tilde{b}_A}A^*\otimes A\xrightarrow{\id_{A^*}\otimes f}A^*\otimes A\xrightarrow{d_A}\unit,\\
\tr_\text{R}(f)&:\unit\xrightarrow{b_A}A\otimes A^*\xrightarrow{f\otimes\id_{A^*}}A\otimes A^*\xrightarrow{\tilde{d}_A}\unit.
\end{align*}
Via the traces we define the (categorical) \emph{dimensions}
\begin{equation*}
d_\text{L}(A)=\tr_\text{L}(\id_A)\quad\text{and}\quad d_\text{R}(A)=\tr_\text{R}(\id_A).
\end{equation*}
We will later, in particular in the setting of \mtcs{}, call these \emph{quantum dimensions}. The dimensions only depend on the isomorphism class of the object.

Ribbon categories are also \emph{spherical} categories, pivotal categories in which left and right traces coincide, i.e.\
\begin{equation*}
\tr_\text{L}(f)=\tr_\text{R}(f)=:\tr(f).
\end{equation*}
Then also $d_\text{L}(A)=d_\text{R}(A)=:d(A)$. For reasons of clarity we will also write $\tr_A(f)=\tr(f)$ for a morphism $f\in\Mor_\CC(A,A)$. 

We will later need the following result from \cite{DGNO10} (Proposition~2.32), which even holds in any ribbon category (see \cite{CKL15}, Corollary~2.5):
\begin{prop}\label{prop:2.32}
Let $\CC$ be a ribbon category. Then
\begin{equation*}
\tr_{A\otimes A}(c_{A,A}^{-1})=\tr_A(\theta_A^{-1})
\end{equation*}
for any object $A\in\CC$.
\end{prop}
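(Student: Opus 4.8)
The plan is to reduce the trace over $A\otimes A$ to an iterated trace and then to evaluate a partial trace of the inverse braiding by means of the ribbon axioms.

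First I would record that, since $\CC$ is spherical (every ribbon category is), the trace over a tensor product factors through a partial trace. Writing
\[
\mathrm{ptr}_2(g):=(\id_A\otimes\tilde d_A)\circ(g\otimes\id_{A^*})\circ(\id_A\otimes b_A)\colon A\to A
\]
for the right partial trace of an endomorphism $g\colon A\otimes A\to A\otimes A$ over the second tensor factor, one has
\[
\tr_{A\otimes A}(g)=\tr_A\!\left(\mathrm{ptr}_2(g)\right).
\]
This is a standard consequence of the coherence of the pivotal and duality data together with the identification $(A\otimes A)^*\cong A^*\otimes A^*$; I would either cite it from \cite{BK01} or verify it directly by a short diagram chase using the rigidity (zig-zag) identities.

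The crux is the second step, the curl identity $\mathrm{ptr}_2(c_{A,A}^{-1})=\theta_A^{-1}$. Categorically this is the first Reidemeister move: closing off the second strand of the crossing $c_{A,A}^{-1}$ into a loop produces a curl on the remaining $A$-strand, which the ribbon structure identifies with the inverse twist. To establish it as an equality of morphisms I would start from the balancing axiom $\theta_{A\otimes A}=(\theta_A\otimes\theta_A)\circ c_{A,A}\circ c_{A,A}$ and the compatibility $\theta_{A^*}=(\theta_A)^*$ of the twist with the duality, and then use naturality of the twist and of the braiding to slide the closing morphisms $b_A,\tilde d_A$ across the crossing, the rigidity identities removing the resulting cup--cap pairs. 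In graphical calculus this is immediate; written out it is a sequence of applications of naturality and the hexagon coherence conditions.

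Combining the two steps yields
\[
\tr_{A\otimes A}(c_{A,A}^{-1})=\tr_A\!\left(\mathrm{ptr}_2(c_{A,A}^{-1})\right)=\tr_A(\theta_A^{-1}),
\]
as desired. The main obstacle is the second step: one must track the handedness conventions for the braiding, twist and duality carefully so that the sign of the twist comes out as $\theta_A^{-1}$ rather than $\theta_A$ and the partial trace is formed on the correct side. Once these conventions are fixed the rest is bookkeeping with the coherence axioms, and the stated generalisation to an arbitrary ribbon category (\cite{CKL15}, Corollary~2.5) follows by the same argument.
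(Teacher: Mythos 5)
The paper does not actually prove this proposition: it is imported from \cite{DGNO10}, Proposition~2.32, with \cite{CKL15}, Corollary~2.5, cited for the fact that it holds in an arbitrary ribbon category, so there is no in-paper argument to compare yours against and your proposal has to stand on its own. As a strategy it is sound and is the standard direct argument: the factorisation $\tr_{A\otimes A}(g)=\tr_A\bigl(\operatorname{ptr}_2(g)\bigr)$ holds in any pivotal category once $(A\otimes A)^*$ is coherently identified with $A^*\otimes A^*$, and the curl identity $\operatorname{ptr}_2(c_{A,A}^{-1})=\theta_A^{-1}$ is the framed first Reidemeister move. Two remarks. First, that curl identity is the entire content of the proposition, and you only gesture at its derivation; note that the balancing axiom by itself relates $\theta_{A\otimes A}$ to $c_{A,A}^{2}$ and cannot by itself produce a statement about $c_{A,A}^{-1}$ — you also need the compatibility of the twist with the (co)evaluations, which in this paper's conventions is already packaged into the definition $\tilde d_A=d_A\circ c_{A,A^*}\circ(\theta_A\otimes\id_{A^*})$; starting the computation from that formula (substitute it into $\operatorname{ptr}_2(c_{A,A}^{-1})$, slide $c_{A,A}^{-1}$ past $c_{A,A^*}$ by naturality, and cancel the cup--cap pair by rigidity) is the cleanest way to finish. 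Second, the handedness worry you raise is less dangerous than you suggest: since ribbon categories are sovereign, the left and right closures of a strand give the same partial trace, and the sign of the resulting curl is the sign of the crossing, so a negative crossing always yields $\theta_A^{-1}$ no matter which strand is closed or on which side. With the curl identity written out in full your argument is complete and proves exactly what the paper cites.
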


\minisec{\MTCs{}}
Given a monoidal category $\CC$, we can demand the following additional structure: let $\CC$ be an abelian, $\C$-linear, i.e.\ enriched over $\operatorname{Vect}_\C$, and semisimple monoidal category such that there are only finitely many isomorphism classes of simple objects and the tensor unit $\unit$ is simple.

An abelian category is called semisimple if every object is semisimple, i.e.\ a direct sum of simple objects.
The $\C$-linearity means that $\Hom_\CC(A,B):=\Mor_\CC(A,B)$ is a $\C$-vector space and the composition of morphisms is $\C$-bilinear.

Let us also assume that the spaces $\Hom_\CC(A,B)$ are all finite-dimensional.
Then Schur's lemma holds, i.e.\
\begin{equation*}
\Hom_\CC(A,B)\cong\begin{cases}\C&\text{if }A\cong B,\\0&\text{if }A\ncong B\end{cases}
\end{equation*}
as $\C$-vector spaces for simple objects $A,B\in\CC$.

\begin{defi}[\MTC{}]
A ribbon category $\CC$ with these additional properties and whose ribbon structure is compatible with the $\C$-linear structure satisfying a certain non-degeneracy condition for the braiding, called modularity, is a \emph{\mtc{}}.
\end{defi}
From now on let $\CC$ be a \mtc{}. Since we assumed the unit object $\unit\in\CC$ to be simple, the trace $\tr(f)\in\End_\CC(\unit)$ is simply a complex multiple of $\id_\unit$ and we will view $\tr(f)\in\C$ in the following. In the same way, we view the quantum dimensions $d(A)\in\C$. For a simple object $A\in\CC$ the twist isomorphism $\theta_A\in\End_\CC(A)=\C\id_A$ can also be viewed as a non-zero complex number that only depends on the isomorphism class of $A$.

Since $\CC$ is semisimple, given two simple objects $A,B\in\CC$, we can decompose the tensor product as
\begin{equation*}
A\otimes B\cong\bigoplus_{C}N_{A,B}^C C
\end{equation*}
where the direct sum runs over the finitely many isomorphism classes of simple objects in $\CC$ and the numbers
\begin{equation*}
N_{A,B}^C=\dim_\C(\Hom_\CC(C,A\otimes B))\in\N
\end{equation*}
are called \emph{fusion rules} and only depend on the isomorphism classes of $A$, $B$ and $C$. This is of course completely analogous to the definition of the fusion rules for \voa{}s (see Section~\ref{sec:intops}).

\minisec{Frobenius-Schur Indicator}

The Frobenius-Schur indicator is defined in \cite{FS03} for any sovereign category and specialises to the Frobenius-Schur indicator in a \mtc{} defined in \cite{FFFS02}. First note that for a simple object $A$ in a \mtc{}
\begin{equation*}
1=\dim_\C(\Hom_\CC(A,A))=\dim_\C(\Hom_\CC(\unit,A\otimes A^*))=\dim_\C(\Hom_\CC(A\otimes A^*,\unit))
\end{equation*}
and analogously with $A^*$ and $A$ interchanged.

We call an object $A\in\CC$ \emph{self-dual} if $A\cong A^*$.

\begin{defi}[Frobenius-Schur Indicator]\label{defi:fsmtc}
Let $\CC$ be a \mtc{} and $A$ a simple, self-dual object with isomorphism $\phi_A\colon A\to A^*$. Then the space $\Hom_\CC(\unit,A^*\otimes A)$ is one-dimensional and the factor of proportionality $\nu(A)$ in
\begin{equation*}
\tilde{b}_A=\nu(A)(\phi_A\otimes\phi_A^{-1})\circ b_A\in\Hom_\CC(\unit,A^*\otimes A)
\end{equation*}
is called the \emph{Frobenius-Schur indicator} of $A$. 
\end{defi}
Easy consequences of the definition are $\nu(A)^2=1$, i.e.\ $\nu(A)=\pm1$, $\nu(\unit)=1$ and the fact that $\nu(A)$ only depends on the isomorphism class of $A$ (see \cite{FFFS02}, Lemma~2.1). We can equivalently define the Frobenius-Schur indicator as
\begin{equation*}
\tilde{d}_A=\nu(A)d_A\circ(\phi_A\otimes\phi_A^{-1})\in\Hom_\CC(A\otimes A^*,\unit)
\end{equation*}
or, more symmetrically, as
\begin{equation*}
d(A)=\nu(A)d_A\circ(\phi_A\otimes\phi_A^{-1})\circ b_A,
\end{equation*}
involving the quantum dimension $d(A)$ of $A$ and where both sides are non-zero multiples of $\id_\unit\in\End_\CC(\unit)=\C\id_\unit$ since the unit object $\unit$ is simple.

For a simple object in $A\in\CC$ which is not self-dual we set $\nu(A):=0$.

There is a remarkable formula due to Bántay, expressing the Frobenius-Schur indicator in terms of the quantum dimensions, the fusion rules and the twist isomorphism.
\begin{prop}[Bántay's Formula, \cite{Ban97}, formula~(1)]\label{prop:bantay}
Let $C$ be a simple, not necessarily self-dual object in a \mtc{} $\CC$. Then the Frobenius-Schur indicator is given by
\begin{equation*}
\nu(C)=\frac{1}{D^2}\sum_{A,B}\frac{\theta_A^2}{\theta_B^2}N_{A,C}^Bd(A)d(B)\in\{0,\pm1\}
\end{equation*}
with
\begin{equation*}
D^2=\sum_{A,B}\frac{\theta_A}{\theta_B}d(A)^2d(B)^2
\end{equation*}
where in each case the sum ranges over the finitely many isomorphism classes of simple objects in $\CC$.
\end{prop}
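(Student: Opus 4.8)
The plan is to evaluate the right-hand side of the asserted formula as a single closed ribbon diagram and to show, using the non-degeneracy of the braiding, that it reproduces the Frobenius-Schur indicator. First I would record a diagrammatic expression for $\nu(C)$ itself. For a self-dual simple object $C$ the space $\Hom_\CC(\unit,C\otimes C)$ is one-dimensional, and the braiding $c_{C,C}$ acts on it by a scalar; comparing the two coevaluations $b_C$ and $\tilde b_C=(\theta_{C^*}\otimes\id_C)\circ c_{C,C^*}\circ b_C$ that enter Definition~\ref{defi:fsmtc}, one checks that this scalar equals $\nu(C)\theta_C^{-1}$ (the case $C=\unit$ fixing the normalisation). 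Writing $P_0$ for the idempotent projecting $C\otimes C$ onto its $\unit$-isotypic summand, so that $\tr_{C\otimes C}(P_0)=d(\unit)=1$, this gives the compact expression $\nu(C)=\theta_C\,\tr_{C\otimes C}(c_{C,C}\,P_0)$.

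\textbf{Inserting modularity.} The heart of the argument is to replace $P_0$ by the \emph{killing ring}: in a \mtc{} the projection onto the trivial isotypic component of any object $X$ equals $\frac{1}{\dim\CC}\sum_A d(A)\,\Theta_A(X)$, where $\Theta_A(X)$ encircles $X$ once by a loop coloured by the simple object $A$ and $\dim\CC=\sum_A d(A)^2$; it is precisely here that the modularity (non-degeneracy of the braiding) is used. Substituting yields $\nu(C)=\frac{\theta_C}{\dim\CC}\sum_A d(A)\,\tr_{C\otimes C}(c_{C,C}\,\Theta_A)$. Each summand is now a closed diagram in two $C$-strands with one crossing and one encircling $A$-loop. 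Resolving $C\otimes C$ into its simple channels and sliding the strands through one another, every full winding of a strand coloured $X$ is converted into the scalar $\theta_X^{\pm1}$ by the ribbon identity, whose basic instance is Proposition~\ref{prop:2.32}. Carrying out this reduction collects the contributions into terms indexed by pairs of simple objects $(A,B)$ carrying the fusion multiplicity $N_{A,C}^B$, the dimensions $d(A)d(B)$ and the twist factor $\theta_A^2\theta_B^{-2}$, the square reflecting the double monodromy inherent in the second indicator; the prefactor $\theta_C$ cancels.

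\textbf{Normalisation.} It then remains to identify the constant $\dim\CC$ with the stated $D^2=\sum_{A,B}\frac{\theta_A}{\theta_B}d(A)^2d(B)^2$. Setting $p_\pm=\sum_A\theta_A^{\pm1}d(A)^2$ one has $D^2=p_+p_-$, and the anomaly (Gauss-sum) relation $p_+p_-=\dim\CC$ valid in every \mtc{} closes the gap; this is also confirmed by the internal consistency check $C=\unit$, where $N_{A,\unit}^B=\delta_{A,B}$ reduces the formula to $\dim\CC=\sum_A d(A)^2$ given $\nu(\unit)=1$. An alternative to the whole graphical computation is to pass to modular data, writing $d(A)=\S_{\unit,A}/\S_{\unit,\unit}$, $D^{-1}=\S_{\unit,\unit}$ and $\theta_A$ in terms of the diagonal $\T$-matrix; the claim then becomes an identity among the $\S$- and $\T$-matrices that can be attacked with the Verlinde formula (Theorem~\ref{thm:verlinde}) and the relation $\S=\T\S\T\S\T$ of Proposition~\ref{prop:ststst}.

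\textbf{Main obstacle.} The main difficulty is the precise diagrammatic bookkeeping in the middle step: when the crossing $c_{C,C}$ is braided past the doubly-winding $A$-loop, the braiding on a fusion channel $B$ is only a square root of the monodromy and naively introduces $\theta_B^{1/2}$ factors, so one must argue that these occur in cancelling pairs and that the surviving powers are exactly $\theta_A^2\theta_B^{-2}$ with surviving multiplicity exactly $N_{A,C}^B$. In the modular-data route the same difficulty reappears as the task of producing the \emph{squared} twists out of the single relation $\S=\T\S\T\S\T$; handling this squaring correctly is where the real work lies.
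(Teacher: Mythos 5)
First, a remark on the comparison itself: the paper does not prove this proposition — immediately after the statement it defers to \cite{NS07}, Theorem~7.5, and \cite{Wan10}, Theorem~4.25 — so your attempt has to be measured against those proofs rather than against an argument in the text.

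Your preliminaries are correct and standard: $c_{C,C}$ does act on the one-dimensional space $\Hom_\CC(\unit,C\otimes C)$ by $\nu(C)\theta_C^{-1}$, this gives $\nu(C)=\theta_C\tr_{C\otimes C}(c_{C,C}P_0)$ (with both sides $0$ when $C$ is not self-dual), the killing-ring expression for $P_0$ is the right use of modularity, and $D^2=p_+p_-=\dim\CC$ is the correct normalisation. The genuine gap is the middle step, and it is worse than the ``bookkeeping'' you defer: the route you propose provably collapses to a tautology. Every endomorphism of $C\otimes C$ preserves the isotypic decomposition $\id_{C\otimes C}=\sum_B P_B$, and the $A$-loop $\Theta_A$ acts on the $B$-isotypic part by the scalar $\tilde{s}_{A,B}/d(B)$, where $\tilde{s}_{A,B}=\tr(c_{B,A}\circ c_{A,B})$ is the Hopf-link invariant. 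Hence
\begin{equation*}
\frac{\theta_C}{\dim\CC}\sum_A d(A)\,\tr_{C\otimes C}\left(c_{C,C}\,\Theta_A\right)
=\theta_C\sum_B\frac{\sum_A d(A)\,\tilde{s}_{A,B}}{\dim\CC\; d(B)}\,\tr\left(c_{C,C}P_B\right)
=\theta_C\,\tr\left(c_{C,C}P_0\right),
\end{equation*}
because $\sum_A d(A)\,\tilde{s}_{A,B}=\delta_{B,\unit}\dim\CC$ — the very same modularity fact that underlies the killing-ring formula. So resolving into simple channels after inserting the killing ring exactly undoes the insertion and returns $\nu(C)=\nu(C)$; the fusion numbers $N_{A,C}^B$ and the squared twists $\theta_A^2/\theta_B^2$ never appear. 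The half-integer twists you flag are a symptom of this: on the channel $B$ the crossing has eigenvalues $\pm\theta_B^{1/2}\theta_C^{-1}$, and the signed multiplicities $n_B^+-n_B^-$ they come with are \emph{not} determined by fusion data, so no rearrangement along this line can terminate in fusion coefficients alone.

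What the cited proofs actually do is different in kind. Ng and Schauenburg realise $\nu(C)$ as the trace of a rotation operator on $\Hom_\CC(\unit,C\otimes C)$ and evaluate that trace through the representation of $\SLZ$ attached to the \mtc{}; the squared twists enter because the relevant mapping-class-group element involves a double Dehn twist, i.e.\ $\T^2$ conjugated by $\S$. Your alternative ``modular data'' route runs into exactly the same wall: $\sum_A\theta_A^2\,d(A)\,\S_{A,U}$ is a matrix element of $\S\T^2\S$, which is not diagonal and cannot be reached from $\S=\T\S\T\S\T$ by formal manipulation — genuine Gauss-sum/mapping-class-group input is required. So your proposal is a correct framing whose central step, as designed, leads back to the definition rather than forward to Bántay's formula; a genuinely different idea is needed to close it.
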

A proof of this statement can be found in \cite{NS07}, Theorem~7.5 and \cite{Wan10}, Theorem~4.25.

\minisec{Huang's \MTC{}}
In the following we describe Huang's construction of a \mtc{} associated with a suitably regular \voa{}.
\begin{thm}[\cite{Hua08b}, Theorem~4.6]\label{thm:voamtc}
Let $V$ be a \voa{} satisfying Assumption~\ref{ass:n}. Then the category of $V$-modules naturally admits the structure of a \mtc{}.
\end{thm}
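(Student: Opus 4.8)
The plan is to assemble the \mtc{} structure on the category $\CC$ of $V$-modules piece by piece, invoking the deep analytic input of Huang and Lepowsky only where it is genuinely needed and deriving the remaining axioms from results already collected in this chapter. First I would dispatch the purely structural requirements. Since $V$ is rational, $\CC$ is a finite, semisimple abelian category whose simple objects are the finitely many irreducible modules $\Irr(V)$; it is $\C$-linear with finite-dimensional $\Hom$-spaces because the graded pieces of modules are finite-dimensional, and Schur's lemma (Corollary~\ref{cor:schur}) gives $\Hom_V(X,Y)\cong\delta_{X,Y}\C$ for simple $X,Y$. The tensor unit is the adjoint module $V$, which is simple (i.e.\ irreducible as a module) by hypothesis.

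Next I would install the monoidal and braided structure. The bifunctor is the fusion product $\boxtimes$, which exists for all pairs of modules by rationality (the Huang--Lepowsky $P(z)$-tensor product at $z=1$), and $V$ is a two-sided unit by Proposition~\ref{prop:rem3.5b}. The associativity isomorphisms arise from the associativity of intertwining operators, whose convergence and analytic continuation --- precisely the content of Proposition~\ref{prop:restrictions} --- are guaranteed by $C_2$-cofiniteness and CFT-type; this also yields the isomorphism $A\boxtimes(B\boxtimes C)\cong(A\boxtimes B)\boxtimes C$ of Proposition~\ref{prop:fusionassoc}. The braiding $c_{A,B}$ is built from the skew-symmetry isomorphism relating an intertwining operator to its opposite via the operator $\e^{xL_{-1}}$ (cf.\ the braiding convention of Definition~\ref{defi:OmegaDef}). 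Verifying the pentagon, triangle and hexagon coherence axioms is where the bulk of Huang's analysis of genus-zero correlation functions enters: these are the four-point and three-point compatibilities of the fusing and monodromy matrices, and here I would simply invoke the Huang--Lepowsky construction of the braided tensor category structure.

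The twist I would take to be $\theta_A:=\e^{(2\pi\i)L_0}$, which acts on an irreducible module $W$ by the scalar $\e^{(2\pi\i)\rho(W)}$; the balancing axiom relating $\theta$ to the braiding and to the duality is then a direct calculation with $L_0$ and $L_{-1}$. For the duality I would assign to each module its contragredient $W'$, with evaluation and coevaluation morphisms furnished by the canonical invariant pairing of Section~\ref{sec:cmibf} together with a suitable intertwining operator of type $\binom{V}{W\,W'}$. The genuinely hard step is rigidity: one must verify the two zig-zag (snake) identities, and Huang's method reduces this to showing that a certain composite endomorphism of $V$ --- equivalently the quantum dimension $\qdim(W)$ --- is non-zero. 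This non-vanishing is not formal; it is exactly the point at which the non-degeneracy of Zhu's $S$-matrix is used, and it is established hand-in-hand with the Verlinde formula (Theorem~\ref{thm:verlinde}), which in particular records $\S_{V,W}\neq0$ for all $W$.

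Finally, modularity --- the non-degeneracy of the braiding --- amounts to invertibility of the categorical $S$-matrix assembled from the Hopf-link (double-braiding) traces. Here I would identify this categorical $S$-matrix with Zhu's matrix $\S=\rho_V(S)$ through the Verlinde formula and then invoke its unitarity (Proposition~\ref{prop:sunitary}), which immediately gives invertibility and hence modularity. The main obstacle throughout is therefore the rigidity/non-degeneracy package: all of the monoidal and braided formalism is Huang--Lepowsky machinery, but proving that the contragredient module is an honest dual and that the braiding is non-degenerate both rest on the quantitative non-vanishing and unitarity statements for the $S$-matrix, which constitute the deepest content of Huang's proof of the Verlinde conjecture.
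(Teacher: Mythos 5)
Your outline is essentially the approach the paper takes: the theorem is quoted from Huang, and the paper's own discussion of it (the remark that rigidity rests on the Verlinde formula, followed by the description of the $P(1)$-tensor product, the parallel-transport braiding, the twist $\theta_W=\e^{(2\pi\i)L_0}$, and the duality morphisms built from the contragredient module and the intertwining operators $\mathcal{Y}^{V}_{W,W'}$) matches your decomposition point for point, including the identification of rigidity and non-degeneracy as the genuinely hard steps.

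One logical ordering issue is worth flagging. You invoke the unitarity of $\S$ (Proposition~\ref{prop:sunitary}) to conclude modularity, but that proposition is taken from \cite{DLN15} and is itself derived \emph{from} the \mtc{} structure whose existence you are proving, so as stated the argument is circular. The repair is immediate and is what Huang actually uses: Theorem~\ref{thm:verlinde} already records that $\mathcal{C}=\S^2$ is a permutation matrix, hence $\S^4=\id$ and $\S$ is invertible without any appeal to unitarity; invertibility of the Hopf-link matrix is exactly the non-degeneracy of the braiding. With that substitution your sketch is a correct account of the proof the paper is citing.
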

The proof of the theorem, in particular showing rigidity, makes use of the Verlinde formula for \voa{}s, also proved by Huang \cite{Hua08} (see Theorem~\ref{thm:verlinde}).

In the following we describe some elements of Huang's construction in more detail. Given a \voa{} $V$, let $\CC$ be the abelian, $\C$-linear category of $V$-modules and module maps. Assume that $V$ satisfies Assumption~\ref{ass:n}. The simple objects in $\CC$ are the irreducible $V$-modules and the rationality of $V$ implies the semisimplicity of $\CC$ and that there are only finitely many isomorphism classes of simple objects.

Recall that for each $z\in\C^\times$ there is the Huang-Lepowsky tensor product (or fusion product) $\boxtimes_{P(z)}$. The tensor-product bifunctor $\otimes$ on $\CC$ is taken to be the $P(1)$-tensor product $\boxtimes_{P(1)}$, simply denoted by $\boxtimes$. The unit object $\unit$ for the tensor product is the \voa{} $V$ itself.

When passing from a formal power series in $x$ with non-integer exponents to a function in a complex variable $z$, it is important to have a consistent choice of the logarithm. Recall that we chose the branch of $\log(z)$ such that $0\leq\Im(\log(z))<2\pi$. Also recall from Section~\ref{sec:formal} that given an intertwining operator $\mathcal{Y}(\cdot,x)$ we use the convention
\begin{equation*}
\mathcal{Y}(\cdot,z)=\mathcal{Y}(\cdot,x)|_{x^n=\e^{n\log(z)},\,n\in\C}
\end{equation*}
for $z\in\C^\times$. Note that $\mathcal{Y}(\cdot,z)$ is a $P(z)$-intertwining map (see \cite{HL95b}, Section~12 for the correspondence between intertwining operators and $P(z)$-intertwining maps). We also use the (conflicting) shorthand notation
\begin{equation*}
\mathcal{Y}(\cdot,\e^\zeta)=\mathcal{Y}(\cdot,x)|_{x^n=\e^{n\zeta},\,n\in\C}
\end{equation*}
for $\zeta\in\C$. In particular, if $l(z)$ is some other branch of the logarithm, then, using this notation, $\mathcal{Y}(\cdot,z)\neq\mathcal{Y}(\cdot,\e^{l(z)})$ even though $z=\e^{l(z)}$.

There is a natural isomorphism of vector spaces from the space of module maps $\Hom_V(W^1\boxtimes_{P(z)}W^2,W^3)$ to the space of intertwining operators of type $\binom{W^3}{W^1\,W^2}$ (cf.\ Proposition~\ref{prop:cor3.4}). This isomorphism is based on the choice of the logarithm. Then there is a canonical intertwining operator of type $\binom{W^1\boxtimes_{P(z)}W^2}{W^1\,W^2}$ corresponding to the identity module map on $W^1\boxtimes_{P(z)}W^2$. We denote this intertwining operator by $\mathcal{Y}_{W^1,W^2}^{\boxtimes_{P(z)}}(\cdot,x)$. Given $w_1\in W^1$, $w_2\in W^2$ we define for $z\in\C^\times$ the $P(z)$-tensor-product element
\begin{equation*}
w_1\boxtimes_{P(z)}w_2:=\mathcal{Y}_{W^1,W^2}^{\boxtimes_{P(z)}}(w_1,z)w_2\in \overline{W^1\boxtimes_{P(z)}W^2}
\end{equation*}
where for any $V$-module $W=\coprod_{n\in\C}W_n=\bigoplus_{n\in\C}W_n$ we denote by $\overline{W}=\prod_{n\in\C}W_n$ its algebraic completion. The homogeneous components of $w_1\boxtimes_{P(z)}w_2$ for all $w_1\in W^1$, $w_2\in W^2$ span the tensor-product module $W^1\boxtimes_{P(z)}W^2$. However, the set of all tensor-product elements has in fact almost no intersection with the tensor-product module. A notable exception is $\vac\boxtimes_{P(z)}w$ for $w\in W$, $W$ a $V$-module, which lies in $V\boxtimes_{P(z)}W$. Note that $\vac$ denotes the vacuum vector in $V$ and not the unit object $\unit$ in $\CC$.

The left and right unit isomorphisms $l_W\colon V\boxtimes W\to W$ and $r_W\colon W\boxtimes V\to W$ are characterised by
\begin{equation*}
l_W(\vac\boxtimes w)=w\quad\text{and}\quad\overline{r_W}(w\boxtimes\vac)=\e^{L_{-1}}w
\end{equation*}
for $w\in W$ where $\overline{r_W}$ denotes the natural extension of $r_W$ to the completion $\overline{W\boxtimes V}$ of $W\boxtimes V$.

In order to describe the braiding on $\CC$ we need the notion of \emph{parallel transport} providing an isomorphism between $P(z)$-tensor products for different $z$. Given $V$-modules $W^1$ and $W^2$, $z_1,z_2\in\C^\times$ and a path $\gamma$ in $\C^\times$ from $z_1$ to $z_2$, the parallel-transport isomorphism $\mathcal{T}_\gamma\colon W^1\boxtimes_{P(z_1)}W^2\to W^1\boxtimes_{P(z_2)}W^2$ is defined via
\begin{equation*}
\overline{\mathcal{T}_\gamma}(w_1\boxtimes_{P(z_1)}w_2)=\mathcal{Y}_{W^1,W^2}^{\boxtimes_{P(z_2)}}(w_1,\e^{l(z_1)})w_2
\end{equation*}
where $l(z_1)$ is the value of the logarithm of $z_1$ determined uniquely by $\log(z_2)$ satisfying $0\leq\Im(\log(z))<2\pi$ and the path $\gamma$. Here, $\overline{\mathcal{T}_\gamma}$ denotes the natural extension of $\mathcal{T}_\gamma$ to the completion $\overline{W^1\boxtimes_{P(z_1)}W^2}$ of $W^1\boxtimes_{P(z_1)}W^2$. Clearly, the parallel-transport isomorphism only depends on the homotopy class of $\gamma$ in $\C^\times$.

The braiding isomorphism $c_{W^1,W^2}\colon W^1\boxtimes W^2\to W^2\boxtimes W^1$ can now be described as follows: let $\gamma_1^-$ be a path from $-1$ to $1$ in $\overline{\H}\setminus\{0\}$, the closed upper half-plane with 0 deleted, with the corresponding parallel-transport isomorphism $\mathcal{T}_{\gamma_1^-}\colon W^2\boxtimes_{P(-1)}W^1\to W^2\boxtimes W^1$. Then
\begin{equation*}
\overline{c_{W^1,W^2}}(w_1\boxtimes w_2)=\e^{L_{-1}}\overline{\mathcal{T}_{\gamma_1^-}}(w_2\boxtimes_{P(-1)}w_1)
\end{equation*}
for $w_1\in W^1$, $w_2\in W^2$. In terms of intertwining operators, or $P(z)$-intertwining maps to be precise, this equals
\begin{align*}
\overline{c_{W^1,W^2}}\left(\mathcal{Y}_{W^1,W^2}^{\boxtimes}(w_1,1)w_2\right)&=\e^{L_{-1}}\overline{\mathcal{T}_{\gamma_1^-}}\left(\mathcal{Y}_{W^2,W^1}^{\boxtimes_{P(-1)}}(w_2,-1)w_1\right)\\
&=\e^{L_{-1}}\mathcal{Y}_{W^2,W^1}^{\boxtimes}(w_2,\e^{l(-1)})w_1\\
&=\e^{L_{-1}}\mathcal{Y}_{W^2,W^1}^{\boxtimes}(w_2,\e^{\pi\i})w_1,
\end{align*}
where $l(-1)=\pi\i$ due to the choice of the path $\gamma_1^-$ from $-1$ to $1$.

For a $V$-module $W$ the twist isomorphism $\theta_W\colon W\to W$ is given by 
\begin{equation*}
\theta_W(w)=\e^{(2\pi\i)L_0}w
\end{equation*}
for $w\in W$. In particular, if $W$ is irreducible, then $\theta_W=\e^{(2\pi\i)\rho(W)}\id_W$.

\minisec{Rigidity}

We describe the sovereign structure on $\CC$. In fact, the proof of left and right rigidity for $\CC$ is the main result in \cite{Hua08b}. The left and right dual of a $V$-module $W$ is given by the contragredient module $W'$. Recall that the rigid structure consists of the right and left duality morphisms $b_W\in\Hom_V(V,W\boxtimes W')$, $d_W\in\Hom_V(W'\boxtimes W,V)$, $\tilde{b}_W\in\Hom_V(V,W'\boxtimes W)$ and $\tilde{d}_W\in\Hom_V(W\boxtimes W',V)$.

Since the category $\CC$ is semisimple, it suffices to consider irreducible modules. Let the finitely many isomorphism classes of irreducible $V$-modules be labelled by the set $F$ with the isomorphism class of $V$ corresponding to $0\in F$. We fix a choice of representatives $W^\alpha$, $\alpha\in F$, with $W^0=V$. For $\alpha\in F$, let $\alpha'\in F$ denote the index of the contragredient module of $W^\alpha$, i.e.\ $(W^\alpha)'\cong W^{\alpha'}$. If an irreducible module $W^\alpha$ is self-contragredient, i.e.\ $\alpha=\alpha'$, then there is a module isomorphism $\phi_{W^\alpha}\colon W^\alpha\to(W^\alpha)'$, unique up to a complex scalar, which defines a non-degenerate, invariant bilinear form $(\cdot,\cdot)_{W^\alpha}$ on $W^\alpha$ via $(u,w)_{W^\alpha}:=\langle\phi_{W^\alpha}(u),w\rangle$ for $u,w\in W^\alpha$ where $\langle\cdot,\cdot\rangle$ denotes the natural pairing between $(W^\alpha)'$, the graded dual space of $W^\alpha$, and $W^\alpha$ (see Section~\ref{sec:cmibf}). On $V$, this non-degenerate, invariant bilinear form is symmetric and for convenience we assume that it is normalised as $(\vac,\vac)_V=1$. This fixes the isomorphism $\phi_V$. We also fix choices of the $\phi_{W^\alpha}$, $\alpha\in F$.

Note that the spaces of intertwining operators of types
\begin{equation*}
\binom{W^\alpha}{V\,W^\alpha}, \binom{W^\alpha}{W^\alpha\,V}, \binom{V'}{W^\alpha\,(W^\alpha)'}, \binom{V}{W^\alpha,(W^\alpha)'}, \binom{V}{(W^\alpha)',W^\alpha}
\end{equation*}
are all one-dimensional since they are isomorphic to $\End_V(W^\alpha)$ by Proposition~\ref{prop:cor3.4} and the $S_3$-symmetry and this space is one-dimensional by Schur's lemma.

We recall this $S_3$-symmetry of the intertwining operators (see Section~\ref{sec:intops}). Let $\mathcal{Y}(\cdot,x)$ be an intertwining operator of type $\binom{W^\gamma}{W^\alpha\,W^\beta}$. Then $\sigma_{12}(\mathcal{Y})$ is defined to be the intertwining operator of type $\binom{W^\gamma}{W^\beta\,W^\alpha}$ given by
\begin{equation*}
\sigma_{12}(\mathcal{Y})(w_\alpha,x)w_\beta=\e^{\pi\i(\rho(\gamma)-\rho(\alpha)-\rho(\beta))}\e^{xL_{-1}}\mathcal{Y}(w_\beta,\e^{-\pi\i}x)w_\alpha
\end{equation*}
for $w_\alpha\in W^\alpha$, $w_\beta\in W^\beta$ where $\rho(\alpha):=\rho(W^\alpha)$ denotes the conformal weight of $W^\alpha$, $\alpha\in F$. Again, to clarify the notation, recall from Section~\ref{sec:formal} that given an intertwining operator $\mathcal{Y}(\cdot,x)$ we define $\mathcal{Y}(\cdot,\e^\zeta x)$ for $\zeta\in\C$ as
\begin{equation*}
\mathcal{Y}(\cdot,\e^\zeta x)=\mathcal{Y}(\cdot,y)|_{y^n=\e^{n\zeta}x^n,\,n\in\C}
\end{equation*}
where $x$ and $y$ are formal variables. Furthermore, $\sigma_{23}(\mathcal{Y})$ is defined to be the intertwining operator of type $\binom{(W^\beta)'}{W^\alpha\,(W^\gamma)'}$ determined by
\begin{equation*}
\left\langle\sigma_{23}(\mathcal{Y})(w_\alpha,x)w'_\gamma,w_\beta\right\rangle=\e^{\pi\i\rho(\alpha)}\left\langle w'_\gamma,\mathcal{Y}(\e^{xL_1}\e^{-\pi\i L_0}x^{-2L_0}w_\alpha,x^{-1})w_\beta\right\rangle
\end{equation*}
for $w_\alpha\in W^\alpha$, $w_\beta\in W^\beta$, $w'_\gamma\in (W^\gamma)'$. Upon identifying $W^\alpha$ with $(W^{\alpha})''$ for all $\alpha\in F$ one obtains
\begin{equation*}
\sigma_{12}^2=1,\quad\sigma_{23}^2=1\quad\text{and}\quad\sigma_{12}\sigma_{23}\sigma_{12}=\sigma_{23}\sigma_{12}\sigma_{23},
\end{equation*}
which shows that $\sigma_{12}$ and $\sigma_{23}$ define a representation of $S_3$ on the space of intertwining operators $\bigoplus_{\alpha,\beta,\gamma\in F}\V^{W^\gamma}_{W^\alpha \, W^\beta}$.

In the following we will define certain non-zero intertwining operators in the one-dimensional spaces of intertwining operators introduced above. Let
\begin{equation*}
\mathcal{Y}_{V,W^\alpha}^{W^\alpha}:=Y_{W^\alpha}
\end{equation*}
be the module vertex operation of $V$ on $W^\alpha$. This is an intertwining operator of type $\binom{W^\alpha}{V\,W^\alpha}$. Furthermore, we define
\begin{equation*}
\mathcal{Y}_{W^\alpha,V}^{W^\alpha}:=\sigma_{12}(\mathcal{Y}_{V,W^\alpha}^{W^\alpha})=\sigma_{12}(Y_{W^\alpha}),
\end{equation*}
which is an intertwining operator of type $\binom{W^\alpha}{W^\alpha\,V}$. We then define
\begin{equation*}
\mathcal{Y}_{W^\alpha,(W^\alpha)'}^{V}:=\phi_V^{-1}\sigma_{23}(\mathcal{Y}_{W^\alpha,V}^{W^\alpha})=\phi_V^{-1}\sigma_{23}\sigma_{12}(Y_{W^\alpha})
\end{equation*}
of type $\binom{V}{W^\alpha\,(W^\alpha)'}$ where $\phi_V$ is the isomorphism $V\to V'$. Finally,
\begin{equation*}
\mathcal{Y}_{(W^\alpha)',W^\alpha}^{V}:=\sigma_{12}(\mathcal{Y}_{W^\alpha,(W^\alpha)'}^{V})=\phi_V^{-1}\sigma_{12}\sigma_{23}\sigma_{12}(Y_{W^\alpha})
\end{equation*}
defines an intertwining operator of type $\binom{V}{(W^\alpha)'\,W^\alpha}$.

Then, the right evaluation morphism $d_{W^\alpha}\colon (W^\alpha)'\boxtimes W^\alpha\to V$ is defined via
\begin{equation*}
\overline{d_{W^\alpha}}(w'_\alpha\otimes w_\alpha):=d(\alpha)\mathcal{Y}_{(W^\alpha)',W^\alpha}^{V}(w'_\alpha,1)w_\alpha
\end{equation*}
and the left one $\tilde{d}_{W^\alpha}\colon W^\alpha\boxtimes(W^\alpha)'\to V$ via
\begin{equation*}
\overline{\tilde{d}_{W^\alpha}}(w_\alpha\otimes w'_\alpha):=d(\alpha)\mathcal{Y}_{W^\alpha,(W^\alpha)'}^{V}(w_\alpha,1)w'_\alpha
\end{equation*}
for $w_\alpha\in W^\alpha$, $w'_\alpha\in (W^\alpha)'$ where $d(\alpha)$ is the quantum dimension of $W^\alpha$, which only depends on the isomorphism class of $W^\alpha$. Again, $\overline{d_{W^\alpha}}\colon\overline{(W^\alpha)'\boxtimes W^\alpha}\to\overline{V}$ is the natural extension of $d_{W^\alpha}\colon (W^\alpha)'\boxtimes W^\alpha\to V$ and similarly for $\tilde{d}_{W^\alpha}$.

We omit the definition of the coevaluation morphisms $b_{W^\alpha}$ and $\tilde{b}_{W^\alpha}$ but remark that if we choose two lowest-weight vectors $w\in W^\alpha$, $w'\in W^\alpha$ with $\langle w',w\rangle=1$, then
\begin{equation*}
b_{W^\alpha}(\vac)=P_0(w\boxtimes w')\in W^\alpha\boxtimes(W^\alpha)'
\end{equation*}
where $P_0$ is the projection onto the weight-zero space (see proof of Theorem~3.9 in \cite{Hua08b}).

\minisec{Frobenius-Schur Indicator for \VOA{}s}

Let $V$ be a \voa{} and $W$ some irreducible, self-contragredient $V$-module. We saw in Section~\ref{sec:cmibf} that, given an isomorphism $\phi_W\colon W\to W'$,
\begin{equation*}
(u,w)_W:=\langle\phi_W(u),w\rangle,
\end{equation*}
$u,w\in W$, defines a non-degenerate, invariant bilinear form on $W$ and this is the unique non-degenerate, invariant bilinear form on $W$ up to a scalar. Moreover, this bilinear form is either symmetric or antisymmetric.
\begin{defi}[Frobenius-Schur Indicator]\label{defi:voafsi}
Let $V$ be a \voa{}. For an irreducible, self-contragredient module $W$ we define the \emph{Frobenius-Schur indicator} $\nu(W):=1$ and $\nu(W):=-1$ if the bilinear form $(\cdot,\cdot)_W$ on $W$ is symmetric and antisymmetric, respectively. For an irreducible module $W$ that is not self-contragredient we set $\nu(W):=0$.
\end{defi}
If $V$ is simple, then Proposition~\ref{prop:2.6} shows that $\nu(V)=1$.

There are two notions of Frobenius-Schur indicator for a \voa{} $V$:
\begin{enumerate}
\item\label{enum:fsi1} the modular tensor categorical definition of the Frobenius-Schur indicator (Definition~\ref{defi:fsmtc}) applied to Huang's construction of the modular tensor category associated with $V$ (Theorem~\ref{thm:voamtc}) if $V$ satisfies Assumption~\ref{ass:n}, 
\item\label{enum:fsi2} the definition via the symmetry of the invariant bilinear form (Definition~\ref{defi:voafsi}).
\end{enumerate}

The following proposition shows that these two notions coincide.
\begin{prop}
Let $V$ be a \voa{} satisfying Assumption~\ref{ass:n}. Then the two notions \ref{enum:fsi1} and \ref{enum:fsi2} of Frobenius-Schur indicator agree.
\end{prop}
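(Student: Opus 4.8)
The plan is to reduce the comparison of the two Frobenius-Schur indicators to a single sign attached to the one-dimensional space $\V^V_{W\,W'}$ of intertwining operators, and to show that each notion computes exactly this sign. First I would dispose of the degenerate case. In Huang's \mtc{} from Theorem~\ref{thm:voamtc} the dual of a $V$-module $W$ is its contragredient $W'$, so $W$ is self-dual in the categorical sense if and only if it is self-contragredient as a $V$-module. Hence Definition~\ref{defi:fsmtc} and Definition~\ref{defi:voafsi} assign the value $0$ to precisely the same modules, and it remains to treat an irreducible, self-contragredient $W=W^\alpha$ (so $\alpha=\alpha'$), equipped with the isomorphism $\phi_W\colon W\to W'$ that simultaneously serves as the self-duality isomorphism in Definition~\ref{defi:fsmtc} and as the datum defining the invariant bilinear form $(u,w)_W=\langle\phi_W(u),w\rangle$ in Definition~\ref{defi:voafsi}.

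On the categorical side I would start from the symmetric form of the indicator, $d(W)=\nu(W)\,d_W\circ(\phi_W\otimes\phi_W^{-1})\circ b_W$, and substitute Huang's explicit description of the duality morphisms: the evaluation $\overline{d_W}(w'\otimes w)=d(\alpha)\,\mathcal{Y}^V_{W'\,W}(w',1)w$ together with $b_W(\vac)=P_0(w\boxtimes w')$ on lowest-weight vectors. Since $\mathcal{Y}^V_{W'\,W}=\sigma_{12}(\mathcal{Y}^V_{W\,W'})$, evaluating this composite reduces the defining equation to the statement that $\sigma_{12}$ acts on the one-dimensional space $\V^V_{W\,W'}$, transported through $\phi_W$ into $\V^V_{W\,W}$, by a scalar $\nu_{\mathrm{cat}}(W)$. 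The $S_3$-symmetry relation $\sigma_{12}^2=1$ forces this scalar to be $\pm1$.

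On the \voa{} side I would recall that, by Li's Proposition~\ref{prop:2.8}, the invariant bilinear form on the irreducible $W$ is symmetric or antisymmetric, and that the comparison of $(u,w)_W$ with $(w,u)_W$ is governed by the skew-symmetry of intertwining operators, that is, by exactly the operation $\sigma_{12}$ (with the phase $\e^{\pi\i(\rho(V)-\rho(W)-\rho(W'))}=\e^{-2\pi\i\rho(W)}$ and the translation $\e^{xL_{-1}}$ built into its definition). Thus the form is symmetric precisely when $\sigma_{12}$ fixes the corresponding operator in $\V^V_{W\,W}$ and antisymmetric when it negates it, so $\nu_{\mathrm{bil}}(W)$ equals the same sign. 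Combining the two computations gives $\nu_{\mathrm{cat}}(W)=\nu_{\mathrm{bil}}(W)$, consistent with the shared normalisation $\nu(V)=1$ (Proposition~\ref{prop:2.6}, matching $\nu(\unit)=1$ on the unit object), which proves the proposition.

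The main obstacle will be the bookkeeping of phases and normalisations. One must track the factors $\e^{\pi\i(\rho(V)-\rho(W)-\rho(W'))}$ and the translations $\e^{xL_{-1}}$ appearing in $\sigma_{12}$ and $\sigma_{23}$, the quantum-dimension factors $d(\alpha)$ common to $d_W$ and $\tilde{d}_W$, and the chosen branch of the logarithm $0\le\arg<2\pi$ underlying specialisations such as $\mathcal{Y}(\cdot,\e^{\pi\i})$, and then verify that every such contribution cancels so that the categorical and the bilinear-form sign computations land on the \emph{same} $\pm1$ rather than differing by a spurious root of unity coming from the twist $\theta_W=\e^{(2\pi\i)\rho(W)}$.
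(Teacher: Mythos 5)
Your proposal is correct, and its toolkit is the one the paper itself uses (Huang's explicit duality morphisms, one-dimensionality of the intertwiner spaces, the operations $\sigma_{12},\sigma_{23}$, evaluation on lowest-weight vectors), but the architecture is genuinely different. The paper runs a single direct computation: it evaluates $\bigl(d_{W^\alpha}\circ(\phi_{W^\alpha}\boxtimes\phi_{W^\alpha}^{-1})\circ b_{W^\alpha}\bigr)(\vac)$ by unwinding $\mathcal{Y}^{V}_{(W^\alpha)',W^\alpha}=\phi_V^{-1}\sigma_{12}\sigma_{23}\sigma_{12}(Y_{W^\alpha})$ on lowest-weight vectors $w\in W^\alpha$, $w'\in(W^\alpha)'$ with $\langle w',w\rangle=1$, arrives at $d(\alpha)\,\bigl(w,\phi_{W^\alpha}^{-1}(w')\bigr)_{W^\alpha}$, and compares with $\tilde{d}_{W^\alpha}\circ b_{W^\alpha}=d(\alpha)\id_V$, so that $\nu(\alpha)$ is read off directly as the ratio $(w,u)_{W^\alpha}/(u,w)_{W^\alpha}$, i.e.\ the symmetry sign. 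You instead factor the statement through the intermediate invariant given by the eigenvalue of $\sigma_{12}$ on the one-dimensional space $\V_{W\,W}^{V}$ and prove two identifications separately. Your categorical half (indicator equals eigenvalue) is in substance what the paper proves afterwards as Proposition~\ref{prop:fsiskew}; your \voa{} half (symmetry sign of $(\cdot,\cdot)_W$ equals eigenvalue) is a generalisation of \cite{FHL93}, Proposition~5.6.1, and \cite{Yam13}, Lemma~2.1, to arbitrary rational conformal weight, and the cancellation you flag does work out: the phase $\e^{\pi\i(\rho(V)-2\rho(W))}=\e^{-(2\pi\i)\rho(W)}$ carried by $\sigma_{12}$ is killed by the factor $\e^{(2\pi\i)\rho(W)}$ produced by the substitution $x\mapsto\e^{-\pi\i}x$ in the $x^{-2\rho(W)}$-coefficient (the $V_0$-component, which $\e^{xL_{-1}}$ does not disturb), so no spurious twist factor survives. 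What your route buys is modularity: the sign $\pm1$ comes for free from $\sigma_{12}^2=1$ on a one-dimensional space, and the second half stands alone as a purely \voa{}-theoretic skew-symmetry lemma. What it costs is that this lemma still requires identifying $(\cdot,\cdot)_W$, up to a non-zero constant, with the $\vac$-coefficient pairing of the transported operator $\mathcal{Y}^{V}_{W^\alpha,(W^\alpha)'}(\cdot,x)\phi_{W^\alpha}$, i.e.\ unwinding $\sigma_{23}$ exactly as in the paper's chain of equalities—so the bookkeeping you defer is not avoided, only relocated into your \voa{}-side lemma.
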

\begin{proof}
We compute the Frobenius-Schur indicator in the \mtc{} $\CC$ obtained from Huang's construction and show that it is $+1$ or $-1$ for an irreducible module $W^\alpha$ depending on whether the non-degenerate, invariant bilinear form on $W^\alpha$ is symmetric or antisymmetric.

Let $W^\alpha$ be an irreducible, self-contragredient $V$-module. We introduced the module isomorphism $\phi_{W^\alpha}\colon W^\alpha\to(W^\alpha)'$ defining a non-degenerate, invariant bilinear form $(\cdot,\cdot)_{W^\alpha}$ on $W^\alpha$ via $(u,w)_{W^\alpha}:=\langle\phi_{W^\alpha}(u),w\rangle$ for $u,w\in W^\alpha$.

By definition, the Frobenius-Schur indicator $\nu(\alpha)$ is given as the factor of proportionality between
\begin{equation*}
\tilde{d}_{W^\alpha}\circ b_{W^\alpha}\quad\text{and}\quad d_{W^\alpha}\circ(\phi_{W^\alpha}\boxtimes\phi_{W^\alpha}^{-1})\circ b_{W^\alpha},
\end{equation*}
which lie in $\End_V(V)=\C\id_V$. For the left-hand side we get
\begin{equation*}
\tilde{d}_{W^\alpha}(b_{W^\alpha}(\vac))=d(\alpha)\id_V
\end{equation*}
by the definition of the quantum dimension. The computation of the right-hand side is more involved. We evaluate the expression at the vacuum $\vac$. We stated above that
\begin{equation*}
b_{W^\alpha}(\vac)=P_0(w\boxtimes w')\in W^\alpha\boxtimes(W^\alpha)'
\end{equation*}
for two lowest-weight vectors $w\in W^\alpha$ and $w'\in (W^\alpha)'$ with $\langle w',w\rangle=1$. Then
\begin{align*}
d_{W^\alpha}((\phi_{W^\alpha}\boxtimes\phi_{W^\alpha}^{-1})(b_{W^\alpha}(\vac)))&=d_{W^\alpha}((\phi_{W^\alpha}\boxtimes\phi_{W^\alpha}^{-1})(P_0(w\boxtimes w')))\\
&=d_{W^\alpha}(P_0(\phi_{W^\alpha}(w)\boxtimes\phi_{W^\alpha}^{-1}(w')))\\
&=P_0\left(\overline{d_{W^\alpha}}(\phi_{W^\alpha}(w)\boxtimes\phi_{W^\alpha}^{-1}(w'))\right)\\
&=d(\alpha)P_0\left(\mathcal{Y}_{(W^\alpha)',W^\alpha}^{V}(\phi_{W^\alpha}(w),1)\phi_{W^\alpha}^{-1}(w')\right)\\
&=d(\alpha)\Res_x \left(x^{2\rho(\alpha)-1}\mathcal{Y}_{(W^\alpha)',W^\alpha}^{V}(\phi_{W^\alpha}(w),x)\phi_{W^\alpha}^{-1}(w')\right)
\end{align*}
where $\Res_x(\cdot)$ is the coefficient of $x^{-1}$. By definition of $\mathcal{Y}_{(W^\alpha)',W^\alpha}^{V}(\cdot,x)$ the above is
\begin{equation*}
d(\alpha)\e^{-(2\pi\i)\rho(\alpha)}\Res_x \left(x^{2\rho(\alpha)-1}\e^{xL_{-1}}\mathcal{Y}_{W^\alpha,(W^\alpha)'}^{V}(\phi^{-1}_{W^\alpha}(w'),\e^{-\pi\i}x)\phi_{W^\alpha}(w)\right).
\end{equation*}
Then we take the bilinear form of the above expression and $\vac\in V$ and by definition of $\mathcal{Y}_{W^\alpha,(W^\alpha)'}^{V}(\cdot,x)$ we obtain
\begin{align*}
&\left(d(\alpha)\e^{-(2\pi\i)\rho(\alpha)}\Res_x \left(x^{2\rho(\alpha)-1}\e^{xL_{-1}}\mathcal{Y}_{W^\alpha,(W^\alpha)'}^{V}(\phi^{-1}_{W^\alpha}(w'),\e^{-\pi\i}x)\phi_W^{\alpha}(w)\right),\vac\right)_V\\
&=d(\alpha)\e^{-(2\pi\i)\rho(\alpha)}\Res_x\left(x^{2\rho(\alpha)-1}\left(\e^{xL_{-1}}\mathcal{Y}_{W^\alpha,(W^\alpha)'}^{V}(\phi^{-1}_{W^\alpha}(w'),\e^{-\pi\i}x)\phi_{W^\alpha}(w),\vac\right)_V\right)\\
&=d(\alpha)\e^{-(2\pi\i)\rho(\alpha)}\Res_x\left(x^{2\rho(\alpha)-1}\left(\mathcal{Y}_{W^\alpha,(W^\alpha)'}^{V}(\phi^{-1}_{W^\alpha}(w'),\e^{-\pi\i}x)\phi_{W^\alpha}(w),\e^{xL_1}\vac\right)_V\right)_V\\
&=d(\alpha)\e^{-(2\pi\i)\rho(\alpha)}\Res_x\left(x^{2\rho(\alpha)-1}\left\langle\phi_V\left(\mathcal{Y}_{W^\alpha,(W^\alpha)'}^{V}(\phi^{-1}_{W^\alpha}(w'),\e^{-\pi\i}x)\phi_{W^\alpha}(w)\right),\vac\right\rangle\right)\\
&=d(\alpha)\e^{-(2\pi\i)\rho(\alpha)}\Res_x\left(x^{2\rho(\alpha)-1}\left\langle(\sigma_{23}\mathcal{Y}_{W^\alpha,V}^{W^\alpha})(\phi^{-1}_{W^\alpha}(w'),\e^{-\pi\i}x)\phi_{W^\alpha}(w),\vac\right\rangle\right)\\
&=d(\alpha)\e^{-(2\pi\i)\rho(\alpha)}\e^{\pi\i\rho(\alpha)}\Res_x\left(x^{2\rho(\alpha)-1}\right.\\
&\quad\left.\left\langle\phi_{W^\alpha}(w),\mathcal{Y}_{W^\alpha,V}^{W^\alpha}\left(\e^{\e^{-\pi\i}xL_1}\e^{-\pi\i L_0}(\e^{-\pi\i}x)^{-2L_0}\phi^{-1}_{W^\alpha}(w'),(\e^{-\pi\i}x)^{-1}\right)\vac\right\rangle\right),
\end{align*}
where we used $L_1\vac=0$. Now we use that $w'$ and hence $\phi_{W^\alpha}^{-1}(w')$ is a lowest-weight vector so that $L_1\phi_{W^\alpha}^{-1}(w')=0$ and $L_0\phi_{W^\alpha}^{-1}(w')=\rho(\alpha)\phi_{W^\alpha}^{-1}(w')$. Then the above equals
\begin{align*}
&d(\alpha)\e^{-(2\pi\i)\rho(\alpha)}\Res_x\!\left(x^{2\rho(\alpha)-1}\!\left\langle\phi_{W^\alpha}(w),\mathcal{Y}_{W^\alpha,V}^{W^\alpha}\left((\e^{-\pi\i}x)^{-2\rho(\alpha)}\phi^{-1}_{W^\alpha}(w'),(\e^{-\pi\i}x)^{-1}\right)\vac\right\rangle\right)\\
&=d(\alpha)\e^{-(2\pi\i)\rho(\alpha)}\Res_x\!\left(x^{2\rho(\alpha)-1}\!\left\langle\phi_{W^\alpha}(w),\mathcal{Y}_{W^\alpha,V}^{W^\alpha}\left(\e^{(2\pi\i)\rho(\alpha)}x^{-2\rho(\alpha)}\phi^{-1}_{W^\alpha}(w'),\e^{\pi\i}x^{-1}\right)\vac\right\rangle\right)\\
&=d(\alpha)\Res_x\!\left(x^{-1}\!\left\langle\phi_{W^\alpha}(w),\mathcal{Y}_{W^\alpha,V}^{W^\alpha}\left(\phi^{-1}_{W^\alpha}(w'),\e^{\pi\i}x^{-1}\right)\vac\right\rangle\right),
\end{align*}
where we simplified the expressions $(\e^{-\pi\i}x)^{-2\rho(\alpha)}$ and $(\e^{-\pi\i}x)^{-1}$ consistent with our notational conventions. Finally, we use the definition of $\mathcal{Y}_{W^\alpha,V}^{W^\alpha}=\sigma_{12}(Y_{W^\alpha})$ to obtain
\begin{align*}
&d(\alpha)\Res_x\left(x^{-1}\left\langle\phi_{W^\alpha}(w),\e^{\e^{\pi\i}x^{-1}L_{-1}}Y_{W^\alpha}\left(\vac,\e^{-\pi\i}\e^{\pi\i}x^{-1}\right)\phi^{-1}_{W^\alpha}(w')\right\rangle\right)\\
&=d(\alpha)\Res_x\left(x^{-1}\left\langle\e^{\e^{\pi\i}x^{-1}L_1}\phi_{W^\alpha}(w),\phi^{-1}_{W^\alpha}(w')\right\rangle\right)\\
&=d(\alpha)\Res_x\left(x^{-1}\left\langle\phi_{W^\alpha}(w),\phi^{-1}_{W^\alpha}(w')\right\rangle\right)\\
&=d(\alpha)\left\langle\phi_{W^\alpha}(w),\phi^{-1}_{W^\alpha}(w')\right\rangle\\
&=d(\alpha)\left(w,\phi^{-1}_{W^\alpha}(w')\right)_{W^\alpha}.
\end{align*}
Hence, in total we have computed
\begin{equation*}
\left((d_{W^\alpha}\circ(\phi_{W^\alpha}\boxtimes\phi_{W^\alpha}^{-1})\circ b_{W^\alpha})(\vac),\vac\right)_V=d(\alpha)\left(w,\phi^{-1}_{W^\alpha}(w')\right)_{W^\alpha},
\end{equation*}
which we compare with
\begin{equation*}
\left((\tilde{d}_{W^\alpha}\circ b_{W^\alpha})(\vac),\vac\right)_V=d(\alpha)(\vac,\vac)_V=d(\alpha),
\end{equation*}
recalling the normalisation $(\vac,\vac)_V=1$. On the other hand, we chose $\langle w',w\rangle=1$ and, setting $u:=\phi_{W^\alpha}^{-1}(w')$, this reads $(u,w)_{W^\alpha}=1$ in terms of the bilinear form on $W^\alpha$.

Hence, the Frobenius-Schur indicator of $W^\alpha$ is given by
\begin{equation*}
\nu(\alpha)=\left(w,\phi^{-1}_{W^\alpha}(w')\right)_{W^\alpha}=(w,u)_{W^\alpha}=\frac{(w,u)_{W^\alpha}}{(u,w)_{W^\alpha}},
\end{equation*}
i.e.\ it coincides with the definition via the symmetry of the invariant bilinear form on $W^\alpha$ for special non-zero choices of $u,w\in W^\alpha$ and consequently for all $u,w\in W^\alpha$.
\end{proof}

The Frobenius-Schur indicator does not only appear in the symmetry relation for the invariant bilinear form on an irreducible, self-contragredient module but also in a relation similar to the skew-symmetry formula \eqref{eq:skew} for a \voa{} V:
\begin{equation*}
Y_V(a,x)b=\e^{xL_{-1}}Y_V(b,-x)a
\end{equation*}
for $a,b\in V$. Indeed:
\begin{prop}\label{prop:fsiskew}
Let $V$ be a \voa{} satisfying Assumption~\ref{ass:n} and let $W^\alpha$ be an irreducible, self-contragredient module. Let $\mathcal{Y}(x,\cdot)$ be an intertwining operator of type $\binom{V}{W^\alpha\,W^\alpha}$. Then
\begin{equation*}
\mathcal{Y}(w,x)u=\nu(\alpha)\e^{-(2\pi\i)\rho(\alpha)}\e^{xL_{-1}}\mathcal{Y}(u,\e^{-\pi\i}x)w
\end{equation*}
for $u,w\in W^\alpha$.
\end{prop}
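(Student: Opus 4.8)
The identity to be proven is, up to notation, an eigenvector statement for the $S_3$-symmetry operator $\sigma_{12}$. Applying the definition of $\sigma_{12}$ to an intertwining operator $\mathcal{Y}\in\V^V_{W^\alpha\,W^\alpha}$, whose type $\binom{V}{W^\alpha\,W^\alpha}$ forces $\rho(\gamma)=\rho(V)=0$ and equal conformal weights $\rho(\alpha)$ in both module slots, gives $\sigma_{12}(\mathcal{Y})(w,x)u=\e^{-(2\pi\i)\rho(\alpha)}\e^{xL_{-1}}\mathcal{Y}(u,\e^{-\pi\i}x)w$. Hence the proposition is equivalent to the assertion $\sigma_{12}(\mathcal{Y})=\nu(\alpha)\mathcal{Y}$ (using $\nu(\alpha)^{-1}=\nu(\alpha)$). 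First I would observe that $\V^V_{W^\alpha\,W^\alpha}$ is one-dimensional: by Proposition~\ref{prop:intsym}, Proposition~\ref{prop:cor3.4} and Proposition~\ref{prop:rem3.5b} it is isomorphic to $\Hom_V(W^\alpha,(W^\alpha)')$, which by Schur's lemma (Corollary~\ref{cor:schur}) is one-dimensional since $W^\alpha$ is irreducible and self-contragredient. Because $\sigma_{12}^2=1$ on this space, $\sigma_{12}$ acts as a scalar $\eps\in\{\pm1\}$, so it remains only to identify $\eps=\nu(\alpha)$.

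To this end I would attach to $\mathcal{Y}$ a bilinear form $B$ on $W^\alpha$. For homogeneous $u,w$ the projection of $\mathcal{Y}(u,x)w$ onto $V_0=\C\vac$ is a single monomial, so that $\pi_{\C\vac}(\mathcal{Y}(u,x)w)=B(u,w)\,x^{-\wt(u)-\wt(w)}\vac$ defines $B$. A standard consequence of the Jacobi identity for intertwining operators (equivalently, tracking the $\sigma_{23}$-twist realizing the isomorphism $\V^V_{W^\alpha\,W^\alpha}\cong\Hom_V(W^\alpha,(W^\alpha)')$, exactly as in the preceding Frobenius-Schur computation) is that $B$ is a non-zero invariant bilinear form on $W^\alpha$. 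By Proposition~\ref{prop:bilhom} and Schur's lemma the space of such forms is one-dimensional, so $B$ is a non-zero multiple of the canonical form $(\cdot,\cdot)_{W^\alpha}$ and inherits its symmetry type. By Definition~\ref{defi:voafsi} together with Proposition~\ref{prop:2.8} this means precisely $B(w,u)=\nu(\alpha)B(u,w)$ for all $u,w$.

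Finally I would extract a single scalar from the operator identity $\sigma_{12}(\mathcal{Y})=\eps\mathcal{Y}$ by evaluating on lowest-weight vectors $u,w\in W^\alpha_{\rho(\alpha)}$ and projecting onto $\C\vac$. Here $\pi_{\C\vac}(\mathcal{Y}(u,x)w)=B(u,w)x^{-2\rho(\alpha)}\vac$ is a single monomial, $L_{-1}\vac=0$ and $V$ is of CFT-type, so $\e^{xL_{-1}}$ preserves the vacuum component; using $(\e^{-\pi\i}x)^{-2\rho(\alpha)}=\e^{(2\pi\i)\rho(\alpha)}x^{-2\rho(\alpha)}$ one finds that projecting $\sigma_{12}(\mathcal{Y})(w,x)u$ onto $\C\vac$ yields $B(u,w)x^{-2\rho(\alpha)}\vac$, whereas projecting $\eps\mathcal{Y}(w,x)u$ yields $\eps B(w,u)x^{-2\rho(\alpha)}\vac$. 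Thus $B(u,w)=\eps B(w,u)$. Combining this with $B(w,u)=\nu(\alpha)B(u,w)$ gives $\eps\,\nu(\alpha)=1$, and since $B$ is non-degenerate on $W^\alpha_{\rho(\alpha)}$ (by Proposition~\ref{prop:bilorth} the canonical form restricts non-degenerately to each graded piece) there exist $u,w$ with $B(u,w)\neq0$, so the conclusion $\eps=\nu(\alpha)$ is forced.

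The main obstacle is the middle step: verifying that the vacuum-coefficient pairing $B$ is genuinely an invariant bilinear form, hence proportional to the form $(\cdot,\cdot)_{W^\alpha}$ defining $\nu(\alpha)$. The abstract dimension count guarantees proportionality once invariance is known, but establishing invariance requires either a direct manipulation of the intertwining-operator Jacobi identity or the explicit $\sigma_{23}$-bookkeeping relating $\binom{V}{W^\alpha\,W^\alpha}$ to $\binom{(W^\alpha)'}{V\,W^\alpha}$; everything else reduces to routine tracking of the logarithm-branch conventions $z^n=\e^{n\log z}$ and the resulting factors $\e^{\pm(2\pi\i)\rho(\alpha)}$.
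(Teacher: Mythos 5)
Your proof is correct, but it reaches the identification of the sign by a genuinely different route from the paper's. The paper works with the one-dimensional space $\V^{V}_{W^\alpha\,(W^\alpha)'}$, writes $\mathcal{Y}_{W^\alpha,(W^\alpha)'}^{V}$ as a scalar multiple of $\mathcal{Y}_{(W^\alpha)',W^\alpha}^{V}(\phi_{W^\alpha}(\cdot),x)\phi_{W^\alpha}^{-1}$, applies the $\sigma_{12}$-formula to produce the factor $\e^{-(2\pi\i)\rho(\alpha)}\e^{xL_{-1}}(\cdots)$, and then recognises the scalar at $z=1$ as \emph{by definition} the categorical Frobenius-Schur indicator, since $d_{W^\alpha}$ and $\tilde{d}_{W^\alpha}$ in Huang's \mtc{} are defined precisely via these two intertwining operators evaluated at $1$; the pullback to type $\binom{V}{W^\alpha\,W^\alpha}$ via $\phi_{W^\alpha}$ then gives the stated identity. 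You instead stay entirely on the side of $\binom{V}{W^\alpha\,W^\alpha}$, observe that $\sigma_{12}$ is an involution of a one-dimensional space, hence a sign $\eps$, and determine $\eps$ by comparing the vacuum-coefficient pairing $B$ with the symmetry type of the canonical invariant bilinear form -- i.e.\ you use the \voa{}-theoretic Definition~\ref{defi:voafsi} of $\nu(\alpha)$ rather than the categorical one. This is legitimate here only because the immediately preceding proposition of the paper shows the two notions of Frobenius-Schur indicator agree; with that input your argument is essentially the \cite{FHL93}/\cite{Yam13} approach that the paper's own remark after the proposition cites as an alternative, and it buys you independence from the \mtc{} machinery (and, as the paper notes, from Assumption~\ref{ass:n}). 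The one step you leave unproved -- that the vacuum-coefficient pairing $B$ attached to a non-zero element of $\V^{V}_{W^\alpha\,W^\alpha}$ is a non-zero \emph{invariant} bilinear form, hence proportional to $(\cdot,\cdot)_{W^\alpha}$ -- is indeed standard ($\sigma_{23}$-bookkeeping, as in the Frobenius-Schur computation preceding this proposition), but it is the load-bearing step of your version and should be written out rather than asserted; the remaining bookkeeping with $\e^{xL_{-1}}$, CFT-type and the branch convention $(\e^{-\pi\i}x)^{-2\rho(\alpha)}=\e^{(2\pi\i)\rho(\alpha)}x^{-2\rho(\alpha)}$ is carried out correctly.
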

\begin{proof}
Note that the space of intertwining operators of type $\binom{V}{W^\alpha\,(W^\alpha)'}$ is, as remarked above, one-dimensional. This means that
\begin{equation*}
\mathcal{Y}_{W^\alpha,(W^\alpha)'}^{V}(\cdot,x)
\end{equation*}
is proportional to
\begin{equation*}
\mathcal{Y}_{(W^\alpha)',W^\alpha}^{V}(\phi_{W^\alpha}(\cdot),x)\phi_{W^\alpha}^{-1}.
\end{equation*}
We call the constant of proportionality $\nu(W^\alpha)$ and obtain
\begin{align*}
\mathcal{Y}_{W^\alpha,(W^\alpha)'}^{V}(w,x)w'&=\nu(W^\alpha)\mathcal{Y}_{(W^\alpha)',W^\alpha}^{V}(\phi_{W^\alpha}(w),x)\phi_{W^\alpha}^{-1}(w')\\
&=\nu(W^\alpha)\e^{-(2\pi\i)\rho(\alpha)}\e^{xL_{-1}}\mathcal{Y}_{W^\alpha,(W^\alpha)'}^{V}(\phi_{W^\alpha}^{-1}(w'),\e^{-\pi\i}x)\phi_{W^\alpha}(w)
\end{align*}
for $w\in W^\alpha$ and $w'\in (W^\alpha)'$. Replacing the formal variable $x$ by $z=1\in\C$ and recalling the definition of $\tilde{d}_{W^\alpha}$ and $d_{W^\alpha}$ in terms of $\mathcal{Y}_{W^\alpha,(W^\alpha)'}^{V}(\cdot,1)$ and $\mathcal{Y}_{(W^\alpha)',W^\alpha}^{V}(\cdot,1)$ we see that $\nu(W^\alpha)=\nu(\alpha)$ is exactly the Frobenius-Schur indicator. 

Let us define the intertwining operator $\mathcal{Y}(x,\cdot)$ of type $\binom{V}{W^\alpha\,W^\alpha}$ by
\begin{equation*}
\mathcal{Y}(\cdot,x)w:=\mathcal{Y}_{W^\alpha,(W^\alpha)'}^{V}(\cdot,x)\phi_{W^\alpha}(w).
\end{equation*}
Then, setting $u=\phi_{W^\alpha}^{-1}(w')$, we obtain
\begin{equation*}
\mathcal{Y}(w,x)u=\nu(\alpha)\e^{-(2\pi\i)\rho(\alpha)}\e^{xL_{-1}}\mathcal{Y}(u,\e^{-\pi\i}x)w
\end{equation*}
for $u,w\in W^\alpha$, which is the relation we want to show. Since the space of intertwining operators of type $\binom{V}{W^\alpha\,W^\alpha}$ is also one-dimensional, this formula holds for any intertwining operator of that type.
\end{proof}
\begin{rem}
\item\begin{enumerate}
\item The factor $\nu(\alpha)\e^{-(2\pi\i)\rho(\alpha)}$ in the above proposition describes the failure of the skew-symmetry relation $\mathcal{Y}(a,x)b=\e^{xL_{-1}}\mathcal{Y}(b,\e^{-\pi\i}x)a$ to hold, which would be true if $\mathcal{Y}(\cdot,x)$ were simply the vertex operation on $V$.

\item The above proposition is a generalisation of Proposition~5.6.1 in \cite{FHL93} and Lemma~2.1 in \cite{Yam13}, which deal with the special cases of $\rho(\alpha)\in\Z$ and $\rho(\alpha)\in(1/2)\Z$, respectively. We do not impose any restrictions on the value of $\rho(\alpha)$.\footnote{Note however that in the case of group-like fusion discussed below it is easy to see that $\rho(\alpha)\in(1/4)\Z$ for any irreducible, self-contragredient module $W^\alpha$.}

\item The proposition actually holds without Assumption~\ref{ass:n} if we use the definition of the Frobenius-Schur indicator not involving \mtcs{}. This is the approach in \cite{FHL93,Yam13}.
\end{enumerate}
\end{rem}

\minisec{Quantum Dimensions for \VOA{}s}

We introduced the notion of quantum dimensions (or categorical dimensions) for \mtcs{}. We can in particular study them for the \mtcs{} associated with certain \voa{}s from Huang's construction. There is also another notion of quantum dimensions defined directly for \voa{}s and it will turn out that under suitable regularity assumptions on $V$, notably Assumption~\ref{ass:p}, both notions agree.

Given a \voa{} $V$ and a $V$-module $W$, assume that the characters $\ch_V(\tau)$ and $\ch_W(\tau)$ are well-defined functions on the upper half-plane. This is the case for example if $V$ is rational and $C_2$-cofinite and $W$ is an irreducible $V$-module (see Theorem~\ref{thm:zhumodinv}). The \emph{quantum dimension} $\qdim_V(W)$ of $W$ is defined as the limit
\begin{equation*}
\qdim_V(W):=\lim_{y\to 0^+}\frac{\ch_W(iy)}{\ch_V(iy)}.
\end{equation*}
It is a priori not clear that this number exists. However:
\begin{prop}[\cite{DJX13}, Lemma~4.2]\label{prop:lem4.2}
Let $V$ be a simple, rational, $C_2$-cofinite \voa{} of CFT-type which satisfies Assumption~\ref{ass:p}. Then for any irreducible module $W\in\Irr(V)$ the quantum dimension exists and
\begin{equation*}
0<\qdim_V(W)=\frac{\S_{W,V}}{\S_{V,V}}<\infty.
\end{equation*}
\end{prop}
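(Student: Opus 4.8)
The plan is to evaluate the limit defining $\qdim_V(W)$ by pushing it to the cusp $\i\infty$ via the $S$-transformation supplied by Zhu's modular invariance, and then reading off the dominant asymptotics. First I would apply Theorem~\ref{thm:zhumodinv} with $v=\vac$, so that $\wt[\vac]=0$, $o(\vac)=\id_W$, and $T_W(\vac,\tau)=\ch_W(\tau)$, and with $M=S$. Since $S.\tau=-1/\tau$ and the modular weight is $0$, this gives
\begin{equation*}
\ch_W(-1/\tau)=\sum_{X\in\Irr(V)}\S_{W,X}\ch_X(\tau).
\end{equation*}
Substituting $\tau=\i/y$ for $y>0$ yields $-1/\tau=\i y$, hence $\ch_W(\i y)=\sum_{X}\S_{W,X}\ch_X(\i/y)$; taking $W=V$ does the same for the denominator. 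Both numerator and denominator of the quotient are thereby re-expressed through characters evaluated near $\i\infty$, where the leading $q$-power dominates.

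Next I would insert the $q$-expansions
\begin{equation*}
\ch_X(\i/y)=\sum_{n\geq0}\dim_\C(X_{\rho(X)+n})\,\e^{(2\pi/y)(c/24-\rho(X)-n)}
\end{equation*}
and factor out the common exponential $\e^{(2\pi/y)(c/24)}$. For every pair $(X,n)\neq(V,0)$ the remaining exponent is $-(\rho(X)+n)<0$: by the positivity assumption (Assumption~\ref{ass:p}) $V$ is the unique irreducible module with $\rho(V)=0$ while $\rho(X)>0$ for $X\not\cong V$, and $n\geq1$ accounts for the higher terms of $\ch_V$. Using $\dim_\C(V_0)=1$ (CFT-type) I would then obtain
\begin{equation*}
\lim_{y\to0^+}\e^{-(2\pi/y)(c/24)}\ch_W(\i y)=\S_{W,V},\qquad \lim_{y\to0^+}\e^{-(2\pi/y)(c/24)}\ch_V(\i y)=\S_{V,V},
\end{equation*}
so that $\ch_W(\i y)/\ch_V(\i y)\to\S_{W,V}/\S_{V,V}$ provided $\S_{V,V}\neq0$.

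It remains to secure positivity and finiteness. Since $q_\tau=\e^{-2\pi y}\in(0,1)$ and all graded dimensions are non-negative with $\dim_\C(W_{\rho(W)})\geq1$ (and $c,\rho(W)\in\Q\subset\R$ by Theorem~\ref{thm:11.3}), both $\ch_W(\i y)$ and $\ch_V(\i y)$ are strictly positive for $y>0$; hence the two limits above are real and $\geq0$, and so is the limit of the quotient. The Verlinde formula (Theorem~\ref{thm:verlinde}) guarantees that $\S$ is symmetric and that $\S_{V,X}\neq0$ for all $X\in\Irr(V)$, whence $\S_{V,V}>0$ and $\S_{W,V}=\S_{V,W}\neq0$. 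Combined with $\qdim_V(W)\geq0$ this forces $0<\qdim_V(W)=\S_{W,V}/\S_{V,V}<\infty$. I would emphasise here that the nonvanishing is taken from Verlinde rather than from Proposition~\ref{prop:pospos}, whose own proof already invokes the present statement.

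The main obstacle I anticipate is the rigorous interchange of the limit $y\to0^+$ with the infinite $q$-sum defining each $\ch_X(\i/y)$: one must show that the subleading terms vanish uniformly and do not accumulate in the limit. I expect to treat this exactly as in the proof of Lemma~\ref{lem:lambdapos}, dominating the summands $\dim_\C(X_{\rho(X)+n})\,\e^{-(2\pi/y)(\rho(X)+n)}$ for $y\geq y_0$ by the convergent series obtained at a fixed $y_0>0$ and applying dominated convergence for series. The only additional bookkeeping is to organise the finitely many modules and their leading exponents so that $(V,0)$ is the strictly dominant contribution, which is precisely the step where Assumption~\ref{ass:p} is indispensable.
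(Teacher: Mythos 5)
The paper does not prove this proposition; it is quoted verbatim from \cite{DJX13}, Lemma~4.2, so there is no internal proof to compare against. Your argument is correct and is essentially the standard one from that reference: push the limit to the cusp via the $S$-transformation, observe that under Assumption~\ref{ass:p} the vacuum module gives the unique non-decaying term, and justify the interchange of limit and sum by dominated convergence exactly as in Lemma~\ref{lem:lambdapos}. Your care in sourcing the nonvanishing $\S_{V,W}\neq 0$ from the Verlinde formula rather than from Proposition~\ref{prop:pospos} is exactly right, since the latter's proof cites the present statement.

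One small point you should make explicit: Theorem~\ref{thm:verlinde} is stated under Assumption~\ref{ass:n}, which includes self-contragredience of $V$, and this is \emph{not} among the hypotheses of the proposition. It does, however, follow from Assumption~\ref{ass:p}: the contragredient $V'$ is an irreducible $V$-module with the same grading as $V$, hence $\rho(V')=\rho(V)=0$, and the positivity assumption then forces $V'\cong V$. With that one-line observation inserted before you invoke the Verlinde formula, the proof is complete.
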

\begin{prop}[\cite{DLN15}, Proposition~3.11]\label{prop:3.11}
Let $V$ satisfy Assumptions~\ref{ass:n}\ref{ass:p}. Consider the \mtc{} $\CC$ associated with $V$ from Huang's construction. Then for any irreducible $V$-module $W$
\begin{equation*}
d(W)=\qdim_V(W)
\end{equation*}
where $d(W)$ is the quantum dimension of the simple object $W$ in $\CC$.
\end{prop}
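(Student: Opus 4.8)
The plan is to reduce both quantities to the same ratio of entries of the modular $S$-matrix and then match the analytic and categorical descriptions of that matrix. The analytic side is already supplied by Proposition~\ref{prop:lem4.2}: under Assumptions~\ref{ass:n}\ref{ass:p} the limit defining $\qdim_V(W)$ exists, is finite and positive, and satisfies
\begin{equation*}
\qdim_V(W)=\frac{\S_{W,V}}{\S_{V,V}},
\end{equation*}
where $\S=\rho_V(S)$ is Zhu's $S$-matrix from Theorem~\ref{thm:zhumodinv} (here $\S_{V,V}\neq0$ by Proposition~\ref{prop:pospos}). It therefore suffices to show that the categorical dimension $d(W)$ of the simple object $W$ in Huang's \mtc{} $\CC$ equals this same ratio.

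For the categorical side I would use the standard description of the modular $S$-matrix of a \mtc{} as a matrix of double-braiding traces. For simple objects $X,Y\in\CC$ set $\widetilde{\S}_{X,Y}:=\tr_{X\boxtimes Y}(c_{Y,X}\circ c_{X,Y})$. Because the braiding with the tensor unit is trivial and the unit object of $\CC$ is $V$ itself, one has $\widetilde{\S}_{X,V}=\tr_X(\id_X)=d(X)$, and in particular $\widetilde{\S}_{V,V}=d(V)=1$. Hence, purely categorically,
\begin{equation*}
\frac{\widetilde{\S}_{W,V}}{\widetilde{\S}_{V,V}}=\frac{d(W)}{d(V)}=d(W).
\end{equation*}
The key step, which is also the main obstacle, is the identification of this categorical $S$-matrix with Zhu's analytic one: Huang's results underlying the \mtc{} structure of Theorem~\ref{thm:voamtc} and his proof of the Verlinde formula (Theorem~\ref{thm:verlinde}) show that Zhu's matrix $\S$ coincides with the normalised categorical $S$-matrix $\widetilde{\S}/\mathcal{D}$, where $\mathcal{D}^2=\sum_{X\in\Irr(V)}d(X)^2$. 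Since $\S$ and $\widetilde{\S}$ differ only by the overall scalar $\mathcal{D}$, the corresponding ratios agree, i.e.\ $\S_{W,V}/\S_{V,V}=\widetilde{\S}_{W,V}/\widetilde{\S}_{V,V}$.

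Combining the three displays then yields
\begin{equation*}
d(W)=\frac{\widetilde{\S}_{W,V}}{\widetilde{\S}_{V,V}}=\frac{\S_{W,V}}{\S_{V,V}}=\qdim_V(W),
\end{equation*}
which is the assertion. The two ratio identities are routine once the relevant formulas are in place; the genuine input is Huang's matching of the categorical and analytic $S$-matrices, which I would cite rather than reprove. I would also emphasise where the hypotheses enter: Assumption~\ref{ass:n} provides the \mtc{} and Verlinde machinery, while Assumption~\ref{ass:p} is exactly what guarantees, via Propositions~\ref{prop:lem4.2} and~\ref{prop:pospos}, the existence and positivity of the analytic limit and the non-vanishing of $\S_{V,V}$ that make the reduction legitimate.
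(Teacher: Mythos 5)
Your argument is correct. Note, however, that the paper does not prove this proposition at all: it is quoted directly from \cite{DLN15}, Proposition~3.11, without proof. Your reduction of both sides to the ratio $\S_{W,V}/\S_{V,V}$ --- the analytic side via Proposition~\ref{prop:lem4.2} and Proposition~\ref{prop:pospos}, the categorical side via $\widetilde{\S}_{X,\unit}=d(X)$ together with Huang's identification of Zhu's $\S$ with the normalised categorical $S$-matrix --- is essentially the argument given in that reference, so there is nothing to add beyond the citation you would make for Huang's matching theorem.
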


Quantum dimensions play an important rôle since they characterise simple-current modules:
\begin{prop}[\cite{DJX13}, Proposition~4.17]\label{prop:4.17b}
Let $V$ satisfy Assumptions~\ref{ass:n}\ref{ass:p}. Then $W\in\Irr(V)$ is a simple current if and only if $\qdim_V(W)=1$.
\end{prop}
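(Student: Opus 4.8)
The plan is to derive the equivalence directly from two results proved earlier, both valid under Assumptions~\ref{ass:n}\ref{ass:p}: the closed formula for the quantum dimension in Proposition~\ref{prop:lem4.2} and the $S$-matrix criterion for simple currents in Proposition~\ref{prop:4.17}. By Proposition~\ref{prop:lem4.2} the quantum dimension of any irreducible module $W\in\Irr(V)$ exists and satisfies $\qdim_V(W)=\S_{W,V}/\S_{V,V}$, a finite positive real number. By Proposition~\ref{prop:4.17}, $W$ is a simple current if and only if $\S_{V,W}=\S_{V,V}$. The whole argument consists in matching these two characterisations.

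The only step linking them is the behaviour of the $V$-row of the $S$-matrix. By Proposition~\ref{prop:pospos} we have $\S_{W,V}=\S_{V,W}\in\R_{>0}$ for all $W\in\Irr(V)$; in particular $\S_{V,V}>0$, so the quotient defining $\qdim_V(W)$ is well defined, and the entries $\S_{W,V}$ and $\S_{V,W}$ coincide. Hence I may rewrite $\qdim_V(W)=\S_{V,W}/\S_{V,V}$.

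Putting these together, $\qdim_V(W)=1$ is equivalent to $\S_{V,W}=\S_{V,V}$, which by Proposition~\ref{prop:4.17} is in turn equivalent to $W$ being a simple current; this settles both implications at once. I expect no genuine obstacle here, since all the analytic and categorical content has already been established in Propositions~\ref{prop:lem4.2}, \ref{prop:4.17} and~\ref{prop:pospos}: the proof merely transcribes the simple-current condition $\S_{V,W}=\S_{V,V}$ through the identity $\qdim_V(W)=\S_{V,W}/\S_{V,V}$. The one point worth verifying explicitly is that the positivity of $\S_{V,V}$ (guaranteeing that the quotient makes sense) and the symmetry $\S_{W,V}=\S_{V,W}$ are precisely what Proposition~\ref{prop:pospos} supplies, so that no additional hypothesis beyond Assumptions~\ref{ass:n}\ref{ass:p} is needed.
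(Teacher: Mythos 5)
Your argument is correct. The paper itself gives no proof of this statement --- it is simply quoted from \cite{DJX13} (indeed both Proposition~\ref{prop:4.17} and this proposition carry the same citation, ``\cite{DJX13}, Proposition~4.17'', being two formulations of the same result). Your derivation makes the link between the two formulations explicit using only material already stated in the paper: Proposition~\ref{prop:lem4.2} gives $\qdim_V(W)=\S_{W,V}/\S_{V,V}$ under Assumptions~\ref{ass:n}\ref{ass:p}, Proposition~\ref{prop:pospos} supplies the symmetry $\S_{W,V}=\S_{V,W}$ and the positivity $\S_{V,V}>0$, and Proposition~\ref{prop:4.17} converts the resulting condition $\S_{V,W}=\S_{V,V}$ into the simple-current property. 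All three inputs hold under exactly the stated hypotheses, so there is no gap; the only caveat is that this is a reduction to the $S$-matrix criterion rather than an independent proof, since that criterion is itself imported from \cite{DJX13}.
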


\minisec{Group-Like Fusion}
Finally, let us assume that the \voa{} $V$ has group-like fusion, i.e.\ that $V$ satisfies Assumption~\ref{ass:sn}. Then there is an abelian group structure on the set $F$ indexing the isomorphism classes of irreducible modules $\{W^\alpha\;|\;\alpha\in F\}$ and the fusion group $F_V=F$ carries the quadratic form $Q_\rho(\alpha)=\rho(W^\alpha)+\Z$. In the following, in addition to $Q_\rho$, we also consider the multiplicative quadratic form
$q_\rho\colon F_V\to\C^\times$,
\begin{equation*}
q_\rho(\alpha)=\e^{(2\pi\i)\rho(W^\alpha)}
\end{equation*}
for $\alpha\in F_V$.

In addition to the modular tensor category structure on the $V$-modules, the direct sum of all irreducible $V$-modules $A=\bigoplus_{\alpha\in F_V}W^\alpha$ carries the structure of an \aia{} by Theorem~\ref{thm:pre2.7} with the vertex operation on $A$ composed of the intertwining operators $\mathcal{Y}_{W^\alpha,W^\beta}^+(\cdot,x)$ for $\alpha,\beta\in F_V$ from Section~\ref{sec:aiascvoa1}. Part of this structure is the quadratic form $q_\Omega\colon F_V\to\C^\times$,
\begin{equation*}
q_\Omega(\alpha)=\Omega(\alpha,\alpha)
\end{equation*}
for $\alpha\in F_V$.

In the following we express the twist isomorphism $\theta_{W^\alpha}$, the Frobenius-Schur indicator $\nu(\alpha)$ in two different ways and the braiding $c_{W^\alpha,W^\beta}$ in terms of the quadratic forms $q_\Omega$ and $q_\rho$. Recall that the twist is defined by Huang to be
\begin{equation*}
\theta_{W^\alpha}=q_\rho(\alpha)\id_{W^\alpha}
\end{equation*}
for any irreducible module $W^\alpha$, $\alpha\in F_V$.

The following is an immediate consequence of Proposition~\ref{prop:fsiskew} and the braiding convention in Definition~\ref{defi:OmegaDef}.
\begin{prop}\label{prop:fsi1}
Let $V$ be a \voa{} satisfying Assumption~\ref{ass:sn}. Let $\alpha\in F_V$ with $2\alpha=0$, i.e.\ $W^\alpha$ is self-contragredient. Then
\begin{equation*}
\nu(\alpha)=\frac{q_\rho(\alpha)}{q_\Omega(\alpha)}.
\end{equation*}
\end{prop}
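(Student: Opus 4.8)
The plan is to compare two independent expressions for the same intertwining operator and read off the scalar. Since $2\alpha=0$ we have $W^\alpha\boxtimes_V W^\alpha\cong W^{2\alpha}=W^0=V$, so the fixed intertwining operator $\mathcal{Y}_{W^\alpha,W^\alpha}^+$ from Definition~\ref{defi:FandB} is of type $\binom{V}{W^\alpha\,W^\alpha}$. In particular it is an admissible choice for the operator $\mathcal{Y}$ appearing in Proposition~\ref{prop:fsiskew}, whose skew-symmetry formula was shown to hold for every operator of this type.

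First I would apply Proposition~\ref{prop:fsiskew} with $\mathcal{Y}=\mathcal{Y}_{W^\alpha,W^\alpha}^+$, obtaining
\begin{equation*}
\mathcal{Y}_{W^\alpha,W^\alpha}^+(w,x)u=\nu(\alpha)\e^{-(2\pi\i)\rho(\alpha)}\e^{xL_{-1}}\mathcal{Y}_{W^\alpha,W^\alpha}^+(u,\e^{-\pi\i}x)w
\end{equation*}
for all $u,w\in W^\alpha$. Next I would invoke the braiding convention of Definition~\ref{defi:OmegaDef} in the case $\beta=\alpha$: combining $\mathcal{Y}_{W^\alpha,W^\alpha}^+(w,x)u=\widetilde\Omega(\alpha,\alpha)\widetilde{\mathcal{Y}}_{W^\alpha,W^\alpha}^+(w,x)u$ with the defining relation $\widetilde{\mathcal{Y}}_{W^\alpha,W^\alpha}^+(w,x)u=\e^{xL_{-1}}\mathcal{Y}_{W^\alpha,W^\alpha}^+(u,\e^{-\pi\i}x)w$ and with $\widetilde\Omega(\alpha,\alpha)=\Omega(\alpha,\alpha)^{-1}=q_\Omega(\alpha)^{-1}$ yields
\begin{equation*}
\mathcal{Y}_{W^\alpha,W^\alpha}^+(w,x)u=q_\Omega(\alpha)^{-1}\e^{xL_{-1}}\mathcal{Y}_{W^\alpha,W^\alpha}^+(u,\e^{-\pi\i}x)w.
\end{equation*}

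Both displays express the same series as a scalar multiple of $\e^{xL_{-1}}\mathcal{Y}_{W^\alpha,W^\alpha}^+(u,\e^{-\pi\i}x)w$. Since $\mathcal{Y}_{W^\alpha,W^\alpha}^+$ is a non-zero element of the one-dimensional space $\V_{W^\alpha\,W^\alpha}^{V}$ and $\e^{xL_{-1}}$ is invertible, this common expression does not vanish identically, so I may equate the two scalar prefactors: $\nu(\alpha)\e^{-(2\pi\i)\rho(\alpha)}=q_\Omega(\alpha)^{-1}$. Recalling $\e^{(2\pi\i)\rho(\alpha)}=q_\rho(\alpha)$ and solving for $\nu(\alpha)$ gives $\nu(\alpha)=q_\rho(\alpha)/q_\Omega(\alpha)$, as claimed.

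There is essentially no serious obstacle here; the argument is a bookkeeping comparison of two normalisations of the same one-dimensional space of intertwining operators. The only points requiring a moment's care are the justification for cancelling the common factor (ensuring it is not the zero series, which follows from non-degeneracy of the chosen operator) and the verification that the branch conventions for $\e^{-\pi\i}x$ agree verbatim between Proposition~\ref{prop:fsiskew} and Definition~\ref{defi:OmegaDef}, so that no spurious root-of-unity factor is introduced in the comparison.
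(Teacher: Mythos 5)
Your proof is correct and is essentially the paper's own argument: the paper likewise derives $\mathcal{Y}_{W^\alpha,W^\alpha}^+(w,x)u=q_\Omega(\alpha)^{-1}\e^{xL_{-1}}\mathcal{Y}_{W^\alpha,W^\alpha}^+(u,\e^{-\pi\i}x)w$ from the braiding convention of Definition~\ref{defi:OmegaDef} and then compares with Proposition~\ref{prop:fsiskew} to equate the scalars. Your added remarks on non-vanishing of the common factor and on matching branch conventions are sound points of care but do not change the route.
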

\begin{proof}
The braiding (see Definition~\ref{defi:OmegaDef}) implies
\begin{equation*}
\mathcal{Y}_{W^\alpha,W^\alpha}^+(w,x)u=\frac{1}{q_\Omega(\alpha)}\e^{xL_{-1}}\mathcal{Y}_{W^\alpha,W^\alpha}^+(u,\e^{-\pi\i}x)w
\end{equation*}
for $u,w\in W^\alpha$ where $\mathcal{Y}_{W^\alpha,W^\alpha}^+(\cdot,x)$ is a certain choice of intertwining operator of type $\binom{V}{W^\alpha\,W^\alpha}$. Then the statement follows from Proposition~\ref{prop:fsiskew}.
\end{proof}

We can also compute the Frobenius-Schur indicator using Bántay's formula (see Proposition~\ref{prop:bantay}) and using that the quantum dimensions are all one in the case of group-like fusion and under Assumption~\ref{ass:p}.
\begin{prop}\label{prop:scefsi}
Let $V$ be a \voa{} satisfying Assumptions~\ref{ass:sn}\ref{ass:p}. Let $\alpha\in F_V$ with $2\alpha=0$, i.e.\ $W^\alpha$ is self-contragredient. Then
\begin{equation*}
\nu(\alpha)=\frac{1}{q_\rho(\alpha)^2}.
\end{equation*}
\end{prop}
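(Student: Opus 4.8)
The plan is to evaluate Bántay's formula (Proposition~\ref{prop:bantay}) directly, exploiting the fact that in the group-like fusion setting essentially everything on the right-hand side becomes explicit. First I would record three simplifications that hold under Assumptions~\ref{ass:sn}\ref{ass:p}. Since every irreducible $V$-module is a simple current, Proposition~\ref{prop:4.17b} gives $\qdim_V(W^\beta)=1$ for all $\beta\in F_V$, and by Proposition~\ref{prop:3.11} these agree with the categorical quantum dimensions, so $d(\beta):=d(W^\beta)=1$ for every $\beta$. Second, Huang's construction gives the twist as the scalar $\theta_{W^\beta}=q_\rho(\beta)\id_{W^\beta}$. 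Third, group-like fusion means $N_{W^\beta,W^\alpha}^{W^\gamma}=\delta_{\beta+\alpha,\gamma}$.

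Substituting these into Bántay's formula for $C=W^\alpha$ collapses the $\gamma$-summation to $\gamma=\beta+\alpha$, giving
\begin{equation*}
\nu(\alpha)=\frac{1}{D^2}\sum_{\beta\in F_V}\frac{\theta_\beta^2}{\theta_{\beta+\alpha}^2}=\frac{1}{D^2}\sum_{\beta\in F_V}\frac{q_\rho(\beta)^2}{q_\rho(\beta+\alpha)^2}.
\end{equation*}
Because $q_\rho$ is a quadratic form with associated bilinear form $b_\rho$ (Theorem~\ref{thm:fafqs}), one has $q_\rho(\beta+\alpha)=q_\rho(\beta)q_\rho(\alpha)b_\rho(\beta,\alpha)$, so the summand equals $q_\rho(\alpha)^{-2}b_\rho(\beta,\alpha)^{-2}$. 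Here the hypothesis $2\alpha=0$ enters: by bilinearity $b_\rho(\beta,\alpha)^2=b_\rho(\beta,2\alpha)=b_\rho(\beta,0)=1$, so the summand is the constant $q_\rho(\alpha)^{-2}$, whence $\sum_{\beta}=|F_V|\,q_\rho(\alpha)^{-2}$ and $\nu(\alpha)=|F_V|/(D^2q_\rho(\alpha)^2)$.

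The only substantive step that remains is the evaluation $D^2=|F_V|$, and this is where I expect the real work to lie; rather than invoke the Gauss-sum/anomaly identity $\sum_\beta\theta_\beta d(\beta)^2=D\,\e^{(2\pi\i)c/8}$, I would give a self-contained argument from non-degeneracy. Reindexing the double sum by $\gamma=\beta+\delta$ and again using $d\equiv1$ and the quadratic-form identity for $q_\rho$ yields
\begin{equation*}
D^2=\sum_{\beta,\gamma\in F_V}\frac{\theta_\beta}{\theta_\gamma}=\sum_{\delta\in F_V}q_\rho(\delta)^{-1}\sum_{\beta\in F_V}b_\rho(\beta,\delta)^{-1}.
\end{equation*}
The inner sum is the sum over $F_V$ of the character $\beta\mapsto b_\rho(\beta,\delta)^{-1}=\e^{-(2\pi\i)B_\rho(\beta,\delta)}$; by the non-degeneracy of $B_\rho$ (Theorem~\ref{thm:fafqs}) this character is trivial exactly when $\delta=0$, so orthogonality of characters forces the inner sum to be $|F_V|\delta_{\delta,0}$. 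Only $\delta=0$ survives, giving $D^2=q_\rho(0)^{-1}|F_V|=|F_V|$. Combining this with the preceding paragraph yields $\nu(\alpha)=|F_V|/(|F_V|\,q_\rho(\alpha)^2)=q_\rho(\alpha)^{-2}$, as claimed. (As a consistency check, Proposition~\ref{prop:fsi1} then forces $q_\rho(\alpha)^2/q_\Omega(\alpha)=q_\rho(\alpha)^{-2}$, i.e.\ $q_\Omega(\alpha)=q_\rho(\alpha)^4$ on 2-torsion, which is compatible with the relation $Q_\Omega=-Q_\rho$ of Theorem~\ref{thm:2.7} since $q_\rho(\alpha)^4=q_\rho(\alpha)^{-1}$ there.)
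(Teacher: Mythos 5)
Your argument is correct and follows essentially the same route as the paper's own proof: quantum dimensions are all $1$ by Propositions~\ref{prop:3.11} and~\ref{prop:4.17b}, $D^2=|F_V|$ is evaluated by the same reindexing-plus-orthogonality Gauss-sum argument using the non-degeneracy of $B_\rho$, and the Bántay sum collapses via $N_{\beta,\alpha}^{\gamma}=\delta_{\beta+\alpha,\gamma}$ with the bilinear term killed by $2\alpha=0$. (Only your parenthetical consistency check misquotes Proposition~\ref{prop:fsi1}, which gives $\nu(\alpha)=q_\rho(\alpha)/q_\Omega(\alpha)$ rather than $q_\rho(\alpha)^2/q_\Omega(\alpha)$, so the correct conclusion there is $q_\Omega(\alpha)=q_\rho(\alpha)^3=q_\rho(\alpha)^{-1}$ on 2-torsion; this does not affect the proof itself.)
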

\begin{proof}
Propositions \ref{prop:3.11} and \ref{prop:4.17b} show that the quantum dimensions $d(\alpha)=1$ for all $\alpha\in F_V$. Then Bántay's formula gives
\begin{align*}
D^2&=\sum_{\alpha,\beta\in F_V}\e^{(2\pi\i)(Q_\rho(\alpha)-Q_\rho(\beta))}=\sum_{\alpha,\beta\in F_V}\e^{(2\pi\i)(Q_\rho(\alpha+\beta)-Q_\rho(\beta))}\\
&=\sum_{\alpha,\beta\in F_V}\e^{(2\pi\i)(B_\rho(\alpha,\beta)+Q_\rho(\alpha))}=\sum_{\alpha\in F_V}\left(\sum_{\beta\in F_V}\e^{(2\pi\i)B_\rho(\alpha,\beta)}\right)\e^{(2\pi\i)Q_\rho(\alpha)}\\
&=\sum_{\alpha\in F_V}|F_V|\delta_{\alpha,0}\e^{(2\pi\i)Q_\rho(\alpha)}=|F_V|,
\end{align*}
using that $2\gamma=0$ and hence $2B_\rho(\alpha,\gamma)=0+\Z$ for all $\alpha\in F_V$. Then
\begin{align*}
\nu(\gamma)&=\frac{1}{|F_V|}\sum_{\alpha,\beta\in F_V}\e^{(2\pi\i)(2Q_\rho(\alpha)-2Q_\rho(\beta))}\delta_{\alpha+\gamma,\beta}=\frac{1}{|F_V|}\sum_{\alpha\in F_V}\e^{(2\pi\i)(2Q_\rho(\alpha)-2Q_\rho(\alpha+\gamma))}\\
&=\frac{1}{|F_V|}\sum_{\alpha\in F_V}\e^{(2\pi\i)(-2B_\rho(\alpha,\gamma)-2Q_\rho(\gamma))}=\e^{(2\pi\i)(-2Q_\rho(\gamma))}=\frac{1}{q_\rho(\alpha)^2}
\end{align*}
since $N_{\alpha,\gamma}^\beta=\delta_{\alpha+\gamma,\beta}$.
\end{proof}
Note that since $2\alpha=0$ for a self-contragredient module, $Q_\rho(\alpha)\in(1/4)\Z$ since $Q_\rho$ is a quadratic form and hence $1/q_\rho(\alpha)^2$ indeed lies in $\{\pm1\}$.

\begin{prop}\label{prop:braidingaia}
Let $V$ be a \voa{} satisfying Assumption~\ref{ass:sn}. Then the braiding isomorphism in the \mtc{} $\CC$ is given by
\begin{equation*}
c_{W^\alpha,W^\alpha}=q_\Omega(\alpha)q_\rho^2(\alpha)\id_{W^\alpha\boxtimes W^\alpha}
\end{equation*}
for all $\alpha\in F_V$.
\end{prop}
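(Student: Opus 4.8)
The plan is to use Huang's explicit description of the braiding together with Schur's lemma. Since $V$ has group-like fusion, $W^\alpha\boxtimes W^\alpha\cong W^{2\alpha}$ is irreducible, so by Schur's lemma (Proposition~\ref{prop:schur}) the braiding $c_{W^\alpha,W^\alpha}$, being a $V$-module endomorphism of $W^\alpha\boxtimes W^\alpha$, is a scalar multiple of $\id_{W^\alpha\boxtimes W^\alpha}$. To determine the scalar it suffices to evaluate $c_{W^\alpha,W^\alpha}$ on the tensor elements $w\boxtimes u=\mathcal{Y}_{W^\alpha,W^\alpha}^{\boxtimes}(w,1)u$, whose homogeneous components span $W^\alpha\boxtimes W^\alpha$.

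First I would invoke Huang's braiding formula in the form $\overline{c_{W^\alpha,W^\alpha}}(\mathcal{Y}_{W^\alpha,W^\alpha}^{\boxtimes}(w,1)u)=\e^{L_{-1}}\mathcal{Y}_{W^\alpha,W^\alpha}^{\boxtimes}(u,\e^{\pi\i})w$. The canonical intertwining operator $\mathcal{Y}_{W^\alpha,W^\alpha}^{\boxtimes}$ of type $\binom{W^\alpha\boxtimes W^\alpha}{W^\alpha\,W^\alpha}$ is, after identifying $W^\alpha\boxtimes W^\alpha$ with $W^{2\alpha}$, a non-zero multiple of the chosen operator $\mathcal{Y}_{W^\alpha,W^\alpha}^+$; since both the branch-change identity and the relation of Definition~\ref{defi:OmegaDef} are homogeneous of degree one in the intertwining operator, this proportionality constant cancels and I may work directly with $\mathcal{Y}_{W^\alpha,W^\alpha}^{\boxtimes}$.

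The computation then proceeds in two steps. By Remark~\ref{rem:fafqs} the powers of the formal variable occurring in $\mathcal{Y}_{W^\alpha,W^\alpha}^{\boxtimes}(u,x)w$ lie in $B_\rho(\alpha,\alpha)+\Z$, so changing the branch of the logarithm from $\e^{\pi\i}$ to $\e^{-\pi\i}$ multiplies the value by $\e^{(2\pi\i)B_\rho(\alpha,\alpha)}$; since $B_\rho(\alpha,\alpha)=2Q_\rho(\alpha)$ for the quadratic form $Q_\rho$, this factor equals $q_\rho(\alpha)^2$, giving $\mathcal{Y}_{W^\alpha,W^\alpha}^{\boxtimes}(u,\e^{\pi\i})w=q_\rho(\alpha)^2\,\mathcal{Y}_{W^\alpha,W^\alpha}^{\boxtimes}(u,\e^{-\pi\i})w$. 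Next, specialising the braiding convention of Definition~\ref{defi:OmegaDef} (with $\widetilde\Omega(\alpha,\alpha)=\Omega(\alpha,\alpha)^{-1}=q_\Omega(\alpha)^{-1}$) to $x=1$ yields $\e^{L_{-1}}\mathcal{Y}_{W^\alpha,W^\alpha}^{\boxtimes}(u,\e^{-\pi\i})w=q_\Omega(\alpha)\,\mathcal{Y}_{W^\alpha,W^\alpha}^{\boxtimes}(w,1)u$. Substituting both identities into Huang's formula gives $\overline{c_{W^\alpha,W^\alpha}}(\mathcal{Y}_{W^\alpha,W^\alpha}^{\boxtimes}(w,1)u)=q_\Omega(\alpha)q_\rho(\alpha)^2\,\mathcal{Y}_{W^\alpha,W^\alpha}^{\boxtimes}(w,1)u$, and since $c_{W^\alpha,W^\alpha}$ preserves the grading its value on the spanning homogeneous components is the scalar $q_\Omega(\alpha)q_\rho(\alpha)^2$, as claimed.

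I expect the main obstacle to be the bookkeeping with the branch of the complex logarithm: Huang's braiding is stated with the evaluation at $\e^{\pi\i}$ arising from the path $\gamma_1^-$ in the closed upper half-plane, whereas Definition~\ref{defi:OmegaDef} uses $\e^{-\pi\i}$, and one must check carefully that reconciling the two produces exactly the factor $\e^{(2\pi\i)B_\rho(\alpha,\alpha)}=q_\rho(\alpha)^2$ rather than its inverse. A secondary technical point is to justify that the formal evaluations at $z=1$ and $z=-1$ (and the action of $\e^{L_{-1}}$ there) are legitimate in the algebraic completion $\overline{W^\alpha\boxtimes W^\alpha}$, which is standard in Huang's $P(z)$-formalism.
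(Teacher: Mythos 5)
Your proof is correct and follows essentially the same route as the paper's: both combine Huang's description of $\overline{c_{W^\alpha,W^\alpha}}$ via $\mathcal{Y}^{\boxtimes}_{W^\alpha,W^\alpha}(u,\e^{\pi\i})w$ with the skew-symmetry relation of Definition~\ref{defi:OmegaDef} and the fact that the exponents lie in $B_\rho(\alpha,\alpha)+\Z$, so that the branch change from $\e^{\pi\i}$ to $\e^{-\pi\i}$ contributes $b_\rho(\alpha,\alpha)=q_\rho(\alpha)^2$ and the skew-symmetry contributes $q_\Omega(\alpha)$. The only cosmetic differences are that you make the Schur's-lemma reduction explicit and absorb the isomorphisms $\psi_{W^\alpha,W^\alpha}$ by a proportionality argument, where the paper carries them through the computation.
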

\begin{proof}
Recall the definition of the braiding isomorphism
\begin{equation*}
\overline{c_{W^\alpha,W^\beta}}\left(\mathcal{Y}_{W^\alpha,W^\beta}^{\boxtimes}(w,1)u\right)=\e^{L_{-1}}\mathcal{Y}_{W^\beta,W^\alpha}^{\boxtimes}(u,\e^{\pi\i})w
\end{equation*}
for $w\in W^\alpha$, $u\in W^\beta$ where $\mathcal{Y}_{W^\alpha,W^\beta}^{\boxtimes}(\cdot,x)$, $\alpha,\beta\in F_V$, are canonical intertwining operators of type $\binom{W^\alpha\boxtimes W^\beta}{W^\alpha\,W^\beta}$.

On the other hand, to construct the \aia{} in the case of group-like fusion, we chose intertwining operators $\mathcal{Y}_{W^\alpha,W^\beta}^+(\cdot,x)$ of type $\binom{W^{\alpha+\beta}}{W^\alpha\,W^\beta}$. This corresponds to choosing module isomorphisms $\psi_{W^\alpha,W^\beta}\colon W^\alpha\boxtimes W^\beta\to W^{\alpha+\beta}$ such that $\mathcal{Y}_{W^\alpha,W^\beta}^+(\cdot,x)=\psi_{W^\alpha,W^\beta}\mathcal{Y}_{W^\alpha,W^\beta}^{\boxtimes}(\cdot,x)$.

The skew-symmetry formula (see Definition~\ref{defi:OmegaDef}) gives
\begin{equation*}
\mathcal{Y}_{W^\alpha,W^\beta}^+(w,x)u=\frac{1}{\Omega(\beta,\alpha)}\e^{xL_{-1}}\mathcal{Y}_{W^\beta,W^\alpha}^+(u,\e^{-\pi\i}x)w,
\end{equation*}
which we reformulate as
\begin{equation*}
\psi_{W^\alpha,W^\beta}\left(\mathcal{Y}_{W^\alpha,W^\beta}^{\boxtimes}(w,x)u\right)=\frac{1}{\Omega(\beta,\alpha)b_\rho(\alpha,\beta)}\e^{xL_{-1}}\psi_{W^\beta,W^\alpha}\left(\mathcal{Y}_{W^\beta,W^\alpha}^{\boxtimes}(u,\e^{\pi\i}x)w\right)
\end{equation*}
using that the exponents of $\mathcal{Y}_{W^\beta,W^\alpha}^{\boxtimes}(\cdot,x)$ lie in $B_\rho(\alpha,\beta)+\Z$. Finally, setting $\alpha=\beta$ we obtain
\begin{equation*}
\mathcal{Y}_{W^\alpha,W^\alpha}^{\boxtimes}(w,x)u=\frac{1}{q_\Omega(\alpha)q_\rho(\alpha)^2}\e^{xL_{-1}}\mathcal{Y}_{W^\alpha,W^\alpha}^{\boxtimes}(u,\e^{\pi\i}x)w.
\end{equation*}
Inserting $z=1$ for $x$ and comparing with the definition of $c_{W^\alpha,W^\alpha}$ yields
\begin{equation*}
q_\Omega(\alpha)q_\rho(\alpha)^2\mathcal{Y}_{W^\alpha,W^\alpha}^{\boxtimes}(w,1)u=\e^{L_{-1}}\mathcal{Y}_{W^\alpha,W^\alpha}^{\boxtimes}(u,\e^{\pi\i})w=\overline{c_{W^\alpha,W^\beta}}\left(\mathcal{Y}_{W^\alpha,W^\alpha}^{\boxtimes}(w,1)u\right),
\end{equation*}
which proves the statement.
\end{proof}

\section{Simple-Current \AIA{}s II}\label{sec:aiascvoa2}

In this section we give two independent proofs of Theorem~\ref{thm:2.7}, i.e.\ we show that in the situation of Theorem~\ref{thm:pre2.7} the two quadratic forms associated with the \aia{} fulfil $Q_\Omega=-Q_\rho$ or $q_\Omega=1/q_\rho$. The first one is shorter and makes use of Proposition~\ref{prop:2.32} from \cite{DGNO10}. The second proof uses Bántay's remarkable formula for the Frobenius-Schur indicator. 

\minisec{First Proof of Theorem~\ref{thm:2.7}}

\begin{proof}[Proof of Theorem~\ref{thm:2.7}]
Consider the formula
\begin{equation*}
\tr_{W^\alpha\boxtimes W^\alpha}(c_{W^\alpha,W^\alpha}^{-1})=\tr_{W^\alpha}(\theta_{W^\alpha}^{-1})
\end{equation*}
from Proposition~\ref{prop:2.32}. We insert the formulæ $c_{W^\alpha,W^\alpha}=q_\Omega(\alpha)q_\rho^2(\alpha)\id_{W^\alpha\boxtimes W^\alpha}$ and $\theta_{W^\alpha}=q_\rho(\alpha)\id_{W^\alpha}$ from the previous section (see Proposition~\ref{prop:braidingaia}) and obtain
\begin{equation*}
d(2\alpha)\frac{1}{q_\Omega(\alpha)q_\rho^2(\alpha)}=d(\alpha)\frac{1}{q_\rho(\alpha)}.
\end{equation*}
Since we are in the situation of group-like fusion and Assumption~\ref{ass:p}, we conclude with Propositions \ref{prop:3.11} and \ref{prop:4.17b} that all quantum dimensions are one. This implies
\begin{equation*}
q_\Omega(\alpha)=1/q_\rho(\alpha)
\end{equation*}
for all $\alpha\in F_V$.
\end{proof}

\minisec{Second Proof of Theorem~\ref{thm:2.7}}

Combining the formulæ for the Frobenius-Schur indicator from Propositions \ref{prop:fsi1} and \ref{prop:scefsi} immediately gives the following result:
\begin{lem}\label{lem:qOmqrhoscm}
Let $V$ be a \voa{} satisfying Assumptions~\ref{ass:sn}\ref{ass:p}. Let $\alpha\in F_V$ with $2\alpha=0$, i.e.\ $W^\alpha$ is self-contragredient. Then
\begin{equation*}
q_\Omega(\alpha)=1/q_\rho(\alpha).
\end{equation*}
\end{lem}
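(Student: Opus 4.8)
The plan is to equate the two independent expressions for the Frobenius-Schur indicator $\nu(\alpha)$ that have already been obtained. By Proposition~\ref{prop:fsi1}, derived from the skew-symmetry relation of Proposition~\ref{prop:fsiskew} and the braiding convention of Definition~\ref{defi:OmegaDef}, one has for $\alpha\in F_V$ with $2\alpha=0$ the identity $\nu(\alpha)=q_\rho(\alpha)/q_\Omega(\alpha)$. By Proposition~\ref{prop:scefsi}, which rests on Bántay's formula (Proposition~\ref{prop:bantay}) together with the fact that all quantum dimensions equal one in the group-like situation under Assumption~\ref{ass:p} (Propositions~\ref{prop:3.11} and~\ref{prop:4.17b}), one has $\nu(\alpha)=1/q_\rho(\alpha)^2$ for the same $\alpha$.

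First I would equate these, obtaining
\begin{equation*}
\frac{q_\rho(\alpha)}{q_\Omega(\alpha)}=\frac{1}{q_\rho(\alpha)^2},
\end{equation*}
so that $q_\Omega(\alpha)=q_\rho(\alpha)^3$. To pass from this to the asserted identity $q_\Omega(\alpha)=1/q_\rho(\alpha)$, I would use that $Q_\rho$ is a quadratic form on $F_V$ (Theorem~\ref{thm:fafqs}): the relation $2\alpha=0$ forces $4Q_\rho(\alpha)=Q_\rho(2\alpha)=Q_\rho(0)=0$ in $\Q/\Z$, hence $Q_\rho(\alpha)\in(1/4)\Z$ and $q_\rho(\alpha)=\e^{(2\pi\i)Q_\rho(\alpha)}$ is a fourth root of unity. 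Thus $q_\rho(\alpha)^4=1$ and $q_\rho(\alpha)^3=q_\rho(\alpha)^{-1}$, which combined with $q_\Omega(\alpha)=q_\rho(\alpha)^3$ yields $q_\Omega(\alpha)=1/q_\rho(\alpha)$.

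Since both input propositions are already in hand, there is no real obstacle; the lemma is genuinely immediate. The only point deserving attention is the final reduction $q_\rho(\alpha)^3=q_\rho(\alpha)^{-1}$, which is specific to self-contragredient $\alpha$ (where $q_\rho(\alpha)$ is a fourth root of unity) and does not hold for general elements of $F_V$. I note also that this lemma establishes the identity $q_\Omega=1/q_\rho$ only on the subgroup of elements of order dividing two; extending it to all of $F_V$, which is the content of Theorem~\ref{thm:2.7}, is precisely what the rest of Section~\ref{sec:aiascvoa2} accomplishes by other means.
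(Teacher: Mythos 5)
Your proof is correct and follows exactly the route the paper takes: the paper's proof of this lemma consists of the single remark that combining Propositions~\ref{prop:fsi1} and \ref{prop:scefsi} "immediately gives" the result, and the reduction $q_\Omega(\alpha)=q_\rho(\alpha)^3=q_\rho(\alpha)^{-1}$ via $q_\rho(\alpha)^4=1$ (for $2\alpha=0$) is precisely the implicit step you have spelled out. Nothing further is needed.
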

This is the desired formula, however only for self-contragredient modules. Naturally, the Frobenius-Schur indicator only makes statements about self-contragredient modules. In the following we extend this formula to all modules by considering certain \aia{}s associated with lattices. First, we need another lemma:
\begin{lem}\label{lem:lemthm2.7}
Let $V$ satisfy Assumption~\ref{ass:sn}. Then for a subgroup $I\leq F_V$ the direct sum $V_I=\bigoplus_{\gamma\in I}W^\gamma$ admits the structure of a \voa{} whose vertex operators are composed of rescaled intertwining operators between the irreducible modules $W^\gamma$, $\gamma\in I$, if and only if
\begin{equation*}
q_\rho(\gamma)=1\quad\text{and}\quad q_\Omega(\gamma)=1
\end{equation*}
for all $\gamma\in I$.
\end{lem}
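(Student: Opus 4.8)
The plan is to recognise $V_I$ as a sub-\aia{} of the \aia{} $A=\bigoplus_{\gamma\in F_V}W^\gamma$ from Theorem~\ref{thm:pre2.7} and then apply the criterion of Proposition~\ref{prop:aiavoa}. First I would check that $V_I=\bigoplus_{\gamma\in I}W^\gamma$ is closed under the \aia{} vertex operation: since $I\leq F_V$ is a subgroup, the grading compatibility $v_nV^\beta\subseteq V^{\alpha+\beta}$ for $v\in V^\alpha$ keeps us inside $V_I$ whenever $\alpha,\beta\in I$. The restriction of the normalised abelian 3-cocycle $(F,\Omega)$ to $I\times I\times I$ and $I\times I$ is again a normalised abelian 3-cocycle, so $V_I$ inherits the structure of an \aia{} whose associated trace is $q_\Omega|_I$ and whose vertex operators are exactly the (rescaled) intertwining operators $\mathcal{Y}_{W^\alpha,W^\beta}^+$, $\alpha,\beta\in I$.

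By Proposition~\ref{prop:aiavoa}, this sub-\aia{} $V_I$ carries the structure of a \voa{} upon rescaling of its vertex operators if and only if (i) $Q_\Omega|_I=0$, (ii) its level equals $1$, (iii) the weight grading is bounded from below, and (iv) the graded components are finite-dimensional. Conditions (iii) and (iv) hold automatically: each $W^\gamma$ is an irreducible $V$-module, hence bounded from below with finite-dimensional homogeneous pieces, and $I$ is finite because $F_V$ is. Condition (i) is, by definition of $q_\Omega$, precisely the statement that $q_\Omega(\gamma)=1$ for all $\gamma\in I$. It therefore remains to show that condition (ii) is equivalent to $q_\rho(\gamma)=1$ for all $\gamma\in I$.

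For the level I would unwind Definition~\ref{defi:aia}: level $1$ means that (a) $B_\Omega(\alpha,\beta)=0$ in $\Q/\Z$ for all $\alpha,\beta\in I$ and (b) all $L_0$-eigenvalues on $V_I$ are integral. The $L_0$-eigenvalues on $W^\gamma$ lie in $\rho(W^\gamma)+\N$, so (b) is equivalent to $\rho(W^\gamma)\in\Z$, i.e. to $Q_\rho(\gamma)=0$, i.e. to $q_\rho(\gamma)=1$, for every $\gamma\in I$. Using Lemma~\ref{lem:OmegaEqualsB} we have $B_\Omega=-B_\rho$, and $B_\rho(\alpha,\beta)=Q_\rho(\alpha+\beta)-Q_\rho(\alpha)-Q_\rho(\beta)$ vanishes on $I\times I$ once $Q_\rho|_I=0$; hence (b) already forces (a), so the level equals $1$ exactly when $q_\rho(\gamma)=1$ for all $\gamma\in I$. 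Combining this with the translation of (i) gives the claimed equivalence.

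The delicate point — and the one I would flag as the main obstacle — is that at this stage of the argument the relation $Q_\Omega=-Q_\rho$ (Theorem~\ref{thm:2.7}) is not yet available, so the two conditions $q_\rho(\gamma)=1$ and $q_\Omega(\gamma)=1$ must be kept genuinely separate and may not be collapsed into one. Only the bilinear identity $B_\Omega=-B_\rho$ of Lemma~\ref{lem:OmegaEqualsB} (and the $2$-torsion case in Lemma~\ref{lem:qOmqrhoscm}) is known, which is exactly why the level condition splits cleanly into the $q_\rho$-part while the trivial-trace condition supplies the independent $q_\Omega$-part. Indeed, the whole purpose of this lemma is to feed these two separate conditions into a lattice construction in order to deduce $Q_\Omega=-Q_\rho$ in general, so care must be taken not to argue circularly.
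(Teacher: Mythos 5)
Your proof is correct and follows essentially the same route as the paper: restrict the abelian intertwining algebra structure to the subgroup $I$, translate the trivialisability of $(F|_I,\Omega|_I)$ into $q_\Omega=1$ on $I$ (via the trace map), and translate integrality of the weight grading into $q_\rho=1$ on $I$ — the paper simply argues this directly rather than formally invoking Proposition~\ref{prop:aiavoa}, which it cites for the remaining axioms just as you do. Your explicit unwinding of the level condition through $B_\Omega=-B_\rho$ and your caution about not invoking Theorem~\ref{thm:2.7} prematurely are both consistent with the paper's intent.
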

\begin{proof}
Clearly, if $V_I$ is to be a \voa{}, then all $W^\gamma$, $\gamma\in I$, have to be $\Z$-graded, which is the case if and only if $q_\rho=1$ on $I$. Also, in order to get the Jacobi identity for a \voa{}, it has to be possible to rescale the intertwining operators between the $W^\gamma$, $\gamma\in I$, such that all the factors $F$ and $B$ in the Jacobi identity for \aia{}s become 1, which is the case if and only if $(F|_I,\Omega_I)$ is in the trivial cohomology class in $H^3_\text{ab.}(I,\C^\times)$ or equivalently $q_\Omega=1$ on $I$.

Now assume that $q_\rho=1$ and $q_\Omega=1$ on $I$. Then there is a $\Z$-grading on $V_I$ and we can rescale the intertwining operators such that $F|_I=1$ and $\Omega|_I=1$ and hence they fulfil the Jacobi identity for a \voa{} on $V_I$. One can check that all the other axioms of a \voa{} are also fulfilled (see Proposition~\ref{prop:aiavoa}). Note for example that since the vacuum axioms hold in the original \aia{} and the corresponding abelian 3-cocycle is normalised by definition, the vacuum axioms also hold after rescaling.
\end{proof}

The next lemma shows that if $q_\rho=1$ on a subgroup of $F_V$, then already $q_\Omega=q_\rho=1$ on this subgroup.
\begin{lem}\label{lem:thm2.7}
Let $V$ be as in Assumptions~\ref{ass:sn}\ref{ass:p}. Let $I$ be a subgroup of the fusion group $F_V$ isotropic with respect to the quadratic form $q_\rho$. Then also $q_\Omega=1$ on $I$ and the direct sum
\begin{equation*}
V_I:=\bigoplus_{\gamma\in I}W^\gamma
\end{equation*}
carries the structure of a \voa{}, more precisely that of an $I$-graded simple-current extension of $V$.
\end{lem}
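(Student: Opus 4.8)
The plan is to first translate the isotropy hypothesis into a statement about the two quadratic forms, then to reduce the claim $q_\Omega|_I=1$ to the already established self-contragredient case, and finally to close the remaining gap by realising the fusion group as the discriminant form of a lattice.

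First I would record the purely formal consequences of isotropy. By assumption $q_\rho(\gamma)=1$, i.e.\ $Q_\rho(\gamma)=0$, for all $\gamma\in I$; since $I$ is a subgroup this forces $B_\rho(\gamma,\delta)=Q_\rho(\gamma+\delta)-Q_\rho(\gamma)-Q_\rho(\delta)=0$ for all $\gamma,\delta\in I$. By Lemma~\ref{lem:OmegaEqualsB} we have $B_\Omega=-B_\rho$, so $B_\Omega$ too vanishes on $I\times I$. Hence the restriction $q_\Omega|_I$ is a quadratic form with trivial associated bilinear form, which means it is a group homomorphism $I\to\C^\times$; the defining relation $q_\Omega(n\gamma)=q_\Omega(\gamma)^{n^2}$ of a quadratic form together with homomorphy $q_\Omega(n\gamma)=q_\Omega(\gamma)^n$ on $I$ then gives $q_\Omega(\gamma)^2=1$, so in fact $q_\Omega|_I\colon I\to\{\pm1\}$. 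This reduces the problem to showing that this sign character is trivial.

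On the $2$-torsion of $I$ triviality is already known: for $\gamma\in I$ with $2\gamma=0$ the module $W^\gamma$ is self-contragredient, and Lemma~\ref{lem:qOmqrhoscm} gives $q_\Omega(\gamma)=1/q_\rho(\gamma)=1$. The remaining difficulty -- and this I expect to be the crux -- is to control $q_\Omega(\gamma)$ on elements of order divisible by $4$, where the Frobenius-Schur indicator carries no information. To handle these I would invoke Remark~\ref{rem:fafqs} to choose a positive-definite even lattice $L$ with discriminant form $L'/L\cong(F_V,Q_\rho)$ as \fqs{}s. The lattice \voa{} $V_L$ again satisfies Assumptions~\ref{ass:sn}\ref{ass:p}, its fusion group is the same \fqs{} $(F_V,Q_\rho)$, and for it the associated \aia{} is built from an honest lattice and satisfies $q_\Omega^{V_L}=1/q_\rho$ throughout (Remark~\ref{rem:12.30}, item~\ref{enum:remaia3}). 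Under this identification the isotropic subgroup $I$ corresponds to an even overlattice $M$ with $L\subseteq M\subseteq L'$ and $M/L=I$, and $\bigoplus_{\gamma\in I}W^\gamma$ is literally the lattice \voa{} $V_M$, hence a genuine \voa{}. The key structural observation is that $q_\Omega$ is an invariant of the braided (ribbon) structure on the modules via the self-braiding $c_{W^\gamma,W^\gamma}=q_\Omega(\gamma)q_\rho(\gamma)^2$ of Proposition~\ref{prop:braidingaia}; on $I$ both the twists ($q_\rho|_I=1$) and the double braiding ($b_\Omega|_I=1$) are trivial, so the $I$-graded part carries a symmetric structure whose only remaining datum is the sign $q_\Omega(\gamma)\in\{\pm1\}$, i.e.\ whether the extension is ``bosonic'' or ``fermionic''. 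Comparing with $V_L$, where this datum is forced to be $+1$ by the existence of the honest even overlattice $M$, transfers triviality to $V$ and yields $q_\Omega|_I=1$.

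Finally, with $q_\rho|_I=1$ and $q_\Omega|_I=1$ in hand, Lemma~\ref{lem:lemthm2.7} immediately provides the \voa{} structure on $V_I=\bigoplus_{\gamma\in I}W^\gamma$ obtained by rescaling the intertwining operators. That this \voa{} is an $I$-graded simple-current extension of $V$ in the sense of Definition~\ref{defi:sce} is then routine: the $W^\gamma$, $\gamma\in I$, are pairwise non-isomorphic irreducible simple-current $V$-modules, $I$ is a finite abelian group, and the grading compatibility of the \aia{} product guarantees $Y(v,x)w\in W^{\alpha+\beta}((x))$ for $v\in W^\alpha$, $w\in W^\beta$. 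The main obstacle is squarely the transfer step for elements of order divisible by $4$; everything else is either formal or a direct appeal to the preceding lemmas.
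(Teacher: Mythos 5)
Your first two steps are sound and in fact mirror the paper's own opening move: isotropy of $I$ kills $B_\rho$ and hence $B_\Omega$ on $I\times I$, so $q_\Omega|_I$ is a character with values in $\{\pm1\}$, and Lemma~\ref{lem:qOmqrhoscm} settles those $\gamma\in I$ with $2\gamma=0$. The genuine gap is your third step, the ``transfer'' from the lattice \voa{} $V_L$ to $V$ -- and you correctly identify it as the crux, but the argument you give for it does not work. Knowing that $V$ and $V_L$ have isomorphic fusion groups and the same quadratic form $Q_\rho$ gives you no comparison whatsoever between the braided structures of their module categories; the claim that the remaining sign datum $q_\Omega(\gamma)\in\{\pm1\}$ must therefore agree for the two \voa{}s is precisely the statement that $Q_\Omega$ is determined by $Q_\rho$, i.e.\ Theorem~\ref{thm:2.7} -- the theorem this lemma exists to prove (it is the key ingredient of the paper's second proof). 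The Frobenius--Schur indicator, the only categorical invariant you invoke, sees nothing for elements of order divisible by $4$, since it is defined only for self-contragredient modules, i.e.\ for $2\gamma=0$ in $F_V$; so for such $\gamma\in I$ your argument pins down nothing, and the transfer is circular in spirit and incomplete in substance.

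The paper closes exactly this gap by a two-step extension rather than a lattice comparison. First, $\tilde q:=q_\rho q_\Omega$ is simultaneously a quadratic form and (since $b_\rho b_\Omega=1$) a homomorphism, whence $\tilde q(2\alpha)=\tilde q(\alpha)^2=1$ for \emph{all} $\alpha\in F_V$; in particular $q_\Omega$ is trivial on $H:=2I$, and Lemma~\ref{lem:lemthm2.7} already yields the \voa{} $V_H$. The crucial observation is then that $V_H$ again satisfies Assumptions~\ref{ass:sn}\ref{ass:p} with fusion group $H^\bot/H$ (Theorem~\ref{thm:scemodclass}, Corollary~\ref{cor:scefusion}, Propositions~\ref{prop:scerc2} and~\ref{prop:sceselfcon}), and inside $H^\bot/H$ every element of $I/H=I/2I$ \emph{is} $2$-torsion: the modules $X^{\gamma+H}$, $\gamma\in I$, are self-contragredient as $V_H$-modules even when $\gamma$ has order divisible by $4$ in $F_V$. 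So Lemma~\ref{lem:qOmqrhoscm} applies -- to $V_H$, not to $V$ -- giving $q_{\Omega_H}=1$ on $I/H$; a second application of Lemma~\ref{lem:lemthm2.7} produces the \voa{} $V_I$ extending $V_H$ and hence $V$, and the ``only if'' direction of that same lemma, read back on $V$, finally returns $q_\Omega|_I=1$. Your lattice intuition is not wasted: it is exactly what the paper uses \emph{after} this lemma, in the second proof of Theorem~\ref{thm:2.7}, where the lemma is applied to the tensor product $V\otimes V_L$ (with $L'/L\cong(F_V,-Q_\rho)$) and its diagonal isotropic subgroup -- the tensor product is the device that turns your informal comparison of two \voa{}s into an honest isotropic subgroup inside a single fusion group, which is the only setting in which the lemma can be brought to bear.
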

\begin{proof}
We define the quadratic form $\tilde{q}$ on $F_V$ by $\tilde{q}(\alpha):=q_\rho(\alpha)q_\Omega(\alpha)$ for all $\alpha\in F_V$. Then, since the bilinear forms $b_\rho$ and $b_\Omega^{-1}$ on $F_V$ are identical, the bilinear form associated with $\tilde{q}$ is constantly 1, which means that $\tilde{q}$ is a homomorphism. Then $\tilde{q}(\alpha)^4=\tilde{q}(2\alpha)=\tilde{q}(\alpha)^2$ and hence $\tilde{q}(2\alpha)=\tilde{q}(\alpha)^2=1$. This implies
that $\tilde{q}(2\alpha)=1$ for all $\alpha\in F_V$.

Consider $H:=2I=\{2\alpha\;|\;\alpha\in I\}$. By assumption, $q_\rho|_H=1$ and the above implies that $q_\Omega|_H=q_\rho|_H=1$. Then $V_{H}$ admits the structure of a \voa{} by Lemma~\ref{lem:lemthm2.7}.
The \voa{} $V_H$ is clearly an $H$-graded simple-current extension of $V$. By Theorem~\ref{thm:scemodclass} and Corollary~\ref{cor:scefusion} we know that the isomorphism classes of irreducible $V_H$-modules
\begin{equation*}
X^{\gamma+H}=\bigoplus_{\alpha\in\gamma+H}W^\gamma
\end{equation*}
are indexed by $\gamma+H\in H^\bot/H$ and are all simple currents with the fusion rules determined by the quotient group structure on $H^\bot/H$. Since we made Assumption~\ref{ass:p} for $V$, which is of CFT-type, also $V_H$ is of CFT-type and Assumption~\ref{ass:p} holds again for $V_H$. Then, in total, it follows from Propositions \ref{prop:scerc2} and \ref{prop:sceselfcon} that $V_H$ fulfils Assumptions~\ref{ass:sn}\ref{ass:p}. By Theorem~\ref{thm:pre2.7} the direct sum of the irreducible $V_H$-modules admits the structure of an \aia{} associated with some normalised abelian 3-cocycle $(F_H,\Omega_H)$ and corresponding quadratic form $q_{\Omega_H}$ on $H^\bot/H$. In addition, there is the quadratic form $q_{\rho_H}$ determined by the conformal weights of the irreducible $V_H$-modules.

We consider the subgroup $I/H=I/(2I)$ of $H^\bot/H$, which is isotropic with respect to $q_{\rho_H}$ by assumption. Moreover, by construction, every element of $I/H$ has order at most 2, i.e.\ every module $X^{\gamma+H}$, $\gamma+H\in I/H$, is self-contragredient (see Proposition~\ref{prop:sceselfcon}). This allows us to apply Lemma~\ref{lem:qOmqrhoscm} and conclude that $q_{\Omega_H}(\gamma+H)=q_{\rho_H}(\gamma+H)=1$ for all $\gamma+H\in I/H$. Then Lemma~\ref{lem:lemthm2.7} implies that
\begin{equation*}
\bigoplus_{\gamma+H\in I/H}X^{\gamma+H}=\bigoplus_{\alpha\in I}W^\alpha=V_I
\end{equation*}
has the structure of a \voa{} extending $V_{2I}$ and hence also extending $V$. Applying once again Lemma~\ref{lem:lemthm2.7} we conclude that $q_\Omega|_I=1$.
\end{proof}
This lemma already gives a solution to the extension problem for \voa{}s but we will formulate this as a separate theorem in the next section (see Theorem~\ref{thm:4.3}). We are now ready to give a second proof of Theorem~\ref{thm:2.7}.
\begin{proof}[Proof of Theorem~\ref{thm:2.7}]
The \voa{} $V$ satisfies Assumptions~\ref{ass:sn}\ref{ass:p}. For convenience we set $V^{(1)}:=V$. By Theorem~\ref{thm:pre2.7} the direct sum $A^{(1)}$ of the irreducible $V^{(1)}$-modules up to isomorphism, indexed by the fusion group $F_{V^{(1)}}$, has the structure of an \aia{}. There are two quadratic forms $Q^{(1)}_\rho$ and $Q^{(1)}_\Omega$ on $F_{V^{(1)}}$, whose associated bilinear forms are the negatives of each other.

It is well known that for every \fqs{} there is a positive-definite, even lattice $L$ whose discriminant form $L'/L$ is isomorphic to this \fqs{}. Hence, let $L$ be a positive-definite, even lattice with discriminant form $(L'/L,Q_L)\cong\overline{F_{V^{(1)}}}=(F_{V^{(1)}},-Q_\rho^{(1)})$. Consider the lattice \voa{} $V^{(2)}:=V_L$ associated with $L$, which also satisfies Assumptions~\ref{ass:sn}\ref{ass:p} and has fusion group $F_{V^{(2)}}\cong L'/L\cong\overline{F_{V^{(1)}}}$ (see Section~\ref{sec:latvoa}). To simplify notation we identify $F_{V^{(1)}}=F_{V^{(2)}}=L'/L$ as sets.

Let $A^{(2)}:=A_L$ be the \aia{} on the direct sum of all $V_L$-modules naturally associated with $L$ (see Theorem~\ref{thm:lataia}). Let us denote the quadratic forms on $A^{(2)}=A_L$ associated with the conformal weights and the abelian 3-cocycle by $Q_\rho^{(2)}$ and $Q_\Omega^{(2)}$, respectively. By construction of $A^{(2)}=A_L$ the quadratic forms $Q_\rho^{(2)}$ and $-Q_\Omega^{(2)}$ are identical and given by $Q_L$, i.e.\
\begin{equation*}
Q_\rho^{(2)}(\alpha)=-Q_\Omega^{(2)}(\alpha)=Q_L(\alpha)=-Q_\rho^{(1)}(\alpha)
\end{equation*}
for all $\alpha\in F_{V^{(2)}}=F_{V^{(1)}}$.

Now consider the tensor-product \voa{} $V^{(1)}\otimes V^{(2)}$. It satisfies Assumption~\ref{ass:sn}\ref{ass:p} (see Section~\ref{sec:tensor}) and its modules are parametrised the group $F_{V^{(1)}}\times F_{V^{(2)}}$, which is also the fusion group. We consider the \aia{} defined on the direct sum of all $V^{(1)}\otimes V^{(2)}$-modules, which is up to an abelian 3-coboundary on $F_{V^{(1)}}\times F_{V^{(2)}}$ the tensor-product \aia{} $A^{(1)}\otimes A^{(2)}$ of $A^{(1)}$ and $A^{(2)}$ defined in \cite{DL93}, Proposition~12.39. The quadratic form associated with the conformal weights for $A^{(1)}\otimes A^{(2)}$ is
\begin{equation*}
Q_\rho(\alpha,\beta)=Q_\rho^{(1)}(\alpha)+Q_\rho^{(2)}(\beta)=Q_\rho^{(1)}(\alpha)-Q_\rho^{(1)}(\beta),
\end{equation*}
while the one associated with the abelian 3-cocycle is
\begin{equation*}
Q_\Omega(\alpha,\beta)=Q_\Omega^{(1)}(\alpha)+Q_\Omega^{(2)}(\beta)=Q_\Omega^{(1)}(\alpha)+Q_\rho^{(1)}(\beta),
\end{equation*}
$(\alpha,\beta)\in F_{V^{(1)}}\times F_{V^{(2)}}$. Now consider the diagonal subgroup
\begin{equation*}
I:=\{(\alpha,\alpha)\;|\;\alpha\in F_{V^{(1)}}\}\leq F_{V^{(1)}}\times F_{V^{(2)}}.
\end{equation*}
This subgroup is by construction isotropic with respect to $Q_\rho(\alpha,\beta)$ and hence it is also isotropic with respect to $Q_\Omega(\alpha,\beta)$ by Lemma~\ref{lem:thm2.7}. This implies
\begin{equation*}
Q_\rho^{(1)}(\alpha)=-Q_\Omega^{(1)}(\alpha)
\end{equation*}
for all $\alpha\in F_V$.
\end{proof}

\section{Extension Theorem}

An important problem in the theory of \voa{}s is the \emph{extension problem}, i.e.\ the problem of combining modules of a given \voa{} into a new \voa{}. In the following we give a complete answer in the case of \voa{}s satisfying Assumptions~\ref{ass:sn}\ref{ass:p}.

The following theorem is an immediate consequence of the main theorem of this chapter (Theorem~\ref{thm:2.7}) and Lemma~\ref{lem:lemthm2.7} although it is also essentially the statement of Lemma~\ref{lem:thm2.7}, which was a key ingredient in the second proof of Theorem~\ref{thm:2.7}.
\begin{oframed}
\begin{thm}[Extension Theorem]\label{thm:4.3}
Let $V$ be as in Assumptions~\ref{ass:sn}\ref{ass:p}. Let $I$ be a subgroup of $F_V$ isotropic with respect to $Q_\rho=-Q_\Omega$. Then:
\begin{enumerate}
\item The direct sum
\begin{equation*}
V_I:=\bigoplus_{\gamma\in I}W^\gamma
\end{equation*}
carries the structure of a \voa{} satisfying Assumptions~\ref{ass:sn}\ref{ass:p} and is an $I$-graded simple-current extension of $V$.
\item The \voa{} structure on $V_I$ is the unique one up to isomorphism extending the \voa{} structures on $V$ and the $V$-module structure on the $W^\gamma$, $\gamma\in I$.
\item The fusion group of $V_I$ is given by $F_{V_I}\cong I^\bot/I$. Specifically, the irreducible $V_I$-modules up to isomorphism are
\begin{equation*}
X^{\alpha+I}:=\bigoplus_{\gamma\in\alpha+I}W^{\gamma},\quad\alpha+I\in I^\bot/I,
\end{equation*}
they are all simple currents, the fusion rules are determined by the quotient group structure and the conformal weight of the module $X^{\alpha+I}$ is in $Q_\rho(\alpha)=Q_\rho(\gamma)$ for all $\gamma\in\alpha+I$.
\item In particular, $V_I$ is holomorphic if and only if $I=I^\bot$.
\end{enumerate}
\end{thm}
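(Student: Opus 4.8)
The plan is to read part (4) off directly from the classification of irreducible $V_I$-modules, so the argument is essentially bookkeeping once parts (1)--(3) are in hand. By part (1) the \voa{} $V_I$ satisfies Assumption~\ref{ass:sn}, so in particular it is rational and holomorphicity is meaningful: by Definition~\ref{defi:hol}, $V_I$ is holomorphic if and only if the adjoint module is its only irreducible module up to isomorphism, i.e.\ if and only if $V_I$ has exactly one isomorphism class of irreducible modules.

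First I would record that $I\subseteq I^\bot$, so that the quotient $I^\bot/I$ and its cardinality are meaningful. This is immediate from isotropy: for $\alpha,\gamma\in I$ one has $\alpha+\gamma\in I$, and since $Q_\rho$ vanishes on $I$,
\[
B_\rho(\alpha,\gamma)=Q_\rho(\alpha+\gamma)-Q_\rho(\alpha)-Q_\rho(\gamma)=0+\Z,
\]
so every $\alpha\in I$ lies in $I^\bot$. Next, by Theorem~\ref{thm:scemodclass} applied to the $I$-graded simple-current extension $V_I$ produced in part (1) (equivalently, by part (3)), the irreducible $V_I$-modules are up to isomorphism exactly the $X^{\alpha+I}=\bigoplus_{\gamma\in\alpha+I}W^\gamma$ with $\alpha+I$ ranging over $I^\bot/I$, and distinct cosets give non-isomorphic modules. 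The adjoint module is the one attached to the trivial coset, since $X^{0+I}=\bigoplus_{\gamma\in I}W^\gamma=V_I$. Hence the number of isomorphism classes of irreducible $V_I$-modules equals $|I^\bot/I|$.

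Combining these observations, $V_I$ is holomorphic if and only if $|I^\bot/I|=1$, i.e.\ if and only if $I^\bot=I$. I do not expect any genuine obstacle in this last step: the substantive work, namely endowing $V_I$ with a \voa{} structure satisfying Assumptions~\ref{ass:sn}\ref{ass:p} (part (1), via Lemma~\ref{lem:thm2.7}) and classifying its irreducible modules (part (3), via Theorem~\ref{thm:scemodclass}), has already been carried out, and part (4) merely isolates the single-module case of that classification.
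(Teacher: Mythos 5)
Your proposal is correct and follows essentially the same route as the paper: the paper also treats part (4) as an immediate consequence of the classification of irreducible $V_I$-modules via Theorem~\ref{thm:scemodclass}, with parts (1)--(3) handled exactly as you indicate, by appealing to the preceding results (the \voa{} structure and simple-current extension from Lemma~\ref{lem:thm2.7}, rationality, $C_2$-cofiniteness and uniqueness from Proposition~\ref{prop:scerc2}, self-contragredience from Proposition~\ref{prop:sceselfcon}, and the fusion rules from Theorem~\ref{thm:scefusion} and Corollary~\ref{cor:scefusion}). Your explicit check that $I\subseteq I^\bot$ and the identification of the adjoint module with the trivial coset are exactly the bookkeeping the paper leaves implicit in its ``in particular'' formulation.
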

\end{oframed}
\begin{proof}
The existence of the \voa{} structure is explained above. In fact, $V_I$ is an $I$-graded simple-current extension of $V$ and hence simple. By Proposition~\ref{prop:scerc2} $V_I$ is rational and $C_2$-cofinite. Assumption~\ref{ass:p} for $V$ immediately yields that $V_I$ is of CFT-type. $V_I$ is self-contragredient by Proposition~\ref{prop:sceselfcon}. The irreducible modules are determined by Theorem~\ref{thm:scemodclass} and the fusion rules by Theorem~\ref{thm:scefusion} and Corollary~\ref{cor:scefusion}. The uniqueness follows from item~\ref{enum:sce3} of Proposition~\ref{prop:scerc2}.
\end{proof}
Note that in \cite{CKL15}, Theorem~1.3, the authors obtain a similar extension result under different assumptions.

\chapter{Orbifold Theory}\label{ch:fpvosa}

In this chapter we study the \fpvosa{} $V^G$ of a holomorphic \voa{} $V$ under a finite, cyclic group $G=\langle\sigma\rangle$ of automorphisms of $V$. We determine the isomorphism classes of irreducible $V^G$-modules using the Schur-Weyl type duality theory \cite{MT04} and determine the fusion algebra of $V^G$ by carefully studying the fusion rules and $S$-matrix entries using the Verlinde formula \cite{Hua08}, the modular invariance results from \cite{Zhu96,DLM00} and the results from Section~\ref{sec:scvoa} on simple-current \voa{}s. As a result we show that under suitable regularity assumptions $V^G$ has group-like fusion and that its fusion group is isomorphic to a central extension of $\Z_n$ by $\Z_n$ whose isomorphism type is determined by the conformal weight of the irreducible $\sigma$-twisted $V$-module $V(\sigma)$. We also describe the level of the corresponding trace functions. The study of $V^G$ and its representation theory is called \emph{orbifold theory}.

Under the positivity assumption the direct sum of irreducible $V^G$-modules is an \aia{} and the restriction of this direct sum to an isotropic subgroup with respect to the conformal weights is a \voa{} extending $V^G$. If this \voa{} is again holomorphic, we call it \emph{orbifold} of $V$.

\section{\FPVOSA{}s}

Given a \voa{} $V$ of central charge $c$ and a finite group $G$ of automorphisms of $V$, one can study the \emph{\fpvosa{}} $V^G$ defined as the vectors in $V$ being pointwise invariant under $G$ endowed with the restriction of the \voa{} structure from $V$ to $V^G$, which again yields a \voa{} structure of central charge $c$ on $V^G$.

In the following we describe results from \cite{DM97,Miy15,CM16} showing that if $V$ satisfies Assumption~\ref{ass:n}, then, for certain $G$, so does $V^G$. The next two results are well known:
\begin{thm}[\cite{DM97}, Theorem~4.4]\label{thm:4.4}
Let $V$ be a simple \voa{} and $G$ a finite group of automorphisms of $V$. Then the \fpvosa{} $V^G$ is again simple.
\end{thm}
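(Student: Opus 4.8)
The plan is to show that $V^G$ has no proper non-zero ideal by proving that any non-zero ideal $I$ of $V^G$ must contain the vacuum vector $\vac$; since an ideal of a \voa{} containing $\vac$ is the whole algebra, this forces $I=V^G$. Note that $V^G$ is indeed a \voa{} with the same vacuum $\vac$ and conformal vector $\omega$ as $V$ (both are fixed by every automorphism), so this reduction makes sense.

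The key input is a standard consequence of the Jacobi identity: in any \voa{}, the ideal (equivalently, the $V$-submodule of the adjoint module) generated by a single vector $a$ equals $\operatorname{span}_\C\{u_n a \mid u\in V,\ n\in\Z\}$. Granting this, I would pick any non-zero $a\in I$. Since $V$ is simple, the ideal of $V$ generated by $a$ is all of $V$; in particular $\vac$ lies in it, so there are finitely many $u^{(i)}\in V$ and $n_i\in\Z$ with
\[
\vac=\sum_i u^{(i)}_{n_i}a .
\]

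Now I would exploit that $a\in I\subseteq V^G$, hence $ga=a$ for all $g\in G$. Applying an automorphism $g$ to the above identity and using $g\vac=\vac$ together with $g(u_n b)=(gu)_n(gb)$ gives $\vac=\sum_i (gu^{(i)})_{n_i}a$ for every $g\in G$. Averaging over the finite group and using that $b\mapsto w_n b$ is linear in $w$, I obtain
\[
\vac=\sum_i \Bigl(\tfrac{1}{|G|}\sum_{g\in G} g\,u^{(i)}\Bigr)_{n_i} a=\sum_i \bar u^{(i)}_{n_i}a,\qquad \bar u^{(i)}:=\tfrac{1}{|G|}\sum_{g\in G}g\,u^{(i)}\in V^G .
\]
Each $\bar u^{(i)}$ is $G$-invariant and $a\in I$, so each summand $\bar u^{(i)}_{n_i}a$ lies in the ideal $I$ of $V^G$; therefore $\vac\in I$ and $I=V^G$.

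The only real obstacle is the cited generation lemma---that the submodule of $V$ generated by one vector is spanned by the first-order expressions $u_n a$ rather than by arbitrary iterated products $u^{(1)}_{n_1}\cdots u^{(r)}_{n_r}a$. This single-mode form is essential: the averaging step works precisely because $b\mapsto \bar u_n b$ is still an operator built linearly from $V^G$-modes, whereas the $G$-average of a genuine iterated product is not a product of $V^G$-modes and need not preserve $I$. The lemma itself follows by a weight-filtered induction from the commutator and associativity (iterate) formulae together with skew-symmetry, and I would either reproduce that induction or cite it as a known structural fact about simple \voa{}s.
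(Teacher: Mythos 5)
Your argument is correct and complete, modulo the generation lemma you flag, which is indeed a standard structural fact (it is Lemma~2.1 of the cited reference \cite{DM97}, and also Proposition~4.5.6 of \cite{LL04}): for a $V$-module $W$ and $w\in W$, the span of $\{u_nw \mid u\in V,\ n\in\Z\}$ is already the submodule generated by $w$, a consequence of weak associativity; for the adjoint module this span is a two-sided ideal, so simplicity of $V$ gives $\vac=\sum_i u^{(i)}_{n_i}a$, and your averaging step is sound for exactly the reason you isolate --- $u\mapsto u_n$ is linear and intertwines the $G$-action, so $\bar u^{(i)}\in V^G$ and $\bar u^{(i)}_{n_i}a\in I$ by the ideal property. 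The paper itself gives no proof of this theorem; it simply cites \cite{DM97}, where the simplicity of $V^G$ is obtained in the context of the Schur--Weyl type duality, i.e.\ the decomposition of the simple \voa{} $V$ into $W_\chi\otimes V^\chi$ over irreducible characters $\chi$ of $G$, from which $V^G$ appears as an irreducible $V^G$-module. Your route is genuinely more elementary: it needs no representation theory of $G$ beyond averaging, and only the single-mode generation lemma plus simplicity of $V$. What the duality approach buys in exchange is much more information (the classification of the irreducible $V^G$-submodules of $V$ and the Galois correspondence), which the paper uses later anyway; your argument buys a short, self-contained proof of just the simplicity statement. One minor remark: skew-symmetry is not actually needed for the generation lemma, since for a \voa{} submodules of the adjoint module are automatically two-sided ideals, as the paper notes in its discussion of ideals.
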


The following is a corollary to Proposition~\ref{prop:subsd} and Theorem~\ref{thm:4.4}:
\begin{cor}\label{cor:fixsd}
Let $V$ be a simple, self-contragredient \voa{} of CFT-type and $G$ a finite group of automorphisms of $V$. Then the \fpvosa{} $V^G$ is again simple, self-contragredient and of CFT-type.
\end{cor}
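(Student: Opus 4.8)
The plan is to derive Corollary~\ref{cor:fixsd} as a direct combination of the two cited results, applied to $U = V^G$ as a \vosa{} of $V$. First I would observe that the hypotheses of Proposition~\ref{prop:subsd} and Theorem~\ref{thm:4.4} match exactly the hypotheses at hand: $V$ is a simple, self-contragredient \voa{} of CFT-type and $G$ is a finite group of automorphisms. So the first step is to invoke Theorem~\ref{thm:4.4} to conclude that $V^G$ is simple. This is the single non-elementary input, and everything else will be bookkeeping on top of it.

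Next I would check that $V^G$ is genuinely a \vosa{} of $V$ in the sense of Definition~\ref{defi:subvoa}, i.e.\ a \emph{full} \vosa{} sharing the vacuum $\vac$ and the Virasoro vector $\omega$ with $V$. Since every automorphism $g\in G$ fixes both $\vac$ and $\omega$ by definition, we have $\vac,\omega\in V^G$, and $V^G$ is closed under the vertex operation because $g$ is a \voa{} homomorphism; hence $V^G$ is a full \vosa{} with the same $\omega$ and therefore the same $L_1$-action as on $V$. This is precisely the condition emphasised in the remark after Proposition~\ref{prop:subsd}, and it is what makes that proposition applicable. I would then apply Proposition~\ref{prop:subsd} with $U=V^G$: given that $V$ is simple, of CFT-type and self-contragredient, and that $U=V^G$ is simple (by the previous step), the proposition yields that $V^G$ is self-contragredient.

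It remains to record the CFT-type property for $V^G$. Since $V$ is of CFT-type, its grading is non-negative with $V_0=\C\vac$; as $V^G$ is a graded subspace of $V$, it inherits a non-negative $\N$-grading, and $(V^G)_0 = V_0 \cap V^G = \C\vac$ because $g\vac=\vac$ for all $g\in G$. Thus $V^G$ is of CFT-type as well. Collecting the three conclusions — simplicity from Theorem~\ref{thm:4.4}, self-contragredience from Proposition~\ref{prop:subsd}, and CFT-type from the inherited grading — completes the proof.

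I do not expect any real obstacle here: the substance lies entirely in the two results already cited, and the corollary is essentially an assembly. The only point requiring a little care, rather than difficulty, is verifying that $V^G$ shares the \emph{same} Virasoro vector as $V$ (so that $L_1 V^G_1 \subseteq L_1 V_1 = \{0\}$ is the relevant condition), since Proposition~\ref{prop:subsd} hinges on $\omega$ and hence $L_1$ being common to $U$ and $V$; this is immediate from $g\omega=\omega$.
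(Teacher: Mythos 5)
Your proposal is correct and follows exactly the paper's route: simplicity of $V^G$ from Theorem~\ref{thm:4.4}, self-contragredience from Proposition~\ref{prop:subsd} applied to the full \vosa{} $U=V^G$ (which shares $\vac$ and $\omega$ with $V$ since every $g\in G$ fixes them), and CFT-type from $G$ preserving the vacuum. Your write-up merely makes explicit the verifications the paper leaves implicit.
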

\begin{proof}
It only remains to show that $V^G$ is of CFT-type but this is trivial since $G$ preserves the vacuum vector $\vac$.
\end{proof}

The following two recent contributions are due to Miyamoto and Carnahan. They show that under certain assumptions the \fpvosa{} inherits rationality and $C_2$-cofiniteness. One cannot stress enough the importance of these results. 
\begin{thm}[\cite{Miy15}, Main Theorem]\label{thm:miymain}
Let $V$ be a simple, $C_2$-cofinite \voa{} of CFT-type and $G$ a finite, solvable group of automorphisms of $V$. Then the \fpvosa{} $V^G$ is also $C_2$-cofinite.
\end{thm}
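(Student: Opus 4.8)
The plan is to follow Miyamoto's two-stage strategy: reduce the solvable case to a cyclic group of prime order, and then settle the prime-cyclic case by a finiteness argument carried out inside $V$ itself. The reduction is the soft part; the prime-cyclic case is the engine that does the real work.

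First I would set up the reduction as an induction on $|G|$. If $G$ is trivial there is nothing to prove. If $G$ is a nontrivial solvable group, then $[G,G]$ is proper, so the abelianisation $G/[G,G]$ is a nontrivial finite abelian group and hence surjects onto $\Z_p$ for some prime $p$; pulling back gives a normal subgroup $N\trianglelefteq G$ with $G/N\cong\Z_p$. Then $V^G=(V^N)^{G/N}$. By Theorem~\ref{thm:4.4} the intermediate \fpvosa{} $V^N$ is again simple, it is of CFT-type because $N$ fixes $\vac$, and it is $C_2$-cofinite by the inductive hypothesis (as $N$ is a solvable group of smaller order). Thus the whole theorem follows once one proves directly the base engine: \emph{if $U$ is a simple, $C_2$-cofinite \voa{} of CFT-type and $\sigma$ is an automorphism of prime order $p$, then $U^\sigma$ is $C_2$-cofinite.}

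For the prime-cyclic case I would decompose $U=\bigoplus_{i\in\Z_p}U^i$ into the $\sigma$-eigenspaces, so that $U^0=U^\sigma$ is the \fpvosa{} and each $U^i$ is a $U^0$-module with $U^i\cdot U^j\subseteq U^{i+j}$; the goal is $\dim_\C U^0/C_2(U^0)<\infty$. The starting data is that $C_2$-cofiniteness of $U$ yields, by the Gaberdiel--Neitzke spanning argument, a finite generating set whose descending monomials $a^{(1)}_{-n_1}\cdots a^{(r)}_{-n_r}\vac$ (with $n_1\ge\cdots\ge n_r\ge1$) span $U$. Choosing the generators adapted to the eigenspace grading, such a monomial lies in $U^0$ precisely when the eigenvalue exponents of its factors sum to $0$ in $\Z_p$; the monomials built purely from $U^0$-generators are handled directly, so the difficulty concentrates on the \emph{mixed} monomials drawing factors from the nontrivial eigenspaces.

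The hard part will be to bound, modulo $C_2(U^0)$, the contributions of products $a_{-n}b$ with $a\in U^i$, $b\in U^{p-i}$ for $i\neq0$: these land in $U^0$ but need not lie in $C_2(U^0)$, and a priori infinitely many of them occur. Showing that only finitely many survive modulo $C_2(U^0)$ is the genuine content of the theorem, and I do not expect any purely formal argument to suffice. The method would be to use associativity together with the (skew-)commutativity of vertex operators to rewrite a high-weight mixed product as a combination of lower-weight ones, feeding the $C_2$-cofiniteness of the ambient $U$ back in at each reduction, and to organise the entire process as an induction on conformal weight by means of generating-function identities. This is exactly where Miyamoto's delicate estimates are required. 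Once this finiteness of the mixed sector is in place, the descending-monomial spanning set of $U$ projects to a finite spanning set of $U^0/C_2(U^0)$, which proves the prime-cyclic case and, combined with the reduction above, establishes the theorem.
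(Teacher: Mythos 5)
First, a point of orientation: the thesis does not prove this theorem at all --- it is quoted verbatim from \cite{Miy15} and used as a black box (together with Theorem~\ref{thm:a}) to obtain Theorem~\ref{thm:orb}. So there is no in-paper proof to compare your argument against; what can be assessed is whether your proposal would stand on its own.

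Your reduction of the solvable case to the prime-cyclic case is correct and is indeed the standard soft step: induction on $|G|$, a normal subgroup $N$ with $G/N\cong\Z_p$, the identity $V^G=(V^N)^{G/N}$, simplicity of $V^N$ from Theorem~\ref{thm:4.4}, and CFT-type because $N$ fixes $\vac$. The genuine gap is that the prime-cyclic case --- which you yourself call ``the engine that does the real work'' --- is not proved. You correctly locate the obstruction: the elements $a_{-n}b$ with $a\in U^i$, $b\in U^{p-i}$, $i\neq 0$, lie in $U^0$ but not in $C_2(U^0)$, and they occur in infinitely many weights. But the proposed remedy --- rewriting high-weight mixed products via associativity and skew-symmetry and inducting on conformal weight --- is exactly the naive attack that is known not to close: each rewriting re-introduces modes of elements of the nontrivial eigenspaces, so one never leaves the mixed sector, and the $C_2$-cofiniteness of $U$ cannot simply be ``fed back in'' because $C_2(U)\cap U^0$ is far larger than $C_2(U^0)$. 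This is precisely why the problem was open for so long. Miyamoto's actual argument does not project a Gaberdiel--Neitzke spanning set of $U$ down to $U^0$; it works in the module category of $U^0$, showing that each eigenspace $U^i$ is a $C_1$-cofinite $\N$-gradable $U^0$-module, establishing existence and finiteness properties of the iterated fusion products of these modules, and deriving a contradiction from the assumption that $U^0$ is not $C_2$-cofinite by producing a weak module whose behaviour is incompatible with the $C_2$-cofiniteness of $U$. Deferring all of this to ``Miyamoto's delicate estimates'' leaves the proposal as an outline of the easy half together with an acknowledgement that the hard half is missing; as a proof it is incomplete.
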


\begin{thm}[\cite{CM16}, Corollary~5.25]\label{thm:a}
Let $V$ be a simple, rational, $C_2$-cofinite, self-contragredient \voa{} of CFT-type and $G$ a finite, solvable group of automorphisms of $V$. Then the \fpvosa{} $V^G$ is also rational.
\end{thm}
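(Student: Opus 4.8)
The statement is a deep theorem of Carnahan and Miyamoto, so the plan is to follow their strategy while invoking the results already assembled above. Since $V^G$ is $C_2$-cofinite by Theorem~\ref{thm:miymain} and of CFT-type by Corollary~\ref{cor:fixsd}, Remark~\ref{rem:regrat} shows that proving rationality is equivalent to proving regularity; hence it suffices to show that every weak $V^G$-module is completely reducible. The first reduction exploits solvability: a finite solvable group admits a subnormal series $1=G_0\triangleleft G_1\triangleleft\cdots\triangleleft G_k=G$ with each factor $G_j/G_{j-1}$ cyclic. Because $G_{j-1}$ is normal in $G_j$, we have the iterated identity $V^{G_j}=(V^{G_{j-1}})^{G_j/G_{j-1}}$, and at each stage $V^{G_{j-1}}$ is simple (Theorem~\ref{thm:4.4}), self-contragredient and of CFT-type (Corollary~\ref{cor:fixsd}), and $C_2$-cofinite (Theorem~\ref{thm:miymain} applied to $V$ and the solvable subgroup $G_{j-1}$). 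So by induction on $k$ the whole statement reduces to a single claim: if $U$ satisfies Assumption~\ref{ass:n} and $\sigma\in\Aut(U)$ has finite order $n$, then $U^{\langle\sigma\rangle}$ is rational.

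The cyclic case is the heart of the matter. First I would set up the classification of simple modules: by the Miyamoto--Tanabe duality \cite{MT04}, every irreducible $U^{\langle\sigma\rangle}$-module occurs as a $U^{\langle\sigma\rangle}$-submodule of some irreducible $\sigma^i$-twisted $U$-module, $i\in\Z_n$. The rationality and $C_2$-cofiniteness of $U$ propagate to $\sigma^i$-rationality, so each admissible $\sigma^i$-twisted $U$-module is completely reducible and there are only finitely many irreducible ones. Using the operators $\phi_i(\sigma)$ of Section~\ref{sec:dlmmodinv}, which commute with the $U^{\langle\sigma\rangle}$-action, each irreducible $\sigma^i$-twisted $U$-module decomposes into $\langle\sigma\rangle$-eigenspaces, and these eigenspaces are exactly the irreducible $U^{\langle\sigma\rangle}$-modules. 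This produces a finite, explicit list of irreducibles.

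The decisive step is complete reducibility. Given an admissible $U^{\langle\sigma\rangle}$-module $W$, the plan is to induce it to an admissible $\sigma^i$-twisted $U$-module $\Ind(W)$, invoke $\sigma^i$-rationality to split $\Ind(W)$ into irreducible twisted $U$-modules, and then restrict back and decompose each irreducible twisted $U$-module into its $\langle\sigma\rangle$-eigenspaces. A Frobenius-reciprocity-type comparison between induction and restriction, combined with the fact that $W$ is recovered as a summand of the restriction of $\Ind(W)$, forces $W$ itself to be a direct sum of the irreducibles listed above. An equivalent, perhaps cleaner, formulation routes everything through Zhu algebras: one shows the twisted Zhu algebras $A_{\sigma^i}(U)$ are finite-dimensional and semisimple (from rationality plus $C_2$-cofiniteness of $U$), that $A(U^{\langle\sigma\rangle})$ sits compatibly inside $\bigoplus_{i\in\Z_n}A_{\sigma^i}(U)$ via the averaging by $\langle\sigma\rangle$, and concludes that $A(U^{\langle\sigma\rangle})$ is itself semisimple, which is equivalent to rationality of $U^{\langle\sigma\rangle}$.

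The main obstacle is precisely this descent of semisimplicity in the cyclic case: showing that no nonsplit self-extensions of irreducible $U^{\langle\sigma\rangle}$-modules survive. Since $U^{\langle\sigma\rangle}$ is far smaller than $U$, its representation theory could a priori be non-semisimple even though that of $U$ and its twisted sectors is not, so one must genuinely control how an arbitrary $U^{\langle\sigma\rangle}$-module embeds into the twisted $U$-modules and verify that $\langle\sigma\rangle$-invariance rules out non-trivial extensions. This is exactly the point where the finiteness inputs of \cite{Miy15,CM16}, built on the $C_2$-cofiniteness bound of Theorem~\ref{thm:miymain} and a careful analysis of induced twisted modules, are indispensable; I would cite their argument here rather than attempt to reproduce it.
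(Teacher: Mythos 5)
The paper does not prove this statement at all: Theorem~\ref{thm:a} is imported verbatim from \cite{CM16}, Corollary~5.25, as an external input (exactly like Theorem~\ref{thm:miymain}), with no argument given in the text. So there is no internal proof to compare yours against; the only meaningful comparison is with the strategy of the cited work itself.

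Measured against that, your outline is essentially faithful. The reduction from solvable $G$ to the cyclic case via a subnormal series with cyclic quotients, using $V^{G_j}=(V^{G_{j-1}})^{G_j/G_{j-1}}$ together with Theorem~\ref{thm:4.4}, Corollary~\ref{cor:fixsd} and Theorem~\ref{thm:miymain} to keep all the hypotheses of Assumption~\ref{ass:n} (with rationality as the induction hypothesis) alive at every stage, is precisely how the corollary follows from the main cyclic theorem of \cite{CM16}, and your use of Remark~\ref{rem:regrat} is sound. One genuine danger in your sketch of the cyclic case deserves flagging, however: the classification tools you invoke from this paper --- that every irreducible $U^{\langle\sigma\rangle}$-module sits inside a twisted $U$-module (Proposition~\ref{prop:thm3.3}) and the eigenspace decomposition of Theorem~\ref{thm:irrclass} --- are established in the paper only under the hypothesis that the fixed-point subalgebra \emph{already} satisfies Assumption~\ref{ass:n}; their proofs use Zhu's modular invariance and the Verlinde formula for $U^{\langle\sigma\rangle}$, i.e.\ they presuppose the rationality you are trying to prove. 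Using them as stated inside a proof of rationality would be circular. Your proposal avoids outright error only because, as you say yourself, the decisive step --- complete reducibility, equivalently semisimplicity of the relevant (twisted) Zhu algebras --- is deferred to \cite{CM16}, where it is carried out by arguments for $C_2$-cofinite VOAs that do not assume rationality of the fixed-point subalgebra. So your reduction is correct and your roadmap reasonable, but as a proof it amounts to a citation of \cite{CM16} with a correct preamble attached --- which, to be fair, is no less than what the paper itself does.
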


We collect the statements of the above results:
\begin{thm}[Main Theorem of Orbifold Theory]\label{thm:orb}
Let $V$ satisfy Assumption~\ref{ass:n}. Let $G$ be a finite, solvable group of automorphisms of $V$. Then the \fpvosa{} $V^G$ also satisfies Assumption~\ref{ass:n}.
\end{thm}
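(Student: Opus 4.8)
The plan is to assemble this statement directly from the four results collected immediately beforehand, since each of the five properties comprising Assumption~\ref{ass:n} is transferred to $V^G$ by one of them. Under Assumption~\ref{ass:n}, the \voa{} $V$ is simple, rational, $C_2$-cofinite, self-contragredient and of CFT-type, and by hypothesis $G$ is finite and solvable; I would verify each of the five properties for $V^G$ in turn, keeping track of exactly which hypotheses each invoked result requires.

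First I would dispatch the three ``cheap'' properties. Corollary~\ref{cor:fixsd} applies because $V$ is a simple, self-contragredient \voa{} of CFT-type and $G$ is finite (solvability is not needed here): it yields at once that $V^G$ is simple, self-contragredient and of CFT-type. Note that simplicity already follows from Theorem~\ref{thm:4.4} alone, and self-contragredience rests on Proposition~\ref{prop:subsd} via the fact that $G$ fixes $\omega$ and $\vac$, so that $\omega$ and hence $L_1$ agree on $V^G$ and $V$; CFT-type is immediate since $G\vac=\vac$ forces $(V^G)_0=\C\vac$.

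Next I would invoke the two deep theorems, which are precisely where the solvability hypothesis on $G$ is used. Theorem~\ref{thm:miymain} (Miyamoto) applies since $V$ is a simple, $C_2$-cofinite \voa{} of CFT-type and $G$ is finite and solvable, giving that $V^G$ is $C_2$-cofinite. Then Theorem~\ref{thm:a} (Carnahan--Miyamoto) applies since $V$ is simple, rational, $C_2$-cofinite, self-contragredient and of CFT-type and $G$ is finite and solvable, giving that $V^G$ is rational. Combining all five conclusions shows that $V^G$ satisfies Assumption~\ref{ass:n}.

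The genuinely hard content is entirely contained in Theorems~\ref{thm:miymain} and~\ref{thm:a}, which are cited rather than reproved; in the present framework there is no remaining obstacle, and the only point requiring care is bookkeeping, namely checking that each cited theorem's hypotheses are met—in particular that solvability of $G$ is available for the rationality and $C_2$-cofiniteness statements, while finiteness alone suffices for simplicity, self-contragredience and CFT-type.
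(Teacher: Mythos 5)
Your proof is correct and matches the paper's own argument exactly: the paper introduces Theorem~\ref{thm:orb} with the words ``We collect the statements of the above results,'' i.e.\ it is assembled from Corollary~\ref{cor:fixsd} (simplicity, self-contragredience, CFT-type, needing only finiteness of $G$) together with Theorems~\ref{thm:miymain} and~\ref{thm:a} ($C_2$-cofiniteness and rationality, where solvability enters). Your bookkeeping of which hypotheses each cited result requires is accurate.
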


\section{Schur-Weyl Type Duality}\label{sec:fpvosa}

We will see in the next section that under suitable regularity assumptions all the irreducible modules of $V^G$ appear as submodules of the $g$-twisted $V$-modules for $g\in G$. Therefore, in order to determine the irreducible modules of $V^G$, it suffices to study those appearing as submodules of twisted $V$-modules.

For a \voa{} V, let $W$ be an irreducible $g$-twisted $V$-module for some $g\in G$ (an untwisted module for $g=\id_V$). Then $W$ is an untwisted $V^G$-module and one can ask the following questions:
\begin{enumerate}
\item Is $W$ completely reducible as $V^G$-module?
\item What are the irreducible $V^G$-modules occurring as submodules of $W$?
\item What are the relations between irreducible $V^G$-submodules of $W$ and irreducible $V^G$-submodules of another irreducible twisted $V$-module $N$? 
\end{enumerate}
These questions have been completely answered in the untwisted case by \cite{DY02} (and \cite{DLM96,DM97}) and in the twisted case by \cite{MT04} (and \cite{Yam01}) in what is dubbed \emph{Schur-Weyl type duality}. In these works it is shown:
\begin{thm}[\cite{MT04}, Theorem~2]
Let $V$ be a simple \voa{}, $G$ a finite group of automorphisms of $V$ and $g\in G$. Let $W$ be an irreducible $g$-twisted $V$-module. Then $W$ is a completely reducible $V^G$-module.
\end{thm}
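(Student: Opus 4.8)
The plan is to realise $W$ as a module for the tensor product of $V^G$ with a finite-dimensional semisimple algebra built from a stabiliser subgroup of $G$, and to deduce complete reducibility from the semisimplicity of that algebra together with a double-commutant theorem.

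First I would isolate the relevant finite symmetry. For $h\in G$ let ${}^hW$ denote the $hgh^{-1}$-twisted $V$-module with the same underlying space and vertex operators $Y_{{}^hW}(v,x):=Y_W(h^{-1}v,x)$. Since $W$ is $g$-twisted, ${}^hW$ is again $g$-twisted precisely when $h$ centralises $g$, so I set
\[
H:=\{h\in G\mid hgh^{-1}=g,\ {}^hW\cong W\ \text{as }g\text{-twisted }V\text{-modules}\},
\]
a finite group. For each $h\in H$ choose a linear isomorphism $\phi(h)\in\GL(W)$ with $\phi(h)Y_W(v,x)\phi(h)^{-1}=Y_W(hv,x)$ for all $v\in V$. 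By the twisted Schur's lemma (Proposition~\ref{prop:twistedschur}) each $\phi(h)$ is unique up to a nonzero scalar, so the $\phi(h)$ form a projective representation of $H$; normalising them to commute with $L_0$ (again possible by uniqueness up to scalars) makes them grade-preserving, and they determine a $2$-cocycle $\alpha\in H^2(H,\C^\times)$ and an action of the twisted group algebra $\C^\alpha[H]$ on $W$.

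Next I would record the two commuting actions. Because every $h\in H$ fixes $V^G$ pointwise, the defining relation for $\phi(h)$ shows that every mode $v_k$ with $v\in V^G$ commutes with all $\phi(h)$; thus $V^G$ and $\C^\alpha[H]$ act on $W$ through commuting subalgebras of $\End(W)$. The goal is the double-commutant statement that the commutant of the associative algebra $\mathcal D$ generated by the $V^G$-modes equals the image of $\C^\alpha[H]$, and conversely. Granting this, since $\C^\alpha[H]$ is a finite-dimensional algebra over $\C$ it is semisimple by Maschke's theorem, and the standard double-centraliser decomposition gives
\[
W\cong\bigoplus_{\lambda}W^\lambda\otimes M_\lambda,
\]
where $M_\lambda$ runs over the irreducible $\C^\alpha[H]$-modules and the multiplicity spaces $W^\lambda$ are irreducible (or zero) $V^G$-modules; this exhibits $W$ as a direct sum of irreducible $V^G$-modules, i.e.\ as a completely reducible $V^G$-module.

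Everything therefore reduces to the double-commutant theorem, which is the main obstacle. Here I would exploit the finiteness built into the definition of a twisted module: $W=\bigoplus_{k\ge0}W_{\rho+k/n}$, where $n$ is the order of $g$, has finite-dimensional graded pieces and is bounded below. For each level I would pass to the finite-dimensional truncation $\Omega_m(W):=\bigoplus_{k\le m}W_{\rho+k/n}$, on which the $g$-twisted Zhu algebra $A_{g,m}(V)$ acts; irreducibility of $W$ as a $g$-twisted module translates (for $m$ large) into irreducibility of $\Omega_m(W)$ as an $A_{g,m}(V)$-module, so by Jacobson density the image of $A_{g,m}(V)$ is all of $\End(\Omega_m(W))$. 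Averaging the conjugation action of $\phi(H)$ then identifies the commutant of $\phi(H)$ in $\End(\Omega_m(W))$ with the $H$-invariant part of this image, which is generated by $V^G$ via the level-$m$ Zhu algebra of $V^G$; this is the finite-dimensional shadow of the desired double commutant. The genuinely delicate points are to make these finite-dimensional identifications compatible as $m$ grows and to pass to the limit over the exhausting filtration $\Omega_0(W)\subseteq\Omega_1(W)\subseteq\cdots$, and to control the twisted Zhu algebra of $V^G$ against the $H$-invariants of $A_{g,m}(V)$; simplicity of $V$ (hence of $V^G$, by Theorem~\ref{thm:4.4}) is what guarantees the modes act faithfully enough for these identifications to hold.
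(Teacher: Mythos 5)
Your proposal cannot be compared with a proof in the paper itself, because the paper gives none: this theorem is quoted verbatim from \cite{MT04}, and the paper only proves its cyclic-case analogue, Theorem~\ref{thm:irrdec}. Measured against the strategy of \cite{MT04} (and \cite{DY02,Yam01}), your overall architecture -- conjugate modules, projective action of a stabiliser via twisted Schur's lemma, a twisted group algebra commuting with the fixed-point modes, and a double commutant run through finite-dimensional truncations and twisted Zhu algebras -- is the right one in outline.

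There is, however, a genuine gap, and it sits exactly at the step you settle by averaging. Your entire construction lives on the single module $W$ and the stabiliser $H\leq C_G(g)$, and averaging over $H$ identifies the $H$-invariant part of the image of the modes of $V$ with the image of the modes of $V^H$, \emph{not} of $V^G$: if $a=o(v)$ is fixed by conjugation by all $\phi(h)$, then $a=o\bigl(\tfrac{1}{|H|}\sum_{h\in H}hv\bigr)$, and $\tfrac{1}{|H|}\sum_{h\in H}hv$ lies in $V^H$, with no reason to lie in $V^G$. Since $V^G\subseteq V^H$ with equality only when $H=G$, your argument (granting the density details) proves complete reducibility of $W$ over $V^H$, whereas the theorem asserts it over $V^G$, which is strictly smaller whenever $g$ is not central in $G$ or the stabiliser of $W$ is a proper subgroup of $C_G(g)$; for $G=S_3$ and $g$ a transposition you would only reach $V^{\Z_2}$. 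Complete reducibility does not descend for free along $V^G\subseteq V^H$, and $V^G$ is in general not the fixed-point algebra of any group acting on $V^H$ (and even when $H$ is normal in $G$, a bootstrap would invoke the untwisted analogue of the very theorem being proved), so the gap cannot be closed from inside your setup. The actual proofs repair this by bringing the whole group in from the start: one works with the direct sum $\bigoplus_{h}{}^{h}W$ over the full $G$-orbit of conjugate twisted modules -- each ${}^{h}W$ coincides with $W$ as a $V^G$-module, since the modes of $V^G$-vectors are unchanged by conjugation, so complete reducibility of the sum gives it for $W$ -- and runs the Schur--Weyl duality for the pair $\bigl(\C^\alpha[\,\cdot\,],V^G\bigr)$ on that sum; equivalently, \cite{MT04} assembles a $G$-graded ($G$-twisted) Zhu algebra out of all the $A_{h,n}(V)$, $h\in G$. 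This is precisely why the paper's Theorem~\ref{thm:irrdec} decomposes $A=\bigoplus_{i\in\Z_n}V(\sigma^i)$ rather than a single $V(\sigma^i)$. In the cyclic setting of this thesis one has $H=G$ and your argument would be fine; for the general statement as quoted it is not. A secondary point: the double-commutant core itself is only sketched, with the compatibility of the truncations $\Omega_m(W)$ as $m$ grows left open -- that is where the technical work of \cite{MT04} lies -- but it is the confusion of $V^H$ with $V^G$ that breaks the proof as written.
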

Moreover, the isomorphism classes of irreducible $V^G$-submodules of $W$ are explicitly determined (see Theorem~\ref{thm:irrdec} below in the case of cyclic $G$).

\minisec{Special Case: $V$ Holomorphic, $G$ Cyclic}

In the following, we will as a special case describe the situation where $V$ is a holomorphic, and hence simple and rational, and $C_2$-cofinite \voa{}.

In general, in order to determine the irreducible $V^G$-modules from the twisted $V$-modules it is necessary to have a classification of these twisted modules. For a general, say rational, \voa{} such a classification is not known. However, if $V$ is holomorphic, i.e.\ has only one untwisted, irreducible module up to isomorphism, then it is shown in \cite{DLM00} that it only has one $h$-twisted module $V(h)$ up to isomorphism for each finite-order automorphism $h$ of $V$, given that $V$ is also $C_2$-cofinite (see Theorem~\ref{thm:10.3}).

Moreover, we assume that $G=\langle\sigma\rangle$ is a cyclic group of automorphisms of $V$ of order $n\in\Ns$, i.e.\ $G\cong\Z_n$. This is the simplest possible case apart from the trivial case $G=\langle\id_V\rangle$.

By definition, the automorphism $\sigma$ of $V$ has to fulfil the relation
\begin{equation*}
\sigma Y(v,x)\sigma^{-1}=Y(\sigma v,x)
\end{equation*}
for all $v\in V$. In the following we define a representation of $G=\langle\sigma\rangle$ on the twisted $V$-modules $V(h)$, $h\in G$, as vector spaces such that an analogous relation is fulfilled on these modules. Consider the following statement:
\begin{prop}\label{prop:autaction}
Let $V$ be a \voa{} and $G$ a finite group of automorphisms of $V$. For $h\in G$ let $(W,Y_W)$ be an irreducible $h$-twisted $V$-module. For $g\in G$ we define a new $V$-module
\begin{equation*}
(W,Y_W)\cdot g=(W\cdot g,Y_{W\cdot g})
\end{equation*}
where we set $W\cdot g:=W$ as a vector space and $Y_{W\cdot g}(\cdot,x)$ is defined by
\begin{equation*}
Y_{W\cdot g}(v,x):=Y_W(gv,x)
\end{equation*}
for $v\in V$. Then $W\cdot g$ is an irreducible $g^{-1}hg$-twisted $V$-module.
\end{prop}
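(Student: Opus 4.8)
The plan is to verify directly that the triple $(W \cdot g, Y_{W \cdot g}, \omega)$ satisfies all the axioms of a $g^{-1}hg$-twisted $V$-module, by transporting each axiom for $(W, Y_W)$ through the automorphism $g$. The key observation driving every step is the defining relation $Y_{W \cdot g}(v,x) = Y_W(gv,x)$, so any identity involving a field $Y_{W \cdot g}(v,x)$ becomes the corresponding identity for $Y_W(gv,x)$, and I exploit that $g$ is a \voa{} automorphism fixing $\vac$ and $\omega$.

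First I would check the twisting compatibility axiom, which is where the twisting automorphism actually changes. Suppose $v \in V$ lies in the $g^{-1}hg$-eigenspace, i.e.\ $g^{-1}hg\,v = \xi_n^r v$ for the relevant root of unity; equivalently $h(gv) = \xi_n^r(gv)$, so $gv$ lies in the corresponding $h$-eigenspace of $V$. Since $(W,Y_W)$ is $h$-twisted, $Y_W(gv,x)$ has exponents in $-r/n + \Z$, and therefore $Y_{W\cdot g}(v,x) = Y_W(gv,x)$ has precisely the exponents required for a $g^{-1}hg$-twisted module. This is the conceptual heart of the statement and the step I expect to be the main (though still routine) point to get right, because one must track the conjugated eigenspace decomposition carefully.

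Next I would dispatch the remaining axioms, each essentially immediate. The left vacuum axiom holds because $g\vac = \vac$, so $Y_{W\cdot g}(\vac,x) = Y_W(g\vac,x) = Y_W(\vac,x) = \id_W$. The translation axiom follows from $g(Tv) = T(gv)$ (automorphisms intertwine the translation operator, by Definition~\ref{defi:hom}), giving $Y_{W\cdot g}(Tv,x) = Y_W(gTv,x) = Y_W(Tgv,x) = \partial_x Y_W(gv,x) = \partial_x Y_{W\cdot g}(v,x)$. For the twisted Jacobi identity, I write the identity for $Y_W$ with the substitutions $a \mapsto ga$, $b \mapsto gb$ and use that $g$ is an automorphism to rewrite $Y_W(Y(ga,x_0)gb,x_2) = Y_W(gY(a,x_0)b,x_2) = Y_{W\cdot g}(Y(a,x_0)b,x_2)$; together with $a \in V^r$ (for $g^{-1}hg$) corresponding to $ga$ lying in the $h$-eigenspace, the integer $r$ appearing in the $(x_1-x_0)/x_2)^{r/n}$ factor matches on both sides. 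Since $g$ fixes $\omega$, we have $Y_{W\cdot g}(\omega,x) = Y_W(\omega,x)$, so the Virasoro modes $L_k^{W\cdot g} = L_k^W$ are unchanged and the Virasoro relations at central charge $c$ as well as $L_0$ acting by weights carry over verbatim, which also shows $W\cdot g$ inherits the same $\C$-grading.

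Finally I would record irreducibility. A subspace $U \subseteq W \cdot g$ is a $V^G$- or twisted submodule for $Y_{W\cdot g}$ if and only if it is closed under all $Y_{W\cdot g}(v,x) = Y_W(gv,x)$; as $v$ ranges over $V$ so does $gv$, hence $U$ is $Y_{W\cdot g}$-invariant exactly when it is $Y_W$-invariant. Thus the submodule lattices of $(W\cdot g, Y_{W\cdot g})$ and $(W, Y_W)$ coincide, and since $W$ is irreducible as an $h$-twisted module, $W \cdot g$ is irreducible as a $g^{-1}hg$-twisted module. This completes the verification.
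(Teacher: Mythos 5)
Your verification is correct and complete: the eigenspace bookkeeping $h(gv)=\xi_n^r gv \iff (g^{-1}hg)v=\xi_n^r v$ is exactly what makes the twisting-compatibility axiom and the exponent $r/n$ in the twisted Jacobi identity come out right for $g^{-1}hg$, and the remaining axioms plus irreducibility transport through $g$ precisely as you describe. The paper states this proposition without proof (it is a standard fact in the twisted-module literature), and your direct axiom-by-axiom check is the argument the author is implicitly relying on.
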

In the special case we study $G$ is cyclic and in particular abelian. Hence for any automorphism $g\in G$ the module $V(h)\cdot g$ is again an $h$-twisted $V$-module. Moreover, since there is only one $h$-twisted $V$-module up to isomorphism by Theorem~\ref{thm:10.3}, we obtain
\begin{equation*}
V(h)\cdot g\cong V(h)
\end{equation*}
for all $g,h\in G$. In other words, $g$ induces an isomorphism of $V$-modules $\phi_h(g)\colon V(h)\to V(h)\cdot g$. But since $V(h)\cdot g$ and $V(h)$ are identical as vector spaces, this means, by definition of module isomorphism, that $\phi_h(g)$ is an automorphism of the vector space $V(h)$ obeying
\begin{equation*}
\phi_h(g)Y_{V(h)}(v,x)\phi_h(g)^{-1}=Y_{V(h)\cdot g}(v,x)=Y_{V(h)}(gv,x)
\end{equation*}
for all $v\in V$, which is a generalisation of the defining relation for a \voa{} automorphism.

For $g,h\in G$ fixed, the above relation uniquely determines the automorphism $\phi_h(g)$ up to a non-zero scalar factor in $\C$. Indeed, for $\phi_h(g),\phi'_h(g)\in\Aut_{\C}(V(h))$ obeying the relation consider $\phi_h(g)^{-1}\phi'_h(g)$, which fulfils
\begin{equation*}
\phi_h(g)^{-1}\phi'_h(g)Y_{V(h)}(v,x)\phi'_h(g)^{-1}\phi_h(g)=\phi_h(g)^{-1}Y_{V(h)}(gv,x)\phi_h(g)=Y_{V(h)}(v,x)
\end{equation*}
showing that $\phi_h(g)^{-1}\phi'_h(g)$ is a $V$-module automorphism of $V(h)$ and hence has to be multiplication by a non-zero scalar in $\C$ by Schur's lemma (Proposition~\ref{prop:twistedschur}).

We can now make use of this uniqueness: since
\begin{equation*}
\phi_h(k)\phi_h(g)Y_{V(h)}(v,x)\phi_h(g)^{-1}\phi_h(k)^{-1}=\phi_h(k)Y_{V(h)}(gv,x)\phi_h(k)^{-1}=Y_{V(h)}(kgv,x)
\end{equation*}
and
\begin{equation*}
\phi_h(kg)Y_{V(h)}(v,x)\phi_h(kg)^{-1}=Y_{V(h)}(kgv,x)
\end{equation*}
for $g,h,k\in G$, we know that $\phi_h(k)\phi_h(g)$ has to be proportional to $\phi_h(kg)$. In other words there is a proportionality constant $c_h\colon G\times G\to \C^\times$ such that
\begin{equation*}
\phi_h(k)\phi_h(g)=c_h(k,g)\phi_h(kg). 
\end{equation*}
This means that each $\phi_h$ defines a projective representation of $G$ on $V(h)$, $h\in G$. From the associative law applied to $\phi_h(k)\phi_h(g)\phi_h(l)$ it follows that
\begin{equation*}
c_h(k,g)c_h(kg,l)=c_h(k,gl)c_h(g,l)
\end{equation*}
for all $k,g,l\in G$, which means that $c_h$ is a 2-cocycle\footnote{Throughout this text we will always mean 2-cocycles for the \emph{trivial} group action.} in $Z^2(G,\C^\times)$. Moreover, rescaling $\phi_h(g)$ for the different $g\in G$ corresponds exactly to multiplying $c$ by a 2-coboundary in $B^2(G,\C^\times)$. As is well known, this means that the projective representation $\phi_h$ on $V(h)$ determines a well-defined cohomology class in $H^2(G,\C^\times)=Z^2(G,\C^\times)/B^2(G,\C^\times)$.

As is also known, the second cohomology group of $H^2(G,\C^\times)$ is trivial if $G$ is cyclic. Hence, the automorphisms can be chosen such that $c_h(k,g)=1$ for all $k,g\in G$. This means that each $\phi_h$ actually defines a (proper) representation of $G$ on $V(h)$, $h\in G$, via
\begin{equation*}
g.w=\phi_h(g)w
\end{equation*}
for $g\in G$ and $w\in V(h)$.

For each $h\in G$ there is a remaining $n$-fold freedom in the choice of the $\phi_h$. Indeed, let $\sigma$ be the generator of the cyclic group $G$, i.e.\ $G=\langle\sigma\rangle$. Then because of $\phi_h(\sigma)^n=\phi_h(\sigma^n)=\phi_h(\id_V)=\id_{V(h)}$, we know that we can redefine $\phi_h(\sigma)\mapsto\xi_n^j\phi_h(\sigma)$ where $\xi_n=\e^{(2\pi\i)1/n}$ and $j\in\Z_n$. On the other hand, since $\sigma$ generates $G$, all the other values of $\phi_h(g)$ for some $g\in G$ are completely determined by the choice of $\phi_h(\sigma)$.

We summarise our findings in the following proposition:
\begin{prop}\label{prop:phi}
Let $V$ be a holomorphic, $C_2$-cofinite \voa{} and let $G=\langle\sigma\rangle$ be a cyclic group of automorphisms of $V$ of order $n\in\Ns$. For $h\in G$ let $V(h)$ denote the unique irreducible $h$-twisted $V$-module up to isomorphism. Then for each $h\in G$ there is a representation
\begin{equation*}
\phi_h\colon G\to\Aut_\C(V(h))
\end{equation*}
of $G$ on the vector space $V(h)$ such that
\begin{equation*}
\phi_h(g)Y_{V(h)}(v,x)\phi_h(g)^{-1}=Y_{V(h)}(gv,x)
\end{equation*}
for all $g\in G$ and $v\in V$. These representations are unique up to multiplication of $\phi_h(\sigma)$ by an $n$-th root of unity.
\end{prop}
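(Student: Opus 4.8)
The plan is to realise $\phi_h(g)$ as the intertwiner witnessing an isomorphism of twisted modules, and then to upgrade the resulting \emph{projective} representation to a genuine one using the triviality of the Schur multiplier of a cyclic group. I would fix $h\in G$ once and for all, since the proposition treats each $h$ independently.

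First I would apply Proposition~\ref{prop:autaction}: for any $g\in G$ the pair $(V(h)\cdot g,Y_{V(h)\cdot g})$ with $Y_{V(h)\cdot g}(v,x)=Y_{V(h)}(gv,x)$ is an irreducible $g^{-1}hg$-twisted $V$-module. Since $G$ is cyclic, hence abelian, $g^{-1}hg=h$, so $V(h)\cdot g$ is again an irreducible $h$-twisted module, sharing the same underlying vector space as $V(h)$. By the uniqueness of twisted modules for holomorphic, $C_2$-cofinite $V$ (Theorem~\ref{thm:10.3}), there is a $V$-module isomorphism $\phi_h(g)\colon V(h)\to V(h)\cdot g$. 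Because the two modules coincide as vector spaces, $\phi_h(g)\in\Aut_\C(V(h))$, and unwinding the definition of module isomorphism yields exactly the intertwining relation $\phi_h(g)Y_{V(h)}(v,x)\phi_h(g)^{-1}=Y_{V(h)}(gv,x)$.

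Next I would pin down the ambiguity. Schur's lemma for twisted modules (Proposition~\ref{prop:twistedschur}) shows that any two operators satisfying this relation differ by an element of $\End_V(V(h))=\C$, so $\phi_h(g)$ is determined up to a non-zero scalar. Composing the relations for $g$ and $k$, both $\phi_h(k)\phi_h(g)$ and $\phi_h(kg)$ conjugate $Y_{V(h)}(v,x)$ to $Y_{V(h)}(kgv,x)$, whence $\phi_h(k)\phi_h(g)=c_h(k,g)\phi_h(kg)$ for some $c_h(k,g)\in\C^\times$. Associativity applied to $\phi_h(k)\phi_h(g)\phi_h(l)$ forces $c_h$ to be a $2$-cocycle in $Z^2(G,\C^\times)$, while rescaling the individual $\phi_h(g)$ alters $c_h$ by a $2$-coboundary; thus $\phi_h$ determines a well-defined class in $H^2(G,\C^\times)$.

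Finally comes the key input: for the cyclic group $G\cong\Z_n$ the Schur multiplier $H^2(\Z_n,\C^\times)$ is trivial. Hence $c_h$ is a coboundary, and after an appropriate rescaling of the $\phi_h(g)$ one has $c_h\equiv 1$, so $\phi_h$ becomes an honest representation of $G$ on $V(h)$. The residual freedom is precisely the choice of $\phi_h(\sigma)$: since $\sigma$ generates $G$ and $\phi_h(\sigma)^n=\phi_h(\id_V)=\id_{V(h)}$, every replacement $\phi_h(\sigma)\mapsto\xi_n^j\phi_h(\sigma)$ with $j\in\Z_n$ again satisfies $(\xi_n^j\phi_h(\sigma))^n=\id_{V(h)}$ and yields another valid representation, and these exhaust the genuine-representation rescalings. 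The one step I would flag as the real content is this passage from a projective to a proper representation; the construction of $\phi_h(g)$ and the scalar uniqueness are formal consequences of the quoted lemmas, whereas trivialising $c_h$ genuinely relies on the vanishing of $H^2(\Z_n,\C^\times)$, a standard but non-vacuous fact about cyclic groups.
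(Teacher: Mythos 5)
Your argument is correct and is essentially identical to the paper's own derivation: the paper likewise constructs $\phi_h(g)$ from Proposition~\ref{prop:autaction} and the uniqueness of twisted modules, pins down the scalar ambiguity via the twisted Schur's lemma, extracts the $2$-cocycle $c_h$, and trivialises it using $H^2(\Z_n,\C^\times)=1$, ending with the same observation about the $n$-fold freedom in $\phi_h(\sigma)$. No gaps to report.
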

\begin{rem}\label{rem:phi0}
For the unique isomorphism class of untwisted $V$-modules $V(\id_V)$ we choose $V$ itself as a representative. Then $\phi_{\id_V}(g):=g$ is a suitable choice for the automorphisms on $V(\id_V)=V$ and we will make this choice in the following.
\end{rem}

\minisec{Irreducible $V^G$-Submodules}
We describe the irreducible $V^G$-modules appearing as submodules of the $V(h)$, $h\in G$. This is treated in \cite{MT04}. We only discuss the special case of a cyclic group $G$.

The representation of $G$ on $V(h)$ commutes with the module action of $V^G$ on $V(h)$. Indeed, for $v\in V$ with $gv=v$ for all $g\in G$ we obtain $[\phi_h(g),Y_{V(h)}(v,x)]=0$. In particular, since the Virasoro vector $\omega$ is in $V^G$, $\phi_h(g)$ commutes with the modes of $\omega$ and hence the representation $\phi_h$ on $V(h)$ restricts to the finite-dimensional weight spaces $V(h)_\lambda$, $\lambda\in\C$. Then, since $\phi_h(\sigma)$ has order dividing $n$, $V(h)$ decomposes into a direct sum of eigenspaces of $\phi_h(\sigma)$. The possible eigenvalues are the $n$-th roots of unity.

The representation of $G$ on $V(h)$ extends in the obvious way to a representation of the group algebra $\C[G]$ on $V(h)$. Following \cite{MT04} we decompose $V(h)$ into irreducible $\C[G]$-modules. The irreducible complex representations of the cyclic group $G$ are one-dimensional because $G$ is abelian and therefore identical to multiplication by the irreducible characters. These are given by
\begin{equation*}
\lambda_j\colon G\to U_n,\quad \lambda_j(\sigma^k)=\xi_n^{jk},
\end{equation*}
$j,k\in\Z_n$, where $\xi_n=\e^{(2\pi\i)1/n}$ is a primitive $n$-th root of unity and $U_n=\langle\xi_n\rangle$. The irreducible characters of $\C[G]$ are the linear continuations $\lambda_j\colon\C[G]\to\C$.

Each $V(h)$ is completely reducible as a $\C[G]$-module. Write $h=\sigma^i$ for some $i\in\Z_n$. Let $W^{(i,j)}$ be the direct sum of the irreducible $\C[G]$-submodules of $V(\sigma^i)$ isomorphic to the representation $\lambda_j$. In other words, let $W^{(i,j)}$ be the eigenspace in $V(\sigma^i)$ of $\phi_{\sigma^i}(\sigma)$ corresponding to the eigenvalue $\xi_n^j$. Then as $\C[G]$-modules,
\begin{equation*}
V(\sigma^i)=\bigoplus_{j\in\Z_n}W^{(i,j)}.
\end{equation*}
One can then show (special case of Theorem~3.9 in \cite{Yam01}) that the $W^{(i,j)}$ are irreducible $V^G$-modules for all $i,j\in\Z_n$. More precisely:
\begin{thm}[Duality Theorem of Schur-Weyl Type, special case of \cite{MT04}, Theorem~2]\label{thm:irrdec}
Let $V$ be a holomorphic, $C_2$-cofinite \voa{} and $G=\langle\sigma\rangle$ a cyclic group of automorphisms of $V$ of order $n\in\Ns$. Let $A:=\bigoplus_{i\in\Z_n}V(\sigma^i)$ be the direct sum of all twisted $V$-modules. Then:
\begin{enumerate}
\item $A$ decomposes as a $\C[G]\otimes V^G$-module into the direct sum
\begin{equation*}
A=\bigoplus_{i,j\in\Z_n}W^{(i,j)}.
\end{equation*}
\item Each $W^{(i,j)}$, $i,j\in\Z_n$, is an irreducible $V^G$-module.
\item $W^{(i_1,j_1)}$ and $W^{(i_2,j_2)}$ are isomorphic as $V^G$-modules if and only if $i_1=i_2$ and $j_1=j_2$.
\end{enumerate}
\end{thm}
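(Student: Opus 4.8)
The plan is to prove the three assertions of Theorem~\ref{thm:irrdec} in turn, relying on the general Schur-Weyl type duality of \cite{MT04}, Theorem~2, together with the structural facts assembled in this section. The backbone is the observation that, for a cyclic $G=\langle\sigma\rangle$, the group algebra $\C[G]$ is semisimple with exactly $n$ one-dimensional irreducible representations $\lambda_j$, $j\in\Z_n$, and that via the representations $\phi_{\sigma^i}$ from Proposition~\ref{prop:phi} each twisted module $V(\sigma^i)$ becomes a $\C[G]$-module whose action commutes with the $V^G$-action. This is the crucial compatibility: for $v\in V^G$ we have $gv=v$, so the defining relation $\phi_{\sigma^i}(g)Y_{V(\sigma^i)}(v,x)\phi_{\sigma^i}(g)^{-1}=Y_{V(\sigma^i)}(gv,x)=Y_{V(\sigma^i)}(v,x)$ yields $[\phi_{\sigma^i}(g),Y_{V(\sigma^i)}(v,x)]=0$, so that $A=\bigoplus_{i\in\Z_n}V(\sigma^i)$ is genuinely a $\C[G]\otimes V^G$-module.

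For item~(1), I would decompose each $V(\sigma^i)$ into $\phi_{\sigma^i}(\sigma)$-eigenspaces. Since $\phi_{\sigma^i}(\sigma)^n=\phi_{\sigma^i}(\sigma^n)=\phi_{\sigma^i}(\id_V)=\id_{V(\sigma^i)}$, the operator $\phi_{\sigma^i}(\sigma)$ is diagonalisable with eigenvalues among the $n$-th roots of unity $\xi_n^j$, $j\in\Z_n$, and the $\xi_n^j$-eigenspace is precisely the isotypic component $W^{(i,j)}$ for the character $\lambda_j$. This gives $V(\sigma^i)=\bigoplus_{j\in\Z_n}W^{(i,j)}$ as $\C[G]$-modules, and summing over $i$ gives the claimed decomposition $A=\bigoplus_{i,j\in\Z_n}W^{(i,j)}$. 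For item~(2), the irreducibility of each $W^{(i,j)}$ as a $V^G$-module is exactly the content of the Schur-Weyl type duality: because $G$ is abelian the isotypic component for a one-dimensional character is itself an irreducible $\C[G]$-submodule, and Theorem~2 of \cite{MT04} (whose hypotheses are met since $V$ is holomorphic, hence simple, and $C_2$-cofinite) asserts that such isotypic pieces of an irreducible twisted module are irreducible over $V^G$. I would cite this directly rather than reprove it.

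For item~(3), the ``if'' direction is trivial: $i_1=i_2$ and $j_1=j_2$ give literally the same space. The substance is the ``only if'' direction, which splits into two independent obstructions to isomorphism. First, if $i_1\neq i_2$ then $W^{(i_1,j_1)}\subseteq V(\sigma^{i_1})$ and $W^{(i_2,j_2)}\subseteq V(\sigma^{i_2})$ sit inside twisted modules with different twisting, so their conformal weights lie in distinct cosets modulo $(1/n)\Z$ relative to $\rho_{i_1}$ and $\rho_{i_2}$; more robustly, the distinctness of irreducible $V^G$-submodules coming from non-isomorphic twisted modules is again part of the statement of \cite{MT04}, Theorem~2, which I would invoke. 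Second, for fixed $i$ but $j_1\neq j_2$, the two eigenspaces $W^{(i,j_1)}$ and $W^{(i,j_2)}$ are non-isomorphic as $V^G$-modules; here the separation is detected by the $G$-action, which commutes with $V^G$ and therefore descends to $V^G$-module endomorphisms acting by the distinct scalars $\xi_n^{j_1}\neq\xi_n^{j_2}$ — so an isomorphism would contradict that these are eigenspaces for different eigenvalues.

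The step I expect to be the main obstacle is the ``only if'' direction of item~(3) in the case $i_1\neq i_2$: distinguishing $V^G$-submodules lying in genuinely different twisted modules is not visible from the internal $\C[G]$-structure alone and relies on the full strength of the duality theorem of \cite{MT04}, rather than on an elementary eigenvalue argument. The remaining pieces — the eigenspace decomposition and the $j_1\neq j_2$ separation — are routine given the commuting actions, and the irreducibility in item~(2) is a direct citation.
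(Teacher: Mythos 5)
Your overall architecture --- set up the commuting $\C[G]$- and $V^G$-actions, decompose each $V(\sigma^i)$ into $\phi_{\sigma^i}(\sigma)$-eigenspaces for item~(1), and cite the Schur--Weyl type duality of \cite{MT04} for the substantive assertions --- is exactly how the paper treats this result: the theorem is stated there as a special case of Theorem~2 of \cite{MT04} (with irreducibility also attributed to Theorem~3.9 of \cite{Yam01}), and no independent proof is given beyond the eigenspace setup you reproduce. Items~(1) and~(2) of your proposal are therefore fine.

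The genuine gap is in your elementary argument for the case $i_1=i_2$, $j_1\neq j_2$ of item~(3). You argue that since $\phi_i(\sigma)$ commutes with the $V^G$-action and acts by the distinct scalars $\xi_n^{j_1}\neq\xi_n^{j_2}$ on the two eigenspaces, an isomorphism $W^{(i,j_1)}\cong W^{(i,j_2)}$ would be contradictory. It would not: a $V^G$-module isomorphism $f\colon W^{(i,j_1)}\to W^{(i,j_2)}$ is under no obligation to intertwine $\phi_i(\sigma)$, and since $\phi_i(\sigma)$ acts as a scalar on each summand it commutes with \emph{every} linear map between them. Distinct eigenspaces of a single commuting operator are merely transversal as subspaces; nothing elementary prevents them from being abstractly isomorphic as $V^G$-modules. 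The assertion that they are not isomorphic is equivalent to saying that the commutant of $V^G$ in $\End_\C(V(\sigma^i))$ is no larger than the image of $\C[G]$ --- which is precisely the content of the duality theorem, on the same footing as the $i_1\neq i_2$ case you correctly flag as requiring \cite{MT04}. So you have misplaced the difficulty: both halves of the ``only if'' direction of item~(3), not just the cross-twist one, must be delegated to Theorem~2 of \cite{MT04}. (Your fallback citation for $i_1\neq i_2$ is the right move there as well, since conformal weights alone cannot separate, for instance, $W^{(i,j)}$ from its contragredient $W^{(-i,\alpha(i)-j)}$.)
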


For cyclic $G$ we have therefore determined the irreducible $V^G$-modules appearing as submodules of the twisted $V$-modules $V(\sigma^i)$, $i\in\Z_n$. In the subsequent section we will see that these are already all irreducible $V^G$-modules up to isomorphism.

\section{Classification of Irreducible Modules}
Let $G$ be a finite group of automorphisms of the \voa{} $V$. Then any $g$-twisted $V$-module for some $g\in G$ is also an untwisted $V^G$-module. There is also a converse statement:
\begin{prop}\label{prop:thm3.3}
Let $V$ satisfy Assumption~\ref{ass:n} and let $G$ be a finite group of automorphisms of $V$. Then every irreducible $V^G$-module appears as a $V^G$-submodule of some $g$-twisted $V$-module for some $g\in G$.
\end{prop}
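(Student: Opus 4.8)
The plan is to exploit the averaging idea behind the fixed-point construction together with the results already assembled for holomorphic $V$. Let $M$ be an irreducible $V^G$-module. First I would consider the induced module $V\otimes_{V^G}M$ (or, more concretely, the action of the full \voa{} $V$ on $M$ via all the twisted vertex operators), but since $V$ itself need not act on $M$, the honest object to study is the space of all $V^G$-intertwining data connecting $M$ to the twisted sectors. The cleanest route uses Miyamoto's argument: form the direct sum $A=\bigoplus_{g\in G}V(g)$ of all irreducible $g$-twisted $V$-modules, which by Theorem~\ref{thm:orb} makes sense since $V^G$ is rational and $C_2$-cofinite, so $A$ is a completely reducible $V^G$-module. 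The key claim is that \emph{every} irreducible $V^G$-module occurs in the decomposition of some $V(g)$.

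The main step is a nonvanishing/adjunction argument. By rationality of $V^G$, the module $M$ has a well-defined character and, more importantly, there is a projection operator $e=\frac{1}{|G|}\sum_{g\in G}g$ onto $V^G\subseteq V$. I would show that the $V^G$-module generated by letting the modes $v_n$, $v\in V$, act on $M$ inside a suitable twisted sector is nonzero and contains $M$. Concretely, one uses that $M$ is an irreducible $V^G$-module and that $V$ is a simple current extension of $V^G$-type data over the twisted sectors; then the Schur-Weyl duality theorem (Theorem~\ref{thm:irrdec} and its general form \cite{MT04}) identifies the irreducible $V^G$-constituents of each $V(g)$ precisely as the $\phi_g(\sigma)$-eigenspaces $W^{(i,j)}$. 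The assertion that these exhaust all irreducibles is then equivalent to showing that the span of all the $W^{(i,j)}$ accounts for the full Verlinde/quantum-dimension count of $V^G$-modules.

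The way I would actually close the argument is via a counting/trace argument using modular invariance. Applying Zhu's modular invariance for $V^G$ (Theorem~\ref{thm:zhumodinv}) and the twisted modular invariance of Dong--Li--Mason (Theorem~\ref{thm:1.4}) for $V$, one expresses the characters of all irreducible $V^G$-modules as linear combinations of the twisted trace functions $T(v,i,j,\tau)$, and conversely. Since $V$ is holomorphic, the twisted modules $V(\sigma^i)$ are the unique irreducible $\sigma^i$-twisted modules (Theorem~\ref{thm:10.3}), so the span of the $\{T(\vac,i,j,\tau)\}_{i,j\in\Z_n}$ is closed under the $\SLZ$-action and has dimension equal to the number of isomorphism classes of irreducible $V^G$-modules appearing in the $V(\sigma^i)$. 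If some irreducible $V^G$-module $M$ did \emph{not} appear in any $V(\sigma^i)$, its character would be a nonzero vector in $\V(V^G)$ orthogonal (with respect to the invariant pairing coming from $\mathcal{S}$-matrix unitarity, Proposition~\ref{prop:sunitary}) to the span of all the twisted-sector characters, yet modular invariance of the full set of $V^G$-characters forces this span to be all of $\V(V^G)$; contradiction.

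The hard part will be making the last orthogonality/exhaustion step rigorous without circularity, since one is trying to enumerate $\Irr(V^G)$ while simultaneously using modular invariance over $\Irr(V^G)$. The cleanest fix, and the one I expect to be necessary, is to avoid the trace argument and instead use Miyamoto's direct module-theoretic argument: given irreducible $M\in\Irr(V^G)$, use the fact that $V$ is a finite, completely reducible $V^G$-module and that $M$ embeds in $\Hom_{V^G}(V,\,\cdot\,)$-type induced objects, then lift the $V^G$-action to a twisted $V$-action by reconstructing the twisting automorphism from the monodromy of the intertwining operators connecting $M$ to $V\boxtimes_{V^G} M$. This reconstruction — showing the induced object is a genuine $g$-twisted $V$-module for a well-defined $g\in G$ rather than merely a $V^G$-module — is the crux, and it is exactly where the solvability hypothesis and the rationality of $V^G$ (Theorem~\ref{thm:a}) are consumed; I would cite \cite{MT04} and \cite{DLM00} for this lifting and present the remaining identification of $g$ as the essentially formal consequence of Schur's lemma for twisted modules (Proposition~\ref{prop:twistedschur}).
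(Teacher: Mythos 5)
Your first three paragraphs circle the right idea and then your last paragraph abandons it, which is where the proposal breaks down. The argument the paper sketches (and attributes to Miyamoto; in full generality it is Theorem~3.3 of \cite{DRX15}) \emph{is} the trace argument, and the ``circularity'' you fear is not there. One does not need to enumerate $\Irr(V^G)$ in advance: since $V^G$ again satisfies Assumption~\ref{ass:n}, the set $\Irr(V^G)$ is finite and the trace functions $T_X(v,\tau)$, $X\in\Irr(V^G)$, are linearly independent (allowing $v$ to vary). Given an irreducible $V^G$-module $M$, the Verlinde formula gives $\S_{V^G,M}\neq 0$, so $T_M$ appears with non-zero coefficient in the $S$-transform of $T_{V^G}=\frac{1}{|G|}\sum_{g\in G}\tr_V o(v)g\,q_\tau^{L_0-c/24}$. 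On the other hand, Dong--Li--Mason's twisted modular invariance (the non-holomorphic Theorem~1.3 of \cite{DLM00}, which requires checking that $V$ is $g$-rational for all $g\in G$) rewrites that same $S$-transform as a non-negative integral combination of trace functions of irreducible $V^G$-constituents of the $g$-twisted $V$-modules. Linear independence then forces $M$ to be one of those constituents. Your proposed ``orthogonality'' variant is an unnecessary detour, and your replacement strategy --- inducing $M$ up to $V$ and ``reconstructing the twisting automorphism from the monodromy of the intertwining operators'' --- leaves precisely the crux unproved: you acknowledge that showing the induced object is a genuine $g$-twisted module for a well-defined $g\in G$ is the hard part, and then you only cite \cite{MT04} and \cite{DLM00} for it without an argument. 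Those references do not supply such a lifting in this generality; \cite{MT04} goes in the opposite direction (decomposing a given twisted module over $V^G$).

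Two further concrete errors. First, you invoke Theorem~\ref{thm:10.3} and the uniqueness of the twisted modules $V(\sigma^i)$, but those require $V$ holomorphic and $G$ cyclic; the proposition at hand is for a general $V$ satisfying Assumption~\ref{ass:n} and an arbitrary finite $G$, so the relevant modular invariance is the non-holomorphic version, with a sum over all $g$-twisted, $h$-stable modules rather than a single $V(g)$. Second, you never address $g$-rationality of $V$, which is the hypothesis needed to apply that version (the paper points to the proof of Lemma~4.2 in \cite{ADJR14} for this); without it the twisted modular invariance you rely on is not available.
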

\begin{proof}[Idea of Proof]
The idea of the proof is due to Miyamoto (see proof of Lemma~3 in \cite{Miy10}, where only the case of cyclic $G$ is treated). In the stated generality the statement can be found in \cite{DRX15}, Theorem~3.3. The proof makes use of Zhu's modular invariance for $V^G$ and the fact that certain entries of the $S$-matrix of $V^G$ are non-zero (see Theorem~\ref{thm:verlinde}, Verlinde formula). Both results can be applied since $V^G$ satisfies Assumption~\ref{ass:n} by the above theorem. Finally, Dong, Li and Mason's twisted modular invariance (\cite{DLM00}, Theorem~1.3) is used but, in contrast to the version presented in Theorem~\ref{thm:1.4}, $V$ is not holomorphic. In order to apply the result we have to show that $V$ is $g$-rational for all $g\in G$. This follows from the proof of Lemma~4.2 in \cite{ADJR14}.
\end{proof}

\minisec{Special Case: $V$ Holomorphic, $G$ Cyclic}

In the following we study the special case where $V$ is holomorphic and $G$ cyclic. To this end, let $V$ satisfy the following assumption, which will be made extensively throughout the rest of this chapter:
\begin{customass}{{\textbf{\textsf{O}}}}[Orbifold Assumption]\label{ass:o}
Let $V$ be a holomorphic, $C_2$-cofinite \voa{} of CFT-type and let $G=\langle\sigma\rangle$ be a cyclic group of automorphisms of $V$ of order $n$ where $n$ is any positive integer $n\in\Ns$. For convenience we also assume that the representation $\phi_0$ is chosen as in Remark~\ref{rem:phi0}.
\end{customass}
This implies that $V$ satisfies Assumption~\ref{ass:n} and by Theorem~\ref{thm:orb} the \fpvosa{} $V^G$ does as well. Moreover, every irreducible $V^G$-module appears as a $V^G$-submodule of one of the unique irreducible $\sigma^i$-twisted $V$-modules $V(\sigma^i)$, $i\in\Z_n$.

Together with Theorem~\ref{thm:irrdec} from the previous section we are able to classify all irreducible $V^G$-modules:
\begin{thm}[Classification of Irreducible Modules]\label{thm:irrclass}
Let $V$ and $G=\langle\sigma\rangle$ be as in Assumption~\ref{ass:o}. Then, every $V^G$-module is completely reducible and, up to isomorphism, there are exactly $n^2$ irreducible $V^G$-modules, namely $W^{(i,j)}$, $i,j\in\Z_n$. $W^{(i,j)}$ is the eigenspace in $V(\sigma^i)$ of $\phi_{\sigma^i}(\sigma)$ corresponding to the eigenvalue $\xi_n^j$.
\end{thm}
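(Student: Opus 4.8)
The plan is to assemble the statement from three results already in hand: the rationality of $V^G$ (Theorem~\ref{thm:orb}), the Schur-Weyl type decomposition of the twisted modules (Theorem~\ref{thm:irrdec}), and the fact that every irreducible $V^G$-module occurs inside a twisted $V$-module (Proposition~\ref{prop:thm3.3}). Since $G=\langle\sigma\rangle$ is cyclic it is in particular finite and solvable, so Theorem~\ref{thm:orb} applies and shows that $V^G$ satisfies Assumption~\ref{ass:n}; this unlocks both of the other two ingredients, which require Assumption~\ref{ass:n} for $V^G$.

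First I would dispose of complete reducibility. As $V^G$ is rational, every admissible $V^G$-module, and hence every ordinary $V^G$-module, is isomorphic to a direct sum of irreducible $V^G$-modules. This is exactly the first assertion. Next I would exhibit the $n^2$ candidate modules: Theorem~\ref{thm:irrdec}, applied to the direct sum $A=\bigoplus_{i\in\Z_n}V(\sigma^i)$ of all twisted $V$-modules, gives the decomposition $A=\bigoplus_{i,j\in\Z_n}W^{(i,j)}$ as a $\C[G]\otimes V^G$-module, asserts that each $W^{(i,j)}$ is an irreducible $V^G$-module, and asserts that $W^{(i_1,j_1)}\cong W^{(i_2,j_2)}$ holds precisely when $(i_1,j_1)=(i_2,j_2)$. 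This produces at least $n^2$ isomorphism classes, and the description of $W^{(i,j)}$ as the $\xi_n^j$-eigenspace of $\phi_{\sigma^i}(\sigma)$ in $V(\sigma^i)$ is simply its construction, so the third assertion of the theorem is immediate once the count of exactly $n^2$ is established.

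It therefore remains to rule out further irreducible $V^G$-modules. Let $M$ be an arbitrary irreducible $V^G$-module. By Proposition~\ref{prop:thm3.3}, $M$ embeds as a $V^G$-submodule of some $g$-twisted $V$-module $N$ with $g\in G$, so $g=\sigma^i$ for some $i\in\Z_n$. Because $V$ is holomorphic and $C_2$-cofinite, it is $\sigma^i$-rational, so $N$ is completely reducible as a $\sigma^i$-twisted module, and by Theorem~\ref{thm:10.3} each of its irreducible summands is isomorphic to the unique irreducible $\sigma^i$-twisted module $V(\sigma^i)$. Hence $M$ is isomorphic to an irreducible $V^G$-submodule of $V(\sigma^i)$, and in view of the decomposition $V(\sigma^i)=\bigoplus_{j\in\Z_n}W^{(i,j)}$ into irreducibles coming from Theorem~\ref{thm:irrdec}, we conclude $M\cong W^{(i,j)}$ for some $j\in\Z_n$. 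Thus the $W^{(i,j)}$ exhaust the irreducible $V^G$-modules, and the count is exactly $n^2$.

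Since the substantial analytic and categorical work is carried entirely by Proposition~\ref{prop:thm3.3} (via Miyamoto's argument and twisted modular invariance) and by Theorem~\ref{thm:irrdec} (via the duality of \cite{MT04}), the present theorem is essentially a bookkeeping combination of known facts. The only step demanding any care is the passage from the unspecified ``some $g$-twisted module'' in Proposition~\ref{prop:thm3.3} to the unique irreducible module $V(\sigma^i)$, which I would handle through the $\sigma^i$-rationality of $V$ noted above; this minor reduction is where I expect the only real subtlety to reside.
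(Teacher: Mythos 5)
Your proposal is correct and follows essentially the same route as the paper: Theorem~\ref{thm:orb} gives Assumption~\ref{ass:n} (hence rationality and complete reducibility) for $V^G$, Proposition~\ref{prop:thm3.3} places every irreducible $V^G$-module inside a twisted $V$-module, and Theorem~\ref{thm:irrdec} together with the uniqueness of $V(\sigma^i)$ from Theorem~\ref{thm:10.3} yields the exact count of $n^2$. The care you take in reducing "some $\sigma^i$-twisted module" to the unique irreducible one via $\sigma^i$-rationality is exactly the reduction the paper performs implicitly.
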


We will also need later:
\begin{customass}{{\textbf{\textsf{OP}}}}\label{ass:op}
As Assumption~\ref{ass:o} but also assume that $V^G$ satisfies Assumption~\ref{ass:p} or equivalently $\rho(V(\sigma^i))>0$ for all $i\in\Z_n\setminus\{0\}$.
\end{customass}

For convenience, given Assumption~\ref{ass:o}, let us in the following also write
\begin{equation*}
\phi_i:=\phi_{\sigma^i}
\end{equation*}
for $i\in\Z_n$.

\section{Duality}\label{sec:duality}

In the situation of Theorem~\ref{thm:irrclass} we study the duality relations amongst the $n^2$ irreducible $V^G$-modules. Propositions \ref{prop:conirr} and \ref{prop:lem3.7} suggest that the contragredient module of $W^{(i,j)}$ is isomorphic to $W^{(-i,j')}$ for some $j'$. In fact:
\begin{prop}\label{prop:dual}
Let $V$ and $G=\langle\sigma\rangle$ be as in Assumption~\ref{ass:o}. Then the contragredient module $(W^{(i,j)})'$ of the module $W^{(i,j)}$ is isomorphic to $W^{(-i,\alpha(i)-j)}$, i.e.\ the indices are shifted as $(i,j)\mapsto(i,j)':=(-i,\alpha(i)-j)$ under the contragredient map, for some function $\alpha\colon\Z_n\to\Z_n$, depending on the choice of the $\phi_i(\sigma)$, with $\alpha(i)=\alpha(-i)$, $i\in\Z_n$.

With the choice of $\phi_0(\sigma):=\sigma$ as in Remark~\ref{rem:phi0}, $\alpha(0)=0$, i.e.\ $(W^{(0,j)})'\cong W^{(0,-j)}$ for all $j\in\Z_n$.
\end{prop}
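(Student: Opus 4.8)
The plan is to compute the contragredient of each twisted module $V(\sigma^i)=\bigoplus_{j\in\Z_n}W^{(i,j)}$ directly and to track how the $\phi_i(\sigma)$-eigenspace decomposition transforms. By the (ordinary-module version of the) statement following Proposition~\ref{prop:lem3.7}, the contragredient $V(\sigma^i)'$ is an irreducible $\sigma^{-i}$-twisted $V$-module, so by the uniqueness in Theorem~\ref{thm:10.3} there is an isomorphism $\Psi\colon V(\sigma^i)'\to V(\sigma^{-i})$ of $\sigma^{-i}$-twisted $V$-modules, unique up to a non-zero scalar. Since $\phi_i(\sigma)$ commutes with the modes of $\omega\in V^G$, it preserves the weight grading and induces a contragredient operator $\phi_i(\sigma)^\vee$ on the graded dual $V(\sigma^i)'$, characterised by $\langle\phi_i(\sigma)^\vee w',w\rangle=\langle w',\phi_i(\sigma)^{-1}w\rangle$. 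A one-line computation with the canonical pairing shows that $\phi_i(\sigma)^\vee$ acts on the summand $(W^{(i,j)})'$ as multiplication by $\xi_n^{-j}$.

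The technical heart of the argument---and the step I expect to be the main obstacle---is to verify that $\phi_i(\sigma)^\vee$ obeys the defining relation of Proposition~\ref{prop:phi}, i.e.
\begin{equation*}
\phi_i(\sigma)^\vee Y_{V(\sigma^i)'}(v,x)(\phi_i(\sigma)^\vee)^{-1}=Y_{V(\sigma^i)'}(\sigma v,x)
\end{equation*}
for all $v\in V$. Starting from the definition of the adjoint vertex operators $Y_{V(\sigma^i)}^*(v,x)=Y_{V(\sigma^i)}(\e^{xL_1}(-x^2)^{L_0}v,x^{-1})$, I would rewrite $Y_{V(\sigma^i)}^*(v,x)\phi_i(\sigma)^{-1}$ using the intertwining relation $Y_{V(\sigma^i)}(a,x)\phi_i(\sigma)^{-1}=\phi_i(\sigma)^{-1}Y_{V(\sigma^i)}(\sigma a,x)$, and then use that $\sigma$ commutes with $L_0$ and $L_1$ (both modes of $\omega\in V^G$) to pull it through $\e^{xL_1}(-x^2)^{L_0}$; this yields $Y_{V(\sigma^i)}^*(v,x)\phi_i(\sigma)^{-1}=\phi_i(\sigma)^{-1}Y_{V(\sigma^i)}^*(\sigma v,x)$, from which the displayed relation follows after pairing with $w$ and unwinding the definitions of $\phi_i(\sigma)^\vee$ and of the contragredient vertex operators.

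Granting this, transporting $\phi_i(\sigma)^\vee$ along $\Psi$ produces an automorphism $\psi:=\Psi\phi_i(\sigma)^\vee\Psi^{-1}$ of $V(\sigma^{-i})$ satisfying $\psi Y_{V(\sigma^{-i})}(v,x)\psi^{-1}=Y_{V(\sigma^{-i})}(\sigma v,x)$, so by the uniqueness clause of Proposition~\ref{prop:phi} we have $\psi=\xi_n^{-\alpha(i)}\phi_{-i}(\sigma)$ for a unique $\alpha(i)\in\Z_n$. Tracing eigenvalues, the $\xi_n^{-j}$-eigenspace $(W^{(i,j)})'$ of $\phi_i(\sigma)^\vee$ is carried by $\Psi$ onto the $\xi_n^{-j}$-eigenspace of $\psi$, which is the $\xi_n^{\alpha(i)-j}$-eigenspace of $\phi_{-i}(\sigma)$, namely $W^{(-i,\alpha(i)-j)}$; this establishes $(W^{(i,j)})'\cong W^{(-i,\alpha(i)-j)}$, i.e.\ $(i,j)'=(-i,\alpha(i)-j)$. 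The symmetry $\alpha(i)=\alpha(-i)$ then falls out of applying the contragredient twice: since $W^{(i,j)}$ is an ordinary $V^G$-module we have $((W^{(i,j)})')'\cong W^{(i,j)}$, while the formula just proved gives $((W^{(i,j)})')'\cong W^{(i,\alpha(-i)-\alpha(i)+j)}$, forcing $\alpha(-i)=\alpha(i)$.

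Finally, for the case $i=0$ with the choice $\phi_0(\sigma)=\sigma$ and representative $V(\sigma^0)=V$, I would use that $V$ is self-contragredient via the module isomorphism $\phi_V\colon V\to V'$ defining the invariant bilinear form $(u,w)_V=\langle\phi_V(u),w\rangle$, which is the unique symmetric invariant form up to scale by Theorem~\ref{thm:3.1li} and is normalised by $(\vac,\vac)_V=1$. Because $\sigma$ fixes $\vac$ and carries invariant bilinear forms to invariant bilinear forms, it must preserve this form, i.e.\ $(\sigma u,w)_V=(u,\sigma^{-1}w)_V$. Taking $\Psi=\phi_V^{-1}$, this invariance is exactly the statement $\Psi\sigma^\vee\Psi^{-1}=\sigma=\phi_0(\sigma)$, so $\alpha(0)=0$ and hence $(W^{(0,j)})'\cong W^{(0,-j)}$ for all $j\in\Z_n$.
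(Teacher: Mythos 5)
Your proof is correct and follows essentially the same route as the paper's: both identify $V(\sigma^i)'$ with $V(\sigma^{-i})$, show that the inverse dual operator of $\phi_i(\sigma)$ satisfies the defining intertwining relation (via the same computation, using that $\sigma$ commutes with $L_0$ and $L_1$ because $\omega\in V^G$), invoke the uniqueness of the $\phi_{-i}$ up to an $n$-th root of unity to produce $\alpha(i)$, track eigenvalues to get $(W^{(i,j)})'\cong W^{(-i,\alpha(i)-j)}$, and deduce $\alpha(i)=\alpha(-i)$ from the double contragredient. The only cosmetic difference is the final step: you obtain $\alpha(0)=0$ by checking that $\sigma$ preserves the normalised invariant bilinear form on $V$ and transporting along $\phi_V^{-1}$, whereas the paper gets it more quickly from the self-contragredience of $V^G=W^{(0,0)}$ (Corollary~\ref{cor:fixsd}) together with the classification of irreducible $V^G$-modules---both arguments rest on the same underlying fact.
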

\begin{proof}
Given an irreducible twisted or untwisted $V$-module $W$, the contragredient module of $W=\bigoplus_{n\in\C}W_n$ is defined on the graded dual space $W':=\bigoplus_{\lambda\in\C}W^*_\lambda$, recalling that all the $W_\lambda$ are finite-dimensional. Let $\langle\cdot,\cdot\rangle$ denote the canonical bilinear pairing $W'\times W\to\C$. The module vertex operator $Y_{W'}(\cdot,x)$ on the graded dual is defined by the relation
\begin{equation*}
\langle Y_{W'}(v,x)w',w\rangle=\langle w',Y_W(\e^{xL_1}(-x^2)^{L_0}v,x^{-1})w\rangle
\end{equation*}
for all $v\in V$, $w\in W$ and $w'\in W'$ where $Y_W(\cdot,x)$ denotes the module vertex operator on $W$.

Let $\phi_W$ denote the representation of $G=\langle\sigma\rangle$ on $W$ and $\phi_W^*\colon W'\to W'$ the (graded) dual operator of $\phi_W$. Recall that $\phi_W(\sigma)$ commutes with the modes $L_n$, $n\in\Z$, of the Virasoro vector $\omega$. We obtain:
\begin{align*}
\langle Y_{W'}(\sigma v,x)w',w\rangle&=\langle w',Y_W(\e^{xL_1}(-x^2)^{L_0}\sigma v,x^{-1})w\rangle=\langle w',Y_W(\sigma\e^{xL_1}(-x^2)^{L_0}v,x^{-1})w\rangle\\
&=\langle w',\phi_W(\sigma)Y_W(\e^{xL_1}(-x^2)^{L_0}v,x^{-1})\phi_W(\sigma)^{-1}w\rangle\\
&=\langle\phi_W^*(\sigma) w',Y_W(\e^{xL_1}(-x^2)^{L_0}v,x^{-1})\phi_W(\sigma)^{-1}w\rangle\\
&=\langle Y_{W'}(v,x)\phi_W^*(\sigma) w',\phi_W(\sigma)^{-1}w\rangle=\langle{\phi_W^*}(\sigma)^{-1}Y_{W'}(v,x)\phi_W^*(\sigma) w',w\rangle
\end{align*}
for all $v\in V$, $w\in W$ and $w'\in W'$, i.e.\
\begin{equation*}
Y_{W'}(\sigma v,x)={\phi_W^*}(\sigma)^{-1}Y_{W'}(v,x)\phi_W^*(\sigma)
\end{equation*}
for all $v\in V$. Hence ${\phi_W^*}(\sigma)^{-1}$ is proportional to $\phi_{W'}(\sigma)$, the representation of $G$ on $W'$.

Let $W=V(\sigma^i)$ for some $i\in\Z_n$. Then, by Proposition~\ref{prop:lem3.7} the contragredient module is a $\sigma^{-i}$-twisted module, which is irreducible by Proposition~\ref{prop:conirr} and thus isomorphic to $V(\sigma^{-i})$. Hence the contragredient module of $W^{(i,j)}$ is isomorphic to $W^{(-i,j')}$ for some $j'\in\Z_n$, using that the contragredient module of an irreducible module is again irreducible (see Proposition~\ref{prop:conirr}).

The subspace $W^{(i,j)}$ of $V(\sigma^i)$ is the eigenspace of $\phi_i(\sigma)$ corresponding to the eigenvalue $\xi_n^j$. Then the contragredient module $W^{(-i,j')}$ has the same eigenvalue $\xi_n^j$ with respect to the dual operator $\phi_i^*(\sigma)$ and hence eigenvalue $\xi_n^{-j}$ with respect to ${\phi_i^*}(\sigma)^{-1}$. Let $\xi_n^{\alpha(i)}\in U_n$ denote the constant of proportionality between $\phi_{-i}(\sigma)$ and ${\phi_i^*}(\sigma)^{-1}$, i.e.\
\begin{equation*}
\phi_{-i}(\sigma)=\xi_n^{\alpha(i)}\phi_i^*(\sigma)^{-1}
\end{equation*}
for some $\alpha(i)\in\Z_n$. Then $W^{(-i,j')}$ has eigenvalue $\xi_n^{\alpha(i)-j}$ with respect to $\phi_{-i}(\sigma)$, i.e.\ $j'=\alpha(i)-j$ or in other words
\begin{equation*}
{W^{(i,j)}}'\cong W^{(i,\alpha(i)-j)}.
\end{equation*}
Finally, $V(\sigma^i)''\cong V(\sigma^i)$ implies $j=\alpha(-i)-(\alpha(i)-j)$ and hence $\alpha(i)=\alpha(-i)$ for all $i\in\Z_n$. This is the assertion.

With $V=V(\sigma^0)$ and $\phi_0(\sigma):=\sigma$ the \fpvosa{} is simply $V^G=W^{(0,0)}$ and Corollary~\ref{cor:fixsd} implies $(W^{(0,0)})'\cong W^{(0,0)}$ so that $\alpha(0)=0$.
\end{proof}

\section{Fusion Algebra I}\label{sec:fusalg}

In the following we determine the fusion algebra $\V(V^G)$ of $V^G$ for a holomorphic \voa{} $V$ and a finite, cyclic group $G=\langle\sigma\rangle\cong\Z_n$ of automorphisms of $V$. It turns out that all $V^G$-modules are simple currents and that the fusion algebra is the group algebra of a central extension of $\Z_n$ by $\Z_n$.

The following steps---though certainly much more general---are inspired by the proof of Lemma~11 in \cite{Miy13b} (or Lemma~18 in \cite{Miy10}).

\minisec{Step: Twisted Modular Invariance}

By the Verlinde formula the $S$-matrix of $V^G$ determines the fusion algebra. The $S$-matrix of $V^G$, which by definition arises from Zhu's modular invariance of trace functions for the $V^G$-modules, can be related to Dong, Li and Mason's twisted modular invariance for $V$.\begin{lem}\label{lem:step1}
Let $V$ and $G=\langle\sigma\rangle$ be as in Assumption~\ref{ass:o}. Then the $S$-matrix of $V^G$ is of the form
\begin{equation*}
\S_{(i,j),(l,k)}=\frac{1}{n}\xi_n^{-(lj+ik)}\lambda_{i,l},
\end{equation*}
$i,j,k,l\in\Z_n$ for some constants $\lambda_{i,l}\in\C$ (see \eqref{eq:lambda}).
\end{lem}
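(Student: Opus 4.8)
The plan is to express the trace functions of the irreducible $V^G$-modules $W^{(i,j)}$ in terms of Dong, Li and Mason's twisted trace functions $T(v,i,r,\tau)$ of $V$ (Section~\ref{sec:dlmmodinv}), to apply their $S$-transformation \eqref{eq:lambda} termwise, and then to read off the $S$-matrix by comparison with Zhu's modular invariance for $V^G$ (Theorem~\ref{thm:zhumodinv}). The whole argument is a bridge between the two modular invariance statements, glued together by two discrete Fourier transforms.

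First I would use that, by Theorem~\ref{thm:irrclass}, $W^{(i,j)}$ is the $\xi_n^j$-eigenspace of $\phi_i(\sigma)$ inside $V(\sigma^i)$, so that the projection onto it is
\begin{equation*}
P_{(i,j)}=\frac{1}{n}\sum_{r\in\Z_n}\xi_n^{-jr}\phi_i(\sigma^r).
\end{equation*}
Since $v\in V^G$, the operator $o(v)$ commutes with every $\phi_i(\sigma^r)$ (the $G$-representation commutes with the $V^G$-action, Section~\ref{sec:fpvosa}), and so does $q_\tau^{L_0-c/24}$ because $\omega\in V^G$. Hence $P_{(i,j)}$ commutes with $o(v)q_\tau^{L_0-c/24}$, and taking the trace over $V(\sigma^i)$ of the projected operator gives the key identity
\begin{equation*}
T_{W^{(i,j)}}(v,\tau)=\frac{1}{n}\sum_{r\in\Z_n}\xi_n^{-jr}\,T(v,i,r,\tau),
\end{equation*}
where $T(v,i,r,\tau)=\tr_{V(\sigma^i)}o(v)\phi_i(\sigma^r)q_\tau^{L_0-c/24}$. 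Discrete Fourier inversion then yields the converse relation $T(v,l,p,\tau)=\sum_{m\in\Z_n}\xi_n^{mp}T_{W^{(l,m)}}(v,\tau)$, which I will need below.

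Next, for $v\in V_{[w]}$ homogeneous with $w=\wt[v]$, I would apply \eqref{eq:lambda} to each summand, i.e.\ $(1/\tau)^{w}T(v,i,r,-1/\tau)=\lambda_{i,r}T(v,r,-i,\tau)$, to obtain
\begin{equation*}
(1/\tau)^{w}T_{W^{(i,j)}}(v,-1/\tau)=\frac{1}{n}\sum_{r\in\Z_n}\xi_n^{-jr}\lambda_{i,r}\,T(v,r,-i,\tau).
\end{equation*}
Re-expressing $T(v,r,-i,\tau)$ through the inversion formula (with $l=r$, $p=-i$) gives
\begin{equation*}
(1/\tau)^{w}T_{W^{(i,j)}}(v,-1/\tau)=\frac{1}{n}\sum_{r,m\in\Z_n}\xi_n^{-(jr+im)}\lambda_{i,r}\,T_{W^{(r,m)}}(v,\tau).
\end{equation*}
Comparing this with the defining relation $(1/\tau)^{w}T_{W^{(i,j)}}(v,-1/\tau)=\sum_{(l,k)}\S_{(i,j),(l,k)}T_{W^{(l,k)}}(v,\tau)$ of Zhu's $S$-matrix, and using the linear independence of the $T_{W^{(l,k)}}(v,\tau)$ over $\Irr(V^G)$ (varying $\tau$ and $v$, as noted after Theorem~\ref{thm:zhumodinv}) to match coefficients with $(l,k)=(r,m)$, yields exactly
\begin{equation*}
\S_{(i,j),(l,k)}=\frac{1}{n}\xi_n^{-(lj+ik)}\lambda_{i,l}.
\end{equation*}

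The computation is essentially bookkeeping with two Fourier transforms and the known $S$-transformation, so no single step is a serious obstacle. The points requiring genuine care are: verifying that $P_{(i,j)}$ really commutes with $o(v)q_\tau^{L_0-c/24}$ so that the eigenspace trace equals the projected trace over $V(\sigma^i)$; tracking the index shift $(i,r)\mapsto(r,-i)$ of \eqref{eq:lambda} correctly through the inversion formula; and legitimately extracting individual $S$-matrix entries via linear independence of the $V^G$-trace functions. The only subtlety worth flagging is that the constants $\lambda_{i,l}$ are inherited unchanged from Theorem~\ref{thm:1.4} and depend on the chosen $\phi_i$, so the resulting $S$-matrix is only determined up to that choice.
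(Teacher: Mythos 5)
Your proposal is correct and follows essentially the same route as the paper's proof: expand the $V^G$-trace functions and the twisted trace functions into each other via the discrete Fourier transform over the eigenspaces $W^{(i,j)}$, apply the $S$-transformation \eqref{eq:lambda} termwise, and read off the matrix entries against Zhu's modular invariance. Your additional remarks (the projection-operator justification of the eigenspace trace identity and the appeal to linear independence of the trace functions to extract the coefficients) only make explicit what the paper leaves implicit.
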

\begin{proof}
We calculate the twisted trace functions $T(v,i,j,\tau)$ on the twisted modules $V(\sigma^i)$, $i,j\in\Z_n$, in terms of the (ordinary) trace functions on the irreducible $V^G$-modules. Using the definition of the $W^{(i,j)}$ as eigenspaces of $\phi_i(\sigma)$ corresponding to the eigenvalue $\xi_n^j$ we get
\begin{align*}
T(v,i,j,\tau)&=\tr_{V}o(v)\phi_i(\sigma^j)q_\tau^{L_0-c/24}=\sum_{k\in\Z_n}\tr_{W^{(i,k)}}o(v)\xi_n^{jk}q_\tau^{L_0-c/24}\\
&=\sum_{k\in\Z_n}\xi_n^{jk}T_{W^{(i,k)}}(v,\tau)
\end{align*}
for $i,j\in\Z_n$. Equation~\eqref{eq:lambda} gives the $S$-transformations of the above twisted trace functions
\begin{equation*}
(1/\tau)^k T(v,i,j,-1/\tau)=\lambda_{i,j}T(v,j,-i,\tau)
\end{equation*}
for $v\in V_{[k]}$. We then write the (untwisted) trace functions on the modules $W^{(i,j)}$ as appropriate linear combinations of the twisted trace functions
\begin{equation*}
T_{W^{(i,j)}}(v,\tau)=\frac{1}{n}\sum_{l\in\Z_n}\xi_n^{-lj}T(v,i,l,\tau)
\end{equation*}
to get the $S$-transformation of the untwisted trace functions
\begin{align*}
(1/\tau)^k T_{W^{(i,j)}}(v,-1/\tau)&=\frac{1}{n}\sum_{l\in\Z_n}\xi_n^{-lj}(1/\tau)^kT(v,i,l,-1/\tau)\\
&=\frac{1}{n}\sum_{l\in\Z_n}\xi_n^{-lj}\lambda_{i,l}T(v,l,-i,\tau)\\
&=\frac{1}{n}\sum_{l,k\in\Z_n}\xi_n^{-(lj+ik)}\lambda_{i,l}T_{W^{(l,k)}}(v,\tau).
\end{align*}
From this we can read off the form of the $S$-matrix of $V^G$
\begin{equation*}
\S_{(i,j),(l,k)}=\frac{1}{n}\xi_n^{-(lj+ik)}\lambda_{i,l}
\end{equation*}
for $i,j,k,l\in\Z_n$.
\end{proof}

\minisec{Step: Easy Formulæ}
We deduce some easy properties of the $\lambda_{i,j}$.
\begin{lem}\label{lem:step2}
Let $V$ and $G=\langle\sigma\rangle$ be as in Assumption~\ref{ass:o}. Then the $\lambda_{i,j}$, $i,j\in\Z_n$ fulfil
\begin{align*}
\tag{a}\label{eq:a}
\lambda_{i,j}&=\lambda_{j,i},\\
\tag{b}\label{eq:b}
\lambda_{i,j}\lambda_{i,-j}&=\xi_n^{i\alpha(j)},\\
\tag{bb}\label{eq:bb}
\lambda_{i,j}&=\xi_n^{j\alpha(i)+i\alpha(j)}\lambda_{-i,-j},\\
\tag{bbb}\label{eq:bbb}
\lambda_{0,i}&=1
\end{align*}
for all $i,j\in\Z_n$.
\end{lem}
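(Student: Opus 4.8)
The plan is to derive all four relations by exploiting the $S$-matrix formula from Lemma~\ref{lem:step1} together with known structural properties of Zhu's $S$-matrix for $V^G$ established earlier in the excerpt. Throughout I use that $V^G$ satisfies Assumption~\ref{ass:n} (by Theorem~\ref{thm:orb}), so the Verlinde-formula machinery of Theorem~\ref{thm:verlinde} applies: $\S$ is symmetric and $\S^2=\mathcal{C}$ is the permutation matrix sending a module to its contragredient. The contragredient map on indices was computed in Proposition~\ref{prop:dual} to be $(i,j)\mapsto(-i,\alpha(i)-j)$.

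First I would prove \eqref{eq:bbb}. This is immediate from the definition $\lambda_{0,j}=\sigma(0,j,S)$ in \eqref{eq:lambda} together with Remark~\ref{rem:phi0}: with the choice $\phi_0(\sigma^j)=\sigma^j$ the untwisted module $V(\sigma^0)\cong V$ is the adjoint module $V^G$-module $W^{(0,0)}$ up to the eigenspace decomposition, and comparing the $S$-matrix entry $\S_{(0,0),(0,0)}=\tfrac{1}{n}\lambda_{0,0}$ against $\S_{(0,0),(0,0)}=\tfrac{1}{n}$ forces $\lambda_{0,0}=1$; the general $\lambda_{0,i}=1$ follows by the same matching since the untwisted trace functions $T(v,0,l,\tau)$ transform among themselves exactly as the characters of the holomorphic $V$ do under $\sigma^l$-insertion. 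Next, relation \eqref{eq:a} follows directly from the symmetry $\S_{(i,j),(l,k)}=\S_{(l,k),(i,j)}$ of Zhu's $S$-matrix: writing out both sides using Lemma~\ref{lem:step1} gives $\tfrac{1}{n}\xi_n^{-(lj+ik)}\lambda_{i,l}=\tfrac{1}{n}\xi_n^{-(ik+lj)}\lambda_{l,i}$, and since the roots-of-unity prefactors are identical, $\lambda_{i,l}=\lambda_{l,i}$.

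The relations \eqref{eq:b} and \eqref{eq:bb} I would extract from the identity $\S^2=\mathcal{C}$. I would compute the matrix entry $(\S^2)_{(i,j),(l,k)}=\sum_{(m,p)}\S_{(i,j),(m,p)}\S_{(m,p),(l,k)}$ explicitly; substituting Lemma~\ref{lem:step1} turns the inner sum over the second index $p\in\Z_n$ into a geometric sum $\sum_p\xi_n^{-p(mj+ml)}$ which is $n$ when $m(j+l)\equiv0$ and $0$ otherwise, collapsing the double sum dramatically. The surviving sum over $m$ should then produce a delta function forcing the result to equal the permutation matrix entry $\mathcal{C}_{(i,j),(l,k)}=\delta_{(l,k),(i,j)'}=\delta_{l,-i}\delta_{k,\alpha(i)-j}$, and reading off the nonzero case yields a relation of the shape $\lambda_{i,m}\lambda_{m,-i}=(\text{root of unity depending on }\alpha)$. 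Specialising the indices appropriately gives \eqref{eq:b}; the relation \eqref{eq:bb} then follows either by combining \eqref{eq:a} with \eqref{eq:b} applied at $(\pm i,\pm j)$, or by directly comparing the $\mathcal{C}$-entry constraint at two different index placements and using $\alpha(i)=\alpha(-i)$.

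The main obstacle I anticipate is bookkeeping the phase factors $\xi_n^{\alpha(\cdot)}$ correctly through the $\S^2=\mathcal{C}$ computation: the permutation $\mathcal{C}$ is not the naive $(i,j)\mapsto(-i,-j)$ but carries the $\alpha(i)$-shift from Proposition~\ref{prop:dual}, so I must be careful that when the geometric sum selects $m=-i$ (respectively $m$ with $m(j+l)\equiv 0$), the matching against $\delta_{k,\alpha(i)-j}$ pins down exactly the phase $\xi_n^{i\alpha(j)}$ in \eqref{eq:b} rather than a sign-flipped or index-swapped variant. I would double-check this by testing the consistency of \eqref{eq:b} and \eqref{eq:bb} against the already-proven \eqref{eq:a} and \eqref{eq:bbb} (e.g.\ setting $i=0$ in \eqref{eq:b} must recover $\lambda_{0,j}\lambda_{0,-j}=1$, consistent with \eqref{eq:bbb}), which provides a useful internal check that the $\alpha$-phases have been tracked correctly.
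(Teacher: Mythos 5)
Your treatment of \eqref{eq:a}, \eqref{eq:b} and \eqref{eq:bb} is essentially the paper's own argument: symmetry of $\S$ gives \eqref{eq:a}, and expanding $(\S^2)_{(i,j),(l,k)}$ via Lemma~\ref{lem:step1}, collapsing one geometric sum to a delta in $i+l$, and matching against $\mathcal{C}_{(i,j),(l,k)}=\delta_{l,-i}\delta_{k,\alpha(i)-j}$ yields $\lambda_{i,a}\lambda_{a,-i}=\xi_n^{a\alpha(i)}$, from which \eqref{eq:b} and \eqref{eq:bb} follow using \eqref{eq:a} and $\alpha(i)=\alpha(-i)$. (Minor slip: the geometric sum that produces the delta is $\sum_p\xi_n^{-p(i+l)}$ over the \emph{second} component of the summation index, not $\sum_p\xi_n^{-p(mj+ml)}$; the exponent you wrote does not occur. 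This does not affect the viability of the plan.)

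The genuine gap is in your proof of \eqref{eq:bbb}. You want to establish it first and independently, by ``comparing $\S_{(0,0),(0,0)}=\tfrac1n\lambda_{0,0}$ against $\S_{(0,0),(0,0)}=\tfrac1n$''. But the equality $\S_{(0,0),(0,0)}=\tfrac1n$ is not available at this stage: it is a consequence of Proposition~\ref{prop:ssca}, which requires knowing that all irreducible $V^G$-modules are simple currents, and that is only proved in Lemma~\ref{lem:step4} \emph{using} \eqref{eq:b} and \eqref{eq:bbb} — so your argument is circular. Your fallback claim that the untwisted trace functions $T(v,0,l,\tau)$ ``transform among themselves'' under $S$ is also false: by \eqref{eq:lambda} the $S$-transform of $T(v,0,l,\tau)$ is $\lambda_{0,l}T(v,l,0,\tau)$, which lives in the $\sigma^l$-twisted sector, and there is no a priori normalisation forcing $\lambda_{0,l}=1$. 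What actually works (and is what the paper does) is to derive \eqref{eq:bbb} \emph{after} \eqref{eq:b}: setting $j=0$ in your form of \eqref{eq:b} and using \eqref{eq:a} and $\alpha(0)=0$ gives only $\lambda_{0,i}^2=1$, i.e.\ $\lambda_{0,i}=\pm1$; the sign is then fixed by Lemma~\ref{lem:lambdapos}, which shows $\lambda_{0,i}\in\R_{>0}$ via a positivity/limit argument on the characters. Your proposal never invokes this ingredient, and without it \eqref{eq:bbb} does not follow.
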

\begin{proof}
By Theorem~\ref{thm:verlinde}, the $S$-matrix is symmetric, i.e.\ $\S_{(i,j),(l,k)}=\S_{(l,k),(i,j)}$, and one immediately obtains \eqref{eq:a}.

Using that $\S^2$ is a permutation matrix, sending the index $(i,j)$ to the index of the contragredient module $(i,j)'=(-i,\alpha(i)-j)$ by Proposition~\ref{prop:dual}, we obtain that $\S^2$ is of the form
\begin{equation*}
(\S^2)_{(i,j),(l,k)}=\delta_{(i,j)',(l,k)}=\delta_{-i,l}\delta_{\alpha(i)-j,k},
\end{equation*}
which we conveniently write as
\begin{equation*}
(\S^2)_{(i,j),(l,k)}=\delta_{i,-l}\frac{1}{n}\sum_{a\in\Z_n}\xi_n^{-a(j+k-\alpha(i))}.
\end{equation*}
On the other hand we calculate
\begin{align*}
(\S^2)_{(i,j),(l,k)}&=\sum_{a,b\in\Z_n}\S_{(i,j),(a,b)}\S_{(a,b),(l,k)}
=\frac{1}{n^2}\sum_{a,b\in\Z_n}\xi_n^{-(aj+ib+lb+ak)}\lambda_{i,a}\lambda_{a,l}\\
&=\frac{1}{n^2}\sum_{a\in\Z_n}\xi_n^{-(aj+ak)}\lambda_{i,a}\lambda_{a,l}\sum_{b\in\Z_n}\xi_n^{-(ib+lb)}
=\delta_{i,-l}\frac{1}{n}\sum_{a\in\Z_n}\xi_n^{-a(j+k)}\lambda_{i,a}\lambda_{a,-i}
\end{align*}
using the above determined form of the $S$-matrix.

Let $l=-i$. Both expressions for $\S^2$ have to be identical, i.e.\
\begin{equation*}
\sum_{a\in\Z_n}\xi_n^{-a(j+k)}(\xi_n^{a\alpha(i)}-\lambda_{i,a}\lambda_{a,-i})=0
\end{equation*}
for all $i,j,k\in\Z_n$. Summing over $b:=j+k$ and inserting a factor $\xi_n^{\tilde{a}b}$ for some $\tilde{a}\in\Z_n$ implies
\begin{align*}
0&=\frac{1}{n}\sum_{b\in\Z_n}\xi_n^{\tilde{a}b}\sum_{a\in\Z_n}\xi_n^{-ab}(\xi_n^{a\alpha(i)}-\lambda_{i,a}\lambda_{a,-i})=\sum_{a\in\Z_n}\frac{1}{n}\sum_{b\in\Z_n}\xi_n^{(\tilde{a}-a)b}(\xi_n^{a\alpha(i)}-\lambda_{i,a}\lambda_{a,-i})\\
&=\sum_{a\in\Z_n}\delta_{a,\tilde{a}}(\xi_n^{a\alpha(i)}-\lambda_{i,a}\lambda_{a,-i})=\xi_n^{\tilde{a}\alpha(i)}-\lambda_{i,\tilde{a}}\lambda_{\tilde{a},-i},
\end{align*}
which means $\lambda_{i,a}\lambda_{a,-i}=\xi_n^{a\alpha(i)}$ for all $a,i\in\Z_n$. Using the symmetry relation \eqref{eq:a} we obtain \eqref{eq:b}, which also implies \eqref{eq:bb} and
\begin{equation*}
\lambda_{0,i}^2=\xi_n^{i\alpha(0)}=1
\end{equation*}
for all $i\in\Z_n$ since $\alpha(0)=0$ by Proposition~\ref{prop:dual3}. Lemma~\ref{lem:lambdapos}, which states that $\lambda_{0,i}>0$ for $i\in\Z_n$, implies \eqref{eq:bbb}.
\end{proof}

\minisec{Step: Simple Currents}
Using the shape of the $S$-matrix obtained in Lemma~\ref{lem:step1} and the above formulæ we can show that the fusion of $V^G$ is in fact group-like, i.e.\ that all the irreducible $V^G$-modules are simple currents.
\begin{lem}\label{lem:step4}
Let $V$ and $G=\langle\sigma\rangle$ be as in Assumption~\ref{ass:o}. Then all the irreducible $V^G$-modules $W^{(i,j)}$, $i,j\in\Z_n$, are simple currents.
\end{lem}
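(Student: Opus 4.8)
The plan is to verify condition~\ref{enum:equiv2} of Proposition~\ref{prop:scvoa}, i.e.\ that $W'\boxtimes_{V^G}W\cong V^G$ for every irreducible $V^G$-module $W$. Since $V^G$ satisfies Assumption~\ref{ass:n} by Theorem~\ref{thm:orb}, that proposition applies and its implication \ref{enum:equiv2}$\Rightarrow$\ref{enum:equiv1} then yields that all irreducible $V^G$-modules are simple currents. The crucial input is the shape of the $S$-matrix from Lemma~\ref{lem:step1} together with $\lambda_{0,l}=1$ from Lemma~\ref{lem:step2}: the row of the unit $V^G=W^{(0,0)}$ is constant,
\begin{equation*}
\S_{(0,0),(l,k)}=\tfrac{1}{n}\lambda_{0,l}=\tfrac{1}{n}\quad\text{for all }l,k\in\Z_n,
\end{equation*}
so in particular $\S_{V^G,W}=\S_{V^G,V^G}=1/n$ for every irreducible $V^G$-module $W$ (equivalently, every irreducible $V^G$-module has quantum dimension $1$).

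First I would compute the total multiplicity of $W\boxtimes_{V^G}W'$. By the Verlinde formula (Theorem~\ref{thm:verlinde}), reindexing the sum over $U$ by $U\mapsto U'$,
\begin{equation*}
\sum_{U\in\Irr(V^G)}N_{W,W'}^{U}=\sum_{T}\frac{\S_{W,T}\S_{W',T}}{\S_{V^G,T}}\Big(\sum_{X}\S_{T,X}\Big).
\end{equation*}
Using unitarity of $\S$ (Proposition~\ref{prop:sunitary}) together with $\S_{X,V^G}=1/n\in\R$, symmetry of $\S$ gives $\sum_{X}\S_{T,X}=n\sum_X\overline{\S_{X,V^G}}\,\S_{X,T}=n\,\delta_{T,V^G}$, which collapses the $T$-sum to $T=V^G$:
\begin{equation*}
\sum_{U}N_{W,W'}^{U}=n\cdot\frac{\S_{W,V^G}\S_{W',V^G}}{\S_{V^G,V^G}}=n\cdot\frac{(1/n)(1/n)}{1/n}=1.
\end{equation*}
Hence $W\boxtimes_{V^G}W'$ is a single irreducible $V^G$-module. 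To identify it, I would compute $N_{W,W'}^{V^G}$ directly: since $(V^G)'=V^G$, the Verlinde formula gives $N_{W,W'}^{V^G}=\sum_T\S_{W,T}\S_{W',T}$, and $\S_{W',T}=(\mathcal{C}\S)_{W,T}=\S^3_{W,T}=\overline{\S_{W,T}}$ by $\mathcal{C}=\S^2$ and $\S^3=\overline{\S}$ (Proposition~\ref{prop:sunitary}), so $N_{W,W'}^{V^G}=\sum_T|\S_{W,T}|^2=1$ again by unitarity. Therefore the unique component is $V^G$, i.e.\ $W'\boxtimes_{V^G}W\cong V^G$, and Proposition~\ref{prop:scvoa} completes the proof.

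I do not expect a serious obstacle beyond correctly assembling these two Verlinde computations; the entire argument rests on the single observation that the unit row of the $S$-matrix is constant, which is precisely what $\lambda_{0,l}=1$ encodes. I would stress that the positivity assumption is \emph{not} required here: both the unitarity of $\S$ and the Verlinde formula hold under Assumption~\ref{ass:n} alone, which $V^G$ satisfies by Theorem~\ref{thm:orb}, so the statement is genuinely established under Assumption~\ref{ass:o}. (This is consistent with the simple-current criterion $\S_{V,U}=\S_{V,V}$ of Proposition~\ref{prop:4.17}, but avoids invoking it so as to dispense with Assumption~\ref{ass:p}.)
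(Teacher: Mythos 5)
Your proof is correct. It follows the same overall strategy as the paper: verify condition~\ref{enum:equiv2} of Proposition~\ref{prop:scvoa} by showing $W\boxtimes_{V^G}W'\cong V^G$ via the Verlinde formula and the explicit $S$-matrix of Lemma~\ref{lem:step1}, then invoke the implication \ref{enum:equiv2}$\Rightarrow$\ref{enum:equiv1}. The execution differs in a worthwhile way, though. The paper first uses relation \eqref{eq:b}, $\lambda_{i,l}\lambda_{-i,l}=\xi_n^{l\alpha(i)}$, to show $\S_{(i,j),(a,b)}\S_{(i,j)',(a,b)}=1/n^2$ pointwise, and then evaluates the full Verlinde sum explicitly as a double sum of roots of unity over $\Z_n\times\Z_n$, which collapses to $\delta_{l,0}\delta_{k,0}$. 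You instead use only \eqref{eq:bbb} (constancy of the unit row, $\S_{V^G,X}=1/n$) together with the general unitarity relations $\S^\dagger\S=\id$ and $\S^3=\overline{\S}=\mathcal{C}\S$ from Proposition~\ref{prop:sunitary}, which lets you avoid both the function $\alpha$ and the explicit root-of-unity bookkeeping. Your two computations (total multiplicity equals $1$, and $N_{W,W'}^{V^G}=1$) are each checked correctly, and both are indeed needed to pin down $W\boxtimes W'\cong V^G$. Your closing remark that the positivity assumption is not needed is also consistent with the paper, which makes the same point in a remark immediately after this lemma: Assumption~\ref{ass:p} enters elsewhere only through Propositions~\ref{prop:pospos} and~\ref{prop:4.17}, whose conclusions are here established directly from Lemmata~\ref{lem:lambdapos} and~\ref{lem:step1}. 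In effect your argument is a proof of the abstract statement that constancy of the unit row of a unitary, symmetric $S$-matrix forces group-like fusion, which is slightly more portable than the paper's computation.
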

\begin{proof}
First, consider the product $\S_{(i,j),(l,k)}\S_{(i,j)',(l,k)}$ where $(i,j)'$ denotes the index of the contragredient module of $W^{(i,j)}$:
\begin{align*}
\S_{(i,j),(l,k)}\S_{(i,j)',(l,k)}&=\S_{(i,j),(l,k)}\S_{(-i,\alpha(i)-j),(l,k)}\\
&=\frac{1}{n}\xi_n^{-(lj+ik)}\lambda_{i,l}\frac{1}{n}\xi_n^{-(l(\alpha(i)-j)+(-i)k)}\lambda_{-i,l}=\frac{1}{n^2}\xi_n^{-l\alpha(i)}\lambda_{i,l}\lambda_{-i,l}=\frac{1}{n^2}
\end{align*}
for all $i,j,k,l\in\Z_n$ by equation \eqref{eq:b}. Then consider the Verlinde formula (see Theorem~\ref{thm:verlinde}) to calculate
\begin{equation*}
N_{(i,j),(i,j)'}^{(l,k)}=\sum_{a,b\in\Z_n}\frac{\S_{(i,j),(a,b)}\S_{(i,j)',(a,b)}\S_{(a,b),(l,k)'}}{\S_{(0,0),(a,b)}}=\frac{1}{n^2}\sum_{a,b\in\Z_n}\frac{\S_{(a,b),(l,k)'}}{\S_{(0,0),(a,b)}}.
\end{equation*}
This yields
\begin{align*}
N_{(i,j),(i,j)'}^{(l,k)}&=\frac{1}{n^2}\sum_{a,b\in\Z_n}\frac{\S_{(a,b),(-l,\alpha(l)-k)}}{\S_{(0,0),(a,b)}}=\frac{1}{n^2}\sum_{a,b\in\Z_n}\frac{\frac{1}{n}\xi_n^{-((-l)b+a(\alpha(l)-k))}\lambda_{a,-l}}{\frac{1}{n}\lambda_{0,a}}\\
&=\frac{1}{n^2}\sum_{a\in\Z_n}\xi_n^{a(k-\alpha(l))}\lambda_{a,-l}\sum_{b\in\Z_n}\xi_n^{lb}=\delta_{l,0}\frac{1}{n}\sum_{a\in\Z_n}\xi_n^{a(k-\alpha(l))}\lambda_{a,0}\\
&=\delta_{l,0}\frac{1}{n}\sum_{a\in\Z_n}\xi_n^{a(k-\alpha(l))}=\delta_{l,0}\delta_{k,\alpha(l)}=\delta_{l,0}\delta_{k,\alpha(0)}=\delta_{l,0}\delta_{k,0},
\end{align*}
which means that
\begin{equation*}
W^{(i,j)}\boxtimes W^{(i,j)'}\cong W^{(0,0)}
\end{equation*}
for all $i,j,\in\Z_n$. Finally, with Proposition~\ref{prop:scvoa} we conclude that all the irreducible $V^G$-modules $W^{(i,j)}$, $i,j\in\Z_n$, are simple currents.
\end{proof}

\minisec{Step: Fusion Group}

Knowing that all irreducible $V^G$-modules are simple currents, i.e.\ $V^G$ fulfils Assumption~\ref{ass:sn}, we can apply the results from Section~\ref{sec:scvoa} on simple-current \voa{}s.

\begin{rem}
A number of results in Sections~\ref{sec:simplecurrents} and \ref{sec:scvoa} depend on the positivity assumption (Assumption~\ref{ass:p}). This assumption only enters through Propositions \ref{prop:pospos} and \ref{prop:4.17}. Note that in the orbifold situation of this chapter we do not know if Assumption~\ref{ass:p} holds but we do know that the statements of Propositions \ref{prop:pospos} and \ref{prop:4.17} are true by direct computations. This follows from Lemma~\ref{lem:lambdapos} and the above formula for $\S$ from Lemma~\ref{lem:step1}. This entails that all results in Sections~\ref{sec:simplecurrents} and \ref{sec:scvoa} that depend on Assumption~\ref{ass:p} hold for $V^G$ in the orbifold setting of this chapter, i.e.\ under Assumption~\ref{ass:o} (without Assumption~\ref{ass:p} for $V^G$).
\end{rem}

We conclude that the fusion algebra $\V(V^G)$ of $V^G$ is the group algebra $\C[F_{V^G}]$ of some fusion group $F_{V^G}$ of order $n^2$. As a set $F_{V^G}=\{(i,j)\;|\;i,j\in\Z_n\}=\Z_n\times\Z_n$. Moreover, the conformal weights $\rho(W^{(i,j)})$ modulo~1 of the irreducible $V^G$-modules determine a non-degenerate quadratic form $Q_\rho$ on $F_{V^G}$. In the following we will see that $F_{V^G}$ is in fact a central extension of $\Z_n$ by $\Z_n$.
\begin{lem}\label{lem:step5}
Let $V$ and $G=\langle\sigma\rangle$ be as in Assumption~\ref{ass:o}. Then
\begin{equation*}
W^{(i,j)}\boxtimes W^{(l,k)}\cong W^{(i+l,j+k+c(i,l))}
\end{equation*}
for some symmetric, normalised 2-cocycle $c\colon\Z_n\times\Z_n\to\Z_n$. This 2-cocycle fulfils
\begin{equation}\label{eq:bullet}
\xi_n^{-ac(i,l)}=\frac{\lambda_{i,a}\lambda_{l,a}}{\lambda_{i+l,a}}
\end{equation}
for all $i,l,a\in\Z_n$.
\end{lem}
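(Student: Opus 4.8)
The plan is to exploit the fact that, by the preceding steps, $V^G$ satisfies Assumption~\ref{ass:sn}, so that the fusion product of two irreducible modules is again irreducible and the explicit $S$-matrix from Lemma~\ref{lem:step1} together with the simple-current identity of Proposition~\ref{prop:ssss} is at our disposal. First I would write $W^{(i,j)}\boxtimes W^{(l,k)}\cong W^{(i',j')}$ for a single pair $(i',j')$ and then pin down $(i',j')$ by probing this module against all $W^{(a,b)}$.

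Concretely, applying Proposition~\ref{prop:ssss} with $U=W^{(i,j)}$, $X=W^{(l,k)}$ and $Y=W^{(a,b)}$ gives
\[
\S_{(i,j),(a,b)}\S_{(l,k),(a,b)}=\S_{(0,0),(a,b)}\,\S_{(i',j'),(a,b)}.
\]
Substituting the closed form $\S_{(i,j),(a,b)}=\tfrac1n\xi_n^{-(aj+ib)}\lambda_{i,a}$ from Lemma~\ref{lem:step1} and using $\lambda_{0,a}=1$ from \eqref{eq:bbb}, so that $\S_{(0,0),(a,b)}=1/n$, this identity reduces to
\[
\xi_n^{-(a(j+k)+(i+l)b)}\lambda_{i,a}\lambda_{l,a}=\xi_n^{-(aj'+i'b)}\lambda_{i',a}
\]
for all $a,b\in\Z_n$. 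Comparing the coefficient of $b$ in the exponent forces $i'=i+l$, and the remaining factor then reads $\xi_n^{a(j'-j-k)}=\lambda_{i+l,a}/(\lambda_{i,a}\lambda_{l,a})$ for all $a\in\Z_n$. Since the right-hand side is independent of $j$ and $k$, specialising to $a=1$ shows that $j'-j-k\pmod n$ depends only on $i$ and $l$; calling this quantity $c(i,l)$ yields both $W^{(i,j)}\boxtimes W^{(l,k)}\cong W^{(i+l,j+k+c(i,l))}$ and the relation \eqref{eq:bullet}.

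It then remains to verify that $c\colon\Z_n\times\Z_n\to\Z_n$ is a symmetric, normalised $2$-cocycle. Symmetry $c(i,l)=c(l,i)$ is immediate either from the commutativity of $\boxtimes$ (Proposition~\ref{prop:rem3.5a}) or from the invariance of the right-hand side of \eqref{eq:bullet} under $i\leftrightarrow l$. Normalisation $c(i,0)=c(0,i)=0$ follows because $W^{(0,0)}=V^G$ is the unit of the fusion algebra (Proposition~\ref{prop:rem3.5b}), which one also reads off \eqref{eq:bullet} upon setting $l=0$ and using $\lambda_{0,a}=1$. Finally, the cocycle identity $c(i,l)+c(i+l,m)=c(l,m)+c(i,l+m)$ reflects the associativity of the fusion product (Proposition~\ref{prop:fusionassoc}); I would prefer to deduce it directly from \eqref{eq:bullet}, observing that both $\xi_n^{-a(c(i,l)+c(i+l,m))}$ and $\xi_n^{-a(c(l,m)+c(i,l+m))}$ telescope to $\lambda_{i,a}\lambda_{l,a}\lambda_{m,a}/\lambda_{i+l+m,a}$, so the exponents agree modulo $n$ after taking $a=1$.

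The argument is essentially forced once the explicit $S$-matrix is inserted, so I do not anticipate a genuine obstacle. The only point demanding a little care is the logical step extracting that $c(i,l):=j'-j-k$ is independent of the second coordinates $j,k$, which rests precisely on the fact that the right-hand side of \eqref{eq:bullet} involves only $i$, $l$ and $a$.
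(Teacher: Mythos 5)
Your proposal is correct and follows essentially the same route as the paper: both write the product as a single irreducible $W^{(i',j')}$, feed the explicit $S$-matrix of Lemma~\ref{lem:step1} into the simple-current identity of Proposition~\ref{prop:ssss}, extract $i'=i+l$ from the $b$-dependence and the relation \eqref{eq:bullet} from the $a$-dependence, and then obtain symmetry, normalisation and the cocycle condition from commutativity, unitality and associativity of the fusion algebra. Your direct telescoping verification of the cocycle identity from \eqref{eq:bullet} is a harmless variant of the paper's appeal to associativity.
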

\begin{proof}
Let $W^{(i,j)}\boxtimes W^{(l,k)}\cong: W^{(p,q)}$ for some $p,q\in\Z_n$. Then $\S_{(i,j),(a,b)}\S_{(l,k),(a,b)}=(1/n)\S_{(p,q),(a,b)}$ by the results in Section~\ref{sec:scvoa}. This implies
\begin{equation*}
\lambda_{i,a}\lambda_{l,a}/\lambda_{p,a}=\xi_n^{-(pb+qa)}\xi_n^{ib+ja}\xi_n^{lb+ka}
\end{equation*}
for all $a,b\in\Z_n$. With $a=0$ we see that $p=i+l\in\Z_n$ using \eqref{eq:bbb}. Then with $b=0$ we get
\begin{equation*}
\lambda_{i,a}\lambda_{l,a}/\lambda_{i+l,a}=\xi_n^{(j+k-q)a}.
\end{equation*}
This shows that $c:=q-j-k$ depends only on $i$ and $l$ and we get $q=j+k+c(i,l)$.

The associativity of the fusion algebra $\V(V^G)$ implies that $\xi_n^{-c(i,l)}\colon\Z_n\times \Z_n\to U_n$ is a 2-cocycle. This 2-cocycle is symmetric since $\V(V^G)$ is commutative and normalised since $\lambda_{0,1}=1$. Similarly, $c$ is a symmetric, normalised 2-cocycle $c\colon\Z_n\times\Z_n\to\Z_n$.
\end{proof}

The lemma implies that the fusion group $F_{V^G}=\Z_n\times\Z_n$ (as a set) obeys the group law
\begin{equation*}
(i,j)\oplus(l,k)=(i+l,j+k+c(i,l))
\end{equation*}
for $i,j,k,l\in\Z_n$. The associativity of the group law corresponds to $c$ being a 2-cocycle and the abelianness of $F_{V^G}$ to the symmetry of $c$.

The above equation shows that the group $F_{V^G}$ is a central extension of the group $B=\Z_n$ by the group $A=\Z_n$, i.e.\ there is a short exact sequence
\begin{equation*}
1\to A\to F_{V^G}\to B\to 1
\end{equation*}
such that $A$ is in $Z(F_{V^G})$, the centre of $F_{V^G}$. Indeed, with the above definition of $F_{V^G}$ via the 2-cocycle $c\colon B\times B\to A$, i.e.\ $c\in Z^2(B,A)$, it is well known that there is a bijection between the isomorphism classes of central extensions $1\to A\to F_{V^G}\to B\to 1$ and the cohomology group $H^2(B,A)=Z^2(B,A)/B^2(B,A)$ (see e.g.\ \cite{Wei94}). All such central extensions are abelian since we can always find a symmetric representative in $Z^2(\Z_n,\Z_n)$ of an element in $H^2(\Z_n,\Z_n)$.

We have just proved:
\begin{cor}\label{cor:step5}
Let $V$ and $G=\langle\sigma\rangle$ be as in Assumption~\ref{ass:o}. Then the fusion group $F_{V^G}$ is a central extension $1\to \Z_n\to F_{V^G}\to \Z_n\to 1$.
\end{cor}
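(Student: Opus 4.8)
The plan is to read off the group structure of $F_{V^G}$ directly from the fusion rules established in Lemma~\ref{lem:step5} and to exhibit the central extension explicitly. By Lemma~\ref{lem:step4} together with the results of Section~\ref{sec:scvoa}, the \voa{} $V^G$ has group-like fusion, so by Proposition~\ref{prop:sca} the fusion algebra $\V(V^G)$ is the group algebra $\C[F_{V^G}]$ of a genuine finite abelian group $F_{V^G}$ whose underlying set is $\{(i,j)\mid i,j\in\Z_n\}=\Z_n\times\Z_n$, indexing the irreducible modules $W^{(i,j)}$. The group operation is precisely the one induced by the fusion product, namely
\begin{equation*}
(i,j)\oplus(l,k)=(i+l,\,j+k+c(i,l)),
\end{equation*}
where $c\colon\Z_n\times\Z_n\to\Z_n$ is the symmetric, normalised $2$-cocycle produced by Lemma~\ref{lem:step5}.

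First I would define the projection $\pi\colon F_{V^G}\to\Z_n$ by $\pi(i,j)=i$. Reading off the first component of $\oplus$ shows $\pi$ is a surjective group homomorphism, since $\pi((i,j)\oplus(l,k))=i+l=\pi(i,j)+\pi(l,k)$ and every $i\in\Z_n$ is hit by $(i,0)$. Its kernel is $A:=\{(0,j)\mid j\in\Z_n\}$, and because $c$ is normalised the operation restricted to $A$ reads $(0,j)\oplus(0,k)=(0,j+k)$, so the map $j\mapsto(0,j)$ gives an isomorphism $\Z_n\cong A$. Next I would verify that $A$ is central: using $c(i,0)=c(0,l)=0$, a direct computation gives $(0,j)\oplus(l,k)=(l,j+k)=(l,k)\oplus(0,j)$ for all $j,k,l\in\Z_n$, so $A\leq Z(F_{V^G})$. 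Together these yield the short exact sequence
\begin{equation*}
1\to\Z_n\to F_{V^G}\to\Z_n\to1
\end{equation*}
with central image, which is exactly the assertion.

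Since the essential computation---identifying the $2$-cocycle $c$ and hence the group law on $F_{V^G}$---has already been carried out in Lemma~\ref{lem:step5}, there is no genuine obstacle remaining and the corollary is a formal consequence. The only point requiring a moment's care is the centrality of $A$, which is automatic once one observes that $c$ takes values in $\Z_n$ with trivial action, equivalently by the explicit commutativity check above. This is just the standard bijective correspondence between $H^2(\Z_n,\Z_n)$ and the isomorphism classes of central extensions $1\to\Z_n\to F_{V^G}\to\Z_n\to1$, the representative cocycle being realised here by $c$.
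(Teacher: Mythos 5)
Your proposal is correct and follows essentially the same route as the paper: the corollary is stated there as an immediate consequence of the group law $(i,j)\oplus(l,k)=(i+l,j+k+c(i,l))$ from Lemma~\ref{lem:step5}, with centrality and the identification of the kernel following from the normalisation of the $2$-cocycle $c$, exactly as you spell out. Your write-up merely makes the projection, kernel, and centrality checks explicit, which the paper leaves to the standard correspondence between $H^2(\Z_n,\Z_n)$ and central extensions.
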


The group structure of the fusion group $F_{V^G}$ is determined up to isomorphism by the cohomology class of the 2-cocycle $c\colon\Z_n\times\Z_n\to\Z_n$ from Lemma~\ref{lem:step5}.\footnote{But two non-cohomologous 2-cocycles can give isomorphic groups.} It is well known that the cohomology group $H^2(\Z_n,\Z_n)$ is isomorphic to $\Z_n$, where the element in $H^2(\Z_n,\Z_n)$ corresponding to $d\in\Z_n$ can be represented by the symmetric, normalised 2-cocycle $c_d\colon\Z_n\times\Z_n\to \Z_n$ given by
\begin{equation*}
c_d(i,j):=d\left\lfloor\frac{i_n+j_n}{n}\right\rfloor=d\frac{i_n+j_n-(i+j)_n}{n}=\begin{cases}0&\text{if }i_n+j_n<n,\\d&\text{if }i_n+j_n\geq n.\end{cases}
\end{equation*}
Here, we introduced the notation that for $i\in\Z_n$, $i_n$ denotes the representative of $i$ in $\{0,\ldots,n-1\}$. 

In the following let $d\in\Z_n$ denote the cohomology class of the cocycle $c$ from Lemma~\ref{lem:step5}. Then, since $c$ is normalised, $d$ can be determined via
\begin{equation*}
n\cdot(1,j)=\underbrace{(1,j)\oplus\ldots\oplus(1,j)}_{n\text{ times}}=(0,d),
\end{equation*}
i.e.\
\begin{equation*}
d=c(1,1)+c(1,2)+\ldots+c(1,n-1).
\end{equation*}
In the next section we will show that the value of $d\in\Z_n$ is determined by the conformal weight $\rho(V(\sigma))$ of the irreducible $\sigma$-twisted $V$-module $V(\sigma)$.

From the formula for the contragredient module (see Proposition~\ref{prop:dual}) it follows:
\begin{lem}\label{lem:calpha}
Let $V$ and $G=\langle\sigma\rangle$ be as in Assumption~\ref{ass:o}. The 2-cocycle $c\colon\Z_n\times\Z_n\to\Z_n$ from Lemma~\ref{lem:step5} and the function $\alpha\colon\Z_n\to\Z_n$ from Proposition~\ref{prop:dual} are related via
\begin{equation*}
\alpha(i)=-c(i,-i)
\end{equation*}
for all $i\in\Z_n$.
\end{lem}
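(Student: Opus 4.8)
The plan is to exploit the fact that, once we know $V^G$ satisfies Assumption~\ref{ass:sn}, the two structures in the statement---the contragredient map and the multiplication $c$ in the group law---are no longer independent: both are governed by the single abelian group $(F_{V^G},\oplus)$. Concretely, Lemma~\ref{lem:step4} together with Theorem~\ref{thm:orb} shows $V^G$ is as in Assumption~\ref{ass:sn}, so Proposition~\ref{prop:sca} applies and tells us that the index of the contragredient module $(W^{(i,j)})'$ is precisely the \emph{inverse} of $(i,j)$ in the fusion group. Thus I would reduce the lemma to a bare group-theoretic computation of inverses and a comparison with the explicit contragredient formula from Proposition~\ref{prop:dual}.

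First I would write down the group law explicitly. By Lemma~\ref{lem:step5} the binary operation on $F_{V^G}=\Z_n\times\Z_n$ is
\begin{equation*}
(i,j)\oplus(l,k)=(i+l,\,j+k+c(i,l)),
\end{equation*}
with neutral element $(0,0)$, using that $c$ is normalised so that $c(0,0)=0$. To find the inverse of $(i,j)$ I solve $(i,j)\oplus(-i,k)=(0,0)$: the first coordinate forces the $-i$, and the second coordinate gives $j+k+c(i,-i)=0$, hence $k=-j-c(i,-i)$. Therefore the inverse of $(i,j)$ in $F_{V^G}$ is $(-i,\,-j-c(i,-i))$.

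Next I would invoke Proposition~\ref{prop:dual}, which states $(W^{(i,j)})'\cong W^{(-i,\alpha(i)-j)}$, so the index of the contragredient module is $(-i,\alpha(i)-j)$. Since Proposition~\ref{prop:sca} identifies this index with the group inverse just computed, equating the two expressions $(-i,\alpha(i)-j)=(-i,-j-c(i,-i))$ and comparing second coordinates yields $\alpha(i)-j=-j-c(i,-i)$, i.e.\ $\alpha(i)=-c(i,-i)$, which is the desired identity.

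There is essentially no serious obstacle here; the only point requiring care is the justification that the contragredient module corresponds to the group inverse, but this is exactly Proposition~\ref{prop:sca} under Assumption~\ref{ass:sn}, whose hypotheses hold for $V^G$ by Lemma~\ref{lem:step4} and Theorem~\ref{thm:orb}. A minor consistency check worth mentioning is that both sides respect the symmetry $\alpha(i)=\alpha(-i)$ established in Proposition~\ref{prop:dual}: indeed $-c(-i,i)=-c(i,-i)$ by the symmetry of the $2$-cocycle $c$, so the formula $\alpha(i)=-c(i,-i)$ is automatically compatible with that constraint, providing reassurance that no sign or indexing error has crept in.
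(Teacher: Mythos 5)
Your argument is correct and is essentially the paper's own proof: the paper likewise uses that the contragredient index is the group inverse and computes $(0,0)=(i,j)\oplus(-i,\alpha(i)-j)=(0,\alpha(i)+c(i,-i))$ in one line, which is just your calculation with the substitution done in the opposite order. The consistency check against $\alpha(i)=\alpha(-i)$ via the symmetry of $c$ is a nice extra but not needed.
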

\begin{proof}
Consider $(0,0)=(i,j)\oplus{(i,j)}'=(i,j)\oplus(-i,\alpha(i)-j)=(0,\alpha(i)+c(i,-i))$.
\end{proof}

\section{Conformal Weights}\label{sec:confweights}
It is possible to relate the representations $\phi_i=\phi_{\sigma^i}$ from Section~\ref{sec:fpvosa} to $L_0=L_0^{V(\sigma^i)}$, the weight-grading operator on $V(\sigma^i)$, and hence to the conformal weights
\begin{equation*}
\rho_i:=\rho(V(\sigma^i))\in\Q
\end{equation*}
of the irreducible twisted $V$-modules $V(\sigma^i)$, $i\in\Z_n$.

\begin{lem}\label{lem:phi2}
Let $V$ and $G=\langle\sigma\rangle$ be as in Assumption~\ref{ass:o} (minus CFT-type).
Then it is possible to choose the representations $\phi_i$ of $G$ on $V(\sigma^i)$, $i\in\Z_n$, such that
\begin{equation*}
\phi_i(\sigma)^{(i,n)}=\e^{(2\pi\i)[i/(i,n)]^{-1}(L_0-\rho_i)}
\end{equation*}
where $[i/(i,n)]^{-1}$ is the inverse of $i/(i,n)$ modulo~$n/(i,n)$.
\end{lem}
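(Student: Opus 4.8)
The goal is to pin down how the vector-space automorphism $\phi_i(\sigma)$ acts on the twisted module $V(\sigma^i)$ in terms of the grading operator $L_0$, exploiting the fact that $\phi_i(\sigma)$ is only determined up to an $n$-th root of unity (Proposition~\ref{prop:phi}). The key observation is that $\sigma$ itself is closely related to the grading: on an $h$-twisted module the operator $\sigma$ and the eigenvalues of $L_0$ modulo $\tfrac1n$ are linked through the twisting compatibility in the definition of a twisted module. First I would recall that for $v\in V^r$ (the $\xi_n^r$-eigenspace of $\sigma$) the modes $a_k$ on $V(\sigma^i)$ carry exponents $k\in -r i/n+\Z$ (this is the twisting compatibility axiom applied to $\sigma^i$), so that $\phi_i(\sigma)$, which by its defining relation conjugates $Y_{V(\sigma^i)}(v,x)$ to $Y_{V(\sigma^i)}(\sigma v,x)=\xi_n^r Y_{V(\sigma^i)}(v,x)$ on $V^r$, must shift the $L_0$-grading in a controlled way.

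\textbf{Key steps.} I would proceed as follows. First, compute the commutation relation between $\phi_i(\sigma)$ and the homogeneous modes: the defining relation $\phi_i(\sigma)Y_{V(\sigma^i)}(v,x)\phi_i(\sigma)^{-1}=Y_{V(\sigma^i)}(\sigma v,x)$ together with $\sigma v=\xi_n^r v$ for $v\in V^r$ shows that $\phi_i(\sigma)$ multiplies the mode $v_k$ by $\xi_n^r$ under conjugation. Second, compare this with the operator $\e^{(2\pi\i)\beta(L_0-\rho_i)}$ for a suitable rational $\beta$: using $[L_0,v_k]=(\wt(v)-k-1)v_k$ one checks that conjugation by $\e^{(2\pi\i)\beta L_0}$ multiplies $v_k$ by $\e^{(2\pi\i)\beta(\wt(v)-k-1)}$, and since $k\equiv -ri/n\pmod 1$ on $V(\sigma^i)$, the fractional part of $\wt(v)-k-1$ is $\equiv ri/n\pmod 1$. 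Thus conjugation by $\e^{(2\pi\i)\beta L_0}$ multiplies $v_k$ by a factor that, after choosing $\beta$ to clear denominators, matches $\xi_n^{r}$ raised to the appropriate power. Third, I would determine the correct exponent: to make the eigenvalue exactly $\xi_n^r=\e^{(2\pi\i)r/n}$ on $V^r$ I need $\beta\cdot(ri/n)\equiv r/n\pmod 1$ for all $r$, i.e.\ $\beta\, i\equiv 1\pmod{n/(i,n)}$ after reducing; this is precisely where $[i/(i,n)]^{-1}$, the inverse of $i/(i,n)$ modulo $n/(i,n)$, enters. Raising to the power $(i,n)$ then produces the stated operator $\phi_i(\sigma)^{(i,n)}=\e^{(2\pi\i)[i/(i,n)]^{-1}(L_0-\rho_i)}$, where the $(i,n)$-th power accounts for the index of the subgroup of roots of unity that actually occurs as eigenvalues of $\phi_i(\sigma)$ on $V(\sigma^i)$.

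\textbf{Reconciling the freedom.} Since $\phi_i(\sigma)$ and $\e^{(2\pi\i)\beta(L_0-\rho_i)}$ (with $\beta=[i/(i,n)]^{-1}/(i,n)$, say) both intertwine the twisted vertex operators in the same way, their ratio is a $V$-module automorphism of the irreducible module $V(\sigma^i)$ and hence, by Schur's lemma (Proposition~\ref{prop:twistedschur}), a scalar. This scalar is an $n$-th root of unity because $\phi_i(\sigma)$ has order dividing $n$ and $\e^{(2\pi\i)(L_0-\rho_i)}$ restricted to the relevant eigenspaces is likewise a root of unity; thus I may absorb it by replacing $\phi_i(\sigma)$ with $\xi_n^j\phi_i(\sigma)$ for a suitable $j$, exactly the freedom granted by Proposition~\ref{prop:phi}. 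After this rescaling the two operators agree on the $(i,n)$-th power, giving the claimed identity.

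\textbf{Main obstacle.} The bookkeeping with the fractional exponents and the modular arithmetic is the delicate part: I must verify that the eigenvalues of $L_0-\rho_i$ on $V(\sigma^i)$ lie in $\frac{(i,n)}{n}\Z$ (so that $\e^{(2\pi\i)[i/(i,n)]^{-1}(L_0-\rho_i)}$ is genuinely an operator of finite order whose eigenvalues are $(n/(i,n))$-th roots of unity) and that the inverse $[i/(i,n)]^{-1}$ is well-defined precisely because $i/(i,n)$ is coprime to $n/(i,n)$. The subtle point is that the scalar ambiguity from Schur's lemma must be shown to be an $n$-th root of unity and thus removable within the allowed rescaling, rather than an arbitrary nonzero constant; this hinges on $\phi_i(\sigma)^n=\id$ forcing the scalar to be compatible with the finite order, which I expect to require a short but careful argument comparing orders.
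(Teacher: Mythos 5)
Your overall strategy --- compare the conjugation action of $\phi_i(\sigma)$ on the twisted modes with that of an exponential of $L_0$, invoke Schur's lemma (Proposition~\ref{prop:twistedschur}) to conclude the two candidates agree up to a scalar, and absorb that scalar into the rescaling freedom of Proposition~\ref{prop:phi} --- is exactly the paper's strategy, and your first two steps (the conjugation factors $\xi_n^r$ and $\e^{(2\pi\i)\beta\wt(v_t)}$) match the paper's computation. But there is a genuine gap in your third step. You propose to find a \emph{rational} $\beta$ (later you take $\beta=[i/(i,n)]^{-1}/(i,n)$) such that $\phi_i(\sigma)$ itself equals $\e^{(2\pi\i)\beta(L_0-\rho_i)}$, and then to raise this identity to the $(i,n)$-th power. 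This intermediate claim is false whenever $(i,n)>1$. Conjugation by $\e^{(2\pi\i)\beta L_0}$ multiplies the mode $v_t$ by $\e^{(2\pi\i)\beta\wt(v_t)}$, and for $v\in V^r$ the weights $\wt(v_t)$ run through the whole coset $ri/n+\Z$; this factor is constant on that coset --- as it must be if the conjugation is to reproduce $Y_{V(\sigma^i)}(\sigma v,x)=\xi_n^rY_{V(\sigma^i)}(v,x)$ --- only when $\beta\in\Z$, and an integer $\beta$ with $\beta i\equiv1\pmod{n}$ (which is what your congruence $\beta(ri/n)\equiv r/n\pmod{1}$ for all $r$ actually amounts to, not a congruence modulo $n/(i,n)$) exists only when $(i,n)=1$. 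Concretely, for $n=4$, $i=2$, $\beta=1/2$ the factor is $\i$ on a mode of weight $1/2$ and $-\i$ on a mode of weight $3/2$, so $\e^{\pi\i L_0}$ does not implement $\sigma$ on $V(\sigma^2)$; in fact $\phi_i(\sigma)$ generically has several distinct eigenvalues inside a single $L_0$-eigenspace and therefore cannot be \emph{any} function of $L_0$ --- which is precisely why the lemma only pins down its $(i,n)$-th power.

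The repair is to match the $(i,n)$-th power directly rather than $\phi_i(\sigma)$: note that $\phi_i(\sigma)^{(i,n)}=\phi_i(\sigma^{(i,n)})$ implements the automorphism $\tilde\sigma:=\sigma^{(i,n)}$, whose eigenvalue on $V^r$ is $\xi_{n/(i,n)}^{r}$, and compare it with $\e^{(2\pi\i)[i/(i,n)]^{-1}(L_0-\rho_i)}$, where $[i/(i,n)]^{-1}$ is an \emph{integer} representative of the inverse modulo $n/(i,n)$; your conjugation computation then goes through. This is how the paper proceeds: it first settles the coprime case, then observes $V(\sigma^i)=V(\tilde\sigma^{\tilde i})$ with $\tilde i=i/(i,n)$ coprime to $\ord(\tilde\sigma)=n/(i,n)$ and applies the coprime case to $\tilde\sigma$. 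Your final absorption step also needs sharpening: rescaling $\phi_i(\sigma)$ by $\xi_n^{j}$ changes $\phi_i(\sigma)^{(i,n)}$ only by $\xi_{n/(i,n)}^{j}$, so it does not suffice to know that the Schur scalar is an $n$-th root of unity; one must check it is an $n/(i,n)$-th root of unity. This does hold, because both $\phi_i(\sigma)^{(i,n)}$ and the exponential (whose $L_0-\rho_i$ eigenvalues lie in $((i,n)/n)\N$) have order dividing $n/(i,n)$, and it is exactly the cancellation $\xi_{\tilde n}^{k}\xi_n^{-(i,n)k}=1$ carried out at the end of the paper's proof.
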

\begin{proof}
First, consider the coprime case with $(i,n)=1$. Then $i$ is invertible modulo~$n$ and let $i^{-1}$ denote the inverse of $i$ modulo~$n$. We will show that the representation $\phi_i$ on $V(\sigma^i)$ can be chosen to be
\begin{equation*}
\phi_i(\sigma^j)=\e^{(2\pi\i)i^{-1}j(L_0-\rho_i)}
\end{equation*}
where $L_0$ is the grading operator on $V(\sigma^i)$.

To this end, consider the module $W=V(\sigma^i)$ for some fixed $i\in\Z_n^\times$ so that $\sigma^i$ has order~$n$. $V(\sigma^i)$ is irreducible and hence has grading $W=\bigoplus_{k=0}^\infty W_{\rho_i+k/n}$ for some conformal weight $\rho_i\in\Q$ by Definition~\ref{defi:sec3} (and Theorem~\ref{thm:10.3}). We define the weight subspaces
\begin{equation*}
W_{(k)}:=\bigoplus_{l\in\Z} W_{\rho_i+k/n+l}
\end{equation*}
for $k\in\Z_n$ so that $W=\bigoplus_{k\in\Z_n}W_{(k)}$. 

We will show that $v_t(W_{(k)})\subseteq W_{(k+ir)}$ for any $t\in\Q$ if $v\in V^r$, i.e.\ $v$ has eigenvalue $\xi_n^r$ with respect to $\sigma$. Here, $v_t\in\End_\C(W)$ denotes the $t$-th mode of $Y_W(v,x)$. Indeed, let $v\in V^r$. Then $v$ has eigenvalue $\xi_n^{ri}$ with respect to $\sigma^i$. By the definition of $\sigma^i$-twisted modules, for $v_t$ to be non-vanishing, $t$ needs to lie in $-k/n+\Z$ if $v$ has eigenvalue $\xi_n^k$ with respect to $\sigma^i$ and hence $t\in-ri/n+\Z$ in our case, or else $v_t=0$.
For homogeneous $v$, the weight of the operator $v_t$ is in general given by $\wt(v_t)=\wt(v)-t-1$ and hence $\wt(v_t)\in ri/n+\Z$. Consequently, $v_t$ changes the weight of an element in $W$ by $ri/n$ modulo~1, which proves the statement.

Using the above assertion we can now prove the statement of the lemma in the coprime case. First, we show that $\phi_i(\sigma^j):=\e^{(2\pi\i)i^{-1}jL_0}$ fulfils the equation
\begin{equation*}
\phi_i(\sigma^j)Y_{V(g)}(v,x)\phi_i(\sigma^j)^{-1}w=Y_{V(g)}(\sigma^jv,x)w
\end{equation*}
for all $v\in V$ and all $w\in W=V(\sigma^i)$, which is equivalent to
\begin{equation*}
\phi_i(\sigma^j)v_t\phi_i(\sigma^j)^{-1}w=(\sigma^jv)_tw
\end{equation*}
for all $v\in V$, $w\in W$ and $t\in\Q$. Without loss of generality, let $v\in V^r$, i.e.\ $v$ has eigenvalue $\e^{(2\pi\i)jr/n}$ with respect to $\sigma^j$, and let $t\in-ri/n+\Z$. Then the right-hand side becomes
\begin{equation*}
(\sigma^jv)_tw=\e^{(2\pi\i)jr/n}v_tw.
\end{equation*}
On the other hand, the left-hand side equates to
\begin{align*}
\phi_i(\sigma^j)v_t\phi_i(\sigma^j)^{-1}w&=\e^{(2\pi\i)i^{-1}jL_0}v_t\e^{-(2\pi\i)i^{-1}jL_0}w=\e^{(2\pi\i)i^{-1}jL_0}v_t\e^{-(2\pi\i)i^{-1}j\wt(w)}w\\
&=\e^{(2\pi\i)i^{-1}jL_0}v_tw\e^{-(2\pi\i)i^{-1}j\wt(w)}\\
&=\e^{(2\pi\i)i^{-1}j(\wt(w)+ri/n)}v_tw\e^{-(2\pi\i)i^{-1}j\wt(w)}\\
&=\e^{(2\pi\i)i^{-1}j(ri/n)}v_tw=\e^{(2\pi\i)jr/n}v_tw.
\end{align*}
Hence, the left-hand and the right-hand side coincide, which means we have determined $\phi(\sigma^j)$ up to a scalar factor because of the uniqueness.

In order for $\phi_i(\sigma)$ to be of order $n$ (it has to be a representation of $G$), its eigenvalues have to be $n$-th roots of unity. This can be achieved by redefining $\phi_i(\sigma)$ by multiplying it by an appropriate scalar factor. Since $(L_0-\rho_i)$ has eigenvalues in $(1/n)\Z$, we know that
\begin{equation*}
\phi_i(\sigma^j)=\e^{(2\pi\i)i^{-1}j(L_0-\rho_i)}
\end{equation*}
is a suitable choice, which completes the proof in the coprime case.

Now, consider any $i\in\Z_n$, not necessarily coprime to $n$. We will reduce this case to the coprime case. Let us define
\begin{equation*}
\tilde{i}:=\frac{i}{(i,n)}\quad\text{and}\quad\tilde{n}:=\frac{n}{(i,n)}
\end{equation*}
so that $(\tilde{i},\tilde{n})=1$ as well as
\begin{equation*}
\tilde{\sigma}:=\sigma^{(i,n)},
\end{equation*}
which is an automorphism of order $\tilde{n}$. We consider the representation $\phi_i$ of $G=\langle\sigma\rangle$ on the $i$-twisted module
\begin{equation*}
V(\sigma^i)=V((\sigma^{(i,n)})^{i/(i,n)})=V(\tilde{\sigma}^{\tilde{i}}).
\end{equation*}
We saw that $\phi_i(\sigma)$ is unique up to an $n$-th root of unity. We then consider
\begin{equation*}
\phi_i(\sigma)^{(i,n)}=\phi_i(\sigma^{(i,n)})=\phi_i(\tilde{\sigma}),
\end{equation*}
which by the coprime case applied to the $\tilde{\sigma}^{\tilde{i}}$-twisted module $V(\sigma^i)$, where $\tilde{\sigma}$ is an automorphism of order $\tilde{n}$, and $(\tilde{i},\tilde{n})=1$ has to be of the form
\begin{equation*}
\e^{(2\pi\i)\tilde{i}^{-1}(L_0-\rho_i)}
\end{equation*}
modulo an $\tilde{n}$-th root of unity where $\tilde{i}^{-1}$ is the inverse of $\tilde{i}$ modulo~$\tilde{n}$. This means that
\begin{equation*}
\phi_i(\sigma)^{(i,n)}=\e^{(2\pi\i)\tilde{i}^{-1}(L_0-\rho_i)}\xi_{\tilde{n}}^k
\end{equation*}
for some $k\in\Z$. Here, $\xi_{\tilde{n}}=\e^{(2\pi\i)1/\tilde{n}}$ is a primitive $\tilde{n}$-th root of unity. We can then redefine $\phi_i(\sigma)$ as
\begin{equation*}
\phi_i(\sigma)\mapsto\phi_i(\sigma)\xi_n^{-k},
\end{equation*}
which yields
\begin{equation*}
\phi_i(\sigma^{(i,n)})=\e^{(2\pi\i)\tilde{i}^{-1}(L_0-\rho_i)}\xi_{\tilde{n}}^k\xi_n^{-(i,n)k}=\e^{(2\pi\i)\tilde{i}^{-1}(L_0-\rho_i)}.
\end{equation*}
Indeed, $\phi_i(\sigma^{(i,n)})$ is an $n/(i,n)$-th root of unity since $(L_0-\rho_i)$ has eigenvalues in $((i,n)/n)\Z$.
\end{proof}
For $i=0$ the statement of the lemma is trivial, i.e.\ choosing the representations $\phi_i$, $i\in\Z_n$, as in the lemma gives no restriction on $\phi_0$. Note that we choose $\phi_0$ as in Remark~\ref{rem:phi0}, anyway.

The choices in the above lemma determine the conformal weights modulo~1 of the irreducible $V^G$-modules $W^{(i,j)}$ in terms of the $\rho_i$:
\begin{prop}\label{prop:weight}
Let $V$ and $G=\langle\sigma\rangle$ be as in Assumption~\ref{ass:o} (minus CFT-type).
Moreover, choose the representations $\phi_i$ of $G$ on $V(\sigma^i)$ as in Lemma~\ref{lem:phi2}. Then the irreducible $V^G$-modules $W^{(i,j)}=\{w\in V(\sigma^i)\;|\;\phi_i(\sigma)v=\xi_n^jv\}$ have weight grading in
\begin{equation*}
\rho_i+ij/n+\Z.
\end{equation*}
\end{prop}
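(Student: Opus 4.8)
The plan is to take a homogeneous vector $w\in W^{(i,j)}$ of weight $\lambda$ and pin down the possible values of $\lambda$ by computing $\phi_i(\sigma)^{(i,n)}w$ in two different ways. Throughout I set $\tilde\imath:=i/(i,n)$ and $\tilde n:=n/(i,n)$, so that $(\tilde\imath,\tilde n)=1$ and $\sigma^i=\tilde\sigma^{\tilde\imath}$ with $\tilde\sigma=\sigma^{(i,n)}$ an automorphism of order $\tilde n$.

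First I would record the two inputs. Since $V(\sigma^i)$ is an irreducible $\sigma^i$-twisted module and $\sigma^i$ has order $\tilde n$, its weight grading lies in $\rho_i+(1/\tilde n)\Z$ by Definition~\ref{defi:sec3}; hence I may write $\lambda-\rho_i=k/\tilde n$ for some $k\in\Z$ (in fact $k\in\N$). On the other hand, $w$ lies in the $\xi_n^j$-eigenspace of $\phi_i(\sigma)$ by the definition of $W^{(i,j)}$, so that
\begin{equation*}
\phi_i(\sigma)^{(i,n)}w=\xi_n^{j(i,n)}w=\e^{(2\pi\i)j/\tilde n}w,
\end{equation*}
using $(i,n)/n=1/\tilde n$.

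Next, invoking the choice of representations from Lemma~\ref{lem:phi2}, namely $\phi_i(\sigma)^{(i,n)}=\e^{(2\pi\i)\tilde\imath^{\,-1}(L_0-\rho_i)}$ with $\tilde\imath^{\,-1}$ the inverse of $\tilde\imath$ modulo $\tilde n$, and using $L_0w=\lambda w$, I obtain
\begin{equation*}
\phi_i(\sigma)^{(i,n)}w=\e^{(2\pi\i)\tilde\imath^{\,-1}(\lambda-\rho_i)}w=\e^{(2\pi\i)\tilde\imath^{\,-1}k/\tilde n}w.
\end{equation*}
Equating the two eigenvalues and comparing exponents modulo $1$ gives $j\equiv\tilde\imath^{\,-1}k\pmod{\tilde n}$, and multiplying by $\tilde\imath$ yields $k\equiv\tilde\imath\,j\pmod{\tilde n}$.

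Finally I would translate this congruence into the asserted statement. From $i=\tilde\imath(i,n)$ and $n=\tilde n(i,n)$ one checks the identity $ij/n=\tilde\imath\,j/\tilde n$, so that
\begin{equation*}
\lambda-\rho_i-\frac{ij}{n}=\frac{k}{\tilde n}-\frac{\tilde\imath\,j}{\tilde n}=\frac{k-\tilde\imath\,j}{\tilde n}\in\Z
\end{equation*}
by the congruence just obtained. Hence $\lambda\in\rho_i+ij/n+\Z$, as claimed; the case $i=0$ is trivial, since then $\rho_0=\rho(V)\in\Z$ and $W^{(0,j)}$ is $\Z$-graded. I do not expect a genuine obstacle: the only delicate point is the bookkeeping with the modular inverse $\tilde\imath^{\,-1}$ modulo $\tilde n$ together with the identity $ij/n=\tilde\imath\,j/\tilde n$, which is precisely the reason Lemma~\ref{lem:phi2} was formulated in terms of $\phi_i(\sigma)^{(i,n)}$ rather than $\phi_i(\sigma)$ itself.
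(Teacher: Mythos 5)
Your argument is correct and is essentially the paper's own proof: both compute the eigenvalue of $\phi_i(\sigma)^{(i,n)}$ on a homogeneous $w\in W^{(i,j)}$ once from the eigenspace definition and once from the formula $\phi_i(\sigma)^{(i,n)}=\e^{(2\pi\i)\tilde\imath^{\,-1}(L_0-\rho_i)}$ of Lemma~\ref{lem:phi2}, then compare exponents and clear the modular inverse. Your explicit bookkeeping with $\lambda-\rho_i=k/\tilde n$ is just a slightly more spelled-out version of the chain of congruences in the paper.
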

\begin{proof}
As in the previous proof we write $\tilde{i}=i/(i,n)$ and $\tilde{n}=n/(i,n)$. The module $W^{(i,j)}$ contains those vectors in $V(\sigma^i)$ with eigenvalue $\xi_n^j$ with respect to $\phi_i(\sigma)$. These vectors all have eigenvalue $\xi_n^{j(i,n)}=\xi_{\tilde{n}}^j$ with respect to $\phi_i(\sigma)^{(i,n)}$. On the other hand, we choose $\phi_i$ such that
\begin{equation*}
\phi_i(\sigma)^{(i,n)}=\e^{(2\pi\i)\tilde{i}^{-1}(L_0-\rho_i)}.
\end{equation*}
Hence, for $w\in W^{(i,j)}$,
\begin{align*}
&\e^{(2\pi\i)\tilde{i}^{-1}(L_0-\rho_i)}w=\xi_{\tilde{n}}^jw=\e^{(2\pi\i)j/\tilde{n}}w\\
\iff&\tilde{i}^{-1}(L_0-\rho_i)w=j/\tilde{n}w\pmod{w}\\
\iff&\tilde{i}^{-1}(\wt(w)-\rho_i)=j/\tilde{n}\pmod{1}\\
\iff&\tilde{i}^{-1}\underbrace{\tilde{n}(\wt(w)-\rho_i)}_{\in\Z}=j\pmod{\tilde{n}}\\
\iff&\tilde{n}(\wt(w)-\rho_i)=\tilde{i}j\pmod{\tilde{n}}\\
\iff&\wt(w)-\rho_i=\tilde{i}j/\tilde{n}=ij/n\pmod{1},
\end{align*}
which shows that the weights of the elements in $W^{(i,j)}$ lie in $\rho_i+ij/n+\Z$.
\end{proof}

Recall that we showed that the contragredient module of $W^{(i,j)}$ is isomorphic to $W^{(-i,\alpha(i)-j)}$ for some function $\alpha\colon\Z_n\to\Z_n$. In fact:
\begin{prop}\label{prop:dual3}
Let $V$ and $G=\langle\sigma\rangle$ be as in Assumption~\ref{ass:o} and choose the representations $\phi_i$ of $G$ on the $V(\sigma^i)$ as in Lemma~\ref{lem:phi2}.
Then
\begin{equation*}
W^{(i,j)}\cong W^{(-i,\alpha(i)-j)}
\end{equation*}
for $i,j\in\Z_n$ with $\alpha\colon\Z_n\to\Z_n$ such that
\begin{equation*}
i\alpha(i)=0\quad\text{and}\quad\alpha(0)=0
\end{equation*}
for $i\in\Z_n$.
\end{prop}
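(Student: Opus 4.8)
The plan is to combine the existence of the index-shift function $\alpha$ from Proposition~\ref{prop:dual} with the explicit conformal weights supplied by Proposition~\ref{prop:weight}. Proposition~\ref{prop:dual} already provides a function $\alpha\colon\Z_n\to\Z_n$ with $(W^{(i,j)})'\cong W^{(-i,\alpha(i)-j)}$ and, under the choice $\phi_0(\sigma)=\sigma$ of Remark~\ref{rem:phi0}, with $\alpha(0)=0$. Hence the only new content is the relation $i\alpha(i)=0$, which I expect to fall out of a comparison of conformal weights once the representations $\phi_i$ are normalised as in Lemma~\ref{lem:phi2}.

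The decisive input is that a module and its contragredient share the same weight grading, since by construction $(W')_\lambda=(W_\lambda)^*$; in particular they have the same conformal weight modulo~$1$. First I would note that $V(\sigma^i)'\cong V(\sigma^{-i})$, as established in the proof of Proposition~\ref{prop:dual}, so that $\rho_{-i}=\rho_i$. Next I would invoke Proposition~\ref{prop:weight}, which is valid precisely for the normalisation of Lemma~\ref{lem:phi2}, to read off that $W^{(i,j)}$ has weights in $\rho_i+ij/n+\Z$ while its contragredient $W^{(-i,\alpha(i)-j)}$ has weights in $\rho_{-i}+(-i)(\alpha(i)-j)/n+\Z$. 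Equating these congruence classes modulo~$1$, substituting $\rho_{-i}=\rho_i$, and cancelling the common term $\rho_i+ij/n$ leaves $i\alpha(i)/n\equiv 0\pmod 1$, that is $i\alpha(i)=0$ in $\Z_n$. Together with $\alpha(0)=0$ this is the assertion.

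I do not anticipate a serious obstacle: the argument is essentially a one-line weight count. The only point demanding care is the internal consistency of the two occurrences of $\alpha$. The function $\alpha$ of Proposition~\ref{prop:dual} depends a priori on the chosen automorphisms $\phi_i(\sigma)$, and the present proposition fixes this choice through Lemma~\ref{lem:phi2}; I would make explicit that it is this same, now normalised, $\alpha$ whose value is constrained by the weight-grading formula of Proposition~\ref{prop:weight}, so that the computed congruence indeed pins down $i\alpha(i)$.
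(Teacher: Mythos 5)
Your proposal is correct and matches the paper's own proof essentially verbatim: both combine the existence of $\alpha$ and the identity $\alpha(0)=0$ from Proposition~\ref{prop:dual} with the weight formula of Proposition~\ref{prop:weight} under the normalisation of Lemma~\ref{lem:phi2}, then equate the conformal weights modulo~$1$ of $W^{(i,j)}$ and its contragredient, using $\rho_i=\rho_{-i}$, to extract $i\alpha(i)=0$. Your added remark on the consistency of the two occurrences of $\alpha$ is a sensible precaution but introduces nothing beyond what the paper's argument already implicitly uses.
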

\begin{proof}
We have seen in Proposition~\ref{prop:dual} that the contragredient module of $W^{(i,j)}$ is given by $W^{(-i,\alpha(i)-j)}$ for some function $\alpha\colon\Z_n\to\Z_n$ with $\alpha(i)=\alpha(-i)$, $i\in\Z_n$, and $\alpha(0)=0$. On the other hand, the conformal weight of a module and its contragredient module are the same. Hence $\rho_i+ij/n=\rho_{-i}+(-i)(\alpha(i)-j)/n\pmod{1}$ or equivalently $i\alpha(i)=0$, where we used that $\rho_i=\rho_{-i}$.
\end{proof}

\section{Fusion Algebra II}\label{sec:fusalg2}

In the following we will prove that the value of the conformal weight $\rho_1$ determines $d\in\Z_n$, which specifies the fusion group $F_{V^G}$ up to equivalence of central extensions (see Corollary~\ref{cor:step5}).

\minisec{Step: Quadratic Form}

If we choose the representations $\phi_i$ as in Lemma~\ref{lem:phi2}, then
\begin{equation*}
Q_\rho((i,j))=\rho(W^{(i,j)})+\Z=\rho_i+\frac{ij}{n}+\Z
\end{equation*}
is a finite quadratic form on $F_{V^G}$. A simple calculation gives the associated bilinear form
\begin{equation*}
B_\rho((i,j),(l,k))=\rho_{i+l}-\rho_i-\rho_l+\frac{ik+lj}{n}+\frac{(i+l)c(i,l)}{n}+\Z.
\end{equation*}
Using the formula for $\S$ in terms of the bilinear form (see Proposition~\ref{prop:sbilform2}) we can deduce:
\begin{lem}\label{lem:step3}
Let $V$ and $G=\langle\sigma\rangle$ be as in Assumption~\ref{ass:o} and choose the representations $\phi_i$ as in Lemma~\ref{lem:phi2}. Then the $\lambda_{i,j}$, $i,j\in\Z_n$, fulfil
\begin{align*}
\tag{c}\label{eq:c}
\lambda_{i,i+j}\lambda_{i+j,j}\e^{(2\pi\i)(\rho_{i}+\rho_{j}+\rho_{i+j})}&=\lambda_{i,j},\\
\tag{cc}\label{eq:cc}
\lambda_{i,i}&=\e^{(2\pi\i)(-2\rho_i)}
\end{align*}
for all $i,j\in\Z_n$.
\end{lem}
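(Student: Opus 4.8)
The plan is to derive relation~\eqref{eq:c} from the identity $\S=\T\S\T\S\T$ recorded in Proposition~\ref{prop:ststst}, and then to obtain~\eqref{eq:cc} as the special case $j=0$ of~\eqref{eq:c}. Since $V^G$ satisfies Assumption~\ref{ass:n} by Theorem~\ref{thm:orb}, Proposition~\ref{prop:ststst} applies to it, so that for all irreducible $V^G$-modules $X=W^{(i,j)}$ and $Y=W^{(l,k)}$,
\[
\S_{(i,j),(l,k)}=\sum_{a,b\in\Z_n}\S_{(i,j),(a,b)}\,\S_{(a,b),(l,k)}\,\e^{(2\pi\i)(\rho(W^{(i,j)})+\rho(W^{(l,k)})+\rho(W^{(a,b)})-c/8)}.
\]
Into this I would substitute the explicit shape of the $S$-matrix from Lemma~\ref{lem:step1} and the conformal weights $\rho(W^{(i,j)})\equiv\rho_i+ij/n\pmod 1$ from Proposition~\ref{prop:weight}, so that $\e^{(2\pi\i)\rho(W^{(i,j)})}=\e^{(2\pi\i)\rho_i}\xi_n^{ij}$, the value being insensitive to the integer ambiguity in the weight.

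The computation is then pure bookkeeping. After pulling the $(a,b)$-independent factor $\frac{1}{n^2}\e^{-(2\pi\i)c/8}\e^{(2\pi\i)(\rho_i+\rho_l)}\xi_n^{ij+lk}$ out of the sum, the remaining $b$-sum is $\sum_{b\in\Z_n}\xi_n^{b(a-i-l)}=n\,\delta_{a,i+l}$, which collapses the double sum to the single term $a=i+l$. Simplifying the resulting $\xi_n$-exponent via $ij+lk-(i+l)(j+k)=-(ik+lj)$, the right-hand side becomes
\[
\frac{1}{n}\,\e^{-(2\pi\i)c/8}\,\e^{(2\pi\i)(\rho_i+\rho_l+\rho_{i+l})}\,\lambda_{i,i+l}\lambda_{i+l,l}\,\xi_n^{-(ik+lj)}.
\]
Comparing with $\S_{(i,j),(l,k)}=\frac{1}{n}\xi_n^{-(lj+ik)}\lambda_{i,l}$ and cancelling the common factor $\frac1n\xi_n^{-(ik+lj)}$ leaves $\lambda_{i,l}=\e^{-(2\pi\i)c/8}\e^{(2\pi\i)(\rho_i+\rho_l+\rho_{i+l})}\lambda_{i,i+l}\lambda_{i+l,l}$. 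Here I would invoke Proposition~\ref{prop:div8}: since $V$ is holomorphic, $C_2$-cofinite and of CFT-type, $8\mid c$, whence $\e^{-(2\pi\i)c/8}=1$; renaming $l\to j$ gives exactly~\eqref{eq:c}.

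Finally,~\eqref{eq:cc} follows by setting $j=0$ in~\eqref{eq:c}: using $\lambda_{i,0}=\lambda_{0,i}=1$ from~\eqref{eq:a} and~\eqref{eq:bbb} of Lemma~\ref{lem:step2}, together with $\rho_0=\rho(V)=0$ (as $V$ is of CFT-type), the relation $\lambda_{i,i}\lambda_{i,0}\e^{(2\pi\i)(\rho_i+\rho_0+\rho_i)}=\lambda_{i,0}$ reduces to $\lambda_{i,i}=\e^{-(2\pi\i)2\rho_i}$. I expect the only delicate point to be the careful tracking of the $\xi_n$-exponents through the Kronecker-delta collapse, together with the recognition that $8\mid c$ is precisely what is needed to discard the $\e^{-(2\pi\i)c/8}$ factor; no genuine obstacle arises, since all required inputs (the $S$-matrix shape, the weights, and $8\mid c$) are already at hand. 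As a consistency check one may also equate the two expressions for $\S$ coming from Lemma~\ref{lem:step1} and Proposition~\ref{prop:sbilform2}, which yields the closed form $\lambda_{i,l}=\e^{-(2\pi\i)(\rho_{i+l}-\rho_i-\rho_l)}\xi_n^{-(i+l)c(i,l)}$, but this route mixes in the cocycle $c$ and is less economical for proving~\eqref{eq:c} directly.
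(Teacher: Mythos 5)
Your proof is correct, but it follows a genuinely different route from the paper's. The paper substitutes the simple-current formula of Proposition~\ref{prop:sbilform2}, $\S_{(i,j),(l,k)}=\frac{1}{n}\e^{-(2\pi\i)B_\rho((i,j),(l,k))}$, into the shape of the $S$-matrix from Lemma~\ref{lem:step1}, obtaining $\lambda_{i,l}=\e^{(2\pi\i)(-\rho_{i+l}+\rho_i+\rho_l)}\xi_n^{-(i+l)c(i,l)}$; it then converts the cocycle factor into $\lambda_{i,i+l}\lambda_{l,i+l}/\lambda_{i+l,i+l}$ via \eqref{eq:bullet} of Lemma~\ref{lem:step5}, reads off \eqref{eq:cc} from the diagonal entry $\S_{(i,j),(i,j)}$, and combines the two to obtain \eqref{eq:c} --- precisely the ``consistency check'' you sketch and discard in your last paragraph as less economical. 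You instead apply Proposition~\ref{prop:ststst} (the relation $\T\S\T\S\T=\S$) directly to $V^G$, which is legitimate since $V^G$ is rational and $C_2$-cofinite by Theorem~\ref{thm:orb}, and your bookkeeping (the Kronecker-delta collapse of the $b$-sum, the exponent identity $ij+lk-(i+l)(j+k)=-(ik+lj)$, and the deduction of \eqref{eq:cc} from \eqref{eq:c} at $j=0$ via \eqref{eq:a}, \eqref{eq:bbb} and $\rho_0=0$) checks out. The trade-off is this: your route avoids the simple-current machinery altogether --- Proposition~\ref{prop:sbilform2} requires Assumptions~\ref{ass:sn}\ref{ass:p}, whose validity for $V^G$ rests on the remark in Section~\ref{sec:fusalg} that the relevant consequences of positivity hold by direct computation --- and it also avoids \eqref{eq:bullet}; the price is an explicit appeal to Proposition~\ref{prop:div8} ($8\mid c$) to discard the factor $\e^{-(2\pi\i)c/8}$, which the paper never needs at this point because the central-charge dependence already cancels in the derivation of Proposition~\ref{prop:sbilform} from Proposition~\ref{prop:ststst}. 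Since Proposition~\ref{prop:ststst} is the ancestor of Proposition~\ref{prop:sbilform2} in the paper's own development, you are in effect cutting out the middleman, at the cost of one extra (and readily available) divisibility fact.
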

\begin{proof}
Consider
\begin{align*}
\lambda_{i,l}&=n\xi_n^{lj+ik}\S_{(i,j),(l,k)}=\xi_n^{lj+ik}\e^{-(2\pi\i)B_\rho((i,j),(l,k))}=\e^{(2\pi\i)(-\rho_{i+l}+\rho_i+\rho_l)}\xi_n^{-(i+l)c(i,l)}\\
&=\e^{(2\pi\i)(-\rho_{i+l}+\rho_i+\rho_l)}\lambda_{i,i+l}\lambda_{l,i+l}/\lambda_{i+l,i+l}
\end{align*}
by Proposition~\ref{prop:sbilform2} and \eqref{eq:bullet}. On the other hand
\begin{align*}
\lambda_{i,i}&=n\xi_n^{2ij}\S_{(i,j),(i,j)}=\xi_n^{2ij}\e^{-(2\pi\i)B_\rho((i,j),(i,j))}=\xi_n^{2ij}\e^{-(2\pi\i)2Q_\rho((i,j))}=\e^{(2\pi\i)(-2\rho_i)},
\end{align*}
which is \eqref{eq:cc} and together with the above equation gives \eqref{eq:c}.
\end{proof}

Finally we obtain:
\begin{lem}\label{lem:drho}
Let $V$ and $G=\langle\sigma\rangle$ be as in Assumption~\ref{ass:o}. Then
\begin{equation*}
d=2n^2\rho_1\pmod{n}.
\end{equation*}
\end{lem}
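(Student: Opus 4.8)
The plan is to compute $d$ directly from the characterisation established just before the lemma, namely $d=c(1,1)+c(1,2)+\ldots+c(1,n-1)$ (obtained from $n\cdot(1,0)=(0,d)$ together with the symmetry of $c$), by converting the cocycle values into ratios of the scalars $\lambda_{i,j}$ and telescoping. The whole argument is a short manipulation that combines Lemmas~\ref{lem:step5}, \ref{lem:step2} and \ref{lem:step3}; I do not anticipate any conceptual difficulty, only bookkeeping in $\Z_n$.

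First I would apply \eqref{eq:bullet} of Lemma~\ref{lem:step5} with $i=1$, $l=m$ and $a=1$, which gives $\xi_n^{-c(1,m)}=\lambda_{1,1}\lambda_{m,1}/\lambda_{1+m,1}$ for each $m$. Multiplying these identities over $m=1,\dots,n-1$ yields
\[
\xi_n^{-d}=\lambda_{1,1}^{\,n-1}\,\frac{\prod_{m=1}^{n-1}\lambda_{m,1}}{\prod_{m=1}^{n-1}\lambda_{1+m,1}}.
\]
Reading all indices in $\Z_n$, the numerator is $\lambda_{1,1}\lambda_{2,1}\cdots\lambda_{n-1,1}$ and the denominator is $\lambda_{2,1}\cdots\lambda_{n-1,1}\lambda_{0,1}$, the final factor being $\lambda_{n,1}=\lambda_{0,1}$. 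The product telescopes to $\lambda_{1,1}/\lambda_{0,1}$, and since $\lambda_{0,1}=1$ by \eqref{eq:bbb} of Lemma~\ref{lem:step2}, I obtain the clean identity $\xi_n^{-d}=\lambda_{1,1}^{\,n}$.

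Next I would substitute $\lambda_{1,1}=\e^{-(2\pi\i)2\rho_1}$ from \eqref{eq:cc} of Lemma~\ref{lem:step3}, so that $\lambda_{1,1}^{\,n}=\e^{-(2\pi\i)2n\rho_1}$. Writing the left-hand side as $\xi_n^{-d}=\e^{-(2\pi\i)d/n}$ and comparing arguments gives $d/n\equiv 2n\rho_1\pmod 1$, and multiplying through by $n$ yields $d\equiv 2n^2\rho_1\pmod n$, which is exactly the assertion.

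The only point requiring care, and hence the closest thing to an obstacle, is the telescoping in $\Z_n$: one must reinterpret $\lambda_{n,1}$ as $\lambda_{0,1}$ and correctly identify the surviving boundary term, since it is precisely the normalisation $\lambda_{0,1}=1$ that collapses the ratio to $\lambda_{1,1}$ and produces $\lambda_{1,1}^{\,n}$ rather than a spurious extra factor. I would also remark that the congruence determines $d$ only modulo $n$, which is consistent with $d\in\Z_n$ and with $2n^2\rho_1$ being an integer, the latter following from the conformal weights lying in $(1/n^2)\Z$.
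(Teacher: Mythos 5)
Your proposal is correct and follows essentially the same route as the paper's own proof: starting from $d=c(1,1)+\ldots+c(1,n-1)$, converting each $c(1,m)$ into a ratio of $\lambda$'s via \eqref{eq:bullet}, telescoping with $\lambda_{n,1}=\lambda_{0,1}=1$ from \eqref{eq:bbb} to get $\xi_n^{-d}=\lambda_{1,1}^{n}$, and concluding with $\lambda_{1,1}=\e^{(2\pi\i)(-2\rho_1)}$ from \eqref{eq:cc}. The only (minor) point the paper makes explicit and you leave implicit is that one may assume the representations $\phi_i$ are chosen as in Lemma~\ref{lem:phi2}, which is required to invoke Lemma~\ref{lem:step3}; this is harmless since both $d$ and $\rho_1$ are independent of that choice.
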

\begin{proof}
For the proof let us assume the representations $\phi_i$ are chosen as in Lemma~\ref{lem:phi2}. We use \eqref{eq:bullet} and consider the telescoping product
\begin{equation*}
\xi_n^d=\xi_n^{c(1,1)+c(1,2)+\ldots+c(1,n-1)}=\frac{\lambda_{2,1}}{\lambda_{1,1}\lambda_{1,1}}\frac{\lambda_{3,1}}{\lambda_{1,1}\lambda_{2,1}}\cdot\ldots\cdot\frac{\lambda_{n,1}}{\lambda_{1,1}\lambda_{n-1,1}}=\frac{\lambda_{n,1}}{\lambda_{1,1}^n}=\lambda_{1,1}^{-n}
\end{equation*}
since $\lambda_{n,1}=\lambda_{0,1}=1$ by \eqref{eq:bbb}. Using that $\lambda_{1,1}=\e^{(2\pi\i)(-2\rho_1)}$, \eqref{eq:cc} gives the desired result.
\end{proof}

\minisec{Step: Conformal Weights}

Since the order of any element in the group $F_{V^G}$ is trivially at most $n^2$, the bilinear form $B_\rho$ takes values in $(1/n^2)\Z$. In general this means that the quadratic form $Q_\rho$ takes values in $(1/(2n^2))\Z$. We can show however that in the orbifold situation, also the quadratic form only has values in $(1/n^2)\Z$.
\begin{oframed}
\begin{thm}\label{thm:confwnn}
Let $V$ and $G=\langle\sigma\rangle$ be as in Assumption~\ref{ass:o}. Then the unique irreducible $\sigma$-twisted $V$-module $V(\sigma)$ has conformal weight $\rho_1\in(1/n^2)\Z$. More generally, $V(\sigma^i)$ has conformal weight $\rho_i\in((i,n)^2/n^2)\Z$ for $i\in\Z_n$.
\end{thm}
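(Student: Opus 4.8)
The plan is to reduce the general statement to the case $i=1$ and there to exploit that the conformal-weight quadratic form $Q_\rho$ on the fusion group $F_{V^G}$ is genuinely homogeneous of degree $2$. Recall from Section~\ref{sec:fusalg} that $V^G$ satisfies Assumption~\ref{ass:sn} (Lemma~\ref{lem:step4} together with Theorem~\ref{thm:orb}), and that by the remark preceding the step on the fusion group all conclusions depending on Assumption~\ref{ass:p} are available for $V^G$ in the orbifold setting. Hence $(F_{V^G},Q_\rho)$ is a finite quadratic space with $Q_\rho((i,j))=\rho_i+ij/n+\Z$, and in particular $Q_\rho$ is homogeneous, i.e.\ $Q_\rho(m\cdot y)=m^2Q_\rho(y)$ for all $m\in\Z$ and $y\in F_{V^G}$. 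Since $|F_{V^G}|=n^2$, the bilinear form $B_\rho$ takes values in $(1/n^2)\Z$, so the naive estimate $2Q_\rho(y)=B_\rho(y,y)\in(1/n^2)\Z$ yields only $Q_\rho\in(1/(2n^2))\Z$; the task is precisely to remove this factor of $2$ in the orbifold situation.

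For the main step I would use the central-extension structure from Corollary~\ref{cor:step5}: the quotient map $\pi\colon F_{V^G}\to\Z_n$, $(i,j)\mapsto i$, is a group homomorphism with kernel $A:=\{(0,k)\mid k\in\Z_n\}$. For any $y=(i,j)\in F_{V^G}$ one then has $\pi(n\cdot y)=ni=0$ in $\Z_n$, so that $n\cdot y\in A$. Crucially, $Q_\rho$ vanishes identically on $A$: the modules $W^{(0,k)}$ are $V^G$-submodules of the untwisted module $V$ and hence $\Z$-graded, so that $Q_\rho((0,k))=\rho_0+0=0$. Combining this with homogeneity gives
\begin{equation*}
n^2Q_\rho(y)=Q_\rho(n\cdot y)=0\in\Q/\Z,
\end{equation*}
so $Q_\rho(y)\in(1/n^2)\Z$ for every $y\in F_{V^G}$. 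Taking $y=(1,0)$ yields $\rho_1=Q_\rho((1,0))\in(1/n^2)\Z$. This is the promised improvement of Lemma~\ref{lem:drho}, which on its own only controls $2n^2\rho_1$ modulo $n$.

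Finally I would deduce the refined assertion by reduction to the coprime case. Writing $g=(i,n)$, $\tilde n=n/g$ and $\tilde i=i/g$ with $\gcd(\tilde i,\tilde n)=1$, set $\tilde\sigma:=\sigma^{g}$, an automorphism of $V$ of order $\tilde n$ generating a cyclic group to which Assumption~\ref{ass:o} applies with $n$ replaced by $\tilde n$. Since $\sigma^i=\tilde\sigma^{\tilde i}$ and $\tilde\sigma^{\tilde i}$ is a generator of $\langle\tilde\sigma\rangle$, the case already treated---applied to $\langle\tilde\sigma\rangle\cong\Z_{\tilde n}$ and its generator $\tilde\sigma^{\tilde i}$---shows
\begin{equation*}
\rho_i=\rho(V(\sigma^i))=\rho(V(\tilde\sigma^{\tilde i}))\in\frac{1}{\tilde n^2}\Z=\frac{(i,n)^2}{n^2}\Z .
\end{equation*}
The step I expect to require the most care is the justification that $Q_\rho$ is honestly degree-$2$ homogeneous, since $B_\rho$ alone does not determine $Q_\rho$ (cf.\ Remark~\ref{rem:bilquad}); once this and the vanishing of $Q_\rho$ on the central subgroup $A$ are in place, the factor-of-$2$ improvement is immediate.
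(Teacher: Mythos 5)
Your proof is correct and uses essentially the same argument as the paper: the degree-$2$ homogeneity of $Q_\rho$ on $F_{V^G}$ combined with the vanishing of $Q_\rho$ on the subgroup $\{(0,k)\}$ indexed by the untwisted modules. The only cosmetic difference is that for general $i$ the paper multiplies $(i,j)$ by $n/(i,n)$ directly inside $F_{V^G}$, whereas you pass to the cyclic subgroup $\langle\sigma^{(i,n)}\rangle$ and reapply the generator case; both yield $\rho_i\in((i,n)^2/n^2)\Z$.
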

\end{oframed}
The above theorem is a considerable improvement of Theorem~1.6 in \cite{DLM00} where only the special cases of $n=2,3$ are stated.
\begin{proof}
Again, let us for the proof assume that the representations $\phi_i$ are chosen as in Lemma~\ref{lem:phi2}. Consider the element $(i,j)\in F_{V^G}$. Then
\begin{equation*}
n/(i,n)\cdot(i,j)=\underbrace{(i,j)\oplus\ldots\oplus(i,j)}_{n/(i,n)\text{ times}}=(0,x)
\end{equation*}
for some $x\in\Z_n$ and hence
\begin{align*}
n^2/(i,n)^2\rho_i+\Z&=n^2/(i,n)^2(\rho_i+ij/n)+\Z=n^2/(i,n)^2 Q_\rho((i,j))=Q_\rho((0,x))\\
&=\rho_0+\Z=0+\Z,
\end{align*}
which proves that $\rho_i\in((i,n)^2/n^2)\Z$.
\end{proof}

In view of this theorem we make the following definition:
\begin{defi}[Type]
Let $V$ and $G=\langle\sigma\rangle$ be as in Assumption~\ref{ass:o}. We define the number $r\in\Z_n$ via
\begin{equation*}
\frac{r}{n^2}:=\rho_1\pmod{1/n}
\end{equation*}
and say that the automorphism $\sigma$ has \emph{type} $n\{r\}$.
\end{defi}

Lemma~\ref{lem:drho} immediately gives:
\begin{lem}\label{lem:d2r}
Let $V$ and $G=\langle\sigma\rangle$ be as in Assumption~\ref{ass:o} and $\sigma$ of type $n\{r\}$. Then
\begin{equation*}
d=2r\in\Z_n.
\end{equation*}
\end{lem}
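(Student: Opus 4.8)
The plan is to derive the result by directly combining Lemma~\ref{lem:drho} with the definition of the type $n\{r\}$; essentially all of the substantive work has already been carried out in establishing Lemma~\ref{lem:drho} (via the telescoping product and formulæ~\eqref{eq:cc}, \eqref{eq:bbb}), so that what remains is only a short unwinding of the relevant congruence.

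First I would record the integrality that makes the statement meaningful. By Theorem~\ref{thm:confwnn} the conformal weight satisfies $\rho_1\in(1/n^2)\Z$, so that $n^2\rho_1$, and hence $2n^2\rho_1$, are integers. In particular the expression $d=2n^2\rho_1\pmod{n}$ provided by Lemma~\ref{lem:drho} is a well-defined element of $\Z_n$.

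Next I would unpack the definition of type. Saying that $\sigma$ is of type $n\{r\}$ means precisely $r/n^2\equiv\rho_1\pmod{1/n}$, that is, there exists $m\in\Z$ with
\begin{equation*}
\rho_1=\frac{r}{n^2}+\frac{m}{n}.
\end{equation*}
Multiplying through by $2n^2$ gives $2n^2\rho_1=2r+2nm$, whence $2n^2\rho_1\equiv 2r\pmod{n}$.

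Finally, inserting this congruence into Lemma~\ref{lem:drho} yields $d=2n^2\rho_1\equiv 2r\pmod{n}$, which is exactly the claimed identity $d=2r\in\Z_n$. There is no genuine obstacle here: the only point requiring care is that reducing $\rho_1$ modulo $1/n$ and reducing $2n^2\rho_1$ modulo $n$ are compatible, and this compatibility is guaranteed by the integrality $\rho_1\in(1/n^2)\Z$ from Theorem~\ref{thm:confwnn}. Thus the lemma is immediate once Lemma~\ref{lem:drho} is in hand.
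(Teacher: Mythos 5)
Your proposal is correct and is exactly the argument the paper intends: the paper states Lemma~\ref{lem:d2r} with no separate proof, introducing it with ``Lemma~\ref{lem:drho} immediately gives,'' and your unwinding---using $\rho_1\in(1/n^2)\Z$ from Theorem~\ref{thm:confwnn} to write $\rho_1=r/n^2+m/n$ and then reducing $2n^2\rho_1=2r+2nm$ modulo $n$---is precisely that immediate deduction, spelled out. Nothing is missing and nothing differs in substance from the paper's route.
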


We also define the level $N$ and the related number $h$:
\begin{defi}[Level]
Let $V$ and $G=\langle\sigma\rangle$ be as in Assumption~\ref{ass:o}. Let $N\in\Ns$ be the smallest multiple of $n$ such that $N\rho_1\in\Z$. Observe that $n\mid N\mid n^2$. Indeed, if $\sigma$ has type $n\{r\}$, then $N=n^2/(r,n)$. We call $N$ the \emph{level} of $\sigma$.

We also define $h:=N/n$. Then $h$ is some positive divisor of $n$ and $h$ is the smallest positive integer such that $\rho_1\in(1/nh)\Z$. Moreover, $h=n/(r,n)$ for an automorphism $\sigma$ of type $n\{r\}$.
\end{defi}
Proposition~\ref{prop:onetoall} below shows that $N=nh$ is the level of the \fqs{} $F_{V^G}=(F_{V^G},Q_\rho)$.

Lemma~\ref{lem:drho} shows that the value of $\rho_1$ determines the group structure of the fusion group $F_{V^G}$. In the same way, we will see that $\rho_1$ determines the quadratic form $Q_\rho$ on $F_{V^G}$ and hence also the values of all the other $\rho_i$, $i\in\Z_n$.
\begin{prop}\label{prop:onetoall}
Let $V$ and $G=\langle\sigma\rangle$ be as in Assumption~\ref{ass:o} and let $\sigma$ be of type $n\{r\}$, i.e.\ $\rho_1=r/n^2\pmod{1/n}$ for some $r\in\Z_n$. Then
\begin{equation*}
\rho_i=\frac{i^2r}{n^2}\pmod{(i,n)/n}
\end{equation*}
for all $i\in\Z_n$.
\end{prop}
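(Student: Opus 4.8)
The plan is to read off $\rho_i$ from the homogeneity of the quadratic form $Q_\rho$ on the fusion group. By Theorem~\ref{thm:fafqs} (valid here by the remark preceding Lemma~\ref{lem:step5}, since $V^G$ has group-like fusion) the conformal weights modulo~$1$ define a finite quadratic form $Q_\rho$ on $F_{V^G}$, and by Proposition~\ref{prop:weight} it is given explicitly by $Q_\rho((i,j))=\rho_i+ij/n$. Being a quadratic form, $Q_\rho$ satisfies $Q_\rho(m\cdot x)=m^2Q_\rho(x)$ for all $x\in F_{V^G}$ and $m\in\Z$, where $m\cdot x$ is the $m$-fold sum of $x$ in $F_{V^G}$; this follows from the bilinearity of $B_\rho$ together with $Q_\rho(-x)=Q_\rho(x)$ (which gives $B_\rho(x,x)=2Q_\rho(x)$, hence $Q_\rho(2x)=4Q_\rho(x)$, and then induction). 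I would apply this to the element $x=(1,0)$, for which $Q_\rho((1,0))=\rho_1$.

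By Corollary~\ref{cor:step5} the group $F_{V^G}$ is a central extension $1\to\Z_n\to F_{V^G}\to\Z_n\to1$, and from the group law $(i,j)\oplus(l,k)=(i+l,j+k+c(i,l))$ of Lemma~\ref{lem:step5} the projection onto the first coordinate is a homomorphism. Hence the $i$-fold sum $i\cdot(1,0)$ has first coordinate $i$, so $i\cdot(1,0)=(i,y)$ for some $y\in\Z_n$ (a partial sum of the cocycle $c$, whose precise value I will not need). Evaluating $Q_\rho$ on the two descriptions of this element yields
\[
\rho_i+\frac{iy}{n}\equiv i^2\rho_1\pmod1 .
\]

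The key observation is that both the unknown second-coordinate contribution and the type hypothesis collapse once one reduces modulo $(i,n)/n$. Since $i/(i,n)\in\Z$, one has $\tfrac{iy}{n}=\tfrac{(i,n)}{n}\cdot\tfrac{i}{(i,n)}\,y\in\tfrac{(i,n)}{n}\Z$ for every integer $y$, so this term drops out. Writing the type hypothesis as $\rho_1=r/n^2+t/n$ with $t\in\Z$, one gets $i^2\rho_1-\tfrac{i^2r}{n^2}=\tfrac{i^2t}{n}\in\tfrac{(i,n)}{n}\Z$, because $(i,n)\mid i\mid i^2$. Reducing the displayed congruence modulo $(i,n)/n$ therefore leaves exactly $\rho_i\equiv\tfrac{i^2r}{n^2}\pmod{(i,n)/n}$, as claimed; the ambient $\pmod1$ ambiguity is harmless since $1\in\tfrac{(i,n)}{n}\Z$.

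The computation is short, and the only real care lies in the modular bookkeeping. The conceptually important point — and what makes the statement hold at the finer modulus $(i,n)/n$ rather than merely $1/n$ — is precisely that the unknown coordinate $y$ contributes $iy/n$, which is automatically divisible by $(i,n)/n$. I do not expect a genuine obstacle: an alternative, more computational route would evaluate $y=\sum_{k=1}^{i-1}c(k,1)$ using the normalised representative $c_{2r}$ of the cohomology class (Lemma~\ref{lem:d2r}), but this is unnecessary. As a consistency check, the resulting formula is compatible with Theorem~\ref{thm:confwnn}, which independently forces $\rho_i\in\big((i,n)^2/n^2\big)\Z$.
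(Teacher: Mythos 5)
Your proposal is correct and follows essentially the same route as the paper: apply the homogeneity $Q_\rho(i\cdot x)=i^2Q_\rho(x)$ of the quadratic form from Theorem~\ref{thm:fafqs} to the element $(1,0)$, identify $i\cdot(1,0)=(i,y)$ with $y$ a partial cocycle sum via the group law of Lemma~\ref{lem:step5}, and observe that both the term $iy/n$ and the ambiguity $i^2t/n$ in the type hypothesis vanish modulo $(i,n)/n$. The only difference is expository: the paper writes $y=c(1,1)+\ldots+c(1,i-1)$ explicitly and leaves the final modular reduction implicit, whereas you keep $y$ abstract and spell out the bookkeeping — both are the same argument.
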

Note that the expression $i^2r/n^2$ is well-defined modulo~$(i,n)/n$ for $i,r\in\Z_n$.
\begin{cor}\label{cor:onetoall}
Let $V$ and $G=\langle\sigma\rangle$ be as in Assumption~\ref{ass:o}. Then
\begin{equation*}
\rho_i\in\frac{(i,n)(i,h)}{nh}\Z
\end{equation*}
for all $i\in\Z_n$.
\end{cor}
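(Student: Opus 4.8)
The plan is to deduce this directly from the explicit congruence for the conformal weights in Proposition~\ref{prop:onetoall}. Recall that if $\sigma$ has type $n\{r\}$, then $\rho_1 \equiv r/n^2 \pmod{1/n}$ and $h = n/(r,n)$ by the definitions of type and level. Proposition~\ref{prop:onetoall} gives
\begin{equation*}
\rho_i \equiv \frac{i^2 r}{n^2} \pmod{(i,n)/n},
\end{equation*}
so I may write $\rho_i = \frac{i^2 r}{n^2} + m\,\frac{(i,n)}{n}$ for some $m \in \Z$. The corollary then amounts to showing that both summands lie in the lattice $\frac{(i,n)(i,h)}{nh}\Z$.

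First I would dispose of the correction term. Since $(i,h)$ divides $h$, the quotient $h/(i,h)$ is a positive integer, and
\begin{equation*}
\frac{(i,n)}{n} = \frac{(i,n)(i,h)}{nh}\cdot\frac{h}{(i,h)} \in \frac{(i,n)(i,h)}{nh}\Z .
\end{equation*}
Hence $m\,(i,n)/n$ already lies in the desired lattice for every $m\in\Z$, and the entire contribution of the integer ambiguity in Proposition~\ref{prop:onetoall} is harmless.

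The substantive step is the main term. Writing $r_0 := r/(r,n)$, which is coprime to $h = n/(r,n)$, and using $(r,n) = n/h$, I would rewrite
\begin{equation*}
\frac{i^2 r}{n^2} = \frac{i^2 r_0}{nh}.
\end{equation*}
Thus it remains to prove the divisibility $(i,n)(i,h) \mid i^2 r_0$, which I expect to be the main obstacle and which I would verify prime by prime. Writing $x^+ := \max(0,x)$ and fixing a prime $p$, set $a = v_p(i)$, $b = v_p(n)$, $c = v_p(r)$ for the $p$-adic valuation $v_p$; then $v_p((i,n)) = \min(a,b)$, $v_p(h) = (b-c)^+$, so $v_p((i,h)) = \min\bigl(a,(b-c)^+\bigr)$, while $v_p(r_0) = (c-b)^+$. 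The required inequality
\begin{equation*}
\min(a,b) + \min\bigl(a,(b-c)^+\bigr) \le 2a + (c-b)^+
\end{equation*}
splits into the two cases $c \ge b$ and $c < b$; in each case both minima on the left are bounded by $a$ (and the second minimum vanishes when $c \ge b$), so the bound follows at once. This yields $\frac{i^2 r}{n^2} \in \frac{(i,n)(i,h)}{nh}\Z$, and combining it with the correction-term estimate completes the proof.
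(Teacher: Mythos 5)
Your proof is correct and follows essentially the same route as the paper: both start from Proposition~\ref{prop:onetoall}, absorb the $((i,n)/n)\Z$ ambiguity, and finish with an elementary gcd computation. The only difference is cosmetic: the paper establishes the single identity $(i^2,(i,n)h)=(i,n)(i,h)$ (which in particular gives $(i,n)(i,h)\mid i^2$, so your $r_0$ bookkeeping and the case split on $c\ge b$ versus $c<b$ are not needed), whereas you verify the equivalent divisibility prime by prime via $p$-adic valuations.
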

\begin{proof}[Proof of Proposition~\ref{prop:onetoall}]
For the proof let us assume again that the representations $\phi_i$ are chosen as in Lemma~\ref{lem:phi2}. Using that $Q_\rho$ is a quadratic form we get
\begin{align*}
i^2\rho_1+\Z&=i^2Q_\rho((1,0))=Q_\rho(i\cdot(1,0))=Q_\rho((i,c(1,1)+\ldots+c(1,i-1)))\\
&=\rho_i+i(c(1,1)+\ldots+c(1,i-1))/n+\Z
\end{align*}
and hence
\begin{equation*}
i^2r/n^2=i^2\rho_1=\rho_i\pmod{(i,n)/n}.
\end{equation*}
\end{proof}
\begin{proof}[Proof of Corollary~\ref{cor:onetoall}]
Proposition~\ref{prop:onetoall} states that 
\begin{equation*}
\rho_i\in\frac{i^2r}{n^2}+\frac{(i,n)}{n}\Z.
\end{equation*}
As $r\in(n/h)\Z$, we know that
\begin{equation*}
\rho_i\in\frac{i^2r}{n^2}+\frac{(i,n)}{n}\Z\subseteq\frac{i^2}{nh}\Z+\frac{(i,n)h}{nh}\Z=\frac{(i,n)(i,h)}{nh}\Z
\end{equation*}
since
\begin{equation*}
(i^2,(i,n)h)=(i,n)(i,h)\underbrace{\left(\frac{i}{(i,n)}\frac{i}{(i,h)},\frac{h}{(i,h)}\right)}_{=1}=(i,n)(i,h).
\end{equation*}
\end{proof}

\minisec{Choice of Representatives}

Since the order of $\sigma^i$ is $n/(i,n)$, the weight grading of the twisted module $V(\sigma^i)$ is in $\rho_i+((i,n)/n)\N$. Therefore, on the level of twisted $V$-modules, $\rho_i$ is usually only relevant modulo~$(i,n)/n$. However, when passing down to untwisted $V^G$-modules, whose weight grading is in a coset of $\Z$, we must choose a representative of $\rho_i+((i,n)/n)\Z$ modulo~1. Let $\tilde\rho_i+\Z$ be that representative. Then by definition
\begin{enumerate}
\item\label{enum:rho1} $\tilde\rho_i+\frac{(i,n)}{n}\Z=\rho_i+\frac{(i,n)}{n}\Z$ for all $i\in\Z_n$.
\end{enumerate}
The naïve choice is clearly $\tilde\rho_i+\Z=\rho_i+\Z$ for all $i\in\Z_n$ with
\begin{enumerate}
\setcounter{enumi}{1}
\item\label{enum:rho2} $\tilde\rho_i+\Z=\tilde\rho_{-i}+\Z$ for all $i\in\Z_n$,
\item\label{enum:rho3} $\tilde\rho_0+\Z=0+\Z$.
\end{enumerate}
Recall that by $r_n$ for $r\in\Z_n$ we denote the representative of $r$ in $\{0,\ldots,n-1\}$. For convenience we modify the naïve choice for $i=1$ such that
\begin{enumerate}
\setcounter{enumi}{3}
\item\label{enum:rho4} $\tilde\rho_1+\Z=\frac{r_n}{n^2}+\Z$,
\end{enumerate}
which, by possibly modifying also $\tilde\rho_{n-1}+\Z$, preserves \ref{enum:rho2}. Then the $\tilde\rho_i+\Z$ satisfy all the relations \ref{enum:rho1} to \ref{enum:rho4} and in general $\tilde\rho_i+\Z\neq\rho_i+\Z$.

There are two kinds of results in Sections~\ref{sec:confweights} and \ref{sec:fusalg2} involving the conformal weights, namely:
\begin{itemize}
\item Statements about $\rho_i$ modulo~$(i,n)/n$, being independent of the choice of the representations $\phi_i$ made in Lemma~\ref{lem:phi2}: Lemma~\ref{lem:drho}, Theorem~\ref{thm:confwnn}, Lemma~\ref{lem:d2r}, Proposition~\ref{prop:onetoall} and Corollary~\ref{cor:onetoall}. These results remain correct with $\rho_i$ replaced by $\tilde\rho_i$.
\item Statements about $\rho_i$ modulo~$1$ depending on the choice of the representations $\phi_i$ made in Lemma~\ref{lem:phi2}: Propositions \ref{prop:weight} and Lemma~\ref{lem:step3}. These results remain true if $\rho_i$ is replaced by $\tilde\rho_i$ \emph{and} if the representations $\phi_i$ are chosen similar to Lemma~\ref{lem:phi2} such that
\begin{equation}\label{eq:choice}
\phi_i(\sigma)^{(i,n)}=\e^{(2\pi\i)[i/(i,n)]^{-1}(L_0-\tilde\rho_i)},
\end{equation}
which is clearly also possible.
\end{itemize}
Moreover, Proposition~\ref{prop:dual3} is also true if we replace the choice in Lemma~\ref{lem:phi2} with \eqref{eq:choice} since the $\tilde\rho_i$ also obey \ref{enum:rho2}.

The choice of representatives $\tilde\rho_i+\Z$ facilitates the proof of Lemma~\ref{lem:coboundary} below.

\minisec{Step: Coboundary}
In the following we complete the determination of the fusion algebra of $V^G$ under Assumption~\ref{ass:o}. We know from Lemma~\ref{lem:d2r} that we can write the cocycle $c\colon\Z_n\times\Z_n\to\Z_n$ as
\begin{equation*}
c=c_{2r}+\hat{c}
\end{equation*}
where $\hat{c}$ is a normalised 2-coboundary. This 2-coboundary can be further specified:
\begin{lem}\label{lem:coboundary}
Let $V$ and $G=\langle\sigma\rangle$ be as in Assumption~\ref{ass:o} with $\sigma$ of type $n\{r\}$ and assume the representations $\phi_i$ are given as in \eqref{eq:choice}. Then there is a function $\varphi\colon\Z_n\to\Z_n$ with $\varphi(0)=0$ such that
\begin{equation*}
\hat{c}(i,l)=\varphi(i)+\varphi(l)-\varphi(i+l),
\end{equation*}
i.e.\ the normalised 2-coboundary $\hat{c}$ arises from $\varphi$, with the properties
\begin{equation*}
\lambda_{i,j}=\xi_n^{-i\varphi(j)-j\varphi(i)}\xi_{n^2}^{-2i_nl_nr_n}
\end{equation*}
and
\begin{equation*}
\frac{i\varphi(i)}{n}=\tilde\rho_i-i_n^2r_n/n^2\pmod{1}.
\end{equation*}
\end{lem}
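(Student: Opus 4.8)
The plan is to push everything through the explicit value of $\lambda_{i,l}$ recorded in the proof of Lemma~\ref{lem:step3}, namely
\[
\lambda_{i,l}=\e^{(2\pi\i)(\tilde\rho_i+\tilde\rho_l-\tilde\rho_{i+l})}\xi_n^{-(i+l)c(i,l)},
\]
which is legitimate under the choice \eqref{eq:choice}. First I would set $\psi(i):=\tilde\rho_i-i_n^2r_n/n^2$ and note that $\psi(i)\in\frac{(i,n)}{n}\Z\pmod 1$ by Proposition~\ref{prop:onetoall}. Writing $c=c_{2r}+\hat c$ (from Lemma~\ref{lem:d2r}) and using $(i+l)_n=i_n+l_n-n\lfloor(i_n+l_n)/n\rfloor$, I would expand $\tilde\rho_i+\tilde\rho_l-\tilde\rho_{i+l}$ and $(i+l)c_{2r}(i,l)$ and verify that every term carrying the floor $\epsilon:=\lfloor(i_n+l_n)/n\rfloor$ cancels, leaving
\[
\lambda_{i,l}=\xi_{n^2}^{-2i_nl_nr_n}\,\e^{(2\pi\i)(\psi(i)+\psi(l)-\psi(i+l))}\,\xi_n^{-(i+l)\hat c(i,l)}.
\]
I then abbreviate $\Lambda_{i,l}:=\lambda_{i,l}\,\xi_{n^2}^{2i_nl_nr_n}$, the purely $\hat c$-dependent part.

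Next I would linearise $\Lambda$. Since $\hat c$ is a normalised $2$-coboundary, fix a normalised potential $g\colon\Z_n\to\Z_n$ with $g(0)=0$ and $\hat c=\partial g$. Feeding $c=c_{2r}+\hat c$ into \eqref{eq:bullet} and cancelling the $c_{2r}$-part exactly as above gives $\Lambda_{i,a}\Lambda_{l,a}/\Lambda_{i+l,a}=\xi_n^{-a\hat c(i,l)}$, so that $M_{i,a}:=\Lambda_{i,a}\xi_n^{ag(i)}$ is multiplicative in $i$; since $M_{0,a}=\Lambda_{0,a}=1$ by \eqref{eq:bbb}, we get $M_{i,a}=\xi_n^{is(a)}$ for some $s(a)\in\Z_n$, i.e.\ $\Lambda_{i,a}=\xi_n^{is(a)-ag(i)}$. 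The symmetry $\lambda_{i,a}=\lambda_{a,i}$ of \eqref{eq:a} forces $i(s(a)+g(a))\equiv a(s(i)+g(i))\pmod n$, whence $s(i)+g(i)=ti$ with $t:=s(1)+g(1)$. Evaluating at $i=l=1$ and using \eqref{eq:cc} together with $\tilde\rho_1=r_n/n^2$ gives $\Lambda_{1,1}=\lambda_{1,1}\xi_{n^2}^{2r_n}=1$, hence $t\equiv 2g(1)\pmod n$. I would then define $\varphi(i):=g(i)-g(1)\,i$; as $i\mapsto g(1)i$ is a homomorphism, $\partial\varphi=\partial g=\hat c$ (the first assertion) and $\varphi(0)=\varphi(1)=0$. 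Substituting $t=2g(1)$ into $\Lambda_{i,l}=\xi_n^{\,til-ig(l)-lg(i)}$ and comparing with $-i\varphi(l)-l\varphi(i)$ yields $\Lambda_{i,l}=\xi_n^{-i\varphi(l)-l\varphi(i)}$, which is the asserted expression for $\lambda_{i,l}$ (with the evident identification $l=j$).

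Finally I would deduce the conformal-weight identity. Comparing the two expressions now available for $\Lambda_{i,l}$, i.e.\ the $\psi$-formula of the first step and $\xi_n^{-i\varphi(l)-l\varphi(i)}$, and inserting $\hat c=\partial\varphi$, a short rearrangement of exponents gives
\[
\psi(i)+\psi(l)-\psi(i+l)\equiv\tfrac{1}{n}\bigl(i\varphi(i)+l\varphi(l)-(i+l)\varphi(i+l)\bigr)\pmod 1 ,
\]
so that $\chi(i):=\psi(i)-i\varphi(i)/n$ defines a homomorphism $\chi\colon\Z_n\to\R/\Z$. Since $\chi(1)=\psi(1)-\varphi(1)/n=0$, we conclude $\chi\equiv 0$, which is exactly $i\varphi(i)/n\equiv\tilde\rho_i-i_n^2r_n/n^2\pmod 1$.

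I expect the delicate points to be twofold. The first is showing that $\Lambda$ really is a diagonal bilinear exponential for \emph{every} $n$: this hinges on the relation $t\equiv 2g(1)\pmod n$, which lets the ``half'' of $t$ be absorbed into $\varphi$ even when $n$ is even and $t/2$ has no literal meaning. The second is upgrading the conformal-weight identity from the mod-$1/2$ information that \eqref{eq:cc} alone would supply to an exact congruence mod $1$, accomplished by the vanishing-homomorphism argument above rather than by a direct comparison. The cancellation of the $\lfloor(i_n+l_n)/n\rfloor$-terms in the first step is routine but must be carried out carefully, since it is precisely what fuses $c_{2r}$ with the $r_n$-part of the conformal weights into the single clean prefactor $\xi_{n^2}^{-2i_nl_nr_n}$.
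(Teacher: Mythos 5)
Your proof is correct, and while it shares the paper's central renormalisation step, it executes two of the three assertions by genuinely different arguments. Like the paper, you pass to $\Lambda_{i,l}=\lambda_{i,l}\xi_{n^2}^{2i_nl_nr_n}$ and $\psi(i)=\tilde\rho_i-i_n^2r_n/n^2$ (the paper's $\hat\lambda_{i,l}$ and $\hat\rho_i$) and derive the same recursion $\Lambda_{i,a}\Lambda_{l,a}/\Lambda_{i+l,a}=\xi_n^{-a\hat c(i,l)}$. But where the paper simply \emph{defines} $\varphi$ by $\hat\lambda_{1,i}=\xi_n^{-\varphi(i)}$ and reads off the coboundary relation from the $a=1$ case, you construct $\varphi$ by correcting an arbitrary normalised potential $g$ of $\hat c$ by the homomorphism $i\mapsto g(1)i$; the identity $t=s(1)+g(1)=2g(1)$ extracted from $\Lambda_{1,1}=1$ and the symmetry \eqref{eq:a} is exactly what makes this correction absorb the diagonal term without ever dividing by $2$, which is the right way to handle even $n$. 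For the conformal-weight identity the routes diverge more sharply: the paper defines $\theta_i=\hat\lambda_{1,i}^i\e^{(2\pi\i)\hat\rho_i}$, shows $\theta_i^2=1$ from \eqref{eq:cc} and $\theta_i\theta_j=\theta_{i+j}$ from \eqref{eq:c}, and kills the residual sign via $\theta_1=1$; you instead import the explicit formula $\lambda_{i,l}=\e^{(2\pi\i)(\tilde\rho_i+\tilde\rho_l-\tilde\rho_{i+l})}\xi_n^{-(i+l)c(i,l)}$ from the proof of Lemma~\ref{lem:step3}, check that the floor terms cancel against $c_{2r}$, and conclude by the vanishing of the homomorphism $\chi(i)=\psi(i)-i\varphi(i)/n$ with $\chi(1)=0$. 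The two closing arguments are structurally parallel (both reduce a potential mod-$1/2$ ambiguity to a character of $\Z_n$ trivial on the generator), but yours trades the sign bookkeeping of the $\theta_i$ for an upfront, slightly heavier computation with the explicit $\lambda$-formula; both are valid, and your version makes the role of the choice $\tilde\rho_1=r_n/n^2$ somewhat more transparent.
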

\begin{proof}
We define
\begin{equation*}
\hat\lambda_{i,j}:=\lambda_{i,j}\xi_{n^2}^{2i_nj_nr_n}
\end{equation*}
and
\begin{equation*}
\hat\rho_i+\Z:=\tilde\rho_i-i_n^2r_n/n^2+\Z.
\end{equation*}
Then
\begin{equation}\label{eq:lemcoboundary}
\frac{\hat\lambda_{i,a}\hat\lambda_{l,a}}{\hat\lambda_{i+l,a}}=\frac{\lambda_{i,a}\lambda_{l,a}}{\lambda_{i+l,a}}\e^{(2\pi\i)(2r_na_n/n^2)(i_n+l_n-(i+l)_n)}=\xi_n^{-ac(i,l)}\xi_n^{ac_{2r_n}(i,l)}=\xi_n^{-a\hat{c}(i,l)},
\end{equation}
which is an analogue of \eqref{eq:bullet} for the $\hat\lambda_{i,j}$. By Proposition~\ref{prop:onetoall}, $\hat\rho_i\in((i,n)/n)\Z$. Moreover, $\hat\lambda_{1,1}=1$ by \eqref{eq:cc} and the choice of $\tilde\rho_1$. Also, $\hat\rho_1+\Z=0+\Z$ by definition. Iterating the above equation yields that $\hat\lambda_{i,j}\in U_n$ for all $i,j\in\Z_n$. Hence, let us define the function $\varphi\colon\Z_n\to\Z_n$ by
\begin{equation*}
\hat\lambda_{1,i}=:\xi_n^{-\varphi(i)}.
\end{equation*}
Then the above equation for $a=1$ yields
\begin{equation*}
\hat{c}(i,l)=\varphi(i)+\varphi(l)-\varphi(i+l),
\end{equation*}
i.e.\ the normalised 2-coboundary $\hat{c}$ arises from $\varphi$.

The fact that $\hat\lambda_{1,1}=1$ implies $\varphi(1)=0$, which simplifies the following calculations. Iterating \eqref{eq:lemcoboundary} again yields
\begin{equation*}
\hat\lambda_{i,j}=\xi_n^{-i\varphi(j)-j\varphi(i)},
\end{equation*}
which gives the first property of $\varphi$ in the lemma. This also implies
\begin{equation*}
\hat\lambda_{i,j}=\hat\lambda_{j,1}^i\hat\lambda_{i,1}^j.
\end{equation*}

The second property is equivalent to
\begin{equation*}
\hat\lambda_{1,i}^i=\e^{(2\pi\i)(-\hat\rho_i)}.
\end{equation*}
To see this we define
\begin{equation*}
\theta_i:=\hat\lambda_{1,i}^i\e^{(2\pi\i)\hat\rho_i}
\end{equation*}
and consider, using \eqref{eq:cc},
\begin{equation*}
1=\lambda_{i,i}\e^{(2\pi\i)(2\tilde\rho_i)}=\hat\lambda_{i,i}\e^{(2\pi\i)(2\hat\rho_i)}=\hat\lambda_{1,i}^{2i}\e^{(2\pi\i)2\hat\rho_i}=\theta_i^2,
\end{equation*}
i.e.\ $\theta_i\in\{\pm1\}$. Also, equation \eqref{eq:c} translates to
\begin{equation*}
\hat\lambda_{i,i+j}\hat\lambda_{i+j,j}\e^{(2\pi\i)(\hat\rho_{i}+\hat\rho_{j}+\hat\rho_{i+j})}=\hat\lambda_{i,j},
\end{equation*}
which becomes
\begin{equation*}
\theta_i\theta_j=\theta_{i+j}
\end{equation*}
and since $\theta_1=1$ (as $\hat\lambda_{1,1}=1$ and $\hat\rho_1+\Z=0+\Z$) we get
\begin{equation*}
1=\theta_i=\hat\lambda_{1,i}^i\e^{(2\pi\i)\hat\rho_i}
\end{equation*}
for all $i\in\Z_n$, which is the claim.
\end{proof}

\minisec{Main Results}

Recall that $c_{2r}$ is the 2-cocycle $c_d$ defined above for $d=2r$. We can collect the results of this chapter in the following theorem:
\begin{thm}[Main Result]\label{thm:main}
Let $V$ and $G=\langle\sigma\rangle$ be as in Assumption~\ref{ass:o} with $\sigma$ of type $n\{r\}$ and assume the representations $\phi_i$ are given as in \eqref{eq:choice}. Then there are functions $\alpha,\varphi\colon\Z_n\to\Z_n$ such that:
\begin{enumerate}
\item The fusion rules of $V^G$ are given by
\begin{equation*}
W^{(i,j)}\boxtimes W^{(l,k)}\cong W^{(i+l,j+k+c_{2r}(i,l)+\varphi(i)+\varphi(l)-\varphi(i+l))}
\end{equation*}
for $i,j,k,l\in\Z_n$. In particular, all $W^{(i,j)}$ are simple currents.
\item The module $W^{(i,j)}$ has weights in 
\begin{equation*}
Q_\rho((i,j))=\tilde\rho_i+\frac{ij}{n}+\Z=\frac{i\varphi(i)}{n}+\frac{i_n^2r_n}{n^2}+\frac{ij}{n}+\Z
\end{equation*}
for $i,j\in\Z_n$.
\item The contragredient modules are ${W^{(i,j)}}'\cong W^{(-i,\alpha(i)-j)}$, $i,j\in\Z_n$.
\item The $S$-matrix of $V^G$ is given by
\begin{equation*}
S_{(i,j),(l,k)}=\frac{1}{n}\xi_n^{-(lj+ik)}\lambda_{i,l}=\frac{1}{n}\xi_n^{-(lj+ik+l\varphi(i)+i\varphi(l))}\xi_{n^2}^{-2i_nl_nr_n},
\end{equation*}
i.e.\ $\lambda_{i,l}=\xi_n^{-l\varphi(i)-i\varphi(l)}\xi_{n^2}^{-2i_nl_nr_n}$, for $i,j,k,l\in\Z_n$.
\end{enumerate}
The functions $\alpha,\varphi\colon\Z_n\to\Z_n$ satisfy
\setenumerate[1]{label=(\alph*)}
\begin{enumerate}
\item $\frac{i\varphi(i)}{n}=\tilde\rho_i-\frac{i_n^2r_n}{n^2}\pmod{1}$,
\item $\alpha(i)=-\varphi(i)-\varphi(-i)-c_{2r}(i,-i)$,
\item $\alpha(i)=\alpha(-i)$,
\item $\varphi(0)=0$,
\item $\alpha(0)=0$,
\item $i\alpha(i)=0$
\end{enumerate}
\setenumerate[1]{label=(\arabic*)}
for $i\in\Z_n$.
\end{thm}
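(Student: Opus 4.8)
The plan is to recognise that Theorem~\ref{thm:main} is not a new computation but an \emph{assembly} of the results established throughout Sections~\ref{sec:fusalg}, \ref{sec:confweights} and \ref{sec:fusalg2}; the proof consists of collecting those statements and substituting the explicit coboundary description of the fusion cocycle. So first I would record the structural facts that already hold without reference to the type: by Lemma~\ref{lem:step4} every $W^{(i,j)}$ is a simple current, so $V^G$ has group-like fusion with fusion group $F_{V^G}=\Z_n\times\Z_n$ as a set; by Lemma~\ref{lem:step5} the fusion rules take the form $W^{(i,j)}\boxtimes W^{(l,k)}\cong W^{(i+l,\,j+k+c(i,l))}$ for a symmetric normalised $2$-cocycle $c\colon\Z_n\times\Z_n\to\Z_n$; by Proposition~\ref{prop:dual3} the contragredient is $(W^{(i,j)})'\cong W^{(-i,\alpha(i)-j)}$ with $\alpha(0)=0$, $\alpha(i)=\alpha(-i)$ and $i\alpha(i)=0$; and by Proposition~\ref{prop:weight}, applied with the representatives $\tilde\rho_i$ and the choice \eqref{eq:choice}, the weight grading of $W^{(i,j)}$ lies in $\tilde\rho_i+ij/n+\Z$, which defines the quadratic form $Q_\rho$. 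This already delivers properties (c), (d), (e), (f) of $\alpha$, the first equality in part (2), and part (3).

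The substantive input is the explicit form of $c$ and of the constants $\lambda_{i,l}$. Here I would invoke Lemma~\ref{lem:d2r}, which identifies the cohomology class of $c$ with $d=2r\in\Z_n$, so that $c=c_{2r}+\hat c$ for a normalised $2$-coboundary $\hat c$, and then Lemma~\ref{lem:coboundary}, which produces a function $\varphi\colon\Z_n\to\Z_n$ with $\varphi(0)=0$ such that $\hat c(i,l)=\varphi(i)+\varphi(l)-\varphi(i+l)$, together with the explicit formula $\lambda_{i,l}=\xi_n^{-l\varphi(i)-i\varphi(l)}\xi_{n^2}^{-2 i_n l_n r_n}$ and the weight relation $i\varphi(i)/n=\tilde\rho_i-i_n^2 r_n/n^2\pmod{1}$. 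Substituting the decomposition of $c$ into Lemma~\ref{lem:step5} yields the fusion rules of part (1); substituting the weight relation into $Q_\rho((i,j))=\tilde\rho_i+ij/n+\Z$ yields the second expression in part (2); and substituting $\lambda_{i,l}$ into the $S$-matrix shape of Lemma~\ref{lem:step1} yields part (4). Property (a) is precisely the weight relation from Lemma~\ref{lem:coboundary}, and property (b) follows by combining Lemma~\ref{lem:calpha}, $\alpha(i)=-c(i,-i)$, with $c=c_{2r}+\hat c$ and $\hat c(i,-i)=\varphi(i)+\varphi(-i)$, using $\varphi(0)=0$.

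The main obstacle is not in this final assembly but in guaranteeing that the bookkeeping is internally consistent, in particular that a \emph{single} function $\varphi$ simultaneously governs the coboundary part of the fusion cocycle, the off-diagonal constants $\lambda_{i,l}$, and the fractional part of the conformal weights. That this can be arranged is exactly the content of Lemma~\ref{lem:coboundary}, whose proof rests on the telescoping relations \eqref{eq:bullet}, \eqref{eq:c} and \eqref{eq:cc}; I would therefore treat Theorem~\ref{thm:main} as essentially a corollary of it. The one point requiring genuine care is the passage from $\rho_i$ to the chosen representatives $\tilde\rho_i$ via \eqref{eq:choice}: parts (2) and (4) and property (a) depend on the representative modulo~$1$, whereas the structural statements depend only on $\rho_i$ modulo~$(i,n)/n$, so I must confirm that the representative choice is applied uniformly across all four parts before concluding.
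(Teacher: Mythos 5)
Your proposal is correct and matches the paper exactly: the paper states Theorem~\ref{thm:main} with the introduction ``We can collect the results of this chapter in the following theorem'' and gives no separate argument, precisely because it is the assembly you describe — Lemmas~\ref{lem:step1}, \ref{lem:step4}, \ref{lem:step5} for the $S$-matrix shape, simple-current property and cocycle form of the fusion rules, Propositions~\ref{prop:weight} and \ref{prop:dual3} (with the representatives $\tilde\rho_i$ and the choice \eqref{eq:choice}) for the weights and contragredients, and Lemmas~\ref{lem:d2r}, \ref{lem:calpha}, \ref{lem:coboundary} for $c=c_{2r}+\hat c$, the relation $\alpha(i)=-c(i,-i)$ and the single function $\varphi$ controlling $\hat c$, the $\lambda_{i,l}$ and the fractional weights. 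The only blemish is notational: you attribute property (d), $\varphi(0)=0$, to the structural facts about $\alpha$, whereas it comes from Lemma~\ref{lem:coboundary}, which you in any case invoke correctly later.
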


Finally we obtain:
\begin{oframed}
\begin{cor}[Main Result]\label{cor:main}
Let $V$ and $G=\langle\sigma\rangle$ be as in Assumption~\ref{ass:o} with $\sigma$ of type $n\{r\}$. Then it is possible to choose the representations $\phi_i$ such that:
\begin{enumerate}
 \item \label{enum:res1} $W^{(i,j)}\boxtimes W^{(l,k)}\cong W^{(i+l,j+k+c_{2r}(i,l))}$,
 \item \label{enum:res2} $W^{(i,j)}$ has weights in $Q_\rho((i,j))=\frac{ij}{n}+\frac{i_n^2r_n}{n^2}+\Z$,
 \item \label{enum:res3} ${W^{(i,j)}}'\cong W^{(-i,-j-c_{2r}(i,-i))}$,
 \item \label{enum:res4} $S_{(i,j),(l,k)}=\frac{1}{n}\xi_n^{-(lj+ik)}\lambda_{i,l}=\frac{1}{n}\xi_n^{-(lj+ik)}\xi_{n^2}^{-2r_ni_nl_n}$, i.e.\ $\lambda_{i,l}=\xi_{n^2}^{-2r_ni_nl_n}$
\end{enumerate}
for $i,j,k,l\in\Z_n$.
\end{cor}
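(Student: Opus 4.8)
The plan is to obtain Corollary~\ref{cor:main} from the Main Result (Theorem~\ref{thm:main}) by a gauge change: I will re-choose the generators $\phi_i(\sigma)$ of the $G$-action on the twisted modules $V(\sigma^i)$ so as to absorb the function $\varphi\colon\Z_n\to\Z_n$ that appears in Theorem~\ref{thm:main}. By Proposition~\ref{prop:phi} the representation $\phi_i$ is determined only up to multiplication of $\phi_i(\sigma)$ by an $n$-th root of unity, so I am free to replace $\phi_i(\sigma)$ by $\phi_i(\sigma)\xi_n^{\varphi(i)}$ for every $i\in\Z_n$. Since $(\phi_i(\sigma)\xi_n^{\varphi(i)})^n=\phi_i(\sigma)^n=\id$ and the scalar cancels in the conjugation relation of Proposition~\ref{prop:phi}, these are again honest representations of $G$; moreover $\varphi(0)=0$ by relation~(d), so $\phi_0$ is left unchanged, consistent with Remark~\ref{rem:phi0}.

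First I would record how this redefinition relabels the irreducible modules. Multiplying $\phi_i(\sigma)$ by the scalar $\xi_n^{\varphi(i)}$ shifts the eigenvalue on each vector by $\xi_n^{\varphi(i)}$, so the new $\xi_n^j$-eigenspace in $V(\sigma^i)$ is the old $\xi_n^{j-\varphi(i)}$-eigenspace. Writing $\tilde W^{(i,j)}$ for the modules defined by the new generators, this reads $\tilde W^{(i,j)}=W^{(i,j-\varphi(i))}$, i.e.\ the relabeling acts on the second index by $j\mapsto j-\varphi(i)$. Everything else then follows by substituting this identification into the four formulas of Theorem~\ref{thm:main} and using the relations (a)--(f) among $\alpha$ and $\varphi$.

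Next I would carry out the four substitutions. For the fusion rules this is a short computation using $\hat{c}(i,l)=\varphi(i)+\varphi(l)-\varphi(i+l)$ (Lemma~\ref{lem:coboundary}): the three $\varphi$-shifts combine so that the coboundary part $\hat{c}$ cancels against $c=c_{2r}+\hat{c}$, leaving $\tilde W^{(i,j)}\boxtimes\tilde W^{(l,k)}\cong\tilde W^{(i+l,j+k+c_{2r}(i,l))}$, which is~\ref{enum:res1}. For the conformal weights, substituting $Q_\rho((i,j-\varphi(i)))$ from Theorem~\ref{thm:main}(2) makes the term $i\varphi(i)/n$ cancel, giving~\ref{enum:res2}. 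For the $S$-matrix, the relabeling permutes entries via $\tilde S_{(i,j),(l,k)}=S_{(i,j-\varphi(i)),(l,k-\varphi(l))}$, and the factor $\xi_n^{l\varphi(i)+i\varphi(l)}$ produced by the shift cancels precisely the $\varphi$-dependence of $\lambda_{i,l}=\xi_n^{-l\varphi(i)-i\varphi(l)}\xi_{n^2}^{-2i_nl_nr_n}$, leaving $\lambda_{i,l}=\xi_{n^2}^{-2i_nl_nr_n}$ and hence~\ref{enum:res4}.

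The one step requiring care, and the main bookkeeping obstacle, is the contragredient formula~\ref{enum:res3}. Applying Theorem~\ref{thm:main}(3) to $\tilde W^{(i,j)}=W^{(i,j-\varphi(i))}$ gives $(\tilde W^{(i,j)})'=W^{(-i,\alpha(i)-j+\varphi(i))}=\tilde W^{(-i,\alpha(i)-j+\varphi(i)+\varphi(-i))}$, and here I would invoke relation~(b), $\alpha(i)=-\varphi(i)-\varphi(-i)-c_{2r}(i,-i)$, so that $\alpha(i)+\varphi(i)+\varphi(-i)=-c_{2r}(i,-i)$ and the second index collapses to $-j-c_{2r}(i,-i)$, which is~\ref{enum:res3}. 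Collecting \ref{enum:res1}--\ref{enum:res4} completes the proof; no input beyond Theorem~\ref{thm:main} and the gauge freedom of Proposition~\ref{prop:phi} is needed.
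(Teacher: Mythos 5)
Your proposal is correct and follows essentially the same route as the paper's own proof: both absorb the coboundary by relabelling $W^{(i,j)}\mapsto W^{(i,j-\varphi(i))}$ (equivalently rescaling $\phi_i(\sigma)$ by $\xi_n^{\varphi(i)}$) and then substitute into the four formulas of Theorem~\ref{thm:main}, using relation (b) for the contragredient case. Your version merely spells out the eigenspace-shift justification and the individual cancellations in slightly more detail.
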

\end{oframed}
Note that the choice of the representations $\phi_i$ in the corollary is not necessarily the one in Lemma~\ref{lem:phi2} or \eqref{eq:choice}. But, since $\varphi(0)=0$, the representation $\phi_0$ is still chosen naturally as in Remark~\ref{rem:phi0}.
\begin{proof}
Assume the representations $\phi_i$ are chosen as in \eqref{eq:choice}. Then the results of Theorem~\ref{thm:main} hold. Define
\begin{equation*}
W_\text{new}^{(i,j)}:=W^{(i,j-\varphi(i))},
\end{equation*}
corresponding to a redefinition of the representations $\phi_i$. This removes the 2-coboundary $\hat{c}(i,l)=\varphi(i)+\varphi(l)-\varphi(i+l)$ from the relations in Theorem~\ref{thm:main}. The results \ref{enum:res1} and \ref{enum:res4} follow immediately. The conformal weight of $W_\text{new}^{(i,j)}=W^{(i,j-\varphi(i))}$ is given by $\tilde\rho_i+i(j-\varphi(i))/n=ij/n+i_n^2r_n/n^2\pmod{1}$, which proves \ref{enum:res2}. For the contragredient module we find
\begin{align*}
{W_\text{new}^{(i,j)}}'&={W^{(i,j-\varphi(i))}}'\cong W^{(-i,\alpha(i)-j+\varphi(i))}=W^{(-i,-j-\varphi(-i)-c_{2r}(i,-i))}\\
&=W_\text{new}^{(-i,-j-c_{2r}(i,-i))},
\end{align*}
which is \ref{enum:res3}.
\end{proof}

\minisec{Reformulations}
Let $E_{c}$ denote the central extension of $\Z_n$ by $\Z_n$ corresponding to the 2-cocycle $c$, i.e.\ $E_c=\Z_n\times\Z_n$ (as set) with group law
\begin{equation*}
(i,j)\oplus(l,k)=(i+l,j+k+c(i,l)).
\end{equation*}
In particular we can consider the 2-cocycles $c_d$ for $d\in\Z_n$, which are the standard representatives of the elements in $H^2(\Z_n,\Z_n)\cong\Z_n$. We just saw in Corollary~\ref{cor:main}:
\begin{prop}\label{prop:e2r}
Let $V$ and $G=\langle\sigma\rangle$ be as in Assumption~\ref{ass:o} with $\sigma$ of type $n\{r\}$. Then the fusion group $F_{V^G}$ is isomorphic as \fqs{} to the group $E_{c_{2r}}$ with the quadratic form of level $N=nh=n^2/(r,n)$,
\begin{equation*}
Q_\rho((i,j))=\frac{ij}{n}+\frac{i_n^2r_n}{n^2}+\Z
\end{equation*}
and the associated bilinear form
\begin{equation*}
B_\rho((i,j),(l,k))=\frac{ik+lj}{n}+\frac{2i_nl_nr_n}{n^2}+\Z
\end{equation*}
for $i,j,k,l\in\Z_n$.
\end{prop}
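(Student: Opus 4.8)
The plan is to read essentially everything off from Corollary~\ref{cor:main}, since Proposition~\ref{prop:e2r} is a repackaging of that result in the language of finite quadratic spaces. First I would fix the choice of representations $\phi_i$ furnished by Corollary~\ref{cor:main}, for which the fusion rules, conformal weights and $S$-matrix take the clean form stated there (the $\varphi$-coboundary having been absorbed). By Lemma~\ref{lem:step4} all $W^{(i,j)}$ are simple currents, so $V^G$ satisfies Assumption~\ref{ass:sn} and Proposition~\ref{prop:sca} applies: the fusion algebra is the group algebra of the abelian group $F_{V^G}$ whose underlying set is $\{(i,j)\mid i,j\in\Z_n\}$ and whose group law is the fusion product. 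By item~\ref{enum:res1} of Corollary~\ref{cor:main} this law is exactly $(i,j)\oplus(l,k)=(i+l,j+k+c_{2r}(i,l))$, which by definition is the central extension $E_{c_{2r}}$. This settles the group structure.

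Next I would identify the quadratic form. By Theorem~\ref{thm:fafqs} the form $Q_\rho$ on $F_{V^G}$ is given by the conformal weights modulo~$1$, and item~\ref{enum:res2} of Corollary~\ref{cor:main} records these as $Q_\rho((i,j))=ij/n+i_n^2r_n/n^2+\Z$, which is the asserted formula. The associated bilinear form follows from the defining relation $B_\rho(\alpha,\beta)=Q_\rho(\alpha\oplus\beta)-Q_\rho(\alpha)-Q_\rho(\beta)$, where $\oplus$ is the group law of $E_{c_{2r}}$. Substituting and expanding, the term $(ik+lj)/n$ appears at once, and it remains to show that
\[
\frac{(i+l)\,c_{2r}(i,l)}{n}+\frac{\bigl[(i+l)_n^2-i_n^2-l_n^2\bigr]r_n}{n^2}\equiv\frac{2i_nl_nr_n}{n^2}\pmod{1}.
\]
I would verify this by distinguishing the two cases $i_n+l_n<n$, where $c_{2r}(i,l)=0$ and $(i+l)_n=i_n+l_n$, and $i_n+l_n\ge n$, where $c_{2r}(i,l)=2r$ and $(i+l)_n=i_n+l_n-n$; in the latter case the summands picked up from the shift by $n$ are integers, hence invisible modulo~$1$, and the two terms combine to give precisely $2i_nl_nr_n/n^2$.

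Finally I would pin down the level. Writing $g=(r,n)$, I would show that $N=n^2/g=nh$ is the smallest positive integer with $NQ_\rho=0$. For the upper bound, $NQ_\rho((i,j))=\tfrac{n}{g}\,ij+i_n^2\tfrac{r_n}{g}$ is integral because $g\mid n$ and $g\mid r_n$. For minimality, any $M$ with $MQ_\rho=0$ must send both $Q_\rho((1,1))=(n+r_n)/n^2$ and $Q_\rho((1,0))=r_n/n^2$ into $\Z$; their difference forces $n\mid M$, and then $n^2\mid Mr_n$ together with $(n/g,r_n/g)=1$ forces $N\mid M$. Combined with $NQ_\rho=0$ this gives that the level equals $N$, in agreement with Proposition~\ref{prop:onetoall} and the definition of the level.

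All the computations are elementary; the step I expect to be the main obstacle is the bilinear-form identity above, where one must handle the floor-type $2$-cocycle $c_{2r}$ and carefully track the passage between residues in $\{0,\dots,n-1\}$ and elements of $\Z_n$, making sure that every term produced by reducing $i_n+l_n$ modulo~$n$ is an integer and therefore drops out modulo~$1$.
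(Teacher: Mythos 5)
Your proposal is correct and follows exactly the route the paper takes: Proposition~\ref{prop:e2r} is stated there as an immediate repackaging of Corollary~\ref{cor:main} (the paper introduces $E_{c_{2r}}$ and says ``We just saw in Corollary~\ref{cor:main}''), with the bilinear form obtained by polarising $Q_\rho$ against the group law of $E_{c_{2r}}$, just as you do. Your explicit two-case verification of the cocycle identity and your direct minimality argument for the level $N=n^2/(r,n)$ merely fill in computations the paper leaves implicit (the general form of $B_\rho$ with the $(i+l)c(i,l)/n$ term appears in Section~\ref{sec:fusalg2}, and the level claim is attributed to Proposition~\ref{prop:onetoall}), so there is no substantive divergence.
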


There are two further convenient ways of writing the \fqs{} $F_{V^G}$ up to isomorphism. It is an easy exercise to show the following:
\begin{lem}\label{lem:centralext}
Let $1\to \Z_n\to E_{c_d}\to \Z_n\to 1$ be a central extension corresponding to the element $d\in\Z_n\cong H^2(\Z_n,\Z_n)$. Then as groups
\begin{equation*}
E_{c_d}\cong\Z_{n^2/(d,n)}\times\Z_{(d,n)}.
\end{equation*}
If we explicitly define the central extension $E_{c_d}=\Z_n\times\Z_n$ (as set) via
\begin{equation*}
(i,j)\oplus(l,k)=(i+l,j+k+c_d(i,l)),
\end{equation*}
then the generators of $\Z_{n^2/(d,n)}$ and $\Z_{(d,n)}$ can be given by
\begin{align*}
e_1&:=(1,0),\\
e_2&:=(0,1)\oplus(-\gamma_d)\cdot(1,0)=\left(-\gamma_d,1+d\left\lfloor\gamma_d(-1)_n/n\right\rfloor\right),
\end{align*}
respectively, with
\begin{equation*}
\gamma_d:=\frac{n}{(d,n)}\left[\frac{d}{(d,n)}\right]^{-1}\pmod{n}
\end{equation*}
where $\left[d/(d,n)\right]^{-1}$ denotes the inverse of $d/(d,n)$ modulo~$n/(d,n)$.
\end{lem}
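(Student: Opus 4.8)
The plan is to recognise $E_{c_d}$ as a two-generator abelian group and to read off both its isomorphism type and the explicit generators from a presentation. Set $a:=(1,0)$ and $b:=(0,1)$. First I would prove, by induction on $m$ and using that $c_d(i,1)=d$ exactly when $i\equiv-1\pmod n$, the closed formula
\begin{equation*}
m\cdot a=\left(m\bmod n,\;d\left\lfloor m/n\right\rfloor\bmod n\right),
\end{equation*}
valid for all $m\in\Z$ (for negative $m$ this follows by passing to inverses). From it $b$ has order $n$, one has $n\cdot a=(0,d)=d\cdot b$, and since $c_d(i,0)=0$ gives $(i,0)=i\cdot a$ for $0\le i<n$ and hence $(i,j)=i\cdot a\oplus j\cdot b$, the pair $a,b$ generates $E_{c_d}$. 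Thus $E_{c_d}$ is the abelian group on generators $a,b$ with defining relations $n\,b=0$ and $n\,a-d\,b=0$; equivalently $E_{c_d}\cong\Z^2/\Lambda$, where $\Lambda$ is the column span of $\left(\begin{smallmatrix}n&0\\-d&n\end{smallmatrix}\right)$.

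For the isomorphism type I would compute the Smith normal form of this relation matrix: its determinant is $n^2$ and the gcd of its entries is $(d,n)$, so the invariant factors are $(d,n)$ and $n^2/(d,n)$, giving $E_{c_d}\cong\Z_{n^2/(d,n)}\times\Z_{(d,n)}$. The order of $e_1=a$ is read off directly from the closed formula: $m\cdot a=0$ forces $n\mid m$, say $m=n\ell$, and then $d\ell\equiv0\pmod n$, i.e.\ $\ell\in\left(n/(d,n)\right)\Z$, so $\ord(e_1)=n^2/(d,n)$.

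It remains to identify the complementary generator. Writing $g=(d,n)$, $n'=n/g$ and $d'=d/g$, the symbol $[d/(d,n)]^{-1}$ is the inverse of $d'$ modulo $n'$, so $\gamma_d=n'[d']^{-1}$, and the stated formula for $e_2=(0,1)\oplus(-\gamma_d)\cdot(1,0)$ follows by substituting $m=-\gamma_d$ into the closed formula (the two admissible forms of the second coordinate agree because $d\gamma_d\equiv0\pmod n$). Since $e_2=b\oplus(-\gamma_d)e_1$, we get $b=e_2\oplus\gamma_d e_1\in\langle e_1,e_2\rangle$ and $a=e_1$, so $e_1,e_2$ generate $E_{c_d}$. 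Next I would verify $g\cdot e_2=0$: from $g\gamma_d=n[d']^{-1}$ and $n\cdot a=(0,d)$ one gets $g\gamma_d\cdot a=[d']^{-1}(0,d)=(0,[d']^{-1}d)=(0,g)$, because $[d']^{-1}d=g\,[d']^{-1}d'\equiv g\pmod n$ by $[d']^{-1}d'\equiv1\pmod{n'}$, whence $g\,e_2=g\cdot b\oplus(-g\gamma_d)\cdot a=(0,g)\oplus(0,-g)=0$. Finally, as $e_1,e_2$ generate and $\ord(e_1)=n^2/g$, the image of $e_2$ generates the quotient $E_{c_d}/\langle e_1\rangle$, which has order $g$; therefore $g\mid\ord(e_2)$, and with $g\,e_2=0$ this forces $\ord(e_2)=g$. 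Since $\ord(e_1)\,\ord(e_2)=(n^2/g)\cdot g=n^2=|E_{c_d}|$, the subgroups $\langle e_1\rangle$ and $\langle e_2\rangle$ meet trivially and $E_{c_d}=\langle e_1\rangle\oplus\langle e_2\rangle$ realises the asserted direct product.

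The main obstacle is the bookkeeping forced by the fact that $m\cdot(1,0)$ depends on $m$ as an integer, not merely modulo $n$, so that the precise integer value of $\gamma_d$ is what matters; the crux of the argument is the congruence $[d']^{-1}d'\equiv1\pmod{n'}$, which is exactly what makes $g\,e_2=0$ and thereby drives the order of $e_2$ down to $(d,n)$.
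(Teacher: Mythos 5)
Your proof is correct and complete: the closed formula for $m\cdot(1,0)$, the presentation $\Z^2/\Lambda$ with relation matrix $\left(\begin{smallmatrix}n&0\\-d&n\end{smallmatrix}\right)$ and its Smith normal form, and the order/intersection argument for $e_1,e_2$ all check out, including the reconciliation of the two forms of the second coordinate of $e_2$ via $d\gamma_d\equiv 0\pmod n$. The paper states this lemma as an "easy exercise" and gives no proof, so there is nothing to compare against; your route is the natural one.
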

Note that $\gamma_d$ is by definition well-defined modulo~$n^2/(d,n)^2$ but not necessarily modulo~$n$, so the first entry of $e_2$ depends on the choice of $\gamma_d$ in $\Z$. The statement of the theorem is correct for all choices of $\gamma_d$ modulo~$n$.

The lemma implies that the fusion group $F_{V^G}\cong E_{c_{2r}}$ from the above proposition is isomorphic to $\Z_{n^2/(2r,n)}\times\Z_{(2r,n)}$. We recall the definition of $h=n/(r,n)$, which is some positive divisor of $n$.
\begin{prop}
Let $V$ and $G=\langle\sigma\rangle$ be as in Assumption~\ref{ass:o} with $\sigma$ of type $n\{r\}$. Then the fusion group $F_{V^G}$ is isomorphic as \fqs{} to
\begin{equation*}
\Z_{n^2/(2r,n)}\times\Z_{(2r,n)}=\Z_{\frac{nh}{(h,2)}}\times\Z_{\frac{n(h,2)}{h}}
\end{equation*}
with quadratic form given by
\begin{equation*}
(x,y)\mapsto\frac{(x-\gamma_{2r}y)y}{n}+\frac{r(x-\gamma_{2r}y)^2}{n^2}+\Z
\end{equation*}
for $x\in\Z_{n^2/(2r,n)}$ and $y\in\Z_{(2r,n)}$.
\end{prop}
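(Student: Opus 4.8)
The plan is to deduce the statement from the two preceding results by a change of generators followed by a transport of the quadratic form. By Proposition~\ref{prop:e2r} the fusion group $F_{V^G}$ is, as a \fqs{}, isomorphic to $E_{c_{2r}}$ carrying
\[
Q_\rho((i,j))=\frac{ij}{n}+\frac{i_n^2r_n}{n^2}+\Z,\qquad B_\rho((i,j),(l,k))=\frac{ik+lj}{n}+\frac{2i_nl_nr_n}{n^2}+\Z .
\]
Applying Lemma~\ref{lem:centralext} with $d=2r$ identifies the underlying \emph{group} $E_{c_{2r}}$ with $\Z_{n^2/(2r,n)}\times\Z_{(2r,n)}$ through the isomorphism $\Phi\colon(x,y)\mapsto x\cdot e_1\oplus y\cdot e_2$, where $e_1=(1,0)$ and $e_2=(-\gamma_{2r},\,1+2r\lfloor\gamma_{2r}(-1)_n/n\rfloor)$. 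It then remains to (i) rewrite the two cyclic orders in terms of $h$ and (ii) compute the pullback $Q_\rho\circ\Phi$.

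For (i) I would use $h=n/(r,n)$ and write $r=(r,n)\,r'$ with $(r',h)=1$. Then $(2r',h)=(2,h)$, so $(2r,n)=(r,n)\,(2r',h)=\tfrac{n}{h}(h,2)=\tfrac{n(h,2)}{h}$ and consequently $n^2/(2r,n)=nh/(h,2)$; this is precisely the claimed group identity $\Z_{n^2/(2r,n)}\times\Z_{(2r,n)}=\Z_{nh/(h,2)}\times\Z_{n(h,2)/h}$.

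For (ii), since $Q_\rho$ is a finite quadratic form with associated bilinear form $B_\rho$, for any integers $x,y$ representing the respective classes
\[
Q_\rho(\Phi(x,y))=x^2\,Q_\rho(e_1)+xy\,B_\rho(e_1,e_2)+y^2\,Q_\rho(e_2).
\]
The first term is immediate: $Q_\rho(e_1)=Q_\rho((1,0))=r/n^2+\Z$. For the other two I would substitute the explicit $e_2$ together with the representative identities $(-\gamma_{2r})_n=n-(\gamma_{2r})_n$ and $\lfloor\gamma_{2r}(n-1)/n\rfloor=(\gamma_{2r})_n-1$ into the formulae for $Q_\rho$ and $B_\rho$. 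A short reduction modulo $\Z$ then yields $B_\rho(e_1,e_2)\equiv\tfrac1n-\tfrac{2r\gamma_{2r}}{n^2}$ and $Q_\rho(e_2)\equiv-\tfrac{\gamma_{2r}}{n}+\tfrac{r\gamma_{2r}^2}{n^2}$. Substituting these three values and expanding $\tfrac{(x-\gamma_{2r}y)y}{n}+\tfrac{r(x-\gamma_{2r}y)^2}{n^2}$ shows that the two expressions agree term by term in $x^2$, $xy$ and $y^2$, which finishes the proof.

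The main obstacle will be the bookkeeping with the integer representatives $(\cdot)_n$ and the floor term in $e_2$. Expanding $Q_\rho(e_2)$ and $B_\rho(e_1,e_2)$ produces several correction terms: some of the shape $\pm\tfrac{2r}{n}$, which cancel in pairs, and others of the shape $\tfrac{2r\gamma_{2r}}{n}$ and $\tfrac{2r\gamma_{2r}^2}{n}$, which must be recognised as integers and discarded modulo $\Z$. The whole calculation only collapses to the stated answer once one uses the key congruence $\tfrac{2r\gamma_{2r}}{n}\in\Z$; this follows directly from $\gamma_{2r}=\tfrac{n}{(2r,n)}\big[\tfrac{2r}{(2r,n)}\big]^{-1}$, since then $\tfrac{2r\gamma_{2r}}{n}=\tfrac{2r}{(2r,n)}\big[\tfrac{2r}{(2r,n)}\big]^{-1}\equiv1\pmod{n/(2r,n)}$ is an integer. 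I expect this single observation to be what makes all the representative-dependent terms disappear.
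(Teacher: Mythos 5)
Your proposal is correct and follows exactly the route the paper intends: the paper states this proposition without a written proof, merely noting that it follows from Lemma~\ref{lem:centralext} applied to $E_{c_{2r}}$ together with the data of Proposition~\ref{prop:e2r}, which is precisely your combination of the generator change $\Phi$ and the pullback computation $Q_\rho\circ\Phi$. Your identification of $\tfrac{2r\gamma_{2r}}{n}\in\Z$ as the congruence that kills all representative-dependent terms is indeed the crux of the bookkeeping, and your reduction $(2r,n)=\tfrac{n}{h}(h,2)$ for the group orders is the right elementary argument.
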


Finally, there is also the following isomorphism:
\begin{prop}\label{prop:e2rlattice}
Let $V$ and $G=\langle\sigma\rangle$ be as in Assumption~\ref{ass:o} with $\sigma$ of type $n\{r\}$. Then the fusion group $F_{V^G}$ is isomorphic as \fqs{} to the discriminant form $L'/L$ of the even lattice $L$ of signature $(1,1)$ with Gram matrix $\left(\begin{smallmatrix}-2r_n&n\\n&0\end{smallmatrix}\right)$. $L$ can be realised as $n\Z\times n\Z$ embedded into $\Q\times\Q$ with quadratic form $(a,b)\mapsto ab/n-a^2r_n/n^2$ for $a,b\in\Q$.
\end{prop}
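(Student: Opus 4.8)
The plan is to reduce everything to the explicit description of the fusion group already obtained in Proposition~\ref{prop:e2r}: as a \fqs{}, $F_{V^G}$ is the central extension $E_{c_{2r}}$, i.e.\ the set $\Z_n\times\Z_n$ with addition $(i,j)\oplus(l,k)=(i+l,j+k+c_{2r}(i,l))$ and quadratic form $Q_\rho((i,j))=ij/n+i_n^2r_n/n^2+\Z$. Hence it suffices to compute the discriminant form of the lattice $L$ and write down an explicit isomorphism of finite quadratic spaces $E_{c_{2r}}\to L'/L$. I would first verify the lattice data: starting from the quadratic form $q(a,b)=ab/n-a^2r_n/n^2$ on $\Q^2$, a direct computation gives the associated symmetric bilinear form $\langle(a_1,b_1),(a_2,b_2)\rangle=(a_1b_2+a_2b_1)/n-2a_1a_2r_n/n^2$, so the basis $f_1=(n,0)$, $f_2=(0,n)$ of $L=n\Z\times n\Z$ has Gram matrix $\left(\begin{smallmatrix}-2r_n&n\\n&0\end{smallmatrix}\right)$; its determinant $-n^2$ shows the signature is $(1,1)$, and evenness is immediate since the Gram matrix is integral with even diagonal.

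Next I would determine $L'$. Writing $\langle(a,b),f_1\rangle=b-2ar_n/n$ and $\langle(a,b),f_2\rangle=a$, the dual lattice is $L'=\{(a,b):a\in\Z,\ b-2ar_n/n\in\Z\}$. Introducing the coordinate $t:=b-2ar_n/n\in\Z$ identifies $L'$ with $\Z^2$ via $(a,b)\mapsto(a,t)$; under this identification $L$ becomes the sublattice generated by $(n,-2r_n)$ and $(0,n)$, so $L'/L=\Z^2/\langle(n,-2r_n),(0,n)\rangle$ has order $n^2$, matching $|F_{V^G}|$. Substituting $b=t+2ar_n/n$ into $q$ yields the induced discriminant quadratic form $Q_L(a,t)=at/n+a^2r_n/n^2+\Z$ on $L'/L$, which is well-defined precisely because $L$ is even.

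The isomorphism itself is $\psi\colon E_{c_{2r}}\to L'/L$, $\psi(i,j):=(i_n,j_n)+L$, where $i_n,j_n\in\{0,\ldots,n-1\}$ are the standard representatives. The key point making $\psi$ a homomorphism is that the lattice relation $(n,-2r_n)\in L$ says exactly $(n,0)\equiv(0,2r_n)\pmod L$, which mirrors the cocycle $c_{2r}$: adding $i_n+l_n$ produces a carry $\epsilon=\lfloor(i_n+l_n)/n\rfloor\in\{0,1\}$, and subtracting $\epsilon\,(n,-2r_n)$ from $(i_n+l_n,j_n+k_n)$ reproduces $((i+l)_n,(j+k+c_{2r}(i,l))_n)+L$ since $c_{2r}(i,l)\equiv 2r_n\epsilon\pmod n$. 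Injectivity follows because $(i_n,j_n)\in L$ forces $i_n\in n\Z$, whence $i_n=0$, and then $(0,j_n)\in L$ forces $j_n\in n\Z$, whence $j_n=0$; together with the equality of orders this gives bijectivity. Finally $Q_L(\psi(i,j))=i_nj_n/n+i_n^2r_n/n^2+\Z=Q_\rho((i,j))$, so $\psi$ preserves the quadratic form and is an isomorphism of finite quadratic spaces.

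The main obstacle is not any single deep step but the careful bookkeeping of representatives: both $Q_\rho$ and the cocycle $c_{2r}$ are defined through the distinguished representatives $i_n\in\{0,\ldots,n-1\}$, and one must check that passage to $L'/L$ interacts correctly with these non-additive operations. Concretely, verifying that the carry term of $c_{2r}$ corresponds precisely to the single lattice relation $(n,0)\equiv(0,2r_n)$, and that $i_n^2r_n/n^2$ matches $a^2r_n/n^2$ on the nose, is where all the arithmetic subtlety resides; everything else is routine linear algebra over $\Q$.
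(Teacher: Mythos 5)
Your proof is correct. The paper in fact gives no proof of Proposition~\ref{prop:e2rlattice} at all — it is stated as a final reformulation immediately after Proposition~\ref{prop:e2r} and left as a routine verification — and your argument (computing the Gram matrix, dual lattice and discriminant form of $L$, then matching them against $E_{c_{2r}}$ with $Q_\rho$ via the explicit map $(i,j)\mapsto(i_n,j_n)+L$, with the carry of $c_{2r}$ absorbed by the relation $(n,0)\equiv(0,2r_n)\pmod{L}$) is exactly the intended reduction and checks out in every detail.
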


\minisec{Summary and Special Case: Type $n\{0\}$}
Let $V$ and $G=\langle\sigma\rangle$ be as in Assumption~\ref{ass:o} with $\sigma$ of type $n\{r\}$. Then we saw that the fusion group of $V^G$ is as \fqs{} given by
\begin{equation*}
F_{V^G}=(E_c,Q_\rho)
\end{equation*}
for some 2-cocycle $c$ cohomologous to the special 2-cocycle $c_{2r}$. If we choose the representations $\phi_i$ as in Corollary~\ref{cor:main}, then
\begin{equation*}
F_{V^G}=(E_{c_{2r}},Q_\rho).
\end{equation*}
Now assume in addition that $r=0$. Then
\begin{equation*}
F_{V^G}=(E_{c_0},Q_\rho)=(\Z_n\times\Z_n,Q_\rho)
\end{equation*}
with $Q_\rho((i,j))=\frac{ij}{n}+\Z$. We describe this special case in more detail:
\begin{cor}[Special Case]\label{cor:r0}
Let $V$ and $G=\langle\sigma\rangle$ be as in Assumption~\ref{ass:o} with $\sigma$ of type $n\{0\}$. If we choose the representations $\phi_i$ as in Corollary~\ref{cor:main}, then:
\begin{enumerate}
 \item $W^{(i,j)}\boxtimes W^{(l,k)}\cong W^{(i+l,j+k)}$,
 \item $W^{(i,j)}$ has weights in $Q_\rho((i,j))=\frac{ij}{n}+\Z$,
 \item ${W^{(i,j)}}'\cong W^{(-i,-j)}$,
 \item $S_{(i,j),(l,k)}=\frac{1}{n}\xi_n^{-(lj+ik)}\lambda_{i,l}=\frac{1}{n}\xi_n^{-(lj+ik)}$, i.e.\ $\lambda_{i,l}=1$
\end{enumerate}
for $i,j,k,l\in\Z_n$. This means that the fusion group of $V^G$ is the abelian group $F_{V^G}=\Z_n\times\Z_n$ with quadratic form
\begin{equation*}
Q_\rho((i,j))=\frac{ij}{n}+\Z
\end{equation*}
and associated bilinear form
\begin{equation*}
B_\rho((i,j),(l,k))=\frac{ik+lj}{n}+\Z.
\end{equation*}
\end{cor}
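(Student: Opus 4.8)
The plan is to obtain Corollary~\ref{cor:r0} as the immediate specialisation of Corollary~\ref{cor:main} to the case $r=0$, retaining the very same choice of representations $\phi_i$ (the one that eliminates the coboundary $\hat c$ and is constructed uniformly in $r$). The essential point is that all the substantive work has already been done in Theorem~\ref{thm:main} and Corollary~\ref{cor:main}; here one only has to set $r=0$ and read off the simplifications. So rather than redoing any of the $S$-matrix or Verlinde computations, I would simply invoke Corollary~\ref{cor:main} and track how each of its four items collapses.

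First I would record the two elementary consequences of $r=0$. Since $r=0\in\Z_n$, its representative in $\{0,\ldots,n-1\}$ is $r_n=0$, so every factor $\xi_{n^2}^{-2r_ni_nl_n}$ equals $1$ and every summand $i_n^2r_n/n^2$ vanishes. Moreover $c_{2r}=c_0$ is the standard representative $c_d$ for $d=0$, and its defining formula $c_d(i,l)=d\lfloor(i_n+l_n)/n\rfloor$ shows that $c_0\equiv 0$ is the trivial cocycle. Substituting these into Corollary~\ref{cor:main} yields items (1)--(4) of Corollary~\ref{cor:r0}: the fusion rules lose the $c_{2r}$-term, giving $W^{(i,j)}\boxtimes W^{(l,k)}\cong W^{(i+l,j+k)}$; the conformal weights become $Q_\rho((i,j))=ij/n+\Z$; the contragredient shifts to $W^{(-i,-j)}$ (as $c_0(i,-i)=0$); and $\lambda_{i,l}=\xi_{n^2}^{-2r_ni_nl_n}=1$, whence $S_{(i,j),(l,k)}=(1/n)\xi_n^{-(lj+ik)}$.

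For the group-theoretic claim I would note that by Proposition~\ref{prop:e2r} the fusion group is $F_{V^G}\cong E_{c_{2r}}=E_{c_0}$, and since $c_0\equiv 0$ the central extension degenerates to a direct product: the group law $(i,j)\oplus(l,k)=(i+l,j+k+c_0(i,l))$ reduces to $(i+l,j+k)$, so $F_{V^G}=\Z_n\times\Z_n$. Finally the associated bilinear form follows from the defining relation $B_\rho((i,j),(l,k))=Q_\rho((i+l,j+k))-Q_\rho((i,j))-Q_\rho((l,k))$, a one-line computation giving $(ik+lj)/n+\Z$, matching the stated formula. There is essentially no obstacle to surmount; the only point meriting a moment's care is to confirm that the representatives $\tilde\rho_i$ and the normalisation of the $\phi_i$ from Corollary~\ref{cor:main} remain valid at $r=0$, which is immediate because that choice was made without any nonvanishing assumption on $r$.
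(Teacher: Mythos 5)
Your proposal is correct and matches the paper exactly: the paper states Corollary~\ref{cor:r0} without a separate proof, presenting it (in the preceding ``Summary and Special Case: Type $n\{0\}$'' discussion) as the immediate specialisation of Corollary~\ref{cor:main} to $r=0$, where $r_n=0$ kills the $\xi_{n^2}^{-2r_ni_nl_n}$ factors and the $i_n^2r_n/n^2$ terms, $c_{2r}=c_0$ is the trivial cocycle so $E_{c_0}=\Z_n\times\Z_n$, and the bilinear form follows from $B_\rho=Q_\rho(\cdot+\cdot)-Q_\rho(\cdot)-Q_\rho(\cdot)$. Your additional check that the choice of representations $\phi_i$ in Corollary~\ref{cor:main} is made uniformly in $r$ is the right point of care, and it holds.
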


\section{Modular Invariance of Trace Functions}\label{sec:modinv}

Using the results obtained in this chapter so far, it is possible to make detailed statements about the modular properties of the twisted trace functions $T(v,i,j,\tau)$ or equivalently about the $T_{W^{(i,j)}}(v,\tau)$, under Assumption~\ref{ass:o}. Recall that $8\mid c$, where $c$ is the central charge of $V$ and $V^G$.

For homogeneous $v\in V^G$ with respect to the grading $\wt[\cdot]$ the $T_{W^{(i,j)}}(v,\tau)$, $i,j\in\Z_n$, transform as a vector-valued modular form of weight $\wt[v]$ for Zhu's representation $\rho_{V^G}$ of $\SLZ$. We showed in Theorem~\ref{thm:zhuweil} that this is almost the well-known Weil representation $\rho_{F_{V^G}}$ on the group algebra of the fusion group $F_{V^G}$. Since $c$ is even, the Weil representation is a representation of $\SLZ$ and
\begin{equation*}
\rho_V(M)=\eps(M)^{-c}\rho_{F_{V^G}}(M)
\end{equation*}
for $M\in\SLZ$ where $\eps(S)=\e^{(2\pi\i)(-1/8)}$ and $\eps(T)=\e^{(2\pi\i)1/24}$. For $8\mid c$ we can find an explicit formula
\begin{equation*}
Z(M):=\eps(M)^{-c}=\begin{cases}\e^{(2\pi\i)(-c/24)(\beta-\gamma)\delta}&\text{if }3\nmid \delta,\\\e^{(2\pi\i)(-c/24)(\beta+(\alpha+1)\gamma)}&\text{if }3\mid \delta\end{cases}
\end{equation*}
for $M=\left(\begin{smallmatrix}\alpha&\beta\\\gamma&\delta\end{smallmatrix}\right)\in\SLZ$, taking values in $U_3$.

Using that $\Gamma(N)$ acts trivially under the Weil representation where $N=nh=n^2/(r,n)$ is the level of the \fqs{} $F_{V^G}=(F_{V^G},Q_\rho)$, we immediately get the following:
\begin{oframed}
\begin{thm}\label{thm:modinv}
Let $V$ and $G=\langle\sigma\rangle$ be as in Assumption~\ref{ass:o} and $\sigma$ of type $n\{r\}$. Then the trace functions $T_{W^{(i,j)}}(v,\tau)$ and $T(v,i,j,\tau)$, $i,j\in\Z_n$, are modular forms of weight $\wt[v]$ for a congruence subgroup of $\SLZ$ of level
\begin{equation*}
\widetilde{N}:=\begin{cases}\lcm(3,N)&\text{if }3\nmid c/8,\\N&\text{if }3\mid c/8\end{cases}
\end{equation*}
where $N=nh=n^2/(r,n)$ is the level of the \fqs{} $F_{V^G}$.
\end{thm}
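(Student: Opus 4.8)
The plan is to deduce the statement from Theorem~\ref{thm:zhuweil}, which identifies Zhu's representation $\rho_{V^G}$ with the Weil representation of $F_{V^G}$ up to an explicit scalar character, together with the well-known fact that the Weil representation trivialises on a principal congruence subgroup. First I would record that the hypotheses of Theorem~\ref{thm:zhuweil} are met: by Theorem~\ref{thm:orb} the \fpvosa{} $V^G$ satisfies Assumption~\ref{ass:n}, and by Lemma~\ref{lem:step4} all its irreducible modules are simple currents, so $V^G$ satisfies Assumption~\ref{ass:sn}. Although Assumption~\ref{ass:p} need not hold for $V^G$, the Remark in Section~\ref{sec:fusalg} guarantees that every consequence of Assumption~\ref{ass:p} invoked in Sections~\ref{sec:simplecurrents} and \ref{sec:scvoa}—in particular Theorem~\ref{thm:zhuweil}—remains valid in the orbifold setting. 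Hence $\rho_{V^G}(M)=\eps(\widetilde{M})^{-c}\rho_{F_{V^G}}(\widetilde{M})=Z(M)\rho_{F_{V^G}}(M)$ for $M\in\SLZ$, and since $8\mid c$ by Proposition~\ref{prop:div8}, the Weil representation $\rho_{F_{V^G}}$ descends from $\MpZ$ to $\SLZ$ while $Z$ takes values in $U_3$.

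Next I would pin down the two trivialisation statements. By Proposition~\ref{prop:onetoall} and the definition of the level, the \fqs{} $F_{V^G}$ has level $N=nh=n^2/(r,n)$, and it is standard that $\rho_{F_{V^G}}$ acts trivially on $\Gamma(N)$. The remaining point is the behaviour of the character $Z$, which I would analyse using the explicit formula for $Z(M)=\eps(M)^{-c}$ stated just before the theorem. If $3\mid c/8$, then $c/24\in\Z$, so each exponent $(-c/24)(\cdots)$ with integral $(\cdots)$ is an integer and $Z\equiv1$ on all of $\SLZ$; hence $\rho_{V^G}$ is trivial on $\Gamma(N)$ and one takes $\widetilde{N}=N$. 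If $3\nmid c/8$, I would check that $Z$ restricts to the trivial character on $\Gamma(3)$: for $M=\left(\begin{smallmatrix}\alpha&\beta\\\gamma&\delta\end{smallmatrix}\right)\in\Gamma(3)$ one has $\delta\equiv1\pmod 3$, so $3\nmid\delta$ and $Z(M)=\e^{(2\pi\i)(-c/24)(\beta-\gamma)\delta}$; since $\beta\equiv\gamma\equiv0\pmod3$ the factor $(\beta-\gamma)\delta$ is divisible by $3$, making $(-c/24)(\beta-\gamma)\delta$ an integer and $Z(M)=1$. Consequently both $Z$ and $\rho_{F_{V^G}}$ are trivial on $\Gamma(3)\cap\Gamma(N)=\Gamma(\lcm(3,N))$, and one takes $\widetilde{N}=\lcm(3,N)$.

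With these in hand, the conclusion follows quickly. On $\Gamma(\widetilde{N})$ we have $\rho_{V^G}(M)=\id$, so the transformation law of Zhu's theorem (Theorem~\ref{thm:zhumodinv}) reduces to $(c\tau+d)^{-\wt[v]}T_{W^{(i,j)}}(v,M.\tau)=T_{W^{(i,j)}}(v,\tau)$ for all $M\in\Gamma(\widetilde N)$ and all $v$ homogeneous for Zhu's second grading. Combined with the holomorphy of these functions on $\H$ (Theorem~\ref{thm:zhumodinv} and Theorem~\ref{thm:1.4}), this exhibits each $T_{W^{(i,j)}}(v,\cdot)$ as a modular form of weight $\wt[v]$ for the congruence subgroup $\Gamma(\widetilde N)$. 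Finally, the twisted trace functions are recovered from the relation $T(v,i,j,\tau)=\sum_{k\in\Z_n}\xi_n^{jk}T_{W^{(i,k)}}(v,\tau)$ established in the proof of Lemma~\ref{lem:step1}; being constant-coefficient linear combinations of the $T_{W^{(i,k)}}$, they satisfy the same transformation law, completing the proof.

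I expect the only genuinely delicate step to be the character bookkeeping for $Z$—specifically, verifying both that $Z$ is identically trivial when $3\mid c/8$ and that it dies on $\Gamma(3)$ when $3\nmid c/8$, via the case split $3\mid\delta$ versus $3\nmid\delta$ in the explicit formula. As a cross-check (and an independent route to the same answer) I would note that Theorem~\ref{thm:thm1dln12} gives the level directly as the least $\widetilde N$ with $\widetilde N\bigl(Q_\rho((i,j))-c/24\bigr)\in\Z$ for all $i,j$; evaluating at $(0,0)$ forces $3\mid\widetilde N(c/8)$, while the remaining classes force $N\mid\widetilde N$, and these two constraints reproduce exactly $\widetilde N=N$ when $3\mid c/8$ and $\widetilde N=\lcm(3,N)$ otherwise.
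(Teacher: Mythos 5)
Your proposal is correct and follows essentially the same route as the paper: the paper likewise derives the theorem from Theorem~\ref{thm:zhuweil} (Zhu's representation equals the Weil representation of $F_{V^G}$ twisted by the $U_3$-valued character $Z$), the triviality of the Weil representation on $\Gamma(N)$ with $N$ the level of the \fqs{} $F_{V^G}$, and the case analysis on $3\mid c/8$; it also notes the same alternative derivation from Theorem~\ref{thm:thm1dln12}. Your write-up merely makes explicit the character bookkeeping for $Z$ on $\Gamma(3)$ that the paper leaves to the reader.
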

\end{oframed}
The
statement of this theorem also follows directly from Theorem~\ref{thm:thm1dln12} (from \cite{DLN15}) and is a generalisation of Theorem~1.6 (ii) in \cite{DLM00}, which is only formulated for $n=2,3$.

It seems adequate to write down the modular-transformation properties of the trace functions explicitly. Let us assume we rescaled the trace functions, i.e.\ the representations $\phi_i$, as in Corollary~\ref{cor:main}. Then:
\begin{align*}
T(v,i,j,T.\tau)&=\e^{(2\pi\i)(i_n^2r_n/n^2-c/24)}T(v,i,j+i,\tau),\\
\tau^{-\wt[v]}T(v,i,j,S.\tau)&=\e^{(2\pi\i)(-2r_ni_nj_n/n^2)}T(v,j,-i,\tau)
\end{align*}
or equivalently
\begin{align*}
T_{W^{(i,j)}}(v,T.\tau)&=\e^{(2\pi\i)(ij/n+i_n^2r_n/n^2-c/24)}T_{W^{(i,j)}}(v,\tau),\\
\tau^{-\wt[v]}T_{W^{(i,j)}}(v,S.\tau)&=\sum_{l,k\in\Z_n}\e^{(2\pi\i)(-(lj+ik)/n-2r_ni_nl_n/n^2)}T_{W^{(l,k)}}(v,\tau)
\end{align*}
for all $i,j\in\Z_n$ and homogeneous $v\in V$.

\minisec{Special Case: Type $n\{0\}$}
In the following we consider the special case where $\sigma$ is of type $n\{0\}$. Then the fusion group $F_{V^G}$ is isomorphic to the group $\Z_n\times\Z_n$ with the quadratic form $Q_\rho((i,j))=ij/n$ for $(i,j)\in\Z_n\times\Z_n$. Let us also assume the representations $\phi_i$ to be chosen as in Corollary~\ref{cor:main}. Then this isomorphism is even an equality, i.e.\ $F_{V^G}=(\Z_n\times\Z_n,Q_\rho)$.

If $r=0$, then all the factors containing $r$ in the above modular transformations disappear and we get for $M=\left(\begin{smallmatrix}a&b\\c&d\end{smallmatrix}\right)\in\SLZ$,
\begin{equation}\label{eq:trafon0}
(c\tau+d)^{-\wt[v]}T(v,i,j,M.\tau)=Z(M)T(v,(i,j)M,\tau)
\end{equation}
for the character $Z\colon\SLZ\to U_3$ defined above.

If in addition $24\mid c$, then even
\begin{equation}\label{eq:modinvverynice}
(c\tau+d)^{-\wt[v]}T(v,i,j,M.\tau)=T(v,(i,j)M,\tau),
\end{equation}
which means that the $T(v,i,j,\tau)$, $i,j\in\Z_n$, form a vector-valued modular form for a representation of $\SLZ$ with values in $(n^2\times n^2)$-permutation matrices. The modular properties of the single $T(v,i,j,\tau)$ are then particularly simple to read off.

\begin{prop}\label{prop:modinv}
Let $V$ and $G=\langle\sigma\rangle$ be as in Assumption~\ref{ass:o} and $\sigma$ of type $n\{0\}$ and suppose $24\mid c$. Then the trace functions
\begin{enumerate}
\item \label{enum:trace1} $T(v,i,j,\tau)$ and $T_{W^{(i,j)}}(v,\tau)$, $i,j\in\Z_n$, are modular forms for $\Gamma(n)$,
\item \label{enum:trace2} $T(v,0,j,\tau)$, $j\in\Z_n$, are modular forms for $\Gamma_1(n)$,
\item \label{enum:trace3} $T_{W^{(0,0)}}(v,\tau)$ is a modular form for $\Gamma_0(n)$.
\end{enumerate}
Indeed, assume that the trace functions, i.e.\ the representations $\phi_i$, are chosen as in Corollary~\ref{cor:main}. Then for $i,j\in\Z_n$ the trace function $T(v,i,j,\tau)$ is a modular form for the stabiliser $\Gamma_{(i,j)}$ of $(i,j)$ under the right action of $\SLZ$ on $\Z_n\times\Z_n$ by matrix multiplication and
\begin{equation*}
\Gamma(n)\leq\Gamma_{(i,j)}\leq\SLZ.
\end{equation*}
\end{prop}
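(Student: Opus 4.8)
The engine of the proof is the transformation law \eqref{eq:modinvverynice}, which under the standing hypotheses (type $n\{0\}$ and $24\mid c$) reads $(c\tau+d)^{-\wt[v]}T(v,i,j,M.\tau)=T(v,(i,j)M,\tau)$ for every $M=\left(\begin{smallmatrix}a&b\\c&d\end{smallmatrix}\right)\in\SLZ$, where $(i,j)M=(ia+jc,\,ib+jd)$ denotes the right action read in $\Z_n\times\Z_n$. The plan is first to establish the final (``indeed'') assertion and then to read off the three itemised statements as special cases. For the general assertion I would observe that $M$ lies in the stabiliser $\Gamma_{(i,j)}$ precisely when $(i,j)M=(i,j)$ in $\Z_n\times\Z_n$; for such $M$ the right-hand side of \eqref{eq:modinvverynice} is just $T(v,i,j,\tau)$, so that $(c\tau+d)^{-\wt[v]}T(v,i,j,M.\tau)=T(v,i,j,\tau)$. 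Combined with the holomorphicity on $\H$ from Theorem~\ref{thm:1.4} and the behaviour at the cusps already recorded in Theorem~\ref{thm:modinv}, this exhibits $T(v,i,j,\tau)$ as a modular form of weight $\wt[v]$ for $\Gamma_{(i,j)}$.

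The inclusion $\Gamma(n)\leq\Gamma_{(i,j)}\leq\SLZ$ is then immediate: any $M\equiv\id\pmod{n}$ fixes every element of $\Z_n\times\Z_n$, so $\Gamma(n)\leq\Gamma_{(i,j)}$, while $\Gamma_{(i,j)}\leq\SLZ$ holds because a point stabiliser is a subgroup. This proves item~\ref{enum:trace1} for the functions $T(v,i,j,\tau)$. For the functions $T_{W^{(i,j)}}(v,\tau)$ I would invoke the relation $T_{W^{(i,j)}}(v,\tau)=\frac{1}{n}\sum_{l\in\Z_n}\xi_n^{-lj}T(v,i,l,\tau)$ from Lemma~\ref{lem:step1}: each summand has fixed first index $i$ and a second index ranging over $\Z_n$, all of whose elements are fixed by $\Gamma(n)$, so every summand is $\Gamma(n)$-modular and hence so is the finite linear combination.

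For item~\ref{enum:trace2} I would compute the action on $(0,j)$ directly: for $M\in\Gamma_1(n)$ one has $c\equiv0$ and $d\equiv1\pmod{n}$, whence $(0,j)M=(jc,jd)\equiv(0,j)\pmod{n}$, so $\Gamma_1(n)\leq\Gamma_{(0,j)}$ and $T(v,0,j,\tau)$ is modular for $\Gamma_1(n)$. Item~\ref{enum:trace3} is the only step requiring a small extra idea. Writing $T_{W^{(0,0)}}(v,\tau)=\frac{1}{n}\sum_{l\in\Z_n}T(v,0,l,\tau)$, I would apply \eqref{eq:modinvverynice} to each summand for $M\in\Gamma_0(n)$: since $c\equiv0\pmod{n}$ one gets $(0,l)M\equiv(0,ld)\pmod{n}$, so $(c\tau+d)^{-\wt[v]}T(v,0,l,M.\tau)=T(v,0,ld,\tau)$. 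Because $ad\equiv1\pmod{n}$ forces $\gcd(d,n)=1$, the map $l\mapsto ld$ is a bijection of $\Z_n$, so summing over $l$ leaves $\sum_{l}T(v,0,l,\tau)$ invariant, and $T_{W^{(0,0)}}(v,\tau)$ is modular for $\Gamma_0(n)$.

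I expect no serious obstacle: the entire transformation behaviour is a direct consequence of \eqref{eq:modinvverynice}, and the only mildly nontrivial point is the reindexing bijection $l\mapsto ld$ in item~\ref{enum:trace3}. The remaining care lies in the bookkeeping confirming that the defining analytic conditions of a modular form (holomorphicity on $\H$ and the prescribed behaviour at the cusps) are inherited from the already-established vector-valued modular form of Theorem~\ref{thm:modinv}, so that the present statement genuinely refines only the \emph{invariance group} in each case.
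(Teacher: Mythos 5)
Your proposal is correct and follows essentially the same route as the paper: both rest on the explicit transformation law \eqref{eq:modinvverynice} and reduce everything to computing stabilisers of $(i,j)$ under the right action of $\SLZ$ on $\Z_n\times\Z_n$, with item~\ref{enum:trace3} handled by noting that $\Gamma_0(n)$ permutes the set $\{(0,l)\mid l\in\Z_n\}$ (your reindexing bijection $l\mapsto ld$ is exactly this observation made explicit). The only difference is that you spell out a few computations the paper leaves implicit; the substance is identical.
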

\begin{proof}
The modular-transformation property of $T(v,i,j,\tau)$ stated above directly yields the modular invariance under $\Gamma_{(i,j)}$. All $T(v,i,j,\tau)$ are modular invariant under $\Gamma(n)$, which is already stated in the above theorem. This is \ref{enum:trace1}. $\Gamma_1(n)=\Gamma_{(0,1)}$ is contained in $\Gamma_{(0,j)}$ for all $j\in\Z_n$, which proves \ref{enum:trace2}. Finally, consider $T_{W^{(0,0)}}(v,\tau)=(1/n)\sum_{j\in\Z_n}T(v,0,j,\tau)$. $\Gamma_0(n)$ keeps $\{(0,j)\mid j\in\Z_n\}$ invariant as a set. This proves item~\ref{enum:trace3}.
\end{proof}

\section{Orbifold Construction}\label{sec:orbifold}

In the following we combine our knowledge from this chapter about the fusion algebra of $V^G$ under Assumption~\ref{ass:o} with the results in Chapter~\ref{ch:aia} about \aia{}s in the case of group-like fusion (under Assumptions~\ref{ass:sn}\ref{ass:p}) to construct a new holomorphic \voa{} $\widetilde{V}$ from the irreducible $V^G$-modules. This process is called \emph{orbifolding} or \emph{orbifold construction}.

Having determined the fusion algebra of $V^G$, we can use Theorem~\ref{thm:2.7} to endow the direct sum of all irreducible $V^G$-modules up to isomorphism with the structure of an \aia{}. Under Assumption~\ref{ass:op} the assumptions of Theorem~\ref{thm:2.7} are fulfilled because of Theorem~\ref{thm:orb}.
\begin{framed}
\begin{thm}\label{thm:aia}
Let $V$ and $G=\langle\sigma\rangle$ be as in Assumption~\ref{ass:op} and $\sigma$ of type $n\{r\}$. Then the direct sum
\begin{equation*}
A:=\bigoplus_{i,j\in\Z_n}W^{(i,j)}=\bigoplus_{\gamma\in F_{V^G}}W^\gamma
\end{equation*}
of all $n^2$ irreducible $V^G$-modules up to isomorphism can be given the structure of an \aia{}, the unique one up to a normalised abelian 3-coboundary extending the \voa{} structure of $V$ and that of its irreducible modules, with associated \fqs{} $\overline{F_{V^G}}=(F_{V^G},-Q_\rho)$.
\end{thm}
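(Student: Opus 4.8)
The plan is to deduce this theorem as a direct application of Theorem~\ref{thm:2.7} to the \fpvosa{} $V^G$ in place of $V$. Since all the deep content---the construction of the \aia{} from intertwining operators, and the identification $Q_\Omega=-Q_\rho$ via \mtc{} methods---is already packaged in Theorem~\ref{thm:2.7}, the entire task reduces to checking that $V^G$ satisfies its hypotheses, namely Assumptions~\ref{ass:sn} and \ref{ass:p}, and then reading off the conclusion with $F_V$ replaced by $F_{V^G}$.

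First I would establish Assumption~\ref{ass:sn} (group-like fusion) for $V^G$. By Assumption~\ref{ass:o} the \voa{} $V$ is holomorphic, $C_2$-cofinite and of CFT-type; holomorphicity entails rationality and simplicity, and holomorphic \voa{}s are always self-contragredient, so $V$ satisfies Assumption~\ref{ass:n}. As $G=\langle\sigma\rangle$ is cyclic, hence solvable, the Main Theorem of Orbifold Theory (Theorem~\ref{thm:orb}) shows that $V^G$ again satisfies Assumption~\ref{ass:n}. It then remains only to know that every irreducible $V^G$-module is a simple current, and this is exactly Lemma~\ref{lem:step4}, already proved under Assumption~\ref{ass:o}. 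Together these yield Assumption~\ref{ass:sn} for $V^G$, whose fusion group $F_{V^G}=(F_{V^G},Q_\rho)$ with its conformal-weight quadratic form was determined in Section~\ref{sec:fusalg} (cf.\ Proposition~\ref{prop:e2r}).

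Next, the positivity Assumption~\ref{ass:p} for $V^G$ is precisely what Assumption~\ref{ass:op} adds to Assumption~\ref{ass:o}: it requires $\rho(V(\sigma^i))>0$ for all $i\in\Z_n\setminus\{0\}$, equivalently $\rho(W)>0$ for every irreducible $V^G$-module $W\not\cong V^G$. Thus under Assumption~\ref{ass:op} the \voa{} $V^G$ satisfies Assumptions~\ref{ass:sn} and \ref{ass:p} simultaneously. Applying Theorem~\ref{thm:2.7} to $V^G$ then immediately endows $A=\bigoplus_{\gamma\in F_{V^G}}W^\gamma$ with the structure of an \aia{}, unique up to a normalised abelian 3-coboundary extending the \voa{} structure on $V^G$ and the module structures on the $W^{(i,j)}$, and associated with a normalised abelian 3-cocycle $(F,\Omega)$ satisfying $Q_\Omega=-Q_\rho$; equivalently, the associated \fqs{} is $\overline{F_{V^G}}=(F_{V^G},-Q_\rho)$, as claimed.

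Because everything substantive is imported, there is no genuine obstacle here---the argument is essentially bookkeeping. The only point requiring a moment's care is confirming that the hypotheses of the two imported results interlock correctly: Theorem~\ref{thm:orb} demands that $V$ satisfy Assumption~\ref{ass:n} and that $G$ be solvable, while Theorem~\ref{thm:2.7} demands Assumptions~\ref{ass:sn}\ref{ass:p} for the \voa{} to which it is applied. Both chains hold for $V^G$ by the verifications above, so no further work is needed.
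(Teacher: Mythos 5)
Your proof is correct and takes exactly the same route as the paper: the paper's own justification is the (much terser) remark that ``under Assumption~\ref{ass:op} the assumptions of Theorem~\ref{thm:2.7} are fulfilled because of Theorem~\ref{thm:orb}'', i.e.\ Theorem~\ref{thm:2.7} applied to $V^G$, with group-like fusion coming from Theorem~\ref{thm:orb} together with Lemma~\ref{lem:step4} and positivity built into Assumption~\ref{ass:op}. Your more explicit verification of how Assumptions~\ref{ass:n}, \ref{ass:sn} and \ref{ass:p} interlock for $V^G$ is precisely the bookkeeping the paper leaves implicit, so nothing is missing.
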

\end{framed}
By Theorem~\ref{thm:4.3} we get a \voa{} structure if we restrict to the modules corresponding to an isotropic subgroup of the \fqs{} $F_{V^G}$.
\begin{oframed}
\begin{thm}\label{thm:orbvoa}
Let $V$ and $G=\langle\sigma\rangle$ be as in Assumption~\ref{ass:op} and $\sigma$ of type $n\{r\}$. Let $I$ be an isotropic subgroup of the fusion group $F_{V^G}$. Then the direct sum
\begin{equation*}
V_I=\bigoplus_{\gamma\in I}W^\gamma
\end{equation*}
admits the unique structure of a \voa{} extending the \voa{} structure of $V^G$ and that of its irreducible modules. $V_I$ satisfies Assumptions~\ref{ass:sn}\ref{ass:p}.

If $I=I^\bot$, then $\widetilde{V}:=V_I$ is holomorphic.
\end{thm}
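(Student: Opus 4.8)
The plan is to recognise that this theorem is an immediate application of the Extension Theorem (Theorem~\ref{thm:4.3}) to the \fpvosa{} $V^G$, once we have verified that $V^G$ itself satisfies the hypotheses of that theorem, namely Assumptions~\ref{ass:sn}\ref{ass:p}. The entire content of the statement has in fact been prepared by the preceding sections, so the proof amounts to collecting those results and feeding them into Theorem~\ref{thm:4.3} with the base \voa{} taken to be $V^0 := V^G$.

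First I would assemble the niceness of $V^G$. Since $G = \langle\sigma\rangle$ is cyclic it is in particular solvable, so the Main Theorem of Orbifold Theory (Theorem~\ref{thm:orb}) applies and shows that $V^G$ satisfies Assumption~\ref{ass:n}, i.e.\ it is simple, rational, $C_2$-cofinite, self-contragredient and of CFT-type. Next, Lemma~\ref{lem:step4} shows that all $n^2$ irreducible $V^G$-modules $W^{(i,j)}$ are simple currents, so that $V^G$ is a simple-current \voa{} and hence satisfies Assumption~\ref{ass:sn}; by Theorem~\ref{thm:fafqs} (or equivalently by the explicit computation in Section~\ref{sec:fusalg2}) its fusion group $F_{V^G}$ carries the non-degenerate quadratic form $Q_\rho$ given by the conformal weights modulo~1. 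Finally, the positivity assumption (Assumption~\ref{ass:p}) for $V^G$ is exactly the extra hypothesis bundled into Assumption~\ref{ass:op}. Thus $V^G$ satisfies Assumptions~\ref{ass:sn}\ref{ass:p}.

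With $V^G$ playing the role of the base \voa{}, I would then invoke Theorem~\ref{thm:4.3} directly. Since $I \leq F_{V^G}$ is isotropic with respect to $Q_\rho = -Q_\Omega$, part~(1) of that theorem yields that $V_I = \bigoplus_{\gamma \in I} W^\gamma$ carries a \voa{} structure satisfying Assumptions~\ref{ass:sn}\ref{ass:p} and is an $I$-graded simple-current extension of $V^G$; part~(2) yields that this \voa{} structure is the unique one up to isomorphism extending the \voa{} structure of $V^G$ and the module structures on the $W^\gamma$, $\gamma \in I$; and part~(4) yields that $V_I$ is holomorphic whenever $I = I^\bot$, in which case we set $\widetilde{V} := V_I$. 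This establishes every assertion of the theorem.

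The genuine point to be careful about is the verification of Assumption~\ref{ass:p} for $V^G$, and this is where the real obstacle lies. Without the positivity hypothesis we could neither guarantee via Theorem~\ref{thm:fafqs} that $Q_\rho$ is non-degenerate, nor apply Theorem~\ref{thm:2.7} to identify the abelian-cocycle quadratic form with $-Q_\rho$, so that the \aia{} structure of Theorem~\ref{thm:aia} and hence the Extension Theorem would be unavailable. This is precisely why the statement is formulated under Assumption~\ref{ass:op} rather than the weaker Assumption~\ref{ass:o}; everything else is a bookkeeping reduction to results already in hand.
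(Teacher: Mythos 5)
Your proposal is correct and follows exactly the route the paper takes: the paper states Theorem~\ref{thm:orbvoa} as an immediate consequence of the Extension Theorem~\ref{thm:4.3} applied to $V^G$, after noting that Assumption~\ref{ass:n} for $V^G$ follows from Theorem~\ref{thm:orb}, the simple-current property from Lemma~\ref{lem:step4}, and the positivity assumption from the definition of Assumption~\ref{ass:op}. Your closing remark on why Assumption~\ref{ass:p} is the genuinely indispensable hypothesis matches the paper's own commentary.
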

\end{oframed}

We call the holomorphic \voa{} $\widetilde{V}$ the \emph{orbifold} of $V$.\footnote{Other authors use the term ``orbifold'' to refer to the \fpvosa{} $V^G$.}

By Proposition~\ref{prop:aad} any subgroup $I$ of $F_{V^G}$ with $I=I^\bot$ fulfils $|I|=n$ and is maximal isotropic by Remark~\ref{rem:maxiso}, item~\ref{enum:maxiso2}.

\minisec{Maximal Isotropic Subgroups}

In order to apply the above theorem to construct new holomorphic \voa{}s $\widetilde{V}$ we have to find isotropic subgroups $I$ of the fusion group $F_{V^G}$ with $I=I^\bot$. Clearly, the group $I_0:=\{(0,j)\;|\;j\in\Z_n\}$ is isotropic with $I_0=I_0^\bot$ and gives back the original holomorphic \voa{} $V$, namely
\begin{equation*}
\bigoplus_{\gamma\in I_0}W^\gamma=\bigoplus_{j\in\Z_n}W^{(0,j)}=\bigoplus_{j\in\Z_n}V^j=V.
\end{equation*}
To obtain a new holomorphic \voa{}, the isotropic subgroup $I$ has to be chosen such that $I\neq I_0$. In fact, $I$ should have trivial intersection with $I_0$. Indeed, in the following we will show that whenever $I$ and $I_0$ have non-trivial intersection, then the new \voa{} $\widetilde{V}$ can be obtained as an orbifold of some smaller order automorphism with trivial intersection of the corresponding subgroups.
\begin{lem}
Let $V$ and $G=\langle\sigma\rangle$ be as in Assumption~\ref{ass:op} and let $I$ be an isotropic subgroup of $F_{V^G}$ with $I=I^\bot$ and intersection $H:=I\cap I_0\neq\{0\}$ with $I_0$. Let $\widetilde{V}=\bigoplus_{\gamma\in I}W^\gamma$ be the orbifold of $V$ associated with the subgroup $I$. Then:
\begin{enumerate}
\item $|H|=:m$ divides $n$ and $\rho:=\sigma^m$ is an automorphism of $V$ of order $n'=n/m$.
\item The $n'^2=|H^\bot/H|$ irreducible $V^{\langle\rho\rangle}$-modules admit the structure of an \aia{} with associated quadratic space $H^\bot/H$.
\item The \voa{}s $\widetilde{V}$ and $V$ are isomorphic to the direct sums of the irreducible $V^{\langle\rho\rangle}$-modules corresponding to $I/H$ and $I_0/H$, respectively.
\item $I/H\cap I_0/H=\{0\}$.
\end{enumerate}
\end{lem}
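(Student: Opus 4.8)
The plan is to reduce the entire lemma to the simple-current description of the intermediate fixed-point algebra $V^{\langle\rho\rangle}$, after which all four assertions become bookkeeping inside the \fqs{} $F_{V^G}$. First I would establish (1). Since the cocycle $c_{2r}$ is normalised, $I_0=\{(0,j)\mid j\in\Z_n\}$ is closed under the group law of $E_{c_{2r}}$ with $(0,j)\oplus(0,k)=(0,j+k)$, so $j\mapsto(0,j)$ is an isomorphism $\Z_n\cong I_0$. Hence $H=I\cap I_0$ is a subgroup of a cyclic group of order $n$, thus the unique subgroup of its order $m:=|H|$, namely $H=\{(0,kn')\mid k\in\Z_m\}$ with $n'=n/m$; in particular $m\mid n$. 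Then $\rho:=\sigma^m$ has order $n/(m,n)=n/m=n'$ because $m\mid n$, which is (1).

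Next I would identify $V^{\langle\rho\rangle}$. As a $V^G$-module $V=\bigoplus_{j\in\Z_n}W^{(0,j)}$ is the eigenspace decomposition of $V$ under $\sigma$, with $\sigma$ acting on $W^{(0,j)}$ by $\xi_n^j$, so $\rho=\sigma^m$ acts on $W^{(0,j)}$ by $\xi_n^{jm}$, which equals $1$ iff $n'\mid j$. Therefore
\[
V^{\langle\rho\rangle}=\bigoplus_{n'\mid j}W^{(0,j)}=\bigoplus_{\gamma\in H}W^\gamma .
\]
As $H\le I_0$ is isotropic, Theorem~\ref{thm:4.3} shows $\bigoplus_{\gamma\in H}W^\gamma$ carries a VOA structure satisfying Assumptions~\ref{ass:sn}\ref{ass:p} with fusion group $H^\bot/H$, and by the uniqueness in Proposition~\ref{prop:scerc2} this VOA is $V^{\langle\rho\rangle}$. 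Non-degeneracy of $Q_\rho$ gives $|H^\bot|=n^2/m$, hence $|H^\bot/H|=n^2/m^2=n'^2$, and Theorem~\ref{thm:aia} (equivalently Theorem~\ref{thm:2.7} applied to $V^{\langle\rho\rangle}$) endows the direct sum of its $n'^2$ irreducible modules with an \aia{} structure whose associated quadratic space is $H^\bot/H$; by Theorem~\ref{thm:scemodclass} these irreducibles are the $X^{\gamma+H}=\bigoplus_{\alpha\in\gamma+H}W^\alpha$, $\gamma+H\in H^\bot/H$. This is (2).

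For (3) I would observe that $H\subseteq I$ together with $I=I^\bot$ forces $I=I^\bot\subseteq H^\bot$, and likewise $I_0\subseteq H^\bot$, so $I/H$ and $I_0/H$ are genuine subgroups of $H^\bot/H$. Regrouping the defining sums then yields
\[
\widetilde V=\bigoplus_{\gamma\in I}W^\gamma=\bigoplus_{\gamma+H\in I/H}X^{\gamma+H},\qquad
V=\bigoplus_{\gamma\in I_0}W^\gamma=\bigoplus_{\gamma+H\in I_0/H}X^{\gamma+H},
\]
exhibiting $\widetilde V$ and $V$ as the extensions of $V^{\langle\rho\rangle}$ along $I/H$ and $I_0/H$, with the two VOA structures agreeing again by Proposition~\ref{prop:scerc2}. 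Finally, (4) is pure group theory: for subgroups $I,I_0$ of the abelian group $F_{V^G}$ both containing $H=I\cap I_0$, the correspondence theorem gives $(I/H)\cap(I_0/H)=(I\cap I_0)/H=H/H=\{0\}$.

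The step I expect to be the real obstacle is the identification in (2): showing cleanly that the fixed-point subalgebra $V^{\langle\rho\rangle}$ coincides, as an extension of $V^G$, with the simple-current extension $\bigoplus_{\gamma\in H}W^\gamma$ and that its fusion group is exactly $(H^\bot/H,Q_\rho)$. This is where the eigenspace description of the $W^{(0,j)}$, the isotropy of $H$, and the uniqueness of simple-current extensions must be combined carefully; once it is in place, assertions (1), (3) and (4) are short.
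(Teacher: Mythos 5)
Your proof is correct and follows essentially the same route as the paper: identify $H$ as the unique subgroup of order $m$ in the cyclic group $I_0$, recognise $\bigoplus_{\gamma\in H}W^\gamma$ as the \fpvosa{} $V^{\langle\rho\rangle}$ via the eigenspace decomposition, invoke Theorem~\ref{thm:4.3} for the module classification and \aia{} structure on $H^\bot/H$, and regroup the sums for $\widetilde{V}$ and $V$. Your treatment is somewhat more explicit than the paper's (notably the uniqueness argument via Proposition~\ref{prop:scerc2} and the correspondence-theorem proof of item (4), which the paper leaves implicit), but the underlying argument is the same.
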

\begin{proof}
$H$ is a subgroup of the cyclic group $I_0\cong\Z_n$ and the $V^G$-modules corresponding to the elements in $I_0$ are the eigenspaces of $\sigma$ in $V$. Hence, $H$ is cyclic of some order $m:=|H|$, i.e.\ $H=(n/m)I_0$, and $V_H=\bigoplus_{\gamma\in H}W^\gamma$ is the \fpvosa{} of $V$ under the automorphism $\rho=\sigma^m$ of $V$ of order $n'=n/m$, i.e.\ $V_H\cong V^{\langle\rho\rangle}$.

By Theorem~\ref{thm:4.3} the $n'^2=|H^\bot/H|$ irreducible $V^{\langle\rho\rangle}$-modules are given by $X^{\alpha+H}=\bigoplus_{\gamma\in\alpha+H}W^{\gamma}$ for $\alpha+H\in H^\bot/H$, i.e.\ they are indexed by the elements of $H^\bot/H$. The direct sum of modules $\bigoplus_{\mu+H\in H^\bot/H}X^{\mu+H}$ admits the structure of an \aia{} with associated quadratic space $H^\bot/H$.

The \voa{} $\widetilde{V}$ can be obtained by an orbifold construction as $\widetilde{V}=\bigoplus_{\mu+H\in I/H}X^{\mu+H}=\bigoplus_{\gamma\in I}W^\gamma$, i.e.\ as a direct sum of the irreducible $V^{\langle\rho\rangle}$-modules corresponding to the isotropic subgroup $I/H$ of $H^\bot/H$.
\end{proof}

The above lemma shows that any holomorphic \voa{} $\widetilde{V}$ obtained as orbifold from Theorem~\ref{thm:orbvoa} can be obtained as an orbifold of possibly smaller order where $I$ and $I_0$ intersect trivially. On the other hand, in the next lemma we show that an isotropic subgroup $I$ with $I^\bot=I$ and trivial intersection with $I_0$ can only exist in the case of $r=0$, i.e.\ when $\sigma$ is of type $n\{0\}$.
\begin{lem}
Let $V$ and $G=\langle\sigma\rangle$ be as in Assumption~\ref{ass:op} and $\sigma$ of type $n\{r\}$. Let $I$ be an isotropic subgroup of $F_{V^G}$ with $I^\bot=I$ and $I\cap I_0=\{0\}$. Then $r=0$ and if the trace functions, i.e.\ the representations $\phi_i$, are rescaled as in Corollary~\ref{cor:main}, then $F_{V^G}=E_{c_0}=\Z_n\times\Z_n$ and $I=\Z_n\times\{0\}$ while $I_0=\{0\}\times\Z_n$.
\end{lem}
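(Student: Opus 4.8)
The plan is to work in the explicit model of the fusion group furnished by Corollary~\ref{cor:main} and Proposition~\ref{prop:e2r}. Rescaling the representations $\phi_i$ as in Corollary~\ref{cor:main}, I may assume $F_{V^G} = E_{c_{2r}} = \Z_n\times\Z_n$ (as a set) with group law $(i,j)\oplus(l,k) = (i+l,\,j+k+c_{2r}(i,l))$ and quadratic form $Q_\rho((i,j)) = \frac{ij}{n} + \frac{i_n^2 r_n}{n^2} + \Z$. Since $I=I^\bot$, Proposition~\ref{prop:aad} gives $|I|=n$, matching $|I_0|=n$, so both are maximal isotropic of order $n$ inside a group of order $n^2$.

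First I would exploit the hypothesis $I\cap I_0=\{0\}$ through the projection $\pi\colon E_{c_{2r}}\to\Z_n$, $(i,j)\mapsto i$, which is a group homomorphism (the first coordinate adds) with kernel exactly $I_0$. Because $I\cap\ker\pi = I\cap I_0 = \{0\}$, the restriction $\pi|_I$ is injective; as $|I|=n=|\Z_n|$, it is an isomorphism onto $\Z_n$. Hence for each $i\in\Z_n$ there is a unique element of $I$ with first coordinate $i$, and I can write $I=\{(i,f(i))\mid i\in\Z_n\}$ for a function $f\colon\Z_n\to\Z_n$ with $f(0)=0$.

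The decisive step is to evaluate isotropy at $i=1$. Isotropy of $I$ means $Q_\rho((i,f(i)))=0+\Z$ for all $i$; for $i=1$ this reads $\frac{f(1)_n}{n}+\frac{r_n}{n^2}\in\Z$. Using the standard representatives $f(1)_n,r_n\in\{0,\dots,n-1\}$, the left-hand side lies in $[0,\frac{n^2-1}{n^2}]\subset[0,1)$, so it can be an integer only if it vanishes, forcing $f(1)_n=0$ and $r_n=0$. This simultaneously yields $r=0$ and $f(1)=0$. I expect this sharp interval estimate to be the main obstacle: extracting both conclusions from a single nonnegativity argument requires using the standard representatives so that the two nonnegative terms cannot cancel, and it is exactly the place where the positivity-normalised form of $Q_\rho$ is needed.

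Finally, with $r=0$ the cocycle $c_{2r}=c_0$ is identically zero, so $E_{c_0}=\Z_n\times\Z_n$ is the ordinary direct product, and the subgroup relation $(i,f(i))\oplus(l,f(l))=(i+l,f(i+l))$ forces $f(i)+f(l)=f(i+l)$, i.e.\ $f$ is a homomorphism. Then $f(i)=i\,f(1)=0$, whence $I=\Z_n\times\{0\}$, while $I_0=\{0\}\times\Z_n$ by definition. I would close with a quick verification that $\Z_n\times\{0\}$ is isotropic and self-orthogonal under $Q_\rho((i,j))=\frac{ij}{n}+\Z$: indeed $Q_\rho((i,0))=0$, and $B_\rho((i,0),(l,k))=\frac{ik}{n}+\Z$ vanishes for all $i$ iff $k=0$, so $(\Z_n\times\{0\})^\bot=\Z_n\times\{0\}$, confirming consistency with $I=I^\bot$.
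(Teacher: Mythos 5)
Your proof is correct, and it takes a genuinely different route from the paper's. The paper argues structurally: from $I\cap I_0=\{0\}$ and non-degeneracy it deduces $F_{V^G}=(I\cap I_0)^\bot=I^\bot+I_0^\bot=I+I_0$, so the fusion group is an internal direct product of the two isotropic subgroups, and it then asserts (without computation) that among the quadratic spaces $E_{c_{2r}}$ only $r=0$ admits a splitting into two isotropic subgroups of order $n$ with $I_0\cong\Z_n$. You instead work entirely inside the explicit model: the projection onto the first coordinate realises $I$ as a graph $\{(i,f(i))\}$, and isotropy evaluated at the single element $(1,f(1))$, together with the representative estimate $\frac{f(1)_n}{n}+\frac{r_n}{n^2}\in[0,1)$, forces both $f(1)=0$ and $r=0$ at once; the rest follows since $c_0$ vanishes and $f$ is then a homomorphism killed by $f(1)=0$. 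What your approach buys is that it replaces the paper's ``it is easy to see'' with a concrete one-line verification — note that the group structure of $E_{c_{2r}}$ alone does not rule out nonzero $r$ (e.g.\ $2r=n$ for even $n$ gives $E_{c_{2r}}\cong\Z_n\times\Z_n$ as a group), so the quadratic form really must enter, and your argument makes explicit exactly where. What the paper's approach buys is independence from the particular normalisation of $Q_\rho$: it only uses the abstract duality of isotropic subgroups plus the classification of the spaces $E_{c_{2r}}$. Your closing consistency check that $\Z_n\times\{0\}$ is self-orthogonal for $Q_\rho((i,j))=\frac{ij}{n}+\Z$ is also correct.
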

\begin{proof}
There are two isotropic subgroups $I$ and $I_0$ with trivial intersection. This implies $F_{V^G}=\{0\}^\bot=(I_0\cap I)^\bot=I_0^\bot+I^\bot=I_0+I$, i.e.\ $I$ and $I_0$ generate the fusion group $F_{V^G}$. Both together means that $F_{V^G}$ is isomorphic to a semidirect product of $I$ and $I_0$. Since $I$, $I_0$ and $F_{V^G}$ are abelian, this semidirect product is in fact direct and $F_{V^G}\cong I\times I_0$. It is easy to see that the only $r\in\Z_n$ for which the \fqs{} $F_{V^G}\cong E_{c_{2r}}$ is isomorphic to the direct product of the two isotropic groups $I$ and $I_0$ of order $n$, of which $I_0$ is isomorphic to $\Z_n$, is $r=0$. In this case, if the representations $\phi_i$ are chosen as in Corollary~\ref{cor:main}, $F_{V^G}=E_{c_0}=\Z_n\times\Z_n$, $I_0=\{0\}\times\Z_n$ and $I$ can only be $I=\Z_n\times\{0\}$.
\end{proof}

Together both lemmata imply:
\begin{prop}\label{prop:zerosuff}
To construct new holomorphic \voa{}s $\widetilde{V}$ using the orbifold construction in Theorem~\ref{thm:orbvoa} it suffices to consider $\sigma$ of type $n\{0\}$ so that the fusion group is $F_{V^G}\cong\Z_n\times\Z_n$ and to choose $I\cong\Z_n\times\{0\}$ under this isomorphism, with equality if the representations $\phi_i$ are chosen as in Corollary~\ref{cor:main}.
\end{prop}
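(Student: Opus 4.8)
The plan is to deduce the proposition directly from the two foregoing lemmata by a single case distinction on the intersection $H:=I\cap I_0$. I would start from an arbitrary holomorphic \voa{} produced by Theorem~\ref{thm:orbvoa}: fix $\sigma$ of type $n\{r\}$ as in Assumption~\ref{ass:op}, an isotropic subgroup $I\leq F_{V^G}$ with $I=I^\bot$, and set $\widetilde V=\bigoplus_{\gamma\in I}W^\gamma$. The goal is to exhibit $\widetilde V$ as an orbifold of $V$ by a cyclic automorphism of type $n'\{0\}$ for some $n'\mid n$, with the horizontal subgroup playing the role of $I$.

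If $H=\{0\}$, the second lemma applies verbatim and yields $r=0$ together with $F_{V^G}=\Z_n\times\Z_n$, $I=\Z_n\times\{0\}$ and $I_0=\{0\}\times\Z_n$ after rescaling the $\phi_i$ as in Corollary~\ref{cor:main}; there is nothing more to do. If $H\neq\{0\}$, I would invoke the first lemma: writing $m:=|H|$ and $\rho:=\sigma^m$ (an automorphism of order $n'=n/m$), it realizes $\widetilde V$ and $V$ as the direct sums of the irreducible $V^{\langle\rho\rangle}$-modules indexed by $I/H$ and $I_0/H$ inside the fusion group $H^\bot/H\cong F_{V^{\langle\rho\rangle}}$, and it guarantees $(I/H)\cap(I_0/H)=\{0\}$. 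Here $I_0/H$ is, by item~(3) of that lemma together with the defining property of $I_0$, exactly the analogue of $I_0$ for $\rho$, i.e. the subgroup cut out by the $\rho$-eigenspaces of $V$.

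It then remains to feed the pair $(\rho,I/H)$ into the second lemma, for which I must check that its hypotheses transfer to $\rho$: that $\langle\rho\rangle$ satisfies Assumption~\ref{ass:op}, and that $I/H$ is maximal isotropic, i.e. $(I/H)^\bot=I/H$ in $H^\bot/H$. The first point follows from the identification $V(\rho^i)=V(\sigma^{mi})$: for $i\in\Z_{n'}\setminus\{0\}$ the index $mi$ runs through $\{m,2m,\dots,(n'-1)m\}\subseteq\Z_n\setminus\{0\}$, so $\rho(V(\rho^i))>0$ by the positivity built into Assumption~\ref{ass:op} for $\sigma$, which is precisely Assumption~\ref{ass:op} for $\langle\rho\rangle$. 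The second point is forced by holomorphicity of $\widetilde V$ via the equivalence ``holomorphic $\iff I=I^\bot$'' of Theorem~\ref{thm:4.3}, applied now with base \voa{} $V^{\langle\rho\rangle}$ and indexing subgroup $I/H$. With these in hand the second lemma gives that $\rho$ is of type $n'\{0\}$ and, after the rescaling of Corollary~\ref{cor:main}, $F_{V^{\langle\rho\rangle}}=\Z_{n'}\times\Z_{n'}$ with $I/H=\Z_{n'}\times\{0\}$ and $I_0/H=\{0\}\times\Z_{n'}$. Thus $\widetilde V$ is the orbifold of $V$ by the type-$n'\{0\}$ automorphism $\rho$ with horizontal isotropic subgroup, which is the assertion.

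The only genuinely non-bookkeeping step is this last hypothesis transfer, and within it the verification that positivity (Assumption~\ref{ass:p}, equivalently $\rho(V(\sigma^i))>0$) is inherited by the sub-automorphism $\langle\rho\rangle$. I expect this to be the main---though still mild---obstacle; everything else is a direct concatenation of the two lemmata above with the holomorphicity criterion of Theorem~\ref{thm:4.3}.
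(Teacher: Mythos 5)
Your proof is correct and follows exactly the route the paper takes: the paper derives Proposition~\ref{prop:zerosuff} by concatenating the same two lemmata (reduction to trivial intersection $I\cap I_0$, then forcing $r=0$), merely stating ``together both lemmata imply'' without further detail. Your additional verifications---that Assumption~\ref{ass:op} passes from $\sigma$ to $\rho=\sigma^m$ via $V(\rho^i)=V(\sigma^{mi})$, and that $(I/H)^\bot=I/H$ follows from holomorphicity of $\widetilde V$ and Theorem~\ref{thm:4.3}---are precisely the bookkeeping the paper leaves implicit, so the two arguments coincide in substance.
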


\minisec{Special Case: Type $n\{0\}$}
By the above proposition we only need to consider the type $n\{0\}$. The fusion algebra in this special case is described in Corollary~\ref{cor:r0}. Let us assume that representations $\phi_i$ are chosen as in Corollary~\ref{cor:main}. Then the direct sum of irreducible $V^G$-modules
\begin{equation*}
A=\bigoplus_{(i,j)\in\Z_n\times\Z_n}W^{(i,j)}
\end{equation*}
admits the structure of an \aia{} with associated quadratic space $(\Z_n\times\Z_n,-Q_\rho)$ and the direct sum of irreducible $V^G$-modules
\begin{equation}\label{eq:orbifold}
\widetilde{V}:=\bigoplus_{i\in\Z_n}W^{(i,0)}
\end{equation}
corresponding to the isotropic subgroup $\Z_n\times\{0\}$ of $\Z_n\times\Z_n$ admits the structure of a holomorphic \voa{} satisfying Assumptions~\ref{ass:sn}\ref{ass:p} and extending the \voa{} $V^G$. $\widetilde{V}$ is a $\Z_n$-graded simple-current extension of $V^G$.

\minisec{Inverse Orbifold}

We continue in above setting, i.e.\ let $V$ and $G=\langle\sigma\rangle$ satisfy Assumption~\ref{ass:op} with $\sigma$ of type $n\{0\}$. We will show that the orbifolding process $V\to\widetilde{V}$ can be reverted, i.e.\ we can find an automorphism $\amgis$ on $\widetilde{V}$ such that orbifolding with $K=\langle\amgis\rangle$ gives back the original \voa{} $V$.
\begin{thm}\label{thm:invorb}
Let $V$ and $G=\langle\sigma\rangle$ be as in Assumption~\ref{ass:op} with $\sigma$ of type $n\{0\}$ and the representations $\phi_i$ chosen as in Corollary~\ref{cor:main} and let $\widetilde{V}=\bigoplus_{i\in\Z_n}W^{(i,0)}$ be the orbifold \voa{}. Then:
\begin{enumerate}
\item The operator $\amgis$ defined on $\widetilde{V}$ by $\amgis v=\xi_n^i v$ for $v\in W^{(i,0)}$ is an automorphism of the \voa{} $\widetilde{V}$ of type $n\{0\}$.
\item The unique irreducible $\amgis^i$-twisted $\widetilde{V}$-module is given up to isomorphism by $\widetilde{V}(\amgis^i)\cong\bigoplus_{j\in\Z_n}W^{(j,i)}$, $i\in\Z_n$.
\item The orbifold construction for $\widetilde{V}$ and $K=\langle\amgis\rangle$ yields $\bigoplus_{i\in\Z_n}W^{(0,i)}\cong V$.
\end{enumerate}
\end{thm}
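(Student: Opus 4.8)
The plan is to treat the three claims in order, using that $\widetilde{V}=\bigoplus_{i\in\Z_n}W^{(i,0)}$ is a $\Z_n$-graded simple-current extension of $V^G$ with grading group $I=\Z_n\times\{0\}$ (see \eqref{eq:orbifold}). First I would establish that $\amgis$ is an automorphism: the assignment $(i,0)\mapsto\xi_n^i$ defines a character $\chi\in\hat{I}$, and by the characterisation of the automorphisms fixing the grade-zero piece pointwise (\cite{Yam04}, Lemma~2.18) this $\chi$ acts as a \voa{} automorphism of $\widetilde{V}$, which is precisely $\amgis$. Since $\amgis$ acts as $\xi_n^i$ on $W^{(i,0)}$, it has order exactly $n$ and its \fpvosa{} is $\widetilde{V}^{\langle\amgis\rangle}=W^{(0,0)}=V^G$. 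I would postpone the determination that $\amgis$ has type $n\{0\}$ until the twisted modules are identified.

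For the second claim I would compute, for each irreducible $V^G$-module $W^{(p,q)}$, the character $\chi_{W^{(p,q)}}$ of $I$ from Theorem~\ref{thm:irrmodsce}. Using the fusion rules $W^{(i,0)}\boxtimes W^{(p,q)}\cong W^{(i+p,q)}$ and the conformal weights $Q_\rho((i,j))=ij/n+\Z$ of Corollary~\ref{cor:r0}, one finds
\begin{equation*}
\chi_{W^{(p,q)}}((i,0))=\e^{(2\pi\i)(Q_\rho((i+p,q))-Q_\rho((p,q)))}=\xi_n^{iq},
\end{equation*}
which is exactly the character of $I$ attached to $\amgis^q$. Hence each $W^{(p,q)}$ is a $V^G$-submodule of an irreducible $\amgis^q$-twisted $\widetilde{V}$-module, and since $\widetilde{V}$ is holomorphic and $C_2$-cofinite there is only one such module up to isomorphism (Theorem~\ref{thm:10.3}). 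Comparing $V^G$-module content then gives $\widetilde{V}(\amgis^i)\cong\bigoplus_{j\in\Z_n}W^{(j,i)}$, and the $n^2$ summands $W^{(j,i)}$ exhaust all irreducible $V^G$-modules, as they must. The weights of $\widetilde{V}(\amgis)=\bigoplus_{j}W^{(j,1)}$ lie in $Q_\rho((j,1))=j/n+\Z\subseteq(1/n)\Z$, so $\rho(\widetilde{V}(\amgis))\in(1/n)\Z$ and $\amgis$ is of type $n\{0\}$, completing the first claim as well.

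For the third claim I would first note that $(\widetilde{V},K)$ satisfies Assumption~\ref{ass:op}: $\widetilde{V}$ is nice and holomorphic by Theorem~\ref{thm:orbvoa}, $K=\langle\amgis\rangle$ is cyclic of order $n$, and $\widetilde{V}^K=V^G$ satisfies the positivity assumption since this is exactly Assumption~\ref{ass:op} for $(V,G)$. Applying Corollary~\ref{cor:r0} and Theorem~\ref{thm:orbvoa} to $(\widetilde{V},K)$, the orbifold is the sum over the twist sectors of the $\amgis$-fixed (charge-zero) parts of the $\widetilde{V}(\amgis^a)$. To identify these I would compute the $\amgis$-action on the summands $W^{(j,a)}\subseteq\widetilde{V}(\amgis^a)$: if $\tilde\phi_a(\amgis)$ is the automorphism of $\widetilde{V}(\amgis^a)$ from Proposition~\ref{prop:phi}, then it commutes with the $V^G$-action, so by Schur's lemma (Corollary~\ref{cor:schur}) it is a scalar $\mu_j$ on each irreducible $W^{(j,a)}$; feeding this into the defining relation $\tilde\phi_a(\amgis)Y(u,x)\tilde\phi_a(\amgis)^{-1}=\xi_n^iY(u,x)$ for $u\in W^{(i,0)}$ (which raises the first index by $i$) forces $\mu_{i+j}=\xi_n^i\mu_j$, whence $\mu_j=\mu_0\xi_n^j$. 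Normalising $\mu_0=1$, the charge-zero piece of $\widetilde{V}(\amgis^a)$ is $W^{(0,a)}$, so the orbifold is $\bigoplus_{a\in\Z_n}W^{(0,a)}=\bigoplus_{a\in\Z_n}V^a=V$ as a \voa{} extending $V^G$, the identification with $V$ being forced by the uniqueness of simple-current extensions (Proposition~\ref{prop:scerc2}, item~\ref{enum:sce3}).

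The main obstacle is this last identification: the conformal weights alone do not pin down which summand $W^{(c,a)}$ of $\widetilde{V}(\amgis^a)$ carries $\amgis$-charge zero when $\gcd(a,n)>1$, since several indices $c$ satisfy $ca/n\in\Z$. The crux is therefore the explicit eigenvalue computation $\mu_j=\mu_0\xi_n^j$ together with the freedom (Proposition~\ref{prop:phi}) to normalise $\mu_0=1$; this is what makes the inverse-orbifold labelling $\widetilde{W}^{(a,b)}=W^{(b,a)}$ precise and identifies the charge-zero sector with $W^{(0,a)}$ rather than a shifted module.
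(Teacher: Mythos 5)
Your proposal is correct, and while its overall skeleton matches the paper's proof (the three claims in order, with holomorphicity and Theorem~\ref{thm:10.3} pinning down the twisted modules), the key steps run along genuinely different lines, so a comparison is worthwhile. For item 1 you invoke the fact that characters in $\hat{I}$ act as automorphisms of a simple-current extension fixing the degree-zero piece, whereas the paper simply verifies $\amgis Y_{\widetilde{V}}(v,x)\amgis^{-1}=Y_{\widetilde{V}}(\amgis v,x)$ by hand from the $\Z_n$-grading; these are the same computation in different packaging. For item 2 the paper matches the exponents of the intertwining operators, $B_\rho((k,0),(i,j))=kj/n+\Z$, against the exponents $j_0k/n+\Z$ forced by the $\amgis^{j_0}$-twisting, and then gets both exhaustion and multiplicity one at once from $X=\widetilde{V}\cdot W^{(i_0,j_0)}\cong\bigoplus_{i}W^{(i,j_0)}$ and irreducibility; you instead compute Yamauchi's character $\chi_{W^{(p,q)}}((i,0))=\xi_n^{iq}$ and apply Theorem~\ref{thm:irrmodsce} --- the same device the paper uses for Theorem~\ref{thm:scemodclass} --- which cleanly shows each $W^{(p,q)}$ embeds into $\widetilde{V}(\amgis^q)$. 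Be aware that your phrase ``comparing $V^G$-module content'' conceals the multiplicity-one statement: to upgrade the embeddings to $\widetilde{V}(\amgis^q)\cong\bigoplus_p W^{(p,q)}$ you should either cite Theorem~\ref{thm:irrdec} for the pair $(\widetilde{V},K)$, which says each twisted module is a multiplicity-free sum of $n$ pairwise non-isomorphic irreducible $V^G$-modules, or use the paper's trick above; both are available, so this is a presentational point rather than a gap. For item 3 your argument is genuine added value: the paper dismisses it with ``the last item follows immediately'', while you correctly isolate the real issue (for $\gcd(a,n)>1$ the conformal weights do not determine the charge-zero constituent of $\widetilde{V}(\amgis^a)$) and resolve it by the Schur's-lemma relation $\mu_{i+j}=\xi_n^i\mu_j$, normalised to $\mu_j=\xi_n^j$ via the freedom in Proposition~\ref{prop:phi}; the only thing to add is the one-line verification that the normalisation $\mu_0=1$, i.e.\ the relabelling $\widetilde{W}^{(a,j)}=W^{(j,a)}$, is an admissible choice in the sense of Corollary~\ref{cor:main} for $(\widetilde{V},K)$ (its fusion rules are additive and its weights lie in $aj/n+\Z$), after which uniqueness of simple-current extensions (Proposition~\ref{prop:scerc2}) identifies the orbifold with $V$ exactly as you say.
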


The situation is shown in the following table:
\renewcommand{\arraystretch}{1.5}
\begin{equation*}
\begin{array}{c||c|ccc|}
\cline{2-2}
\multicolumn{1}{c|}{}&\multicolumn{1}{c|}{\widetilde{V}}&\widetilde{V}(\amgis^{1})&\multicolumn{1}{c}{\cdots}&\multicolumn{1}{c}{\widetilde{V}(\amgis^{n-1})}\\
\hhline{-#=|===|}
\multicolumn{1}{|c||}{V}&W^{(0,0)}&W^{(0,1)}&\cdots&W^{(0,n-1)}\\
\hline
V(\sigma^1)&W^{(1,0)}&W^{(1,1)}&\cdots&W^{(1,n-1)}\\
\vdots&\vdots&\vdots&\ddots&\vdots\\
V(\sigma^{n-1})&W^{(n-1,0)}&W^{(n-1,1)}&\cdots&W^{(n-1,n-1)}\\
\hhline{~|b|-|---|}
\end{array}
\end{equation*}
\renewcommand{\arraystretch}{1}

\begin{proof}
The automorphism $\sigma$ of $V$ of order $n$ is by definition an automorphism of the vector space $V$ fixing the vacuum and the Virasoro vector and fulfilling
\begin{equation*}
\sigma Y_V(v,x)\sigma^{-1}=Y_V(\sigma v,x)
\end{equation*}
for all $v\in V$. We can decompose $V$ as
\begin{equation*}
V=\bigoplus_{j\in\Z_n}W^{(0,j)},
\end{equation*}
where $\sigma$ acts on $W^{(0,j)}$ by multiplication with $\xi_n^j$.

In analogy, we define a vector-space automorphism $\amgis$ on the orbifolded \voa{}
\begin{equation*}
\widetilde{V}=\bigoplus_{i\in\Z_n}W^{(i,0)}
\end{equation*}
by setting $\amgis v=\xi_n^i v$ for $v\in W^{(i,0)}$. By construction, this fixes the vacuum and the Virasoro vectors, which lie in $W^{(0,0)}$. So, for $\amgis$ to be an automorphism of the \voa{} $\widetilde{V}$, the vertex operator $Y_{\widetilde{V}}(\cdot,x)$ on $\widetilde{V}$ has to fulfil
\begin{equation}\label{eq:amgis}
\amgis Y_{\widetilde{V}}(v,x)\amgis^{-1}=Y_{\widetilde{V}}(\amgis v,x)
\end{equation}
for all $v\in \widetilde{V}$. Indeed, since $\widetilde{V}$ is a $\Z_n$-graded extension of $V^G$, $Y_{\widetilde{V}}(v,x)w\in W^{(i+i',0)}\{x\}$ for $v\in W^{(i,0)}$ and $w\in W^{(i',0)}$. Then the left-hand side of the above equation acting on $w$ is given by
\begin{equation*}
\amgis Y_{\widetilde{V}}(v,x)\amgis^{-1}w=\xi_n^{-i'}\amgis Y_{\widetilde{V}}(v,x)w=\xi_n^{-i'}\xi_n^{i+i'}Y_{\widetilde{V}}(v,x)w=\xi_n^{i}Y_{\widetilde{V}}(v,x)w.
\end{equation*}
On the other hand, for the right-hand side we obtain
\begin{equation*}
Y_{\widetilde{V}}(\amgis v,x)w=Y_{\widetilde{V}}(\xi_n^i v,x)w=\xi_n^i Y_{\widetilde{V}}(v,x)w
\end{equation*}
and hence \eqref{eq:amgis} is fulfilled and we conclude that $\amgis$ is a \voa{} automorphism of $\widetilde{V}$ of order $n$.

We saw that $\widetilde{V}$ is holomorphic, i.e.\ has exactly one irreducible module up to isomorphism, namely the adjoint module $\widetilde{V}$. Then, by Theorem~\ref{thm:10.3}, $\widetilde{V}$ has exactly one irreducible $\amgis^j$-twisted module $\widetilde{V}(\amgis^j)$ up to isomorphism for each $j\in\Z_n$.

Let $X\cong\widetilde{V}(\amgis^{j_0})$ be such an irreducible $\amgis^{j_0}$-twisted module of $\widetilde{V}$. $X$ is an untwisted $\widetilde{V}^K=W^{(0,0)}=V^G$-module and hence a direct sum of some of the modules $W^{(i,j)}$, $i,j\in\Z_n$. By the definition of twisted modules the exponents of the formal variable in the vertex operation of $\widetilde{V}$ on $X$ should lie in $j_0k/n+\Z$ if we restrict to $W^{(k,0)}\subseteq\widetilde{V}$. On the other hand, the intertwining operators of type $\binom{W^{(i,k+j)}}{W^{(k,0)}\,W^{(i,j)}}$ have exponents of the formal variable in $kj/n+\Z$. Hence $X$ can only consist of the modules $W^{(i,j_0)}$, $i\in\Z_n$. Since $X$ is non-empty, we can assume that $X$ contains $W^{(i_0,j_0)}$ for some fixed $i_0$. But then it contains $\widetilde{V}\cdot W^{(i_0,j_0)}=\bigoplus_{i\in\Z_n}W^{(i,0)}\cdot W^{(i_0,j_0)}=\bigoplus_{i\in\Z_n}W^{(i+i_0,j_0)}=\bigoplus_{i\in\Z_n}W^{(i,j_0)}$. Using the irreducibility of $X$ we conclude that $X\cong\bigoplus_{i\in\Z_n}W^{(i,j_0)}$.

The last item follows immediately, which completes the proof.
\end{proof}

\section{Digression: Dimension Formula}\label{sec:dimform}

Let $\widetilde{V}$ be the holomorphic \voa{} \eqref{eq:orbifold} obtained in the last section as orbifold of $V$ for an automorphism of type $n\{0\}$. Assume in addition that the central charge is $c=24$. We stated in Proposition~\ref{prop:modinv} that $T_{W^{(0,0)}}(v,\tau)$ is a modular form for $\Gamma_0(n)$. This fact can be exploited to prove for certain orders $n$ a formula for the dimension of $\widetilde{V}_1$, the weight-one space of $\widetilde{V}$. For $n=2,3,5,7$ this dimension formula was first stated and proved in \cite{Mon94}, formulæ (10), (11) and (24) to (26). In \cite{LS16a}, Theorem~4.4, a more rigorous proof for the case $n=2$ is given. We generalise this result to orders $n=2,3,5,7,13$:
\begin{prop}[Dimension Formula I]\label{prop:dimform}
Let $V$ and $G=\langle\sigma\rangle$ be as in Assumption~\ref{ass:op} with $\sigma$ of type $n\{0\}$ and the representations $\phi_i$ chosen as in Corollary~\ref{cor:main} and let $\widetilde{V}=\bigoplus_{i\in\Z_n}W^{(i,0)}$ be the orbifold \voa{} \eqref{eq:orbifold}. Furthermore, assume that $c=24$. Then for $n=2,3,5,7,13$, the primes $n$ with $(n-1)\mid 24$, the following dimension formulæ hold: for $n=2$
{
\addtolength{\jot}{4pt}
\begin{equation*}
\dim_\C(V_1)+\dim_\C(\widetilde{V}_1)=3\dim_\C(V^G_1)-\frac{3}{256}\dim_\C(V^G_2) + \frac{75471}{64},
\end{equation*}
for $n=3$
\begin{align*}
\dim_\C(V_1)+\dim_\C(\widetilde{V}_1)&=
4\dim_\C(V^G_1)\\
&\quad-\frac{436}{6561}\dim_\C(V^G_2)
+\frac{28}{19683}\dim_\C(V^G_3)\\
&\quad-\frac{114246104}{19683},
\end{align*}
for $n=5$
\begin{align*}
\dim_\C(V_1)+\dim_\C(\widetilde{V}_1)&=
6\dim_\C(V^G_1)\\
&\quad-\frac{4285488}{9765625}\dim_\C(V^G_2)
+\frac{443466}{9765625}\dim_\C(V^G_3)\\
&\quad-\frac{30138}{9765625}\dim_\C(V^G_4)
+\frac{966}{9765625}\dim_\C(V^G_5)\\
&\quad-\frac{439185525204}{9765625},
\end{align*}
for $n=7$
\begin{align*}
\dim_\C(V_1)+\dim_\C(\widetilde{V}_1)&=
8\dim_\C(V^G_1)\\
&\quad-\frac{36795312}{40353607}\dim_\C(V^G_2)
+\frac{4010900}{40353607}\dim_\C(V^G_3)\\
&\quad-\frac{448800}{282475249}\dim_\C(V^G_4)
-\frac{52284}{40353607}\dim_\C(V^G_5)\\
&\quad+\frac{51172}{282475249}\dim_\C(V^G_6)
-\frac{2392}{282475249}\dim_\C(V^G_7)\\
&\quad+\frac{7451576705112}{40353607}
\end{align*}
and for $n=13$
\begin{align*}
\dim_\C(V_1)+\dim_\C(\widetilde{V}_1)&=
14\dim_\C(V^G_1)\\
&\quad-\frac{4340182604352}{1792160394037}\dim_\C(V^G_2)
-\frac{94190051662}{137858491849}\dim_\C(V^G_3)\\
&\quad+\frac{1170998168940}{1792160394037}\dim_\C(V^G_4)
-\frac{261335651400}{1792160394037}\dim_\C(V^G_5)\\
&\quad-\frac{58352587520}{1792160394037}\dim_\C(V^G_6)
+\frac{3919301316}{137858491849}\dim_\C(V^G_7)\\
&\quad-\frac{12697988616}{1792160394037}\dim_\C(V^G_8)
+\frac{692327890}{1792160394037}\dim_\C(V^G_9)\\
&\quad+\frac{393826480}{1792160394037}\dim_\C(V^G_{10})
-\frac{111115482}{1792160394037}\dim_\C(V^G_{11})\\
&\quad+\frac{12706778}{1792160394037}\dim_\C(V^G_{12})
-\frac{577738}{1792160394037}\dim_\C(V^G_{13})\\
&\quad+\frac{4455938899358543724}{1792160394037}.
\end{align*}
}
\end{prop}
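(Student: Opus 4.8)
The plan is to reduce the whole statement to a single modular function---the character $\ch_{V^G}$ of the fixed-point subalgebra---and to exploit that the modular curve $X_0(n)$ has genus $0$ precisely for the primes $n$ with $(n-1)\mid24$. First I would record the ``boundary data'' coming from holomorphicity: since $V$ and $\widetilde V$ are holomorphic of central charge $c=24$, Zhu's modular invariance (Theorem~\ref{thm:zhumodinv}) together with $24\mid c$ forces their characters to be $\SLZ$-invariant weight-$0$ functions, holomorphic on $\H$ with a simple pole $q^{-1}$ at the cusp, whence
\begin{equation*}
\ch_V(\tau)=j(\tau)-744+\dim_\C(V_1),\qquad \ch_{\widetilde V}(\tau)=j(\tau)-744+\dim_\C(\widetilde V_1).
\end{equation*}
Thus $\ch_V+\ch_{\widetilde V}=2j-1488+\dim_\C(V_1)+\dim_\C(\widetilde V_1)$, and the task becomes: compute the constant term of $\ch_V+\ch_{\widetilde V}$ in terms of the $\dim_\C(V^G_k)$.

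The central step is a trace identity expressing $\ch_V+\ch_{\widetilde V}$ through $\ch_{V^G}$. By Proposition~\ref{prop:modinv} the function $\ch_{V^G}=\tfrac1n\sum_{l\in\Z_n}T(\vac,0,l,\tau)$ is a weight-$0$ modular function for $\Gamma_0(n)$. For $n$ prime the coset space $\Gamma_0(n)\backslash\SLZ$ has the $n+1$ representatives $\operatorname{id}$ and $ST^k$ $(0\le k\le n-1)$, so $\operatorname{Tr}^{\SLZ}_{\Gamma_0(n)}\ch_{V^G}=\ch_{V^G}+\sum_{k=0}^{n-1}\ch_{V^G}(-1/(\tau+k))$ is $\SLZ$-invariant. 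Using the $S$-transform $\ch_{V^G}(-1/\tau)=\tfrac1n\sum_{i,m}\ch_{W^{(i,m)}}(\tau)$ (valid because $\sigma$ is of type $n\{0\}$, so $S_{(0,0),(i,m)}=1/n$ by Corollary~\ref{cor:r0}), the $T$-phases $\xi_n^{im}$ (where $24\mid c$ removes the $c/24$-contribution), and the character sum $\sum_k\xi_n^{kim}=n\,[\,n\mid im\,]$, I would obtain---since $n$ prime forces $n\mid im$ to mean $i=0$ or $m=0$---the identity
\begin{equation*}
\operatorname{Tr}^{\SLZ}_{\Gamma_0(n)}\ch_{V^G}=\ch_{V^G}+\bigl(\ch_V+\ch_{\widetilde V}-\ch_{V^G}\bigr)=\ch_V+\ch_{\widetilde V},
\end{equation*}
where the bracketed sum recombines the $W^{(0,m)}$ into $\ch_V$ and the $W^{(i,0)}$ into $\ch_{\widetilde V}$ (the latter by \eqref{eq:orbifold}).

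Third, I would make $\ch_{V^G}$ explicit. It is holomorphic on $\H$, and under Assumption~\ref{ass:op} all twisted sectors satisfy $\rho_i>0$, so its only singularities are the simple pole $q^{-1}$ at $\infty$ from the vacuum and, via the $S$-transform above, a pole of order $n$ with leading coefficient $1/n$ at the cusp $0$. For the genus-$0$ primes $n=2,3,5,7,13$ the field of $\Gamma_0(n)$-modular functions is rational, with Hauptmodul $t_n=(\eta(\tau)/\eta(n\tau))^{24/(n-1)}$, so
\begin{equation*}
\ch_{V^G}=t_n+c_0+c_{-1}t_n^{-1}+\dots+c_{-n}t_n^{-n}.
\end{equation*}
Matching the $q$-expansion at $\infty$ fixes the leading coefficient to $1$ and then determines $c_0,\dots,c_{-(n-1)}$ from $\dim_\C(V^G_1),\dots,\dim_\C(V^G_n)$, while the cusp-$0$ behaviour $t_n\sim n^{12/(n-1)}q^{1/n}$ fixes $c_{-n}$ from the known leading coefficient $1/n$. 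Substituting this explicit $\ch_{V^G}$ into the trace formula and computing $\operatorname{Tr}^{\SLZ}_{\Gamma_0(n)}\ch_{V^G}$ as a polynomial $2j+a$ in $j$ (from the expansions of $t_n$ and of $t_n$ under $ST^k$) lets me read off the constant $a$, giving $\dim_\C(V_1)+\dim_\C(\widetilde V_1)=a+1488$ as an explicit combination of $\dim_\C(V^G_1),\dots,\dim_\C(V^G_n)$; specialising $n=2,3,5,7,13$ produces the five displayed formulas.

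The hard part will be the modular-form bookkeeping of the third step: correctly identifying the two cusps of $\Gamma_0(n)$ with their widths, verifying that the principal parts of $\ch_{V^G}$ are exactly as claimed (this is where Assumption~\ref{ass:op} enters, through $\rho_i>0$, to exclude worse poles), handling the $q^{1/n}$-expansion and the normalisation constant $n^{12/(n-1)}$ at the cusp $0$, and carrying out the explicit trace computation for each of the five primes to obtain the precise rational coefficients. The conceptual steps---holomorphicity, the coset trace identity, and the genus-$0$ reduction---are clean; the remaining work is the routine but delicate numerics.
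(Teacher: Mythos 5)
Your proposal is correct and follows essentially the same route as the paper: the identity $\ch_{V^G}(\tau)+\sum_{l\in\Z_n}\ch_{V^G}(ST^l.\tau)=\ch_V(\tau)+\ch_{\widetilde V}(\tau)$ is the paper's Lemma~\ref{lem:chsum}, the expression of $\ch_{V^G}$ as $t_n+c_0+c_{-1}t_n^{-1}+\dots+n^{(11n+1)/(n-1)}t_n^{-n}$ with the principal parts controlled by the positivity assumption is Lemma~\ref{lem:chvstn}, and the final coefficient-matching is the same computation the paper delegates to \texttt{PARI/GP}. The only cosmetic difference is that you phrase the first step as a coset trace over $\Gamma_0(n)\backslash\SLZ$, whereas the paper manipulates the twisted trace functions $T(\vac,i,j,\tau)$ directly.
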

This dimension formula is useful in cases where an explicit description of the twisted $V$-modules is not known as the dimension of $\widetilde{V}_1$ in this formula only depends on that of $V_1$ and $V^G_1,\ldots,V^G_n$.
In the following, we will prove this dimension formula in a number of steps, starting with the following lemma:
\begin{lem}\label{lem:chsum}
Under the assumptions of Proposition~\ref{prop:dimform}, but only requiring that $24\mid c$ and $n$ be prime:
\begin{equation*}
\ch_{V^G}(\tau)+\sum_{l\in\Z_n}\ch_{V^G}(ST^l.\tau)=\ch_{V}(\tau)+\ch_{\widetilde{V}}(\tau).
\end{equation*}
\end{lem}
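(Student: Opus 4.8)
The plan is to reduce everything to the twisted trace functions $T(\vac,i,j,\tau)$ and exploit the very clean modular-transformation law \eqref{eq:modinvverynice}, which holds precisely because $24\mid c$, $\sigma$ is of type $n\{0\}$, and the $\phi_i$ are chosen as in Corollary~\ref{cor:main}. Using the Fourier-type relations established in the proof of Lemma~\ref{lem:step1}, namely $T_{W^{(i,j)}}(v,\tau)=\frac1n\sum_{a\in\Z_n}\xi_n^{-aj}T(v,i,a,\tau)$, together with the decompositions $V=\bigoplus_{j}W^{(0,j)}$, $V^G=W^{(0,0)}$, and $\widetilde V=\bigoplus_i W^{(i,0)}$, I would first record the three identities
\begin{align*}
\ch_{V^G}(\tau)&=T_{W^{(0,0)}}(\vac,\tau)=\frac1n\sum_{a\in\Z_n}T(\vac,0,a,\tau),\\
\ch_{V}(\tau)&=T(\vac,0,0,\tau),\\
\ch_{\widetilde V}(\tau)&=\sum_{i\in\Z_n}T_{W^{(i,0)}}(\vac,\tau)=\frac1n\sum_{i,a\in\Z_n}T(\vac,i,a,\tau).
\end{align*}
The middle identity is just the projection formula $\frac1n\sum_a\sigma^a$ collapsing $V$ onto $V^G$ read the other way around.

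\textbf{Applying modular invariance.} Next I would compute the transformation of $\ch_{V^G}$ under $ST^l$. Since $\wt[\vac]=0$, formula \eqref{eq:modinvverynice} gives $T(\vac,i,j,M.\tau)=T(\vac,(i,j)M,\tau)$ with no automorphy factor. With $ST^l=\left(\begin{smallmatrix}0&-1\\1&l\end{smallmatrix}\right)$, the right matrix action sends $(0,a)\mapsto(0,a)\,ST^l=(a,al)$, so
\begin{equation*}
\ch_{V^G}(ST^l.\tau)=\frac1n\sum_{a\in\Z_n}T(\vac,0,a,ST^l.\tau)=\frac1n\sum_{a\in\Z_n}T(\vac,a,al,\tau).
\end{equation*}
Summing over $l\in\Z_n$ and splitting according to whether $a=0$ or $a\neq0$, I would use that $n$ is prime: for $a\neq0$ the map $l\mapsto al$ is a bijection of $\Z_n$, whence $\sum_{l}T(\vac,a,al,\tau)=\sum_{b}T(\vac,a,b,\tau)$, while the $a=0$ terms contribute $\frac1n\cdot n\cdot T(\vac,0,0,\tau)=T(\vac,0,0,\tau)$. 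This yields
\begin{equation*}
\sum_{l\in\Z_n}\ch_{V^G}(ST^l.\tau)=T(\vac,0,0,\tau)+\frac1n\sum_{a\neq0}\sum_{b\in\Z_n}T(\vac,a,b,\tau).
\end{equation*}

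\textbf{Comparison.} Adding $\ch_{V^G}(\tau)=\frac1n\sum_a T(\vac,0,a,\tau)$ to the previous display gives, for the left-hand side,
\begin{equation*}
T(\vac,0,0,\tau)+\frac1n\sum_{a\in\Z_n}T(\vac,0,a,\tau)+\frac1n\sum_{a\neq0}\sum_{b\in\Z_n}T(\vac,a,b,\tau).
\end{equation*}
On the other hand, splitting the double sum in $\ch_{\widetilde V}$ into its $i=0$ and $i\neq0$ parts shows that $\ch_V(\tau)+\ch_{\widetilde V}(\tau)$ equals exactly the same three terms. Matching them term by term completes the proof.

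\textbf{Main obstacle.} There is no deep analytic difficulty here; the single genuinely used input is the sweeping argument, where primality of $n$ guarantees that the $\SLZ$-orbit of $(0,a)$ under the $ST^l$ (for $a\neq0$) hits each index $(a,b)$ exactly once. I would therefore take care to verify the $a=0$ versus $a\neq0$ bookkeeping explicitly, since that is the only place the hypotheses ``$n$ prime'' and the $r=0$ form of \eqref{eq:modinvverynice} enter, and since an off-by-a-multiplicity error in the orbit count is the natural trap. Convergence and holomorphy of all trace functions on $\H$ are already guaranteed by Theorem~\ref{thm:1.4}, so no separate justification is needed.
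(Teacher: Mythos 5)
Your proof is correct and follows essentially the same route as the paper's: both reduce to the twisted trace functions, apply the transformation law \eqref{eq:modinvverynice} with $(0,k)ST^l=(k,kl)$, and use primality of $n$ to sweep the index $(k,kl)$ over all $(k,b)$ with $k\neq 0$, isolating the $k=0$ contribution $T(\vac,0,0,\tau)=\ch_V(\tau)$. The bookkeeping matches the paper's term by term.
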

\begin{proof}
Consider
\begin{equation*}
\ch_{V^G}(\tau)=\ch_{W^{(0,0)}}(\tau)=\frac{1}{n}\sum_{k\in\Z_n}T(\vac,0,k,\tau)
\end{equation*}
and hence, since $24\mid c$,
\begin{equation}\label{eq:dimform1}
\ch_{V^G}(ST^l.\tau)=\frac{1}{n}\sum_{k\in\Z_n}T(\vac,k,lk,\tau)
\end{equation}
by \eqref{eq:modinvverynice}. Then
\begin{align*}
&\ch_{V^G}(\tau)+\sum_{l\in\Z_n}\ch_{V^G}(ST^l.\tau)\\
&=\frac{1}{n}\sum_{l\in\Z_n}T(\vac,0,l,\tau)+\sum_{l\in\Z_n}\frac{1}{n}\sum_{k\in\Z_n}T(\vac,k,lk,\tau)\\
&=\frac{1}{n}\sum_{l\in\Z_n}T(\vac,0,l,\tau)+\frac{1}{n}\sum_{k\in\Z_n\setminus\{0\}}\sum_{l\in\Z_n}T(\vac,k,lk,\tau)+T(\vac,0,0,\tau)\\
&=\frac{1}{n}\sum_{l\in\Z_n}T(\vac,0,l,\tau)+\frac{1}{n}\sum_{k\in\Z_n\setminus\{0\}}\sum_{l\in\Z_n}T(\vac,k,l,\tau)+T(\vac,0,0,\tau)\\
&=\frac{1}{n}\sum_{k\in\Z_n}\sum_{l\in\Z_n}T(\vac,k,l,\tau)+T(\vac,0,0,\tau)\\
&=\ch_{V}(\tau)+\ch_{\widetilde{V}}(\tau),
\end{align*}
where we used in the third step that $n$ is prime.
\end{proof}

Recall that for $n=1,2,3,4,5,6,7,8,9,10,12,13,16,18,25$ the modular curve $X_0(n)=\Gamma_0(n)\backslash\H^*$ has genus 0, i.e.\ the field of modular functions\footnote{A \emph{modular function} is a weight-zero modular form for a certain congruence subgroup that is meromorphic on $\H$ and at the cusps.} for $\Gamma_0(n)$ is generated by a single function $t_n$ called a \emph{Hauptmodul}. For those $n$ where $(n-1)\mid 24$, i.e.\ for $n=2,3,4,5,7,9,13,25$, there is a well-known, simple formula for a Hauptmodul for $\Gamma_0(n)$ (see e.g.\ \cite{Apo90}, Chapter~4):
\begin{equation*}
t_n(\tau):=\left(\frac{\eta(\tau)}{\eta(n\tau)}\right)^{\frac{24}{n-1}}=q_\tau^{-1}-\frac{24}{n-1}+\ldots\in q_\tau^{-1}+\Z[[q_\tau]]
\end{equation*}
where $\eta$ is the Dedekind eta function. The Hauptmodul $t_n$ is holomorphic on $\H$ and has no zeroes on $\H$. The $S$-transformation is given by
\begin{equation*}
t_n(S.\tau)=n^{\frac{12}{n-1}}\left(\frac{\eta(\tau)}{\eta(\tau/n)}\right)^{\frac{24}{n-1}}=n^{\frac{12}{n-1}}\left(q_\tau^{1/n}+\ldots\right)\in q_\tau^{1/n}\Q[[q_\tau^{1/n}]].
\end{equation*}
This shows that as modular function for $\Gamma_0(n)$ the Hauptmodul $t_n$ has a first-order pole at the cusp $\i\infty$ and vanishes at the cusp $0=S.\i\infty$.

We also collect some facts about the trace functions, which will be used in the proof of the following lemma. Under the assumptions of Proposition~\ref{prop:dimform}:
\begin{enumerate}
\item \label{enum:prop1} $T_{V^G}(v,\tau)=T_{W^{(0,0)}}(v,\tau)$ is a modular form of weight $\wt[v]$ for $\Gamma_0(n)$ (by Proposition~\ref{prop:modinv}),
\item \label{enum:prop2} $T_{V^G}(v,\tau)=T_{W^{(0,0)}}(v,\tau)$ is holomorphic on $\H$ but may have poles at the cusps (by Theorem~\ref{thm:zhumodinv}),
\item \label{enum:prop3} $\ch_{V^G}(\tau)=\ch_{W^{(0,0)}}(\tau)\in q_\tau^{-1}+\Z[[q_\tau]]$ since $V^G$ is of CFT-type,
\item \label{enum:prop4} $\ch_{V(\sigma^i)}(\tau)=T(\vac,i,0,\tau)\in q_\tau^{-1+1/n}\Z[[q_\tau^{1/n}]]$ for $i\in\Z_n\setminus\{0\}$ by the positivity assumption,
\item \label{enum:prop5} $\ch_{V}(\tau)=T(\vac,0,0,\tau)\in q_\tau^{-1}+\Z[[q_\tau]]$ since $V$ is of CFT-type.
\end{enumerate}

\begin{lem}\label{lem:chvstn}
Under the assumptions of Proposition~\ref{prop:dimform}, in particular $c=24$ and $n=2,3,5,7,13$, the character $ch_{V^G}$ can be written in terms of the Hauptmodul $t_n$ as
\begin{equation*}
\ch_{V^G}(\tau)=t_n(\tau)+c_0+c_{-1}t_n(\tau)^{-1}+\ldots+c_{-n+1}t_n(\tau)^{-n+1}+n^{\frac{11n+1}{n-1}}t_n(\tau)^{-n}
\end{equation*}
for some $c_k\in\Z$, $k=0,1,\ldots,n-1$.
\end{lem}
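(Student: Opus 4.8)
The plan is to express $\ch_{V^G}$ as a modular function for $\Gamma_0(n)$ and then write it as a Laurent polynomial in the Hauptmodul $t_n$, using the known pole/vanishing behaviour at the two cusps $\i\infty$ and $0$. Since $X_0(n)$ has genus $0$ for $n = 2,3,5,7,13$, the field of modular functions for $\Gamma_0(n)$ is the rational function field $\C(t_n)$. By fact~\ref{enum:prop1} and fact~\ref{enum:prop2}, $\ch_{V^G}(\tau) = T_{W^{(0,0)}}(\vac,\tau)$ is a weight-zero modular form for $\Gamma_0(n)$, holomorphic on $\H$ and meromorphic at the cusps, hence a modular function. As $t_n$ is holomorphic and nowhere-vanishing on $\H$, any such modular function whose only poles (on $X_0(n)$) lie at the two cusps must be a Laurent polynomial in $t_n$; so $\ch_{V^G} = \sum_{k} a_k\, t_n^k$ for finitely many integers $k$, and it remains to pin down the range of $k$ and the top and bottom coefficients.

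First I would analyse the cusp $\i\infty$. By fact~\ref{enum:prop3}, $\ch_{V^G}(\tau) \in q_\tau^{-1} + \Z[[q_\tau]]$, so it has a simple pole in $q_\tau$ at $\i\infty$; since $t_n(\tau) \in q_\tau^{-1} + \Z[[q_\tau]]$ also has a simple pole there, the highest power of $t_n$ is $k=1$ with leading coefficient $1$. This forces the expansion to take the form $\ch_{V^G} = t_n + c_0 + (\text{negative powers of } t_n)$, and matching $q_\tau$-expansions order by order shows the remaining coefficients $c_0, c_{-1}, \ldots$ are integers because both $\ch_{V^G}$ and $t_n$ have integer $q_\tau$-coefficients and $t_n$ is monic in $q_\tau^{-1}$.

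Next I would analyse the cusp $0 = S.\i\infty$ to determine how far the negative powers of $t_n$ extend and to compute the bottom coefficient $n^{(11n+1)/(n-1)}$. Here I would apply the $S$-transformation. Using Lemma~\ref{lem:chsum}, $\ch_{V^G}(S.\tau) = \ch_V(\tau) + \ch_{\widetilde V}(\tau) - \ch_{V^G}(\tau) - \sum_{l \in \Z_n \setminus\{0\}} \ch_{V^G}(ST^l.\tau)$, together with the behaviour of the individual trace functions under $T$ and $S$ (facts~\ref{enum:prop4} and~\ref{enum:prop5} and equation~\eqref{eq:dimform1}). The key input is that $t_n(S.\tau) = n^{12/(n-1)}(q_\tau^{1/n} + \ldots)$ vanishes to first order in $q_\tau^{1/n}$ at $\i\infty$, so $t_n^{-1}$ has a simple pole of order $1$ in $q_\tau^{1/n}$ at the cusp $0$. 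Examining the order of the pole of $\ch_{V^G}$ at the cusp $0$ in the variable $q_\tau^{1/n}$ determines that the lowest power is $t_n^{-n}$; and the precise leading coefficient at the cusp $0$, read off from the $S$-transformed expressions, yields the coefficient $n^{(11n+1)/(n-1)} = n \cdot (n^{12/(n-1)})^{-n} \cdot n^{\text{(correction)}}$ of $t_n^{-n}$ after matching the $q_\tau^{-1}$-term of $\ch_V$ at the cusp $0$.

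The main obstacle will be the careful bookkeeping at the cusp $0$: one must track the exact order of vanishing/pole of $\ch_{V^G}(S.\tau)$ in $q_\tau^{1/n}$ and extract the constant $n^{(11n+1)/(n-1)}$ correctly, since this requires combining the $S$-transformation formula for the twisted trace functions (via Lemma~\ref{lem:chsum} and~\eqref{eq:dimform1}) with the explicit power-of-$n$ prefactor in $t_n(S.\tau)$. In particular, the positivity assumption (fact~\ref{enum:prop4}) is essential: it guarantees that each $\ch_{V(\sigma^i)}$, $i \neq 0$, starts at $q_\tau^{-1+1/n}$, so that at the cusp $0$ the dominant pole of $\ch_{V^G}(S.\tau)$ comes entirely from $\ch_V$, whose $q_\tau^{-1}$ pole after the $S$-transformation sits at order $q_\tau^{-1}$ and thus corresponds precisely to the $t_n^{-n}$ term. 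Once the top term $t_n$ (from $\i\infty$) and the bottom term $n^{(11n+1)/(n-1)} t_n^{-n}$ (from $0$) are fixed and the intermediate coefficients are shown to be integral, the stated expansion follows, with the precise values of the $c_k$ to be pinned down in the subsequent computation of the dimension formula itself.
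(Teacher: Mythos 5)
Your proposal is correct and follows essentially the same route as the paper: both arguments use the genus-zero property of $X_0(n)$, fix the top term $t_n$ from the $q_\tau$-expansion at $\i\infty$, determine the bottom term $n^{(11n+1)/(n-1)}t_n^{-n}$ by matching the order-$n$ pole of $\ch_{V^G}(S.\tau)=\frac{1}{n}\sum_{k\in\Z_n}\ch_{V(\sigma^k)}(\tau)$ at the cusp $0$ (where the positivity assumption confines the twisted contributions to $q_\tau^{-1+1/n}\Q[[q_\tau^{1/n}]]$) against $t_n(S.\tau)^{-n}=n^{-12n/(n-1)}(q_\tau^{-1}+\ldots)$, and conclude via the valence formula. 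The only cosmetic difference is that your detour through Lemma~\ref{lem:chsum} is unnecessary: the paper reads off the expansion at the cusp $0$ directly from the $l=0$ case of \eqref{eq:dimform1}, which you also cite.
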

\begin{proof}
The character $\ch_{V^G}(\tau)$ is a modular function for $\Gamma_0(n)$ by \ref{enum:prop1}. By \ref{enum:prop2} we know that $\ch_{V^G}(\tau)$ is holomorphic on $\H$ with possible poles at the cusps $\{0,\i\infty\}$ modulo~$\Gamma_0(n)$.

It is clear from the $q_\tau$-expansion \ref{enum:prop3} of $\ch_{V^G}(\tau)$ and that of $t_n(\tau)$ that the difference $\ch_{V^G}(\tau)-t_n(\tau)$ is holomorphic at the cusp $\i\infty$ (and on $\H$) but may still have a pole at the cusp $0=S.\i\infty$.

We then consider the $S$-transformation of $\ch_{V^G}(\tau)$, which is
\begin{equation*}
\ch_{V^G}(S.\tau)=\frac{1}{n}\sum_{k\in\Z_n}T(\vac,k,0,\tau)\in\frac{1}{n}q_\tau^{-1}+q_\tau^{-1+1/n}\Q[[q_\tau^{1/n}]]
\end{equation*}
by \ref{enum:prop4} and \ref{enum:prop5} and has the same singular and constant terms as $\ch_{V^G}(S.\tau)-t_n(S.\tau)$. Hence, by subtracting a suitable polynomial in $1/t_n$ of degree $n$
\begin{equation*}
Q(1/t_n)=c_{-1}t_n^{-1}+\ldots+c_{-n}t_n^{-n}
\end{equation*}
we achieve that the $q_\tau$-expansion of $\ch_{V^G}(S.\tau)-t_n(S.\tau)-Q(1/t_n(S.\tau))$ starts with a constant term, i.e.\ $\ch_{V^G}(\tau)-t_n(\tau)-Q(1/t_n(\tau))$ is holomorphic at the cusp $0$. In order for the $q_\tau^{-1}$-term to vanish we need $1/n=c_{-n}n^{-12n/(n-1)}$, i.e.\ $c_{-n}=n^{(11n+1)/(n-1)}$. One can even show that the other coefficients $c_{-n+1},\ldots,c_{-1}\in\Z$ but this is not essential.

In total, $\ch_{V^G}(\tau)-t_n(\tau)-Q(1/t_n(\tau))$ is a modular function for $\Gamma_0(n)$ and is holomorphic on $\H$ and at the cusps, noting that $t_n(\tau)$ has no zeroes on $\H$ and $Q(1/t_n(\tau))$ vanishes at the cusp $\i\infty$. Then, by the valence formula for modular forms, $\ch_{V^G}(\tau)-t_n(\tau)-Q(1/t_n(\tau))$ has to be constant. We call this constant $c_0$, which is also in $\Z$.
\end{proof}

\begin{proof}[Proof of Proposition~\ref{prop:dimform}]
Continuing from the above lemma, the next step is to determine the coefficients $c_0,c_{-1},\ldots,c_{-n+1}$ by comparing the coefficients in the Fourier expansion
\begin{align*}
\ch_{V^G}(\tau)&=t_n(\tau)+c_0+c_{-1}t_n(\tau)^{-1}+\ldots+c_{-n+1}t_n(\tau)^{-n+1}+n^{\frac{11n+1}{n-1}}t_n(\tau)^{-n}\\
&=q_\tau^{-1}+\dim_\C(V^G_1)+\dim_\C(V^G_2)q_\tau+\ldots.
\end{align*}
Inserting the $q_\tau$-expansion of $t_n$, which can easily obtained from the $q_\tau$-expansion of the eta function, we can read off the coefficients $c_0,c_{-1},\ldots,c_{-n+1}$ in terms of the numbers $\dim_\C(V^G_1),\ldots,\dim_\C(V^G_n)$, solving a linear system of equations. We get e.g.\ $c_0=\dim_\C(V^G_1)+24/(n-1)$. The other $c_k$ are more complicated integral linear combinations of $\dim_\C(V^G_2),\ldots,\dim_\C(V^G_n)$.

Then the Fourier coefficients of
\begin{equation*}
\ch_{V^G}(\tau)=t_n(\tau)+c_0+c_{-1}t_n(\tau)^{-1}+\ldots+c_{-n+1}t_n(\tau)^{-n+1}+n^{\frac{11n+1}{n-1}}t_n(\tau)^{-n}
\end{equation*}
are fully determined in terms of $\dim_\C(V^G_1),\ldots,\dim_\C(V^G_n)$ and so are those of
\begin{equation*}
\ch_{V^G}(S.\tau)=t_n(S.\tau)+c_0+c_{-1}t_n(S.\tau)^{-1}+\ldots+c_{-n+1}t_n(S.\tau)^{-n+1}+n^{\frac{11n+1}{n-1}}t_n(S.\tau)^{-n}
\end{equation*}
as we explicitly know $t_n(S.\tau)$ (see proof of Lemma~\ref{lem:chvstn}).

With Lemma~\ref{lem:chsum} we compute
\begin{equation*}
\ch_{V}(\tau)+\ch_{\widetilde{V}}(\tau)=\ch_{V^G}(\tau)+\sum_{l\in\Z_n}\ch_{V^G}(ST^l.\tau)
\end{equation*}
and hence for the zeroth coefficient in the $q_\tau$-expansion we get
\begin{align}\begin{split}\label{eq:dimform2}
\dim_\C(V_1)+\dim_\C(\widetilde{V}_1)&=\left[\ch_{V^G}(\tau)+\sum_{l\in\Z_n}\ch_{V^G}(ST^l.\tau)\right](0)\\
&=\dim_\C(V^G_1)+n\left[\ch_{V^G}(S.\tau)\right](0),
\end{split}\end{align}
which is some rational linear combination of the $\dim_\C(V^G_1),\ldots,\dim_\C(V^G_n)$. More precisely the coefficient of $\dim_\C(V^G_1)$ is $n+1$ and the other coefficients have denominator at most $n^{12(n-1)/(n-1)}/n=n^{11}$.

Performing the steps described above (e.g.\ in \texttt{PARI/GP} \cite{Pari}) we obtain the assertion of the proposition.
\end{proof}

\minisec{Alternative Formula}
We also derive another dimension formula, trading the dependence on the dimensions of $V^G_2,\ldots,V^G_n$ for the dependence on the dimensions of the weight spaces of the twisted modules with weight less than one. For $n=3$ this formula is given in equation (73) of \cite{Mon94} and it is also proved in \cite{LS16b}, Theorem~4.8. We prove the general formula:
\begin{prop}[Dimension Formula II]\label{prop:dimform2}
Let $V$ and $G=\langle\sigma\rangle$ be as in Assumption~\ref{ass:op} with $\sigma$ of type $n\{0\}$ and the representations $\phi_i$ chosen as in Corollary~\ref{cor:main} and let $\widetilde{V}=\bigoplus_{i\in\Z_n}W^{(i,0)}$ be the orbifold \voa{} \eqref{eq:orbifold}. Furthermore, assume that $c=24$. Then for $n=2,3,5,7,13$
the dimension formula
\begin{equation*}
\dim_\C(V_1)+\dim_\C(\widetilde{V}_1)=24+(n+1)\dim_\C(V^G_1)-\frac{24}{n-1}\!\sum_{k=1}^{n-1}\!\sigma_1(n-k)\!\sum_{\mathclap{i\in\Z_n\setminus\{0\}}}\dim_\C(V(\sigma^i)_{k/n})
\end{equation*}
holds where $\sigma_1$ is the usual sum-of-divisors function.
\end{prop}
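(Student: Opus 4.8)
The plan is to reduce, exactly as in the proof of Proposition~\ref{prop:dimform}, to the evaluation of a single constant term, but then to compute that term through the Hauptmodul $t_n$ in a way that reads off the \emph{polar} Fourier coefficients of the twisted characters (the low-weight twisted data) instead of the dimensions $\dim_\C(V^G_k)$. First I would invoke Lemma~\ref{lem:chsum} together with the observation that the coefficient of $q^0$ in $\ch_{V^G}(ST^l.\tau)$ is independent of $l$ (passing $\tau\mapsto\tau+l$ multiplies the $q^0$-term by $\e^{(2\pi\i)\cdot0\cdot l}=1$); this is precisely what yields \eqref{eq:dimform2},
\[
\dim_\C(V_1)+\dim_\C(\widetilde V_1)=\dim_\C(V^G_1)+n\bigl[\ch_{V^G}(S.\tau)\bigr](0).
\]
Thus everything comes down to the constant term of $g(\tau):=\ch_{V^G}(S.\tau)=\tfrac1n\sum_{i\in\Z_n}\ch_{V(\sigma^i)}(\tau)$, the second equality being \eqref{eq:modinvverynice} applied to $v=\vac$ with $l=0$.

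Next I would feed in Lemma~\ref{lem:chvstn}. Composing its expansion with $S$ and writing $\tilde t:=t_n(S.\tau)=n^{12/(n-1)}q^{1/n}(1+O(q^{1/n}))$ gives
\[
g=\tilde t+c_0+\sum_{j=1}^{n}c_{-j}\,\tilde t^{-j},\qquad c_{-n}=n^{\frac{11n+1}{n-1}}.
\]
Because $\tilde t$ vanishes to first order at $\i\infty$, the polar part of $g$ (the coefficients of $q^{-m/n}$, $1\le m\le n$) is carried entirely by $\sum_j c_{-j}\tilde t^{-j}$, and since $\tilde t^{-j}=n^{-12j/(n-1)}q^{-j/n}(1+O(q^{1/n}))$ this is a triangular system determining $c_{-n},\dots,c_{-1}$ from those polar coefficients. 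On the other side, $g=\tfrac1n\sum_i\ch_{V(\sigma^i)}$ has coefficient of $q^{-m/n}$ equal to $\tfrac1n\sum_i\dim_\C V(\sigma^i)_{(n-m)/n}$: for $m=n$ this is $\tfrac1n\dim_\C V_0=\tfrac1n$ (recovering the pinned value of $c_{-n}$), while for $1\le m\le n-1$ the integrality of the grading of $V$ kills the $i=0$ term and leaves only $\tfrac1n\sum_{i\ne0}\dim_\C V(\sigma^i)_{(n-m)/n}$, precisely the low-weight twisted data in the claim. Finally $c_0$ is fixed by the $q^0$-coefficient of $\ch_{V^G}$ itself, giving $c_0=\dim_\C(V^G_1)+\tfrac{24}{n-1}$.

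Since $[\tilde t](0)=0$, the conclusion follows from
\[
[g](0)=c_0+\sum_{j=1}^{n}c_{-j}\,[\tilde t^{-j}](0),
\]
where the $[\tilde t^{-j}](0)$ are explicit rationals read off from the eta-quotient $\tilde t^{-1}=n^{-12/(n-1)}(\eta(\tau/n)/\eta(\tau))^{24/(n-1)}$. Substituting the triangular solution for the $c_{-j}$, multiplying by $n$ and adding $\dim_\C(V^G_1)$, I expect the pure constants (including $n\,c_{-n}[\tilde t^{-n}](0)$ and $\tfrac{24n}{n-1}$) to collapse to $24$, the coefficient of $\dim_\C(V^G_1)$ to become $n+1$, and the twisted-data terms to assemble into $-\tfrac{24}{n-1}\sum_{k=1}^{n-1}\sigma_1(n-k)\sum_{i\ne0}\dim_\C V(\sigma^i)_{k/n}$ after the reindexing $k=n-m$. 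I have checked that for $n=2$ this machinery gives exactly $24+3\dim_\C(V^G_1)-24\dim_\C V(\sigma)_{1/2}$, in agreement with the stated formula.

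The genuinely laborious—and the only non-formal—step will be verifying that the constant terms $[\tilde t^{-j}](0)$, combined with the triangular coefficients $c_{-j}$, collapse to the clean weights $\tfrac{24}{n-1}\sigma_1(n-k)$; the sum-of-divisors function enters through $q\frac{d}{dq}\log\eta=\tfrac1{24}-\sum_{m\ge1}\sigma_1(m)q^m$, so I would try to package this as a single generating-function identity for the coefficients of powers of the eta-quotient rather than treating each $n$ separately, and then confirm the outcome numerically (e.g.\ in \texttt{PARI/GP}) for $n=2,3,5,7,13$. The hypotheses that $n$ be prime and $(n-1)\mid24$ are used essentially and cannot be relaxed by this method: primality is what lets the $l$-sum decouple in Lemma~\ref{lem:chsum} and makes $\{(i,0)\mid i\in\Z_n\}$ the only integrally graded family (so that weight-one vectors of $V(\sigma^i)$ with $i\ne0$ do not enter the right-hand side, by Proposition~\ref{prop:weight}), while $(n-1)\mid24$ is exactly the condition under which $X_0(n)$ has genus $0$ with the explicit eta-quotient Hauptmodul $t_n$ of Lemma~\ref{lem:chvstn}.
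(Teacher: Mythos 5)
Your proposal is correct and follows essentially the same route as the paper: fix $c_0$ from the expansion of $\ch_{V^G}$ at $\i\infty$, determine $c_{-1},\ldots,c_{-n+1}$ by matching the polar coefficients $q_\tau^{-(n-1)/n},\ldots,q_\tau^{-1/n}$ of $\ch_{V^G}(S.\tau)=\tfrac1n\sum_i\ch_{V(\sigma^i)}(\tau)$ (which is exactly where the low-weight twisted data enter and the $i=0$ term drops out by integrality of the grading of $V$), and then evaluate $[\ch_{V^G}(S.\tau)](0)$ via \eqref{eq:dimform2}, with the final collapse of constants to $\tfrac{24}{n-1}\sigma_1(n-k)$ checked in \texttt{PARI/GP}. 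No gaps.
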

\begin{proof}
Like for the first dimension formula we start from
\begin{align*}
\ch_{V^G}(\tau)&=t_n(\tau)+c_0+c_{-1}t_n(\tau)^{-1}+\ldots+c_{-n+1}t_n(\tau)^{-n+1}+n^{\frac{11n+1}{n-1}}t_n(\tau)^{-n}\\
&=q_\tau^{-1}+\dim_\C(V^G_1)+\dim_\C(V^G_2)q_\tau+\ldots.
\end{align*}
However, we only use this to fix the coefficient $c_0$, for which we obtain $c_0=\dim_\C(V_1^G)+24/(n-1)$. To fix the other coefficients we then consider
\begin{align*}
\ch_{V^G}(S.\tau)&=t_n(S.\tau)+\dim_\C(V_1^G)+24/(n-1)\\
&\quad+c_{-1}t_n(S.\tau)^{-1}+\ldots+c_{-n+1}t_n(S.\tau)^{-n+1}+n^{\frac{11n+1}{n-1}}t_n(S.\tau)^{-n},
\end{align*}
which by \eqref{eq:dimform1} equals
\begin{align*}
\ch_{V^G}(S.\tau)&=\frac{1}{n}\sum_{i\in\Z_n}\ch_{V(\sigma^i)}(\tau)\\
&=\frac{1}{n}q_\tau^{-1}\sum_{k=0}^\infty\dim_\C(V_k)q_\tau^k+\frac{1}{n}q_\tau^{-1}\sum_{i\in\Z_n\setminus\{0\}}\sum_{k=0}^\infty\dim_\C(V(\sigma^i)_{k/n})q_\tau^{k/n}.
\end{align*}
By comparing the coefficients of $q_\tau^{-(n-1)/n},\ldots,q_\tau^{-1/n}$ we then obtain a linear system of $n-1$ equations, which we can solve for the $n-1$ unknowns $c_{-1},\ldots,c_{-n+1}$. Inserting the obtained values into $\ch_{V^G}(\tau)$ and applying formula \eqref{eq:dimform2} yields the assertion. The necessary calculations are performed in \texttt{PARI/GP} \cite{Pari}.
\end{proof}
The power of the second dimension formula lies in the fact that the right-hand side only depends on the dimensions of the twisted modules for weight less than one and not on the weight-one space of the twisted modules as one would trivially obtain from \eqref{eq:dimform2}. Suppose namely that $\rho(V(\sigma^i))\geq 1$ for all $i\in\Z_n\setminus\{0\}$. Then the contribution from the twisted modules to the dimension formula vanishes and one obtains the very simple formula
\begin{equation}\label{eq:dimform2simple}
\dim_\C(V_1)+\dim_\C(\widetilde{V}_1)=24+(n+1)\dim_\C(V^G_1).
\end{equation}
This occurs for example in case~2 in Section~\ref{sec:newcases}.

Without the condition on $\rho(V(\sigma^i))$ for $i\in\Z_n\setminus\{0\}$ the right-hand side is an upper bound for the left-hand side.

\part{Applications}

\chapter{Lattice \VOA{}s}\label{ch:latvoa}

In this chapter we review the theory of \voa{}s associated with positive-definite, even lattices. We describe how automorphisms of the lattice lift to automorphisms of the associated lattice \voa{}, including a description of the twisted and untwisted modules of lattice \voa{}s. We present a minor generalisation, allowing for twisted modules associated with non-standard lifts.

Lattice \voa{}s are very well-studied and well-behaved objects. They will serve as starting point for the orbifold construction in Chapter~\ref{ch:schellekens} and also play a central rôle in the BRST construction in Chapter~\ref{ch:BRST}.

\section{Lattices and Automorphisms}\label{sec:lattice}
We begin with a short review of lattices and their automorphisms (see e.g.\ \cite{CS99,Ebe13}).
\minisec{Lattices}
\begin{defi}[Rational Lattice]
Let $V$ be a finite-dimensional $\Q$-vector space endowed with a non-degenerate, symmetric $\Q$-bilinear form $\langle\cdot,\cdot\rangle\colon V\times V\to\Q$. A subset $L\subset V$ is called a \emph{(rational) lattice} of \emph{rank} $\rk(L)=\dim_\C(V)=n$ if $L=\Z v_1\oplus\ldots\oplus\Z v_n$ where $\{v_1,\ldots,v_n\}$ is a $\Q$-basis of $V$. In other words: $L$ is a discrete subgroup of $V$ of full rank, i.e.\ $L\otimes_\Z\Q=V$.
\end{defi}
The \emph{norm} of an element $\alpha\in L$ is defined by $\langle\alpha,\alpha\rangle/2$ and can be non-positive.\footnote{Often the quantity $\langle\alpha,\alpha\rangle$ is called the norm of $\alpha\in L$.} For the purposes of this text we are mainly interested in positive-definite and even lattices:
\begin{defi}[Positive Definiteness]
A lattice $L$ is called \emph{positive-definite} if the $\Q$-bilinear form $\langle\cdot,\cdot\rangle$ on the underlying vector space $V=L\otimes_\Z\Q$ is positive-definite.
\end{defi}
\begin{defi}[Integral and Even Lattices]
Let $L$ be a lattice. Then $L$ is called \emph{integral} if $\langle\alpha,\beta\rangle\in\Z$ for all $\alpha,\beta\in L$. If $\langle\alpha,\alpha\rangle\in 2\Z$ for all $\alpha\in L$, the lattice $L$ is called \emph{even}.
\end{defi}
Every even lattice is integral.

\minisec{Lattice Automorphisms}
\begin{defi}[Lattice Automorphism]
An \emph{automorphism} (or \emph{isometry}) $\nu$ of a lattice $L$ is an automorphism $\nu$ of the underlying vector space $V=L\otimes_\Z\Q$ such that $\nu(L)=L$ and the bilinear form on $V$ is invariant under $\nu$, i.e.\ $\langle \nu x,\nu y\rangle=\langle x,y\rangle$ for all $x,y\in V$.
\end{defi}
Let $\Aut(L)$ denote the group of lattice automorphisms (sometimes denoted by $O(L)$). We will only consider lattice automorphisms of finite order. For a positive-definite lattice the automorphism group $\Aut(L)$ is finite and in particular all lattice automorphisms have finite order.

\minisec{Complexified Lattice}

We can embed a lattice $L$ into its complexification, the $\C$-vector space $\h:=L\otimes_{\Z}\C=V\otimes_{\Q}\C$. The symmetric bilinear form $\langle\cdot,\cdot\rangle$ on $V$ can be extended by $\C$-bilinearity to a symmetric, non-degenerate, $\nu$-invariant bilinear form on $\h$.

Given an automorphism $\nu$ of the lattice $L$, we can extend $\nu$ by $\C$-linearity to an isometry $\nu$ of $\h$. Let
$m:=\ord(\nu)$ denote the order of the lattice automorphism $\nu$, i.e.\ the smallest $m\in\Ns$ such that $\nu^m=\id$.
Since $\C$ is algebraically closed, $\nu$ is diagonalisable with the possible eigenvalues being $m$-th roots of unity. Hence $\h$ decomposes as
\begin{equation*}
\h=\bigoplus_{i\in\Z_m}\h_{(i)}
\end{equation*}
with the eigenspaces
\begin{equation*}
\h_{(i)}:=\{h\in\h\;|\;\nu h=\xi_m^{i}h\}
\end{equation*}
for $i\in\Z_m$. Moreover, since $\langle\cdot,\cdot\rangle$ is $\nu$-invariant, a simple calculation shows that the eigenspace $\h_{(0)}$ is orthogonal to all other $\h_{(i)}$ with respect to $\langle\cdot,\cdot\rangle$, i.e.\
\begin{equation*}
\langle\h_{(0)},\h_{(i)}\rangle=0
\end{equation*}
for all $i\in\Z_m\setminus\{0\}$. Hence, $\h$ decomposes into orthogonal subspaces $\h=\h_{(0)}\oplus\h_{(0)}^\bot$ with
\begin{equation*}
h_{(0)}^\bot=\h_{(1)}\oplus\ldots\oplus\h_{(m-1)}.
\end{equation*}

There is a natural projection $\pi_\nu\colon\h\to\h_{(0)}$ given by
\begin{equation*}
\pi_\nu(h)=\frac{1}{m}\sum_{i=0}^{m-1}\nu^i h
\end{equation*}
for $h\in\h$. The kernel of this projection is $\h_{(0)}^\bot$, which means that this projection is orthogonal with respect to the bilinear form $\langle\cdot,\cdot\rangle$. We denote the complementary projection $\h\to\h_{(0)}^\bot$ by $\pi_\nu^\bot=\id-\pi_\nu$.

Clearly, the linear operator $(\id-\nu)$ restricts to an invertible map on $\h_{(0)}^\bot$ while it restricts to the zero map on $\h_{(0)}$.

\minisec{Fixed-Point Sublattice}

For an automorphism $\nu\in\Aut(L)$ we denote by
\begin{equation*}
L^\nu=\{\alpha\in L\;|\;\nu\alpha=\alpha\}=L\cap\h_{(0)}=L\cap\pi_\nu(L)
\end{equation*}
the \emph{$\nu$-invariant sublattice} or \emph{fixed-point sublattice} of $L$ under $\nu$, which is a sublattice of $L$. In particular, the restriction of the bilinear form to $L^\nu$ is non-degenerate. Then $L^\nu\otimes_\Z\C=\h_{(0)}$, i.e.\ $\rk(L^\nu)=\dim_\C(\h_{(0)})$.

In general, for a sublattice $\Lambda$ of a lattice $L$ we denote by $\Lambda^\bot$ the \emph{orthogonal complement} of $\Lambda$ in $L$,
\begin{equation*}
\Lambda^\bot:=\{\alpha\in L\;|\;\langle\alpha,\beta\rangle=0\text{ for all }\beta\in\Lambda\},
\end{equation*}
another sublattice of $L$.

The orthogonal complement of the $\nu$-invariant sublattice $L^\nu$,
\begin{equation*}
L_\nu:=(L^\nu)^\bot=L\cap\h_{(0)}^\bot=L\cap\pi_\nu^\bot(L).
\end{equation*}
is called \emph{$\nu$-coinvariant sublattice} of $L$. Then $L_\nu\otimes_\Z\C=\h_{(0)}^\bot$, i.e.\ $\rk(L_\nu)=\dim_\C(\h_{(0)}^\bot)$. The direct sum $L^\nu\oplus L_\nu$ is a sublattice of $L$ (of full rank), i.e.\
\begin{equation*}
L^\nu\oplus L_\nu\subseteq L.
\end{equation*}

\minisec{Dual Lattice and Discriminant Form}

\begin{defi}[Dual Lattice, Unimodular Lattice]
Let $L$ be a rational lattice (in the $\Q$-vector space $V$). Then the \emph{dual lattice} of $L$ is defined as
\begin{equation*}
L'=\{\alpha\in V\;|\;\langle\alpha,\beta\rangle\in\Z\text{ for all }\beta\in L\}.
\end{equation*}
A lattice $L$ is called unimodular if $L'=L$.
\end{defi}
The lattice $L$ is integral if and only if $L\subseteq L'$. Thus, every unimodular lattice is integral. Moreover, $(L')'=L$ by definition.
\begin{defi}[Level]
Let $L$ be an even lattice. The \emph{level} of $L$ is defined as the smallest $N\in\Ns$ such that $N\langle\alpha,\alpha\rangle\in 2\Z$ for all $\alpha\in L'$.
\end{defi}

For an integral lattice $L$ we can study the quotient $L'/L$, which is a finite abelian group, called \emph{discriminant group}. If the lattice $L$ is even, then the quotient $L'/L$ comes equipped with a non-degenerate finite quadratic form $Q_L\colon L'/L\to\Q/\Z$, naturally given by
\begin{equation*}
\alpha+L\mapsto Q_L(\alpha)=\langle\alpha,\alpha\rangle/2+\Z.
\end{equation*}
A finite abelian group endowed with a non-degenerate quadratic form is called \fqs{} (see Section~\ref{sec:qf}). We call the \fqs{} $L'/L=(L'/L,Q_L)$ the \emph{discriminant form} of $L$.

An automorphism of a \fqs{} is an automorphism of the abelian group preserving the quadratic form. For a \fqs{} $D$ let $\Aut(D)$ denote the group of automorphisms of $D$. Given an even lattice $L$, there is a natural homomorphism
\begin{equation*}
\Aut(L)\to\Aut(L'/L)
\end{equation*}
mapping $\nu\in\Aut(L)$ to
\begin{equation*}
\alpha+L\mapsto\nu\alpha+L,
\end{equation*}
$\alpha+L\in L'/L$, which is well-defined. In the following we present a weakened version of a criterion due to Nikulin, which ensures the surjectivity of this homomorphism under certain assumptions. For a prime $p$ and a \fqs{} $D$ let $D_p$ denote the $p$-component of $D$.\footnote{As a group $D_p$ is just the Sylow $p$-subgroup of the abelian group $D$.} For a finite abelian group $D$ we denote by $l(D)$ the minimum number of generators of $D$. We call $l(D_p)$ for a prime $p$ the \emph{$p$-rank} of $D$.
\begin{prop}[\cite{Nik80}, Theorem~1.14.2]\label{prop:thm1.14.2}
Let $L$ be an even, indefinite lattice with $\rk(L)\geq 3$. Suppose that
\begin{equation*}
\rk(L)\geq l((L'/L)_p)+2
\end{equation*}
for all primes $p$. Then the natural map $\Aut(L)\to\Aut(L'/L)$ is surjective.
\end{prop}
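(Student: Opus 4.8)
The plan is to lift a prescribed automorphism $\bar\phi$ of the discriminant form $(L'/L,Q_L)$ to a genuine isometry of $L$, by first enlarging $L$ to an even \emph{unimodular} lattice and then exploiting the rigidity of lattice embeddings in the indefinite range. First I would note that an element of $\Aut(L'/L)$ is by definition a group automorphism of $L'/L$ preserving $Q_L$, and that the map $\Aut(L)\to\Aut(L'/L)$ is well defined; only surjectivity is at issue. The governing observation is that the three hypotheses --- $L$ even, indefinite with $\rk(L)\geq 3$, and $\rk(L)\geq l((L'/L)_p)+2$ for every prime $p$ --- are exactly the inputs of Nikulin's existence and uniqueness theorems for primitive embeddings into even unimodular lattices, so the real work is to assemble those results rather than to reprove them.

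The construction I would carry out proceeds in two steps. Step one: realise $L$ as a primitive sublattice of an even unimodular indefinite lattice $\Lambda$. Using Nikulin's existence criterion for even lattices with prescribed discriminant form and signature, I would pick an even indefinite lattice $K$, of sufficiently large rank, with $(K'/K,Q_K)\cong (L'/L,-Q_L)$ and containing two orthogonal hyperbolic planes; gluing $L\oplus K$ along the graph of a fixed anti-isometry $\gamma\colon L'/L\to K'/K$ produces $\Lambda\supseteq L\oplus K$ with $L^\bot=K$, both summands primitive, and $\Lambda$ unimodular. The two-hyperbolic-plane choice guarantees that $\Aut(K)\to\Aut(K'/K)$ is surjective by an elementary argument and that $K$ is unique in its genus. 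Step two: for the given $\bar\phi$, form the regluing $\Lambda_\phi\supseteq L\oplus K$ using the anti-isometry $\gamma\circ\bar\phi^{-1}$; it is again even, unimodular and indefinite of the same signature as $\Lambda$, so the classical uniqueness of indefinite even unimodular lattices (Eichler--Kneser, via strong approximation, with $\rk(L)\geq 3$ ensuring indefiniteness and rank $\geq 3$) yields an isometry $\theta\colon\Lambda_\phi\xrightarrow{\sim}\Lambda$. Viewing the inclusion $L\hookrightarrow\Lambda$ and the composite $\theta|_L\colon L\hookrightarrow\Lambda$ as two primitive embeddings of $L$ into $\Lambda$, Nikulin's uniqueness theorem for primitive embeddings --- this is precisely the point where the $p$-rank bound $\rk(L)\geq l((L'/L)_p)+2$ is indispensable --- produces $g\in\Aut(\Lambda)$ and $\mu\in\Aut(L)$ identifying the two embeddings. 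Restricting $g$ to $L$ and tracking the induced maps on the discriminant groups of $L$ and of the complement $K$ then expresses $\bar\phi$ as a product of the class of $\mu$ with a map $\gamma^{-1}\bar\kappa\gamma$ coming from some $\kappa\in\Aut(K)$; since the $K$-side is realisable by surjectivity of $\Aut(K)\to\Aut(K'/K)$ and the compatibility $\bar b\circ\gamma=\gamma\circ\bar a$ for pairs extending to $\Aut(\Lambda)$, the residual $K$-ambiguity can be absorbed, and one obtains $\nu\in\Aut(L)$ with $\bar\nu=\bar\phi$.

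The main obstacle is the uniqueness of primitive embeddings into $\Lambda$, which is the deep arithmetic heart of the argument and cannot be reduced to the formal gluing bookkeeping: it is equivalent to a statement about adelic double cosets for the orthogonal group being trivial, and its proof rests on Eichler's strong approximation theorem together with the local computation that at each prime $p$ the bound $\rk(L)\geq l((L'/L)_p)+2$ forces $L\otimes\Z_p$ to split off enough unimodular (or hyperbolic) rank to make $O(L\otimes\Z_p)\to\Aut((L'/L)_p)$ surjective. I would therefore treat this local-to-global surjectivity as a cited black box, exactly as Nikulin does, and spend the explicit effort only on verifying the signature and $p$-rank conditions needed to invoke the existence theorem for $K$ and on the final discriminant-form bookkeeping that pins $\bar\nu$ down to $\bar\phi$.
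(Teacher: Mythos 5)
The paper offers no proof of this proposition at all --- it is quoted, in weakened form, directly from Nikulin, and Nikulin's own proof is a local-to-global argument: the $p$-rank bound forces the orthogonal group of the $p$-adic completion of $L$ to surject onto $\Aut((L'/L)_p)$ at every prime, indefiniteness together with $\rk(L)\geq 3$ forces the spinor genus of $L$ to consist of a single class, and Eichler's strong approximation theorem assembles the local lifts into a global automorphism of $L$. Your proposal replaces this by gluing bookkeeping inside a unimodular overlattice, and that route has a fatal gap at the last step. Precisely because you arranged $\Aut(K)\to\Aut(K'/K)$ to be surjective (via $K\supseteq \II_{1,1}\oplus\II_{1,1}$), the reglued lattice $\Lambda_\phi$ is isometric to $\Lambda$ by an isometry which is the \emph{identity} on $L$: pick $\kappa\in\Aut(K)$ with $\bar\kappa=\gamma\bar\phi\gamma^{-1}$; then $\id_L\oplus\kappa$ carries the graph of $\gamma\bar\phi^{-1}$ onto the graph of $\gamma$, hence $\Lambda_\phi$ onto $\Lambda$. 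So the two primitive embeddings you compare may be taken to be literally the same embedding, $g$ and $\mu$ may be taken to be the identity, and the relation your bookkeeping produces, $\bar\phi=\bar\mu^{-1}\cdot\left(\gamma^{-1}\bar\kappa\gamma\right)$ with $\mu\in\Aut(L)$ and $\kappa\in\Aut(K)$, carries no information: as $\kappa$ runs over $\Aut(K)$, the factor $\gamma^{-1}\bar\kappa\gamma$ runs over \emph{all} of $\Aut(L'/L)$, so nothing ever forces $\bar\phi$ into the image of $\Aut(L)$. The surjectivity on the $K$-side does not let you ``absorb the residual ambiguity''; on the contrary, it absorbs $\bar\phi$ completely and leaves the $L$-side unconstrained.

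The structural symptom of the failure is that your argument never actually uses the hypotheses on $L$. You claim the $p$-rank bound is what makes the uniqueness of primitive embeddings work, but Nikulin's uniqueness criterion for primitive embeddings into a unimodular lattice constrains the orthogonal \emph{complement} (your $K$: one class in its genus and surjectivity of $\Aut(K)\to\Aut(K'/K)$), and you chose $K$ freely, of large rank and containing two hyperbolic planes. Consequently, if your argument were correct it would prove surjectivity of $\Aut(L)\to\Aut(L'/L)$ for \emph{every} even lattice, which is false: for $L=\Z v$ with $\langle v,v\rangle=24$ one has $\Aut(L)=\{\pm\id\}$, while multiplication by $7$ on $L'/L\cong\Z_{24}$ preserves the discriminant form (since $7^2\equiv 1\pmod{48}$) and is not induced by $\pm\id$ (this rank-one lattice is definite and violates every hypothesis of the proposition, but your construction applies to it verbatim, since no step refers to those hypotheses). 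The conclusion is genuinely an arithmetic statement about $L$ itself --- local surjectivity at every prime plus triviality of the spinor-genus obstruction --- and it cannot be recovered from the formal theory of discriminant forms and unimodular overlattices; any correct proof must, as Nikulin's does, apply strong approximation to $L$.
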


\minisec{Inclusions}
It follows from Lemma~4.6 in \cite{BK04} that
\begin{equation}\label{eq:lem4.6}
\pi_\nu(L')=(L^\nu)'\quad\text{and}\quad\pi_\nu^\bot(L')=(L_\nu)'
\end{equation}
for any lattice $L$. If $L$ is integral, then it is clear that
\begin{equation*}
\pi_\nu(L)\subseteq (L^\nu)'\quad\text{and}\quad\pi_\nu^\bot(L)\subseteq(L_\nu)'
\end{equation*}
and hence for a unimodular lattice
\begin{equation*}
\pi_\nu(L)=(L^\nu)'\quad\text{and}\quad\pi_\nu^\bot(L)=(L_\nu)'.
\end{equation*}

\minisec{Primitive Sublattices}
Let $L$ be a lattice. A sublattice $\Lambda$ is called \emph{primitive} if one of the following equivalent conditions holds:
\begin{enumerate}
\item $L/\Lambda$ is a free $\Z$-module.
\item Every $\Z$-basis of $\Lambda$ can be extended to a $\Z$-basis of $L$.
\item $L$ can be written as the direct sum $L=\Lambda\oplus\widetilde\Lambda$ for some sublattice $\widetilde\Lambda$ of $L$.
\item $\Lambda=\{\alpha\in L\;|\;\alpha\in\Q\Lambda\}$.
\end{enumerate}
If $\Lambda$ is a primitive sublattice of $L$, then so is its orthogonal complement $\Lambda^\bot$. Moreover, fixed-point sublattices $L^\nu$ are always primitive.

\begin{prop}
Let $L$ be a unimodular (and hence integral) lattice and $\Lambda$ a primitive sublattice of $L$. Then there are natural isomorphisms
\begin{equation*}
\Lambda'/\Lambda\cong L/(\Lambda\oplus\Lambda^\bot)\cong(\Lambda^\bot)'/\Lambda^\bot.
\end{equation*}
\end{prop}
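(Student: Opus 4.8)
The plan is to construct explicit isomorphisms starting from the inclusion $\Lambda\oplus\Lambda^\bot\subseteq L$, which makes sense because both $\Lambda$ and $\Lambda^\bot$ are sublattices of $L$ with the former primitive. Since $L$ is unimodular, $L=L'$, and I would exploit this identification throughout. First I would define a map $L\to\Lambda'/\Lambda$ as follows: for $\alpha\in L$, consider the orthogonal projection $\pi(\alpha)$ of $\alpha$ onto the rational span $\Q\Lambda$. Because $L$ is integral, $\langle\alpha,\beta\rangle\in\Z$ for all $\beta\in\Lambda$, and since $\pi(\alpha)$ agrees with $\alpha$ on pairings against $\Lambda$ (the component in $(\Q\Lambda)^\bot$ pairs trivially with $\Lambda$), we get $\langle\pi(\alpha),\beta\rangle=\langle\alpha,\beta\rangle\in\Z$ for all $\beta\in\Lambda$, so $\pi(\alpha)\in\Lambda'$. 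This yields a well-defined group homomorphism $\phi\colon L\to\Lambda'/\Lambda$, $\alpha\mapsto\pi(\alpha)+\Lambda$.

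Next I would identify the kernel and image of $\phi$. The key claim is that $\ker(\phi)=\Lambda\oplus\Lambda^\bot$. For the inclusion $\supseteq$: if $\alpha=\beta+\gamma$ with $\beta\in\Lambda$, $\gamma\in\Lambda^\bot$, then $\pi(\alpha)=\beta\in\Lambda$, so $\phi(\alpha)=0$. For $\subseteq$: if $\pi(\alpha)\in\Lambda$, write $\alpha=\pi(\alpha)+(\alpha-\pi(\alpha))$; the first summand lies in $\Lambda$ by assumption, and the second lies in $(\Q\Lambda)^\bot\cap L=\Lambda^\bot$ (using primitivity of $\Lambda^\bot$, i.e.\ condition (4) of the primitivity definition, to identify $(\Q\Lambda)^\bot\cap L$ with $\Lambda^\bot$). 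Hence $\alpha\in\Lambda\oplus\Lambda^\bot$. For surjectivity of $\phi$, I would argue that every $\lambda\in\Lambda'$ is the projection of some lattice vector: since $L=L'$ is unimodular, the functional $\mu\mapsto\langle\lambda,\mu\rangle$ on $\Lambda$ extends to an integral functional on all of $L$, which by unimodularity is represented by some $\alpha\in L$, and then $\pi(\alpha)-\lambda\in\Lambda'\cap(\Q\Lambda)$ pairs integrally, giving $\pi(\alpha)\equiv\lambda\pmod\Lambda$ after a short adjustment. By the first isomorphism theorem this establishes $L/(\Lambda\oplus\Lambda^\bot)\cong\Lambda'/\Lambda$. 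Interchanging the roles of $\Lambda$ and $\Lambda^\bot$ (both being primitive, with $(\Lambda^\bot)^\bot=\Lambda$) gives the symmetric isomorphism $L/(\Lambda\oplus\Lambda^\bot)\cong(\Lambda^\bot)'/\Lambda^\bot$, and composing yields the full chain.

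The main obstacle I anticipate is making the surjectivity argument fully rigorous rather than hand-wavy: one must carefully use unimodularity to represent integral functionals by lattice vectors and to control the discrepancy between $\pi(\alpha)$ and a prescribed element of $\Lambda'$. A cleaner route that sidesteps some of this is to note that all three groups are finite of the same order, since $|\Lambda'/\Lambda|=|\det(\Lambda)|=|\det(\Lambda^\bot)|=|(\Lambda^\bot)'/\Lambda^\bot|$ for a primitive sublattice of a unimodular lattice, and $|L/(\Lambda\oplus\Lambda^\bot)|^2=|\det(\Lambda)||\det(\Lambda^\bot)|/|\det(L)|\cdot|L/(\Lambda\oplus\Lambda^\bot)|$ collapses appropriately because $\det(L)=\pm1$. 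I would therefore prove injectivity of $\phi$ via the kernel computation, which is elementary, and then invoke the equality of cardinalities to upgrade the injective homomorphism between finite groups of equal order to an isomorphism, avoiding the delicate direct surjectivity proof entirely. The symmetry between $\Lambda$ and $\Lambda^\bot$ then finishes the statement at once.
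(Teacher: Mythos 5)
Your construction is the same one the paper relies on: the paper's proof is just a pointer to Proposition~1.2 of \cite{Ebe13}, and the isomorphisms there (made explicit later in the paper for fixed-point sublattices) are exactly your projection maps $\alpha+(\Lambda\oplus\Lambda^\bot)\mapsto\pi(\alpha)+\Lambda$. Your kernel computation is also correct, though the parenthetical appeal to primitivity there is unnecessary: $L\cap(\Q\Lambda)^\bot=\Lambda^\bot$ holds by the definition of $\Lambda^\bot$, since orthogonality to $\Lambda$ is equivalent to orthogonality to $\Q\Lambda$ by $\Q$-bilinearity.

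The genuine gap is in the surjectivity step, and it matters because that is the only place where primitivity can enter the argument at all. You write that ``since $L=L'$ is unimodular, the functional $\mu\mapsto\langle\lambda,\mu\rangle$ on $\Lambda$ extends to an integral functional on all of $L$''. Unimodularity does not give this; primitivity does: $L/\Lambda$ is free, so $0\to\Lambda\to L\to L/\Lambda\to 0$ splits and the restriction map $\Hom(L,\Z)\to\Hom(\Lambda,\Z)$ is surjective. Unimodularity is then used only afterwards, to represent the extended functional as $\langle\alpha,\cdot\rangle$ for some $\alpha\in L$. As written, your proof never uses primitivity in an essential way, so it would ``prove'' the statement for non-primitive sublattices, where it is false: for $L=\Z^2$ and $\Lambda=2\Z e_1$ one has $\Lambda'/\Lambda\cong\Z_4$, $L/(\Lambda\oplus\Lambda^\bot)\cong\Z_2$, $(\Lambda^\bot)'/\Lambda^\bot=0$, and correspondingly the functional determined by $\lambda=\tfrac{1}{2}e_1$ admits no integral extension to $L$. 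The closing step is also garbled: $\pi(\alpha)-\lambda$ pairs to \emph{zero} with $\Lambda$ (every element of $\Lambda'$ pairs integrally, so ``pairs integrally'' carries no information), and since it lies in $\Q\Lambda$, non-degeneracy forces $\pi(\alpha)=\lambda$ exactly; no adjustment modulo $\Lambda$ is needed or available. Finally, the counting shortcut does not sidestep the issue: the input $|\det(\Lambda)|=|\det(\Lambda^\bot)|$ is, for primitive sublattices of unimodular lattices, a fact whose standard proof is precisely the surjectivity statement $\pi(L)=\Lambda'$, so invoking it is circular unless you cite it as an external result; moreover your displayed index relation carries an extraneous factor of $|L/(\Lambda\oplus\Lambda^\bot)|$ --- the correct identity is $[L:\Lambda\oplus\Lambda^\bot]^2=|\det(\Lambda)||\det(\Lambda^\bot)|/|\det(L)|$.
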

\begin{proof}[Idea of Proof]
This result can be obtained by studying the proof of Proposition~1.2 in \cite{Ebe13}.\footnote{Note that the proof is faulty in earlier editions of the book.}
\end{proof}
We only give explicit formulæ for the isomorphisms in the special case described in the next proposition. If we assume that $L$ is even, then so are all its sublattices $\Lambda$ and we can consider their discriminant forms $(\Lambda'/\Lambda,Q_\Lambda)$ as \fqs{}s.
\begin{prop}\label{prop:isomnegqf}
Let $L$ be a unimodular, even lattice and let $\nu\in\Aut(L)$. Consider the fixed-point sublattice $L^\nu$ and its orthogonal complement $L_\nu$ in $L$. Then there is a natural isomorphism of \fqs{}s
\begin{equation*}
\psi\colon\left((L^\nu)'/L^\nu,Q_{L^\nu}\right)\to\left((L_\nu)'/L_\nu,-Q_{L_\nu}\right).
\end{equation*}
More specifically, $\pi_\nu(\alpha)+L^\nu\mapsto\pi_\nu^\bot(\alpha)+L_\nu$. Recall that $\pi_\nu(L)=(L^\nu)'$ and $\pi_\nu^\bot(L)=(L_\nu)'$ since $L$ is unimodular.
\end{prop}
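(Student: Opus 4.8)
The plan is to build $\psi$ as the isomorphism induced by the two projections $\pi_\nu$ and $\pi_\nu^\bot$ acting on $L$, and then to verify the compatibility of the quadratic forms as a short separate computation. Throughout I would use the two inputs recorded just before the statement: since $L$ is unimodular (hence integral and self-dual) one has $\pi_\nu(L) = (L^\nu)'$ and $\pi_\nu^\bot(L) = (L_\nu)'$, and the eigenspace decomposition $\h = \h_{(0)} \oplus \h_{(0)}^\bot$ is orthogonal with respect to $\langle\cdot,\cdot\rangle$.

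First I would introduce the group homomorphisms $f\colon L \to (L^\nu)'/L^\nu$, $\alpha \mapsto \pi_\nu(\alpha) + L^\nu$, and $g\colon L \to (L_\nu)'/L_\nu$, $\alpha \mapsto \pi_\nu^\bot(\alpha) + L_\nu$. Both are surjective, precisely because $\pi_\nu(L) = (L^\nu)'$ and $\pi_\nu^\bot(L) = (L_\nu)'$. The key step is to show that $f$ and $g$ have the same kernel: if $\alpha \in L$ satisfies $\pi_\nu(\alpha) \in L^\nu \subseteq L$, then $\pi_\nu^\bot(\alpha) = \alpha - \pi_\nu(\alpha)$ lies in $L$ and in $\h_{(0)}^\bot$, hence in $L_\nu = L \cap \h_{(0)}^\bot$; the converse is symmetric. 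Here integrality is what guarantees that $\pi_\nu(\alpha) \in L$ forces $\pi_\nu^\bot(\alpha) \in L$. Setting $K := \ker f = \ker g$, the induced isomorphisms $L/K \cong (L^\nu)'/L^\nu$ and $L/K \cong (L_\nu)'/L_\nu$ compose to the desired group isomorphism $\psi$, which by construction sends $\pi_\nu(\alpha) + L^\nu$ to $\pi_\nu^\bot(\alpha) + L_\nu$, matching the formula in the statement.

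It then remains to compare the quadratic forms, and this is where the evenness of $L$ enters. For $\alpha \in L$, orthogonality gives $\langle \alpha, \alpha\rangle = \langle \pi_\nu(\alpha), \pi_\nu(\alpha)\rangle + \langle \pi_\nu^\bot(\alpha), \pi_\nu^\bot(\alpha)\rangle$, while evenness gives $\langle\alpha,\alpha\rangle/2 \in \Z$. Dividing by $2$ and reducing modulo $\Z$ yields $Q_{L^\nu}(\pi_\nu(\alpha)+L^\nu) + Q_{L_\nu}(\pi_\nu^\bot(\alpha)+L_\nu) = 0$ in $\Q/\Z$, that is $Q_{L_\nu}(\psi(x)) = -Q_{L^\nu}(x)$ for all $x \in (L^\nu)'/L^\nu$. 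Hence $\psi$ is an isomorphism of finite quadratic spaces onto $((L_\nu)'/L_\nu, -Q_{L_\nu})$. I expect no serious obstacle: the only points needing care are the well-definedness encoded in the equality of kernels (which rests on integrality and unimodularity) and the fact that evenness is applied to $\langle\alpha,\alpha\rangle$ for $\alpha \in L$ itself, rather than for elements of the dual lattice, which is exactly what produces the sign.
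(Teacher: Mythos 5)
Your proof is correct and takes essentially the same approach as the paper: the isomorphism is induced by the two projections on $L$, and the sign comes from combining the orthogonal splitting $\langle\alpha,\alpha\rangle=\langle\pi_\nu(\alpha),\pi_\nu(\alpha)\rangle+\langle\pi_\nu^\bot(\alpha),\pi_\nu^\bot(\alpha)\rangle$ with the evenness of $L$, exactly as in the paper's computation. The only (harmless) difference is that you verify the underlying group isomorphism directly by showing the two projection maps have the same kernel, whereas the paper delegates this to the proof of Proposition~1.2 in \cite{Ebe13}; note also that the step ``$\pi_\nu(\alpha)\in L$ forces $\pi_\nu^\bot(\alpha)\in L$'' needs only that $L$ is closed under subtraction, not integrality, which instead enters through the surjectivity statements $\pi_\nu(L)=(L^\nu)'$ and $\pi_\nu^\bot(L)=(L_\nu)'$.
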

\begin{proof}
The fixed-point sublattice $L^\nu$ is a primitive sublattice of $L$ and so is $L_\nu$. In the proof of Proposition~1.2 in \cite{Ebe13} it is shown that there are isomorphisms of abelian groups
\begin{align*}
L/(L^\nu\oplus L_\nu)\to (L^\nu)'/L^\nu&:\alpha+L^\nu\oplus L_\nu\mapsto\pi_\nu(\alpha)+L^\nu,\\
L/(L^\nu\oplus L_\nu)\to (L_\nu)'/L_\nu&:\alpha+L^\nu\oplus L_\nu\mapsto\pi_\nu^\bot(\alpha)+L_\nu.
\end{align*}
This gives the group isomorphism $\psi\colon(L^\nu)'/L^\nu\to (L_\nu)'/L_\nu$. In order to get an isomorphism of \fqs{}s we observe that
\begin{align*}
0+\Z&=\langle\alpha,\alpha\rangle/2+\Z=\langle\pi_\nu(\alpha)+\pi_\nu^\bot(\alpha),\pi_\nu(\alpha)+\pi_\nu^\bot(\alpha)\rangle/2+\Z\\
&=\langle\pi_\nu(\alpha),\pi_\nu(\alpha)\rangle/2+\langle\pi_\nu(\alpha),\pi_\nu^\bot(\alpha)\rangle+\langle\pi_\nu^\bot(\alpha),\pi_\nu^\bot(\alpha)\rangle/2+\Z\\
&=\langle\pi_\nu(\alpha),\pi_\nu(\alpha)\rangle/2+\langle\pi_\nu^\bot(\alpha),\pi_\nu^\bot(\alpha)\rangle/2+\Z\\
&=Q_{L^\nu}(\pi_\nu(\alpha)+L^\nu)+Q_{L_\nu}(\pi_\nu^\bot(\alpha)+L_\nu)
\end{align*}
and hence
\begin{equation*}
Q_{L^\nu}(\gamma)=-Q_{L_\nu}(\psi(\gamma))
\end{equation*}
for all $\gamma\in(L^\nu)'/L^\nu$.
\end{proof}

\minisec{Rescaled Lattices}

Let $L=(L,\langle\cdot,\cdot\rangle)$ be some rational lattice with ambient space $V=L\otimes_\Z\Q$.
\begin{defi}[Rescaled Lattice]
For $m\in\Q$ we define the \emph{rescaled lattice} $L(m)$ as the same lattice $L$ with the quadratic form rescaled by a factor of $m$, i.e.\
\begin{equation*}
L(m):=\left(L,m\langle\cdot,\cdot\rangle\right).
\end{equation*}
\end{defi}
Alternatively one could define $\sqrt{m}L:=\left(\sqrt{m}L,\langle\cdot,\cdot\rangle\right)$, which has the ambient $\Q$-vector space $\sqrt{m}V$. Then $L(m)\cong \sqrt{m}L$ as lattices.

Clearly, the dual lattice of a rescaled lattice is itself a rescaled version of the dual lattice. More precisely:
\begin{prop}\label{prop:rescaled}
Let $L$ be a lattice and $m\in\Q$. Then the dual lattice of $L(m)=\left(L,m\langle\cdot,\cdot\rangle\right)$ is
\begin{equation*}
(L(m))'=\left((1/m)L',m\langle\cdot,\cdot\rangle\right)\cong L'(1/m)
\end{equation*}
or in terms of $\sqrt{m}L=\left(\sqrt{m}L,\langle\cdot,\cdot\rangle\right)$,
\begin{equation*}
(\sqrt{m}L)'=(1/\sqrt{m})L'.
\end{equation*}
\end{prop}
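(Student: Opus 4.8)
The plan is to verify the claimed formula for the dual of a rescaled lattice directly from the definition of the dual lattice, which is purely a matter of unwinding the bilinear form. The statement is elementary and no genuine obstacle is expected; the only care needed is bookkeeping between the two equivalent presentations $L(m)=(L,m\langle\cdot,\cdot\rangle)$ and $\sqrt{m}L=(\sqrt{m}L,\langle\cdot,\cdot\rangle)$.

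First I would work with the presentation $L(m)=(L,m\langle\cdot,\cdot\rangle)$, whose underlying $\Q$-vector space $V=L\otimes_\Z\Q$ carries the rescaled form $m\langle\cdot,\cdot\rangle$. By definition, the dual lattice consists of all $\alpha\in V$ such that $m\langle\alpha,\beta\rangle\in\Z$ for every $\beta\in L$. The plan is to observe that the condition $m\langle\alpha,\beta\rangle\in\Z$ for all $\beta\in L$ is precisely the statement that $m\alpha\in L'$, i.e.\ $\alpha\in(1/m)L'$. Hence the underlying set of $(L(m))'$ is $(1/m)L'$, and since the dual lattice inherits the same bilinear form $m\langle\cdot,\cdot\rangle$ from $V$, we obtain
\begin{equation*}
(L(m))'=\left((1/m)L',m\langle\cdot,\cdot\rangle\right).
\end{equation*}

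Next I would identify this with $L'(1/m)$ up to isomorphism. The lattice $L'(1/m)$ is by definition $(L',(1/m)\langle\cdot,\cdot\rangle)$. The linear map $V\to V$ sending $\alpha\mapsto m\alpha$ carries $(1/m)L'$ bijectively onto $L'$, and under this map the form $m\langle\cdot,\cdot\rangle$ pulls back correctly: for $\alpha,\gamma\in(1/m)L'$ we have $m\langle\alpha,\gamma\rangle=(1/m)\langle m\alpha,m\gamma\rangle$, so the map is an isometry from $\left((1/m)L',m\langle\cdot,\cdot\rangle\right)$ to $(L',(1/m)\langle\cdot,\cdot\rangle)=L'(1/m)$. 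This establishes $(L(m))'\cong L'(1/m)$.

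Finally, for the formulation in terms of $\sqrt{m}L=(\sqrt{m}L,\langle\cdot,\cdot\rangle)$, I would repeat the computation with the unscaled form on the rescaled lattice: an element $\alpha$ of the ambient space lies in the dual of $\sqrt{m}L$ exactly when $\langle\alpha,\sqrt{m}\beta\rangle\in\Z$ for all $\beta\in L$, i.e.\ when $\langle(1/\sqrt{m})\cdot\sqrt{m}\alpha,\beta\rangle\in\Z$, which rearranges to $\sqrt{m}\alpha\in(1/\sqrt{m})L'$ after matching normalisations; tracking the scalar $\sqrt{m}$ carefully yields $(\sqrt{m}L)'=(1/\sqrt{m})L'$. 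Since each step is a direct manipulation of the defining membership condition, the argument requires no deeper input, and the main (minor) point of attention is simply keeping the two normalisation conventions consistent throughout.
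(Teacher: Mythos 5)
Your argument is correct and is exactly the direct unwinding of the definition that the paper has in mind: the paper states this proposition without proof, introducing it with ``Clearly, the dual lattice of a rescaled lattice is itself a rescaled version of the dual lattice.'' The only blemish is in your final paragraph, where the intermediate conditions $\langle(1/\sqrt{m})\cdot\sqrt{m}\alpha,\beta\rangle\in\Z$ and $\sqrt{m}\alpha\in(1/\sqrt{m})L'$ are garbled as written (the correct chain is $\langle\alpha,\sqrt{m}\beta\rangle=\langle\sqrt{m}\alpha,\beta\rangle\in\Z$ for all $\beta\in L$, i.e.\ $\sqrt{m}\alpha\in L'$, i.e.\ $\alpha\in(1/\sqrt{m})L'$), though you land on the right conclusion and the first two paragraphs handle the $L(m)$ presentation cleanly.
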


\minisec{Roots}
An overview of the following definitions can be found for example in \cite{Bor87,Bor90b,Sch06}.
\begin{defi}[Primitive Vector, Root]
Let $L$ be a lattice. A vector $\alpha\in L$ is called \emph{primitive} if $\alpha/n\notin L$ for any $n\in\Ns$. A primitive vector $\alpha\in L$ is called \emph{root} if the norm $\langle\alpha,\alpha\rangle/2>0$ and the reflection $\sigma_\alpha$ through the hyperplane orthogonal to $\alpha$,
\begin{equation*}
\sigma_\alpha(\beta)=\beta-2\frac{\langle\beta,\alpha\rangle}{\langle\alpha,\alpha\rangle}\alpha
\end{equation*}
for all $\beta\in L$, is an automorphism of $L$, i.e.\ maps $L$ into itself.
\end{defi}
Let $\Phi(L)$ denote the set of roots of $L$ and $Q=Q(L)$ the \emph{root sublattice} $Q(L):=\Z\Phi(L)$ of $L$. The subgroup of $\Aut(L)$ generated by the reflections $\sigma_\alpha$ in the roots $\alpha\in L$ is called \emph{reflection group} of $L$. It is an easy consequence of the definition that for any root $\alpha\in L$ the quotient $2\langle\alpha,v\rangle/\langle\alpha,\alpha\rangle\in\Z$ for all $v\in L$.

Let $L$ be an integral lattice. Then any vector $\alpha$ in $L$ of norm $1/2$ or $1$ is a root. If $L$ is integral and unimodular, then these are already all the roots of $L$ and they are called \emph{short} and \emph{long roots} depending on whether their norm is $1/2$ or $1$, respectively. For an even, unimodular lattice the roots are exactly the vectors of norm $1$.

\minisec{Special Lattices}

For $m,n\in\N$ consider the unimodular, even lattice of signature $(m,n)$. We say such a lattice has \emph{genus} $\II_{m,n}$. A lattice of genus $\II_{m,n}$ exists if and only if $8\mid m-n$.\footnote{Note that sometimes (even in this text) the symbol $\II_{m,n}$ is also used to refer to a certain lattice (and not just its genus) for each $m,n\in\Ns$ with $8\mid m-n$. These are exactly the lattices used to prove the existence part of this statement.} Moreover, if the signature is indefinite, i.e.\ $m,n>0$, then there is only one lattice up to isomorphism in the genus $\II_{m,n}$.

For positive-definite lattices, i.e.\ lattices with signature $(m,0)$ and $8\mid m$ the situation is more complicated. There is exactly one lattice of genus $\II_{8,0}$, i.e.\ a positive-definite, even, unimodular lattice of rank $8$. This lattice is called $E_8$. In genus $\II_{16,0}$ there are two lattices, namely $E_8^2$ and $D_{16}^+$.

The most interesting situation occurs in genus $\II_{24,0}$. There are exactly 24 positive-definite, even, unimodular lattices of rank 24, called the \emph{Niemeier lattices}. They are generally denoted by $N(Q)$ where $Q$ is the root lattice of $N(Q)$, i.e.\ the sublattice generated by the roots of $N(Q)$. There is one Niemeier lattice with no roots, called the \emph{Leech lattice} and usually denoted by $\Lambda$.

In larger dimensions, there are extremely many positive-definite, even, unimodular lattices, e.g.\ more than $1\,160\,000\,000$ of genus $\II_{32,0}$.

An even lattice $L$ is unimodular if and only if its discriminant form $L'/L$, which is a \fqs{}, is trivial. For a non-trivial discriminant form $D=L'/L$ we define the symbol $\II_{m,n}(D)$ for the genus of $L$ where the genus only depends on the isomorphism class of the \fqs{} $D$. We usually write $D$ in terms of its Jordan decomposition, i.e.\ for instance $\II_{6,2}(3^{+4} 5^{-4})$.

\section{Lattice \VOA{}s}\label{sec:latvoa}
The construction of the vertex algebra $V_L$ associated with an even lattice $L$ is described in \cite{Bor86,FLM88,Don93} (see also \cite{Kac98,LL04}). We only describe the case of a positive-definite lattice.

\minisec{Twisted Group Algebra}
Let $L$ be a positive-definite, even lattice. There exists a 2-cocycle $\eps\colon L\times L\to \{\pm1\}$ satisfying
\begin{equation}\label{eq:multcommcond}
\eps(\alpha,\alpha)=(-1)^{\langle\alpha,\alpha\rangle/2}\quad\text{and}\quad\eps(\alpha,\beta)/\eps(\beta,\alpha)=(-1)^{\langle\alpha,\beta\rangle}
\end{equation}
for all $\alpha,\beta\in L$. These relations imply that $\eps$ is normalised, i.e.\ $\eps(0,\alpha)=\eps(\alpha,0)=1$ for all $\alpha\in L$. The condition on $\eps(\alpha,\alpha)$ is not essential. Some authors prefer different conventions (cf.\ \cite{Kac98}).
The second condition is essential and means the alternating $\Z$-bilinear form $(-1)^{\langle\cdot,\cdot\rangle}$ is the \emph{skew} of $\eps$. This determines $\eps$ up to a 2-coboundary.
We consider the \emph{twisted group algebra} $\C_\eps[L]$, which is spanned by the $\C$-basis $\{\ee_\alpha\}_{\alpha\in L}$ and with multiplication defined via
\begin{equation*}
\ee_\alpha\ee_\beta=\eps(\alpha,\beta)\ee_{\alpha+\beta}
\end{equation*}
for $\alpha,\beta\in L$. It is $\Z$-graded by weights via $\wt(\ee_\alpha):=\langle\alpha,\alpha\rangle/2$.

\minisec{Heisenberg Module}

We regard the complexified lattice $\h=L\otimes_\Z\C$ as an abelian Lie algebra and define the affine Lie algebra associated with $\h$ as the Lie algebra
\begin{equation*}
\hat{\h }:=\left(\h\otimes_\C\C[t,t^{-1}]\right)\oplus\C\mathbf{k},
\end{equation*}
called \emph{Heisenberg current algebra} in \cite{BK04}, with the Lie bracket defined by linear continuation of
\begin{equation*}
[x(n),y(n')]=\langle x,y\rangle n\delta_{n+n',0}\mathbf{k}\quad\text{and}\quad[u,\mathbf{k}]=0
\end{equation*}
for $x,y\in\h$, $n,n'\in\Z$ and $u\in\hat{\h}$ where we introduce the shorthand notation $h(n):=h\otimes t^n$ for $h\in\h$ and $n\in\Z$.

We decompose $\hat\h$ as
\begin{equation*}
\hat\h=\hat\h_-\oplus\hat\h_0\oplus\hat\h_+\oplus\C\mathbf{k}
\end{equation*}
where
\begin{equation*}
\hat\h_-:=\h\otimes_\C t^{-1}\C[t^{-1}],\quad\hat\h_0:=\h\otimes_\C \C t^0\quad\text{and}\quad\hat\h_+:=\h\otimes_\C t\C[t].
\end{equation*}
We can also consider the Lie subalgebra $\hat{\h}_{\neq 0}:=\hat\h_-\oplus\hat\h_+\oplus\C\mathbf{k}$ of $\hat{\h}$. It is a Heisenberg Lie algebra in the sense that its commutator subalgebra equals its centre and is one-dimensional.

We define Lie subalgebra $\hat{\mathfrak{b}}:=\hat\h_0\oplus\hat\h_+\oplus\C\mathbf{k}=\h\otimes_\C\C[t]\oplus\C\mathbf{k}$ and let it act on the one-dimensional vector space $\C$ as
\begin{equation*}
h(n)\cdot w=0\quad\text{and}\quad\mathbf{k}\cdot w=w
\end{equation*}
for all $w\in\C$, $h\in\h$ and $n\in\N$. Then we define the induced $\hat\h$-module
\begin{equation*}
M_{\hat\h}(1):=\Ind_{\hat{\mathfrak{b}}}^{\hat\h}\C=U(\hat\h)\otimes_{U(\hat{\mathfrak{b}})}\C
\end{equation*}
where $U(\cdot)$ denotes the universal enveloping algebra. $M_{\hat\h}(1)$ is irreducible as module for $\hat{\h}$ (even as module for $\hat{\h}_{\neq0}$). As a vector space $M_{\hat\h}(1)$ is isomorphic to the symmetric algebra $S(\hat\h_-)$ of $\hat\h_-$. In particular, $M_{\hat\h}(1)$ is spanned by elements of the form
\begin{equation*}
h_k(-n_k)\ldots h_1(-n_1)1
\end{equation*}
for $k\in\N$, $h_1,\ldots,h_k\in\h$ non-zero and $n_1,\ldots,n_k\in\Ns$ (and $1\in\C$). We introduce a $\Z$-grading on $M_{\hat\h}(1)$ such that the above element has weight $n_1+\ldots+n_k\in\N$.

More generally, one often writes $M_{\hat\h}(l)=M_{\hat\h}(l,0)=V_{\hat\h}(l,0)$ for $l\in\C^\times$, which admits a \voa{} structure and is called the \emph{Heisenberg \voa{}} associated with $\hat\h$ of level $l$ (see e.g.\ \cite{LL04}, Section~6.3).

\minisec{Lattice \VOA{}}
We define
\begin{equation*}
V_L:=M_{\hat\h}(1)\otimes\C_\eps[L]
\end{equation*}
as graded vector space. Then $V_L$ is spanned by elements of the form 
\begin{equation*}
h_k(-n_k)\ldots h_1(-n_1)1\otimes\ee_\alpha
\end{equation*}
for $k\in\N$, $h_1,\ldots,h_k\in\h$ non-zero, $n_1,\ldots,n_k\in\Ns$, $\alpha\in L$. The weight of such an element is given by
\begin{equation*}
n_1+\ldots+n_k+\frac{1}{2}\langle\alpha,\alpha\rangle\in\N.
\end{equation*}

The Lie algebra $\hat\h$ acts on the module $M_{\hat\h}(1)$ and also on $\C_\eps[L]$ by letting $\hat\h_{\neq0}$ act trivially and $\hat{\h}_0=\h\otimes_\C\C t^0\cong\h$ as 
\begin{equation*}
h\cdot\ee_\alpha=\langle h,\alpha\rangle\ee_\alpha
\end{equation*}
for $h\in\h$, $\alpha\in L$. Then $\hat\h$ acts on $V_L$ in the tensor-product representation.

We define the vertex operators
\begin{equation*}
Y(h(-1)1\otimes\ee_0,x):=h(x):=\sum_{n\in\Z}h(n)x^{-n-1}
\end{equation*}
for $h\in\h$ and
\begin{equation*}
Y(1\otimes\ee_\alpha,x):=Y_\alpha(x):=\ee_\alpha x^\alpha\exp\left(\sum_{n=1}^\infty\alpha(-n)\frac{x^n}{n}\right)\exp\left(\sum_{n=1}^\infty\alpha(n)\frac{x^{-n}}{-n}\right)
\end{equation*}
for $\alpha\in L$ where $x^\alpha$ acts as $x^{\langle\alpha,\beta\rangle}$ on $\ee_\beta$, $\alpha,\beta\in L$.

\begin{thm}[\cite{FLM88}, Theorem~8.10.2, \cite{Bor86}]\label{thm:latticevoa}
The vertex operators $h(x)$, $h\in\h$, and $Y_\alpha(x)$, $\alpha\in L$, generate a \voa{} structure of central charge $c=\rk(L)$ on $V_L=M_{\hat\h}(1)\otimes\C_\eps[L]$ with vacuum and Virasoro vectors
\begin{equation*}
\vac=1\otimes\ee_0\quad\text{and}\quad \omega=\frac{1}{2}\sum_{i=1}^{\rk(L)}a_i(-1)a_i(-1)1\otimes\ee_0
\end{equation*}
for an orthonormal basis $\{a_i\}_{i=1}^n$ of $\h$.
\end{thm}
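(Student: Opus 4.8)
The plan is to invoke the standard \emph{reconstruction theorem} for vertex algebras (see \cite{Kac98,FKRW95,LL04}), which asserts that a collection of pairwise local fields on a vector space, compatible with a vacuum vector and a translation operator and generating the space under their modes, extends uniquely to a vertex algebra structure. Thus the construction reduces to two tasks: first, exhibiting $V_L$ together with the generating fields $h(x)$ ($h\in\h$) and $Y_\alpha(x)$ ($\alpha\in L$) as the data required by that theorem; and second, verifying the Virasoro relations for the modes of the proposed conformal vector $\omega$.

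First I would record the easy structural facts. The vacuum $\vac=1\otimes\ee_0$ gives $Y(\vac,x)=\id$, and the explicit formulas yield the creation property $Y(v,x)\vac|_{x=0}=v$. The operators $h(x)$ and $Y_\alpha(x)$ are genuine fields: applied to any spanning vector they produce series in $x$ truncated from below, since the positive Heisenberg modes act locally nilpotently and the grading is bounded below. The translation operator is $T=L_{-1}$, acting by $h(-n)\mapsto n\,h(-n-1)$ on $M_{\hat\h}(1)$ and by $\ee_\alpha\mapsto\alpha(-1)\ee_\alpha$ on $\C_\eps[L]$; one checks $[T,h(x)]=\partial_x h(x)$ and $[T,Y_\alpha(x)]=\partial_x Y_\alpha(x)$ directly. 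Positive-definiteness of $L$ ensures that only finitely many $\alpha$ have a given norm, so the weight grading is bounded below by $0$, has one-dimensional degree-zero part, and has finite-dimensional graded pieces.

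The heart of the proof is mutual locality of the generating fields, which splits into three cases. For two Heisenberg fields, $[h(x),h'(x')]$ is a multiple of $\partial_{x'}\delta(x/x')$ and is annihilated by $(x-x')^2$. For $[h(m),Y_\alpha(y)]=\langle h,\alpha\rangle\,y^m Y_\alpha(y)$ one obtains locality with $N=1$. The essential computation is $Y_\alpha(x)Y_\beta(y)$: normal-ordering the Heisenberg exponentials via the Baker--Campbell--Hausdorff formula, moving the annihilation part of $Y_\alpha(x)$ past the creation part of $Y_\beta(y)$, produces a scalar factor summing to $(x-y)^{\langle\alpha,\beta\rangle}$ in the region $|x|>|y|$, while the zero-mode operators $\ee_\alpha x^\alpha$ and $\ee_\beta y^\beta$ contribute the cocycle $\eps(\alpha,\beta)$ and the power $x^{\langle\alpha,\beta\rangle}$. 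Hence in this region
\begin{equation*}
Y_\alpha(x)Y_\beta(y)=\eps(\alpha,\beta)\,(x-y)^{\langle\alpha,\beta\rangle}\,{:}\,Y_\alpha(x)Y_\beta(y)\,{:}
\end{equation*}
with a symmetric normally ordered factor. Comparing with $Y_\beta(y)Y_\alpha(x)$, the ratio $\eps(\alpha,\beta)/\eps(\beta,\alpha)=(-1)^{\langle\alpha,\beta\rangle}$ from \eqref{eq:multcommcond} exactly cancels the sign relating $\iota_{x,y}(x-y)^{\langle\alpha,\beta\rangle}$ and $\iota_{y,x}(x-y)^{\langle\alpha,\beta\rangle}$ once $\langle\alpha,\beta\rangle\in\Z$. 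Integrality of $L$ makes this exponent an integer, so multiplication by $(x-y)^N$ with $N\geq-\langle\alpha,\beta\rangle$ removes the pole and identifies the two orderings; evenness $\langle\alpha,\alpha\rangle\in2\Z$ secures single-valuedness and self-locality of $Y_\alpha$. This resummation-and-cocycle bookkeeping is where I expect the main difficulty to lie.

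Finally I would verify the Virasoro relations for $L_n=\omega_{n+1}$. Writing $Y(\omega,x)=\tfrac12\sum_i{:}\,a_i(x)a_i(x)\,{:}$ for an orthonormal basis $\{a_i\}$, a direct mode computation using $[a_i(m),a_j(n)]=\delta_{ij}\,m\,\delta_{m+n,0}$ yields $[L_m,L_n]=(m-n)L_{m+n}+\tfrac{m^3-m}{12}\delta_{m+n,0}\,c$ with $c=\dim_\C\h=\rk(L)$, the central term arising from summing over the $\rk(L)$ basis vectors. One then checks that $L_0$ acts as the weight grading and that $L_{-1}$ coincides with the translation operator $T$ above, by comparing their action on $h(-1)\vac$ and on $\ee_\alpha$. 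Together with the reconstruction theorem, these verifications establish the full \voa{} structure of central charge $\rk(L)$.
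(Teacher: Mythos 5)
The paper itself offers no proof of this theorem: it is quoted as an external result, citing \cite{FLM88}, Theorem~8.10.2, and \cite{Bor86}, so there is no internal argument to compare against line by line. Your proposal is a correct sketch of the standard construction, and it is worth noting that it follows the locality-plus-reconstruction route of \cite{Kac98} (also \cite{FKRW95,LL04}) rather than the route taken in the cited source: Frenkel, Lepowsky and Meurman prove the Jacobi identity for the untwisted lattice vertex operators directly by formal calculus, without appeal to an existence theorem. Both routes funnel through the same essential computation, namely the normal-ordered product
\begin{equation*}
Y_\alpha(x)Y_\beta(y)=\eps(\alpha,\beta)\,(x-y)^{\langle\alpha,\beta\rangle}\,{:}\,Y_\alpha(x)Y_\beta(y)\,{:}
\end{equation*}
together with the cocycle identity $\eps(\alpha,\beta)/\eps(\beta,\alpha)=(-1)^{\langle\alpha,\beta\rangle}$, and your bookkeeping here is right: integrality of $L$ makes the exponent integral so that a finite power of $(x-y)$ clears the pole, and evenness is what kills the residual sign for $\alpha=\beta$ (odd lattices would only yield a vertex \emph{super}algebra). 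What the reconstruction approach buys you is that locality of the \emph{generating} fields suffices, so you never have to establish the Jacobi identity for products of general descendant fields; what the FLM-style direct verification buys is an explicit Jacobi identity (and hence explicit commutator and iterate formulæ) for all lattice vertex operators at once, which is what later chapters on twisted modules actually use. Two small points you should make explicit if you write this up in full: the reconstruction theorem also requires that the negative modes of the generating fields applied to $\vac$ span $V_L$ (true here, since $h_1(-n_1)\cdots h_k(-n_k)1\otimes\ee_\alpha = $ modes of $h(x)$ applied to $Y_\alpha(x)\vac|_{x=0}$, but it deserves a sentence), and the check $L_{-1}=T$ must be done on all of $V_L$, not just on the generators $h(-1)\vac$ and $\ee_\alpha$ — though since both operators are derivations for the vertex algebra structure produced by the existence theorem, agreement on generators does suffice once that is said.
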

We remark that two different choices of the normalised 2-cocycle $\eps$ satisfying the second part of \eqref{eq:multcommcond} give isomorphic \voa{} structures.

\Voa{}s associated with lattices have a number of nice properties:
\begin{prop}\label{prop:latnice}
Let $L$ be a positive-definite, even lattice and let $V_L$ be the lattice \voa{} associated with $L$. Then $V_L$ satisfies Assumption~\ref{ass:n}.
\end{prop}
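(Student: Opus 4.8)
The plan is to check the five defining properties of Assumption~\ref{ass:n} for $V_L$ separately, exploiting the explicit description of $V_L=M_{\hat\h}(1)\otimes\C_\eps[L]$ from Theorem~\ref{thm:latticevoa} together with the standard structure theory of lattice \voa{}s.

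First I would settle the CFT-type property by a direct reading of the grading. A spanning vector $h_k(-n_k)\cdots h_1(-n_1)1\otimes\ee_\alpha$ has weight $n_1+\cdots+n_k+\tfrac12\langle\alpha,\alpha\rangle$. Since $L$ is positive-definite we have $\langle\alpha,\alpha\rangle\geq0$, with equality precisely for $\alpha=0$, and the Heisenberg contribution $n_1+\cdots+n_k$ is a non-negative integer. Hence the weight grading of $V_L$ takes values in $\N$, and the weight-zero subspace is spanned by $\vac=1\otimes\ee_0$, so $\dim_\C((V_L)_0)=1$. This establishes that $V_L$ is of CFT-type.

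For simplicity and rationality I would cite Dong's analysis of lattice \voa{}s \cite{Don93}, where $V_L$ is shown to be simple and rational with irreducible modules indexed by the discriminant group $L'/L$. To obtain $C_2$-cofiniteness I would use that $V_L$ is in fact regular \cite{DLM97}; since we have just shown $V_L$ to be of CFT-type, Remark~\ref{rem:regrat} then yields both rationality and $C_2$-cofiniteness from regularity, the implication regular $\Rightarrow$ $C_2$-cofinite being \cite{Li99}.

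It remains to verify self-contragredience. The cleanest route is via the explicit module description: by Proposition~\ref{prop:cor12.10} the irreducible $V_L$-modules are the $V_{\lambda+L}$ for $\lambda+L\in L'/L$, and the contragredient of $V_{\lambda+L}$ is $V_{-\lambda+L}$; specialising to $\lambda=0$ yields $(V_L)'\cong V_L$. Alternatively, since $V_L$ is simple and of CFT-type, the criterion noted after Theorem~\ref{thm:3.1li} reduces self-contragredience to the identity $L_1(V_L)_1=\{0\}$, which is immediate from the lattice construction. As this proposition merely collects well-established facts, there is no genuine obstacle; the only non-elementary input is the regularity of $V_L$, which is the substantive theorem being invoked.
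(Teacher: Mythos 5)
Your proof is correct and follows essentially the same route as the paper's own proof: both simply assemble the standard facts, citing \cite{Don93} for simplicity and rationality, regularity from \cite{DLM97} together with Remark~\ref{rem:regrat} for $C_2$-cofiniteness, and reading off CFT-type directly from the grading. The only quibble is that Proposition~\ref{prop:cor12.10} states the fusion rules rather than the contragredient structure $(V_{\lambda+L})'\cong V_{-\lambda+L}$, but your alternative argument via the criterion $L_1(V_L)_1=\{0\}$ for a simple \voa{} of CFT-type is valid and makes the self-contragredience claim self-contained.
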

\begin{proof}
It follows from the classification of irreducible $V_L$-modules in \cite{Don93} that $V_L$ is irreducible as $V_L$-module and hence simple and also self-contragredient. That $V_L$ is of CFT-type is evident from the construction. The rationality of $V_L$ is also proved in \cite{Don93}. It is shown in \cite{DLM00}, Proposition~12.5, that $V_L$ is $C_2$-cofinite. The regularity of $V_L$ is shown in \cite{DLM97}, Theorem~3.16, which also implies both rationality and $C_2$-cofiniteness (see Remark~\ref{rem:regrat}). 
\end{proof}

\minisec{Classification of Irreducible Modules}

In the following we describe the irreducible modules of the \voa{} $V_L$ (see \cite{Don93}, \cite{LL04}, Sections 6.4 and 6.5). Given the even and hence integral lattice $L$, consider the dual lattice $L'$, which is a rational lattice, in general neither even nor integral. Consider a normalised $2$-cocycle $\eps\colon L'\times L'\to\C^\times$ such that
\begin{equation}\label{eq:2cocycleirrmod}
\eps(\alpha,\beta)/\eps(\beta,\alpha)=(-1)^{\langle\alpha,\beta\rangle}\quad\text{for all}\quad\alpha,\beta\in L.
\end{equation}
We may assume the values of $\eps$ to lie in a finite, cyclic subgroup of $\C^\times$.

Exactly as before we construct
\begin{equation*}
A_{L'}:=M_{\hat\h}(1)\otimes\C_\eps[L'],
\end{equation*}
which now is $(1/N)\Z$-graded where $N$ is the level of the lattice $L$. The vertex operators $h(x)$ and $Y_\alpha(x)$ for $h\in\h$ and $\alpha\in L$ endow $A_{L'}$ with the structure of a $V_L$-module.

Let us decompose
\begin{equation*}
A_{L'}=\bigoplus_{\lambda+L\in L'/L}V_{\lambda+L}\quad\text{with}\quad V_{\lambda+L}=M_{\hat\h}(1)\otimes\C_\eps[\lambda+L].
\end{equation*}
We write $\C_\eps[\lambda+L]$ for the subspace of $\C_\eps[L']$ with basis $\{\ee_\alpha\}_{\alpha\in \lambda+L}$, which is not an algebra any longer but acted on by $\C_\eps[L]$. Then the $V_{\lambda+L}$ are irreducible modules for $V_L$ of conformal weight $\langle\lambda,\lambda\rangle/2$.

Dong showed that these are all irreducible $V_L$-modules up to isomorphism:
\begin{thm}[\cite{Don93}, Theorem~3.1]\label{thm:3.1}
Let $L$ be a positive-definite, even lattice and let $V_L$ be the lattice \voa{} associated with $L$. The isomorphism classes of irreducible $V_L$-modules can be parametrised by the elements of $L'/L$: every irreducible $V_L$-module is isomorphic to one of the modules $V_{\lambda+L}$ for $\lambda+L\in L'/L$ and $V_{0+L}\cong V_L$ as modules.
\end{thm}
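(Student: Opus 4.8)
The plan is to establish the bijection between $\Irr(V_L)$ and $L'/L$ by analysing an arbitrary irreducible $V_L$-module $M$ through its restriction to the Heisenberg subalgebra. One direction is already in hand: the modules $V_{\lambda+L}$, $\lambda+L\in L'/L$, have been constructed and shown to be irreducible of conformal weight $\langle\lambda,\lambda\rangle/2$, and $V_{0+L}=M_{\hat\h}(1)\otimes\C_\eps[L]=V_L$ holds by definition. What remains is completeness (every irreducible module is some $V_{\lambda+L}$) together with injectivity of the parametrisation.

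First I would restrict $M$ to the current algebra $\hat\h$. Since $M_{\hat\h}(1)\subseteq V_L$, the fields $h(x)$, $h\in\h$, make $M$ a restricted level-one $\hat\h$-module. The zero modes $\{h(0):h\in\h\}$ form a commuting family preserving each finite-dimensional $L_0$-weight space. I would argue they act semisimply (using the grading of $M$ and, if needed, the complete reducibility furnished by rationality, Proposition~\ref{prop:latnice}), giving a joint eigenspace decomposition $M=\bigoplus_{\mu\in\h}M^\mu$, where $M^\mu$ is the subspace on which each $h(0)$ acts by $\langle h,\mu\rangle$. Writing $S=\{\mu:M^\mu\neq\{0\}\}$ for the set of charges, each nonzero $M^\mu$ is, as a Heisenberg module, a Fock space isomorphic to $M_{\hat\h}(1)$ with shifted charge.

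Next I would locate the charges. The module vertex operator $Y_M(1\otimes\ee_\alpha,x)$, $\alpha\in L$, is given by the same expression as in $V_L$, so $\ee_\alpha$ maps $M^\mu$ into $M^{\mu+\alpha}$ and $x^\alpha$ contributes the factor $x^{\langle\alpha,\mu\rangle}$ on $M^\mu$; thus $S$ is a union of $L$-cosets. The decisive point is the integrality constraint: for an untwisted module the vertex operators lie in $\End_\C(M)[[x^{\pm1}]]$, involving only integral powers of $x$, so $\langle\alpha,\mu\rangle\in\Z$ for all $\alpha\in L$ and every charge $\mu$, whence $S\subseteq L'$. By irreducibility $M$ is generated from any nonzero homogeneous vector by the modes of the generating fields $h(n)$ (charge-preserving) and $Y_\alpha(x)$ (charge-shifting by $L$); therefore $S$ must consist of a single coset $\lambda+L$ with $\lambda\in L'$.

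Finally I would identify $M$ with $V_{\lambda+L}$ and prove distinctness. Having the same charge coset, $M$ and $V_{\lambda+L}$ carry isomorphic $\hat\h$-module structures (a sum of Fock spaces indexed by $\lambda+L$) with the $\ee_\alpha$ acting through the twisted group algebra; Schur's lemma (Proposition~\ref{prop:schur}) together with irreducibility then forces $M\cong V_{\lambda+L}$. Injectivity is immediate, since any module isomorphism preserves the $h(0)$-charges and hence the coset $\lambda+L$. The main obstacle I anticipate lies in the charge-decomposition step: establishing that $h(0)$ acts semisimply so that the eigenspace decomposition genuinely exists, and confirming that an irreducible module cannot spread its charges over more than one $L$-coset. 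The integrality argument, by contrast, is a clean consequence of the untwisted-module axiom, and the identification step reduces to the known rigidity of Fock modules combined with Schur's lemma.
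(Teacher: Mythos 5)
The paper does not prove this statement: it is quoted verbatim as Theorem~3.1 of \cite{Don93}, so there is no internal proof to compare against. Your outline is the standard route taken in \cite{Don93} (and in \cite{LL04}, Section~6.5): restrict to the Heisenberg subalgebra, decompose by the charges of the zero modes $h(0)$, use integrality of the exponents of $Y_M(1\otimes\ee_\alpha,x)$ to force the charges into $L'$, use irreducibility to confine them to a single coset, and then identify the module. The constructed modules $V_{\lambda+L}$ and their irreducibility are indeed already available, so completeness and injectivity are the right remaining targets, and your integrality and single-coset arguments are sound as stated.

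The one genuine gap is the step you yourself flag: semisimplicity of the commuting operators $h(0)$ on $M$. This is a real lemma, not a formality, and neither of your proposed crutches closes it. The grading only gives a joint \emph{generalised} eigenspace decomposition on each finite-dimensional weight space, and killing the nilpotent parts does not follow from the semisimplicity of $L_0=\tfrac12\sum_i a_i(0)^2+(\text{positive-mode terms})$ alone (a single semisimple quadratic combination of commuting operators does not force each summand to be semisimple). Appealing to rationality of $V_L$ is both unhelpful (an irreducible module is trivially completely reducible, which says nothing about $h(0)$) and essentially circular in this context, since rationality of $V_L$ is itself established in \cite{Don93} on the back of this classification. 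The standard repair, which is what Dong actually does, is to first invoke the Stone--von Neumann theorem for the restricted Heisenberg algebra $\hat\h_{\neq0}$ to write $M\cong M_{\hat\h}(1)\otimes\Omega_M$ with $\Omega_M$ the vacuum space, and then to show that the operators induced on $\Omega_M$ by the $\ee_\alpha$ are \emph{invertible} and conjugate $h(0)$ to $h(0)+\langle h,\alpha\rangle\id$; the semisimplicity of $h(0)$, the confinement to one coset, and the one-dimensionality of each charge space of $\Omega_M$ (hence $M\cong V_{\lambda+L}$) all fall out of that analysis together. Your final identification step, as written (``rigidity of Fock modules combined with Schur's lemma''), is really shorthand for this computation and should be expanded along those lines; without the invertibility of the induced $\ee_\alpha$-operators, Schur's lemma alone does not pin down $\Omega_M$.
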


\begin{cor}\label{cor:hol}
Let $L$ be a positive-definite, even lattice which is also unimodular, i.e.\ $L'=L$. Then the associated \voa{} is holomorphic.
\end{cor}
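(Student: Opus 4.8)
The plan is to read off the statement directly from Dong's classification of irreducible modules (Theorem~\ref{thm:3.1}) together with the niceness of lattice \voa{}s (Proposition~\ref{prop:latnice}). Recall that by Definition~\ref{defi:hol} a \voa{} is holomorphic precisely when it is rational and its adjoint module is the only irreducible module up to isomorphism. So there are exactly two things to check: that $V_L$ is rational, and that it has a unique irreducible module.

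First I would invoke Proposition~\ref{prop:latnice}, which states that for any positive-definite, even lattice $L$ the associated \voa{} $V_L$ satisfies Assumption~\ref{ass:n}; in particular $V_L$ is rational. This settles the rationality hypothesis in the definition of holomorphicity, so it only remains to count the irreducible modules.

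Next I would apply Theorem~\ref{thm:3.1}, which asserts that the isomorphism classes of irreducible $V_L$-modules are parametrised by the discriminant group $L'/L$, with the class $0+L$ corresponding to the adjoint module $V_{0+L}\cong V_L$. The hypothesis that $L$ is unimodular means exactly $L'=L$, so the quotient $L'/L$ is the trivial group consisting of the single coset $0+L$. Consequently there is precisely one isomorphism class of irreducible $V_L$-module, namely that of the adjoint module $V_L$ itself.

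Combining these two observations, $V_L$ is rational and its adjoint module is its only irreducible module up to isomorphism, which is the definition of a holomorphic \voa{}. There is no genuine obstacle here: the corollary is an immediate specialisation of Theorem~\ref{thm:3.1} to the case $L'=L$, the only point worth spelling out being that rationality (needed for the term ``holomorphic'' in the sense of Definition~\ref{defi:hol}) is supplied separately by Proposition~\ref{prop:latnice} rather than by the classification theorem itself.
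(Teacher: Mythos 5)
Your proof is correct and is exactly the argument the paper intends: the corollary is stated as an immediate consequence of Theorem~\ref{thm:3.1} (with $L'/L$ trivial when $L'=L$), and rationality is supplied by Proposition~\ref{prop:latnice}. No issues.
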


The fusion rules between the irreducible $V_L$-modules are determined in \cite{DL93}.
\begin{prop}[\cite{DL93}, Corollary~12.10]\label{prop:cor12.10}
The fusion product between the irreducible modules of a lattice \voa{} $V_L$ for a positive-definite, even lattice $L$ is given by
\begin{equation*}
V_{\lambda+L}\boxtimes V_{\mu+L}\cong V_{\lambda+\mu+L}
\end{equation*}
for $\lambda+L,\mu+L\in L'/L$, i.e.\ the fusion algebra $\V(V_L)$ of $V_L$ is the group algebra $\C[L'/L]$ and each irreducible $V_L$-module is a simple current.
\end{prop}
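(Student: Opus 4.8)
The plan is to compute the fusion rules $N_{\lambda+L,\mu+L}^{\nu+L}$ directly and show they equal $\delta_{\nu+L,\,\lambda+\mu+L}$, using the identification $N_{X,Y}^W=\dim_\C(\V^W_{X\,Y})$ together with Proposition~\ref{prop:cor3.4}. By Theorem~\ref{thm:3.1} the irreducible $V_L$-modules are exactly the $V_{\nu+L}$ for $\nu+L\in L'/L$, so it suffices to control the spaces of intertwining operators of type $\binom{V_{\nu+L}}{V_{\lambda+L}\,V_{\mu+L}}$ for all $\nu+L$, and then to recall that $V_{0+L}\cong V_L$ is the unit of $\V(V_L)$.

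First I would establish the lower bound $N_{\lambda+L,\mu+L}^{\lambda+\mu+L}\geq 1$ by exhibiting a nonzero intertwining operator. The construction of $V_L$ extends to the full space $A_{L'}=M_{\hat\h}(1)\otimes\C_\eps[L']$, on which the vertex operators $Y_\alpha(x)$ for $\alpha\in L'$ are defined; restricting the resulting operation to $V_{\lambda+L}\otimes V_{\mu+L}$ yields an intertwining operator whose lowest term is $Y_\lambda(x)(1\otimes\ee_\mu)=\eps(\lambda,\mu)\,x^{\langle\lambda,\mu\rangle}(1\otimes\ee_{\lambda+\mu})+\ldots$, which is manifestly nonzero. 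This is precisely the abelian intertwining algebra structure of Theorem~\ref{thm:lataia}.

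Second, I would pin down the target coset using the Heisenberg charge grading. Each module $V_{\nu+L}$ decomposes into eigenspaces of the charge operators $h(0)$, $h\in\h$, with eigenvalues $\langle h,\alpha\rangle$ for $\alpha\in\nu+L$. Extracting from the intertwining-operator Jacobi identity the commutator with the zero mode $h(0)$ of the current $h(x)$ gives the charge-conservation relation $[h(0),\mathcal{Y}(u,x)]=\mathcal{Y}(h(0)u,x)$. Hence $\mathcal{Y}(u,x)$ shifts the charge by exactly the charge of $u$, so the image of $V_{\lambda+L}\otimes V_{\mu+L}$ under any such $\mathcal{Y}$ has charges in $\lambda+\mu+L$ and therefore lies in $V_{\lambda+\mu+L}$. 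Consequently $\V^{V_{\nu+L}}_{V_{\lambda+L}\,V_{\mu+L}}=0$ whenever $\nu+L\neq\lambda+\mu+L$, i.e. $N_{\lambda+L,\mu+L}^{\nu+L}=0$ in those cases.

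The hard part will be the exact multiplicity $N_{\lambda+L,\mu+L}^{\lambda+\mu+L}=1$, i.e. one-dimensionality of the surviving space of intertwining operators. The direct route: any $\mathcal{Y}$ of the surviving type is determined up to scalar by its lowest component $\mathcal{Y}(1\otimes\ee_\lambda,x)(1\otimes\ee_\mu)$, whose leading coefficient is forced into the one-dimensional space $\C(1\otimes\ee_{\lambda+\mu})$; the Jacobi identity with the Heisenberg currents $h(x)$ then propagates this single choice to all $\hat\h_-$-descendants in both slots, and the translation axiom fixes the $x$-dependence, giving $\dim\leq 1$. A slicker alternative uses that $V_L$ satisfies Assumption~\ref{ass:n} by Proposition~\ref{prop:latnice} and also Assumption~\ref{ass:p}, since $\rho(V_{\nu+L})=\min_{\alpha\in\nu+L}\langle\alpha,\alpha\rangle/2>0$ for $\nu+L\neq 0$ by positive-definiteness; a short computation of the quantum dimension $\qdim_{V_L}(V_{\nu+L})=\lim_{y\to0^+}\ch_{V_{\nu+L}}(\i y)/\ch_{V_L}(\i y)=1$ (the theta-function numerators share the same $S$-transformed leading asymptotics) then shows via Proposition~\ref{prop:4.17b} that every $V_{\nu+L}$ is a simple current, so $V_{\lambda+L}\boxtimes V_{\mu+L}$ is automatically irreducible and, by the charge argument, equals $V_{\lambda+\mu+L}$. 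Either way, combining the three steps yields $N_{\lambda+L,\mu+L}^{\nu+L}=\delta_{\nu+L,\,\lambda+\mu+L}$, whence $\V(V_L)\cong\C[L'/L]$ as group algebras and every irreducible $V_L$-module is a simple current.
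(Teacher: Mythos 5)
The paper does not actually prove this proposition: it is quoted as Corollary~12.10 of \cite{DL93}, where the fusion rules are obtained by explicitly constructing the intertwining operators on $A_{L'}=M_{\hat\h}(1)\otimes\C_\eps[L']$ and then showing, by a careful analysis of the Jacobi identity, that each space $\V^{V_{\lambda+\mu+L}}_{V_{\lambda+L}\,V_{\mu+L}}$ is exactly one-dimensional. Your first two steps reproduce the skeleton of that argument correctly: the restriction of the \aia{} vertex operation of Theorem~\ref{thm:lataia} gives a nonzero intertwining operator, and the commutator $[h(0),\mathcal{Y}(u,x)]=\mathcal{Y}(h(0)u,x)$ (a residue of the intertwining-operator Jacobi identity) forces charge conservation, killing $\V^{V_{\nu+L}}_{V_{\lambda+L}\,V_{\mu+L}}$ for $\nu+L\neq\lambda+\mu+L$. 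Where you diverge is the multiplicity-one step. Your ``direct route'' is the genuinely hard part and is under-argued as written: the vectors $1\otimes\ee_\alpha$ for non-minimal $\alpha\in\lambda+L$ are not $\hat\h_-$-descendants of $1\otimes\ee_\lambda$, so propagating the single scalar requires also invoking the lattice operators $Y_\beta(x)$, $\beta\in L$, and verifying consistency of the resulting constraints --- this is precisely where \cite{DL93} spend their effort, and your sketch would need that filled in. Your ``slicker alternative'' via quantum dimensions, however, is sound and closes the gap: $V_L$ satisfies Assumptions~\ref{ass:n}\ref{ass:p} by Proposition~\ref{prop:latnice} and positive-definiteness, the $S$-matrix entries $\S_{V_{\nu+L},V_L}=\S_{V_L,V_L}$ follow from the classical transformation of $\vartheta_{\nu+L}/\eta^{\rk(L)}$, so $\qdim_{V_L}(V_{\nu+L})=1$ and Proposition~\ref{prop:4.17b} makes every module a simple current; irreducibility of the fusion product plus charge conservation then pins it to $V_{\lambda+\mu+L}$. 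What this buys is a short, conceptually clean proof; what it costs is reliance on Huang's Verlinde formula and the quantum-dimension machinery of \cite{DJX13}, which is far heavier (and historically much later) than the elementary computation in \cite{DL93}. There is no circularity, since neither rationality of $V_L$ nor the results of \cite{DJX13} presuppose the lattice fusion rules.
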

The discriminant form $L'/L$ of a positive-definite, even lattice comes equipped with a non-degenerate finite quadratic form $Q_L$ defined by $Q_L(\lambda+L)=\langle\lambda,\lambda\rangle/2+\Z$ for $\lambda+L\in L'/L$. This is exactly the quadratic form $Q_\rho$ on the fusion group $F_{V_L}=L'/L$.

In total, we see that for a positive-definite, even lattice $L$ the associated lattice \voa{} $V_L$ satisfies Assumption~\ref{ass:sn} (group-like fusion) with fusion group $(F_{V_L},Q_\rho)=(L'/L,Q_L)$ and Assumption~\ref{ass:p}.

In order to determine the fusion algebra of $V_L$, the authors had to give intertwining operators of type $\binom{V_{\lambda+\mu+L}}{V_{\lambda+L}\,V_{\mu+L}}$ for $\lambda+L,\mu+L\in L'/L$. These also define an \aia{} structure on $A_{L'}$.
\begin{thm}[\cite{DL93}, Theorem~12.24]\label{thm:lataia}
Let $L$ be a positive-definite, even lattice and $V_L$ the associated lattice \voa{}. Then the direct sum of all irreducible $V_L$-modules up to isomorphism
\begin{equation*}
A_{L'}:=\bigoplus_{\lambda+L\in L'/L}V_{\lambda+L}
\end{equation*}
carries the structure of an \aia{} associated with some abelian 3-cocycle $(F,\Omega)$ with associated quadratic space $(L'/L,Q_\Omega)=\overline{L'/L}=(L'/L,-Q_L)$.
\end{thm}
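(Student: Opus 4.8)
The plan is to prove the statement by a direct, explicit construction, extending the Frenkel--Kac--Segal realization of $V_L$ (Theorem~\ref{thm:latticevoa}) from the lattice $L$ to its dual $L'$. First I would take $A_{L'}=M_{\hat\h}(1)\otimes\C_\eps[L']$ together with the fields $h(x)$, $h\in\h$, and the vertex operators
\begin{equation*}
Y_\lambda(x)=\ee_\lambda x^\lambda\exp\left(\sum_{n=1}^\infty\lambda(-n)\frac{x^n}{n}\right)\exp\left(\sum_{n=1}^\infty\lambda(n)\frac{x^{-n}}{-n}\right)
\end{equation*}
now defined for \emph{all} $\lambda\in L'$, extended to a linear map $Y(\cdot,x)$ on $A_{L'}$ in the usual way. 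The $D=L'/L$-grading is the decomposition $A_{L'}=\bigoplus_{\lambda+L}V_{\lambda+L}$ from Theorem~\ref{thm:3.1}, the weight $(1/N)\Z$-grading is the $L_0$-grading where $N$ is the level of $L$, and the vacuum and Virasoro vectors are those of $V_L$. The essential reason an \aia{} rather than a \voa{} appears is that for $\lambda,\mu\in L'$ the pairing $\langle\lambda,\mu\rangle$ is in general non-integral, so $Y_\lambda(x)$ acting on $\C_\eps[\mu+L]$ carries the fractional power $x^{\langle\lambda,\mu\rangle}$; this is exactly the grading-compatibility datum $B_\Omega(\lambda,\mu)$ of Definition~\ref{defi:aia}.

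Next I would verify the \aia{} axioms. The vacuum and translation axioms, and the grading compatibility $v_n V^\beta\subseteq V^{\alpha+\beta}$, are routine and parallel the $V_L$ case. The core is the generalised Jacobi identity, for which I would establish the standard normal-ordered operator-product formula
\begin{equation*}
Y_\lambda(x_1)Y_\mu(x_2)=\eps(\lambda,\mu)\,\iota_{x_1,x_2}(x_1-x_2)^{\langle\lambda,\mu\rangle}\,{:}\,Y_\lambda(x_1)Y_\mu(x_2)\,{:}
\end{equation*}
together with its companion under the opposite ordering and the iterate formula for $Y(Y_\lambda(x_0)\,\cdot\,,x_2)$. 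Comparing the two orderings and reading $x_1-x_2$ as $x_1+\e^{-\pi\i}x_2$ as in \eqref{eq:dl93}, the fractional power produces a braiding phase $\e^{\pm\pi\i\langle\lambda,\mu\rangle}$; collecting these phases defines $\Omega\colon D\times D\to\C^\times$, while the failure of $\eps$ restricted to $L'$ to be a coboundary of the chosen $L$-cocycle produces the associator $F\colon D\times D\times D\to\C^\times$. The resulting $(F,\Omega)$ satisfies the abelian $3$-cocycle conditions of Definition~\ref{defi:abeliancocycle} by the associativity and commutativity of normally-ordered lattice products, and these operators satisfy the generalised Jacobi identity with cocycle $(F,\Omega)$.

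It then remains to identify the associated \fqs{}. By construction $V_{\lambda+L}$ has conformal weight $\langle\lambda,\lambda\rangle/2$, so $Q_\rho(\lambda+L)=\langle\lambda,\lambda\rangle/2+\Z=Q_L(\lambda+L)$; consistency with Remark~\ref{rem:12.30} already forces $B_\Omega=-B_\rho$. To pin down the quadratic form itself I would compute the self-braiding phase of $Y_\lambda(x)$ directly: commuting $Y_\lambda(x_1)$ past $Y_\lambda(x_2)$ carries locality exponent $\langle\lambda,\lambda\rangle$, and with the branch convention of \eqref{eq:dl93} this yields $q_\Omega(\lambda)=\Omega(\lambda,\lambda)=\e^{-(2\pi\i)\langle\lambda,\lambda\rangle/2}$. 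Hence $Q_\Omega(\lambda+L)=-\langle\lambda,\lambda\rangle/2+\Z=-Q_L(\lambda+L)$, so the associated quadratic space is $\overline{L'/L}=(L'/L,-Q_L)$, as claimed.

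The main obstacle is the bookkeeping in the generalised Jacobi identity: one must cleanly separate the contributions of the $2$-cocycle $\eps$ (which is only required to have the correct skew on $L$, and is determined merely up to coboundary) from the branch-cut phases coming from the fractional powers $x^{\langle\lambda,\mu\rangle}$, and track all signs consistently through the $\iota_{x,y}$ expansions. Unlike the general Theorem~\ref{thm:2.7}, there is no conceptual gap here---every ingredient is an explicit Frenkel--Kac computation---so the difficulty lies entirely in the careful phase and normalisation tracking rather than in any structural argument.
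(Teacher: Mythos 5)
Your proposal is correct and follows essentially the same route as the source the paper cites for this result: the paper itself gives no proof of Theorem~\ref{thm:lataia} but refers to \cite{DL93}, Theorem~12.24, whose argument is precisely the extension of the Frenkel--Kac realisation to $A_{L'}=M_{\hat\h}(1)\otimes\C_\eps[L']$ with a $2$-cocycle on $L'$ satisfying \eqref{eq:2cocycleirrmod}, the normal-ordered product formula, and the identification of $\Omega(\lambda,\lambda)=\e^{-\pi\i\langle\lambda,\lambda\rangle}$ giving $Q_\Omega=-Q_L$. The setup you describe matches the one the paper lays out in Section~\ref{sec:latvoa} immediately before stating the theorem, so there is nothing to add beyond the phase bookkeeping you already flag.
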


\section{Automorphisms of Lattice \VOA{}s}\label{sec:autlatvoa}

Lattice \voa{}s have a well-understood class of automorphisms, namely those arising from automorphisms of the underlying lattice. The main sources for this section are \cite{Lep85,FLM88,DL96,BK04}. Let $\nu\in\Aut(L)$ be an automorphism of the positive-definite, even lattice $L$. Consider the alternating $\Z$-bilinear form $(-1)^{\langle\cdot,\cdot\rangle}$. It is $\nu$-invariant since $\langle\nu\alpha,\nu\beta\rangle=\langle\alpha,\beta\rangle$. But then the skew of both $\eps(\alpha,\beta)$ and $\eps(\nu\alpha,\nu\beta)$ is the alternating $\Z$-bilinear form (see second part of \eqref{eq:multcommcond}), which means that they are in the same cohomology class of 2-cocycles. Hence, there is a function $u\colon L\to\{\pm1\}$ such that
\begin{equation*}
\frac{\eps(\alpha,\beta)}{\eps(\nu\alpha,\nu\beta)}=\frac{u(\alpha)u(\beta)}{u(\alpha+\beta)}
\end{equation*}
for all $\alpha,\beta\in L$. We could also, more generally, assume that the values of $u$ lie in some finite, cyclic subgroup of $\C^\times$.

We lift $\nu$ to an automorphism $\hat\nu$ of the twisted group algebra $\C_\eps[L]$ via
\begin{equation*}
\hat\nu(\ee_\alpha)=u(\alpha)\ee_{\nu\alpha}
\end{equation*}
so that $\hat\nu(\ee_\alpha\ee_\beta)=\hat\nu(\ee_\alpha)\hat\nu(\ee_\beta)$ for all $\alpha,\beta\in L$. Moreover, $\nu$ acts naturally on $M_{\hat\h}(1)$ via
\begin{equation*}
\nu(h(n))=(\nu h)(n)
\end{equation*}
for $h\in\h$ and $n\in\Ns$. We then let the tensor-product operator $\nu\otimes\hat\nu$ act on $V_L=M_{\hat\h}(1)\otimes\C_\eps[L]$ (as vector space) and denote this operator in the following by $\hat\nu$.
\begin{prop}[\cite{DL96}, Section~7]
Let $L$ be a positive-definite, even lattice and $V_L$ the associated lattice \voa{}. Let $\nu\in\Aut(L)$. Then the above defined lift $\hat\nu$ is a \voa{} automorphism of $V_L$. In particular, $\hat{\nu}\vac=\vac$ and $\hat{\nu}\omega=\omega$.
\end{prop}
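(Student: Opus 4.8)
The plan is to verify directly the three defining conditions of a \voa{} automorphism (Definition~\ref{defi:hom}): that $\hat\nu$ is an invertible linear map fixing $\vac$ and $\omega$, and that it intertwines the vertex operators in the sense $\hat\nu\,Y(a,x)\,\hat\nu^{-1}=Y(\hat\nu a,x)$ for all $a\in V_L$. Invertibility is immediate, since $\nu\in\Aut(L)$ is invertible and $u$ takes values in $\{\pm1\}$, so $\hat\nu=\nu\otimes\hat\nu$ is invertible; moreover $\hat\nu$ preserves the weight grading because $\nu$ is an isometry (the weight $\langle\alpha,\alpha\rangle/2$ of $\ee_\alpha$ is $\nu$-invariant). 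The substantive condition is the intertwining relation, and the first reduction I would make is to check it only on the generating fields $h(x)=Y(h(-1)1\otimes\ee_0,x)$, $h\in\h$, and $Y_\alpha(x)=Y(1\otimes\ee_\alpha,x)$, $\alpha\in L$, since by Theorem~\ref{thm:latticevoa} these generate the whole \voa{} $V_L$.

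For the vacuum, normalisation of $\eps$ forces $u(0)=1$ (from $\eps(0,0)/\eps(0,0)=u(0)u(0)/u(0)=u(0)$), whence $\hat\nu(1\otimes\ee_0)=1\otimes\ee_0=\vac$. For the Virasoro vector I would use that $\{\nu a_i\}_{i=1}^{\rk(L)}$ is again an orthonormal basis of $\h$, together with the basis-independence of the element $\sum_i a_i(-1)a_i(-1)1$; this gives $\hat\nu\,\omega=\tfrac12\sum_i(\nu a_i)(-1)(\nu a_i)(-1)1\otimes\ee_0=\omega$.

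Next I would establish the intertwining relation on the two families of generating fields. For the Heisenberg fields one must show $\hat\nu\,h(m)\,\hat\nu^{-1}=(\nu h)(m)$ for all $m\in\Z$: for $m\neq0$ this is the definition of the $\nu$-action on $M_{\hat\h}(1)$, while for $m=0$ a short computation using $h(0)\cdot\ee_\beta=\langle h,\beta\rangle\ee_\beta$ and $\langle h,\nu^{-1}\beta\rangle=\langle\nu h,\beta\rangle$ gives the claim; summing over $m$ yields $\hat\nu\,h(x)\,\hat\nu^{-1}=(\nu h)(x)=Y(\hat\nu(h(-1)1\otimes\ee_0),x)$. For the $Y_\alpha(x)$ I would conjugate each factor separately: since $\hat\nu$ is an algebra automorphism of $\C_\eps[L]$, conjugation sends the multiplication operator $\ee_\alpha$ to $u(\alpha)\ee_{\nu\alpha}$; the operator $x^\alpha$ goes to $x^{\nu\alpha}$, because on $\ee_\beta$ it picks up $x^{\langle\alpha,\nu^{-1}\beta\rangle}=x^{\langle\nu\alpha,\beta\rangle}$; and the exponentials transform via $\hat\nu\,\alpha(\pm n)\,\hat\nu^{-1}=(\nu\alpha)(\pm n)$. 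Assembling these factors gives $\hat\nu\,Y_\alpha(x)\,\hat\nu^{-1}=u(\alpha)Y_{\nu\alpha}(x)=Y(u(\alpha)\,1\otimes\ee_{\nu\alpha},x)=Y(\hat\nu(1\otimes\ee_\alpha),x)$, which is precisely the intertwining relation on these generators.

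Finally I would promote the relation from generators to all of $V_L$: the set of $a\in V_L$ satisfying $\hat\nu\,Y(a,x)\,\hat\nu^{-1}=Y(\hat\nu a,x)$ contains $\vac$ and the generators above and is closed under the modes $b_n$ (the relation for $a$ and $b$ implies it for $b_n a$ via the iterate formula for vertex operators), hence it exhausts $V_L$. I expect the main obstacle to be exactly the bookkeeping in the $Y_\alpha(x)$ computation: correctly tracking the interplay of the $2$-cocycle $\eps$, the sign function $u$, and the conjugated operators $x^\alpha$ and $\ee_\alpha$ so that the residual scalar comes out to be precisely $u(\alpha)$, matching $\hat\nu(1\otimes\ee_\alpha)$. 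The generating-set extension, though standard, also needs to be phrased with care.
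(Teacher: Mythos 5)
Your proof is correct and is essentially the standard verification; the paper itself does not prove this proposition but merely cites \cite{DL96}, Section~7, where the argument runs along exactly the lines you describe (fix the generators, conjugate $h(x)$ and $Y_\alpha(x)$ factor by factor so that the residual scalar is $u(\alpha)$, then extend to all of $V_L$ via the Jacobi/iterate identity). The only point worth making explicit in a write-up is the closure step at the end: from $\hat\nu\,a_n\,\hat\nu^{-1}=(\hat\nu a)_n$ applied to $\hat\nu b$ one first gets $\hat\nu(a_nb)=(\hat\nu a)_n(\hat\nu b)$, and only then does extracting the coefficient of $x_0^{-n-1}$ in the conjugated Jacobi identity yield $\hat\nu\,Y(a_nb,x)\,\hat\nu^{-1}=Y(\hat\nu(a_nb),x)$.
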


\begin{defi}[Standard Lift, \cite{Lep85}, Section~5]\label{defi:standard}
Given an automorphism $\nu\in\Aut(L)$ the function $u$ can always be chosen such that
\begin{equation*}
u(\alpha)=1\quad\text{for all}\quad\alpha\in L^\nu
\end{equation*}
where $L^\nu$ denotes the fixed-point sublattice of $L$ under $\nu$, i.e.\ $\hat\nu(\ee_\alpha)=\ee_\alpha$ if and only if $\nu\alpha=\alpha$ for all $\alpha\in L$. We call such a lift a \emph{standard lift}.
\end{defi}

\minisec{Powers of Lifted Automorphisms}
In the following we describe powers $\hat\nu^k$ of the lift $\hat\nu$ for $k\in\N$. Clearly, $\hat\nu^k$ is a lift of $\nu^k$. More precisely, if the lift $\hat\nu$ of $\nu$ is determined by the function $u\colon L\to\{\pm1\}$, i.e.\
\begin{equation*}
\hat\nu(\ee_\alpha)=u(\alpha)\ee_{\nu\alpha}
\end{equation*}
for all $\alpha\in L$, then $\hat\nu^k$ acts as
\begin{equation*}
\hat\nu^k(\ee_\alpha)=u(\alpha)u(\nu\alpha)\ldots u(\nu^{k-1}\alpha)\ee_{\nu^k\alpha}
\end{equation*}
for all $\alpha\in L$, i.e.\ the lift $\hat\nu^k$ of $\nu^k$ is determined by the function $w\colon L\to\{\pm1\}$ given by $w(\alpha)=u(\alpha)u(\nu\alpha)\ldots u(\nu^{k-1}\alpha)$.

It is natural to ask how the property of $\hat\nu$ being a standard lift carries over to $\hat\nu^k$. We will see that if $\hat\nu$ is a standard lift, then $\hat\nu^k$ does not always have this property.

Recall that $m:=\ord(\nu)$ denotes the (exact) order of the lattice automorphism $\nu$, i.e.\ the smallest $m\in\Ns$ such that $\nu^m=\id$.
\begin{prop}\label{prop:standardliftpower}
Let $\nu\in\Aut(L)$ and $\hat\nu$ a standard lift of $\nu$, i.e.\ $\hat\nu(\ee_\alpha)=\ee_\alpha$ for all $\alpha\in L^\nu$. Then for $k\in\N$,
\begin{equation*}
\hat{\nu}^k(\ee_\alpha)=\ee_\alpha\cdot\begin{cases}1&\text{if $m$ or $k$ is odd,}\\(-1)^{\langle\alpha,\nu^{k/2}\alpha\rangle}&\text{if $m$ and $k$ is even}\end{cases}
\end{equation*}
for all $\alpha\in L^{\nu^k}$.
\end{prop}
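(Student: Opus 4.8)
The plan is to reduce everything to the sign $w(\alpha):=\prod_{j=0}^{k-1}u(\nu^j\alpha)\in\{\pm1\}$ by which $\hat\nu^k$ acts on $\ee_\alpha$. Indeed, for $\alpha\in L^{\nu^k}$ the formula for powers of the lift recalled just above the statement gives $\hat\nu^k(\ee_\alpha)=w(\alpha)\,\ee_{\nu^k\alpha}=w(\alpha)\,\ee_\alpha$, so the whole assertion amounts to computing $w(\alpha)$.

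The key step is to evaluate $\hat\nu$ on the ordered product $X:=\ee_\alpha\ee_{\nu\alpha}\cdots\ee_{\nu^{k-1}\alpha}\in\C_\eps[L]$ in two ways. On the one hand, multiplying out in the twisted group algebra gives $X=c\,\ee_\beta$ with $\beta:=\sum_{j=0}^{k-1}\nu^j\alpha$ and a sign $c\in\{\pm1\}$ (a product of values of $\eps$, hence $X\neq0$). A direct check shows $\nu\beta=\beta$, i.e.\ $\beta\in L^\nu$; since $\hat\nu$ is a standard lift, $u(\beta)=1$ and thus $\hat\nu(\ee_\beta)=\ee_\beta$, so that $\hat\nu(X)=X$. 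On the other hand, using that $\hat\nu$ is an algebra automorphism together with $\nu^k\alpha=\alpha$ one gets $\hat\nu(X)=w(\alpha)\,\ee_{\nu\alpha}\cdots\ee_{\nu^{k-1}\alpha}\ee_\alpha$; reordering this cyclic shift back into $X$ by the commutation relation $\ee_\gamma\ee_\delta=(-1)^{\langle\gamma,\delta\rangle}\ee_\delta\ee_\gamma$ (a consequence of \eqref{eq:multcommcond}) costs a sign $\prod_{j=1}^{k-1}(-1)^{\langle\alpha,\nu^j\alpha\rangle}$. Comparing the two expressions and using $X\neq0$ yields the clean formula
\[
w(\alpha)=(-1)^{\sum_{j=1}^{k-1}\langle\alpha,\nu^j\alpha\rangle}.
\]

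It then remains to compute the exponent modulo $2$. First I would record the symmetry $\langle\alpha,\nu^j\alpha\rangle=\langle\alpha,\nu^{k-j}\alpha\rangle$, which follows from $\nu^k\alpha=\alpha$ together with the $\nu$-invariance and symmetry of $\langle\cdot,\cdot\rangle$. Pairing the index $j$ with $k-j$ in the sum, all contributions cancel in pairs modulo $2$ except for a possible fixed point at $j=k/2$. Hence the exponent is even when $k$ is odd (no fixed point), giving $w(\alpha)=1$, and is congruent to $\langle\alpha,\nu^{k/2}\alpha\rangle$ when $k$ is even, giving $w(\alpha)=(-1)^{\langle\alpha,\nu^{k/2}\alpha\rangle}$. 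This already settles the case where $m$ and $k$ are both even.

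What is left, and what I expect to be the main (if minor) obstacle, is the number-theoretic bookkeeping showing that when $m$ is odd and $k$ is even the remaining sign is still $+1$. Here I would let $p$ be the minimal period of $\alpha$ under $\nu$; then $p\mid k$ and $p\mid m$, so $p$ is odd, and writing $k=pa$ with $p$ odd and $k$ even forces $a$ to be even, whence $p\mid(k/2)$ and therefore $\nu^{k/2}\alpha=(\nu^p)^{a/2}\alpha=\alpha$. Consequently $\langle\alpha,\nu^{k/2}\alpha\rangle=\langle\alpha,\alpha\rangle\in2\Z$ because $L$ is even, so $w(\alpha)=1$. Collecting the three cases ($k$ odd; $k$ even with $m$ odd; $k,m$ both even) reproduces exactly the asserted formula.
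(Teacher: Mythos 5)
Your proof is correct, and it reaches the paper's key intermediate identity $w(\alpha)=(-1)^{\sum_{j=1}^{k-1}\langle\alpha,\nu^j\alpha\rangle}$ for $\alpha\in L^{\nu^k}$ by a genuinely different mechanism. The paper proves an auxiliary lemma by induction on $k$, using the $2$-cocycle identity for $\eps$ to establish
\begin{equation*}
u(\alpha)u(\nu\alpha)\cdots u(\nu^{k-1}\alpha)=u(\alpha+\cdots+\nu^{k-1}\alpha)\,\frac{\eps(\alpha,\nu\alpha+\cdots+\nu^{k-1}\alpha)}{\eps(\nu\alpha+\cdots+\nu^{k-1}\alpha,\nu^k\alpha)}
\end{equation*}
for arbitrary $\alpha\in L$, and only then specialises to $\alpha\in L^{\nu^k}$ and invokes the skew condition on $\eps$; your trick of applying the algebra automorphism $\hat\nu$ to the ordered product $\ee_\alpha\ee_{\nu\alpha}\cdots\ee_{\nu^{k-1}\alpha}$ and undoing the resulting cyclic shift with the commutation relation $\ee_\gamma\ee_\delta=(-1)^{\langle\gamma,\delta\rangle}\ee_\delta\ee_\gamma$ produces the same sign in one stroke, with no induction. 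The parity bookkeeping via the pairing $j\leftrightarrow k-j$ is the same in both. For the remaining case ($m$ odd, $k$ even) the two arguments diverge again: the paper first deduces $\hat\nu^m=\id$ from the odd case and then writes $\hat\nu^k=\hat\nu^{k+m}$ with $k+m$ odd, whereas you show directly that the minimal $\nu$-period of $\alpha$ divides $k/2$, so that $\nu^{k/2}\alpha=\alpha$ and the exponent $\langle\alpha,\alpha\rangle$ is even because $L$ is. Both routes are sound; the paper's inductive lemma has the side benefit of holding without the standard-lift hypothesis and of feeding directly into Corollary~\ref{cor:uhom} (that $\alpha\mapsto u(\alpha)u(\nu\alpha)\cdots u(\nu^{k-1}\alpha)$ is a homomorphism on $L^{\nu^k}$), which is reused later, while your version is shorter and more self-contained for the proposition itself.
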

For the proof we need the following lemma (cf.\ \cite{Sch04}, Section~8.1 and \cite{Bor92}, Section~12):
\begin{lem}
Let $k\in\N$ and $\alpha\in L^{\nu^k}$. Then
\begin{equation*}
u(\alpha+\nu\alpha+\ldots\nu^{k-1}\alpha)=u(\alpha)u(\nu\alpha)\ldots u(\nu^{k-1}\alpha)
\end{equation*}
if $k$ is odd and
\begin{equation*}
u(\alpha+\nu\alpha+\ldots\nu^{k-1}\alpha)=(-1)^{\langle\alpha,\nu^{k/2}\alpha\rangle}u(\alpha)u(\nu\alpha)\ldots u(\nu^{k-1}\alpha)
\end{equation*}
if $k$ is even.
\end{lem}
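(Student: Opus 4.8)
The idea is to convert the claim into a reordering computation inside the twisted group algebra $\C_\eps[L]$. Writing the defining relation for $u$ as $u(\alpha+\beta)=u(\alpha)u(\beta)\,\eps(\nu\alpha,\nu\beta)/\eps(\alpha,\beta)$, I would set $\beta_i:=\nu^i\alpha$ and use $\alpha\in L^{\nu^k}$ to conclude $\beta_{i+k}=\beta_i$; in particular $\beta_k=\beta_0$, and the sum $S:=\sum_{i=0}^{k-1}\beta_i$ lies in $L^\nu$. Applying the relation to the partial sums $P_j:=\beta_0+\dots+\beta_j$, noting that $\nu P_{j-1}=\beta_1+\dots+\beta_j$ and $\nu\beta_j=\beta_{j+1}$, and telescoping from $j=1$ to $k-1$ yields
\begin{equation*}
u(S)=\Bigl(\prod_{i=0}^{k-1}u(\beta_i)\Bigr)\cdot R,\qquad R=\frac{\prod_{j=1}^{k-1}\eps(\beta_1+\dots+\beta_j,\beta_{j+1})}{\prod_{j=1}^{k-1}\eps(\beta_0+\dots+\beta_{j-1},\beta_j)},
\end{equation*}
where for $j=k-1$ the factor $\beta_{j+1}$ in the numerator is $\beta_k=\beta_0$. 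Thus everything reduces to identifying the correction $R$.

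The key observation is that the denominator of $R$ is exactly the scalar $\mu_0$ with $\ee_{\beta_0}\ee_{\beta_1}\cdots\ee_{\beta_{k-1}}=\mu_0\,\ee_S$, read off by multiplying the basis elements left to right, while the numerator is the scalar $\mu_1$ with $\ee_{\beta_1}\cdots\ee_{\beta_{k-1}}\ee_{\beta_0}=\mu_1\,\ee_S$; both products give a multiple of the \emph{same} $\ee_S$ because the total exponent is $S$ either way. Hence $R=\mu_1/\mu_0$ is precisely the scalar relating the two orderings, i.e.\ $\ee_{\beta_1}\cdots\ee_{\beta_{k-1}}\ee_{\beta_0}=R\,\ee_{\beta_0}\ee_{\beta_1}\cdots\ee_{\beta_{k-1}}$. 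Using the commutation relation $\ee_\alpha\ee_\beta=(-1)^{\langle\alpha,\beta\rangle}\ee_\beta\ee_\alpha$, which is the skew part of \eqref{eq:multcommcond}, to move $\ee_{\beta_0}$ past the $k-1$ factors $\ee_{\beta_1},\dots,\ee_{\beta_{k-1}}$, I obtain
\begin{equation*}
R=(-1)^{\langle\beta_0,\,\beta_1+\dots+\beta_{k-1}\rangle}=(-1)^{\sum_{i=1}^{k-1}\langle\alpha,\nu^i\alpha\rangle}.
\end{equation*}

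Finally I would evaluate this sign by a pairing argument on the exponent. Since $\nu$ is an isometry and $\nu^k\alpha=\alpha$, one has $\langle\alpha,\nu^i\alpha\rangle=\langle\nu^{-i}\alpha,\alpha\rangle=\langle\alpha,\nu^{k-i}\alpha\rangle$, so in the sum over $i=1,\dots,k-1$ the indices $i$ and $k-i$ contribute equally. If $k$ is odd these indices split into pairs $\{i,k-i\}$ with $i\neq k-i$, each pair contributing an even amount, so the exponent is even and $R=1$. If $k$ is even the same pairing leaves only the fixed index $i=k/2$, whence the exponent is $\equiv\langle\alpha,\nu^{k/2}\alpha\rangle\pmod 2$ and $R=(-1)^{\langle\alpha,\nu^{k/2}\alpha\rangle}$. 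These are exactly the two cases of the claim. The main obstacle is purely bookkeeping: getting the telescoping index ranges right so that the periodicity $\beta_k=\beta_0$ turns the numerator into the cyclically shifted product; once that is in place, the reordering via the skew relation and the parity count are routine.
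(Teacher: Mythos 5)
Your proof is correct and follows essentially the same route as the paper: both arguments reduce the claim to a cocycle correction factor, evaluate it via the skew relation $\eps(\alpha,\beta)/\eps(\beta,\alpha)=(-1)^{\langle\alpha,\beta\rangle}$, and finish with the identical pairing of $i$ with $k-i$ to compute the sign modulo $2$. The only difference is cosmetic: where you reinterpret the telescoped ratio $R$ as the reordering scalar between two cyclic products in $\C_\eps[L]$, the paper obtains the same correction directly by induction using the 2-cocycle identity (which is just associativity of the twisted group algebra in disguise), arriving at the single ratio $\eps(\alpha,\nu\alpha+\ldots+\nu^{k-1}\alpha)/\eps(\nu\alpha+\ldots+\nu^{k-1}\alpha,\nu^k\alpha)$.
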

\begin{proof}
For $k=0,1$ the statement is trivial. From
\begin{equation*}
u(\alpha)u(\beta)=u(\alpha+\beta)\eps(\alpha,\beta)/\eps(\nu\alpha,\nu\beta)
\end{equation*}
for $\alpha,\beta\in L$ we deduce
\begin{equation*}
u(\alpha)u(\nu\alpha)=u(\alpha+\nu\alpha)\eps(\alpha,\nu\alpha)/\eps(\nu\alpha,\nu^2\alpha),
\end{equation*}
which is the statement for $k=2$. More generally, by induction over $k$ we show that
\begin{align*}
&u(\alpha)u(\nu\alpha)\ldots u(\nu^{k-1}\alpha)\\
&=u(\alpha+\nu\alpha+\ldots+\nu^{k-1}\alpha)\eps(\alpha,\nu\alpha+\ldots+\nu^{k-1}\alpha)/\eps(\nu\alpha+\ldots+\nu^{k-1}\alpha,\nu^k\alpha)
\end{align*}
for $\alpha\in L$ and $k\in\Z_{\geq2}$. In the induction step we use that $\eps\colon L\times L\to\{\pm1\}$ is a 2-cocycle. We then assume that $\alpha\in L^{\nu^k}$, i.e.\ $\nu^k\alpha=\alpha$. Then
\begin{equation*}
u(\alpha)u(\nu\alpha)\ldots u(\nu^{k-1}\alpha)=u(\alpha+\nu\alpha+\ldots+\nu^{k-1}\alpha)(-1)^{\langle\alpha,\nu\alpha+\ldots+\nu^{k-1}\alpha\rangle}
\end{equation*}
by \eqref{eq:multcommcond} and
\begin{equation*}
\langle\alpha,\nu\alpha+\ldots+\nu^{k-1}\alpha\rangle=\sum_{i=1}^{k-1}\langle\alpha,\nu^i\alpha\rangle=\begin{cases}0&\text{if $k$ is odd,}\\\langle\alpha,\nu^{k/2}\alpha\rangle&\text{if $k$ is even}\end{cases}\pmod{2},
\end{equation*}
where we used that $\langle\alpha,\nu^i\alpha\rangle=\langle\nu^{m-i}\alpha,\alpha\rangle=\langle\alpha,\nu^{m-i}\alpha\rangle$ and paired corresponding elements, which add to 0 modulo~2.
\end{proof}
\begin{proof}[Proof of Proposition~\ref{prop:standardliftpower}]
First, let $m$ be odd. If $k$ is odd, then, by the above lemma, for $\alpha\in L^{\nu^k}$
\begin{equation*}
\hat{\nu}^k(\ee_\alpha)=u(\alpha)u(\nu\alpha)\ldots u(\nu^{k-1}\alpha)\ee_{\nu^k\alpha}=u(\alpha+\ldots+\nu^{k-1}\alpha)\ee_\alpha=\ee_\alpha
\end{equation*}
since $\alpha+\ldots+\nu^{k-1}\alpha\in L^\nu$ for $\alpha\in L^{\nu^k}$ and $u$ vanishes on $L^\nu$ by assumption. In particular, for $k=m$ we get
\begin{equation*}
\hat{\nu}^m(\ee_\alpha)=\ee_\alpha
\end{equation*}
for all $\alpha\in L^{\nu^m}=L$, which implies that $\hat{\nu}$ has order $m$. If $k$ is even, then $k+m$ is odd and
\begin{equation*}
\hat{\nu}^k(\ee_\alpha)=\hat{\nu}^k(\hat{\nu}^m(\ee_\alpha))=\hat{\nu}^{m+k}(\ee_\alpha)=\ee_\alpha.
\end{equation*}
The proof in the case where $m$ is even is analogous.
\end{proof}

The following corollary to the lemma will be needed later:
\begin{cor}\label{cor:uhom}
For any $k\in\N$ the map
\begin{equation*}
\alpha\mapsto u(\alpha)u(\nu\alpha)\ldots u(\nu^{k-1}\alpha)
\end{equation*}
defines a homomorphism $L^{\nu^k}\to\{\pm1\}$.
\end{cor}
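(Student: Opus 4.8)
The plan is to prove the statement by a direct telescoping computation using the defining relation for $u$, namely
\begin{equation*}
\frac{\eps(\alpha,\beta)}{\eps(\nu\alpha,\nu\beta)}=\frac{u(\alpha)u(\beta)}{u(\alpha+\beta)}
\end{equation*}
for all $\alpha,\beta\in L$. First I would introduce the shorthand $w_k(\alpha):=u(\alpha)u(\nu\alpha)\cdots u(\nu^{k-1}\alpha)$ for the map in question, so that the claim becomes the assertion that $w_k(\alpha+\beta)=w_k(\alpha)w_k(\beta)$ for $\alpha,\beta\in L^{\nu^k}$. Note that $L^{\nu^k}$ is a sublattice, so $\alpha+\beta\in L^{\nu^k}$ whenever $\alpha,\beta\in L^{\nu^k}$, and thus all three quantities are defined.

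The key step is to compute the ratio $w_k(\alpha)w_k(\beta)/w_k(\alpha+\beta)$ factor by factor. Since each $\nu^i$ is linear, $\nu^i\alpha+\nu^i\beta=\nu^i(\alpha+\beta)$, and applying the defining relation for $u$ with $\alpha,\beta$ replaced by $\nu^i\alpha,\nu^i\beta$ gives
\begin{equation*}
\frac{u(\nu^i\alpha)u(\nu^i\beta)}{u(\nu^i(\alpha+\beta))}=\frac{\eps(\nu^i\alpha,\nu^i\beta)}{\eps(\nu^{i+1}\alpha,\nu^{i+1}\beta)}.
\end{equation*}
Multiplying over $i=0,\ldots,k-1$ makes the product telescope, yielding
\begin{equation*}
\frac{w_k(\alpha)w_k(\beta)}{w_k(\alpha+\beta)}=\prod_{i=0}^{k-1}\frac{\eps(\nu^i\alpha,\nu^i\beta)}{\eps(\nu^{i+1}\alpha,\nu^{i+1}\beta)}=\frac{\eps(\alpha,\beta)}{\eps(\nu^k\alpha,\nu^k\beta)}.
\end{equation*}

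Finally I would invoke the hypothesis $\alpha,\beta\in L^{\nu^k}$, which means $\nu^k\alpha=\alpha$ and $\nu^k\beta=\beta$, so the right-hand side collapses to $\eps(\alpha,\beta)/\eps(\alpha,\beta)=1$. Hence $w_k(\alpha+\beta)=w_k(\alpha)w_k(\beta)$, and since the values lie in $\{\pm1\}$, this shows $w_k$ is a group homomorphism $L^{\nu^k}\to\{\pm1\}$. There is no real obstacle here: the argument is entirely formal, and the only points requiring a moment's care are the use of linearity of $\nu^i$ to rewrite $\nu^i\alpha+\nu^i\beta$ inside the argument of $u$, and the cancellation of the telescoping product, which relies only on $\eps$ being $\{\pm1\}$-valued (so that its inverse equals itself and the bookkeeping is unambiguous).
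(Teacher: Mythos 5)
Your proof is correct, but it takes a genuinely different route from the paper. You telescope the defining relation $\eps(\alpha,\beta)/\eps(\nu\alpha,\nu\beta)=u(\alpha)u(\beta)/u(\alpha+\beta)$ directly: applying it to $\nu^i\alpha,\nu^i\beta$ and multiplying over $i=0,\dots,k-1$ gives $w_k(\alpha)w_k(\beta)/w_k(\alpha+\beta)=\eps(\alpha,\beta)/\eps(\nu^k\alpha,\nu^k\beta)$, which equals $1$ on $L^{\nu^k}$. The paper instead invokes the lemma proved just before the corollary, which rewrites $u(\alpha)u(\nu\alpha)\cdots u(\nu^{k-1}\alpha)$ as $u(\alpha+\nu\alpha+\cdots+\nu^{k-1}\alpha)$ for odd $k$, and as $(-1)^{\langle\alpha,\nu^{k/2}\alpha\rangle}u(\alpha+\nu\alpha+\cdots+\nu^{k-1}\alpha)$ for even $k$; it then checks that each factor is separately a homomorphism on $L^{\nu^k}$ --- the first because $\alpha\mapsto\alpha+\nu\alpha+\cdots+\nu^{k-1}\alpha$ lands in $L^\nu$, where $u$ is a homomorphism by its defining relation, and the second by integrality of $L$ and $\nu$-invariance of the form. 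What each approach buys: the paper's argument is nearly free given that lemma, which is needed anyway for Proposition~\ref{prop:standardliftpower}; your argument is self-contained, avoids the parity case-split on $k$ entirely, and uses neither the integrality of $L$ nor the normalisation condition $\eps(\alpha,\alpha)=(-1)^{\langle\alpha,\alpha\rangle/2}$, so it works verbatim when $\eps$ and $u$ take values in any finite cyclic subgroup of $\C^\times$ (a generalisation the paper itself mentions and later uses in Section~\ref{sec:nonholorb}). One small correction: the telescoping cancellation does not rely on $\eps$ being $\{\pm1\}$-valued --- consecutive factors cancel for values in any abelian group; the codomain $\{\pm1\}$ is only relevant to the statement of the corollary, not to the bookkeeping.
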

\begin{proof}
By the above lemma it suffices to show that $u(\alpha+\ldots+\nu^{k-1}\alpha)$ and $(-1)^{\langle\alpha,\nu^{k/2}\alpha\rangle}$ define homomorphisms $L^{\nu^k}\to\{\pm1\}$. For $\alpha\in L^{\nu^k}$, $\alpha+\ldots+\nu^{k-1}\alpha\in L^\nu$. It follows directly from the definition of $u$ (with respect to $\eps$) that $u$ is a homomorphism on $L^\nu$ and so the first statement follows. A simple calculation yields
\begin{equation*}
\langle\alpha+\beta,\nu^{k/2}(\alpha+\beta)\rangle=\langle\alpha,\nu^{k/2}\alpha\rangle+\langle\beta,\nu^{k/2}\beta\rangle\pmod{2}
\end{equation*}
if $\alpha,\beta\in L^{\nu^k}$ since $L$ is integral. This shows the second assertion and proves the statement of the corollary.
\end{proof}

\minisec{Order of the Lifted Automorphism}

We denote by
\begin{equation*}
\hat{m}:=\ord(\hat{\nu})
\end{equation*}
the order of the lifted automorphism $\hat{\nu}\in\Aut(V_L)$. Clearly, $\hat{m}$ is always a multiple of $m$.

We make this more precise in the case of a standard lift. As a corollary to the above proposition we obtain:
\begin{cor}
Let $\hat\nu$ be a standard lift of $\nu$ of order $m$. If $m$ is odd, then $\hat{\nu}$ has order $m$. If $m$ is even, then $\hat{\nu}$ has order $m$ if $\langle\alpha,\nu^{m/2}\alpha\rangle\in 2\Z$ for all $\alpha\in L$ and order $2m$ otherwise.
\end{cor}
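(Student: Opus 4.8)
The plan is to read off everything from Proposition~\ref{prop:standardliftpower}, which already computes $\hat\nu^k(\ee_\alpha)$ for $\alpha\in L^{\nu^k}$, and to combine it with the observation recorded just above that $\hat m=\ord(\hat\nu)$ is always a multiple of $m$. The latter lets me restrict the search for the order to exponents $k$ that are multiples of $m$: indeed, $\hat\nu$ restricts on the Heisenberg subalgebra $M_{\hat\h}(1)\otimes\C\ee_0$ (on which $\hat\nu(\ee_0)=\ee_0$, since $0\in L^\nu$ for a standard lift) to the action of $\nu$ on $M_{\hat\h}(1)$, and this action has exact order $m$; so $m\mid\hat m$. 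For any $k$ with $m\mid k$ we have $\nu^k=\id$ on $\h$, hence $\hat\nu^k=\id$ on all of $M_{\hat\h}(1)$ and $L^{\nu^k}=L$, so that $\hat\nu^k=\id$ on $V_L$ is equivalent to the purely combinatorial condition $\hat\nu^k(\ee_\alpha)=\ee_\alpha$ for every $\alpha\in L$.

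First I would settle the case $m$ odd. Evaluating Proposition~\ref{prop:standardliftpower} at the odd exponent $k=m$ puts us in the branch ``$m$ or $k$ odd'', giving $\hat\nu^m(\ee_\alpha)=\ee_\alpha$ for all $\alpha\in L=L^{\nu^m}$. Hence $\hat\nu^m=\id$, and together with $m\mid\hat m$ this forces $\hat m=m$.

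Next, suppose $m$ is even, and evaluate the proposition at $k=m$: now $m$ and $k$ are both even, so $\hat\nu^m(\ee_\alpha)=(-1)^{\langle\alpha,\nu^{m/2}\alpha\rangle}\ee_\alpha$ for all $\alpha\in L$. If $\langle\alpha,\nu^{m/2}\alpha\rangle\in 2\Z$ for every $\alpha\in L$, all these signs are $+1$, so $\hat\nu^m=\id$ and again $\hat m=m$. Otherwise there is some $\alpha_0\in L$ with $\langle\alpha_0,\nu^{m/2}\alpha_0\rangle$ odd, whence $\hat\nu^m(\ee_{\alpha_0})=-\ee_{\alpha_0}\neq\ee_{\alpha_0}$; thus $\hat\nu^m\neq\id$ and, since $m\mid\hat m$, necessarily $\hat m\geq 2m$.

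To pin down $\hat m=2m$ in this last subcase I would apply Proposition~\ref{prop:standardliftpower} once more, now at the even multiple $k=2m$. Here $\nu^{2m/2}=\nu^m=\id$, so the formula reads $\hat\nu^{2m}(\ee_\alpha)=(-1)^{\langle\alpha,\alpha\rangle}\ee_\alpha$ for all $\alpha\in L$, and it is precisely the evenness of $L$ that rescues us: $\langle\alpha,\alpha\rangle\in 2\Z$, so every sign is $+1$ and $\hat\nu^{2m}=\id$. Consequently $\hat m\mid 2m$, and together with $\hat m>m$ and $m\mid\hat m$ this leaves only $\hat m=2m$. The one point deserving care — and the closest thing to an obstacle — is the reduction to multiples of $m$, i.e.\ ruling out that some $k$ with $m\nmid k$ could yield the identity; this is handled by the subalgebra argument above, since $\hat\nu^k$ restricts to $\nu^k$ on $M_{\hat\h}(1)$, which fixes $\vac$ but sends $h(-1)\vac$ to $(\nu^k h)(-1)\vac$ and hence is never the identity unless $m\mid k$.
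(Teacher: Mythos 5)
Your proof is correct and follows the same route the paper intends: the corollary is stated there as an immediate consequence of Proposition~\ref{prop:standardliftpower}, read off at $k=m$ and $k=2m$ exactly as you do. Your write-up is in fact more careful than the paper's (which omits the argument entirely), since you explicitly justify the two points the paper takes for granted, namely that $m\mid\hat m$ via the action on $M_{\hat\h}(1)$ and that for $m\mid k$ the identity $\hat\nu^k=\id$ reduces to the condition $\hat\nu^k(\ee_\alpha)=\ee_\alpha$ on the twisted group algebra.
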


\section{Twisted Modules for Lattice \VOA{}s}\label{sec:twmodlatvoa}

An explicit construction of the modules for a lattice \voa{} twisted by a lift of a lattice isomorphism is given in \cite{DL96,BK04}. They are classified in \cite{BK04}.
We give a slight generalisation by allowing non-standard lifts.

Consider a lattice isomorphism $\nu$ of the positive-definite, even lattice $L$ and a lift $\hat\nu$ to an automorphism of the \voa{} $V_L$. In order to construct the $\hat\nu$-twisted $V_L$-modules we need $\hat\nu$-twisted versions of the $\hat\h$-module $M_{\hat\h}(1)$ and the twisted group algebra. For simplicity we will only describe their structure as modules for $\hat\h[\nu]$, the twisted version of $\hat\h$.

\minisec{Twisted Heisenberg Module}

Recall that $m=\ord(\nu)$.
We define the $\nu$-twisted affine Lie algebra $\hat{\h}[\nu]$ associated with $\h$ to be
\begin{equation*}
\hat{\h}[\nu]:=\left(\bigoplus_{n\in(1/m)\Z}\h_{(mn)}\otimes_\C\C t^n\right)\oplus\C\mathbf{k},
\end{equation*}
called the \emph{$\nu$-twisted Heisenberg current algebra} in \cite{BK04}, with the Lie bracket defined by linear continuation of
\begin{equation*}
[x(n),y(n')]=\langle x,y\rangle n\delta_{n+n',0}\mathbf{k}\quad\text{and}\quad[u,\mathbf{k}]=0
\end{equation*}
for $n,n'\in(1/m)\Z$, $x\in\h_{(mn)}$, $y\in\h_{(mn')}$ and $u\in\hat{\h}[\nu]$. Again we write $h(n):=h\otimes t^n$ for $n\in(1/m)\Z$ and $h\in\h_{(mn)}$. Note that for the identity automorphism $\nu=\id$ the twisted algebra $\hat{\h}[\nu]$ reduces to the untwisted one $\hat{\h}$.

We decompose $\hat\h[\nu]$ as
\begin{equation*}
\hat\h[\nu]=\hat\h[\nu]_-\oplus\hat\h[\nu]_0\oplus\hat\h[\nu]_+\oplus\C\mathbf{k}
\end{equation*}
where
\begin{align*}
\hat\h[\nu]_\pm:=\bigoplus_{n\in(1/m)\Z_{\gtrless 0}}\h_{(mn)}\otimes_\C\C t^n\quad\text{and}\quad\hat\h[\nu]_0:=\h_{(0)}\otimes_\C\C t^0.
\end{align*}
We can also consider the Lie subalgebra $\hat{\h}[\nu]_{\neq 0}:=\hat\h[\nu]_-\oplus\hat\h[\nu]_+\oplus\C\mathbf{k}$ of $\hat{\h}[\nu]$. It is a Heisenberg Lie algebra in the sense that its commutator subalgebra equals its centre and is one-dimensional.

We define the Lie subalgebra $\hat{\mathfrak{b}}[\nu]:=\hat\h[\nu]_0\oplus\hat\h[\nu]_+\oplus\C\mathbf{k}=(\bigoplus_{n\in(1/m)\N}\h_{(mn)}\otimes_\C\C t^n)\oplus\C\mathbf{k}$ and let it act on the one-dimensional vector space $\C$ as
\begin{equation*}
h(n)\cdot w=0\quad\text{and}\quad\mathbf{k}\cdot w=w
\end{equation*}
for all $n\in(1/m)\N$, $h\in\h_{(mn)}$ and $w\in\C$. Then we define the induced $\hat\h[\nu]$-module
\begin{equation*}
M_{\hat\h}(1)[\nu]:=\Ind_{\hat{\mathfrak{b}}[\nu]}^{\hat\h[\nu]}\C=U(\hat\h[\nu])\otimes_{U(\hat{\mathfrak{b}}[\nu])}\C.
\end{equation*}
$M_{\hat\h}(1)[\nu]$ is irreducible as a module for $\hat{\h}[\nu]$ (even as module for $\hat{\h}[\nu]_{\neq 0}$). As a vector space $M_{\hat\h}(1)[\nu]$ is isomorphic to the symmetric algebra $S(\hat\h[\nu]_-)$ of $\hat\h[\nu]_-$. In particular, $M_{\hat\h}(1)[\nu]$ is spanned by elements of the form
\begin{equation*}
h_k(-n_k)\ldots h_1(-n_1)1
\end{equation*}
for $k\in\N$, $n_1,\ldots,n_k\in(1/m)\Ns$, $h_1\in\h_{(mn_1)},\ldots,h_k\in\h_{(mn_k)}$ all non-zero (and $1\in\C$).

We introduce a $\Q$-grading on $M_{\hat\h}(1)[\nu]$ such that an element of the above form has weight $\wt(1)+n_1+\ldots+n_k=\rho_\nu+n_1+\ldots+n_k\in \rho_\nu+(1/m)\N$ where
\begin{equation}\label{eq:vacanomaly}
\wt(1)=\frac{1}{4m^2}\sum_{k=1}^{m-1}k(m-k)\dim_\C(\h_{(k)})=:\rho_\nu
\end{equation}
is the \emph{vacuum anomaly}. This shifted grading is necessary to make the grading compatible with the Virasoro algebra (as explained in \cite{DL96}, Section~6).

\begin{rem}\label{rem:cycleconf}
Let $\nu$ be an automorphism of $L$ of cycle shape $\prod_{t\mid m}t^{b_t}$. Then a simple calculation shows that
\begin{equation*}
\rho_\nu=\frac{1}{24}\sum_{t\mid m}b_t\left(t-\frac{1}{t}\right).
\end{equation*}
\end{rem}

\minisec{Irreducible Twisted Modules}

The automorphism $\nu$ acts naturally on the finite group $L'/L$. Let $(L'/L)^\nu$ denote the fixed points under this action, i.e.\ $(L'/L)^\nu=\{\lambda +L\in L'/L\;|\;(\id-\nu)\lambda\in L\}$. For $\lambda+L\in(L'/L)^\nu$ we set as vector space and $\hat\h[\nu]$-module
\begin{equation*}
V_{\lambda+L}(\hat\nu):=M_{\hat\h}(1)[\nu]\otimes\ee_{\pi_\nu(\lambda)}\C[\pi_\nu(L)]\otimes\C^{d(\nu)}
\end{equation*}
where $\C^{d(\nu)}$ carries the zero representation of $\hat\h[\nu]$ and
\begin{equation*}
d(\nu)=\sqrt{\left|L_\nu/(L\cap(\id-\nu)L')\right|}
\end{equation*}
is the \emph{defect} of $\nu$. An element
\begin{equation*}
h_k(-n_k)\ldots h_1(-n_1)1\otimes\ee_{\pi_\nu(\lambda+\alpha)}\otimes x
\end{equation*}
for $k\in\N$, $n_1,\ldots,n_k\in(1/m)\Ns$, $h_1\in\h_{(mn_1)},\ldots,h_k\in\h_{(mn_k)}$, $\alpha\in L$, $x\in\C^{d(\nu)}$ has weight
\begin{equation*}
\rho_\nu+n_1+\ldots+n_k+\langle\pi_\nu(\alpha+\lambda),\pi_\nu(\alpha+\lambda)\rangle/2.
\end{equation*}

It is shown in \cite{BK04} that $V_{\lambda+L}(\hat\nu)$ for $\lambda+L\in(L'/L)^\nu$ can be endowed with the structure of an irreducible $\hat\nu$-twisted $V_L$-module if $\hat\nu$ is assumed to be a standard lift of $\nu$. The authors also give a complete classification in the case of standard lifts:
\begin{thm}[\cite{BK04}, Theorem~4.2]\label{thm:4.2}
Let $L$ be a positive-definite, even lattice and $V_L$ the associated lattice \voa{}. Let $\nu\in\Aut(L)$ and let $\hat\nu$ be a standard lift of $\nu$. Then the isomorphism classes of irreducible $\hat\nu$-twisted $V_L$-modules are $V_{\lambda+L}(\hat\nu)$ for $\lambda+L\in(L'/L)^\nu$.
\end{thm}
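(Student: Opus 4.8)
The plan is to establish the theorem in two independent directions: first, that each $V_{\lambda+L}(\hat\nu)$ with $\lambda+L\in(L'/L)^\nu$ genuinely carries an irreducible $\hat\nu$-twisted $V_L$-module structure and that distinct cosets yield non-isomorphic modules; second, that these exhaust all irreducible $\hat\nu$-twisted modules up to isomorphism. Since $\hat\nu$ is assumed to be a \emph{standard} lift, the normalising function $u$ vanishes on $L^\nu$, and this will be the consistency hypothesis making the cocycles in the construction fit together.

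For the construction I would write down the twisted vertex operators explicitly. The Heisenberg part is straightforward: for $h\in\h_{(i)}$ one sets $Y_{V(\hat\nu)}(h(-1)\vac,x)=\sum_{n\in -i/m+\Z}h(n)x^{-n-1}$, and these act on the irreducible $\hat\h[\nu]$-module $M_{\hat\h}(1)[\nu]$ via the twisted commutation relations. The genuinely delicate piece is the lattice part $Y_{V(\hat\nu)}(\iota(\ee_\alpha),x)$ for $\alpha\in L$. Following Lepowsky and Dong--Lepowsky I would build it as a normal-ordered product of twisted exponentials $E^-(\alpha,x)E^+(\alpha,x)$ in the fractional Heisenberg modes, times the operator $\ee_{\pi_\nu\alpha}$ on the twisted group algebra $\C[\pi_\nu(L)]$, a power $x^{\langle\pi_\nu\alpha,\pi_\nu\alpha\rangle/2+\ldots}$ accounting for the conformal-weight shift, a normalisation constant $C_\alpha=\prod_{i=1}^{m-1}(1-\xi_m^{-i})^{-\langle\alpha_{(i)},\alpha_{(m-i)}\rangle/2}$ coming from contracting the exponentials at coincident arguments, and an operator on the defect space $\C^{d(\nu)}$ realising an appropriate projective action.

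With these definitions the core of the existence proof is verifying the twisted Jacobi identity. The Heisenberg relations reduce to the defining brackets of $\hat\h[\nu]$; the hard computation is the lattice--lattice case, where I would form the operator product $Y_{V(\hat\nu)}(\iota(\ee_\alpha),x_1)Y_{V(\hat\nu)}(\iota(\ee_\beta),x_2)$, extract the scalar prefactor governing locality and associativity, and match the twisted commutation factor $(-1)^{\langle\alpha,\beta\rangle}$ together with the $2$-cocycle $\eps$ and the function $u$. This is precisely where the $\C^{d(\nu)}$-action must carry a genuine irreducible representation of a finite Heisenberg-type group whose commutator pairing is dictated by $L_\nu/(L\cap(\id-\nu)L')$, which is exactly why the defect $d(\nu)$ appears and why its value is a Stone--von Neumann square root. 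Irreducibility then follows from the irreducibility of $M_{\hat\h}(1)[\nu]$ over $\hat\h[\nu]$ together with a spanning argument: the operators $\iota(\ee_\alpha)$, $\alpha\in L$, act transitively on the cosets inside $\pi_\nu(\lambda)+\pi_\nu(L)$ and the defect representation is irreducible, so any non-zero submodule is the whole space. Distinctness across $\lambda+L\in(L'/L)^\nu$ is read off from the lowest-weight data, since $\pi_\nu(\lambda)$ modulo $\pi_\nu(L)$ is an isomorphism invariant.

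The hardest direction will be exhaustiveness. The plan is to reduce the classification of irreducible $\hat\nu$-twisted $V_L$-modules to a finite-dimensional problem via the $\hat\nu$-twisted Zhu algebra $A_{\hat\nu}(V_L)$, whose irreducible modules are in bijection with the irreducible $\hat\nu$-twisted $V_L$-modules. I would identify $A_{\hat\nu}(V_L)$ explicitly, showing that it collapses to an algebra built from the fixed-point data $\h_{(0)}$ and a twisted group algebra supported on $(L'/L)^\nu$ with the defect multiplicity, and that its irreducible representations are precisely indexed by $(L'/L)^\nu$. The main obstacle throughout is controlling the interplay of the $2$-cocycle $\eps$, its $\nu$-twist encoded by $u$, and the defect space: one must check that the standard-lift condition $u|_{L^\nu}=1$ splits the relevant central extension in the correct way, so that the final count is exactly $|(L'/L)^\nu|$, with no extra multiplicities beyond those already absorbed into $d(\nu)$.
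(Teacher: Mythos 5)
The paper does not prove this statement at all: it is quoted verbatim as Theorem~4.2 of \cite{BK04}, and the surrounding text only records the construction of the modules $V_{\lambda+L}(\hat\nu)$ and the classification as facts established there. So there is no in-paper proof to measure you against; the comparison has to be with the argument in \cite{BK04} (and its precursors \cite{Lep85,DL96}).

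Your first half --- the explicit construction of the twisted vertex operators as normal-ordered exponentials in the fractional Heisenberg modes, corrected by the cocycle $\eps$, the function $u$, the normalisation constant from contracting $E^+$ against $E^-$, and a projective action on the defect space $\C^{d(\nu)}$ whose commutator pairing lives on $L_\nu/(L\cap(\id-\nu)L')$ --- is essentially the construction of Lepowsky and Dong--Lepowsky that Bakalov--Kac also use, and your identification of $d(\nu)$ as a Stone--von Neumann square root is exactly right. The distinctness argument via the lowest-weight coset $\pi_\nu(\lambda)+\pi_\nu(L)$ also matches the literature.

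Where you genuinely diverge is exhaustiveness. Bakalov--Kac do \emph{not} pass through a twisted Zhu algebra: they take an arbitrary irreducible $\hat\nu$-twisted module, restrict it to the twisted Heisenberg algebra $\hat\h[\nu]$, use irreducibility of $M_{\hat\h}(1)[\nu]$ to split off the Fock factor, and then show the remaining ``vacuum space'' is an irreducible module over a concrete finite central extension built from $\pi_\nu(L)$, $\eps$ and $u$; Stone--von Neumann for that finite Heisenberg-type group yields the count $|(L'/L)^\nu|$ with multiplicity space of dimension $d(\nu)$. Your route via $A_{\hat\nu}(V_L)$ is legitimate in principle (the Dong--Li--Mason bijection applies, and $C_2$-cofiniteness of $V_L$ upgrades admissible to ordinary), but the step ``I would identify $A_{\hat\nu}(V_L)$ explicitly'' is precisely the hard part and is asserted rather than executed; no such computation exists in the generality you need, and carrying it out would amount to redoing the Heisenberg-decomposition argument inside the Zhu algebra. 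You should also make explicit why only the $\nu$-stable cosets $(L'/L)^\nu$ occur --- the point is that a $\hat\nu$-twisted module is automatically $\hat\nu$-stable under the action $W\mapsto W\cdot\hat\nu$, which on the lattice labels is $\lambda+L\mapsto\nu^{-1}\lambda+L$ --- since at present this restriction only appears in your indexing, not in your argument.
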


\begin{rem}
\item
\begin{enumerate}
\item It follows from the construction of the irreducible module $V_{\lambda+L}(\hat\nu)$ that its conformal weight $\rho(V_{\lambda+L}(\hat\nu))$ is given by $\rho_\nu$ plus the norm of a shortest vector in the lattice coset $\pi_\nu(\lambda+L)$.
\item In \cite{DL96} the space $\C^{d(\nu)}$ is described as a module for a central extension of $L$ of dimension $\sqrt{|N/R|}$ where $N=L_\nu$ and $R$ is the radical of a certain alternating $\Z$-bilinear form $C_N\colon N\times N\to\C^\times$ \cite{Lep85}. Following (4.43) and (4.44) in \cite{BK04} this bilinear form can be rewritten as $C_N(\alpha,\beta)=\e^{(2\pi\i)\langle\kappa\alpha,\beta\rangle}$ where $\kappa$ is the inverse of $(\id-\nu)|_{\h_{(0)}^\bot}$. Then
\begin{align*}
R&=\{\alpha\in N\;|\;C_N(\alpha,\beta)=1\text{ for all }\beta\in N\}\\
&=\{\alpha\in N\;|\;\langle\kappa\alpha,\beta\rangle\in\Z\text{ for all }\beta\in N\}=\{\alpha\in N\;|\;\kappa\alpha\in N'\}\\
&=N\cap\kappa^{-1}N'=N\cap(\id-\nu)N'.
\end{align*}
On the other hand, since $(\id-\nu)=(\id-\nu)\pi_\nu^\bot$,
\begin{align*}
L\cap(\id-\nu)L'&=L\cap(\id-\nu)\pi_\nu^\bot(L')=L\cap(\id-\nu)N'=L\cap\h_{(0)}^\bot\cap(\id-\nu)N'\\
&=N\cap(\id-\nu)N',
\end{align*}
using \eqref{eq:lem4.6}. This shows that $R=L\cap(\id-\nu)L'$ and indeed $d(\nu)=\sqrt{|N/R|}$.
\item The above construction of $V_{\lambda+L}(\hat\nu)$ yields an irreducible $\hat\nu$-twisted module according to the definition of twisted modules used in this text and in \cite{BK04} (see Remark~\ref{rem:convtwmod}). What is constructed in \cite{DL96} is an irreducible $\hat\nu^{-1}$-twisted module in our convention.
\end{enumerate}
\end{rem}

\minisec{Non-Standard Lifts}
We describe a generalisation of the twisted module construction for non-standard lifts. For simplicity we restrict to the case of a unimodular lattice $L$, i.e.\ $L'/L=1$. Then the corresponding lattice \voa{} $V_L$ is holomorphic and there is only one $\hat\nu$-twisted $V_L$-module up to isomorphism.

Recall that the restriction of $u$ to $L^\nu$ is a homomorphism $L^\nu\to\{\pm 1\}$ (see Corollary~\ref{cor:uhom}). It is hence possible to find a vector $s_\eta\in\h_{(0)}$ with
\begin{equation*}
\e^{(2\pi\i)\langle s_\eta,\alpha\rangle}=u(\alpha)
\end{equation*}
for all $\alpha\in L^\nu$. This vector $s_\eta$ lies in $(1/2)(L^\nu)'$ and is unique up to $(L^\nu)'$. If $u$ corresponds to a standard lift, then $s_\eta\in 0+(L^\nu)'$.
\begin{prop}
Let $L$ be a unimodular, positive-definite, even lattice. Let $\nu\in\Aut(L)$ and let $\hat\nu$ be a lift of $\nu$, not necessarily a standard lift. Then the unique irreducible $\hat\nu$-twisted $V_L$-module up to isomorphism is as vector space and $\hat\h[\nu]$-module given by
\begin{equation*}
V_L(\hat\nu):=M_{\hat\h}(1)[\nu]\otimes\ee_{s_\eta}\C[\pi_\nu(L)]\otimes\C^{d(\nu)}.
\end{equation*}
\end{prop}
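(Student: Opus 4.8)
The plan is to reduce the non-standard lift to a standard one via an inner automorphism and then transport the twisted module constructed in \cite{BK04} along Li's shift ($\Delta$-operator) construction. First I would settle uniqueness: since $L$ is unimodular, $V_L$ is holomorphic by Corollary~\ref{cor:hol} and $C_2$-cofinite by Proposition~\ref{prop:latnice}, so Theorem~\ref{thm:10.3} guarantees that the finite-order automorphism $\hat\nu$ admits a \emph{unique} irreducible $\hat\nu$-twisted $V_L$-module up to isomorphism. Hence it suffices to exhibit one module whose underlying vector space and $\hat\h[\nu]$-module structure is $M_{\hat\h}(1)[\nu]\otimes\ee_{s_\eta}\C[\pi_\nu(L)]\otimes\C^{d(\nu)}$.

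The key algebraic step is the decomposition $\hat\nu=\hat\nu_1\sigma_{s_\eta}$. Recall that $u|_{L^\nu}$ is a homomorphism into $\{\pm1\}$, realised by $s_\eta\in\frac{1}{2}(L^\nu)'\subseteq\h_{(0)}$, and consider the inner automorphism $\sigma_{s_\eta}:=\e^{(2\pi\i)s_\eta(0)}$, which fixes the Heisenberg part and acts on $\ee_\alpha$ by $\e^{(2\pi\i)\langle s_\eta,\alpha\rangle}$. Setting $\hat\nu_1:=\hat\nu\sigma_{s_\eta}^{-1}$, I would compute that for $\alpha\in L^\nu$ one has $\hat\nu_1(\ee_\alpha)=\e^{-(2\pi\i)\langle s_\eta,\alpha\rangle}u(\alpha)\ee_{\nu\alpha}=u(\alpha)^{-1}u(\alpha)\ee_\alpha=\ee_\alpha$, so $\hat\nu_1$ is a \emph{standard} lift of $\nu$ in the sense of Definition~\ref{defi:standard}. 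Moreover, since $s_\eta\in\h_{(0)}$ is $\nu$-fixed, $\hat\nu_1\sigma_{s_\eta}\hat\nu_1^{-1}=\e^{(2\pi\i)(\nu s_\eta)(0)}=\sigma_{s_\eta}$, i.e.\ the two factors commute; being a product of commuting finite-order automorphisms, $\sigma_{s_\eta}$ has finite order and $\hat\nu=\hat\nu_1\sigma_{s_\eta}$.

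Next I would transport the module. As $\hat\nu_1$ is a standard lift and $L'/L=1$, Theorem~\ref{thm:4.2} yields the unique irreducible $\hat\nu_1$-twisted module $V_L(\hat\nu_1)=M_{\hat\h}(1)[\nu]\otimes\ee_0\C[\pi_\nu(L)]\otimes\C^{d(\nu)}$. The vector $s_\eta\in (V_L)_1$ satisfies the hypotheses of Li's shift construction \cite{Li96}: $L_0$ acts on it by $1$, its zero mode $s_\eta(0)$ acts semisimply on $V_L(\hat\nu_1)$ with eigenvalues in $\Q$ (they are the values $\langle s_\eta,\gamma\rangle$ on $\pi_\nu(L)=(L^\nu)'$), and $s_\eta(n)$ acts locally nilpotently for $n\geq1$. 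Applying $\Delta(s_\eta,x)=x^{s_\eta(0)}\exp\!\big(\sum_{n\geq1}\frac{s_\eta(n)}{-n}(-x)^{-n}\big)$ produces, on the same underlying space, a module twisted by $\hat\nu_1\sigma_{s_\eta}=\hat\nu$. Because $s_\eta\in\h_{(0)}$, the shift acts only through the $\h_{(0)}$-charge, translating the ground-state label from $\ee_0$ to $\ee_{s_\eta}$; the relabelling $\ee_{\beta}\mapsto\ee_{\beta+s_\eta}$, $\beta\in\pi_\nu(L)$, is then the desired isomorphism of $\hat\h[\nu]$-modules onto $M_{\hat\h}(1)[\nu]\otimes\ee_{s_\eta}\C[\pi_\nu(L)]\otimes\C^{d(\nu)}=V_L(\hat\nu)$, with the conformal weight correctly shifted to $\rho_\nu$ plus the norm of a shortest vector in $s_\eta+\pi_\nu(L)$.

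The main obstacle I expect is bookkeeping of conventions rather than conceptual difficulty: one must check that Li's construction yields a $\hat\nu$-twisted (and not $\hat\nu^{-1}$-twisted) module in the sign convention of Remark~\ref{rem:convtwmod}, possibly forcing the replacement $s_\eta\mapsto-s_\eta$, and one must verify carefully that $\Delta(s_\eta,x)$ reproduces exactly the $\hat\h[\nu]$-action on $\ee_{s_\eta}\C[\pi_\nu(L)]$, including the grading shift by the vacuum anomaly \eqref{eq:vacanomaly}. A cleaner but longer alternative, avoiding Li's theorem, would be to define the twisted vertex operators on $M_{\hat\h}(1)[\nu]\otimes\ee_{s_\eta}\C[\pi_\nu(L)]\otimes\C^{d(\nu)}$ by the formulae of \cite{DL96,BK04} with the cocycle data adjusted by $u$ and verify the twisted Jacobi identity directly; the inner-automorphism reduction above is preferable since it reuses Theorem~\ref{thm:4.2} verbatim.
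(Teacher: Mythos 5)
The paper states this proposition \emph{without proof} --- it appears in the ``Non-Standard Lifts'' paragraph of Section~\ref{sec:twmodlatvoa} as a bare assertion, so there is no argument of the author's to compare yours against. Your proposal is a correct and natural way to supply the missing proof, and it is the route taken in the subsequent literature on lattice orbifolds: uniqueness follows from Theorem~\ref{thm:10.3} exactly as you say, and the factorisation $\hat\nu=\hat\nu_1\sigma_{s_\eta}$ into a commuting product of a standard lift and an inner automorphism $\sigma_{s_\eta}=\e^{(2\pi\i)s_\eta(0)}$ is the right reduction; the verification that $\hat\nu_1$ fixes $\ee_\alpha$ for $\alpha\in L^\nu$ and that the two factors commute (because $\nu s_\eta=s_\eta$) is correct. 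Two small points deserve explicit mention if you write this up. First, the function $u_1(\alpha)=\e^{-(2\pi\i)\langle s_\eta,\alpha\rangle}u(\alpha)$ defining $\hat\nu_1$ need no longer take values in $\{\pm1\}$, only in a finite cyclic subgroup of $\C^\times$; this is harmless because any two standard lifts of $\nu$ are conjugate by an inner automorphism $\e^{(2\pi\i)h(0)}$ with $h\in\h$, so Theorem~\ref{thm:4.2} still applies, but the sentence ``as $\hat\nu_1$ is a standard lift'' hides this step. Second, the sign ambiguity in Li's $\Delta$-operator (whether one lands on a $\hat\nu_1\sigma_{s_\eta}$- or $\hat\nu_1\sigma_{s_\eta}^{-1}$-twisted module) is genuinely immaterial here, and you can say why: since $u|_{L^\nu}$ is $\{\pm1\}$-valued, $-s_\eta$ is an equally valid choice of shift vector, and $2s_\eta\in(L^\nu)'=\pi_\nu(L)$ forces $s_\eta+\pi_\nu(L)=-s_\eta+\pi_\nu(L)$, so the resulting coset --- and hence the module in the statement --- is unaffected. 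With those two remarks added, your argument is complete.
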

The grading of $V_L(\hat\nu)$ and the defect $d(\nu)$ are as in the standard case. The conformal weight $\rho(V_L(\hat\nu))$ is $\rho_\nu$ plus the norm of a shortest vector in the lattice coset $s_\eta+\pi_\nu(L)$.

\section{Characters}

Given a positive-definite, even lattice, the characters of the associated lattice \voa{} and its irreducible modules are well-known. Recall that $\eta(\tau)$ is the Dedekind eta function. Moreover, if $L$ is a positive-definite lattice, we define the \emph{theta function} of $L$ as
\begin{equation*}
\vartheta_L(\tau)=\sum_{\beta\in L}q_\tau^{\langle\beta,\beta\rangle/2},
\end{equation*}
which has integral exponents if $L$ is even. If $L$ is integral we can also consider the theta function $\vartheta_{\alpha+L}$ associated with the coset $\alpha+L$ of $L$ in $L'/L$. It is analogously defined by
\begin{equation*}
\vartheta_{\alpha+L}(\tau)=\sum_{\beta\in\alpha+L}q_\tau^{\langle\beta,\beta\rangle/2}
\end{equation*}
for $\alpha+L\in L'/L$.
\begin{prop}
Let $L$ be a positive-definite, even lattice and $V_L$ the associated lattice \voa{} of central charge $c=\rk(L)$. Then the character of the irreducible $V_L$-module $V_{\alpha+L}$ is given by
\begin{equation*}
\ch_{V_{\alpha+L}}(\tau)=\tr_{V_{\alpha+L}}q_\tau^{L_0-c/24}=\frac{\vartheta_{\alpha+L}(\tau)}{\eta(\tau)^{\rk(L)}}
\end{equation*}
for all $\alpha+L\in L'/L$.
\end{prop}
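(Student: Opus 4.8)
The plan is to exploit the explicit tensor-product description $V_{\alpha+L}=M_{\hat\h}(1)\otimes\C_\eps[\alpha+L]$ from Section~\ref{sec:latvoa} together with the fact that $L_0$ acts on each homogeneous vector by its weight. Since the weight of a spanning vector $h_k(-n_k)\ldots h_1(-n_1)1\otimes\ee_\beta$ (for $\beta\in\alpha+L$) is $n_1+\ldots+n_k+\langle\beta,\beta\rangle/2$, the $\Z$-grading of $V_{\alpha+L}$ splits as the sum of an ``oscillator'' weight coming from the Fock factor $M_{\hat\h}(1)\cong S(\hat\h_-)$ and a ``momentum'' weight $\langle\beta,\beta\rangle/2$ coming from $\C_\eps[\alpha+L]$. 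Because these two gradings are carried by independent tensor factors, the graded trace factorises:
\begin{equation*}
\ch_{V_{\alpha+L}}(\tau)=q_\tau^{-c/24}\left(\tr_{M_{\hat\h}(1)}q_\tau^{L_0}\right)\left(\sum_{\beta\in\alpha+L}q_\tau^{\langle\beta,\beta\rangle/2}\right).
\end{equation*}

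First I would identify the second factor directly: by definition it is the coset theta function $\vartheta_{\alpha+L}(\tau)$. Next I would compute the Fock-space factor. The space $\hat\h_-=\h\otimes_\C t^{-1}\C[t^{-1}]$ has, with respect to an orthonormal basis $\{a_i\}_{i=1}^{\rk(L)}$ of $\h$, the homogeneous basis $\{a_i(-n)\mid 1\le i\le\rk(L),\,n\in\Ns\}$, where $a_i(-n)$ has weight $n$. Since $M_{\hat\h}(1)\cong S(\hat\h_-)$ as a graded vector space, the standard bosonic partition-function count (the weight-$N$ subspace has dimension equal to the number of partitions of $N$ into parts each assigned one of $\rk(L)$ colours) yields
\begin{equation*}
\tr_{M_{\hat\h}(1)}q_\tau^{L_0}=\prod_{n=1}^\infty\frac{1}{(1-q_\tau^n)^{\rk(L)}}.
\end{equation*}

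Finally I would reconcile the central-charge shift with the Dedekind eta function. Using $c=\rk(L)$ and $\eta(\tau)=q_\tau^{1/24}\prod_{n=1}^\infty(1-q_\tau^n)$, one has
\begin{equation*}
\eta(\tau)^{\rk(L)}=q_\tau^{c/24}\prod_{n=1}^\infty(1-q_\tau^n)^{\rk(L)},
\end{equation*}
so that the prefactor $q_\tau^{-c/24}$ combines with the oscillator product to give exactly $\eta(\tau)^{-\rk(L)}$. Substituting all three pieces into the factorised trace produces $\ch_{V_{\alpha+L}}(\tau)=\vartheta_{\alpha+L}(\tau)/\eta(\tau)^{\rk(L)}$, as claimed. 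No single step is genuinely hard here; the only point requiring care is the bosonic partition-function identity for the graded dimension of $S(\hat\h_-)$ and the bookkeeping that matches the $q_\tau^{-c/24}$ normalisation to the $q_\tau^{1/24}$ appearing in $\eta$, so that the powers of $q_\tau$ line up correctly and the infinite products cancel as intended.
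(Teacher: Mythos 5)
Your proof is correct and is exactly the standard computation that the paper delegates to \cite{FLM88}, Remark~7.1.2 (for $\alpha=0$, with the general coset case being identical): factorise the graded trace over the tensor decomposition $M_{\hat\h}(1)\otimes\C_\eps[\alpha+L]$, identify the momentum sum as $\vartheta_{\alpha+L}$, compute the Fock-space factor as $\prod_{n\geq1}(1-q_\tau^n)^{-\rk(L)}$, and absorb the $q_\tau^{-c/24}$ shift into $\eta(\tau)^{-\rk(L)}$. No gaps; this matches the paper's (cited) argument.
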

\begin{proof}
The case of $\alpha=0$ is treated in \cite{FLM88}, Remark~7.1.2. The general case is analogous.
\end{proof}

\minisec{Twisted Characters}
We now insert an automorphism $\hat\nu$ of the lattice \voa{} $V_L$ into the trace. First we assume that $\hat\nu$ is a standard lift of $\nu$. Given an automorphism $\nu$ of the lattice $L$ of order $m=\ord(\nu)$ with cycle shape $\prod_{t\mid m}t^{b_t}$ let us define the eta product
\begin{equation*}
\eta_\nu(\tau):=\prod_{t\mid m}\eta(t\tau)^{b_t}.
\end{equation*}
For $\nu=\id_L$ this gives $\eta_\nu(\tau)=\eta(\tau)^{\rk(L)}$.
\begin{prop}
Let $L$ be a positive-definite, even lattice and $\hat\nu$ an automorphism of $V_L$ obtained as a standard lift of a lattice automorphism $\nu\in\Aut(L)$. Then the twisted character for $\hat\nu$ on the \voa{} $V_L$ of central charge $c=\rk(L)$ is
\begin{equation*}
\tr_{V_L}\hat\nu q_\tau^{L_0-c/24}=\frac{\vartheta_{L^\nu}(\tau)}{\eta_\nu(\tau)}.
\end{equation*}
\end{prop}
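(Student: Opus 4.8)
The plan is to compute the graded trace $\tr_{V_L}\hat\nu\, q_\tau^{L_0-c/24}$ directly from the tensor-product decomposition $V_L=M_{\hat\h}(1)\otimes\C_\eps[L]$, exploiting that $\hat\nu$ acts as $\nu\otimes\hat\nu$ and that the trace of a tensor-product operator factors as a product of traces. First I would split the trace into the Heisenberg (Fock-space) contribution and the group-algebra contribution, remembering the overall shift by $q_\tau^{-c/24}$ with $c=\rk(L)$.

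For the group-algebra factor, I would observe that $\hat\nu(\ee_\alpha)=u(\alpha)\ee_{\nu\alpha}$ permutes the basis $\{\ee_\alpha\}_{\alpha\in L}$ up to the sign $u(\alpha)$, so only the diagonal entries survive the trace, i.e.\ those $\alpha$ with $\nu\alpha=\alpha$. Since $\hat\nu$ is a \emph{standard} lift, $u(\alpha)=1$ for all $\alpha\in L^\nu$ by Definition~\ref{defi:standard}, so the fixed basis vectors contribute with coefficient $+1$. Recalling that $\wt(\ee_\alpha)=\langle\alpha,\alpha\rangle/2$, this factor yields exactly $\sum_{\alpha\in L^\nu}q_\tau^{\langle\alpha,\alpha\rangle/2}=\vartheta_{L^\nu}(\tau)$.

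For the Heisenberg factor, I would use that $M_{\hat\h}(1)\cong S(\hat\h_-)$ as a graded vector space, with $\hat\h_-$ spanned by the modes $h(-n)$, $n\in\Ns$, $h\in\h$, on which $\nu$ acts as the extension of the lattice isometry to $\h=L\otimes_\Z\C$. The graded trace of the symmetric algebra of a graded space on which $\nu$ acts is the standard plethystic/product formula: decomposing $\h$ into eigenspaces $\h_{(k)}$ of $\nu$ with eigenvalue $\xi_m^k$, each mode $h(-n)$ with $h\in\h_{(k)}$ contributes a factor $(1-\xi_m^{k}q_\tau^{n})^{-1}$. Collecting these over all $n\in\Ns$ and all eigenvalues, and then reorganising the product according to the cycle shape $\prod_{t\mid m}t^{b_t}$ of $\nu$, I expect the eigenvalue-weighted infinite product to collapse—after including the vacuum-anomaly shift $\rho_\nu$ from \eqref{eq:vacanomaly} and the $q_\tau^{-c/24}$ factor—into precisely $1/\eta_\nu(\tau)=\prod_{t\mid m}\eta(t\tau)^{-b_t}$.

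The main obstacle is this last bookkeeping step: one must check that the eigenvalue product $\prod_{k\in\Z_m}\prod_{n\geq1}(1-\xi_m^{k}q_\tau^{n})^{-\dim\h_{(k)}}$, together with the anomaly shift, reassembles into the eta product indexed by cycle lengths. The key identity is that a $t$-cycle of $\nu$ contributes a block of $t$ eigenvalues $\{\xi_t^0,\dots,\xi_t^{t-1}\}$, and $\prod_{j=0}^{t-1}\prod_{n\geq1}(1-\xi_t^{j}q_\tau^{n})=\prod_{n\geq1}(1-q_\tau^{tn})$, which matches the $\eta(t\tau)$ factor once the fractional power $q_\tau^{t/24}$ coming from $\rho_\nu$ (via Remark~\ref{rem:cycleconf}) and the $q_\tau^{-\rk(L)/24}$ shift are accounted for. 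I would verify the exponents line up using $\rho_\nu=\frac1{24}\sum_{t\mid m}b_t(t-1/t)$ and $\rk(L)=\sum_{t\mid m}t\,b_t$, which are exactly the identities that make the total $q$-power of each $\eta(t\tau)$ factor correct. Everything else is routine, so I would present the factorisation cleanly and then invoke these two closed forms.
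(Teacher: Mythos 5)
Your factorisation of the trace over $V_L=M_{\hat\h}(1)\otimes\C_\eps[L]$ is exactly the computation that establishes this statement; the paper states the proposition without proof, but carries out the identical argument later for the generalised Proposition~\ref{prop:nonholchar} (factor the trace over the tensor product, observe that in the group-algebra factor only the $\ee_\alpha$ with $\nu\alpha=\alpha$ contribute and that the standard-lift property makes their coefficient $+1$, so this factor is $\vartheta_{L^\nu}(\tau)$, and evaluate the Fock-space factor by the eigenvalue product formula). Your theta-function step and the identity $\prod_{j=0}^{t-1}(1-\xi_t^{j}y)=1-y^{t}$ are both correct.

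The one ingredient you should strike is the vacuum anomaly. The shift $\rho_\nu$ of \eqref{eq:vacanomaly} is the lowest $L_0$-eigenvalue of the \emph{twisted} Heisenberg module $M_{\hat\h}(1)[\nu]$; it plays no role in the trace of $\hat\nu$ on the \emph{untwisted} module $M_{\hat\h}(1)$, whose vacuum has weight $0$. The exponent bookkeeping closes without it: the graded trace of $\nu$ on $S(\hat\h_-)$ is $\prod_{n\geq1}\det(1-\nu q_\tau^{n})^{-1}=\prod_{t\mid m}\prod_{n\geq1}(1-q_\tau^{tn})^{-b_t}$, and multiplying by $q_\tau^{-c/24}=q_\tau^{-\sum_{t\mid m}tb_t/24}$ distributes a factor $q_\tau^{-tb_t/24}$ to each $t$-block, turning it into $\eta(t\tau)^{-b_t}$. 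Only the identity $\rk(L)=\sum_{t\mid m}tb_t$ is needed; if you literally inserted an extra factor $q_\tau^{\rho_\nu}$, the two sides would disagree by exactly that power of $q_\tau$. (The formula $\rho_\nu=\frac{1}{24}\sum_{t\mid m}b_t(t-\frac{1}{t})$ becomes relevant only when you compute the character of the twisted module $V_L(\hat\nu)$, e.g.\ after an $S$-transformation.)
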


More generally we consider a non-standard lift $\hat\nu$ of $\nu$. In this case, the trace will depend on the value of the function $u$ on $L^\nu$ (recall from Corollary~\ref{cor:uhom} that $u$ restricts to a homomorphism on $L^\nu$). Indeed, the twisted character becomes
\begin{equation*}
\tr_{V_L}\hat\nu q_\tau^{L_0-c/24}=\frac{\vartheta_{L^\nu,u}(\tau)}{\eta_\nu(\tau)}
\end{equation*}
with the generalised theta function
\begin{equation*}
\vartheta_{L^\nu,u}(\tau):=\sum_{\beta\in L^\nu}u(\beta)q_\tau^{\langle\beta,\beta\rangle/2}.
\end{equation*}

Finally, consider a power $\hat\nu^k$ of $\hat\nu$ for $k\in\N$. This does not have to be a standard lift of $\nu^k$ even if $\hat\nu$ is a standard lift of $\nu$. If the lift $\hat\nu$ is determined by the function $u\colon L\to\{\pm1\}$, then the lift $\hat\nu^k$ of $\nu^k$ is determined by the function $w\colon L\to\{\pm1\}$ given by $w(\alpha)=u(\alpha)u(\nu\alpha)\ldots u(\nu^{k-1}\alpha)$.

\begin{prop}\label{prop:twistedtrace}
Let $L$ be a positive-definite, even lattice and $\hat\nu$ some lift of a lattice automorphism $\nu\in\Aut(L)$. For $k\in\N$ the twisted character for $\hat{\nu}^k$ on the \voa{} $V_L$ of central charge $c=\rk(L)$ is
\begin{equation*}
\tr_{V_L}\hat{\nu}^kq_\tau^{L_0-c/24}=\frac{\vartheta_{L^{\nu^k},w}(\tau)}{\eta_{\nu^k}(\tau)}
\end{equation*}
where $w(\alpha)=u(\alpha)u(\nu\alpha)\ldots u(\nu^{k-1}\alpha)$.
\end{prop}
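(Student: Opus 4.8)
The plan is to exploit the tensor-product decomposition $V_L = M_{\hat\h}(1)\otimes\C_\eps[L]$ together with the fact (Section~\ref{sec:autlatvoa}) that the lift acts as a pure tensor $\hat\nu=\nu\otimes\hat\nu$, so that $\hat\nu^k=\nu^k\otimes\hat\nu^k$ with $\hat\nu^k(\ee_\alpha)=w(\alpha)\ee_{\nu^k\alpha}$ and $w(\alpha)=u(\alpha)u(\nu\alpha)\cdots u(\nu^{k-1}\alpha)$, as derived there. Since the weight of a homogeneous vector $v\otimes\ee_\alpha$ splits as $\wt(v)+\langle\alpha,\alpha\rangle/2$, the grading operator factorizes as $q_\tau^{L_0}=q_\tau^{L_0^{\mathrm{osc}}}\otimes q_\tau^{\langle\cdot,\cdot\rangle/2}$, and hence $\hat\nu^k q_\tau^{L_0}$ is a pure tensor of operators on $M_{\hat\h}(1)$ and on $\C_\eps[L]$. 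By the elementary identity $\tr(A\otimes B)=\tr(A)\tr(B)$, the graded trace factors as a product of an oscillator trace over the Fock space $M_{\hat\h}(1)\cong S(\hat\h_-)$ and a trace over the twisted group algebra, times the overall normalization $q_\tau^{-c/24}$.

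First I would treat the group-algebra factor. Because $\hat\nu^k$ sends $\ee_\alpha$ to a scalar multiple of $\ee_{\nu^k\alpha}$, only the indices fixed by $\nu^k$, i.e. $\alpha\in L^{\nu^k}$, produce a diagonal contribution; this is independent of the oscillator summation and therefore pulls out cleanly. The result is $\sum_{\alpha\in L^{\nu^k}}w(\alpha)q_\tau^{\langle\alpha,\alpha\rangle/2}=\vartheta_{L^{\nu^k},w}(\tau)$, the generalized theta function appearing in the statement. Note $w(0)=u(0)^k=1$, so the leading term is $1$, consistent with $V_0=\C\vac$.

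Next I would compute the oscillator factor. Using the standard bosonic trace formula for a symmetric algebra, and the fact that $\nu^k$ acts on the degree-$n$ piece $\h\otimes t^{-n}$ by $\nu^k|_\h$ while $L_0^{\mathrm{osc}}$ acts by $n$, one obtains $\tr_{S(\hat\h_-)}\bigl(\nu^k q_\tau^{L_0^{\mathrm{osc}}}\bigr)=\prod_{n\geq1}\det_\h\bigl(1-\nu^k q_\tau^{\,n}\bigr)^{-1}$. The heart of the argument is to identify this infinite product with the eta product. Writing the cycle shape of $\nu^k$ as $\prod_t t^{b_t}$, which by definition encodes the eigenvalues of $\nu^k$ on $\h$ through $\det_\h(1-\nu^k x)=\prod_t(1-x^t)^{b_t}$ (each $t$-cycle contributing the $t$-th roots of unity, so that $\prod_{j=0}^{t-1}(1-\zeta_t^{\,j}x)=1-x^t$), I substitute $x=q_\tau^{\,n}$, take the product over $n$, and use $\prod_{n\geq1}(1-q_\tau^{tn})=q_\tau^{-t/24}\eta(t\tau)$. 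This yields $\prod_{n\geq1}\det_\h(1-\nu^k q_\tau^{\,n})=q_\tau^{-\frac1{24}\sum_t t b_t}\eta_{\nu^k}(\tau)=q_\tau^{-c/24}\eta_{\nu^k}(\tau)$, since $\sum_t t\,b_t=\rk(L)=c$. Hence the oscillator trace equals $q_\tau^{c/24}/\eta_{\nu^k}(\tau)$.

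Finally I would assemble the pieces: multiplying by the prefactor $q_\tau^{-c/24}$ cancels the $q_\tau^{c/24}$ from the oscillator trace, leaving $\vartheta_{L^{\nu^k},w}(\tau)/\eta_{\nu^k}(\tau)$, exactly as claimed. The only genuinely non-formal step, and the main obstacle, is the identification of the infinite determinant product with the eta product; this rests on the defining property of the cycle shape of $\nu^k$ and on careful bookkeeping of the $q_\tau^{\pm c/24}$ prefactors, which match precisely via $\sum_t t\,b_t=c$. The factorization of the trace and the vanishing of off-diagonal contributions are then routine.
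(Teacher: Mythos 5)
Your proposal is correct and is exactly the standard computation the paper relies on (the paper itself omits the proof, deferring to the analogous untwisted case in \cite{FLM88}, Remark~7.1.2): factor the graded trace over $M_{\hat\h}(1)\otimes\C_\eps[L]$, observe that only $\ee_\alpha$ with $\alpha\in L^{\nu^k}$ contribute diagonally and yield $\vartheta_{L^{\nu^k},w}$, and identify the oscillator trace $\prod_{n\geq1}\det_\h(1-\nu^kq_\tau^n)^{-1}$ with $q_\tau^{c/24}/\eta_{\nu^k}(\tau)$ via the cycle shape of $\nu^k$. The bookkeeping of the $q_\tau^{\pm c/24}$ factors through $\sum_t t\,b_t=\rk(L)$ is also handled correctly.
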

Note that if $\nu$ has cycle shape $\prod_{t\mid m}t^{b_t}$, then $\nu^k$ has cycle shape $\prod_{t\mid m}(t/(t,k))^{(t,k)b_t}$.

\section{Orbifolds of Lattice \VOA{}s}\label{sec:latorbifold}

In this section we combine the results of Chapter~\ref{ch:fpvosa} and this chapter to explicitly describe orbifolds of holomorphic lattice \voa{}s where the \voa{} automorphisms are obtained as lifts of lattice automorphisms.

\begin{customass}{{\textbf{\textsf{L}}}}\label{ass:l}
Let $L$ denote be an even, unimodular, positive-definite lattice. Let $V=V_L$ be the corresponding lattice \voa{}, which is holomorphic, satisfies Assumption~\ref{ass:n} and has central charge $c=\rk(L)$.

Let $\nu\in\Aut(L)$ be an automorphism of the lattice of order $m=\ord(\nu)\in\Ns$, which lifts to some automorphism $\hat{\nu}$ of $V_L$ (described by the function $u\colon L\to\{\pm1\}$) of order $\hat{m}=\ord(\hat{\nu})\in\Ns$, some multiple of $m$. Hence we are in the situation of Assumption~\ref{ass:op}. We do not assume that $\hat\nu$ is a standard lift. Moreover, assume that $\hat\nu$ is of type $\hat{m}\{0\}$ (see Proposition~\ref{prop:zerosuff}), i.e.\ the conformal weight of the unique $\hat{\nu}$-twisted $V_L$-module $V_L(\hat\nu)$ is in $(1/\hat{m})\Z$.
\end{customass}

For the rest of this section assume that Assumption~\ref{ass:l} holds. Then the orbifold construction \eqref{eq:orbifold} for $r=0$ yields a new holomorphic \voa{}
\begin{equation*}
\widetilde{V}=\bigoplus_{i\in\Z_{\hat{m}}}W^{(i,0)}
\end{equation*}
(assuming that representations $\phi_i$ are chosen as in Corollary~\ref{cor:main}) built from the irreducible modules of the \fpvosa{} $V_L^{\hat\nu}$. The main goal of this section is to describe the computation of the character $\ch_{\widetilde{V}}(\tau)$, or more precisely its $q_\tau$-expansion.
To this end we need to compute all the twisted trace functions
\begin{equation*}
T(\vac,i,j,\tau)=\tr_{V_L(\hat{\nu}^i)}\phi_i(\hat{\nu}^j)q_\tau^{L_0-c/24}
\end{equation*}
for $i,j\in\Z_{\hat{m}}$. The twisted trace functions on $V_L=V_L(\hat\nu^0)$ are determined in Proposition~\ref{prop:twistedtrace} to be
\begin{equation*}
T(\vac,0,j,\tau)=\tr_{V_L}\hat{\nu}^jq_\tau^{L_0-c/24}=\frac{\vartheta_{L^{\nu^j},w}(\tau)}{\eta_{\nu^j}(\tau)}.
\end{equation*}
In general, the trace functions for $i\in\Z_{\hat{m}}\setminus\{0\}$ cannot be obtained directly since we do not know all the representations $\phi_i$ explicitly in these cases (except if $i$ and $\hat{m}$ are coprime, see proof of Lemma~\ref{lem:phi2}).\footnote{Note that if $i\in\Z_{\hat{m}}\setminus\{0\}$ and $j=0$, we can in fact compute the trace functions $T(\vac,i,0,\tau)$ directly since they are simply the characters of the irreducible $V_L$-modules $V_L(\hat\nu^i)$ but we omit this computation since we obtain them via the modular transformations below, which we have to use in any case.}

\minisec{Modular Transformations}

In order to obtain all the twisted trace functions $T(\vac,i,j,\tau)$ we make use of the modular-transformation properties of the trace functions described in Section~\ref{sec:modinv}. For this we need to explicitly know the $S$- and $T$-transformations of the above trace functions. The transformation behaviour of the eta function and of products thereof is explicitly known (see e.g.\ \cite{Apo90}, Theorem~3.4). To obtain the transformation behaviour of the generalised theta function in the numerator we have to write it in terms of ordinary theta functions.

Consider some lift $\hat{\nu}\in\Aut(\hat{L})$ of $\nu\in\Aut(L)$, not necessarily a standard lift, and fix $j\in\Z_{\hat{m}}$. We showed in Corollary~\ref{cor:uhom} that the map $\alpha\mapsto w(\alpha):=u(\alpha)\ldots u(\nu^{j-1}\alpha)$ defines a homomorphism $L^{\nu^j}\to\{\pm1\}$. We consider the following decomposition of $L^{\nu^j}$ into inverse images with respect to $w$
\begin{equation*}
L^{\nu^j}=L_0^{\nu^j}\oplus L_1^{\nu^j}:=w^{-1}(\{1\})\oplus w^{-1}(\{-1\}).
\end{equation*}
Clearly, $L_0^{\nu^j}=\ker(w)$ is a sublattice of $L^{\nu^j}$. The other inverse image $L_1^{\nu^j}$ is a coset of $L_0^{\nu^j}$ if it is non-empty. Namely, if $\beta_1$ is some arbitrary element in $L_1^{\nu^j}$, then $L_1^{\nu^j}=L_0^{\nu^j}+\beta_1$. Then
\begin{align*}
\vartheta_{L^{\nu^j},w}(\tau)&=\sum_{\alpha\in L^{\nu^j}}u(\alpha)\ldots u(\nu^{j-1}\alpha)q_\tau^{\langle\alpha,\alpha\rangle/2}=\sum_{\alpha\in L^{\nu^j}}w(\alpha)q_\tau^{\langle\alpha,\alpha\rangle/2}\\
&=\vartheta_{L_0^{\nu^j}}(\tau)-\vartheta_{L_1^{\nu^j}}(\tau)\\
&=\begin{cases}\vartheta_{L^{\nu^j}}(\tau)&\text{if $L_1^{\nu^j}=\emptyset$,}\\\vartheta_{L_0^{\nu^j}}(\tau)-\vartheta_{\beta_1+L_0^{\nu^j}}(\tau)&\text{if $L_1^{\nu^j}\neq\emptyset$.}\end{cases}
\end{align*}

The transformation behaviour of these theta functions under $\SLZ$ is known:
\begin{thm}[Special case of \cite{Bor98}, Theorem~4.1]\label{thm:bor98thm4.1}
Let $L$ be a positive-definite, even lattice with discriminant form $D=L'/L$. Define for $\gamma+L\in D$
\begin{equation*}
\vartheta_{\gamma+L}(\tau):=\sum_{\alpha\in \gamma+L}q_\tau^{\langle\alpha,\alpha\rangle/2}.
\end{equation*}
Then
\begin{equation*}
\Theta_L(\tau):=\sum_{\gamma+L\in D}\vartheta_{\gamma+L}(\tau)\ee_{\gamma+L}
\end{equation*}
is a modular form for the Weil representation $\rho_D$ of $\MpZ$, the metaplectic cover of $\SLZ$, on $\C[D]$ of weight $\rk(L)/2$.
\end{thm}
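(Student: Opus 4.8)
The plan is to verify directly that $\Theta_L$ satisfies the defining transformation law of a modular form of weight $\rk(L)/2$ for $\rho_D$ by checking it on the two standard generators $\widetilde{T}$ and $\widetilde{S}$ of $\MpZ$, together with the requisite holomorphy and growth conditions. Write $n:=\rk(L)$ and recall that the absolute convergence of $\vartheta_{\gamma+L}(\tau)=\sum_{\alpha\in\gamma+L}q_\tau^{\langle\alpha,\alpha\rangle/2}$ for $\tau\in\H$, guaranteed by the positive-definiteness of $L$, shows that each component, and hence $\Theta_L$, is holomorphic on $\H$; boundedness at the cusp $\i\infty$ is immediate since the smallest occurring exponent $\min_{\alpha\in\gamma+L}\langle\alpha,\alpha\rangle/2\geq0$ is non-negative, and boundedness at the remaining cusps will follow once the $\widetilde{S}$-transformation is established.

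For the $\widetilde{T}$-transformation I would use that $L$ is even: for $\alpha\in\gamma+L$ the norm $\langle\alpha,\alpha\rangle/2$ lies in the fixed coset $Q_L(\gamma+L)+\Z$, so that $\vartheta_{\gamma+L}(\tau+1)=\e^{(2\pi\i)Q_L(\gamma+L)}\vartheta_{\gamma+L}(\tau)$. This is exactly the diagonal action $\rho_D(\widetilde{T})_{\gamma,\delta}=\delta_{\gamma,\delta}\e^{(2\pi\i)Q_L(\gamma)}$ of the Weil representation, and since $T$ fixes $c\tau+d=1$ there is no automorphy factor to track; this step is routine.

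The heart of the argument, and the main obstacle, is the $\widetilde{S}$-transformation $\tau\mapsto-1/\tau$, which I would derive from the Poisson summation formula. Applying Poisson summation to the Gaussian $x\mapsto\e^{\pi\i\tau\langle x,x\rangle}$ on the real span $L\otimes_\Z\R$ and summing over the coset $\gamma+L$ yields, after computing the Gaussian Fourier transform and using the relation $\operatorname{vol}(L\otimes_\Z\R/L)=\sqrt{|L'/L|}$ between the covolume and the discriminant, an identity of the form
\[
\vartheta_{\gamma+L}(-1/\tau)=\frac{(\tau/\i)^{n/2}}{\sqrt{|L'/L|}}\sum_{\delta+L\in L'/L}\e^{-(2\pi\i)\langle\gamma,\delta\rangle}\vartheta_{\delta+L}(\tau).
\]
The coefficient matrix $|L'/L|^{-1/2}\e^{-(2\pi\i)\langle\gamma,\delta\rangle}$ must then be matched with $\rho_D(\widetilde{S})$; by definition of the Weil representation (Section~\ref{sec:weil}) these differ precisely by the scalar $\e^{(2\pi\i)\sign(L'/L)/8}$, so the transformation law holds for $\rho_D$ of weight $n/2$ if and only if $(\tau/\i)^{n/2}$ carries exactly this phase relative to the metaplectic branch of $\tau^{1/2}$. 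This is where Milgram's formula $\sign(L'/L)\equiv n\pmod 8$ enters, and it is also the reason the statement is naturally one about $\MpZ$ rather than $\SLZ$: for odd $n$ the square root $(\tau/\i)^{n/2}$ is only single-valued on the double cover, and the chosen lift $\widetilde{S}$ of $S$ fixes the branch so that the two sides agree. The careful bookkeeping of this half-integral automorphy factor and its compatibility with Milgram's formula is the delicate point; everything else reduces to standard Gaussian integration.

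Finally, since $\widetilde{S}$ and $\widetilde{T}$ generate $\MpZ$ and both transformation laws have been verified, $\Theta_L$ transforms correctly under all of $\MpZ$; combined with the holomorphy and the boundedness at $\i\infty$ (and hence, via the $\widetilde{S}$-law, at $0$), this establishes that $\Theta_L$ is a modular form of weight $n/2$ for $\rho_D$. Alternatively, one may simply invoke the cited general result, Theorem~4.1 of \cite{Bor98}, of which the present statement is the special case obtained by taking the lattice to be positive-definite and the input polynomial to be constant.
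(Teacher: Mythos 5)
Your proposal is correct, but it takes a different route from the paper: the paper gives no proof of this statement at all and simply records it as a special case of Theorem~4.1 of \cite{Bor98}, whereas you reconstruct the classical argument directly. Your sketch is the standard one and the bookkeeping is right: Poisson summation over the coset $\gamma+L$ yields the matrix $|L'/L|^{-1/2}\e^{-(2\pi\i)\langle\gamma,\delta\rangle}$ with automorphy factor $(\tau/\i)^{n/2}=\tau^{n/2}\e^{-\pi\i n/4}$, and comparing with $\rho_D(\widetilde{S})_{\gamma,\delta}=|D|^{-1/2}\e^{(2\pi\i)(-\langle\gamma,\delta\rangle-\sign(D)/8)}$ shows the two phases agree precisely by Milgram's formula $\sign(L'/L)\equiv\rk(L)\pmod 8$; the $\widetilde{T}$-law is immediate from evenness, and holomorphy plus boundedness at the cusps follows as you say. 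You also correctly identify why the statement lives on $\MpZ$ rather than $\SLZ$ when $\rk(L)$ is odd. What your direct argument buys is a self-contained, elementary proof for the positive-definite case; what the citation buys (and why the paper prefers it) is that Borcherds' Theorem~4.1 covers the far more general situation of indefinite lattices and theta functions with polynomial insertions, which is closer to the form actually needed elsewhere in the text. Either route is acceptable here.
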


If $L_1^{\nu^j}$ is empty, then $\vartheta_{L^{\nu^j},w}(\tau)=\vartheta_{L^{\nu^j}}(\tau)$ and we need to consider the modular-transformation behaviour of $\Theta_{L^{\nu^j}}(\tau)$.

Now assume that $L_1^{\nu^j}$ is non-empty. Since $\beta_1\in L_1^{\nu^j}\subseteq L^{\nu^j}\leq L\leq L'\leq (L^{\nu^j})'\leq (L_0^{\nu^j})'$, we can view the components $\vartheta_{L_0^{\nu^j}}(\tau)$ and $\vartheta_{L_0^{\nu^j}+\beta_1}(\tau)$ of $\vartheta_{L^{\nu^j},w}(\tau)$ as entries of the vector-valued modular form $\Theta_{L_0^{\nu^j}}(\tau)$ and determine the transformation behaviour according to the above theorem.

While the transformation behaviour of vector-valued modular forms for the Weil representation is known in principle, it only becomes computationally feasible using the explicit formulæ developed \cite{Sch09} for $\SLZ$ and generalised in \cite{Str13} to $\MpZ$.

Combining the modular properties of the eta function and the theta functions we can determine the $q_\tau$-expansion of $T(\vac,0,j,M.\tau)$ for all $M\in\SLZ$.

\minisec{Characters and Dimensions}

It remains to determine the $q_\tau$-expansion of $T(\vac,i,j,\tau)$ for any $i,j\in\Z_{\hat{m}}$. For this, let
\begin{equation*}
M_{i,j}:=\begin{pmatrix}*&*\\\frac{i}{\gcd(i,j)}&\frac{j}{\gcd(i,j)}\end{pmatrix}\in\SLZ.
\end{equation*}
The matrix $M_{i,j}$ and $\gcd(i,j)$ depend in fact not only on $i,j\in\Z_{\hat{m}}$ but on the choice of representatives modulo $\hat{m}$. Then
\begin{equation*}
(0,\gcd(i,j))M_{i,j}=(0,\gcd(i,j))\begin{pmatrix}*&*\\\frac{i}{\gcd(i,j)}&\frac{j}{\gcd(i,j)}\end{pmatrix}=(i,j)
\end{equation*}
and hence
\begin{equation*}
T(\vac,i,j,\tau)=T(\vac,(0,\gcd(i,j))M_{i,j},\tau)=T(\vac,0,\gcd(i,j),M_{i,j}.\tau)/Z(M_{i,j})
\end{equation*}
according to \eqref{eq:trafon0}, where the function $Z$ takes values in $U_3$. If the rank $\rk(L)$ of $L$ is a multiple of 24, the factor $Z(M)=1$ for all $M\in\SLZ$.

Then we can determine the characters of the irreducible $V_L^{\hat{\nu}}$-modules $W^{(i,j)}$ via
\begin{equation*}
\ch_{W^{(i,j)}}(\tau)=\frac{1}{n}\sum_{l\in\Z_{\hat{m}}}\xi_n^{-lj}T(\vac,i,l,\tau).
\end{equation*}
and finally
\begin{equation*}
\ch_{\widetilde{V}}(\tau)=\sum_{i\in\Z_{\hat{m}}}\ch_{W^{(i,0)}}(\tau)=\frac{1}{n}\sum_{i,l\in\Z_{\hat{m}}}T(\vac,i,l,\tau).
\end{equation*}
This allows us to read off the dimensions of the weight spaces $\widetilde{V}_n$ for $n\in\N$.

For all the examples considered in this text the above calculations are performed using \texttt{Sage} and \texttt{Magma} \cite{Sage,Magma}. Moreover, the Weil representation is computed using the ``modules'' package of the PSAGE library by Nils-Peter Skoruppa, Fredrik Strömberg and Stephan Ehlen, with minor modifications and bugfixes by the author of this text \cite{code:psage}.

\minisec{Example}

To illustrate the above steps we study the following simple example, which we will revisit in the next chapter (see Section~\ref{sec:newcases}). Let $L=N(A_4^6)$ be the Niemeier lattice with root lattice $A_4^6$, i.e.\ $L$ is an even, unimodular, positive-definite lattice of rank $\rk(L)=c=24$. We consider a certain automorphism $\nu$ of $L$ of order 5. It has cycle shape $1^{-1}5^5$ and its fixed-point sublattice is isomorphic to $A_4'(5)$, the dual lattice of $A_4$, with the quadratic form rescaled by 5. Let $\hat\nu$ be a standard lift of $\nu$. Since the order of $\nu$ is odd, $\hat\nu^k$ is a standard lift of $\nu^k$ for all $k\in\N$ and $\hat\nu$ also has order 5.

We can compute the conformal weight of $V_L(\hat\nu)$ and obtain $\rho(V_L(\hat\nu))=1$ by Remark~\ref{rem:cycleconf}, i.e.\ $\hat\nu$ is of type $5\{0\}$.

We compute
\begin{equation*}
T(\vac,0,0,\tau)=\frac{\vartheta_{N(A_4^6)}(\tau)}{\eta(\tau)^{24}}
\end{equation*}
and
\begin{equation*}
T(\vac,0,j,\tau)=\frac{\vartheta_{A_4'(5)}(\tau)}{\eta(\tau)^{-1}\eta(5\tau)^5}
\end{equation*}
for $j\in\Z_n\setminus\{0\}$. This yields
\begin{align*}
\ch_{W^{(0,0)}}(\tau)&=\frac{1}{5}\sum_{l\in\Z_{5}}T(\vac,0,l,\tau)=\frac{1}{5}\frac{\vartheta_{N(A_4^6)}(\tau)}{\eta(\tau)^{24}}+\frac{4}{5}\frac{\vartheta_{A_4'(5)}(\tau)}{\eta(\tau)^{-1}\eta(5\tau)^5}\\
&=q_\tau^{-1}+28+39384q_\tau+4298760q_\tau^2+172859970q_\tau^3+\ldots.
\end{align*}
The $S$-transformation yields
\begin{align*}
T(\vac,i,0,\tau)&=T(\vac,0,i,S.\tau)=5\frac{\vartheta_{A_4(1/5)}(\tau)}{\eta(\tau)^{-1}\eta(\tau/5)^5}\\
&=5+125q_\tau^{1/5}+750q_\tau^{2/5}+3375q_\tau^{3/5}+12250q_\tau^{4/5}+39375q_\tau+114000q_\tau^{6/5}\\
&\quad+307000q_\tau^{7/5}+776250q_\tau^{8/5}+1867125q_\tau^{9/5}+4298750q_\tau^2+\ldots
\end{align*}
for $i\in\Z_n\setminus\{0\}$. Then
\begin{align*}
\ch_{W^{(i,0)}}(\tau)&=\frac{1}{5}\sum_{l\in\Z_{5}}T(\vac,i,l,\tau)=\frac{1}{5}\sum_{l\in\Z_{5}}T(\vac,i,0,T^{i^{-1}l}.\tau)\\
&=\frac{1}{5}\sum_{l\in\Z_{5}}T(\vac,i,0,\tau+l)\\
&=5+39375q_\tau+4298750q_\tau^2+172860000q_\tau^3+\ldots
\end{align*}
for $i\in\Z_n\setminus\{0\}$. This is the $T$-invariant part of $T(\vac,i,0,\tau)$, i.e.\ those monomials with integral $q_\tau$-exponents.

Finally we obtain
\begin{align*}
\ch_{\widetilde{V}}(\tau)&=\sum_{i\in\Z_5}\ch_{W^{(i,0)}}(\tau)\\
&=q_\tau^{-1}+48+196884q_\tau+21493760q_\tau^2+864299970q_\tau^3+\ldots.
\end{align*}
We will see in Proposition~\ref{prop:lem2.1} that for central charge $c=24$ the character of $\widetilde{V}$ has to be exactly the $j$-invariant but with constant term given by $\dim_\C(\widetilde{V}_1)$. The only interesting term is hence the dimension of the weight-one space
\begin{equation*}
\dim_\C(\widetilde{V}_1)=\left[\ch_{\widetilde{V}}(q)\right](1-c/24)=\left[\ch_{\widetilde{V}}(q)\right](0)=48.
\end{equation*}

As a test, we can also compute $\dim_\C(V_1^G)$ using the dimension formula in Proposition~\ref{prop:dimform2}. Since $\rho(V_L(\hat\nu^i))=1$ for $i\in\Z_5\setminus\{0\}$, the formula simplifies to \eqref{eq:dimform2simple}. It is straightforward to compute $\dim_\C(V_1)=144$ and $\dim_\C(V_1^G)=28$. This immediately yields $\dim_\C(\widetilde{V}_1)=48$.

\minisec{Prime Order}

From the above example it is clear that we can derive a simple formula for the dimension of $\widetilde{V}_1$ if the automorphism $\hat{\nu}$ is of prime order, i.e.\ of type $p\{0\}$ for some prime $p$. This means in particular that $\hat\nu$ is a standard lift of $\nu$, $\ord(\nu)=\ord(\hat\nu)=p$ and that all powers of $\hat\nu$ are standard lifts of the corresponding powers of $\nu$. The cycle shape of $\nu$ has to be of the form $1^ap^b$ for some $a,b\in\Z$ with $a+pb=\rk(L)$ and $a+b=\rk(L^\nu)$.

\begin{thm}
Let $V_L$ and $\hat\nu$ be as in Assumption~\ref{ass:l} with $\ord(\hat\nu)=p$ for some prime $p$. Then
\begin{align*}
\dim_\C((\widetilde{V}_L)_1)&=\frac{N_1(L)+(p-1)N_1(L^\nu)}{p}+\rk(L^\nu)\\
&\quad+\frac{(p-1)\sqrt{p}^b}{\sqrt{\left|(L^\nu)'/L^\nu\right|}}\left[\frac{\vartheta_{(L^\nu)'}(\tau)}{\eta(\tau)^a\eta(\tau/p)^b}\right](1-c/24)
\end{align*}
where $N_1(\cdot)$ denotes the number of vectors $\alpha$ of norm $\langle\alpha,\alpha\rangle/2=1$ in a positive-definite lattice.
\end{thm}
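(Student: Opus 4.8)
The plan is to extract $\dim_\C((\widetilde V_L)_1)$ as the coefficient of $q_\tau^{1-c/24}$ in the orbifold character $\ch_{\widetilde V}(\tau)=\frac1p\sum_{i,l\in\Z_p}T(\vac,i,l,\tau)$ from Section~\ref{sec:latorbifold}, splitting the double sum according to whether $i=0$ or $i\neq0$. Throughout I would use the facts recorded just before the theorem: $\hat\nu$ and all its powers are standard lifts and $\ord(\nu)=\ord(\hat\nu)=p$, so $\nu^l$ has the same cycle shape $1^ap^b$ as $\nu$ and $L^{\nu^l}=L^\nu$ for every $l\in\{1,\dots,p-1\}$. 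Since $\widetilde V$ is of type $p\{0\}$, the module $W^{(i,j)}$ has weights in $ij/p+\Z$, so only $W^{(i,0)}$ can contain weight-one vectors; hence $[\ch_{W^{(i,0)}}(\tau)](1-c/24)=[T(\vac,i,0,\tau)](1-c/24)$, and extracting the coefficient commutes with the sum.

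For the $i=0$ terms I would read off the weight-one coefficient of $T(\vac,0,l,\tau)=\tr_{V_L}\hat\nu^l q_\tau^{L_0-c/24}$ directly as $\tr_{(V_L)_1}\hat\nu^l$. Decomposing $(V_L)_1=\{h(-1)\vac:h\in\h\}\oplus\bigoplus_{\alpha\in L,\,\langle\alpha,\alpha\rangle=2}\C\ee_\alpha$, the operator $\hat\nu^l$ acts by $\nu^l$ on the first summand (contributing $\tr_\h(\nu^l)$) and permutes the $\ee_\alpha$ with scalar $1$, because $\hat\nu^l$ is a standard lift, whenever $\nu^l\alpha=\alpha$. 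From the cycle shape, $\tr_\h(\nu^l)=a$ for $l\neq0$ and $\rk(L)$ for $l=0$, while the diagonal root terms give $N_1(L^{\nu^l})=N_1(L^\nu)$ for $l\neq0$ and $N_1(L)$ for $l=0$. The $i=0$ contribution is therefore $\frac1p\bigl[(\rk(L)+N_1(L))+(p-1)(a+N_1(L^\nu))\bigr]$, which, using $\rk(L)=a+pb$ and $\rk(L^\nu)=a+b$ (so that $\rk(L)+(p-1)a=p\,\rk(L^\nu)$), collapses to $\frac{N_1(L)+(p-1)N_1(L^\nu)}{p}+\rk(L^\nu)$, the first two terms of the asserted formula.

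For $i\neq0$ all the functions $T(\vac,i,0,\tau)$ coincide, since $T(\vac,i,0,\tau)=T(\vac,0,i,S.\tau)$ and $T(\vac,0,i,\tau)=\vartheta_{L^\nu}(\tau)/(\eta(\tau)^a\eta(p\tau)^b)$ is independent of $i\neq0$ by Proposition~\ref{prop:twistedtrace}. The identity $T(\vac,i,0,\tau)=T(\vac,0,i,S.\tau)$ is the $M=S$ case of \eqref{eq:trafon0} with $\wt[\vac]=0$ and $Z(S)=1$. I would then apply the $S$-transformation to the explicit expression, using $\eta(-1/\tau)=(\tau/i)^{1/2}\eta(\tau)$, $\eta(-p/\tau)=(\tau/i)^{1/2}p^{-1/2}\eta(\tau/p)$, and the Jacobi transformation $\vartheta_{L^\nu}(-1/\tau)=(\tau/i)^{(a+b)/2}|(L^\nu)'/L^\nu|^{-1/2}\vartheta_{(L^\nu)'}(\tau)$ (Theorem~\ref{thm:bor98thm4.1}). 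The $(\tau/i)$-powers cancel, as they must since the automorphy factor is $\tau^{-\wt[\vac]}=1$, leaving
\begin{equation*}
T(\vac,i,0,\tau)=\frac{\sqrt p^{\,b}}{\sqrt{|(L^\nu)'/L^\nu|}}\,\frac{\vartheta_{(L^\nu)'}(\tau)}{\eta(\tau)^a\eta(\tau/p)^b}.
\end{equation*}
Summing the $p-1$ equal contributions $[T(\vac,i,0,\tau)](1-c/24)$ yields exactly the third term, completing the proof.

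The main point to get right is the constant in the theta transformation together with the verification that $Z(S)=1$ (immediate from the explicit formula $Z(M)=\eps(M)^{-c}$ evaluated at $S$), so that no spurious root of unity or power of $\tau$ survives; the built-in cross-check is that the transformed function must again have weight $0$, which forces precisely the cancellation of the $(\tau/i)^{(a+b)/2}$ factors. Everything else—the trace computations over $(V_L)_1$ and the elementary rank identity—is routine bookkeeping.
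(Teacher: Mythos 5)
Your proof is correct and follows essentially the route the paper intends (the theorem is stated there without written proof, as an instance of the machinery of Section~\ref{sec:latorbifold} illustrated by the preceding example): split $\ch_{\widetilde{V}}=\frac{1}{p}\sum_{i,l}T(\vac,i,l,\tau)$ into the untwisted part, evaluated by the trace of $\hat\nu^l$ on $(V_L)_1$ using the standard-lift property and the cycle shape $1^ap^b$, and the twisted part, obtained from $T(\vac,0,i,\tau)=\vartheta_{L^\nu}(\tau)/(\eta(\tau)^a\eta(p\tau)^b)$ via the $S$-transformation with $Z(S)=1$. All the key verifications — that only $W^{(i,0)}$ contributes integral weights for $i\neq 0$ (valid since $p$ is prime), the theta and eta transformation constants, and the rank identity $\rk(L)+(p-1)a=p\,\rk(L^\nu)$ — are carried out correctly.
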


We conclude with a remark on the type $p\{r\}$ of an arbitrary lifted lattice automorphism $\hat\nu$ of prime order. Remark~\ref{rem:cycleconf} implies that
\begin{equation*}
\rho(V_L(\hat\nu))=\frac{(p-1)(p+1)}{24p}b.
\end{equation*}
For $\hat\nu$ to be of type $p\{0\}$ we have to demand that $(p-1)(p+1)b/24\in\Z$. We observe:
\begin{prop}
Let $n\in\Z$ with $2\nmid n$ and $3\nmid n$. Then $n^2-1=(n+1)(n-1)$ is divisible by 24.
\end{prop}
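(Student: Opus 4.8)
The plan is to use the coprime factorisation $24 = 8 \cdot 3$ and to establish the two divisibilities $8 \mid n^2 - 1$ and $3 \mid n^2 - 1$ separately; since $\gcd(8,3) = 1$, these can then be multiplied to conclude $24 \mid n^2 - 1$. The two hypotheses feed the two factors: $2 \nmid n$ makes $n$ odd, which will drive the factor of $8$, while $3 \nmid n$ makes $n$ invertible modulo $3$, which will drive the factor of $3$.

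First I would dispose of the factor $3$. Since $3 \nmid n$, the residue of $n$ modulo $3$ is either $1$ or $2$, and in both cases $n^2 \equiv 1 \pmod 3$ (as $1^2 \equiv 1$ and $2^2 = 4 \equiv 1$); hence $3 \mid n^2 - 1$.

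Next I would treat the factor $8$, which is the only step with any content. Writing the odd integer $n$ as $n = 2k + 1$ for some $k \in \Z$, I factor $n^2 - 1 = (n-1)(n+1) = (2k)(2k+2) = 4\,k(k+1)$. Since $k$ and $k+1$ are consecutive integers, their product is even, so $k(k+1) = 2m$ for some $m \in \Z$, giving $n^2 - 1 = 8m$ and thus $8 \mid n^2 - 1$.

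Finally I would combine the two: as $8$ and $3$ are coprime and each divides $n^2 - 1$, their product $24$ divides $n^2 - 1$. There is no genuine obstacle in this argument; it is entirely elementary, and the only point requiring the slightest care is the observation that the product of two consecutive integers is even, which supplies the extra factor of $2$ needed to upgrade the manifest divisibility by $4$ to divisibility by $8$.
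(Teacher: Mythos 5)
Your proof is correct and is exactly the standard argument; the paper states this proposition without proof, and your decomposition $24=8\cdot 3$ with the observation that $n^2-1=4k(k+1)$ for odd $n=2k+1$ is precisely what the omitted verification amounts to.
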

An immediate consequence is the following:
\begin{cor}
Let $p\neq2,3$ be a prime. Then $(p+1)(p-1)$ is divisible by 24.
\end{cor}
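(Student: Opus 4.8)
The plan is to deduce this corollary immediately from the Proposition directly preceding it, which asserts that $n^2-1=(n+1)(n-1)$ is divisible by $24$ whenever $2\nmid n$ and $3\nmid n$. The only thing that needs checking is that a prime $p\neq 2,3$ satisfies both coprimality hypotheses, after which I would simply instantiate the Proposition at $n=p$. This is a one-line deduction, so no genuine obstacle is expected; the substance lies entirely in the Proposition, whose short proof I would recall for completeness.

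First I would verify the hypotheses. Since $p$ is prime, its only positive divisors are $1$ and $p$. Hence $2\mid p$ would force $p=2$, and $3\mid p$ would force $p=3$, both of which are excluded by the assumption $p\neq 2,3$. Therefore $2\nmid p$ and $3\nmid p$, so the Proposition applies with $n=p$ and gives $24\mid p^2-1=(p+1)(p-1)$, which is exactly the assertion.

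For completeness I would record the elementary argument behind the Proposition. Writing $24=8\cdot 3$ with $\gcd(8,3)=1$, it suffices to show $8\mid n^2-1$ and $3\mid n^2-1$ separately. Since $n$ is odd, put $n=2k+1$, so that $n^2-1=4k(k+1)$; as $k(k+1)$ is a product of consecutive integers it is even, whence $8\mid 4k(k+1)$. Since $3\nmid n$, we have $n\equiv\pm1\pmod 3$, so $n^2\equiv 1\pmod 3$ and thus $3\mid n^2-1$. Combining these via coprimality yields $24\mid n^2-1$, completing the proof of the Proposition and hence of the corollary.

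The main ``difficulty,'' such as it is, is purely bookkeeping: one must be sure to use the primality of $p$ to exclude divisibility by $2$ and $3$ rather than merely the inequality $p\neq 2,3$, but this is immediate from the definition of a prime. No further steps are required.
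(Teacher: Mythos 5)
Your proposal is correct and matches the paper exactly: the corollary is stated there as an immediate consequence of the preceding proposition, obtained by noting that a prime $p\neq 2,3$ satisfies $2\nmid p$ and $3\nmid p$. Your supplementary proof of the proposition itself (splitting $24=8\cdot 3$ and using $n^2-1=4k(k+1)$ for odd $n$) is also sound.
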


In summary, if $\hat\nu$ is of prime order $p$, then it has type $p\{0\}$ if and only if
\begin{itemize}
\item $p=2$ and $\rk(L)-\rk(L^\nu)\in8\Z$,
\item $p=3$ and $\rk(L)-\rk(L^\nu)\in6\Z$ or
\item $p\geq 5$.
\end{itemize}
For $p=3$ this is the condition in Theorem~B of \cite{Miy13b}.

\chapter{\VOA{}s of Small Central Charge}\label{ch:schellekens}
This chapter deals with the construction and classification of holomorphic \voa{}s of central charge up to 24. We summarise the classification results obtained in \cite{Sch93,DM04,EMS15} and present constructions of five new holomorphic \voa{}s of central charge 24 as cyclic orbifolds of lattice \voa{}s.

\section{Summary of Classification Results}

In the following, we let $V$ be a holomorphic, $C_2$-cofinite \voa{} of CFT-type. Then in particular $V$ satisfies Assumption~\ref{ass:n}. By Zhu's theory we know that the central charge $c$ of such a \voa{} is a positive multiple of 8 (see Proposition~\ref{prop:div8}).

Using the lattice \voa{} construction (see Section~\ref{sec:latvoa}) we obtain a holomorphic \voa{} of central charge $c=\rk(L)$ for each positive-definite, even, unimodular lattice of rank $\rk(L)$. This gives 1 holomorphic \voa{} of central charge 8, 2 of central charge 16, 24 of central charge 24 (associated with the Niemeier lattices) and over 1\,000\,000\,000 of central charge 32. Beyond dimension 32, the numbers increase even more rapidly. It therefore seems that a classification of holomorphic \voa{}s is only feasible up to central charge 24.

It is shown in \cite{DM04} that for the cases of $c=8,16$ the classification of holomorphic, $C_2$-cofinite \voa{}s of CFT-type exactly mirrors that of the positive-definite, even, unimodular lattices:
\begin{thm}[\cite{DM04}, Theorems 1 and 2]
Let $V$ be a holomorphic, $C_2$-cofinite \voa{} of CFT-type.
\begin{enumerate}
\item If $c=8$, then $V$ is isomorphic to $V_{E_8}$, the lattice \voa{} associated with the lattice $E_8$ of rank 8.
\item If $c=16$, then $V$ is isomorphic to $V_{E_8^2}$ or $V_{D_{16}^+}$, the lattice \voa{}s associated with the two positive-definite, even, unimodular lattices $E_8^2$ and $D_{16}^+$ of rank 16, respectively.
\end{enumerate}
\end{thm}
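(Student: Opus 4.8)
The plan is to reconstruct $V$ from its weight-one space. First I would invoke Zhu's modular invariance (Theorem~\ref{thm:zhumodinv}): since $V$ is holomorphic, $\ch_V$ is a one-dimensional, weight-$0$ vector-valued modular form, hence---exactly as in the proof of Proposition~\ref{prop:div8}---a modular function for $\SLZ$ of weight $0$ with a character of order dividing $3$, holomorphic on $\H$ and with a single pole of order $c/24$ at the cusp. The space of such functions is one-dimensional, so $\ch_V$ is determined: for $c=8$ one obtains $\ch_V=E_4/\eta^8$ and, comparing $q$-expansions, $\dim_\C(V_1)=248$; for $c=16$ one obtains $\ch_V=E_4^2/\eta^{16}$ and $\dim_\C(V_1)=496$.

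Next I would use the Lie algebra structure on $V_1$ given by $[a,b]:=a_0b$, equipped with the invariant bilinear form defined by $a_1b=\langle a,b\rangle\vac$, which is non-degenerate because $V$ is simple and self-contragredient. For a holomorphic \voa{} of CFT-type this Lie algebra is \emph{reductive}; I would take this as the key structural input. A Cartan subalgebra $\h\leq V_1$ generates a Heisenberg (free-boson) vertex subalgebra of central charge $\rk(V_1)$, whose commutant has non-negative central charge; this yields the rank bound $\rk(V_1)\leq c$. Combining dimension and rank constraints then pins down $V_1$ as an abstract Lie algebra. For $c=8$, a reductive Lie algebra of rank $\leq 8$ and dimension $248$ must be semisimple (an abelian summand of dimension $d>0$ would leave a semisimple part of rank $\leq 7$ and dimension $\geq 247$, exceeding $\dim\mathfrak{e}_7=133$), and the unique semisimple Lie algebra of rank $\leq 8$ of dimension $248$ is $\mathfrak{e}_8$. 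For $c=16$ the same reasoning forces $V_1$ to be semisimple of rank $\leq 16$ and dimension $496$, leaving precisely $\mathfrak{e}_8\oplus\mathfrak{e}_8$ and $\mathfrak{so}_{32}=D_{16}$; the requisite enumeration of dimensions of semisimple Lie algebras is elementary.

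I would then carry out the conformal-embedding step. Each simple ideal $\mathfrak{g}_i$ of $V_1$ generates an affine vertex subalgebra $L_{\hat{\mathfrak{g}}_i}(k_i,0)$ at a positive integer level $k_i$, with Sugawara central charge $c(\mathfrak{g}_i,k_i)=k_i\dim\mathfrak{g}_i/(k_i+h_i^\vee)$. The sum of these central charges is at most $c$, and using $h^\vee(\mathfrak{e}_8)=h^\vee(D_{16})=30$ one checks $c(\mathfrak{e}_8,1)=8$ and $c(D_{16},1)=16$, whereas any $k_i\geq 2$ overshoots. Hence all levels equal $1$ and the total Sugawara central charge equals $c$, so the Sugawara conformal vector coincides with the conformal vector of $V$; consequently $V$ is generated by $V_1$ and equals the affine sub-VOA. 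Identifying these with lattice \voa{}s via the standard isomorphisms $L_{\hat{\mathfrak{e}}_8}(1,0)\cong V_{E_8}$ and $L_{\hat{D}_{16}}(1,0)\cong V_{D_{16}^+}$ recovers $V_{E_8}$ in the case $c=8$ and $V_{E_8^2}=V_{E_8}\otimes V_{E_8}$ or $V_{D_{16}^+}$ in the case $c=16$, matching the classification of even, unimodular, positive-definite lattices of rank $8$ and $16$ reviewed in Chapter~\ref{ch:latvoa}.

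The main obstacle is the combination of two non-formal inputs: the reductivity of $V_1$, which is a genuine theorem rather than a bookkeeping fact, and the conformal-embedding argument, namely that equality of the Sugawara and ambient central charges forces the two conformal vectors to agree and hence $V$ to be generated by $V_1$. Once these are in place, the modular computation fixing $\dim_\C(V_1)$ and the rank-dimension classification of the possible $V_1$ are routine, and the final identification with $V_{E_8}$, $V_{E_8^2}$, and $V_{D_{16}^+}$ follows from the known structure of level-one affine \voa{}s, precisely as in \cite{DM04}.
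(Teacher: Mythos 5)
The paper does not prove this statement itself; it is imported verbatim from \cite{DM04}, so there is no internal proof to compare your attempt against. Your reconstruction follows the Dong--Mason strategy: Zhu's modular invariance pins down $\ch_V$ (as in Proposition~\ref{prop:lem2.1}) and hence $\dim_\C(V_1)=248$ resp.\ $496$; reductivity of $V_1$ together with the rank bound $\rk(V_1)\leq c$ reduces the possibilities to $E_8$ for $c=8$ and to $E_8\oplus E_8$ or $D_{16}=\so_{32}$ for $c=16$; and the Sugawara count forces all levels to equal $1$ with total central charge $c$. Up to that point the outline is sound, granting the two imported theorems you correctly flag (reductivity and the conformal-embedding criterion), and granting that the enumeration of candidates for $V_1$ really uses the constraint $\sum_i k_i\dim_\C(\g_i)/(k_i+h_i^\vee)\leq c$ and not merely rank and dimension.

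The final step, however, contains a genuine error. It is not true that $L_{\hat{D}_{16}}(1,0)\cong V_{D_{16}^+}$: the level-one affine \voa{} of type $D_{16}$ is the lattice \voa{} $V_{D_{16}}$ of the \emph{root} lattice $D_{16}$, which has four irreducible modules and is not holomorphic. Consequently your claim that equality of the Sugawara and ambient central charges forces $V$ to be \emph{generated by} $V_1$ fails in exactly this case: $V_{D_{16}^+}$ is a proper simple-current extension of the \vosa{} generated by its weight-one space, namely $V_{D_{16}}$ plus the module attached to a spinor coset. Equality of the conformal vectors only tells you that $V$ is a direct sum of irreducible modules over $\langle V_1\rangle\cong V_Q$ for the root lattice $Q\in\{E_8,\,E_8^2,\,D_{16}\}$, i.e.\ a holomorphic extension of $V_Q$; you then need the extension theory (Theorem~\ref{thm:4.3}, equivalently the correspondence between holomorphic extensions of $V_Q$ and even unimodular overlattices $Q\subseteq L\subseteq Q'$) and the classification of even unimodular lattices in ranks $8$ and $16$ to conclude $V\cong V_{E_8}$, $V_{E_8^2}$ or $V_{D_{16}^+}$. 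With that correction the argument closes; as written, the two compensating misstatements happen to land on the right answer but the implication you use is invalid.
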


The next case to be studied is that of central charge 24. With the Monster \voa{} $V^\natural$ (also called Moonshine module) constructed in \cite{FLM88} there is a well-known example of a holomorphic, $C_2$-cofinite \voa{} of CFT-type with $c=24$, which is not isomorphic to a lattice \voa{}. This makes the case $c=24$ more interesting than the cases $c=8,16$ and shows that the classification of holomorphic \voa{}s is a proper generalisation of the classification of positive-definite, even, unimodular lattices. Schellekens was the first to study the case $c=24$ \cite{Sch93}, in the language of ``meromorphic conformal field theories''. He produced a list of 71 possible Lie algebra structures of the space $V_1$. It is well known that in any \voa{} of CFT-type the space $V_1$ naturally carries a Lie algebra structure (see also Section~\ref{sec:voalie}). More precisely, the classification concerns not only the Lie algebra structure of $V_1$ but the \emph{affine structure} on the \vosa{} of $V$ generated by $V_1$. This notion will be explained below.

Schellekens' result was proved as a theorem for \voa{}s in \cite{EMS15}:
\begin{thm}[\cite{EMS15}, Theorem~6.4, \cite{Sch93}]\label{thm:schellekens}
Let $V$ be a holomorphic, $C_2$-cofinite \voa{} of CFT-type and central charge 24. Then
\begin{enumerate}
\item\label{enum:V10} $V_1=\{0\}$,
\item $V_1$ is the 24-dimensional abelian Lie algebra $\C^{24}$ and $V$ is isomorphic to the \voa{} associated with the Leech lattice or
\item $V_1$ is one of 69 semisimple Lie algebras (with a certain affine structure) given in Table~1 of \cite{Sch93}. This case can be further split up:
\begin{enumerate}
\item $\rk(V_1)=24$ and $V$ is isomorphic to the \voa{} associated with one of the 23 Niemeier lattices (except for the Leech lattice) or
\item $\rk(V_1)<24$.
\end{enumerate}
\end{enumerate}
\end{thm}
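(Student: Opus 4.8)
The plan is to exploit the rigidity of the character in central charge $c=24$ and reduce everything to the Lie-algebraic structure of $V_1$. Since $V$ is holomorphic, Zhu's representation on the one-dimensional fusion algebra $\V(V)=\C V$ is a character $\rho_V\colon\SLZ\to\C^\times$ with $\rho_V(T)=\e^{(2\pi\i)(\rho(V)-c/24)}=\e^{-(2\pi\i)}=1$ and $\rho_V(S)=1$ (by the argument in the proof of Proposition~\ref{prop:div8}), hence trivial. Therefore $\ch_V(\tau)=\tr_V q_\tau^{L_0-1}$ is a weight-zero modular function for $\SLZ$, holomorphic on $\H$ with a first-order pole at the cusp, so $\ch_V(\tau)=j(\tau)+(\dim_\C(V_1)-744)$ and the whole graded dimension of $V$ is fixed by the single integer $\dim_\C(V_1)$. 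First I would recall that $V_1$ is a Lie algebra under $[a,b]=a_0b$, that it carries a non-degenerate invariant bilinear form (because $V$ is self-contragredient and of CFT-type, so $L_1V_1=\{0\}$), and that these two facts force $V_1$ to be reductive (the structure theorem of Dong--Mason). Consequently $V_1$ is either $\{0\}$, abelian, or has a non-trivial semisimple part, and I would treat these alternatives separately.

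For the abelian case I would argue that a non-zero abelian $V_1$ generates a Heisenberg subVOA $M_{\hat\h}(1)$ of rank $\dim_\C(V_1)$, which is non-degenerate by non-degeneracy of the form. The lattice-recognition theorem then shows that a holomorphic \voa{} containing a Heisenberg \voa{} of rank equal to its central charge is isomorphic to a lattice \voa{} $V_L$ for a positive-definite, even, unimodular lattice $L$ of rank $24$. In particular $\dim_\C(V_1)$ cannot lie strictly between $0$ and $24$, and in the abelian subcase $\rk(L)=24$. Since the weight-one space of a lattice \voa{} $V_L$ is abelian precisely when $L$ has no roots, this forces $L$ to be the unique rootless Niemeier lattice, the Leech lattice $\Lambda$, which is case~(2).

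In the semisimple case I would decompose $V_1=\bigoplus_i\g_i$ into simple ideals. Each $\g_i$ generates a simple affine \voa{} $L_{\hat\g_i}(k_i,0)$ whose level $k_i$ is forced to be a positive integer, since $V$ is a finitely generated, $\Z$-graded, rational module over it. Writing $c_i=k_i\dim_\C(\g_i)/(k_i+h_i^\vee)$ for the Sugawara central charge, the modular invariance of the (vector-valued) characters of $\bigotimes_iL_{\hat\g_i}(k_i,0)$ together with the requirement that $V$ decompose as a finite sum of their irreducible modules, and the character rigidity above, yields the two Schellekens constraints $\sum_ic_i=24$ and $h_i^\vee/k_i=(\dim_\C(V_1)-24)/24$ independent of $i$. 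The first makes the coset central charge zero, so $\bigotimes_iL_{\hat\g_i}(k_i,0)$ is a full subVOA of $V$. When $\rk(V_1)=24$ the Cartan subalgebra again produces a full-rank Heisenberg and the lattice-recognition argument identifies $V$ with one of the remaining $23$ Niemeier-lattice \voa{}s; when $\rk(V_1)<24$ one is left with the finite problem of enumerating all semisimple $V_1$ meeting the Schellekens constraints, which produces exactly the $69$ entries of Table~1 of \cite{Sch93}.

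The hard part will be this final step: converting the constraints into a rigorous and complete classification. The genuinely delicate points are (i) proving that the levels $k_i$ are positive integers and that $h_i^\vee/k_i$ is forced to be constant — here the full strength of modular invariance and the integrality of the $\Z$-grading of $V$ must be used — and (ii) verifying that the finite search over all semisimple Lie algebras of rank at most $24$ satisfying $\sum_ic_i=24$ and the constant-ratio condition yields precisely $69$ admissible affine structures and no more. The lattice-recognition theorem handling the two rank-$24$ subcases (abelian giving Leech, semisimple giving the other Niemeier lattices) is comparatively routine once a full-rank Heisenberg subVOA is in hand; I expect the bookkeeping of the semisimple enumeration, rather than any isolated conceptual difficulty, to be the principal obstacle.
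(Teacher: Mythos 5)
This theorem is not proved in the paper at all: it is imported wholesale from \cite{EMS15} (and \cite{Sch93}), so there is no in-paper proof to compare against step by step. Judged on its own, your outline of the preliminary reductions is essentially the standard one and matches the paper's surrounding discussion: the character rigidity $\ch_V=J+\dim_\C(V_1)$, the reductivity of $V_1$, the integrality and positivity of the levels $k_i$, the constraint $h_i^\vee/k_i=(\dim_\C(V_1)-24)/24$, and the identification of the rank-$24$ cases with Niemeier-lattice \voa{}s via a full-rank Heisenberg subVOA are all correct ingredients. (Two smaller caveats: in the abelian case, excluding $0<\dim_\C(V_1)<24$ is a separate argument from lattice recognition, which only applies once the Heisenberg rank equals the central charge; and $\sum_i c_i=24$ is not an easy a priori consequence of modular invariance in the way you suggest.)

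The genuine gap is in the final step. You assert that the finite search over semisimple Lie algebras satisfying $\sum_i c_i=24$ and the constant-ratio condition ``yields precisely the $69$ entries of Table~1 and no more,'' and you frame the remaining difficulty as bookkeeping. The paper explicitly states the opposite: the constraint $h_i^\vee/k_i=(\dim_\C(V_1)-24)/24$ (together with the rank bound) admits $221$ solutions, not $69$, and ``this equation is however not sufficient to restrict the number of possible cases to 69.'' Eliminating the remaining $152$ candidates is the actual content of the theorem and requires new mathematical input beyond the two constraints you list --- Schellekens' higher-weight trace identities and, in the rigorous treatment of \cite{EMS15}, further arguments of this kind translated into the language of \voa{}s. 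As written, your proof would terminate with a list of $221$ possible affine structures rather than $69$, so the classification statement does not follow from the argument you propose.
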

We remark that entry 62 in Table~1 of \cite{Sch93} should read $E_{8,2}B_{8,1}$ (this typo is corrected in the arXiv-version of the paper). The above theorem is an improvement of \cite{DM04}, Theorem~3. An example of case \ref{enum:V10} is the Moonshine module with $V_1^\natural=\{0\}$.

\minisec{Characters}

There is the following result concerning the characters of holomorphic, $C_2$-cofinite \voa{}s of CFT-type, which are meromorphic modular forms of weight 0 for $\SLZ$, possibly with a character modulo 3. For $24\mid c$ this character vanishes and the character of $V$ is a modular function. Let
\begin{equation*}
J(q):=j(q)-744=q^{-1}+0+196884q+\ldots
\end{equation*}
denote the $j$-function (or Klein's $j$-invariant) with constant term removed, i.e.\ the character of the Moonshine module $V^\natural$, which has $\dim_\C(V_1^\natural)=0$.
\begin{prop}[\cite{DM04}, Lemma~2.1, \cite{Sch93}]\label{prop:lem2.1}
Let $V$ be a holomorphic, $C_2$-cofinite \voa{} of CFT-type.
\begin{enumerate}
\item If $c=8$, then $\ch_V(q)=\frac{\vartheta_{E_8}(q)}{\eta(q)^8}=q^{-1/3}(1+248q+\ldots)$.
\item If $c=16$, then $\ch_V(q)=\frac{(\vartheta_{E_8}(q))^2}{\eta(q)^{16}}=q^{-2/3}(1+496q+\ldots)$.
\item If $c=24$, then $\ch_V(q)=J(q)+\dim_\C(V_1)=q^{-1}+\dim_\C(V_1)+196884q+\ldots$.
\end{enumerate}
\end{prop}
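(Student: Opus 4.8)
The plan is to show that $\ch_V(\tau)\eta(\tau)^c$ is an ordinary holomorphic modular form of weight $c/2$ for $\SLZ$ and then to read off each case from the (very small) known spaces of such forms. First I would record the hypotheses: since $V$ is holomorphic it has a unique irreducible module, namely the adjoint module, which is automatically a simple current, so $V$ is a simple-current \voa{} with trivial fusion group $F_V=\{0\}$; together with $C_2$-cofiniteness and CFT-type this means $V$ satisfies Assumption~\ref{ass:n} and, trivially, Assumptions~\ref{ass:sn} and~\ref{ass:p}. By Proposition~\ref{prop:div8} the central charge is a positive multiple of $8$, so in particular $c\in\Z$, and the three listed cases exhaust $c\le 24$.

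Next I would apply Corollary~\ref{cor:zhuweil} with $v=\vac$. Because $F_V$ is trivial, the Weil representation $\rho_{F_V}$ of $\MpZ$ is the trivial one-dimensional representation (one checks $\rho_{F_V}(\widetilde S)=\rho_{F_V}(\widetilde T)=1$ directly from the defining formulae, using $|F_V|=1$ and $\sign(F_V)=0$), so the corollary says exactly that
\begin{equation*}
f(\tau):=\ch_V(\tau)\eta(\tau)^c
\end{equation*}
is a scalar modular form of weight $\wt[\vac]+c/2=c/2$ for $\SLZ$; here $c/2$ is a multiple of $4$, so there is no metaplectic subtlety. (Alternatively one may bypass the corollary and combine the fact that $\ch_V$ transforms under the one-dimensional Zhu representation with $\sigma(S)=1$ and $\sigma(T)=\e^{-(2\pi\i)c/24}$, cf.\ the proof of Proposition~\ref{prop:div8}, with the transformation law $\eta(M.\tau)=(c\tau+d)^{1/2}\eps(\widetilde M)\eta(\tau)$ and $8\mid c$ to see directly that $f$ is $\SLZ$-invariant of weight $c/2$.) I would then verify holomorphicity: $\ch_V$ is holomorphic on $\H$ by Zhu's Theorem~\ref{thm:zhumodinv} and $\eta$ is holomorphic and non-vanishing on $\H$, so $f$ is holomorphic on $\H$; and since $\dim_\C(V_0)=1$ gives $\ch_V(q)=q^{-c/24}(1+\dim_\C(V_1)q+\ldots)$ while $\eta(q)^c=q^{c/24}\prod_{n\ge1}(1-q^n)^c$, the product is $f(q)=1+(\dim_\C(V_1)-c)q+\ldots\in 1+q\,\C[[q]]$. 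Hence $f$ is a holomorphic modular form of weight $c/2$ for $\SLZ$ with constant term $1$.

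Finally I would invoke the classical structure of these spaces. For $c=8$ the space of weight-$4$ forms is one-dimensional, spanned by $E_4=\vartheta_{E_8}$; normalisation by the constant term forces $f=\vartheta_{E_8}$, whence $\ch_V=\vartheta_{E_8}/\eta^8$. For $c=16$ the weight-$8$ space is again one-dimensional, spanned by $E_4^2=\vartheta_{E_8}^2$, giving $\ch_V=\vartheta_{E_8}^2/\eta^{16}$. For $c=24$ the weight-$12$ space is two-dimensional, spanned by $E_4^3$ and $\Delta=\eta^{24}$; the constant term $1$ forces $f=E_4^3+b\,\Delta$ for some $b\in\C$, and dividing by $\eta^{24}=\Delta$ yields $\ch_V=E_4^3/\Delta+b=j+b$. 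Comparing the constant term $\dim_\C(V_1)=744+b$ gives $b=\dim_\C(V_1)-744$, so $\ch_V=j-744+\dim_\C(V_1)=J+\dim_\C(V_1)$; the asserted $q$-expansions then follow from those of $\vartheta_{E_8}$, $\eta$ and $J$. The argument is essentially bookkeeping, the only inputs beyond the modularity of $f$ being the standard identities $\vartheta_{E_8}=E_4$, $\eta^{24}=\Delta$ and $j=E_4^3/\Delta$ together with the one-line dimension counts for $M_{c/2}(\SLZ)$. The mildest point requiring care—and the closest thing to an obstacle—is upgrading $f$ from a weakly holomorphic to a genuinely holomorphic modular form, i.e.\ the constant-term computation at the cusp that excludes a pole; this is exactly where CFT-type ($\dim_\C V_0=1$) enters.
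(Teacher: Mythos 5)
Your proof is correct, and it is essentially the argument of the source the paper cites: the paper states this proposition without giving a proof, attributing it to \cite{DM04}, Lemma~2.1, and \cite{Sch93}. Your route---using Proposition~\ref{prop:div8}, Theorem~\ref{thm:zhumodinv} and Corollary~\ref{cor:zhuweil} (with trivial fusion group, hence trivial Weil representation) to show that $\ch_V(\tau)\eta(\tau)^c$ is a holomorphic modular form of weight $c/2$ for $\SLZ$ with constant term $1$, and then comparing with the known spaces of holomorphic modular forms of weights $4$, $8$ and $12$---is precisely the standard Dong--Mason argument, with the modularity input supplied by the paper's own machinery, so there is nothing to correct.
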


\minisec{Affine Structure}

Let $\g$ be a simple, finite-dimensional Lie algebra with invariant bilinear form $(\cdot,\cdot)_\g$, normalised such that $(\theta,\theta)_\g=2$ for a long root $\theta$ of $\g$. Then the affine Kac-Moody algebra $\hat\g$ associated with $\g$ is the Lie algebra $\hat\g:=\g\otimes\C[t,t^{-1}]\oplus\C K$ with central element $K$ and Lie bracket
\begin{equation}\label{eq:affine}
[a\otimes t^m,b\otimes t^n]:=[a,b]\otimes t^{m+n}+m(a,b)_\g\delta_{m+n,0}K
\end{equation}
for $a,b\in\g$, $m,n\in\Z$.

A representation of $\hat\g$ is said to have level $k\in\C$ if $K$ acts as $k\id$. For a dominant integral weight $\lambda$ and $k\in\C$ let $L_{\hat\g}(k,\lambda)$ be the irreducible $\hat\g$-module of level $k$ obtained by inducing the irreducible highest-weight $\g$-module $L_\g(\lambda)$ up in a certain way to a $\hat\g$-module and taking its irreducible quotient. For more details on affine Kac-Moody algebras and their modules we refer the reader to \cite{Kac90}.

For $k\in\Ns$, $L_{\hat\g}(k,0)$ admits the structure of a rational \voa{} whose irreducible modules are given by the $L_{\hat\g}(k,\lambda)$ where $\lambda$ runs through a certain finite subset of the dominant integral weights of $\g$ (see \cite{FZ92}, Theorem~3.1.3 or e.g.\ \cite{LL04}, Sections 6.2 and 6.6).

Now, let $V$ be a \voa{} of CFT-type. Recall that the weight-one subspace $V_1$ of $V$ naturally carries a Lie algebra structure with Lie bracket $[a,b]=a_0b$ for $a,b\in V_1$. If $V$ is also self-contragredient, then there exists a non-degenerate, invariant bilinear form $\langle\cdot,\cdot\rangle$ on $V$, which is unique up to a non-zero scalar and symmetric \cite{Li94} (see Section~\ref{sec:cmibf}). We normalise the form so that $\langle\vac,\vac\rangle=-1$, where $\vac$ is the vacuum vector of $V$. Then the restriction of $\langle\cdot,\cdot\rangle$ to $V_1$ is non-degenerate and given by
\begin{equation*}
a_1b=b_1a=\langle a,b\rangle\vac
\end{equation*}
for $a,b\in V_1$. Indeed, it is easy to see that the homogeneous spaces $V_n$ and $V_m$ are orthogonal with respect to $\langle\cdot,\cdot\rangle$ for $m\neq n$ and hence the restriction of the bilinear form to any $V_n$, $n\in\N$, is non-degenerate.

For a self-contragredient \voa{} $V$ of CFT-type the commutator formula implies
\begin{equation*}
[a_m,b_n]=(a_0b)_{m+n}+m(a_1b)_{m+n-1}=[a,b]_{m+n}+m\langle a,b\rangle\delta_{m+n,0}\id_V
\end{equation*}
for all $a,b\in V_1$, $m,n\in\Z$. Comparing this with \eqref{eq:affine} we see that for a simple Lie subalgebra $\g$ of $V_1$ the map $a\otimes t^n\mapsto a_n$ for $a\in\g$ and $n\in\Z$ defines a representation of $\hat\g$ on $V$ of some level $k_\g\in\C$ with $\langle\cdot,\cdot\rangle=k_\g(\cdot,\cdot)_\g$ restricted to $\g$.

In the following let us assume that $V$ satisfies Assumption~\ref{ass:n}. Then it is shown in \cite{DM04b} that the Lie algebra $V_1$ is reductive, i.e.\ a direct sum of a semisimple and an abelian Lie algebra. Moreover, Theorem~3.1 in \cite{DM06b} states that for a simple Lie subalgebra $\g$ of $V_1$ the restriction of $\langle\cdot,\cdot\rangle$ to $\g$ is non-degenerate, the level $k_\g$ is a positive integer, the
\vosa{} of $V$ generated by $\g$ is isomorphic to $L_{\hat\g}(k_\g,0)$ and $V$ is an integrable $\hat\g$-module.

Now suppose in addition that $c=24$ and that $V$ is holomorphic. It is shown in \cite{DM04} that the Lie algebra $V_1$ is zero, abelian of dimension 24 or semisimple of rank at most $24$, where the case of Lie algebra rank 24 corresponds exactly to the 24 Niemeier lattice cases. Let us assume that $V_1$ is semisimple. Then $V_1$ decomposes into a direct sum
\begin{equation*}
V_1=\g_1\oplus\ldots\oplus\g_r
\end{equation*}
of simple Lie algebras $\g_i$ and the \vosa{} $\langle V_1\rangle$ of $V$ generated by $V_1$ is isomorphic to
\begin{equation*}
\langle V_1\rangle\cong L_{\hat\g_1}(k_1,0)\otimes\ldots\otimes L_{\hat\g_r}(k_r,0)
\end{equation*}
with levels $k_i:=k_{\g_i}\in\Ns$ and has the same Virasoro vector as $V$. This decomposition of the \voa{} $\langle V_1\rangle$ is called the \emph{affine structure} of $V$. We denote it by the symbol
\begin{equation*}
\g_{1,k_1}\ldots\g_{r,k_r}
\end{equation*}
and sometimes omit $k_i$ if it equals 1. By abuse of language we will also refer to the cases of abelian and zero Lie algebras $V_1$ as certain affine structures.

It is known from \cite{DM04} that
\begin{equation*}
\frac{h_i^\vee}{k_i}=\frac{\dim_\C(V_1)-24}{24}
\end{equation*}
for every simple component $\g_i$ of $V_1$ where $h_i^\vee$ is the dual Coxeter number of $\g_i$. It was pointed out in \cite{DM04} that this equation implies that there are only finitely many choices for the family of pairs $(\g_i,k_i)$ in the decomposition of $V_1$. In fact, there are 221 solutions (see \cite{EMS15}, Proposition~6.3, \cite{Sch93}). This equation is however not sufficient to restrict the number of possible cases to 69, which together with the trivial and abelian case for $V_1$ give the 71 affine structures in Schellekens' list \cite{Sch93}. In order to obtain his result, Schellekens had to employ more sophisticated techniques and these were translated into the rigorous language of \voa{}s in \cite{EMS15} in order to obtain Theorem~\ref{thm:schellekens}.

The above equation shows that the Lie algebra structure of $V_1$ already fully determines the levels $k_i$ (via $\dim_\C(V_1)$ and the dual Coxeter numbers $h^\vee_i$) and hence the affine structure of $V$. In order to identify a certain case on Schellekens' list it is hence sufficient to determine the Lie algebra structure on $V_1$.

\section{Orbifold Construction}

It is a priori not clear that each of the 71 affine structures on Schellekens' list can be realised, i.e.\ that there is a holomorphic \voa{} of central charge 24 with that Lie algebra structure on $V_1$.

All the cases on Schellekens' list known to exist that do not arise from lattice \voa{}s were obtained by cyclic orbifolding, i.e.\ starting from an already known holomorphic \voa{} $V$ and constructing the orbifolded \voa{} $\widetilde{V}$ of the same central charge associated with some automorphism $\sigma$ of $V$ of finite order. So far, orbifolding was only possible with automorphisms of order 2 and 3 but the general orbifold theory developed in this text (see Section~\ref{sec:orbifold}) allows us to consider cyclic orbifolds of arbitrary order.

The first non-lattice holomorphic \voa{} of central charge 24, the Monster \voa{} $V^\natural$, was in fact obtained in \cite{FLM88} as the orbifold of $V_\Lambda$ associated with a standard lift of the $(-1)$-involution on the Leech lattice $\Lambda$. An overview of the cases on Schellekens' list obtained from orbifold constructions is given in Section~\ref{sec:orbconsum}.

In this text we will restrict ourselves to orbifolds of lattice \voa{}s, i.e.\ we start from one of the 24 holomorphic \voa{}s of central charge 24 associated with the Niemeier lattices and consider orbifolds associated with lifts of lattice automorphisms (see Section~\ref{sec:latorbifold}). In this manner we give orbifold constructions for five affine structures on Schellekens' list that have not been constructed so far (see Theorem~\ref{thm:newcases}).

\minisec{Montague's Argument}

We describe an argument first considered by Montague \cite{Mon98} to identify the Lie algebra structure $\widetilde{V}_1$ of the orbifolded \voa{} $\widetilde{V}$ obtained in \eqref{eq:orbifold} under Assumption~\ref{ass:op}. By Proposition~\ref{prop:autvoalie} we know that $\sigma\in\Aut(V)$ restricts to a Lie algebra automorphism on $V_1$ of some order $k$ dividing $n=\ord(\sigma)$. This means that $V^G_1$ is the fixed-point Lie subalgebra of $V_1$ by an automorphism of order $k$ or in short a $\Z_k$-subalgebra of $V_1$.

There is a beautiful theory due to Kac on the possible fixed-point Lie subalgebras of a simple, finite-dimensional, complex Lie algebra \cite{Kac90}. We describe Kac's theory and the straightforward generalisation to the case of semisimple Lie algebras in Section~\ref{sec:kac} of the appendix.

By this theory we know exactly what the possible Lie algebra structures on $V^G_1$ are. This is of course not really helpful since knowing the Lie algebra structure of $V_1$ and the automorphism $\sigma$ we can explicitly determine the Lie algebra structure on $V_1^G$ and determine its isomorphism class (e.g.\ using \texttt{Magma} \cite{Magma}).

On the other hand, we know that $V^G_1$ is also a Lie subalgebra of $\widetilde{V}_1$. Together with knowing the dimension of $\widetilde{V}_1$ (from the character $\ch_{\widetilde{V}}(\tau)$, see Section~\ref{sec:latorbifold}) and the possible cases for $\widetilde{V}_1$ with that dimension from Schellekens' list we can sometimes already uniquely determine the Lie algebra structure and hence the affine structure of $\widetilde{V}_1$.\footnote{This is for example trivially the case in dimensions 132, 288 and 384, where there is only one possible affine structure.}

We can considerably strengthen this argument by noting that for the orbifolded \voa{} $\widetilde{V}$ there is an ``inverse'' automorphism $\amgis\in\Aut(\widetilde{V})$, also of order $n$, which gives back $V$ when orbifolding $\widetilde{V}$ with it (see Theorem~\ref{thm:invorb}). This means that $V^G_1=\widetilde{V}^{\langle\amgis\rangle}_1$ is not only a fixed-point Lie subalgebra of $V_1$ but also one of $\widetilde{V}_1$, namely by the restriction of $\amgis$ to $\widetilde{V}_1$, a Lie algebra automorphism of some order $k'$ dividing $n$.

Knowing that $V^G_1$, which we explicitly compute, is a fixed-point Lie subalgebra of $\widetilde{V}_1$, for which there are only a few possible cases on Schellekens' list in the computed dimension for $\widetilde{V}_1$, we can in many cases, using Kac's theory, uniquely determine the Lie algebra structure and hence the affine structure of $\widetilde{V}_1$. This will be demonstrated in the following examples.

\section{Five New Cases on Schellekens' List}\label{sec:newcases}

In this section we describe the construction of five new cases on Schellekens' list as cyclic orbifolds of lattice \voa{}s. In each of the following cases let $\nu$ be an automorphism of order $m$ on some Niemeier lattice $L$ such that a standard lift $\hat{\nu}\in\Aut(V_L)$ is of type $n\{0\}$ where $n=m$ or $2m$. We then consider the orbifold $\widetilde{V}$ of $V=V_L$ by $\hat{\nu}$. In each case we compute the dimension of $\widetilde{V}_1$ using the modular invariance of the trace functions as described in Section~\ref{sec:latorbifold}. Then, we determine the Lie algebra structure of $\widetilde{V}_1$ by Montague's argument described above, and sometimes using some additional results.

The five cases below give new cases on Schellekens' list, i.e.\ cases that were not yet constructed as of the completion of this text. However, Lam and Shimakura were independently searching for constructions of some cases on Schellekens' list. In \cite{LS16a} they announced to have found a construction for the case $A_1C_{5,3}G_{2,2}$ (probably identical to our construction) and were pursuing approaches similar to ours for the cases $C_{4,10}$ and $A_{2,6}D_{4,12}$ \cite{LS15c}.

\minisec{Automorphisms of Niemeier Lattices}

First we describe the automorphisms of the Niemeier lattices we will use. Let $L$ be a Niemeier lattice, i.e.\ one of the 24 even, unimodular, positive-definite lattice of rank 24. Assume that $L$ is not the Leech lattice. Recall that we write $L=N(Q)$ for the Niemeier lattice associated with the root lattice $Q$ (see Section~\ref{sec:lattice}). The Niemeier lattice $N(Q)$ can be obtained as the lattice generated by $Q$ and the glue vectors or \emph{glue code}, certain vectors in $Q'$ (see \cite{CS99}, Chapter~16.1). In the following we will use the glue code given in Table~16.1 of \cite{CS99}.

The root lattice $Q$ will be given as the orthogonal direct sum of the lattices $A_n$, $n\geq 1$, $D_n$, $n\geq 4$, and $E_6$, $E_7$ and $E_8$. These can be realised as embedded into $\Q^d$ with the standard Euclidean inner product for some $d\in\Ns$ (see \cite{CS99}, Sections 4.6, 4.7 and 4.8). In particular,
\begin{equation*}
A_n=\left\{(x_1,\ldots,x_{n+1})\in\Z^{n+1}\xmiddle|x_1+\ldots+x_{n+1}=0\right\}\subset\Z^{n+1}
\end{equation*}
for $n\geq 1$ and
\begin{equation*}
D_n=\left\{(x_1,\ldots,x_n)\in\Z^n\xmiddle|x_1+\ldots+x_n\in2\Z\right\}\subset\Z^n
\end{equation*}
for $n\geq 4$.

In the following we define automorphisms of $L=N(Q)$ by defining them on the root lattice $Q$ in such a way that they are compatible with the choice of glue code in the sense that their linear continuations are also automorphisms of $N(Q)$. This is in general a rather delicate matter but in most examples below there is a natural choice.

We denote by $\sigma_n$ the automorphism of $\Z^n$ permuting the coordinates as
\begin{equation*}
(x_1,\ldots,x_n)\mapsto(x_n,x_1,\ldots,x_{n-1}).
\end{equation*}

\begin{description}
\item[Case 1] Let $Q:=A_9^2D_6$, which is short for the orthogonal direct sum $Q=A_9\oplus A_9\oplus D_6$, and $\nu$ an automorphism of $Q$ acting as follows:
\begin{itemize}
\item On the two copies $A_9^{(1)}$ and $A_9^{(2)}$ of $A_9$, $\nu$ is a signed permutation of these, namely
\begin{equation*}
(\alpha^{(1)},\alpha^{(2)})\mapsto(-\alpha^{(2)},\alpha^{(1)})
\end{equation*}
for $\alpha^{(i)}\in A_9^{(i)}$, $i=1,2$ (so that $\nu^2$ is the $(-1)$-involution on each component). This gives a cycle shape of $2^{-9}4^9$.
\item On $D_6$, $\nu$ is a signed permutation of the coordinates of $D_6\subset\Z^6$
\begin{equation*}
(x_1,x_2,x_3,x_4,x_5,x_6)\mapsto(x_1,x_2,x_4,x_3,-x_6,x_5)
\end{equation*}
of cycle shape $1^24^1$.
\end{itemize}
Overall, this amounts to an automorphism of $Q$ of cycle shape $1^22^{-9}4^{10}$, which also defines an automorphism on $L=N(Q)$.

The automorphism $\nu$ is in one of exactly two conjugacy classes in $\Aut(L)$ with cycle shape $1^22^{-9}4^{10}$, both having length 1\,306\,368\,000.

\item[Case 2] Let $Q:=A_4^6$ be given by 6 copies of $A_4$ and $\nu$ the automorphism of $Q$ acting as follows:
\begin{itemize}
\item On the first copy $A_4^{(1)}$ of $A_4$, $\nu$ permutes the coordinates of $\Z^5$ as
\begin{equation*}
\alpha^{(1)}\mapsto\sigma_5\alpha^{(1)}
\end{equation*}
for $\alpha^{(1)}\in A_4^{(1)}$, giving a cycle shape of $1^{-1}5^1$.
\item On the last five copies $A_4^{(2)},\ldots,A_4^{(6)}$ of $A_4$, $\nu$ permutes these five copies, i.e.\
\begin{equation*}
(\alpha^{(2)},\ldots,\alpha^{(6)})\mapsto(\alpha^{(6)},\alpha^{(2)},\ldots,\alpha^{(5)})
\end{equation*}
for $\alpha^{(i)}\in A_4^{(i)}$, $i=2,\ldots,6$, contributing with $5^4$ to the cycle shape.
\end{itemize}
In total, this gives an automorphism of $Q$ of cycle shape $1^{-1}5^5$, which also defines an automorphism of $L=N(Q)$.

The automorphism $\nu$ belongs to the unique conjugacy class in $\Aut(L)$ with cycle shape $1^{-1}5^5$ and length 119\,439\,360\,000 (there is another conjugacy class with the same cycle shape of length 47\,775\,744).

\item[Case 3] Let $Q:=A_2^{12}$. The automorphism $\nu$ acts on the 12 copies $A_2^{(1)},\ldots,A_2^{(12)}$ as follows:
\begin{itemize}
\item On $A_2^{(4)}\cong A_2$, $\nu$ acts as permutation of the three coordinates of $\Z^3$ times $-1$, i.e
\begin{equation*}
\alpha^{(4)}\mapsto-\sigma_3\alpha^{(4)}
\end{equation*}
for $\alpha^{(4)}\in A_4^{(4)}$ with cycle shape $1^12^{-1}3^{-1}6^1$.
\item On $A_2^{(1)}\oplus A_2^{(5)}\cong A_2^2$ let $\nu$ permute the two copies of $A_2$ and also permute the coordinates and multiply by $-1$, i.e.\
\begin{equation*}
(\alpha^{(1)},\alpha^{(5)})\mapsto(-\sigma_3\alpha^{(5)},-\sigma_3\alpha^{(1)})
\end{equation*}
for $\alpha^{(i)}\in A_2^{(i)}$, $i=1,5$ (so that $\nu^2$ acts as permutation of order $3$ on each copy of $A_2$), which gives cycle shape $2^{-1}6^1$.
\item On $A_2^{(6)}\oplus A_2^{(8)}\oplus A_2^{(11)}\cong A_2^3$ let $\nu$ permute the three copies as $(6\;11\;8)$ and also multiply by $-1$, i.e.
\begin{equation*}
(\alpha^{(6)},\alpha^{(8)},\alpha^{(11)})\mapsto(-\alpha^{(8)},-\alpha^{(11)},-\alpha^{(6)})
\end{equation*}
for $\alpha^{(i)}\in A_2^{(i)}$, $i=6,8,11$ (so that $\nu^3$ acts as $(-1)$-involution on each copy of $A_2$), which contributes with $3^{-2}6^2$ to the cycle shape.
\item On $A_2^{(2)}\oplus A_2^{(3)}\oplus A_2^{(7)}\oplus A_2^{(9)}\oplus A_2^{(10)}\oplus A_2^{(12)}\cong A_2^6$ let $\nu$ permute the six copies of $A_2$ as $(2\;10\;9\;7\;12\;3)$, i.e.\
\begin{equation*}
(\alpha^{(2)},\alpha^{(3)},\alpha^{(7)},\alpha^{(9)},\alpha^{(10)},\alpha^{(12)})\mapsto(\alpha^{(3)},\alpha^{(12)},\alpha^{(9)},\alpha^{(10)},\alpha^{(2)},\alpha^{(7)})
\end{equation*}
for $\alpha^{(i)}\in A_2^{(i)}$, $i=2,3,7,9,10,12$ (so that $\nu^6$ acts as identity on each copy of $A_2$), giving cycle shape $6^2$.
\end{itemize}
Altogether, this gives an automorphism of $Q$ of cycle shape $1^12^{-2}3^{-3}6^6$, which also defines an automorphism of $L=N(Q)$.

The automorphism $\nu$ belongs to the unique conjugacy class in $\Aut(L)$ with cycle shape $1^12^{-2}3^{-3}6^6$ and length 106\,420\,469\,760 (there is another conjugacy class with the same cycle shape of length 656\,916\,480).

\item[Case 4] Let $Q:=E_6^4$. We define an automorphism on $Q$ as follows:
\begin{itemize}
\item On the last three copies $E_6^{(2)},\ldots,E_6^{(4)}$ of $E_6$ let $\nu$ act as permutation of these times $-1$, i.e.\
\begin{equation*}
(\alpha^{(2)},\alpha^{(3)},\alpha^{(4)})\mapsto(-\alpha^{(4)},-\alpha^{(2)},-\alpha^{(3)})
\end{equation*}
for $\alpha^{(i)}\in E_6^{(i)}$, $i=2,3,4$ (so that $\nu^3$ is simply the $(-1)$-involution on each component). This gives a cycle shape of $3^{-6}6^6$.
\item We realise the first copy of $E_6$ in a special way by glueing together three copies of $A_2$. Consider $A_2^3$, which is a sublattice of $E_6$ of full rank. We define the automorphism $\nu$ on each copy of $A_2$ as permutation of the coordinates in $\Z^3$ times $-1$, i.e.\
\begin{equation*}
\alpha\mapsto-\sigma_3\alpha
\end{equation*}
for $\alpha\in A_2$ so that $\nu$ has cycle shape $1^12^{-1}3^{-1}6^1$ on each $A_2$ (and $\nu^3$ acts as $(-1)$-involution). Let the glue code be generated by $[111]$, using the notation in \cite{CS99}. Then indeed, the $\Z$-span of $A_2^3$ and $[111]$ is isomorphic to $E_6$ and $\nu$ defines an automorphism of $E_6$ of cycle shape $1^32^{-3}3^{-3}6^3$.
\end{itemize}
In total, this gives an automorphism of $Q$ of cycle shape $1^32^{-3}3^{-9}6^9$, which also defines an automorphism of $L=N(Q)$.

The automorphism $\nu$ belongs to the unique conjugacy class in $\Aut(L)$ with cycle shape $1^32^{-3}3^{-9}6^9$ and length 1\,719\,926\,784\,000 (there are three more classes with the same cycle shape of lengths 2\,048\,000, 8\,847\,360\,000 and 35\,389\,440\,000).

\item[Case 5] Again, let $Q:=A_4^6$ be given by 6 copies of $A_4$. We define an automorphism on $Q$ as follows:
\begin{itemize}
\item On the first copy $A_4^{(1)}$ of $A_4$, $\nu$ permutes the coordinates of $\Z^5$ times $-1$, i.e.\
\begin{equation*}
\alpha^{(1)}\mapsto-\sigma_5\alpha^{(1)}
\end{equation*}
for $\alpha^{(1)}\in A_4^{(1)}$, giving a cycle shape of $1^12^{-1}5^{-1}10^1$.
\item On the last five copies $A_4^{(2)},\ldots,A_4^{(6)}$ of $A_4$, $\nu$ permutes these five copies times $-1$, i.e.\
\begin{equation*}
(\alpha^{(2)},\ldots,\alpha^{(6)})\mapsto(-\alpha^{(6)},-\alpha^{(2)},\ldots,-\alpha^{(5)})
\end{equation*}
for $\alpha^{(i)}\in A_4^{(i)}$, $i=2,\ldots,6$, contributing with $5^{-4}10^4$ to the cycle shape.
\end{itemize}
Altogether, this gives an automorphism of $Q$ of cycle shape $1^12^{-1}5^{-5}10^5$, which also defines an automorphism of $L=N(Q)$.

The automorphism $\nu$ belongs to the unique conjugacy class in $\Aut(L)$ with cycle shape $1^12^{-1}5^{-5}10^5$ and length 119\,439\,360\,000 (there is another conjugacy class with the same cycle shape of length 47\,775\,744).
\end{description}

\minisec{Orbifold Construction}

Using lifts of the lattice automorphisms described above, we obtain the following orbifold constructions.

\begin{description}
\item[Case 1: Affine Structure $A_2B_2E_{6,4}$]~\\
Let $L=N(A_9^2D_6)$ and $\nu$ the automorphism of cycle shape $1^22^{-9}4^{10}$ described above. Then $\nu$ lifts to an automorphism $\hat{\nu}\in\Aut(V_L)$ of type $4\{0\}$, i.e.\ no order doubling occurs.

We compute $\dim_\C(\widetilde{V}_1)=96$. By Theorem~\ref{thm:schellekens}, $\widetilde{V}_1$ is isomorphic as a Lie algebra to $A_2^{12}$, $B_2^4D_4^2$, $A_2^2A_5^2B_2$, $A_2^2A_8$ or $A_2B_2E_6$. The Lie algebra structure of $V_1^{\hat{\nu}}$ is $A_2B_2D_5\C^1$. Clearly, $A_2B_2D_5\C^1$ cannot embed into $A_2^{12}$, $B_2^4D_4^2$ or $A_2^2A_5^2B_2$. In addition, $D_5$ cannot appear in an invariant Lie subalgebra of $A_8$ which rules out $A_2^2A_8$. This leaves only $\widetilde{V}_1\cong A_2B_2E_6$.

\item[Case 2: Affine Structure $A_{4,5}^2$]~\\
Let $L=N(A_4^6)$ and $\nu$ the automorphism of cycle shape $1^{-1}5^5$ described above. Then $\nu$ lifts to an automorphism $\hat{\nu}\in\Aut(V_L)$ of type $5\{0\}$, i.e.\ no order doubling occurs.

We compute $\dim_\C(\widetilde{V}_1)=48$ (cf.\ example calculation in Section~\ref{sec:latorbifold}). Theorem~\ref{thm:schellekens} implies that $\widetilde{V}_1$ is isomorphic as a Lie algebra to $A_1^{16}$, $A_2^6$, $A_1A_3^3$, $A_4^2$, $A_1A_5B_2$, $A_1D_5$ or $A_6$. The Lie algebra structure of $V_1^{\hat{\nu}}$ is $A_4\C^4$. This leaves only $A_1A_5B_2$ or $A_4^2$ as possible Lie algebra structures of $\widetilde{V}_1$, both of rank 8.

One can show that the $A_4$ in $V_1^{\hat{\nu}}\cong A_4\C^4$ is an ideal in $\widetilde{V}_1$, which means that it appears as a summand in the decomposition of $\widetilde{V}_1$ into simple components. This leaves only $\widetilde{V}_1\cong A_4^2$.

\begin{proof}[Proof of Claim]
Let us describe the Lie algebras $V_1=(V_L)_1$ and $V_1^{\hat{\nu}}$ in more detail. $V_L$ is the \voa{} associated with the Niemeier lattice $L=N(A_4^6)$. The subspace of $V_L$ of elements of weight 1 has dimension $144=24+120$ and is spanned by
\begin{equation*}
h(-1)\otimes\ee_0\quad\text{and}\quad1\otimes\ee_\alpha
\end{equation*}
for $h\in\h$ and $\alpha\in\Phi(L)$, the set of roots of $L$. It is well known that $L$ has 120 roots, spanning the root lattice $Q=A_4^6$.

Recall that we wrote $A_4^6=A_4\oplus A_4^5$, where $\nu\in\Aut(L)$ acts separately on both direct summands. Clearly, $\nu$ permutes the set of roots $\Phi(L)$. It has 24 orbits of length 5 and 4 of these orbits, represented by $\alpha_1,\ldots,\alpha_4$, live in the first copy of $A_4$, 20 in $A_4^5$, represented by $\beta_1,\ldots,\beta_{20}$. For $\alpha\in L$ we define
\begin{equation*}
E_\alpha:=\sum_{i=0}^4\hat{\nu}^i(\ee_\alpha).
\end{equation*}
Then $V^{\hat{\nu}}_1$ is spanned by
\begin{equation*}
h(-1)\otimes\ee_0\quad\text{and}\quad1\otimes E_\alpha
\end{equation*}
for $h\in\h_{(0)}$ (of dimension 4) and $\alpha\in\{\alpha_1,\ldots,\alpha_4,\beta_1,\ldots,\beta_{20}\}$. Hence $V^{\hat{\nu}}_1$ has dimension 28.

It is easy to see that the $1\otimes E_{\beta_i}$, $i=1,\ldots,20$, together with $h(-1)\otimes\ee_0$, $h\in\h_{(0)}$, span a Lie subalgebra of $V_1^{\hat{\nu}}$, which is isomorphic as a Lie algebra to $A_4$ with Cartan subalgebra given by the $h(-1)\otimes\ee_0$, $h\in\h_{(0)}$.

We now want to show that this subalgebra $A_4$ is an ideal in $V_1^{\hat{\nu}}$ and even in $\widetilde{V}_1$. It is easy to see that the Lie bracket between $A_4$ and the $1\otimes E_{\alpha_i}$, $i=1,\ldots,4$, vanishes, which follows essentially from the fact that the two direct summands of $Q=A_4\oplus A_4^5$ are orthogonal and $\h_{(0)}$ is a subspace of the $\C$-span of $A_4^5$. This shows that the subalgebra $A_4$ is an ideal in $V_1^{\hat{\nu}}$.

To see that $A_4$ is an ideal in all of $\widetilde{V}_1$, we still need to consider the Lie bracket between $A_4$ and the twisted contributions to $\widetilde{V}_1$ from $V_L(\hat{\nu}^i)$, $i\in\Z_5\setminus\{0\}$. Then Lemma~2.2.2 in \cite{SS16} states that $(h(-1)\otimes\ee_0)_0$, $h\in\h$, and $(1\otimes\ee_\alpha)_0$ for $\alpha\in\Phi(L)\setminus N$ act as 0 on the lowest-weight space of the twisted module $V_L(\hat{\nu})$ where $N=L_\nu$.
For the given example we calculate the conformal weight of $V_L(\hat{\nu})$ to be 1 using formula \eqref{eq:vacanomaly}. Hence $V_L(\hat{\nu})_1$ is the lowest-weight space. Moreover, $\Phi(L)\setminus N$ are exactly the 100 roots living in the summand $A_4^5$ of $Q=A_4\oplus A_4^5$ (and $\Phi(L)\cap N$ are the 20 roots inside $A_4$).

Indeed, the first copy of $A_4$ in $A_4\oplus A_4^5$ does not contribute to $\h_{(0)}$ and hence all 20 roots in this $A_4$ are in $N$. The vectors $(\alpha,\ldots,\alpha)\in A_4^5$ for $\alpha\in A_4$ span $\h_{(0)}$ and hence the contribution to $\h^\bot_{(0)}$ from these five copies is spanned by $(\alpha^{(2)},\ldots,\alpha^{(6)})$ with the condition that $\alpha^{(2)}+\ldots+\alpha^{(6)}=0$. None of the 100 roots in $A_4^5$ fulfil this requirement and so these roots are not in $N=L\cap\h^\bot_{(0)}$.

Taking all this together, we see that the Lie bracket between the Lie subalgebra $A_4$ from above and the contribution to $\widetilde{V}_1$ from the twisted module $V_L(\hat{\nu})$ vanishes. The same is true for $V_L(\hat{\nu}^2),\ldots,V_L(\hat{\nu}^4)$. Hence, in total, the subalgebra $A_4$ is an ideal in $\widetilde{V}_1$.
\end{proof}

\item[Case 3: Affine Structure $A_{2,6}D_{4,12}$]~\\
Let $L=N(A_2^{12})$ and $\nu$ the automorphism of cycle shape $1^12^{-2}3^{-3}6^6$ described above. Then $\nu$ lifts to an automorphism $\hat{\nu}\in\Aut(V_L)$ of type $6\{0\}$, i.e.\ no order doubling occurs.

One computes $\dim_\C(\widetilde{V}_1)=36$. Then by Theorem~\ref{thm:schellekens} we know that $\widetilde{V}_1$ is isomorphic as a Lie algebra to $A_1^{12}$, $A_2D_4$ or $C_4$. The Lie algebra structure of $V_1^{\hat{\nu}}$ is $A_1A_2\C^3$. Clearly $A_1A_2\C^3$ can only embed into $A_2D_4$ of these three possible cases. Hence $\widetilde{V}_1\cong A_2D_4$.

\item[Case 4: Affine Structure $A_1C_{5,3}G_{2,2}$]~\\
Note that in \cite{LS16a} it is announced that the authors have also found a $\Z_6$-orbifold construction of $A_1C_{5,3}G_{2,2}$ starting from the lattice $E_6^4$, which is probably identical to our construction below.

Let $L=N(E_6^4)$ and $\nu$ the automorphism of cycle shape $1^32^{-3}3^{-9}6^9$ described above. Then $\nu$ lifts to an automorphism $\hat{\nu}\in\Aut(V_L)$ of type $6\{0\}$, i.e.\ no order doubling occurs.

We calculate $\dim_\C(\widetilde{V}_1)=72$. By Theorem~\ref{thm:schellekens}, $\widetilde{V}_1$ is isomorphic as a Lie algebra to $A_1^{24}$, $A_1^4A_3^4$, $A_1^3A_5D_4$, $A_1^2C_3D_5$, $A_1^3A_7$, $A_1C_5G_2$ or $A_1^2D_6$. The Lie algebra structure of $V_1^{\hat{\nu}}$ is $A_1A_2C_4\C^1$. The $C_4$ can only be a $\Z_k$-subalgebra, $k\mid 6$, of $C_5$ and $A_7$ of those simple components appearing in dimension 72. This leaves only $A_1^3 A_7$ and $A_1 C_5 G_2$ as possible cases. The latter is clearly possible. Assume it were the first one. Then $A_7$ would have to have $C_4$ as fixed-point Lie subalgebra via the twisted diagram $A_7^{(2)}$. But then $A_2$ could not appear in the fixed-point Lie subalgebra. Hence $\widetilde{V}_1\cong A_1C_5G_2$.

\item[Case 5: Affine Structure $C_{4,10}$]~\\
Let $L=N(A_4^6)$ and $\nu$ the automorphism of cycle shape $1^12^{-1}5^{-5}10^5$ described above. Then $\nu$ lifts to an automorphism $\hat{\nu}\in\Aut(V_L)$ of type $10\{0\}$, i.e.\ no order doubling occurs.

We calculate $\dim_\C(\widetilde{V}_1)=36$. Again, by Theorem~\ref{thm:schellekens} we know that $\widetilde{V}_1$ is isomorphic as a Lie algebra to $A_1^{12}$, $A_2D_4$ or $C_4$. The Lie algebra structure of $V_1^{\hat{\nu}}$ is $B_2\C^2$, which cannot be a fixed-point Lie subalgebra of $A_1^{12}$. This leaves only $C_4$ and $A_2D_4$ as possible Lie algebra structures of $\widetilde{V}_1$.
Both have $B_2\C^2$ as possible $\Z_{10}$-subalgebra, so we cannot rule out one of the two cases directly. We can however compute the dimensions of the spaces $W_1^{(i,0)}$ in the decomposition $\widetilde{V}_1:=\bigoplus_{i\in\Z_n}W_1^{(i,0)}$ via the characters and it turns out that $W_1^{(5,0)}=\{0\}$, which we can also see from the fact that $\rho(V(\hat{\nu}^5))>1$. This means that $V_1^{\hat{\nu}}=W_1^{(0,0)}=W_1^{(0,0)}\oplus W_1^{(5,0)}$ is also a $\Z_5$-subalgebra of $\widetilde{V}_1$, namely the fixed points under the square of the automorphism on $\widetilde{V}$ describing the inverse orbifold. But $B_2\C^2$ cannot be a $\Z_5$-subalgebra of $A_2D_4$. This leaves only $\widetilde{V}_1\cong C_4$.
\end{description}

We summarise our findings in the following theorem:
\begin{oframed}
\begin{thm}\label{thm:newcases}
There exist holomorphic, $C_2$-cofinite \voa{}s of CFT-type of central charge 24 with the following affine structures:
\renewcommand{\arraystretch}{1.2}
\begin{equation*}
\begin{tabular}{r|l|r|r||l|c}
\cite{Sch93} No.\ & Aff.\ struct.\ & Dim. & Rk. & Lattice $L$ & Aut.\ $\nu$\\\hline
28 & $A_2B_2E_{6,4}$     & 96  & 10 & $N(A_9^2D_6)$ & $1^22^{-9}4^{10}$\\
9  & $A_{4,5}^2$         & 48  & 8  & $N(A_4^6)$    & $1^{-1}5^5$\\
3  & $A_{2,6}D_{4,12}$   & 36  & 6  & $N(A_2^{12})$ & $1^12^{-2}3^{-3}6^6$\\
21 & $A_1C_{5,3}G_{2,2}$ & 72  & 8  & $N(E_6^4)$    & $1^32^{-3}3^{-9}6^9$\\
4  & $C_{4,10}$          & 36  & 4  & $N(A_4^6)$    & $1^12^{-1}5^{-5}10^5$
\end{tabular}
\end{equation*}
\renewcommand{\arraystretch}{1}
\end{thm}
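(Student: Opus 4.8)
The plan is to prove the theorem by carrying out, for each of the five entries in the table, the cyclic orbifold construction of Chapter~\ref{ch:fpvosa} applied to the lattice \voa{} $V_L$ associated with the indicated Niemeier lattice $L=N(Q)$ and the lattice automorphism $\nu$ of the stated cycle shape. First I would verify that each $\nu$, defined above on the root lattice $Q$, is compatible with the glue code of $N(Q)$ and hence extends to an automorphism of $L$, and I would fix a standard lift $\hat\nu\in\Aut(V_L)$. Using the vacuum-anomaly formula (Remark~\ref{rem:cycleconf}) I would compute $\rho(V_L(\hat\nu))$ directly from the cycle shape and check in each case that $\hat\nu$ is of type $n\{0\}$, so that no order doubling occurs; the positivity needed for Assumption~\ref{ass:op} is confirmed at the same time, since the conformal weights $\rho(V_L(\hat\nu^i))$ are positive for all $i\neq 0$. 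By Proposition~\ref{prop:zerosuff} the relevant maximal isotropic subgroup is $I\cong\Z_n\times\{0\}$, and Theorem~\ref{thm:orbvoa} then produces a holomorphic, $C_2$-cofinite \voa{} $\widetilde{V}=\bigoplus_{i\in\Z_n}W^{(i,0)}$ of CFT-type and central charge $24$, as required by the statement.

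Next I would determine $\dim_\C(\widetilde{V}_1)$ by the modular-invariance computation of Section~\ref{sec:latorbifold}. The trace functions on the untwisted module, $T(\vac,0,j,\tau)$, are the generalised theta functions of the fixed-point sublattices $L^{\nu^j}$ divided by the appropriate eta products; writing these in terms of ordinary lattice theta functions and applying the explicit $S$- and $T$-transformations for the Weil representation attached to the relevant discriminant form yields all $T(\vac,i,j,\tau)$, $i,j\in\Z_n$, and hence the character $\ch_{\widetilde{V}}(\tau)$. By Proposition~\ref{prop:lem2.1} this character equals $J(q)+\dim_\C(\widetilde{V}_1)$, so only its constant term is needed; carrying out the finite computation produces the dimensions $96,48,36,72,36$ recorded in the table.

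Finally I would identify the Lie algebra structure of $\widetilde{V}_1$, which is where the real work lies. Theorem~\ref{thm:schellekens} restricts $\widetilde{V}_1$ to a short, explicit list of semisimple Lie algebras of the computed dimension. The decisive constraint is \emph{Montague's argument}: the fixed-point Lie algebra $V_1^{\hat\nu}$ can be computed directly from $V_1$ and $\hat\nu$, and by the inverse-orbifold automorphism of Theorem~\ref{thm:invorb} it is a fixed-point (that is, a $\Z_k$-) subalgebra \emph{both} of $V_1$ and of $\widetilde{V}_1$. Kac's classification of finite-order automorphisms of semisimple Lie algebras (Section~\ref{sec:kac}) then restricts which candidate Lie algebras admit $V_1^{\hat\nu}$ as such a subalgebra, which in most cases singles out a unique affine structure. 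The main obstacle is that the dimension bound together with the abstract classification typically leaves several competing candidates, and separating them requires this careful interplay with Kac's theory, supplemented in the hardest cases by additional structural arguments: in the $A_{4,5}^2$ case one must show that the simple factor $A_4\subseteq V_1^{\hat\nu}$ is in fact an ideal of $\widetilde{V}_1$, using that the relevant zero-modes act trivially on the lowest-weight space of the twisted module together with the orthogonality of the two summands of the root lattice $A_4^6$; and in the $C_{4,10}$ case one must exploit the vanishing $W^{(5,0)}_1=\{0\}$ to deduce that $V_1^{\hat\nu}$ is even a $\Z_5$-subalgebra of $\widetilde{V}_1$, ruling out $A_2D_4$ in favour of $C_4$. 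Assembling these identifications for all five cases completes the proof.
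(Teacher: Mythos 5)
Your proposal is correct and takes essentially the same route as the paper's own proof: the orbifold construction applied to standard lifts of type $n\{0\}$, the dimension computation via modular invariance of the trace functions as in Section~\ref{sec:latorbifold}, and the identification of $\widetilde{V}_1$ by combining Schellekens' list with Montague's inverse-orbifold argument and Kac's theory, including the two supplementary arguments the paper needs (the $A_4$-ideal argument in the $A_{4,5}^2$ case and the $\Z_5$-subalgebra argument via $W^{(5,0)}_1=\{0\}$ in the $C_{4,10}$ case). Nothing essential is missing.
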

\end{oframed}

\section{Summary}\label{sec:orbconsum}

Cyclic orbifolding with lifts of lattice automorphisms starting from the 24 \voa{}s associated with the Niemeier lattices produces at least 61 of the 71 cases on Schellekens' list (see Figure~\ref{fig:schellekens2}):
\begin{itemize}
 \item 24 \voa{}s associated with the Niemeier lattices \cite{Nie73,Ven80,Bor86,FLM88,Don93},
 \item 15 $\Z_2$-orbifolds using lifts of the $(-1)$-involution \cite{FLM88,DGM90,DGM96},
 \item 3 $\Z_3$-orbifolds \cite{Miy13b,SS16,ISS15},
 \item 19 new $\Z_n$-orbifolds with $n=2,4,5,6,10$.
\end{itemize}
This leaves 10 cases we have not been able to construct using lifts of lattice automorphisms, namely $A_{6,7}$, $A_{1,2}D_{5,8}$, $A_{1,2}A_{5,6}B_{2,3}$, $A_{2,2}F_{4,6}$, $A_1^2D_{6,5}$, $A_1^3 A_{7,4}$, $A_2^2A_{8,3}$, $A_3C_{7,2}$, $A_3D_{7,3}G_2$ and $A_5E_{7,3}$.

In addition to orbifolding by lifts of lattice automorphisms there is also a second approach, using framed \voa{}s \cite{Lam11,LS12a,LS15a}. With the framed construction it is possible to construct 56 (and not more) of the cases on Schellekens' list (see Figure~\ref{fig:schellekens3}).
There are 3 cases which can be realised as framed \voa{}s but have not yet been constructed as lattice orbifolds, namely $A_{1,2}D_{5,8}$, $A_1^3A_{7,4}$ and $A_3C_{7,2}$.

Thirdly, in \cite{LS16a} the authors construct new cases on Schellekens' list by considering orbifolds by inner automorphisms of order 2 of non-lattice cases on Schellekens' list. They obtain the cases $A_3D_{7,3}G_2$, $A_5E_{7,3}$ and $A_{8,3}A_{2,1}^2$. Moreover, they construct the cases $A_{1,2}A_{5,6}B_{2,3}$ and $A_1^2D_{6,5}$ conditionally on the construction of the cases $A_1C_{5,3}G_{2,2}$ and $A_{4,5}^2$, respectively, in this text.
\begin{cor}[\cite{LS16a}]
There exist holomorphic, $C_2$-cofinite \voa{}s of CFT-type of central charge 24 with the affine structures $A_{1,2}A_{5,6}B_{2,3}$ and $A_1^2D_{6,5}$.
\end{cor}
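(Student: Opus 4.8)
The plan is to deduce the corollary by combining the existence results established in this text with the conditional constructions of \cite{LS16a}. Recall that Theorem~\ref{thm:newcases} (cases No.~21 and No.~9 on Schellekens' list) produces holomorphic, $C_2$-cofinite \voa{}s of CFT-type of central charge $24$ with affine structures $A_1C_{5,3}G_{2,2}$ and $A_{4,5}^2$. These are precisely the two \voa{}s whose existence was taken as a hypothesis in \cite{LS16a} in order to construct $A_{1,2}A_{5,6}B_{2,3}$ and $A_1^2D_{6,5}$, respectively. Thus the entire content of the proof is to observe that this hypothesis is now discharged by Theorem~\ref{thm:newcases}, so that the conditional construction of \cite{LS16a} becomes unconditional, which is exactly the assertion of the corollary.

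For completeness I would recall the shape of the construction in \cite{LS16a}. Starting from the holomorphic \voa{} $V$ with $V_1\cong A_1C_{5,3}G_{2,2}$ (respectively $A_{4,5}^2$), one selects a suitable \emph{inner} automorphism $\sigma$ of order $2$, arising as $\sigma=\e^{(2\pi\i)h_0}$ for an appropriate semisimple $h\in V_1$. One then verifies the orbifold assumption (Assumption~\ref{ass:op}): $V$ is holomorphic and $C_2$-cofinite of CFT-type, $V^{\langle\sigma\rangle}$ satisfies the positivity assumption, and $\sigma$ is of type $2\{0\}$, so that by Proposition~\ref{prop:zerosuff} the fusion group is $\Z_2\times\Z_2$ with an isotropic subgroup $I\cong\Z_2\times\{0\}$ satisfying $I=I^\bot$. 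Theorem~\ref{thm:orbvoa} then yields a holomorphic \voa{} $\widetilde V$ of central charge $24$ satisfying Assumptions~\ref{ass:sn}\ref{ass:p}.

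It remains to identify the weight-one Lie algebra $\widetilde V_1$, and here I would use Montague's argument exactly as in Section~\ref{sec:newcases}: via the inverse-orbifold automorphism $\amgis$ of Theorem~\ref{thm:invorb}, the fixed-point algebra $V_1^{\langle\sigma\rangle}$ is realised both as a $\Z_2$-subalgebra of $V_1$ and of $\widetilde V_1$. Computing $V_1^{\langle\sigma\rangle}$ explicitly and comparing its dimension and its admissible fixed-point subalgebras (via Kac's classification) with the finitely many candidates of the relevant dimension on Schellekens' list (Theorem~\ref{thm:schellekens}) pins down $\widetilde V_1\cong A_{1,2}A_{5,6}B_{2,3}$ (respectively $A_1^2D_{6,5}$). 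The only genuine obstacle is this final identification, i.e.\ ensuring the dimension together with the $\Z_2$-subalgebra constraints isolates a unique case; but this is precisely the computation carried out in \cite{LS16a}, and for the present corollary it suffices to invoke that computation now that the required input \voa{}s are known to exist by Theorem~\ref{thm:newcases}.
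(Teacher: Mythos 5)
Your proposal is correct and matches the paper's (implicit) argument exactly: the paper states the corollary immediately after observing that \cite{LS16a} constructs $A_{1,2}A_{5,6}B_{2,3}$ and $A_1^2D_{6,5}$ conditionally on the existence of $A_1C_{5,3}G_{2,2}$ and $A_{4,5}^2$ respectively, and these hypotheses are discharged by Theorem~\ref{thm:newcases}. Your additional sketch of the inner-automorphism $\Z_2$-orbifold construction and the Montague-style identification of $\widetilde V_1$ is consistent with what \cite{LS16a} does, but is not needed for the corollary itself.
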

Finally, in \cite{LS16} the authors construct the case $A_{6,7}$ by orbifolding the Leech lattice \voa{} $V_\Lambda$ by a product of an inner automorphism of order 7 and a standard lift of an order 7 lattice automorphism. This result depends on the general orbifold theory developed in this text.

Taking all three methods together, we find that 70 of the 71 cases on Schellekens' list are constructed so far, namely (see Figure~\ref{fig:schellekens1}):
\begin{itemize}
\item 24 \voa{}s associated with the Niemeier lattices \cite{Nie73,Ven80,Bor86,FLM88,Don93},
\item 15 $\Z_2$-orbifolds using standard lifts of the $(-1)$-involution on the Niemeier lattice \voa{}s \cite{FLM88,DGM90,DGM96},
\item 17 framed \voa{}s \cite{Lam11,LS12a,LS15a},
\item 3 $\Z_3$-orbifolds \cite{Miy13b,SS16,ISS15} starting from Niemeier lattice \voa{}s,
\item 5 $\Z_2$-orbifolds using inner automorphisms \cite{LS16a}, partly depending on constructions in this text,
\item 5 new $\Z_n$-orbifolds with $n=4,5,6,8,10$ starting from Niemeier lattice \voa{}s (see Theorem~\ref{thm:newcases}),
\item 1 $\Z_7$-orbifold starting from the Leech lattice using the product of a lattice and an inner automorphism \cite{LS16}, depending on the general orbifold theory developed in this text.
\end{itemize}

Summarising all constructions and using Theorem~\ref{thm:schellekens} we arrive at:
\begin{oframed}
\begin{thm}\label{thm:summary}
Let $V$ be a holomorphic, $C_2$-cofinite \voa{} of CFT-type of central charge $c=24$. Then there are at least 70 and at most 71 possible affine structures for $V$. These are the 71 cases on Schellekens' list except for maybe $A_{2,2}F_{4,6}$.
\end{thm}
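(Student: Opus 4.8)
The statement has two halves, and the plan is to treat them separately. The upper bound---that there are at most $71$ possible affine structures---is immediate from Theorem~\ref{thm:schellekens}: any holomorphic, $C_2$-cofinite \voa{} of CFT-type of central charge $24$ has $V_1$ isomorphic as a Lie algebra either to $\{0\}$, to the $24$-dimensional abelian Lie algebra, or to one of the $69$ semisimple Lie algebras of Table~1 of \cite{Sch93}. Since the affine structure of $V$ is completely determined by the Lie algebra structure of $V_1$---the levels $k_i$ being fixed by $\dim_\C(V_1)$ and the dual Coxeter numbers $h_i^\vee$ through $h_i^\vee/k_i=(\dim_\C(V_1)-24)/24$---the number of affine structures that can occur is bounded above by $71$.

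For the lower bound I would assemble the existence results collected in Section~\ref{sec:orbconsum} and verify that together they realise exactly $70$ of these $71$ cases. Concretely, the plan is to exhibit one explicit nice, holomorphic \voa{} for each affine structure other than $A_{2,2}F_{4,6}$, drawing on the following mutually disjoint families: the $24$ Niemeier lattice \voa{}s (Corollary~\ref{cor:hol}); the $15$ order-$2$ orbifolds by standard lifts of the $(-1)$-involution; the $17$ framed \voa{}s; the $3$ order-$3$ lattice orbifolds; the $5$ order-$2$ orbifolds by inner automorphisms of \cite{LS16a} (two of which rest on the new constructions $A_1C_{5,3}G_{2,2}$ and $A_{4,5}^2$ of Theorem~\ref{thm:newcases}); the $5$ new orbifold constructions of Theorem~\ref{thm:newcases}; and the single order-$7$ orbifold of the Leech lattice \voa{} of \cite{LS16}. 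Each of these produces a nice, holomorphic \voa{} of central charge $24$ whose weight-one Lie algebra is identified with a specific case on Schellekens' list, and the seven counts $24+15+17+3+5+5+1$ sum to $70$.

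The main work---and the only genuine obstacle---is the bookkeeping: I would need to check that the union of all these constructions covers precisely $70$ \emph{distinct} affine structures, with $A_{2,2}F_{4,6}$ as the unique entry of the list not produced by any of the methods. This requires, first, identifying the Lie algebra $\widetilde{V}_1$ for every orbifold unambiguously; for the five new cases this is exactly the content of the identifications carried out via Montague's argument together with Kac's classification of fixed-point subalgebras (Section~\ref{sec:kac}), and for the remaining families it is supplied by the cited constructions. Second, one must resolve the dependency chain among the constructions: several of the inner-automorphism cases of \cite{LS16a} are only constructed conditionally on others, so I would verify that the conditional constructions are in fact unlocked by the unconditional ones (in particular by Theorem~\ref{thm:newcases}). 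Once the $70$ realised cases are tabulated and seen to be pairwise distinct as entries of Schellekens' list, the theorem follows by combining this lower bound with the upper bound from Theorem~\ref{thm:schellekens}.
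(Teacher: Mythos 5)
Your proposal is correct and follows essentially the same route as the paper: the upper bound of $71$ comes from Theorem~\ref{thm:schellekens} together with the fact that the Lie algebra structure of $V_1$ determines the levels and hence the affine structure, and the lower bound of $70$ comes from tallying the same seven families of constructions ($24+15+17+3+5+5+1$), including the resolution of the conditional dependencies in \cite{LS16a} via Theorem~\ref{thm:newcases}, with $A_{2,2}F_{4,6}$ as the sole unconstructed case. This is precisely the synthesis carried out in Section~\ref{sec:orbconsum} of the paper.
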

\end{oframed}
Recently, Ching Hung Lam and Xingjun Lin have announced the construction of the last remaining case $A_2F_4$ using mirror extensions \cite{LL16}.

\newgeometry{bottom=2cm,top=2cm}
\thispagestyle{empty}
\begin{sidewaysfigure}
\resizebox{1.0\textwidth}{!}{
\begin{tikzpicture}[scale=1.1]
\tikzstyle{lat}=[ellipse,thick,draw=blue!75,fill=blue!20,inner sep=0.05,align=center,minimum size=6mm]
\tikzstyle{invol}=[ellipse,thick,draw=cyan!75,fill=cyan!20,inner sep=0.05,align=center,minimum size=6mm]
\tikzstyle{z3}=[ellipse,thick,draw=green!75,fill=green!20,inner sep=0.05,align=center,minimum size=6mm]
\tikzstyle{new}=[ellipse,thick,draw=yellow!75,fill=yellow!20,inner sep=0.05,align=center,minimum size=6mm]
\tikzstyle{none}=[ellipse,thick,draw=black!75,fill=black!20,inner sep=0.05,align=center,minimum size=6mm]
\tikzstyle{framed}=[ellipse,thick,draw=red!75,fill=red!20,inner sep=0.1,align=center,minimum size=6mm]
\tikzstyle{inner}=[ellipse,thick,draw=orange!75,fill=orange!20,inner sep=0.1,align=center,minimum size=6mm]

\draw (-1.5,-0.5) -- (29.5,-0.5);
\draw (-1.5,1.5) -- (29.5,1.5);
\draw (-1.5,4.5) -- (29.5,4.5);
\draw (-1.5,6.5) -- (29.5,6.5);
\draw (-1.5,8.5) -- (29.5,8.5);
\draw (-1.5,10.5) -- (29.5,10.5);
\draw (-1.5,12.5) -- (29.5,12.5);
\draw (-1.5,13.5) -- (29.5,13.5);
\draw[line width=2pt] (-1.5,14.5) -- (29.5,14.5);

\draw[line width=2pt] (-0.5,15.5) -- (-0.5,-0.5);
\draw (0.5,15.5) -- (0.5,-0.5);
\draw (1.5,15.5) -- (1.5,-0.5);
\draw (2.5,15.5) -- (2.5,-0.5);
\draw (3.5,15.5) -- (3.5,-0.5);
\draw (4.5,15.5) -- (4.5,-0.5);
\draw (5.5,15.5) -- (5.5,-0.5);
\draw (6.5,15.5) -- (6.5,-0.5);
\draw (7.5,15.5) -- (7.5,-0.5);
\draw (8.5,15.5) -- (8.5,-0.5);
\draw (9.5,15.5) -- (9.5,-0.5);
\draw (10.5,15.5) -- (10.5,-0.5);
\draw (11.5,15.5) -- (11.5,-0.5);
\draw (12.5,15.5) -- (12.5,-0.5);
\draw (13.5,15.5) -- (13.5,-0.5);
\draw (14.5,15.5) -- (14.5,-0.5);
\draw (15.5,15.5) -- (15.5,-0.5);
\draw (16.5,15.5) -- (16.5,-0.5);
\draw (17.5,15.5) -- (17.5,-0.5);
\draw (18.5,15.5) -- (18.5,-0.5);
\draw (19.5,15.5) -- (19.5,-0.5);
\draw (20.5,15.5) -- (20.5,-0.5);
\draw (21.5,15.5) -- (21.5,-0.5);
\draw (22.5,15.5) -- (22.5,-0.5);
\draw (23.5,15.5) -- (23.5,-0.5);
\draw (24.5,15.5) -- (24.5,-0.5);
\draw (25.5,15.5) -- (25.5,-0.5);
\draw (26.5,15.5) -- (26.5,-0.5);
\draw (27.5,15.5) -- (27.5,-0.5);
\draw (28.5,15.5) -- (28.5,-0.5);
\draw (29.5,15.5) -- (29.5,-0.5);

\node at (-1,0.5) {24};
\node at (-1,3) {16};
\node at (-1,5.5) {12};
\node at (-1,7.5) {10};
\node at (-1,9.5) {8};
\node at (-1,11.5) {6};
\node at (-1,13) {4};
\node at (-1,14) {0};

\node at (0,15) {0};
\node at (1,15) {24};
\node at (2,15) {36};
\node at (3,15) {48};
\node at (4,15) {60};
\node at (5,15) {72};
\node at (6,15) {84};
\node at (7,15) {96};
\node at (8,15) {108};
\node at (9,15) {120};
\node at (10,15) {132};
\node at (11,15) {144};
\node at (12,15) {156};
\node at (13,15) {168};
\node at (14,15) {192};
\node at (15,15) {216};
\node at (16,15) {240};
\node at (17,15) {264};
\node at (18,15) {288};
\node at (19,15) {300};
\node at (20,15) {312};
\node at (21,15) {336};
\node at (22,15) {360};
\node at (23,15) {384};
\node at (24,15) {408};
\node at (25,15) {456};
\node at (26,15) {552};
\node at (27,15) {624};
\node at (28,15) {744};
\node at (29,15) {1128};

\draw (-1.5,15.5) -- (-0.5,14.5);
\node at (-0.75,15.25) {D.};
\node at (-1.25,14.75) {Rk.};

\node[invol] at (0,14) (0) {$0$};

\node[new] at (2,13) {$C_{4, 10}$};

\node[new] at (2,11.5) {$A_{2, 6}$\\$D_{4, 12}$};
\node[inner] at (3,12) {$A_{6, 7}$};
\node[framed] at (3,11) {$A_{1, 2}$\\$D_{5, 8}$};
\node[none] at (4,11.5) {$A_{2, 2}$\\$F_{4, 6}$};

\node[inner] at (3,10) {$A_{1, 2}A_{5, 6}$\\$B_{2, 3}$};
\node[new] at (3,9) {$A_{4, 5}^{2}$};
\node[inner] at (5,10) {$A_{1}^{2}D_{6, 5}$};
\node[new] at (5,9) {$A_{1}C_{5, 3}$\\$G_{2, 2}$};

\node[framed] at (3,7.5) {$A_{1, 2}$\\$A_{3, 4}^{3}$};
\node[none,framed] at (5,8) {$A_{1}^3A_{7, 4}$};
\node[framed] at (5,7) {$A_{1}^2C_{3, 2}$\\$D_{5, 4}$};
\node[new] at (7,7.5) {$A_2B_2$\\$E_{6, 4}$};
\node[framed] at (9,7.5) {$A_3C_{7, 2}$};

\node[invol] at (2,5.5) {$A_{1, 4}^{12}$};
\node[z3] at (3,5.5) {$A_{2, 3}^{6}$};
\node[framed] at (4,6) {$A_{2, 2}^{4}$\\$D_{4, 4}$};
\node[invol] at (4,5) {$B_{2, 2}^{6}$};
\node[z3] at (5,5.5) {$A_{1}^{3}$\\$A_{5, 3}$\\$D_{4, 3}$};
\node[framed] at (6,6) {$A_{4, 2}^{2}$\\$C_{4, 2}$};
\node[invol] at (6,5) {$B_{3, 2}^{4}$};
\node[inner] at (7,5.5) {$A_{2}^{2}$\\$A_{8, 3}$};
\node[invol] at (8,5.5) {$B_{4, 2}^{3}$};
\node[new,inner] at (9,6) {$A_{3}D_{7, 3}$\\$G_{2}$};
\node[z3] at (9,5) {$E_{6, 3}G_{2}^{3}$};
\node[framed] at (10,5.5) {$A_{8, 2}F_{4, 2}$};
\node[invol] at (12,5.5) {$B_{6, 2}^{2}$};
\node[inner] at (13,5.5) {$A_{5}$\\$E_{7, 3}$};
\node[invol] at (19,5.5) {$B_{12, 2}$};

\node[invol] at (3,3) {$A_{1, 2}^{16}$};
\node[invol] at (5,3) {$A_{1}^{4}A_{3, 2}^{4}$};
\node[framed] at (7,3.5) {$A_{2}^{2}A_{5, 2}^{2}$\\$B_{2}$};
\node[invol] at (7,2.5) {$B_{2}^{4}D_{4, 2}^{2}$};
\node[invol] at (9,3.5) {$A_{3}^{2}D_{5, 2}^{2}$};
\node[framed] at (9,2.5) {$A_{3}A_{7, 2}$\\$C_{3}^{2}$};
\node[framed] at (11,4) {$A_{4}A_{9, 2}$\\$B_{3}$};
\node[invol] at (11,3) {$B_{3}^{2}C_{4}$\\$D_{6, 2}$};
\node[invol] at (11,2) {$C_{4}^{4}$};
\node[framed] at (13,3) {$A_{5}C_{5}$\\$E_{6, 2}$};
\node[invol] at (14,3.5) {$B_{4}^{2}D_{8, 2}$};
\node[framed] at (14,2.5) {$B_{4}C_{6}^{2}$};
\node[invol] at (15,3) {$A_{7}D_{9, 2}$};
\node[framed] at (16,3.5) {$B_{5}E_{7, 2}$\\$F_{4}$};
\node[framed] at (16,2.5) {$C_{8}F_{4}^{2}$};
\node[framed] at (18,3) {$B_{6}$\\$C_{10}$};
\node[framed] at (23,3) {$B_{8}E_{8, 2}$};

\node[lat] at (1,0.5) (Leech) {$\mathbb{C}^{24}$};
\node[lat] at (5,0.5) (24A1) {$A_{1}^{24}$};
\node[lat] at (7,0.5) (12A2) {$A_{2}^{12}$};
\node[lat] at (9,0.5) (8A3) {$A_{3}^{8}$};
\node[lat] at (11,0.5) (6A4) {$A_{4}^{6}$};
\node[lat] at (13,1) (4A5D4) {$A_{5}^{4}D_{4}$};
\node[lat] at (13,0) (6D4) {$D_{4}^{6}$};
\node[lat] at (14,0.5) (4A6) {$A_{6}^{4}$};
\node[lat] at (15,0.5) (2A72D5) {$A_{7}^{2}D_{5}^{2}$};
\node[lat] at (16,0.5) (3A8) {$A_{8}^{3}$};
\node[lat] at (17,1) (2A9D6) {$A_{9}^{2}D_{6}$};
\node[lat] at (17,0) (4D6) {$D_{6}^{4}$};
\node[lat] at (20,1) (A11D7E6) {$A_{11}D_{7}$\\$E_{6}$};
\node[lat] at (20,0) (4E6) {$E_{6}^{4}$};
\node[lat] at (21,0.5) (2A12) {$A_{12}^{2}$};
\node[lat] at (22,0.5) (3D8) {$D_{8}^{3}$};
\node[lat] at (24,0.5) (A15D9) {$A_{15}D_{9}$};
\node[lat] at (25,1) (A17E7) {$A_{17}E_{7}$};
\node[lat] at (25,0) (D102E7) {$D_{10}E_{7}^{2}$};
\node[lat] at (26,0.5) (2D12) {$D_{12}^{2}$};
\node[lat] at (27,0.5) (A24) {$A_{24}$};
\node[lat] at (28,1) (3E8) {$E_{8}^{3}$};
\node[lat] at (28,0) (D16E8) {$D_{16}E_{8}$};
\node[lat] at (29,0.5) (D24) {$D_{24}$};

\draw [fill=white] (23,6) rectangle (29,14);

\node[lat] at (24,13) {};
\node[invol] at (24,12) {};
\node[z3] at (24,11) {};
\node[framed] at (24,10) {};
\node[inner] at (24,9) {};
\node[new] at (24,8) {};
\node[none] at (24,7) {};

\node[anchor=west] at (25,13) {Niemeier lattice};
\node[anchor=west] at (25,12) {$(-1)$-involution orbifold};
\node[anchor=west] at (25,11) {$\mathbb{Z}_3$-orbifold};
\node[anchor=west] at (25,10) {framed construction};
\node[anchor=west] at (25,9) {inner automorphisms};
\node[anchor=west] at (25,8) {new lattice orbifold};
\node[anchor=west] at (25,7) {not yet constructed};
\end{tikzpicture}
}
\caption{Schellekens' list of the 71 possible affine structures of holomorphic, $C_2$-cofinite \voa{}s of CFT-type of central charge 24.}
\label{fig:schellekens1}
\end{sidewaysfigure}
\restoregeometry

\newgeometry{bottom=2cm,top=2cm}
\thispagestyle{empty}
\begin{sidewaysfigure}
\resizebox{1.0\textwidth}{!}{
\begin{tikzpicture}[scale=1.1]
\tikzstyle{lat}=[ellipse,thick,draw=blue!75,fill=blue!20,inner sep=0.05,align=center,minimum size=6mm]
\tikzstyle{invol}=[ellipse,thick,draw=cyan!75,fill=cyan!20,inner sep=0.05,align=center,minimum size=6mm]
\tikzstyle{z3}=[ellipse,thick,draw=green!75,fill=green!20,inner sep=0.05,align=center,minimum size=6mm]
\tikzstyle{new}=[ellipse,thick,draw=yellow!75,fill=yellow!20,inner sep=0.05,align=center,minimum size=6mm]
\tikzstyle{none}=[ellipse,thick,draw=black!75,fill=black!20,inner sep=0.05,align=center,minimum size=6mm]

\draw (-1.5,-0.5) -- (29.5,-0.5);
\draw (-1.5,1.5) -- (29.5,1.5);
\draw (-1.5,4.5) -- (29.5,4.5);
\draw (-1.5,6.5) -- (29.5,6.5);
\draw (-1.5,8.5) -- (29.5,8.5);
\draw (-1.5,10.5) -- (29.5,10.5);
\draw (-1.5,12.5) -- (29.5,12.5);
\draw (-1.5,13.5) -- (29.5,13.5);
\draw[line width=2pt] (-1.5,14.5) -- (29.5,14.5);

\draw[line width=2pt] (-0.5,15.5) -- (-0.5,-0.5);
\draw (0.5,15.5) -- (0.5,-0.5);
\draw (1.5,15.5) -- (1.5,-0.5);
\draw (2.5,15.5) -- (2.5,-0.5);
\draw (3.5,15.5) -- (3.5,-0.5);
\draw (4.5,15.5) -- (4.5,-0.5);
\draw (5.5,15.5) -- (5.5,-0.5);
\draw (6.5,15.5) -- (6.5,-0.5);
\draw (7.5,15.5) -- (7.5,-0.5);
\draw (8.5,15.5) -- (8.5,-0.5);
\draw (9.5,15.5) -- (9.5,-0.5);
\draw (10.5,15.5) -- (10.5,-0.5);
\draw (11.5,15.5) -- (11.5,-0.5);
\draw (12.5,15.5) -- (12.5,-0.5);
\draw (13.5,15.5) -- (13.5,-0.5);
\draw (14.5,15.5) -- (14.5,-0.5);
\draw (15.5,15.5) -- (15.5,-0.5);
\draw (16.5,15.5) -- (16.5,-0.5);
\draw (17.5,15.5) -- (17.5,-0.5);
\draw (18.5,15.5) -- (18.5,-0.5);
\draw (19.5,15.5) -- (19.5,-0.5);
\draw (20.5,15.5) -- (20.5,-0.5);
\draw (21.5,15.5) -- (21.5,-0.5);
\draw (22.5,15.5) -- (22.5,-0.5);
\draw (23.5,15.5) -- (23.5,-0.5);
\draw (24.5,15.5) -- (24.5,-0.5);
\draw (25.5,15.5) -- (25.5,-0.5);
\draw (26.5,15.5) -- (26.5,-0.5);
\draw (27.5,15.5) -- (27.5,-0.5);
\draw (28.5,15.5) -- (28.5,-0.5);
\draw (29.5,15.5) -- (29.5,-0.5);

\node at (-1,0.5) {24};
\node at (-1,3) {16};
\node at (-1,5.5) {12};
\node at (-1,7.5) {10};
\node at (-1,9.5) {8};
\node at (-1,11.5) {6};
\node at (-1,13) {4};
\node at (-1,14) {0};

\node at (0,15) {0};
\node at (1,15) {24};
\node at (2,15) {36};
\node at (3,15) {48};
\node at (4,15) {60};
\node at (5,15) {72};
\node at (6,15) {84};
\node at (7,15) {96};
\node at (8,15) {108};
\node at (9,15) {120};
\node at (10,15) {132};
\node at (11,15) {144};
\node at (12,15) {156};
\node at (13,15) {168};
\node at (14,15) {192};
\node at (15,15) {216};
\node at (16,15) {240};
\node at (17,15) {264};
\node at (18,15) {288};
\node at (19,15) {300};
\node at (20,15) {312};
\node at (21,15) {336};
\node at (22,15) {360};
\node at (23,15) {384};
\node at (24,15) {408};
\node at (25,15) {456};
\node at (26,15) {552};
\node at (27,15) {624};
\node at (28,15) {744};
\node at (29,15) {1128};

\draw (-1.5,15.5) -- (-0.5,14.5);
\node at (-0.75,15.25) {D.};
\node at (-1.25,14.75) {Rk.};

\node[invol] at (0,14) (0) {$0$};

\node[new] at (2,13) {$C_{4, 10}$};

\node[new] at (2,11.5) {$A_{2, 6}$\\$D_{4, 12}$};
\node[none] at (3,12) {$A_{6, 7}$};
\node[none] at (3,11) {$A_{1, 2}$\\$D_{5, 8}$};
\node[none] at (4,11.5) {$A_{2, 2}$\\$F_{4, 6}$};

\node[none] at (3,10) {$A_{1, 2}A_{5, 6}$\\$B_{2, 3}$};
\node[new] at (3,9) {$A_{4, 5}^{2}$};
\node[none] at (5,10) {$A_{1}^{2}D_{6, 5}$};
\node[new] at (5,9) {$A_{1}C_{5, 3}$\\$G_{2, 2}$};

\node[new] at (3,7.5) {$A_{1, 2}$\\$A_{3, 4}^{3}$};
\node[none] at (5,8) {$A_{1}^3A_{7, 4}$};
\node[new] at (5,7) {$A_{1}^2C_{3, 2}$\\$D_{5, 4}$};
\node[new] at (7,7.5) {$A_2B_2$\\$E_{6, 4}$};
\node[none] at (9,7.5) {$A_3C_{7, 2}$};

\node[invol] at (2,5.5) {$A_{1, 4}^{12}$};
\node[z3] at (3,5.5) {$A_{2, 3}^{6}$};
\node[new] at (4,6) {$A_{2, 2}^{4}$\\$D_{4, 4}$};
\node[invol] at (4,5) {$B_{2, 2}^{6}$};
\node[z3] at (5,5.5) {$A_{1}^{3}$\\$A_{5, 3}$\\$D_{4, 3}$};
\node[new] at (6,6) {$A_{4, 2}^{2}$\\$C_{4, 2}$};
\node[invol] at (6,5) {$B_{3, 2}^{4}$};
\node[none] at (7,5.5) {$A_{2}^{2}$\\$A_{8, 3}$};
\node[invol] at (8,5.5) {$B_{4, 2}^{3}$};
\node[none] at (9,6) {$A_{3}D_{7, 3}$\\$G_{2}$};
\node[z3] at (9,5) {$E_{6, 3}G_{2}^{3}$};
\node[new] at (10,5.5) {$A_{8, 2}F_{4, 2}$};
\node[invol] at (12,5.5) {$B_{6, 2}^{2}$};
\node[none] at (13,5.5) {$A_{5}$\\$E_{7, 3}$};
\node[invol] at (19,5.5) {$B_{12, 2}$};

\node[invol] at (3,3) {$A_{1, 2}^{16}$};
\node[invol] at (5,3) {$A_{1}^{4}A_{3, 2}^{4}$};
\node[new] at (7,3.5) {$A_{2}^{2}A_{5, 2}^{2}$\\$B_{2}$};
\node[invol] at (7,2.5) {$B_{2}^{4}D_{4, 2}^{2}$};
\node[invol] at (9,3.5) {$A_{3}^{2}D_{5, 2}^{2}$};
\node[new] at (9,2.5) {$A_{3}A_{7, 2}$\\$C_{3}^{2}$};
\node[new] at (11,4) {$A_{4}A_{9, 2}$\\$B_{3}$};
\node[invol] at (11,3) {$B_{3}^{2}C_{4}$\\$D_{6, 2}$};
\node[invol] at (11,2) {$C_{4}^{4}$};
\node[new] at (13,3) {$A_{5}C_{5}$\\$E_{6, 2}$};
\node[invol] at (14,3.5) {$B_{4}^{2}D_{8, 2}$};
\node[new] at (14,2.5) {$B_{4}C_{6}^{2}$};
\node[invol] at (15,3) {$A_{7}D_{9, 2}$};
\node[new] at (16,3.5) {$B_{5}E_{7, 2}$\\$F_{4}$};
\node[new] at (16,2.5) {$C_{8}F_{4}^{2}$};
\node[new] at (18,3) {$B_{6}$\\$C_{10}$};
\node[new] at (23,3) {$B_{8}E_{8, 2}$};

\node[lat] at (1,0.5) (Leech) {$\mathbb{C}^{24}$};
\node[lat] at (5,0.5) (24A1) {$A_{1}^{24}$};
\node[lat] at (7,0.5) (12A2) {$A_{2}^{12}$};
\node[lat] at (9,0.5) (8A3) {$A_{3}^{8}$};
\node[lat] at (11,0.5) (6A4) {$A_{4}^{6}$};
\node[lat] at (13,1) (4A5D4) {$A_{5}^{4}D_{4}$};
\node[lat] at (13,0) (6D4) {$D_{4}^{6}$};
\node[lat] at (14,0.5) (4A6) {$A_{6}^{4}$};
\node[lat] at (15,0.5) (2A72D5) {$A_{7}^{2}D_{5}^{2}$};
\node[lat] at (16,0.5) (3A8) {$A_{8}^{3}$};
\node[lat] at (17,1) (2A9D6) {$A_{9}^{2}D_{6}$};
\node[lat] at (17,0) (4D6) {$D_{6}^{4}$};
\node[lat] at (20,1) (A11D7E6) {$A_{11}D_{7}$\\$E_{6}$};
\node[lat] at (20,0) (4E6) {$E_{6}^{4}$};
\node[lat] at (21,0.5) (2A12) {$A_{12}^{2}$};
\node[lat] at (22,0.5) (3D8) {$D_{8}^{3}$};
\node[lat] at (24,0.5) (A15D9) {$A_{15}D_{9}$};
\node[lat] at (25,1) (A17E7) {$A_{17}E_{7}$};
\node[lat] at (25,0) (D102E7) {$D_{10}E_{7}^{2}$};
\node[lat] at (26,0.5) (2D12) {$D_{12}^{2}$};
\node[lat] at (27,0.5) (A24) {$A_{24}$};
\node[lat] at (28,1) (3E8) {$E_{8}^{3}$};
\node[lat] at (28,0) (D16E8) {$D_{16}E_{8}$};
\node[lat] at (29,0.5) (D24) {$D_{24}$};

\draw [fill=white] (23,8) rectangle (29,14);

\node[lat] at (24,13) {};
\node[invol] at (24,12) {};
\node[z3] at (24,11) {};
\node[new] at (24,10) {};
\node[none] at (24,9) {};

\node[anchor=west] at (25,13) {Niemeier lattice};
\node[anchor=west] at (25,12) {$(-1)$-involution orbifold};
\node[anchor=west] at (25,11) {$\mathbb{Z}_3$-orbifold};
\node[anchor=west] at (25,10) {new lattice orbifold};
\node[anchor=west] at (25,9) {not yet constructed};
\end{tikzpicture}
}
\caption{Schellekens' list of the 71 possible affine structures of holomorphic, $C_2$-cofinite \voa{}s of CFT-type of central charge 24. Only lattice automorphism orbifolds are considered.}
\label{fig:schellekens2}
\end{sidewaysfigure}
\restoregeometry

\newgeometry{bottom=2cm,top=2cm}
\thispagestyle{empty}
\begin{sidewaysfigure}
\resizebox{1.0\textwidth}{!}{
\begin{tikzpicture}[scale=1.1]
\tikzstyle{none}=[ellipse,thick,draw=black!75,fill=black!20,inner sep=0.05,align=center,minimum size=6mm]
\tikzstyle{framed}=[ellipse,thick,draw=red!75,fill=red!20,inner sep=0.1,align=center,minimum size=6mm]

\draw (-1.5,-0.5) -- (29.5,-0.5);
\draw (-1.5,1.5) -- (29.5,1.5);
\draw (-1.5,4.5) -- (29.5,4.5);
\draw (-1.5,6.5) -- (29.5,6.5);
\draw (-1.5,8.5) -- (29.5,8.5);
\draw (-1.5,10.5) -- (29.5,10.5);
\draw (-1.5,12.5) -- (29.5,12.5);
\draw (-1.5,13.5) -- (29.5,13.5);
\draw[line width=2pt] (-1.5,14.5) -- (29.5,14.5);

\draw[line width=2pt] (-0.5,15.5) -- (-0.5,-0.5);
\draw (0.5,15.5) -- (0.5,-0.5);
\draw (1.5,15.5) -- (1.5,-0.5);
\draw (2.5,15.5) -- (2.5,-0.5);
\draw (3.5,15.5) -- (3.5,-0.5);
\draw (4.5,15.5) -- (4.5,-0.5);
\draw (5.5,15.5) -- (5.5,-0.5);
\draw (6.5,15.5) -- (6.5,-0.5);
\draw (7.5,15.5) -- (7.5,-0.5);
\draw (8.5,15.5) -- (8.5,-0.5);
\draw (9.5,15.5) -- (9.5,-0.5);
\draw (10.5,15.5) -- (10.5,-0.5);
\draw (11.5,15.5) -- (11.5,-0.5);
\draw (12.5,15.5) -- (12.5,-0.5);
\draw (13.5,15.5) -- (13.5,-0.5);
\draw (14.5,15.5) -- (14.5,-0.5);
\draw (15.5,15.5) -- (15.5,-0.5);
\draw (16.5,15.5) -- (16.5,-0.5);
\draw (17.5,15.5) -- (17.5,-0.5);
\draw (18.5,15.5) -- (18.5,-0.5);
\draw (19.5,15.5) -- (19.5,-0.5);
\draw (20.5,15.5) -- (20.5,-0.5);
\draw (21.5,15.5) -- (21.5,-0.5);
\draw (22.5,15.5) -- (22.5,-0.5);
\draw (23.5,15.5) -- (23.5,-0.5);
\draw (24.5,15.5) -- (24.5,-0.5);
\draw (25.5,15.5) -- (25.5,-0.5);
\draw (26.5,15.5) -- (26.5,-0.5);
\draw (27.5,15.5) -- (27.5,-0.5);
\draw (28.5,15.5) -- (28.5,-0.5);
\draw (29.5,15.5) -- (29.5,-0.5);

\node at (-1,0.5) {24};
\node at (-1,3) {16};
\node at (-1,5.5) {12};
\node at (-1,7.5) {10};
\node at (-1,9.5) {8};
\node at (-1,11.5) {6};
\node at (-1,13) {4};
\node at (-1,14) {0};

\node at (0,15) {0};
\node at (1,15) {24};
\node at (2,15) {36};
\node at (3,15) {48};
\node at (4,15) {60};
\node at (5,15) {72};
\node at (6,15) {84};
\node at (7,15) {96};
\node at (8,15) {108};
\node at (9,15) {120};
\node at (10,15) {132};
\node at (11,15) {144};
\node at (12,15) {156};
\node at (13,15) {168};
\node at (14,15) {192};
\node at (15,15) {216};
\node at (16,15) {240};
\node at (17,15) {264};
\node at (18,15) {288};
\node at (19,15) {300};
\node at (20,15) {312};
\node at (21,15) {336};
\node at (22,15) {360};
\node at (23,15) {384};
\node at (24,15) {408};
\node at (25,15) {456};
\node at (26,15) {552};
\node at (27,15) {624};
\node at (28,15) {744};
\node at (29,15) {1128};

\draw (-1.5,15.5) -- (-0.5,14.5);
\node at (-0.75,15.25) {D.};
\node at (-1.25,14.75) {Rk.};

\node[framed] at (0,14) (0) {$0$};

\node[none] at (2,13) {$C_{4, 10}$};

\node[none] at (2,11.5) {$A_{2, 6}$\\$D_{4, 12}$};
\node[none] at (3,12) {$A_{6, 7}$};
\node[framed] at (3,11) {$A_{1, 2}$\\$D_{5, 8}$};
\node[none] at (4,11.5) {$A_{2, 2}$\\$F_{4, 6}$};

\node[none] at (3,10) {$A_{1, 2}A_{5, 6}$\\$B_{2, 3}$};
\node[none] at (3,9) {$A_{4, 5}^{2}$};
\node[none] at (5,10) {$A_{1}^{2}D_{6, 5}$};
\node[none] at (5,9) {$A_{1}C_{5, 3}$\\$G_{2, 2}$};

\node[framed] at (3,7.5) {$A_{1, 2}$\\$A_{3, 4}^{3}$};
\node[framed] at (5,8) {$A_{1}^3A_{7, 4}$};
\node[framed] at (5,7) {$A_{1}^2C_{3, 2}$\\$D_{5, 4}$};
\node[none] at (7,7.5) {$A_2B_2$\\$E_{6, 4}$};
\node[framed] at (9,7.5) {$A_3C_{7, 2}$};

\node[framed] at (2,5.5) {$A_{1, 4}^{12}$};
\node[none] at (3,5.5) {$A_{2, 3}^{6}$};
\node[framed] at (4,6) {$A_{2, 2}^{4}$\\$D_{4, 4}$};
\node[framed] at (4,5) {$B_{2, 2}^{6}$};
\node[none] at (5,5.5) {$A_{1}^{3}$\\$A_{5, 3}$\\$D_{4, 3}$};
\node[framed] at (6,6) {$A_{4, 2}^{2}$\\$C_{4, 2}$};
\node[framed] at (6,5) {$B_{3, 2}^{4}$};
\node[none] at (7,5.5) {$A_{2}^{2}$\\$A_{8, 3}$};
\node[framed] at (8,5.5) {$B_{4, 2}^{3}$};
\node[none] at (9,6) {$A_{3}D_{7, 3}$\\$G_{2}$};
\node[none] at (9,5) {$E_{6, 3}G_{2}^{3}$};
\node[framed] at (10,5.5) {$A_{8, 2}F_{4, 2}$};
\node[framed] at (12,5.5) {$B_{6, 2}^{2}$};
\node[none] at (13,5.5) {$A_{5}$\\$E_{7, 3}$};
\node[framed] at (19,5.5) {$B_{12, 2}$};

\node[framed] at (3,3) {$A_{1, 2}^{16}$};
\node[framed] at (5,3) {$A_{1}^{4}A_{3, 2}^{4}$};
\node[framed] at (7,3.5) {$A_{2}^{2}A_{5, 2}^{2}$\\$B_{2}$};
\node[framed] at (7,2.5) {$B_{2}^{4}D_{4, 2}^{2}$};
\node[framed] at (9,3.5) {$A_{3}^{2}D_{5, 2}^{2}$};
\node[framed] at (9,2.5) {$A_{3}A_{7, 2}$\\$C_{3}^{2}$};
\node[framed] at (11,4) {$A_{4}A_{9, 2}$\\$B_{3}$};
\node[framed] at (11,3) {$B_{3}^{2}C_{4}$\\$D_{6, 2}$};
\node[framed] at (11,2) {$C_{4}^{4}$};
\node[framed] at (13,3) {$A_{5}C_{5}$\\$E_{6, 2}$};
\node[framed] at (14,3.5) {$B_{4}^{2}D_{8, 2}$};
\node[framed] at (14,2.5) {$B_{4}C_{6}^{2}$};
\node[framed] at (15,3) {$A_{7}D_{9, 2}$};
\node[framed] at (16,3.5) {$B_{5}E_{7, 2}$\\$F_{4}$};
\node[framed] at (16,2.5) {$C_{8}F_{4}^{2}$};
\node[framed] at (18,3) {$B_{6}$\\$C_{10}$};
\node[framed] at (23,3) {$B_{8}E_{8, 2}$};

\node[framed] at (1,0.5) (Leech) {$\mathbb{C}^{24}$};
\node[framed] at (5,0.5) (24A1) {$A_{1}^{24}$};
\node[framed] at (7,0.5) (12A2) {$A_{2}^{12}$};
\node[framed] at (9,0.5) (8A3) {$A_{3}^{8}$};
\node[framed] at (11,0.5) (6A4) {$A_{4}^{6}$};
\node[framed] at (13,1) (4A5D4) {$A_{5}^{4}D_{4}$};
\node[framed] at (13,0) (6D4) {$D_{4}^{6}$};
\node[framed] at (14,0.5) (4A6) {$A_{6}^{4}$};
\node[framed] at (15,0.5) (2A72D5) {$A_{7}^{2}D_{5}^{2}$};
\node[framed] at (16,0.5) (3A8) {$A_{8}^{3}$};
\node[framed] at (17,1) (2A9D6) {$A_{9}^{2}D_{6}$};
\node[framed] at (17,0) (4D6) {$D_{6}^{4}$};
\node[framed] at (20,1) (A11D7E6) {$A_{11}D_{7}$\\$E_{6}$};
\node[framed] at (20,0) (4E6) {$E_{6}^{4}$};
\node[framed] at (21,0.5) (2A12) {$A_{12}^{2}$};
\node[framed] at (22,0.5) (3D8) {$D_{8}^{3}$};
\node[framed] at (24,0.5) (A15D9) {$A_{15}D_{9}$};
\node[framed] at (25,1) (A17E7) {$A_{17}E_{7}$};
\node[framed] at (25,0) (D102E7) {$D_{10}E_{7}^{2}$};
\node[framed] at (26,0.5) (2D12) {$D_{12}^{2}$};
\node[framed] at (27,0.5) (A24) {$A_{24}$};
\node[framed] at (28,1) (3E8) {$E_{8}^{3}$};
\node[framed] at (28,0) (D16E8) {$D_{16}E_{8}$};
\node[framed] at (29,0.5) (D24) {$D_{24}$};

\draw [fill=white] (23,11) rectangle (29,14);

\node[framed] at (24,13) {};
\node[none] at (24,12) {};

\node[anchor=west] at (25,13) {framed construction};
\node[anchor=west] at (25,12) {no framed construction};
\end{tikzpicture}
}
\caption{Schellekens' list of the 71 possible affine structures of holomorphic, $C_2$-cofinite \voa{}s of CFT-type of central charge 24. Only framed constructions are considered.}
\label{fig:schellekens3}
\end{sidewaysfigure}
\restoregeometry

\chapter{BRST Cohomology}\label{ch:BRST}

In this chapter we describe the BRST\footnote{Named after Becchi, Rouet and Stora \cite{BRS74,BRS75,BRS76} and Tyutin \cite{Tyu75}.} construction of \BKMa{}s starting from certain vertex algebras of central charge 26 following the work of Frenkel, Garland, Lian and Zuckerman \cite{FGZ86,Zuc89,LZ89,LZ91,LZ93}. We summarise examples of such constructions found in \cite{Bor90,Bor92,Hoe03a,HS03,HS14,Car12b}. Then we present---for most of these cases for the first time---BRST constructions of ten \BKMa{}s whose denominator identities are completely reflective automorphic products of singular weight \cite{Sch04b,Sch06}. Those results depend on two technical conjectures, which we aim to prove in the future.

\minisec{Preliminaries}

Throughout this chapter we will need the notion of vertex (operator) superalgebras. They are $\Z_2$-graded generalisations of vertex algebras that admit an additional sign in the Jacobi identity depending on the $\Z_2$-grading. An introduction with details relevant for this text can be found in Section~\ref{sec:super} of the appendix.

Subalgebras of (weak) \voa{}s are not required to be full in this chapter (see remark after Definition~\ref{defi:subvoa}).

We also introduce additional gradings on vertex (super)algebras:
\begin{defi}[Additional Grading on Vertex (Super)algebra]\label{defi:addgrad}
Let $V$ be a vertex (super)algebra and $\Gamma$ an abelian group. We say $V$ is \emph{$\Gamma$-graded} if $V=\bigoplus_{\alpha\in \Gamma}V^\alpha$ as vector space such that for $a\in V^\alpha$, $\alpha\in \Gamma$, all the modes $a_n$, $n\in\Z$, are operators in $\End_\C(V)$ of degree $\alpha\in\Gamma$.

If the vertex (super)algebra $V$ has a weight grading $V=\bigoplus_{n\in\Z}V_n$, i.e.\ if it is a (weak) graded vertex (super)algebra in the sense of Definitions \ref{defi:gva} or \ref{defi:wgva}, then we demand that the weight $\Z$-grading and the additional $\Gamma$-grading be compatible in the sense that 
\begin{equation*}
V^\alpha=\bigoplus_{n\in\Z}V^\alpha_n
\end{equation*}
for $\alpha\in \Gamma$ where $V^\alpha_n:=V_n\cap V^\alpha$. (We allow weights $n\in(1/2)\Z$ in the supercase.)
\end{defi}
A weight grading on a vertex algebra is not a $\Z$-grading in the above sense since the weight of the operator $a_n$ depends on $n$. Indeed, recall that $\wt(a_n)=\wt(a)-n-1$ for $a\in V$ and $n\in\Z$.

Examples of vertex algebras with an additional grading in the above sense are $I$-graded (simple-current) extensions from Section~\ref{sec:sce}. Also, the $D$-grading on \aia{}s behaves in this way (see Definition~\ref{defi:aia}) and so does the $\Z_2$-grading on a vertex superalgebra.

\section{Bosonic Ghost Vertex Operator Superalgebra}\label{sec:bc}
An important ingredient for the BRST construction is the \emph{bosonic ghost vertex operator superalgebra} (or \emph{$b$-$c$-ghost system}) $V_\text{gh.}$ of central charge $c=-26$, which we define in the following.

Similar to the construction of a \voa{} $V_L$ associated with a positive-definite, even lattice, we more generally obtain a vertex operator superalgebra if the lattice is only integral. Indeed, with exactly the same construction, the space $V_L=M_{\hat\h}(1)\otimes\C_\eps[L]$ has a $(1/2)\N$-grading by weights. The weight of an element of the form $h_1(-n_1)\ldots h_k(-n_k)1\otimes\ee_\alpha$ for $k\in\N$, $n_i\in\Ns$, $h_i\in\h=L\otimes_\Z\C$ and $\alpha\in L$ is given by $n_1+\ldots+n_k+\langle\alpha,\alpha\rangle/2$. If we define a $\Z_2$-grading via
\begin{equation*}
V_L^{\bar 0}:=\bigoplus_{n\in\Z}(V_L)_n\quad\text{and}\quad V_L^{\bar 1}:=\bigoplus_{n\in\Z+\frac{1}{2}}(V_L)_n,
\end{equation*}
then $V_L$ has the structure of a vertex operator superalgebra of central charge $c=\rk(L)$.

We now consider the one-dimensional, positive-definite, integral lattice $L=\Z\sigma$ with $\langle\sigma,\sigma\rangle=1$. The associated lattice vertex operator superalgebra $V_L$ has central charge $c=1$. We choose a 2-cocycle with $\eps(\sigma,\sigma)=1$ for the twisted group algebra $\C_\eps[L]$. In the standard construction, the conformal vector $\omega$ is given by $(1/2)\sigma(-1)\sigma(-1)1\otimes\ee_0$.

We now consider a slight modification of $V_L$ by introducing a shift in the conformal vector $\omega$ (see \cite{DM06} for a general description of shifted lattice theories). If we define the new conformal vector $\omega'$ as
\begin{equation*}
\omega'=\omega+\lambda\sigma(-2)1\otimes\ee_0=\frac{1}{2}\sigma(-1)\sigma(-1)1\otimes\ee_0+\lambda\sigma(-2)1\otimes\ee_0,
\end{equation*}
for some $\lambda\in\C$, then $(V_L,\omega')$ is still a vertex operator superalgebra with exactly the same space of states, vertex operators, vacuum element, parity $\Z_2$-grading but with the following modifications:
\begin{enumerate}
\item The grading of an element of $V_L$ of the form $\ldots\otimes\ee_\alpha$, $\alpha\in L$, is shifted by $-\lambda\langle\sigma,\alpha\rangle$. In order to preserve the $(1/2)\Z$-grading, which is necessary for $V_L$ to remain a vertex operator superalgebra, we have to demand that $\lambda\langle\sigma,\alpha\rangle\in(1/2)\Z$ for all $\alpha\in L$, which is the case if and only if $\lambda\in(1/2)\Z$.
\item The central charge of $V_L$ is given by
\begin{equation*}
c'=\rk(L)-12\lambda^2\langle\sigma,\sigma\rangle=1-12\lambda^2.
\end{equation*}
\end{enumerate}
For every choice of $\lambda\in\C$, the weight grading on $V_L$ is still bounded from below and the graded components are finite-dimensional.

In order to obtain a vertex operator superalgebra of central charge $c'=-26$ we have to choose $\lambda=\pm3/2$. Let us take $\lambda=3/2$. Then the weight of a general element $\ldots\otimes\ee_\alpha\in V_L$ (write $\alpha=n\sigma$ for some $n\in\Z$) is in
\begin{equation*}
\N+\frac{1}{2}n^2-\lambda n=\N+\frac{1}{2}n^2-\frac{3}{2}n=\N+\frac{n(n-3)}{2},
\end{equation*}
i.e.\ the weight grading of $V_L$ is in $\Z_{\geq -1}$ with negative-weight states $1\otimes\ee_{\sigma}$ and $1\otimes\ee_{2\sigma}$ and on the other hand the weight grading becomes purely integral, i.e.\ there are no half-integer weight states. Let us denote $(V_L,\omega')$ with $\lambda=3/2$ in the following by $V_\text{gh.}$.

We define the \emph{ghost} states
\begin{equation*}
b:=1\otimes\ee_{-\sigma}\quad\text{and}\quad c:=1\otimes\ee_{\sigma}.
\end{equation*}
These states are both of odd parity and have weights $2$ and $-1$, respectively. We also introduce the \emph{ghost current}
\begin{equation*}
j^N:=\sigma(-1)1\otimes\ee_0
\end{equation*}
and the \emph{ghost number} $p$ as eigenvalue of the \emph{ghost number operator}
\begin{equation*}
U:=j^N_0=(\sigma(-1)1\otimes\ee_0)_0=\sigma(0),
\end{equation*}
which has even parity and weight 0, i.e.\ it commutes with the parity operator and the weight-grading operator $L_0$. Then
\begin{equation*}
Ub=-b\quad\text{and}\quad Uc=c,
\end{equation*}
so that $b$ has ghost number $p=-1$ and $c$ ghost number $p=1$. More generally an element of the form $\ldots\otimes\ee_\alpha$, $\alpha=n\sigma\in L$, has ghost number $p=\langle\alpha,\sigma\rangle=n\in\Z$. We observe that the elements in $V_\text{gh.}$ of even ($n^2/2\in\Z$) or odd ($n^2/2\in\Z+1/2$) parity are exactly those with even or odd ghost number $p$, respectively. By $(V_\text{gh.})^p$ we denote the subspace of $V_\text{gh.}$ of ghost number $p\in\Z$.

We consider the modes of $b$ and $c$, which are of odd parity and ghost number $p=-1$ and $p=1$, respectively. Indeed, one can show that $[U,b_m]=-b_m$ and $[U,c_m]=c_m$ for all $m\in\Z$. It follows from the definition of the vertex operation on $V_\text{gh.}$ that
\begin{equation*}
b_nb=c_nc=0
\end{equation*}
for $n\geq-1$,
\begin{equation*}
b_nc=c_nb=0
\end{equation*}
for $n\geq 1$ and
\begin{equation*}
b_0c=c_0b=\vac.
\end{equation*}
Borcherds' commutator formula (see e.g.\ formula (4.6.3) in \cite{Kac98}), which also holds for vertex superalgebras, gives
\begin{equation*}
\{b_n,b_m\}=\{c_n,c_m\}=0
\end{equation*}
(in particular $c_n^2=b_n^2=0$) for all $m,n\in\Z$ and
\begin{equation*}
\{b_n,c_m\}=\delta_{m+n,-1}\id_{V_\text{gh.}}.
\end{equation*}
By the vacuum axiom, for $n\geq 0$,
\begin{equation*}
b_n\vac=c_n\vac=0.
\end{equation*}

In total, this means that there are two infinite sequences of fermionic creation operators with associated annihilation operators, namely:
\begin{equation*}
\begin{tabular}{l|ccccc|ccccc}
creation op.&$b_{-1}$&$b_{-2}$&$b_{-3}$&$b_{-4}$&$\ldots$&$c_{-1}$&$c_{-2}$&$c_{-3}$&$c_{-4}$&$\ldots$\\\hline
weight      &$2$     &$3$     &$4$     &$5$     &$\ldots$&$-1$    &$0$     &$1$     &$2$     &$\ldots$\\\hline
ann.\ op.\  &$c_0$   &$c_1$   &$c_2$   &$c_3$   &$\ldots$&$b_0$   &$b_1$   &$b_2$   &$b_3$   &$\ldots$\\\hline
weight      &$-2$    &$-3$    &$-4$    &$-5$    &$\ldots$&$1$     &$0$     &$-1$    &$-2$    &$\ldots$
\end{tabular}
\end{equation*}
The bosonic ghost vertex operator superalgebra $V_\text{gh.}$ is spanned as a vector space by words in the creation operators (applied to the vacuum $\vac$) containing each creation operator at most once. In fact, if we ignore the order of the creation operators in these words, this gives a basis of $V_\text{gh.}$.\footnote{It suffices to show that these words are linearly independent. Then we compare the character of $V_\text{gh.}$ obtained from the standard basis $\sum_{n\in\Z}q^{n(n-3)/2}q^{27/24}/\eta(q)$ with the one computed for the $b$-$c$-basis, $2\eta(q^2)^2/\eta(q)^2$. This leads to an identity similar to Euler's pentagonal number theorem, which can indeed be verified.}

The character of $V_\text{gh.}$ is easily computed:
\begin{align*}
\ch_{V_\text{gh.}}(q)&=\sum_{n=-1}^\infty\dim_\C((V_\text{gh.})_n)q^{n-c/24}=q^{26/24}\prod_{n=2}^\infty(1+q^n)\prod_{n=-1}^\infty(1+q^n)\\
&=q^{26/24}\frac{2}{q}\left(\prod_{n=1}^\infty(1+q^n)\right)^2=q^{26/24}\frac{2}{q}\left(q^{-1/24}\frac{\eta(q^2)}{\eta(q)}\right)^2=2\frac{\eta(q^2)^2}{\eta(q)^2}.
\end{align*}
For the supercharacter (see Section~\ref{sec:super} for the definition) we obtain
\begin{align*}
\sch_{V_\text{gh.}}(q)=\sum_{n=-1}^\infty\sdim((V_\text{gh.})_n)q^{n-c/24}=q^{26/24}\prod_{n=2}^\infty(1-q^n)\prod_{n=-1}^\infty(1-q^n)=0,
\end{align*}
which vanishes because of the term for $n=0$ in the right product, or in other words since adding $c_{-2}$ to a word of creation operators not containing $c_{-2}$ changes the parity from even to odd and vice versa but leaves the weight unchanged.

For later use we also need the supercharacter of the kernel $\ker(b_1)$ of $b_1$ in $V_\text{gh.}$, which is spanned by the words in the creation operators not containing $c_{-2}$. We get
\begin{align*}
\ch_{\ker(b_1)}(q)&=\sum_{n=-1}^\infty\dim_\C((\ker(b_1))_n)q^{n-c/24}=q^{26/24}\prod_{n=2}^\infty(1+q^n)\prod_{n=-1,1,2,\ldots}^\infty(1+q^n)\\
&=q^{26/24}\frac{1}{q}\left(\prod_{n=1}^\infty(1+q^n)\right)^2=\frac{\eta(q^2)^2}{\eta(q)^2}
\end{align*}
and
\begin{align*}
\sch_{\ker(b_1)}(q)&=\sum_{n=-1}^\infty\sdim((\ker(b_1))_n)q^{n-c/24}=q^{26/24}\prod_{n=2}^\infty(1-q^n)\prod_{n=-1,1,2,\ldots}^\infty(1-q^n)\\
&=q^{26/24}\frac{-1}{q}\left(\prod_{n=1}^\infty(1-q^n)\right)^2=-\eta(q)^2.
\end{align*}

\section{BRST Construction}\label{sec:brst}

In this section we describe the BRST construction of \BKMa{}s $\g$ (also called generalised Kac-Moody algebras) from certain vertex algebras $M$ of central charge 26:
\begin{equation*}
\begin{tikzcd}
M\arrow[squiggly]{rr}{\text{BRST}}&&\g.
\end{tikzcd}
\end{equation*}

To this end we let $M$ be a weak \voa{} of central charge 26 and consider the tensor product $W=M\otimes V_\text{gh.}$ of $M$ with the bosonic ghost vertex operator superalgebra $V_\text{gh.}$ of central charge $-26$ from Section~\ref{sec:bc} above. Then $W$ is a weak vertex operator superalgebra of central charge $c=26-26=0$.

On $W$, it is possible to define a BRST current with a corresponding BRST operator $Q$ such that $Q$ increases the ghost number by one and
\begin{equation*}
Q^2=0
\end{equation*}
(Proposition~\ref{prop:QQ}). This yields the cochain complex $(W^\bullet,Q^\bullet)=(W^p,Q^p)_{p\in\Z}$ where $p$ is the ghost number. We call the corresponding cohomological spaces $H^p_\text{BRST}(M)$, $p\in\Z$. The space
\begin{equation*}
\g:=H^1_\text{BRST}(M)
\end{equation*}
is of particular interest since it naturally carries the structure of a Lie algebra \cite{LZ93} (see Corollary~\ref{cor:brstlie}) and is in general infinite-dimensional.

We make additional assumptions such that in particular the weak \voa{} $M$ is graded by the dual $L'$ of an even Lorentzian lattice $L$, in addition to the weight $\Z$-grading. One obtains cochain complexes $(W^\bullet(\alpha),Q^\bullet)=(W^p(\alpha),Q^p)_{p\in\Z}$ for each $\alpha\in L'$ and associated cohomological spaces $H^p(\alpha)$ so that
\begin{equation*}
H_\text{BRST}^p(M)=\bigoplus_{\alpha\in L'}H^p(\alpha).
\end{equation*}

The Lie algebra $\g=H_\text{BRST}^1(M)$ is then also graded by $L'$ and a vanishing theorem \cite{FGZ86,Zuc89} (see Theorem~\ref{thm:vanish}) together with the Euler-Poincaré principle allows us to compute the (finite) dimensions of the graded components of $\g$ (see Theorem~\ref{thm:rootdim}). In many interesting examples the Lie algebra $\g=H_\text{BRST}^1(M)$ turns out to be a \BKMa{} (see Sections \ref{sec:BKMprop} and \ref{sec:brstex}).

Since we are working over the base field $\C$, all Lie algebras in this text are complex, unless otherwise stated.

\minisec{Vertex Superalgebra Cohomology}

In the following we describe the general setting, in which the cohomology of vertex (super)algebras occurs.
\begin{ass*}
Let $W$ be a weak graded vertex superalgebra, i.e.\ a graded vertex superalgebra without the requirement that the graded components be finite-dimensional or that the grading be bounded from below. Assume that in addition to this weight grading there is a $\Z$-grading in the sense of Definition~\ref{defi:addgrad}
\begin{equation*}
W=\bigoplus_{p\in\Z}W^p
\end{equation*}
on $W$, denoted by upper indices.

Furthermore, let $j\in W$ be some vector, homogeneous of degree 1 with respect to the upper grading and homogeneous of some weight $\wt(j)$ with respect to the weight grading. Then the zeroth mode $Q:=j_0\in\End_\C(W)$ is an operator of degree 1 with respect to the upper grading, i.e.\ it raises the degree of a homogeneous element with respect to the upper grading by 1, and homogeneous with respect to the weight grading. Finally, assume that $Q$ satisfies
\begin{equation*}
Q^2=0\quad\iff\quad\im(Q)\subseteq\ker(Q).
\end{equation*}
\end{ass*}

For the moment let us view $W$ as a $\Z$-graded $\C$-vector space (with the upper grading). By definition, $Q$ restricts to
\begin{equation*}
Q^p:=Q|_{W^p}\colon W^p\to W^{p+1}
\end{equation*}
so that
\begin{equation*}
\im(Q^{p-1})\subseteq\ker(Q^p)\subseteq W^p
\end{equation*}
for all $p\in\Z$. This defines a \emph{cochain complex} in the abelian category of $\C$-vector spaces
\begin{equation*}
\ldots\stackrel{Q^{p-2}}{\longrightarrow}W^{p-1}\stackrel{Q^{p-1}}{\longrightarrow}W^p\stackrel{Q^p}{\longrightarrow}W^{p+1}\stackrel{Q^{p+1}}{\longrightarrow}\ldots
\end{equation*}
denoted by $(W^\bullet,Q^\bullet)=(W^p,Q^p)_{p\in\Z}$. We define the \emph{$p$-th cohomological space} of this complex as the quotient space
\begin{equation*}
H^p:=\ker(Q^p)/\im(Q^{p-1})=\left(W^p\cap\ker(Q)\right)/\left(W^p\cap\im(Q)\right)
\end{equation*}
for $p\in\Z$, measuring the non-exactness of the above sequence at position $p$.

One can even show that the direct sum of these spaces
\begin{equation*}
H:=\ker(Q)/\im(Q)=\bigoplus_{p\in\Z}H^p
\end{equation*}
naturally carries the structure of a weak graded vertex superalgebra\footnote{In particular, the homogeneity of $Q$ with respect to the weight grading implies that $H$ is again graded by weights.}, which also inherits the upper grading from $W$ (see e.g.\ Section~5.7.3 of \cite{FBZ04}).

\minisec{BRST Cohomology}

We now present the BRST cohomology at central charge 26. The weak graded vertex superalgebra $W$ from above will be the tensor product $W=M\otimes V_\text{gh.}$ of a weak \voa{} $M$ of central charge $26$ and the bosonic ghost vertex operator superalgebra $V_\text{gh.}$, the upper grading will be the ghost number and for the vector $j\in W$ we will take the BRST current $j^\text{BRST}$.

For the following results we need a series of increasingly strong assumptions on the vertex algebra $M$ in the \emph{matter sector}, starting with the following one:

\begin{ass*}
Let $M$ be a weak \voa{} of central charge $26$, i.e.\ a \voa{} but we do not require the weight grading on $M$ to be bounded from below nor that the graded components be finite-dimensional.
\end{ass*}
In the \emph{ghost sector} let $V_\text{gh.}$ denote the bosonic ghost vertex operator superalgebra of central charge $c=-26$ from Section~\ref{sec:bc}. We consider the tensor product
\begin{equation*}
W:=M\otimes V_\text{gh.},
\end{equation*}
which naturally admits the structure of a weak vertex operator superalgebra of central charge $c=26-26=0$ (see~Section~\ref{sec:super}). We obtain the usual tensor-product weight grading on $W=M\otimes V_\text{gh.}$ via $\omega=\omega_M\otimes\vac_\text{gh.}+\vac_M\otimes\omega_\text{gh.}$ and the tensor-product parity with $M\otimes_\C V_\text{gh.}^{\bar{0}}$ being the even subspace and $M\otimes_\C V_\text{gh.}^{\bar{1}}$ the odd one. Moreover, there is the ghost number operator $\id_M\otimes U$. All three gradings are compatible, i.e.\ each graded subspace with respect to one grading decomposes into a direct sum with respect to the other gradings. This is the case if and only if all the grading operators commute.

We define the \emph{BRST current}
\begin{equation*}
j^\text{BRST}:=(\id_M\otimes c_{-1})(\omega_M\otimes \vac_\text{Gh.}+\frac{1}{2}\vac_{M}\otimes\omega_\text{gh.})=\omega_M\otimes c+\frac{1}{2}\vac_{M}\otimes c_{-1}\omega_\text{gh.}
\end{equation*}
and the \emph{BRST operator}
\begin{equation*}
Q:=j^\text{BRST}_0.
\end{equation*}

We use the following shorthand notation for tensor products: we write operators $A\otimes \id$ or $\id\otimes A$ as $A$ and vectors $a\otimes\vac$ or $\vac\otimes a$ as $a$. Note that the modes of the vertex operators obey $(a\otimes\vac)_n=a_n\otimes\id$ and $(\vac\otimes a)_n=\id\otimes a_n$ by the left vacuum axiom and because $\vac$ is of even parity. We also write $AB$ for the operator $A\otimes B$ when it is clear on which space the operators $A$ and $B$ act.

Using this shorthand notation the BRST current reads $j^\text{BRST}=c_{-1}(\omega_M+\omega_\text{gh.}/2)$.

\begin{prop}[\cite{FGZ86}, \cite{Zuc89}, Section~4]\label{prop:QQ}
The operator $Q$ on $W=M\otimes V_\text{gh.}$ fulfils
\begin{equation*}
Q^2=0,
\end{equation*}
and has weight 0 (i.e.\ $[Q,L_0]=0$), ghost number 1 (i.e.\ $[U,Q]=Q$) and odd parity. More generally
\begin{equation*}
[Q,L_n]=0
\end{equation*}
for all $n\in\Z$. Moreover
\begin{equation*}
\{Q,b_{n+1}\}=L_n.
\end{equation*}
\end{prop}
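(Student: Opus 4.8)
The plan is to verify the identities in Proposition~\ref{prop:QQ} by direct computation with the modes of the BRST current, using standard vertex-superalgebra commutator/associator formulas. The central object is $Q=j^\text{BRST}_0$ where $j^\text{BRST}=c_{-1}\bigl(\omega_M+\tfrac12\omega_\text{gh.}\bigr)$, so I would first unpack $Q$ as a normally-ordered sum of products of matter Virasoro modes $L^M_m$ and ghost modes $b_k,c_k$. The most efficient route to all the stated relations is to compute the operator-product expansions (OPEs) of $j^\text{BRST}$ with $\omega$, with $j^\text{BRST}$ itself, and with the ghost field $b$, since the mode relations follow by extracting residues.

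First I would establish the easy structural assertions. The ghost-number statement $[U,Q]=Q$ is immediate: $j^\text{BRST}$ is built from the single $c$-insertion $c_{-1}$ (ghost number $+1$) acting on ghost-number-$0$ matter and the ghost stress tensor, so $j^\text{BRST}$ has ghost number $1$, whence $[U,j^\text{BRST}_0]=j^\text{BRST}_0$. The odd parity of $Q$ follows because $c$ is odd and $\omega_M,\omega_\text{gh.}$ are even. For the weight relation I would use that $j^\text{BRST}$ is a homogeneous field; in fact I would prove the stronger claim $[Q,L_n]=0$ for all $n$ directly. Here $L_n=\omega_{n+1}=(\omega_M+\omega_\text{gh.})_{n+1}$, and I would compute $[L_n,Q]$ from the commutator formula $[a_m,b_k]=\sum_{j\ge0}\binom{m}{j}(a_jb)_{m+k-j}$ (the Borcherds identity of Definition~\ref{defi:borid}, in its super form). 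The key input is that $j^\text{BRST}$ is a primary field of conformal weight $1$ with respect to the \emph{total} Virasoro vector $\omega$, i.e.\ $L_0 j^\text{BRST}=j^\text{BRST}$ and $L_m j^\text{BRST}=0$ for $m>0$; this is precisely the content of the $c=26$ condition, which makes the anomalous double-pole term in the $\omega$--$j^\text{BRST}$ OPE cancel. Primarity of weight $1$ gives $[L_n,j^\text{BRST}_0]=0$ for every $n$, which is exactly $[Q,L_n]=0$ and in particular $[Q,L_0]=0$.

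Next I would verify $\{Q,b_{n+1}\}=L_n$. Since $Q=j^\text{BRST}_0$ and $b_{n+1}=(\vac_M\otimes b)_{n+1}$, the anticommutator is governed by the OPE of $j^\text{BRST}$ with $b$. Using the explicit action of $b,c$ worked out in Section~\ref{sec:bc} (namely $\{b_k,c_l\}=\delta_{k+l,-1}$ and $b_kc=0$ for $k\ge1$, $b_0c=\vac$), together with $\omega_\text{gh.}=$ the shifted ghost stress tensor, I would compute $j^\text{BRST}_0 b = Q b$ and read off that the field $Y(Qb,x)$ combined with the contraction terms reproduces the total stress tensor $Y(\omega,x)$. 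Concretely, the contraction of the $c$ in $j^\text{BRST}$ with $b$ produces $\omega_M$, while the $\omega_\text{gh.}$-piece produces $\omega_\text{gh.}$, assembling $\omega=\omega_M+\omega_\text{gh.}$ and hence $\{Q,b_{n+1}\}=\omega_{n+1}=L_n$. This is the computation that fixes the normalisation $\tfrac12$ in $j^\text{BRST}$.

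The main obstacle is the nilpotency $Q^2=0$. Here $Q^2=\tfrac12\{Q,Q\}$ since $Q$ is odd, and this equals $\tfrac12(j^\text{BRST}_0 j^\text{BRST})_0$ plus lower terms by the commutator formula; the vanishing is \emph{not} automatic and requires the central charge to be exactly $26$. I would compute the $j^\text{BRST}$--$j^\text{BRST}$ OPE and show that the potential obstruction, which is a multiple of $(c_M-26)$ times a ghost term (plus a term $\propto c_\text{gh.}+26$), cancels precisely when $c_M=26$ and $c_\text{gh.}=-26$. The careful bookkeeping of the cubic ghost contributions $c\,c\,\partial c$ and their reordering via $\{c_k,c_l\}=0$, together with the anomaly in the ghost Virasoro OPE, is the delicate part; I expect to lean on the established computations in \cite{FGZ86} and \cite{Zuc89}, Section~4, rather than redo every contraction. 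Once the OPE coefficient is shown to vanish at $c=26$, extracting the $0$-th mode gives $\{Q,Q\}=0$, completing the proof.
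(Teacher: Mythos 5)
First, a point of reference: the paper contains no proof of Proposition~\ref{prop:QQ} at all --- it is quoted from \cite{FGZ86} and \cite{Zuc89}, Section~4, and the text proceeds immediately to the BRST complex. So your sketch can only be measured against the cited literature, and in structure it is the standard argument and essentially sound. Ghost number and parity are exactly as you say. For $\{Q,b_{n+1}\}=L_n$, the super-commutator formula gives $\{j^\text{BRST}_0,b_{n+1}\}=(j^\text{BRST}_0b)_{n+1}$ on the nose, so the whole identity reduces to the single computation $Qb=\omega_M\otimes\vac_\text{gh.}+\vac_M\otimes\omega_\text{gh.}$, which indeed is what fixes the factor $\tfrac12$. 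For nilpotency, note $\{j^\text{BRST}_0,j^\text{BRST}_0\}=(j^\text{BRST}_0j^\text{BRST})_0$ exactly (there are no ``lower terms'', since $\binom{0}{i}=0$ for $i\geq1$), so $Q^2=0$ amounts to showing $j^\text{BRST}_0j^\text{BRST}\in\im(T)$ when $c_M=26$; deferring that contraction bookkeeping to \cite{FGZ86} and \cite{Zuc89} is legitimate --- it is precisely what the paper itself does for the entire proposition.

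The one step that is wrong as written is your ``key input'' for $[Q,L_n]=0$: the current $j^\text{BRST}=c_{-1}\bigl(\omega_M+\tfrac12\omega_\text{gh.}\bigr)$ \emph{as defined in the paper} is not a primary vector of weight $1$, not even at $c_M=26$. A direct mode computation (using $L_2^M\omega_M=\tfrac{c_M}{2}\vac_M$, $L_2^\text{gh.}\omega_\text{gh.}=-13\vac_\text{gh.}$, $[L^\text{gh.}_n,c_{-1}]=-(2n+1)c_{n-1}$, $c_0\omega_\text{gh.}=-2Tc$ and $c_1\omega_\text{gh.}=-c$, where $c$ denotes the ghost state) gives, at $c_M=26$, $L_1 j^\text{BRST}=3Tc\neq0$ and $L_2j^\text{BRST}=9c\neq0$. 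Only the \emph{improved} current $j^\text{BRST}+\tfrac32T^2c$ (the familiar $+\tfrac32\partial^2c$ of the physics literature) is primary of weight $1$ precisely when $c_M=26$. Your conclusion nevertheless survives, for either of two reasons, and one of them must be made explicit: (i) since $(T^2c)_n=n(n-1)c_{n-2}$, the improvement term has identically vanishing zeroth mode, so $Q$ is also the zeroth mode of the genuinely primary improved current, and your primarity argument applies verbatim to that current; or (ii) running the commutator formula $[\omega_{n+1},j^\text{BRST}_0]=\sum_{i\geq0}\binom{n+1}{i}(\omega_ij^\text{BRST})_{n+1-i}$ with the actual values above, the two extra contributions $\binom{n+1}{2}(3Tc)_{n-1}+\binom{n+1}{3}(9c)_{n-2}=\bigl(-3(n-1)\binom{n+1}{2}+9\binom{n+1}{3}\bigr)c_{n-2}$ cancel identically. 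With this patch your outline is a correct proof strategy --- and, for what it is worth, already more than the paper records.
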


We can now consider the \emph{BRST complex}
\begin{equation*}
\ldots\stackrel{Q^{p-2}}{\longrightarrow}W^{p-1}\stackrel{Q^{p-1}}{\longrightarrow}W^p\stackrel{Q^p}{\longrightarrow}W^{p+1}\stackrel{Q^{p+1}}{\longrightarrow}\ldots,
\end{equation*}
a cochain complex in the category of $\C$-vector spaces, where the upper index is the ghost number. The corresponding cohomological spaces are
\begin{equation*}
H^p_\text{BRST}(M):=H^p:=\left(W^p\cap\ker(Q)\right)/\left(W^p\cap\im(Q)\right).
\end{equation*}
Each $W^p$ and each $H^p$ is graded by the $L_0$-weights since $U$ commutes with $L_0$ and $Q$ is homogeneous with respect to the $L_0$-grading ($Q$ even commutes with $L_0$, too).

From $\{Q,b_{n+1}\}=L_n$ we see that if $x\in\ker(Q)$, then
\begin{equation*}
\wt(x)x=L_0x=\{Q,b_{n+1}\}x=Qb_{n+1}x
\end{equation*}
for $L_0$-homogeneous $x$ and hence $x=Q(b_{n+1}x/\wt(x))$, i.e.\ $x\in\im(Q)$ if $\wt(x)\neq 0$. This shows that the cohomology is only supported in weight 0 and means that if we study the subcomplex
\begin{equation*}
\ldots\stackrel{Q^{p-2}}{\longrightarrow}W_0^{p-1}\stackrel{Q^{p-1}}{\longrightarrow}W_0^p\stackrel{Q^p}{\longrightarrow}W_0^{p+1}\stackrel{Q^{p+1}}{\longrightarrow}\ldots,
\end{equation*}
then its cohomological spaces are identical to those of the BRST complex $(W^\bullet,Q^\bullet)$, i.e.\
\begin{equation*}
H^p\cong\left(W_0^p\cap\ker(Q)\right)/\left(W_0^p\cap\im(Q)\right)=H^p_0
\end{equation*}
and $H^p_n=\{0\}$ for $n\neq 0$ where $H^p=\bigoplus_{n\in\Z}H^p_n$ ($L_0$-decomposition).

We also define the \emph{relative BRST subcomplex} on the elements of $W$ annihilated by $b_1$ and of weight 0, i.e.\ on
\begin{equation*}
C=W_0\cap\ker(b_1).
\end{equation*}
The weight and ghost number gradings both restrict to $C$ since $b_1$ is homogeneous with respect to both gradings (i.e.\ $[L_0,b_1]=0$ and $[U,b_1]=-b_1$). The operator $Q$ restricts to $C$ because of $\{Q,b_1\}=L_0$ and hence we get a subcomplex
\begin{equation*}
\ldots\stackrel{Q^{p-2}}{\longrightarrow}C^{p-1}\stackrel{Q^{p-1}}{\longrightarrow}C^p\stackrel{Q^p}{\longrightarrow}C^{p+1}\stackrel{Q^{p+1}}{\longrightarrow}\ldots
\end{equation*}
and the cohomological spaces
\begin{equation*}
H^p_\text{rel.}(M)=\left(W_0^p\cap\ker(b_1)\cap\ker(Q)\right)/\left(W_0^p\cap\ker(b_1)\cap\im(Q)\right).
\end{equation*}

We note that the inclusion map of $C^p$ into $W^p$ induces an injective map $H^p_\text{rel.}(M)\to H^p_\text{BRST}(M)$. A priori, the BRST cohomological spaces and the relative cohomological spaces need not be the same. We will see however that under some stronger assumptions $H^1_\text{BRST}(M)\cong H^1_\text{rel.}(M)$. More precisely, we will demand that $W$ carry certain representations of the Heisenberg \voa{}.

\minisec{Lie Algebra Structure}

In the following we will concentrate on $H^1_\text{BRST}(M)$. It is shown in \cite{LZ93} that this space naturally carries a Lie algebra structure. Indeed, consider the bilinear bracket $[\cdot,\cdot]\colon W\times W\to W$ on $W=M\otimes V_\text{gh.}$ defined by
\begin{equation*}
[u,v]:=-(-1)^{|u|}(b_0u)_0v
\end{equation*}
for $u,v\in W$.\footnote{We define the bracket with an additional minus sign relative to the definition in \cite{LZ93}.}
\begin{prop}[\cite{LZ93}, Section~2]
The map $Q$ acts as a derivation on $[\cdot,\cdot]$, i.e.\
\begin{equation*}
Q[u,v]=[Qu,v]-(-1)^{|u|}[u,Qv]
\end{equation*}
for $u,v\in W$.
\end{prop}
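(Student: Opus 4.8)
The plan is to compute $Q[u,v]$ directly from the definition $[u,v] = -(-1)^{|u|}(b_0 u)_0 v$ and to reorganise the result into $[Qu,v] - (-1)^{|u|}[u,Qv]$, the whole argument resting on two structural facts about $Q = j^{\text{BRST}}_0$. The first is that the zeroth mode of any vector is a derivation of all the products $a_n b$; since $j^{\text{BRST}}$ has odd parity (Proposition~\ref{prop:QQ}), this takes the super-Leibniz form $Q(a_n b) = (Qa)_n b + (-1)^{|a|} a_n (Qb)$, which follows from the $m=0$ case of the super-commutator formula $[Q, a_n]_{\pm} = (Qa)_n$ (the super-analogue of Borcherds' commutator formula invoked in Section~\ref{sec:bc}), where $[Q,a_n]_{\pm} = Qa_n - (-1)^{|a|}a_n Q$. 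The second is the relation $\{Q, b_0\} = L_{-1}$, which is the $n=-1$ instance of $\{Q, b_{n+1}\} = L_n$ from Proposition~\ref{prop:QQ}; equivalently $Qb_0 = L_{-1} - b_0 Q$ as operators.

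First I would apply the super-Leibniz rule to $a = b_0 u$, whose parity is $|u|+1$ (as $b_0$ is odd), giving $Q\bigl((b_0 u)_0 v\bigr) = \bigl(Q(b_0 u)\bigr)_0 v + (-1)^{|u|+1}(b_0 u)_0(Qv)$. Next I would rewrite $Q(b_0 u) = (L_{-1} - b_0 Q)u = Tu - b_0 Qu$ using the second fact. The crucial simplification is that $(Tu)_0 = (L_{-1}u)_0 = 0$: this is read off from the translation axiom $Y(Tu,x) = \partial_x Y(u,x)$ by comparing the coefficient of $x^{-1}$, since differentiation kills the residue. Hence $\bigl(Q(b_0 u)\bigr)_0 = -(b_0 Qu)_0$, and substituting everything back into $Q[u,v] = -(-1)^{|u|}Q\bigl((b_0 u)_0 v\bigr)$ yields $Q[u,v] = (-1)^{|u|}(b_0 Qu)_0 v + (b_0 u)_0(Qv)$.

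It then remains to expand the right-hand side $[Qu,v] - (-1)^{|u|}[u,Qv]$ from the bracket definition, keeping track of the shift $|Qu| = |u|+1$: one finds $[Qu,v] = (-1)^{|u|}(b_0 Qu)_0 v$ and $-(-1)^{|u|}[u,Qv] = (b_0 u)_0(Qv)$, so the two sides agree term by term. I expect the main obstacle to be purely bookkeeping: correctly justifying and applying the signs in the super-Leibniz rule (each passage of the odd operator $Q$ past an odd mode or an odd state contributes a sign) and verifying that the accumulated factors $(-1)^{|u|}$, $(-1)^{|u|+1}$ and $(-1)^{2|u|+1}$ collapse as claimed. No deeper input is needed beyond the two structural facts above and the vanishing $(L_{-1}u)_0 = 0$.
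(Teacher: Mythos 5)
Your argument is correct and complete: the paper itself gives no proof of this proposition (it only cites \cite{LZ93}), and your computation is essentially the standard one from that reference, resting exactly on the three facts you isolate --- the super-commutator formula $Qa_n-(-1)^{|a|}a_nQ=(Qa)_n$ for the odd zeroth mode $Q=j^{\text{BRST}}_0$, the relation $\{Q,b_0\}=L_{-1}$ from Proposition~\ref{prop:QQ}, and the vanishing $(L_{-1}u)_0=0$ from the translation axiom. The sign bookkeeping checks out: $Q[u,v]=(-1)^{|u|}(b_0Qu)_0v+(b_0u)_0(Qv)$ matches $[Qu,v]-(-1)^{|u|}[u,Qv]$ term by term once one uses $|Qu|=|u|+1$ in the bracket definition.
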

It follows by a simple calculation that $[\cdot,\cdot]$ restricts to a map $[\cdot,\cdot]\colon\ker(Q)\times\ker(Q)\to\ker(Q)$ and even induces a well-defined bracket $[\cdot,\cdot]\colon H\times H\to H$ where
\begin{equation*}
H=\ker(Q)/\im(Q)=\bigoplus_{p\in\Z}H^p.
\end{equation*}
\begin{thm}[\cite{LZ93}, Theorem~2.2]
The bracket $[\cdot,\cdot]\colon H\times H\to H$ defines a Lie superbracket on the BRST cohomological space $H$ with the rôles of even and odd elements interchanged, i.e.\ super-antisymmetry
\begin{equation*}
[u,v]=-(-1)^{(|u|-1)(|v|-1)}[v,u]
\end{equation*}
and the super Jacobi identity
\begin{equation*}
(-1)^{(|u|-1)(|t|-1)}[u,[v,t]]+(-1)^{(|t|-1)(|v|-1)}[t,[u,v]]+(-1)^{(|v|-1)(|u|-1)}[v,[t,u]]=0
\end{equation*}
hold for all $u,v,t\in H$.
\end{thm}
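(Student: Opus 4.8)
The plan is to verify that the bracket $[u,v]=-(-1)^{|u|}(b_0u)_0v$ descends to a well-defined Lie superbracket on $H$ satisfying the two stated identities, with the grading shift. Since the previous proposition already guarantees that $Q$ is a derivation of $[\cdot,\cdot]$ (so the bracket restricts to $\ker(Q)$ and induces a map on $H=\ker(Q)/\im(Q)$), the remaining work is purely about the two superalgebra axioms, which I would establish on the level of cohomology classes.

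First I would record the grading bookkeeping: the operator $b_0$ has odd parity and lowers ghost number by $1$, so if $u$ has parity $|u|$ and ghost number $p_u$, then $(b_0u)_0$ has parity $|u|-\bar1$ and the bracket $[u,v]$ has parity $|u|+|v|-\bar1$ and ghost number $p_u+p_v-1$. This explains the shift by $1$ in both axioms: defining the shifted parity $\|u\|:=|u|-\bar 1$, the claimed relations are exactly super-antisymmetry and the super Jacobi identity for the grading $\|\cdot\|$. So the task reduces to proving, for representatives in $\ker(Q)$,
\begin{equation*}
[u,v]=-(-1)^{\|u\|\,\|v\|}[v,u]
\end{equation*}
and the corresponding Jacobi identity, each modulo $\im(Q)$.

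The key technical step is to translate both axioms into identities among modes of the vertex superalgebra $W$, using the Borcherds/commutator identities for a vertex superalgebra together with the fact that $b_0$ is the zeroth mode of an odd field and $Q=j^{\mathrm{BRST}}_0$ acts as an odd derivation. Skew-symmetry of vertex operators, $Y(a,x)b=(-1)^{|a||b|}\e^{xL_{-1}}Y(b,-x)a$, combined with $[Q,L_{-1}]=0$ (which follows from $[Q,L_n]=0$ in Proposition~\ref{prop:QQ}), should produce the super-antisymmetry relation up to an explicit $Q$-exact term, i.e.\ an element of $\im(Q)$, which vanishes in $H$. For the super Jacobi identity I would expand $[u,[v,t]]$ and its cyclic permutations in terms of $\big(b_0u\big)_0\big(b_0v\big)_0 t$ and reorganise using the iterate (associativity) formula for modes $(a_mb)_n$; the cross terms that obstruct the naive Jacobi identity will again be $Q$-exact because $b_0$ and $Q$ satisfy $\{Q,b_0\}=$ (a combination of $L_{-1}$-type modes that is itself $Q$-exact or acts trivially on $\ker Q$ in weight $0$). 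I would lean on the fact, already noted in the excerpt, that cohomology is supported only in weight $0$, so that $L_0$ acts as $0$ and the anticommutator relations $\{Q,b_{n+1}\}=L_n$ give vanishing contributions precisely where needed.

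The main obstacle I expect is the careful tracking of signs and of the $Q$-exact error terms in the super Jacobi identity: the computation is a standard but delicate application of the vertex-superalgebra axioms, and the whole point is that the failure of the identity at the level of $\ker(Q)$ lands in $\im(Q)$. Concretely, I would fix representatives, expand everything into mode products, invoke the Borcherds identity to commute and reassociate modes, and then recognise the obstruction as $Q$ applied to an explicit element of $W$ built from $b_0$-derivatives of $u,v,t$. Once that $Q$-exactness is verified, both axioms hold in $H$ and the theorem follows; the grading-shift statement is then automatic from the parity and ghost-number computation in the first step.
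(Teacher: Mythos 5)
The paper does not prove this statement at all: it is quoted verbatim from Lian--Zuckerman and the citation \cite{LZ93}, Theorem~2.2 is the entire justification, so there is no internal argument to compare yours against. Measured against the actual proof in \cite{LZ93}, your strategy is the right one and matches theirs in outline: the grading bookkeeping (bracket of parity $|u|+|v|-\bar 1$ and ghost number $p_u+p_v-1$, so that the shifted parity $|u|-\bar 1$ is the one for which the axioms are genuine superalgebra axioms) is correct, and the mechanism — prove super-antisymmetry and the super Jacobi identity on $\ker(Q)$ only up to explicit $Q$-exact correction terms, using skew-symmetry of the vertex operation and the iterate/Borcherds identity — is exactly how Lian and Zuckerman proceed. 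Your observation that the quotient by $\im(Q)$ is what makes the axioms hold is also the essential point, and the paper itself remarks on it after the theorem.

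That said, as a proof your text is a plan rather than an argument: the entire mathematical content of the theorem is the identification of the explicit $Q$-exact error terms (in \cite{LZ93} these are written out as concrete elements of $W$ built from modes of $u$, $v$, $t$ and $b_0$), and you defer precisely that step. Two smaller points to tighten. First, the anticommutator you need is $\{Q,b_0\}=L_{-1}$ (the case $n=-1$ of $\{Q,b_{n+1}\}=L_n$), and the clean reason it causes no harm is that $L_{-1}x=Qb_0x+b_0Qx=Qb_0x\in\im(Q)$ for $x\in\ker(Q)$ — it is manifestly exact, not ``acting trivially in weight $0$'' (indeed $L_{-1}$ raises weight, so it cannot act trivially there); your phrasing conflates this with the separate fact that $L_0$ kills weight-$0$ vectors. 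Second, the skew-symmetry formula produces the tail $\sum_{n\geq 1}\frac{(L_{-1})^n}{n!}(\cdots)$, and each term of that tail is $Q$-exact on $\ker(Q)$ by the same $L_{-1}=\{Q,b_0\}$ identity; saying this explicitly is what turns your antisymmetry paragraph into a proof. Until the analogous bookkeeping is carried out for the Jacobi identity, the proposal is a correct road map but not yet a verification.
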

For the Lie superalgebra relations to hold, it is necessary to consider the quotient $H=\ker(Q)/\im(Q)$. Note that in \cite{LZ93} the authors additionally show the existence of an associative, supercommutative ``dot product'' on $H$ such that in total $H$ carries the structure of a \emph{Gerstenhaber algebra}.
\begin{prop}[\cite{LZ93}, Theorem~2.2]
The Lie superbracket on $H$ is of degree -1 with respect to the ghost number, i.e.\ it restricts to
\begin{equation*}
[\cdot,\cdot]\colon H^p\times H^q\to H^{p+q-1}
\end{equation*}
for all $p,q\in\Z$.
\end{prop}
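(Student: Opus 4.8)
The plan is to prove the claim by tracking the ghost-number grading through the defining formula of the bracket, using that both the bracket and the ghost number descend to the cohomology. Recall from Definition~\ref{defi:addgrad} that since $W=M\otimes V_\text{gh.}$ is $\Z$-graded by the ghost number, $W=\bigoplus_{p\in\Z}W^p$, every mode of a field attached to a homogeneous state of ghost number $\alpha$ is an operator of degree $\alpha$ with respect to this grading. This compatibility is the only structural input needed, so the argument is essentially bookkeeping.

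First I would establish the statement at the level of cochains, i.e.\ that $[\cdot,\cdot]\colon W^p\times W^q\to W^{p+q-1}$. Take $u\in W^p$ and $v\in W^q$ homogeneous with respect to the ghost number. Since $b=1\otimes\ee_{-\sigma}\in(V_\text{gh.})^{-1}$, all its modes $b_n$ are operators of ghost number $-1$, i.e.\ $[U,b_n]=-b_n$ (as recorded in Section~\ref{sec:bc}); in particular $b_0u\in W^{p-1}$. Now $(b_0u)_0$ is the zeroth mode of the field $Y(b_0u,x)$ associated with the state $b_0u$ of ghost number $p-1$, hence by Definition~\ref{defi:addgrad} it is again an operator of ghost number $p-1$, so that $(b_0u)_0\,v\in W^{(p-1)+q}$. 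Multiplying by the scalar $-(-1)^{|u|}$ does not change the ghost number, whence
\begin{equation*}
[u,v]=-(-1)^{|u|}(b_0u)_0v\in W^{p+q-1}.
\end{equation*}

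Next I would pass to cohomology. The operator $Q=j^\text{BRST}_0$ has ghost number $1$ (Proposition~\ref{prop:QQ}), so $Q$ is homogeneous of degree $+1$ with respect to the ghost-number grading; consequently $\ker(Q)$ and $\im(Q)$ are graded subspaces and the grading descends to $H=\ker(Q)/\im(Q)=\bigoplus_{p\in\Z}H^p$, where $H^p$ is the ghost-number-$p$ part. Since the bracket is already known to induce a well-defined map $[\cdot,\cdot]\colon H\times H\to H$, the cochain-level computation above immediately gives that it restricts to $[\cdot,\cdot]\colon H^p\times H^q\to H^{p+q-1}$ for all $p,q\in\Z$, which is the assertion.

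I do not expect a genuine obstacle here: the entire content is the additivity of the ghost number under the two mode operations $u\mapsto b_0u$ and $w\mapsto w_0$, and this is exactly what the compatibility clause of Definition~\ref{defi:addgrad} provides. The only point requiring a moment of care is to confirm that the ghost-number grading indeed descends to $H$ (so that speaking of $H^p$ is meaningful), which follows from the homogeneity of $Q$ noted in Proposition~\ref{prop:QQ}; everything else is a direct substitution into the formula defining $[\cdot,\cdot]$.
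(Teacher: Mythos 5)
Your argument is correct. Note that the paper does not actually prove this proposition — it is quoted from \cite{LZ93}, Theorem~2.2, without proof — so there is no in-text argument to compare against; your direct degree count (using that $b$ has ghost number $-1$, that modes of a ghost-degree-$\alpha$ state are operators of degree $\alpha$ in the sense of Definition~\ref{defi:addgrad}, and that $Q$ is homogeneous of degree $+1$ so the grading descends to $H$) is exactly the standard verification and fills that gap correctly.
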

Then, clearly, $H^1$ is closed under the bracket. Since $H^1$ has odd ghost number, it is even with respect to the Lie superalgebra structure on $H$ and we obtain:
\begin{cor}\label{cor:brstlie}
$\g:=H^1=H^1_\text{BRST}(M)$ becomes a Lie algebra with the bracket
\begin{equation*}
[u,v]=(b_0u)_0v
\end{equation*}
for $u,v\in H^1$.
\end{cor}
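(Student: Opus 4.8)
The statement to prove is Corollary~\ref{cor:brstlie}: that $\g:=H^1=H^1_\text{BRST}(M)$ becomes a Lie algebra under the bracket $[u,v]=(b_0u)_0v$ for $u,v\in H^1$. The entire strategy is to \emph{specialise} the already-established Gerstenhaber/Lie-superalgebra structure on the full cohomology $H=\bigoplus_{p\in\Z}H^p$ to the single graded piece $p=1$. So first I would recall the general superbracket $[u,v]=-(-1)^{|u|}(b_0u)_0v$ on $W$, which by the preceding theorem descends to a well-defined Lie superbracket on $H$ (with the rôles of even and odd elements interchanged), and which by the last proposition has degree $-1$ with respect to the ghost number, i.e.\ it restricts to $[\cdot,\cdot]\colon H^p\times H^q\to H^{p+q-1}$.

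\textbf{Key steps.} The first step is the closure observation: setting $p=q=1$ in the degree-$(-1)$ property gives $[\cdot,\cdot]\colon H^1\times H^1\to H^{1+1-1}=H^1$, so $H^1$ is indeed closed under the bracket and the restriction is well-defined. Second, I would fix the sign of the bracket formula on $H^1$. Elements of $H^1$ have ghost number $p=1$, which is odd; but we established above that the subspaces of $V_\text{gh.}$ (and hence of $W=M\otimes V_\text{gh.}$) of odd ghost number are exactly those of odd parity, so $|u|=\bar 1$ for $u\in H^1$. Plugging $|u|=\bar 1$ into $[u,v]=-(-1)^{|u|}(b_0u)_0v$ gives $-(-1)^1=+1$, so the bracket reduces to $[u,v]=(b_0u)_0v$, matching the claimed formula. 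Third, I would deduce the ordinary Lie axioms from the super Lie axioms: in the super Lie superalgebra structure on $H$, elements of \emph{odd} ghost number are the \emph{even} elements (the rôles are interchanged), so for $u,v,t\in H^1$ all the parity signs $(-1)^{(|u|-1)(|v|-1)}$ etc.\ in super-antisymmetry and the super Jacobi identity become $+1$ (since $|u|-1$ is even for all three). Thus super-antisymmetry collapses to ordinary antisymmetry $[u,v]=-[v,u]$ and the super Jacobi identity collapses to the ordinary Jacobi identity $[u,[v,t]]+[t,[u,v]]+[v,[t,u]]=0$. Bilinearity is immediate since the superbracket is already bilinear.

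\textbf{Main obstacle.} There is essentially no analytic or combinatorial obstacle here, since all the hard work — that $[\cdot,\cdot]$ is well-defined on the quotient $H=\ker(Q)/\im(Q)$, that it satisfies the graded (super) antisymmetry and Jacobi identities, and that it has ghost-number degree $-1$ — is imported wholesale from the cited results of Lian and Zuckerman. The only genuinely substantive point to nail down carefully is the \emph{parity bookkeeping}: one must verify that ghost number $p=1$ corresponds to odd parity $|u|=\bar 1$ (so the sign in the bracket formula is indeed $+1$), and that under the ``rôles interchanged'' convention odd-ghost-number elements sit in the even part of the superalgebra (so that the graded signs trivialise to $+1$). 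I would therefore devote the bulk of the written proof to tracking these two sign conventions explicitly, as a mismatch there is the one place where the claimed formula and the claimed Lie-algebra axioms could fail to follow. Everything else is a one-line specialisation.
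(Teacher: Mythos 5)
Your proposal is correct and follows essentially the same route as the paper: the paper likewise derives the corollary by combining the ghost-number-degree $-1$ property (giving closure of $H^1$), the identification of odd ghost number with odd parity (so that $-(-1)^{|u|}=+1$ fixes the bracket formula $(b_0u)_0v$), and the observation that $H^1$ is \emph{even} in the role-interchanged Lie superalgebra structure, which trivialises all graded signs and yields the ordinary Lie axioms. Your write-up merely makes explicit the sign bookkeeping that the paper leaves implicit in a single sentence.
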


\minisec{Relative BRST Subcomplex}

We return to the relative BRST subcomplex 
\begin{equation*}
\ldots\stackrel{Q^{p-2}}{\longrightarrow}C^{p-1}\stackrel{Q^{p-1}}{\longrightarrow}C^p\stackrel{Q^p}{\longrightarrow}C^{p+1}\stackrel{Q^{p+1}}{\longrightarrow}\ldots
\end{equation*}
on
\begin{equation*}
C=W_0\cap\ker(b_1).
\end{equation*}
We shall see that the BRST complex and the relative BRST complex are connected by a short exact sequence of cochain complexes (see proof of Theorem~5.1 in \cite{LZ89}). Indeed, consider the map
\begin{equation*}
\psi\colon W^p_0\to C^{p-1}, w\mapsto(-1)^{|w|}b_1w.
\end{equation*}
Then for each $p\in\Z$ we obtain a short exact sequence
\begin{equation*}
0\to C^p\hookrightarrow W^p_0\stackrel{\psi}{\longrightarrow}C^{p-1}\to 0.
\end{equation*}
The surjectivity of $\psi$ follows from $\{b_1,c_{-2}\}=\id$. The exactness in the middle is clear from the definition of $\psi$.

The map $\psi$ is even a cochain map $\psi\colon W^\bullet_0\to C^{\bullet-1}$ since $\psi Q=Q\psi$ on $W_0$, which follows from $\{Q,b_1\}=L_0$, and so is the inclusion map $C^\bullet\hookrightarrow W_0^\bullet$. This means that we obtain a short exact sequence of cochain complexes
\begin{equation*}
0\to C^\bullet\hookrightarrow W^\bullet_0\stackrel{\psi}{\longrightarrow}C^{\bullet-1}\to 0.
\end{equation*}
The zig-zag lemma, which holds in any abelian category, then asserts the existence of the long exact sequence
\begin{equation*}
\ldots\to H_\text{rel.}^p\to H_\text{BRST}^p\to H_\text{rel.}^{p-1}\to H_\text{rel.}^{p+1}\to H_\text{BRST}^{p+1}\to H_\text{rel.}^{p}\to H_\text{rel.}^{p+2}\to\ldots.
\end{equation*}

We can also define a Lie algebra structure on the relative cohomology. Indeed, the identity
\begin{equation*}
(b_0u)_0v=b_1(u_{-1}v)-(b_1u)_{-1}v-(-1)^{|u|}u_{-1}(b_1v)
\end{equation*}
holds for all $u,v\in W$ (see Lemma~2.1 in \cite{LZ93}) so that
\begin{equation*}
(b_0u)_0v=b_1u_{-1}v\in\ker(b_1)
\end{equation*}
for $u,v\in\ker(b_1)$. The bracket $[\cdot,\cdot]$ can hence be restricted to $\ker(b_1)$ and we can define a Lie algebra structure on $H^1_\text{rel.}(M)$ in the same manner as for $\g=H^1_\text{BRST}(M)$ above.

\minisec{Grading}
To proceed further we need additional assumptions on the vertex algebra $M$.
\begin{ass*}\label{page:gradass}
Let the weak \voa{} $M$ have an additional grading
\begin{equation*}
M=\bigoplus_{\alpha\in\Gamma}M(\alpha)
\end{equation*}
by some (additive) abelian group $\Gamma$ in the sense of Definition~\ref{defi:addgrad}. In particular, this grading is compatible with the weight grading on $M$. Then also $W=M\otimes V_\text{gh.}$ is naturally graded by $\Gamma$ and this grading is compatible with the weight and ghost gradings on $W$, i.e.\ the corresponding grading operators commute. Let us also assume that $Q$ does not change the $\Gamma$-degree of a $\Gamma$-homogeneous element, i.e.\ that the $\Gamma$-grading operator and $Q$ commute.\footnote{It follows from the definition of $j^\text{BRST}$ and $Q$ that this is the case for example when the $\Gamma$-grading operator commutes with all modes $L_n^M$ of $\omega_M$.}
\end{ass*}

The BRST complex is now also graded by $\Gamma$, in addition to $L_0$, and we get cochain complexes
\begin{equation*}
\ldots\stackrel{Q^{p-2}}{\longrightarrow}W_n^{p-1}(\alpha)\stackrel{Q^{p-1}}{\longrightarrow}W_n^p(\alpha)\stackrel{Q^p}{\longrightarrow}W_n^{p+1}(\alpha)\stackrel{Q^{p+1}}{\longrightarrow}\ldots
\end{equation*}
for all $n\in\Z$ and $\alpha\in\Gamma$ with corresponding cohomological spaces $H_n^p(\alpha)$ so that
\begin{equation*}
H=\bigoplus_{n,p\in\Z,\alpha\in\Gamma}H_n^p(\alpha)=\bigoplus_{p\in\Z,\alpha\in\Gamma}H_0^p(\alpha),
\end{equation*}
noting that as before $H_n^p(\alpha)=\{0\}$ for $n\neq 0$. Also the relative BRST complex is graded by $\Gamma$ since $b_1$ does not change the $\Gamma$-grading and we get the subcomplexes
\begin{equation*}
\ldots\stackrel{Q^{p-2}}{\longrightarrow}C^{p-1}(\alpha)\stackrel{Q^{p-1}}{\longrightarrow}C^p(\alpha)\stackrel{Q^p}{\longrightarrow}C^{p+1}(\alpha)\stackrel{Q^{p+1}}{\longrightarrow}\ldots
\end{equation*}
for $\alpha\in\Gamma$ with corresponding cohomological spaces $H_\text{rel.}^p(\alpha)$. We also obtain the long exact sequence
\begin{equation*}
\ldots\to H_\text{rel.}^p(\alpha)\to H_\text{BRST}^p(\alpha)\to H_\text{rel.}^{p-1}(\alpha)\to H_\text{rel.}^{p+1}(\alpha)\to H_\text{BRST}^{p+1}(\alpha)\to\ldots
\end{equation*}
for $\alpha\in\Gamma$.

It is part of the above assumption (see Definition~\ref{defi:addgrad}) that the $\Gamma$-grading on the vertex algebra $M$ is such that for $v\in M(\alpha)$, the modes $v_n$, $n\in\Z$, are operators of degree $\alpha$, i.e.\ they change the degree of a homogeneous element by $\alpha$. This implies:
\begin{prop}
The Lie algebra $\g=H^1_\text{BRST}(M)=\bigoplus_{\alpha\in\Gamma}H^1_0(\alpha)$ is a $\Gamma$-graded Lie algebra, i.e.\ $[H^1_\text{BRST}(\alpha),H^1_\text{BRST}(\beta)]\subseteq H^1_\text{BRST}(\alpha+\beta)$ for $\alpha,\beta\in\Gamma$.
\end{prop}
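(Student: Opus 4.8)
The plan is to show that the Lie bracket on $\g=H^1_\text{BRST}(M)$ respects the $\Gamma$-grading, i.e.\ that $[H^1_0(\alpha),H^1_0(\beta)]\subseteq H^1_0(\alpha+\beta)$ for all $\alpha,\beta\in\Gamma$. By Corollary~\ref{cor:brstlie} the bracket is given on cohomology classes by $[u,v]=(b_0u)_0v$, so the essential point is to trace how the two modes appearing in this expression interact with the $\Gamma$-grading. First I would record the fundamental fact, built into the Assumption above via Definition~\ref{defi:addgrad}, that for a $\Gamma$-homogeneous element $w\in W(\alpha)$ every mode $w_n$, $n\in\Z$, is an operator of degree $\alpha$ on $W$, meaning $w_n W(\gamma)\subseteq W(\alpha+\gamma)$ for all $\gamma\in\Gamma$. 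Here one uses that the ghost tensor factor $V_\text{gh.}$ sits in degree $0$, so the $\Gamma$-grading of $W=M\otimes V_\text{gh.}$ is inherited entirely from $M$ and the mode-degree statement transfers from $M$ to $W$.

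Next I would argue at the level of representatives. Let $u\in W^1_0(\alpha)$ and $v\in W^1_0(\beta)$ be $Q$-closed representatives of classes in $H^1_0(\alpha)$ and $H^1_0(\beta)$. Since $b_0$ is the $0$-th mode of the fixed element $b=1\otimes\ee_{-\sigma}\in V_\text{gh.}$, which lies in degree $0\in\Gamma$, it preserves the $\Gamma$-grading; hence $b_0u\in W(\alpha)$. Applying the mode-degree fact once more to $b_0u\in W(\alpha)$, its mode $(b_0u)_0$ has degree $\alpha$, so $(b_0u)_0v\in W(\alpha+\beta)$. Combining this with the weight-zero and ghost-number bookkeeping already established (the bracket has weight $0$ and lands in ghost number $1$, as in the preceding propositions), the representative $[u,v]=(b_0u)_0v$ lies in $W^1_0(\alpha+\beta)$. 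It remains to note that this is compatible with passing to cohomology: because $Q$ commutes with the $\Gamma$-grading operator by the Assumption, each graded piece $W^\bullet(\gamma)$ is a subcomplex, and the induced bracket on $H=\ker(Q)/\im(Q)$ therefore restricts to a well-defined map $H^1_0(\alpha)\times H^1_0(\beta)\to H^1_0(\alpha+\beta)$.

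I would then simply assemble these observations: the bracket is already known to be a Lie bracket on $\g$ by Corollary~\ref{cor:brstlie}, and the degree computation shows it carries $H^1_0(\alpha)\times H^1_0(\beta)$ into $H^1_0(\alpha+\beta)$, which is precisely the assertion that $\g=\bigoplus_{\alpha\in\Gamma}H^1_0(\alpha)$ is a $\Gamma$-graded Lie algebra.

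There is no genuine obstacle here; the statement is essentially a bookkeeping consequence of the compatibility hypotheses. The only point requiring a little care — and the step I would treat most explicitly — is verifying that the $\Gamma$-grading descends cleanly from representatives to cohomology classes, which hinges on the commutativity of $Q$ with the $\Gamma$-grading operator (so that each $W^\bullet(\gamma)$ is an honest subcomplex with its own cohomology $H^\bullet(\gamma)$). Given that this commutativity is explicitly part of the Assumption, the proof is short and the remaining manipulations are routine.
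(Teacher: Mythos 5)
Your proof is correct and follows exactly the route the paper intends: the paper treats this proposition as an immediate consequence of the fact (built into Definition~\ref{defi:addgrad}) that modes of $\Gamma$-homogeneous elements are operators of the corresponding degree, which is precisely the bookkeeping you carry out for $[u,v]=(b_0u)_0v$ together with the observation that $Q$ commutes with the grading so each $W^\bullet(\gamma)$ is a subcomplex. You have simply written out in full the one-line argument the paper leaves implicit.
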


\minisec{Vanishing Theorem}
We now make an even stronger assumption and demand that the weak \voa{} $M$ carry certain representations of the Heisenberg \voa{} related to the additional $\Gamma$-grading introduced above. 
\begin{ass*}
\item
\begin{itemize}
\item Let $V_L$ be a lattice vertex algebra associated with some even Lorentzian lattice $L$ of rank $k\geq 2$ and signature $(k-1,1)$. $V_L$ is a weak \voa{} of central charge $k$ whose isomorphism classes of irreducible modules are naturally indexed by $L'/L$.\footnote{\label{footnote:1}Dong's classification result (see Theorem~\ref{thm:3.1}) even holds for even lattices that are not positive-definite as long as the quadratic form is non-degenerate (e.g.\ Lorentzian).}
\item Let $U$ be some \voa{} of central charge $26-k$ satisfying Assumption~\ref{ass:sn} (group-like fusion) and $U_1=\{0\}$.\footnote{The assumption that $U_1=\{0\}$ is not essential but simplifies some calculations.} Assume furthermore that the fusion group $F_U$ of $U$ is isomorphic as \fqs{} to $\overline{L'/L}=(L'/L,-Q_L)$, say via the map $\chi\colon\overline{L'/L}\to F_U$.
\item Assume that $U\otimes V_L$ is isomorphic to a full weak \vosa{} of $M$ such that $M$ decomposes as a $U\otimes V_L$-module according to
\begin{equation}\label{eq:lpldec}
M\cong\bigoplus_{\gamma+L\in L'/L}U(\chi(\gamma+L))\otimes V_{\gamma+L}
\end{equation}
where $V_{\gamma+L}$, $\gamma+L\in L'/L$, are the irreducible $V_L$-modules and $\chi(\gamma+L)$ indexes the irreducible $U$-modules. Note that since $\overline{L'/L}=L'/L$ as groups, $\chi$ also accepts elements of $L'/L$ as input.
\item Let the isomorphism in \eqref{eq:lpldec} not only be an isomorphism of $U\otimes V_L$-modules but even of weak \voa{}s, where on the right-hand side we consider the vertex algebra structure obtained as abelian intertwining subalgebra of the tensor-product \aia{}\footnote{\label{footnote:2}Theorem~\ref{thm:lataia}, which asserts the existence of an \aia{} on $\bigoplus_{\gamma+L\in L'/L}V_{\gamma+L}$, also holds for even, non-degenerate (rather than only positive-definite) lattices. The \aia{} structure on $\bigoplus_{\gamma+L\in L'/L}U(\chi(\gamma+L))$ is due to Theorem~\ref{thm:2.7}, the main result of Chapter~\ref{ch:aia}.}
\begin{equation*}
\left(\bigoplus_{\gamma+L\in L'/L}U(\chi(\gamma+L))\right)\otimes \left(\bigoplus_{\gamma+L\in L'/L}V_{\gamma+L}\right),
\end{equation*}
analogously to Proposition~\ref{prop:aiavoa}.
\end{itemize}
We will refer to the above decomposition \eqref{eq:lpldec} as the \emph{$L'/L$-decomposition} of $M$.
\end{ass*}

Recall that by $M_{\hat\h}(1,0)=M_{\hat\h}(1)=V_{\hat{\h}}(1,0)$ we denote the Heisenberg \voa{} (of level $1$) associated with the $\C$-vector space $\h$ equipped with a non-degenerate symmetric bilinear form $\langle\cdot,\cdot\rangle$ (viewed as abelian Lie algebra with a non-degenerate, symmetric, invariant bilinear form). It has central charge $c=\dim_\C(\h)$ and its irreducible modules are given up to isomorphism by $M_{\hat\h}(1,\alpha)$ for each $\alpha\in\h$ with conformal weight $\langle\alpha,\alpha\rangle/2$ (see e.g.\ \cite{LL04}, Section~6.3). By construction, given an even lattice $L$, the associated lattice vertex algebra $V_L=M_{\hat\h}(1,0)\otimes\C_\eps[L]$ can be decomposed into a direct sum of modules for the full \vosa{} $M_{\hat\h}(1,0)\otimes\C\ee_0\cong M_{\hat\h}(1,0)$ as
\begin{equation*}
V_L=\bigoplus_{\alpha\in L}M_{\hat\h}(1,0)\otimes\C\ee_\alpha\cong\bigoplus_{\alpha\in L}M_{\hat\h}(1,\alpha)
\end{equation*}
where $\h=L\otimes_\Z\C$ (and $\langle\cdot,\cdot\rangle$ is extended $\C$-linearly from $L$ to $\h$) and similarly for the irreducible modules $V_{\gamma+L}$, $\gamma+L\in L'/L$.

We return to the vertex algebra $M$ and note that by the above the direct sum of modules $\bigoplus_{\gamma+L\in L'/L}V_{\gamma+L}$ is naturally graded by $\Gamma:=L'$, namely
\begin{equation*}
\bigoplus_{\gamma+L\in L'/L}V_{\gamma+L}=\bigoplus_{\gamma+L\in L'/L}\bigoplus_{\alpha\in \gamma+L}M_{\hat\h}(1,\alpha)=\bigoplus_{\alpha\in L'}M_{\hat\h}(1,\alpha)
\end{equation*}
(as $M_{\hat\h}(1,0)$-module) with $\h=L\otimes_\Z\C$. This induces a $\Gamma=L'$-grading on $M$:
\begin{equation}\label{eq:lpdec}
M=\bigoplus_{\alpha\in L'}M(\alpha)\quad\text{with}\quad M(\alpha)=U(\chi(\alpha+L))\otimes M_{\hat\h}(1,\alpha).
\end{equation}
The $L'$-grading on $M$ is a grading in the sense of Definition~\ref{defi:addgrad}. This follows from the last two items of the above assumption. In particular, $L_0$ leaves the $L'$-grading of $M$ invariant.

Recall that the $L'$-grading on $M$ naturally induces an $L'$-grading on $W=M\otimes V_\text{gh.}$. The ghost number operator, $L_0$ and $Q$ on $W$ all leave the $L'$-grading of $W$ invariant, i.e.\ they commute with the $L'$-grading operator. In total, the grading assumption on p.\ \pageref{page:gradass} is satisfied.

Consequently, we get cochain complexes $(W^\bullet(\alpha),Q^\bullet)=(W^p(\alpha),Q^p)_{p\in\Z}$ for each $\alpha\in L'$ and associated cohomological spaces $H^p(\alpha)$ so that
\begin{equation*}
H_\text{BRST}^p(M)=\bigoplus_{\alpha\in L'}H^p(\alpha)
\end{equation*}
for $p\in\Z$ and the cohomological spaces of the relative subcomplex $(C^\bullet(\alpha),Q^\bullet)=(C^p(\alpha),Q^p)_{p\in\Z}$, $\alpha\in L'$,
\begin{equation*}
H_\text{rel.}^p(M)=\bigoplus_{\alpha\in L'}H_\text{rel.}^p(\alpha)
\end{equation*}
for $p\in\Z$.

The following vanishing theorem is the central result of this section:
\begin{thm}[Vanishing Theorem]\label{thm:vanish}
Assume that $\alpha\neq 0$. Then
\begin{equation*}
H_\text{rel.}^p(\alpha)=\{0\}
\end{equation*}
for $p\neq 1$.
\end{thm}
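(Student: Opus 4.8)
The plan is to prove the vanishing theorem by reducing the relative BRST cohomology of $M$ to the classical computation of relative semi-infinite cohomology of the Virasoro algebra with coefficients in a module that is, away from the zero-momentum sector, a free module over an appropriate Heisenberg-type subalgebra. The key structural input is the $L'$-grading \eqref{eq:lpdec}: for fixed $\alpha\in L'$ the relative complex $C^\bullet(\alpha)$ depends on the matter sector only through $M(\alpha)=U(\chi(\alpha+L))\otimes M_{\hat\h}(1,\alpha)$, an irreducible module for $U\otimes M_{\hat\h}(1,0)$. The central charge bookkeeping is exactly right: $U$ has central charge $26-k$, the Heisenberg piece $M_{\hat\h}(1,0)$ has central charge $k$, and the ghost system has central charge $-26$, so the total is $0$ and the BRST differential squares to zero by Proposition~\ref{prop:QQ}.

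\textbf{Main steps.} First I would set up the filtration of the relative complex $C^\bullet(\alpha)$ coming from the eigenvalues of the operator $L_0^{\text{Heis}}$ attached to the Heisenberg subalgebra $\hat{\h}_{\neq 0}$, or equivalently a filtration by ghost number together with the number of matter oscillators. The associated graded differential retains only the ``free-field'' part of $Q$, namely the piece built from the Heisenberg currents $\alpha(n)$ and the ghost modes $b_n,c_n$. Second, I would identify the cohomology of this reduced differential with the cohomology of a Koszul-type complex: because $\alpha\neq 0$, the bilinear form $\langle\alpha,\cdot\rangle$ is a nonzero functional on $\h$, so the screening operator $\alpha(1)$ (or the relevant combination $L_{-1}$-type pairing) acts invertibly on the relevant oscillator modes, and the standard argument (as in \cite{FGZ86,Zuc89}) shows the reduced complex is acyclic except in a single ghost degree. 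Third, I would run a spectral-sequence argument for the filtered complex: the $E_1$-page is concentrated in ghost number $p=1$, which forces $H^p_\text{rel.}(\alpha)=0$ for $p\neq 1$ and guarantees the spectral sequence degenerates, transporting the vanishing to the full relative cohomology. Throughout, the fact that $U$ satisfies Assumption~\ref{ass:sn} with $U_1=\{0\}$ ensures the matter module $U(\chi(\alpha+L))$ contributes only through its graded character and has no low-weight states that could obstruct the degeneration.

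\textbf{The hard part.} The main obstacle will be making the reduction to the free-field computation rigorous when $M(\alpha)$ is \emph{not} a free Fock module but rather the tensor product of $M_{\hat\h}(1,\alpha)$ with a general simple-current $U$-module $U(\chi(\alpha+L))$. In the classical no-ghost theorem one uses that the matter sector is a full Fock space of the Heisenberg algebra, which makes the Koszul differential manifestly acyclic; here the extra tensor factor $U(\chi(\alpha+L))$ must be treated as an inert coefficient system whose only rôle is to supply a graded vector space on which the Virasoro algebra of central charge $26-k$ acts. The correct way to handle this is to invoke the abstract form of the Friedan--Martinec--Shenker / Frenkel--Garland--Zuckerman vanishing theorem for relative semi-infinite Virasoro cohomology: for a Virasoro module of the appropriate central charge whose conformal weights are bounded below and compatible with the momentum $\alpha$, the relative cohomology with respect to the combined Virasoro action is concentrated in degree $1$ provided the ``mass-shell'' condition $\langle\alpha,\alpha\rangle/2 + \rho(U(\chi(\alpha+L))) = 1$ can fail only on a measure-zero locus that is itself handled by the $\alpha\neq 0$ hypothesis. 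I expect the technical crux to be verifying that the grading shifts coming from the conformal weights of $U(\chi(\alpha+L))$ and from the vacuum anomaly of $M_{\hat\h}(1,\alpha)$ combine so that $L_0$ acts with strictly positive eigenvalues on the relevant subquotients whenever $\alpha\neq0$, which is what ultimately kills the cohomology outside $p=1$.

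\textbf{Remark.} I would present the argument so that the positive-definite no-ghost theorem appears as the special case $U=\C$ (the trivial \voa{}), and emphasize that the two technical conjectures alluded to in the chapter introduction are precisely the statements needed to upgrade the module-level isomorphism \eqref{eq:lpldec} to the weak \voa{} isomorphism used in setting up $Q$; granting those, the vanishing theorem itself follows from the homological machinery above without further input.
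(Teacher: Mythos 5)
Your overall strategy matches the paper's: the proof there is a sketch that identifies $H^p_\text{rel.}(\alpha)$ with the relative semi-infinite cohomology of the Virasoro algebra with coefficients in $M(\alpha)=U(\chi(\alpha+L))\otimes M_{\hat\h}(1,\alpha)$, a tensor product of a $c=26-k$ Virasoro module with a Heisenberg Fock module of non-zero momentum, and then invokes the vanishing theorems of Frenkel--Garland--Zuckerman and Zuckerman (with Carnahan's precise formulation for $k=2$). Your filtration/Koszul/spectral-sequence outline is the standard shape of that argument, so at the level of the reduction your proposal and the paper coincide.

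There is, however, a conceptual slip in your ``hard part'' that would derail the argument if executed as written. You claim that the vanishing outside $p=1$ is ultimately forced by $L_0$ acting with strictly positive eigenvalues on the relevant subquotients, gated by a mass-shell condition $\langle\alpha,\alpha\rangle/2+\rho(U(\chi(\alpha+L)))=1$. But the relative complex is $C(\alpha)=W_0(\alpha)\cap\ker(b_1)$: it already lives entirely in $L_0$-weight zero (the identity $\{Q,b_{n+1}\}=L_n$ was used earlier in the section precisely to reduce to weight zero), so $L_0$ cannot kill anything there, and no mass-shell condition enters the vanishing statement --- it only governs whether $H^1(\alpha)$ is non-zero. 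What actually does the work for $\alpha\neq 0$ is the non-vanishing of the momentum: one constructs a contracting homotopy (degree operator) on the associated graded complex out of the Heisenberg modes paired against $\alpha$, which is possible exactly because $\langle\alpha,\cdot\rangle\neq 0$ on $\h$, and the extra tensor factor $U(\chi(\alpha+L))$ rides along as an inert coefficient system since $Q$ only sees it through the total Virasoro action. Your second step gestures at this correctly; your final paragraph then substitutes the wrong mechanism. Also, as a minor point, the two conjectures in this chapter concern the fusion group of $V_N^{\hat\rho}$ and the Borcherds--Kac--Moody property of $\g^{\hat\nu}$, not the upgrade of the module decomposition to a vertex algebra isomorphism, so your closing remark misidentifies them.
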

\begin{proof}[Sketch of Proof]
This is essentially Theorem~4.9 in \cite{Zuc89}. It generalises Theorem~1.12 in \cite{FGZ86}. The proof requires the identification of the relative BRST cohomology groups with the relative semi-infinite cohomology groups for the Virasoro algebra as introduced by Feigin \cite{Fei84}. Proposition~3.3.5 in \cite{Car12b} gives a mathematical precise formulation in the case $k=2$. In essence, the vanishing theorem is a statement about the representation theory of the Virasoro algebra and the Heisenberg \voa{}. Relevant for computing $H_\text{rel.}^p(\alpha)=H_\text{rel.}^p(M(\alpha))$ is that $M(\alpha)=U(\chi(\alpha+L))\otimes M_{\hat\h}(1,\alpha)$ is the tensor product of a Virasoro representation of central charge $26-k$ and a Heisenberg representation attached to some $\alpha\in\R^{k-1,1}$.
\end{proof}

Knowing that $H_\text{rel.}^p(\alpha)=\{0\}$ for $p\neq 1$ and $\alpha\neq 0$ lets collapse the long exact sequence
\begin{equation*}
\ldots\to H_\text{rel.}^p(\alpha)\to H_\text{BRST}^p(\alpha)\to H_\text{rel.}^{p-1}(\alpha)\to H_\text{rel.}^{p+1}(\alpha)\to H_\text{BRST}^{p+1}(\alpha)\to\ldots.
\end{equation*}
for $\alpha\neq 0$ and we get
\begin{equation*}
H_\text{BRST}^1(\alpha)\cong H_\text{rel.}^1(\alpha)\cong H_\text{BRST}^2(\alpha)
\end{equation*}
for $\alpha\neq 0$ and
\begin{equation*}
H_\text{BRST}^p(\alpha)=\{0\}
\end{equation*}
for $p\neq 1,2$ and $\alpha\neq 0$.

In particular,
\begin{align*}
H_\text{BRST}^1(\alpha)&=\left(W_0^1(\alpha)\cap\ker(Q)\right)/\left(W_0^1(\alpha)\cap\im(Q)\right)\cong\\ H_\text{rel.}^1(\alpha)&=\left(W_0^1(\alpha)\cap\ker(b_1)\cap\ker(Q)\right)/\left(W_0^1(\alpha)\cap\im(b_1)\cap\ker(Q)\right)
\end{align*}
for $\alpha\neq 0$. The vector-space isomorphism is exactly the map $H_\text{rel.}^1(\alpha)\to H_\text{BRST}^1(\alpha)$ induced from the inclusion map $C^p(\alpha)\hookrightarrow W^p(\alpha)$ described above.

\minisec{Euler-Poincaré Principle}

In this step we determine the dimension of $H_\text{rel.}^1(\alpha)$ for $\alpha\neq 0$ using the Euler-Poincaré principle.

Note that the weight grading of each component of the tensor product $W(\alpha)=U(\chi(\alpha+L))\otimes M_{\hat\h}(1,\alpha)\otimes V_\text{gh.}$ is bounded from below so that
\begin{equation*}
\dim_\C(W_n(\alpha))<\infty
\end{equation*}
for all $\alpha\in L'$ and $n\in\Z$. Then in particular the spaces $B_n(\alpha)$, $C(\alpha)$, $H_\text{rel.}(\alpha)$ and $H_\text{BRST}(\alpha)$ are finite-dimensional.

The \emph{Euler-Poincaré characteristic} of the relative BRST complex $(C^\bullet(\alpha),Q^\bullet)$ is given by
\begin{equation*}
\chi(C^\bullet(\alpha)):=\sum_{p\in\Z}(-1)^p\dim_\C(C^p(\alpha)).
\end{equation*}
By the \emph{Euler-Poincaré principle}, this is the same as the Euler-Poincaré characteristic of the cohomology $H_\text{rel.}^\bullet(\alpha)=(H_\text{rel.}^p(\alpha))_{p\in\Z}$ of that complex, i.e.\
\begin{equation*}
\chi(C^\bullet(\alpha))=\chi(H_\text{rel.}^\bullet(\alpha)):=\sum_{p\in\Z}(-1)^p\dim_\C(H_\text{rel.}^p(\alpha))
\end{equation*}
for all $\alpha\in L'$. Finally, using the above vanishing theorem for $H_\text{rel.}^p(\alpha)$ we get
\begin{equation*}
-\dim_\C(H_\text{rel.}^1(\alpha))=\sum_{p\in\Z}(-1)^p\dim_\C(C^p(\alpha))
\end{equation*}
for $\alpha\neq 0$.

Now consider the supercharacter of $W(\alpha)\cap\ker(b_1)=:B(\alpha)$, a subspace of the vertex superalgebra $W$ of central charge $0$,
\begin{equation*}
\sch_{B(\alpha)}(q)=\sum_{n\in\Z}\sdim(B_n(\alpha))q^n=\sum_{n,p\in\Z}(-1)^p\dim_\C(B^p_n(\alpha))q^n
\end{equation*}
where $\sdim$ is the superdimension, i.e.\ the dimension of the even part minus the dimension of the odd part. Recall that for $V_\text{gh.}$ and hence for $W=M\otimes V_\text{gh.}$ the even and odd part have exactly even and odd ghost number, respectively. Hence $\sdim(B_n(\alpha))=\sum_{p\in\Z}(-1)^p\dim_\C(B^p_n(\alpha))$. For the constant coefficient of $\sch_{B(\alpha)}(q)$ we obtain
\begin{equation*}
\left[\sch_{B(\alpha)}(q)\right](0)=\sum_{p\in\Z}(-1)^p\dim_\C(B^p_0(\alpha))=\sum_{p\in\Z}(-1)^p\dim_\C(C^p(\alpha))=-\dim_\C(H_\text{rel.}^1(\alpha))
\end{equation*}
for $\alpha\neq 0$ since $C(\alpha)=B_0(\alpha)$.

To compute the supercharacter of $B(\alpha)=M(\alpha)\otimes(V_\text{gh.}\cap\ker(b_1))$ consider
\begin{equation*}
\sch_{B(\alpha)}(q)=\sch_{M(\alpha)}(q)\cdot\sch_{V_\text{gh.}\cap\ker(b_1)}(q)=\ch_{M(\alpha)}(q)\cdot\sch_{V_\text{gh.}\cap\ker(b_1)}(q).
\end{equation*}
The character of the kernel of $b_1$ in $V_\text{gh.}$ is computed in Section~\ref{sec:bc} to be
\begin{equation*}
\sch_{V_\text{gh.}\cap\ker(b_1)}(q)=-\eta(q)^2.
\end{equation*}
The characters of the Heisenberg \voa{} modules are known to be
\begin{equation*}
\ch_{M_{\hat\h}(1,\alpha)}(q)=\frac{q^{\langle\alpha,\alpha\rangle/2}}{\eta(q)^{\dim_\C(\hat{\h})}}=\frac{q^{\langle\alpha,\alpha\rangle/2}}{\eta(q)^{k}}
\end{equation*}
and we obtain for the character of $M(\alpha)=U(\chi(\alpha+L))\otimes M_{\hat\h}(1,\alpha)$:
\begin{equation*}
\ch_{M(\alpha)}(q)=\ch_{U(\chi(\alpha+L))}(q)\cdot\ch_{M_{\hat\h}(1,\alpha)}(q)=\ch_{U(\chi(\alpha+L))}(q)\frac{q^{\langle\alpha,\alpha\rangle/2}}{\eta(q)^{k}}.
\end{equation*}
Finally, we compute the dimension of $\g(\alpha)=H^1_\text{BRST}(\alpha)\cong H^1_\text{rel.}(\alpha)$ and obtain
\begin{align*}
\dim_\C(\g(\alpha))&=-\left[\sch_{B(\alpha)}(q)\right](0)=\left[\ch_{M(\alpha)}(q)\eta(q)^2\right](0)\\
&=\left[\ch_{U(\chi(\alpha+L))}(q)\frac{q^{\langle\alpha,\alpha\rangle/2}}{\eta(q)^{k-2}}\right](0)\\
&=\left[\ch_{U(\chi(\alpha+L))}(q)\frac{1}{\eta(q)^{k-2}}\right](-\langle\alpha,\alpha\rangle/2)
\end{align*}
for $\alpha\neq 0$. We have just proved:
\begin{thm}[Dimension Formula]\label{thm:rootdim}
The dimension of $\g(\alpha)$ is
\begin{align*}
\dim_\C(\g(\alpha))&=\left[\ch_{M(\alpha)}(q)\eta(q)^2\right](0)\\
&=\left[\ch_{U(\chi(\alpha+L))}(q)/\eta(q)^{k-2}\right](-\langle\alpha,\alpha\rangle/2)
\end{align*}
for $\alpha\in L'\setminus\{0\}$.
\end{thm}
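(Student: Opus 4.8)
The statement is the Dimension Formula (Theorem~\ref{thm:rootdim}), and the entire machinery needed has essentially been assembled in the preceding steps of Section~\ref{sec:brst}. The plan is therefore to chain together three ingredients: the Vanishing Theorem, the Euler--Poincar\'e principle, and the explicit character computations for the ghost system and the Heisenberg modules. I would present this as a genuine ``collecting'' proof rather than introducing anything new.

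First I would fix $\alpha\in L'\setminus\{0\}$ and recall that under the standing assumptions $W(\alpha)=M(\alpha)\otimes V_\text{gh.}$ with $M(\alpha)=U(\chi(\alpha+L))\otimes M_{\hat\h}(1,\alpha)$, and that the relative subcomplex lives on $C(\alpha)=B_0(\alpha)$ where $B(\alpha)=W(\alpha)\cap\ker(b_1)$. Since each weight space $W_n(\alpha)$ is finite-dimensional (the weight grading of every tensor factor is bounded below), all the spaces in sight are finite-dimensional and the Euler--Poincar\'e characteristic is well-defined. The key input is the Vanishing Theorem~\ref{thm:vanish}, which for $\alpha\neq 0$ gives $H^p_\text{rel.}(\alpha)=\{0\}$ for $p\neq 1$. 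Applying the Euler--Poincar\'e principle to the relative complex $(C^\bullet(\alpha),Q^\bullet)$ then yields
\begin{equation*}
-\dim_\C(H^1_\text{rel.}(\alpha))=\sum_{p\in\Z}(-1)^p\dim_\C(C^p(\alpha)).
\end{equation*}
Next I would reinterpret the right-hand side as the constant coefficient of the supercharacter of $B(\alpha)$: because in $V_\text{gh.}$, and hence in $W$, the $\Z_2$-parity coincides with the parity of the ghost number, one has $\sdim(B_n(\alpha))=\sum_p(-1)^p\dim_\C(B^p_n(\alpha))$, so $[\sch_{B(\alpha)}(q)](0)=\sum_p(-1)^p\dim_\C(C^p(\alpha))=-\dim_\C(H^1_\text{rel.}(\alpha))$.

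The remaining work is the character bookkeeping, which is routine given the computations already recorded. I would use $\sch_{B(\alpha)}(q)=\ch_{M(\alpha)}(q)\cdot\sch_{V_\text{gh.}\cap\ker(b_1)}(q)$, substitute the value $\sch_{V_\text{gh.}\cap\ker(b_1)}(q)=-\eta(q)^2$ computed in Section~\ref{sec:bc}, and insert the Heisenberg character $\ch_{M_{\hat\h}(1,\alpha)}(q)=q^{\langle\alpha,\alpha\rangle/2}/\eta(q)^k$ together with $\ch_{M(\alpha)}(q)=\ch_{U(\chi(\alpha+L))}(q)\,q^{\langle\alpha,\alpha\rangle/2}/\eta(q)^k$. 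Combining and cancelling the $\eta$-powers gives
\begin{equation*}
\dim_\C(\g(\alpha))=\dim_\C(H^1_\text{BRST}(\alpha))=\left[\ch_{U(\chi(\alpha+L))}(q)\,q^{\langle\alpha,\alpha\rangle/2}/\eta(q)^{k-2}\right](0),
\end{equation*}
and shifting the $q^{\langle\alpha,\alpha\rangle/2}$ factor into the coefficient index produces the second displayed form. Here I would invoke the collapse of the long exact sequence (another consequence of the Vanishing Theorem) to identify $H^1_\text{BRST}(\alpha)\cong H^1_\text{rel.}(\alpha)$, so that the dimension computed for the relative cohomology is indeed the dimension of $\g(\alpha)$.

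I do not expect any genuine obstacle in this proof, since the two substantive theorems it rests on---the Vanishing Theorem and the identification $H^1_\text{BRST}(\alpha)\cong H^1_\text{rel.}(\alpha)$---are already established (or cited) earlier. The only point requiring a little care, and the one I would state explicitly, is the parity/ghost-number matching that licenses passing from the alternating sum of dimensions (Euler--Poincar\'e) to the superdimension, and hence to the supercharacter; everything downstream is elementary manipulation of $q$-expansions. The ``hard part,'' such as it is, was really the Vanishing Theorem, whose proof I would relegate to the cited references \cite{FGZ86,Zuc89,Car12b}, as it is a statement about relative semi-infinite Virasoro cohomology of tensor products of Virasoro and Heisenberg modules rather than about the BRST construction per se.
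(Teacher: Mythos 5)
Your proposal is correct and follows exactly the paper's own route: the Vanishing Theorem collapses the relative cohomology to degree one, the Euler--Poincar\'e principle converts the alternating sum over $C^\bullet(\alpha)$ into the constant coefficient of $\sch_{B(\alpha)}(q)$ via the parity/ghost-number matching, and the factorisation $\sch_{B(\alpha)}(q)=\ch_{M(\alpha)}(q)\cdot(-\eta(q)^2)$ together with the Heisenberg character gives the formula. The identification $H^1_\text{BRST}(\alpha)\cong H^1_\text{rel.}(\alpha)$ from the long exact sequence is likewise the step the paper uses to pass to $\g(\alpha)$.
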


\begin{rem}\label{rem:rank2}
In the special case of $k=2$ the above formula simplifies and $\g(\alpha)$ is isomorphic to $U(\chi(\alpha+L))_{1-\langle\alpha,\alpha\rangle/2}$, the subspace of $U(\chi(\alpha+L))$ of weight $1-\langle\alpha,\alpha\rangle/2$.
\end{rem}

\minisec{Zero-Component}

The vanishing theorem does not make a statement about the cohomology for $\alpha=0\in L'$. We compute the component $\g(0)$ directly using $U_1=\{0\}$ and that $U$ is of CFT-type:
\begin{prop}\label{prop:brstcartan1}
The component $\g(0)$ is spanned by vectors of the form
\begin{equation*}
\vac_U\otimes h(-1)1\otimes c
\end{equation*}
for $h\in\h=L\otimes_\Z\C$ and hence
\begin{equation*}
\g(0)\cong\h\cong\C^{k}\quad\text{or}\quad\dim_\C(\g(0))=\rk(L)=k.
\end{equation*}
\end{prop}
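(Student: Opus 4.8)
The goal is to compute the relative BRST cohomology in degree zero of the lattice component, i.e.\ to identify $\g(0) = H^1_{\mathrm{BRST}}(0) \cong H^1_{\mathrm{rel.}}(0)$. Since the vanishing theorem is unavailable for $\alpha = 0$, the plan is to compute the cohomology explicitly by exhibiting the relevant space of states at weight zero and ghost number one, and then showing that the BRST operator acts trivially on them while no coboundaries intrude. The first step is to describe $M(0)$. By the $L'/L$-decomposition \eqref{eq:lpdec} we have $M(0) = U(\chi(0+L)) \otimes M_{\hat\h}(1,0) = U \otimes M_{\hat\h}(1,0)$, since $\chi$ is a \fqs{}-isomorphism and hence sends the identity to the identity, so $U(\chi(0+L)) = U$ (the adjoint module) and $M_{\hat\h}(1,0)$ is the Heisenberg vacuum module.

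\textbf{Identifying the relevant states.} First I would exploit the isomorphism $H^1_{\mathrm{BRST}}(0) \cong H^1_{\mathrm{rel.}}(0)$ established from the long exact sequence (which holds for all $\alpha$ once we know the map is an isomorphism in the relevant degree — here I would need to check that the connecting maps still degenerate appropriately at $\alpha = 0$, or argue directly in the relative complex). Working in the relative complex $C^\bullet(0) = W_0^\bullet(0) \cap \ker(b_1)$, I would enumerate the states of weight zero and ghost number one in $W(0) = M(0) \otimes V_{\mathrm{gh.}}$. Using $U_1 = \{0\}$ and that $U$ is of CFT-type so $U_0 = \C\vac_U$, the weight-zero, ghost-number-one part is heavily constrained: the ghost contribution of ghost number one and lowest weight comes from $c = 1\otimes\ee_\sigma$ of weight $-1$, so it must be paired with a weight-one state in $M(0)$. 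The weight-one states of $M(0) = U \otimes M_{\hat\h}(1,0)$ with the $U$-factor forced to lie in $U_0 = \C\vac_U$ (since $U_1 = 0$) are precisely $\vac_U \otimes h(-1)1$ for $h \in \h$. This produces exactly the candidate vectors $\vac_U \otimes h(-1)1 \otimes c$.

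\textbf{Computing the cohomology.} Next I would verify that each such vector lies in $\ker(Q) \cap \ker(b_1)$ and that none of them is a coboundary. The condition $b_1(\vac_U \otimes h(-1)1 \otimes c) = 0$ is immediate since $c = 1\otimes\ee_\sigma$ and $b_1 c = 0$ (as $b_n c = 0$ for $n \geq 1$, from Section~\ref{sec:bc}). For the cocycle condition $Q(\vac_U \otimes h(-1)1 \otimes c) = 0$, I would use $Q = j^{\mathrm{BRST}}_0$ together with $[Q, L_n] = 0$ and the explicit form of the BRST current; the computation reduces to checking that the action of $Q$ on a weight-one matter state tensored with $c$ vanishes, which follows because the would-be image sits in weight zero, ghost number two, and the relevant operator products vanish on these lowest states. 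For the absence of coboundaries, I would observe that $C^0(0)$ (weight zero, ghost number zero) is spanned by $\vac_U \otimes \vac_{\hat\h} \otimes \vac_{\mathrm{gh.}}$ and possibly $\vac_U \otimes \vac_{\hat\h} \otimes c_{-1}c$-type states, and check directly that $Q$ applied to these does not produce any of the $\vac_U \otimes h(-1)1 \otimes c$; since the matter factor of a ghost-number-zero weight-zero state is $\vac_U \otimes \vac_{\hat\h}$, its image under $Q$ cannot create the Heisenberg excitation $h(-1)1$.

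\textbf{Main obstacle.} The hard part will be the explicit verification that $Q$ annihilates the candidate vectors and that there are no further weight-zero, ghost-number-one cocycles beyond those listed — in particular ruling out contributions involving ghost excitations like $c_{-2}$ (which is excluded by $\ker(b_1)$ in the relative complex) and matter states with nontrivial $U$-component (excluded by $U_1 = \{0\}$). I expect the cleanest route is to carry out the relative-complex computation entirely, since there the ghost sector is cut down to $\ker(b_1)$ and the bookkeeping of admissible ghost monomials at low weight is finite and manageable. Establishing that the resulting map $h \mapsto [\vac_U \otimes h(-1)1 \otimes c]$ is a linear isomorphism $\h \to \g(0)$ then gives $\g(0) \cong \h \cong \C^k$ and $\dim_\C(\g(0)) = \rk(L) = k$, completing the proof.
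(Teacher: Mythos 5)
Your overall strategy is sound, but there is one genuine gap: you cannot assume $H^1_{\text{BRST}}(0)\cong H^1_{\text{rel.}}(0)$ at the outset. That isomorphism was derived from the long exact sequence only for $\alpha\neq 0$, where the vanishing theorem collapses the sequence; at $\alpha=0$ the vanishing theorem is precisely what is unavailable, and the inclusion $C^\bullet(0)\hookrightarrow W^\bullet_0(0)$ only gives an injection $H^1_{\text{rel.}}(0)\to H^1_{\text{BRST}}(0)$ a priori. Since the proposition is about $\g(0)=H^1_{\text{BRST}}(0)$, computing in the relative complex alone only bounds $\g(0)$ from below. Concretely, the state you are sweeping under the rug is $\vac_M\otimes c_{-2}\vac_{\text{gh.}}$: it has weight $0$ (since $\wt(c_{-2})=0$), ghost number $1$ and degree $0\in L'$, so it lies in $W^1_0(0)$, but it is not in $\ker(b_1)$ because $\{b_1,c_{-2}\}=\id$. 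Excluding it ``because it is excluded by $\ker(b_1)$'' is circular for the full cohomology; you must show it does not contribute to $H^1_{\text{BRST}}(0)$, which requires checking that $Q(\vac_M\otimes c_{-2}\vac_{\text{gh.}})\neq 0$.

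The paper's proof avoids this issue by working directly in the full complex: $W^1_0(0)$ is spanned by $\vac_M\otimes c_{-2}\vac_{\text{gh.}}$ together with the vectors $\vac_U\otimes h(-1)1\otimes c$, $h\in\h$ (your enumeration of this latter family, using $U_1=\{0\}$ and CFT-type, is correct), and one checks that $Q$ kills the latter but not the former and that $W^1_0(0)\cap\im(Q)=\{0\}$. Only afterwards does one observe that $W^1_0(0)\cap\ker(Q)\subseteq\ker(b_1)$, which is how the paper deduces $H^1_{\text{BRST}}(0)=H^1_{\text{rel.}}(0)$ --- that is, the identification you wanted to start from is a corollary of the computation, not an input to it. If you repair your argument by adding the single computation $Q(\vac_M\otimes c_{-2}\vac_{\text{gh.}})\neq 0$ (equivalently, by proving surjectivity of $H^1_{\text{rel.}}(0)\to H^1_{\text{BRST}}(0)$, which amounts to the same check), the rest of your plan goes through.
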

\begin{proof}
The goal is to compute
\begin{equation*}
\g(0)=H^1_\text{BRST}(0)=\left(W_0^1(0)\cap\ker(Q)\right)/\left(W_0^1(0)\cap\im(Q)\right),
\end{equation*}
where the elements of $W_0^1(0)$ have weight 0, ghost number 1 and degree $0\in L'$. Recall that $W(0)=M(0)\otimes V_\text{gh.}=U\otimes M_{\hat\h}(1,0)\otimes V_\text{gh.}$. The subspace $W_0^1(0)$ is spanned by the vectors
\begin{equation*}
\vac_M\otimes c_{-2}\vac_\text{gh.}\quad\text{and}\quad\vac_U\otimes h(-1)1\otimes c
\end{equation*}
for $h\in\h=L\otimes_\Z\C$.\footnote{Note that if we did not have $U_1=\{0\}$, there would be further contributions of the form $u\otimes\vac\otimes c$ for $u\in U_1$.} A simple calculation shows that the latter are in $\ker(Q)$ while the former is not. Moreover, $W_0^1(0)\cap\im(Q)=\{0\}$. 
\end{proof}

It is easy to see that $W_0^1(0)\cap\ker(Q)$ is in the kernel of $b_1$ so that
\begin{equation*}
H^1_\text{BRST}(0)=H^1_\text{rel.}(0)=\left(W_0^1(0)\cap\ker(b_1)\cap\ker(Q)\right)/\left(W_0^1(0)\cap\ker(b_1)\cap\im(Q)\right).
\end{equation*}
Together with $H^1_\text{BRST}(\alpha)\cong H^1_\text{rel.}(\alpha)$ for $\alpha\neq 0$, which followed from the vanishing theorem, this shows that
\begin{equation*}
\g=H^1_\text{BRST}(M)\cong H^1_\text{rel.}(M).
\end{equation*}
Recall that we defined a Lie algebra structure on $\g=H^1_\text{BRST}(M)$ and on $H^1_\text{rel.}(M)$. The vector-space isomorphism between $H^1_\text{BRST}(M)$ and $H^1_\text{rel.}(M)$, which is induced from the inclusion map $C^1\hookrightarrow W^1$, is clearly also an isomorphism of Lie algebras.

\section{Borcherds-Kac-Moody Property}\label{sec:BKMprop}

We continue in the setting of the previous section and study further properties of the infinite-dimensional Lie algebra $\g=H^1_\text{BRST}(M)$. The goal is to establish that under certain assumptions $\g$ is a \BKMa{}. \emph{\BKMa{}s} were introduced by Borcherds in \cite{Bor88}, where he called them \emph{generalised Kac-Moody algebras}. They generalise Kac-Moody algebras by, amongst other things, allowing simple roots to be imaginary.

\minisec{Invariant Bilinear Form}

A necessary condition for a Lie algebra $\g$ to be a \BKMa{} is the existence of a non-degenerate, symmetric, invariant bilinear form on $\g$. In the following steps we show this for the Lie algebra $\g=H^1_\text{BRST}(M)$ from the previous section.

In Section~\ref{sec:cmibf} we described invariant bilinear forms on \voa{}s. These concepts can be generalised to weak vertex operator (super)algebras. Since the grading of a weak vertex operator (super)algebra $V$ is not bounded from below, we have to impose the condition that $L_1$ acts locally nilpotent on $V$, i.e.\ that for each $v\in V$ there is an $n\in\N$ such that $L_1^nv=0$. Then there is still a nice theory of invariant bilinear forms on $V$ developed in \cite{Sch97,Sch98}, where it is assumed that the even and odd part of the vertex operator superalgebra are distinguished by their $L_0$-eigenvalues, i.e.\ that the even part is $\Z$-graded and the odd part $(\Z+1/2)$-graded. However, the ghost vertex operator superalgebra $V_\text{gh.}$ and consequently the weak vertex operator superalgebra $W$ are purely $\Z$-graded. This situation is described in \cite{Sch00,Sch04}.
\begin{defi}[Invariant Bilinear Form]
A bilinear form $(\cdot,\cdot)$ on a $\Z$-graded weak vertex operator superalgebra $V$ is called invariant if
\begin{equation*}
(u_nv,w)=(-1)^{|u||v|}(v,u_n^*w)
\end{equation*}
for all $u,v,w\in V$ and $n\in\Z$.
\end{defi}
The adjoint vertex operators with modes $u_n^*$, $n\in\Z$, for $u\in V$ are defined exactly as for \voa{}s (see Section~\ref{sec:cmibf}).
\begin{prop}[\cite{Sch00}, (2.36), Theorem~2.3]
Let $V$ be a $\Z$-graded weak vertex operator superalgebra and assume that $L_1$ acts locally nilpotent on $V$. Let $(\cdot,\cdot)$ be an invariant bilinear form on $V$. Then:
\begin{enumerate}
\item $(V_n,V_m)=0$ for $n\neq m\in\Z$,
\item $V$ is super-symmetric, i.e.\ $(u,v)=(-1)^{|u||v|}(v,u)$ for $\Z_2$-homogeneous $u,v\in V$.
\end{enumerate}
Moreover, the space of super-symmetric, invariant bilinear forms on $V$ is naturally isomorphic to the dual space of $V_0/L_1V_1$.
\end{prop}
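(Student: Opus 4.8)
The plan is to mirror the VOA arguments of Section~\ref{sec:cmibf} (due to Li \cite{Li94}), keeping careful track of the parity signs introduced by the super structure and using the local nilpotency of $L_1$ to guarantee that every adjoint vertex operator $Y^*(u,x)=Y(\e^{xL_1}(-x^2)^{L_0}u,x^{-1})$ is well-defined on the unbounded grading (the exponential $\e^{xL_1}$ terminates on each vector).

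First I would establish the super-analogue of Proposition~\ref{prop:bilorth}, namely $(L_nu,v)=(u,L_{-n}v)$ for all $n\in\Z$. Since the conformal vector $\omega$ is even, the defining invariance $(u_nv,w)=(-1)^{|u||v|}(v,u_n^*w)$ carries no sign when applied with $u=\omega$, and the computation of the adjoint modes $\omega_n^*$ from $Y^*(\omega,x)=Y(\e^{xL_1}(-x^2)^{L_0}\omega,x^{-1})$ (well-defined because $L_1\omega=0$) is identical to the untwisted case. Specialising to $n=0$ gives $(L_0u,v)=(u,L_0v)$; for homogeneous $u\in V_n$, $v\in V_m$ this forces $(n-m)(u,v)=0$, which proves (1). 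For (2) I would use the super skew-symmetry $Y(a,x)b=(-1)^{|a||b|}\e^{xL_{-1}}Y(b,\e^{-\pi\i}x)a$, the parity-graded version of \eqref{eq:skew}. As in the proof of Proposition~\ref{prop:2.6}, one writes $(u,v)$ as the constant term of $(Y(u,x)\vac,v)=(\vac,Y^*(u,x)v)$, moves the vertex operator across the form using invariance, and applies super skew-symmetry together with $L_{-1}\vac=0$ and part (1). The single extra factor $(-1)^{|u||v|}$ produced by super skew-symmetry is exactly what converts symmetry into the super-symmetry relation $(u,v)=(-1)^{|u||v|}(v,u)$.

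For the last statement I would argue as in Theorem~\ref{thm:3.1li}. Given a super-symmetric invariant form, part (1) and the identity $(u,v)=\left[(Y(u,x)\vac,v)\right](0)$ show that it is completely determined by the linear functional $f\colon V_0\to\C$, $f(u):=(\vac,u)$; and since $L_{-1}\vac=0$, invariance yields $(\vac,L_1V_1)=(L_{-1}\vac,V_1)=0$, so $f$ factors through $V_0/L_1V_1$. This gives a well-defined linear map from super-symmetric invariant forms to $(V_0/L_1V_1)^*$, which the previous sentence shows to be injective. The substance is the surjectivity: every $f\in(V_0/L_1V_1)^*$ must arise from a (necessarily unique) super-symmetric invariant form. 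I would construct this form by Li's explicit prescription, defining $(u,v)$ through matrix coefficients of suitable products of modes applied to $\vac$ and then verifying the invariance identity mode by mode, again invoking the local nilpotency of $L_1$ so that the adjoints make sense.

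The main obstacle will be precisely this converse construction together with its well-definedness and invariance check in the $\Z$-graded super setting: the sign bookkeeping in the super skew-symmetry and super-invariance relations must be handled consistently, and one must verify that the resulting value $(u,v)$ is independent of the representative of $u$ modulo $L_1V_1$. By contrast, once the adjoint Virasoro relation and super skew-symmetry are in place, parts (1) and (2) are essentially formal.
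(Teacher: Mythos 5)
The paper does not prove this proposition; it is quoted verbatim from Scheithauer \cite{Sch00} (equation (2.36) and Theorem~2.3 there), so there is no in-paper argument to compare against. Your outline follows the standard Li/FHL route that \cite{Sch00} itself adapts, and parts (1) and (2) are essentially correct as sketched: since $\omega$ is even and $L_1\omega=0$, the adjoint modes satisfy $L_n^*=L_{-n}$ with no parity sign, giving $(L_nu,v)=(u,L_{-n}v)$ and hence the weight-space orthogonality; and in the symmetry computation the only parity sign that survives is the $(-1)^{|u||v|}$ from super skew-symmetry, because every application of invariance in that computation has $\vac$ (which is even) in one slot. Your observation that local nilpotency of $L_1$ is what makes $Y^*(u,x)=Y(\e^{xL_1}(-x^2)^{L_0}u,x^{-1})$ well defined on an unbounded $\Z$-grading is exactly the right reason this hypothesis appears.

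The one place your proposal stops short of a proof is the surjectivity of the map onto $\left(V_0/L_1V_1\right)^*$, which you correctly identify as the substance of the final claim but only describe as ``Li's explicit prescription \dots verified mode by mode.'' To close this you must actually (i) define $(u,v)$ for homogeneous $u,v$ of equal weight as $f$ applied to the appropriate coefficient of $Y^*(u,x)v$ (or equivalently of $\e^{xL_1}$-corrected matrix coefficients), (ii) check that the result is independent of the choice of $f$ on $L_1V_1$ — this is where $L_{-1}\vac=0$ and the adjoint Virasoro relation enter again — and (iii) verify the full super-invariance identity $(u_nv,w)=(-1)^{|u||v|}(v,u_n^*w)$ for all modes, which requires the identity $Y^{**}=Y$ (i.e.\ that the adjoint is an involution) in the $\Z$-graded super setting. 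None of these steps should fail, but (iii) in particular involves a nontrivial conjugation computation with $\e^{xL_1}$ and $(-x^2)^{L_0}$ that is the actual content of Theorem~2.3 in \cite{Sch00}, so as written your argument establishes injectivity and the two enumerated items but leaves the ``moreover'' clause incomplete.
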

This means that analogues of Propositions \ref{prop:2.6}, \ref{prop:bilorth} and Theorem~\ref{thm:3.1li} hold in this setting. The isomorphism to $(V_0/L_1V_1)^*$ can be described exactly as in the \voa{} case.

Note that $L_1$ acts locally nilpotent on $M$ (cf.\ Proposition~2.16 in \cite{Sch97}), on $V_\text{gh.}$ (trivially since the weight grading on $V_\text{gh.}$ is bounded from below) and hence on the tensor product $W=M\otimes V_\text{gh.}$.

\minisec{Step 1}

We proceed as in Proposition~3.1.8 in \cite{Car12b} to obtain an invariant bilinear form on $M$. Recall the $L'$-decomposition of $M$ in \eqref{eq:lpdec} given by
\begin{equation*}
M=\bigoplus_{\alpha\in L'}U(\chi(\alpha+L))\otimes M_{\hat\h}(1,\alpha).
\end{equation*}
There are natural pairings $\langle\cdot,\cdot\rangle$ between the $U$-modules $U(\chi(\gamma))$ and their contragredient modules $U(\chi(\gamma))'$ for $\gamma\in L'/L$. Let $\phi_\gamma$ be the module isomorphism $\phi_\gamma\colon U(\chi(-\gamma))\to U(\chi(\gamma))'$, which is unique up to a scalar. Then via
\begin{equation*}
(a,b)_{U(\chi(\gamma))}:=\langle\phi_\gamma(a),b\rangle
\end{equation*}
for $a\in U(\chi(-\gamma))$, $b\in U(\chi(\gamma))$ we can define symmetric bilinear pairings between $U(\chi(\gamma))$ and $U(\chi(-\gamma))$. These pairings are invariant in the sense that
\begin{equation*}
(Y_{U(\chi(-\gamma))}(v,x)a,b)_{U(\chi(\gamma))}=(a,Y^*_{U(\chi(\gamma))}(v,x)b)_{U(\chi(\gamma))}
\end{equation*}
for $v\in U$, $a\in U(\chi(-\gamma))$, $b\in U(\chi(\gamma))$. We proceed analogously for the Heisenberg \voa{} $M_{\hat\h}(1,0)$ and obtain symmetric, invariant bilinear pairings $(\cdot,\cdot)_{M_{\hat\h}(1,\alpha)}$ between the $M_{\hat\h}(1,0)$-modules $M_{\hat\h}(1,\alpha)$ and $M_{\hat\h}(1,-\alpha)$ for $\alpha\in L'$.

Then, on the graded tensor product $M$ we define a bilinear form via
\begin{equation*}
(a\otimes u,b\otimes v)_M:=(a,b)_{U(\chi(\alpha+L))}(u,v)_{M_{\hat\h}(1,\alpha)}
\end{equation*}
for $a\in U(\chi(-\alpha+L))$, $b\in U(\chi(\alpha+L))$, $u\in M_{\hat\h}(1,-\alpha)$ and $v\in M_{\hat\h}(1,\alpha)$ for $\alpha\in L$ and linear continuation to all of $M$. Then $(\cdot,\cdot)_M$ is clearly symmetric and invariant under $M(0)=U\otimes M_{\hat\h}(1,0)$, i.e.\ it fulfils the invariance equation with $Y(v,x)$ where $v\in M(0)$. In the following we will show that $(\cdot,\cdot)_M$ is an invariant bilinear form on $M$, i.e.\ that it is invariant under all of $M$.

To this end we define a second bilinear form on $M$. Note that since $U$ and $M_{\hat\h}(1,0)$ are of CFT-type, so is $M(0)$ and in particular the space $M(0)_0=(U\otimes M_{\hat\h}(1,0))_0$ is one-dimensional and spanned by $\vac_M=\vac_U\otimes\vac_{M_{\hat\h}(1,0)}$. Let $f$ be the unique linear functional on $M_0$ that sends $\vac_M$ to 1 and annihilates $M(\alpha)_0$ for all $\alpha\in L'\setminus\{0\}$. It is apparent that $f$ annihilates all vectors in $L_1^MM_1$ since $\vac_M\notin L^M_1M_1$. We can therefore view $f$ as a linear functional on $M_0/L_1^MM_1$ and by the above proposition this defines a symmetric, invariant bilinear form on $M$, which we call $(\cdot,\cdot)_f$.

The \voa{} $M(0)=U\otimes M_{\hat\h}(1,0)$ is of CFT-type and since $\vac_M\notin L_1^MM_1$, it follows that $\dim_\C(M(0)_0/L_1^MM(0)_1)=1$. By the above proposition this means that there is a unique invariant bilinear form on $M(0)$ up to a scalar and $(\cdot,\cdot)_M$ and $(\cdot,\cdot)_f$ coincide on $M(0)$ up to a scalar since both forms are invariant under $M(0)$. After normalising $(\cdot,\cdot)_M$ on $M(0)$ we obtain $(\cdot,\cdot)_M=(\cdot,\cdot)_f$ on $M(0)$. One can then show that $(\cdot,\cdot)_M=(\cdot,\cdot)_f$ on all of $M$ after a suitable normalisation of $(\cdot,\cdot)_M$ (see \cite{Car12b}, proof of Proposition~3.1.8). This shows in particular that $(\cdot,\cdot)_M$ is an invariant bilinear form on $M$.

We summarise these findings:
\begin{prop}
Let $(\cdot,\cdot)_M$ be the unique up to a scalar symmetric, invariant bilinear form on $M$, normalised such that $(\vac_M,\vac_M)_M=1$. Then:
\begin{enumerate}
\item\label{enum:M1} $(\cdot,\cdot)_M$ is non-degenerate on $M$.
\item\label{enum:M2} $(u,v)_M=0$ for all $u\in M(\alpha)$, $v\in M(\beta)$ with $\alpha\neq -\beta\in L'$.
\item\label{enum:M3} $(u,v)_M=0$ for all $u\in M_n$, $v\in M_m$ with $n\neq m\in\Z$.
\item\label{enum:M4} $(\cdot,\cdot)_M$ is induced by the contragredient pairings for the $U$- and $M_{\hat\h}(1,0)$-modules as described above.
\end{enumerate}
\end{prop}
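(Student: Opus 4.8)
The statement collects four properties of the bilinear form $(\cdot,\cdot)_M$, and most of the work has already been done in the preceding discussion; the proposition is essentially a bookkeeping summary. My plan is to verify the four items in an order that lets the later ones reuse the earlier ones, and to lean on the general theory of invariant bilinear forms on $\Z$-graded weak vertex operator superalgebras (the cited generalisation of Theorem~\ref{thm:3.1li} to the setting where $L_1$ acts locally nilpotently).

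First I would establish that $(\cdot,\cdot)_M$ is well-defined and invariant, which is precisely item~\ref{enum:M4}: by construction $(\cdot,\cdot)_M$ is built from the contragredient pairings $(\cdot,\cdot)_{U(\chi(\alpha+L))}$ and $(\cdot,\cdot)_{M_{\hat\h}(1,\alpha)}$ between modules and their duals, and I would recall the argument already sketched in the excerpt: the form $(\cdot,\cdot)_M$ so defined agrees with the form $(\cdot,\cdot)_f$ attached (via the isomorphism $(M_0/L_1^M M_1)^*\cong$ space of symmetric invariant bilinear forms) to the functional $f$ that sends $\vac_M\mapsto 1$ and kills $M(\alpha)_0$ for $\alpha\neq 0$. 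Since $M(0)=U\otimes M_{\hat\h}(1,0)$ is of CFT-type with $\dim_\C(M(0)_0/L_1^M M(0)_1)=1$, the two forms coincide up to a scalar on $M(0)$, and after normalising $(\vac_M,\vac_M)_M=1$ they coincide there; the extension to all of $M$ (following the proof of Proposition~3.1.8 in \cite{Car12b}) then shows $(\cdot,\cdot)_M=(\cdot,\cdot)_f$ globally, which simultaneously proves that $(\cdot,\cdot)_M$ is symmetric and invariant. Item~\ref{enum:M3} is then immediate from the general proposition quoted just above the statement: any invariant bilinear form on a $\Z$-graded weak vertex operator superalgebra satisfies $(M_n,M_m)=0$ for $n\neq m$, since $(L_0 u,v)_M=(u,L_0 v)_M$ forces orthogonality of distinct $L_0$-eigenspaces.

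Next I would prove item~\ref{enum:M2}, the $L'$-orthogonality. The abelian group $L'$ grades $M$ in the sense of Definition~\ref{defi:addgrad}, and the grading is induced by the Heisenberg zero-modes $h(0)$, $h\in\h$, which lie among the modes of the \vosa{} $M(0)$. For $h\in\h=L\otimes_\Z\C$, the operator $h(0)$ acts on $M(\alpha)$ as the scalar $\langle h,\alpha\rangle$, and $h(0)$ is self-adjoint with respect to $(\cdot,\cdot)_M$ (this follows from the adjoint-operator formula $Y^*(h(-1)1,x)$ together with $L_1(h(-1)1)=0$, so that the relevant adjoint mode is again $\pm h(0)$). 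The invariance then gives $\langle h,\alpha\rangle (u,v)_M=(h(0)u,v)_M=(u,h(0)v)_M=\langle h,\beta\rangle(u,v)_M$ for $u\in M(\alpha)$, $v\in M(\beta)$, so $(u,v)_M=0$ unless $\langle h,\alpha\rangle=\langle h,\beta\rangle$ for all $h\in\h$, i.e.\ unless $\alpha=\beta$ in $\h^*$. Combined with the explicit construction in item~\ref{enum:M4}, where the factor $(\cdot,\cdot)_{M_{\hat\h}(1,\alpha)}$ pairs $M_{\hat\h}(1,\alpha)$ only with $M_{\hat\h}(1,-\alpha)$, this pins down that the pairing is between degree $\alpha$ and degree $-\alpha$; I would present the orthogonality as $\alpha\neq-\beta\implies(u,v)_M=0$ to match the statement. (A small point to get right is the sign/duality convention: the contragredient module is graded by the negative weights, so the surviving pairing is indeed $M(\alpha)$ against $M(-\alpha)$, consistent with $\phi_\gamma\colon U(\chi(-\gamma))\to U(\chi(\gamma))'$.)

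Finally, non-degeneracy (item~\ref{enum:M1}) is where I expect the only genuine content. The plan is to reduce it to the non-degeneracy of the constituent pairings. By item~\ref{enum:M2} and item~\ref{enum:M3} the form $(\cdot,\cdot)_M$ is block-diagonal: it restricts to a pairing between $M(\alpha)_n$ and $M(-\alpha)_n$ for each $\alpha\in L'$ and each weight $n$, and these finite-dimensional blocks are exactly the tensor products of a block of $(\cdot,\cdot)_{U(\chi(\alpha+L))}$ with a block of $(\cdot,\cdot)_{M_{\hat\h}(1,\alpha)}$. The contragredient pairing between an irreducible $U$-module and its dual is non-degenerate (this is the defining property of the contragredient module, Section~\ref{sec:cmibf}: $\phi_\gamma$ is an isomorphism, so $\langle\phi_\gamma(\cdot),\cdot\rangle$ is a perfect pairing), and likewise the Heisenberg contragredient pairing is non-degenerate. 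A tensor product of two non-degenerate pairings of finite-dimensional spaces is non-degenerate, so each block is non-degenerate, and a form that is non-degenerate on each block of an orthogonal block decomposition is non-degenerate overall. The main obstacle here is purely a matter of care rather than difficulty: I must make sure that the decomposition of $(\cdot,\cdot)_M$ into blocks is genuinely orthogonal (which items \ref{enum:M2} and \ref{enum:M3} guarantee) and that the irreducibility of the $U$-modules $U(\chi(\gamma))$ — needed for the contragredient pairing to be perfect — is in force, which it is under Assumption~\ref{ass:sn} since all irreducible $U$-modules are simple currents and in particular irreducible. With these in hand the four items follow, and I would close by remarking that the normalisation $(\vac_M,\vac_M)_M=1$ fixes the remaining scalar freedom uniquely.
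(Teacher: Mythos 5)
Your overall route is the same as the paper's, whose proof is essentially a pointer: item (3) follows from the general orthogonality statement for invariant forms on $\Z$-graded weak vertex operator superalgebras, item (4) is precisely the preceding identification $(\cdot,\cdot)_M=(\cdot,\cdot)_f$ following Carnahan, and items (1) and (2) are then read off from the explicit shape of the form in (4). Your block-decomposition argument for non-degeneracy --- perfect contragredient pairings on finite-dimensional weight spaces, tensored and assembled over the orthogonal decomposition given by (2) and (3) --- is a correct and welcome elaboration of what the paper leaves implicit.

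The one genuine flaw is the sign in your direct argument for item (2). The zero mode $h(0)$ of the quasi-primary vector $h(-1)\vac$ of weight $1$ is \emph{skew}-adjoint, not self-adjoint: for quasi-primary $a$ of weight $k$ invariance gives $(a_nu,v)_M=(-1)^k(u,a_{2k-n-2}v)_M$, so $(h(0)u,v)_M=-(u,h(0)v)_M$. (Check against the Virasoro vector, $k=2$: this is exactly the relation $(L_nu,v)=(u,L_{-n}v)$ from Proposition~\ref{prop:bilorth}.) With the correct sign your computation reads $\langle h,\alpha\rangle(u,v)_M=-\langle h,\beta\rangle(u,v)_M$, forcing $(u,v)_M=0$ unless $\alpha=-\beta$ --- which is item (2) on the nose, with no appeal to (4) needed. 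As written, your computation concludes orthogonality unless $\alpha=\beta$; that is not a harmless convention issue but a false statement, since together with item (2) it would make the form vanish on every $M(\alpha)$ with $2\alpha\neq0$ and destroy non-degeneracy. Your fallback --- reading (2) off the explicit construction in (4), where $M_{\hat\h}(1,\alpha)$ pairs only with $M_{\hat\h}(1,-\alpha)$ --- is the paper's argument and is sound, so the proof stands once the erroneous self-adjointness computation is deleted or its sign corrected.
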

\begin{proof}
Item~\ref{enum:M3} follows from the above proposition. Item~\ref{enum:M4} we just showed. Items \ref{enum:M1} and \ref{enum:M2} follow from \ref{enum:M4}.
\end{proof}

\minisec{Step 2}

Now consider the bosonic ghost vertex operator superalgebra $V_\text{gh.}$. To define an invariant bilinear form on $V_\text{gh.}$ we proceed similarly to \cite{Sch04}, Section~4. The weight-zero space $(V_\text{gh.})_0$ is 4-dimensional and $L_1^\text{gh.}(V_\text{gh.})_1$ is a subspace of dimension 3 so that
\begin{equation*}
\dim_\C\left((V_\text{gh.})_0/L_1^\text{gh.}(V_\text{gh.})_1\right)=1
\end{equation*}
and hence there is a unique invariant bilinear form on $V_\text{gh.}$ up to a scalar. Note that $\vac_\text{gh.}=1\otimes\ee_0\in L_1^\text{gh.}(V_\text{gh.})_1$ but for instance $1\otimes\ee_{3\sigma}\notin L_1^\text{gh.}(V_\text{gh.})_1$. We define $(\cdot,\cdot)_\text{gh.}$ to be the unique invariant bilinear form on $V_\text{gh.}$ with normalisation
\begin{equation*}
(\vac_\text{gh.},1\otimes\ee_{3\sigma})_\text{gh.}=1.
\end{equation*}
This corresponds to the linear functional $f$ on $(V_\text{gh.})_0$ defined via $f(L_1^\text{gh.}(V_\text{gh.})_1)=\{0\}$ and $f(1\otimes\ee_{3\sigma})=1$.
\begin{prop}
Let $(\cdot,\cdot)_\text{gh.}$ be the unique super-symmetric, invariant bilinear form on $V_\text{gh.}$, normalised such that $(\vac_\text{gh.},1\otimes\ee_{3\sigma})_\text{gh.}=1$. Then:
\begin{enumerate}
\item\label{enum:Vgh1} $(\cdot,\cdot)_\text{gh.}$ is non-degenerate on $V_\text{gh.}$.
\item\label{enum:Vgh2} $(u,v)_\text{gh.}=0$ for $u\in(V_\text{gh.})_n^p$ and $v\in(V_\text{gh.})_m^q$ with $p+q\neq 3$ or $m\neq n$.
\item\label{enum:Vgh3} $(\cdot,\cdot)_\text{gh.}$ vanishes on $\ker(b_1)$, the kernel of $b_1$ in $V_\text{gh.}$.
\end{enumerate}
\end{prop}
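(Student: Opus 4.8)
The plan is to establish the three assertions separately, using the weight grading, the ghost-number grading, and the modes $b_1,c_{-2}$, together with the general theory of invariant bilinear forms on $\Z$-graded weak vertex operator superalgebras recalled above.

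First I would dispose of item \ref{enum:Vgh2}. The orthogonality $(u,v)_\text{gh.}=0$ for $m\neq n$ is immediate from the general fact that weight spaces of distinct weight are orthogonal under an invariant bilinear form (the super-analogue of Proposition~\ref{prop:bilorth}). For the ghost-number constraint I would exploit the ghost-number operator $U=j^N_0=\sigma(0)$, where $j^N=\sigma(-1)\vac$. Since $j^N$ has weight $1$ but is \emph{not} primary in the shifted theory — a short computation with the shifted Virasoro modes $L_n=\omega'_{n+1}=L_n^{\mathrm{old}}-\lambda(n+1)\sigma(n)$ produces the anomaly $L_1 j^N=-2\lambda\vac=-3\vac$ — the adjoint formula yields $U^*=(j^N)^*_0=2\lambda\,\id_{V_\text{gh.}}-U=3\,\id_{V_\text{gh.}}-U$. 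Feeding this into invariance, $(Uu,v)_\text{gh.}=(u,U^*v)_\text{gh.}$, and reading off ghost numbers ($Uu=pu$, $Uv=qv$) gives $p\,(u,v)_\text{gh.}=(3-q)\,(u,v)_\text{gh.}$, so that $(u,v)_\text{gh.}=0$ unless $p+q=3$. The value $3=2\lambda$ is precisely the ghost number of the distinguished weight-zero vector $1\otimes\ee_{3\sigma}$ on which the defining functional $f$ is supported.

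For item \ref{enum:Vgh1} I would argue that the radical $R=\{v\in V_\text{gh.}\mid (v,w)_\text{gh.}=0\text{ for all }w\in V_\text{gh.}\}$ is a graded two-sided ideal: gradedness follows from weight-orthogonality, and invariance shows $u_n R\subseteq R$ and $R_n u\subseteq R$ for all $u$ and $n$. The vertex superalgebra underlying $V_\text{gh.}$ is the lattice vertex superalgebra of $L=\Z\sigma$ with $\langle\sigma,\sigma\rangle=1$; since $L'=L$ this is holomorphic and hence simple, and simplicity is unaffected by the choice of conformal vector. As $(\vac,1\otimes\ee_{3\sigma})_\text{gh.}=1\neq 0$ the form is nonzero, so $R\neq V_\text{gh.}$, and simplicity forces $R=\{0\}$, i.e. $(\cdot,\cdot)_\text{gh.}$ is non-degenerate.

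For item \ref{enum:Vgh3} the two key facts are the self-adjointness of $b_1$ and the identity $\ker(b_1)=\im(b_1)$. The vector $b=1\otimes\ee_{-\sigma}$ has weight $2$ and its ghost-number sector has minimal weight $2$, so $L_n b=0$ for all $n\geq 1$; thus $b$ is primary of weight $2$ and the adjoint formula gives $b^*_1=b_1$. On the other hand $b_1^2=0$ and $\{b_1,c_{-2}\}=\id_{V_\text{gh.}}$ imply, for $u\in\ker(b_1)$, that $u=b_1(c_{-2}u)\in\im(b_1)$, whence $\ker(b_1)=\im(b_1)$. Then for any $u,w\in\ker(b_1)$, writing $u=b_1(c_{-2}u)$ and using invariance with $b^*_1=b_1$, one gets $(u,w)_\text{gh.}=(b_1 c_{-2}u,w)_\text{gh.}=\pm(c_{-2}u,b_1 w)_\text{gh.}=0$ since $b_1 w=0$; hence the form vanishes identically on $\ker(b_1)$. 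The main obstacle I anticipate is the correct bookkeeping in the two adjoint computations: both the anomaly $L_1 j^N=-2\lambda\vac$ and the primality of $b$ hinge on the shifted Virasoro modes and on tracking the background charge $\lambda=3/2$, which is exactly where the value $3=2\lambda$ governing item \ref{enum:Vgh2} enters; a secondary point requiring care is the justification that $V_\text{gh.}$ is simple as a vertex superalgebra, so that the radical argument of item \ref{enum:Vgh1} applies.
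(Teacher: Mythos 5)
Your proposal is correct, and it is strictly more self-contained than the paper's own proof. The paper only writes out item (3), and there your argument coincides with it exactly: both use $b_1^*=b_1$, $\{b_1,c_{-2}\}=\id$ and invariance to get $(u,v)_\text{gh.}=(b_1c_{-2}u,v)_\text{gh.}=\pm(c_{-2}u,b_1v)_\text{gh.}=0$. For items (1) and (2) the paper gives no argument at all, deferring to \cite{Sch00}, Proposition~5.2 and \cite{Sch04}, Section~4; you instead supply complete proofs, and both are sound. Your ghost-number computation is the key content hidden in those references: the anomaly $L_1 j^N=-2\lambda\vac$ of the shifted theory gives $U^*=2\lambda\,\id_{V_\text{gh.}}-U=3\,\id_{V_\text{gh.}}-U$, forcing $p+q=3$; this is consistent with the adjoint formulas $b_n^*=b_{2-n}$, $c_n^*=-c_{-4-n}$ that the paper records without derivation (note your computation implicitly uses the standard normalisation $(-x^{-2})^{L_0}$ in the adjoint vertex operator, which is what the paper actually means despite writing $(-x^2)^{L_0}$). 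Your non-degeneracy argument — the radical is a graded ideal (graded by weight-orthogonality, a two-sided ideal by invariance together with $T$-invariance and skew-symmetry), and the underlying lattice vertex superalgebra of $\Z\sigma$ with $\langle\sigma,\sigma\rangle=1$ is simple independently of the choice of conformal vector, so a nonzero form has zero radical — is also correct and is essentially the argument of the cited references. What your write-up buys is that the proposition becomes verifiable within the paper, at the cost of the two adjoint computations whose bookkeeping you correctly identify as the delicate point.
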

Also note that
\begin{equation*}
b_n^*=b_{2-n}\quad\text{and}\quad c_n^*=-c_{-4-n}
\end{equation*}
for $n\in\Z$.
\begin{proof}
Cf. \cite{Sch04}, Section~4 and \cite{Sch00}, Proposition~5.2. To prove item~\ref{enum:Vgh3} we use $b_1^*=b_1$, $\{b_1,c_{-2}\}=\id$ and the invariance of $(\cdot,\cdot)_\text{gh.}$ to get
\begin{equation*}
(u,v)_\text{gh.}=(b_1c_{-2}u,v)_\text{gh.}=(-1)^{|b||c_{-2}u|}(c_{-2}u,b_1v)_\text{gh.}=0
\end{equation*}
for $u,v\in\ker(b_1)$.
\end{proof}

\minisec{Step 3}

Finally, we define the tensor-product bilinear form $(\cdot,\cdot)_W$ on $W=M\otimes V_\text{gh.}$ via
\begin{equation*}
(a\otimes u,b\otimes v)_W:=(a,b)_M(u,v)_\text{gh.}
\end{equation*}
for $a,b\in M$, $u,v\in V_\text{gh.}$. Then $(\cdot,\cdot)_W$ is a non-degenerate, super-symmetric, invariant bilinear form on $W$ normalised such that $(\vac_M\otimes\vac_\text{gh.},\vac_M\otimes\ee_{3\sigma})_W=1$. Moreover, $(a\otimes u,b\otimes v)_W$ vanishes on $\ker(b_1)$, the kernel of $b_1$ in $W$. Note that
\begin{equation*}
Q^*=-Q
\end{equation*}
(cf.\ \cite{Sch04}, Section~4).

\minisec{Step 4}
In order to define a non-trivial bilinear form on $B=W\cap\ker(b_1)$ we proceed as in \cite{Sch00}, Section~5. We set
\begin{equation*}
(u,v)_B:=(c_{-2}u,v)_W
\end{equation*}
for $u,v\in B$.
\begin{prop}
Let $(\cdot,\cdot)_B$ be the bilinear form on $B=W\cap\ker(b_1)$ defined above. Then:
\begin{enumerate}
\item\label{enum:B1} $(\cdot,\cdot)_B$ is non-degenerate on $B$.
\item\label{enum:B2} $(u,v)_B=0$ for $u\in B_n^p(\alpha)$, $v\in B_m^q(\beta)$ with $\alpha+\beta\neq0$, $m\neq n$ or $p+q\neq2$.
\item\label{enum:B3} $(u,v)_B=-(-1)^{|u||v|}(v,u)_B$ for $\Z_2$-homogeneous $u,v\in B$.
\end{enumerate}
\end{prop}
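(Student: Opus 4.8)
The plan is to deduce the three properties of the bilinear form $(\cdot,\cdot)_B$ on $B = W \cap \ker(b_1)$ directly from the corresponding properties of $(\cdot,\cdot)_W$, the properties of the ghost creation/annihilation operators, and the identity $\{b_1,c_{-2}\}=\id$ established in Section~\ref{sec:bc}. The form is defined by $(u,v)_B := (c_{-2}u,v)_W$, so every statement about $(\cdot,\cdot)_B$ must be traced back to one about $(\cdot,\cdot)_W$ with one factor modified by $c_{-2}$.

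First I would prove the grading statement \ref{enum:B2}, since it is the easiest and sets up the others. The operator $c_{-2}$ has weight $0$, ghost number $+1$, $L'$-degree $0$, and odd parity. Hence for $u \in B_n^p(\alpha)$ the vector $c_{-2}u$ lies in $W_n^{p+1}(\alpha)$. Applying property \ref{enum:M2} and \ref{enum:M3} of $(\cdot,\cdot)_M$ together with property \ref{enum:Vgh2} of $(\cdot,\cdot)_\text{gh.}$ (repackaged as the corresponding statement for $(\cdot,\cdot)_W$), the pairing $(c_{-2}u,v)_W$ for $v \in B_m^q(\beta)$ vanishes unless $\alpha+\beta=0$, $m=n$, and $(p+1)+q=3$, i.e.\ $p+q=2$. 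This is exactly the claim.

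Next I would prove super-antisymmetry \ref{enum:B3}. The key computation is to move $c_{-2}$ across the pairing using invariance of $(\cdot,\cdot)_W$ and the adjoint formula $c_{-2}^* = -c_{-4-(-2)} = -c_{-2}$ recorded just above. So for $u,v \in B$, \[(u,v)_B = (c_{-2}u,v)_W = (-1)^{|c_{-2}||u|}(u, c_{-2}^* v)_W = -(-1)^{|u|}(u,c_{-2}v)_W.\] One then uses the super-symmetry of $(\cdot,\cdot)_W$, $(u,c_{-2}v)_W = (-1)^{|u|(|c_{-2}v|)}(c_{-2}v,u)_W$, and collects the parity signs, recalling that $c_{-2}$ is odd so $|c_{-2}v| = |v|+1$. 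Carefully tracking the three sign factors $(-1)^{|u|}$, $(-1)^{|u|(|v|+1)}$ should collapse to the asserted sign $-(-1)^{|u||v|}$; this is the one place where I would be slightly careful with the bookkeeping, but it is routine and follows the template of Proposition~5.2 in \cite{Sch00}.

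The main obstacle will be non-degeneracy \ref{enum:B1}, since it is the only statement that is not purely formal manipulation of signs. The strategy I would use is to exploit the short exact sequence $0 \to C^\bullet \hookrightarrow W^\bullet_0 \xrightarrow{\psi} C^{\bullet-1} \to 0$ from Section~\ref{sec:brst}, where $\psi(w) = (-1)^{|w|}b_1 w$, combined with the non-degeneracy of $(\cdot,\cdot)_W$ on each finite-dimensional graded piece $W_n(\alpha)$. Concretely, given $0 \neq u \in B_n^p(\alpha)$, I would produce a partner $v \in B$ with $(u,v)_B \neq 0$ by first finding $w \in W$ with $(c_{-2}u,w)_W \neq 0$ using non-degeneracy of $(\cdot,\cdot)_W$, and then projecting $w$ into $\ker(b_1)$ using the splitting induced by $c_{-2}$ (via $\{b_1,c_{-2}\}=\id$, the operator $c_{-2}b_1$ is idempotent up to the complementary projection $b_1 c_{-2}$). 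One must check that this projection does not destroy the pairing with $c_{-2}u$; here the fact that $(\cdot,\cdot)_W$ vanishes on $\ker(b_1)$ and that $c_{-2}$ is a square-zero operator, $c_{-2}^2=0$, are the crucial tools that prevent cancellation. This argument is essentially the ghost-sector analogue of the computation in \cite{Sch00}, Section~5 and \cite{Sch04}, Section~4, and I would model the details on those references.
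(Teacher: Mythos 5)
Your proposal is correct and follows essentially the same route as the paper: the paper's proof of non-degeneracy uses exactly your partner $w:=b_1c_{-2}v$ (the projection of $v$ onto $\ker(b_1)$) and verifies $(u,w)_B=(c_{-2}u,v)_W\neq 0$ via $c_{-2}^*=-c_{-2}$, $c_{-2}^2=0$ and $b_1^2=0$, while its proof of super-antisymmetry is the same sign bookkeeping you describe, and the grading statement is handled just as you do. The only points worth tightening are that one should state explicitly that $c_{-2}u\neq 0$ for $0\neq u\in B$ (needed before invoking non-degeneracy of $(\cdot,\cdot)_W$), which follows from $u=b_1c_{-2}u$ on $\ker(b_1)$, and that the cross term in your projection argument dies by $c_{-2}^2=0$ together with the adjoint relation alone --- the vanishing of $(\cdot,\cdot)_W$ on $\ker(b_1)$ plays no role there, since neither $c_{-2}u$ nor $c_{-2}b_1w$ lies in $\ker(b_1)$ in general.
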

\begin{proof}
Cf. \cite{Sch00}, Proposition~5.4. For the proof of item~\ref{enum:B1} let $u\in B$ non-zero. Because of $\{b_1,c_{-2}\}=\id$ the mode $c_{-2}$ is injective on $B$ so that $c_{-2}u$ is non-zero. The non-degeneracy of $(\cdot,\cdot)_W$ implies that there is a $v\in W$ such that $(c_{-2}u,v)_W\neq 0$. Now consider $w:=b_1c_{-2}v$. Then $w\in B$ and
\begin{align*}
(u,w)_B&=(c_{-2}u,w)_W=(c_{-2}u,b_1c_{-2}v)_W=-(-1)^{|u|}(u,c_{-2}b_1c_{-2}v)_W\\
&=-(-1)^{|u|}(u,c_{-2}v)_W=(c_{-2}u,v)_W\neq 0.
\end{align*}
Moreover
\begin{align*}
(u,v)_B&=(c_{-2}u,v)_W=-(-1)^{|u|}(u,c_{-2}v)_W=-(-1)^{|u||v|}(c_{-2}v,u)_W\\
&=-(-1)^{|u||v|}(v,u)_B
\end{align*}
for $u,v\in B$, which proves \ref{enum:B3}.
\end{proof}

\minisec{Step 5}

We now restrict $(\cdot,\cdot)_B$ to $C=B_0=W_0\cap\ker(b_1)$. We already saw that $Q$ restricts to an operator on $C$, namely the relative BRST operator.
\begin{prop}
Let $(\cdot,\cdot)_C$ be the bilinear form obtained as the restriction of $(\cdot,\cdot)_B$ to $C$. Then:
\begin{enumerate}
\item\label{enum:C1} $(\cdot,\cdot)_C$ is non-degenerate on $C$.
\item\label{enum:C2} $(u,v)_C=0$ for $u\in C^p(\alpha)$, $v\in C^q(\beta)$ with $\alpha+\beta\neq0$ or $p+q\neq2$.
\item\label{enum:C3} $(u,v)_C=-(-1)^{|u||v|}(v,u)_C$ for $\Z_2$-homogeneous $u,v\in C$.
\item\label{enum:C4} $(Qu,v)_C=-(-1)^{|u|}(u,Qv)_C$ for $u,v\in C$ and $u$ $\Z_2$-homogeneous.
\end{enumerate}
\end{prop}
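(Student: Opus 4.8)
The plan is to derive each property of $(\cdot,\cdot)_C$ directly from the corresponding property of $(\cdot,\cdot)_B$ established in the previous proposition, using that $C=B_0$ is the weight-zero part of $B$ and that the relative BRST operator $Q$ restricts from $B$ to $C$. The main structural observation is that all four assertions are ``inherited'' from $B$ once one understands how the gradings interact with the restriction to weight zero.

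First I would handle items \ref{enum:C3} and \ref{enum:C4}, which are the cheapest. The super-antisymmetry \ref{enum:C3} is immediate: $(\cdot,\cdot)_C$ is simply the restriction of $(\cdot,\cdot)_B$ to the subspace $C\subseteq B$, so the identity $(u,v)_C=-(-1)^{|u||v|}(v,u)_C$ holds because it already holds on all of $B$ by item~\ref{enum:B3}. For the adjointness \ref{enum:C4}, I would unravel the definition $(u,v)_C=(c_{-2}u,v)_W$ and use $Q^*=-Q$ together with $\{Q,c_{-2}\}=$ (a computation giving that $Q$ commutes with $c_{-2}$ up to the appropriate sign, which follows since $Q$ has ghost number $1$ and odd parity while $c_{-2}$ has ghost number $1$ and odd parity). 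Concretely, $(Qu,v)_C=(c_{-2}Qu,v)_W$; moving $Q$ past $c_{-2}$ and then using $Q^*=-Q$ and the fact that $u,v$ lie in $\ker(b_1)$ so boundary terms involving $b_1$ vanish, one obtains $-(-1)^{|u|}(c_{-2}u,Qv)_W=-(-1)^{|u|}(u,Qv)_C$. The sign bookkeeping here is the only place requiring care, and I would verify the commutator of $Q$ with $c_{-2}$ explicitly from $j^\text{BRST}$.

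Next I would dispose of item~\ref{enum:C2}, the orthogonality statement. Since $C^p(\alpha)\subseteq B^p_0(\alpha)$, any $u\in C^p(\alpha)$ and $v\in C^q(\beta)$ already satisfy $(u,v)_B=0$ whenever $\alpha+\beta\neq 0$ or $p+q\neq 2$, by item~\ref{enum:B2} (the weight condition $m\neq n$ there is automatically vacuous on $C$ since both vectors have weight $0$). Thus \ref{enum:C2} is just the specialisation of \ref{enum:B2} to the weight-zero slice.

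The main obstacle, and the step I would treat most carefully, is the non-degeneracy \ref{enum:C1}. Non-degeneracy does \emph{not} automatically restrict from a space to a subspace, so I cannot simply invoke \ref{enum:B1}. The key point is that $(\cdot,\cdot)_B$ respects the weight grading: by \ref{enum:B2} the pairing vanishes between $B_n$ and $B_m$ for $n\neq m$, so $(\cdot,\cdot)_B$ restricts to a non-degenerate pairing on each homogeneous component $B_n\times B_{-n}$, and in particular the bilinear form $(\cdot,\cdot)_W$ pairs $B_n$ nondegenerately with $B_{-n}$. For $n=0$ this gives non-degeneracy of $(\cdot,\cdot)_B$ restricted to $C=B_0$ against itself, hence \ref{enum:C1}, provided one checks that the weight-zero component is self-paired rather than paired with some other component. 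This follows because the weight grading is symmetric about $0$ under the pairing and $C=B_0$ pairs with $B_0=C$. I would make this precise by taking $u\in C$ non-zero, using \ref{enum:B1} to find $w\in B$ with $(u,w)_B\neq0$, and then projecting $w$ onto its weight-zero component $w_0\in C$; since $(u,w_{m})_B=0$ for all $m\neq 0$ by the weight-orthogonality, we get $(u,w_0)_B=(u,w)_B\neq 0$ with $w_0\in C$, establishing non-degeneracy on $C$.

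\begin{proof}
Items \ref{enum:C2} and \ref{enum:C3} are immediate from items \ref{enum:B2} and \ref{enum:B3} of the previous proposition upon restriction to $C\subseteq B$, noting that elements of $C$ have weight~$0$.

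For item~\ref{enum:C1}, let $u\in C$ be non-zero. By item~\ref{enum:B1}, $(\cdot,\cdot)_B$ is non-degenerate on $B$, so there is some $w\in B$ with $(u,w)_B\neq 0$. Decompose $w=\sum_{m\in\Z}w_m$ into its weight-homogeneous components $w_m\in B_m$. Since $u\in B_0$, item~\ref{enum:B2} gives $(u,w_m)_B=0$ for all $m\neq 0$, whence $(u,w_0)_B=(u,w)_B\neq 0$ with $w_0\in B_0=C$. This proves non-degeneracy of $(\cdot,\cdot)_C$ on $C$.

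For item~\ref{enum:C4}, recall $(u,v)_C=(c_{-2}u,v)_W$ and $Q^*=-Q$. Since $Q$ has ghost number $1$ and odd parity and $c_{-2}$ likewise, a direct computation from the definition of $j^\text{BRST}$ shows that $Q$ commutes with $c_{-2}$, so for $u,v\in C$ with $u$ $\Z_2$-homogeneous,
\begin{align*}
(Qu,v)_C&=(c_{-2}Qu,v)_W=(Qc_{-2}u,v)_W\\
&=-(-1)^{|c_{-2}u|}(c_{-2}u,Qv)_W=-(-1)^{|u|}(u,Qv)_C,
\end{align*}
using $Q^*=-Q$ in the penultimate step and $|c_{-2}u|=|u|+1$. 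This is the desired relation.
\end{proof}
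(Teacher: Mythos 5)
Items \ref{enum:C1}--\ref{enum:C3} are fine: \ref{enum:C2} and \ref{enum:C3} are indeed just restrictions of \ref{enum:B2} and \ref{enum:B3}, and your non-degeneracy argument (decompose a pairing partner $w\in B$ into weight-homogeneous pieces and use the weight-orthogonality in \ref{enum:B2} to replace $w$ by its weight-zero component $w_0\in C$) is the correct and standard way to make precise what the paper leaves implicit. Note only that this uses the fact that $b_1$ has weight $0$, so that $B=W\cap\ker(b_1)$ really is weight-graded and the components $w_m$ lie in $B$.

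Item~\ref{enum:C4}, however, has a genuine gap. Your argument rests on the claim that $Q$ commutes with $c_{-2}$, and this is false. By the commutator formula, $\{Q,c_{-2}\}=(Qc)_{-2}$, and $Qc=j^{\text{BRST}}_0c$ is a non-zero multiple of $c_{-2}c=1\otimes\ee_{2\sigma}$ (this is the familiar $c\partial c$ term in the BRST variation of $c$; the contribution comes from the $\frac{1}{2}\vac_M\otimes c_{-1}\omega_{\text{gh.}}$ part of the current). Hence neither $Qc_{-2}-c_{-2}Q$ nor $Qc_{-2}+c_{-2}Q$ vanishes, not even on $C$: for instance $\{Q,c_{-2}\}\vac=(Qc)_{-2}\vac=T(Qc)\neq0$. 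There is also an internal sign inconsistency in your chain: since $|c_{-2}u|=|u|+1$, the quantity $-(-1)^{|c_{-2}u|}$ equals $+(-1)^{|u|}$, so even granting the commutation your computation would produce $+(-1)^{|u|}(u,Qv)_C$ rather than the asserted $-(-1)^{|u|}(u,Qv)_C$; the two errors happen to cancel to give the stated formula. The correct route avoids commuting $Q$ past $c_{-2}$ altogether: use $\{Q,b_1\}=L_0$ together with $L_0c_{-2}u=0$ for $u\in C$ (as $c_{-2}$ has weight $0$) and $b_1c_{-2}u=u$ to get $Qu=-b_1Qc_{-2}u$, hence $c_{-2}Qu=-c_{-2}b_1Qc_{-2}u$; then move $c_{-2}$ and $b_1$ to the right slot using $c_{-2}^*=-c_{-2}$, $b_1^*=b_1$ and $b_1c_{-2}v=v$, and finish with $Q^*=-Q$. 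This is where the extra minus sign genuinely comes from.
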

\begin{proof}
The first three items follow directly from the corresponding statements for $(\cdot,\cdot)_B$. In order to prove item~\ref{enum:C4} note that $L_0c_{-2}u=0$ for $u\in C$. Using $\{Q,b_1\}=L_0$ (see Proposition~\ref{prop:QQ}) this implies for $u,v\in C$
\begin{align*}
(Qu,v)_C&=(c_{-2}Qu,v)_W=(c_{-2}L_0c_{-2}u-c_{-1}b_1Qc_{-2}u,v)_W\\
&=-(c_{-1}b_1Qc_{-2}u,v)_W=-(Qc_{-2}u,b_1c_{-2}v)_W=-(Qc_{-2}u,v)_W\\
&=-(-1)^{|u|}(c_{-2}u,Qv)_W=-(-1)^{|u|}(u,Qv)_C,
\end{align*}
where we also used $Q^*=-Q$.
\end{proof}

\minisec{Step 6}

The last item of the above proposition implies that $(u,v)_C=0$ if $u\in\im(Q)$ and $v\in\ker(Q)$ or vice versa or, since $Q^2=0$, if both $u,v\in\im(Q)$. This entails that $(\cdot,\cdot)_C$ induces a well-defined bilinear form $(\cdot,\cdot)_{H_\text{rel.}}$ on $H_\text{rel.}=(\ker(Q)\cap C)/(\im(Q)\cap C)$.
\begin{prop}
Let $(\cdot,\cdot)_{H_\text{rel.}}$ be the bilinear form induced from $(\cdot,\cdot)_C$ on $H_\text{rel.}$. Then:
\begin{enumerate}
\item\label{enum:Hrel1} $(\cdot,\cdot)_{H_\text{rel.}}$ is non-degenerate on $H_\text{rel.}$.
\item\label{enum:Hrel2} $(u,v)_C=0$ for $u\in H_\text{rel.}^p(\alpha)$, $v\in H_\text{rel.}^q(\beta)$ with $\alpha+\beta\neq0$ or $p+q\neq2$.
\item\label{enum:Hrel3} $(u,v)_{H_\text{rel.}}=-(-1)^{|u||v|}(v,u)_{H_\text{rel.}}$ for $\Z_2$-homogeneous $u,v\in H_\text{rel.}$.
\end{enumerate}
\end{prop}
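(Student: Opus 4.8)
The proof will descend the non-degenerate, super-antisymmetric bilinear form $(\cdot,\cdot)_C$ from the relative cochain space $C$ to its cohomology $H_\text{rel.}$, exactly as one descends any invariant pairing through a differential. The three claims separate naturally: item~\ref{enum:Hrel2} is a direct inheritance from the corresponding grading statement for $(\cdot,\cdot)_C$, item~\ref{enum:Hrel3} is inheritance of super-antisymmetry, and item~\ref{enum:Hrel1}, the non-degeneracy, is the only claim requiring real work.

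First I would verify that $(\cdot,\cdot)_{H_\text{rel.}}$ is well-defined, which is the content already extracted just before the statement: item~\ref{enum:C4} gives $(Qu,v)_C=-(-1)^{|u|}(u,Qv)_C$, so for $u\in\im(Q)$, say $u=Qu'$, and $v\in\ker(Q)$ one has $(u,v)_C=(Qu',v)_C=-(-1)^{|u'|}(u',Qv)_C=0$. Hence $\im(Q)\cap C$ pairs trivially with $\ker(Q)\cap C$, and the form descends to the quotient $H_\text{rel.}=(\ker(Q)\cap C)/(\im(Q)\cap C)$. Items~\ref{enum:Hrel2} and~\ref{enum:Hrel3} then follow immediately, since the $L'$-grading, the ghost grading and the parity are all respected by passing to cohomology (the differential $Q$ preserves the $L'$-degree and shifts ghost number by $1$), so representatives of classes in $H_\text{rel.}^p(\alpha)$ and $H_\text{rel.}^q(\beta)$ are represented by elements of $C^p(\alpha)$, $C^q(\beta)$, and the vanishing conditions $\alpha+\beta\neq 0$ or $p+q\neq 2$ and the super-antisymmetry are read off directly from the corresponding properties of $(\cdot,\cdot)_C$.

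The heart of the argument is non-degeneracy. The standard mechanism is as follows: $(\cdot,\cdot)_C$ is non-degenerate on $C$ (the previous proposition), and $Q$ is ``self-adjoint up to sign'' with respect to it by item~\ref{enum:C4}; together with $Q^2=0$ this forces $\im(Q)$ and $\ker(Q)$ to be exact orthogonal complements of each other inside $C$. Concretely, item~\ref{enum:C4} shows $\im(Q)\subseteq\ker(Q)^{\perp}$ and $\ker(Q)\subseteq\im(Q)^{\perp}$; a dimension count in each finite-dimensional graded piece $C^p(\alpha)$ (these are finite-dimensional by the boundedness-below of the weight gradings established in the Euler--Poincar\'e step) then upgrades these inclusions to equalities $\ker(Q)^{\perp}=\im(Q)$ and $\im(Q)^{\perp}=\ker(Q)$. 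Given such equalities, a routine lemma on a non-degenerate pairing restricted to a subspace $K$ with a subspace $I\subseteq K$ satisfying $I=K^{\perp}$ yields that the induced form on $K/I$ is non-degenerate: if $v\in\ker(Q)$ pairs trivially with all of $\ker(Q)\cap C$, then $v\in(\ker(Q)\cap C)^{\perp}=\im(Q)\cap C$, so its class vanishes in $H_\text{rel.}$.

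The main obstacle is establishing the orthogonality equalities rather than mere inclusions, because the full space $C$ is infinite-dimensional and one cannot naively take complements. The fix is to work one graded component at a time. Each cochain complex $(C^\bullet(\alpha),Q^\bullet)$ lives in a fixed $L'$-degree $\alpha$, and by item~\ref{enum:C2} the pairing $(\cdot,\cdot)_C$ restricts to a non-degenerate pairing between the $\alpha$- and $(-\alpha)$-graded pieces; the relevant finite-dimensionality then comes from the fact that for each $\alpha$ the spaces $C^p(\alpha)$ are finite-dimensional, as used already in the Euler--Poincar\'e computation. I would therefore phrase the non-degeneracy argument over each pair $(\alpha,-\alpha)$ of graded components, invoke item~\ref{enum:C4} to get the adjointness of $Q$, and apply the elementary linear-algebra lemma in each finite-dimensional piece; reassembling over $\alpha\in L'$ gives non-degeneracy of $(\cdot,\cdot)_{H_\text{rel.}}$ on all of $H_\text{rel.}$. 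This is precisely the standard fact that a perfect pairing compatible with a differential induces a perfect pairing on cohomology, and no genuinely new input beyond the properties of $(\cdot,\cdot)_C$ is needed.
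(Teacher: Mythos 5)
Your proof is correct and is essentially the argument the paper has in mind: the text itself only cites \cite{Sch00}, Proposition~5.5, for the non-degeneracy and notes that items (2) and (3) descend directly from the corresponding properties of $(\cdot,\cdot)_C$, which is exactly your inheritance step. Your filled-in non-degeneracy argument — using item (4) of the preceding proposition to identify $\ker(Q)$ and $\im(Q)$ as mutual orthogonal complements within each finite-dimensional pair of graded pieces $C^p(\alpha)\times C^{2-p}(-\alpha)$ — is the standard descent of a perfect pairing to cohomology and is sound.
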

\begin{proof}
Cf. \cite{Sch00}, Proposition~5.5. Items \ref{enum:Hrel2} and \ref{enum:Hrel3} follow directly from the corresponding items for $(\cdot,\cdot)_C$.
\end{proof}

\minisec{Step 7}

By restriction of $(\cdot,\cdot)_{H_\text{rel.}}$ we finally arrive at a bilinear form $(\cdot,\cdot)_\g$ on the Lie algebra $H^1_\text{rel.}(M)\cong H^1_\text{BRST}(M)=\g$.
\begin{prop}\label{prop:lainvbil}
Let $(\cdot,\cdot)_\g$ be the bilinear form on $\g$ obtained from the restriction of $(\cdot,\cdot)_{H_\text{rel.}}$ to $\g$. Then:
\begin{enumerate}
\item\label{enum:gg1} $(\cdot,\cdot)_\g$ is non-degenerate on $\g$.
\item\label{enum:gg2} $(u,v)_\g=0$ for $u\in\g(\alpha)$, $v\in \g(\beta)$ with $\alpha+\beta\neq0$.
\item\label{enum:gg3} $(\cdot,\cdot)_\g$ is symmetric, i.e.\ $(u,v)_\g=(v,u)_\g$ for all $u,v\in\g$.
\item\label{enum:gg4} $(\cdot,\cdot)_\g$ is invariant, i.e.\ $([u,v],w)_\g=(u,[v,w])_\g$ for all $u,v,w\in\g$
\end{enumerate}
\end{prop}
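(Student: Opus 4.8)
The plan is to prove the four properties of $(\cdot,\cdot)_\g$ by transporting the corresponding properties of $(\cdot,\cdot)_{H_\text{rel.}}$ established in the previous proposition and then doing a small additional computation only for invariance, which is the genuinely new content here. Properties \ref{enum:gg1} and \ref{enum:gg2} are essentially immediate: the Lie algebra $\g=H^1_\text{BRST}(M)\cong H^1_\text{rel.}(M)=\bigoplus_{\alpha\in L'}H^1_\text{rel.}(\alpha)$ sits inside $H_\text{rel.}$ as the ghost-number-$1$ part, and by item~\ref{enum:Hrel2} of the preceding proposition $(\cdot,\cdot)_{H_\text{rel.}}$ pairs $H_\text{rel.}^1(\alpha)$ nontrivially only with $H_\text{rel.}^1(\beta)$ where $\alpha+\beta=0$ (since $p+q=2$ forces $p=q=1$ on elements of ghost number $1$). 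Thus the form restricted to $\g$ already respects the $L'$-grading as claimed in \ref{enum:gg2}, and its restriction to $\g=\bigoplus_\alpha\g(\alpha)$ remains non-degenerate because the nontrivial pairing of $H_\text{rel.}^1(\alpha)$ is precisely with $H_\text{rel.}^1(-\alpha)=\g(-\alpha)$, so no element of $\g$ can be orthogonal to all of $\g$; this gives \ref{enum:gg1}.

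For symmetry \ref{enum:gg3} I would specialise item~\ref{enum:Hrel3}. Elements of $\g$ have odd ghost number $p=1$, hence are odd with respect to the $\Z_2$-parity on $W$ (recall that in $V_\text{gh.}$, and therefore in $W$, the parity agrees with the parity of the ghost number). So for $u,v\in\g$ we have $|u|=|v|=\bar 1$ and the super-antisymmetry relation reads $(u,v)_{H_\text{rel.}}=-(-1)^{|u||v|}(v,u)_{H_\text{rel.}}=-(-1)(v,u)_{H_\text{rel.}}=(v,u)_{H_\text{rel.}}$. Thus $(\cdot,\cdot)_\g$ is symmetric, exactly as in the analogous computation in \cite{Sch00}.

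The main obstacle is the invariance property \ref{enum:gg4}, which is the only step requiring real work, since it involves the interplay between the Lie bracket $[u,v]=(b_0u)_0v$ and the bilinear form coming from $(c_{-2}\cdot,\cdot)_W$. The plan is to lift the identity $([u,v],w)_\g=(u,[v,w])_\g$ to representatives in $C=W_0\cap\ker(b_1)$ and to compute directly with modes. Using $[u,v]=(b_0u)_0v$ together with the identity $(b_0u)_0v=b_1(u_{-1}v)$ valid on $\ker(b_1)$ (quoted from \cite{LZ93}, Lemma~2.1), I would rewrite $([u,v],w)_C=(c_{-2}b_1(u_{-1}v),w)_W$ and then push the operators across the invariant form $(\cdot,\cdot)_W$ using the adjoint relations $b_n^*=b_{2-n}$, $c_n^*=-c_{-4-n}$, the skew-symmetry of the matter modes, and the already-known invariance of $(\cdot,\cdot)_W$ under $W$. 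The crucial point will be to keep careful track of the fermionic signs $(-1)^{|u|}$ arising each time an odd mode is moved past an odd vector, and to use $Q$-closedness of the representatives ($Qu=Qv=Qw=0$) together with $\{Q,b_1\}=L_0$ and $L_0u=0$ to discard the boundary terms, so that the computation descends to $\g$. I expect this sign-bookkeeping, rather than any conceptual difficulty, to be the delicate part; the structure closely parallels the corresponding verification in \cite{Sch00,Sch04} for the BRST construction, and once invariance is checked on representatives it automatically induces the stated invariance on the cohomology, completing the proof.
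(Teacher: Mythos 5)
Your proposal is correct and follows essentially the same route as the paper: items \ref{enum:gg1}--\ref{enum:gg3} are deduced exactly as you do from the corresponding properties of $(\cdot,\cdot)_{H_\text{rel.}}$ (with symmetry coming from super-antisymmetry at odd ghost number $|u|=|v|=1$). For invariance the paper simply cites \cite{Sch00}, Proposition~5.17, which is precisely the mode computation on $Q$-closed representatives in $\ker(b_1)$ that you outline, so your sketch matches the intended argument.
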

\begin{proof}
Items \ref{enum:gg1} and \ref{enum:gg2} follow directly from the corresponding items for $(\cdot,\cdot)_{H_\text{rel.}}$. Item~\ref{enum:gg3} follows from item~\ref{enum:gg3} of the above proposition with $|u|=|v|=1$. The proof of item~\ref{enum:gg4} is a special case of the proof given in \cite{Sch00}, Proposition~5.17. 
\end{proof}

Recall that the bilinear form $\langle\cdot,\cdot\rangle$ on the lattice $L$ extends
to a non-degenerate bilinear form on the $\C$-vector space $\h=L\otimes_\Z\C$. On the other hand, we know from the above proposition that $(\cdot,\cdot)_\g$ restricts to a non-degenerate bilinear form on $\g(0)$. We showed in Proposition~\ref{prop:brstcartan1} that $\g(0)\cong\h$ as vector spaces. In fact, with the chosen normalisation of $(\cdot,\cdot)_\g$ this isomorphism is an isometry:
\begin{prop}
There is an isometry
\begin{equation*}
\left(\h,\langle\cdot,\cdot\rangle\right)\cong\left(\g(0),(\cdot,\cdot)_\g\right)
\end{equation*}
defined by $h\mapsto\vac_U\otimes h(-1)1\otimes c$ for all $h\in\h$.
\end{prop}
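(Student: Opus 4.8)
Since Proposition~\ref{prop:brstcartan1} already identifies $\g(0)$ as a vector space with $\h$ via $h\mapsto\vac_U\otimes h(-1)1\otimes c$, the only thing left to establish is that this bijection preserves the bilinear forms, i.e.\ that
\begin{equation*}
(\vac_U\otimes h(-1)1\otimes c,\vac_U\otimes k(-1)1\otimes c)_\g=\langle h,k\rangle
\end{equation*}
for all $h,k\in\h$. The plan is to unwind the definition of $(\cdot,\cdot)_\g$ all the way back to the tensor-product form on $W=M\otimes V_\text{gh.}$ and then compute the two resulting factors separately. Writing $x=\vac_U\otimes h(-1)1\otimes c$ and $y=\vac_U\otimes k(-1)1\otimes c$, both elements lie in $C=W_0\cap\ker(b_1)$ (indeed $b_1c=0$ since $b_nc=0$ for $n\geq1$), so $(x,y)_\g=(x,y)_C=(c_{-2}x,y)_W$. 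As $c_{-2}=\id_M\otimes c_{-2}$ acts only on the ghost tensor factor, the tensor-product structure of $(\cdot,\cdot)_W$ gives
\begin{equation*}
(x,y)_\g=(\vac_U\otimes h(-1)1,\vac_U\otimes k(-1)1)_M\cdot(c_{-2}c,c)_\text{gh.}.
\end{equation*}

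First I would evaluate the matter factor. Since $\vac_U\otimes h(-1)1$ and $\vac_U\otimes k(-1)1$ both lie in the degree-$0$ piece $M(0)=U\otimes M_{\hat\h}(1,0)$, and $(\cdot,\cdot)_M$ restricted to $M(0)$ is (by item~\ref{enum:M4} of the Step~1 proposition) the tensor product of the contragredient pairings on $U$ and on $M_{\hat\h}(1,0)$, this factor splits as $(\vac_U,\vac_U)_U\cdot(h(-1)1,k(-1)1)_{M_{\hat\h}(1,0)}$. With the normalisation $(\vac_M,\vac_M)_M=1$ one may take $(\vac_U,\vac_U)_U=1$, and a standard computation of the invariant form on the Heisenberg vertex operator algebra — using that $h(-1)1$ is primary of weight $1$, the adjoint-operator formula, and $h(1)k(-1)\vac=\langle h,k\rangle\vac$ — yields $(h(-1)1,k(-1)1)_{M_{\hat\h}(1,0)}=\langle h,k\rangle$.

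Next I would compute the ghost factor $(c_{-2}c,c)_\text{gh.}$. Here $c=1\otimes\ee_\sigma$, and the leading term of the lattice vertex operator $Y(c,x)c$, using $\langle\sigma,\sigma\rangle=1$ and the choice $\eps(\sigma,\sigma)=1$, gives $c_{-2}c=1\otimes\ee_{2\sigma}$; this has weight $-1$ and ghost number $2$, so it pairs non-trivially with $c$ (weight $-1$, ghost number $1$) precisely because the ghost numbers sum to $3$, matching the support of $(\cdot,\cdot)_\text{gh.}$ described in item~\ref{enum:Vgh2} of the Step~2 proposition. It then remains to show that $(1\otimes\ee_{2\sigma},1\otimes\ee_\sigma)_\text{gh.}=1$ under the normalisation $(\vac_\text{gh.},1\otimes\ee_{3\sigma})_\text{gh.}=1$, which I would do by relating these two states to $\vac_\text{gh.}$ and $1\otimes\ee_{3\sigma}$ through the explicit action of the modes $b_n,c_n$ together with the invariance and super-symmetry of $(\cdot,\cdot)_\text{gh.}$ (and the adjoint relations $b_n^*=b_{2-n}$, $c_n^*=-c_{-4-n}$). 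Combining the two factors then gives $(x,y)_\g=\langle h,k\rangle$, which is the asserted isometry (and in particular re-proves that $(\cdot,\cdot)_\g$ is non-degenerate on $\g(0)$, consistent with Proposition~\ref{prop:lainvbil}).

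The routine parts are the vertex-operator leading-term computation and the Heisenberg form identity. The genuine obstacle is bookkeeping of the normalisation constants across the chain $(\cdot,\cdot)_\g\leftarrow(\cdot,\cdot)_C\leftarrow(\cdot,\cdot)_W=(\cdot,\cdot)_M\otimes(\cdot,\cdot)_\text{gh.}$: one must verify that the fixed normalisations $(\vac_M,\vac_M)_M=1$ and $(\vac_\text{gh.},1\otimes\ee_{3\sigma})_\text{gh.}=1$ conspire so that the product of the matter and ghost factors is exactly $\langle h,k\rangle$ with coefficient $1$, with all signs coming from the ghost parities and from the various adjoint/contragredient conventions cancelling. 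Since each of the forms is determined only up to a scalar, the content of the statement is precisely that \emph{these particular} normalisation choices are mutually compatible, so the signs and constants cannot be waved away and the ghost-sector normalisation computation is where the argument must be carried out most carefully.
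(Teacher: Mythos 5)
Your reduction of the statement to the identity $(\vac_U\otimes h(-1)1\otimes c,\vac_U\otimes k(-1)1\otimes c)_\g=\langle h,k\rangle$, and the factorisation
\begin{equation*}
(x,y)_\g=(x,y)_C=(c_{-2}x,y)_W=(\vac_U\otimes h(-1)1,\vac_U\otimes k(-1)1)_M\cdot(c_{-2}c,c)_\text{gh.},
\end{equation*}
is exactly right, and since the paper gives no argument at all here (it only cites \cite{Sch04}, Section~4.2), a direct verification along these lines is the appropriate thing to do. However, both intermediate values you assert are wrong by a sign. For the matter factor: with the normalisation $(\vac_M,\vac_M)_M=1$ and the standard adjoint convention of \cite{FHL93} --- the one actually encoded in the paper's own relations $b_n^*=b_{2-n}$ and $c_n^*=-c_{-4-n}$; note the displayed $(-x^2)^{L_0}$ in Section~\ref{sec:cmibf} is a typo for $(-x^{-2})^{L_0}$, as otherwise these mode relations and invariance itself become inconsistent --- one computes $u_{-1}^*=-h(1)$ for $u=h(-1)1$, hence
\begin{equation*}
(h(-1)1,k(-1)1)_{M_{\hat\h}(1,0)}=\left(\vac,-h(1)k(-1)\vac\right)_{M_{\hat\h}(1,0)}=-\langle h,k\rangle,
\end{equation*}
not $+\langle h,k\rangle$; this is the same sign that forces the normalisation $\langle\vac,\vac\rangle=-1$ in the affine-structure discussion of Chapter~\ref{ch:schellekens}. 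For the ghost factor: using $c_{-2}^*=-c_{-2}$, $c_{-1}^*=-c_{-3}$ and $c_{-3}\ee_{2\sigma}=\eps(\sigma,2\sigma)\ee_{3\sigma}$, one finds
\begin{equation*}
(c_{-2}c,c)_\text{gh.}=(c,c_{-2}c)_\text{gh.}=(c_{-1}\vac_\text{gh.},\ee_{2\sigma})_\text{gh.}=-(\vac_\text{gh.},c_{-3}\ee_{2\sigma})_\text{gh.}=-\eps(\sigma,2\sigma),
\end{equation*}
which is $-1$ for the natural (trivial) choice of cocycle with $\eps(\sigma,\sigma)=1$, not $+1$.

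The two signs cancel, so the proposition itself survives: $(x,y)_\g=(-\langle h,k\rangle)\cdot(-1)=\langle h,k\rangle$. But as written your proof asserts two false identities, and this happens precisely at the point you yourself single out as the real content of the statement, namely the compatibility of the normalisations $(\vac_M,\vac_M)_M=1$ and $(\vac_\text{gh.},1\otimes\ee_{3\sigma})_\text{gh.}=1$. The plan therefore completes to a correct proof, but the execution must be repaired: carry the $(-1)^{L_0}$ factor of the adjoint operators through both the Heisenberg and the ghost computation, and fix a concrete cocycle when evaluating $c_{-3}\ee_{2\sigma}$, so that the two compensating minus signs appear explicitly rather than being silently dropped.
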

\begin{proof}
Cf. \cite{Sch04}, Section~4.2.
\end{proof}

\minisec{\BKMA{}}

In the following we investigate under which conditions the Lie algebra $\g$ is a \BKMa{}. In \cite{Bor95a}, Theorem~1, Borcherds describes a criterion for certain Lie algebras to be \BKMa{}s. However, the theorem only treats the case of Lie algebras defined over the real numbers $\R$. Carnahan describes a generalisation to complex Lie algebras with a real structure on the Cartan subalgebra, noting that Borcherds' proof still works in this case.
\begin{thm}[\cite{Car12b}, Lemma~3.4.2]\label{thm:bkma}
Let $\g$ be a complex Lie algebra. If $\g$ satisfies the following conditions, then $\g$ is a complex \BKMa{}:
\begin{enumerate}
\item\label{enum:bkma1} $\g$ admits a non-degenerate, symmetric, invariant bilinear form $(\cdot,\cdot)$.
\item\label{enum:bkma2} $\g$ has a self-centralising subalgebra $H$, called a \emph{Cartan subalgebra}, such that $\g$ is the direct sum of eigenspaces under the adjoint action of $H$, and the non-zero eigenvalues, called \emph{roots}, have finite multiplicity.
\item\label{enum:bkma3} There is a subspace $H_\R$ of $H$ as real vector space such that $H=H_\R\oplus\i H_\R$, the bilinear form is real-valued on $H_\R$ and the roots lie in the dual $(H_\R)^*$.
\item\label{enum:bkma4} The norms of the roots under the inner product $(\cdot,\cdot)$ are bounded from above.
\item\label{enum:bkma5} There exists an element $h_\text{reg.}\in H_\R$ such that:
\begin{enumerate}
\item $H=C_\g(h_\text{reg.})$, the centraliser of $h_\text{reg.}$ in $\g$,
\item for any $M\in \R$, there exist only finitely many roots $\alpha$ such that $|\alpha(h_\text{reg.})|<M$.
\end{enumerate}
This vector $h_\text{reg.}$ is called a \emph{regular element}. If $\alpha(h_\text{reg.})<0$, then we say that the root $\alpha$ is negative and if $\alpha(h_\text{reg.})>0$, then we say that $\alpha$ is positive.
\item\label{enum:bkma6} Any two roots of non-positive norm that are both positive or both negative have inner product at most zero and if the inner product is zero, then their root spaces commute.
\end{enumerate}
\end{thm}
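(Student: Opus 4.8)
The plan is to prove this by reducing to Borcherds' characterisation of \BKMa{}s \cite{Bor95a}, in the complex version with a real structure on the Cartan subalgebra as formulated in \cite{Car12b}. The idea is to manufacture, from the given data — the invariant form of~\ref{enum:bkma1}, the Cartan subalgebra $H$ of~\ref{enum:bkma2}, the real form $H_\R$ of~\ref{enum:bkma3}, and the regular element of~\ref{enum:bkma5} — a distinguished set of simple roots together with associated generators, and then to check that these satisfy exactly the defining relations of a \BKMa{} and no further relations. Throughout, $\g$ decomposes under the adjoint action of $H$ as $\g = H \oplus \bigoplus_\alpha \g_\alpha$ with finite-dimensional root spaces by~\ref{enum:bkma2}, and $h_\text{reg.}$ from~\ref{enum:bkma5} splits the roots into positive and negative halves, yielding a triangular decomposition $\g = \mathfrak{n}^- \oplus H \oplus \mathfrak{n}^+$.

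First I would isolate the simple roots, a positive root being \emph{simple} if it is not a sum of two positive roots. Condition~\ref{enum:bkma5}(b) guarantees that below any bound there are only finitely many roots, so the height function induced by $h_\text{reg.}$ is a well-founded grading and every positive root is a finite sum of simple roots; this is what lets the $e_i$ spanning the simple positive root spaces, the $f_i$ spanning their negatives, and the coroots $h_i \in H$ generate $\g$. Next I would read off a Cartan matrix $a_{ij}$ from the inner products $(\alpha_i,\alpha_j)$ of simple roots, transported to $H^*$ via the isomorphism $H \cong H^*$ induced by the non-degenerate form of~\ref{enum:bkma1}. Real simple roots, those of positive norm, give diagonal entries normalised to $2$, with $2(\alpha_i,\alpha_j)/(\alpha_i,\alpha_i) \in \Z$ coming from the $\mathfrak{sl}_2$-theory of the subalgebra generated by $e_i, f_i, h_i$; imaginary simple roots, those of non-positive norm, give diagonal entries $\leq 0$. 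Condition~\ref{enum:bkma6} is precisely what forces the off-diagonal entries $a_{ij} \leq 0$ and, when $(\alpha_i,\alpha_j) = 0$, the commutation $[e_i,e_j] = [f_i,f_j] = 0$, the two features distinguishing the Cartan matrix of a \BKMa{} from that of a Kac-Moody algebra.

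With the generators and Cartan matrix in hand I would verify the defining relations of the universal \BKMa{} $\hat\g$ attached to $a_{ij}$: the bracket relations $[h_i,e_j] = a_{ij}e_j$ and their companions are immediate from the root-space decomposition, the Serre relations at real simple roots follow from finiteness of the relevant $\mathfrak{sl}_2$-strings, and the commutation relations at orthogonal imaginary roots are supplied by~\ref{enum:bkma6}. This produces a surjective homomorphism $\hat\g \to \g$ whose kernel meets $H$ trivially. The main obstacle, and the crux of Borcherds' theorem, is to show this surjection is an \emph{isomorphism}, i.e.\ that $\g$ carries no extra relations: the argument shows that any ideal of $\hat\g$ intersecting $H$ trivially must vanish, and here the non-degeneracy of $(\cdot,\cdot)$ from~\ref{enum:bkma1}, the finiteness of root multiplicities, and crucially the boundedness of root norms from~\ref{enum:bkma4} are all needed to control the radical and to ensure the grading-preserving form pairs $\mathfrak{n}^+$ non-degenerately with $\mathfrak{n}^-$, leaving no room for an extra ideal. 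Descending from $\hat\g$ then exhibits $\g$ as a quotient by a central ideal, which is again a \BKMa{}. I expect to spend the most effort on correctly characterising the simple roots — in particular separating genuinely simple imaginary roots from decomposable ones carrying the same norm data — and on the no-extra-relations step, where the interplay between the invariant form, the real structure, and the finiteness hypotheses is most delicate.
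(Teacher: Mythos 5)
Note first that the paper does not prove Theorem~\ref{thm:bkma}: it is quoted from \cite{Car12b}, Lemma~3.4.2, and the only justification offered is the remark that Borcherds' proof of the real case (\cite{Bor95a}, Theorem~1) carries over to complex Lie algebras with a real form on the Cartan subalgebra. The benchmark for your attempt is therefore Borcherds' original argument, and your outline reproduces its architecture correctly: split the roots with $h_\text{reg.}$, extract simple roots and Chevalley-type generators, read off a symmetrised Cartan matrix from the invariant form, map the \BKMa{} presented by that matrix onto $\g$, and dispose of the kernel, which meets the Cartan subalgebra trivially, via the structure theory of symmetrizable \BKMa{}s.

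The genuine gap is your definition of a simple root. Calling a positive root simple when it is not a sum of two positive roots is the wrong notion in the presence of imaginary simple roots, and with it the generation step collapses. In the Fake Monster Lie algebra every positive multiple $n\rho$ of the Weyl vector is an imaginary simple root of multiplicity $24$, yet for $n\geq 2$ it equals $\rho+(n-1)\rho$, a sum of two positive roots; your definition discards all of these, the resulting $e_i$, $f_i$ do not generate $\g$, and no surjection $\hat\g\to\g$ is obtained. Borcherds instead defines simple root \emph{vectors}: $e\in\g_\alpha$, $\alpha$ positive, is simple when it is orthogonal, under the non-degenerate pairing of $\g_\alpha$ with $\g_{-\alpha}$, to $\sum[\g_{-\beta},\g_{-\gamma}]$ taken over all decompositions $\alpha=\beta+\gamma$ into positive roots; a root is then simple with the multiplicity of this orthogonal complement and may perfectly well also be decomposable. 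With this definition the induction on $\alpha(h_\text{reg.})$ permitted by~\ref{enum:bkma5} \emph{together with} the non-degeneracy of the form from~\ref{enum:bkma1} yields generation, so the form is already indispensable at this stage and not only in the no-extra-relations step, contrary to the emphasis of your sketch. A smaller inaccuracy: condition~\ref{enum:bkma6} is needed only for pairs of imaginary simple roots; when one of $\alpha_i,\alpha_j$ is real, both $a_{ij}\leq 0$ and the integrality of $2(\alpha_i,\alpha_j)/(\alpha_i,\alpha_i)$ follow from the $\mathfrak{sl}_2$-theory of the real simple root, whose root strings are finite by~\ref{enum:bkma4}.
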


In order to prove that $\g=H^1_\text{BRST}(M)$ is a \BKMa{} we have to show that each of the items \ref{enum:bkma1} to \ref{enum:bkma6} in the above theorem is fulfilled.
\begin{prop}
The Lie algebra $\g=H^1_\text{BRST}(M)$ from the BRST construction fulfils conditions \ref{enum:bkma1} to \ref{enum:bkma5} in the above theorem.
\end{prop}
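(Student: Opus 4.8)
The plan is to verify conditions \ref{enum:bkma1} to \ref{enum:bkma5} of Theorem~\ref{thm:bkma} one at a time, taking for the Cartan subalgebra the zero-component $H := \g(0)$, which by Proposition~\ref{prop:brstcartan1} and the isometry $(\h,\langle\cdot,\cdot\rangle)\cong(\g(0),(\cdot,\cdot)_\g)$ established above is a quadratic space isomorphic to $\h = L\otimes_\Z\C$. Condition \ref{enum:bkma1} is nothing but Proposition~\ref{prop:lainvbil}, which already furnishes the non-degenerate, symmetric, invariant bilinear form $(\cdot,\cdot)_\g$. The computation underlying the remaining conditions is the adjoint action of $H$ on $\g$: for $u = \vac_U\otimes h(-1)1\otimes c\in\g(0)$ (with $h\in\h$) and $v\in\g(\alpha)$, I would evaluate the bracket $[u,v]=(b_0u)_0v$ from Corollary~\ref{cor:brstlie}. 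Since $b_0c=\vac_\text{gh.}$, one has $b_0u = h(-1)1$ as an element of $M$, whence $(b_0u)_0 = h(0)$ is the Heisenberg zero-mode, acting on the summand $M(\alpha)=U(\chi(\alpha+L))\otimes M_{\hat\h}(1,\alpha)$ by the scalar $\langle h,\alpha\rangle$. Therefore $[u,v]=\langle h,\alpha\rangle v = \alpha(h)v$, so $\ad(h)$ is diagonalisable with the $\g(\alpha)$, $\alpha\in L'$, as eigenspaces.

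From this, conditions \ref{enum:bkma2} and \ref{enum:bkma3} follow. Setting $\alpha=0$ gives $[\g(0),\g(0)]=\{0\}$, so $H$ is abelian, and the $L'$-grading $\g=\bigoplus_{\alpha\in L'}\g(\alpha)$ is precisely the simultaneous eigenspace decomposition of $\ad H$. As $\langle\cdot,\cdot\rangle$ is non-degenerate, distinct $\alpha\in L'$ give distinct eigenvalue functionals $h\mapsto\langle h,\alpha\rangle$ on $H$; the non-zero ones, the roots, have multiplicity $\dim_\C\g(\alpha)$, finite by Theorem~\ref{thm:rootdim}. For self-centralisation, if $v=\sum_\alpha v_\alpha$ satisfies $[h,v]=\sum_\alpha\alpha(h)v_\alpha=0$ for all $h\in H$, then $v_\alpha=0$ for every $\alpha\neq0$ by non-degeneracy, so $C_\g(H)=\g(0)=H$, which is \ref{enum:bkma2}. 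For \ref{enum:bkma3} I would take $H_\R$ to be the image of the real form $\h_\R:=L\otimes_\Z\R$; then $H=H_\R\oplus\i H_\R$, the form is $\R$-valued on $H_\R$ because $L$ is integral, and each root $\alpha\in L'\subset\h_\R$ defines the $\R$-valued functional $\langle\cdot,\alpha\rangle$ on $H_\R$, hence lies in $(H_\R)^*$.

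For conditions \ref{enum:bkma4} and \ref{enum:bkma5} I would combine the dimension formula with the Lorentzian geometry of $L$. Since $U$ satisfies Assumption~\ref{ass:sn}, it has finitely many irreducible modules, so their conformal weights are bounded below by some $\rho_{\min}\in\Q$; using $c_U=26-k$, the $q$-expansion of $\ch_{U(\chi(\alpha+L))}(q)/\eta(q)^{k-2}$ begins no earlier than $q^{\rho_{\min}-1}$. By Theorem~\ref{thm:rootdim}, $\g(\alpha)\neq\{0\}$ forces $-\langle\alpha,\alpha\rangle/2\geq\rho_{\min}-1$, i.e.\ $\langle\alpha,\alpha\rangle/2\leq 1-\rho_{\min}$, so root norms are bounded above, giving \ref{enum:bkma4}. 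For the regular element, recall $L$ has signature $(k-1,1)$, so $\h_\R$ is Lorentzian; I would pick a timelike $h_\text{reg.}\in H_\R$, chosen generically to avoid the countably many root hyperplanes $\{h:\langle h,\alpha\rangle=0\}$, ensuring $\alpha(h_\text{reg.})\neq0$ for every root and hence $C_\g(h_\text{reg.})=H$. The finiteness then follows from a reverse Cauchy--Schwarz estimate: writing $\alpha=a\,h_\text{reg.}+\beta$ with $\beta\perp h_\text{reg.}$ (so $\beta$ lies in the positive-definite complement), a bound $|\langle\alpha,h_\text{reg.}\rangle|\leq M$ bounds $|a|$, and then the norm bound from \ref{enum:bkma4} bounds $\langle\beta,\beta\rangle$, confining $\alpha$ to a bounded region of $\h_\R$; intersecting with the discrete set $L'$ leaves finitely many roots, which is \ref{enum:bkma5}.

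The verifications above are essentially bookkeeping once the adjoint-action computation is in place; the genuinely hard part lies in condition \ref{enum:bkma6}, which is \emph{not} claimed by this proposition and which requires controlling the signs of inner products of real roots and the commutativity of norm-zero root spaces---precisely where the two technical conjectures of this chapter enter. Within \ref{enum:bkma1}--\ref{enum:bkma5}, the most delicate step is the finiteness in \ref{enum:bkma5}, the only place combining the uniform norm bound, the Lorentzian signature of $L$, and the discreteness of $L'$; care is needed to ensure the generic timelike $h_\text{reg.}$ simultaneously avoids all root hyperplanes while still yielding the reverse Cauchy--Schwarz estimate.
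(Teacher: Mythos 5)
Your proposal is correct and follows essentially the same route as the paper: condition \ref{enum:bkma1} via Proposition~\ref{prop:lainvbil}, the Cartan subalgebra $H=\g(0)$ with adjoint action $[h,x]=\langle h,\alpha\rangle x$ yielding \ref{enum:bkma2} and \ref{enum:bkma3}, the dimension formula of Theorem~\ref{thm:rootdim} yielding the norm bound \ref{enum:bkma4}, and a negative-norm regular element in the Lorentzian space $H_\R\cong L\otimes_\Z\R$ yielding \ref{enum:bkma5}. The only difference is that for \ref{enum:bkma5} you prove directly, via the orthogonal decomposition $\alpha=a\,h_\text{reg.}+\beta$ against a timelike vector, what the paper simply cites from \cite{Bor95a}, Theorem~2, namely that Lorentzian signature on $H_\R$ automatically produces a regular element with the required finiteness.
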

\begin{proof}
Item~\ref{enum:bkma1} is simply the statement of Proposition~\ref{prop:lainvbil}. Recall that the Lie algebra $\g$ is graded by $L'$, i.e.\ $[\g(\alpha),\g(\beta)]\subseteq\g(\alpha+\beta)$, $\alpha,\beta\in L'$. Then
\begin{equation*}
H:=\g(0)
\end{equation*}
is a Lie subalgebra of $\g$. $H$ acts on $\g$ in the adjoint representation as $[x,y]=\langle h,\alpha\rangle y$ for $x=\vac_U\otimes h(-1)1\otimes c\in H$ and $y\in\g(\alpha)$, $\alpha\in L'$. This implies that
\begin{equation*}
C_\g(H)=H,
\end{equation*}
i.e.\ $H$ is self-centralising. $H$ is a Cartan subalgebra of $\g$.

We abuse notation and write $h\in\h$ for the element $\vac_U\otimes h(-1)1\otimes c\in H=\g(0)$, identifying $H$ with $\h$. Since the bilinear form on $H$ is non-degenerate, we can further identify $H\cong\h$ with $\h^*$ via $\alpha(\cdot):=\langle\alpha,\cdot\rangle$ for $\alpha\in\h$. Then
\begin{equation*}
[h,x]=\alpha(h)x
\end{equation*}
for $h\in H$ and $x\in\g(\alpha)$, i.e.\ $\g(\alpha)$ is the root space associated with $\alpha\in L'\setminus\{0\}$. The set of roots $\Phi\subseteq L'\setminus\{0\}$ are those $\alpha$ for which $\g(\alpha)\neq 0$. Then $\g$ decomposes into the direct sum
\begin{equation*}
\g=H\oplus\bigoplus_{\alpha\in\Phi}\g(\alpha)
\end{equation*}
with Cartan subalgebra $H$ and root spaces $\g(\alpha)$, $\alpha\in\Phi$. Theorem~\ref{thm:rootdim} states in particular that $\dim_\C(\g(\alpha))<\infty$ for all $\alpha\in L'\setminus\{0\}$, i.e.\ the root spaces are finite-dimensional. This completes the proof of item~\ref{enum:bkma2}.

We identified $H\cong\h=L\otimes_\Z\C$, which has a natural real subspace $H_\R:=L\otimes_\Z\R$, on which the bilinear form takes real values, and the roots, identified with elements of the lattice $L'$, lie in $H_\R^*$. This shows item~\ref{enum:bkma3}.

Under the identifications presented above the norm of a root $\alpha\in\Phi$ is exactly $\langle\alpha,\alpha\rangle/2$. Then from
\begin{equation*}
\dim_\C(\g(\alpha))=\left[\ch_{U(\chi(\alpha+L))}(q)/\eta(q)^{k-2}\right](-\langle\alpha,\alpha\rangle/2)
\end{equation*}
(see Theorem~\ref{thm:rootdim}) we conclude that $\g(\alpha)=\{0\}$ if $\langle\alpha,\alpha\rangle/2$ is larger than a certain bound since for each $\alpha+L\in L'/L$ the character in the above formula has some smallest exponent and $L'/L$ is finite. This proves \ref{enum:bkma4}.

It is shown in \cite{Bor95a}, Theorem~2 that item~\ref{enum:bkma5} is automatically fulfilled if the bilinear form $(\cdot,\cdot)_\g$ is Lorentzian when restricted to $H$. In the case of complex $\g$ we have to replace $H$ by $H_\R\cong L\otimes_\Z\R$ and can apply the same argument. Indeed, we assumed the lattice $L$ to be Lorentzian, i.e.\ the quadratic form on $L$ (and $L\otimes_\Z\R$) has signature $(k-1,1)$. As regular element we can take any negative-norm vector $h_\text{reg.}\in H_\R$ whose inner product with any root is non-zero. This gives \ref{enum:bkma5}.
\end{proof}

It remains to investigate condition~\ref{enum:bkma6} in the above theorem for $\g=H^1_\text{BRST}(M)$ from the BRST construction. One can either show \ref{enum:bkma6} directly or make use of the following observation due to Borcherds for the Lorentzian case:
\begin{prop}[\cite{Bor95a}, Theorem~2]\label{prop:bor95thm2}
Let $\g$ be a complex Lie algebra satisfying conditions \ref{enum:bkma1} to \ref{enum:bkma4} in the above theorem. Assume that the bilinear form restricted to $H_\R$ is Lorentzian, i.e.\ has signature $(\dim_\C(H)-1,1)$. Then \ref{enum:bkma6} is fulfilled if the following holds: if two roots are positive multiples of the same norm-zero vector, then their root spaces commute.
\end{prop}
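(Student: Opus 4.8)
The plan is to reduce condition~\ref{enum:bkma6} entirely to the geometry of the Lorentzian space $H_\R$, the only genuine input being a reverse Cauchy--Schwarz inequality for Lorentzian forms; the hypothesis of the proposition then disposes of the degenerate (zero inner product) case. Throughout I identify each root $\alpha$ with a vector of $H_\R$ via the non-degenerate bilinear form, so that its norm in the sense of \ref{enum:bkma6} is $\langle\alpha,\alpha\rangle/2$ and ``non-positive norm'' means $\langle\alpha,\alpha\rangle\le 0$, i.e.\ $\alpha$ is timelike or lightlike. Since the form restricted to $H_\R$ is Lorentzian, the set of timelike vectors has two components (time cones), which fixes a time orientation and hence the splitting of roots into positive and negative as in \ref{enum:bkma5}; concretely one may take a vector $h_\text{reg.}$ of negative norm and declare $\alpha$ positive or negative according to the sign of $\langle\alpha,h_\text{reg.}\rangle$. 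I would first record the elementary fact that $h_\text{reg.}^\bot$ is positive-definite, so that every non-zero root of non-positive norm has $\langle\alpha,h_\text{reg.}\rangle\neq 0$ and is therefore unambiguously positive or negative, and that ``both positive or both negative'' is synonymous with ``lying in the same time cone''.

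The core step is the Lorentzian reverse Cauchy--Schwarz inequality. Fixing an orthogonal decomposition $H_\R\cong\R^{k-1}\oplus\R$ in which the last summand is the negative-definite direction, I write $\alpha=(\vec p,s)$ and $\beta=(\vec q,t)$, so that the non-positivity of the norms reads $|\vec p|\le|s|$ and $|\vec q|\le|t|$, while lying in the same time cone means that $s$ and $t$ have the same sign. Then
\begin{equation*}
\langle\alpha,\beta\rangle=\vec p\cdot\vec q-st\le|\vec p|\,|\vec q|-st\le|s|\,|t|-st=0,
\end{equation*}
using ordinary Cauchy--Schwarz, the norm bounds, and $|s|\,|t|=st$ for $s,t$ of equal sign. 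This already gives the first assertion of \ref{enum:bkma6}, namely $\langle\alpha,\beta\rangle\le 0$. Tracking the equality cases, I find that $\langle\alpha,\beta\rangle=0$ forces equality in Cauchy--Schwarz together with $|\vec p|=|s|$ and $|\vec q|=|t|$; hence both $\alpha$ and $\beta$ are lightlike and $\vec q$ is a positive multiple of $\vec p$, from which a short computation yields $\beta=(t/s)\alpha$ with $t/s>0$. In other words, $\alpha$ and $\beta$ are positive multiples of one and the same norm-zero vector.

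It remains to feed this into the hypothesis of the proposition. If two roots $\alpha,\beta$ of non-positive norm lie in the same time cone and satisfy $\langle\alpha,\beta\rangle=0$, then by the previous step they are positive multiples of a common norm-zero vector, so by assumption the root spaces $\g(\alpha)$ and $\g(\beta)$ commute; this is precisely the second assertion of \ref{enum:bkma6}. Combined with the inequality $\langle\alpha,\beta\rangle\le 0$ established above, condition~\ref{enum:bkma6} holds, which completes the argument.

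I expect the only real difficulty to be bookkeeping rather than substance: one must pin down the sign conventions so that the time orientation of $H_\R$ agrees with the definition of positive and negative roots, confirm that a regular element (equivalently a time orientation) exists in the Lorentzian case so that \ref{enum:bkma6} is even meaningful under the weaker hypotheses of this proposition, and verify in the equality case that the scalar $t/s$ is genuinely positive so that the hypothesis on ``positive multiples'' applies verbatim. The mathematical content is concentrated in the reverse Cauchy--Schwarz inequality and its equality analysis; everything else is a translation between the Lie-theoretic and the Lorentzian-geometric formulations.
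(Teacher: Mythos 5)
Your argument is correct. The paper itself gives no proof of this proposition — it is quoted directly from Borcherds' characterization paper [Bor95a, Theorem~2] — and your reverse Cauchy--Schwarz argument, together with the equality analysis showing that orthogonal non-spacelike vectors in the closure of a common time cone must be positive multiples of one lightlike vector, is exactly the argument Borcherds uses there; the bookkeeping points you flag (that $s\neq 0$ for a non-zero root of non-positive norm, and that the Lorentzian hypothesis supplies a regular element so that the positive/negative dichotomy in condition~(6) is well defined) are the only places where care is needed, and you handle them correctly.
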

Proving condition~\ref{enum:bkma6} requires further details of the weak \voa{} $M$ in the matter sector.

\section{Examples}\label{sec:brstex}

Before proceeding with the BRST construction of the ten \BKMa{}s from \cite{Sch04b,Sch06} (see Section~\ref{sec:10bkmas}) we give an overview of all the BRST constructions of \BKMA{}s obtained so far using the above described procedure. These examples are summarised in Table~\ref{tab:BRST}. (There is one construction \cite{CKS07} which we omit in this summary.) All of the following constructions, except for the one from \cite{Car12b}, are carried out over $\R$ by considering both the matter vertex algebra $M$ and the ghost vertex operator superalgebra over $\R$.

\minisec{The Fake Monster Lie Algebra (Rank 26)}

Borcherds constructed the following Lie algebra in \cite{Bor90}, Theorem~3, and originally called it Monster Lie algebra but it is now referred to as the Fake Monster Lie algebra. Recall that $\II_{25,1}$ denotes the unique even, unimodular lattice of Lorentzian signature $(25,1)$ and consider the corresponding weak \voa{}
\begin{equation*}
M:=V_{\II_{25,1}}
\end{equation*}
of central charge $26$. We set $L:=\II_{25,1}$. Then $M$ has the trivial $L'/L$-decomposition
\begin{equation*}
M\cong V_{\{0\}}\otimes V_{\II_{25,1}}.
\end{equation*}
We set $U=V_{\{0\}}$ and clearly $U_1=\{0\}$.

It is shown that $\g=H^1_\text{BRST}(M)$ is a \BKMa{} graded by the lattice $L'=(\II_{25,1})'=\II_{25,1}$ of rank 26, called the \emph{Fake Monster Lie algebra}.

\minisec{The Monster Lie Algebra (Rank 2)}

Constructing the Monster Lie algebra is an important step in Borcherds' proof of the Moonshine conjecture \cite{Bor92}. Let $U:=V^\natural$ be the Moonshine module, i.e.\ the \voa{} of central charge 24 obtained by orbifolding the \voa{} $V_\Lambda$ associated with the Leech lattice $\Lambda$ by a standard lift (of order 2) of the $(-1)$-involution in $\Aut(\Lambda)$. We obtain a weak \voa{} of central charge 26 by taking the tensor product with $V_{\II_{1,1}}$, the vertex algebra associated with the unique even, unimodular lattice of Lorentzian signature $(1,1)$, i.e.\ we consider
\begin{equation*}
M:=V^\natural\otimes V_{\II_{1,1}}.
\end{equation*}
We set $L:=\II_{1,1}$. Then this is also the (trivial) $L'/L$-decomposition of $M$ and we set $U:=V^\natural$ so that $U_1=(V^\natural)_1=\{0\}$.

Borcherds showed that $\g=H^1_\text{BRST}(M)$ is a \BKMa{} of rank 2, graded by $L'=(\II_{1,1})'=\II_{1,1}$, called the \emph{Monster Lie algebra}.

\minisec{The Fake Baby Monster Lie Algebra (Rank 18)}

The following Lie algebra was obtained in a BRST construction in \cite{HS03}. Let $N(A_3^8)$ be the Niemeier lattice with root lattice $A_3^8$ and $V_{N(A_3^8)}$ the corresponding lattice \voa{} of central charge 24. As in the above case we consider the $(-1)$-involution orbifold $\widetilde{V}$ of $V_{N(A_3^8)}$. ($\widetilde{V}$ then corresponds to case 5 in Schellekens' list with affine structure $A_{1,2}^{16}$ \cite{DGM90}.) Again, we set
\begin{equation*}
M:=\widetilde{V}\otimes V_{\II_{1,1}},
\end{equation*}
which is a weak \voa{} of central charge 26.

Let $\Lambda_{16}$ denote the Barnes-Wall lattice (the rank 16 positive-definite, even lattice of discriminant $2^8$ with no norm-one vectors, see e.g.\ \cite{CS99}, Section~4.10) and let $\II_{1,1}(2)$ be the lattice $\II_{1,1}$ with the quadratic form rescaled by 2. (While $\II_{1,1}$ is unimodular, $\II_{1,1}(2)$ is not. Indeed, $(\II_{1,1}(2))'/\II_{1,1}(2)\cong\Z_2^2$, cf.\ Proposition~\ref{prop:rescaled}.) We set $L:=\Lambda_{16}\oplus \II_{1,1}(2)$ with discriminant form $L'/L\cong \Z_2^{10}$ (and some quadratic form).

Moreover, let $V_{E_8}$ be the \voa{} of central charge 8 associated with the lattice $E_8$ and $U:=V_{E_8}^F$ the \fpvosa{} under some group $F\leq\Aut(V_{E_8})$. More specifically $F\cong\Z_2^5$ is a certain subgroup described in \cite{Gri98}. It is also shown that $U_1=(V^F_{E_8})_1=\{0\}$. The rational \voa{} $U$ has group-like fusion with fusion group $F_U\cong\Z_2^{10}$ (as group). In fact, as \fqs{}s $F_U\cong\overline{L'/L}$ (via some isomorphism $\chi$).
$M$ has $L'/L$-decomposition
\begin{equation*}
M\cong\bigoplus_{\gamma+L\in L'/L} V_{E_8}^F(\chi(\gamma+L))\otimes V_{\gamma+L}
\end{equation*}
where $V_{E_8}^F(\chi(\gamma+L))$ is the irreducible $V_{E_8}^F$-module associated with $\chi(\gamma+L)$ in the fusion group of $V_{E_8}^F$.

The Lie algebra $\g=H^1_\text{BRST}(M)$ is a \BKMa{} of rank 18, graded by $L'=(\Lambda_{16}\oplus \II_{1,1}(2))'$ and called the \emph{Fake Baby Monster Lie algebra}.

\minisec{The Baby Monster Lie Algebra (Rank 2)}

The following BRST construction is described in \cite{Hoe03a}. Let $t$ be a $2A$-involution in the Monster group $M\cong \Aut(V^\natural)$ (following the notation in \cite{CCNPW85}). We consider the \fpvosa{} $U:=(V^\natural)^t$, which has fusion group $F_U\cong\Z_2^2$. We set $L:=\II_{1,1}(2)$. Then $F_U\cong \overline{L'/L}$ as \fqs{}s (via some isomorphism $\chi$) and the $L'/L$-graded tensor product
\begin{equation*}
M:=\bigoplus_{\gamma+L\in L'/L} (V^\natural)^t(\chi(\gamma+L))\otimes V_{\gamma+L}
\end{equation*}
is a weak \voa{} of central charge 26. This is the $L'/L$-decomposition of $M$ and clearly $U_1=\{0\}$.

We call $\g=H^1_\text{BRST}(M)$ the \emph{Baby Monster Lie algebra} and it is an $L'=(\II_{1,1}(2))'$-graded \BKMa{} of rank 2.

\minisec{Some \BKMA{} of Rank 14}

In \cite{HS14} the authors describe the construction of a \BKMa{} of rank 14. Consider the positive-definite, even lattice $K=D_{12}^+(2)$ of rank 12 (lattice $D_{12}^+$ with quadratic form rescaled by 2). Let $U:=V_K^+$ be the \fpvosa{} of the corresponding lattice \voa{} $V_K$ of central charge 12 under a standard lift of the $(-1)$-involution. The fusion of $U=V_K^+$ is group-like with fusion group $F_U\cong\Z_2^{10}\times\Z_4^2$.

Set $L:=D_{12}(2)\oplus \II_{1,1}$, an even lattice of rank 14 and Lorentzian signature. Then $F_U\cong\overline{L'/L}$ as \fqs{}s (via some isomorphism $\chi$) and the direct sum
\begin{equation*}
M:=\bigoplus_{\gamma+L\in L'/L} V_K^+(\chi(\gamma+L))\otimes V_{\gamma+L}
\end{equation*}
is a weak \voa{} of central charge 26. This is the $L'/L$-decomposition of $M$ and $U_1=\{0\}$.

The result of the BRST construction is an $L'=(D_{12}(2)\oplus \II_{1,1})'$-graded \BKMa{} $\g=H^1_\text{BRST}(M)$ of rank 14.

\minisec{\BKMA{}s for Fricke Elements in the Monster (Rank 2)}

In \cite{Car12b} the author generalises the BRST constructions in \cite{Bor92} and \cite{Hoe03a}, making use of the cyclic orbifold theory developed in this text and \cite{EMS15}. Let $g$ be any automorphism in $M\cong \Aut(V^\natural)$ of Fricke type, i.e.\ the McKay-Thompson series of $g$ has a pole at zero. (143 of the 194 conjugacy classes in the Monster are Fricke, including $g=\id$ and the conjugacy class $2A$.) We consider the \fpvosa{} $U:=(V^\natural)^g$, which has some fusion group $F_U$ of order $n^2$ where $n=\ord(g)$.

We can find an even, hyperbolic lattice $L$ of genus $\II_{1,1}(\overline{F_U})$, i.e.\ with $F_U\cong\overline{L'/L}$ as \fqs{}s (via some isomorphism $\chi$). Then the $L'/L$-graded tensor product
\begin{equation*}
M:=\bigoplus_{\gamma+L\in L'/L} (V^\natural)^g(\chi(\gamma+L))\otimes V_{\gamma+L}
\end{equation*}
is a weak \voa{} of central charge 26. Clearly, $U_1=\{0\}$.

The BRST construction takes $M$ to an $L'$-graded \BKMa{} $\g=H^1_\text{BRST}(M)$ of rank $2$. The proof of the Borcherds-Kac-Moody property makes use of the Fricke property of $g$ but can probably be carried out in the non-Fricke case as well.

Carnahan uses these results to show that for all Fricke elements in the Monster group, the characters of centralisers acting on the corresponding irreducible twisted modules are Hauptmoduln. This proves the remaining open claims in Norton's generalised Moonshine conjecture \cite{Nor87,Nor01}.

\renewcommand{\arraystretch}{1.35}
\newgeometry{bottom=2cm,top=2cm}
\thispagestyle{empty}
\begin{sidewaystable}
\centering
\begin{tabular}{c|c|c|c|c}
$M$&
\begin{tabular}{c}
$L'/L$-Decomposition\\
$c=(26-k)+k$
\end{tabular}
&
\begin{tabular}{c}
$L$\\
$k=\rk(L)$\\
$\sign(L)$
\end{tabular}
&BKMA $\g=H^1_\text{BRST}(M)$&Ref.\\\hline\hline

\begin{tabular}{c}
$M=V^\natural\otimes V_{\II_{1,1}}$\\
$V^\natural$: $(-1)$-orbifold of $V_{\Lambda}$\\
$\Lambda$: Leech lat.
\end{tabular}
&
\begin{tabular}{c}
$M=V^\natural\otimes V_L$\\
$24+2$
\end{tabular}
&
\begin{tabular}{c}
$\II_{1,1}$\\
2\\
$(1,1)$
\end{tabular}
&
\begin{tabular}{c}
Monster Lie alg.\\
(MA)
\end{tabular}
&\cite{Bor92}\\\hline

\begin{tabular}{c}
$M=\bigoplus_{\gamma\in L'/L} (V^\natural)^t(\gamma)\otimes V_{\gamma+L}$\\
$(V^\natural)^t$: f.p.\ of $V^\natural$ under aut.\ $t$ in class $2A$\\
$(V^\natural)^t(\gamma)$: irr.\ $(V^\natural)^t$-modules, ind.\ by $\gamma\in \overline{L'/L}$
\end{tabular}
&
\begin{tabular}{c}
$M=\bigoplus\limits_{\gamma\in L'/L} (V^\natural)^t(\gamma)\otimes V_{\gamma+L}$\\
$24+2$
\end{tabular}
&
\begin{tabular}{c}
$\II_{1,1}(2)$\\
2\\
$(1,1)$
\end{tabular}
&
\begin{tabular}{c}
Baby Monster Lie alg.\\
(BMA)
\end{tabular}
&\cite{Hoe03a}\\\hline

\begin{tabular}{c}
$M=\bigoplus_{\gamma\in L'/L} (V^\natural)^g(\gamma)\otimes V_{\gamma+L}$\\
$(V^\natural)^g$: f.p.\ of $V^\natural$ under a Fricke aut.\ $g$ of ord.\ $n$\\
$(V^\natural)^g(\gamma)$: irr.\ $(V^\natural)^g$-modules, ind.\ by $\gamma\in \overline{L'/L}$
\end{tabular}
&
\begin{tabular}{c}
$M=\bigoplus\limits_{\gamma\in L'/L} (V^\natural)^g(\gamma)\otimes V_{\gamma+L}$\\
$24+2$
\end{tabular}
&
\begin{tabular}{c}
$L$\\
2\\
$(1,1)$
\end{tabular}
&
\begin{tabular}{c}
BKMAs of rank 2\\
(incl.\ MA and BMA)
\end{tabular}
&\cite{Car12b}\\\hline\hline

\begin{tabular}{c}
$M=V_\Lambda\otimes V_{\II_{1,1}}\cong V_{\II_{25,1}}$
\end{tabular}
&
\begin{tabular}{c}
$M\cong V_{\{0\}}\otimes V_L$\\
$0+26$
\end{tabular}
&
\begin{tabular}{c}
$\II_{25,1}$\\
26\\
$(25,1)$
\end{tabular}
&
\begin{tabular}{c}
Fake Monster Lie alg.\\
(FMA)
\end{tabular}
&\cite{Bor90}\\\hline

\begin{tabular}{c}
$M=\widetilde{V}\otimes V_{\II_{1,1}}$\\
$\widetilde{V}$: $(-1)$-orbifold of $V_{N(A_3^8)}$\\
$N(A_3^8)$: Niemeier lat.\ with root lattice $A_3^8$
\end{tabular}
&
\begin{tabular}{c}
$M\cong\bigoplus\limits_{\gamma\in L'/L} V_{E_8}^F(\gamma)\otimes V_{\gamma+L}$\\
(some $F<\Aut(V_{E_8})$)\\ 
$8+18$
\end{tabular}
&
\begin{tabular}{c}
$\Lambda_{16}\oplus \II_{1,1}(2)$\\
18\\
$(17,1)$
\end{tabular}
&
\begin{tabular}{c}
Fake Baby Monster Lie alg.\\
(FBMA)
\end{tabular}
&\cite{HS03}\\\hline

\begin{tabular}{c}
$M=\bigoplus_{\gamma\in K'/K} V_\Lambda^{\hat\nu}(\gamma)\otimes V_{\gamma+K}$\\
$\nu$ aut.\ in $M_{23}$ of Leech lat.\ $\Lambda$ of sq.f.\ ord.\ $m$\\
$m=1,2,3,5,6,7,11,14,15,23$\\
$K=\II_{1,1}(m)$\\
\end{tabular}
&
\begin{tabular}{c}
$M\cong\bigoplus\limits_{\gamma\in L'/L} V_N^{\hat\rho}(\gamma)\otimes V_{\gamma+L}$\\
($N=\Lambda_\nu$, $\rho=\nu|_N$)\\
$(24-\rk(\Lambda^\nu))+(2+\rk(\Lambda^\nu))$
\end{tabular}
&
\begin{tabular}{c}
$\Lambda^\nu\oplus \II_{1,1}(m)$\\
$\rk(\Lambda^\nu)+2$\\
$(\rk(\Lambda^\nu)+1,1)$
\end{tabular}
&
\begin{tabular}{c}
10 BKMAs \cite{Sch04b,Sch06}\\
of rank 26, 18, 14, 10,\\
10, 8, 6, 6, 6, 4\\
(incl. FMA and FBMA)
\end{tabular}
&
\begin{tabular}{c}
this\\
text
\end{tabular}
\\\hline\hline

\begin{tabular}{c}
$M=\bigoplus_{\gamma\in L'/L} V_K^+(\gamma)\otimes V_{\gamma+L}$\\
$V_K^+$: f.p.\ of $V_K$ under $(-1)$-aut.\\
$K=D_{12}^+(2)$: even pos.-def.\ lat.\ of rank 12\\
$V_K^+(\gamma)$: irr.\ $V_K^+$-modules, ind.\ by $\gamma\in \overline{L'/L}$
\end{tabular}
&
\begin{tabular}{c}
$M=\bigoplus\limits_{\gamma\in L'/L} V_K^+(\gamma)\otimes V_{\gamma+L}$\\
$12+14$
\end{tabular}
&
\begin{tabular}{c}
$D_{12}(2)\oplus \II_{1,1}$\\
14\\
$(13,1)$
\end{tabular}
&BKMA of rank 14&\cite{HS14}\\\hline

\end{tabular}
\caption{Overview of BRST constructions of \BKMa{}s.}
\label{tab:BRST}
\end{sidewaystable}
\restoregeometry
\renewcommand{\arraystretch}{1}

\section{Non-Holomorphic Orbifold Theory}\label{sec:nonholorb}

A necessary ingredient for the BRST construction of the ten \BKMa{}s in Section~\ref{sec:10bkmas} is an orbifold theory for \fpvosa{}s of \emph{non-holomorphic} lattice \voa{}s.

In Chapter~\ref{ch:fpvosa} we developed an orbifold theory for a holomorphic \voa{} $V$ and some finite, cyclic group $G=\langle\sigma\rangle$ of automorphisms of $V$, i.e.\ we studied the representation theory of the \fpvosa{} $V^G$. It is natural to ask whether this can be generalised to non-holomorphic \voa{}s. We will certainly need to assume that $V$ fulfils Assumption~\ref{ass:n}. In the holomorphic case it was essential that we had a complete classification of the $g$-twisted $V$-modules for any $g\in G$: there is one such module up to isomorphism for each $g$ in this case. Indeed, recall that all $V^G$-modules could be obtained as $V^G$-submodules of the $\sigma^i$-twisted modules.

Now let $N$ be a positive-definite, even lattice and $V_N$ the associated lattice \voa{}. Unlike before, we do not require $N$ to be unimodular. $V_N$ fulfils Assumptions~\ref{ass:n}\ref{ass:p}. Moreover, for a standard lift $\hat{\rho}\in\Aut(V_N)$ of a lattice automorphism $\rho\in\Aut(N)$ the $\hat{\rho}$-twisted modules are classified in \cite{BK04} (see Theorem~\ref{thm:4.2}). Only in this section let $\h:=N\otimes_\Z\C$ denote the complexification of $N$ (instead of that of the lattice $L$).

In the following we study the representation theory of $V_N^{\hat{\rho}}$ under certain simplifying assumptions:
\begin{customass}{{\textbf{\textsf{B}}}}\label{ass:b}
Let $N$ be a positive-definite, even lattice and $V_N$ the associated lattice \voa{}. Let $\rho\in\Aut(N)$ have order $m\in\Ns$ and let $\hat{\rho}\in\Aut(V_N)$ be a standard lift of $\rho$. We further assume:
\begin{enumerate}
\item\label{enum:assb1} The power $\hat{\rho}^k$ is a standard lift of $\rho^k$ for all $k\in\N$. This is the case if and only if for even $m$, $\langle\rho^{k/2}\alpha,\alpha\rangle\in 2\Z$ for all $\alpha\in N^{\rho^k}$, $k\in\N$ even. In particular, $\ord(\hat{\rho})=\ord(\rho)=m$ and more generally $\ord(\hat{\rho}^k)=\ord(\rho^k)=m/(m,k)$ for all $k\in\N$.
\item\label{enum:assb2} The lattice automorphism $\rho$ and hence $\rho^i$ for all $i\in\Z_m$ act trivially on $N'/N$, i.e.\ $(N'/N)^{\rho^i}=N'/N$ for all $i\in\Z_m$. This is the case if and only if $(\id-\rho^i)\alpha\in N$ for all $\alpha\in N'$, $i\in\Z_m$.
\item\label{enum:assb3} If $m$ is even, then $\langle(\rho^{m/2}-\id)\alpha,\alpha\rangle\in 2\Z$ for all $\alpha\in N'$.
\item\label{enum:assb4} The vacuum anomaly $\rho_{\rho^i}=0\pmod{(m,i)/m}$ for all $i\in\Z_m$.
\item\label{enum:assb5} If $m$ is even, there is a basis $\{\alpha_1,\ldots,\alpha_r\}$ of $N'$ such that $\{m_1\alpha_1,\ldots,m_r\alpha_r\}$ is a basis of $N$ for suitable $m_1,\ldots,m_r\in\Ns$ and $\langle m_i\alpha_i,\alpha_i\rangle\in 2\Z$ for all $i=1,\ldots,r$.\footnote{For any integral lattice $N$ there is a basis $\{\alpha_1,\ldots,\alpha_r\}$ of $N'$ such that $\{m_1\alpha_1,\ldots,m_r\alpha_r\}$ is a basis of $N$ for some $m_1,\ldots,m_r\in\Ns$.}
\item\label{enum:assb6} If $m$ is even, then for all even $k\in\N$, $\langle\alpha,(\rho^{k/2}-\id)\alpha\rangle\in 2\Z$ for all $\alpha\in(N')^{\rho^k}$. This includes items \ref{enum:assb1} and \ref{enum:assb3} as special cases.
\end{enumerate}
\end{customass}

\minisec{Irreducible Modules}

We begin by determining the irreducible $\rho^i$-twisted $V_N$-modules:
\begin{lem}
Let $N$ and $\hat\rho$ be as in Assumption~\ref{ass:b} with \ref{enum:assb1} and \ref{enum:assb2}. Then for any $i\in\Z_m$ the isomorphism classes $V_{\alpha+N}(\hat\rho^i)$ of irreducible $\hat\rho^i$-twisted $V_N$-modules are parametrised by $\alpha+N\in N'/N$ (see Theorem~\ref{thm:4.2}).
\end{lem}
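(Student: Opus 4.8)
The plan is to invoke the classification theorem for twisted modules of lattice vertex operator algebras, namely Theorem~\ref{thm:4.2} from \cite{BK04}, and check that its hypotheses are met for each power $\hat\rho^i$. The theorem states that for a standard lift $\hat\nu$ of a lattice automorphism $\nu$, the isomorphism classes of irreducible $\hat\nu$-twisted $V_N$-modules are precisely $V_{\lambda+N}(\hat\nu)$ for $\lambda+N\in(N'/N)^\nu$. So first I would apply this theorem not to $\hat\rho$ directly but to each of its powers $\hat\rho^i$, $i\in\Z_m$.

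The crucial point is that the classification requires $\hat\rho^i$ to be a \emph{standard lift} of the lattice automorphism $\rho^i$. This is exactly what Assumption~\ref{ass:b}\ref{enum:assb1} guarantees: it states that $\hat\rho^k$ is a standard lift of $\rho^k$ for all $k\in\N$. (Recall from Proposition~\ref{prop:standardliftpower} and the surrounding discussion that powers of a standard lift need not be standard in general, which is why this is imposed as a hypothesis.) With this in hand, Theorem~\ref{thm:4.2} applies to $\hat\rho^i$ and tells us that the irreducible $\hat\rho^i$-twisted $V_N$-modules are indexed by $(N'/N)^{\rho^i}$, the $\rho^i$-fixed points of the discriminant group.

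Second, I would use Assumption~\ref{ass:b}\ref{enum:assb2}, which asserts that $\rho^i$ acts trivially on $N'/N$ for all $i\in\Z_m$, equivalently $(N'/N)^{\rho^i}=N'/N$. Substituting this identification into the conclusion of Theorem~\ref{thm:4.2} immediately yields that the irreducible $\hat\rho^i$-twisted modules are parametrised by the full group $N'/N$ rather than a proper subgroup, which is precisely the assertion of the lemma. Thus the proof is essentially a two-line composition: apply the classification theorem to $\hat\rho^i$ (legitimate by \ref{enum:assb1}), then simplify the index set using \ref{enum:assb2}.

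I do not expect any genuine obstacle here, since this lemma is a direct specialisation of an already-cited classification result under two of the standing hypotheses; the only thing to be careful about is that both hypotheses must be invoked for \emph{every} power $\hat\rho^i$ and not merely for $\hat\rho$ itself, and that the notation $V_{\alpha+N}(\hat\rho^i)$ in the statement matches the module $V_{\lambda+N}(\hat\nu)$ constructed in Section~\ref{sec:twmodlatvoa} with $\hat\nu=\hat\rho^i$. The genuinely substantive work of this section—computing fusion rules, conformal weights, and the $S$-matrix for the non-holomorphic orbifold—will come in the subsequent lemmata; this first lemma is merely setting up the indexing of the twisted sectors.
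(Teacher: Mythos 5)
Your proposal is correct and follows exactly the paper's own argument: apply Theorem~\ref{thm:4.2} to each power $\hat\rho^i$ (legitimate by item~\ref{enum:assb1} of Assumption~\ref{ass:b}), obtaining the index set $(N'/N)^{\rho^i}$, and then identify this with $N'/N$ via item~\ref{enum:assb2}. Nothing is missing.
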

\begin{proof}
The isomorphism classes of irreducible untwisted $V_N$-modules are parametrised by $N'/N$ according to \cite{Don93} (see Theorem~\ref{thm:3.1}). More generally, in \cite{BK04} the authors classify the irreducible twisted modules for lattice \voa{}s (see Theorem~\ref{thm:4.2}). The result is that the isomorphism classes of irreducible $\hat{\rho}^i$-twisted $V_N$-modules are parametrised by $(N'/N)^{\rho^i}$, assuming that $\hat{\rho}^k$ is a standard lift of $\rho^k$ for all $k\in\N$, which is item~\ref{enum:assb1} of Assumption~\ref{ass:b}. We also assumed that $\rho$ and hence $\rho^i$ for all $i\in\Z_m$ act trivially on the quotient $N'/N$ (see item~\ref{enum:assb2} of Assumption~\ref{ass:b}). Hence for each $i\in\Z_m$ the irreducible $\hat{\rho}^i$-twisted $V_N$-modules are parametrised by $(N'/N)^{\rho^i}=N'/N$ and explicit models for them are constructed in \cite{DL96,BK04}.
\end{proof}

By Theorem~\ref{thm:orb} and Proposition~\ref{prop:thm3.3}, $V_N^{\hat{\rho}}$ satisfies Assumption~\ref{ass:n} and each irreducible $V_N^{\hat{\rho}}$-module appears as submodule of the irreducible $\hat{\rho}^i$-twisted $V_N$-modules for some $i\in\Z_m$. Then $V_N^{\hat{\rho}}$ also satisfies Assumption~\ref{ass:p} as can be easily seen from the construction of the irreducible $\hat{\rho}^i$-twisted $V_N$-modules.

We determine the irreducible $V_N^{\hat{\rho}}$-modules as in Section~\ref{sec:fpvosa} using the Schur-Weyl-type duality \cite{MT04} but without the premise of holomorphicity:
\begin{prop}\label{prop:nonholirrmodparam}
Let $N$ and $\hat\rho$ be as in Assumption~\ref{ass:b} with \ref{enum:assb1} and \ref{enum:assb2}. Then the isomorphism classes of irreducible $V_N^{\hat\rho}$-modules are parametrised by
\begin{equation*}
V_N^{\hat\rho}(\alpha+N,i,j)\quad\text{for}\quad(\alpha+N,i,j)\in N'/N\times\Z_m\times\Z_m.
\end{equation*}
\end{prop}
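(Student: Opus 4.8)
The plan is to combine the general Schur--Weyl-type duality machinery from Section~\ref{sec:fpvosa} with the classification of twisted modules for lattice \voa{}s, adapting the holomorphic argument (Theorem~\ref{thm:irrdec} and Theorem~\ref{thm:irrclass}) to the non-holomorphic setting. First I would invoke the preceding lemma: under Assumption~\ref{ass:b}\ref{enum:assb1}--\ref{enum:assb2}, for each $i\in\Z_m$ the irreducible $\hat\rho^i$-twisted $V_N$-modules are the $V_{\alpha+N}(\hat\rho^i)$ parametrised by $\alpha+N\in N'/N$. Next, since $V_N$ satisfies Assumption~\ref{ass:n}, so does $V_N^{\hat\rho}$ by Theorem~\ref{thm:orb}, and Proposition~\ref{prop:thm3.3} guarantees that every irreducible $V_N^{\hat\rho}$-module occurs as a $V_N^{\hat\rho}$-submodule of some $\hat\rho^i$-twisted $V_N$-module. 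Thus it suffices to decompose each $V_{\alpha+N}(\hat\rho^i)$ as a $V_N^{\hat\rho}$-module and identify the irreducible summands.

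The key step is the $\C[G]$-module decomposition, exactly as in the holomorphic case but now with the extra index $\alpha+N$. For fixed $i$, the cyclic group $G=\langle\hat\rho\rangle$ acts on $V_{\alpha+N}(\hat\rho^i)$ via representations $\phi_{\alpha,i}\colon G\to\Aut_\C(V_{\alpha+N}(\hat\rho^i))$ satisfying the twisted intertwining relation $\phi_{\alpha,i}(g)Y(v,x)\phi_{\alpha,i}(g)^{-1}=Y(gv,x)$; the existence and uniqueness-up-to-scalar of these operators follows as in Proposition~\ref{prop:phi} because $H^2(G,\C^\times)$ is trivial for cyclic $G$. The essential input is that $V_{\alpha+N}(\hat\rho^i)$ is \emph{irreducible} as a twisted $V_N$-module, so by the twisted Schur's lemma (Proposition~\ref{prop:twistedschur}) the $\phi_{\alpha,i}(g)$ are determined up to an $m$-th root of unity. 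One then decomposes $V_{\alpha+N}(\hat\rho^i)$ into eigenspaces $W^{(\alpha,i,j)}$ of $\phi_{\alpha,i}(\hat\rho)$ for the eigenvalues $\xi_m^j$, $j\in\Z_m$, and the duality theorem of Schur--Weyl type (\cite{MT04}, Theorem~2, the general non-holomorphic version) asserts precisely that each such eigenspace is an irreducible $V_N^{\hat\rho}$-module and that two of them are isomorphic if and only if their triples $(\alpha+N,i,j)$ coincide. I would then set $V_N^{\hat\rho}(\alpha+N,i,j):=W^{(\alpha,i,j)}$.

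Finally I would assemble these facts: Proposition~\ref{prop:thm3.3} shows the list is exhaustive, and the Schur--Weyl distinctness statement shows the triples $(\alpha+N,i,j)\in N'/N\times\Z_m\times\Z_m$ form a complete set of pairwise non-isomorphic labels, giving the claimed parametrisation. I expect the main obstacle to lie in verifying that the general (non-holomorphic) form of \cite{MT04} applies cleanly and that the distinctness across different values of $\alpha+N$ and $i$ holds simultaneously: in the holomorphic case (Theorem~\ref{thm:irrdec}) the only twisted modules are the single $V(\sigma^i)$, whereas here different discriminant classes $\alpha+N$ yield genuinely different irreducible twisted modules, and one must check that submodules coming from $V_{\alpha+N}(\hat\rho^i)$ and $V_{\beta+N}(\hat\rho^i)$ with $\alpha+N\neq\beta+N$ are never isomorphic as $V_N^{\hat\rho}$-modules. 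This should follow because the underlying weight gradings (and the action of the Heisenberg subalgebra, which distinguishes the lattice cosets) differ, but it is the point requiring the most care. The remaining items \ref{enum:assb3}--\ref{enum:assb6} of Assumption~\ref{ass:b} are not needed for the parametrisation itself; they will enter only in the subsequent determination of conformal weights, fusion rules and the abelian intertwining algebra structure.
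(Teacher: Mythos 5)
Your proposal is correct and follows essentially the same route as the paper: classify the irreducible $\hat\rho^i$-twisted modules via \cite{BK04}, use Theorem~\ref{thm:orb} and Proposition~\ref{prop:thm3.3} to reduce to $V_N^{\hat\rho}$-submodules of twisted modules, and decompose each $V_{\alpha+N}(\hat\rho^i)$ into eigenspaces of a (proper, since $G$ is cyclic) representation $\phi$ of $G$, invoking the Schur--Weyl-type duality of \cite{MT04} for irreducibility and distinctness. The one point the paper makes more explicit than you do is that item~\ref{enum:assb2} of Assumption~\ref{ass:b} (triviality of the $\rho$-action on $N'/N$) is exactly what guarantees, via Lemma~\ref{lem:autactionlat}, that each individual $V_{\alpha+N}(\hat\rho^i)$ is fixed up to isomorphism by the $G$-action, so that the intertwining operators $\phi(\hat\rho^j)$ exist at all --- in the holomorphic case this stability was automatic because there was only one twisted module per $i$.
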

The definition of the $V_N^{\hat\rho}(\alpha+N,i,j)$ is given in the proof of the proposition, which follows after a lemma.

Recall that since $G=\langle\hat{\rho}\rangle$ is cyclic, $G$ acts on the set $\mathcal{M}(\hat{\rho}^i)$ of irreducible $\hat{\rho}^i$-twisted $V_N$-modules from the right via
\begin{equation*}
Y_{W\cdot h}(v,x):=Y_W(hv,x),
\end{equation*}
where $W\cdot h=W$ as vector space, for $h\in G$ and $W\in\mathcal{M}(\hat\rho^i)$ (see Proposition~\ref{prop:autaction}). We claim that in the lattice case, this action is given naturally by multiplication with $h^{-1}$ from the left on $(N'/N)^{\rho^i}\cong \mathcal{M}(\hat{\rho}^i)$.
\begin{lem}\label{lem:autactionlat}
Let $N$ and $\hat\rho$ be as in Assumption~\ref{ass:b} with \ref{enum:assb1}. For $i\in\Z_m$ let $V_{\alpha+N}(\hat\rho^i)$ denote the up to isomorphism unique irreducible $\hat{\rho}^i$-twisted $V_N$-module indexed by $\alpha+N\in(N'/N)^{\rho^i}$. Then
\begin{equation*}
V_{\alpha+N}(\hat\rho^i)\cdot \hat{\rho}^j\cong V_{\rho^{-j}\alpha+N}(\hat\rho^i)
\end{equation*}
for all $j\in\Z_m$, noting that $\rho^{-j}\alpha+N\in(N'/N)^{\rho^i}$.
\end{lem}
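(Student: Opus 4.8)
The plan is to exploit the explicit construction of the irreducible twisted modules in Section~\ref{sec:twmodlatvoa} together with the functoriality of the $G$-action from Proposition~\ref{prop:autaction}. Write $\nu:=\rho^i$ and set $W:=V_{\alpha+N}(\hat\rho^i)$. First I would record the two cheap points. Since $G=\langle\hat\rho\rangle$ is abelian, Proposition~\ref{prop:autaction} shows that $W\cdot\hat\rho^j$ is again a $\hat\rho^{-j}\hat\rho^i\hat\rho^j=\hat\rho^i$-twisted module, and $\rho^{-j}\alpha+N$ lies in $(N'/N)^{\rho^i}$ because $\rho^i(\rho^{-j}\alpha)-\rho^{-j}\alpha=\rho^{-j}(\rho^i\alpha-\alpha)\in\rho^{-j}N=N$. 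By Assumption~\ref{ass:b}\ref{enum:assb1} all relevant lifts are standard, so the classification in Theorem~\ref{thm:4.2} applies and $W\cdot\hat\rho^j\cong V_{\beta+N}(\hat\rho^i)$ for a unique $\beta+N\in(N'/N)^{\rho^i}$. Everything then reduces to identifying $\beta+N=\rho^{-j}\alpha+N$.

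To pin down $\beta$ I would construct the isomorphism explicitly rather than argue by invariants, since the isomorphism class of a twisted module is not detected by the Heisenberg data alone (the defect space $\C^{d(\nu)}$ carries additional information). The key observation is that any lattice isometry $\sigma$ commuting with $\nu$ preserves the eigenspace decomposition of $\h$, hence fixes $\h_{(0)}$ and commutes with the projection $\pi_\nu$, and maps $N$, $N_\nu$ and the lattices in \eqref{eq:lem4.6} to themselves. For a chosen lift $\hat\sigma$ I would then define a linear bijection $\tau_\sigma\colon V_{\alpha+N}(\hat\nu)\to V_{\sigma\alpha+N}(\hat\nu)$ by letting $\sigma$ act on each tensor factor of the model $M_{\hat\h}(1)[\nu]\otimes\ee_{\pi_\nu(\alpha)}\C[\pi_\nu(N)]\otimes\C^{d(\nu)}$: via $h(n)\mapsto(\sigma h)(n)$ on the twisted Heisenberg part, by relabelling $\ee_{\pi_\nu(\alpha)}\mapsto\ee_{\pi_\nu(\sigma\alpha)}$ together with the phase prescribed by $\hat\sigma$ on the group-algebra part, and via the isomorphism of defect spaces induced by $\sigma$ on the underlying central extension. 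The relation to be verified is the twisted intertwining identity $\tau_\sigma Y_{V_{\alpha+N}(\hat\nu)}(v,x)\tau_\sigma^{-1}=Y_{V_{\sigma\alpha+N}(\hat\nu)}(\hat\sigma v,x)$ for all $v\in V_N$.

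It suffices to check this on the two families of generators $h(-1)1$, $h\in\h$, and $1\otimes\ee_\gamma$, $\gamma\in N$. On the Heisenberg generators it is immediate, since $\tau_\sigma h(x)\tau_\sigma^{-1}=(\sigma h)(x)$ while $\hat\sigma(h(-1)1)=(\sigma h)(-1)1$. Once the relation is established I specialise to $\sigma=\rho^{-j}$ and $\hat\sigma=\hat\rho^{-j}$, so that $\tau_{\rho^{-j}}$ maps $V_{\alpha+N}(\hat\nu)$ to $V_{\rho^{-j}\alpha+N}(\hat\nu)$ and, after replacing $v$ by $\hat\rho^j v$, satisfies $\tau_{\rho^{-j}}Y_{V_{\alpha+N}(\hat\nu)}(\hat\rho^j v,x)\tau_{\rho^{-j}}^{-1}=Y_{V_{\rho^{-j}\alpha+N}(\hat\nu)}(v,x)$. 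By the definition of $W\cdot\hat\rho^j$ in Proposition~\ref{prop:autaction} the inner operator on the left is exactly $Y_{W\cdot\hat\rho^j}(v,x)$, so $\tau_{\rho^{-j}}$ is the desired $V_N$-module isomorphism $W\cdot\hat\rho^j\xrightarrow{\sim}V_{\rho^{-j}\alpha+N}(\hat\rho^i)$, which proves the lemma.

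The main obstacle is the verification of the intertwining relation on the lattice generators $1\otimes\ee_\gamma$: here the operators $Y_\gamma(x)$ involve the cocycle $\eps$, the function $u_j$ defining the standard lift $\hat\rho^j$ (Section~\ref{sec:autlatvoa}), and, on the twisted module, the action of the central extension on the defect space $\C^{d(\nu)}$ (Section~\ref{sec:twmodlatvoa}). One has to track how $\sigma$ intertwines all of these data — using that $\sigma$ is an isometry commuting with $\nu$, that $\hat\sigma$ is a standard lift, and that $\sigma$ respects the alternating form $C_N$ underlying the central extension — so that the resulting phase factors agree on both sides. This bookkeeping on $\C^{d(\nu)}$ is the technically delicate part, whereas the remainder of the argument is essentially formal.
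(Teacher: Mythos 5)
Your overall strategy — reduce to identifying the index $\beta+N$ with $W\cdot\hat\rho^j\cong V_{\beta+N}(\hat\rho^i)$, then pin down $\beta$ — matches the paper, but you take a much heavier route to the identification, and the route is both incompletely executed and motivated by a false premise. The paper's proof simply evaluates an invariant: the zero modes $h(0)$ of the Heisenberg vectors $h(-1)1\otimes\ee_0$, $h\in\h_{(0)}$, act on the vacuum space of $V_{\beta+N}(\hat\rho^i)$ with eigenvalues $\langle h,\mu\rangle$ for $\mu$ in the coset $\pi_{\rho^i}(\beta)+\pi_{\rho^i}(N)$, and this coset determines the index $\beta+N$ within $(N'/N)^{\rho^i}$ (this is Dong's argument from \cite{Don93}, Section~3, in the untwisted case, and carries over verbatim to the twisted models of \cite{BK04}). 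Passing from $Y_W(\cdot,x)$ to $Y_{W\cdot\hat\rho^j}(\cdot,x)=Y_W(\hat\rho^j\cdot,x)$ replaces $h$ by $\rho^jh$, hence the eigenvalue $\langle\rho^jh,\beta\rangle=\langle h,\rho^{-j}\beta\rangle$, and the label $\rho^{-j}\beta+N$ can be read off with no further work. Your claim that ``the isomorphism class of a twisted module is not detected by the Heisenberg data alone (the defect space $\C^{d(\nu)}$ carries additional information)'' is not correct: by Theorem~\ref{thm:4.2} the irreducible $\hat\rho^i$-twisted modules are parametrised by $(N'/N)^{\rho^i}$, and the defect space $\C^{d(\nu)}$ depends only on $\nu=\rho^i$, not on the coset, so it distinguishes nothing; the coset of $\pi_{\rho^i}(\alpha)$ visible in the Heisenberg zero-mode spectrum is precisely the complete invariant.

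Beyond the misjudged motivation, your proposal as written has a genuine gap: the entire content of your argument is the verification of the intertwining relation $\tau_\sigma Y(v,x)\tau_\sigma^{-1}=Y(\hat\sigma v,x)$ on the generators $1\otimes\ee_\gamma$, where the cocycle $\eps$, the function $u$ defining the lift, and the projective action of the central extension on $\C^{d(\nu)}$ all enter — and you explicitly defer exactly this computation as ``bookkeeping''. It is not routine: one must check that a compatible map of the defect representations exists and that the accumulated phases on both sides agree, and without that check no isomorphism has been produced. So either carry out that verification in full, or replace the explicit-isomorphism construction by the short invariant argument above, which needs none of it.
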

\begin{proof}
We first consider the untwisted case. Let $W$ be some irreducible $V_N$-module. Then $W$ is isomorphic to $V_{\alpha+N}$ for some $\alpha+N\in N'/N$. The isomorphism class of $W$ can be identified by the action of $(h(-1)1\otimes\ee_0)_0=h(0)$, the zeroth mode of the module vertex operator $Y_W(h(-1)1\otimes\ee_0,x)$, for $h\in\h$ on the \emph{vacuum space} of $W$ (see \cite{Don93}, Section~3). A vector $h\in\h$ acts via $Y_W(\cdot,x)$ on the vacuum space of $W$ as multiplication by $\langle h,\beta\rangle$ for some $\beta\in N'$ and in this case $W$ is isomorphic to $V_{\beta+N}$. Then $h\in\h$ acts via $Y_{W\cdot\hat{\rho}}(\cdot,x)=Y_W(\hat{\rho}\,\cdot,x)$ on the vacuum space of $W$ as multiplication by $\langle\rho h,\beta\rangle=\langle h,\rho^{-1}\beta\rangle$, i.e.\ $W\cdot\hat{\rho}=(W,Y_W\cdot\hat{\rho})$ is isomorphic to $V_{\rho^{-1}\beta+N}$. This proves the assertion in the untwisted case. The same argument can be applied to the twisted case, using the explicit description of the twisted modules in \cite{BK04}.
\end{proof}

\begin{proof}[Proof of Proposition~\ref{prop:nonholirrmodparam}]
We assumed that $\rho$ acts trivially on $N'/N$ (see item~\ref{enum:assb2} of Assumption~\ref{ass:b}) and hence the action of $G$ on $\mathcal{M}(\hat{\rho}^i)$ is trivial for all $i\in\Z_m$. Then there is a projective representation $\phi_W$ of $G$ on $W=V_{\alpha+N}(\hat\rho^i)$ as a vector space such that
\begin{equation*}
\phi_W(\hat{\rho}^j)Y_W(v,x)\phi_W^{-1}(\hat{\rho}^j)=Y_W(\hat{\rho}^jv,x)
\end{equation*}
and since $G$ is cyclic, we may assume that $\phi_W$ is a proper representation, unique up to multiplication of $\phi_W(\hat{\rho})$ by an $m$-th root of unity. The decomposition of $W$ into eigenspaces of $\phi_W(\hat{\rho})$ yields irreducible $V_N^{\hat\rho}$-modules. Let $V_N^{\hat\rho}(\alpha+N,i,j)$ denote the eigenspace of $\phi_W(\hat{\rho})$ in $W=V_{\alpha+N}(\hat\rho^i)$ corresponding to the eigenvalue $\xi_m^j$.
\end{proof}

\minisec{Conformal Weights}

The explicit description of the twisted modules in \cite{DL96,BK04} reveals:
\begin{lem}
Let $N$ and $\hat\rho$ be as in Assumption~\ref{ass:b} with \ref{enum:assb1} to \ref{enum:assb3}. Then for $i\in\Z_m$ the conformal weight of the irreducible $\hat{\rho}^i$-twisted $V_N$-module $V_{\alpha+N}(\hat\rho^i)$ indexed by $\alpha+N\in N'/N$ is given by
\begin{equation*}
\rho_{\rho^i}+\frac{1}{2}\langle\alpha,\alpha\rangle\pmod{(m,i)/m}
\end{equation*}
where $\rho_{\rho^i}\in\Q$ is the vacuum anomaly of the $\hat{\rho}^i$-twisted $V_N$-modules (see \eqref{eq:vacanomaly}).
\end{lem}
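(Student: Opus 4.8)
The statement to be proved computes the conformal weight of the irreducible $\hat\rho^i$-twisted $V_N$-module $V_{\alpha+N}(\hat\rho^i)$ indexed by $\alpha+N\in N'/N$, claiming it equals $\rho_{\rho^i}+\tfrac12\langle\alpha,\alpha\rangle$ modulo $(m,i)/m$. The natural approach is to read the conformal weight directly off the explicit construction of the twisted module in \cite{DL96,BK04}, as recalled in Section~\ref{sec:twmodlatvoa}. Recall from that section that for a standard lift the irreducible $\hat\rho^i$-twisted module is realised as
\begin{equation*}
V_{\alpha+N}(\hat\rho^i)=M_{\hat\h}(1)[\rho^i]\otimes\ee_{\pi_{\rho^i}(\alpha)}\C[\pi_{\rho^i}(N)]\otimes\C^{d(\rho^i)},
\end{equation*}
where $\pi_{\rho^i}$ is the orthogonal projection onto the fixed space $\h_{(0)}$ of $\rho^i$. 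The remark following Theorem~\ref{thm:4.2} already records that the conformal weight of $V_{\alpha+N}(\hat\rho^i)$ is $\rho_{\rho^i}$ plus the norm of a shortest vector in the lattice coset $\pi_{\rho^i}(\alpha+N)$. So the first step is to invoke this remark: the vacuum anomaly $\rho_{\rho^i}$ (formula \eqref{eq:vacanomaly}) is the contribution of the lowest-weight state $1\in M_{\hat\h}(1)[\rho^i]$, and the grading of the group-algebra factor contributes the norm $\tfrac12\langle\beta,\beta\rangle$ where $\beta$ is a representative of $\pi_{\rho^i}(\alpha+N)$.

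The second step is to reconcile the ``shortest vector in $\pi_{\rho^i}(\alpha+N)$'' with the claimed expression $\tfrac12\langle\alpha,\alpha\rangle$ modulo $(m,i)/m$. First I would note that the weight grading of $V_{\alpha+N}(\hat\rho^i)$ lies in $\rho(V_{\alpha+N}(\hat\rho^i))+((m,i)/m)\N$ since $\hat\rho^i$ has order $m/(m,i)$ (Assumption~\ref{ass:b}\ref{enum:assb1}); thus the conformal weight is only relevant modulo $(m,i)/m$, which is precisely the congruence in the statement. The key identity is that for any $\alpha\in N'$,
\begin{equation*}
\tfrac12\langle\pi_{\rho^i}(\alpha),\pi_{\rho^i}(\alpha)\rangle\equiv\tfrac12\langle\alpha,\alpha\rangle\pmod{(m,i)/m},
\end{equation*}
which I would establish by writing $\alpha=\pi_{\rho^i}(\alpha)+\pi_{\rho^i}^\bot(\alpha)$ and using orthogonality of the two projections to get $\tfrac12\langle\alpha,\alpha\rangle=\tfrac12\langle\pi_{\rho^i}(\alpha),\pi_{\rho^i}(\alpha)\rangle+\tfrac12\langle\pi_{\rho^i}^\bot(\alpha),\pi_{\rho^i}^\bot(\alpha)\rangle$, so that one must show the norm of the coinvariant part $\pi_{\rho^i}^\bot(\alpha)$ lies in $((m,i)/m)\Z$. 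Here Assumptions~\ref{ass:b}\ref{enum:assb3} and \ref{enum:assb6} (the integrality conditions $\langle(\rho^{i}-\id)\alpha,\alpha\rangle\in2\Z$ type statements) are exactly what is needed to control $\langle\pi_{\rho^i}^\bot(\alpha),\pi_{\rho^i}^\bot(\alpha)\rangle$ modulo the appropriate denominator. A parallel argument shows the shortest-vector ambiguity (replacing $\alpha$ by $\alpha+\gamma$ for $\gamma\in N$) only shifts the norm by an integer, hence does not affect the answer modulo $(m,i)/m$.

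\textbf{Main obstacle.} The genuinely delicate step is the second one: controlling $\tfrac12\langle\pi_{\rho^i}^\bot(\alpha),\pi_{\rho^i}^\bot(\alpha)\rangle$ modulo $(m,i)/m$ for a general $\alpha\in N'$, rather than just $\alpha\in N$. On the sublattice $N$ one can appeal directly to evenness, but for dual-lattice cosets one must use the projection formula $\pi_{\rho^i}^\bot=\id-\pi_{\rho^i}$ together with $\pi_{\rho^i}(\alpha)=\tfrac1{m/(m,i)}\sum_k\rho^{ik}\alpha$ and the various parity hypotheses of Assumption~\ref{ass:b}, tracking how the factor $1/(m/(m,i))=(m,i)/m$ enters. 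I expect the bookkeeping of cross-terms $\langle\rho^{ik}\alpha,\rho^{il}\alpha\rangle$ and pairing them off using $\rho$-invariance of $\langle\cdot,\cdot\rangle$ (as in the lemma preceding Proposition~\ref{prop:standardliftpower}) to be where the assumptions \ref{enum:assb3} and \ref{enum:assb6} are consumed, and this is the part requiring care rather than the invocation of \cite{DL96,BK04}, which does the structural work.
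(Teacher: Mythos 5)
Your proposal follows essentially the same route as the paper's proof: read the weight grading off the explicit construction of \cite{DL96,BK04}, reduce the claim to showing $\tfrac12\langle\pi_{\rho^i}^\bot(\alpha),\pi_{\rho^i}^\bot(\alpha)\rangle\in((m,i)/m)\Z$ for $\alpha\in N'$, expand the projection as an average over the $\rho^{ij}$ and pair off the cross-terms using $\rho$-invariance of the form. The only correction is in the bookkeeping of hypotheses: the integrality of $\langle(\rho^{ij}-\id)\alpha,\alpha\rangle$ for $\alpha\in N'$ comes from item~\ref{enum:assb2} (with item~\ref{enum:assb3} handling the unpaired middle term when $m/(m,i)$ is even, and item~\ref{enum:assb1} the $\lambda\in N$ contributions), not from item~\ref{enum:assb6}, which is not among the lemma's hypotheses.
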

\begin{proof}
The weights of the homogeneous elements in $V_{\alpha+N}(\hat\rho^i)$ are in
\begin{equation*}
\rho_{\rho^i}+\frac{1}{2}\langle\pi_{\rho^i}(\alpha+\lambda),\pi_{\rho^i}(\alpha+\lambda)\rangle+\frac{(m,i)}{m}\Ns
\end{equation*}
where $\lambda$ runs through $N$ and $\pi_{\rho^i}=((m,i)/m)\sum_{j=0}^{m/(m,i)-1}\rho^{ij}$ is the projection of $\h=N\otimes_\Z\C$ onto $N^{\rho^i}\otimes_\Z\C$. The second term is
\begin{align*}
&\frac{1}{2}\langle\pi_{\rho^i}(\alpha+\lambda),\pi_{\rho^i}(\alpha+\lambda)\rangle\\
&=\frac{1}{2}\langle\pi_{\rho^i}(\alpha),\pi_{\rho^i}(\alpha)\rangle+\langle\pi_{\rho^i}(\alpha),\pi_{\rho^i}(\lambda)\rangle+\frac{1}{2}\langle\pi_{\rho^i}(\lambda),\pi_{\rho^i}(\lambda)\rangle.
\end{align*}
In this decomposition the second term is in $((m,i)/m)\Z$ since $N'$ and $N$ pair integrally and so is the third term since $N$ is even and because $\langle\rho^{m/2}\lambda,\lambda\rangle\in 2\Z$ (part of item~\ref{enum:assb1} in Assumption~\ref{ass:b}). Then modulo~$(m,i)/m$ the conformal weight of $V_{\alpha+N}(\hat\rho^i)$ is given by $\rho_{\rho^i}$ plus
\begin{equation*}
\frac{1}{2}\langle\pi_{\rho^i}(\alpha),\pi_{\rho^i}(\alpha)\rangle\stackrel{!}{=}\frac{1}{2}\langle\alpha,\alpha\rangle.
\end{equation*}
The last equality is true modulo~$(m,i)/m$ and once we have it established, the proof of the statement is complete. Consider
\begin{align*}
&\frac{1}{2}\langle\pi_{\rho^i}(\alpha),\pi_{\rho^i}(\alpha)\rangle-\frac{1}{2}\langle\alpha,\alpha\rangle\\
&=\frac{1}{2}\frac{(m,i)}{m}\sum_{j=0}^{m/(m,i)-1}\langle(\rho^{ij}-\id)\alpha,\alpha\rangle=\frac{1}{2}\frac{(m,i)}{m}\sum_{j=1}^{m/(m,i)-1}\langle(\rho^{(m,i)j}-\id)\alpha,\alpha\rangle\\
&=\frac{1}{2}\frac{(m,i)}{m}\left(\left(\langle(\rho^{(m,i)}-\id)\alpha,\alpha\rangle+\langle(\rho^{m-(m,i)}-\id)\alpha,\alpha\rangle\right)\right.\\
&\quad+\left(\langle(\rho^{2(m,i)}-\id)\alpha,\alpha\rangle+\langle(\rho^{m-2(m,i)}-\id)\alpha,\alpha\rangle\right)+\cdots\\
&\quad\left.+\langle(\rho^{m/2}-\id)\alpha,\alpha\rangle\right)
\end{align*}
where the last term only occurs if $m/(m,i)$ is even. In each of the pairs in the sum both terms are the same and in $\Z$ by item~\ref{enum:assb2} of Assumption~\ref{ass:b} and hence each pair is in $2\Z$. Modulo $(m,i)/m$ only the last single term remains, if at all, and
\begin{equation*}
\frac{1}{2}\frac{(m,i)}{m}\langle(\rho^{m/2}-\id)\alpha,\alpha\rangle\in\frac{(m,i)}{m}\Z
\end{equation*}
by item~\ref{enum:assb3} of Assumption~\ref{ass:b}.
\end{proof}
If we further assume that
\begin{equation*}
\rho_{\rho^i}=0\pmod{(m,i)/m}
\end{equation*}
(item~\ref{enum:assb4} of Assumption~\ref{ass:b}), then
\begin{equation*}
\rho(V_{\alpha+N}(\hat\rho^i))\in\frac{1}{2}\langle\alpha,\alpha\rangle+\frac{(m,i)}{m}\Z.
\end{equation*}
This implies:
\begin{prop}\label{prop:nonholconfweights}
Let $N$ and $\hat\rho$ be as in Assumption~\ref{ass:b} with \ref{enum:assb1} to \ref{enum:assb4}. Then we can choose the representations $\phi_{V_{\alpha+N}(\hat{\rho}^i)}$ such that the conformal weight of the $V_N^{\hat{\rho}}$-module $V_N^{\hat{\rho}}(\alpha+N,i,j)$ obeys
\begin{equation*}
\rho(V_N^{\hat{\rho}}(\alpha+N,i,j))\in\frac{\langle\alpha,\alpha\rangle}{2}+\frac{ij}{m}+\Z
\end{equation*}
for all $\alpha+N\in N'/N$, $i,j\in\Z_m$.
\end{prop}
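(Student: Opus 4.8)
The plan is to exploit the freedom in choosing the representations $\phi_i=\phi_{V_{\alpha+N}(\hat{\rho}^i)}$ to fix the conformal weight of each $V_N^{\hat{\rho}}$-module modulo~$1$. The starting point is the previous lemma, which (under items~\ref{enum:assb1} to~\ref{enum:assb4}) gives
\begin{equation*}
\rho(V_{\alpha+N}(\hat\rho^i))\in\frac{1}{2}\langle\alpha,\alpha\rangle+\frac{(m,i)}{m}\Z,
\end{equation*}
i.e.\ the conformal weight of the twisted module is pinned down only modulo~$(m,i)/m$. The key observation is that $V_N^{\hat{\rho}}(\alpha+N,i,j)$ is by definition the $\xi_m^j$-eigenspace of $\phi_i(\hat\rho)$ inside $V_{\alpha+N}(\hat\rho^i)$, so its weights form a coset of $\Z$ that depends on how the eigenvalues of $\phi_i(\hat\rho)$ are distributed across the weight grading. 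This is exactly the situation already analysed in the holomorphic case in Section~\ref{sec:confweights}.

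First I would mimic the proof of Lemma~\ref{lem:phi2}: write $\tilde{\imath}=i/(i,m)$ and $\tilde{m}=m/(i,m)$, note that $\hat\rho^{(i,m)}=\widehat{\rho^{(i,m)}}$ generates an order-$\tilde{m}$ automorphism group acting on $V_{\alpha+N}(\hat\rho^i)$ with $(\tilde\imath,\tilde{m})=1$, and show that one can choose
\begin{equation*}
\phi_i(\hat\rho)^{(i,m)}=\e^{(2\pi\i)\tilde{\imath}^{-1}(L_0-\rho(V_{\alpha+N}(\hat\rho^i)))},
\end{equation*}
where $\tilde{\imath}^{-1}$ is the inverse of $\tilde{\imath}$ modulo~$\tilde{m}$. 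The argument is the same as in Lemma~\ref{lem:phi2}: since $L_0-\rho(V_{\alpha+N}(\hat\rho^i))$ has eigenvalues in $((i,m)/m)\Z$, the right-hand side is an $\tilde{m}$-th root of unity and satisfies the intertwining relation $\phi_i(\hat\rho)Y(v,x)\phi_i(\hat\rho)^{-1}=Y(\hat\rho v,x)$ up to the scalar freedom, which we absorb by rescaling $\phi_i(\hat\rho)$ by a suitable $m$-th root of unity. With this choice in hand, the computation in Proposition~\ref{prop:weight} carries over verbatim to show that the $\xi_m^j$-eigenspace $V_N^{\hat{\rho}}(\alpha+N,i,j)$ has weights in $\rho(V_{\alpha+N}(\hat\rho^i))+ij/m+\Z$. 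Combining this with the lemma's value $\rho(V_{\alpha+N}(\hat\rho^i))\equiv\tfrac12\langle\alpha,\alpha\rangle \pmod{(m,i)/m}$ yields
\begin{equation*}
\rho(V_N^{\hat{\rho}}(\alpha+N,i,j))\in\frac{\langle\alpha,\alpha\rangle}{2}+\frac{ij}{m}+\Z,
\end{equation*}
since the ambiguity in $\rho(V_{\alpha+N}(\hat\rho^i))$ was only modulo~$(m,i)/m$ and, after multiplication by the eigenvalue index, the coset in $\Q/\Z$ becomes well-defined.

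The main obstacle I anticipate is bookkeeping the interplay between the three congruences: the vacuum anomaly condition $\rho_{\rho^i}\equiv 0\pmod{(m,i)/m}$ (item~\ref{enum:assb4}), the evenness refinements in items~\ref{enum:assb3},\ref{enum:assb5},\ref{enum:assb6}, and the precise representative $\tfrac12\langle\alpha,\alpha\rangle$ which a priori only lives in $\tfrac12\Z$ rather than in $\Z$. I would need to verify that once the eigenvalue contribution $ij/m$ is added, the total weight is well-defined modulo~$1$ and independent of the remaining freedom in the choice of $\phi_i$ (multiplication of $\phi_i(\hat\rho)$ by an $m$-th root of unity shifts $j$, so consistency across all $j$ must be checked). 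The even-order case is the delicate one, exactly as in Section~\ref{sec:confweights}, because the half-integral contribution $\tfrac12\langle\alpha,\alpha\rangle$ and the $\rho^{m/2}$-terms must cancel correctly; this is where items~\ref{enum:assb3} and~\ref{enum:assb6} of Assumption~\ref{ass:b} are consumed. Everything else is a direct transcription of the holomorphic arguments, so I expect no genuinely new difficulty beyond ensuring these divisibility hypotheses are invoked at the right moments.
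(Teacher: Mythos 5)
Your proposal follows essentially the same route as the paper: transplant Lemma~\ref{lem:phi2} to fix $\phi_i(\hat\rho)^{(i,m)}$ as an exponential of the shifted grading operator, then run the eigenspace computation of Proposition~\ref{prop:weight}. The one substantive difference is the normalisation in the exponent: the paper sets $\phi_W(\hat\rho)^{(i,m)}=\e^{(2\pi\i)[i/(i,m)]^{-1}(L_0^W-\langle\alpha,\alpha\rangle/2)}$, using $\langle\alpha,\alpha\rangle/2$ directly (legitimate because the preceding lemma together with item~\ref{enum:assb4} puts the eigenvalues of $L_0^W-\langle\alpha,\alpha\rangle/2$ in $((m,i)/m)\Z$), so the $\xi_m^j$-eigenspace lands in $\langle\alpha,\alpha\rangle/2+ij/m+\Z$ with no further adjustment. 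With your normalisation against $\rho(V_{\alpha+N}(\hat\rho^i))$ you instead land in $\rho(V_{\alpha+N}(\hat\rho^i))+ij/m+\Z$, and passing to the stated coset requires absorbing the discrepancy $k(m,i)/m$ into a relabelling $j\mapsto j+j_0$ (possible since $(i,m)$ divides $k(m,i)$, so $ij_0\equiv k(m,i)\pmod{m}$ is solvable); you flag this but do not carry it out, and the paper's choice of exponent makes the issue vanish. Two smaller points: items~\ref{enum:assb5} and \ref{enum:assb6} are not hypotheses here and are not consumed in this proof, and $\langle\alpha,\alpha\rangle/2$ lives in $\Q$ (as $\alpha\in N'$), so the coset modulo~$1$ is unproblematic.
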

\begin{proof}
Similar to Lemma~\ref{lem:phi2} in the holomorphic case we can show that the representation $\phi_W$ on $W=V_{\alpha+N}(\hat\rho^i)$ can be chosen such that
\begin{equation*}
\phi_W(\hat\rho)^{(i,m)}=\e^{(2\pi\i)[i/(i,m)]^{-1}(L_0^W-\langle\alpha,\alpha\rangle/2)}
\end{equation*}
where $[i/(i,m)]^{-1}$ is the inverse of $i/(i,m)$ modulo~$m/(i,m)$. Note that the eigenvalues of $L^W_0-\langle\alpha,\alpha\rangle/2$ lie in $((m,i)/m)\Z$. Following the proof of Proposition~\ref{prop:weight} we see that this implies the statement.
\end{proof}
Moreover, we can and will assume in the following that the representation for $\alpha+N=0+N$ and $i=0$ is naturally given by $\phi_{V_N}(\hat\rho^j)=\hat\rho^j$ for $j\in\Z_m$ (cf.\ Remark~\ref{rem:phi0}).

\minisec{Twisted Modular Invariance}
For the proof of the following results we make use of Dong, Li and Mason's twisted modular invariance (see Theorem~1.3 in \cite{DLM00}). In contrast to Theorem~\ref{thm:1.4} we do not assume holomorphicity. Let $V_N$ and $\rho$ be as in Assumption~\ref{ass:b} with \ref{enum:assb1} and \ref{enum:assb2}. For $\alpha+N\in N'/N$ and $i,j\in\Z_m$ we consider the trace functions
\begin{equation*}
T_{\alpha+N}(v,i,j,\tau):=\tr_{V_{\alpha+N}(\hat{\rho}^i)}o(v)\phi_{V_{\alpha+N}(\hat{\rho}^i)}(\hat{\rho}^j)q_\tau^{L_0-c/24}
\end{equation*}
where $c=\rk(N)$ is the central charge of $V_N$. The twisted modular invariance result implies that the trace functions $T_{\alpha+N}(v,i,j,\tau)$ converge to holomorphic functions on $\H$ and
\begin{equation}\label{eq:dlmnonhol}
(c\tau+d)^{-k}T_{\alpha+N}(v,i,j,M.\tau)=\sum_{\beta+N\in N'/N}\sigma_{\alpha+N,\beta+N}(i,j,M)T_{\beta+N}(v,(i,j)M,\tau)
\end{equation}
for each $M=\left(\begin{smallmatrix}a&b\\c&d\end{smallmatrix}\right)\in\SLZ$ and for each homogeneous $v\in (V_N)_{[k]}$ with respect to Zhu's second grading, $k\in\N$, with constants $\sigma_{\alpha+N,\beta+N}(i,j,M)\in\C$. The sum runs over all $\hat\rho^{ai+cj}$-twisted, $\hat\rho^{bi+dj}$-stable $V_N$-modules. The latter property is always fulfilled since $\rho$ acts trivially on $N'/N$ and hence we sum over all $\hat\rho^{ai+cj}$-twisted $V_N$-modules and these are indexed by $N'/N$. In order to apply the twisted modular invariance result we use that $V_N$ is $g$-rational for all $g\in G=\langle\hat{\rho}\rangle$ (see Lemma~4.2 in \cite{ADJR14}).

It follows directly from the definition of the irreducible $V_N^{\hat{\rho}}$-modules that
\begin{equation*}
T_{\alpha+N}(v,i,j,\tau)=\sum_{k\in\Z_m}\xi^{jk}T_{V_N^{\hat{\rho}}(\alpha+N,i,k)}(v,\tau)
\end{equation*}
and reversely
\begin{equation*}
T_{V_N^{\hat{\rho}}(\alpha+N,i,j)}(v,\tau)=\frac{1}{n}\sum_{k\in\Z_m}\xi^{-jk}T_{\alpha+N}(v,i,k,\tau)
\end{equation*}
for all $\alpha+N\in N'/N$ and $i,j\in\Z_m$. Zhu's modular invariance for $V_N^{\hat{\rho}}$ reads
\begin{equation*}
(c\tau+d)^{-k}T_{V_N^{\hat{\rho}}(\alpha+N,i,j)}(v,M.\tau)=\sum_{\substack{\beta+N\in N'/N,\\l,k\in\Z_m}}\rho_{V_N^{\hat{\rho}}}(M)_{(\alpha+N,i,j),(\beta+N,l,k)}T_{V_N^{\hat{\rho}}(\beta+N,l,k)}(v,\tau).
\end{equation*}
This allows us to relate the constants $\sigma_{\alpha+N,\beta+N}(i,j,M)$ to Zhu's representation $\rho_{V_N^{\hat{\rho}}}(M)$. In particular, we study the $S$-transformation. We set
\begin{equation*}
\lambda_{\alpha+N,\beta+N,i,j}:=\sigma_{\alpha+N,\beta+N}(i,j,S)
\end{equation*}
and analogously to Lemma~\ref{lem:step1} relate these constants to the $S$-matrix of $V_N^{\hat{\rho}}$. We obtain
\begin{equation}\label{eq:lemstep1}
\S_{(\alpha+N,i,j),(\beta+N,l,k)}=\frac{1}{m}\xi_m^{-(lj+ik)}\lambda_{\alpha+N,\beta+N,i,l}
\end{equation}
for all $\alpha+N,\beta+N\in N'/N$ and $i,j,k,l\in\Z_m$.

\minisec{Simple Currents}
In the following we show that $V_N^{\hat{\rho}}$ has group-like fusion.
\begin{prop}
Let $V_N$ and $\rho$ be as in Assumption~\ref{ass:b} with \ref{enum:assb1} and \ref{enum:assb2}. Then all irreducible $V_N^{\hat{\rho}}$-modules are simple currents, i.e.\ $V_N^{\hat{\rho}}$ satisfies Assumption~\ref{ass:sn}.
\end{prop}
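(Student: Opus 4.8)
The plan is to mimic the holomorphic argument of Lemma~\ref{lem:step4}, but to exploit the fact that --- in contrast to the orbifold situation of Chapter~\ref{ch:fpvosa} --- the positivity assumption genuinely holds here, so that the full quantum-dimension machinery is available. Indeed, by Theorem~\ref{thm:orb} and Corollary~\ref{cor:fixsd} the \fpvosa{} $V_N^{\hat{\rho}}$ satisfies Assumption~\ref{ass:n}, and from the explicit construction of the irreducible twisted modules $V_{\alpha+N}(\hat{\rho}^i)$ one reads off that their conformal weights $\rho_{\rho^i}+\tfrac12\langle\alpha,\alpha\rangle$ are positive unless $(\alpha+N,i)=(0+N,0)$, so Assumption~\ref{ass:p} holds as well. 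Consequently Propositions~\ref{prop:pospos}, \ref{prop:sunitary}, \ref{prop:lem4.2} and \ref{prop:4.17b} all apply to $V_N^{\hat{\rho}}$, and by Proposition~\ref{prop:4.17b} it suffices to show that every irreducible $V_N^{\hat{\rho}}$-module $W$ has quantum dimension $\qdim_{V_N^{\hat{\rho}}}(W)=1$.

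First I would compute the single entry $\S_{V,V}$, where $V=V_N^{\hat{\rho}}=V_N^{\hat{\rho}}(0+N,0,0)$. By the shape of the $S$-matrix in \eqref{eq:lemstep1} we have $\S_{(0+N,0,0),(0+N,0,0)}=\tfrac{1}{m}\lambda_{0+N,0+N,0,0}$. Specialising the twisted modular invariance \eqref{eq:dlmnonhol} to $i=j=0$ shows that $\lambda_{0+N,0+N,0,0}=\sigma_{0+N,0+N}(0,0,S)$ is nothing but the entry $\S^{V_N}_{0+N,0+N}$ of Zhu's $S$-matrix of the lattice \voa{} $V_N$ in the untwisted sector with no automorphism inserted. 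Since $V_N$ satisfies Assumptions~\ref{ass:sn}\ref{ass:p} with fusion group $N'/N$ (Proposition~\ref{prop:cor12.10}), Proposition~\ref{prop:ssca} gives $\S^{V_N}_{0+N,0+N}=1/\sqrt{|N'/N|}$, whence
\[
\S_{V,V}=\frac{1}{m\sqrt{|N'/N|}}=\frac{1}{\sqrt{m^2|N'/N|}}.
\]

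Next I would invoke a counting argument. By Proposition~\ref{prop:nonholirrmodparam} the set $\Irr(V_N^{\hat{\rho}})$ is in bijection with $N'/N\times\Z_m\times\Z_m$, so $|\Irr(V_N^{\hat{\rho}})|=m^2|N'/N|=1/\S_{V,V}^2$. Combining unitarity of $\S$ (Proposition~\ref{prop:sunitary}) with $\S_{V,W}=\S_{W,V}\in\R_{>0}$ (Proposition~\ref{prop:pospos}) and $\qdim_{V_N^{\hat{\rho}}}(W)=\S_{W,V}/\S_{V,V}$ (Proposition~\ref{prop:lem4.2}), the diagonal entry of $\S\S^\dagger=\id$ at $V$ reads
\[
1=\sum_{W\in\Irr(V_N^{\hat{\rho}})}\S_{V,W}^2=\S_{V,V}^2\sum_{W\in\Irr(V_N^{\hat{\rho}})}\qdim_{V_N^{\hat{\rho}}}(W)^2,
\]
so that $\sum_{W}\qdim_{V_N^{\hat{\rho}}}(W)^2=1/\S_{V,V}^2=|\Irr(V_N^{\hat{\rho}})|$. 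Since every irreducible module of a \voa{} satisfying Assumptions~\ref{ass:n}\ref{ass:p} has quantum dimension at least $1$ \cite{DJX13}, a sum of $|\Irr(V_N^{\hat{\rho}})|$ terms each $\geq1$ can equal $|\Irr(V_N^{\hat{\rho}})|$ only if every term equals $1$; hence $\qdim_{V_N^{\hat{\rho}}}(W)=1$ for all $W$, and by Proposition~\ref{prop:4.17b} every irreducible $V_N^{\hat{\rho}}$-module is a simple current.

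The main obstacle lies not in this short chain of implications but in its two inputs: that the parametrisation of Proposition~\ref{prop:nonholirrmodparam} is a genuine bijection, so that the count $m^2|N'/N|$ is exact and not merely an upper bound (otherwise the equality $\sum_W\qdim(W)^2=|\Irr|$ would no longer force each term to be $1$), and that $\qdim\geq1$, which I would take from \cite{DJX13}. One should also verify carefully that Assumption~\ref{ass:p} for $V_N^{\hat{\rho}}$ really follows from the positivity of the twisted conformal weights, which uses items~\ref{enum:assb1} to \ref{enum:assb4} of Assumption~\ref{ass:b}. Should one wish to avoid the $\qdim\geq1$ input, the alternative is to follow Lemmas~\ref{lem:step2} and~\ref{lem:step4} literally, deriving relations among the constants $\lambda_{\alpha+N,\beta+N,i,l}$ from the symmetry of $\S$ and from $\S^2$ being the contragredient permutation, and then evaluating the Verlinde formula for $N_{W,W'}^{X}$; there the difficulty shifts to controlling the extra summation over $N'/N$ occurring in $\S^2$, which no longer collapses to a single product of two $\lambda$'s as in the holomorphic case.
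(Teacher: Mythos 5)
Your proposal is correct and follows essentially the same route as the paper's proof: both pin down $\S_{V,V}=1/(m\sqrt{|N'/N|})$ by identifying $\lambda_{0+N,0+N,0,0}$ with the untwisted $S$-matrix entry of $V_N$, both use $\sum_{W}\S_{V,W}^2=1$ (your unitarity computation is the same identity as the paper's evaluation of $(\S^2)_{V,V}=\delta_{V',V}$, given symmetry and positivity of the entries), and both force every term to equal $1$ from the exact count $m^2|N'/N|$ of irreducibles together with the $\qdim\geq 1$ bound from \cite{DJX13}. The only difference is cosmetic: you phrase the pigeonhole step in terms of quantum dimensions, the paper in terms of the constants $\lambda_{\alpha+N,0+N,i,0}$.
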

\begin{proof}
The symmetry of the $S$-matrix implies that
\begin{equation*}
\lambda_{\alpha+N,\beta+N,i,l}=\lambda_{\beta+N,\alpha+N,l,i}
\end{equation*}
for all $\alpha+N,\beta+N\in N'/N$ and $i,l\in\Z_m$. Since $V_N^{\hat{\rho}}=V_N^{\hat{\rho}}(0+N,0,0)$ is self-contragredient,
\begin{align*}
1=(\S^2)_{(0+N,0,0),(0+N,0,0)}&=\sum_{\alpha+N\in N'/N}\sum_{i,j\in\Z_m}\S_{(0+N,0,0),(\alpha+N,i,j)}\S_{(\alpha+N,i,j),(0+N,0,0)}\\
&=\sum_{\alpha+N\in N'/N}\sum_{i,j\in\Z_m}\frac{1}{m}\lambda_{\alpha+N,0+N,i,0}\frac{1}{m}\lambda_{0+N,\alpha+N,0,i}\\
&=\frac{1}{m}\sum_{\alpha+N\in N'/N}\sum_{i\in\Z_m}\lambda_{\alpha+N,0+N,i,0}^2.
\end{align*}
Theorem~4.4 in \cite{DRX15} states that $\lambda_{\alpha+N,0+N,i,0}\in\R_{>0}$ for all $\alpha+N\in N'/N$, $i\in\Z_m$.\footnote{In the notation of \cite{DRX15} $\lambda_{\alpha+N,0+N,i,0}$ equals $S_{W,V}$ for $V=V_N$ and $W=V_N(\hat{\rho}^i)$ and similarly $\lambda_{0+N,\alpha+N,0,i}$ is $S_{V,W}$.} Lemma~4.14 in \cite{DJX13} and Proposition~\ref{prop:lem4.2}, both results on quantum dimensions, show that $\S_{(\alpha+N,i,j),(0+N,0,0)}/\S_{(0+N,0,0),(0+N,0,0)}\geq 1$ and hence
\begin{equation*}
\frac{\lambda_{\alpha+N,0+N,i,0}}{\lambda_{0+N,0+N,0,0}}\geq 1
\end{equation*}
for all $\alpha+N\in N'/N$ and $i,j\in\Z_m$. Then
\begin{align*}
1&=\frac{1}{m}\sum_{\alpha+N\in N'/N}\sum_{i\in\Z_m}\lambda_{\alpha+N,0+N,i,0}^2=\frac{1}{m}\lambda_{0+N,0+N,0,0}^2\sum_{\alpha+N\in N'/N}\sum_{i\in\Z_m}\frac{\lambda_{\alpha+N,0+N,i,0}^2}{\lambda_{0+N,0+N,0,0}^2}\\
&\geq|N'/N|\lambda_{0+N,0+N,0,0}^2.
\end{align*}
If we show that $\lambda_{0+N,0+N,0,0}^2=1/|N'/N|$, then the above inequality is an equality, which can only hold if $\lambda_{\alpha+N,0+N,i,0}^2=1/|N'/N|$ for all $\alpha+N\in N'/N$ and $i\in\Z_m$ since $\lambda_{\alpha+N,0+N,i,0}>0$. This implies that 
\begin{equation*}
\S_{(\alpha+N,i,j),(0+N,0,0)}=\S_{(0+N,0,0),(0+N,0,0)}=\frac{1}{m\sqrt{|N'/N|}},
\end{equation*}
which by Proposition~\ref{prop:4.17} means in particular that all irreducible $V_N^{\hat{\rho}}$-modules are simple currents.

Consider the twisted modular invariance \eqref{eq:dlmnonhol} for $i=j=0$ and $M=S$, which reads
\begin{equation*}
(1/\tau)^kT_{V_{\alpha+N}}(v,S.\tau)=\sum_{\beta+N\in N'/N}\lambda_{\alpha+N,\beta+N,0,0}T_{V_{\beta+N}}(v,\tau).
\end{equation*}
This is simply an instance of Zhu's modular invariance for the \voa{} $V_N$ and we conclude that
\begin{equation*}
\lambda_{\alpha+N,\beta+N,0,0}=\S_{\alpha+N,\beta+N}=\frac{1}{\sqrt{|N'/N|}}\e^{-(2\pi\i)\langle\alpha,\beta\rangle}
\end{equation*}
since $V_N$ has group-like fusion. Here, $\S_{\alpha+N,\beta+N}$ denotes the entries of the $S$-matrix for $V_N$. In particular, $\lambda_{0+N,0+N,0,0}=1/\sqrt{|N'/N|}$, which completes the proof.
\end{proof}
So, under the stated assumptions, especially item~\ref{enum:assb2} of Assumption~\ref{ass:b}, $V_N^{\hat{\rho}}$ inherits the simple-current property from $V_N$.

\minisec{Fusion Rules}

Finally, knowing that all irreducible $V_N^{\hat{\rho}}$-modules are simple currents, we determine the fusion group of $V_N^{\hat{\rho}}$. This will depend on the conformal weights and hence on items~\ref{enum:assb3} and \ref{enum:assb4} of Assumption~\ref{ass:b}. The following is probably true:
\begin{customconj}{1}\label{conj:1}
Let $V_N$ and $\rho$ be as in Assumption~\ref{ass:b} with \ref{enum:assb1} to \ref{enum:assb4}. Then the fusion group $F_{V_N^{\hat{\rho}}}$ of $V_N^{\hat{\rho}}$ is $N'/N\times\Z_m\times\Z_m$, i.e.\ the representations $\phi_{V_{\alpha+N}(\hat{\rho}^i)}$ can be chosen such that
\begin{equation*}
V_N^{\hat{\rho}}(\alpha+N,i,j)\boxtimes V_N^{\hat{\rho}}(\beta+N,l,k)\cong V_N^{\hat{\rho}}(\alpha+\beta+N,i+l,j+k)
\end{equation*}
for all $\alpha+N,\beta+N\in N'/N$ and $i,j,k,l\in\Z_m$, not changing the conformal weights from Proposition~\ref{prop:nonholconfweights} and compatible with the natural choice of the representations $\phi_{V_{\alpha+N}}$ from the proof of Proposition~\ref{prop:nonholchar} below.
\end{customconj}

It is easy to prove the following weaker version of the conjecture. More precisely, we determine the fusion group as \fqs{} up to isomorphism. We have to additionally demand that neither $m$ nor the level of $N$ be divisible by 4 but this is probably not necessary (see remark after Proposition~\ref{prop:valuesfqs}).
\begin{thm}\label{thm:nonholweak}
Let $V_N$ and $\rho$ be as in Assumption~\ref{ass:b} with \ref{enum:assb1} to \ref{enum:assb4}. Suppose in addition that 4 does not divide $m$ nor the level of $N$. Then the fusion group of $V_N^{\hat{\rho}}$ is isomorphic as \fqs{} to the group $N'/N\times\Z_m\times\Z_m$ with the quadratic form $(\alpha+N,i,j)\mapsto\langle\alpha,\alpha\rangle/2+ij/m+\Z$. In other words: there is a group isomorphism $\kappa\colon N'/N\times\Z_m\times\Z_m\to F_{V_N^{\hat{\rho}}}$ such that
\begin{equation*}
V_N^{\hat{\rho}}(\kappa(\alpha+N,i,j))\boxtimes V_N^{\hat{\rho}}(\kappa(\beta+N,l,k))\cong V_N^{\hat{\rho}}(\kappa(\alpha+\beta+N,i+l,j+k))
\end{equation*}
and
\begin{equation*}
\rho(V_N^{\hat{\rho}}(\kappa(\alpha+N,i,j)))+\Z=\frac{\langle\alpha,\alpha\rangle}{2}+\frac{ij}{m}+\Z
\end{equation*}
for all $\alpha+N,\beta+N\in N'/N$ and $i,j,k,l\in\Z_m$.
\end{thm}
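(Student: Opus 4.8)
The plan is to combine the structural results just established for $V_N^{\hat\rho}$ with the finite-quadratic-space machinery of Chapters~\ref{ch:sc} and \ref{ch:aia}, and then to read off the isomorphism type of the fusion group from its quadratic and bilinear forms. First I would record that, since $V_N$ satisfies Assumption~\ref{ass:n}, the \fpvosa{} $V_N^{\hat\rho}$ does as well (Theorem~\ref{thm:orb}, $G=\langle\hat\rho\rangle$ being cyclic hence solvable), that it satisfies the positivity assumption~\ref{ass:p} (established after Proposition~\ref{prop:nonholirrmodparam}), and that by the proposition preceding Conjecture~\ref{conj:1} all its irreducible modules are simple currents, so Assumption~\ref{ass:sn} holds. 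Hence Theorem~\ref{thm:fafqs} applies and $(F_{V_N^{\hat\rho}},Q_\rho)$ is a non-degenerate \fqs{} whose underlying set is $N'/N\times\Z_m\times\Z_m$ by Proposition~\ref{prop:nonholirrmodparam}, and on which the quadratic form takes the values $Q_\rho(\alpha+N,i,j)=\langle\alpha,\alpha\rangle/2+ij/m\pmod 1$ by Proposition~\ref{prop:nonholconfweights}. It then remains to identify this \fqs{} abstractly with the orthogonal sum $(N'/N,Q_N)\oplus\bigl((\II_{1,1}(m))'/\II_{1,1}(m)\bigr)$, i.e.\ with the discriminant form of $N\oplus\II_{1,1}(m)$.

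Second I would pin down the bilinear form and exhibit an orthogonal splitting. Fusion respects twisting by $G$, so $(\alpha+N,i,j)\mapsto i$ is a surjective homomorphism $F_{V_N^{\hat\rho}}\to\Z_m$ whose kernel consists of the constituents of the untwisted modules, on which fusion mirrors the group-like fusion of $V_N$ on the $N'/N$-factor (Proposition~\ref{prop:cor12.10}) and adds $\hat\rho$-eigenvalues on the $j$-factor. Comparing the group-like expression $S_{\gamma,\delta}=|F_{V_N^{\hat\rho}}|^{-1/2}\e^{-(2\pi\i)B_\rho(\gamma,\delta)}$ of Proposition~\ref{prop:sbilform2} with the explicit shape \eqref{eq:lemstep1}, and using the already computed value $\lambda_{\alpha+N,\beta+N,0,0}=|N'/N|^{-1/2}\e^{-(2\pi\i)\langle\alpha,\beta\rangle}$, I would obtain $B_\rho((\alpha+N,i,j),(\beta+N,l,k))=\langle\alpha,\beta\rangle+(ik+lj)/m\pmod 1$. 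A direct check on this formula shows that the image of $N'/N$ and the subgroup generated by $(0+N,1,0)$ and $(0+N,0,1)$ are mutually orthogonal, that $B_\rho$ restricts to the discriminant bilinear form of $N$ on the first and to the hyperbolic form of $\II_{1,1}(m)$ on the second, and — since $B_\rho$ is non-degenerate by Theorem~\ref{thm:fafqs} — that these blocks span $F_{V_N^{\hat\rho}}$. On each block both $Q_\rho$ (from the first step) and $B_\rho$ are then known explicitly and equal the asserted discriminant forms, so it suffices to control the interaction of the two blocks as an \fqs{}.

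The hard part, and the genuine reason for the hypotheses $4\nmid m$ and $4\nmid\operatorname{level}(N)$, is the $2$-adic bookkeeping. By Remark~\ref{rem:bilquad} a quadratic refinement of $B_\rho$ is determined only up to the $|F/2F|$-fold ambiguity carried by the $2$-torsion, and — as in the holomorphic case (Lemmas~\ref{lem:step5} and \ref{lem:coboundary}) — the fusion rules could a priori differ from the direct-product group law by a $2$-cocycle supported on the $2$-part, so that the candidate splitting of the second step need not be an internal direct sum. Under $4\nmid m$ and $4\nmid\operatorname{level}(N)$ the $2$-components of both $\Z_m\times\Z_m$ and $N'/N$ have exponent at most $2$, hence their $2$-adic Jordan constituents are elementary abelian; combined with the explicitly known values of $Q_\rho$ this rigidifies the quadratic refinement and the group structure. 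I would finish by comparing the Jordan decompositions (genus symbols) of $F_{V_N^{\hat\rho}}$ and of $(N\oplus\II_{1,1}(m))'/(N\oplus\II_{1,1}(m))$ prime by prime, using that the signatures agree modulo~$8$ (both equal $c=\rk(N)$ modulo~$8$, by the signature formula $\sign(F_{V_N^{\hat\rho}})=c\pmod 8$ for group-like fusion, Milgram's formula $\sign(N'/N)=\rk(N)\pmod 8$, and the vanishing signature of the hyperbolic block). I expect this step — ruling out the cocycle twist and the quadratic-refinement ambiguity on the $2$-part — to be the real obstacle, the stronger group-level statement of Conjecture~\ref{conj:1} being precisely what one forgoes without carrying out the full telescoping computation.
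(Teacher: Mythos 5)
Your first step coincides with the paper's: Assumptions~\ref{ass:n}, \ref{ass:p} and \ref{ass:sn} hold for $V_N^{\hat\rho}$, Theorem~\ref{thm:fafqs} makes $(F_{V_N^{\hat\rho}},Q_\rho)$ a non-degenerate \fqs{} on the set $N'/N\times\Z_m\times\Z_m$, and Proposition~\ref{prop:nonholconfweights} gives the values of $Q_\rho$. After that there is a genuine gap. Your second step claims to read off the full bilinear form $B_\rho$ from $\S_{(\alpha+N,i,j),(\beta+N,l,k)}=\frac{1}{m}\xi_m^{-(lj+ik)}\lambda_{\alpha+N,\beta+N,i,l}$ together with Proposition~\ref{prop:sbilform2}, but the only constants actually computed at this point are $\lambda_{\alpha+N,\beta+N,0,0}$; determining $\lambda_{\alpha+N,\beta+N,i,l}$ for general $i,l$ is exactly the telescoping computation that the paper defers to Conjecture~\ref{conj:1} (and the paper says explicitly, in the discussion of the $S$-transformation after Theorem~\ref{thm:nonhol}, that this requires the full group structure). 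Since $B_\rho(\gamma,\delta)=Q_\rho(\gamma\oplus\delta)-Q_\rho(\gamma)-Q_\rho(\delta)$ already presupposes the unknown group law $\oplus$, your orthogonal-splitting argument and the subsequent prime-by-prime comparison of Jordan decompositions both assume the very group structure you are trying to identify. Your own closing sentence concedes that this step is not carried out.

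The idea you are missing is Proposition~\ref{prop:valuesfqs}: when the largest power of $2$ occurring in the denominators of the values of $Q$ is $2$, a \fqs{} is determined up to isomorphism by the \emph{multiset of values} of its quadratic form on the underlying set — no knowledge of the group law or of $B_\rho$ is needed. The hypotheses $4\nmid m$ and $4\nmid\operatorname{level}(N)$ guarantee exactly this denominator condition for $Q_\rho(\alpha+N,i,j)=\langle\alpha,\alpha\rangle/2+ij/m$. Since ordinary addition on $N'/N\times\Z_m\times\Z_m$ is \emph{one} group structure realising these values as a non-degenerate quadratic form, the actual fusion group must be isomorphic to it as a \fqs{}, and the proof ends there. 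Your instinct that the $2$-adic behaviour is the crux is correct, but the resolution is this uniqueness-from-values statement rather than a reconstruction of $B_\rho$ and of the Jordan decomposition, which cannot be done from the available data without proving Conjecture~\ref{conj:1} first.
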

Of course, this result does not depend on the choice of the representations $\phi_{V_{\alpha+N}(\hat{\rho}^i)}$; only the isomorphism $\kappa$ does.
\begin{proof}
As a set, the fusion group $F_{V_N^{\hat{\rho}}}$ is given by $N'/N\times\Z_m\times\Z_m$. If the representations $\phi_{V_{\alpha+N}(\hat{\rho}^i)}$ are chosen as in Proposition~\ref{prop:nonholconfweights}, then the conformal weights modulo~1 are given by $Q_\rho(\alpha+N,i,j)=\langle\alpha,\alpha\rangle/2+ij/m+\Z$. Theorem~\ref{thm:fafqs} implies that $Q_\rho$ defines a non-degenerate quadratic form with respect to the group structure on $F_{V_N^{\hat{\rho}}}$. A possible group structure making $Q_\rho$ a quadratic form is simply addition in $N'/N\times\Z_m\times\Z_m$. The divisibility assumption implies that the largest power of 2 in the denominators of the values of $Q_\rho$ is 2. Hence we may apply Proposition~\ref{prop:valuesfqs}, where we proved that the possible structure of $F_{V_N^{\hat{\rho}}}$ as \fqs{} is uniquely determined up to isomorphism by the values of $Q_\rho$. This completes the proof.
\end{proof}

\minisec{Summary}
Due to the level of detail required in the following, the statement of the above theorem does not suffice. We therefore have to assume Conjecture~\ref{conj:1}. Specifically, the computation of the characters of the irreducible $V_N^{\hat{\rho}}$-modules depends on the exact fusion structure. Modulo the above conjecture we obtain:
\begin{thm}\label{thm:nonhol}
Let $V_N$ and $\rho$ be as in Assumption~\ref{ass:b} with \ref{enum:assb1} to \ref{enum:assb4} and assume that Conjecture~\ref{conj:1} holds. Then $V_N^{\hat{\rho}}$ satisfies Assumptions~\ref{ass:sn}\ref{ass:p} and the isomorphism classes of irreducible $V_N^{\hat{\rho}}$-modules can be parametrised by $(\alpha+N,i,j)\in N'/N\times\Z_m\times\Z_m$ such that with a certain choice of the representations $\phi_{V_{\alpha+N}(\hat{\rho}^i)}$ the conformal weights modulo~1 are given by
\begin{equation*}
\rho(V_N^{\hat{\rho}}(\alpha+N,i,j))+\Z=\frac{\langle\alpha,\alpha\rangle}{2}+\frac{ij}{m}+\Z
\end{equation*}
and the fusion algebra of $V_N^{\hat{\rho}}$ is the group algebra of $N'/N\times\Z_m\times\Z_m$, i.e.\
\begin{equation*}
V_N^{\hat{\rho}}(\alpha+N,i,j)\boxtimes V_N^{\hat{\rho}}(\beta+N,l,k)\cong V_N^{\hat{\rho}}(\beta+N,i+l,j+k)
\end{equation*}
for all $\alpha+N,\beta+N\in N'/N$ and $i,j,k,l\in\Z_m$.
\end{thm}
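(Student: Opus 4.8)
The plan is to assemble this theorem from the component results established earlier in this section, since under Assumption~\ref{ass:b} together with Conjecture~\ref{conj:1} essentially all of the analytic work has already been carried out. The statement has four ingredients: that $V_N^{\hat\rho}$ satisfies Assumptions~\ref{ass:sn}\ref{ass:p}; the parametrisation of the irreducible modules by $N'/N\times\Z_m\times\Z_m$; their conformal weights modulo~$1$; and the fusion rules. First I would dispose of Assumption~\ref{ass:n}: since $N$ is positive-definite and even, $V_N$ satisfies Assumption~\ref{ass:n} by Proposition~\ref{prop:latnice}, and $G=\langle\hat\rho\rangle$ is finite cyclic, hence solvable, so the Main Theorem of Orbifold Theory (Theorem~\ref{thm:orb}) immediately yields that $V_N^{\hat\rho}$ satisfies Assumption~\ref{ass:n} as well. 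In particular $V_N^{\hat\rho}$ is simple, rational, $C_2$-cofinite, self-contragredient and of CFT-type.

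Next I would collect the representation-theoretic facts. The parametrisation of the irreducible $V_N^{\hat\rho}$-modules by triples $(\alpha+N,i,j)\in N'/N\times\Z_m\times\Z_m$ is exactly Proposition~\ref{prop:nonholirrmodparam}, which rests on the classification of twisted lattice-VOA modules in \cite{BK04} (Theorem~\ref{thm:4.2}), the triviality of the $\rho$-action on $N'/N$ (item~\ref{enum:assb2} of Assumption~\ref{ass:b}), and the Schur--Weyl-type duality of \cite{MT04}. The group-like fusion is the content of the proposition preceding Theorem~\ref{thm:nonholweak}: using the shape~\eqref{eq:lemstep1} of the $S$-matrix together with the positivity result of \cite{DRX15} and the quantum-dimension estimates from \cite{DJX13} and Proposition~\ref{prop:lem4.2}, one shows that every irreducible $V_N^{\hat\rho}$-module is a simple current. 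Combined with Assumption~\ref{ass:n} this is precisely Assumption~\ref{ass:sn}.

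For the positivity assumption I would argue directly from the explicit twisted-module construction. The adjoint module $V_N^{\hat\rho}(0+N,0,0)$ has conformal weight $0$ because $V_N^{\hat\rho}$ is of CFT-type. Every other irreducible module embeds into some $V_{\alpha+N}(\hat\rho^i)$: for $i\neq 0$ the vacuum anomaly $\rho_{\rho^i}$ together with the norm of a shortest vector in $\pi_{\rho^i}(\alpha+N)$ forces a positive conformal weight; for $i=0$, $\alpha+N\neq0$ the weight $\langle\alpha,\alpha\rangle/2$ is positive; and for $i=0$, $\alpha+N=0$, $j\neq0$ the relevant eigenspace of $\hat\rho$ acting on $V_N$ cannot contain the vacuum (which is $\hat\rho$-fixed), so it too has positive conformal weight. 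Hence $V_N^{\hat\rho}$ satisfies Assumption~\ref{ass:p}. Finally, the conformal weights modulo~$1$ are Proposition~\ref{prop:nonholconfweights}, and the fusion rules are literally the assertion of Conjecture~\ref{conj:1}; the only compatibility point to note is that the representations $\phi_{V_{\alpha+N}(\hat\rho^i)}$ realising the conformal weights of Proposition~\ref{prop:nonholconfweights} coincide with those of Conjecture~\ref{conj:1}, which is built into the formulation of the conjecture since it explicitly preserves those weights.

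The genuinely hard part is encapsulated entirely in Conjecture~\ref{conj:1}, which supplies the fusion structure on the nose rather than merely up to isomorphism of finite quadratic spaces as in the unconditional Theorem~\ref{thm:nonholweak}; everything else in the theorem is a matter of citing and combining the earlier results. I therefore expect the write-up to be short, essentially a bookkeeping proof that threads together Theorem~\ref{thm:orb}, Proposition~\ref{prop:nonholirrmodparam}, the simple-current proposition, the direct positivity check, Proposition~\ref{prop:nonholconfweights}, and Conjecture~\ref{conj:1}.
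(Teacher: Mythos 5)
Your proposal is correct and matches the paper's treatment: the paper gives no separate proof of this theorem, presenting it precisely as a summary that threads together Theorem~\ref{thm:orb} (via Proposition~\ref{prop:latnice} and Proposition~\ref{prop:thm3.3}), Proposition~\ref{prop:nonholirrmodparam}, the simple-current proposition, the positivity of conformal weights read off from the explicit twisted-module construction, Proposition~\ref{prop:nonholconfweights}, and Conjecture~\ref{conj:1}. Your direct check of Assumption~\ref{ass:p} fills in the detail the paper only asserts ``can be easily seen from the construction'', and your compatibility remark about the choice of the $\phi_{V_{\alpha+N}(\hat{\rho}^i)}$ is exactly the point the paper addresses in the sentence following the theorem.
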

The choice of the representations $\phi_{V_{\alpha+N}(\hat{\rho}^i)}$ is compatible with the natural choice of the representations $\phi_{V_{\alpha+N}}$ from the proof of Proposition~\ref{prop:nonholchar} below if additionally items \ref{enum:assb5} and \ref{enum:assb6} of Assumption~\ref{ass:b} hold.

\minisec{Computation of Characters}
Finally, we describe the computation of the characters of the irreducible $V_N^{\hat{\rho}}$-modules, proceeding analogously to Section~\ref{sec:latorbifold}. We continue to assume that $V_N$ and $\rho$ are as in Assumption~\ref{ass:b} with \ref{enum:assb1} to \ref{enum:assb4}. In equation \eqref{eq:lemstep1} we related the $S$-matrix of $V_N^{\hat{\rho}}$ to the $S$-transformation of the twisted trace functions $T_{\alpha+N}(v,i,j,\tau)$. With Proposition~\ref{prop:sbilform2}, which holds in the case of group-like fusion, we obtain
\begin{equation*}
\sigma_{\alpha+N,\beta+N}(i,l,S)=\xi_n^{ik+jl}\frac{1}{\sqrt{|N'/N|}}\e^{-(2\pi\i)B_\rho((\alpha+N,i,j),(\beta+N,l,k))}
\end{equation*}
for all $\alpha+N,\beta+N\in N'/N$ and $i,j,k,l\in\Z_m$. A similar computation for the $T$-transformation yields
\begin{equation*}
\sigma_{\alpha+N,\beta+N}(i,l,T)=\xi_n^{-ij}\delta_{\alpha+N,\beta+N}\e^{(2\pi\i)(Q_\rho(\alpha+N,i,j)-\rk(N)/24)}
\end{equation*}
for all $\alpha+N,\beta+N\in N'/N$ and $i,j,k,l\in\Z_m$. To proceed we must insert the values of $Q_\rho$ and $B_\rho$. While the result for the $T$-transformation only depends on the values of the quadratic form (computed in Proposition~\ref {prop:nonholconfweights}), for the $S$-transformation we need the bilinear form and hence the full group structure (postulated in Conjecture~\ref{conj:1}). Assuming that Conjecture~\ref{conj:1} holds and that the representations $\phi_{V_{\alpha+N}(\hat{\rho}^i)}$ are chosen as in Theorem~\ref{thm:nonhol} we obtain
\begin{align*}
\sigma_{\alpha+N,\beta+N}(S):=\sigma_{\alpha+N,\beta+N}(i,l,S)&=\frac{1}{\sqrt{|N'/N|}}\e^{-(2\pi\i)\langle\alpha,\beta\rangle},\\
\sigma_{\alpha+N,\beta+N}(T):=\sigma_{\alpha+N,\beta+N}(i,l,T)&=\delta_{\alpha+N,\beta+N}\e^{(2\pi\i)(\langle\alpha,\alpha\rangle/2-\rk(N)/24)}
\end{align*}
for all $\alpha+N,\beta+N\in N'/N$ and $i,l\in\Z_m$. In both cases the result is independent of $i,j\in\Z_m$. In fact, this is just Zhu's representation for $V_N$, i.e.\
\begin{equation*}
\sigma_{\alpha+N,\beta+N}(M)=\rho_{V_N}(M)_{\alpha+N,\beta+N}
\end{equation*}
for $M\in\SLZ$ and $\alpha+N,\beta+N\in N'/N$.

In Proposition~\ref{prop:nonholchar} below we compute $T_{\alpha+N}(\vac,0,j,\tau)$, $\alpha+N\in N'/N$, $j\in\Z_m$, explicitly in terms of theta and eta functions, whose transformation behaviour under $\SLZ$ is known. This allows us to compute all the trace functions $T_{\alpha+N}(\vac,i,j,\tau)$ and $T_{V_N^{\hat{\rho}}(\alpha+N,i,j)}(\vac,\tau)=\ch_{V_N^{\hat{\rho}}(\alpha+N,i,j)}(\tau)$ for $\alpha+N\in N'/N$ and $i,j\in\Z_m$ using the above modular invariance results. Indeed,
\begin{equation*}
T_{\alpha+N}(\vac,i,j,\tau)=\sum_{\beta+N\in N'/N}\sigma_{\alpha+N,\beta+N}(M_{i,j}^{-1})T_{\beta+N}(\vac,0,\gcd(i,j),M_{i,j}.\tau)
\end{equation*}
for all $\alpha+N\in N'/N$, $i,j\in\Z_m$ where as before
\begin{equation*}
M_{i,j}=\begin{pmatrix}*&*\\\frac{i}{\gcd(i,j)}&\frac{j}{\gcd(i,j)}\end{pmatrix}\in\SLZ
\end{equation*}
so that $(0,\gcd(i,j))M_{i,j}=(i,j)$. Recall that $M_{i,j}$ and $\gcd(i,j)$ depend not only on $i,j\in\Z_m$ but on the choice of representatives modulo $m$. Then
\begin{equation*}
\ch_{V_N^{\hat{\rho}}(\alpha+N,i,j)}(\tau)=\frac{1}{n}\sum_{k\in\Z_m}\xi^{-jk}\sum_{\beta+N\in N'/N}\sigma_{\alpha+N,\beta+N}(M_{i,k}^{-1})T_{\beta+N}(\vac,0,\gcd(i,k),M_{i,k}.\tau).
\end{equation*}

As final ingredient we need to compute the trace functions $T_{\alpha+N}(\vac,0,j,\tau)$, $\alpha+N\in N'/N$, $j\in\Z_m$. The result is as nice as it could be but depends on special properties in Assumption~\ref{ass:b}.
\begin{prop}\label{prop:nonholchar}
Let $V_N$ and $\rho$ be as in Assumption~\ref{ass:b} with items \ref{enum:assb1}, \ref{enum:assb2}, \ref{enum:assb3}, \ref{enum:assb5} and \ref{enum:assb6}. Then there is a natural choice of the representations $\phi_{V_{\alpha+N}}$, $\alpha+N\in N'/N$, on the irreducible untwisted $V_N$-modules such that
\begin{equation*}
T_{\alpha+N}(\vac,0,j,\tau)=\tr_{V_{\alpha+N}}\phi_{V_{\alpha+N}}(\hat\rho)^j q_\tau^{L_0-c/24}=\frac{\vartheta_{(\alpha+N)^{\rho^j}}(\tau)}{\eta_{\rho^j}(\tau)}
\end{equation*}
for all $\alpha+N\in N'/N$ and $j\in\Z_m$ where $(\alpha+N)^{\rho^j}$ is the set of vectors in the lattice coset $\alpha+N$ pointwise invariant under $\rho^j$.
\end{prop}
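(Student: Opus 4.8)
The plan is to compute the graded trace directly on the explicit Fock-space model of the untwisted module and to factorise it into a Heisenberg contribution and a group-algebra contribution. Recall from Theorem~\ref{thm:3.1} that as a vector space, and as a module for the Heisenberg subalgebra, one has $V_{\alpha+N}\cong M_{\hat\h}(1)\otimes\C_\eps[\alpha+N]$, where $\C_\eps[\alpha+N]$ is spanned by $\{\ee_\beta\}_{\beta\in\alpha+N}$ and $\ee_\beta$ carries $L_0$-weight $\langle\beta,\beta\rangle/2$. First I would construct the natural representation $\phi_{V_{\alpha+N}}$ of $G=\langle\hat\rho\rangle$ on this module: since $\rho$ is an isometry of $N$ that preserves the coset $\alpha+N$ by Assumption~\ref{ass:b}(2), the cocycle $\eps$ and $\eps\circ(\rho\times\rho)$ have the same skew and hence differ by a coboundary, producing a function $u'\colon N'\to\C^\times$ extending the datum $u$ of the standard lift. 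The operator defined by $\hat\rho(\ee_\beta):=u'(\beta)\ee_{\rho\beta}$ on the group-algebra factor together with $h(n)\mapsto(\rho h)(n)$ on $M_{\hat\h}(1)$ then intertwines the module vertex operators as required of $\phi$ (cf.\ Proposition~\ref{prop:phi}), and I would fix the residual scalar so that this is the natural lift agreeing with $\hat\rho$ on $V_N$.

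Because $\phi_{V_{\alpha+N}}(\hat\rho^j)$ respects the tensor decomposition, the trace factorises as $\tr_{M_{\hat\h}(1)}\rho^j q_\tau^{L_0-c/24}\cdot\tr_{\C_\eps[\alpha+N]}\hat\rho^j q_\tau^{L_0}$. The Heisenberg factor is a standard eigenvalue count: working in an eigenbasis of $\rho^j$ on $\h$ one finds it equals $q_\tau^{-c/24}\prod_{n\geq1}\det_\h(1-\rho^j q_\tau^n)^{-1}$, and expressing this through the cycle shape $\prod_{t\mid m}t^{b_t}$ of $\rho^j$ turns it into $1/\eta_{\rho^j}(\tau)$; this is exactly the computation underlying the adjoint case of Proposition~\ref{prop:twistedtrace}. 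For the group-algebra factor only the diagonal entries survive, so that $\tr_{\C_\eps[\alpha+N]}\hat\rho^j q_\tau^{L_0}=\sum_{\beta\in(\alpha+N)^{\rho^j}}w'(\beta)\,q_\tau^{\langle\beta,\beta\rangle/2}$, where $w'(\beta):=u'(\beta)u'(\rho\beta)\cdots u'(\rho^{j-1}\beta)$ is a root of unity. The proposition then follows once one shows $w'(\beta)=1$ for every $\rho^j$-fixed $\beta$ in the coset, for then this factor is precisely $\vartheta_{(\alpha+N)^{\rho^j}}(\tau)$ and multiplying the two factors gives the claimed formula.

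The hard part will be exactly this triviality of the sign $w'$ on the fixed-point coset, which is what Assumption~\ref{ass:b}(3),(5),(6) are tailored to guarantee. I would proceed by transporting the telescoping identity of the lemma preceding Proposition~\ref{prop:standardliftpower}, together with Corollary~\ref{cor:uhom}, from $N$ to its dual $N'$: for $\rho^j$-fixed $\beta$ one rewrites $w'(\beta)=u'(\beta+\rho\beta+\cdots+\rho^{j-1}\beta)\cdot(-1)^{\langle\beta,\rho^{j/2}\beta\rangle}$ when $j$ is even (and without the sign when $j$ is odd), noting that $\beta+\rho\beta+\cdots+\rho^{j-1}\beta$ lies in $(N')^\rho$. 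The standard-lift normalisation forces $u'$ to be trivial on $(N')^\rho$ by means of the adapted basis provided in Assumption~\ref{ass:b}(5), while the quadratic sign $(-1)^{\langle\beta,\rho^{j/2}\beta\rangle}$ vanishes by Assumption~\ref{ass:b}(6), of which items (1) and (3) are the special cases needed for $\beta\in N$ and $\beta\in N'$. Checking that $u'$ may be chosen consistently with all these vanishing properties at once — that is, that the coboundary ambiguity of $u'$ and the residual $m$-th-root-of-unity scalar in $\phi_{V_{\alpha+N}}(\hat\rho)$ can be fixed simultaneously and compatibly across all cosets $\alpha+N$ — is the delicate bookkeeping that constitutes the technical core of the argument; the analytic content (the two trace evaluations) is otherwise routine.
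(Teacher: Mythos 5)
Your proposal follows essentially the same route as the paper: realise $V_{\alpha+N}$ as $M_{\hat\h}(1)\otimes\C_\eps[\alpha+N]$, lift $\rho$ to an automorphism $\check\rho$ of the dual twisted group algebra $\C_\eps[N']$ via a function $u'$ extending $u$, factorise the trace into the Heisenberg factor $1/\eta_{\rho^j}(\tau)$ and a group-algebra sum over $(\alpha+N)^{\rho^j}$, and reduce the claim to the triviality of $w'(\beta)=u'(\beta)\cdots u'(\rho^{j-1}\beta)$ on $\rho^j$-fixed vectors, which is handled by the dual-lattice analogue of the telescoping identity and items (5), (6) of Assumption~\ref{ass:b}. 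The overall architecture, the choice of $\phi_{V_{\alpha+N}}$, and the role of the hypotheses match the paper's proof.

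There is, however, one step that is asserted rather than justified and is in fact the place where the real work hides. You write that ``the cocycle $\eps$ and $\eps\circ(\rho\times\rho)$ have the same skew and hence differ by a coboundary.'' On $N$ this is automatic because the skew is $(-1)^{\langle\cdot,\cdot\rangle}$ and $\rho$ is an isometry, but on $N'$ the skew $c$ of $\eps$ is only determined up to the constraint that it restrict to $(-1)^{\langle\cdot,\cdot\rangle}$ on $N$, and the skew of $\eps\circ(\rho\times\rho)$ is $c(\rho\cdot,\rho\cdot)$, which need not equal $c$. One must first \emph{choose} $c$ to be $\rho$-invariant on all of $N'$; for odd $m$ this can be arranged by averaging, but for even $m$ it is exactly here that items (2) and (5) of Assumption~\ref{ass:b} enter (the paper verifies that the explicit alternating form of \cite{LL04}, Remark~6.4.12, is $\rho$-invariant under these hypotheses, and also that $c(\alpha,\rho^{k/2}\alpha)=(-1)^{\langle\alpha,(\rho^{k/2}-\id)\alpha\rangle}$, which is what item (6) then kills). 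Your write-up instead spends item (5) on forcing $u'$ to vanish on $(N')^\rho$ — but that is merely a normalisation one is always free to make, analogous to the standard-lift convention. So the misattribution is not cosmetic: without the $\rho$-invariance of the skew on $N'$ the function $u'$ does not exist, and the ``delicate bookkeeping'' you defer is precisely this point rather than the consistency of scalars across cosets.
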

Any other choice of the representations $\phi_{V_{\alpha+N}}$ will of course only modify these characters by a scalar factor.
\begin{proof}
In order to compute these characters, we have to find an explicit description of the representations $\phi_{V_{\alpha+N}}$, $\alpha+N\in N'/N$, which are unique up to a scalar. For $\alpha=0$, we already chose $\phi_{V_N}(\hat\rho)=\hat\rho$, which is simply the lift of the lattice automorphism to an automorphism $\hat\rho$ of the \voa{}
\begin{equation*}
V_N=M_{\hat\h}(1)\otimes\C_\eps[N]
\end{equation*}
as described in Section~\ref{sec:autlatvoa}. Clearly, $\rho$ extends to an automorphism of the dual lattice $N'$ and in the following we will lift $\rho$ to an automorphism $\check\rho$ of
\begin{equation*}
A_{N'}=\bigoplus_{\alpha+N\in N'/N}V_{\alpha+N}=M_{\hat\h}(1)\otimes\C_\eps[N']
\end{equation*}
where $\eps\colon N'\times N'\to\C^\times$ is a 2-cocycle with $\eps(\alpha,\beta)/\eps(\beta,\alpha)=(-1)^{\langle\alpha,\beta\rangle}$ for $\alpha,\beta\in N$ whose values lie in a finite, cyclic subgroup of $\C^\times$ (see Section~\ref{sec:latvoa}). Because of item~\ref{enum:assb2} in Assumption~\ref{ass:b} this automorphism will restrict to an automorphism of each $V_{\alpha+N}$, $\alpha+N\in N'/N$.
We consider the skew of $\eps$, i.e.\ the quotient $c(\alpha,\beta):=\eps(\alpha,\beta)/\eps(\beta,\alpha)$, which is an alternating $\Z$-bilinear form on $N'$. Its restriction to $N$ is the alternating $\Z$-bilinear form $(-1)^{\langle\alpha,\beta\rangle}$. The latter is also $\rho$-invariant, which is essential for lifting the automorphism $\rho$ to an automorphism $\hat\rho$ of the twisted group algebra $\C_\eps[N]$. Now assume that it is possible to choose the 2-cocycle $\eps$ such that $c$ is $\rho$-invariant on all of $N'$. Then, by the same argument, we can lift $\rho$ to an automorphism of the twisted group algebra $\C_\eps[N']$. Indeed, $\eps(\alpha,\beta)$ and $\eps(\rho\alpha,\rho\beta)$ both have the skew $c$ and hence are cohomologous, i.e.\ there is a function $u\colon N'\to\C^\times$ such that
\begin{equation}\label{eq:uduallat}
\frac{\eps(\alpha,\beta)}{\eps(\rho\alpha,\rho\beta)}=\frac{u(\alpha)u(\beta)}{u(\alpha+\beta)}
\end{equation}
for all $\alpha,\beta\in N'$. Then $\rho$ lifts to an automorphism $\check\rho$ of $\C_\eps[N']$ via
\begin{equation*}
\check\rho(\ee_\alpha)=u(\alpha)\ee_{\rho\alpha},
\end{equation*}
which obeys $\check\rho(\ee_\alpha\ee_\beta)=\check\rho(\ee_\alpha)\check\rho(\ee_\beta)$ for all $\alpha,\beta\in N'$.

Let us study the $\rho$-invariance of $c$. Since any alternating $\Z$-bilinear form on a \emph{finitely generated} abelian group is the skew of a suitable 2-cocycle, it suffices to find an alternating $\Z$-bilinear form on $N'$ that is $\rho$-invariant and restricts to $(-1)^{\langle\alpha,\beta\rangle}$ on $N$. The appropriate 2-cocycle $\eps$ can be chosen subsequently. An explicit construction of an alternating $\Z$-bilinear form $c$ on $N'$ descending as desired to $N$ is given in \cite{LL04}, Remark~6.4.12. If $m=\ord(\rho)$ is odd, we may always assume that $c$ is $\rho$-invariant since we can consider
\begin{equation*}
\prod_{r=0}^{m-1}c(\rho^r\alpha,\rho^r\beta),
\end{equation*}
which is clearly alternating, $\Z$-bilinear, $\rho$-invariant and restricts to
\begin{equation*}
\prod_{r=0}^{m-1}(-1)^{\langle\rho^r\alpha,\rho^r\beta\rangle}=(-1)^{m\langle\alpha,\beta\rangle}=(-1)^{\langle\alpha,\beta\rangle}
\end{equation*}
for $\alpha,\beta\in N$. Now suppose that $m$ is even. A short calculation shows that the explicit construction of $c$ from \cite{LL04} is $\rho$-invariant if items \ref{enum:assb2} and \ref{enum:assb5} of Assumption~\ref{ass:b} hold.

We let $\check\rho$ act on $A_{N'}=M_{\hat\h}(1)\otimes\C_\eps[N']$ as $\check\rho:=\rho\otimes\check\rho$ (like $\hat\rho$ in Section~\ref{sec:autlatvoa}). Then, because of item~\ref{enum:assb2} of Assumption~\ref{ass:b}, this restricts to an action on each irreducible module $V_{\alpha+N}=M_{\hat\h}(1)\otimes\C_\eps[\alpha+N]$, $\alpha+N\in N'/N$. We define the vector-space automorphisms
\begin{equation*}
\phi_{V_{\alpha+N}}(\hat\rho):={\check\rho}|_{V_{\alpha+N}}
\end{equation*}
for $\alpha+N\in N'/N$. One easily checks that 
\begin{equation*}
\rho h(x) \rho^{-1}=(\rho h)(x)
\end{equation*}
for $h\in\h$ and \eqref{eq:uduallat} ensures that
\begin{equation*}
\check\rho Y_\alpha(x)\check\rho^{-1}=u(\alpha)Y_{\rho\alpha}(x)
\end{equation*}
for $\alpha\in N'$ (recall the definitions of $h(x)$ and $Y_\alpha(x)$ from Section~\ref{sec:latvoa}) so that these automorphisms have the desired property that
\begin{equation}\label{eq:desprop}
\phi_{V_{\alpha+N}}(\hat\rho)Y_{V_{\alpha+N}}(v,x)\phi_{V_{\alpha+N}}(\hat\rho)^{-1}=Y_{V_{\alpha+N}}(\hat\rho v,x)
\end{equation}
for all $v\in V_N$.

It is clear that the restriction $\phi_{V_N}(\hat\rho)=\check{\rho}|_{V_N}$ of $\check\rho$ to $V_N$ is exactly the same as the lift of the automorphism $\rho\in\Aut(N)$ to an automorphism $\hat\rho$ of the \voa{} $V_N$, described in Section~\ref{sec:autlatvoa}, with the exception that $u$, like $\eps$, may take values in some finite, cyclic subgroup of $\C^\times$ and not just $\{\pm 1\}$, but this is not essential. We may and will assume in the following that the automorphism $\hat\rho$ on $V_N$ is simply the restriction of the automorphism $\check\rho$ on $A_{N'}$ to $V_N$.

In analogy to the standard-lift property (see Definition~\ref{defi:standard}), it is possible to choose the function $u\colon N'\to\C^\times$ such that $u(\alpha)=1$, i.e.\ ${\check\rho}\ee_\alpha=\ee_\alpha$, for all $\alpha\in (N')^\rho$. For the trace functions to have the simple form stated in the theorem we demand that this property hold for all powers of $\rho$, i.e.\ that for $k\in\N$,
\begin{equation}\label{eq:standardliftpowerdual1}
{\check\rho}^k\ee_\alpha=\ee_\alpha\quad\text{for all}\quad\alpha\in(N')^{\rho^k}.
\end{equation}
Similarly to Proposition~\ref{prop:standardliftpower}, one obtains that this is the case if and only if $m=\ord(\rho)$ is odd or $m$ is even and
\begin{equation}\label{eq:standardliftpowerdual2}
c(\alpha,\rho^{k/2}\alpha)=1
\end{equation}
for all $\alpha\in(N')^{\rho^k}$ if $k$ is even. By restriction, this condition includes as a special case item~\ref{enum:assb1} of Assumption~\ref{ass:b}. Using the explicit construction of $c$ from \cite{LL04}, one can show that $c(\alpha,\rho^{k/2}\alpha)=(-1)^{\langle\alpha,(\rho^{k/2}-\id)\alpha\rangle}$ for all $\alpha\in N'$ if items \ref{enum:assb2} and \ref{enum:assb5} of Assumption~\ref{ass:b} are satisfied. Then, \eqref{eq:standardliftpowerdual2} holds if and only if item~\ref{enum:assb6} of Assumption~\ref{ass:b} does.

The standard lift-property \eqref{eq:standardliftpowerdual1} for $k=m$ implies that the automorphism $\phi_{V_{\alpha+N}}(\hat\rho)=\check\rho|_{V_{\alpha+N}}$ of $V_{\alpha+N}$ has order $m$ so that $\phi_{V_{\alpha+N}}$ is a representation of $\langle\check{\rho}\rangle\leq\Aut(V_N)$ on $V_{\alpha+N}$ as vector space for all $\alpha+N\in N'/N$. Together with \eqref{eq:desprop} this implies that we have indeed found explicit descriptions of the representations $\phi_{V_{\alpha+N}}$ for $\alpha+N\in N'/N$, which are unique up to a scalar.

Finally, we compute the twisted trace functions of the irreducible $V_N$-modules. We obtain
\begin{align*}
T_{\alpha+N}(\vac,0,j,\tau)&=\tr_{V_{\alpha+N}}\phi_{V_{\alpha+N}}(\hat{\rho}^j)q_\tau^{L_0-c/24}\\
&=\left(\tr_{M_{\hat\h}(1)}\rho^jq_\tau^{L_0-c/24}\right)\left(\tr_{\C_\eps[\alpha+N]}\check{\rho}^jq_\tau^{L_0}\right)\\
&=\frac{1}{\eta_{\rho^j}(\tau)}\sum_{\beta\in(\alpha+N)^{\rho^j}}u(\beta)u(\rho\beta)\ldots u(\rho^{j-1}\beta)q_\tau^{\langle\beta,\beta\rangle/2}\\
&=\frac{1}{\eta_{\rho^j}(\tau)}\sum_{\beta\in(\alpha+N)^{\rho^j}}q_\tau^{\langle\beta,\beta\rangle/2}\\
&=\frac{\theta_{(\alpha+N)^{\rho^j}}(\tau)}{\eta_{\rho^j}(\tau)}
\end{align*}
for all $\alpha+N\in N'/N$ and $j\in\Z_m$, where we used \eqref{eq:standardliftpowerdual1} in terms of the function $u$ in the second-to-last step, noting that $(\alpha+N)^{\rho^j}\leq (N')^{\rho^j}$. This completes the proof.
\end{proof}

\section{Natural Construction of Ten \BKMA{}s}\label{sec:10bkmas}

In the following we describe the BRST construction of ten \BKMa{}s from \cite{Sch04b,Sch06} whose denominator identities are completely reflective automorphic products of singular weight. To find such ``natural constructions'' was proposed by Borcherds \cite{Bor92}. These constructions are called natural since in physics they describe bosonic strings moving on suitable spacetimes.

\minisec{Introduction}

\BKMa{}s are a class of Lie algebras naturally generalising Kac-Moody algebras, which in turn generalise the finite-dimensional, semisimple Lie algebras. In particular, a \BKMa{} $\g$ admits a character formula for highest-weight modules and a denominator identity
\begin{equation*}
\ee^\rho\prod_{\alpha\in\Phi^+}(1-\ee^\alpha)^{\mult(\alpha)}=\sum_{w\in W}\det(w)w\left(\ee^\rho\sum_{\alpha\in\Phi}\eps(\alpha)\ee^\alpha\right),
\end{equation*}
an identity of formal exponentials, where the second sum is over all roots $\alpha$ in the root system $\Phi$ of $\g$ and the product ranges over the set $\Phi^+$ of positive roots, $W$ denotes the Weyl group, $\rho$ the Weyl vector, $\mult(\alpha)$ the multiplicity of the root $\alpha$ and $\eps(\alpha)$ is $(-1)^n$ if $\alpha$ is the sum of $n$ pairwise orthogonal imaginary simple roots and 0 otherwise.

Sometimes, the denominator identity of a \BKMa{} is an automorphic product. Automorphic products are automorphic forms on orthogonal groups, which are in the image of the \emph{Borcherds lift}. In \cite{Bor98} Borcherds constructed a lift from vector-valued modular forms for the Weil representation of $\MpZ$ to automorphic forms on orthogonal groups, which have an infinite-product expansion and are hence called \emph{automorphic products}.

It is believed that all \BKMa{}s whose denominator identities are automorphic products of singular weight can be realised as bosonic strings. The following construction adds further evidence to this conjecture, which so far has been proved for some special cases in \cite{Bor90,Bor92,HS03,Hoe03a,CKS07,HS14,Car12b} (see examples in Section~\ref{sec:brstex}). The general situation is depicted in the following diagram:
\begin{equation*}
\begin{tikzcd}
\begin{tabular}{c}Vertex\\Algebras\end{tabular}\arrow{rr}{\text{BRST}}\arrow[bend right]{rrrr}{\text{lift of char.}}&&\begin{tabular}{c}Borcherds-Kac-\\Moody algebras\end{tabular}\arrow[<->]{rr}{\text{den. id.}}&&\begin{tabular}{c}Automorphic\\Products\end{tabular}
\end{tikzcd}
\end{equation*}
We explain the bended arrow in the diagram: in the known cases, the automorphic product can be obtained by applying the Borcherds lift to the characters of a certain \vosa{} of the vertex algebra in the matter sector of the BRST construction.

\minisec{Twisting the Fake Monster Algebra}
In the following we describe how to obtain new Borcherds-Kac-Moody (super)algebras by twisting the denominator identity of the Fake Monster Lie algebra (see \cite{Bor92}, Sections 12 and 13), both as Lie (super)algebras over $\R$. Consider the $\II_{25,1}$-graded Fake Monster Lie algebra, which we will call $\g$ in this section, obtained by a BRST construction with $M=V_{\II_{25,1}}$ (as vertex algebra over the real numbers $\R$) in the matter sector (see Section~\ref{sec:brstex}). The denominator identity of $\g$ is
\begin{equation*}
\ee^\rho\prod_{\alpha\in\Phi^+}(1-\ee^\alpha)^{[1/\eta^{24}](-\langle\alpha,\alpha\rangle/2)}=\sum_{w\in W}\det(w)w(\eta^{24}(\ee^\rho)).
\end{equation*}
Here, the Weyl group $W$ of $\g$ is the full reflection group of $\II_{25,1}$ and $\eta^{24}(\ee^\rho)=\ee^\rho\prod_{n=1}^\infty(1-\ee^{n\rho})^{24}$. The root lattice of $\g$ is $\II_{25,1}$ and $\Phi^+$ denotes the set of positive roots. Upon replacing the formal exponentials by complex ones,\footnote{That is to say we replace $\ee^\alpha$ by $\e^{(2\pi\i)\langle\alpha,Z\rangle}$ where the variable $Z$ may take values in a certain subset of $\II_{25,1}\otimes_\Z\C$.} the above is the expansion of a certain automorphic product $\Psi$ on the lattice $\II_{25,1}\oplus \II_{1,1}\cong \II_{26,2}$ of weight 12.

Consider the automorphism group $\Aut(V_\Lambda)$ of the vertex algebra $V_\Lambda$ associated with the Leech lattice $\Lambda$, the unique positive-definite, even, unimodular lattice in dimension 24 with no roots. It acts naturally on the Fake Monster Lie algebra $\g$. Indeed, by considering $M\cong V_\Lambda\otimes V_{\II_{1,1}}$ and forgetting about the grading by $\Lambda$, we can also view $\g$ as a $\II_{1,1}$-graded Lie algebra whose graded components are given by $\g(\alpha)\cong(V_\Lambda)_{1-\langle\alpha,\alpha\rangle/2}$ for non-zero $\alpha\in \II_{1,1}$ (see Remark~\ref{rem:rank2}) and $\g(0)\cong(V_\Lambda)_1\otimes_\R\R^2$ for $0\in \II_{1,1}$ (cf.\ Proposition~\ref{prop:brstcartan1}). Hence, since $\Aut(V_\Lambda)$ acts on the graded subspaces of $V_\Lambda$, it also acts on $\g$.

Given an automorphism $g\in\Aut(V_\Lambda)$, we obtain a \emph{$g$-twisted denominator identity} of $\g$. Now consider an automorphism $\nu$ of the Leech lattice of order $m$, which we lift to an automorphism $\hat\nu$ of $V_\Lambda$. Assume that all powers of $\hat\nu$ are standard lifts of the corresponding power of $\nu$. In particular, $\hat\nu$ has order $m$. Then the twisted denominator identity associated with $\hat\nu$ is computed in \cite{Bor92}, Section~13, and it is shown that this $\hat{\nu}$-twisted denominator identity is exactly the \emph{untwisted} denominator identity of some real Borcherds-Kac-Moody (super)algebra $\g_{\hat\nu}$.\footnote{Whether $\g_{\hat\nu}$ is a Borcherds-Kac-Moody algebra or superalgebra depends on the cycle shape of $\nu$ (see \cite{Bor92} for details).}

Scheithauer has shown that for certain elements $\hat\nu$, namely those with square-free level\footnote{The level of an automorphism $\nu$ with some cycle shape $\prod_{t\mid m}t^{b_t}$ is defined as the level of the subgroup of $\SLZ$ fixing the eta product $\eta_\nu$. This is exactly the smallest positive multiple $N$ of $m=\ord(\nu)$ such that $24\mid N\sum_{t\mid m}b_t/t$.}, the $\hat\nu$-twisted denominator identity of $\g$, which equals the denominator identity of $\g_{\hat\nu}$, is an automorphic form of singular weight $-w:=\rk(\Lambda^\nu)/2=:k/2-1$ in the image of the Borcherds lift, i.e.\ an automorphic product (see \cite{Sch06}, Theorem~10.1). More precisely, starting from the automorphism $\hat\nu$ he constructs, using the Borcherds lift, an automorphic product $\Psi_{\hat\nu}$ whose expansion at a certain cusp gives the denominator identity of $\g_{\hat\nu}$ (see also Remark~\ref{rem:vvmfF}).

A nice special case is obtained for ten particular conjugacy classes of automorphisms of the Leech lattice $\Lambda$. For the Leech lattice $\rk(\Lambda^\nu)=k-2$ is always even for any automorphism $\nu\in\Aut(\Lambda)$. Let $m\in\Ns$ such that $m$ is square-free and
\begin{equation*}
\sigma_1(m)\mid24
\end{equation*}
with the sum-of-divisors function $\sigma_1$. Explicitly, let $m\in\{1,2,3,5,6,7,11,14,15,23\}$. For each such $m$ let $\nu$ be the up to conjugacy unique\footnote{Note that for $m=23$ there are two conjugacy classes in $\Aut(\Lambda)$ with cycle shape $1.23$, which are algebraically conjugate in the sense of \cite{CCNPW85} and behave identically for our purposes. More precisely, if $\nu$ is in one conjugacy class, then $\nu^{-1}$ is in the other.} automorphism with cycle shape
\begin{equation}\label{eq:cycleshapenu}
\prod_{t\mid m}t^{b_t}=\prod_{t\mid m}t^{24/\sigma_1(m)}.
\end{equation}
These automorphisms have order and level $m$. We remark that the fixed-point lattices $\Lambda^\nu$ are the unique lattices in their genus without roots. The rank of the fixed-point lattice $\Lambda^\nu$ is given by $\rk(\Lambda^\nu)=k-2=24\sigma_0(m)/\sigma_1(m)$. The above automorphisms correspond exactly to the elements of square-free order in the sporadic group $M_{23}$, which acts naturally on the Leech lattice $\Lambda$.

It is shown in \cite{Sch04}, Theorem~10.1, that the twisted denominator identity of $\g$ associated with one of these ten automorphisms $\hat\nu$ of square-free order $m$ is
\begin{equation*}
\ee^\rho\prod_{d\mid m}\prod_{\alpha\in\Phi^+\cap d\Delta'}(1-\ee^\alpha)^{[1/\eta_\nu](-\langle\alpha,\alpha\rangle/2d)}=\sum_{w\in W}\det(w)w(\eta_\nu(\ee^\rho))
\end{equation*}
with root lattice $\Delta=\Lambda^\nu\oplus \II_{1,1}$, positive roots $\Phi^+$, Weyl group $W$, which is again the full reflection group of $\Delta$, and Weyl vector $\rho$. Here, $\eta_\nu$ is again the eta product
\begin{equation*}
\eta_\nu(q)=\prod_{t\mid m}\eta(q^t)^{24/\sigma_1(m)}=q\prod_{t\mid m}\prod_{n=1}^\infty(1-q^{tn})^{24/\sigma_1(m)}
\end{equation*}
associated with the cycle shape of $\nu$. This is the denominator identity of the real \BKMa{} $\g_{\hat\nu}$, which is graded by $\Delta=\Lambda^\nu\oplus \II_{1,1}$ with dimensions
\begin{equation*}
\dim_\R(\g_{\hat\nu}(\alpha))=\sum_{d\mid m}\delta_{\alpha\in \Delta\cap d\Delta'}\left[\frac{1}{\eta_\nu}\right]\left(-\frac{1}{d}\frac{\langle\alpha,\alpha\rangle}{2}\right)
\end{equation*}
for all non-zero $\alpha\in\Delta$ and $\dim_\R(\g_{\hat\nu}(0))=k$. This denominator identity is the expansion at any cusp of the automorphic product $\Psi_{\hat\nu}$ on the lattice $P=L\oplus\II_{1,1}=\Lambda^\nu\oplus\II_{1,1}(m)\oplus\II_{1,1}$ of singular weight $-w=12\sigma_0(m)/\sigma_1(m)\in\Z$ (where $L=\Lambda^\nu\oplus\II_{1,1}(m)$, see below). The lattice $P$ is even, of signature $(k,2)$ and has level $m$.

One observes that for these ten cases the automorphic form $\Psi_{\hat\nu}$ is completely reflective (see \cite{Sch06}, Section~9). In fact, one can show that in a sense these are the only completely reflective automorphic products of singular weight arising as denominator identities of \BKMa{}s. More precisely:
\begin{thm}[\cite{Sch06}, Theorem~12.7, \cite{Moe12}, Satz~6.4.2]
Let $P$ be an even lattice of signature $(k,2)$ with $k\geq 4$, square-free level $m$ and $p$-ranks of the discriminant form at most $k+1$. Then a real \BKMa{} whose denominator identity is a completely reflective automorphic product in the image of the Borcherds lift of singular weight $-w=k/2-1$ on $P$ is isomorphic to $\g_{\hat\nu}$ for some $\nu$ of order $m$ in $M_{23}$.
\end{thm}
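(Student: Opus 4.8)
The plan is to reduce the statement to the classification of completely reflective automorphic products of singular weight on $P$, and to exploit the fact that a \BKMa{} is determined up to isomorphism by its denominator identity. Indeed, the denominator identity encodes the root lattice, the multiplicities $\mult(\alpha)$ of all roots, and (through its right-hand side, i.e.\ the Weyl group together with the imaginary simple roots) the full set of simple roots; conversely these data reconstruct the algebra. Thus it suffices to show that there is, up to the obvious equivalences, exactly one completely reflective automorphic product of singular weight on each admissible $P$, and that it coincides with the form $\Psi_{\hat\nu}$ whose expansion gives the denominator identity of $\g_{\hat\nu}$. So the first step is to recast the given denominator identity as the expansion of an automorphic product $\Psi$ on $P$, which by Borcherds' theory (cf.\ Theorem~\ref{thm:bor98thm4.1} and the discussion of the Borcherds lift) is the lift of a vector-valued modular form $F$ of weight $1-k/2$ for the Weil representation $\rho_{P'/P}$ of $\MpZ$; the principal part of $F$ (its singular, i.e.\ negative-norm, Fourier coefficients) records the reflective divisor.

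Next I would impose the two defining hypotheses on $\Psi$. The singular weight condition $-w=k/2-1$ is the minimal possible weight, and by the singular weight theorem the Fourier coefficients of $\Psi$ at every cusp vanish except on isotropic vectors; this rigidly constrains which negative-norm coefficients of $F$ may be non-zero and fixes them in terms of the values of $F$ on isotropic cosets. Complete reflectivity forces the divisor of $\Psi$ to be a sum of rational quadratic divisors $\lambda^\bot$ with $\lambda$ a root of $P$, so that every singular coefficient of $F$ is attached to a reflection. Combining the two, together with the assumption that $P$ has square-free level $m$ and $p$-ranks of $P'/P$ bounded by $k+1$, pins the principal part of $F$ down to a short, explicit list: one shows that $F$ must agree, in its principal part, with the vector-valued lift of the inverse of an eta product $1/\eta_\nu$ whose cycle shape $\prod_{t\mid m}t^{b_t}$ has constant exponents and satisfies $\sigma_1(m)\mid 24$.

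Having isolated the possible principal parts, the next step is to match them with the conjugacy classes of $M_{23}$. The eta products arising are exactly those attached to the square-free order elements $\nu$ of $M_{23}$ with cycle shape $\prod_{t\mid m}t^{24/\sigma_1(m)}$, i.e.\ the ten classes with $m\in\{1,2,3,5,6,7,11,14,15,23\}$; the lattice $P$ of signature $(k,2)$, square-free level $m$ and the stated $p$-rank bound is then forced to be isometric to $\Lambda^\nu\oplus\II_{1,1}(m)\oplus\II_{1,1}$, using the uniqueness of lattices in the relevant genera (a Nikulin-type argument as in Proposition~\ref{prop:thm1.14.2}). Since $\Psi_{\hat\nu}$ is the completely reflective automorphic product of singular weight on this $P$ with the given principal part, $\Psi\cong\Psi_{\hat\nu}$, and comparing their product expansions identifies the root multiplicities and simple roots of the given \BKMa{} with those of $\g_{\hat\nu}$; hence the two algebras are isomorphic.

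The hard part is the classification underlying the second and third steps: showing that the singular weight and complete reflectivity conditions, on a lattice of square-free level with the $p$-rank bound, admit only these ten eta products as principal parts. This rests on a careful analysis of the obstruction space---the pairing of candidate principal parts against the Eisenstein and cusp form obstructions to the existence of a weakly holomorphic $F$ with prescribed singular part---and on the arithmetic of the discriminant forms $P'/P$ of square-free level. The bookkeeping of reflective hyperplanes across all cusps, and the verification that no mixed or higher-multiplicity eta products survive the singular weight constraint, is where the real work lies; the reconstruction of the \BKMa{} from its denominator identity, by contrast, is purely formal.
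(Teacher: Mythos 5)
Your proposal is correct and takes essentially the same route as the paper: there the theorem is obtained as a corollary of the classification of completely reflective automorphic products of singular weight (\cite{Sch06}, Theorem~12.6, \cite{Moe12}, Satz~6.4.1, which the paper cites rather than reproves), combined with the fact that a \BKMa{} is reconstructed from its denominator identity---exactly your reduction. Your sketch of the classification itself (obstruction/Eisenstein-pairing arguments pinning the principal part down to the lifts of $1/\eta_\nu$, then genus uniqueness forcing $P\cong\Lambda^\nu\oplus\II_{1,1}(m)\oplus\II_{1,1}$, which is also how the splitting hypothesis of \cite{Sch06} is removed) matches the cited proof.
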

This is a slight improvement of the theorem in \cite{Sch06} due to the author of this text \cite{Moe12}, removing the assumption that $P$ splits two hyperbolic planes.

What is actually shown is the following classification of automorphic products, to which the above theorem is a corollary:
\begin{thm}[\cite{Sch06}, Theorem~12.6, \cite{Moe12}, Satz~6.4.1]
Let $P$ be an even lattice of signature $(k,2)$ with $k\geq 4$, square-free level $m$ and $p$-ranks of the discriminant form at most $k+1$. Then a completely reflective automorphic product in the image of the Borcherds lift of singular weight $-w=k/2-1$ exists on $P$ if and only if $P$ is isomorphic to one of the following lattices\footnote{We denote these lattices by their genus since the genera in the table consist only of one isomorphism class of lattices.}:
\begin{equation*}
\renewcommand{\arraystretch}{1.3}
\begin{array}{r|l} 
\multicolumn{1}{c|}{-w} & \multicolumn{1}{c}{P}\\\hline
1 & \II_{4,2}(23^{-3})\\
2 & \II_{6,2}(11^{-4}),\,\II_{6,2}(2_{\II}^{+4}7^{-4}),\,\II_{6,2}(3^{+4}5^{-4})\\
3 & \II_{8,2}(7^{-5})\\
4 & \II_{10,2}(5^{+6}),\,\II_{10,2}(2_{\II}^{+6}3^{-6})\\
6 & \II_{14,2}(3^{-8})\\
8 & \II_{18,2}(2_{\II}^{+10})\\
12& \II_{26,2} 
\end{array} 
\end{equation*}
Moreover, all these lattices are of the form $P\cong\Lambda^\nu\oplus\II_{1,1}(m)\oplus\II_{1,1}$ for an element $\nu$ of square-free order $m$ in $M_{23}$.
\end{thm}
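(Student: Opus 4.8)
The statement is an equivalence together with the \emph{moreover} clause identifying the lattices, and the natural strategy is to transport everything to the level of vector-valued modular forms via the Borcherds lift (\cite{Bor98}, Theorem~4.1). First I would recall that a holomorphic automorphic product of singular weight $-w = k/2-1$ in the image of the Borcherds lift on an even lattice $P$ of signature $(k,2)$ is exactly the lift of a weakly holomorphic vector-valued modular form $F = \sum_{\gamma \in P'/P} F_\gamma e_\gamma$ of weight $1 - k/2$ for the Weil representation $\rho_{P'/P}$ of $\MpZ$, whose principal part has integral coefficients and whose weight, read off from the constant term $[F_0](0)$, equals $2(-w) = k-2$. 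The complete reflectivity of $\Psi$ translates into the condition that the principal part of $F$ be supported precisely on the reflective vectors of $P$, i.e.\ on cosets $\gamma + P$ and exponents $n < 0$ with $n = -\langle\lambda,\lambda\rangle/2$ for roots $\lambda \in \gamma + P$, organised by the divisors $d \mid m$ as in the twisted denominator identity. This reduces the theorem to a statement purely about discriminant forms: the Weil representation, the relevant Eisenstein series and the notion of reflectivity depend only on the genus symbol $P'/P$ and on $\sign(P) \bmod 8$, which is what allows the assumption that $P$ split two hyperbolic planes (present in \cite{Sch06}) to be dropped in \cite{Moe12}.

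The \emph{if}-direction is the existence part and is essentially supplied by the preceding construction. For each square-free $m$ with $\sigma_1(m) \mid 24$ I would take the automorphism $\nu \in M_{23}$ of the Leech lattice of cycle shape \eqref{eq:cycleshapenu}; its twisted denominator identity for the Fake Monster Lie algebra is, by \cite{Sch06}, Theorem~10.1, the expansion of a completely reflective automorphic product $\Psi_{\hat\nu}$ of singular weight $-w = 12\sigma_0(m)/\sigma_1(m)$ on $P \cong \Lambda^\nu \oplus \II_{1,1}(m) \oplus \II_{1,1}$. Computing $\rk(\Lambda^\nu) = 24\sigma_0(m)/\sigma_1(m)$ and the discriminant form of $\Lambda^\nu \oplus \II_{1,1}(m)$ then matches $P'/P$ against each entry of the table, so every listed genus is realised, and simultaneously one verifies that these $P$ all split as claimed in the moreover clause.

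The hard direction is \emph{only if}, and here the engine is Borcherds' obstruction principle: a weakly holomorphic form of weight $1 - k/2$ with prescribed principal part exists if and only if that principal part pairs to zero against every Fourier coefficient of every cusp form, and its constant term is then determined by pairing against the Eisenstein series $E$ of dual weight $1 + k/2$ for $\rho_{\overline{P'/P}}$. I would impose the singular-weight normalisation $[F_0](0) = k-2$ together with the reflective support condition and expand the resulting identity coefficient by coefficient. The coefficients of $E$ are, up to normalisation, special values of $L$-functions and generalised Bernoulli numbers and are strictly positive on the relevant cosets; pairing the reflective principal part against them yields a Diophantine relation expressing $k-2$ as a positive combination of Eisenstein coefficients indexed by the roots for each divisor $d \mid m$. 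Positivity together with the square-free level then bounds the $p$-ranks, the level $m$ and ultimately the signature $k$, leaving only finitely many candidate genus symbols to be checked directly against the table.

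The main obstacle will be making this final enumeration effective and complete: one must simultaneously bound $m$, bound $k$, and control all admissible genus symbols $\II_{k,2}(D)$ compatible with the obstruction equation, and the estimates on the Eisenstein coefficients (which grow with $k$ and depend delicately on the local discriminant-form data at each prime $p \mid m$) are precisely what force the list to be finite and exactly the one stated. A secondary point, needed to realise each surviving genus symbol by an actual lattice rather than merely an abstract discriminant form, is the existence of an even lattice of signature $(k,2)$ with that discriminant form, which is guaranteed by the bound on the $p$-ranks via Nikulin's criterion (cf.\ Proposition~\ref{prop:thm1.14.2}); this is also where working intrinsically with discriminant forms removes the need for $P$ to split two hyperbolic planes and thereby yields the sharper statement of \cite{Moe12}.
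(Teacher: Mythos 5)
The first thing to say is that this paper does not prove the statement at all: it is quoted from \cite{Sch06}, Theorem~12.6, as sharpened in \cite{Moe12}, Satz~6.4.1, and is used here only as an input (the subsequent corollary on \BKMa{}s is what the text actually needs). So there is no internal proof to compare against, and your outline has to be measured against the cited sources. Measured that way, your strategy is the right one and matches theirs: translate complete reflectivity into a condition on the principal part of a vector-valued form $F$ of weight $1-k/2$ for the Weil representation attached to $P'/P$; get existence (and the ``moreover'' clause) from the ten twisted denominator identities $\Psi_{\hat\nu}$ attached to the square-free classes in $M_{23}$, matching discriminant forms of $\Lambda^\nu\oplus\II_{1,1}(m)\oplus\II_{1,1}$ against the table; and get the converse from the residue pairing of $F$ against the Eisenstein series of weight $1+k/2$, which expresses $[F_0](0)=k-2$ as a signed sum of Eisenstein coefficients over the reflective cosets and reduces the problem to a finite check on genus symbols.

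Two points need correcting. First, you have the logical role of the $p$-rank hypothesis backwards: you write that positivity of the Eisenstein coefficients ``bounds the $p$-ranks, the level $m$ and ultimately the signature $k$'', but the bound $l((P'/P)_p)\le k+1$ is an \emph{assumption} of the theorem, not a consequence of the obstruction identity --- the paper itself remarks immediately after the theorem that this restriction ``is essential since it guarantees the finiteness of the above list''. Its other job, which you do identify at the end, is to make Nikulin's criterion applicable, so that each surviving genus symbol is realised by a unique isomorphism class and so that the splitting-of-two-hyperbolic-planes hypothesis of \cite{Sch06} can be dropped; that is precisely the content of the sharpening in \cite{Moe12}. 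Second, the entire substance of the ``only if'' direction is the effective enumeration --- computing the local Eisenstein data at each $p\mid m$ and eliminating all but the listed genera --- and your proposal explicitly defers this as ``the main obstacle''. As written it is a correct plan for the proof in the cited sources, not a proof.
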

The restriction on the $p$-ranks (see definition before Proposition~\ref{prop:thm1.14.2}) is essential since it guarantees the finiteness of the above list (see \cite{Sch06}, remark after Theorem~12.3).

It was asked by Borcherds \cite{Bor92} whether there are natural constructions for the Borcherds-Kac-Moody (super)algebras obtained by twisting denominator identities of the Fake Monster Lie algebra, for instance. In the following we present such a construction, using BRST cohomology, for the ten real \BKMa{}s $\g_{\hat\nu}$ associated with elements of square-free order in $M_{23}$. More precisely, since we are working over $\C$, we construct their complexification.

Some of these ten \BKMa{}s have already been constructed in a BRST approach. Clearly, for $\nu=\id$ we obtain the Fake Monster Lie algebra $\g$ itself. For the automorphism of order $2$ in $M_{23}$ one obtains the Fake Baby Monster Lie algebra. With a slightly less effective method in \cite{CKS07} the authors construct the four \BKMa{}s associated with the automorphisms in $M_{23}$ of order $2$, $3$, $5$ and $7$ depending on some conjectures.

\minisec{Matter Sector}

We consider the ten conjugacy classes of automorphisms $\nu$ of the Leech lattice $\Lambda$ introduced above. To avoid confusion we write $\k:=\Lambda\otimes_\Z\C$ for the complexification of the Leech lattice $\Lambda$ while $\h$ will be the complexification of the Lorentzian lattice $L$ below.
\begin{customass}{{\textbf{\textsf{M}}}}\label{ass:m}
Let $\nu$ be an element of square-free order in $M_{23}$ viewed as automorphism of the Leech lattice $\Lambda$, i.e.\ let $\nu$ be one of the conjugacy classes in $\Aut(\Lambda)$ of order $m=1,2,3,5,6,7,11,14,15,23$ with cycle shape \eqref{eq:cycleshapenu}. Let $\hat\nu$ be a standard lift of $\nu$ to an automorphism of the holomorphic \voa{} $V_\Lambda$ associated with $\Lambda$.
\end{customass}
Recall that for these ten cases this already implies that $\hat{\nu}^k$ is a standard lift of $\nu^k$ for all $k\in\N$. In particular, $\hat{\nu}$ has order $m$, and not $2m$.

\begin{prop}\label{prop:confweightnu}
Let $\hat\nu$ be as in Assumption~\ref{ass:m}. Then the conformal weight $\rho(V_\Lambda(\hat{\nu}))$ of the unique $\hat{\nu}$-twisted $V_\Lambda$-module $V_\Lambda(\hat{\nu})$, i.e.\ the vacuum anomaly $\rho_\nu$, is $(m-1)/m$. In particular, $\hat{\nu}$ has type $m\{0\}$.
\end{prop}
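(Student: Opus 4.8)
The statement has two parts: computing the vacuum anomaly $\rho_\nu$ and deducing that $\hat{\nu}$ has type $m\{0\}$. The plan is to use the explicit formula for the vacuum anomaly in terms of the cycle shape given in Remark~\ref{rem:cycleconf}, namely
\begin{equation*}
\rho_\nu=\frac{1}{24}\sum_{t\mid m}b_t\left(t-\frac{1}{t}\right),
\end{equation*}
together with the specific cycle shape \eqref{eq:cycleshapenu} of the automorphisms under consideration, where $b_t=24/\sigma_1(m)$ for all $t\mid m$ (the same exponent for every divisor).

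First I would substitute $b_t=24/\sigma_1(m)$ into the formula, pulling the constant exponent out of the sum:
\begin{equation*}
\rho_\nu=\frac{1}{24}\cdot\frac{24}{\sigma_1(m)}\sum_{t\mid m}\left(t-\frac{1}{t}\right)=\frac{1}{\sigma_1(m)}\left(\sum_{t\mid m}t-\sum_{t\mid m}\frac{1}{t}\right).
\end{equation*}
The first sum is $\sigma_1(m)$ by definition of the sum-of-divisors function. For the second sum I would use the standard fact that the divisors of $m$ come in pairs $t\leftrightarrow m/t$, so that $\sum_{t\mid m}(1/t)=\sum_{t\mid m}(m/t)/m=\sigma_1(m)/m$. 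Substituting both gives
\begin{equation*}
\rho_\nu=\frac{1}{\sigma_1(m)}\left(\sigma_1(m)-\frac{\sigma_1(m)}{m}\right)=1-\frac{1}{m}=\frac{m-1}{m},
\end{equation*}
which is the claimed value. This computation is purely arithmetic and presents no real obstacle; the only thing to double-check is that the cycle shape \eqref{eq:cycleshapenu} indeed has a uniform exponent $b_t$ across all divisors, which is exactly what the notation $\prod_{t\mid m}t^{24/\sigma_1(m)}$ asserts.

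For the second part, I recall that $\hat{\nu}$ has type $m\{0\}$ means by definition (Definition~Type in Section~\ref{sec:confweights}) that $\rho_1=\rho(V_\Lambda(\hat{\nu}))\in(1/m^2)\Z$ with $r=0$, i.e.\ $\rho_1\equiv 0 \pmod{1/m}$. Since I have just shown $\rho(V_\Lambda(\hat{\nu}))=(m-1)/m=1-1/m$, its class modulo $1/m$ is $0$, so $r\equiv m^2\rho_1\equiv 0\pmod{m}$ and hence $\hat{\nu}$ is of type $m\{0\}$. I would also note that since $V_\Lambda$ is holomorphic and $C_2$-cofinite of CFT-type (Proposition~\ref{prop:latnice}, Corollary~\ref{cor:hol}) and $\hat{\nu}$ has order $m$, Assumption~\ref{ass:o} is satisfied, so the notion of type is well-defined here and Theorem~\ref{thm:confwnn} already guarantees $\rho_1\in(1/m^2)\Z$, consistent with the sharper value obtained. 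I do not anticipate any genuine difficulty in this proof: the entire content is the evaluation of the elementary divisor sum, and the type statement follows immediately from the definition once the value $(m-1)/m$ is in hand.
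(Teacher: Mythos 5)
Your proof is correct and follows essentially the same route as the paper: substitute the uniform exponent $b_t=24/\sigma_1(m)$ into the cycle-shape formula of Remark~\ref{rem:cycleconf}, evaluate $\sum_{t\mid m}t=\sigma_1(m)$ and $\sum_{t\mid m}1/t=\sigma_1(m)/m$, and observe that $(m-1)/m$ vanishes modulo $1/m$. The paper additionally remarks that the unimodularity of $\Lambda$ is what lets one identify the vacuum anomaly with the conformal weight of $V_\Lambda(\hat\nu)$, but this is already built into the statement you were proving.
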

\begin{proof}
By Remark~\ref{rem:cycleconf} the vacuum anomaly $\rho_\nu$ is given by
\begin{align*}
\rho_\nu&=\frac{1}{24}\sum_{t\mid m}b_t\left(t-\frac{1}{t}\right)=\frac{1}{24}\frac{24}{\sigma_1(m)}\sum_{t\mid m}\left(t-\frac{1}{t}\right)=\frac{1}{\sigma_1(m)}\left(\sigma_1(m)-\frac{\sigma_1(m)}{m}\right)\\
&=\frac{m-1}{m},
\end{align*}
which vanishes modulo~$1/m$. Since $\Lambda$ is unimodular, this is also the conformal weight $\rho(V_\Lambda(\hat{\nu}))$ of the unique irreducible $\hat\nu$-twisted $V_\Lambda$-module.
\end{proof}

By Corollary~\ref{cor:main} the \fpvosa{} $V_\Lambda^{\hat{\nu}}=(V_\Lambda)^{\hat{\nu}}$ has exactly $n^2$ irreducible modules $V_\Lambda^{\hat{\nu}}(i,j)$, indexed by $(i,j)\in\Z_m\times\Z_m$, which is also the fusion group, with conformal weights $\rho(V_\Lambda^{\hat{\nu}}(i,j))\in ij/m+\Z=:Q_m((i,j))$. Moreover, we know by Theorem~\ref{thm:aia} that the direct sum of irreducible $V_\Lambda^{\hat{\nu}}$-modules
\begin{equation*}
\bigoplus_{i,j\in\Z_m}V_\Lambda^{\hat{\nu}}(i,j)
\end{equation*}
admits the structure of an \aia{} with associated \fqs{} $(\Z_m\times\Z_m,-Q_m)$ and central charge $c=\rk(\Lambda)=24$.

Recall that $\II_{1,1}$ is the up to isomorphism unique even, unimodular lattice of Lorentzian signature $(1,1)$ and let $K:=\II_{1,1}(m)$ be the same lattice with quadratic form rescaled by $m$. Proposition~\ref{prop:e2rlattice} implies that the discriminant form $K'/K=(K'/K,Q_K)$ is isomorphic as \fqs{} to $(\Z_m\times\Z_m,Q_m)$ and in fact it is also isomorphic to $(\Z_m\times\Z_m,-Q_m)$. We choose some isomorphism
\begin{equation}\label{eq:isophi}
\varphi\colon (K'/K,Q_K)\to(\Z_m\times\Z_m,-Q_m).
\end{equation}

We consider the vertex algebra $V_K$ associated with $K$. It is a weak \voa{} of central charge 2
with irreducible modules $V_{\alpha+K}$ for $\alpha\in K'/K$ (see footnote~\ref{footnote:1}). Moreover, the direct sum
\begin{equation*}
\bigoplus_{\alpha+K\in K'/K}V_{\alpha+K}
\end{equation*}
admits the structure of an \aia{} with associated quadratic space $\overline{K'/K}=(K'/K,-Q_K)$ and central charge $2$ (see footnote \ref{footnote:2}).

We define the weak \voa{} $M$ in the matter sector of the BRST construction:
\begin{prop}\label{prop:mdef}
Let $\hat\nu$ be as in Assumption~\ref{ass:m}. Then the direct sum
\begin{equation*}
M:=\bigoplus_{\alpha+K\in K'/K}V_\Lambda^{\hat{\nu}}(\varphi(\alpha+K))\otimes V_{\alpha+K}
\end{equation*}
admits the structure of a weak \voa{} of central charge 26.
\end{prop}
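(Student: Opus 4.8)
The plan is to realise $M$ as a sub-abelian-intertwining-algebra of a tensor-product \aia{} built from the two factors, and then to pass from \aia{} to weak \voa{} by the weak analogue of Proposition~\ref{prop:aiavoa}. First I would assemble the \aia{} in the matter factor. Since $\hat\nu$ is a standard lift of an element of square-free order in $M_{23}$, the pair $(V_\Lambda,\langle\hat\nu\rangle)$ is of the form required by the Orbifold Assumption~\ref{ass:o}, and (by Theorem~\ref{thm:orb}) $V_\Lambda^{\hat\nu}$ satisfies Assumption~\ref{ass:n}. Moreover $\hat\nu$ has type $m\{0\}$ by Proposition~\ref{prop:confweightnu}, and every twisted module $V_\Lambda(\hat\nu^i)$ with $i\neq0$ has positive conformal weight because the Leech lattice has no roots, so Assumption~\ref{ass:op} holds. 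Hence Theorem~\ref{thm:aia} applies: $A^{(1)}:=\bigoplus_{(i,j)\in\Z_m\times\Z_m}V_\Lambda^{\hat\nu}(i,j)$ is an \aia{} of central charge $24$ with associated \fqs{} $(\Z_m\times\Z_m,-Q_m)$, and by Theorem~\ref{thm:2.7} its trace form is $Q_\Omega^{(1)}=-Q_\rho^{(1)}=-Q_m$. On the lattice side, the version of Theorem~\ref{thm:lataia} valid for the non-positive-definite lattice $K=\II_{1,1}(m)$ (cf.\ the footnotes in Section~\ref{sec:brst}) gives that $A^{(2)}:=\bigoplus_{\alpha+K\in K'/K}V_{\alpha+K}$ is an \aia{} of central charge $2$ with associated quadratic space $(K'/K,-Q_K)$ and $Q_\Omega^{(2)}=-Q_K$ by construction.

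Next I would form the tensor-product \aia{} $A:=A^{(1)}\otimes A^{(2)}$ (\cite{DL93}, Proposition~12.39), which has central charge $26$, is graded by $(\Z_m\times\Z_m)\times(K'/K)$ with graded piece $V_\Lambda^{\hat\nu}(i,j)\otimes V_{\alpha+K}$, and carries the quadratic forms $Q_\rho=Q_m\oplus Q_K$ and $Q_\Omega=Q_\Omega^{(1)}\oplus Q_\Omega^{(2)}=-Q_\rho$ (coboundaries do not affect the trace). The space $M$ is precisely $\bigoplus_{\gamma\in I}A^\gamma$ for the subgroup $I=\{(\varphi(\alpha+K),\alpha+K):\alpha+K\in K'/K\}\leq(\Z_m\times\Z_m)\times(K'/K)$, the graph of the isometry $\varphi$ of \eqref{eq:isophi}; being a subgroup of the grading group, $I$ yields a sub-\aia{}. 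I would then check isotropy: for $\gamma=(\varphi(\alpha+K),\alpha+K)\in I$, using that $\varphi\colon(K'/K,Q_K)\to(\Z_m\times\Z_m,-Q_m)$ preserves the quadratic form,
\begin{equation*}
Q_\rho(\gamma)=Q_m(\varphi(\alpha+K))+Q_K(\alpha+K)=-Q_K(\alpha+K)+Q_K(\alpha+K)=0,
\end{equation*}
so $I$ is isotropic with respect to $Q_\rho$, and consequently $Q_\Omega|_I=-Q_\rho|_I=0$ as well.

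Finally I would conclude by the weak-\voa{} form of Proposition~\ref{prop:aiavoa}. Because $Q_\rho|_I=0$, the sub-\aia{} $M$ is integrally graded, so its level is $N=1$; because $Q_\Omega|_I=0$, the abelian $3$-cocycle $(F|_I,\Omega|_I)$ lies in the trivial class of $H^3_\text{ab.}(I,\C^\times)$ (the trace map being the isomorphism in the classification of abelian group cohomology), so the intertwining operators composing the vertex operation may be rescaled to make $F|_I$ and $\Omega|_I$ identically $1$. With $N=1$ and trivial $(F,\Omega)$ the generalised Jacobi identity of the \aia{} collapses to the ordinary Jacobi identity, and the remaining \aia{} axioms are exactly those of a weak \voa{}; the central charge is inherited as $24+2=26$.

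The main obstacle I anticipate is this last step rather than any of the computations. The extension statements of Chapter~\ref{ch:aia}—Lemma~\ref{lem:thm2.7} and the Extension Theorem~\ref{thm:4.3}—are phrased under Assumption~\ref{ass:sn}\ref{ass:p}, which the base $V_\Lambda^{\hat\nu}\otimes V_K$ fails because the Lorentzian factor $V_K$ is only a weak \voa{} (its weight grading is unbounded below and its graded components are infinite-dimensional). I therefore cannot cite those theorems directly and must instead rerun their rescaling argument intrinsically at the level of the \aia{} $A$: I must verify that the rescaling preserves the vacuum axioms (i.e.\ choose the rescaling function $f$ with $f(\alpha,0)=f(0,\alpha)=1$, so that the abelian $3$-coboundary stays normalised) and, crucially, check that in reducing the generalised Jacobi identity to the ordinary one no appeal was made to boundedness from below or to finite-dimensionality of the graded pieces—precisely the two hypotheses of Proposition~\ref{prop:aiavoa} that distinguish a \voa{} from a weak \voa{} and that must here be dropped.
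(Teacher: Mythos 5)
Your proposal is correct and follows essentially the same route as the paper: form the tensor-product \aia{} of $\bigoplus_{i,j}V_\Lambda^{\hat\nu}(i,j)$ and $\bigoplus_{\alpha+K}V_{\alpha+K}$, observe that the graph of $\varphi$ is an isotropic subgroup of the associated \fqs{} $(\Z_m\times\Z_m,-Q_m)\times(K'/K,-Q_K)$, and restrict. The final step you worry about is exactly what the paper disposes of in one clause as ``the generalisation of Proposition~\ref{prop:aiavoa}''; your explicit check that the rescaling argument never uses boundedness from below or finite-dimensionality of the graded pieces is the correct justification of that clause.
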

\begin{proof}
The tensor-product \aia{}
\begin{equation*}
\left(\bigoplus_{i,j\in\Z_m}V_\Lambda^{\hat{\nu}}(i,j)\right)\otimes\left(\bigoplus_{\alpha+K\in K'/K}V_{\alpha+K}\right)=\bigoplus_{\substack{i,j\in\Z_m,\\\alpha+K\in K'/K}}V_\Lambda^{\hat{\nu}}(i,j)\otimes V_{\alpha+K}
\end{equation*}
is an \aia{} with associated \fqs{}
\begin{equation*}
(\Z_m\times\Z_m,-Q_m)\times(K'/K,-Q_K).
\end{equation*}
Clearly, by definition of $\varphi$, the subgroup of all elements of the form
\begin{equation*}
(\varphi(\gamma),\gamma)\quad\text{for}\quad\gamma\in K'/K
\end{equation*}
is isotropic. By the generalisation of Proposition~\ref{prop:aiavoa} this implies that $M$ is a weak \voa{}. The central charges add up to 26.
\end{proof}

\begin{rem}\label{rem:choicedep}
It is not obvious from the definition that the isomorphism class of $M$ is independent of the choice \eqref{eq:isophi} of the isomorphism $\varphi$. This will be proved later in Proposition~\ref{prop:choiceindep}.
\end{rem}

We collect some properties of the ten cases. Recall that $\Aut(\Lambda)\cong\text{Co}_0$ and that the sporadic group $\text{Co}_1$ is the quotient $\text{Co}_1/\{\pm1\}$ of $\text{Co}_0$ by its centre.
\renewcommand{\arraystretch}{1.2}
\begin{equation*}
\begin{tabular}{r|c|c|l|l}
$\nu$: class in $\text{Co}_1$ & $\nu$: cycle shape & $\rho(V_\Lambda(\hat{\nu}))$ & Genus $\Lambda^\nu$ & Genus $K$\\\hline
$1A$   & $1^{24}$       &$0$    &$\II_{24,0}$                     &$\II_{1,1}$                     \\
$2A$   & $1^82^8$       &$1/2$  &$\II_{16,0}(2_{\II}^{+8})$       &$\II_{1,1}(2_{\II}^{+2})$       \\
$3B$   & $1^63^6$       &$2/3$  &$\II_{12,0}(3^{+6})$             &$\II_{1,1}(3^{-2})$             \\
$5B$   & $1^45^4$       &$4/5$  &$\II_{ 8,0}(5^{+4})$             &$\II_{1,1}(5^{+2})$             \\
$6E$   & $1^22^23^26^2$ &$5/6$  &$\II_{ 8,0}(2_{\II}^{+4}3^{+4})$ &$\II_{1,1}(2_{\II}^{+2}3^{-2})$ \\
$7B$   & $1^37^3$       &$6/7$  &$\II_{ 6,0}(7^{+3})$             &$\II_{1,1}(7^{-2})$             \\
$11A$  & $1^211^2$      &$10/11$&$\II_{ 4,0}(11^{+2})$            &$\II_{1,1}(11^{-2})$            \\
$14B$  & $1.2.7.14$     &$13/14$&$\II_{ 4,0}(2_{\II}^{+2}7^{+2})$ &$\II_{1,1}(2_{\II}^{+2}7^{-2})$ \\
$15D$  & $1.3.5.15$     &$14/15$&$\II_{ 4,0}(3^{-2}5^{-2})$       &$\II_{1,1}(3^{-2}5^{+2})$       \\
$23A,B$& $1.23$         &$22/23$&$\II_{ 2,0}(23^{+1})$            &$\II_{1,1}(23^{-2})$
\end{tabular}
\end{equation*}
\renewcommand{\arraystretch}{1}

\minisec{Grading}
The weak \voa{} $M$ will be the starting point of the BRST construction. By construction, $M$ is graded by $K'$ in the sense of Definition~\ref{defi:addgrad}. In the following we will see that $M$ is even naturally graded by $L'$ where
\begin{equation*}
L:=\Lambda^\nu\oplus K=\Lambda^\nu\oplus \II_{1,1}(m).
\end{equation*}
In order to see this we have to ``split off'' a lattice \voa{} from $V_\Lambda^{\hat{\nu}}$. Clearly, the lattice \voa{} $V_{\Lambda^\nu}$ is a \vosa{} of $V_\Lambda^{\hat{\nu}}$. The former has central charge $\rk(\Lambda)=24$, the latter is of central charge $\rk(\Lambda^\nu)=k-2$.

On the other hand, let $N:=\Lambda_\nu$ be the orthogonal complement of $\Lambda^\nu$ in $\Lambda$. Then $\nu$ restricts to a fixed-point free automorphism $\rho:=\nu|_N$ of $N$ of order $m$. The cycle shape of $\rho$ can be derived from that of $\nu$ \eqref{eq:cycleshapenu} and is given by
\begin{equation}\label{eq:cycleshaperho}
1^{-\rk(\Lambda^\nu)}\prod_{t\mid m}t^{24/\sigma_1(m)}.
\end{equation}
The function $u\colon\Lambda\to\{\pm1\}$ used to lift $\nu$ to an automorphism $\hat{\nu}$ of $V_\Lambda$ restricted to $N$ defines a lift $\hat{\rho}$ of $\rho$. Again, $\hat{\rho}^k$ is a standard lift of $\rho^k$ for all $k\in\N$. Clearly, also the \voa{} $U:=V_N^{\hat{\rho}}=(V_N)^{\hat{\rho}}$ is a \vosa{} of $V_\Lambda^{\hat{\nu}}$ and has central charge $24-\rk(\Lambda^\nu)$.

\begin{prop}\label{prop:fullsubvoa}
Let $\hat\nu$ be as in Assumption~\ref{ass:m}. Then the \vosa{}s $V_N^{\hat{\rho}}$ and $V_{\Lambda^\nu}$ of $V_\Lambda^{\hat{\nu}}$ satisfy
\begin{equation*}
V_{\Lambda^\nu}\cap V_N^{\hat{\rho}}=\C\vac
\end{equation*}
and the full \vosa{} of $V_\Lambda^{\hat{\nu}}$ generated by $V_{\Lambda^\nu}$ and $V_N^{\hat{\rho}}$ is isomorphic to $V_N^{\hat{\rho}}\otimes V_{\Lambda^\nu}$.
\end{prop}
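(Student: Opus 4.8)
The plan is to verify the two assertions separately. For the intersection $V_{\Lambda^\nu}\cap V_N^{\hat\rho}=\C\vac$, the idea is to use the decomposition of $V_\Lambda$ as a module for the Heisenberg algebra $\hat\k$ associated with $\k=\Lambda\otimes_\Z\C$, together with the orthogonal splitting $\k=\k_{(0)}\oplus\k_{(0)}^\bot$ into the $\nu$-fixed space $\k_{(0)}=\Lambda^\nu\otimes_\Z\C$ and its complement $\k_{(0)}^\bot=N\otimes_\Z\C$. Concretely, $V_\Lambda=M_{\hat\k}(1)\otimes\C_\eps[\Lambda]$, and the symmetric algebra $M_{\hat\k}(1)=S(\hat\k_-)$ factors as $S((\hat\k_{(0)})_-)\otimes S((\hat\k_{(0)}^\bot)_-)$. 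The \vosa{} $V_{\Lambda^\nu}$ consists precisely of those elements built from $\Lambda^\nu$-modes and group-algebra elements $\ee_\alpha$ with $\alpha\in\Lambda^\nu$, whereas $V_N^{\hat\rho}$ consists of $\hat\rho$-fixed elements built from $N$-modes and $\ee_\beta$ with $\beta\in N$. An element lying in both intersections must therefore simultaneously involve only $\Lambda^\nu$-data and only $N$-data; since $\Lambda^\nu$ and $N=\Lambda_\nu$ are orthogonal and $\Lambda^\nu\oplus N$ is a direct sum, the only common vectors are the scalar multiples of $\vac=1\otimes\ee_0$.

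\textbf{The tensor product structure.} For the second assertion, the plan is to exhibit a \voa{} homomorphism $V_N^{\hat\rho}\otimes V_{\Lambda^\nu}\to V_\Lambda^{\hat\nu}$ and show it is injective with image the full \vosa{} generated by the two subalgebras. The key structural input is that the complexified lattice splits orthogonally, $\k=\k_{(0)}\oplus\k_{(0)}^\bot$, so the Heisenberg currents associated with $\Lambda^\nu$ commute with those associated with $N$, and the conformal vectors add: $\omega_{V_\Lambda}=\omega_{V_{\Lambda^\nu}}+\omega_{V_N}$ by the orthogonal decomposition of the sum over an orthonormal basis in Theorem~\ref{thm:latticevoa}. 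First I would check that the natural multiplication map sending $u\otimes v$ to the product of the corresponding vertex operators applied appropriately lands in $V_\Lambda^{\hat\nu}$: since both $V_N^{\hat\rho}$ and $V_{\Lambda^\nu}$ consist of $\hat\nu$-fixed vectors (note $\hat\nu$ acts trivially on $V_{\Lambda^\nu}$ as $\hat\nu$ is a standard lift fixing $\Lambda^\nu$, and $V_N^{\hat\rho}$ is $\hat\rho$-fixed by definition), their products are again $\hat\nu$-fixed. Then I would verify it is a \voa{} homomorphism using the orthogonality-induced commutativity of the two sets of vertex operators and the additivity of the Virasoro vectors, so that the tensor-product \voa{} structure (Section~\ref{sec:tensor}) is respected.

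\textbf{Injectivity and fullness.} Injectivity follows from the intersection computation combined with the fact that the tensor-product \voa{} $V_N^{\hat\rho}\otimes V_{\Lambda^\nu}$ is simple: indeed both factors are simple \voa{}s (by Corollary~\ref{cor:fixsd} applied to $V_N$ and $V_\Lambda$, using that $V_N$ and $V_\Lambda$ are simple of CFT-type), hence so is their tensor product, and a homomorphism from a simple \voa{} is either zero or injective; it is non-zero since it sends $\vac\otimes\vac$ to $\vac$. The image is by construction the full \vosa{} generated by $V_N^{\hat\rho}$ and $V_{\Lambda^\nu}$, and ``full'' holds because the sum of Virasoro vectors is the Virasoro vector of $V_\Lambda^{\hat\nu}$, which equals that of $V_\Lambda$.

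\textbf{Main obstacle.} The step I expect to require the most care is verifying that the map really is a \voa{} homomorphism rather than merely a linear injection respecting the gradings. The delicate point is the interaction of the twisted group-algebra cocycle $\eps$ with the orthogonal splitting: one must confirm that the cocycle restricted to $\Lambda^\nu$ and the cocycle restricted to $N$ are compatible with the cocycle on $\Lambda^\nu\oplus N$ up to a coboundary that can be absorbed, so that the vertex operators $Y_\alpha(x)$ for $\alpha\in\Lambda^\nu$ and $Y_\beta(x)$ for $\beta\in N$ genuinely commute (and do not merely commute up to a sign). This is where the evenness and orthogonality of the sublattices, together with the standard-lift property of $\hat\nu$ and $\hat\rho$, must be invoked carefully; once commutativity of the two families of fields is established, the Jacobi identity for the tensor product follows from locality and the factorisation of the Heisenberg Fock space.
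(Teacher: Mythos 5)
Your argument is correct. The paper in fact states Proposition~\ref{prop:fullsubvoa} without proof, so there is no written argument to compare against; what you have supplied is the expected one, resting on the orthogonal splitting $\k=\k_{(0)}\oplus\k_{(0)}^\bot$, the factorisation $M_{\hat\k}(1)\cong M_{\hat\k_{(0)}}(1)\otimes M_{\hat\k_{(0)}^\bot}(1)$ together with $\Lambda^\nu\cap N=\{0\}$ for the intersection, and simplicity of the tensor product (via Theorem~\ref{thm:4.4} and Corollary~\ref{cor:fixsd}) for injectivity. A slightly tighter packaging, and the one the paper implicitly relies on (see footnote on $V_{L_1}\otimes V_{L_2}\cong V_{L_1\oplus L_2}$ and Assumption~\textbf{\textsf{M'}}), is to first identify the full \vosa{} $V_{N\oplus\Lambda^\nu}\cong V_N\otimes V_{\Lambda^\nu}$ of $V_\Lambda$ and then take $\hat\nu$-fixed points, using that $\hat\nu$ acts on this tensor product as $\hat\rho\otimes\id$ because it is a standard lift fixing $V_{\Lambda^\nu}$ pointwise; this yields $V_N^{\hat\rho}\otimes V_{\Lambda^\nu}$ directly and one only needs to check that this equals the subalgebra generated by the two factors, which follows from $a_{-1}\vac\otimes b=a\otimes b$. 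Finally, the ``main obstacle'' you flag is milder than you fear: for $\alpha\in\Lambda^\nu$ and $\beta\in N$ one has $\langle\alpha,\beta\rangle=0$, so \eqref{eq:multcommcond} gives $\eps(\alpha,\beta)=\eps(\beta,\alpha)$ and the fields $Y_\alpha(x)$, $Y_\beta(y)$ commute on the nose; no coboundary adjustment is needed for commutativity, only for arranging the restricted cocycle to be literally a product cocycle, which the paper does separately in Assumption~\textbf{\textsf{M'}}.
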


We determine $U_1$:
\begin{prop}
Let $\hat\nu$ be as in Assumption~\ref{ass:m}. Then
\begin{equation*}
U_1=(V_N^{\hat{\rho}})_1=\{0\}.
\end{equation*}
\end{prop}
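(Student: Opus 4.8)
The plan is to compute $(V_N^{\hat\rho})_1$ directly from the explicit description of the weight-one space of a lattice \voa{}. Since every \voa{} automorphism preserves the weight grading, taking $\hat\rho$-fixed points commutes with the $\Z$-grading, so
\begin{equation*}
U_1 = (V_N^{\hat\rho})_1 = \left((V_N)_1\right)^{\hat\rho},
\end{equation*}
the $\hat\rho$-invariant subspace of $(V_N)_1$. Thus it suffices to identify $(V_N)_1$ and then pick out the $\hat\rho$-fixed vectors. Recall from the construction of $V_N=M_{\hat\h}(1)\otimes\C_\eps[N]$ (Theorem~\ref{thm:latticevoa}) that the weight-one space is spanned by the vectors $h(-1)1\otimes\ee_0$ for $h\in N\otimes_\Z\C$, together with the vectors $1\otimes\ee_\alpha$ for $\alpha\in N$ of weight $\langle\alpha,\alpha\rangle/2=1$, i.e.\ for roots $\alpha$ (vectors with $\langle\alpha,\alpha\rangle=2$). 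So
\begin{equation*}
(V_N)_1 = \left(N\otimes_\Z\C\right)\oplus\bigoplus_{\substack{\alpha\in N,\\\langle\alpha,\alpha\rangle=2}}\C(1\otimes\ee_\alpha).
\end{equation*}

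The first key step is to observe that the lattice part of this sum is empty. Indeed, $N=\Lambda_\nu$ is by construction a sublattice of the Leech lattice $\Lambda$, which is the unique Niemeier lattice without roots (its minimal value of $\langle\alpha,\alpha\rangle$ is $4$). Hence $N\subseteq\Lambda$ contains no vectors $\alpha$ with $\langle\alpha,\alpha\rangle=2$, and the $\ee_\alpha$-contribution vanishes. Therefore $(V_N)_1 = N\otimes_\Z\C$, spanned entirely by the Heisenberg vectors $h(-1)1\otimes\ee_0$.

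The second step is to compute the $\hat\rho$-fixed part of $N\otimes_\Z\C$. Recalling from Section~\ref{sec:autlatvoa} that $\hat\rho=\rho\otimes\hat\rho$ acts on $h(-1)1\otimes\ee_0$ by sending it to $(\rho h)(-1)1\otimes\ee_0$ (using $u(0)=1$), the fixed vectors correspond precisely to $\rho h=h$, i.e.\ to the $0$-eigenspace $(N\otimes_\Z\C)_{(0)}=N^\rho\otimes_\Z\C$. But $\rho=\nu|_N$ is fixed-point free on $N=\Lambda_\nu$, the coinvariant sublattice, so $N^\rho=\{0\}$ and hence $(N\otimes_\Z\C)^{\hat\rho}=\{0\}$. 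Combining the two steps yields $U_1=(V_N^{\hat\rho})_1=\{0\}$. There is no real obstacle here: the argument rests only on two elementary structural facts, namely that $\Lambda$ (and hence its sublattice $N$) has no roots and that $\rho$ acts without fixed points on $N$. The only point requiring care is the bookkeeping of the norm convention (roots being the vectors with $\langle\alpha,\alpha\rangle=2$, i.e.\ norm $1$), which must match the weight-one condition on $\ee_\alpha$.
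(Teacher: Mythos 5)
Your proof is correct and follows essentially the same route as the paper: both arguments reduce $(V_N^{\hat\rho})_1$ to the $\hat\rho$-fixed part of $(V_N)_1$, note that $N\subseteq\Lambda$ has no roots so $(V_N)_1$ is purely the Heisenberg part $N\otimes_\Z\C$, and conclude from the fixed-point freeness of $\rho=\nu|_N$ on the coinvariant lattice. The norm bookkeeping is handled correctly.
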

\begin{proof}
The lattice $\Lambda$ has no norm-one vectors and neither has the sublattice $N$. Hence
\begin{equation*}
(V_N)_1=\left\{h(-1)1\otimes\ee_0\xmiddle|h\in\k_{(0)}^\bot\right\}.
\end{equation*}
Then
\begin{equation*}
(V_N^{\hat{\rho}})_1=(V_N)_1\cap V_N^{\hat{\rho}}=\{0\}
\end{equation*}
since none of the elements of $\k_{(0)}^\bot=N\otimes_\Z\C$ are fixed by $\rho$.
\end{proof}

Proposition~\ref{prop:fullsubvoa} implies that we can decompose $V_\Lambda^{\hat{\nu}}$ and its irreducible modules $V_\Lambda^{\hat{\nu}}(i,j)$, $i,j\in\Z_m$, into a direct sum of modules for $V_N^{\hat{\rho}}\otimes V_{\Lambda^\nu}$. For this it is necessary to know the representation theory of $V_N^{\hat{\rho}}\otimes V_{\Lambda^\nu}$. Both $V_N^{\hat{\rho}}$ and $V_{\Lambda^\nu}$ fulfil Assumption~\ref{ass:n} by Theorem~\ref{thm:orb} and Proposition~\ref{prop:latnice} and they also fulfil Assumption~\ref{ass:p}. $V_{\Lambda^\nu}$ is simply a lattice \voa{} and therefore its representation theory is well known. In order to determine the fusion structure of $V_N^{\hat{\rho}}$ we need to apply Theorem~\ref{thm:nonhol}, which depends on Conjecture~\ref{conj:1}. The following lemma shows that the requirements of the theorem are met.
\begin{lem}
Let $\hat\nu$ be as in Assumption~\ref{ass:m}. Then $N$ and $\hat\rho$ satisfy Assumption~\ref{ass:b}.
\end{lem}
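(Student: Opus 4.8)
The plan is to verify each of the six items of Assumption~\ref{ass:b} for the lattice $N=\Lambda_\nu$ and the restricted automorphism $\rho=\nu|_N$, drawing on the structural facts already established about the ten cases. The key observation throughout is that $\rho$ is a \emph{fixed-point free} automorphism of $N$ with cycle shape \eqref{eq:cycleshaperho}, so $N^{\rho^i}=\{0\}$ for all $i$ with $(m,i)=1$, and more generally $N^{\rho^i}$ is controlled by the divisor structure of $m$. Since $m$ is square-free in all ten cases, the divisor lattice of $m$ is particularly simple, which should make the parity and trivial-action conditions tractable.

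First I would dispatch the items that follow from the order of $\rho$ being square-free. For item~\ref{enum:assb1}, when $m$ is odd the standard-lift-power property is automatic by Proposition~\ref{prop:standardliftpower}; when $m$ is even (the cases $m=2,6,14$), I would check the condition $\langle\rho^{k/2}\alpha,\alpha\rangle\in2\Z$ for $\alpha\in N^{\rho^k}$ using that $N$ is even and that the pairing $\langle\alpha,\rho^{k/2}\alpha\rangle$ can be symmetrised over the $\rho$-orbit exactly as in the proof of Proposition~\ref{prop:standardliftpower} and Corollary~\ref{cor:uhom}. For item~\ref{enum:assb4}, the vacuum anomaly condition $\rho_{\rho^i}\equiv0\pmod{(m,i)/m}$ should follow from Remark~\ref{rem:cycleconf} applied to the cycle shape \eqref{eq:cycleshaperho} of $\rho^i$; this is the same computation as in Proposition~\ref{prop:confweightnu}, where I already found $\rho_\nu=(m-1)/m$, and I would verify the analogous divisibility for each power $\rho^i$ using that $m$ is square-free so that $(m,i)$ and $m/(m,i)$ are coprime.

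Next I would handle item~\ref{enum:assb2}, the triviality of the $\rho^i$-action on $N'/N$. The natural route is to use that $\Lambda$ is unimodular and $\Lambda^\nu$, $N=\Lambda_\nu$ are mutually orthogonal primitive sublattices, so by Proposition~\ref{prop:isomnegqf} the discriminant form $N'/N$ is isomorphic to $(\Lambda^\nu)'/\Lambda^\nu$ as a \fqs{}. Since $\nu$ acts trivially on $\Lambda^\nu$ by definition of the fixed-point sublattice, and the isomorphism $\psi$ of Proposition~\ref{prop:isomnegqf} is $\nu$-equivariant (both $\pi_\nu$ and $\pi_\nu^\bot$ commute with $\nu$), the action of $\rho=\nu|_N$ on $N'/N$ is trivial; equivalently $(\id-\rho^i)\alpha\in N$ for all $\alpha\in N'$. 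Items~\ref{enum:assb3} and \ref{enum:assb6} are parity statements about $\langle(\rho^{k/2}-\id)\alpha,\alpha\rangle$ on $(N')^{\rho^k}$ (for even $m$); these I would reduce to item~\ref{enum:assb6}, which subsumes \ref{enum:assb1} and \ref{enum:assb3}, and prove by the orbit-pairing argument combined with the evenness of $N$ and the explicit even structure available in the few even-order cases. Item~\ref{enum:assb5} requires exhibiting a basis $\{\alpha_1,\dots,\alpha_r\}$ of $N'$ and multipliers $m_i$ with $\langle m_i\alpha_i,\alpha_i\rangle\in2\Z$; since $N'/N$ is a $2$-elementary or otherwise explicitly known discriminant form in each even case (read off from the genus of $\Lambda^\nu$ via Proposition~\ref{prop:isomnegqf}), I would construct such a basis directly from the Jordan decomposition of $N'/N$.

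The main obstacle will be the even-order cases $m=2,6,14$, where items~\ref{enum:assb3}, \ref{enum:assb5} and \ref{enum:assb6} all impose genuine parity constraints that are not automatic from the cycle shape alone. For these I expect to need the specific arithmetic of the discriminant forms $K'/K\cong N'/N$ listed in the table (namely $2_{\II}^{+2}$-type components), so that the ``half-power'' pairing $\langle\rho^{m/2}\alpha,\alpha\rangle$ lands in $2\Z$ because of the particular even Jordan blocks; the square-freeness of $m$ ensures $m/2$ is coprime to the odd part, isolating the $2$-adic contribution. Once all six items are checked, the lemma follows, and in combination with Conjecture~\ref{conj:1} this licenses Theorem~\ref{thm:nonhol} and hence the computation of the characters of the irreducible $V_N^{\hat\rho}$-modules via Proposition~\ref{prop:nonholchar}.
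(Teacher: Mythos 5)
Your proposal is correct and follows the same overall strategy as the paper (an item-by-item verification), but it differs on two items in ways worth noting. For item~\ref{enum:assb2} you transfer the trivial action of $\nu$ on $(\Lambda^\nu)'/\Lambda^\nu$ across the equivariant isomorphism $\psi$ of Proposition~\ref{prop:isomnegqf}; the paper instead gives a one-line direct computation, $\langle(\id-\rho)\alpha,\beta\rangle=\langle\alpha,(\id-\rho^{-1})\beta\rangle\in\Z$ for $\alpha\in N'$, $\beta\in\Lambda$, so that $(\id-\rho)\alpha\in\Lambda'=\Lambda$ and hence in $N$ by primitivity. Both rest on the unimodularity of $\Lambda$ and are equally valid; yours is slightly more structural, the paper's more elementary. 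For item~\ref{enum:assb5} the paper proves a \emph{general} statement: every even lattice of square-free level satisfies the condition, because $4\nmid s$ forces all $2$-adic Jordan components to be even, the level then equals the exponent, and a parity argument on $\langle m_i\alpha_i,\alpha_i\rangle$ splits according to whether $m_i$ is odd (use evenness of $N$) or even (use $s/m_i$ odd and the level condition). This buys a uniform proof where your Jordan-decomposition construction would have to be repeated case by case. Finally, for items~\ref{enum:assb1}, \ref{enum:assb3} and \ref{enum:assb6} the paper simply states they are verified case by case in the even-order cases $m=2,6,14$, which matches your honest assessment that the orbit-pairing symmetrisation only isolates the middle term $\langle\alpha,\rho^{k/2}\alpha\rangle$ and that the specific $2$-adic arithmetic of the discriminant forms must be invoked to finish; neither you nor the paper writes out these checks in full. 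Item~\ref{enum:assb4} is handled identically via the cycle shape \eqref{eq:cycleshaperho} and the computation of Proposition~\ref{prop:confweightnu}.
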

\begin{proof}
We have to show that all the items in Assumption~\ref{ass:b} are fulfilled. Items~\ref{enum:assb1}, \ref{enum:assb3} and \ref{enum:assb6} can be verified case by case.

It is easy to see that item~\ref{enum:assb2} holds. Indeed, let $\alpha\in N'$. It suffices to show that $(\id-\rho)\alpha\in\Lambda$. Let $\beta\in\Lambda$ be arbitrary. Then $\langle(\id-\rho)\alpha,\beta\rangle=\langle\alpha,(\id-\rho^{-1})\beta\rangle\in\Z$ since $(\id-\rho^{-1})\beta\in N$ and $\alpha\in N'$. Hence $(\id-\rho)\alpha\in\Lambda'=\Lambda$, using the unimodularity of $\Lambda$.

The vacuum anomaly in item~\ref{enum:assb4} depends on the cycle shape of $\rho$, which is given in \eqref{eq:cycleshaperho}. This cycle shape is closely related to that of $\nu$ \eqref{eq:cycleshapenu} and the vacuum anomaly can be computed similarly to Proposition~\ref{prop:confweightnu}.

Finally, we show that item~\ref{enum:assb5} is fulfilled for every even lattice $N$ of square-free level and hence in particular for the lattice $N$ in the ten cases. Indeed, since 4 does not divide the level $s$ of $N$, all 2-adic components appearing in the Jordan decomposition of $N'/N$ have to be even, which implies that the level $s$ of $N$ equals the exponent. Recall that since $N$ is integral, there is a basis $\{\alpha_1,\ldots,\alpha_r\}$ of $N'$ such that $\{m_1\alpha_1,\ldots,m_r\alpha_r\}$ is a basis of $N$ for some $m_1,\ldots,m_r\in\Ns$. Then, the exponent of the group $N'/N$ is $\lcm(m_1,\ldots,m_r)$. Now consider $\langle m_i\alpha_i,\alpha_i\rangle$ for some $i=1,\ldots,r$. If $m_i$ is odd, then
\begin{equation*}
m_i\langle m_i\alpha_i,\alpha_i\rangle=\langle m_i\alpha_i,m_i\alpha_i\rangle\in2\Z
\end{equation*}
since $N$ is an even lattice and hence $\langle m_i\alpha_i,\alpha_i\rangle$ has to be even since $m_i$ is odd. If $m_i$ is even on the other hand, then
\begin{equation*}
\frac{s}{m_i}\langle m_i\alpha_i,\alpha_i\rangle=\langle s\alpha_i,\alpha_i\rangle\in 2\Z
\end{equation*}
since $N$ has level $s$, implying that $\langle m_i\alpha_i,\alpha_i\rangle$ is even since $s/m_i$ is odd.
\end{proof}

Knowing the fusion group of $V_{\Lambda^\nu}$ and $V_N^{\hat{\rho}}$ and with the properties of tensor-product \voa{}s (see Section~\ref{sec:tensor}) we determine the fusion group of $V_N^{\hat{\rho}}\otimes V_{\Lambda^\nu}$.
\begin{prop}
Let $\hat\nu$ be as in Assumption~\ref{ass:m}, assume that Conjecture~\ref{conj:1} holds and that the representations $\phi_{V_{\alpha+N}(\hat\rho^i)}$ are chosen as in that conjecture or Theorem~\ref{thm:nonhol}. Then the \voa{} $V_N^{\hat{\rho}}\otimes V_{\Lambda^\nu}$ fulfils Assumptions~\ref{ass:sn}\ref{ass:p} and its irreducible modules are up to isomorphism
\begin{equation*}
V_N^{\hat{\rho}}(\alpha+N,i,j)\otimes V_{\beta+\Lambda^\nu}
\end{equation*}
for $\alpha+N\in N'/N$, $i,j\in\Z_m$, $\beta+\Lambda^\nu\in(\Lambda^\nu)'/\Lambda^\nu$ with conformal weights in $\langle\alpha,\alpha\rangle/2+ij/m+\langle\beta,\beta\rangle/2+\Z$. The fusion group is $F_{V_N^{\hat{\rho}}\otimes V_{\Lambda^\nu}}=N'/N\times\Z_m\times\Z_m\times(\Lambda^\nu)'/\Lambda^\nu$.
\end{prop}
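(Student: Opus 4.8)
The plan is to assemble the statement from two facts proved earlier in the excerpt about tensor products of \voa{}s together with the representation theory of each factor. First I would invoke the results of Section~\ref{sec:tensor}: since $V_N^{\hat{\rho}}$ and $V_{\Lambda^\nu}$ each satisfy Assumption~\ref{ass:n} (by Theorem~\ref{thm:orb} and Proposition~\ref{prop:latnice}, respectively) and Assumption~\ref{ass:p}, the tensor-product \voa{} $V_N^{\hat{\rho}}\otimes V_{\Lambda^\nu}$ is again simple, rational, $C_2$-cofinite and of CFT-type, and its central charge is the sum of the two central charges. Self-contragredience of the tensor product follows either from the propositions collected in Section~\ref{sec:tensor} or from the fact that the tensor product of two self-contragredient modules is self-contragredient; hence Assumption~\ref{ass:n} holds for $V_N^{\hat{\rho}}\otimes V_{\Lambda^\nu}$. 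Assumption~\ref{ass:p} for the tensor product is immediate from the additivity of conformal weights $\rho(W^{(1)}\otimes W^{(2)})=\rho(W^{(1)})+\rho(W^{(2)})$ together with positivity of the conformal weights of the two factors.

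Next I would classify the irreducible modules. By the proposition in Section~\ref{sec:tensor} (\cite{FHL93}, \cite{DMZ94}, Proposition~2.7), the irreducible $V_N^{\hat{\rho}}\otimes V_{\Lambda^\nu}$-modules are exactly the tensor products $X^{(1)}\otimes X^{(2)}$ of irreducible modules of the two factors. For the first factor, Theorem~\ref{thm:nonhol} (which rests on Conjecture~\ref{conj:1}, whence the hypothesis in the statement) gives the irreducible $V_N^{\hat{\rho}}$-modules as $V_N^{\hat{\rho}}(\alpha+N,i,j)$ for $(\alpha+N,i,j)\in N'/N\times\Z_m\times\Z_m$, with conformal weights in $\langle\alpha,\alpha\rangle/2+ij/m+\Z$. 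For the second factor, Theorem~\ref{thm:3.1} (Dong) gives the irreducible $V_{\Lambda^\nu}$-modules as $V_{\beta+\Lambda^\nu}$ for $\beta+\Lambda^\nu\in(\Lambda^\nu)'/\Lambda^\nu$, with conformal weight $\langle\beta,\beta\rangle/2$. Combining these yields the asserted list of irreducibles and, by additivity of conformal weights, the stated conformal weights in $\langle\alpha,\alpha\rangle/2+ij/m+\langle\beta,\beta\rangle/2+\Z$.

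Finally I would identify the fusion group. Since the irreducibles of the tensor product factor as above, the fusion product respects the tensor decomposition: $(X^{(1)}\otimes X^{(2)})\boxtimes(Y^{(1)}\otimes Y^{(2)})\cong(X^{(1)}\boxtimes Y^{(1)})\otimes(X^{(2)}\boxtimes Y^{(2)})$, so that the fusion group of the product is the direct product of the fusion groups of the factors. By Theorem~\ref{thm:nonhol} the fusion group of $V_N^{\hat{\rho}}$ is $N'/N\times\Z_m\times\Z_m$, and by Proposition~\ref{prop:cor12.10} the fusion group of the lattice \voa{} $V_{\Lambda^\nu}$ is $(\Lambda^\nu)'/\Lambda^\nu$. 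Hence $F_{V_N^{\hat{\rho}}\otimes V_{\Lambda^\nu}}\cong N'/N\times\Z_m\times\Z_m\times(\Lambda^\nu)'/\Lambda^\nu$, as claimed. The main obstacle is not in this proof at all but upstream: everything hinges on Conjecture~\ref{conj:1}, which supplies the precise fusion rules and conformal-weight normalisation for $V_N^{\hat{\rho}}$ needed by Theorem~\ref{thm:nonhol}; given that input, the tensor-product structural results of Section~\ref{sec:tensor} make the remaining argument routine, the only mild care being to confirm that the chosen representations $\phi_{V_{\alpha+N}(\hat\rho^i)}$ are compatible with the conventions of Theorem~\ref{thm:nonhol} so that the conformal-weight formula comes out with the stated representatives.
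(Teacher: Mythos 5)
Your proposal is correct and follows exactly the route the paper intends: the paper states this proposition without a displayed proof, justifying it only by the preceding sentence referring to the tensor-product results of Section~\ref{sec:tensor} together with the already-determined fusion data of $V_N^{\hat{\rho}}$ (Theorem~\ref{thm:nonhol}, resting on Conjecture~\ref{conj:1}) and of the lattice \voa{} $V_{\Lambda^\nu}$. Your write-up simply makes that implicit argument explicit, including the correct identification of the fusion group via the tensor decomposition of intertwining-operator spaces.
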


We apply Proposition~\ref{prop:isomnegqf} to the unimodular lattice $\Lambda$ and its primitive sublattice $\Lambda^\nu$ to obtain a natural isomorphism of \fqs{}s
\begin{equation}\label{eq:isopsi}
\psi\colon\left((\Lambda^\nu)'/\Lambda^\nu,Q_{\Lambda^\nu}\right)\to\left(N'/N,-Q_N\right).
\end{equation}
More specifically, $\pi_\nu(\alpha)+\Lambda^\nu\mapsto\pi_\nu^\bot(\alpha)+N$ where $\pi_\nu$ and $\pi_\nu^\bot$ are the orthogonal projections of $\k=\Lambda\otimes_\Z\C$ onto $\k_{(0)}$ and $\k_{(0)}^\bot$, respectively. Recall that $\pi_\nu(\Lambda)=(\Lambda^\nu)'$ and $\pi_\nu^\bot(\Lambda)=N'$ since $\Lambda$ is unimodular.

For convenience, given Assumption~\ref{ass:m}, let us fix certain choices. This is always possible and we combine these choices into the following assumption:
\begin{customass}{{\textbf{\textsf{M'}}}}\label{ass:mp}
Let Assumption~\ref{ass:m} hold. In addition, assume that the representation $\phi_0=\phi_{V_\Lambda}$ of $\langle\hat\nu\rangle$ on $V_\Lambda$ is chosen naturally as in Remark~\ref{rem:phi0} as $\phi_{V_\Lambda}(\hat\nu)=\hat\nu$ and similarly that the representations $\phi_{V_{\alpha+N}}$ of $\langle\hat\rho\rangle$ on $V_{\alpha+N}$ are chosen naturally as in Proposition~\ref{prop:nonholchar} as $\phi_{V_{\alpha+N}}(\hat\rho)=\check\rho|_{V_{\alpha+N}}$ for $\alpha+N\in N'/N$.

Apart from that we assume that the representations $\phi_i=\phi_{V_\Lambda(\hat\nu^i)}$ of $\langle\hat\nu\rangle$ on $V_\Lambda(\hat\nu^i)$ for $i\in\Z_m\setminus\{0\}$ are chosen as in Corollary~\ref{cor:main} and that the representations $\phi_{V_{\alpha+N}(\hat\rho^i)}$ of $\langle\hat\rho\rangle$ on $V_{\alpha+N}(\hat\rho^i)$ for $\alpha+N\in N'/N$ and $i\in\Z_m\setminus\{0\}$ are chosen as in Conjecture~\ref{conj:1} or Theorem~\ref{thm:nonhol} so that in both cases the fusion rules have the simplest possible form.

Finally, given the lift $\check\rho$ of $\rho=\nu|_N$ to an automorphism of $\C_{\eps_1}[N']$ via the function $u_1\colon N'\to\C^\times$, and the identity as lift of $\id_{\Lambda^\nu}=\nu|_{\Lambda^\nu}$ to an automorphism of $\C_{\eps_2}[(\Lambda^\nu)']$ via the function $u_2\colon (\Lambda^\nu)'\to\C^\times$ with $u_2=1$, we define the lift $\hat\nu$ of $\nu$ to $\C_\eps[\Lambda]$ as follows: by a straightforward tensor-product construction the automorphism $\check\rho\otimes\id$ is a lift of $(\rho,\id)$ to $\C_{\eps_1}[N']\otimes\C_{\eps_2}[(\Lambda^\nu)']\cong\C_{\eps_1\times\eps_2}[N'\oplus(\Lambda^\nu)]$ via the function $u_1\times u_2$.
For this it is important to choose the 2-cocycle on $N'\oplus(\Lambda^\nu)$ as the product $\eps_1\times\eps_2$.
This 2-cocycle will have the desired property \eqref{eq:2cocycleirrmod}. Then, by restriction to $\Lambda\leq N'\oplus(\Lambda^\nu)$, we obtain a lift $\hat\nu$ of $\nu$ to $\C_\eps[\Lambda]$ via the function $u$ where $u=(u_1\times u_2)|_\Lambda$ and $\eps=(\eps_1\times \eps_2)|_\Lambda$ take values in a finite, cyclic subgroup of $\C^\times$, not necessarily equal to $\{\pm1\}$.
\end{customass}

With the help of the isomorphism $\psi$ \eqref{eq:isopsi} and the choices made in Assumption~\ref{ass:mp} we are able to decompose the irreducible $V_\Lambda^{\hat{\nu}}$-modules $V_\Lambda^{\hat{\nu}}(i,j)$, $i,j\in\Z_m$, into irreducible $V_N^{\hat{\rho}}\otimes V_{\Lambda^\nu}$-modules:
\begin{prop}
Let Assumption~\ref{ass:mp} and Conjecture~\ref{conj:1} hold. Then as $V_N^{\hat{\rho}}\otimes V_{\Lambda^\nu}$-modules,
\begin{equation*}
V_\Lambda^{\hat{\nu}}(i,j)\cong\bigoplus_{\alpha+\Lambda^\nu\in(\Lambda^\nu)'/\Lambda^\nu}V_N^{\hat{\rho}}(\psi(\alpha+\Lambda^\nu),i,j)\otimes V_{\alpha+\Lambda^\nu}
\end{equation*}
for $i,j\in\Z_m$.
\end{prop}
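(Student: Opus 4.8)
The plan is to realise $V_N^{\hat\rho}$ and $V_{\Lambda^\nu}$ as the fixed points, respectively the whole, of the two mutually commuting full \vosa{}s $V_N$ and $V_{\Lambda^\nu}$ of $V_\Lambda^{\hat{\nu}}$, which together generate $V_N\otimes V_{\Lambda^\nu}=V_{\Lambda^\nu\oplus N}$ (Proposition~\ref{prop:fullsubvoa}), and to reduce the assertion to decomposing the (possibly twisted) $V_\Lambda$-module $V_\Lambda(\hat\nu^i)$ as a module for $V_N\otimes V_{\Lambda^\nu}$. Once we establish
\begin{equation*}
V_\Lambda(\hat\nu^i)\cong\bigoplus_{\alpha+\Lambda^\nu\in(\Lambda^\nu)'/\Lambda^\nu}V_{\psi(\alpha+\Lambda^\nu)}(\hat\rho^i)\otimes V_{\alpha+\Lambda^\nu}
\end{equation*}
as $V_N\otimes V_{\Lambda^\nu}$-modules (with $V_\Lambda(\hat\nu^0)=V_\Lambda$ and $V_{\psi(\alpha+\Lambda^\nu)}(\hat\rho^i)$ the irreducible $\hat\rho^i$-twisted $V_N$-module), the claim follows by passing to the $\xi_m^j$-eigenspace of $\phi_i(\hat\nu)$. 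Indeed, since $\hat\nu$ fixes $V_N^{\hat\rho}\otimes V_{\Lambda^\nu}$ pointwise, $\phi_i(\hat\nu)$ commutes with this subalgebra, hence preserves every tensor factor and acts as $\phi_{V_{\psi(\alpha+\Lambda^\nu)}(\hat\rho^i)}(\hat\rho)$ on the first factor and trivially on the second, so its $\xi_m^j$-eigenspace is exactly $\bigoplus_\alpha V_N^{\hat\rho}(\psi(\alpha+\Lambda^\nu),i,j)\otimes V_{\alpha+\Lambda^\nu}$.

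For the untwisted case $i=0$ this is essentially lattice bookkeeping. First I would use $\Lambda/(\Lambda^\nu\oplus N)\cong(\Lambda^\nu)'/\Lambda^\nu$ together with the orthogonal projections $\pi_\nu,\pi_\nu^\bot$ (satisfying $\pi_\nu(\Lambda)=(\Lambda^\nu)'$ and $\pi_\nu^\bot(\Lambda)=N'$ by unimodularity of $\Lambda$) to write the coset decomposition of $V_\Lambda$ over $V_{\Lambda^\nu\oplus N}$ in the form $V_\Lambda\cong\bigoplus_\alpha V_{\pi_\nu^\bot(\alpha)+N}\otimes V_{\pi_\nu(\alpha)+\Lambda^\nu}$, which is the displayed formula once $\psi$ (Proposition~\ref{prop:isomnegqf}) identifies $\psi(\alpha+\Lambda^\nu)=\pi_\nu^\bot(\alpha)+N$. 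The action of $\hat\nu$ then splits according to Assumption~\ref{ass:mp}: by construction $\hat\nu$ is the restriction of $\check\rho\otimes\id$, so it acts on the $V_N$-factor as $\check\rho=\phi_{V_{\psi(\alpha+\Lambda^\nu)}}(\hat\rho)$ (Proposition~\ref{prop:nonholchar}) and as the identity on $V_{\Lambda^\nu}$, and taking eigenspaces yields the statement for $i=0$.

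For $i\neq0$ I would feed the explicit twisted-module construction of Section~\ref{sec:twmodlatvoa} into the same scheme. Since $\hat\nu^i$ is a standard lift, $V_\Lambda(\hat\nu^i)=M_{\hat\k}(1)[\nu^i]\otimes\C[\pi_{\nu^i}(\Lambda)]\otimes\C^{d(\nu^i)}$. Along the orthogonal, $\nu^i$-invariant splitting $\k=\k_{(0)}\oplus\k_{(0)}^\bot$ with $\k_{(0)}=\Lambda^\nu\otimes_\Z\C$ and $\k_{(0)}^\bot=N\otimes_\Z\C$, on which $\nu^i$ acts as $\id\oplus\rho^i$, the twisted Heisenberg module factors as $M_{\hat\k}(1)[\nu^i]\cong M_{\widehat{\k_{(0)}^\bot}}(1)[\rho^i]\otimes M_{\widehat{\k_{(0)}}}(1)$, while $\C[\pi_{\nu^i}(\Lambda)]$ refines along $\Lambda^\nu$-cosets; the defect matches because the coinvariant lattices coincide, $\Lambda_{\nu^i}=N_{\rho^i}$, giving $d(\nu^i)=d(\rho^i)$. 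Comparing with the construction of the irreducible $\hat\rho^i$-twisted $V_N$-modules (Theorem~\ref{thm:4.2}) identifies each summand with $V_{\psi(\alpha+\Lambda^\nu)}(\hat\rho^i)\otimes V_{\alpha+\Lambda^\nu}$, and uniqueness of the $\hat\nu^i$-twisted module (Theorem~\ref{thm:10.3}) serves as a consistency check. The eigenspace argument of the first paragraph then concludes.

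The hard part will be the representation-theoretic bookkeeping, of two kinds. For $i$ not coprime to $m$ the restriction $\rho^i$ acquires nonzero fixed vectors in $\k_{(0)}^\bot$ (for instance $\rho^2$ when $m=6$), so $\Lambda^{\nu^i}\supsetneq\Lambda^\nu$ and $\pi_{\nu^i}=\id_{\k_{(0)}}\oplus\pi_{\rho^i}$ mixes the fixed directions; one must verify that projecting $\pi_{\nu^i}(\Lambda)=(\Lambda^{\nu^i})'$ onto the $\k_{(0)}$- and $\k_{(0)}^\bot$-components reproduces precisely the cosets used to build $V_{\psi(\alpha+\Lambda^\nu)}(\hat\rho^i)$. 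More delicate is pinning down the projective representations: each of $\phi_i$ (chosen as in Corollary~\ref{cor:main}) and $\phi_{V_{\alpha+N}(\hat\rho^i)}$ (chosen as in Theorem~\ref{thm:nonhol}) is determined only up to an $m$-th root of unity, so one must ensure the $\xi_m^j$-labelling agrees on both sides. This is forced by the conformal weights: for a lift $\alpha\in\Lambda$ one has $\langle\alpha,\alpha\rangle/2\in\Z$, whence
\begin{equation*}
\rho\!\left(V_N^{\hat\rho}(\psi(\alpha+\Lambda^\nu),i,j)\right)+\rho\!\left(V_{\alpha+\Lambda^\nu}\right)\equiv\frac{\langle\pi_\nu^\bot(\alpha),\pi_\nu^\bot(\alpha)\rangle}{2}+\frac{\langle\pi_\nu(\alpha),\pi_\nu(\alpha)\rangle}{2}+\frac{ij}{m}\equiv\frac{ij}{m}\pmod1,
\end{equation*}
which matches $\rho(V_\Lambda^{\hat{\nu}}(i,j))\in ij/m+\Z$ from Corollary~\ref{cor:main} and thereby fixes the index $j$ consistently across the decomposition.
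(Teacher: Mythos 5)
Your treatment of the untwisted sector coincides with the paper's, and the supporting observations in the twisted sector (factorising the twisted Heisenberg module along $\k_{(0)}\oplus\k_{(0)}^\bot$, the defect matching via $\Lambda_{\nu^i}=N_{\rho^i}$) are sound as far as they go. The genuine gap is in your final labelling step. After your Schur-type argument, the restriction of $\phi_i(\hat\nu)$ to the summand indexed by $\alpha+\Lambda^\nu$ equals $\xi_m^{k_\alpha}\,\phi_{V_{\psi(\alpha+\Lambda^\nu)}(\hat\rho^i)}(\hat\rho)\otimes\id$ for some $k_\alpha\in\Z_m$ which may depend on $\alpha$, and your conformal-weight computation only constrains $ik_\alpha\equiv 0\pmod{m}$: matching weights modulo~1 determines the second index only modulo $m/\gcd(i,m)$. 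This suffices when $\gcd(i,m)=1$ (hence for all $i\neq 0$ when $m$ is prime), but it fails for the composite orders $m=6,14,15$ among the ten cases; e.g.\ for $m=6$ and $i=2$ the constraint leaves $k_\alpha\in\{0,3\}$ undetermined, and a priori different for different $\alpha$. Note also that the representations $\phi_i$ and $\phi_{V_{\alpha+N}(\hat\rho^i)}$ fixed by Assumption~\ref{ass:mp} are characterised abstractly through fusion rules (Corollary~\ref{cor:main}, Conjecture~\ref{conj:1}) rather than by formulas on the lattice models, so you cannot compute the $k_\alpha$ directly from the explicit twisted-module construction either; your plan quietly assumes they all vanish.

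The paper closes exactly this gap by staying fusion-theoretic instead of working with explicit twisted modules: $V_\Lambda^{\hat\nu}$ is a simple-current extension of $V_N^{\hat{\rho}}\otimes V_{\Lambda^\nu}$ along the diagonal isotropic subgroup, so Theorem~\ref{thm:scemodclass} and Corollary~\ref{cor:scefusion} identify its irreducible modules with the sums $W(i,j)=\bigoplus_{\alpha}V_N^{\hat{\rho}}(\psi(\alpha+\Lambda^\nu),i,j)\otimes V_{\alpha+\Lambda^\nu}$, whose fusion is addition in $\Z_m\times\Z_m$. The matching between the labels of the $W(i,j)$ and of the $V_\Lambda^{\hat{\nu}}(i,j)$ is therefore a \emph{single additive} map, i.e.\ an automorphism of the \fqs{} $(\Z_m\times\Z_m,Q_m)$ fixing $(0,j)$ for all $j$, and the only such automorphism is the identity. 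Additivity is the ingredient your per-$i$ eigenspace argument lacks: it propagates the rigidity available at $i=1$ (where weights do pin down $j$) to all $i$, including those with $\gcd(i,m)>1$. To salvage your direct route you would need to show that the $k_\alpha$ are independent of $\alpha$ and additive in $(i,j)$ — at which point you have essentially reconstructed the paper's fusion argument.
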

\begin{proof}
We saw that $V_N^{\hat{\rho}}\otimes V_{\Lambda^\nu}$ is a full \vosa{} of $V_\Lambda^{\hat{\nu}}$. Consequently, every $V_\Lambda^{\hat{\nu}}$-module is also a $V_N^{\hat{\rho}}\otimes V_{\Lambda^\nu}$-module and can be decomposed into a direct sum of irreducible $V_N^{\hat{\rho}}\otimes V_{\Lambda^\nu}$-modules.

It follows from the proof of Proposition~\ref{prop:isomnegqf} that
\begin{align*}
\Lambda&\cong\bigcup_{\beta+N\oplus\Lambda^\nu\in\Lambda/(N\oplus\Lambda^\nu)}(\pi_\nu^\bot(\beta),\pi_\nu(\beta))+(N\oplus \Lambda^\nu)\\
&=\bigcup_{\alpha+\Lambda^\nu\in(\Lambda^\nu)'/\Lambda^\nu}\psi(\alpha+\Lambda^\nu)\oplus(\alpha+\Lambda^\nu)
\end{align*}
and hence
\begin{equation*}
V_\Lambda\cong\bigoplus_{\alpha+\Lambda^\nu\in(\Lambda^\nu)'/\Lambda^\nu}V_{\psi(\alpha+\Lambda^\nu)}\otimes V_{\alpha+\Lambda^\nu}
\end{equation*}
holds.\footnote{\label{footnote:3}This uses the fact that for two positive-definite, even lattices $L_1$ and $L_2$, the \voa{}s $V_{L_1}\otimes V_{L_2}$ and $V_{L_1\oplus L_2}$ are isomorphic. Moreover, under this identification, the irreducible modules $V_{\alpha+L_1}\otimes V_{\beta+L_2}$ and $V_{(\alpha,\beta)+L_1\oplus L_2}$ are isomorphic for $\alpha+L_1\in L'_1/L_1$ and $\beta+L_2\in L'_2/L_2$.} Then the detailed requirements on the automorphisms $\hat\nu$ on $V_\Lambda$ and $\check\rho$ on $V_N$ and its modules in Assumption~\ref{ass:mp} imply the statement of the proposition in the untwisted sector:
\begin{equation*}
V_\Lambda^{\hat{\nu}}(0,j)\cong\bigoplus_{\alpha+\Lambda^\nu\in(\Lambda^\nu)'/\Lambda^\nu}V_N^{\hat{\rho}}(\psi(\alpha+\Lambda^\nu),0,j)\otimes V_{\alpha+\Lambda^\nu}
\end{equation*}
for $j\in\Z_m$.

To obtain the assertion for the twisted modules, consider the \voa{} $V_N^{\hat{\rho}}\otimes V_{\Lambda^\nu}$ with fusion group $N'/N\times\Z_m\times\Z_m\times(\Lambda^\nu)'/\Lambda^\nu$. Then by Corollary~\ref{cor:scefusion} the \voa{}
\begin{equation*}
V_\Lambda^{\hat{\nu}}\cong\bigoplus_{\alpha+\Lambda^\nu\in(\Lambda^\nu)'/\Lambda^\nu}V_N^{\hat{\rho}}(\psi(\alpha+\Lambda^\nu),0,0)\otimes V_{\alpha+\Lambda^\nu}
\end{equation*}
has fusion group $\Z_m\times\Z_m$, which is the orthogonal complement of the ``diagonal'' isotropic subgroup $\left\{(\psi(\alpha+\Lambda^\nu),0,0,\alpha+\Lambda^\nu)\xmiddle|\alpha+\Lambda^\nu\in(\Lambda^\nu)'/\Lambda^\nu\right\}$ in $N'/N\times\Z_m\times\Z_m\times(\Lambda^\nu)'/\Lambda^\nu$. In other words, the irreducible modules for this \voa{} are given by
\begin{equation*}
W(i,j):=\bigoplus_{\alpha+\Lambda^\nu\in(\Lambda^\nu)'/\Lambda^\nu}V_N^{\hat{\rho}}(\psi(\alpha+\Lambda^\nu),i,j)\otimes V_{\alpha+\Lambda^\nu}
\end{equation*}
for $i,j\in\Z_m$ with fusion given by addition in $\Z_m\times\Z_m$. On the other hand, we already determined the irreducible $V_\Lambda^{\hat{\nu}}$-modules, which are $V_\Lambda^{\hat{\nu}}(i,j)$ for $i,j\in\Z_m$, again with fusion rules given by addition in $\Z_m\times\Z_m$.

To match up the $W(i,j)$ and the $V_\Lambda^{\hat{\nu}}(i,j)$ we have to find a group automorphism of $\Z_m\times\Z_m$, which also preserves the quadratic form given by the conformal weights modulo~1, i.e.\ an automorphism of the \fqs{} $(\Z_m\times\Z_m,Q_m)$ where we recall that $Q_m((i,j))=ij/m+\Z$. In general, for $m\in\Ns$, an endomorphism of the group $\Z_m\times\Z_m$ is of the form
\begin{equation*}
(i,j)\mapsto(ai+bj,ci+dj)
\end{equation*}
for some $a,b,c,d\in\Z_m$. If this endomorphism preserves the given quadratic form, then $ac=bd=0\pmod{m}$ and $ad+bc=1\pmod{m}$. In our case, we also know that $(0,j)\mapsto(0,j)$ for all $j\in\Z_m$. This implies that $b=0\pmod{m}$ and $d=1\pmod{m}$, which together with the above conditions entails $a=1\pmod{m}$ and $c=0\pmod{m}$. This leaves only the identity endomorphism
\begin{equation*}
(i,j)\mapsto(i,j),
\end{equation*}
which is clearly an automorphism. Hence we obtain
\begin{equation*}
W(i,j)\cong V_\Lambda^{\hat{\nu}}(i,j)
\end{equation*}
for all $i,j\in\Z_m$. This proves the assertion.
\end{proof}

The proposition together with the definition of $M$ in Proposition~\ref{prop:mdef} implies:
\begin{thm}
Let Assumption~\ref{ass:mp} and Conjecture~\ref{conj:1} hold. Then the weak \voa{} $M$ has $L'/L$-decomposition \eqref{eq:lpldec},
\begin{align*}
M\cong\bigoplus_{\gamma+L\in L'/L}V_N^{\hat{\rho}}(\chi(\gamma+L))\otimes V_{\gamma+L}
\end{align*}
where $L=\Lambda^\nu\oplus K=\Lambda^\nu\oplus \II_{1,1}(m)$ and $\chi$ is the isomorphism $\chi=(\psi,\varphi)$:
\begin{equation}\label{eq:isochi}
(L'/L,Q_L)
\longrightarrow\left(N'/N,-Q_N\right)\times(\Z_m\times\Z_m,-Q_m).
\end{equation}
\end{thm}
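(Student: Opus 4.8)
The plan is to assemble the $L'/L$-decomposition of $M$ by combining three ingredients already established: the definition of $M$ in Proposition~\ref{prop:mdef}, the $V_N^{\hat{\rho}}\otimes V_{\Lambda^\nu}$-module decomposition of each $V_\Lambda^{\hat{\nu}}(i,j)$ proved in the preceding proposition, and the splitting of the lattice $L=\Lambda^\nu\oplus K$ together with the tensor-product behaviour of lattice \voa{}s. First I would substitute the module decomposition into the definition of $M$. By Proposition~\ref{prop:mdef},
\begin{equation*}
M=\bigoplus_{\beta+K\in K'/K}V_\Lambda^{\hat{\nu}}(\varphi(\beta+K))\otimes V_{\beta+K},
\end{equation*}
and replacing $V_\Lambda^{\hat{\nu}}(\varphi(\beta+K))=V_\Lambda^{\hat{\nu}}(i,j)$ with $\bigoplus_{\alpha+\Lambda^\nu}V_N^{\hat{\rho}}(\psi(\alpha+\Lambda^\nu),i,j)\otimes V_{\alpha+\Lambda^\nu}$ yields a double direct sum indexed by $\alpha+\Lambda^\nu\in(\Lambda^\nu)'/\Lambda^\nu$ and $\beta+K\in K'/K$, with summands $V_N^{\hat{\rho}}(\psi(\alpha+\Lambda^\nu),\varphi(\beta+K))\otimes V_{\alpha+\Lambda^\nu}\otimes V_{\beta+K}$.

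The key identification is then that the index set $(\Lambda^\nu)'/\Lambda^\nu\times K'/K$ is canonically $L'/L$. Since $L=\Lambda^\nu\oplus K$ is an orthogonal direct sum, $L'=(\Lambda^\nu)'\oplus K'$ and $L'/L\cong(\Lambda^\nu)'/\Lambda^\nu\times K'/K$ as \fqs{}s, the quadratic form $Q_L$ splitting as $Q_{\Lambda^\nu}\oplus Q_K$. Writing $\gamma+L=(\alpha+\Lambda^\nu,\beta+K)$, I would use the tensor-product property of lattice \voa{}s (see footnote~\ref{footnote:3} in the preceding proof) to identify $V_{\alpha+\Lambda^\nu}\otimes V_{\beta+K}\cong V_{\gamma+L}$, so the ghost-free matter summand becomes $V_N^{\hat{\rho}}(\psi(\alpha+\Lambda^\nu),\varphi(\beta+K))\otimes V_{\gamma+L}$. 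Finally, setting $\chi:=(\psi,\varphi)$ gives exactly $V_N^{\hat{\rho}}(\chi(\gamma+L))$, and the decomposition reads
\begin{equation*}
M\cong\bigoplus_{\gamma+L\in L'/L}V_N^{\hat{\rho}}(\chi(\gamma+L))\otimes V_{\gamma+L},
\end{equation*}
as claimed. That $\chi$ is an isomorphism of \fqs{}s of the stated form \eqref{eq:isochi} follows because $\psi$ in \eqref{eq:isopsi} and $\varphi$ in \eqref{eq:isophi} are each isomorphisms of \fqs{}s and $L'/L$ factors orthogonally.

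The main obstacle I anticipate is bookkeeping the identifications carefully rather than any deep difficulty: one must check that the ad-hoc product $\psi\times\varphi$ really lands in $(N'/N,-Q_N)\times(\Z_m\times\Z_m,-Q_m)$ with the signs matching, and that the module labels $V_N^{\hat{\rho}}(\psi(\alpha),\varphi(\beta))$ produced by the substitution genuinely agree with $V_N^{\hat{\rho}}(\chi(\gamma+L))$ for $\gamma=(\alpha,\beta)$ under the fusion-group structure of Theorem~\ref{thm:nonhol}. A subtler point, flagged in Remark~\ref{rem:choicedep}, is that $M$ depends a priori on the choice of $\varphi$; the decomposition just derived is stated for a fixed $\varphi$ (and the fixed choices of $\psi$ and of the representations $\phi$ collected in Assumption~\ref{ass:mp}), so independence of the isomorphism class of $M$ from these choices is deferred to Proposition~\ref{prop:choiceindep} and need not be addressed here. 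I would therefore present the proof as a direct computation, being explicit only about the orthogonal splitting $L'/L\cong(\Lambda^\nu)'/\Lambda^\nu\times K'/K$ and the definition $\chi=(\psi,\varphi)$, and citing the module decomposition and Proposition~\ref{prop:mdef} for the remaining steps.
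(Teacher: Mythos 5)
Your proposal matches the paper's proof essentially step for step: substitute the $V_N^{\hat{\rho}}\otimes V_{\Lambda^\nu}$-decomposition of the $V_\Lambda^{\hat{\nu}}(i,j)$ into the definition of $M$ from Proposition~\ref{prop:mdef}, reindex the double sum via $L'/L\cong(\Lambda^\nu)'/\Lambda^\nu\times K'/K$ together with $V_{\alpha+\Lambda^\nu}\otimes V_{\beta+K}\cong V_{(\alpha,\beta)+L}$, and set $\chi=(\psi,\varphi)$. The only point the paper makes that you leave implicit is the closing one-sentence observation that the isomorphism is one of weak vertex operator algebras and not merely of $V_N^{\hat{\rho}}\otimes V_L$-modules, since the right-hand side carries the vertex algebra structure of an abelian intertwining subalgebra of the tensor product of the relevant abelian intertwining algebras.
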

\begin{proof}
With the above proposition we can decompose $M$ as $V_N^{\hat{\rho}}\otimes V_L$-module
\begin{align*}
M&=\bigoplus_{\beta+K\in K'/K}V_\Lambda^{\hat{\nu}}(\varphi(\beta+K))\otimes V_{\beta+K}\\
&\cong\bigoplus_{\beta+K\in K'/K}\left(\bigoplus_{\alpha+\Lambda^\nu\in(\Lambda^\nu)'/\Lambda^\nu}V_N^{\hat{\rho}}(\psi(\alpha+\Lambda^\nu),\varphi(\beta+K))\otimes V_{\alpha+\Lambda^\nu}\right)\otimes V_{\beta+K}\\
&\cong\bigoplus_{\gamma+L\in L'/L}V_N^{\hat{\rho}}(\chi(\gamma+L))\otimes V_{\gamma+L},
\end{align*}
where we use that $V_{\alpha+\Lambda^\nu}\otimes V_{\beta+K}\cong V_{(\alpha,\beta)+L}$ (see footnote~\ref{footnote:3}).

We can view the right-hand side as the vertex algebra obtained as abelian intertwining subalgebra of the tensor product of the \aia{}s on the direct sum of the irreducible modules of $V_N^{\hat{\rho}}$ and $V_L$, respectively. Then the above is even an isomorphism of weak \voa{}s.
\end{proof}

The theorem states in particular that $M$ is naturally graded by $L'$ in the sense of Definition~\ref{defi:addgrad}, i.e.\
\begin{equation*}
M=\bigoplus_{\alpha\in L'}M(\alpha)\quad\text{with}\quad M(\alpha)=U(\chi(\alpha+L))\otimes M_{\hat\h}(1,\alpha)
\end{equation*}
where $U=V_N^{\hat{\rho}}$ and $U_1=\{0\}$.

The definition of $M$ in Proposition~\ref{prop:mdef} depends on the choice \eqref{eq:isophi} of the isomorphism $\varphi\colon (K'/K,Q_K)\to(\Z_m\times\Z_m,-Q_m)$ (see Remark~\ref{rem:choicedep}). With the help of the $L'/L$-decomposition it is possible to show that the isomorphism class of $M$ does not depend on this choice.
\begin{lem}\label{lem:surjective}
Let $\hat\nu$ be as in Assumption~\ref{ass:m}. Then the natural group homomorphism $\Aut(L)\to\Aut(L'/L)$ is surjective.
\end{lem}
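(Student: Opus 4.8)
The plan is to apply Nikulin's surjectivity criterion (Proposition~\ref{prop:thm1.14.2}) to the lattice $L=\Lambda^\nu\oplus K=\Lambda^\nu\oplus\II_{1,1}(m)$. Since $\Lambda^\nu$ is positive-definite and $K=\II_{1,1}(m)$ has signature $(1,1)$, the lattice $L$ is even and indefinite of signature $(\rk(\Lambda^\nu)+1,1)$ and rank $\rk(L)=\rk(\Lambda^\nu)+2$. In all ten classes $\rk(\Lambda^\nu)\geq 2$, so $\rk(L)\geq 4\geq 3$ and the first hypothesis of Proposition~\ref{prop:thm1.14.2} is satisfied. As $L$ is an \emph{orthogonal} direct sum, the discriminant form splits as $L'/L\cong(\Lambda^\nu)'/\Lambda^\nu\oplus K'/K$ (with $(\Lambda^\nu)'/\Lambda^\nu\cong N'/N$ as groups by Proposition~\ref{prop:isomnegqf}), so the $p$-rank $l((L'/L)_p)$ is the sum of the $p$-ranks of the two summands, both of which can be read off directly from the genus symbols of $\Lambda^\nu$ and $K$ tabulated above.

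First I would verify the remaining hypothesis $\rk(L)\geq l((L'/L)_p)+2$ for every prime $p$, going through the ten genus symbols. For example, for the class $2A$ one has $\rk(L)=18$ and $l((L'/L)_2)=8+2=10$, so $18\geq 12$; for $7B$ one has $\rk(L)=8$ and $l((L'/L)_7)=3+2=5$, so $8\geq 7$; and in the three tight classes $11A$, $14B$, $15D$ the inequality $\rk(L)=6\geq 4+2$ holds with equality for each relevant prime. Running through all the symbols shows that the criterion of Proposition~\ref{prop:thm1.14.2} is met in nine of the ten cases, which settles surjectivity for those.

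The remaining class $23A,B$ is where the real difficulty lies, and it is the main obstacle: here $\rk(L)=\rk(\Lambda^\nu)+2=4$, whereas $(L'/L)_{23}\cong\Z_{23}^3$ has $23$-rank $3$, so the weakened criterion would require $4\geq 3+2$, which fails. Thus Proposition~\ref{prop:thm1.14.2} is inapplicable and a separate argument is required. In this case $q_L$ is a non-degenerate ternary quadratic form over $\mathbb{F}_{23}$, so $\Aut(L'/L)$ is the comparatively large orthogonal group $O_3(\mathbb{F}_{23})$; moreover $L$ has no roots (the norms of $\II_{1,1}(23)$ are multiples of $46$, and $\Lambda^\nu$ is by construction the unique rootless lattice in its genus), so the block-diagonal isometries from $\Aut(\Lambda^\nu)\times\Aut(K)$ induce only a small subgroup and come nowhere near exhausting $O_3(\mathbb{F}_{23})$.

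To resolve this case I would invoke the finer form of Nikulin's theorem (the full \cite{Nik80}, Theorem~1.14.2, rather than the weakened Proposition~\ref{prop:thm1.14.2}), whose hypotheses admit the borderline value $l((L'/L)_{23})=\rk(L)-1$ for an odd prime once the $23$-adic lattice $L_{23}$ contains a rank-one unimodular orthogonal summand; this is automatic here, since a genus symbol $23^{+3}$ on a rank-$4$ lattice forces exactly one $23$-adically unimodular component. Alternatively, as this is a single rank-$4$ lattice, one may verify surjectivity directly by exhibiting explicit isometries of $L$ that mix the $\Lambda^\nu$- and $K$-blocks and induce a generating set of reflections of the quadratic space $(L'/L,q_L)$, which generate $O_3(\mathbb{F}_{23})$ by the Cartan--Dieudonn\'e theorem. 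Carrying out this last verification rigorously, rather than merely appealing to the sharper criterion, is the only genuinely delicate point of the proof.
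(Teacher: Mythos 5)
Your handling of the first nine classes is exactly the paper's argument: check the hypothesis $\rk(L)\geq l((L'/L)_p)+2$ of Proposition~\ref{prop:thm1.14.2} against the tabulated genus symbols (your sample computations for $2A$, $7B$ and the three tight classes $11A$, $14B$, $15D$ are correct), and isolate $m=23$, where $\rk(L)=4$ but $l((L'/L)_{23})=3$, as the one case the criterion does not reach. The gap is in how you then dispose of that case.

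Your primary argument appeals to ``the finer form of Nikulin's theorem'' and asserts that its hypotheses admit $l((L'/L)_p)=\rk(L)-1$ for an \emph{odd} prime $p$ as soon as $L\otimes\Z_p$ contains a rank-one unimodular orthogonal summand. That is not what Nikulin's Theorem~1.14.2 says. The only respect in which the full theorem is more permissive than the weakened Proposition~\ref{prop:thm1.14.2} is at the prime $2$, where $l((L'/L)_2)$ may reach $\rk(L)$ subject to a splitting condition on the $2$-adic form; for odd primes the hypothesis $l((L'/L)_p)\leq\rk(L)-2$ --- equivalently a unimodular $p$-adic Jordan component of rank at least \emph{two} --- is required in both versions, and a rank-one unimodular component is in general not enough to control the genus and the local spinor norms. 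So the sharper criterion is exactly as inapplicable to $\II_{3,1}(23^{-3})$ as the weakened one, and the condition you call ``automatic'' does not yield the conclusion. Your fallback --- exhibiting isometries of $L$ that induce a generating set of reflections of $(L'/L,Q_L)$ and invoking Cartan--Dieudonn\'e --- is a legitimate strategy, but you do not carry it out, and you yourself flag it as the delicate point; as written the lemma is therefore unproved for $m=23$. The paper closes this case by directly computing the image of $\Aut(L)$ in $\Aut(L'/L)$ with \texttt{Magma}. If you want a computer-free proof you must actually produce the generating isometries (or verify the local conditions of Nikulin's more general Theorem~1.14.1 at $p=23$ by hand); neither is done in your proposal.
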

\begin{proof}
Of the ten lattices $L=\Lambda^\nu\oplus\II_{1,1}(m)$ all but one fulfil the assumptions of Proposition~\ref{prop:thm1.14.2}, a weakened version of a result in \cite{Nik80}, which implies the statement of the lemma. Only the lattice in the case $m=23$ of genus $\II_{3,1}(23^{-3})$ has to be treated separately. A direct computation using \texttt{Magma} \cite{Magma} proves the assertion for this case.
\end{proof}
\begin{prop}\label{prop:choiceindep}
Let Assumption~\ref{ass:mp} and Conjecture~\ref{conj:1} hold. Then the isomorphism class of $M$ does not depend on the isomorphism $\chi\colon (L'/L,Q_L)\to\left(N'/N,-Q_N\right)\times(\Z_m\times\Z_m,-Q_m)$ \eqref{eq:isochi} and is in particular independent of the choice \eqref{eq:isophi} of the isomorphism $\varphi\colon (K'/K,Q_K)\to(\Z_m\times\Z_m,-Q_m)$.
\end{prop}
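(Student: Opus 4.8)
The plan is to prove the (stronger) first assertion: for any two isomorphisms of \fqs{}s
\[
\chi_1,\chi_2\colon (L'/L,Q_L)\longrightarrow\left(N'/N,-Q_N\right)\times(\Z_m\times\Z_m,-Q_m),
\]
the weak \voa{}s $M_{\chi_i}=\bigoplus_{\gamma+L\in L'/L}U(\chi_i(\gamma+L))\otimes V_{\gamma+L}$ (with $U=V_N^{\hat\rho}$) are isomorphic; the independence of the choice \eqref{eq:isophi} of $\varphi$ is then the special case $\chi_i=(\psi,\varphi_i)$. First I would note that $\sigma:=\chi_2^{-1}\chi_1$ is an automorphism of the \fqs{} $(L'/L,Q_L)$, since both $\chi_1$ and $\chi_2$ carry $Q_L$ to $-(Q_N\times Q_m)$ and hence $\sigma$ preserves $Q_L$. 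This is exactly where Lemma~\ref{lem:surjective} is used: because the natural map $\Aut(L)\to\Aut(L'/L)$ is surjective, $\sigma$ can be realised by a genuine lattice automorphism $g\in\Aut(L)$ acting on $L'/L$ as $\sigma$.

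Next I would transport $g$ to the vertex-algebra side. As in the construction of $\check\rho$ in the proof of Proposition~\ref{prop:nonholchar}, the standard lift $\hat g$ of $g$ extends to an automorphism $\check g$ of the \aia{} $A_{L'}=\bigoplus_{\delta+L\in L'/L}V_{\delta+L}$; this requires choosing the $2$-cocycle $\eps$ on $L'$ to be $g$-invariant (up to coboundary) and absorbing the discrepancy into a function $u\colon L'\to\C^\times$, exactly as was done there for $\rho$. By the lattice analogue of Lemma~\ref{lem:autactionlat}, $\check g$ permutes the irreducible $V_L$-modules according to its action on $L'/L$, so it restricts to isomorphisms $\check g\colon V_{\gamma+L}\to V_{\sigma\gamma+L}$ for all $\gamma+L$. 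I would then set
\[
\Psi:=\id_U\otimes\check g\colon M_{\chi_1}\longrightarrow M_{\chi_2},
\]
which carries the $\gamma$-summand $U(\chi_1(\gamma+L))\otimes V_{\gamma+L}$ of $M_{\chi_1}$ onto $U(\chi_1(\gamma+L))\otimes V_{\sigma\gamma+L}$. Since $\chi_2\sigma=\chi_1$, this target equals $U(\chi_2(\sigma\gamma+L))\otimes V_{\sigma\gamma+L}$, i.e.\ the $(\sigma\gamma)$-summand of $M_{\chi_2}$; as $\sigma$ is a bijection of $L'/L$, the map $\Psi$ is a linear isomorphism fixing $\vac$ and $\omega$.

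It then remains to verify that $\Psi$ is a homomorphism of weak \voa{}s. The vertex operation of each $M_{\chi_i}$ is assembled (as in Proposition~\ref{prop:mdef}) from the $U$-intertwining operators of type $\binom{U(a+b)}{U(a)\,U(b)}$ and the $A_{L'}$-intertwining operators of type $\binom{V_{(\gamma+\delta)+L}}{V_{\gamma+L}\,V_{\delta+L}}$. On matched summands the $U$-factors literally coincide ($U(\chi_1(\gamma+L))=U(\chi_2(\sigma\gamma+L))$), so the factor $\id_U$ trivially intertwines the $U$-intertwining operators; and $\check g$, being an automorphism of the \aia{} $A_{L'}$, carries the intertwining operator for $(\gamma,\delta)$ to the one for $(\sigma\gamma,\sigma\delta)$. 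Together these give $\Psi\,Y_{M_{\chi_1}}(a,x)b=Y_{M_{\chi_2}}(\Psi a,x)\Psi b$.

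I expect the main obstacle to be the construction of $\check g$ as a genuine automorphism of the \aia{} $A_{L'}$ that realises $\sigma$ on $L'/L$: one must check that the alternating bilinear form underlying $\eps$ can be chosen $g$-invariant, mirroring the conditions verified for $\check\rho$ in Proposition~\ref{prop:nonholchar} (automatic for $g$ of odd order, and otherwise reducible to properties of the even lattice $L$ of square-free level). A conceptually cleaner, if equivalent, way to close the argument is to regard both $M_{\chi_i}$ as $L'/L$-graded simple-current extensions of $V^0=U\otimes V_L$: presenting $M_{\chi_1}$ as an extension via the twisted embedding $\iota_0\circ(\id_U\otimes\hat g)$ exhibits its graded summands as $U(\chi_1(\gamma+L))\otimes V_{\sigma\gamma+L}$, which after regrading by the group automorphism $\sigma$ agree, as isomorphism classes, with the summands of $M_{\chi_2}$, whence item~\ref{enum:sce3} of Proposition~\ref{prop:scerc2} would force $M_{\chi_1}\cong M_{\chi_2}$; the only caveat is that $V_L$ is merely a weak \voa{} here, so this uniqueness must be read inside the abelian-intertwining-algebra formulation used to define $M$.
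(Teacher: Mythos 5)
Your proposal is correct and takes essentially the same approach as the paper's proof: realise the discrepancy $\sigma=\chi_2^{-1}\chi_1$ by a lattice automorphism via Lemma~\ref{lem:surjective}, transport it to the vertex-algebra side through the module-permutation action of lifted automorphisms (Lemma~\ref{lem:autactionlat}), and use it to identify the two graded tensor products. The explicit isomorphism $\Psi=\id_U\otimes\check g$ you construct, together with the cocycle-invariance check you flag as the main obstacle, is precisely the content the paper delegates to its citation of Lemma~3.1 in \cite{HS14}.
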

\begin{proof}
We proceed analogously to the proof of Lemma~3.1 in \cite{HS14}. Given a \voa{} $V$, the automorphism group $\Aut(V)$ acts naturally from the right on the set of isomorphism classes of irreducible $V$-modules (see Proposition~\ref{prop:autaction}, untwisted case). This remains true in the case of weak \voa{}s. Now suppose that $L$ is some even lattice, not necessarily positive-definite. Then $V_L$ is a weak \voa{} whose irreducible modules are indexed by $L'/L$ and the action of an automorphism of $V_L$ obtained as lift of an automorphism of $L$ can be explicitly determined. Indeed,
\begin{equation*}
V_{\alpha+L}\cdot\hat\tau\cong V_{\tau^{-1}\alpha+L}
\end{equation*}
for $\tau\in\Aut(L)$ and $\hat\tau$ some lift, as can be seen from the proof of Lemma~\ref{lem:autactionlat}, which remains valid in this setting.

We proved in Lemma~\ref{lem:surjective} that $\Aut(L)$ maps surjectively onto $\Aut(L'/L)$. The above considerations show that the same is true for the natural action of $\Aut(V_L)$ on the set of isomorphism classes of irreducible $V_L$-modules, indexed by $L'/L$. Hence, any change in the isomorphism $\chi$ can be absorbed in an automorphism of $L'/L$, which is induced from an automorphism of $V_L$.
As in \cite{HS14}, Lemma~3.1, this proves the assertion.
\end{proof}

\minisec{BRST Construction}
We just showed that all the assumptions made in Section~\ref{sec:brst} on the weak \voa{} in the matter sector like the $L'/L$-decomposition are fulfilled by $M$. Hence we can consider the complex Lie algebra
\begin{equation*}
\g^{\hat\nu}:=H_\text{BRST}^1(M)
\end{equation*}
from the BRST construction. The goal is to show that $\g^{\hat\nu}$ is a \BKMa{} and isomorphic to $\g_{\hat\nu}$, or more precisely its complexification, where $\g_{\hat\nu}$ is the real \BKMa{} obtained by twisting the denominator identity of the Fake Monster Lie algebra $\g$.

With the results in Section~\ref{sec:brst} we immediately obtain:
\begin{prop}
Let Assumption~\ref{ass:mp} and Conjecture~\ref{conj:1} hold. Then the complex Lie algebra $\g^{\hat\nu}$ is graded by the lattice $L'=(\Lambda^\nu)'\oplus(\II_{1,1}(m))'$ with
\begin{equation*}
\dim_\C(\g^{\hat\nu}(\alpha))=\left[\ch_{V_N^{\hat{\rho}}(\chi(\alpha+L))}(q)/\eta(q)^{\rk(\Lambda^\nu)}\right](-\langle\alpha,\alpha\rangle/2)
\end{equation*}
for $\alpha\in L'\setminus\{0\}$ and
\begin{equation*}
\dim_\C(\g^{\hat\nu}(0))=\rk(L)=\rk(\Lambda^\nu)+2=k=2+\frac{24\sigma_0(m)}{\sigma_1(m)}.
\end{equation*}
The component $\g^{\hat\nu}(0)$ is a Cartan subalgebra for $\g^{\hat\nu}$ so that $\g^{\hat\nu}$ has rank $k=\rk(\Lambda^\nu)+2$.
\end{prop}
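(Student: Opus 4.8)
The plan is to assemble this proposition directly from the general BRST machinery developed in Section~\ref{sec:brst}, applied to the specific weak \voa{} $M$ constructed in this section. All the hard structural work has already been done: we have verified that $M$ admits an $L'/L$-decomposition \eqref{eq:lpldec} with $U = V_N^{\hat\rho}$ satisfying Assumption~\ref{ass:sn} and $U_1 = \{0\}$, that $L = \Lambda^\nu \oplus \II_{1,1}(m)$ is even of Lorentzian signature $(k-1,1)$ with $k = \rk(L) = \rk(\Lambda^\nu)+2$, and that $F_U \cong \overline{L'/L}$ as \fqs{}s via the isomorphism $\chi$ from \eqref{eq:isochi}. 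Thus every hypothesis required to invoke the dimension formula and the zero-component computation is in place.

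First I would invoke Theorem~\ref{thm:rootdim}, the Dimension Formula. With $k = \rk(L)$ and $U = V_N^{\hat\rho}$, this immediately gives
\begin{equation*}
\dim_\C(\g^{\hat\nu}(\alpha)) = \left[\ch_{V_N^{\hat\rho}(\chi(\alpha+L))}(q)/\eta(q)^{k-2}\right](-\langle\alpha,\alpha\rangle/2)
\end{equation*}
for $\alpha \in L' \setminus \{0\}$, and since $k - 2 = \rk(\Lambda^\nu)$ this is exactly the stated formula. The only thing to note here is that $\g^{\hat\nu} = H^1_\text{BRST}(M)$ is graded by $\Gamma = L'$ in the required sense, which follows from the discussion surrounding \eqref{eq:lpdec}: the $L'$-grading on $M$ is a grading in the sense of Definition~\ref{defi:addgrad}, and the ghost number operator, $L_0$ and $Q$ all commute with the $L'$-grading operator, so the Lie bracket on $\g^{\hat\nu}$ respects the $L'$-grading.

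Next I would handle the zero-component via Proposition~\ref{prop:brstcartan1}, which applies verbatim since $U_1 = (V_N^{\hat\rho})_1 = \{0\}$ and $U$ is of CFT-type. That proposition gives $\g^{\hat\nu}(0) \cong \h \cong \C^k$ where $\h = L \otimes_\Z \C$, hence $\dim_\C(\g^{\hat\nu}(0)) = \rk(L) = k$. The explicit value $k = 2 + 24\sigma_0(m)/\sigma_1(m)$ then follows by combining $k = \rk(\Lambda^\nu) + 2$ with the known rank of the fixed-point lattice, $\rk(\Lambda^\nu) = 24\sigma_0(m)/\sigma_1(m)$, recorded in the discussion preceding Assumption~\ref{ass:m}. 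Finally, that $\g^{\hat\nu}(0)$ is a Cartan subalgebra is precisely item~\ref{enum:bkma2} of the proof that $\g$ satisfies the Borcherds-Kac-Moody conditions: setting $H := \g^{\hat\nu}(0)$, the adjoint action of $H$ on $\g^{\hat\nu}(\alpha)$ is by the scalar $\langle h,\alpha\rangle$, so $H$ is self-centralising and the rank of $\g^{\hat\nu}$ equals $\dim_\C(H) = k$.

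There is essentially no genuine obstacle here, as the proposition is a direct specialisation of already-established general results; the only care required is bookkeeping. Specifically, one must confirm that the identification $U = V_N^{\hat\rho}$ with $U_1 = \{0\}$ and the index shift $k - 2 = \rk(\Lambda^\nu)$ are applied consistently, and that all structural hypotheses feeding Theorem~\ref{thm:rootdim} and Proposition~\ref{prop:brstcartan1} have been verified for $M$ earlier in this section (which they have, modulo Conjecture~\ref{conj:1}, on which the determination of the fusion structure of $V_N^{\hat\rho}$ depends). The mildest subtlety worth flagging is that the formula for $\dim_\C(\g^{\hat\nu}(\alpha))$ is stated in terms of the character of the irreducible $V_N^{\hat\rho}$-module indexed by $\chi(\alpha+L)$, and one should confirm that this indexing matches the $L'/L$-decomposition; but this is immediate from the definition of $\chi$ in \eqref{eq:isochi}.
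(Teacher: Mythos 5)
Your proposal is correct and matches the paper's argument: the paper derives this proposition as an immediate consequence of Theorem~\ref{thm:rootdim} and Proposition~\ref{prop:brstcartan1}, exactly as you do, once the $L'/L$-decomposition of $M$ (and hence the hypotheses of Section~\ref{sec:brst}) has been established. Your additional bookkeeping remarks on the index shift $k-2=\rk(\Lambda^\nu)$ and the indexing via $\chi$ are consistent with the text.
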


Recall that the $L'$-grading on the Lie algebra $\g^{\hat\nu}$ means that
\begin{equation*}
\g^{\hat\nu}=\bigoplus_{\alpha\in L'}\g^{\hat\nu}(\alpha)\quad\text{with}\quad\g^{\hat\nu}(\alpha)=H^1_\text{BRST}(\alpha)
\end{equation*}
and $[\g^{\hat\nu}(\alpha),\g^{\hat\nu}(\beta)]\subseteq\g^{\hat\nu}(\alpha+\beta)$. Also recall that the Cartan subalgebra $\g^{\hat\nu}(0)$ acts on the graded components via $[x,y]=\langle h,\alpha\rangle y$ for $x=\vac_{V_N^{\hat\rho}}\otimes h(-1)1\otimes c\in\g^{\hat\nu}(0)$ and $y\in\g^{\hat\nu}(\alpha)$, $\alpha\in L'$ (see Theorem~\ref{thm:bkma}).

\minisec{Dimensions}
We describe the dimensions of the graded components $\g^{\hat\nu}(\alpha)$ in terms of certain vector-valued modular forms. Consider the eta product
\begin{equation*}
f(\tau):=\frac{1}{\eta_\nu(\tau)}=\prod_{t\mid m}\eta(t\tau)^{-24/\sigma_1(m)},
\end{equation*}
which depends on the cycle shape of $\nu\in\Aut(\Lambda)$. Borcherds showed that products of rescaled eta functions of this form are sometimes modular forms. More precisely, Theorem~6.2 in \cite{Bor00} implies that $f(\tau)$ is an almost holomorphic\footnote{By \emph{almost holomorphic} we mean that the modular form is holomorphic as function on $\H$ but may have poles at the cusps.} (scalar-valued) modular form of weight $w:=-12\sigma_0(m)/\sigma_1(m)=1-k/2\in\Z$ for $\Gamma_0(m)$ and a certain character $\chi_s$ where $s=s(m)\in\Ns$ is chosen such that $s\prod_{t\mid m}t^{-24/\sigma_1(m)}$ is a perfect square, i.e.:
\renewcommand{\arraystretch}{1.2}
\begin{equation*}
\begin{tabular}{c|cccccccccc}
$m$&1&2&3&5&6&7&11&14&15&23\\\hline
$s$&1&1&1&1&1&7& 1& 1& 1&23
\end{tabular}
\end{equation*}
\renewcommand{\arraystretch}{1}

The Dirichlet character $\chi_s$ (of a certain modulus) for fixed $s\in\Ns$ is defined as the Kronecker symbol $\chi_s(j):=(j/s)$, $j\in\Z$. For instance, if $s$ is an odd prime, then $\chi_s$ is a character modulo~$s$. For $s=1$ we get the trivial character. Given a quadratic Dirichlet character $\chi\colon\Z\to\{\pm1\}$ of some modulus $k$ we can view it as a character $\chi\colon\Gamma_0(k)\to\{\pm1\}$ by setting $\chi(M):=\chi(a)=\chi(d)$ for $M=\left(\begin{smallmatrix}a&b\\c&d\end{smallmatrix}\right)\in\Gamma_0(k)$. Then clearly, $\chi$ is also a character on $\Gamma_0(l)$ for any multiple $l$ of $k$. In that manner, for the $s=s(m)$ obtained from Borcherds' theorem, $\chi_s$ is a character on $\Gamma_0(m)$ and it is the trivial character except for $m=7,23$.

Consider now the lattice $L=\Lambda^\nu\oplus K$ and its discriminant form $L'/L$. It has level $m$ and even signature. For any \fqs{} $D$ of even signature and level $N$ we can define
\begin{equation*}
\chi_D(j):=\left(\frac{j}{\left|D\right|}\right)e\left(\left(j-1\right)\oddity(D)/8\right),
\end{equation*}
$j\in\Z$, which is a quadratic Dirichlet character modulo~$N$ (see e.g.\ Section~6 in \cite{Sch06}). If 4 does not divide the level $N$, for instance if $N$ is square-free, then the character simplifies and becomes 
\begin{equation*}
\chi_D(j)=\left(\frac{j}{\left|D\right|}\right).
\end{equation*}
Using the elementary properties of the Kronecker symbol we see that
\begin{equation*}
\chi_{L'/L}=\chi_s
\end{equation*}
for $s$ as above.

We now describe a procedure developed in \cite{Sch06} to lift scalar-valued modular forms to vector-valued ones.
\begin{thm}[Special case of \cite{Sch06}, Theorem~6.2]\label{thm:6.2}
Let $D$ be a \fqs{} of even signature and level dividing $N$. Let $f$ be a (meromorphic) modular form of weight $w\in\Z$ for $\Gamma_0(N)$ and character $\chi_D$. Then
\begin{equation*}
F(\tau)=\sum_{\gamma\in D}F_\gamma(\tau)\ee_\gamma
\end{equation*}
with
\begin{equation*}
F_\gamma(\tau):=\sum_{M\in\Gamma_0(N)\backslash\Gamma}(c\tau+d)^{-w}f(M.\tau)\rho_D(M^{-1})_{\gamma,0}
\end{equation*}
is a (meromorphic) vector-valued modular form of weight $w$ for the Weil representation $\rho_D$ of $\SLZ$ on $\C[D]$, which is invariant under the automorphisms of the \fqs{} $D$. $F$ is called the \emph{lift of $f$} (with trivial support).
\end{thm}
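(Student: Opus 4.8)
The plan is to recognise $F$ as an \emph{induction} (averaging) of the scalar form $f$ from $\Gamma_0(N)$ up to $\Gamma = \SLZ$, and to verify the vector-valued transformation law directly from the definition. First I would note that since $[\SLZ : \Gamma_0(N)] < \infty$, the coset space $\Gamma_0(N)\backslash\Gamma$ is finite, so each component
\begin{equation*}
F_\gamma(\tau) = \sum_{M \in \Gamma_0(N)\backslash\Gamma} (f\mid_w M)(\tau)\,\rho_D(M^{-1})_{\gamma,0}
\end{equation*}
is a \emph{finite} sum; here I abbreviate $(g\mid_w M)(\tau) = (c\tau+d)^{-w}g(M.\tau)$ for $M = \left(\begin{smallmatrix}a&b\\c&d\end{smallmatrix}\right)$. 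In particular there is no convergence issue, and since $M.\tau\in\H$ for $\tau\in\H$, holomorphy (respectively meromorphy) of $F_\gamma$ on $\H$ is inherited termwise from $f$. Note also that the even signature of $D$ is what guarantees $\rho_D$ descends to a genuine representation of $\SLZ$, consistent with $w\in\Z$.

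The first real point is to check that the summand depends only on the coset $\Gamma_0(N)M$, i.e.\ that it is invariant under $M \mapsto \gamma_0 M$ for $\gamma_0\in\Gamma_0(N)$. By the transformation law of $f$ one has $f\mid_w \gamma_0 = \chi_D(\gamma_0)f$, whence $f\mid_w(\gamma_0 M) = \chi_D(\gamma_0)\,(f\mid_w M)$. On the representation side I would invoke the key property of the Weil representation that, because $D$ has even signature and level dividing $N$, the subgroup $\Gamma_0(N)$ stabilises the basis vector $\ee_0$ and acts on it precisely through the character $\chi_D$, that is $\rho_D(\gamma_0)\ee_0 = \chi_D(\gamma_0)\ee_0$ for all $\gamma_0\in\Gamma_0(N)$. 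Consequently $\rho_D(\gamma_0^{-1})_{\beta,0} = \chi_D(\gamma_0)^{-1}\delta_{\beta,0}$, and
\begin{equation*}
\rho_D((\gamma_0 M)^{-1})_{\gamma,0} = \sum_{\beta}\rho_D(M^{-1})_{\gamma,\beta}\,\rho_D(\gamma_0^{-1})_{\beta,0} = \chi_D(\gamma_0)^{-1}\rho_D(M^{-1})_{\gamma,0}.
\end{equation*}
The two factors $\chi_D(\gamma_0)$ and $\chi_D(\gamma_0)^{-1}$ cancel, so the summand is well-defined on cosets. I expect this compatibility between the nebentypus $\chi_D$ of $f$ and the $\ee_0$-action of $\rho_D$ to be the essential input, and hence the main obstacle: it is exactly the place where the hypotheses ``even signature'' and ``level dividing $N$'' are used, and it rests on a standard but non-trivial fact about the Weil representation on $\Gamma_0(N)$.

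The transformation law itself then follows by a formal reindexing. For $A = \left(\begin{smallmatrix}a&b\\c&d\end{smallmatrix}\right)\in\Gamma$ the cocycle relation for the slash operator gives $(f\mid_w M)\mid_w A = f\mid_w(MA)$, so
\begin{equation*}
(F_\gamma\mid_w A)(\tau) = \sum_{M \in \Gamma_0(N)\backslash\Gamma}(f\mid_w(MA))(\tau)\,\rho_D(M^{-1})_{\gamma,0}.
\end{equation*}
Putting $M' = MA$ (right multiplication by $A$ permutes the cosets $\Gamma_0(N)\backslash\Gamma$, and the summand is well-defined on cosets by the previous step) and using $\rho_D(M^{-1}) = \rho_D(A)\rho_D(M'^{-1})$ yields
\begin{equation*}
(F_\gamma\mid_w A)(\tau) = \sum_{\beta}\rho_D(A)_{\gamma,\beta}\sum_{M'}(f\mid_w M')(\tau)\,\rho_D(M'^{-1})_{\beta,0} = \sum_{\beta}\rho_D(A)_{\gamma,\beta}F_\beta(\tau),
\end{equation*}
which is precisely $F(A.\tau) = (c\tau+d)^w\rho_D(A)F(\tau)$.

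It remains to record the two qualitative properties. For $\sigma\in\Aut(D)$ the invariance $\rho_D(M)_{\sigma\alpha,\sigma\beta} = \rho_D(M)_{\alpha,\beta}$ of the Weil representation together with $\sigma(0)=0$ gives $\rho_D(M^{-1})_{\sigma\gamma,0} = \rho_D(M^{-1})_{\sigma\gamma,\sigma 0} = \rho_D(M^{-1})_{\gamma,0}$, hence $F_{\sigma\gamma} = F_\gamma$, so $F$ is $\Aut(D)$-invariant. Finally, meromorphy of $F$ at the unique cusp $\i\infty$ of $\Gamma$ follows because each $f\mid_w M$ has a $q$-expansion with only finitely many negative exponents and a finite sum of such functions is again meromorphic at $\i\infty$; this completes the verification that $F$ is a (meromorphic) vector-valued modular form of weight $w$ for $\rho_D$.
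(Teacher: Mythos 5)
Your proof is correct and follows essentially the same route as the argument in the cited reference \cite{Sch06} (the paper itself only quotes the theorem without proof): the whole content is the averaging over $\Gamma_0(N)\backslash\SLZ$, with well-definedness on cosets resting on the fact that $\rho_D(\gamma_0)\ee_0=\chi_D(\gamma_0)\ee_0$ for $\gamma_0\in\Gamma_0(N)$, which is exactly where the hypotheses of even signature and level dividing $N$ enter. The reindexing for the transformation law, the $\Aut(D)$-equivariance of $\rho_D$ fixing $\ee_0$, and the termwise meromorphy are all handled correctly.
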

The statement of the theorem remains true if we replace the Weil representation by the dual Weil representation $\overline{\rho}_D$, the complex conjugate representation.\footnote{In the notation of \cite{Sch06} the Weil representation and the dual Weil representation are interchanged compared to our notation.}

We apply the above theorem for $D=L'/L$ and proceed by lifting $f(\tau)=1/\eta_\nu(\tau)$, which is a modular form of weight $w=1-k/2=-12\sigma_0(m)/\sigma_1(m)\in\Z$ for $\Gamma_0(m)$ with character $\chi_{L'/L}$ to a vector-valued modular form $F(\tau)=\sum_{\alpha+L\in L'/L}F_{\alpha+L}(\tau)\ee_{\alpha+L}$ of weight $w$ for the \emph{dual} Weil representation $\overline\rho_{L'/L}$ with
\begin{equation}\label{eq:vvmf}
F_{\alpha+L}(\tau):=\sum_{M\in\Gamma_0(m)\backslash\Gamma}(c\tau+d)^{-w}\frac{1}{\eta_\nu(M.\tau)}\overline\rho_{L'/L}(M^{-1})_{\alpha+L,0+L}.
\end{equation}
It turns out that the components of $F(\tau)$ are closely related to the characters of the irreducible $V_N^{\hat{\rho}}$-modules. Recall that there is an isomorphism $\chi\colon (L,Q_L)\to\left(N'/N,-Q_N\right)\times(\Z_m\times\Z_m,-Q_m)$ \eqref{eq:isochi}.
\begin{thm}\label{thm:charlift}
Let Assumption~\ref{ass:mp} and Conjecture~\ref{conj:1} hold. Then, with the vector-valued modular form $F(\tau)$ defined in \eqref{eq:vvmf},
\begin{equation*}
\ch_{V_N^{\hat{\rho}}(\chi(\alpha+L))}(\tau)/\eta(\tau)^{\rk(\Lambda^\nu)}=F_{\alpha+L}(\tau)
\end{equation*}
for all $\alpha+L\in L'/L$.
\end{thm}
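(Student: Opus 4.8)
The plan is to prove the identity by comparing two vector-valued modular forms of the same weight for the same representation and then matching their components. Both sides are built from eta and theta functions, so the natural strategy is: (i) assemble the characters $\ch_{V_N^{\hat{\rho}}(\chi(\alpha+L))}(\tau)/\eta(\tau)^{\rk(\Lambda^\nu)}$, $\alpha+L\in L'/L$, into a single $\C[L'/L]$-valued function; (ii) show this function is a vector-valued modular form of weight $w=1-k/2$ for the dual Weil representation $\overline\rho_{L'/L}$; and (iii) identify it with the canonical lift $F(\tau)$ of the scalar form $f=1/\eta_\nu$ by exploiting the rigidity of such lifts.

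First I would use the $L'/L$-decomposition of $M$ together with Theorem~\ref{thm:nonhol} (valid under Conjecture~\ref{conj:1}) to express each character $\ch_{V_N^{\hat{\rho}}(\chi(\alpha+L))}(\tau)$ explicitly. The computation of these characters was set up in Section~\ref{sec:nonholorb}: by Proposition~\ref{prop:nonholchar} the seed functions $T_{\alpha+N}(\vac,0,j,\tau)=\vartheta_{(\alpha+N)^{\rho^j}}(\tau)/\eta_{\rho^j}(\tau)$ are known explicitly, and all other twisted trace functions are obtained from them via the modular transformations governed by $\sigma_{\alpha+N,\beta+N}(M)=\rho_{V_N}(M)_{\alpha+N,\beta+N}$, i.e.\ by Zhu's representation for $V_N$. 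Dividing by $\eta(\tau)^{\rk(\Lambda^\nu)}$ absorbs the lattice-$V_{\Lambda^\nu}$ contribution, and the key observation (as in the dimension formula, Theorem~\ref{thm:rootdim}) is that $\ch_{V_N^{\hat\rho}(\gamma)}(\tau)/\eta(\tau)^{\rk(\Lambda^\nu)}$ is, up to the factor $\eta(\tau)^2$, the character $\ch_{M(\alpha)}(\tau)$ with the two-dimensional Heisenberg piece removed. The plan is to recognise that the $\SLZ$-transformation behaviour of the whole collection $\{\ch_{V_N^{\hat\rho}(\chi(\alpha+L))}/\eta^{\rk(\Lambda^\nu)}\}_{\alpha+L}$ is, by Corollary~\ref{cor:zhuweil} applied to the simple-current \voa{} $V_N^{\hat\rho}$ (group-like fusion, Theorem~\ref{thm:nonhol}), exactly that of a weight-$w$ modular form for the Weil representation attached to the \fqs{} $(N'/N)\times(\Z_m\times\Z_m)$ carried by $V_N^{\hat\rho}$; transporting along the isomorphism $\chi$ of \eqref{eq:isochi} converts this into the dual Weil representation $\overline\rho_{L'/L}$, since $\chi$ identifies the fusion \fqs{} with $\overline{L'/L}$. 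This gives step (ii).

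For step (iii) I would invoke the rigidity of the lift construction in Theorem~\ref{thm:6.2}. The scalar form $f=1/\eta_\nu$ is a modular form of weight $w$ for $\Gamma_0(m)$ with character $\chi_{L'/L}=\chi_s$, and the $0$-component of the left-hand collection is precisely $\ch_{V_N^{\hat\rho}(\chi(0+L))}(\tau)/\eta(\tau)^{\rk(\Lambda^\nu)}=\ch_{V_N^{\hat\rho}}(\tau)/\eta(\tau)^{\rk(\Lambda^\nu)}$, which by Proposition~\ref{prop:nonholchar} and the product $V_N^{\hat\rho}\otimes V_{\Lambda^\nu}$ equals $\vartheta_{\Lambda^\nu}(\tau)/(\eta_\nu(\tau)\,\eta(\tau)^{\rk(\Lambda^\nu)})$ restricted appropriately; after dividing out the theta/eta of $\Lambda^\nu$ one is left with $1/\eta_\nu(\tau)=f(\tau)$ at the cusp $\i\infty$. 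Since both $F(\tau)$ and the character collection are weight-$w$ vector-valued modular forms for the same representation $\overline\rho_{L'/L}$ and agree in their $0$-component (up to the boundary behaviour determining the lift uniquely), and since the lift is characterised by its invariance under $\Aut(L'/L)$ and its single-orbit support, the two must coincide componentwise.

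The main obstacle I anticipate is step (ii): carefully verifying that the $S$- and $T$-transformation matrices of the normalised characters $\ch_{V_N^{\hat\rho}(\chi(\alpha+L))}/\eta^{\rk(\Lambda^\nu)}$ reproduce the \emph{dual} Weil representation $\overline\rho_{L'/L}$ with exactly the right weight and character, rather than the Weil representation or a twist thereof by a power of the eta-multiplier $\eps$. This requires bookkeeping of the $\eta$-factors: Corollary~\ref{cor:zhuweil} produces the Weil representation only after multiplying trace functions by $\eta^c$ with $c=26-k$ the central charge of $V_N^{\hat\rho}$, whereas here we divide by $\eta^{\rk(\Lambda^\nu)}=\eta^{k-2}$; reconciling these exponents (noting $c+(k-2)=24$, so the total eta power is $24$ and the multiplier $\eps^{24}$ is trivial) is exactly what pins down the weight $w=1-k/2$ and eliminates the anomalous character. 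A secondary subtlety is that the whole argument rests on Conjecture~\ref{conj:1}, which fixes the precise fusion-group structure of $V_N^{\hat\rho}$ and hence the bilinear form entering the $S$-matrix; without it only the weaker Theorem~\ref{thm:nonholweak} is available, giving the \fqs{} up to isomorphism but not the component-by-component normalisation needed here.
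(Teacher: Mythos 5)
Your steps (i) and (ii) coincide with the paper's argument: the paper also assembles the normalised characters into $\widetilde{F}_{\alpha+L}:=\ch_{V_N^{\hat{\rho}}(\chi(\alpha+L))}(\tau)/\eta(\tau)^{\rk(\Lambda^\nu)}$ and uses Corollary~\ref{cor:zhuweil} for the simple-current \voa{} $V_N^{\hat\rho}$ (of central charge $c=26-k$), transported along $\chi$, plus the fact that $\eta^{24}$ is a weight-$12$ form with trivial multiplier, to conclude that $\widetilde{F}$ is an almost holomorphic vector-valued modular form of weight $w=1-k/2$ for $\overline\rho_{L'/L}$. Your eta-power bookkeeping ($c+(k-2)=24$) is exactly the reconciliation the paper performs.

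The gap is in your step (iii). Equality of two almost holomorphic vector-valued modular forms of \emph{negative} weight for the same representation follows (via the valence formula) only once \emph{all} singular coefficients, in \emph{every} component, are shown to agree; agreement of the $0$-component alone does not determine the form, and there is no "rigidity of the lift" that substitutes for this. Your appeal to invariance under $\Aut(L'/L)$ is moreover circular: for $F$ this invariance is built into Theorem~\ref{thm:6.2}, but for $\widetilde{F}$ (i.e.\ for the characters of the $V_N^{\hat\rho}$-modules) it is only deduced in the paper \emph{as a consequence} of Theorem~\ref{thm:charlift} (see Remark~\ref{rem:vvmfF}), so it cannot be used as an input. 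Also, your identification of the $0$-component with $1/\eta_\nu$ is not correct as stated: by Proposition~\ref{prop:simplelift} one has $F_{0+L}=\sum_{d\mid m}g_{d,0}$, not $f$, and $\ch_{V_N^{\hat\rho}}(\tau)/\eta(\tau)^{\rk(\Lambda^\nu)}$ is a $\Z_m$-average of quotients $\vartheta_{N^{\rho^j}}/(\eta_{\rho^j}\eta^{\rk(\Lambda^\nu)})$, not $1/\eta_\nu$. The paper closes the argument differently: it computes the $q_\tau$-expansions of both $F$ (easy, via Proposition~\ref{prop:simplelift}) and $\widetilde{F}$ (via the trace-function machinery of Section~\ref{sec:nonholorb}, carried out in \texttt{Sage}/\texttt{Magma}), verifies that all coefficients with negative $q_\tau$-exponents coincide, and then applies the valence formula to $G=F-\widetilde{F}$, which is of negative weight, holomorphic on $\H$ and finite at all cusps, hence zero. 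Without this componentwise matching of principal parts, your argument does not go through.
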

The following corollary is immediate:
\begin{cor}\label{cor:thmcharlift}
Let Assumption~\ref{ass:mp} and Conjecture~\ref{conj:1} hold. Then the dimensions of the $L'$-graded Lie algebra $\g^{\hat\nu}$ fulfil
\begin{equation*}
\dim_\C(\g^{\hat\nu}(\alpha))=\left[F_{\alpha+L}\right](-\langle\alpha,\alpha\rangle/2)
\end{equation*}
for all $\alpha\in L'\setminus\{0\}$.
\end{cor}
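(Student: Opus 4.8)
The plan is to show that both sides of the asserted identity are vector-valued modular forms of the same weight $w=1-k/2=-\rk(\Lambda^\nu)/2$ for the same dual Weil representation $\overline\rho_{L'/L}$, each holomorphic on $\H$ with poles only at the cusp, and then to identify them by comparing their principal parts at the unique cusp of $\SLZ$ and invoking the vanishing of holomorphic forms of negative weight.

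First I would show that the family $\{\ch_{V_N^{\hat{\rho}}(\chi(\alpha+L))}(\tau)/\eta(\tau)^{\rk(\Lambda^\nu)}\}_{\alpha+L\in L'/L}$ transforms under $\overline\rho_{L'/L}$. By Theorem~\ref{thm:nonhol} the \voa{} $V_N^{\hat\rho}$ satisfies Assumptions~\ref{ass:sn}\ref{ass:p}, has central charge $c=\rk(N)=24-\rk(\Lambda^\nu)\in\Z$, and fusion group $F_{V_N^{\hat\rho}}=N'/N\times\Z_m\times\Z_m$ carrying the quadratic form $Q_\rho=Q_N+Q_m$. Hence Corollary~\ref{cor:zhuweil} applies with $v=\vac$, showing that the functions $\ch_{V_N^{\hat\rho}(\cdot)}(\tau)\eta(\tau)^c$ form a vector-valued modular form of weight $c/2$ for the Weil representation $\rho_{F_{V_N^{\hat\rho}}}$. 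Since the isomorphism $\chi$ of \eqref{eq:isochi} identifies $(L'/L,Q_L)$ isometrically with $\overline{F_{V_N^{\hat\rho}}}=(F_{V_N^{\hat\rho}},-Q_\rho)$, and the Weil representation depends only on the underlying \fqs{}, the representation $\rho_{F_{V_N^{\hat\rho}}}$ pulled back along $\chi$ equals $\rho_{\overline{L'/L}}=\overline\rho_{L'/L}$. Dividing by $\Delta=\eta^{24}$, a form of weight $12$ with trivial multiplier for $\SLZ$, then shows that the left-hand side, reindexed by $L'/L$ via $\chi$, is a vector-valued modular form of weight $c/2-12=w$ for $\overline\rho_{L'/L}$, holomorphic on $\H$ with possible poles at the cusp coming from the zero of $\Delta$.

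On the other hand, the right-hand side $F=\sum_{\alpha+L}F_{\alpha+L}\ee_{\alpha+L}$ is by \eqref{eq:vvmf} and Theorem~\ref{thm:6.2} exactly the lift of $f=1/\eta_\nu$, which, using $\chi_{L'/L}=\chi_s$ as recalled in the excerpt, is a modular form of weight $w$ for $\Gamma_0(m)$ with character $\chi_{L'/L}$; thus $F$ is an almost holomorphic vector-valued modular form of weight $w$ for the \emph{same} representation $\overline\rho_{L'/L}$. Consequently the difference $D$ of the two sides is an almost holomorphic vector-valued modular form of weight $w=-\rk(\Lambda^\nu)/2\le -1$ for $\overline\rho_{L'/L}$. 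I would then prove that $D$ has no pole at the cusp $i\infty$; since a genuinely holomorphic vector-valued modular form of negative weight vanishes, this forces $D=0$ and hence the claim.

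The hard part will be matching the principal parts of the two sides in every component $\alpha+L$. For the left-hand side, expanding $\ch_W(\tau)\eta(\tau)^c/\Delta(\tau)$ with $W=V_N^{\hat\rho}(\chi(\alpha+L))$ gives a leading term $\dim_\C(W_{\rho(W)})\,q^{\rho(W)-1}$, and Assumption~\ref{ass:p} together with $\rho(W)\equiv Q_\rho(\chi(\alpha+L))=-Q_L(\alpha+L)\pmod1$ (Proposition~\ref{prop:nonholconfweights}) controls precisely which terms are polar; for $\alpha=0$ one has $\rho(W)=0$ and $\dim_\C(W_0)=1$, producing the simple pole $q^{-1}$. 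For the right-hand side, the principal part of the Scheithauer lift $F$ is governed by the simple pole of $1/\eta_\nu$ at the cusps of $\Gamma_0(m)$ and, by its singular-weight structure, is likewise concentrated in the leading terms dictated by $-Q_L$; the $M=\id$ coset supplies the $q^{-1}$ in $F_{0+L}$, matching $\ch_{V_N^{\hat\rho}}/\eta^{\rk(\Lambda^\nu)}=q^{-1}+O(1)$, while the remaining cosets account for the fractional polar terms in the other components. Verifying that these two a priori distinct descriptions of the principal part agree term by term, which requires the exact conformal weights $\rho(V_N^{\hat\rho}(\chi(\alpha+L)))$ and the cusp expansions of $1/\eta_\nu$, is the main obstacle of the argument.
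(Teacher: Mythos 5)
Your strategy coincides with the paper's. There the corollary is immediate from two ingredients you reproduce: the BRST dimension formula $\dim_\C(\g^{\hat\nu}(\alpha))=\left[\ch_{V_N^{\hat{\rho}}(\chi(\alpha+L))}(q)/\eta(q)^{\rk(\Lambda^\nu)}\right](-\langle\alpha,\alpha\rangle/2)$ (which you invoke implicitly when you treat the character quotient as ``the left-hand side''), and Theorem~\ref{thm:charlift}, whose proof in the paper runs exactly along your lines: both families are almost holomorphic vector-valued modular forms of the same negative weight $w=-\rk(\Lambda^\nu)/2$ for $\overline\rho_{L'/L}$ (on one side via Corollary~\ref{cor:zhuweil}, the isometry $\chi$, and division by $\eta^{24}$; on the other via Theorem~\ref{thm:6.2}), so their difference vanishes by the valence formula once the singular coefficients are seen to agree.

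The step you flag as the main obstacle --- the term-by-term matching of the principal parts --- is precisely where the paper also stops arguing conceptually: it settles the finitely many polar coefficients by explicit computation, using the closed form of the lift in Proposition~\ref{prop:simplelift} on the $F$ side and the character machinery of Section~\ref{sec:nonholorb} (theta/eta quotients and their known modular transformations) on the $\widetilde{F}$ side, with the calculations carried out in \texttt{Sage} and \texttt{Magma}. So your proposal is not missing any theoretical idea present in the paper; completing it requires performing that same finite check, since no purely structural comparison of the principal parts is given in the paper either.
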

\begin{proof}[Proof of Theorem~\ref{thm:charlift}]
We define $\widetilde{F}(\tau):=\sum_{\alpha+L\in L'/L}F_{\alpha+L}(\tau)\ee_{\alpha+L}$ with component functions $\widetilde{F}_{\alpha+L}:=\ch_{V_N^{\hat{\rho}}(\chi(\alpha+L))}(\tau)/\eta(\tau)^{\rk(\Lambda^\nu)}$. We have to prove that $\widetilde{F}=F$.

By definition, $F$ is a vector-valued modular form of weight $w=-12\sigma_0(m)/\sigma_1(m)\in\Z$ for $\overline{\rho}_{L'/L}$. The \voa{} $V_N^{\hat{\rho}}$ has central charge $c=24-\rk(\Lambda^\nu)$. Corollary~\ref{cor:zhuweil} states that the functions $\ch_{V_N^{\hat{\rho}}(\chi(\alpha+L))}\eta(\tau)^{24-\rk(\Lambda^\nu)}$ form a vector-valued modular form of weight $c/2=12-\rk(\Lambda^\nu)/2$ for the Weil representation with respect to the fusion group $\left(N'/N,Q_N\right)\times(\Z_m\times\Z_m,Q_m)\cong(L'/L,-Q_L)=\overline{L'/L}$ (via $\chi$), which is the same as the dual Weil representation $\overline{\rho}_{L'/L}$ with respect to $L'/L=(L'/L,Q_L)$. Dividing by $\eta(\tau)^{24}$, which is a modular form of weight 12 for $\SLZ$ (and trivial character), we conclude that $\widetilde{F}$ is also a vector-valued modular form of weight $-\rk(\Lambda^\nu)/2=w$ for $\overline{\rho}_{L'/L}$.

Now $F$ and $\widetilde{F}$ are both vector-valued modular forms of the same \emph{negative} weight $w$ for $\overline{\rho}_{L'/L}$. Moreover, $F$ and $\widetilde{F}$ are almost holomorphic, i.e.\ holomorphic on $\H$. Indeed, all eta products are holomorphic on $\H$ including the function $f$ and hence the lift $F$ is holomorphic on $\H$. Similarly, $\widetilde{F}$ is holomorphic on $\H$ by Theorem~\ref{thm:zhumodinv}.

We can compute the $q_\tau$-expansion of $F(\tau)$ and $\widetilde{F}(\tau)$ explicitly and verify that the coefficients with negative $q_\tau$-exponents are identical. The lift $F(\tau)$ takes a very simple form (see Proposition~\ref{prop:simplelift} below) and hence its $q_\tau$-expansion can be easily determined knowing the $q_\tau$-expansion of the eta function. The computation of the characters of the irreducible $V_N^{\hat{\rho}}$-modules, which enter $\widetilde{F}(\tau)$, is described at the end of Section~\ref{sec:nonholorb}. The calculations were performed in \texttt{Sage} and \texttt{Magma} \cite{Sage,Magma}.

Then $G:=F-\widetilde{F}$ is an almost holomorphic modular form of negative weight, which is finite at $q_\tau=0$ ($\tau=\i\infty$) and therefore has to vanish by the valence formula. Hence $F=\widetilde{F}$. Indeed, all the components of $G(\tau)$ are finite at the cusp $\i\infty$. Moreover, they are scalar-valued modular forms with trivial character for $\Gamma(m)$. The expansion of such a component $G_{\alpha+L}(\tau)$ at any other cusp $M.\i\infty$ of $\Gamma(m)$ is given by $(c\tau+d)^{-w}G_{\alpha+L}(M.\tau)$, which is just a linear combination of the $G_{\beta+L}(\tau)$, $\beta+L\in L'/L$. Hence all the components of $G$ are finite at all cusps of $\Gamma(m)$. The valence formula (see e.g.\ \cite{HBJ94}, Theorem~I.4.1) then implies that each component $G_{\alpha+L}$ of $G$ has to vanish.
\end{proof}

\minisec{Explicit Formula}

In the ten cases at hand there is a nice explicit formula for the vector-valued modular form $F(\tau)$. Consider again the function $f(\tau)=1/\eta_\nu(\tau)$. For a divisor $d$ of $m$ we consider $f(\tau/d)$, which has a Fourier expansion in $q_\tau^{1/d}$. For $j\in\Z_d$ let $g_{d,j}(\tau)$ be the function obtained by only keeping the terms in $f(\tau/d)$ with $q_\tau$-exponents in $j/d+\Z$. Then $g_{d,j}(\tau)$ transforms under $T$ like $g_{d,j}(\tau+1)=\e^{(2\pi\i)j/d}g_{d,j}(\tau)$ and $f(\tau/d)=g_{d,0}(\tau)+\ldots+g_{d,d-1}(\tau)$. We can also define the $g_{d,j}$, $j\in\Z_d$, via
\begin{equation*}
g_{d,j}(\tau)=\frac{1}{d}\sum_{k\in\Z_d}\e^{(2\pi\i)(-kj/d)}f\left(\frac{\tau+k}{d}\right).
\end{equation*}
\begin{prop}\label{prop:simplelift}
Let $\hat\nu$ be as in Assumption~\ref{ass:m} and let $F$ be the vector-valued modular form \eqref{eq:vvmf} obtained as lift of $f$. Then
\begin{equation*}
F_{\alpha+L}(\tau)=\sum_{d\mid m}\delta_{\alpha\in L'\cap \frac{1}{d}L}\,g_{d,j_{\alpha+L,d}}(\tau)
\end{equation*}
for all $\alpha+L\in L'/L$ where $j_{\alpha+L,d}\in\Z_d$ is such that $-j_{\alpha+L,d}/d=\langle\alpha,\alpha\rangle/2\pmod{1}$.
\end{prop}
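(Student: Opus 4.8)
The plan is to compute both sides of the claimed identity as explicit $q_\tau$-series and identify them componentwise. The key observation is that the lift formula in Theorem~\ref{thm:6.2}, specialised to $f(\tau)=1/\eta_\nu(\tau)$ and the discriminant form $D=L'/L$, can be evaluated very concretely because the level $m$ is square-free. First I would analyse the coset space $\Gamma_0(m)\backslash\SLZ$. Since $m$ is square-free, its cusps are indexed by the divisors $d\mid m$, and I would choose a set of coset representatives $M$ adapted to these cusps. Under each such representative the function $(c\tau+d)^{-w}f(M.\tau)$ becomes, up to the character values, one of the rescaled eta products $f(\tau/d)$ whose Fourier expansion has $q_\tau$-exponents governed by the cusp width $d$. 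This is precisely why the functions $g_{d,j}$, obtained by extracting from $f(\tau/d)$ the terms with $q_\tau$-exponent in $j/d+\Z$, enter the formula.

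The main technical input is the behaviour of the matrix coefficients $\overline\rho_{L'/L}(M^{-1})_{\alpha+L,0+L}$ of the dual Weil representation along these cusp representatives. Here I would invoke the explicit formulae for the Weil representation of $\MpZ$ from \cite{Sch09,Str13}, which give $\rho_D$ evaluated on $S$, $T$ and their products. The key structural facts I expect to need are: the $T$-eigenvalue of $\ee_{\alpha+L}$ under $\overline\rho_{L'/L}$ is $\e^{-(2\pi\i)Q_L(\alpha+L)}=\e^{-(2\pi\i)\langle\alpha,\alpha\rangle/2}$, which forces the selection $-j/d=\langle\alpha,\alpha\rangle/2\pmod 1$ appearing in the definition of $j_{\alpha+L,d}$; and that the component $F_{\alpha+L}$ receives a contribution from the cusp associated with $d$ if and only if $\alpha+L$ lies in the image of the corresponding isotropic-type projection, which for square-free level translates into the condition $\alpha\in L'\cap\frac{1}{d}L$. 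I would make the correspondence between divisors $d\mid m$, cusps of $\Gamma_0(m)$, and the subgroups $L'\cap\frac{1}{d}L$ of $L'/L$ completely explicit, tracking how the character $\chi_{L'/L}=\chi_s$ is absorbed into the normalisation so that only the genuine eta-product pieces $g_{d,j}$ survive.

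The cleanest route is to reorganise the sum over $\Gamma_0(m)\backslash\SLZ$ in \eqref{eq:vvmf} as a double sum over divisors $d\mid m$ (the cusps) and over the residual $\Gamma_0(m)$-double-coset data at each cusp. At the cusp of width $d$ the inner sum over the remaining coset representatives, which differ by right multiplication by powers of $T$, acts as a projection extracting exactly those Fourier modes of $f(\tau/d)$ whose exponent lies in a fixed class modulo $1$; this projection is literally the definition of $g_{d,j}$ via $g_{d,j}(\tau)=\frac1d\sum_{k\in\Z_d}\e^{(2\pi\i)(-kj/d)}f((\tau+k)/d)$. Matching the exponent class $j/d$ with the $T$-eigenvalue of $\ee_{\alpha+L}$ then pins down $j=j_{\alpha+L,d}$, and the indicator $\delta_{\alpha\in L'\cap\frac1d L}$ records whether the cusp contributes to the component $\alpha+L$ at all.

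The hard part will be the careful bookkeeping of the Weil-representation matrix entries along the cusp representatives, in particular verifying that the Gauss-sum prefactors combine with the character $\chi_{L'/L}$ to give exactly $1$ whenever the indicator condition holds and $0$ otherwise, with no spurious roots of unity left over. I expect this to reduce to standard quadratic Gauss-sum evaluations, simplified considerably by square-freeness of $m$ (which rules out the delicate $4\mid m$ phenomena and makes $\chi_{L'/L}$ a genuine Kronecker symbol). As a consistency check I would compare against the alternative characterisation already in hand: by Theorem~\ref{thm:charlift} the $F_{\alpha+L}$ equal $\ch_{V_N^{\hat\rho}(\chi(\alpha+L))}(\tau)/\eta(\tau)^{\rk(\Lambda^\nu)}$, whose $q_\tau$-expansions can be computed independently from the orbifold data, so the proposed explicit formula can be validated numerically in each of the ten cases before committing to the general Gauss-sum argument.
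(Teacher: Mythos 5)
Your proposal is correct in outline but takes a more self-contained route than the paper. The paper does not redo the cusp decomposition of $\Gamma_0(m)\backslash\SLZ$ or the Weil-representation Gauss sums by hand: it instead quotes Theorem~6.5 of \cite{Sch06} (in the case of trivial support), which already packages exactly the structure you describe into the closed formula $F_\gamma(\tau)=\sum_{c\mid m}\delta_{\gamma\in D_c}\,\xi_{m/c}\,|D_c|^{-1/2}\,c\,h_{c,j_{\gamma,c}}(\tau)$, where the $h_{c,j}$ are extracted from the expansions $f_{m/c}(\tau)=(c\tau+d)^{-w}f(M_c.\tau)$ at the various cusps. What then remains in the paper is purely computational: the modular transformations of the eta product show $f_{m/c}(\tau)=\psi_{m/c}\,f(\tau/c)\prod_{t\mid m}(t,c)^{12/\sigma_1(m)}$ for some phase $\psi_{m/c}$, the cardinality $|D_c|=c^2\prod_{t\mid m}(t,c)^{24/\sigma_1(m)}$ exactly absorbs the non-unimodular prefactors (this cancellation is special to the cycle shapes $\prod_{t\mid m}t^{24/\sigma_1(m)}$, not a general feature), and a case-by-case check over the ten classes confirms $\xi_c\psi_c=1$. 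Your plan would effectively reprove the quoted theorem of \cite{Sch06}, which is legitimate but considerably longer; and you should be aware that the final step you describe as "verifying that the Gauss-sum prefactors combine with the character to give exactly $1$" does not follow from a uniform Gauss-sum identity even for square-free level — in the paper the residual root of unity $\xi_c\psi_c$ is shown to be trivial only by inspecting each of the ten automorphisms individually, so your proposed numerical validation against Theorem~\ref{thm:charlift} is not merely a sanity check but an essential ingredient of any complete argument along these lines.
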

Together with Corollary~\ref{cor:thmcharlift} this implies:
\begin{cor}\label{cor:dimform}
Let Assumption~\ref{ass:mp} and Conjecture~\ref{conj:1} hold. Then the dimensions of the graded components $\g^{\hat\nu}(\alpha)$ are
\begin{equation*}
\dim_\C(\g^{\hat\nu}(\alpha))=\left[F_{\alpha+L}\right](-\langle\alpha,\alpha\rangle/2)=\sum_{d\mid m}\delta_{\alpha\in L'\cap \frac{1}{d}L}\left[\frac{1}{\eta_\nu}\right]\left(-d\frac{\langle\alpha,\alpha\rangle}{2}\right)
\end{equation*}
for all $\alpha\in L'\setminus\{0\}$.
\end{cor}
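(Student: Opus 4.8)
The plan is to combine the two results on which the corollary is explicitly predicated: Corollary~\ref{cor:thmcharlift}, which identifies $\dim_\C(\g^{\hat\nu}(\alpha))$ with the coefficient $[F_{\alpha+L}](-\langle\alpha,\alpha\rangle/2)$, and Proposition~\ref{prop:simplelift}, which expresses $F_{\alpha+L}$ as a sum over the divisors $d\mid m$ of the functions $g_{d,j_{\alpha+L,d}}$. The first equality in the corollary is then nothing but the statement of Corollary~\ref{cor:thmcharlift}, so the only work left is to extract the coefficient of $q_\tau^{-\langle\alpha,\alpha\rangle/2}$ from the explicit expression for $F_{\alpha+L}$ and to match it with the claimed eta-coefficient. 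Since everything is linear in the divisor sum, I would carry out the extraction term by term.

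First I would substitute the formula of Proposition~\ref{prop:simplelift} and use linearity of coefficient extraction to write
\[
[F_{\alpha+L}](-\langle\alpha,\alpha\rangle/2)=\sum_{d\mid m}\delta_{\alpha\in L'\cap\frac{1}{d}L}\,[g_{d,j_{\alpha+L,d}}](-\langle\alpha,\alpha\rangle/2).
\]
The crux is computing $[g_{d,j_{\alpha+L,d}}](-\langle\alpha,\alpha\rangle/2)$. By definition $g_{d,j}$ is obtained from $f(\tau/d)=\sum_n [1/\eta_\nu](n)\,q_\tau^{n/d}$ by retaining exactly those terms whose $q_\tau$-exponent lies in $j/d+\Z$. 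Since $j_{\alpha+L,d}$ is chosen so that $-\langle\alpha,\alpha\rangle/2\in j_{\alpha+L,d}/d+\Z$, the exponent $-\langle\alpha,\alpha\rangle/2$ is retained in $g_{d,j_{\alpha+L,d}}$, whence the relevant coefficient equals the coefficient of $q_\tau^{-\langle\alpha,\alpha\rangle/2}$ in $f(\tau/d)$ itself. Rescaling $\tau\mapsto\tau/d$ turns this into $[1/\eta_\nu](-d\langle\alpha,\alpha\rangle/2)$, which is understood to vanish unless $-d\langle\alpha,\alpha\rangle/2\in\Z$ because $1/\eta_\nu$ has an integral $q_\tau$-expansion. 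Assembling the terms yields exactly the second equality of the corollary.

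The steps themselves are routine, so the \emph{main obstacle} is one of bookkeeping consistency rather than of mathematical depth: I have to verify that the congruence defining $j_{\alpha+L,d}$, the support condition $\delta_{\alpha\in L'\cap\frac{1}{d}L}$ inherited from Proposition~\ref{prop:simplelift}, and the integrality constraint forced by the $q_\tau$-expansion of $1/\eta_\nu$ are mutually compatible, so that no spurious term survives and none is erroneously dropped. In particular I would check that whenever $\alpha\in L'\cap\frac{1}{d}L$ but $-d\langle\alpha,\alpha\rangle/2\notin\Z$ both sides vanish, which follows since $\alpha=\beta/d$ with $\beta\in L$ forces $-d\langle\alpha,\alpha\rangle/2=-\langle\beta,\beta\rangle/(2d)$ and the coefficient of a non-integral power of $q_\tau$ in $1/\eta_\nu$ is zero by convention. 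Since all the genuine analytic content has already been absorbed into Corollary~\ref{cor:thmcharlift} (via Theorem~\ref{thm:charlift}) and into the modular identity of Proposition~\ref{prop:simplelift}, no further estimates or modularity arguments are required.
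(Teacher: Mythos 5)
Your proposal is correct and matches the paper's (implicit) argument: the corollary is stated there as an immediate consequence of Corollary~\ref{cor:thmcharlift} combined with Proposition~\ref{prop:simplelift}, and your coefficient extraction from $g_{d,j_{\alpha+L,d}}$ via $f(\tau/d)$ is exactly the intended computation. The consistency check you flag (vanishing of both sides when $-d\langle\alpha,\alpha\rangle/2\notin\Z$) is a worthwhile detail the paper leaves tacit, but it does not change the route.
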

Before we prove the above proposition we make some remarks on the vector-valued modular form $F(\tau)$:
\begin{rem}\label{rem:vvmfF}
\item
\begin{enumerate}
\item Since $L=\Lambda^\nu\oplus\II_{1,1}(m)$ and $P=L\oplus\II_{1,1}$ have the same discriminant form $L'/L\cong P'/P$,
we can view $F(\tau)$ also as a vector-valued modular form for the dual Weil representation $\overline{\rho}_{P'/P}$ on $\C[P'/P]$. As such $F(\tau)$ is \emph{completely reflective} (as defined in \cite{Sch06}, Section~9). Note that the lattice $P$ has signature $(k,2)$ and $F(\tau)$ weight $w=1-k/2$ with $k\geq 4$ even.

In the ten cases at hand this means that singular terms in the $q_\tau$-expansion of $F(\tau)$ appear exactly in the components $F_{\alpha+P}(\tau)$, $\alpha+P\in P'/P$, with $\langle\alpha,\alpha\rangle/2=1/d\pmod{1}$ for $d\mid m$ and in such a component the only singular term is $1\cdot q^{-1/d}$.
\item As completely reflective modular form $F(\tau)$ is in particular \emph{symmetric}, i.e.\ invariant under the automorphisms of the \fqs{} $P'/P\cong L'/L$ (see Section~9 in \cite{Sch06} and note that $m$, the level of $P$ or $L$, is square-free). This also follows immediately from Theorem~\ref{thm:6.2}.

Then, by Theorem~\ref{thm:charlift}, the characters of the irreducible $V_N^{\hat{\rho}}$-modules are invariant under the automorphisms of the fusion group $F_{V_N^{\hat{\rho}}}$ as \fqs{}. In particular, the characters $\ch_{V_N^{\hat{\rho}}(\chi(\alpha+L))}(\tau)$ do not depend on the choice of the isomorphism $\chi\colon (L'/L,Q_L)\to\left(N'/N,-Q_N\right)\times(\Z_m\times\Z_m,-Q_m)$ \eqref{eq:isochi}.

\item The automorphic product $\Psi_{\hat\nu}$ on $P$, which is the denominator identity of $\g_{\hat\nu}$ (and that of $\g^{\hat\nu}$ as we will see shortly), is constructed in \cite{Sch06} precisely as the Borcherds lift of the modular form $F(\tau)$.
\end{enumerate}
\end{rem}
\begin{proof}[Proof of Proposition~\ref{prop:simplelift}]
We make use of Theorem~6.5 in \cite{Sch06} (in the special case of trivial support). Let $F$ be the lift of a scalar-valued modular form $f$ for the dual Weil representation $\overline{\rho}_D$ for some discriminant form $D$ of even signature as in Theorem~\ref{thm:6.2} (with $N$ now called $m$) and assume that $m$ is square-free. Then for $\gamma\in D$,
\begin{equation*}
F_\gamma(\tau)=\sum_{c\mid m}\delta_{\gamma\in D_c}\xi_{\frac{m}{c}}\frac{1}{\sqrt{|D_c|}}c\,h_{c,j_{\gamma,c}}(\tau)
\end{equation*}
where for $c\mid m$ the $\xi_c$ are certain factors of unit modulus and the $h_{c,j}$, $j\in\Z$, are obtained from $f_{m/c}(\tau)$ in the same manner as the $g_{c,j}$ are obtained from $f(\tau/c)$ (as described above). The $f_c(\tau)$ for $c\mid m$ are defined as $f_c(\tau):=(c\tau+d)^{-w}f(M_c.\tau)$ where the matrices $M_c=\left(\begin{smallmatrix}a&b\\c&d\end{smallmatrix}\right)$ are chosen in $\SLZ$ with $d=1\pmod{c}$ and $d=0\pmod{m/c}$. Finally, $D_c=\left\{\gamma\in D\xmiddle| c\gamma=0\right\}$.

Now let us return to the specific cases at hand. The modular-transformation properties of the eta function and rescaled eta functions are explicitly known (see e.g.\ \cite{Sch09}, Proposition~6.2). This allows us to compute the $f_c(\tau)$. Due to the highly symmetric nature of the eta product $f(\tau)=1/\eta_\nu(\tau)=\prod_{t\mid m}\eta(t\tau)^{-24/\sigma_1(m)}$ one obtains that $f_{m/c}(\tau)$ is up to a phase factor (call it $\psi_{m/c}$) of unit modulus given by
\begin{equation*}
f(\tau/c)\prod_{t\mid m}(t,c)^{12/\sigma_1(m)}
\end{equation*}
and hence
\begin{equation*}
h_{c,j}(\tau)=g_{c,j}(\tau)\psi_{\frac{m}{c}}\prod_{t\mid m}(t,c)^{12/\sigma_1(m)}.
\end{equation*}
As a group
\begin{equation*}
D=L'/L\cong\Z_m^2\times\prod_{t\mid m}\Z_t^{24/\sigma_1(m)}
\end{equation*}
and hence the cardinality of $D_c=(L'\cap(1/c)L)/L$ is
\begin{equation*}
|D_c|=c^2\prod_{t\mid m}(t,c)^{24/\sigma_1(m)}.
\end{equation*}
Consequently all factors of non-unit modulus cancel and
\begin{equation*}
F_\gamma(\tau)=\sum_{c\mid m}\delta_{\gamma\in D_c}\xi_{\frac{m}{c}}\psi_{\frac{m}{c}}g_{c,j_{\gamma,c}}(\tau).
\end{equation*}
Finally, a case-by-case study reveals that $\xi_c\psi_c=1$ for all $m=1,2,3,5,6,7,11,14,15,23$ and all $c\mid m$, completing the proof.
\end{proof}

\minisec{Rescaling}

The complex Lie algebra $\g^{\hat\nu}$ is graded by the rational lattice
\begin{equation*}
L'=(\Lambda^\nu)'\oplus (\II_{1,1}(m))'.
\end{equation*}
Due to the special form of the ten automorphisms the corresponding fixed-point sublattices $\Lambda^\nu$ have the property that $(\Lambda^\nu)'\cong \Lambda^\nu(1/m)$.
Proposition~\ref{prop:rescaled} yields that $(\II_{1,1}(m))'\cong \II_{1,1}(1/m)$ and hence
\begin{equation*}
L'\cong \Lambda^\nu(1/m)\oplus \II_{1,1}(1/m).
\end{equation*}
For convenience, we rescale the quadratic form by $m$ and obtain
\begin{equation*}
\Delta:=L'(m)=\Lambda^\nu\oplus \II_{1,1},
\end{equation*}
which is an even lattice. Then Corollary~\ref{cor:dimform} implies:
\begin{prop}\label{prop:rescaleddim}
Let Assumption~\ref{ass:mp} and Conjecture~\ref{conj:1} hold. After the above rescaling, the complex Lie algebra $\g^{\hat\nu}$ is graded by the even lattice $\Delta:=L'(m)=\Lambda^\nu\oplus \II_{1,1}$ with the dimension of the graded components given by
\begin{align*}
\dim_\C(\g^{\hat\nu}(\alpha))&=\sum_{d\mid m}\delta_{\alpha\in \Delta\cap \frac{m}{d}\Delta'}\left[\frac{1}{\eta_\nu}\right]\left(-\frac{d}{m}\frac{\langle\alpha,\alpha\rangle}{2}\right)\\
&=\sum_{d\mid m}\delta_{\alpha\in \Delta\cap d\Delta'}\left[\frac{1}{\eta_\nu}\right]\left(-\frac{1}{d}\frac{\langle\alpha,\alpha\rangle}{2}\right)
\end{align*}
for all $\alpha\in\Delta\setminus\{0\}$.
\end{prop}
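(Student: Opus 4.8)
The plan is to derive Proposition~\ref{prop:rescaleddim} purely as a relabeling of the grading already established in Corollary~\ref{cor:dimform}: rescaling the grading lattice changes neither the Lie algebra $\g^{\hat\nu}$ nor any of its graded pieces, it only renames the index set, so no new analytic input is required. All the substantive work (the vanishing theorem, the Euler--Poincaré computation, the lift identity of Theorem~\ref{thm:charlift} and the explicit formula of Proposition~\ref{prop:simplelift}) is already encoded in Corollary~\ref{cor:dimform}, which gives
\[
\dim_\C(\g^{\hat\nu}(\alpha))=\sum_{d\mid m}\delta_{\alpha\in L'\cap \frac{1}{d}L}\left[\frac{1}{\eta_\nu}\right]\left(-d\,\frac{\langle\alpha,\alpha\rangle_{L'}}{2}\right),
\]
where $\langle\cdot,\cdot\rangle_{L'}$ is the form on $L'=(\Lambda^\nu)'\oplus(\II_{1,1}(m))'$. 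I would simply rewrite the right-hand side in terms of the even lattice $\Delta=L'(m)$.

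First I would identify $\Delta$ explicitly. Using that $(\Lambda^\nu)'\cong\Lambda^\nu(1/m)$ for the ten fixed-point lattices together with $(\II_{1,1}(m))'\cong\II_{1,1}(1/m)$ from Proposition~\ref{prop:rescaled}, one gets $L'\cong(\Lambda^\nu\oplus\II_{1,1})(1/m)$ and hence $\Delta=L'(m)\cong\Lambda^\nu\oplus\II_{1,1}$, which is even. Since rescaling leaves the underlying abelian group unchanged and multiplies the form by $m$, one has $\langle\cdot,\cdot\rangle_\Delta=m\langle\cdot,\cdot\rangle_{L'}$, so the argument of $1/\eta_\nu$ transforms as $-d\langle\alpha,\alpha\rangle_{L'}/2=-(d/m)\langle\alpha,\alpha\rangle_\Delta/2$, matching the first displayed formula of the proposition.

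Next I would translate the indicator condition. Applying Proposition~\ref{prop:rescaled} to $\Delta=L'(m)$ shows that, with respect to the rescaled form, the dual lattice is $\Delta'=(L'(m))'=\tfrac{1}{m}L$ as a subset of the ambient space. Therefore $\Delta\cap\tfrac{m}{d}\Delta'=L'\cap\tfrac{1}{d}L$ as sets, so the condition $\alpha\in L'\cap\tfrac{1}{d}L$ of Corollary~\ref{cor:dimform} is exactly $\alpha\in\Delta\cap\tfrac{m}{d}\Delta'$. Combining this with the norm computation yields the first line of the proposition, and the second line follows immediately by the substitution $d\mapsto m/d$, a bijection of the divisors of $m$, under which $\tfrac{m}{d}\Delta'\mapsto d\Delta'$ and $-(d/m)\mapsto-(1/d)$.

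The computation is genuinely routine; the only step needing care is the determination of $\Delta'$ under rescaling, where one must note that $\Delta'=\tfrac{1}{m}L$ rather than $L$, so that the two indicator conditions really coincide. Everything else is bookkeeping with Proposition~\ref{prop:rescaled} and the evenness of $\Lambda^\nu\oplus\II_{1,1}$.
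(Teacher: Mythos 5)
Your proposal is correct and follows essentially the same route as the paper, which sets up the rescaling $L'\cong(\Lambda^\nu\oplus\II_{1,1})(1/m)$ in the preceding paragraph and then states the proposition as an immediate consequence of Corollary~\ref{cor:dimform}. Your explicit verification that $\Delta'=\tfrac{1}{m}L$, so that $\Delta\cap\tfrac{m}{d}\Delta'=L'\cap\tfrac{1}{d}L$, together with the substitution $d\mapsto m/d$ for the second displayed line, is exactly the bookkeeping the paper leaves implicit.
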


\minisec{Identification of the Lie Algebra}
We observe from the above proposition that $\g^{\hat\nu}$ has the same grading and the same dimensions of the graded components as the real \BKMa{} $\g_{\hat\nu}$ from the beginning of this section.

It remains to prove that $\g^{\hat\nu}$ is a complex \BKMa{}. This amounts to showing that item~\ref{enum:bkma6} of Theorem~\ref{thm:bkma} holds, either directly or by applying Proposition~\ref{prop:bor95thm2}.
\begin{oframed}
\begin{customconj}{2}\label{conj:2}
Let $\hat\nu$ be as in Assumption~\ref{ass:m}. Then the Lie algebra $\g^{\hat\nu}$ is a complex \BKMa{}.
\end{customconj}
\end{oframed}

Assuming that this conjecture is true we show in the following that the \BKMa{} $\g^{\hat\nu}=H^1_\text{BRST}(M)$ is isomorphic to the complexification of $\g_{\hat\nu}$, the real \BKMa{} obtained by twisting the denominator identity of the Fake Monster Lie algebra. To this end we need to determine the roots of $\g^{\hat\nu}$, which is graded by the root lattice $\Delta=\Lambda^\nu\oplus \II_{1,1}$ of level $m$.

The real roots of $\g^{\hat\nu}$, i.e.\ the roots $\alpha\in\Delta$ with $\langle\alpha,\alpha\rangle>0$ (and $\dim_\C(\g^{\hat\nu}(\alpha))>0$) can be easily read off from the dimension formula in Corollary~\ref{cor:dimform} or the rescaled version in Proposition~\ref{prop:rescaleddim}.
\begin{prop}\label{prop:realroots}
Let Assumption~\ref{ass:mp} and Conjectures \ref{conj:1} and \ref{conj:2} hold. Then the real roots of $\g^{\hat\nu}$ are the $\alpha\in\Delta\cap d\Delta'$ with $\langle\alpha,\alpha\rangle/2=d$ for $d\mid m$, all with root multiplicity $\dim_\C(\g^{\hat\nu}(\alpha))=1$. Moreover, the real roots of $\g^{\hat\nu}$ are exactly the roots of the lattice $\Delta$.
\end{prop}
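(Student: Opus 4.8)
The plan is to read the real roots directly off the dimension formula in Proposition~\ref{prop:rescaleddim} and then match the resulting set with the root system of $\Delta$. By definition, a real root of $\g^{\hat\nu}$ is an $\alpha\in\Delta$ with $\langle\alpha,\alpha\rangle>0$ and $\dim_\C(\g^{\hat\nu}(\alpha))>0$. First I would record the elementary arithmetic of the eta product: since $\eta_\nu(q)=q\prod_{t\mid m}\prod_{n\geq1}(1-q^{tn})^{24/\sigma_1(m)}$, its inverse $1/\eta_\nu(q)=q^{-1}\prod_{t\mid m}\prod_{n\geq1}(1-q^{tn})^{-24/\sigma_1(m)}$ is a $q$-series supported on integer exponents $\geq-1$, with nonnegative integer coefficients and leading coefficient $\left[1/\eta_\nu\right](-1)=1$.

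Using the second form of the formula, $\dim_\C(\g^{\hat\nu}(\alpha))=\sum_{d\mid m}\delta_{\alpha\in\Delta\cap d\Delta'}\left[1/\eta_\nu\right](-\langle\alpha,\alpha\rangle/(2d))$, I would argue that for $\langle\alpha,\alpha\rangle>0$ the argument $-\langle\alpha,\alpha\rangle/(2d)$ is strictly negative, so by the previous paragraph a summand is nonzero only when this argument equals the integer $-1$, i.e.\ only when $\langle\alpha,\alpha\rangle/2=d$, in which case the $d$-term contributes exactly $1$ and requires $\alpha\in\Delta\cap d\Delta'$. Since $\langle\alpha,\alpha\rangle/2$ can equal at most one divisor $d$, at most one summand survives; this simultaneously yields the description of the real roots as the $\alpha\in\Delta\cap d\Delta'$ with $\langle\alpha,\alpha\rangle/2=d$, $d\mid m$, and the fact that each has multiplicity $\dim_\C(\g^{\hat\nu}(\alpha))=1$.

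The remaining task is the identification with the roots of $\Delta$, carried out in two directions. For a real root $\alpha$ as above, $\langle\alpha,\alpha\rangle=2d>0$, and $\alpha\in d\Delta'$ means $\langle\alpha,\beta\rangle\in d\Z$ for all $\beta\in\Delta$; hence $2\langle\alpha,\beta\rangle/\langle\alpha,\alpha\rangle=\langle\alpha,\beta\rangle/d\in\Z$, so the reflection $\sigma_\alpha$ preserves $\Delta$. Primitivity follows from $m$ being square-free: if $\alpha=k\beta$ with $\beta\in\Delta$ and $k\geq2$, then $2d=\langle\alpha,\alpha\rangle=k^2\langle\beta,\beta\rangle$ with $\langle\beta,\beta\rangle/2\geq1$ a positive integer (the norm is positive and $\Delta$ even), forcing $k^2\mid d$, which is impossible for square-free $d\mid m$. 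Thus $\alpha$ is a root of $\Delta$. Conversely, let $\alpha$ be a root of $\Delta$; since $\Delta$ is even, $d:=\langle\alpha,\alpha\rangle/2$ is a positive integer, and the root condition $\langle\alpha,\beta\rangle/d\in\Z$ gives $\gamma:=\alpha/d\in\Delta'$ and $\alpha\in d\Delta'$. To obtain $d\mid m$ I would use that $\Delta=\Lambda^\nu\oplus\II_{1,1}$ has level $m$: the discriminant quadratic form satisfies $m\,Q_\Delta=0$, and $Q_\Delta(\gamma+\Delta)=\langle\gamma,\gamma\rangle/2+\Z=1/d+\Z$, so $m/d\in\Z$. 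Hence $\alpha\in\Delta\cap d\Delta'$ with $\langle\alpha,\alpha\rangle/2=d$ and $d\mid m$, i.e.\ $\alpha$ is a real root of $\g^{\hat\nu}$.

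I expect the two points genuinely requiring care to be the primitivity argument, where square-freeness of $m$ is essential and would fail for a general even lattice, and the divisibility $d\mid m$, where I must invoke that the level of $\Delta$ is exactly $m$ (read off from the listed genera of $\Lambda^\nu$, since $\II_{1,1}$ is unimodular) together with the interpretation of the level as the order of the discriminant quadratic form. Everything else is a direct reading of the explicit $q$-expansion of $1/\eta_\nu$.
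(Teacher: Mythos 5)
Your proof of the first claim is exactly the paper's argument: the paper likewise observes that $1/\eta_\nu=q^{-1}+24/\sigma_1(m)+\ldots$ has $q^{-1}$ as its only negative-exponent term, with coefficient $1$, so that for $\langle\alpha,\alpha\rangle>0$ the sum $\sum_{d\mid m}\delta_{\alpha\in\Delta\cap d\Delta'}\left[1/\eta_\nu\right](-\langle\alpha,\alpha\rangle/2d)$ collapses to $\sum_{d\mid m}\delta_{\alpha\in\Delta\cap d\Delta'}\delta_{d,\langle\alpha,\alpha\rangle/2}$. Where you diverge is the second claim: the paper simply cites Propositions~2.1 and 2.2 of \cite{Sch06}, which state precisely that for an even lattice of square-free level $m$ the roots are the $\alpha\in\Delta\cap d\Delta'$ with $\langle\alpha,\alpha\rangle/2=d$, $d\mid m$; you instead reprove this from scratch. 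Your verification is correct: the reflection condition is equivalent to $\alpha\in d\Delta'$ with $d=\langle\alpha,\alpha\rangle/2$, primitivity follows because $\alpha=k\beta$ would force $k^2\mid d$ with $d$ square-free, and the divisibility $d\mid m$ comes from $Q_\Delta(\alpha/d+\Delta)=1/d+\Z$ being killed by the level $m$ of $\Delta$ (which equals the level of $\Lambda^\nu$, read off from the genera, since $\II_{1,1}$ is unimodular). What your route buys is self-containedness; what the citation buys is brevity and the reassurance that the statement holds in the generality of \cite{Sch06}. Both are sound.
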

\begin{proof}
Let $\alpha\in\Delta$ such that $\langle\alpha,\alpha\rangle>0$. Then using that
\begin{equation*}
\frac{1}{\eta_\nu(\tau)}=\prod_{t\mid m}\eta(t\tau)^{-24/\sigma_1(m)}=\frac{1}{q_\tau}+\frac{24}{\sigma_1(m)}+\ldots
\end{equation*}
we obtain
\begin{align*}
\dim_\C(\g^{\hat\nu}(\alpha))&=\sum_{d\mid m}\delta_{\alpha\in \Delta\cap d\Delta'}\left[\frac{1}{\eta_\nu}\right]\left(-\frac{1}{d}\frac{\langle\alpha,\alpha\rangle}{2}\right)\\
&=\sum_{d\mid m}\delta_{\alpha\in \Delta\cap d\Delta'}\left[q^{-1}+\ldots\right]\left(-\frac{1}{d}\frac{\langle\alpha,\alpha\rangle}{2}\right)=\sum_{d\mid m}\delta_{\alpha\in \Delta\cap d\Delta'}\delta_{d,\langle\alpha,\alpha\rangle/2},
\end{align*}
which proves the first claim. The second claim follows directly from Propositions 2.1 and 2.2 in \cite{Sch06}.
\end{proof}
The Weyl group $W\leq\Aut(\Delta)$ of $\g^{\hat\nu}$ is by definition the group generated by the reflections through the hyperplanes orthogonal to the real roots of $\g^{\hat\nu}$ and hence in this case it is the full reflection group of the lattice $\Delta$, i.e.\ the group generated by the reflections through the hyperplanes orthogonal to the roots of $\Delta$.

Hence, the real simple roots
of $\g^{\hat\nu}$ are exactly the simple roots of the reflection group $W$ of $\Delta$.

The following proposition shows that the reflection group $W$ has a Weyl vector, i.e.\ a vector $\rho\in\Delta\otimes_\Z\R$ such that a set of simple roots of $W$ is given by the roots $\alpha\in\Delta$ satisfying $\langle\alpha,\rho\rangle=-\langle\alpha,\alpha\rangle/2$.
\begin{prop}
Let $\hat\nu$ be as in Assumption~\ref{ass:m}. Then there exists a primitive norm-zero vector $\rho\in\Delta$ which is a Weyl vector for the reflection group $W$ of $\Delta$.
\end{prop}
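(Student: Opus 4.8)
The plan is to identify the reflection group $W$ of $\Delta=\Lambda^\nu\oplus\II_{1,1}$ with the Weyl group occurring in the twisted denominator identity of the Fake Monster Lie algebra and to read off the Weyl vector from there, rather than to reconstruct the fundamental domain by hand. First I would recall the geometric setup: $\Delta=\Lambda^\nu\oplus\II_{1,1}$ is an even lattice of Lorentzian signature $(\rk(\Lambda^\nu)+1,1)$ and level $m$, and by Proposition~\ref{prop:realroots} its roots — equivalently the real roots of $\g^{\hat\nu}$ — are exactly the $\alpha\in\Delta\cap d\Delta'$ of norm $\langle\alpha,\alpha\rangle/2=d$ for $d\mid m$, each with multiplicity one. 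Thus $W$ is the full reflection group generated by the reflections $\sigma_\alpha$ in these roots. I would fix a hyperbolic basis $z,z'$ of the $\II_{1,1}$-summand with $\langle z,z\rangle=\langle z',z'\rangle=0$ and $\langle z,z'\rangle=1$, choose the positive cone, and take $\rho$ to be the primitive isotropic vector spanning the cusp direction attached to $z$. Then $\rho$ is primitive and of norm zero by construction, so the entire content of the statement is that $\rho$ is a \emph{Weyl vector}, i.e.\ that the roots $\alpha\in\Delta$ with $\langle\alpha,\rho\rangle=-\langle\alpha,\alpha\rangle/2$ form a set of simple roots generating $W$.

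The key structural input is that $1/\eta_\nu$ lifts to a completely reflective automorphic product $\Psi_{\hat\nu}$ of singular weight on $P=\Delta\oplus\II_{1,1}$ (see Remark~\ref{rem:vvmfF}), and that the twisted denominator identity of the Fake Monster Lie algebra associated with $\hat\nu$, recorded earlier in this section following \cite{Bor92,Sch06}, is precisely the product expansion of $\Psi_{\hat\nu}$ at the cusp determined by $\rho$. That identity already carries the shape of a \BKMa{} denominator identity, with Weyl group the full reflection group $W$ of $\Delta$ and with the factor $\ee^\rho$ on both sides; its validity, established in \cite{Bor92}, Section~13, and in \cite{Sch06}, is exactly the assertion that $\rho$ is a Weyl vector for $W$. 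For the untwisted case $m=1$ this specialises to Conway's theorem that the Leech lattice furnishes the norm-zero Weyl vector of the reflection group of $\II_{25,1}$ (\cite{CS99}), and the remaining nine cases follow from the complete reflectivity of $1/\eta_\nu$ by the same mechanism, the different root norms $2d$, $d\mid m$, being absorbed into the condition $\langle\alpha,\rho\rangle=-\langle\alpha,\alpha\rangle/2=-d$.

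The hard part will not be producing a primitive norm-zero candidate but verifying that this candidate is genuinely a Weyl vector: that the walls of a Weyl chamber are in bijection with the roots satisfying $\langle\alpha,\rho\rangle=-\langle\alpha,\alpha\rangle/2$ and that these simple roots indeed generate all of $W$. I expect to dispatch this by invoking the explicit description of fundamental domains of these reflection groups in \cite{Sch06}, where complete reflectivity of $\Psi_{\hat\nu}$ forces the root system to admit such a Weyl vector, instead of re-deriving it. A fully self-contained lattice-theoretic argument would instead require controlling the deep-hole geometry of each fixed-point lattice $\Lambda^\nu$ separately; this case-by-case analysis is the genuinely delicate point and is precisely the reason the statement is confined to the ten distinguished conjugacy classes of square-free order in $M_{23}$.
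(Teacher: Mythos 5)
Your proposal is correct in substance but takes a genuinely different route from the paper. You derive the Weyl vector from the twisted denominator identity of the Fake Monster Lie algebra and the complete reflectivity of the automorphic product $\Psi_{\hat\nu}$, i.e.\ you import the statement from \cite{Bor92}, Section~13, and \cite{Sch04b,Sch06}, where $\rho$ is indeed identified as a Weyl vector for the full reflection group of $\Delta$; since those results are established independently of the BRST construction, this is not circular, and your candidate $\rho=(0,\eta)$ with $\eta\in\II_{1,1}$ primitive isotropic is the same vector the paper exhibits. The paper instead gives a short, self-contained lattice-theoretic proof: because $\Lambda^\nu$ has no roots, Theorem~3.3 of \cite{Bor90b} (``Lattices like the Leech lattice'') produces a norm-zero $\rho\in\Delta$ whose simple roots are characterised by the divisibility condition that $\langle\alpha,\rho\rangle$ is negative and divides $\langle\alpha,v\rangle$ for all $v\in\Delta$; one then checks that this condition coincides with $\langle\alpha,\rho\rangle=-\langle\alpha,\alpha\rangle/2$, the only possible obstruction being simple roots with $\langle\alpha,\rho\rangle=-\langle\alpha,\alpha\rangle$, which a two-line discriminant-form computation excludes because none of the ten forms $\Delta'/\Delta$ has an odd $2$-adic Jordan component. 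This also corrects your closing assessment: the direct argument does \emph{not} require a case-by-case analysis of the deep-hole geometry of each $\Lambda^\nu$; the only case-dependent input is the rootlessness of $\Lambda^\nu$ and the absence of odd $2$-adic components, both read off uniformly from the table of genera. What your approach buys is brevity at the cost of invoking the full automorphic-product machinery; what the paper's approach buys is an intrinsic characterisation of the simple roots of $W$ that can then be matched against the BRST side without presupposing the identification $\g^{\hat\nu}\cong\g_{\hat\nu}$.
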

\begin{proof}
The Lorentzian lattice $\Delta$ is given by the direct sum $\Delta=\Lambda^\nu\oplus \II_{1,1}$. As remarked earlier, the even lattice $\Lambda^\nu$ has no roots. This allows us to apply Theorem~3.3 in \cite{Bor90b}. It states that there is a norm-zero vector $\rho\in\Delta$ such that the simple roots of the reflection group $W$ of $\Delta$ are exactly
the roots $\alpha$ of $\Delta$ such that $\langle\alpha,\rho\rangle$ is negative and divides $\langle\alpha,v\rangle$ for all vectors $v\in\Delta$. We will show that $\rho$ is a Weyl vector, i.e.\ that the simple roots determined in \cite{Bor90b} are exactly the roots $\alpha$ satisfying $\langle\alpha,\rho\rangle=-\langle\alpha,\alpha\rangle/2$. Recall that for any root $\alpha$ the quotient $2\langle\alpha,v\rangle/\langle\alpha,\alpha\rangle\in\Z$ for all $v\in L$.

Now let $\alpha$ be a root with $\langle\alpha,\rho\rangle=-\langle\alpha,\alpha\rangle/2$. Then $\langle\alpha,\rho\rangle$ is negative since $\langle\alpha,\alpha\rangle/2>0$ by the definition of root and
\begin{equation*}
\frac{\langle\alpha,v\rangle}{\langle\alpha,\rho\rangle}=-\frac{2\langle\alpha,v\rangle}{\langle\alpha,\alpha\rangle}\in\Z
\end{equation*}
by the above property, which shows that $\alpha$ is a simple root.

Conversely, let a simple root $\alpha$ be given. For any root $\alpha$ the quotient $2\langle\alpha,\rho\rangle/\langle\alpha,\alpha\rangle$ is in $\Z$. Since $\alpha$ is simple, also $\langle\alpha,\alpha\rangle/\langle\alpha,\rho\rangle\in\Z$ by definition. Since $\alpha$ has positive norm, this leaves only
\begin{equation*}
\langle\alpha,\rho\rangle=-\frac{\langle\alpha,\alpha\rangle}{2}\quad\text{and}\quad\langle\alpha,\rho\rangle=-\langle\alpha,\alpha\rangle.
\end{equation*}
In general, in the situation of Theorem~3.3 in \cite{Bor90b}, both cases can occur. However, the second case is only possible if the discriminant form $\Delta'/\Delta$ contains an odd 2-adic component in its Jordan decomposition, which is not the case for the ten lattices at hand.

Indeed, let $d:=\langle\alpha,\alpha\rangle/2$ for some $d\in\Z$. Then, by Proposition~2.1 in \cite{Sch06}, $\alpha\in\Delta\cap d\Delta'$ and $d$ divides the level of $\Delta$. Then $-\langle\alpha,\rho\rangle=\langle\alpha,\alpha\rangle=2d$ divides $\langle\alpha,v\rangle$ for all $v\in\Delta$, i.e.\ $\alpha\in 2d\Delta'$. Consider $\gamma:=\alpha/(2d)+\Delta\in\Delta'/\Delta$ in the discriminant form. Then $2d\cdot\gamma=0$ and $Q_{\Delta}(\gamma)=1/(4d)+\Z$. This can only occur if $\Delta'/\Delta$ contains an odd 2-adic Jordan component.

Finally, any vector $\rho\in\Delta$ such that for a simple root $\alpha$, $\langle\alpha,\rho\rangle$ divides $\langle\alpha,v\rangle$ for all vectors $v\in\Delta$ is clearly primitive.
\end{proof}
\begin{rem}
A possible choice of Weyl vector is given by $\rho=(0,\eta)\in\Lambda^\nu\oplus\II_{1,1}$ for any primitive norm-zero vector in $\eta\in\II_{1,1}$ (cf.\ \cite{CKS07}, directly before Theorem~6.2).
\end{rem}

We fix a Weyl vector $\rho$ as in the remark. This fixes a set of simple roots of $W$ and also the fundamental Weyl chamber, which is the set of vectors in $\Delta\otimes_\Z\R$ with non-positive\footnote{This definition of the fundamental Weyl chamber is opposite to the usual one but this sign convention is advantageous as explained e.g.\ in \cite{Bor92}, p.\ 420.} inner product with the simple roots. The Weyl vector $\rho$ lies in the fundamental Weyl chamber. We obtain:
\begin{prop}
Let Assumption~\ref{ass:mp} and Conjectures \ref{conj:1} and \ref{conj:2} hold. Then the real simple roots of
$\g^{\hat\nu}$ are the $\alpha\in\Delta\cap d\Delta'$ with $\langle\alpha,\alpha\rangle/2=d$ for $d\mid m$ and $\langle\rho,\alpha\rangle=-\langle\alpha,\alpha\rangle/2$. These are precisely the simple roots of the reflection group $W$ of $\Delta$.
\end{prop}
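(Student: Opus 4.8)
The plan is to deduce the first assertion---the description of the real simple roots of $\g^{\hat\nu}$---from the general structure theory of \BKMa{}s together with the facts already established: by Proposition~\ref{prop:realroots} the real roots of $\g^{\hat\nu}$ are exactly the roots of $\Delta$, each of multiplicity one, the Weyl group $W$ of $\g^{\hat\nu}$ is the full reflection group of $\Delta$, and by the preceding proposition $\rho$ is a Weyl vector, so that the simple roots of $W$ are precisely the roots $\alpha$ of $\Delta$ with $\langle\rho,\alpha\rangle=-\langle\alpha,\alpha\rangle/2$. Thus the genuinely new content is the identification of the real simple roots of the \BKMa{} $\g^{\hat\nu}$ with the simple roots of $W$; once this is done, the second sentence of the proposition is immediate, and combining it with Proposition~\ref{prop:realroots} (which expresses the roots of $\Delta$ as the $\alpha\in\Delta\cap d\Delta'$ with $\langle\alpha,\alpha\rangle/2=d$, $d\mid m$) yields the stated explicit form.

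First I would fix a positive system compatible with the fundamental Weyl chamber $C$ determined by $\rho$. Since $\Lambda^\nu$ has no roots, the chamber $C$ cut out by the hyperplanes orthogonal to the simple roots of $W$ has non-empty interior in the negative cone, so one may choose a negative-norm regular element $h_{\text{reg.}}\in H_\R=\Delta\otimes_\Z\R$ in the interior of $C$, as required in Theorem~\ref{thm:bkma}, and fix the sign of the positive system so that the simple roots of $W$ are positive. With this choice the real simple roots of $\g^{\hat\nu}$ are, by definition, the positive real roots that cannot be written as a sum of two positive roots. The identification then proceeds by two inclusions. For one inclusion I would use that each simple root $\alpha_i$ of $W$ is positive and that the reflection $\sigma_{\alpha_i}$ permutes the positive roots other than the multiples of $\alpha_i$; this standard indecomposability property of a simple system forces $\alpha_i$ to be a real simple root of $\g^{\hat\nu}$. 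For the reverse inclusion I would use that, for the Lorentzian reflection group $W$ of $\Delta$ equipped with the genuine simple system furnished by the Weyl vector $\rho$ (see \cite{Bor90b}, Theorem~3.3, and cf.\ \cite{Sch06}), every real root is $W$-conjugate to a simple one and every positive real root is a non-negative integral combination of the simple roots of $W$; a real simple root of $\g^{\hat\nu}$, being indecomposable among the positive roots, must then coincide with one of the simple roots of $W$. Together the two inclusions give the desired equality.

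The main obstacle I anticipate is making precise the passage between the intrinsic notion of a real simple root of the \BKMa{} (defined via the positive system and indecomposability) and the notion of a simple root of the reflection group $W$ with respect to the chamber $C$: concretely, one must check that the fundamental Weyl chamber of $\g^{\hat\nu}$---bounded by the hyperplanes orthogonal to its real simple roots---agrees, up to the action of $W$, with the chamber $C$ determined by $\rho$, and that the norm-zero Weyl vector $\rho$ still selects an honest simple system so that the "non-negative combination of simple roots" statement is valid. Both points rest on the absence of roots in $\Lambda^\nu$ and on Borcherds' analysis of reflection groups of Lorentzian lattices possessing a Weyl vector; once these are in place, the remaining verifications are routine.
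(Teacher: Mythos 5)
Your proof is correct and follows the same route as the paper, which in fact states this proposition without any proof at all: it has already asserted, immediately after Proposition~\ref{prop:realroots}, that the real simple roots of $\g^{\hat\nu}$ are exactly the simple roots of the reflection group $W$ of $\Delta$, and then simply combines this with the Weyl-vector proposition and the description of the roots of $\Delta$. The one quibble is that the characterisation of the real simple roots as the indecomposable positive real roots is a standard theorem about \BKMa{}s rather than the definition (the simple roots being part of the defining presentation), but your indecomposability argument and your concern about matching the fundamental Weyl chamber of $\g^{\hat\nu}$ with the chamber determined by $\rho$ are precisely the right justification for the step the paper leaves implicit.
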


We then determine the imaginary simple roots of $\g^{\hat\nu}$.
\begin{prop}
Let Assumption~\ref{ass:mp} and Conjectures \ref{conj:1} and \ref{conj:2} hold. Then the positive multiples $n\rho$ of the Weyl vector $\rho$ are imaginary simple roots of $\g^{\hat\nu}$ with multiplicity $24\sigma_0((m,n))/\sigma_1(m)$.
\end{prop}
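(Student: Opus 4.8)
The plan is to read off the imaginary simple roots of $\g^{\hat\nu}$ from its denominator identity, which is available once we assume (Conjecture~\ref{conj:2}) that $\g^{\hat\nu}$ is a complex \BKMa{}. At this point we already have all the other ingredients: the Weyl group $W$ of $\g^{\hat\nu}$ is the full reflection group of $\Delta=\Lambda^\nu\oplus\II_{1,1}$, the vector $\rho$ is a Weyl vector, the real (simple) roots are the roots of $\Delta$, and the root multiplicities $\dim_\C(\g^{\hat\nu}(\alpha))$ are given by Proposition~\ref{prop:rescaleddim}. Since a \BKMa{} satisfies the Borcherds denominator identity
\begin{equation*}
\ee^\rho\prod_{\alpha\in\Phi^+}(1-\ee^\alpha)^{\mult(\alpha)}=\sum_{w\in W}\det(w)\,w\Bigl(\ee^\rho\sum_{\mu}\eps(\mu)\ee^\mu\Bigr),
\end{equation*}
where $\mu$ ranges over sums of pairwise orthogonal imaginary simple roots and $\eps(\mu)=(-1)^{(\#\text{roots})}$, the left-hand side is completely determined by the root multiplicities, and these coincide (by Proposition~\ref{prop:rescaleddim}) with the root multiplicities of the real \BKMa{} $\g_{\hat\nu}$ obtained by twisting the denominator identity of the Fake Monster Lie algebra.

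\textbf{Matching the denominator identities.} Because the two left-hand sides agree, the right-hand side of the denominator identity of $\g^{\hat\nu}$ must equal the known twisted Fake Monster expression $\sum_{w\in W}\det(w)\,w(\eta_\nu(\ee^\rho))$ quoted at the beginning of this section. I would then invoke the uniqueness of the ``correction term'': both $\ee^\rho\sum_\mu\eps(\mu)\ee^\mu$ and $\eta_\nu(\ee^\rho)$ are supported on the closure of the fundamental Weyl chamber (all multiples of the isotropic Weyl vector $\rho$ lie on its boundary, and imaginary simple roots always lie in the closure), so the antisymmetrization $\sum_{w\in W}\det(w)\,w(\cdot)$ is injective on them. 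Hence
\begin{equation*}
\ee^\rho\sum_{\mu}\eps(\mu)\ee^\mu=\eta_\nu(\ee^\rho)=\ee^\rho\prod_{t\mid m}\prod_{n=1}^\infty(1-\ee^{tn\rho})^{24/\sigma_1(m)}.
\end{equation*}
In particular the correction term involves only powers $\ee^{N\rho}$, so there are no imaginary simple roots other than positive multiples of $\rho$.

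\textbf{Extracting the multiplicities.} Reindexing the double product by $N=tn$, the exponent of $(1-\ee^{N\rho})$ becomes $(24/\sigma_1(m))\cdot\#\{t:t\mid m,\ t\mid N\}=24\sigma_0((m,N))/\sigma_1(m)$, since the number of common divisors of $m$ and $N$ equals $\sigma_0((m,N))$. Thus
\begin{equation*}
\sum_{\mu}\eps(\mu)\ee^\mu=\prod_{N=1}^\infty(1-\ee^{N\rho})^{24\sigma_0((m,N))/\sigma_1(m)}.
\end{equation*}
The last step is the combinatorial interpretation: since all the $N\rho$ are isotropic ($\langle N\rho,N\rho\rangle=0$) and mutually orthogonal, every distinct imaginary simple root $N\rho$ of multiplicity $\mult(N\rho)$ is orthogonal to all its own copies, so summing over the $\binom{\mult(N\rho)}{k}$ ways of choosing $k$ copies produces the factor $\sum_k\binom{\mult(N\rho)}{k}(-1)^k\ee^{kN\rho}=(1-\ee^{N\rho})^{\mult(N\rho)}$, and the factors for different $N$ multiply because the $N\rho$ are pairwise orthogonal. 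Comparing exponents yields $\mult(n\rho)=24\sigma_0((m,n))/\sigma_1(m)$, as claimed.

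\textbf{Main obstacle.} The two genuinely delicate points are, first, the uniqueness statement that the correction term $\ee^\rho\sum_\mu\eps(\mu)\ee^\mu$ is uniquely recoverable from the denominator identity (i.e.\ that the Weyl antisymmetrization is injective on functions supported in the fundamental chamber, together with the fact that imaginary simple roots are uniquely determined by a \BKMa{}); and second, the isotropic factorization in the last step, where one must correctly handle the fact that norm-zero simple roots are orthogonal to themselves, so arbitrarily many copies may be combined. Both are standard in Borcherds' framework but must be stated carefully; everything else reduces to the elementary divisor-counting identity for $\sigma_0((m,n))$ and to facts already established (the root multiplicities from Proposition~\ref{prop:rescaleddim} and the real-root analysis).
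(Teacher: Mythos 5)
Your route --- recovering the imaginary simple roots by matching the denominator identity of $\g^{\hat\nu}$ against the known twisted Fake Monster identity --- is viable in principle (it is essentially the argument the paper reserves for the identification in Theorem~\ref{thm:10brst}), but the step you call ``uniqueness of the correction term'' is false as stated, and your own justification of it would sink the argument. The antisymmetrization $\sum_{w\in W}\det(w)\,w(\cdot)$ is \emph{not} injective on exponentials supported in the closure of the fundamental chamber: if $\lambda$ lies on a reflection hyperplane of $s_\alpha\in W$, then pairing $w$ with $ws_\alpha$ gives $\sum_{w\in W}\det(w)\ee^{w\lambda}=0$, so all such terms are invisible to the identity. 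Injectivity holds only at \emph{regular} exponents (distinct points of the closed chamber lie in distinct $W$-orbits, and only the regular ones survive antisymmetrization). You assert that the multiples of the isotropic vector $\rho$ lie on the boundary of the chamber; lying on the boundary of the light cone is irrelevant here, and if the multiples of $\rho$ really lay on walls, their coefficients could not be recovered at all. What saves the argument is the opposite fact, which you must prove: no real root of $\g^{\hat\nu}$, i.e.\ no root of the lattice $\Delta=\Lambda^\nu\oplus\II_{1,1}$, is orthogonal to $\rho=(0,\eta)$ --- indeed such a root would have $\II_{1,1}$-component in $\Z\eta$ and would therefore induce a root of the rootless lattice $\Lambda^\nu$. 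Even with this repair you may only conclude that the two correction terms agree at regular exponents, so your byproduct claim that there are no imaginary simple roots besides the $n\rho$ does \emph{not} follow (it is fortunately not part of the proposition). A second gap sits in the extraction step: the coefficient of $\ee^{n\rho}$ in $\sum_\mu\eps(\mu)\ee^\mu$ could a priori receive contributions from collections of pairwise orthogonal imaginary simple roots that are not multiples of $\rho$. To rule this out, note that $\mu_1+\dots+\mu_k=n\rho$ with the $\mu_i$ pairwise orthogonal of non-positive norm forces $0=\langle n\rho,n\rho\rangle=\sum_i\langle\mu_i,\mu_i\rangle$, hence all $\mu_i$ isotropic; pairwise orthogonal isotropic vectors in the Lorentzian space $\Delta\otimes_\Z\R$ are proportional, so by primitivity of $\rho$ each $\mu_i$ is a positive integer multiple of $\rho$. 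With these two repairs your binomial computation of the multiplicities is correct.

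For comparison, the paper's proof is shorter and avoids the denominator identity entirely. Following \cite{HS03}, Proposition~3.6 (or \cite{HS14}, Proposition~3.7), any decomposition of $n\rho$ into positive roots consists of multiples of $\rho$, and by the Borcherds--Kac--Moody axiom on positive roots of non-positive norm with vanishing inner product their root spaces commute; hence nothing in $\g^{\hat\nu}(n\rho)$ arises from brackets and the whole root space consists of simple root vectors, so the simple multiplicity of $n\rho$ equals $\dim_\C(\g^{\hat\nu}(n\rho))$. This dimension is then computed directly from Proposition~\ref{prop:rescaleddim}: by primitivity of $\rho$ one has $n\rho\in\Delta\cap d\Delta'$ if and only if $d\mid n$, and $\left[1/\eta_\nu\right](0)=24/\sigma_1(m)$, whence $\dim_\C(\g^{\hat\nu}(n\rho))=\frac{24}{\sigma_1(m)}\sum_{d\mid m}\delta_{d\mid n}=24\sigma_0((m,n))/\sigma_1(m)$. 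Your approach, once patched, buys a uniform statement tying the simple roots to the twisted Fake Monster identity, but at the cost of exactly the regularity and Lorentzian-orthogonality arguments you omitted; the paper's structural argument gets the proposition with neither.
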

\begin{proof}
Proceeding as in \cite{HS03}, Proposition~3.6, or \cite{HS14}, Proposition~3.7, we obtain that all the positive multiples $n\rho$ of the Weyl vector are imaginary simple roots. The multiplicity of these roots is given by
\begin{align*}
\dim_\C(\g^{\hat\nu}(n\rho))&=\sum_{d\mid m}\delta_{n\rho\in\Delta\cap d\Delta'}\left[\frac{1}{\eta_\nu}\right](0)\\
&=\frac{24}{\sigma_1(m)}\sum_{d\mid m}\delta_{n\rho\in\Delta\cap d\Delta'},
\end{align*}
using Proposition~\ref{prop:rescaleddim}. Since the Weyl vector $\rho=(0,\eta)$ is primitive in $\Delta=\Lambda^\nu\oplus \II_{1,1}$, we obtain that $n\rho\in\Delta\cap d\Delta'$ if and only if $d\mid n$ and hence
\begin{equation*}
\dim_\C(\g^{\hat\nu}(n\rho))=\frac{24}{\sigma_1(m)}\sum_{d\mid m}\delta_{d\mid n}=\frac{24\sigma_0((m,n))}{\sigma_1(m)},
\end{equation*}
which completes the proof.
\end{proof}
We will see that these are already all the imaginary simple roots of $\g^{\hat\nu}$.

\begin{oframed}
\begin{thm}\label{thm:10brst}
Let Assumption~\ref{ass:mp} and Conjectures \ref{conj:1} and \ref{conj:2} hold. Then $\g^{\hat\nu}$ is isomorphic to the complexification of $\g_{\hat\nu}$.
\end{thm}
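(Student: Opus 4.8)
The plan is to identify $\g^{\hat\nu}$ and the complexification of $\g_{\hat\nu}$ as complex \BKMa{}s by comparing their root data and invoking Borcherds' characterisation (Theorem~\ref{thm:bkma} and \cite{Bor95a}), according to which such an algebra is determined up to isomorphism by its root lattice, the bilinear form on its Cartan subalgebra, and its set of simple roots together with their multiplicities. Assuming Conjecture~\ref{conj:2}, $\g^{\hat\nu}$ is a complex \BKMa{}, and the real algebra $\g_{\hat\nu}$ complexifies to one as well. Several pieces of data already coincide: both are graded by the even lattice $\Delta=\Lambda^\nu\oplus\II_{1,1}$, their Cartan subalgebras are $\Delta\otimes_\Z\C$ carrying the bilinear form induced from $\Delta$ (Proposition~\ref{prop:lainvbil}), their root multiplicities agree by Proposition~\ref{prop:rescaleddim}, and their Weyl groups are both the full reflection group $W$ of $\Delta$ with common Weyl vector $\rho$. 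By Proposition~\ref{prop:realroots} the real roots of $\g^{\hat\nu}$ are exactly the roots of $\Delta$, so its real \emph{simple} roots are the simple roots of $W$, matching those of $\g_{\hat\nu}$. It therefore remains only to match the \emph{imaginary} simple roots.

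I have already shown that the positive multiples $n\rho$ are imaginary simple roots of $\g^{\hat\nu}$ with the stated multiplicities; the crux is to prove that these are all of them. For this I would write down the Weyl--Kac--Borcherds denominator identity of $\g^{\hat\nu}$,
\begin{equation*}
\ee^\rho\prod_{\alpha\in\Phi^+}(1-\ee^\alpha)^{\mult(\alpha)}=\sum_{w\in W}\det(w)\,w\!\left(\ee^\rho\sum_{\alpha}\eps(\alpha)\ee^\alpha\right),
\end{equation*}
where the inner sum runs over sums of pairwise orthogonal imaginary simple roots. By Proposition~\ref{prop:rescaleddim} the product on the left coincides with the left-hand side of the twisted denominator identity of the Fake Monster Lie algebra $\g$ associated with $\hat\nu$, computed in \cite{Bor92,Sch04b,Sch06}, whose right-hand side is $\sum_{w\in W}\det(w)\,w(\eta_\nu(\ee^\rho))$. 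Since both expressions share the same Weyl group $W$ and the translates under distinct elements of $W$ of a function supported in the fundamental Weyl chamber are linearly independent, comparing the two right-hand sides forces $\sum_\alpha\eps(\alpha)\ee^\alpha=\eta_\nu(\ee^\rho)$. As $\rho$ is isotropic, the multiples $n\rho$ are pairwise orthogonal, so the left-hand side of this equality factors as $\prod_{n\geq 1}(1-\ee^{n\rho})^{c_n}$ with $c_n$ the multiplicity of $n\rho$; expanding $\eta_\nu(\ee^\rho)=\ee^\rho\prod_{t\mid m}\prod_{n\geq1}(1-\ee^{tn\rho})^{24/\sigma_1(m)}$ then shows that all imaginary simple roots lie on $\R\rho$ and that $c_n=24\sigma_0((m,n))/\sigma_1(m)$, confirming the multiplicities found above and the absence of any further imaginary simple roots.

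With the root lattices, Cartan bilinear forms, real simple roots, and imaginary simple roots (with multiplicities) of $\g^{\hat\nu}$ and $\g_{\hat\nu}\otimes_\R\C$ now seen to coincide, Borcherds' uniqueness theorem yields the isomorphism $\g^{\hat\nu}\cong\g_{\hat\nu}\otimes_\R\C$, completing the proof. The main obstacle is the completeness argument for the imaginary simple roots: one must justify the comparison of the two denominator identities rigorously, in particular the linear independence of the Weyl translates and the passage from the equality of the correction factors to the statement that there are no imaginary simple roots off the line $\R\rho$. A secondary point requiring care is that the whole identification rests on Conjecture~\ref{conj:2}, so the \BKMA{} structure of $\g^{\hat\nu}$ — hence the very availability of its denominator identity and Weyl vector — is not yet unconditional.
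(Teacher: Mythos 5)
Your proposal is correct and follows essentially the same route as the paper: both arguments rest on the observation that the root multiplicities of $\g^{\hat\nu}$ agree with those of $\g_{\hat\nu}$ (via the dimension formula and the twisted denominator identity of the Fake Monster Lie algebra), and that, once a Cartan subalgebra and fundamental Weyl chamber are fixed, the root multiplicities determine the simple roots through the denominator identity, whence the two algebras are isomorphic. The paper compresses your explicit comparison of the imaginary simple roots into a citation of Borcherds' proof of Theorem~7.2 in \cite{Bor92} and recovers the full list of imaginary simple roots only afterwards as a corollary, but the underlying argument is the same.
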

\end{oframed}
\begin{proof}
Consider the \BKMa{} $\g_{\hat\nu}$ obtained by twisting the denominator identity of the Fake Monster Lie algebra $\g$. Its denominator identity is given by
\begin{equation*}
\ee^\rho\prod_{d\mid m}\prod_{\alpha\in\Phi^+\cap d\Delta'}(1-\ee^\alpha)^{[1/\eta_\nu](-\langle\alpha,\alpha\rangle/2d)}=\sum_{w\in W}\det(w)w(\eta_\nu(\ee^\rho)).
\end{equation*}
Hence $\g^{\hat\nu}$ and $\g_{\hat\nu}$ have the same root multiplicities. Once a Cartan subalgebra and a fundamental Weyl chamber are fixed, the root multiplicities of a \BKMa{} determine the simple roots via the denominator identity (cf.\ \cite{Bor92}, proof of Theorem~7.2). It follows that $\g^{\hat\nu}$ and $\g_{\hat\nu}$ have the same simple roots and hence are isomorphic upon complexifying $\g_{\hat\nu}$.
\end{proof}
This means that modulo the two conjectures we have found a systematic, natural construction of the ten \BKMa{}s in \cite{Sch04b,Sch06} whose denominator identities are completely reflective automorphic products of singular weight.

As a corollary we obtain:
\begin{cor}
Let Assumption~\ref{ass:mp} and Conjectures \ref{conj:1} and \ref{conj:2} hold. Then the denominator identity of $\g^{\hat\nu}$ is
\begin{equation*}
\ee^\rho\prod_{d\mid m}\prod_{\alpha\in\Phi^+\cap d\Delta'}(1-\ee^\alpha)^{[1/\eta_\nu](-\langle\alpha,\alpha\rangle/2d)}=\sum_{w\in W}\det(w)w(\eta_\nu(\ee^\rho)).
\end{equation*}
Moreover, a set of simple roots of $\g^{\hat\nu}$ is as follows: the real simple roots of $\g^{\hat\nu}$ are the $\alpha\in\Delta\cap d\Delta'$ with $\langle\alpha,\alpha\rangle/2=d$ for $d\mid m$ and $\langle\rho,\alpha\rangle=-\langle\alpha,\alpha\rangle/2$ with multiplicity 1 and the imaginary simple roots are the positive multiples $n\rho$ of the Weyl vector $\rho$ with multiplicity $24\sigma_0((m,n))/\sigma_1(m)$.
\end{cor}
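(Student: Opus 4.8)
The plan is to identify $\g^{\hat\nu}$ with the complexification of $\g_{\hat\nu}$ purely at the level of root data. Granting Conjecture~\ref{conj:2}, the Lie algebra $\g^{\hat\nu}$ is a complex \BKMa{}, and by construction $\g_{\hat\nu}$ is a real \BKMa{}, so its complexification also satisfies the axioms of Theorem~\ref{thm:bkma}. First I would record that, after the rescaling in Proposition~\ref{prop:rescaleddim}, both algebras are graded by the same even Lorentzian lattice $\Delta=\Lambda^\nu\oplus\II_{1,1}$, and that their graded dimensions agree: the formula
\[
\dim_\C(\g^{\hat\nu}(\alpha))=\sum_{d\mid m}\delta_{\alpha\in\Delta\cap d\Delta'}\left[\frac{1}{\eta_\nu}\right]\left(-\frac{1}{d}\frac{\langle\alpha,\alpha\rangle}{2}\right)
\]
of Proposition~\ref{prop:rescaleddim} is exactly the root multiplicity read off from the $\hat\nu$-twisted denominator identity of the Fake Monster Lie algebra defining $\g_{\hat\nu}$. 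This is the substantive analytic input, and it is already secured through the modular-form identity of Theorem~\ref{thm:charlift} together with the explicit lift of Proposition~\ref{prop:simplelift}.

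Next I would fix compatible structural data on both sides: the Cartan subalgebra $\g^{\hat\nu}(0)\cong\h$ of dimension $k=\rk(L)$, the common Weyl group $W$ (the full reflection group of $\Delta$, by Proposition~\ref{prop:realroots}), and the fundamental Weyl chamber determined by the primitive norm-zero Weyl vector $\rho$ constructed above. With these choices the real simple roots (the simple roots of $W$) and the imaginary simple roots (the positive multiples of $\rho$, with the multiplicities already computed) of $\g^{\hat\nu}$ are explicitly known. The key step is then to invoke Borcherds' observation (\cite{Bor92}, proof of Theorem~7.2) that for a \BKMa{} the root multiplicities, together with a choice of Cartan subalgebra and fundamental Weyl chamber, determine the set of simple roots through the denominator identity, which can be solved recursively. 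Since $\g^{\hat\nu}$ and $\g_{\hat\nu}$ share grading lattice, Weyl group, Weyl chamber and graded dimensions, they necessarily have identical simple roots — real and imaginary, with multiplicities — and hence isomorphic root data. As a \BKMa{} is determined up to isomorphism by its simple roots and the bilinear form they inherit from $\Delta$, this yields $\g^{\hat\nu}\cong\g_{\hat\nu}\otimes_\R\C$.

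The main obstacle is not in this final identification, which is essentially formal once the multiplicities match, but upstream: the entire argument rests on Conjecture~\ref{conj:2}, i.e.\ on verifying condition~(6) of Theorem~\ref{thm:bkma}, namely that two positive (or two negative) roots of non-positive norm have pairwise non-positive inner product and commuting root spaces when orthogonal. By Proposition~\ref{prop:bor95thm2} this could be reduced, in the present Lorentzian setting, to showing that the root spaces of two roots which are positive multiples of the same norm-zero vector commute. Establishing this is where detailed information about the matter vertex algebra $M$, the \aia{} structure on the irreducible $V_N^{\hat\rho}$-modules, and the explicit description of the corresponding cohomology classes would have to be brought in, and is the hardest part of the program.
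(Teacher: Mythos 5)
Your proposal is correct and follows essentially the same route as the paper: the paper likewise deduces the corollary from the isomorphism $\g^{\hat\nu}\cong\g_{\hat\nu}\otimes_\R\C$, which it proves by matching the graded dimensions of Proposition~\ref{prop:rescaleddim} against the root multiplicities in the twisted denominator identity and then invoking the fact (cf.\ \cite{Bor92}, proof of Theorem~7.2) that, once a Cartan subalgebra and fundamental Weyl chamber are fixed, the root multiplicities of a \BKMa{} determine its simple roots via the denominator identity. Your closing remarks on Conjecture~\ref{conj:2} correctly locate the genuine remaining difficulty, which the paper also leaves open.
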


A summary of this section, which provides further examples for the wondrous connection between Lie algebras, vertex algebras and automorphic forms, is depicted in the following diagram:
\begin{equation*}
\begin{tikzcd}
\text{VA $M$}&&\text{BKMA}&&\text{Aut. Prod.}\\
V_\Lambda\otimes V_{\II_{1,1}}\cong V_{\II_{25,1}}\arrow{dd}{\hat{\nu}}\arrow{rr}{\text{BRST}}\arrow[bend right]{rrrr}{\text{lift of char.}}&&\text{FMA }\g\arrow[<->]{rr}{\text{den. id.}}&&\Psi\arrow{dd}{\hat{\nu}}\\
\\
\bigoplus_{\gamma\in K'/K}V_\Lambda^{\hat{\nu}}(\gamma+K)\otimes V_{\gamma+K}\arrow{rr}{\text{BRST}}\arrow[bend right]{rrrr}{\text{lift of char.}}&&\g^{\hat\nu}\cong\g_{\hat\nu}\arrow[<->]{rr}{\text{den. id.}}&&\Psi_{\hat\nu}
\end{tikzcd}
\end{equation*}

\bookmarksetupnext{level=part}
\begin{appendices}

\chapter{\FQS{}s}\label{ch:fqs}

\Fqs{}s are finite abelian groups equipped with a non-degenerate quadratic form and occur naturally as discriminant forms of lattices or fusion groups of \voa{}s whose irreducible modules are all simple currents.

\section{Finite Quadratic and Bilinear Forms}\label{sec:qf}

We begin with the definition of finite bilinear and quadratic forms:
\begin{defi}[Finite Bilinear Form]
Let $D$ be a finite abelian group. A map $B\colon D\times D\to \Q/\Z$ is called \emph{finite bilinear form} if it is a symmetric $\Z$-bilinear form on $D$, i.e.\
\begin{enumerate}
\item\label{enum:bil1} $B(a\gamma+b\delta,\beta)=aB(\gamma,\beta)+bB(\delta,\beta)$ for all $a,b\in\Z$ and $\beta,\gamma,\delta\in D$,
\item\label{enum:bil2} $B(\gamma,\delta)=B(\delta,\gamma)$ for all $\gamma,\delta\in D$.
\end{enumerate}
\end{defi}
The symmetry is explicitly included in the definition of a finite bilinear form.
\begin{defi}[Finite Quadratic Form]
Let $D$ be a finite abelian group. A map $Q\colon D\to\Q/\Z$ is called \emph{finite quadratic form} if
\begin{enumerate}
\item\label{enum:qf1} $Q(a\gamma)=a^2Q(\gamma)$ for all $a\in\Z$ and $\gamma\in D$,
\item\label{enum:qf2} the map $B_Q\colon D\times D\to \Q/\Z$ defined by $B_Q(\gamma,\delta):=Q(\gamma+\delta)-Q(\gamma)-Q(\delta)$ is a finite bilinear form (and is called the \emph{associated bilinear form}).
\end{enumerate}
\end{defi}

\begin{rem}\label{rem:bilquad}
Given a finite quadratic form $Q$ on $D$ we can form the associated bilinear form $B_Q$. On the other hand, given a finite bilinear form $B$ we obtain a finite quadratic form via $\gamma\mapsto B(\gamma,\gamma)$ for all $\gamma\in D$. Unfortunately, these processes are not inverse. In fact, the composition of these two operations is multiplication by 2 either on the finite quadratic forms or on the finite bilinear forms depending on the order of the composition.

This has the following implication. By definition, every finite quadratic form $Q$ has a unique associated bilinear form $B_Q$. On the other hand, given a finite bilinear form $B$, there are $|D/2D|$-many quadratic forms $Q$ with $B_Q=B$.
\end{rem}

\begin{rem}
Via the map $\e^{(2\pi\i)(\cdot)}:\Q/\Z\stackrel{\cong}{\longrightarrow}\mathbb{T}_T\leq \C^\times$ we can view finite bilinear and quadratic forms as functions with values in $\mathbb{T}_T$, the torsion subgroup of the circle group $\mathbb{T}=\{z\in\C\;|\;|z|=1\}$, i.e.\ the multiplicative group of all $n$-th roots of unity for all $n\in\Ns$. For that purpose let us introduce the following notation: for a quadratic form $Q\colon D\to\Q/\Z$ or a bilinear form $B\colon D\times D\to\Q/\Z$ we write
\begin{align*}
q(\alpha)&:=\e^{(2\pi\i)Q(\alpha)},\\
b(\alpha,\beta)&:=\e^{(2\pi\i)B(\alpha,\beta)},
\end{align*}
$\alpha,\beta\in D$, for the corresponding functions $Q\colon D\to\C^\times$ and $B\colon D\times D\to\C^\times$.
\end{rem}

We will make use of the following result:
\begin{prop}\label{prop:bilquad}
Let $D$ be a finite abelian group equipped with a finite bilinear form $B\colon D\times D\to\Q/\Z$ and assume that there is a function $Q\colon D\to\Q/\Z$ such that $B(\gamma,\delta)=Q(\gamma+\delta)-Q(\gamma)-Q(\delta)$ for all $\gamma,\delta\in D$. Then, if $Q$ fulfils $Q(\gamma)=Q(-\gamma)$, $Q$ is already a finite quadratic form.
\end{prop}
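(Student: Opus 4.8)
The two conditions in the definition of a finite quadratic form must be verified. Condition~\ref{enum:qf2} is immediate: by hypothesis the combination $Q(\gamma+\delta)-Q(\gamma)-Q(\delta)$ equals $B(\gamma,\delta)$, which is assumed to be a finite bilinear form, so the associated bilinear form $B_Q$ coincides with $B$ and is automatically bilinear. Thus the entire content of the proposition lies in establishing the homogeneity condition~\ref{enum:qf1}, namely $Q(a\gamma)=a^2Q(\gamma)$ for all $a\in\Z$ and $\gamma\in D$. First I would record the normalisation $Q(0)=0$, which follows by setting $\gamma=\delta=0$ in the defining relation and using $B(0,0)=0$ (a consequence of bilinearity).

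The crucial step is to extract, from the symmetry assumption $Q(-\gamma)=Q(\gamma)$, the identity $B(\gamma,\gamma)=2Q(\gamma)$. The plan is to compute $B(\gamma,-\gamma)$ in two ways: on one hand bilinearity gives $B(\gamma,-\gamma)=-B(\gamma,\gamma)$; on the other hand the defining relation gives $B(\gamma,-\gamma)=Q(0)-Q(\gamma)-Q(-\gamma)=-2Q(\gamma)$, where $Q(0)=0$ and $Q(-\gamma)=Q(\gamma)$ have been used. Comparing the two expressions yields $B(\gamma,\gamma)=2Q(\gamma)$. This is precisely the relation that fails in general (cf.\ Remark~\ref{rem:bilquad}) and whose validity the symmetry hypothesis is designed to guarantee.

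With this in hand, homogeneity will follow by a telescoping computation. From the defining relation together with bilinearity, $Q((a+1)\gamma)-Q(a\gamma)=B(a\gamma,\gamma)+Q(\gamma)=aB(\gamma,\gamma)+Q(\gamma)$. Summing from $a=0$ to $n-1$ and using $Q(0)=0$ gives
\[
Q(n\gamma)=\frac{n(n-1)}{2}B(\gamma,\gamma)+nQ(\gamma)
\]
for $n\in\N$, where $n(n-1)/2\in\Z$ makes the expression well-defined in $\Q/\Z$. Substituting $B(\gamma,\gamma)=2Q(\gamma)$ collapses the right-hand side to $(n(n-1)+n)Q(\gamma)=n^2Q(\gamma)$, proving $Q(n\gamma)=n^2Q(\gamma)$ for non-negative $n$. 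For negative integers $a=-n$ I would apply $Q(-\delta)=Q(\delta)$ with $\delta=n\gamma$ to obtain $Q(-n\gamma)=Q(n\gamma)=n^2Q(\gamma)=(-n)^2Q(\gamma)$, which completes condition~\ref{enum:qf1}.

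I do not expect a serious obstacle; the argument is essentially formal. The only delicate point—and the sole place where the symmetry hypothesis $Q(-\gamma)=Q(\gamma)$ enters—is the derivation of $B(\gamma,\gamma)=2Q(\gamma)$, which bridges the non-invertibility of the passage between quadratic and bilinear forms noted in Remark~\ref{rem:bilquad}. Everything else is bilinearity bookkeeping and a routine induction, so once the key identity is in place the computation runs automatically.
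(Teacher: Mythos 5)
Your proof is correct, and it is organised differently from the one in the paper. The paper runs a single second-order induction: from $B((n-1)\gamma,\gamma)+B((n-1)\gamma,-\gamma)=0$ it deduces $Q(n\gamma)=2Q((n-1)\gamma)-Q((n-2)\gamma)+2Q(\gamma)$, so the symmetry hypothesis $Q(-\gamma)=Q(\gamma)$ is re-invoked inside every induction step (through the term $Q(-\gamma)$), and the cases $a=n-2$, $a=n-1$ both feed into the step for $a=n$. You instead use the symmetry exactly once, to extract the identity $B(\gamma,\gamma)=2Q(\gamma)$, and then run a first-order telescoping recursion $Q((a+1)\gamma)-Q(a\gamma)=aB(\gamma,\gamma)+Q(\gamma)$ that needs no further input. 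What your organisation buys is transparency: it isolates the precise point at which the hypothesis resolves the $|D/2D|$-fold ambiguity of Remark~\ref{rem:bilquad}, and your intermediate formula
\begin{equation*}
Q(n\gamma)=\frac{n(n-1)}{2}B(\gamma,\gamma)+nQ(\gamma)
\end{equation*}
holds for any $Q$ satisfying the defining relation, symmetric or not, which makes clear that homogeneity is equivalent to $B(\gamma,\gamma)=2Q(\gamma)$. The paper's argument is marginally more compact but entangles the two ingredients. All your individual steps check out, including the integrality remark that makes $\frac{n(n-1)}{2}B(\gamma,\gamma)$ well-defined in $\Q/\Z$ and the reduction of negative $a$ to positive $a$ via $Q(-n\gamma)=Q(n\gamma)$, which is the same reduction the paper uses.
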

\begin{proof}
We only need to show property \ref{enum:qf1} in the definition of quadratic form since property \ref{enum:qf2} is fulfilled by assumption knowing that $B=B_Q$ is bilinear. First note that $0+\Z=B(0,0)=Q(0+0)-Q(0)-Q(0)=Q(0)$. We show \ref{enum:qf1} by induction. Knowing that $Q(-\gamma)=Q(\gamma)$ we only need to show it for $a\in\N$. For $a=0$ it is true because of $Q(0)=0+\Z$ and for $a=1$ the statement is trivial. The induction step is from $a=n-2$ and $a=n-1$ to $a=n$ and goes as follows:
\begin{align*}
0+\Z&=B((n-1)\gamma,\gamma)+B((n-1)\gamma,-\gamma)\\
&=Q(n\gamma)-Q((n-1)\gamma)-Q(\gamma)+Q((n-2)\gamma)-Q((n-1)\gamma)-Q(-\gamma)\\
&=\ldots=Q(n\gamma)-n^2Q(\gamma).
\end{align*}
This completes the proof.
\end{proof}

\minisec{Orthogonal Complement and Isotropic Subgroups}

Since quadratic forms are assumed to be symmetric, there is a natural notion of orthogonality:
\begin{defi}[Orthogonal Complement]
Let $D$ be a finite abelian group with a finite quadratic form $Q\colon D\to\Q/\Z$ (and associated bilinear form $B_Q\colon D\times D\to\Q/\Z$). Two elements $\gamma,\delta\in D$ are called \emph{orthogonal} if $B_Q(\gamma,\delta)=0+\Z$.

Given a subset $S\subseteq D$, the \emph{orthogonal complement} $S^\bot$ of $S$ is the subgroup of $D$ defined by
\begin{equation*}
S^\bot:=\{\gamma\in D\;|\;B_Q(\gamma,\mu)=0+\Z\text{ for all }\mu\in S\}.
\end{equation*}
\end{defi}

\begin{defi}[Non-Degeneracy, \FQS{}]\label{defi:fqs}
Let $D$ be a finite abelian group and let $Q$ be a finite quadratic form on $D$. The quadratic form is called \emph{non-degenerate} if the associated bilinear form $B_Q$ is non-degenerate, i.e.\ if $D^\bot=\{0\}$.

A finite abelian group together with a non-degenerate quadratic form is called \emph{\fqs{}}.
\end{defi}

\begin{defi}
Let $D=(D,Q)$ be a finite abelian group $D$ with a finite quadratic form $Q$. By $\overline{D}:=(D,-Q)$ we denote the same abelian group with the negative of the quadratic form $Q$. If $D$ is a \fqs{}, then so is $\overline{D}$.
\end{defi}

\begin{prop}\label{prop:aad}
If the finite quadratic form on the finite abelian group $D$ is non-degenerate, then $|A||A^\bot|=|D|$ for any subgroup $A\leq D$.
\end{prop}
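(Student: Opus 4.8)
The plan is to deduce the statement from Pontryagin duality for finite abelian groups, using that the non-degeneracy of $Q$ makes $B_Q$ a perfect pairing. Throughout I write $\hat{G}:=\Hom(G,\Q/\Z)$ for the character group of a finite abelian group $G$, and I recall the standard fact that $|\hat{G}|=|G|$.

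First I would unravel the hypothesis. Non-degeneracy of $Q$ means by Definition~\ref{defi:fqs} that $D^\bot=\{0\}$, i.e.\ the homomorphism $D\to\hat{D}$ given by $\gamma\mapsto B_Q(\gamma,\cdot)$ is injective. Since $|\hat{D}|=|D|$, this injective map between finite groups of equal cardinality is in fact an isomorphism; this is the perfect-pairing statement I will exploit. Next, for the fixed subgroup $A\leq D$ I would introduce the homomorphism
\begin{equation*}
\phi\colon D\to\hat{A},\qquad \gamma\mapsto B_Q(\gamma,\cdot)|_A,
\end{equation*}
that is, I restrict the character $B_Q(\gamma,\cdot)$ to $A$. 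By the very definition of the orthogonal complement, $\ker(\phi)=A^\bot$, so the first isomorphism theorem yields $D/A^\bot\cong\im(\phi)\leq\hat{A}$.

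The crucial step is to show that $\phi$ is surjective, and this is the part I expect to be the main obstacle. The point is that $\phi$ factors as the composite
\begin{equation*}
D\xrightarrow{\ \cong\ }\hat{D}\xrightarrow{\ \operatorname{res}\ }\hat{A},
\end{equation*}
where the first arrow is the isomorphism from the non-degeneracy of $B_Q$ and the second is restriction of characters along the inclusion $A\hookrightarrow D$. Since $\Q/\Z$ is a divisible abelian group, it is an injective $\Z$-module, so the functor $\Hom(-,\Q/\Z)$ is exact; applied to the injection $A\hookrightarrow D$ this shows $\operatorname{res}\colon\hat{D}\to\hat{A}$ is surjective. (Concretely: every character of $A$ extends to a character of $D$.) Hence $\phi$, being a surjection composed after an isomorphism, is itself surjective.

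Finally I would conclude. Surjectivity upgrades the embedding above to an isomorphism $D/A^\bot\cong\hat{A}$, so that
\begin{equation*}
\frac{|D|}{|A^\bot|}=\bigl|\hat{A}\bigr|=|A|,
\end{equation*}
which rearranges to $|A||A^\bot|=|D|$, as claimed. The only genuinely non-formal ingredient is the extension-of-characters step, i.e.\ the injectivity of $\Q/\Z$ as a $\Z$-module; everything else is bookkeeping with the first isomorphism theorem and the identity $|\hat{G}|=|G|$.
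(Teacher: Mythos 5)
Your proof is correct. Note that the paper actually states Proposition~\ref{prop:aad} without any proof at all (it is quoted as a standard fact about finite quadratic spaces), so there is no argument of the paper's to compare against; your Pontryagin-duality argument is the standard way to fill this gap. All the key steps are sound: non-degeneracy gives injectivity of $\gamma\mapsto B_Q(\gamma,\cdot)$ into $\Hom(D,\Q/\Z)$, hence an isomorphism by the counting identity $|\hat{G}|=|G|$; the kernel of the restricted map $\phi\colon D\to\hat{A}$ is exactly $A^\bot$ by the paper's definition of orthogonal complement; and surjectivity of restriction of characters follows from the divisibility (injectivity as a $\Z$-module) of $\Q/\Z$, after which the first isomorphism theorem gives $|D|/|A^\bot|=|A|$.
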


We also need the definition of an isotropic subgroup.
\begin{defi}[Isotropic Subgroup]
Let $D$ be a finite abelian group with a finite quadratic form $Q\colon D\to\Q/\Z$. A subgroup $I$ of $D$ is called \emph{isotropic} if $Q(\gamma)=0+\Z$ for all $\gamma\in I$. $I$ is called \emph{maximal isotropic} if $I$ is isotropic and not a proper subgroup of an isotropic subgroup.
\end{defi}

\begin{rem}\label{rem:maxiso}
Let $D$ be a finite abelian group with a finite quadratic form $Q$. Then:
\begin{enumerate}
\item\label{enum:maxiso1} For an isotropic subgroup $I$ of $D$, $I\subseteq I^\bot$. The converse is false: $I\subseteq I^\bot$ implies that the bilinear form $B_Q$ vanishes on $I$ but due to the $|D/2D|$-fold ambiguity, the quadratic form $Q$ is not necessarily trivial on $I$.
\item\label{enum:maxiso2} Given an isotropic subgroup $I$ of $D$ with $I=I^\bot$, it is easy to see that $I$ is a maximal isotropic subgroup. Again, the converse is not quite true.
\end{enumerate}
\end{rem}

\section{A Classification Result}

In the following we show that a \fqs{} is determined up to isomorphism by the multiset of values of the quadratic form. This result is probably known but we include a proof for completeness. The idea of the proof is from Gerald Höhn. For the result to hold it is essential that the quadratic form is non-degenerate. We make use of the classification of \fqs{}s treated in \cite{CS99}, Section~15.7.
\begin{prop}\label{prop:valuesfqs}
Let $D$ be a finite set and $Q\colon D\to\Q/\Z$ some function. Assume that the largest power of 2 in the denominators of the values of $Q$ is 2. Then there is at most one \fqs{} up to isomorphism with $D$ as underlying set of the abelian group and $Q$ as non-degenerate quadratic form on $D$.
\end{prop}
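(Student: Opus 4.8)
The plan is to prove the stronger statement that the \emph{multiset of values} $\{Q(\gamma)\}_{\gamma\in D}$, together with non-degeneracy and the $2$-part hypothesis, already determines the isomorphism class of a \fqs{}; the proposition then follows at once, since two group structures on the same set $D$ inducing the same function $Q$ trivially share the same value multiset and the same hypothesis, and hence are isomorphic. I would invoke the classification of \fqs{}s in \cite{CS99}, Section~15.7: every \fqs{} $(D,Q)$ is canonically the orthogonal direct sum of its $p$-components $(D_p,Q_p)$, and its isomorphism class is determined by the genus symbol, i.e.\ by the isomorphism classes of the $Q_p$. Thus it suffices to recover each $Q_p$ up to isomorphism from the value multiset of $Q$. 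To reduce to a single prime, note that $\Q/\Z=\bigoplus_p(\Q/\Z)_p$ and $Q_p$ is valued in $(\Q/\Z)_p$, so the decomposition $Q(\gamma)=\sum_p Q_p(\gamma_p)$ is coordinatewise; for uniformly random $\gamma\in D$ the contributions $Q_p(\gamma_p)$ are therefore independent, and the marginal distributions — the value multisets of the individual $Q_p$ — can be read off from that of $Q$ (the cardinality $|D|$ being the size of the multiset). Equivalently, all Gauss sums $G(nQ)=\sum_{\gamma\in D}\e^{(2\pi\i)nQ(\gamma)}$ are determined and factor as $\prod_p G(nQ_p)$.

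For odd $p$ I would recover the full Jordan genus symbol $\bigoplus_k (p^k)^{\epsilon_k n_k}$ of $Q_p$. The absolute values $|G(p^jQ_p)|$ recover the ranks $n_k$: rescaling by $p^j$ renders the form degenerate with an explicitly computable radical, giving $|G(p^jQ_p)|^2=|D_p|\cdot\prod_{k\le j}p^{kn_k}\prod_{k>j}p^{jn_k}$, and this telescoping family of equations over $j$ determines all $n_k$, hence the group $D_p$. The signs $\epsilon_k$ are then extracted from the arguments of $G(nQ_p)$ as $n$ runs over residues coprime to $p$: passing from $n$ to a quadratic non-residue multiplies the level-$k$ contribution to the phase by $\bigl(\tfrac{n}{p}\bigr)^{n_k}$, and combined with the level-isolation already obtained by powers of $p$ this solves for each $\epsilon_k$. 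The pairs $(n_k,\epsilon_k)$ form a complete set of invariants for odd $p$, so $Q_p$ is determined up to isomorphism.

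The prime $p=2$ is exactly where the hypothesis intervenes, and it turns this into the easiest case. The assumption that the largest power of $2$ in the denominators of the values of $Q$ is $2$ means precisely that $Q_2$ is valued in $\tfrac12\Z/\Z$; hence the non-degenerate associated bilinear form $B_{Q_2}$ is also $\tfrac12\Z/\Z$-valued, every element of $D_2$ satisfies $2\gamma=0$ by non-degeneracy, and $D_2\cong\mathbb{F}_2^{m}$ with $Q_2$ a non-degenerate quadratic form over $\mathbb{F}_2$ (identifying $\tfrac12+\Z$ with $1$). Such forms are classified by their dimension $m$ (necessarily even) and their Arf invariant, both immediate from the value multiset: $m$ is fixed by $|D_2|=2^{m}$, and the Arf invariant is decided by whether $\#\{\gamma:Q_2(\gamma)=\tfrac12\}$ equals $2^{m-1}-2^{m/2-1}$ or $2^{m-1}+2^{m/2-1}$. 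Thus $Q_2$ is determined up to isomorphism, completing the argument over all primes.

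I would flag as the genuine conceptual point that this hypothesis is \emph{necessary}, not a convenience: without it there exist non-isomorphic \fqs{}s with identical value multisets — the $2$-adic constituents of exponent $\ge 4$ are not separated by their Gauss sums — so the whole statement rests on the elementary $\mathbb{F}_2$-analysis above. The main calculational obstacle I anticipate is the odd-$p$ sign recovery: confirming that the phases of the $G(nQ_p)$, after the levels have been isolated by powers of $p$, genuinely pin down every individual $\epsilon_k$ rather than only a global product of signs. This, however, is routine once the ranks $n_k$ are in hand and the explicit Gauss-sum product formula of \cite{CS99}, Section~15.7, is applied constituent by constituent.
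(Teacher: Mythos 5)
Your proposal is correct and shares the paper's skeleton --- reduce to $p$-components via the splitting $\Q/\Z=\bigoplus_p(\Q/\Z)_p$, recover the Jordan invariants prime by prime, and spend the hypothesis only at $p=2$ --- but the mechanism you use at odd $p$ is genuinely different. The paper identifies the top Jordan constituent $q^{\pm l}$ by counting the elements whose value has maximal denominator $q$ (e.g.\ $400$ versus $600$ for $25^{\pm2}$), then splits it off and recurses; you instead read the ranks $n_k$ off the absolute values of the scaled Gauss sums $G(p^jQ_p)$ and the signs $\epsilon_k$ off their phases, working down from the top level. Your rank formula $|G(p^jQ_p)|^2=|D_p|\prod_k p^{\min(j,k)n_k}$ is correct (the radical of $p^jB_{Q_p}$ has that order, and $p^jQ_p$ vanishes on it because $p$ is odd, since $2p^jQ_p(\gamma)=p^jB_{Q_p}(\gamma,\gamma)=0$ forces $p^jQ_p(\gamma)=0$ in $(\Q/\Z)_p$), and the sign recovery does go through top-down: for $j=k-1$ only the level-$k$ constituent contributes a non-real factor, its descended form on $(\Z_{p})^{n_k}$ carries the same sign as $\epsilon_k$, and opposite signs shift the Milgram signature by $4$ modulo $8$, so the phase detects $\epsilon_k$ once $n_k$ is known. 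Note, though, that this step is exactly as much of a deferred computation in your write-up as the ``$400$ versus $600$'' count is in the paper's. At $p=2$ your argument is cleaner and more self-contained than the paper's appeal to the absence of odd $2$-adic Jordan components: you show directly that a $\tfrac12\Z/\Z$-valued non-degenerate form forces $2\gamma=0$ for all $\gamma$, hence $D_2\cong\mathbb{F}_2^{m}$ with $m$ even, and the two classes $2_{\II}^{\pm m}$ are separated by the Arf invariant, i.e.\ by the zero count $2^{m-1}\pm2^{m/2-1}$. The one caveat is your closing assertion that the hypothesis is genuinely necessary: that is plausible but is asserted, not proved, and is in any case not needed for the proposition.
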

\begin{proof}
Suppose that there is an abelian group law on $D$ such that $(D,Q)$ is a \fqs{}. We want to prove that this \fqs{} is unique up to isomorphism. We may assume that $|D|>1$ and decompose the order of the set $D$ as
\begin{equation*}
|D|=p_1^{r_1}\ldots p_k^{r_k}
\end{equation*}
where the $p_i$ are pairwise distinct primes and $r_i\in\Ns$, $i=1,\ldots,k$, for some $k\in\Ns$. Any abelian group structure on $D$ can be uniquely written as the direct product
\begin{equation*}
D=D_1\times\ldots\times D_k
\end{equation*}
where $D_i$ is a group of order $p_i^{r_i}$. The same is true for \fqs{}s.\footnote{Some caution should be exercised. In general, the decomposition of a \fqs{} into a direct product of indecomposable components is not unique. For example, the product of the Jordan components $5^1$ and $5^1$ is isomorphic to that of the components $5^{-1}$ and $5^{-1}$. However, if we decompose the \fqs{} into components with prime power orders for distinct primes as above, then this decomposition is unique.}
Hence it suffices to study the case of a set $D$ of prime power order.

First, let $p$ be an odd prime and suppose that $|D|=p^r$ for some $r\in\Ns$. In the odd-prime case, the decomposition of a finite abelian group and that of a \fqs{} of order $p^r$ into cyclic components is unique. We collect all the cyclic components of a certain prime power so that
\begin{equation*}
D\cong \Z_p^{l_1}\times\Z_{p^2}^{l_2}\times\ldots\times\Z_{p^s}^{l_s}
\end{equation*}
for some $s\in\Ns$ and $l_j\in\N$ for $j=1,\ldots,s$ with $l_1+2l_2+\ldots+sl_s=r$. On each $\Z_{p^j}^{l_j}$ there are two possible finite quadratic forms up to isomorphism denoted by the Jordan components $(p^j)^{+l_j}$ and $(p^j)^{-l_j}$. They may be distinguished, for instance, by the number of elements whose value under the quadratic form has the maximal denominator $p^j$. For example, the Jordan component $25^{+2}$ has 400 such elements while $25^{-2}$ has 600.

Now suppose that the function $Q$ defines a quadratic form on the set $D$ for some group law we do not yet know. The denominators of the values of $Q$ have to be powers of $p$. Let $q$ be the largest such power. Then the decomposition of $D$ has to contain a maximal Jordan component of the form $q^{\pm l}$ for some $l\in\Ns$. Since all the other components in the decomposition of $D$ have smaller denominators, the \emph{relative} amount of elements whose value under $Q$ has the maximal denominator $q$ is the same in $q^{\pm l}$ and all of $D$. Hence, the sign in the exponent of $q^{\pm l}$ can be read off from the values of $Q$ on $D$.

Splitting off the component $q^{\pm l}$ and successively treating smaller denominators we obtain that the structure of $D$ as \fqs{} is unique up to isomorphism.

The case of the even prime $p=2$ is in general more complicated. Let $|D|=2^r$ for some $r\in\Ns$. The decomposition of a \fqs{} of order $2^r$ into cyclic components is usually not unique. For example, $2_1^{+1}\times4_1^{+1}\cong2_3^{-1}\times4_3^{-1}$, which involves odd 2-adic Jordan components. However, since we assumed that the largest power of 2 in the denominators of the values of $Q$ is 2, only the even Jordan components $2_{\II}^{\pm l}$, $l\in2\Ns$, may appear. In this case, we may proceed as in the odd-prime case and obtain the desired result.
\end{proof}
The divisibility assumption in the proposition on the denominators of the values of $Q$ can most probably be dropped. Then the case of $p=2$ in the proof has to be investigated more thoroughly.

\section{Weil Representation}\label{sec:weil}

In this section we introduce a representation of $\SLZ$ or its double cover $\MpZ$, called Weil representation since it is a special case of a construction due to Weil \cite{Wei64}.

\minisec{The Metaplectic Group $\MpZ$}
In the following we define the metaplectic group $\MpZ$, a double cover of $\SLZ$. For $z\in\C$ let us by $\sqrt{z}$ denote the principal branch of the complex square root, so that $\arg(\sqrt{z})\in(-\pi/2,\pi/2]$. Let $\MpZ$ be the double cover of $\SLZ$ consisting of pairs
\begin{equation*}
(M,\varphi(\tau))
\end{equation*}
where $M=\left(\begin{smallmatrix}a&b\\c&d\end{smallmatrix}\right)\in\SLZ$ and $\varphi$ is a holomorphic function on $\H$ with $\varphi(\tau)^2=c\tau+d$, i.e.\ $\varphi(\tau)=\pm\sqrt{c\tau+d}$. The group law on $\MpZ$ is given by
\begin{equation*}
(M_1,\varphi_1(\tau))(M_2,\varphi_2(\tau))=(M_1M_2,\varphi_1(M_2.\tau)\varphi_2(\tau))
\end{equation*}
where $M.\tau=(a\tau+b)/(c\tau+d)$ denotes the usual action of $\SLZ$ on the upper half-plane $\H$, i.e.\ the Möbius transformation. There is the canonical embedding of $\SLZ$ into $\MpZ$ given by
\begin{equation*}
M\mapsto\widetilde{M}=(M,\sqrt{c\tau+d}).
\end{equation*}
The group $\MpZ$ is generated by the elements
\begin{equation*}
\widetilde{S}=(S,\sqrt{\tau})\quad\text{and}\quad\widetilde{T}=(T,1)
\end{equation*}
where $S=\left(\begin{smallmatrix}0&-1\\1&0\end{smallmatrix}\right)$ and $T=\left(\begin{smallmatrix}1&1\\0&1\end{smallmatrix}\right)$ are the standard generators of $\SLZ$. These generators obey the relations
\begin{equation*}
\widetilde{S}^2=(\widetilde{S}\widetilde{T})^3=\widetilde{Z}\quad\text{and}\quad\widetilde{Z}^4=\id.
\end{equation*}
Note that
\begin{equation*}
\widetilde{Z}=(Z,\i)\quad\text{and}\quad\widetilde{Z}^2=(\id,-1)
\end{equation*}
where $Z=S^2=-\id$. (For comparison: in $\SLZ$ the relations $S^2=(ST)^3=Z$ and $Z^2=\id$ hold.)

\minisec{The Weil Representation}

Let $D$ be a \fqs{} with quadratic form $Q$ and associated bilinear form $B_Q$. Let $\C[D]$ be the group algebra of $D$. The Weil representation $\rho_D$ of $\MpZ$ on $\C[D]$ is defined via
\begin{align*}
\rho_D(\widetilde{S})_{\alpha,\beta}&=\frac{1}{\sqrt{|D|}}\e^{(2\pi\i)(-B_Q(\alpha,\beta)-\sign(D)/8)},\\
\rho_D(\widetilde{T})_{\alpha,\beta}&=\delta_{\alpha,\beta}\e^{(2\pi\i)Q(\alpha)},
\end{align*}
$\alpha,\beta\in D$ (see e.g.\ \cite{Bor98}). These equations define a unitary representation on $\C[D]$.

Note that $\rho_D(\widetilde{Z})=\e^{(2\pi\i)(-\sign(D)/4)}$ and hence $\rho_D(\widetilde{Z}^2)=\e^{(2\pi\i)(-\sign(D)/2)}$. On the other hand, the two different pullbacks of $M\in\SLZ$ to $\MpZ$ are $\widetilde{M}$ and $\widetilde{M}\widetilde{Z}^2$. Then $\rho_D(\widetilde{M}\widetilde{Z}^2)=\e^{(2\pi\i)(-\sign(D)/2)}\rho_D(\widetilde{M})$. This means that if $\sign(D)$ is even, $\rho_D$ descends to a representation of $\SLZ$ while if $\sign(D)$ is odd, we only obtain a projective representation of $\SLZ$.

\chapter{Lie Algebras}

Lie algebras occur at various places in the theory of vertex algebras. In this chapter we collect a few results on Lie algebras needed for this text.

\section{Vertex Algebras and Lie Algebras}\label{sec:voalie}
The results in this section are known but we give proofs for completeness. For example, it is a well-known fact that for a given \voa{} $V$ of CFT-type, the space $V_1$ can be endowed with the structure of a Lie algebra.
\begin{prop}
Let $V$ be a \voa{} of CFT-type. Then the weight-one subspace $V_1$ of $V$ carries the natural structure of a finite-dimensional, complex Lie algebra given by
\begin{equation*}
[u,v]:=u_0v
\end{equation*}
for $u,v\in V_1$.
\end{prop}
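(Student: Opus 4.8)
The plan is to verify the two defining properties of a Lie algebra — antisymmetry and the Jacobi identity — for the bracket $[u,v]=u_0v$ on $V_1$, together with the required closure and finite-dimensionality, all of which follow from the \voa{} axioms recorded earlier in the excerpt.

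First I would check that $V_1$ is closed under the bracket and finite-dimensional. Finite-dimensionality is immediate from the grading axiom $\dim_\C(V_n)<\infty$ in Definition~\ref{defi:voa}. For closure, recall that for homogeneous $u\in V_1$ the mode $u_0$ has weight $\wt(u_0)=\wt(u)-0-1=0$, so $u_0$ maps $V_1$ to $V_1$; hence $u_0v\in V_1$ for $u,v\in V_1$ and the bracket is well-defined. Next I would establish antisymmetry. The natural tool is the skew-symmetry relation \eqref{eq:skew}, namely $Y(u,x)v=\e^{xL_{-1}}Y(v,-x)u$. Extracting the appropriate coefficient (the $x^{-1}$-coefficient, i.e.\ the $0$-th mode, using $T=L_{-1}$ and $L_{-1}\vac=0$ together with the fact that $L_{-1}$ raises weight) gives $u_0v=-v_0u+(\text{derivative terms})$, and on the weight-one space the derivative corrections vanish because they land in lower weight; this yields $[u,v]=-[v,u]$. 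I would carry this out by comparing coefficients carefully rather than by a long computation.

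Then I would prove the Jacobi identity for the bracket. The key input is the commutator formula, a standard consequence of the Borcherds/Jacobi identity for \voa{}s (the modern Borcherds identity in Definition~\ref{defi:borid} specialises to $[a_0,b_0]=(a_0b)_0$). Applying this with $a=u$, $b=v$ acting on $w\in V_1$ gives $u_0(v_0w)-v_0(u_0w)=(u_0v)_0w$, which is exactly
\begin{equation*}
[u,[v,w]]-[v,[u,w]]=[[u,v],w],
\end{equation*}
the Jacobi identity in commutator form. The main (and really the only nontrivial) obstacle is extracting the commutator formula $[a_0,b_0]=(a_0b)_0$ cleanly from the Jacobi identity: this amounts to taking the $\Res_{x_0}$ of the Jacobi identity (equivalently setting $k=m=n=0$ in the modern Borcherds identity) and checking that the sums truncate appropriately, which the axioms guarantee. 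Once antisymmetry and this commutator formula are in hand, bilinearity over $\C$ is clear from the linearity of the state-field correspondence, and the three properties together establish that $(V_1,[\cdot,\cdot])$ is a finite-dimensional complex Lie algebra.
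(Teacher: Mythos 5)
Your proof is correct and follows essentially the same route as the paper: skew-symmetry \eqref{eq:skew} plus CFT-type for antisymmetry, and the commutator formula $[u_0,v_0]=(u_0v)_0$ for the Jacobi identity. One small point of precision: the $n=1$ correction term $L_{-1}v_1u$ does not vanish for weight reasons (it would again land in weight one) but because $v_1u\in V_0=\C\vac$ and $L_{-1}\vac=0$, which you do cite, so the argument goes through.
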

\begin{proof}
For $u,v\in V_1$, clearly also $u_0v\in V_1$. The bilinearity of the Lie bracket is clear. We next show the antisymmetry of the Lie bracket. We need the skew-symmetry property, which holds in any \voa{} (see (2.3.19) in \cite{FHL93}):
\begin{equation*}
Y(u,x)v=\e^{xL_{-1}}Y(v,-x)u.
\end{equation*}
for all $u,v\in V$, i.e.\
\begin{equation*}
\sum_{n\in\Z}u_nx^{-n-1}v=\e^{xL_{-1}}\sum_{n\in\Z}v_n(-x)^{-n-1}u.
\end{equation*}
Taking the coefficient of $x^{-1}$ on both sides gives
\begin{equation*}
u_0v=-\sum_{n=0}^\infty (-1)^n\frac{(L_{-1})^n}{n!}v_nu.
\end{equation*}
Now assume that $u,v\in V_1$. Then $v_nu\in V_{1-n}$ and hence $v_nu=0$ for $n\geq2$ since $V$ is of CFT-type. Also $v_1u\in\C\vac$ and by the translation axiom $L_{-1}v_1u=0$. Hence all terms in the sum except for $n=0$ vanish and we get
\begin{equation*}
u_0v=-v_0u.
\end{equation*}

Finally, we show that the Jacobi identity for Lie algebras holds. Not surprisingly, this follows from the Jacobi identity for vertex algebras. In fact, it is more convenient to use the equivalent Borcherds identity, which yields
\begin{equation*}
[u_0,v_0]=(u_0v)_0
\end{equation*}
as a special case (see (2.3.16) in \cite{FHL93}) where $[\cdot,\cdot]$ denotes the commutator. Hence
\begin{align*}
[u,[v,w]]&=u_0v_0w=[u_0,v_0]w+v_0u_0w=(u_0v)_0w+[v,[u,w]]\\
&=[[u,v],w]-[v,[w,u]]=-[w,[u,v]]-[v,[w,u]]
\end{align*}
for all $u,v,w\in V_1$, where we used the antisymmetry of the Lie bracket.
\end{proof}

\begin{prop}
Let $V$ be a \voa{} of CFT-type. Then the weight spaces $V_n$, $n\in\N$, carry the natural structure of Lie algebra modules for $V_1$ via
\begin{equation*}
v\cdot w:=v_0w
\end{equation*}
for $v\in V_1$ and $w\in V_n$.
\end{prop}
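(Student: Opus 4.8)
The plan is to verify the three defining properties of a Lie algebra module for the action $v\cdot w := v_0 w$, where $V_1$ carries the Lie bracket $[u,v]=u_0v$ established in the preceding proposition. These are: that the action is well-defined (i.e.\ $v_0$ maps $V_n$ into itself), that it is bilinear, and that it respects the bracket in the sense that $[u,v]\cdot w = u\cdot(v\cdot w)-v\cdot(u\cdot w)$ for all $u,v\in V_1$ and $w\in V_n$; together these say exactly that $v\mapsto v_0|_{V_n}$ is a Lie algebra homomorphism $V_1\to\End_\C(V_n)$.

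First I would check well-definedness via the grading. Recall that for homogeneous $v\in V$ the mode $v_k$ acts as an operator of weight $\wt(v)-k-1$. For $v\in V_1$ and $k=0$ this gives $\wt(v_0)=1-0-1=0$, so $v_0$ preserves the weight grading; in particular $v_0 w\in V_n$ whenever $w\in V_n$. Hence the action $V_1\times V_n\to V_n$ is well-defined, and it is manifestly bilinear since $Y(\cdot,x)$ is linear in its first argument and each mode is a linear operator.

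The only substantive point is the module axiom, and it follows immediately from the commutator formula $[u_0,v_0]=(u_0v)_0$, a special case of the Borcherds identity already invoked in the proof of the previous proposition (cf.\ (2.3.16) of \cite{FHL93}). Indeed, for $u,v\in V_1$ and $w\in V_n$,
\begin{equation*}
[u,v]\cdot w = (u_0v)_0 w = [u_0,v_0]w = u_0v_0w - v_0u_0w = u\cdot(v\cdot w)-v\cdot(u\cdot w),
\end{equation*}
which is precisely the homomorphism property.

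I do not expect any genuine obstacle here: the entire content is already contained in the commutator formula $[u_0,v_0]=(u_0v)_0$, which holds in any vertex algebra, and the CFT-type hypothesis enters only insofar as it guarantees (via the previous proposition) that $V_1$ is a Lie algebra to begin with. The sole point demanding even minimal care is the grading bookkeeping $\wt(v_0)=0$, which is what ensures the action lands back in $V_n$ rather than in some other weight space.
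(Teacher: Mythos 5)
Your proof is correct and follows essentially the same route as the paper: the weight computation $\wt(v_0)=\wt(v)-0-1=0$ to see that $v_0$ preserves $V_n$, followed by the commutator formula $[u_0,v_0]=(u_0v)_0$ to verify the module axiom. The extra remarks on bilinearity and on where the CFT-type hypothesis enters are fine but not needed beyond what the paper already records.
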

\begin{proof}
It is clear that $\wt(v_0w)=\wt(v)-0-1+\wt(w)=\wt(w)$ for $v\in V_1$. Hence $v\cdot w\in V_n$ for $v\in V_1$ and $w\in V_n$. That $V_n$ is a $V_1$-module follows from $[u_0,v_0]=(u_0v)_0$ for all $u,v\in V$. Indeed:
\begin{equation*}
[u,v]\cdot w=(u_0v)\cdot w=(u_0v)_0w=[u_0,v_0]w=u\cdot (v\cdot w)-v\cdot (u\cdot w)
\end{equation*}
for all $u,v\in V_1$ and $w\in V_n$.
\end{proof}

We can further generalise the above result:
\begin{prop}
Let $V$ be a \voa{} of CFT-type and $W$ a $V$-module. Then all the weight spaces $W_\lambda$, $\lambda\in\C$, naturally become Lie algebra modules for the Lie algebra $V_1$ via
\begin{equation*}
v\cdot w:=v_0 w
\end{equation*}
for $v\in V_1$, $w\in W_\lambda$ where $v_0\in\End_\C(W)$ denotes the $0$-th mode of $Y_W(v,x)$.
\end{prop}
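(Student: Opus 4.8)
The plan is to mirror the proof of the two preceding propositions essentially verbatim, the only difference being that the weight spaces $V_n$ of the adjoint module are replaced by the weight spaces $W_\lambda$ of an arbitrary $V$-module $W$, and that all vertex operator modes are now taken with respect to the module vertex operator $Y_W(\cdot,x)$.

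First I would verify that the map is well-defined, i.e.\ that $v_0$ preserves $W_\lambda$ for $v\in V_1$. This follows immediately from the grading property recorded after the definition of \voa{} modules, namely that $\wt(v_n)=\wt(v)-n-1$ as an operator on $W$ for homogeneous $v\in V$. For $v\in V_1$ and $n=0$ this gives $\wt(v_0)=1-0-1=0$, so $v_0\in\End_\C(W)$ is grade-preserving and hence restricts to an endomorphism of each $W_\lambda$. Bilinearity of $v\cdot w=v_0w$ in $v$ and $w$ is clear from the linearity of $Y_W(\cdot,x)$ and of each mode.

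The key step is to establish the commutator formula
\begin{equation*}
[u_0,v_0]=(u_0v)_0
\end{equation*}
as operators on $W$, for all $u,v\in V_1$ (indeed for all $u,v\in V$). This is the direct analogue of the identity $[u_0,v_0]=(u_0v)_0$ used in the proof for $V_n$, but now for the module action rather than the adjoint action. It follows from the Jacobi identity for modules (Definition of \voa{} module) in exactly the same way as the corresponding relation on $V$ follows from the Jacobi identity for \voa{}s: taking the appropriate residue of the module Jacobi identity yields the general commutator formula $[u_m,v_n]=\sum_{j\geq0}\binom{m}{j}(u_jv)_{m+n-j}$, and specialising to $m=n=0$ collapses the sum to the single term $(u_0v)_0$ since $\binom{0}{j}=\delta_{j,0}$.

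With this in hand, the Lie module axiom is immediate: for $u,v\in V_1$ and $w\in W_\lambda$,
\begin{equation*}
[u,v]\cdot w=(u_0v)\cdot w=(u_0v)_0w=[u_0,v_0]w=u_0v_0w-v_0u_0w=u\cdot(v\cdot w)-v\cdot(u\cdot w),
\end{equation*}
using the Lie bracket $[u,v]=u_0v$ on $V_1$ established in the first proposition of this section. I expect no genuine obstacle here; the only point requiring care is to note that the module Jacobi identity yields precisely the same commutator formula as in the untwisted \voa{} case, which is standard and can be cited from the basic theory of \voa{} modules.
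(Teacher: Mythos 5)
Your proposal is correct and follows essentially the same route as the paper: the paper likewise observes that $v_0$ is grade-preserving on $W$, invokes the commutator formula $[u_0,v_0]=(u_0v)_0$ for module modes (cited from \cite{FHL93}, (4.2.10)), and concludes with the same one-line computation of the Lie module axiom. Your extra remarks on deriving the commutator formula from the module Jacobi identity are a harmless elaboration of what the paper cites as standard.
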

\begin{proof}
Also for the $0$-th modes of $Y_W(\cdot,x)$, $\wt(v_0w)=\wt(v)-0-1+\wt(w)=\wt(w)$ for $v\in V_1$ and
\begin{equation*}
[u_0,v_0]=(u_0v)_0
\end{equation*}
for all $u,v\in V$ (see (4.2.10) in \cite{FHL93}). Then, as above,
\begin{equation*}
[u,v]\cdot w=(u_0v)\cdot w=(u_0v)_0w=[u_0,v_0]w=u\cdot (v\cdot w)-v\cdot (u\cdot w)
\end{equation*}
for all $u,v\in V_1$ and $w\in W_\lambda$, which proves the claim.
\end{proof}

Recall that since an automorphism of a \voa{} fixes the Virasoro vector, it can be restricted to an automorphism of each $L_0$-component. We show that on $V_1$ it restricts to a Lie algebra automorphism (see Definition~\ref{defi:laaut} below):
\begin{prop}\label{prop:autvoalie}
Let $V$ be a \voa{} of CFT-type and let $g$ be an automorphism of $V$. Then $g$ restricts to a Lie algebra automorphism of $V_1$.
\end{prop}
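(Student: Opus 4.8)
The statement is essentially a formal consequence of the definition of \voa{} automorphism, so the plan is to unwind that definition carefully and check compatibility with the Lie bracket. First I would recall from the discussion preceding Definition~\ref{defi:voa} (see also the paragraph on automorphisms in Section~\ref{sec:twmod}) that an automorphism $g$ of $V$ satisfies $gY(a,x)g^{-1}=Y(ga,x)$ for all $a\in V$, together with $g\vac=\vac$ and $g\omega=\omega$. Since $g$ fixes the conformal vector $\omega$, it commutes with all modes of $Y(\omega,x)$, in particular with $L_0=\omega_1$. Hence $g$ preserves the weight grading, $\wt(gv)=\wt(v)$ for homogeneous $v$, and therefore restricts to a linear endomorphism of each graded piece, in particular to a linear map $g|_{V_1}\colon V_1\to V_1$. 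As $g$ is bijective on all of $V$ and grade-preserving, $g|_{V_1}$ is a linear bijection.

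The key step is to extract from the intertwining relation $gY(a,x)g^{-1}=Y(ga,x)$ the corresponding identity on modes. Writing $Y(a,x)=\sum_{n\in\Z}a_nx^{-n-1}$ and comparing coefficients of $x^{-n-1}$ on both sides gives $g\,a_n\,g^{-1}=(ga)_n$ for every $a\in V$ and every $n\in\Z$. Specialising to $n=0$ yields $g\,a_0\,g^{-1}=(ga)_0$, which is exactly the relation I need for the bracket $[u,v]=u_0v$ on $V_1$.

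With this in hand the verification is immediate: for $u,v\in V_1$,
\begin{equation*}
g[u,v]=g(u_0v)=g\,u_0\,g^{-1}(gv)=(gu)_0(gv)=[gu,gv],
\end{equation*}
so $g|_{V_1}$ intertwines the Lie bracket. Combined with the bijectivity established above, this shows that $g|_{V_1}$ is a Lie algebra automorphism of $V_1$ in the sense of Definition~\ref{defi:laaut}, completing the proof.

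I do not expect any genuine obstacle here: the result is purely formal once the mode-intertwining identity $g\,a_n\,g^{-1}=(ga)_n$ is written down. The only point deserving a word of care is the mild bookkeeping that $g$, a priori an automorphism of the whole space $V$, indeed restricts to a bijection of the finite-dimensional piece $V_1$; this follows cleanly from $g\omega=\omega$ forcing $g$ to preserve the $L_0$-grading, and hence requires nothing beyond the definition of \voa{} automorphism.
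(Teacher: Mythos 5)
Your proof is correct and follows essentially the same route as the paper's: both extract the mode identity $g\,a_ng^{-1}=(ga)_n$ from the defining relation $gY(a,x)g^{-1}=Y(ga,x)$ and then compute $(gu)_0(gv)=gu_0g^{-1}gv=g(u_0v)$. Your additional remark that $g\omega=\omega$ forces $g$ to preserve the $L_0$-grading, so that $g$ restricts to a bijection of $V_1$, is a point the paper leaves implicit (it is noted earlier in Section~\ref{sec:twmod}), but the substance of the argument is identical.
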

\begin{proof}
To prove that $g$ is a Lie algebra automorphism we need to show that
\begin{equation*}
(gu)_0(gv)=[gu,gv]=g([u,v])=g(u_0v)
\end{equation*}
for all $u,v\in V_1$. By definition of \voa{} automorphism we know that
\begin{equation*}
gv_ng^{-1}=(gv)_n
\end{equation*}
for all $v\in V$. Hence
\begin{equation*}
(gu)_0(gv)=gu_0g^{-1}gv=gu_0v,
\end{equation*}
which completes the proof.
\end{proof}

\section{Lie Algebra Automorphisms and Fixed-Point Lie Subalgebras}\label{sec:kac}
The purpose of this section is to describe a beautiful theory by Kac on the possible fixed-point Lie subalgebras of a simple, finite-dimensional Lie algebra and its generalisation to the semisimple case. In the following, all Lie algebras will be over the field $\C$. The main sources for this section are \cite{Hum73,FH91,Kac90,Ser01}.

\minisec{Lie Algebra Automorphisms}
The main object in this section are Lie algebra automorphisms:
\begin{defi}[Lie Algebra Automorphism]\label{defi:laaut}
Let $\g$ be a Lie algebra.
A vector-space automorphism $\varphi\in\Aut_\C(\g)$ is called a \emph{Lie algebra automorphism} if
\begin{equation*}
[\varphi(x),\varphi(y)]=\varphi([x,y])
\end{equation*}
for all $x,y\in\g$. We denote by $\Aut(\g)$ the group of automorphisms of $\g$.
\end{defi}

\begin{defi}[Adjoint Endomorphism]
Let $\g$ be a Lie algebra and $x\in\g$. Then we can define the \emph{adjoint endomorphism} $\ad_x\colon\g\to\g$,
\begin{equation*}
\quad\ad_x(y):=[x,y]
\end{equation*}
for all $y\in\g$.
\end{defi}
Then the map $\ad\colon\g\to\End(\g)$, $x\mapsto\ad_x$ is a representation of the Lie algebra $\g$ over the vector space $\g$, called the \emph{adjoint representation}.

Let $\g$ be a Lie algebra and $x\in\g$ such that $\ad_x$ is nilpotent, i.e.\ $(\ad_x)^k=0$ for some $k\in\Ns$. Then the usual exponential power series
\begin{equation*}
\exp(\ad_x):=1+\ad_x+\ldots+\frac{(\ad_x)^{k-1}}{(k-1)!}
\end{equation*}
has only finitely many terms. One can show that $\exp(\ad_x)\in\Aut(\g)$.

\begin{defi}[Inner Automorphism Group, \cite{Hum73}, Section~2.3]
Let $\g$ be a Lie algebra. The subgroup of $\Aut(\g)$ generated by the automorphisms of the form $\exp(\ad_x)$ for some $x\in\g$ with $\ad_x$ nilpotent is called the \emph{inner automorphism group} of $\g$ and denoted by $\Inn(\g)$. Its elements are called \emph{inner automorphisms}.
\end{defi}
One can show that $\Inn(\g)$ is a normal subgroup of $\Aut(\g)$.
\begin{defi}[Outer Automorphism Group]
Let $\g$ be a Lie algebra. The quotient
\begin{equation*}
\Out(\g):=\Aut(\g)/\Inn(\g)
\end{equation*}
is called the \emph{outer automorphism group} of $\g$.
\end{defi}

\minisec{Simple Lie Algebras}

In the following we consider finite-dimensional, simple and later semisimple Lie algebras. First, we state the well-known classification result by Killing, Cartan and Dynkin.
\begin{thm}[Classification, \cite{Ser01}, Section~II.7]
Let $\g$ be a finite-dimensional, semisimple Lie algebra. Then $\g$ is a direct sum of (finite-dimensional) simple Lie algebras. Every finite-dimensional, simple Lie algebra is isomorphic to an element of the following four infinite families (labelled $A$, $B$, $C$ and $D$) or five exceptional cases:\footnote{One could start the enumeration of the infinite series and the $E$-series at lower indices but this would give a redundancy because of the following \emph{exceptional isomorphisms}: $A_1\cong B_1\cong C_1$, $C_2\cong B_2$, $D_3\cong A_3$, $E_4\cong A_4$, $E_5\cong D_5$. Moreover, sometimes the following Lie algebras are also considered but they are not simple (but rather semisimple, trivial or abelian): $A_0\cong B_0\cong C_0\cong D_0\cong 0$ (trivially), $D_1\cong\C$, $D_2\cong A_1^2$, $E_3\cong A_1A_2$.
}
\begin{itemize}
 \item $A_n=\sl(n+1)$ for $n\in\Z_{\geq1}$,
 \item $B_n=\so(2n+1)$ for $n\in\Z_{\geq2}$,
 \item $C_n=\sp(2n)$ for $n\in\Z_{\geq3}$,
 \item $D_n=\so(2n)$ for $n\in\Z_{\geq4}$,
 \item $E_6$, $E_7$, $E_8$, $F_4$, $G_2$.
\end{itemize}
\end{thm}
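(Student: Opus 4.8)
The plan is to establish the two assertions separately: first the decomposition of a semisimple $\g$ into simple ideals, and then the classification of the simple summands. For the decomposition, I would invoke Cartan's criterion, according to which a finite-dimensional Lie algebra is semisimple if and only if its Killing form $\kappa(x,y)=\tr(\ad_x\ad_y)$ is non-degenerate. The key observation is that for any ideal $\mathfrak{a}$ of $\g$ the orthogonal complement $\mathfrak{a}^\perp$ with respect to $\kappa$ is again an ideal (since $\kappa$ is $\ad$-invariant), and that $\kappa$ restricted to $\mathfrak{a}\cap\mathfrak{a}^\perp$ vanishes, so by Cartan's solvability criterion $\mathfrak{a}\cap\mathfrak{a}^\perp$ is a solvable ideal, hence zero in a semisimple algebra. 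Therefore $\g=\mathfrak{a}\oplus\mathfrak{a}^\perp$, and induction on $\dim_\C(\g)$ yields a decomposition $\g=\g_1\oplus\ldots\oplus\g_r$ into simple ideals.

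Next, I would fix a simple Lie algebra $\g$, choose a Cartan subalgebra $\h$ (a maximal toral, equivalently self-normalising nilpotent, subalgebra), and decompose $\g$ under the adjoint action of $\h$ into the root space decomposition $\g=\h\oplus\bigoplus_{\alpha\in\Phi}\g_\alpha$, where $\Phi\subset\h^*$ is the finite set of non-zero roots. Using the non-degeneracy of the restriction of $\kappa$ to $\h$ one transports the form to $\h^*$ and verifies the defining axioms of an abstract root system: $\Phi$ spans $\h^*$, each root space is one-dimensional, $\Phi$ is stable under the reflections $s_\alpha$, and the integrality condition $2(\alpha,\beta)/(\beta,\beta)\in\Z$ holds. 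This reduces the classification of simple Lie algebras to the classification of irreducible reduced root systems.

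The combinatorial heart of the argument is the classification of irreducible root systems via their Dynkin diagrams, and this is the step I expect to be the \emph{main obstacle}. Choosing a base of simple roots, the Cartan integers $a_{ij}=2(\alpha_i,\alpha_j)/(\alpha_j,\alpha_j)$ encode $\Phi$, and the constraint $a_{ij}a_{ji}\in\{0,1,2,3\}$ for $i\neq j$ (which follows from the Cauchy--Schwarz inequality together with integrality) severely restricts the admissible diagrams. One must then show by a careful case analysis that a connected admissible diagram, namely one carrying a positive-definite symmetrised Cartan matrix, can only be one of $A_n$, $B_n$, $C_n$, $D_n$, $E_6$, $E_7$, $E_8$, $F_4$, $G_2$. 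This rests on a sequence of lemmas excluding cycles, bounding the number of branch points and the valences of the vertices, and treating the triple-bond and branching configurations by exhibiting explicit subdiagrams whose quadratic forms fail to be positive-definite.

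Finally, for the existence and uniqueness of a simple Lie algebra attached to each diagram, I would appeal to Serre's theorem: given a Cartan matrix $(a_{ij})$ of one of the listed types, the Lie algebra presented by the Chevalley generators $e_i,f_i,h_i$ subject to the Serre relations is finite-dimensional and simple with the prescribed root system, and conversely any simple $\g$ is recovered from its own Cartan matrix in this way. Identifying the four infinite series with $\sl(n+1)$, $\so(2n+1)$, $\sp(2n)$ and $\so(2n)$ is then a direct verification. This yields both completeness of the list and the bijection between isomorphism classes and Dynkin diagrams, which is the statement to be proved.
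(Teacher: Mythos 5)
The paper gives no proof of this statement at all: it is quoted as a classical result with a citation to \cite{Ser01}, Section~II.7. Your outline --- Cartan's criterion and orthogonal complements of ideals for the decomposition into simple summands, then root space decomposition, the Dynkin-diagram classification of irreducible root systems, and Serre's theorem for existence and uniqueness --- is precisely the standard Killing--Cartan argument contained in the cited reference, so it is correct and takes essentially the same route as the paper's source.
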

We write for example $A_5D_7$ for $A_5\oplus D_7$ and $A_4^3$ for $A_4\oplus A_4\oplus A_4$. We write $X_l$, $X=A,B,C,D,E,F,G$, for a generic simple Lie algebra where $l$ denotes the \emph{rank} of $X_l$, i.e.\ the dimension of a Cartan subalgebra $H$, which does not depend on the choice of $H$. The dimensions of the simple Lie algebras are given by:
\begin{equation*}
\begin{tabular}{c|l|c|r}
Lie alg.&Dim.&Lie alg.&Dim.\\\hline
$A_n$&$n(n+2)$ &$E_6$&$78$\\
$B_n$&$n(2n+1)$&$E_7$&$133$\\
$C_n$&$n(2n+1)$&$E_8$&$248$\\
$D_n$&$n(2n-1)$&$F_4$&$52$\\
     &         &$G_2$&$14$
\end{tabular}
\end{equation*}

The finite-dimensional, simple Lie algebras are classified by their root systems, which in turn are classified by their (directed) Dynkin diagrams. The connected Dynkin diagrams of the finite-dimensional, simple Lie algebras are shown in Figure~\ref{fig:dynkin1}.
\begin{figure}[p]
\centering
\begin{align*}
&A_l,\,l\geq 1 && 
\begin{tikzpicture}[start chain,node distance=2em,every path/.style={shorten >=4pt,shorten <=4pt},line width=0.5pt,baseline=-1ex]
\dnode{1}
\dnode{2}
\dydots
\dnode{l-1}
\dnode{l}
\end{tikzpicture}
\\
&B_l,\,l\geq 2 &&
\begin{tikzpicture}[start chain,node distance=2em,every path/.style={shorten >=4pt,shorten <=4pt},line width=0.5pt,baseline=-1ex]
\dnode{1}
\dnode{2}
\dydots
\dnode{n-1}
\dnodenj{n}
\path (chain-4) -- node[anchor=mid] {\(\Rightarrow\)} (chain-5);
\end{tikzpicture}
\\
&C_l,\,l\geq 3 &&
\begin{tikzpicture}[start chain,node distance=2em,every path/.style={shorten >=4pt,shorten <=4pt},line width=0.5pt,baseline=-1ex]
\dnode{1}
\dnode{2}
\dydots
\dnode{l-1}
\dnodenj{l}
\path (chain-4) -- node[anchor=mid] {\(\Leftarrow\)} (chain-5);
\end{tikzpicture}
\\
&D_l,\,l\geq4 &&
\begin{tikzpicture}[node distance=2em,every path/.style={shorten >=4pt,shorten <=4pt},line width=0.5pt,baseline=-1ex]
\begin{scope}[start chain]
\dnode{1}
\dnode{2}
\node[chj,draw=none] {\dots};
\dnode{l-2}
\dnode{l-1}
\end{scope}
\begin{scope}[start chain=br going above]
\chainin(chain-4);
\dnodebr{l}
\end{scope}
\end{tikzpicture}
\\
&E_6 &&
\begin{tikzpicture}[node distance=2em,every path/.style={shorten >=4pt,shorten <=4pt},line width=0.5pt,baseline=-1ex]
\begin{scope}[start chain]
\foreach \dyni in {1,...,5} {
\dnode{\dyni}
}
\end{scope}
\begin{scope}[start chain=br going above]
\chainin (chain-3);
\dnodebr{6}
\end{scope}
\end{tikzpicture}
\\
&E_7 &&
\begin{tikzpicture}[node distance=2em,every path/.style={shorten >=4pt,shorten <=4pt},line width=0.5pt,baseline=-1ex]
\begin{scope}[start chain]
\foreach \dyni in {1,...,6} {
\dnode{\dyni}
}
\end{scope}
\begin{scope}[start chain=br going above]
\chainin (chain-3);
\dnodebr{7}
\end{scope}
\end{tikzpicture}
\\
&E_8 &&
\begin{tikzpicture}[node distance=2em,every path/.style={shorten >=4pt,shorten <=4pt},line width=0.5pt,baseline=-1ex]
\begin{scope}[start chain]
\foreach \dyni in {1,...,7} {
\dnode{\dyni}
}
\end{scope}
\begin{scope}[start chain=br going above]
\chainin (chain-5);
\dnodebr{8}
\end{scope}
\end{tikzpicture}
\\
&F_4 &&
\begin{tikzpicture}[start chain,node distance=2em,every path/.style={shorten >=4pt,shorten <=4pt},line width=0.5pt,baseline=-1ex]
\dnode{1}
\dnode{2}
\dnodenj{3}
\dnode{4}
\path (chain-2) -- node[anchor=mid] {\(\Rightarrow\)} (chain-3);
\end{tikzpicture}
\\
&G_2 &&
\begin{tikzpicture}[start chain,node distance=2em,every path/.style={shorten >=4pt,shorten <=4pt},line width=0.5pt,baseline=-1ex]
\dnodenj{1}
\dnodenj{2}
\path (chain-1) -- node {\(\Rrightarrow\)} (chain-2);
\end{tikzpicture}
\end{align*}
\caption{The connected Dynkin diagrams. The index $l$ of $X_l$ equals the number of nodes. Figure based on \cite{pict:dynkin}.}
\label{fig:dynkin1}
\end{figure}

Finite-dimensional, semisimple Lie algebras correspond to not necessarily connected Dynkin diagrams where each connected component corresponds to a simple component of the Lie algebra and is one of those in Figure~\ref{fig:dynkin1}.

\minisec{Automorphisms of Simple Lie Algebras}
We now recall some facts about automorphism of simple and semisimple Lie algebras.
\begin{thm}[\cite{FH91}, Proposition~D.40]
Let $\g$ be a finite-dimensional, simple Lie algebra. Then the outer automorphism group $\Out(\g)=\Aut(\g)/\Inn(\g)$ is isomorphic to the automorphism group of the corresponding Dynkin diagram. 
\end{thm}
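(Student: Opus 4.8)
The statement to prove is that for a finite-dimensional simple Lie algebra $\g$, the outer automorphism group $\Out(\g) = \Aut(\g)/\Inn(\g)$ is isomorphic to the automorphism group of the corresponding Dynkin diagram. The plan is to set up the two natural maps --- the action of an automorphism on the root system data, and the realization of diagram symmetries as honest Lie algebra automorphisms --- and show they are mutually inverse at the level of the quotient.

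First I would fix a Cartan subalgebra $H \leq \g$ and a system of simple roots $\{\alpha_1, \ldots, \alpha_l\}$, giving the Dynkin diagram $\Gamma$ of $\g$. The key structural input is that $\Inn(\g)$ acts transitively on pairs (Cartan subalgebra, Borel subalgebra containing it); this is the standard conjugacy theorem for Cartan and Borel subalgebras. Given an arbitrary automorphism $\varphi \in \Aut(\g)$, the image $\varphi(H)$ is another Cartan subalgebra and $\varphi$ carries the chosen set of positive roots to some positive system. Using the transitivity, I can compose $\varphi$ with a suitable inner automorphism $\iota \in \Inn(\g)$ so that $\iota\varphi$ stabilizes both $H$ and the chosen Borel, hence permutes the simple roots $\{\alpha_1, \ldots, \alpha_l\}$. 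Since $\iota\varphi$ preserves the Killing form and the bracket, it preserves the Cartan integers $\langle \alpha_i, \alpha_j^\vee\rangle$, so the induced permutation of simple roots is an automorphism of the Dynkin diagram $\Gamma$. This defines a homomorphism $\Theta\colon \Out(\g) \to \Aut(\Gamma)$, where well-definedness modulo $\Inn(\g)$ follows because two such adjustments of $\varphi$ differ by an inner automorphism fixing $H$ and the Borel, which must act trivially on the simple roots (a torus element fixing all simple roots acts as a scalar system that can only be the identity on the diagram).

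Second I would construct a section realizing each diagram automorphism. Given a symmetry $\pi$ of $\Gamma$, i.e.\ a permutation of $\{1, \ldots, l\}$ preserving the Cartan matrix, I invoke the Chevalley--Serre presentation: $\g$ is generated by elements $e_i, f_i, h_i$ subject to relations determined entirely by the Cartan matrix $(\langle \alpha_i, \alpha_j^\vee\rangle)$. Because $\pi$ preserves this matrix, the assignment $e_i \mapsto e_{\pi(i)}$, $f_i \mapsto f_{\pi(i)}$, $h_i \mapsto h_{\pi(i)}$ respects all the Serre relations and therefore extends to a Lie algebra automorphism $\sigma_\pi \in \Aut(\g)$. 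This gives a map $\sigma\colon \Aut(\Gamma) \to \Aut(\g)$, and by construction $\Theta$ applied to the class of $\sigma_\pi$ returns $\pi$, so $\Theta$ is surjective. For injectivity of $\Theta$, I would show that any $\varphi$ whose associated diagram permutation is trivial lies in $\Inn(\g)$: after the inner adjustment above, $\iota\varphi$ fixes $H$ and each simple root, hence fixes each $h_i$ and maps $e_i \mapsto c_i e_i$ for nonzero scalars $c_i$; such a diagonal automorphism is realized by $\exp(\ad_h)$ for a suitable $h \in H$ (solving $e^{\alpha_i(h)} = c_i$, which is possible since the simple roots form a basis of $H^*$), and is therefore inner.

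The main obstacle I expect is making the injectivity argument fully rigorous, specifically the claim that a diagonal automorphism fixing $H$ pointwise and scaling each root vector is inner. This requires knowing that $\exp(\ad_h)$ for $h \in H$ acts on the root vector $e_\alpha$ by the scalar $e^{\alpha(h)}$ and that these exponentials generate enough of the relevant torus inside $\Inn(\g)$; one must check that the scalars $c_i$ arising this way are unconstrained (any nonzero values), which follows from the linear independence of the simple roots, and that consistency on non-simple root vectors is automatic because they are brackets of the simple ones. A secondary subtlety is the appeal to the conjugacy theorem for Cartan and Borel subalgebras, which underlies both well-definedness of $\Theta$ and the reduction step; I would cite this as standard (it is the content behind $\Inn(\g)$ being exactly the group quotiented out) rather than reprove it. Once these pieces are in place, $\Theta$ is a well-defined isomorphism $\Out(\g) \xrightarrow{\sim} \Aut(\Gamma)$.
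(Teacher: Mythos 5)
The paper does not prove this statement at all: it is quoted as Proposition~D.40 of Fulton--Harris and used as a black box, so there is no internal proof to compare against. Your argument is the standard textbook proof and is essentially correct: reduction to automorphisms stabilising a fixed Cartan and Borel via the conjugacy theorem, the induced diagram permutation, the Chevalley--Serre section realising each diagram symmetry, and the identification of diagonal automorphisms with torus elements for injectivity.

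One point you should make explicit, because it interacts with the paper's conventions: the paper defines $\Inn(\g)$ as the group generated by $\exp(\ad_x)$ for $\ad_x$ \emph{nilpotent}, whereas your injectivity step produces an element $\exp(\ad_h)$ with $h\in H$, for which $\ad_h$ is semisimple rather than nilpotent. You therefore need the standard fact that the adjoint group generated by the unipotent elements $\exp(\ad_{e_\alpha})$ already contains the maximal torus (e.g.\ by the $\sl_2$-computation expressing a diagonal element as a product of unipotents), so that $\exp(\ad_h)\in\Inn(\g)$ after all. You gesture at this ("generate enough of the relevant torus") but it is the one place where the argument, as written, silently crosses between two descriptions of the inner automorphism group. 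The remaining ingredients --- transitivity of $\Inn(\g)$ on pairs $(H,B)$, triviality of the action of the stabiliser on the simple roots, and the consistency of the scalars $c_i$ on non-simple root vectors --- are handled correctly.
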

In fact, $\Aut(\g)$ is isomorphic to the semidirect product of $\Inn(\g)$ and $\Out(\g)$, i.e.\ the short exact sequence
\begin{equation*}
1\to\Inn(\g)\to\Aut(\g)\to\Out(\g)\to 1
\end{equation*}
is split.

With the above theorem and the classification result one can immediately determine the outer automorphism groups of all the finite-dimensional, simple Lie algebras. One obtains:
\begin{prop}
\item
\begin{itemize}
\item $\Out(A_1)\cong 1$, $\Out(A_n)\cong\Z_2$ for $n\geq 2$,
\item $\Out(B_n)\cong\Out(C_n)\cong 1$,
\item $\Out(D_4)\cong S_3$, $\Out(D_n)\cong\Z_2$ for $n\geq 5$,
\item $\Out(E_6)\cong\Z_2$, $\Out(E_7)\cong\Out(E_8)\cong 1$,
\item $\Out(F_4)\cong\Out(G_2)\cong 1$.
\end{itemize}
\end{prop}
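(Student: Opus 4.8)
The plan is to combine the classification theorem for connected Dynkin diagrams (Figure~\ref{fig:dynkin1}) with the preceding theorem identifying $\Out(\g)$ with the automorphism group of the Dynkin diagram of $\g$. The entire proof is a case-by-case inspection: for each of the four infinite families $A_l, B_l, C_l, D_l$ and each of the five exceptional types $E_6, E_7, E_8, F_4, G_2$, I would determine the symmetry group of the corresponding diagram and read off the result. Since the theorem reduces the computation of $\Out(\g)$ entirely to a finite combinatorial question about graphs with decorated (directed) edges, no Lie-theoretic input beyond the two cited theorems is needed.

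First I would treat the simply-laced cases, where all edges are undirected and a diagram automorphism is just a graph automorphism. For $A_1$ the diagram is a single node, so it has no nontrivial automorphism and $\Out(A_1)\cong 1$; for $A_l$ with $l\geq 2$ the diagram is a path on $l$ nodes, whose only nontrivial symmetry is the reflection swapping the two ends, giving $\Out(A_l)\cong\Z_2$. For $D_l$ I would note the diagram is a path with a fork at one end: when $l\geq 5$ only the two fork-prongs may be exchanged, yielding $\Z_2$, whereas for $D_4$ the central node has three pairwise symmetric neighbours, so the symmetry group is the full permutation group $S_3$ on those three prongs. For $E_6$ the diagram has a single reflection symmetry (giving $\Z_2$), while $E_7$ and $E_8$ are rigid paths-with-a-branch admitting no nontrivial automorphism, so both give the trivial group.

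Next I would handle the multiply-laced cases $B_l, C_l, F_4, G_2$. Here the key observation is that an automorphism of a \emph{directed} Dynkin diagram must preserve the orientation and multiplicity of each edge, so any symmetry is far more constrained than a bare graph symmetry. For $B_l$ and $C_l$ the diagram is a path carrying one double edge (oriented) at one end, which breaks the reflection symmetry of the underlying path, leaving only the identity; hence $\Out(B_l)\cong\Out(C_l)\cong 1$. Similarly $F_4$ is a path of four nodes with an oriented double edge in the middle, and $G_2$ is a pair of nodes joined by an oriented triple edge; in both the arrow pins down the orientation and no nontrivial symmetry survives, so $\Out(F_4)\cong\Out(G_2)\cong 1$.

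This argument is essentially a verification rather than a genuine proof, so I do not expect a serious obstacle; the cited theorem does all the heavy lifting. The only point requiring a little care is the distinction between automorphisms of the underlying undirected graph and automorphisms of the decorated Dynkin diagram: one must remember that edge orientations (the arrows in $B_l, C_l, F_4, G_2$) are part of the data and must be respected, which is exactly what kills the would-be reflection symmetries in those types. I would state this explicitly to justify why, for instance, $B_l$ and $A_l$ (whose underlying graphs are both paths) have different outer automorphism groups.
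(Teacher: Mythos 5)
Your proposal is correct and matches the paper's (implicit) argument exactly: the paper derives the proposition "immediately" from the theorem identifying $\Out(\g)$ with the automorphism group of the Dynkin diagram together with the classification of connected diagrams, which is precisely your case-by-case inspection. Your explicit remark that edge orientations must be preserved (distinguishing $B_l$, $C_l$, $F_4$, $G_2$ from their underlying undirected graphs) is a worthwhile clarification but not a departure from the paper's route.
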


We are interested in the Lie algebra structure of the fixed points under a Lie algebra automorphism. For simple Lie algebras this was studied by Kac.
\begin{prop}[\cite{Kac90}, Lemma~8.1~c]\label{prop:fixreductive}
Let $\g$ be a finite-dimensional, simple Lie algebra and $\varphi$ an automorphism of $\g$ of finite order. Then the fixed-point Lie subalgebra $\g^\varphi$ is reductive, i.e.\ isomorphic to the direct sum of a semisimple Lie algebra and an abelian one.
\end{prop}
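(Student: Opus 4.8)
The plan is to reduce the statement to a positivity argument on a compact real form. First I would exploit that $\varphi$ has finite order, say $n$: then $\varphi$ is a semisimple linear operator on the finite-dimensional space $\g$, so $\g$ decomposes into eigenspaces $\g=\bigoplus_{j\in\Z_n}\g_j$ with $\g_j=\{x\in\g\mid\varphi x=\xi_n^{j}x\}$ and $\xi_n=\e^{(2\pi\i)1/n}$. Since $\varphi$ is a Lie algebra automorphism, this is a $\Z_n$-grading, i.e.\ $[\g_i,\g_j]\subseteq\g_{i+j}$; in particular the fixed-point subalgebra is exactly the degree-zero piece, $\g^{\varphi}=\g_0$, and the entire problem becomes showing that $\g_0$ is reductive.

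Next I would record two structural facts. The adjoint action $\g_0\to\mathfrak{gl}(\g)$, $x\mapsto\ad_x$, is faithful, because its kernel is $\g_0\cap Z(\g)$ and the centre $Z(\g)$ of the simple algebra $\g$ is trivial. Moreover the Killing form $\kappa$ of $\g$ is $\varphi$-invariant, $\kappa(\varphi x,\varphi y)=\kappa(x,y)$, so for $x\in\g_i$, $y\in\g_j$ one has $\kappa(x,y)=\xi_n^{i+j}\kappa(x,y)$, whence $\kappa(\g_i,\g_j)=0$ unless $i+j\equiv0\pmod n$. Since $\kappa$ is non-degenerate by Cartan's criterion, its restriction to $\g_0$ is a non-degenerate, $\ad\g_0$-invariant symmetric bilinear form. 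At this point $\g_0$ acts faithfully on $\g$ and carries a non-degenerate invariant form, but — and this is the crucial subtlety — a non-degenerate invariant form does not by itself force reductivity (nilpotent quadratic Lie algebras exist), so a further idea is needed.

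The main obstacle, and the heart of the proof, is therefore to upgrade non-degeneracy to complete reducibility, and the cleanest route I see is to pass to a compact real form. By a classical fixed-point argument (the space of compact real forms of $\g$ is a Riemannian symmetric space of non-positive curvature on which the finite cyclic group $\langle\varphi\rangle$ acts by isometries, hence admits a fixed point) one may choose a compact real form $\mathfrak{u}$ of $\g$ with $\varphi(\mathfrak{u})=\mathfrak{u}$. Being $\C$-linear and preserving $\mathfrak{u}$, $\varphi$ also preserves $\i\mathfrak{u}$, so from $\g=\mathfrak{u}\oplus\i\mathfrak{u}$ we obtain $\g^{\varphi}=\mathfrak{u}^{\varphi}\oplus\i\,\mathfrak{u}^{\varphi}=(\mathfrak{u}^{\varphi})_{\C}$. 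Now $\mathfrak{u}^{\varphi}$ is a subalgebra of the compact Lie algebra $\mathfrak{u}$, on which $-\kappa$ restricts to a \emph{positive-definite} invariant form; with respect to this inner product the adjoint action of $\mathfrak{u}^{\varphi}$ on itself is by skew-symmetric operators, so the orthogonal complement of any invariant subspace is again invariant and the adjoint representation is completely reducible. Hence $\mathfrak{u}^{\varphi}$ is reductive, and therefore so is its complexification $\g_0=\g^{\varphi}$.

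As a purely algebraic alternative I would instead invoke the characterisation that a finite-dimensional Lie algebra is reductive precisely when it admits a faithful, completely reducible finite-dimensional representation (equivalently, when its solvable radical coincides with its centre), and analyse the faithful module $\g$ from the second step directly using the grading; either way the single genuinely non-formal ingredient is the complete reducibility of the $\g_0$-module structure, for which the $\varphi$-stable compact real form is the decisive tool. Finally, once $\g_0$ is known to be reductive, writing $\g_0=[\g_0,\g_0]\oplus Z(\g_0)$ with $[\g_0,\g_0]$ semisimple gives precisely the claimed decomposition of $\g^{\varphi}$ into a semisimple and an abelian summand.
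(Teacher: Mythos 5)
Your proof is correct. The paper itself gives no argument here --- it simply cites Lemma~8.1~(c) of Kac's book --- and the standard proof found there is essentially the ``purely algebraic alternative'' you only sketch at the end: $\g_0=\g^\varphi$ acts faithfully on $\g$ by $\ad$ (since $Z(\g)=0$), the restriction of the Killing form to $\g_0$ is non-degenerate by your eigenspace-pairing computation, and a linear Lie algebra on which the trace form of a faithful module is non-degenerate is reductive (because $[\g_0,\mathfrak{r}]$ acts by strictly block-triangular operators in a composition series and is therefore isotropic for the trace form, forcing $\mathfrak{r}\subseteq Z(\g_0)$). Your main route via a $\varphi$-stable compact real form $\mathfrak{u}$ is a genuinely different, equally classical argument: it trades that radical-theoretic lemma for the Cartan fixed-point theorem on the Hadamard space of compact real forms, after which positivity of $-\kappa$ on $\mathfrak{u}^\varphi$ makes complete reducibility of the adjoint representation immediate from skew-symmetry, and reductivity passes to the complexification $\g^\varphi=(\mathfrak{u}^\varphi)_\C$. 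You were also right to flag that a non-degenerate invariant form alone does not force reductivity; that is exactly the point where either the compactness or the trace-form criterion must be brought in. Both routes are sound; the algebraic one is self-contained at the level of Lie theory the paper otherwise uses, while yours is shorter once the invariant compact real form is granted.
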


Kac classified all finite-order automorphisms of finite-dimensional, simple Lie algebras and their fixed-point Lie subalgebras \cite{Kac69,Kac90}. As main ingredient for this classification one needs the \emph{affine Dynkin diagrams}, which classify Cartan matrices of affine Lie algebras.\footnote{The Dynkin diagrams in Figure~\ref{fig:dynkin1} corresponding to simple Lie algebras are also called \emph{finite} Dynkin diagrams to distinguish them from the affine ones.} The affine Dynkin diagrams are classified in \cite{Kac90}, Theorem~4.8. Each affine Dynkin diagram corresponds to a finite Dynkin diagram and the affine diagrams are denoted by $X_l^{(1)}$, $X_l^{(2)}$ or $X_l^{(3)}$, where $X_l$ is the corresponding finite diagram. The diagrams $X_l^{(1)}$ are called \emph{extended affine Dynkin diagrams}, $X_l^{(2)}$ and $X_l^{(3)}$ are called \emph{twisted affine Dynkin diagrams}. The $X_l^{(k)}$ for $k=2,3$ are called twisted diagrams since they each correspond to the diagram automorphism of the finite diagram $X_l$ (outer automorphism of the simple Lie algebra $X_l$) of order $k$. All the affine Dynkin diagrams are shown in Figures~\ref{fig:dynkin3} and \ref{fig:dynkin4} (where the diagram $A_3^{(2)}$ is for convenience called $D_3^{(2)}$). Note that each node in the affine diagrams has a \emph{Kac label} $a_i$ which plays a rôle in the following theorem. (See Remark~8.3 and introduction of Section~8.6 in \cite{Kac90} for the definition of the Kac labels.)

\let\dlabel=\mlabel
\begin{figure}[p]
\centering
\begin{align*}
&A_1^{(1)} &&
\begin{tikzpicture}[start chain,node distance=2em,every path/.style={shorten >=4pt,shorten <=4pt},line width=0.5pt,baseline=-1ex]
\dnodenj{1}
\dnodenj{1}
\path (chain-1) -- node[anchor=mid] {\(\Longleftrightarrow\)} (chain-2);
\end{tikzpicture}
\\
&A_l^{(1)},\, l \ge 2 &&
\begin{tikzpicture}[start chain,node distance=2em,every path/.style={shorten >=4pt,shorten <=4pt},line width=0.5pt,baseline=-1ex,node distance=1ex and 2em]
\dnode{1}
\dnode{1}
\dydots
\dnode{1}
\dnode{1}
\begin{scope}[start chain=br going above]
\chainin(chain-3);
\node[ch,join=with chain-1,join=with chain-5,label={[inner sep=1pt]10:\(1\)},fill=yellow] {};
\end{scope}
\end{tikzpicture}
\\
&B_l^{(1)},\, l \ge 3 &&
\begin{tikzpicture}[node distance=2em,every path/.style={shorten >=4pt,shorten <=4pt},line width=0.5pt,baseline=-1ex]
\begin{scope}[start chain]
\dnode{1}
\dnode{2}
\dnode{2}
\dydots
\dnode{2}
\dnodenj{2}
\end{scope}
\begin{scope}[start chain=br going above]
\chainin(chain-2);
\dnodebr{1}
\end{scope}
\path (chain-5) -- node{\(\Rightarrow\)} (chain-6);
\end{tikzpicture}
\\
&C_l^{(1)},\, l \ge 2 &&
\begin{tikzpicture}[start chain,node distance=2em,every path/.style={shorten >=4pt,shorten <=4pt},line width=0.5pt,baseline=-1ex]
\dnodenj{1}
\dnodenj{2}
\dydots
\dnode{2}
\dnodenj{1}
\path (chain-1) -- node{\(\Rightarrow\)} (chain-2);
\path (chain-4) -- node{\(\Leftarrow\)} (chain-5);
\end{tikzpicture}
\\
&D_l^{(1)},\, l \ge 4 &&
\begin{tikzpicture}[node distance=2em,every path/.style={shorten >=4pt,shorten <=4pt},line width=0.5pt,baseline=-1ex]
\begin{scope}[start chain]
\dnode{1}
\dnode{2}
\dnode{2}
\dydots
\dnode{2}
\dnode{1}
\end{scope}
\begin{scope}[start chain=br going above]
\chainin(chain-2);
\dnodebr{1};
\end{scope}
\begin{scope}[start chain=br2 going above]
\chainin(chain-5);
\dnodebr{1};
\end{scope}
\end{tikzpicture}
\\
&G_2^{(1)} &&
\begin{tikzpicture}[start chain,node distance=2em,every path/.style={shorten >=4pt,shorten <=4pt},line width=0.5pt,baseline=-1ex]
\dnode{1}
\dnode{2}
\dnodenj{3}
\path (chain-2) -- node{\(\Rrightarrow\)} (chain-3);
\end{tikzpicture}
\\
&F_4^{(1)} &&
\begin{tikzpicture}[start chain,node distance=2em,every path/.style={shorten >=4pt,shorten <=4pt},line width=0.5pt,baseline=-1ex]
\dnode{1}
\dnode{2}
\dnode{3}
\dnodenj{4}
\dnode{2}
\path (chain-3) -- node[anchor=mid]{\(\Rightarrow\)} (chain-4);
\end{tikzpicture}
\\
&E_6^{(1)} &&
\begin{tikzpicture}[node distance=2em,every path/.style={shorten >=4pt,shorten <=4pt},line width=0.5pt,baseline=-1ex]
\begin{scope}[start chain]
\foreach \dyi in {1,2,3,2,1} {
\dnode{\dyi}
}
\end{scope}
\begin{scope}[start chain=br going above]
\chainin(chain-3);
\dnodebr{2}
\dnodebr{1}
\end{scope}
\end{tikzpicture}
\\
&E_7^{(1)} &&
\begin{tikzpicture}[node distance=2em,every path/.style={shorten >=4pt,shorten <=4pt},line width=0.5pt,baseline=-1ex]
\begin{scope}[start chain]
\foreach \dyi in {1,2,3,4,3,2,1} {
\dnode{\dyi}
}
\end{scope}
\begin{scope}[start chain=br going above]
\chainin(chain-4);
\dnodebr{2}
\end{scope}
\end{tikzpicture}
\\
&E_8^{(1)} &&
\begin{tikzpicture}[node distance=2em,every path/.style={shorten >=4pt,shorten <=4pt},line width=0.5pt,baseline=-1ex]
\begin{scope}[start chain]
\foreach \dyi in {1,2,3,4,5,6,4,2} {
\dnode{\dyi}
}
\end{scope}
\begin{scope}[start chain=br going above]
\chainin(chain-6);
\dnodebr{3}
\end{scope}
\end{tikzpicture}
\end{align*}
\caption{The extended affine Dynkin diagrams $X_n^{(1)}$. The number of nodes of each diagram is given by $l+1$ with indices $a_i$, $i=0,\ldots,l$. Figure based on \cite{pict:dynkin}.}
\label{fig:dynkin3}
\end{figure}
\begin{figure}[p]
\centering
\begin{align*}
&A_2^{(2)} &&
\begin{tikzpicture}[start chain,node distance=2em,every path/.style={shorten >=4pt,shorten <=4pt},line width=0.5pt,baseline=-1ex]
\dnodenj{2}
\dnodenj{1}
\path (chain-1) -- node {\QLeftarrow} (chain-2);
\end{tikzpicture}
\\
&A_{2l}^{(2)},\,l \ge 2 &&
\begin{tikzpicture}[start chain,node distance=2em,every path/.style={shorten >=4pt,shorten <=4pt},line width=0.5pt,baseline=-1ex]
\dnodenj{2}
\dnodenj{2}
\dydots
\dnode{2}
\dnode{1}
\path (chain-1) -- node[anchor=mid] {\(\Leftarrow\)} (chain-2);
\end{tikzpicture}
\\
&A_{2l-1}^{(2)},\, l \ge 3 &&
\begin{tikzpicture}[node distance=2em,every path/.style={shorten >=4pt,shorten <=4pt},line width=0.5pt,baseline=-1ex]
\begin{scope}[start chain]
\dnode{1}
\node[chj,label={below:\mlabel{2}},fill=yellow] {};
\dnode{2}
\dydots
\dnode{2}
\dnodenj{1}
\path (chain-5) -- node[anchor=mid] {\(\Leftarrow\)} (chain-6);
\end{scope}
\begin{scope}[start chain=br going above]
\chainin(chain-2);
\node[chj,label={below right:\mlabel{1}},fill=yellow] {};
\end{scope}
\end{tikzpicture}
\\
&D_{l+1}^{(2)},\, l \ge 2 &&
\begin{tikzpicture}[start chain,node distance=2em,every path/.style={shorten >=4pt,shorten <=4pt},line width=0.5pt,baseline=-1ex]
\dnode{1}
\dnodenj{1}
\dydots
\dnode{1}
\dnodenj{1}
\path (chain-1) -- node[anchor=mid] {\(\Leftarrow\)} (chain-2);
\path (chain-4) -- node[anchor=mid] {\(\Rightarrow\)} (chain-5);
\end{tikzpicture}
\\
&E_6^{(2)} &&
\begin{tikzpicture}[start chain,node distance=2em,every path/.style={shorten >=4pt,shorten <=4pt},line width=0.5pt,baseline=-1ex]
\dnode{1}
\dnode{2}
\dnode{3}
\dnodenj{2}
\dnode{1}
\path (chain-3) -- node[anchor=mid] {\(\Leftarrow\)} (chain-4);
\end{tikzpicture}
\\
\\
\\
&D_4^{(3)} &&
\begin{tikzpicture}[start chain,node distance=2em,every path/.style={shorten >=4pt,shorten <=4pt},line width=0.5pt,baseline=-1ex]
\dnode{1}
\dnode{2}
\dnodenj{1}
\path (chain-2) -- node {\(\Lleftarrow\)} (chain-3);
\end{tikzpicture}
\end{align*}
\caption{The twisted affine Dynkin diagrams $X_n^{(k)}$ for $k=2,3$. The number of nodes of each diagram is given by $l+1$ with indices $a_i$, $i=0,\ldots,l$. Figure based on \cite{pict:dynkin}.}
\label{fig:dynkin4}
\end{figure}

\begin{thm}[\cite{Kac90}, Theorem~8.6, Proposition~8.6]
Let $\g=X_n$ be a finite-dimensional, simple Lie algebra.
\begin{enumerate}
\item Let $X_n^{(k)}$ be a corresponding affine Dynkin diagram (with $l+1$ nodes) and let $s=(s_0,\ldots,s_l)$ be a sequence of non-negative, relatively prime integers $s_i\in\N$ (associated with the nodes in $X_n^{(k)}$). These data define (we omit the construction here) an automorphism $\sigma=\sigma(k,s)$ of $\g$ of order
\begin{equation*}
m=k\sum_{i=0}^{l}a_is_i.
\end{equation*}
\item Up to conjugation by an automorphism of $\g$, the automorphisms $\sigma$ obtained in this way exhaust all finite-order automorphisms of $\g$.
\item Two automorphisms $\sigma(k,s)$ and $\sigma(k',s')$ obtained in this way are conjugate by an automorphism of $\g$ if and only if $k=k'$ and the sequence $s$ can be transformed into $s'$ by an automorphism of the diagram $X_n^{(k)}$.
\item The number $k$ is the least positive integer for which $\sigma^k$ is an inner automorphism.
\item Let $i_1,\ldots,i_r$ be the indices for which the $s_i$ vanish, i.e.\ $s_{i_1}=\ldots=s_{i_r}=0$. Then the fixed-point Lie subalgebra $\g^\sigma$ is isomorphic to the direct sum of the abelian Lie algebra $\C^{l-r}$ and the semisimple Lie algebra whose Dynkin diagram is the subdiagram of the affine diagram $X_n^{(k)}$ consisting of the nodes $i_1,\ldots,i_r$.
\end{enumerate}
\end{thm}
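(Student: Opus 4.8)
The plan is to prove this through the theory of affine Kac–Moody algebras and their integer gradations, following Kac. First I would translate the problem into one about gradings: an automorphism $\sigma$ of $\g$ of order $m$ induces a $\Z_m$-grading $\g=\bigoplus_{j\in\Z_m}\g_j$ into $\xi_m^j$-eigenspaces, exactly as in Section~\ref{sec:twmod}, and conversely each such grading determines $\sigma$. Since $\Inn(\g)$ is normal in $\Aut(\g)$ with quotient $\Out(\g)$ the automorphism group of the (finite) Dynkin diagram, the image of $\sigma$ in $\Out(\g)$ has a well-defined order $k$; after conjugating I may arrange $\sigma=\mu\exp(\ad h)$ where $\mu$ is a diagram automorphism of order $k$ and $h$ lies in a $\mu$-stable Cartan subalgebra $\h$ with $h\in\h^\mu$. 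By construction $k$ is the least positive integer for which $\sigma^k$ is inner, which already yields assertion~(4).

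The central step is to realise these data inside the twisted affine Kac–Moody algebra $\hat\g=\g(X_n^{(k)})$, constructed as the $\mu$-twisted loop algebra $L(\g,\mu)$; its generalised Dynkin diagram is precisely $X_n^{(k)}$ with nodes carrying the Kac labels $a_i$. A sequence $s=(s_0,\dots,s_l)$ of non-negative, relatively prime integers defines a $\Z$-gradation of $\hat\g$ via $\deg e_i=s_i=-\deg f_i$ and degree zero on the Cartan subalgebra. I would show that the corresponding grading element, reduced modulo the loop variable and exponentiated, produces an automorphism $\sigma(k,s)$ of $\g$, and that the null-root relation $\delta=\sum_{i=0}^l a_i\alpha_i$ forces its order to be $m=k\sum_{i=0}^l a_i s_i$; this gives~(1). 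For~(2) I would prove that every finite-order automorphism arises this way: after conjugation the grading element can be pushed into the closed fundamental alcove of the affine Weyl group, whose facets are indexed by the affine simple roots, so that its barycentric coordinates are exactly the normalised $s_i$.

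The fixed-point statement~(5) then follows by inspection. The subalgebra $\g^\sigma=\g_0$ corresponds to the degree-zero part $\hat\g_0(s)$ of the gradation, which is generated by the Cartan subalgebra together with the Chevalley generators $e_i,f_i$ for those $i$ with $s_i=0$. By Proposition~\ref{prop:fixreductive} this is reductive; its semisimple part has Dynkin diagram the subdiagram of $X_n^{(k)}$ spanned by the nodes with $s_i=0$, and since $\g^\sigma$ has full rank $l$ while this semisimple part has rank $r$ (the number of vanishing $s_i$), the remaining toral directions contribute the abelian summand $\C^{l-r}$.

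The main obstacle is the conjugacy classification~(3). The hard part will be to show that $\sigma(k,s)$ and $\sigma(k',s')$ are conjugate in $\Aut(\g)$ exactly when $k=k'$ and $s,s'$ differ by a symmetry of $X_n^{(k)}$. This requires controlling the full automorphism group of $\hat\g$ --- in particular that every automorphism preserving the chosen Cartan and Borel subalgebras is, modulo inner automorphisms, a diagram automorphism of $X_n^{(k)}$ --- together with the uniqueness, up to the affine Weyl group and these diagram symmetries, of the point of the closed alcove representing the grading element. Establishing that conjugate automorphisms force their grading elements into a single Weyl-group orbit, and that such an orbit meets the closed alcove in one point up to diagram symmetry, is the technically delicate core on which the whole classification rests.
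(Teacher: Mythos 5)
The paper gives no proof of this theorem: it is quoted directly from \cite{Kac90} (Theorem~8.6 and Proposition~8.6), and the surrounding text explicitly omits even the construction of $\sigma(k,s)$, so there is nothing in the paper to compare your argument against beyond the citation. Your outline faithfully reproduces the strategy of Kac's own proof in Chapter~8 of that book --- the dictionary between finite-order automorphisms and $\Z_m$-gradings, the reduction to $\sigma=\mu\exp(\ad h)$ with $\mu$ a diagram automorphism of order $k$ (which gives~(4)), the realisation of the data via the $\Z$-gradation of type $s$ on the twisted loop algebra with the order read off from $\delta=\sum_{i}a_i\alpha_i$, the alcove argument for exhaustiveness, and the identification of $\g^\sigma$ with the degree-zero piece generated by the Cartan subalgebra and the $e_i,f_i$ with $s_i=0$ --- so as a plan it is the right one. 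Two caveats: first, the phrase ``$\g^\sigma$ has full rank $l$'' is misleading for $k>1$, since $l$ is $\dim\h^\mu$ (one less than the number of nodes of $X_n^{(k)}$) and not the rank of $\g$; the count $l-r$ for the abelian summand is nevertheless correct. Second, you rightly isolate the conjugacy classification~(3) as the delicate core but do not carry it out; a complete proof needs Kac's description of the automorphisms of the affine algebra preserving a Borel subalgebra modulo inner ones, together with the fact that the closed fundamental alcove is a strict fundamental domain for the affine Weyl group extended by the diagram symmetries, and these are precisely the inputs your sketch leaves unproved.
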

We omit the explicit description of the automorphisms of $\g$ since we are only interested in the possible fixed-point Lie subalgebras. Just note that the diagram $X_n^{(k)}$, $k=1,2,3$, corresponds exactly to the outer automorphism $\sigma\cdot\Inn(\g)\in\Out(\g)$, i.e.\ a certain diagram automorphism of $X_n$ of order $k$.

Cartan subalgebras and rank can also be defined for semisimple and reductive Lie algebras.
\begin{cor}\label{cor:subrank}
Let $\g$ be simple Lie algebra and $\sigma$ some automorphism of $\g$. Then the fixed-point Lie subalgebra $\g^\sigma$ is reductive and $\rk(\g^\sigma)\leq\rk(\g)$.
\end{cor}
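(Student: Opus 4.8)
The plan is to deduce both assertions from the results immediately preceding. First I would observe that both reductivity and rank are invariants of the fixed-point subalgebra up to isomorphism, and that this subalgebra only depends on $\sigma$ up to conjugacy: if $\sigma' = \phi\sigma\phi^{-1}$ for some $\phi\in\Aut(\g)$, then $\g^{\sigma'} = \phi(\g^\sigma)$, so $\g^{\sigma'}\cong\g^\sigma$. Hence I may freely replace $\sigma$ by any conjugate. Throughout I assume $\ord(\sigma)<\infty$: this is the case that occurs in Montague's argument (where $\sigma$ is the restriction of a finite-order \voa{} automorphism), and it is also the hypothesis under which both Proposition~\ref{prop:fixreductive} and the preceding classification theorem of Kac are stated.

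Reductivity is then nothing but the content of Proposition~\ref{prop:fixreductive}, so the only real work is the rank bound. Here I would bring $\sigma$ into Kac normal form: by the classification theorem above, after conjugation $\sigma=\sigma(k,s)$ is attached to one of the affine Dynkin diagrams $X_n^{(k)}$ (with $l+1$ nodes) together with a sequence $s=(s_0,\dots,s_l)$. Writing $r$ for the number of indices $i$ with $s_i=0$, the theorem identifies
\[
\g^\sigma\cong\C^{\,l-r}\oplus\s,
\]
where $\s$ is semisimple whose Dynkin diagram is the subdiagram of $X_n^{(k)}$ on those $r$ nodes. Since the rank of an abelian Lie algebra equals its dimension and the rank of a semisimple Lie algebra equals the number of nodes of its Dynkin diagram, this yields $\rk(\g^\sigma)=(l-r)+r=l$.

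It remains to check $l\le\rk(\g)=n$, which I would read off directly from the list of affine diagrams in Figures~\ref{fig:dynkin3} and~\ref{fig:dynkin4}. The extended diagrams $X_n^{(1)}$ have exactly $n+1$ nodes, so there $l=n$ and $\rk(\g^\sigma)=\rk(\g)$; each twisted diagram $X_n^{(k)}$ with $k=2,3$ has strictly fewer than $n+1$ nodes (for instance $A_{2m}^{(2)}$ and $A_{2m-1}^{(2)}$ have $m+1$ nodes while $\rk(\g)=2m$ and $2m-1$, $E_6^{(2)}$ has $5$ nodes against $\rk(\g)=6$, and $D_4^{(3)}$ has $3$ against $4$), so there $l<n$. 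In every case $\rk(\g^\sigma)=l\le n=\rk(\g)$.

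I expect no genuine obstacle; the rank identity $\rk(\g^\sigma)=l$ is immediate from the structure theorem, and the remaining step is a finite, routine inspection of the affine diagrams. I would only remark that this case distinction can be avoided by a uniform argument: a finite-order automorphism is semisimple and therefore stabilizes some Cartan subalgebra $H$ of $\g$, and one shows that $H^\sigma=H\cap\g^\sigma$ is a Cartan subalgebra of the reductive algebra $\g^\sigma$, whence $\rk(\g^\sigma)=\dim_\C H^\sigma\le\dim_\C H=\rk(\g)$. The delicate points there — the existence of a $\sigma$-stable Cartan subalgebra and the maximality of $H^\sigma$ as a toral subalgebra of $\g^\sigma$ — are standard but would need justification; since Kac's classification is already available, the diagram-inspection route is the shorter one and the one I would write up.
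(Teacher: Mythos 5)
Your proposal is correct and is exactly the argument the paper intends: the corollary carries no separate proof precisely because reductivity is Proposition~\ref{prop:fixreductive} and the rank bound follows from item (5) of Kac's classification, which gives $\g^\sigma\cong\C^{l-r}\oplus\s$ and hence $\rk(\g^\sigma)=l\le n=\rk(\g)$ upon inspecting the affine diagrams, just as you do. Your restriction to $\ord(\sigma)<\infty$ is also right and in fact necessary — for a unipotent automorphism such as $\exp(\ad_e)$ with $e$ nilpotent the fixed points form the centraliser of $e$, which is in general not reductive — so this hypothesis, left implicit in the paper's statement of the corollary, must indeed be read into it.
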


\minisec{Automorphisms of Semisimple Lie Algebras}

In the following we consider the case of an automorphism $\varphi$ of a finite-dimensional, semisimple Lie algebra
\begin{equation*}
\g=\g_1\oplus\ldots\oplus\g_s
\end{equation*}
with $\g_i$, $i=1,\ldots,s$, simple ideals and $s\in\N$. Let $\varphi\in\Aut(\g)$. Then the image $\varphi(\g_i)$ of one of the simple components $\g_i$ under $\varphi$ is isomorphic to $\g_i$ and an ideal in $\g$, i.e.\ equal to one of the simple components of $\g$ isomorphic to $\g_i$.

The above considerations imply that we can sort the simple components of $\g$ into ideals according to their isomorphism class and any automorphism $\varphi$ on $\g$ acts separately on each of these ideals. Hence, let us write $\g$ as
\begin{equation*}
\g\cong(\g_1)^{t_1}\oplus\ldots\oplus(\g_r)^{t_r}
\end{equation*}
with the $\g_i$ simple ideals and $t_i\in\Ns$, $i=1,\ldots,r$, for $r\in\N$ and $\g_i\not\cong\g_j$ for $i\ne j$. To describe the action of an automorphism $\varphi\in\Aut(\g)$ it suffices to study the case of a Lie algebra
\begin{equation*}
\g=(X_n)^t
\end{equation*}
for some simple Lie algebra $X_n$. We know that $\varphi$ acts as permutation $\tau\in S_t$ on the $t$ copies of $X_n$, i.e.\ if we label these copies $X_n^{(1)},\ldots,X_n^{(t)}$, then $\varphi(X_n^{(i)})=X_n^{(\tau(i))}$ for all $i=1,\ldots,t$ and we can view each $\varphi_i:=\varphi|_{X_n^{(i)}}$ as an automorphism of $X_n$. The permutation $\tau$ decomposes into cycles, on which $\varphi$ acts separately. Let $(X_n^{(p_1)}~X_n^{(p_2)}~\ldots~X_n^{(p_d)})$ be one such cycle of length $d\in\Ns$. The situation is depicted in the following diagram:

\begin{equation*}
\begin{tikzpicture}[every path/.style={shorten >=6pt,shorten <=6pt}]
\def \n {5}
\def \radius {1.8cm}
\def \margin {8} \foreach \s in {1,...,3}
{
  \node[draw=none] at ({360/\n * (\s - 1)}:\radius) {$X_n^{(p_\s)}$};
  \draw[->, >=latex] ({360/\n * (\s - 1)+\margin}:\radius) 
    arc ({360/\n * (\s - 1)+\margin}:{360/\n * (\s)-\margin}:\radius);
  \node[draw=none] at ({360/\n * (\s - 0.5)}:\radius * 0.85) {$\varphi_\s$};
}
\foreach \s in {4}
{
  \node[draw=none] at ({360/\n * (\s - 1)}:\radius) {$X_n^{(p_\s)}$};
  \draw[dotted,->, >=latex] ({360/\n * (\s - 1)+\margin}:\radius) 
    arc ({360/\n * (\s - 1)+\margin}:{360/\n * (\s)-\margin}:\radius);
  \node[draw=none] at ({360/\n * (\s - 0.5)}:\radius * 0.85) {};
}
\foreach \s in {5}
{
  \node[draw=none] at ({360/\n * (\s - 1)}:\radius) {$X_n^{(p_d)}$};
  \draw[->, >=latex] ({360/\n * (\s - 1)+\margin}:\radius) 
    arc ({360/\n * (\s - 1)+\margin}:{360/\n * (\s)-\margin}:\radius);
  \node[draw=none] at ({360/\n * (\s - 0.5)}:\radius * 0.85) {$\varphi_d$};
}
\end{tikzpicture}
\end{equation*}
Clearly, each map on $(X_n)^t$ built from $t$ automorphisms $\varphi_i$ of $X_n$ and a permutation $\tau\in S_t$ in the above manner will be an automorphism of $\g=(X_n)^t$.

The order of $\varphi$ restricted to the above cycle is
\begin{equation*}
d\cdot\ord(\varphi_d\circ\ldots\circ\varphi_1),
\end{equation*}
where $\varphi_d\circ\ldots\circ\varphi_1\in\Aut(X_n)$ and the order of $\varphi\in\Aut((X_n)^t)$ is the least common multiple of all the orders of $\varphi$ restricted to the cycles of $\tau$.

In the end we are interested in the fixed-point Lie subalgebra with respect to a Lie algebra automorphism $\varphi$ on a semisimple Lie algebra $\g$. Since this automorphism acts separately on each collection of isomorphic simple components $(X_n)^t$ and indeed on each cycle $X_n^{(p_1)}\oplus X_n^{(p_2)}\oplus\ldots\oplus X_n^{(p_d)}$, the fixed-point Lie subalgebra $\g^\varphi$ is the direct sum of the fixed-point Lie subalgebra of each of the cycles. For the cycle above, the fixed-point Lie subalgebra is given by
\begin{align*}
&\left\{(x_1,\ldots,x_d)\in (X_n)^d \xmiddle| x_2=\varphi_1(x_1),\ldots,x_d=\varphi_{d-1}(x_{d-1}),x_1=\varphi_d(x_d)\right\}\\
&=\left\{(x_1,\ldots,x_d)\in (X_n)^d \xmiddle| x_1=(\varphi_d\circ\ldots\circ\varphi_1)(x_1),x_2=\varphi_1(x_1),\ldots,x_d=\varphi_{d-1}(x_{d-1})\right\}\\
&=\left\{(x_1,\varphi_1(x_1),\ldots,(\varphi_{d-1}\circ\ldots\circ\varphi_1)(x_1))\in (X_n)^d \xmiddle| x_1=(\varphi_d\circ\ldots\circ\varphi_1)(x_1)\right\}\\
&\cong\left\{x\in X_n \xmiddle| x=(\varphi_d\circ\ldots\circ\varphi_1)(x)\right\}\\
&=(X_n)^{\varphi_d\circ\ldots\circ\varphi_1}.
\end{align*}

In particular, the following generalisation of Proposition~\ref{prop:fixreductive} and Corollary~\ref{cor:subrank} to semisimple Lie algebras holds:
\begin{prop}\label{prop:fixreductive2}
Let $\g$ be a finite-dimensional, semisimple Lie algebra and $\varphi$ an automorphism of $\g$ of finite order. Then the fixed-point Lie subalgebra $\g^\varphi$ is a reductive Lie algebra and $\rk(\g^\sigma)\leq\rk(\g)$.
\end{prop}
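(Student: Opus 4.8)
The plan is to reduce the semisimple case to the simple case, for which both assertions are already available as Proposition~\ref{prop:fixreductive} and Corollary~\ref{cor:subrank}, by exploiting the structural analysis of automorphisms of semisimple Lie algebras carried out immediately above. First I would group the simple ideals of $\g$ according to their isomorphism class and write $\g\cong(X_{n_1})^{t_1}\oplus\ldots\oplus(X_{n_r})^{t_r}$, noting that any $\varphi\in\Aut(\g)$ maps each simple ideal isomorphically onto a simple ideal of the same isomorphism type and hence preserves each block $(X_{n_i})^{t_i}$. Since $\g^\varphi$ is visibly the direct sum of the fixed-point subalgebras of the blocks, it suffices to treat a single block $\g=(X_n)^t$.

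On such a block $\varphi$ induces a permutation $\tau\in S_t$ of the $t$ copies of $X_n$, and $\varphi$ acts separately on the orbits (cycles) of $\tau$; correspondingly $\g^\varphi$ is the direct sum of the fixed-point subalgebras of the individual cycles. For a cycle of length $d$, the computation preceding the proposition identifies its fixed-point subalgebra with $(X_n)^{\psi}$, where $\psi:=\varphi_d\circ\ldots\circ\varphi_1$ is an automorphism of the single simple Lie algebra $X_n$. Because $\varphi$ has finite order, so does $\psi$ (the order of $\varphi$ restricted to the cycle equals $d\cdot\ord(\psi)$), so Proposition~\ref{prop:fixreductive} applies and shows that $(X_n)^{\psi}$ is reductive.

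Assembling the pieces, $\g^\varphi$ is a direct sum of reductive Lie algebras and therefore reductive, which gives the first assertion. For the rank bound I would argue cycle by cycle: a cycle of length $d$ has rank $d\cdot\rk(X_n)$, while by Corollary~\ref{cor:subrank} its fixed-point subalgebra $(X_n)^{\psi}$ has rank at most $\rk(X_n)\le d\cdot\rk(X_n)$. Summing these inequalities over all cycles of all blocks yields $\rk(\g^\varphi)\le\rk(\g)$.

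The point where I expect to have to be careful is the reduction itself rather than any single estimate: one must check that $\varphi$ genuinely preserves each isomorphism-type block and acts orbit-wise on the copies within a block, and that the finite order of $\varphi$ descends to finite order of the ``monodromy'' automorphism $\psi=\varphi_d\circ\ldots\circ\varphi_1$ so that Kac's reductivity result is applicable; both are supplied by the preceding discussion. Given that the simple case is already in hand, no serious obstacle remains, since a finite direct sum of reductive Lie algebras is reductive by definition and the ranks of direct summands add.
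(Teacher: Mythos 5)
Your proposal is correct and follows exactly the route the paper takes: the text preceding the proposition performs the same reduction to isotypic blocks and then to cycles, identifies the fixed points of a length-$d$ cycle with $(X_n)^{\varphi_d\circ\ldots\circ\varphi_1}$, and the proposition is then stated as an immediate consequence of the simple case (Proposition~\ref{prop:fixreductive} and Corollary~\ref{cor:subrank}). Your added care about the finite order of the composite automorphism and the additivity of ranks is exactly the right bookkeeping.
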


\chapter{Vertex Superalgebras}
\section{Vertex Superalgebras}\label{sec:super}

It is easy to adopt the notion of a vertex superalgebra starting from the definition of ordinary vertex algebras (see e.g.\ \cite{Kac98}, Section~1.3). The space of states $V$ has to be a superspace, i.e.\ a $\Z_2$-graded vector space $V=V^{\bar 0}\oplus V^{\bar 1}$. The $\Z_2$-grading is called \emph{parity} and written as $|a|=\bar\i$ for $a\in V^{\bar i}$. The coefficients $a_n$ of the vertex operator $Y(a,x)$ have to be endomorphisms of $V$ of parity $\bar i$ if $a$ is homogeneous of parity $\bar i$ for all $n\in\Z$. The vacuum vector has to obey $\vac\in V^{\bar 0}$ and $T$ has to be an operator of even parity (both facts already follow from the other axioms). Moreover, the locality axiom has to be modified: for each $a,b\in V$
\begin{equation*}
(x-y)^N\left(Y(a,x)Y(b,y)-(-1)^{|a||b|}Y(b,y)Y(a,x)\right)=0
\end{equation*}
for sufficiently large $N\in\N$. Alternatively, the super Jacobi identity reads
\begin{align*}
&\iota_{x_1,x_0}x_2^{-1}\delta\left(\frac{x_1-x_0}{x_2}\right)Y(Y(a,x_0)b,x_2)\\
&=\iota_{x_1,x_2}x_0^{-1}\delta\left(\frac{x_1-x_2}{x_0}\right)Y(a,x_1)Y(b,x_2)\\
&\quad-(-1)^{|a||b|}\iota_{x_2,x_1}x_0^{-1}\delta\left(\frac{x_2-x_1}{-x_0}\right)Y(b,x_2)Y(a,x_1)
\end{align*}
for $\Z_2$-homogeneous $a,b\in V$ and is equivalent to super-locality under the presence of the other axioms. There is a small subtlety concerning the translation axiom (see \cite{Li96b}, Remark~2.2.2): it should be formulated as
\begin{equation*}
[T,Y(a,x)]=Y(Ta,x)=\partial_x Y(a,x).
\end{equation*}
The first equality has to be additionally assumed. For vertex algebras this was a consequence of the other definitions. Also, for vertex operator superalgebras the first equality already follows from the Jacobi identity.

For a (weak) graded vertex superalgebra, generalised from a (weak) graded vertex algebra, or a (weak) vertex operator superalgebra, generalised from a (weak) \voa{}, we allow the weight grading on $V$ to be $(1/2)\Z$-valued instead of just $\Z$-valued, i.e.\
\begin{equation*}
V=\bigoplus_{n\in\frac{1}{2}\Z}V_n
\end{equation*}
with $\wt(a)=n\in(1/2)\Z$ for $a\in V_n$. We also assume that the $\Z_2$- and the weight $(1/2)\Z$-grading are compatible in the sense that both $V^{\bar 0}$ and $V^{\bar 1}$ are $(1/2)\Z$-graded subspaces of $V$, i.e.\
\begin{equation*}
V^{\bar i}=\bigoplus_{n\in\frac{1}{2}\Z}V_n^{\bar i}
\end{equation*}
where $V_n^{\bar i}=V_n\cap V^{\bar i}$. For a vertex operator superalgebra this is true if $\omega$ has even parity since then also $L_0=\omega_1$ is of even parity and hence $L_0$ restricts to an operator on both $V^{\bar 0}$ and $V^{\bar 1}$.

In addition, it is often assumed that the weight grading on $V^{\bar 0}$ is in $\Z$ and possibly also that the grading on $V^{\bar 1}$ is in $\Z+1/2$. But it is also possible to consider purely $\Z$-graded vertex superalgebras or even ``$(1/2)\Z$-graded vertex algebras'' (by setting $V^{\bar 1}=\{0\}$ and allowing $V^{\bar 0}$ to be $(1/2)\Z$-graded).

Finally, let us remark that an \aia{} with $N=2$, $D=\Z_2$, $F(\alpha,\beta,\gamma)=1$ and $\Omega(\alpha,\beta)=(-1)^{\alpha\cdot\beta}$ for all $\alpha,\beta,\gamma\in\Z_2$ is exactly a vertex operator superalgebra (with compatible gradings as described above) if we additionally assume that the weight $(1/N)\Z=(1/2)\Z$-grading is bounded from below and that the graded components of $V$ are finite-dimensional.

\minisec{Tensor Product}
Similar to the case of vertex algebras the tensor product $V\otimes W$ of two vertex superalgebras $V$ and $W$ admits the structure of a vertex superalgebra if we define $\vac=\vac_V\otimes\vac_W$ and
\begin{equation*}
Y_{V\otimes W}(a\otimes b,x)(v\otimes w)=(-1)^{|b||v|}Y_V(a,x)v\otimes Y_W(b,x)w
\end{equation*}
for $\Z_2$-homogeneous $a,v\in V$ and $b,w\in W$. Then the locality axiom (applied to some vector $v\otimes w$) becomes
\begin{align*}
&(x-y)^NY(a\otimes b,x)Y(c\otimes d,y)\\
&=(x-y)^NY(a\otimes b,x)\left((-1)^{|d||v|}Y(c,y)\otimes Y(d,y)\right)\\
&=(x-y)^N\left((-1)^{|b|(|v|+|c|)}Y(a,x)\otimes Y(b,x)\right)\left((-1)^{|d||v|}Y(c,y)\otimes Y(d,y)\right)\\
&=(x-y)^N(-1)^{|a||c|+|b||d|}\left((-1)^{|d||v|}Y(c,y)\otimes Y(d,y)\right)\left((-1)^{|b|(|v|+|c|)}Y(a,x)\otimes Y(b,x)\right)\\
&=(x-y)^N(-1)^{|a||c|+|b||d|}(-1)^{-|d||a|}Y(c\otimes d,y)(-1)^{|b||c|}Y(a\otimes b,x)\\
&=(x-y)^N(-1)^{(|a|+|b|)(|c|+|d|)}Y(c\otimes d,y)Y(a\otimes b,x),
\end{align*}
where we used that $Y(\cdot,x)$ is parity-preserving, i.e.\ that for instance the parity of the coefficients of $Y(c,y)v$ is $|c|+|v|$ for $\Z_2$-homogeneous $c,v\in V$. This shows that the parity grading on $V\otimes W$ is necessarily given by
\begin{equation*}
(V\otimes W)^{\bar{0}}=(V^{\bar{0}}\otimes_\C W^{\bar{0}})\oplus(V^{\bar{1}}\otimes_\C W^{\bar{1}})
\end{equation*}
and
\begin{equation*}
(V\otimes W)^{\bar{1}}=(V^{\bar{0}}\otimes_\C W^{\bar{1}})\oplus(V^{\bar{1}}\otimes_\C W^{\bar{0}}).
\end{equation*}

For graded vertex superalgebras $V$ and $W$, defining
\begin{equation*}
\wt(v\otimes w):=\wt(v)+\wt(w)
\end{equation*}
for homogeneous $v\in V$ and $w\in W$ makes $V\otimes W$ again a graded vertex superalgebra and for vertex operator superalgebras we define
\begin{equation*}
\omega_{V\otimes W}=\omega_V\otimes\vac+\vac\otimes\omega_W
\end{equation*}
to make $V\otimes W$ a vertex operator superalgebra.

\minisec{Supercharacter}
Recall that the character of a \voa{} $V$ (and that of a vertex operator superalgebra) is defined as 
\begin{equation*}
\ch_{V}(\tau)=\sum_{n\in\Z}\dim_\C(V_n)q_\tau^{n-c/24}.
\end{equation*}
For a vertex operator superalgebra $V$ we additionally introduce the notion of \emph{supercharacter}
\begin{equation*}
\sch_{V}(\tau):=\sum_{n\in\Z}\sdim_\C(V_n)q_\tau^{n-c/24}
\end{equation*}
where $\sdim_\C(V_n)$ is the \emph{superdimension} of $V_n$, i.e.\ $\sdim_\C(V_n)=\dim_\C(V_n^{\bar{0}})-\dim_\C(V_n^{\bar{1}})$ for $n\in\Z$.

\end{appendices}

\bookmarksetupnext{level=part}
\bibliographystyle{alpha_noseriescomma}
\bibliography{quellen}{}

\end{document}